\newtheorem{thm}{Theorem}[section]
\newtheorem{cor}[thm]{Corollary}
\newtheorem{lem}[thm]{Lemma}
\newtheorem{prop}[thm]{Proposition}
\newtheorem{conj}[thm]{Conjecture}
\theoremstyle{definition}
\newtheorem*{rem}{Remark}
\newtheorem{rems}[thm]{Remark}
\newtheorem{defn}[thm]{Definition}
\numberwithin{equation}{section}
\renewcommand{\P}{\mathbb P}
\newcommand{\Q}{\mathbb Q}
\newcommand{\Z}{\mathbb Z}
\newcommand{\N}{\mathbb N}
\newcommand{\C}{\mathbb C}
\newcommand{\R}{\mathbb R}
\newcommand{\A}{\mathbb A}
\newcommand{\G}{\mathbb G}
\renewcommand{\H}{\mathbb H}
\def\disj{\uplus}
\DeclareMathOperator{\Aut}{Aut}
\DeclareMathOperator{\GL}{GL}
\DeclareMathOperator{\GO}{GO}
\DeclareMathOperator{\PGL}{PGL}
\DeclareMathOperator{\SL}{SL}
\DeclareMathOperator{\Sp}{Sp}
\DeclareMathOperator{\Pic}{Pic}
\DeclareMathOperator{\Div}{Div}
\DeclareMathOperator{\NS}{NS}
\DeclareMathOperator{\Sym}{Sym}
\DeclareMathOperator{\Alt}{Alt}
\DeclareMathOperator{\End}{End}
\DeclareMathOperator{\Hom}{Hom}
\DeclareMathOperator{\Mat}{Mat}
\DeclareMathOperator{\rank}{rank}
\DeclareMathOperator{\coker}{coker}
\DeclareMathOperator{\ad}{ad}
\DeclareMathOperator{\Spec}{Spec}
\DeclareMathOperator{\Proj}{Proj}
\DeclareMathOperator{\Tor}{Tor}
\DeclareMathOperator{\im}{im}
\DeclareMathOperator{\coh}{coh}
\DeclareMathOperator{\Tw}{Tw}
\newcommand{\sO}{\mathcal O}
\newcommand{\tW}{\widetilde W}
\newcommand{\sHom}{\mathcal{H}{\rm om}}
\newcommand{\sEnd}{\mathcal E{\rm nd}}
\DeclareMathOperator{\Hilb}{Hilb}
\newcommand{\ratto}{\dashrightarrow}
\newcommand{\Gampq}{\Gamma_{\!\!p,q}}
\newcommand{\Gamq}{\Gamma_{\!\!q}}
\def\Gamm#1{\Gamma_{\!\! #1}}
\DeclareMathOperator{\Res}{Res}
\DeclareMathOperator{\Ind}{Ind}
\DeclareMathOperator{\Coind}{Coind}
\DeclareMathOperator{\lcm}{lcm}
\begin{document}

\allowdisplaybreaks

\newcommand{\arXivNumber}{1709.02989}

\renewcommand{\thefootnote}{}

\renewcommand{\PaperNumber}{111}

\FirstPageHeading

\ShortArticleName{Elliptic Double Affine Hecke Algebras}

\ArticleName{Elliptic Double Affine Hecke Algebras\footnote{This paper is a~contribution to the Special Issue on Elliptic Integrable Systems, Special Functions and Quantum Field Theory. The full collection is available at \href{https://www.emis.de/journals/SIGMA/elliptic-integrable-systems.html}{https://www.emis.de/journals/SIGMA/elliptic-integrable-systems.html}}}

\Author{Eric M.~RAINS}
\AuthorNameForHeading{E.M.~Rains}

\Address{Department of Mathematics, California Institute of Technology, USA}
\Email{\href{mailto:rains@caltech.edu}{rains@caltech.edu}}

\ArticleDates{Received December 19, 2019, in final form October 16, 2020; Published online November 05, 2020}

\Abstract{We give a construction of an affine Hecke algebra associated to any Coxeter group acting on an abelian variety by reflections; in the case of an affine Weyl group, the result is an elliptic analogue of the usual double affine Hecke algebra. As~an application, we use a variant of the $\tilde{C}_n$ version of the construction to construct a flat noncommutative deformation of the $n$th symmetric power of any rational surface with a smooth anticanonical curve, and give a further construction which conjecturally is a corresponding deformation of the Hilbert scheme of points.}

\Keywords{elliptic curves; Hecke algebras; noncommutative deformations}

\Classification{33D80; 39A70; 14A22}

\renewcommand{\thefootnote}{\arabic{footnote}}
\setcounter{footnote}{0}

\tableofcontents

\section{Introduction}\label{section1}

The origin of this paper was a question of P.~Etingof which was conveyed to
the author by A.~Okounkov at a 2011 conference,\footnote{Affine Hecke Algebras, the Langlands Program, Conformal Field Theory and Super Yang--Mills Theory, Centre International de Rencontres Math\'ematiques (Luminy), June 2011.} to wit whether the author knew of a way to
construct noncommutative deformations of symmetric powers of the complement
of a smooth cubic plane curve. Although the answer was ``no'' (at the
time, see below!), it seemed likely that it should be possible to extend
the approach of~\cite{sklyanin_anal} (which the author and S.~Ruijsenaars
had developed earlier that month) to multivariate difference operators;
although this would not answer the question as posed, it {\em would} give
analogous deformations associated to the complement of a~smooth biquadratic
curve in~$\P^1\times \P^1$, represented as algebras of elliptic difference
operators in~$n$ variables. The~author continued to develop this approach
while on a sabbatical that fall at~MIT, eventually coming up with a
construction for such deformations for {\em any} rational surface equipped
with a smooth anticanonical curve and a rational ruling.

There were, however, a couple of significant issues. One was that the
spaces of operators were cut out by a number of conditions, including in
particular certain residue conditions that only made sense for generic
values of the parameters. This would have been merely an annoying
technicality, except that the conditions specifically failed to make sense
in the commutative case, making it rather difficult to consider the family
as actually being a deformation. This could be worked around by
considering the family as a whole, or in other words only considering those
operators that could be extended to an open subset of parameter space.
However, although this would indeed give a well-defined family of algebras,
it would make the question of flatness even more difficult, and would in
principle even allow the representation in difference operators to fail to
be faithful!

Despite these difficulties, the definition was still well-behaved enough to
allow a fair amount of experimentation. One thing that became clear was
that the construction directly led to some spaces of operators that had
been considered in the literature, in particular those associated to the
difference equations for interpolation and biorthogonal functions
\cite{bctheta, xforms}, of particular interest in the latter case since no
satisfying construction was yet known for the full space of operators. In~addition, since these functions degenerate to more familiar functions, to
wit the Macdonald and Koornwinder polynomials, this suggested that the
algebras of elliptic difference operators should degenerate to algebras
related to those polynomials. In~particular, the latter algebras can be
constructed as spherical algebras of appropriate double affine Hecke
algebras, and P. Etingof suggested to the author that the same might hold
at the elliptic level, and give a possible approach to flatness.

\looseness=-1Indeed, the same approach to constructing the algebra of operators (as
operators preserving (locally) holomorphic functions and satisfying
appropriate vanishing conditions on the coefficients) could be fairly
easily extended to give a construction of elliptic double affine Hecke
algebras. The~resulting residue conditions turned out to be essentially
those of~\cite{GinzburgV/KapranovM/VasserotE:1997}, with again the caveat
that they only make sense when the noncommutative parameter $q$ is
non-torsion. In~fact, something slightly stronger is true: the residue
conditions are well-behaved as long as one only considers a sufficiently
small interval relative to an appropriate (Bruhat) filtration, with the
constraint on the interval being simply that it act faithfully. This led
the author to investigate that filtration more carefully, leading
eventually to the realization that (a) the residue conditions always make
sense on rank~1 subalgebras (which are very special cases of the
construction of~\cite{GinzburgV/KapranovM/VasserotE:1997}), and (b) those
rank~1 subalgebras always generate a flat algebra, even when~$q$ is
torsion. As a~result, one could avoid the residue conditions entirely and
simply consider the algebra generated by the rank~1 algebras. It is then
relatively straightforward to show that the resulting family of algebras is
flat (and the representation as difference-reflection operators is
faithful), and not too difficult to show that this flatness is inherited by
the spherical algebra. Moreover, much of the theory can be developed for
quite general actions of Coxeter groups on abelian varieties, so that the
DAHAs are just the special cases in which the Coxeter group is affine.

In the above discussion, we~have neglected a few technical issues. The~first is that the deformations of symmetric powers are not, in general,
{\em algebras} of operators. The~difficulty is that, with the exception of
complements of smooth anticanonical curves in del Pezzo surfaces, none of
the surfaces we wish to deform are actually affine! Since they are only
quasiprojective in general, it is easier to simply deform the symmetric
power of the original projective surface (and then take the appropriate
localization if desired). This still requires a choice of ample divisor,
and one then encounters the difficulty that twisting a noncommutative
variety by a line bundle tends to change the noncommutative variety. As~a
result, the object actually being deformed is the category of line bundles
on the symmetric power.

When it comes to the DAHA, however, the situation is even more complicated. The~problem is that the elliptic DAHA essentially arises by replacing one
of the two commutative subalgebras of the usual DAHA by the structure sheaf
of an abelian variety of the form $E^n$. In~the affine case considered in
\cite{GinzburgV/KapranovM/VasserotE:1997}, the commutative subalgebra is
finite over the center of the affine Hecke algebra, and thus one is
naturally led to consider sheaves over the center, or in other words
sheaves on the quotient $E^n/W$, where $W$ is the relevant finite Weyl
group. Unfortunately, in the double affine setting, $W$ is replaced by an
affine Weyl group, and there {\em is} no such quotient scheme! As~a~result, the trickiest part of our construction turns out to be simply
figuring out what kind of object we should be constructing. The~key idea,
coming from earlier work in noncommutative geometry
\cite{ArtinM/VandenBerghM:1990,VandenBerghM:1996} is that since the
elliptic DAHA should have $\sO_{E^n}$ as a subalgebra, it should have a
natural {\em bimodule} structure over $\sO_{E^n}$, and thus correspond to a
quasicoherent sheaf on the product $E^n\times E^n$. Subject to some
finiteness conditions (satisfied for any sheaf of meromorphic difference
operators), such bimodules form a monoidal category, and thus we can
construct the elliptic DAHA as a monoid object (``sheaf algebra'') in that
category.

A final technical issue is that we wish to deform symmetric powers of
arbitrary blowups of~$\P^1\times \P^1$ or the Hirzebruch surface $F_1$.
Each time we blow up a point, we~acquire a new parameter, and thus our
construction needs to admit an unbounded number of parameters. This is an
issue from the standpoint of traditional double affine Hecke algebras,
where one normally has precisely one parameter per root (which must be
constant on orbits), plus an overall parameter $q$. One partial exception
is the $C^\vee C_n$ case, where one has a total of 5+1 parameters. This is
normally explained by taking a nonreduced root system, so that one has~5~orbits of roots, but from the standpoint of the actual algebra, this is
rather artificial: there is an action of~$S_2\times S_2$ on the parameters
that has no effect on the algebra, but moves degrees of freedom between
corresponding short and long roots. It is thus much more natural to view
those four parameters as assigning an unordered pair to each orbit of short
roots of the {\em reduced} root system. This turns out to generalize
easily to the elliptic setting: we obtain an elliptic DAHA for every
assignment of an effective divisor on~$E$ to each orbit of roots. This
causes some difficulties in constructing the spherical algebra, as~the
usual construction via idempotents fails even generically, but one can show
that the spherical algebra still continues to inherit flatness in this
general case.

As we mentioned above, the construction of deformations of~$\Sym^n(X)$,
where $X$ is a projective rational surface with a choice of smooth
anticanonical curve, depends on a choice of rational ruling on~$X$. One
consequence is that we cannot directly obtain the case $X=\P^2$ from our
construction. This can be worked around by blowing up a point but then
only considering those line bundles coming from $\P^2$, but this approach
leads to a nontrivial question of showing that the result is independent of
the choice of point. Similarly, if $X\cong \P^1\times \P^1$, then there
are two choices of ruling on~$X$, but deformation theory suggests that the
resulting deformations should be the same (both have the maximum number of
parameters). Both questions turn out to~red\-uce to~the existence of a
certain generalized Fourier transform in the $\P^1\times \P^1$ case, which
is also key to proving the most general form of the flatness result. (The
DAHA only tells us that certain sheaves are flat, so gives only an
asymptotic flatness result for their global sections.) We~find that not
only is our deformation of the category of~symmetric powers of~line bundles
on~$X$ flat (modulo some possibility of bad parameters in codimension~$\ge
2$ not including the original symmetric-power-of-commutative-surface case),
but it is invariant under the action of~a~Coxeter group of~type
$W(E_{m+1})$. (In other words, to first approximation, the construction
only depends on~the~underlying surface $X$ and two points $q$ and $t$ of
the Jacobian of the anticanonical curve.) Note that both facts are
actually {\em not} true for the full original family of~commutative
categories, but hold for the subcategory in which we only allow those
morphisms that extend to~a~neighborhood in the family.

\looseness=-1 The plan of this paper is as follows. First, in Section~\ref{section2}, we~deal with a
largely notational issue that arises due to the fact that we wish to deal
with the construction from a purely algebraic standpoint. The~issue is
that we need in many cases to deal with {\em twisted} versions of our
algebras, in which the coefficients of the operators lie in nontrivial line
bundles. To make sense of this, one must not only describe those line
bundles but various maps between tensor products of pullbacks of those
bundles through elements of the Coxeter group, with associated concerns
about compatibility. In~the analytic setting, one can avoid most of those
issues by constructing the line bundle via an appropriate automorphy factor
on the universal cover, and our objective in Section~\ref{section2} is to do something
similar in the algebraic setting. The~key idea is to replace the
individual curve $E$ by the universal curve over the moduli stack of
elliptic curves; it turns out that one can compute the Picard group of the
corresponding stack ${\cal E}^n$ in general, with only very mild
rigidification required to make pullbacks and tensor products behave well. In~addition, certain of the line bundles come with natural global sections,
which one can use to construct sections of more general bundles; our most
significant result along these lines gives conditions for a function on the
analytic locus described via theta functions to extend to the full moduli
stack (i.e., when it extends in a nice way to elliptic curves over an
arbitrary base). We also partially consider the case of varieties which
are isogenous to powers of elliptic curves (i.e., over a moduli stack of
elliptic curves with a cyclic subgroup), and as an application give some
results on spaces of invariant sections of equivariant line bundles on~${\cal E}^n$. The~main result along those lines states that the dimension
of the space of invariants is independent of the curve $E$ with only
finitely many possible exceptions (supersingular curves of characteristic
dividing the order of the group).

In Section~\ref{section3}, we~give some structural results on the main scenario we
consider in the sequel, namely a~Coxeter group acting on an abelian variety
``by reflections''. In~particular, we~show that under reasonable
conditions every root of the Coxeter group can be assigned an associated
``coroot'' morphism to an elliptic curve, compatibly with the linear
relations between roots in the standard reflection representation (and
satisfying suitable notions of positivity!). This is a~key ingredient in
our construction, as~our parameters will correspond to effective divisors
on those curves. We also show that in the case of a finite Coxeter group,
the invariant theory is better behaved than suggested by the results of
Section~\ref{section2}: as long as a certain isogeny (which is an isomorphism in the
most natural cases) has diagonalizable kernel, the invariant theory
continues to behave well even for supersingular curves. This flatness of
invariants is a crucial ingredient in proving flatness of the spherical
algebra, and in particular means the $C^\vee C_n$ case will be flat over
any field.

Section~\ref{section4} is largely a~recapitulation of the construction of
\cite{GinzburgV/KapranovM/VasserotE:1997}, in which we associate to any
finite Coxeter group $W$ acting on~a~family $X$ of abelian varieties a~family (the ``elliptic Hecke algebra'' for concision, though it should be
thought of as affine, with the role of the commutative subalgebra being
played by the structure sheaf $\sO_X$) of sheaves of algebras on~$X/W$
parametrized by effective divisors on~the ``coroot'' curves. The~main
difference, apart from allowing arbitrary numbers of parameters and
slightly more general abelian varieties, is that we replace the residue
conditions of~\cite{GinzburgV/KapranovM/VasserotE:1997} by the (equivalent)
condition that the operators preserve the spaces of local sections of the
structure sheaf of~$X$ on~$W$-invariant open subsets, as~the latter is
easier to generalize from a~conceptual standpoint. Our main new tool for
studying these algebras is a~natural filtration by Bruhat order on~$W$,
which allows us to express various subsheaves as extensions of line bundles
in natural ways. In~particular, this makes it easy to show that the
algebra is generated by the subalgebras corresponding to simple roots,
which will be a~key ingredient of the extension to infinite Coxeter groups, as~well as giving a~construction which is easily seen to respect base
change. We also prove an important technical lemma on~the space of~``invariants'' of a~module over the elliptic AHA, giving fairly general
conditions on~a~family of~modules guaranteeing that the invariants are
well-behaved; this will be the key lemma in~proving flatness of~spherical
algebras. In~addition, we~give an analogue of Mackey's theorem for the
case of parabolic subgroups in which the usual sum over double cosets is
replaced by a~filtration.

Section~\ref{section5} begins with a discussion of sheaf algebras, which we use in place
of a sheaf of algebras on the quotient. Since the corresponding tensor
product of bimodules is somewhat tricky to deal with, we~discuss approaches
to dealing with this issue (and in particular ways to describe the maps
$A\otimes B\to C$ we need in order to discuss algebras or categories).
This makes it relatively straightforward to construct analogues of the
affine Hecke algebra in which $W$ is replaced by an infinite Coxeter group:
start with the sheaf algebra of meromorphic reflection operators and take
the sheaf subalgebra generated by the rank~1 Hecke algebras. (The only
tricky aspect comes when we consider twisted forms of the algebra, for~which we prove a fairly general result about ``orders'' in twisted forms of~$k(X)[W]$ containing $\sO_X[W]$.) Most of the results extend immediately
from the finite case (with the caveat that any parabolic subgroups
considered should be finite); in particular, the infinite analogue of the
Mackey result is precisely the remaining result we need to show that the
spherical algebras are flat.

Section~\ref{section6} discusses the special case in which $W$ is an affine Weyl group,
so that the sheaf algebras constructed in Section~\ref{section5} are analogues of double
affine Hecke algebras. Apart from some mild issues about viewing $q$ as a
parameter (not the case for the standard construction), this mainly
consists of some observations about the spherical algebra (relative to the
associated finite Weyl group): the fact that the fibers are (sheaf)
algebras of difference operators, and are thus in a natural sense domains,
and the fact that the elliptic DAHA is at least generically Morita
equivalent to its spherical algebra (as well as versions in which some of
the parameters have been shifted by $q$). In~addition, we~discuss the
consequences of the fact that the action of~$\tW$ fails to be faithful when~$q$ is torsion: not only does the sheaf algebra come from a sheaf of
algebras on the quotient by the image of~$\tW$, but it has a
$2n$-dimensional center (over which it is presumably finite). Moreover,
under mild conditions on the twisting, we~can identify the center as the
spherical algebra of an elliptic DAHA with~$q=0$ living on an isogenous
abelian variety.

Section~\ref{section7} considers in detail the case that $W$ is the affine Weyl group of
type $C_n$ and $X$ is a particularly nice action of that group, with a view
to constructing deformations of symmetric powers of rational surfaces. In~addition to the spherical algebras themselves, one must also consider
certain intertwining bimodules. We prove that these are always flat {\em
 as sheaf bimodules}, and show that in the case $t=0$ the result is indeed
a symmetric power of the univariate case (which was constructed in
\cite{noncomm1} without reference to DAHAs). This enables us to at least
partially extend the flatness as sheaves to flatness of global sections, by
giving a number of cases in which the sheaves are acyclic. In~addition to
these general results, our main result in this section is showing that if
we blow up $8$ points of~$\P^1\times \P^1$, then there is a hypersurface in
parameter space on which the ``anticanonical'' algebra is an integrable
system: it is generated by $n+1$ commuting (and self-adjoint) elliptic
difference operators in~$n$ variables. (One can verify that this is
precisely the integrable system of~\cite{vanDiejenJF:1994,vanDiejenJF:1995, KomoriY/HikamiK:1997}. In~addition, the geometry strongly suggests the existence of other integrable
systems with the same number of parameters, but higher-order operators.)

Section~\ref{section8} deals with the question of showing that the construction is
mostly independent of the way in which we represented our rational surface
as a blowup of a ruled surface. The~key ingredient is a certain ``Fourier
transform''. Analytically, this should be represented by the integral
operator with kernel constructed in~\cite{quadxforms}, but there are
difficulties in showing this is well-defined in general (as well as showing
that it respects the additional conditions associated to any points we have
blown up). As~a result, we~construct the transform in several steps.
First, we~give a construction that is manifestly well-defined (and a
homomorphism) as long as $q$ is not torsion, but lives on a certain
completion of the algebra of meromorphic difference operators. Although
the construction is in general quite complicated, it is sufficiently
well-behaved to allow us to compute a few special cases. The~result in
those special cases turns out to be well-defined even when~$q$ is torsion,
and this allows us to show that this formal transform extends in general.
Moreover, the special cases we understand are sufficiently close to
generating the full algebras that we can prove that the formal transform
restricts to an actual transform. This also gives us some ability to
explore how the algebras behave when we degenerate the elliptic curve, as~it is easy to take limits of the almost-generators. In~addition to
allowing us to prove a much stronger flatness result, the Fourier transform
also allows us to construct a large collection of~``quasi-integrable''
systems, in particular including the aforementioned operators associated to
interpolation and biorthogonal functions.

Section~\ref{section9} gives some partial results towards ``desingularizing'' the above
construction; i.e., giving a deformation of the Hilbert scheme of points of~$X$ rather than just the symmetric power. The~basic idea is fairly
straightforward: simply include additional bimodules that shift $t$ as well
as the points being blown up. The~argument for flatness breaks down in
general, even at the level of sheaves, but we can show both flatness and
agreement with Euler characteristics of line bundles on the Hilbert schemes
in a fair number of cases. Moreover, for~several of~those cases, not only
do we obtain the correct number of global sections, but one can identify
those global sections as global sections on the Hilbert scheme in such a
way that they satisfy precisely the same relations. In~particular, this
includes the line bundles corresponding to the embedding of~$\Hilb^n(X)$ in
a~Grassmannian that takes the ideal sheaf ${\cal I}$ to the subspace
$\Gamma(X;{\cal I}(D))\subset \Gamma(X;\sO_X(D))$. (One~can also show that
the Fourier transform extends, so that again the construction essentially
depends only on~$X$ and not the particular way it was obtained from a
Hirzebruch surface.)

We close with a summary of some of the various open problems that arose in
the course of this work. (Of course, this is only a small sampling of such
problems, as~nearly any existing result on double affine Hecke algebras
suggests the existence of a generalization to the elliptic case! We are
also omitting some questions discussed in Sections~\ref{section2} and~\ref{section3}, as~they are
peripheral to the main thrust of the work.) One big collection of
questions has to do with the fact that, although we show that the spherical
algebras of the elliptic DAHAs are flat in significant generality, we~can
prove almost nothing else about them in general. In~particular, we~cannot
even show that they are Noetherian (even in the specific $C^\vee C_n$-type
cases for which we have such strong flatness results). The~approach to
such questions in~\cite{ArtinM/TateJ/VandenBerghM:1990} suggests that one
should be able to reduce this to the case in which everything is defined
over a finite field, when one expects the spherical algebra to be finite
over its center (which should itself be Noetherian); unfortunately,
understanding the center (when~$q$ is torsion) is itself an open problem.
(This reduces to understanding the spherical algebra when~$q=0$.)

Another natural question is whether the Fourier transform of the usual DAHA
extends to~the elliptic level. The~construction of Section~\ref{section8} can be viewed
as a partial affirmative answer to~this question in the $C^\vee C_n$ case, as~it constructs a Fourier transform on the spherical algebra. For~$n=1$,
this at least implicitly leads to a Fourier transform on the elliptic DAHA
by using the appropriate Morita equivalence, but even there it is unclear
how to make the transformation explicit. (There is also a philosophical
question: our work suggests that the transform should really be viewed
as living on the spherical algebra, as~the analogous transform on the DAHA
is not as well behaved relative to the natural filtration.)

In fact, even in the $C_n$ case, there are still open questions about the
Fourier transform, as~the construction of Section~\ref{section8} only applies to the
case in which we have assigned precisely one parameter to the $D_n$-type
roots. There is evidence suggesting the existence of some other Fourier
transforms related to the versions of the DAHAs of types $A_n$ and $C_n$
that do not have this ``$t$'' parameter. Indeed, the paper
\cite{SpiridonovVP/WarnaarSO:2006} discusses several integral
transformations; one appears to relate two versions of the $A_n$ spherical
algebra, while the other two appear to relate the $A_n$ spherical algebra
to the $C_n$ spherical algebra. The~main obstruction to understanding
these cases is that the lack of a $t$ parameter makes it difficult to
control the spaces of global sections, as~we can no longer view the
spherical algebras as deformations of a symmetric power. In~any event,
this suggests that the existence of isomorphisms between spherical algebras
is a much more subtle question than one might have thought based on the
classical theory of double affine Hecke algebras. Another source of
questions about the Fourier transform is the analytic version constructed
in~\cite{quadxforms}. In~addition to the ``interpolation kernel'', that
paper constructed a few other functions with some similar properties (the
``Littlewood'', ``dual Littlewood'' and ``Kawanaka'' kernels), and it is
natural to ask whether those functions interact with the $C^\vee C_n$
spherical algebra in any interesting way. That such an interaction should
exist is strongly suggested by the fact that the quadratic transforms
proved in that paper were generalizations of results first proved using
the action of affine Hecke algebras on Laurent polynomials.

There are also a number of questions about our deformations of symmetric
powers having to do with taking global sections. One fundamental question
has to do with the fact that our construction, although (mostly) flat and
highly symmetrical, does not quite correspond directly to deformations of
symmetric powers of surfaces: we must still make a choice of ample line
bundle in order to obtain an actual projective deformation in the
commutative case or a deformation of the category of sheaves in the
noncommutative case. For~$n=1$, it was shown in~\cite{noncomm1} that all
such choices give the same result, but it will be difficult to extend those
techniques to~$n>1$. In~addition, one would like to show that the
corresponding family of commutative quasiprojective varieties is at least
generically smooth (which experiment suggests is the case). Of course, we~also expect that our conjectural deformation of Hilbert schemes should
always be smooth (i.e., in the noncommutative case, that the corresponding
category of coherent sheaves should satisfy Serre duality). In~addition, one
would like to have a proof that our family of algebras (or noncommutative
varieties) actually depends on all of the parameters (and is not, say,
simply a base change from a lower-dimensional family); this suggests trying
to understand how the family relates to the infinitesimal deformation
theory of the symmetric power. (Note, however, that the deformation
associated to the $t$ parameter is almost certainly not locally trivial.
Also, the fact that replacing $t$ by $q-t$ gives an isomorphic algebra
implies that the corresponding Kodaira--Spencer map will vanish unless we
first descend the family to the quotient by this symmetry, and it is
unclear how to do so without breaking the representation via difference
operators.) This would largely be settled if we could show that the
$q=0$ case of the Hilbert scheme deformation agreed with the deformation
constructed in~\cite{noncomm1} (as a moduli space of rank~1 sheaves on a
noncommutative rational surface).

In any event, a choice of ample divisor allows one to translate each
original line bundle into an actual sheaf on the (noncommutative) variety,
and thus produces a saturated version of the original $\Hom$ space (by
taking all morphisms between the sheaves). In~the univariate case, the
saturated morphisms were still difference operators, and there is a
primarily combinatorial algorithm for computing the resulting dimensions. The~question is more subtle in the multivariate setting, with two main
issues arising. For~some line bundles on~$F_1$ (including those coming
from line bundles on~$\P^2$), we~can only prove flatness away from a
possible bad locus of codi\-men\-sion~$\ge 2$, and some new approach to
constructing global sections is likely needed to eliminate this
possibility. The~other tricky case arises from the elliptic pencil on a
deformation of a non-Jacobian elliptic surface (i.e., in which the elliptic
fibration does not have a section). There, not only do we want to know how
many global sections there are (with the conjecture being that on the
hypersurface where the algebra of global sections is nontrivial, it is
flat), but also, by analogy with the Jacobian case, expect that the algebra
of global sections will give a~new integrable system, associated to the
same parameters as the van Diejen/Komori--Hikami system, with the addition
of a choice of nontrivial torsion point on the elliptic curve.

Finally, the condition that a projective rational surface have a {\em
 smooth} anticanonical curve is quite restrictive, and in particular
excludes a number of cases in which deformations were already known.
Although the strong version of flatness cannot be expected to extend in
general, one can still expect to have flatness for {\em ample} bundles.
There are already issues before blowing up any points, as~it appears one
must give an analogue of the elliptic DAHA in which the elliptic curve
becomes singular, reducible, or even nonreduced, and this can cause issues
with the Bruhat filtration as well as the generation in rank~1. Beyond
that, although it should be fairly straightforward (especially if the base
curve remains integral) to consider blowups in {\em smooth} points of the
base curve, this is again a pretty restrictive condition, while blowing up
singular points quickly leads to a combinatorial explosion without some
more conceptual approach.

\section[Line bundles on $E^n$ and their sections]{Line bundles on~$\boldsymbol{E^n}$ and their sections}\label{section2}

In the sequel, we~will quite frequently need to specify line bundles on a
power $E^n$ of an elliptic curve $E$ or (meromorphic) sections thereof.
(Here $E$ will typically be defined over an algebraically closed field,
though much of the discussion works over a general base.) At first glance,
the problem of specifying a line bundle appears nearly trivial. Indeed,
given any principally polarized abelian variety $A$, we~have a short exact
sequence
\begin{gather*}
0\to A\to \Pic(A)\to \NS(A)\to 0,
\end{gather*}
where the N\'eron--Severi group $\NS(A)$ is naturally isomorphic to the
group of endomorphisms of~$A$ which are symmetric under the Rosati
involution. If~$\End(E)=\Z$ (which holds generically), then this becomes
\begin{gather*}
0\to E^n\to \Pic(E^n)\to \NS(E^n)\to 0,
\end{gather*}
where $\NS(E^n)$ is the group of symmetric $n\times n$ integer matrices.
Moreover, it turns out (as we will discuss in more detail below) that this
short exact sequence splits, giving us canonical labels for line bundles
{\em up to isomorphism}.

This last caveat is quite significant, however; if ${\cal L}_1$ and ${\cal
 L}_2$ represent given classes in~$\Pic(E^n)$ and ${\cal L}_3$ represents
their sum, then there exists an isomorphism ${\cal L}_1\otimes {\cal
 L}_2\cong {\cal L}_3$, but this is only determined up to an overall
scalar multiple. Another consequence is that if we have (as we will below)
a group $G$ acting on~$E^n$, a $G$-invariant class in~$\Pic(E^n)$ need not
specify an {\em equivariant} line bundle.

If $E$ is an analytic curve $\C/\langle 1,\tau\rangle$, there is a standard
way to avoid these difficulties, namely the theory of theta functions.
Indeed, given a cocycle $z\in Z^1(\pi_1(E^n);\A(\C^n)^*)$, we~can construct
a corresponding line bundle ${\cal L}_z$, and these bundles satisfy ${\cal
 L}_{z_1}\otimes {\cal L}_{z_2}\cong {\cal L}_{z_1z_2}$ and $g^*{\cal
 L}_z\cong {\cal L}_{g^*z}$. Moreover, since $H^1(\Z,\A(\C)^*)=0$, we~can
arrange for our cocycles to have trivial restriction along $\Z^n\subset
\langle 1,\tau\rangle^n\cong \pi_1(E^n)$. We thus obtain the following
description of line bundles on~$E^n$: given a symmetric integer matrix $Q$
and constants $C_1,\dots,C_n\in \C^*$, we~could consider the line bundle on~$(\C/\langle 1,\tau\rangle)^n$ with local sections given by functions on~$\C$
satisfying
\begin{gather*}
 f(z_1,\dots,z_{i-1},z_i+1,z_{i+1},\dots,z_n)= f(z_1,\dots,z_n),
 \\
 f(z_1,\dots,z_{i-1},z_i+\tau,z_{i+1},\dots,z_n)
 = C_i e \bigg({-}\sum_j Q_{ij}z_j\bigg)
 f(z_1,\dots,z_n),
\end{gather*}
where $e(x):=\exp\big(2\pi\sqrt{-1}x\big)$. These line bundles behave well under
tensor product, but in slightly odd ways under pullback; it turns out that
we should not quite trivialize the cocycle along $\Z^n$ in general, but
instead merely insist that it restrict to an appropriate morphism $\Z^n\to
\{\pm 1\}$. This leads us to define the line bundle ${\cal L}_{Q;\vec{C}}$ on~$(\C/\langle 1,\tau\rangle)^n$ as the sheaf with local sections consisting of
holomorphic functions satisfying
\begin{gather*}
f(\vec{z}+\vec{x}\tau+\vec{y})
=
\prod_{1\le i\le n} (-1)^{Q_{ii}(x_i+y_i)} C_i^{x_i}
\prod_{1\le i,j\le n} e(-Q_{ij} x_i(z_j+x_j\tau/2))f(\vec{z}),
\end{gather*}
for~$\vec{x},\vec{y}\in \Z^n$. Note that this does not quite solve the
problem as stated, since it gives multiple representatives for each line
bundle (multiplying $f$ by the nowhere vanishing entire function~$e(z_i)$
multiplies $C_i$ by $e(\tau)$), but still makes it straightforward to
control equivariant structures on~line bundles, as~well as some of the more
gerbe-like structures we need to consider below.

Although this suffices for many purposes, we~would like to have an {\em
 algebraic} solution to this problem. Not only is our construction below
essentially algebraic in nature, there are also some indications that it
may prove useful in later work to be able to consider versions defined over
finite fields. (In particular, see the use in
\cite{ArtinM/TateJ/VandenBerghM:1990} of finite field instances of the
Sklyanin algebra in proving the latter is Noetherian, of particular
interest given that we do not yet have a proof that our algebras are
Noetherian; see also unpublished work of Bezrukavnikov and Okounkov.) A
first step towards this is to observe that we are not {\em really}
interested in constructing things over a particular curve; rather, we~wish
to have constructions that apply to {\em all} curves. In~other words, what
we truly want to understand are line bundles on the $n$th fiber power
${\cal E}^n$ of the universal curve ${\cal E}$ over the moduli stack ${\cal
 M}_{1,1}$ of elliptic curves.

\begin{rem}
For those readers who may be unfamiliar with stacks, it is worth noting
that there is an equivalence between statements about line bundles on~${\cal E}^n$ (or ${\cal M}_{1,1}$) and statements about equivariant line
bundles on an appropriate scheme-with-group-action. This comes from the
fact that each of these stacks is a quotient stack. Indeed, a choice of a
nonzero holomorphic differential~$\omega$ on an elliptic curve on which $6$
is invertible\footnote{Something similar holds over $\Z$, but with~$\G_m$
 replaced by the group $(y,x)\mapsto \big(a^3y+bx+c,a^2x+d\big)$.} determines a
unique expression of the curve as a~Weierstrass curve: the compactification
of a curve $y^2=x^3+a_4x+a_6$, with differential~${\rm d}x/2y$. It follows
immediately that for any family of elliptic curves $E/S$ with~$6$
invertible on~$S$, there is a $\G_m$-torsor $T$ over $S$ such that $E_T$ is
given by such an equation: simply take $T$ to be the torsor of nonzero
holomorphic differentials! It follows that there is a~natural isomorphism
between the (symmetric monoidal) category of line bundles on~${\cal
 M}_{1,1}[1/6]$ (or ${\cal E}^n[1/6]$) and the category of~$\G_m$-equivariant line bundles on~$\Spec(\Z[1/6,a_4,a_6,1/\Delta])$ (or
the corresponding family of abelian varieties). Indeed, if we are given a
$\G_m$-equivariant line bundle on the family of~Weierstrass curves (or, for~${\cal E}^n$, on the appropriate family of abelian varieties), then for any
family of elliptic curves over $S$ (resp.~with $n$ additional sections), we~obtain an~induced $\G_m$-equivariant bundle on the $\G_m$-torsor $T$, and
this descends naturally to a line bundle on~$S$ (satisfying the requisite
compatibility conditions). Conversely, any line bundle on the stack
induces a line bundle on the corresponding family of Weierstrass curves,
and the compatibility conditions make this line bundle naturally
equivariant. This equivalence would allow one to~enti\-rely eliminate stacks
from the discussion below, at the cost of requiring some additional
bookkeeping in the arguments to keep track of equivariance.
\end{rem}

The theory of Jacobi forms
\cite{EichlerM/ZagierD:1985,GritsenkoVA:1994,GritsenkoVA:1988, KriegA:1996}
gives us an approach to this at the analytic level. Analytically, ${\cal
 E}^n$ is the quotient of~$\C^n\times \H$ by the appropriate action of~$\Z^{2n}\rtimes \SL_2(\Z)$, and thus again we may specify line bundles via
cocycles. This leads to the following definition: Given a~symmetric
integer matrix $Q$ (the ``level'') and an integer $w$ (the ``weight''), we~define the line bundle ${\cal L}_{Q,w}$ on the complex locus of~${\cal
 E}^n$ to be the sheaf with local sections consisting of functions
$f(z_1,\dots,z_n;\tau)$ such that
\begin{gather*}
\begin{array}{l}
f(\vec{z}+\vec{x}\tau+\vec{y};\tau)=
(-1)^{\vec{x}^tQ\vec{x}+\vec{y}^tQ\vec{y}} e(-\vec{x}^t Q (\vec{z}+\vec{x}\tau/2))f(\vec{z};\tau),
\\[.5ex]
f(\vec{z}/(c\tau+d);(a\tau+b)/(c\tau+d))=
(c\tau+d)^w e(c\vec{z}^{\,t}Q\vec{z}/2(c\tau+d))f(\vec{z};\tau),
\end{array}
\end{gather*}
with $\vec{x},\vec{y}\in \Z^n$ and $\left(\begin{smallmatrix}
 a&b\\c&d\end{smallmatrix}\right)\in \SL_2(\Z)$. (This differs from the usual notion
 of Jacobi form by virtue of our not imposing any condition at the cusp;
 we have also allowed $Q$ to have odd diagonal.)

\begin{lem}
 The line bundle ${\cal L}_{1,-1}(-[0])$ on the complex locus of~${\cal
 E}$ is trivial, where the divisor~$[0]$ is the image of the identity
 section~$0\colon {\cal M}_{1,1}\to {\cal E}$.
\end{lem}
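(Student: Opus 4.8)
The plan is to write down an explicit nowhere-vanishing global section of $\mathcal{L}_{1,-1}(-[0])$, equivalently a global section of $\mathcal{L}_{1,-1}$ over the complex locus of $\mathcal{E}$ whose divisor of zeros is precisely $[0]$ with multiplicity one. The natural candidate is
\[
f(z;\tau)=\frac{\theta_1(z;\tau)}{\eta(\tau)^3},
\]
where $\theta_1$ is the odd Jacobi theta function, normalized so that $\theta_1(z+1;\tau)=-\theta_1(z;\tau)$ and $\theta_1(z+\tau;\tau)=-e(-z-\tau/2)\,\theta_1(z;\tau)$, and $\eta$ is the Dedekind eta function. Since $\theta_1(\,\cdot\,;\tau)$ is entire with simple zeros exactly at the lattice points $\Z+\Z\tau$, while $\eta$ is holomorphic and nowhere vanishing on $\H$, the function $f$ is holomorphic on $\C\times\H$ and vanishes, to first order, exactly along the image of the identity section.

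The first step is to check that $f$ is a section of $\mathcal{L}_{1,-1}$, i.e. that it obeys the two defining functional equations in the case $n=1$, $Q=(1)$, $w=-1$. The quasi-periodicity in $z$ is immediate: $\eta$ does not depend on $z$, and iterating the relations for $\theta_1$ gives $\theta_1(z+x\tau+y;\tau)=(-1)^{x+y}e(-x(z+x\tau/2))\theta_1(z;\tau)$, which matches the prescribed automorphy factor $(-1)^{x^2+y^2}e(-x(z+x\tau/2))$ because $x\equiv x^2$ and $y\equiv y^2\pmod 2$. For the $\SL_2(\Z)$ equation I would combine the classical transformations $\theta_1\bigl(z/(c\tau+d);\gamma\tau\bigr)=\zeta(\gamma)(c\tau+d)^{1/2}e(cz^2/2(c\tau+d))\,\theta_1(z;\tau)$, with $\zeta(\gamma)$ an eighth root of unity, and $\eta(\gamma\tau)=\varepsilon(\gamma)(c\tau+d)^{1/2}\eta(\tau)$, using the identity $\zeta(\gamma)=\varepsilon(\gamma)^3$; the latter follows by differentiating the $\theta_1$ transformation in $z$ at $z=0$ together with $\theta_1'(0;\tau)=2\pi\eta(\tau)^3$. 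The eighth-root multipliers then cancel, the half-integral weights combine to $\tfrac12-\tfrac32=-1$, and the Gaussian factor $e(cz^2/2(c\tau+d))$ passes through untouched — which is exactly the automorphy law defining $\mathcal{L}_{1,-1}$.

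With that in hand, $f$ is a global section of $\mathcal{L}_{1,-1}$ whose zero divisor is $[0]$, taken with multiplicity one, so regarding $f$ as a section of $\mathcal{L}_{1,-1}(-[0])=\mathcal{L}_{1,-1}\otimes\mathcal{O}_{\mathcal{E}}(-[0])$ makes it nowhere vanishing; it therefore defines an isomorphism $\mathcal{O}_{\mathcal{E}}\xrightarrow{\ \sim\ }\mathcal{L}_{1,-1}(-[0])$, proving triviality. (Conceptually this is just the statement that $\mathcal{O}_{\mathcal{E}}([0])$ and $\mathcal{L}_{1,-1}$ are the ``same'' relative-degree-one theta bundle, with the weight $-1$ compensating exactly for the modular behavior of the canonical section of $\mathcal{O}_{\mathcal{E}}([0])$.)

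I expect the only genuinely delicate point to be the bookkeeping of the multiplier systems: $\theta_1$ and $\eta$ individually carry half-integral weight and eighth- and twenty-fourth-root-of-unity multipliers, and it is precisely the coincidence $\zeta=\varepsilon^3$ — equivalently, that $\theta_1/\eta^3$ has trivial multiplier on the whole of $\SL_2(\Z)$ — that makes the quotient an honest section of the integer-weight bundle $\mathcal{L}_{1,-1}$ rather than of a metaplectic twist. Everything else (the quasi-periodicity, the order and location of the zero, and the passage from a section vanishing along $[0]$ to a trivialization of the twist by $-[0]$) is routine.
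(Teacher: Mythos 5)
Your proof is correct and is essentially the paper's own argument: the function $\vartheta$ the paper writes down in its infinite-product/sum form is precisely $\theta_1/\eta^3$ up to a constant, and the paper likewise concludes by noting that $\vartheta$ is a section of ${\cal L}_{1,-1}$ with divisor exactly $[0]$. The only cosmetic difference is that the paper simply asserts the arithmetic multipliers of $\theta_1$ and $\eta$ cancel, while you supply the standard reason via $\theta_1'(0;\tau)=2\pi\eta(\tau)^3$.
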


\begin{proof}
 We first observe that the function
 \begin{gather*}
 \vartheta(z;\tau) :=
 \frac{(e(z/2)-e(-z/2))\prod_{1\le j} (1-e(j\tau+z))(1-e(j\tau-z))}
 {\prod_{1\le j} (1-e(j\tau))^2}
 \\ \phantom{ \vartheta(z;\tau){:}}
{} = \frac{\sum_{k\in 1/2+\Z} (-1)^{k-1/2} e(k z+k^2\tau/2)}
 {\sum_{k\in 1/2+\Z} (-1)^{k-1/2} k e(k^2\tau/2)}
 \end{gather*}
 is a global section of~${\cal L}_{1,-1}$. This follows immediately from
 the standard transformation law for Jacobi theta functions together with
 the transformation law for the Dedekind eta function~$e(\tau/24)\prod_{1\le j} (1-e(j\tau))$. (Note that in each case, the
 overall transformation law involves complicated arithmetic characters,
 but these turn out to cancel.) This is holomorphic on~$\C\times \H$, and
 for each $\tau\in \H$ is a nonzero function vanishing only on the lattice
 $\langle 1,\tau\rangle$; thus the corresponding section of~${\cal
 L}_{1,-1}$ has divisor~$[0]$, establishing triviality as required.
\end{proof}

\begin{rem}
 The function~$\vartheta(z;\tau)$ may be expressed in the standard
 multiplicative notation for theta functions in elliptic special function
 theory as $\vartheta(z;\tau) =
 \frac{-x^{-1/2}\theta_p(x)}{(p;p)_\infty^2}$, where $x = e(z)$,
 $p=e(\tau)$.
\end{rem}

It follows in particular that we can extend ${\cal L}_{1,-1}$ (or, rather,
an algebraic representative of its isomorphism class!) to the entirety of~${\cal E}$: simply take the line bundle $\sO_{\cal E}([0])$. Pulling this back
through a homomorphism $g\colon {\cal E}^n\to {\cal E}$, $(x_1,\dots,x_n)\mapsto
\sum_i g_i x_i$, gives an algebraic version of~${\cal L}_{g^t g,-1}$, and
thus by taking tensor products an algebraic version of~${\cal L}_{Q,w}$ in
general. Of course, there is a potential issue here of uniqueness. Up to
isomorphism, this is settled by the following.

\begin{prop}
 The Picard group of~${\cal E}^n$ is a canonically split extension of~$\Pic({\cal M}_{1,1})\cong \Z/12\Z$ by the N\'eron--Severi group of the
 generic fiber.
\end{prop}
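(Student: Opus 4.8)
\emph{The plan} is to exploit the fibration $\pi\colon{\cal E}^n\to{\cal M}_{1,1}$ together with its zero section $0\colon{\cal M}_{1,1}\to{\cal E}^n$. Since $\pi\circ 0=\id$, pullback along $0$ retracts $\pi^*\colon\Pic({\cal M}_{1,1})\to\Pic({\cal E}^n)$, so the exact sequence $0\to K\to\Pic({\cal E}^n)\xrightarrow{0^*}\Pic({\cal M}_{1,1})\to 0$, with $K:=\ker(0^*)$, is canonically split by $\pi^*$. Granting $\Pic({\cal M}_{1,1})\cong\Z/12\Z$ [Mumford; Fulton--Olsson], the proposition reduces to producing a canonical isomorphism of $K$ with the N\'eron--Severi group $\NS(E^n)=\Sym^2(\Z^n)$ of the generic fibre $E$ --- the symmetric integer matrices, since $\End(E)=\Z$.

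\emph{Step 1 (a comparison map, and its injectivity).} I would define $c\colon K\to\NS(E^n)$ by restricting to the generic fibre and taking the N\'eron--Severi class. If $c({\cal L})=0$, then ${\cal L}$ restricts on the generic fibre to a class in $\Pic^0(E^n)$, i.e.\ to a point of $\widehat{E^n}\cong E^n$ over the (stacky) generic point of ${\cal M}_{1,1}$; invariance under the generic automorphism $[-1]$ of ${\cal E}$ forces it into $E^n[2]$, so ${\cal L}$ produces a section of the finite \'etale group scheme ${\cal E}[2]^n\to{\cal M}_{1,1}$. Because ${\cal E}[2]\smallsetminus[0]$ is connected --- the mod $2$ monodromy surjects onto $\GL_2(\F_2)=S_3$, acting transitively on $\F_2^2\smallsetminus 0$ --- it has no section, so the point is $0$; thus ${\cal L}$ is trivial over $\pi^{-1}(U)$ for a dense open $U\subseteq{\cal M}_{1,1}$, and as ${\cal M}_{1,1}$ is a smooth Deligne--Mumford curve with irreducible $\pi$-fibres this forces ${\cal L}\cong\pi^*{\cal N}$, whence ${\cal N}=0^*{\cal L}$ is trivial since ${\cal L}\in K$. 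So $c$ is injective.

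\emph{Step 2 (surjectivity).} Here the preceding Lemma is what does the work: it identifies $\sO_{\cal E}([0])$ as an algebraic form of ${\cal L}_{1,-1}$, a line bundle of fibrewise degree $1$, whose restriction to the generic fibre generates $\NS(E)\cong\Z$. Pulling $\sO_{\cal E}([0])$ back along the homomorphisms ${\cal E}^n\to{\cal E}$ given by the $i$-th projection, resp.\ by the sum of the $i$-th and $j$-th coordinates, realizes on the generic fibre the N\'eron--Severi classes $E_{ii}$ and $E_{ii}+E_{jj}+E_{ij}+E_{ji}$; twisting each such bundle by the pullback of its own restriction along $0$ places it in $K$ without changing $c$ of it. Since these matrices generate $\Sym^2(\Z^n)$ over $\Z$, $c$ is surjective, and combined with Step 1 we get $c\colon K\xrightarrow{\sim}\NS(E^n)$, which is the assertion of the proposition (with canonicity of all maps evident).

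\emph{Expected main obstacle.} I expect the injectivity of $c$ to be the real content: it is precisely there that the stackiness of ${\cal M}_{1,1}$ must be used --- first (via the generic $\pm 1$) to force a relatively-$\Pic^0$ bundle to be relatively $2$-torsion, then (via global mod $2$ monodromy) to kill that torsion --- and one still needs the elementary but not purely formal descent from ``trivial on the generic fibre'' to ``pulled back from the base''. The same content can be packaged through the relative Picard algebraic space, via $0\to\Pic({\cal M}_{1,1})\to\Pic({\cal E}^n)\to\Pic_{{\cal E}^n/{\cal M}_{1,1}}({\cal M}_{1,1})\to\Br({\cal M}_{1,1})$ (last arrow killed by the section $0$), together with $\Pic^0_{{\cal E}^n/{\cal M}_{1,1}}\cong{\cal E}^n$ and the vanishing ${\cal E}^n({\cal M}_{1,1})=0$, which collapses the middle term onto $\NS(E^n)$.
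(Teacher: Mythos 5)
Your proof is correct and reaches the same structural conclusion as the paper's (split via the identity section, surjectivity via pullbacks of $\sO_{\cal E}([0])$, injectivity reduced to showing any section ${\cal M}_{1,1}\to{\cal E}^n$ is zero), but the key injectivity step takes a genuinely different route.  The paper kills sections of ${\cal E}^n$ by exhibiting a single explicit elliptic curve $y^2+txy=x^3+t^5$ over $\C(t)$ with trivial Mordell--Weil group and using that the corresponding map $\Spec(\C(t))\to{\cal M}_{1,1}$ is dominant; triviality is then a closed condition, so it propagates.  You instead exploit the stacky structure of ${\cal M}_{1,1}$ twice: the $[-1]$ in every automorphism group forces the $\Pic^0$-class on each fiber to be $2$-torsion, and the mod $2$ monodromy (connectedness of ${\cal E}[2]\setminus[0]$) then kills any nonzero $2$-torsion section.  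This is essentially the Shioda-type argument the paper itself invokes in a later remark (after the lemma on sections of ${\cal E}_d$ over ${\cal X}_0(N)$) for general level structures.  Your approach is more conceptual and uniform, explaining \emph{why} sections vanish rather than producing a witness; the paper's approach is shorter and avoids any monodromy computation.  One small inaccuracy: ${\cal E}[2]$ is only finite flat, not finite \'etale, over the part of ${\cal M}_{1,1}$ in characteristic $2$, so the monodromy/connectedness argument runs over $\Z[1/2]$; you then need the same ``triviality is closed'' step that the paper uses (and which you implicitly already have) to cover the residual fibers.  With that caveat noted, the argument is sound.
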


\begin{proof}
 That $\Pic({\cal M}_{1,1})\cong \Z/12\Z$ is standard (see~\cite{FultonW/OlssonM:2010} for an extension to fairly general base
 changes), as~is the fact that we can represent the restrictions of such
 line bundles to the complex locus via sheaves of modular forms of given
 weight. (The reduction mod 12 then comes from the fact that the
 discriminant $\Delta(\tau)$ is a holomorphic form of weight 12, nowhere
 vanishing on the smooth locus.) Moreover, the identity section~${\cal
 M}_{1,1}\to {\cal E}^n$ gives rise to a splitting of the natural
 pullback morphism $\Pic({\cal M}_{1,1})\to \Pic({\cal E}^n)$. The~above
 construction moreover shows that the natural map from $\Pic({\cal E}^n)$
 to the N\'eron--Severi group of the generic fiber is surjective. (Note
 that since ${\cal M}_{1,1}$ is a stack, ``the generic fiber'' does not
 quite make sense, but it suffices to base change to some smooth curve
 covering ${\cal M}_{1,1}$, say by imposing a full level $3$ or full level
 $4$ structure.)

 It thus remains only to show that if ${\cal L}$ is a line bundle on~${\cal E}^n$ which on the generic fiber is algebraically equivalent to
 the trivial divisor, then every fiber of~${\cal L}$ is trivial, and thus
 ${\cal L}$ is the pullback of a line bundle on~${\cal M}_{1,1}$. Since
 an algebraically trivial line bundle on an abelian variety gives rise to
 a point of the dual variety, and ${\cal E}^n$ is principally polarized
 via the product polarization, we~find that ${\cal L}$ induces a section~${\cal M}_{1,1}\to {\cal E}^n$, and we need merely show that this is the
 identity section. The~elliptic curve $y^2+txy=x^3+t^5$ over $\C(t)$
 has trivial Mordell--Weil group, and thus the pullback of any section~${\cal M}_{1,1}\to {\cal E}^n$ to this elliptic curve is trivial. Since
 the corresponding map $\Spec(\C(t))\to {\cal M}_{1,1}$ is dominant, it
 follows that any section~${\cal M}_{1,1}\to {\cal E}^n$ is generically
 trivial, and thus everywhere trivial since this is a closed condition.
\end{proof}

Thus each line bundle on~${\cal E}^n$ is determined by its weight (the
restriction to the zero section as an element of~$\Pic({\cal M}_{1,1})\cong
\Z/12\Z$) and polarization (the class in the N\'eron--Severi group of~the
generic fiber). It will be convenient going forward to represent the
polarization as a~symmetric integer matrix $Q$ or as the corresponding quadratic
polynomial $\vec{z} Q \vec{z}^{\,t}/2$; the latter will be~particularly
convenient when we have assigned names to the coordinates in~${\cal E}^n$.

Let us thus choose for each symmetric integer matrix $Q$ and integer $w\in
\Z$ a line bundle ${\cal L}_{Q,w}$ (unique up to isomorphism) which
restricts to the bundle of weight $w$ on the identity section and induces
the symmetric endomorphism $Q$ on the generic fiber of~${\cal E}^n$. This
very nearly solves our problem of constructing line bundles with consistent
isomorphisms under tensor product and pullbacks, by virtue of the fact that
there are very few global units on~${\cal E}^n$. Indeed, since~${\cal
 E}^n$ is proper over ${\cal M}_{1,1}$, the global units on~${\cal E}^n$
are just the global units on~${\cal M}_{1,1}$, and these are easily seen to
consist precisely of elements of the form $\pm \Delta^l$ for~$l\in \Z$.
But in fact the same thing that gives us a splitting of the Picard group
gives us a natural way to rigidify things completely.

\begin{defn}
 For an integer $w$, a {\em weight $w$ trivialization} of a line bundle
 ${\cal L}$ on~${\cal E}^n$ is an~isomorphism
 \begin{gather*}
 0^*{\cal L}\cong (0^*\omega_{{\cal E}/{\cal M}_{1,1}})^w.
 \end{gather*}
\end{defn}

We now enhance our data as follows: ${\cal L}_{Q,w}$ is not just a line
bundle in the appropriate iso\-mor\-phism class; rather, it is such a line
bundle together with a choice of weight $w$ trivialization. We also insist
that ${\cal L}_{0,0}=\sO_{{\cal E}^n}$ with the obvious trivialization.
(Here we now take $w$ an integer rather than a class mod 12, as~trivializing $0^*\omega_{{\cal E}/{\cal M}_{1,1}}^{12}$ requires choosing
one of~$\Delta$ or $-\Delta$. This is not a significant issue, however,
and in fact should allow us to extend everything to the cusps of~${\cal
 M}_{1,1}$.)

\begin{thm}
 There is a family of natural isomorphisms
 \begin{gather*}
 {\cal L}_{Q_1,w_1}\otimes {\cal L}_{Q_2,w_2}
 \cong
 {\cal L}_{Q_1+Q_2,w_1+w_2},
 \end{gather*}
 agreeing with the obvious isomorphisms when $(Q_1,w_1)=(0,0)$ or
 $(Q_2,w_2)=(0,0)$, and, for~any linear transformation~$g\colon \Z^n\to \Z^m$
 $($with induced homomorphism $g\colon {\cal E}^n\to {\cal E}^m)$, a family of~natu\-ral isomorphisms
 \begin{gather*}
 g^*{\cal L}_{Q,w}\cong {\cal L}_{g^t Q g,w},
 \end{gather*}
 agreeing with the obvious isomorphism when~$g=1$. Moreover, these are
 compatible in the sense that any isomorphism
 \begin{gather*}
 \bigotimes_{1\le i\le n} g_i^*{\cal L}_{Q_i,w_i}
 \to
 {\cal L}_{\sum_i g_i^t Q_i g_i,\sum_i w_i}
 \end{gather*}
 constructed from these ingredients is invariant under permutations of the
 tensor factors and independent of the order in which the above
 isomorphisms are applied.
\end{thm}

\begin{proof}
 In either case, not only are the bundles isomorphic, but so are their
 pullbacks through~$0$ (e.g., since $0^*g^*=0^*$), and the respective
 trivializations actually induce a specific choice of isomorphism of the
 pullbacks. Since both isomorphisms are determined up to a global unit on~${\cal M}_{1,1}$, rigidifying the pullback suffices to rigidify the
 desired isomorphisms. Similarly, the compatibility conditions certainly
 hold up to a scalar factor, and pulling back through~$0^*$ shows that
 that scalar must be 1.
\end{proof}

To specify (meromorphic) sections of such line bundles, we~would like to
have an analogue of~the~Jacobi theta function. Here we have the following.

\begin{lem}
 The line bundle $\sO_{\cal E}([0])$ on~${\cal E}$ has a natural weight $-1$
 trivialization.
\end{lem}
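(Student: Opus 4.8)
The plan is to extract the trivialization directly from the theta function $\vartheta(z;\tau)$ constructed in the first lemma. Recall that lemma produced a global section of ${\cal L}_{1,-1}$ whose divisor is exactly $[0]$; equivalently, division by $\vartheta$ gives an isomorphism $\sO_{\cal E}([0]) \cong {\cal L}_{1,-1}$ on the complex locus. Since ${\cal L}_{1,-1}$ has weight $-1$ on the identity section, what I want is to read off the induced isomorphism $0^*\sO_{\cal E}([0]) \cong (0^*\omega_{{\cal E}/{\cal M}_{1,1}})^{-1}$ and check that it is algebraic (i.e. defined over the whole moduli stack, not just the complex locus).

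The concrete computation is this: $0^*\sO_{\cal E}([0])$ is canonically the fiber of the conormal-type line bundle, which for a divisor is identified with $\sO([0])|_{[0]}$, and there is a standard canonical isomorphism $\sO_{\cal E}([0])|_{[0]} \cong {\mathcal N}_{[0]/{\cal E}} \cong (0^*\omega_{{\cal E}/{\cal M}_{1,1}})^{-1}$ — the normal bundle of the zero section is the Lie algebra of the elliptic curve, which is dual to $0^*\omega$. This is entirely algebraic and functorial in the base. So first I would invoke this canonical identification to get a weight $-1$ trivialization abstractly. Then, to pin it down as \emph{the} natural one and to see it matches the analytic picture, I would compute the leading Taylor coefficient of $\vartheta(z;\tau)$ at $z=0$: from the product formula, $\vartheta(z;\tau) = z \cdot 2\pi\sqrt{-1} \cdot (\text{something})^{-1} + O(z^2)$, and in fact the normalization in the second displayed formula for $\vartheta$ (division by $\sum_k (-1)^{k-1/2} k\, e(k^2\tau/2)$, which up to the eta-quotient is essentially a power of $\eta$) is chosen precisely so that $\partial_z\vartheta(0;\tau)$ transforms as a modular form of weight $1$ — i.e., is a nowhere-vanishing section of $0^*\omega_{{\cal E}/{\cal M}_{1,1}}$. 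That identifies the "residue" of $\sO_{\cal E}([0])$ along $[0]$ with $0^*\omega^{-1}$ in a way consistent with the global section, confirming the trivialization extends algebraically.

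The main obstacle I expect is bookkeeping rather than depth: making sure the canonical isomorphism $\sO([0])|_{[0]} \cong ({\mathcal N}_{[0]/{\cal E}})$ is set up with the sign/duality conventions matching the "$(0^*\omega)^w$" convention in the definition of weight $w$ trivialization, and confirming that "natural" really does single out one isomorphism (there is no ambiguity here because $0^*\omega$ has no nontrivial automorphisms over ${\cal M}_{1,1}$ other than $\pm 1$, and the leading coefficient of $\vartheta$ breaks even that sign ambiguity). A secondary point to be careful about: the construction should be manifestly independent of the analytic uniformization, so I would phrase the final statement via the normal-bundle identification and merely use $\vartheta$ as a consistency check. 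Once that is in place, the trivialization is forced, and the lemma follows.
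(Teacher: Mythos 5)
Your proof is correct and is essentially the paper's argument: the paper compresses the normal-bundle identification $0^*\sO_{\cal E}([0]) \cong {\mathcal N}_{[0]/{\cal E}} \cong (0^*\omega_{{\cal E}/{\cal M}_{1,1}})^{-1}$ into the single phrase ``this is just adjunction,'' which is the residue isomorphism $0^*(\omega_{{\cal E}/{\cal M}_{1,1}}([0])) \cong \sO_{{\cal M}_{1,1}}$ you spell out. Your consistency check via the leading coefficient of $\vartheta$ is also exactly what the paper records in the remark immediately following the lemma.
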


\begin{proof}
 We need an isomorphism
 \begin{gather*}
 0^*\sO_{\cal E}([0])\otimes 0^*\omega_{{\cal E}/{\cal M}_{1,1}}
 \cong
 \sO_{{\cal M}_{1,1}},
 \end{gather*}
 or equivalently
 \begin{gather*}
 0^*\big(\omega_{{\cal E}/{\cal M}_{1,1}}([0])\big)
 \cong \sO_{{\cal M}_{1,1}},
 \end{gather*}
 but this is just adjunction.
\end{proof}

\begin{rem}
 Note that the isomorphism coming from adjunction simply takes a
 differential with simple pole at $0$ to its residue. In~particular,
 the natural isomorphism $[-1]^*\sO_{\cal E}([0])\cong \sO_{\cal E}([0])$ negates
 the trivialization.
\end{rem}

\begin{defn}
 Let ${\cal L}_{1,-1}$ be the chosen line bundle with trivialization on~${\cal E}$. Then the global section~$\vartheta\in \Gamma({\cal
 L}_{1,-1})$ is the image of~$1$ under the isomorphism $\sO_{\cal E}([0])\cong
 {\cal L}_{1,-1}$ respecting the trivialization.
\end{defn}

Note that the function~$\vartheta$ considered above was normalized so that
\begin{gather*}
\Res_{z=0} \frac{{\rm d}z}{\vartheta(z;\tau)} = 1,
\end{gather*}
and thus the two definitions essentially agree on the analytic locus.
(This is not {\em quite} correct, since the analytic definition of~${\cal
 L}_{1,-1}$ only gives something {\em isomorphic} to an algebraic bundle,
but the residue condition implies that there is a system of isomorphisms
between the algebraic versions of~${\cal L}_{Q,w}$ and their analytic
versions satisfying appropriate compatibility conditions along tensor
products and pullbacks and taking $\vartheta$ to~$\vartheta(;\tau)$.) This
gives us the following consequence, a very powerful way of constructing
functions on powers ${\cal E}^n$.

\begin{thm}\sloppy
 Let $c_{ij}$, $1\le i\le l$, $1\le j\le n$ and $m_i$, $1\le i\le l$ be
 integers such that \mbox{$\sum_{1\le i\le l} m_i = 0$} and $\sum_{1\le i\le l}
 m_i c_{ij}c_{ik}=0$ for~$1\le j, k\le n$. Then there is a $($unique$)$
 meromorphic function on~${\cal E}^n$, defined on every fiber, which on
 the complex locus restricts to~$ \prod_{1\le i\le l} \vartheta\big(\sum_j c_{ij}z_j;\tau\big)^{m_i}$.
\end{thm}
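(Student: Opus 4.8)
The plan is to build the desired meromorphic function directly as a product of pullbacks of $\vartheta$ and then identify its isomorphism class, showing it lands in $\mathcal L_{0,0}=\sO_{\mathcal E^n}$. Concretely, for each $i$ let $g_i:\mathcal E^n\to\mathcal E$ be the homomorphism $(x_1,\dots,x_n)\mapsto\sum_j c_{ij}x_j$, and consider $F:=\bigotimes_{1\le i\le l}(g_i^*\vartheta)^{\otimes m_i}$. By the preceding theorem on pullbacks and tensor products (with its compatibility conditions), $g_i^*\mathcal L_{1,-1}\cong\mathcal L_{g_i^tg_i,-1}$ canonically, where $g_i^tg_i$ is the rank-one symmetric matrix with $(j,k)$ entry $c_{ij}c_{ik}$; taking the $m_i$-th tensor power and the product over $i$ gives a canonical isomorphism of the ambient line bundle with $\mathcal L_{Q',w'}$ where $Q'=\sum_i m_i\,(c_{ij}c_{ik})_{j,k}$ and $w'=-\sum_i m_i$. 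The two hypotheses $\sum_i m_i c_{ij}c_{ik}=0$ and $\sum_i m_i=0$ say exactly that $Q'=0$ and $w'=0$, so the ambient bundle is canonically $\mathcal L_{0,0}=\sO_{\mathcal E^n}$, and $F$ is therefore a genuine (meromorphic, since negative $m_i$ are allowed) global function on $\mathcal E^n$, defined on every fiber because each $\vartheta$ is.

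The second step is to check that this $F$ restricts on the complex locus to the stated product $\prod_i\vartheta(\sum_j c_{ij}z_j;\tau)^{m_i}$. Here I would invoke the compatibility, noted just before the theorem, between the algebraic bundles $\mathcal L_{Q,w}$ with their trivializations and their analytic incarnations, under which $\vartheta$ maps to the analytic theta function $\vartheta(\,\cdot\,;\tau)$; since the pullback and tensor-product isomorphisms used to construct $F$ are the canonical ones, they match the corresponding analytic identities (pullback of the analytic cocycle, multiplication of functions), so the analytic restriction of $F$ is literally the product written down. One should note the analytic product is well-defined precisely because of the same two hypotheses: the level-$Q'$ and weight-$w'$ automorphy factors of the individual theta factors multiply to the trivial automorphy factor exactly when $Q'=0$ and $w'=0$.

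For uniqueness: two meromorphic functions on $\mathcal E^n$ defined on every fiber and agreeing on the complex locus differ by a meromorphic function that is zero on the complex locus; since the complex locus (the image of the analytic uniformization, or any cover by $\mathcal M_{1,1}$ with level structure) is dense in $\mathcal E^n$ and the functions are sections of the same sheaf, their difference vanishes on a dense set and hence is zero. Alternatively one can argue as in the proof of the Proposition above: it suffices to base change to a smooth curve covering $\mathcal M_{1,1}$ and note that a rational function on a fiber power vanishing on a Zariski-dense subset must vanish identically, this being a closed condition.

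The main obstacle is the bookkeeping in the first step: one must be careful that the "canonical" isomorphisms $g_i^*\mathcal L_{1,-1}\cong\mathcal L_{g_i^tg_i,-1}$ and $\mathcal L_{Q_1,w_1}\otimes\mathcal L_{Q_2,w_2}\cong\mathcal L_{Q_1+Q_2,w_1+w_2}$ of the previous theorem are used in a way that is compatible on the nose — i.e., that the composite isomorphism identifying the ambient bundle of $F$ with $\mathcal L_{0,0}$ does not secretly depend on the order of the factors or on a choice. This is precisely what "satisfying all of the obvious compatibility conditions" in that theorem guarantees, together with the fact that the only global units on $\mathcal E^n$ are $\pm\Delta^l$ and these are pinned down once we rigidify via weight trivializations; so the obstacle is really one of verifying that the hypotheses let us land in the rigidified $\sO_{\mathcal E^n}$ rather than merely something abstractly isomorphic to it. Once that is in hand, everything else — the analytic restriction and the uniqueness — is routine.
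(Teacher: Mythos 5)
Your proof is correct and follows essentially the same route as the paper's: pull $\vartheta$ back along the homomorphisms $c_i$, use the canonical tensor/pullback isomorphisms to identify the ambient bundle with $\mathcal L_{\sum_i m_i c_i^t c_i,\,-\sum_i m_i}\cong\mathcal L_{0,0}=\sO_{\mathcal E^n}$, and observe the resulting function has a divisor with no vertical components. The paper's proof is terser (it dispenses with the analytic-restriction and uniqueness checks, treating them as immediate), but the substance is the same; your remark that "defined on every fiber because each $\vartheta$ is" is the same observation the paper makes in the form "the divisor of which manifestly has no vertical components."
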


\begin{proof}
 We may view each $c_i$ as a morphism ${\cal E}^n\to {\cal E}$, and
 find that
 $
 \prod_{1\le i\le l} (c_i^*\vartheta)^{m_i}
 $
 is a meromorphic section of the line bundle
 \begin{gather*}
 \bigotimes_{1\le i\le l} (c_i^* {\cal L}_{1,-1})^{m_i}
 \cong
 {\cal L}_{\sum_i m_i c_i^t c_i,-\sum_i m_i}
 \cong
 {\cal O}_{{\cal E}^n},
 \end{gather*}
 with both maps canonical. In~other words, the given product of sections
 of line bundles determines a meromorphic function on~${\cal E}^n$, the
 divisor of which manifestly has no vertical components.
\end{proof}

\begin{rems}
 By a very mild abuse of notation, we~will write the functions constructed
 in~this way as
 $
 \prod_{1\le i\le l} \vartheta(\sum_j c_{ij}z_j)^{m_i}$,
 and similarly for the analogous meromorphic sections of line bundles
 ${\cal L}_{Q,w}$.
\end{rems}

\begin{rems}
 Note that since $[-1]^*$ negates the natural trivialization of~$\sO_{\cal
 E}([0])$, we~have the identity $\vartheta(-z)=-\vartheta(z)$.
\end{rems}

\begin{rems}
 Of course, if we multiply by a suitable power of~$\Delta$, we~can
 construct similar functions under the weaker assumption that $\sum_i m_i$
 is a multiple of~$12$. On~the other hand, with the constraints as given,
 there is a natural limit at the cusp of~${\cal M}_{1,1}$, namely the
 rational function
 \begin{gather*}
 \prod_j x_j^{-\sum_i m_ic_{ij}/2}
 \prod_{1\le i\le l} \bigg(1-\prod_j x_j^{c_{ij}}\bigg)^{m_i}
 \end{gather*}
 on~$(\C^*)^n=e(\C)^n$ (or more generally $\G_m^n$). Note that $\sum_i
 m_ic_{ij}$ is even since $\sum_i m_i c_{ij}^2=0$, so this is indeed
 well-defined. This gives a useful method for sanity-checking
 calculations, by~verifying that the result is correct in this limit. One
 can also take a limit as all of the variables approach~0; this is
 ill-defined, but blowing up the~0 section gives the function
 \begin{gather*}
 \prod_{1\le i\le l} \bigg(\sum_j c_{ij}y_j\bigg)^{m_i}
 \end{gather*}
 on the exceptional $\P^{n-1}$ (as long as we avoid the finitely many
 characteristics in which one of the factors is identically 0).
\end{rems}

It is worth noting that there is an alternate approach to the theorem
which, while it does not help us deal with line bundles, is more powerful
in one important respect: it gives a reasonable algorithm for {\em
 evaluating} such functions at specific points (i.e., on~$n$-tuples of
points of specific elliptic curves, say over a finite field). We choose
(to make the appropriate induction work) a~differential $\omega$ on~$E$,
allowing us to eliminate the constraint $\sum_i m_i=0$. The~corresponding
trivialization of the line bundle $\sO_{\cal E}(-d[0])$ can then be computed as
follows: choose any uniformizer $u$ at 0 such that $\omega/u$ has residue
1, and then for any function~$f$ with multiplicity $d$ at 0, express
$f=c(f,\omega) u^d+O(u^{d+1})$ to obtain a ``leading coefficient''
$c(f,\omega)$. It is easy to see that this leading coefficient is
independent of the choice of~$u$, and that $c(f,\omega)$ scales in the
appropriate way with~$\omega$.

Now, we~can characterize the function~$ \prod_{1\le i\le l}
\vartheta\big(\sum_j c_{ij}z_j\big)^{m_i} $ up to a scalar multiple by vie\-wing it
as a function of~$z_n$ with specified divisor. If~there are no factors
depending only on~$z_n$, then the value of the function along $z_n=0$ is
then a function of the same form, which can be computed by induction,
giving us the requisite scale factor and letting us plug in the specific
value of~$z_n$ desired. If~the only factors depending only on~$z_n$ are
powers of~$\vartheta(z_n)$, then we use the leading coefficient instead and
proceed as before. (The correct leading coefficient is computed by
removing powers of~$\vartheta(z_n)$ and setting $z_n=0$.) Note that in
general we can compute the leading coefficient as long as every factor
$\vartheta(kz_n)^m$ that arises has $k$ invertible (since~$\vartheta(kz)$
has leading term $ku$ in characteristic 0). Thus if we can construct
functions of the form $\frac{\vartheta(k z)}{\vartheta(z)^{k^2}}$ directly, we~could eliminate those factors before computing the leading coefficient.
Since $\vartheta(kz)=-\vartheta(-kz)$, we~reduce to the case $k>1$.
Divisor considerations then tell us that such a function must be
proportional to the $k$-division polynomial (as defined in
\cite[Example~3.7]{SilvermanJH:2009}), and we find in fact that the $k$-division
polynomial has the correct leading term in characteristic 0, so gives the
desired function on general $E$.

Note that although we are writing $\vartheta(z)$ as a function, the fact
that it is only a section of a~line bundle means that we must be somewhat
cautious when specializing. In~particular, consider the ratio
$\vartheta(u+z)/\vartheta(u)$, a section of the line bundle with
polarization~$z^2/2+uz$ and weight 0. The~restriction of this line bundle
to the hypersurface $z=0$ is trivial (and in a natural way given our global
choices of trivializations), and thus $\vartheta(u+z)/\vartheta(u)$
restricts to a function on this hypersurface, which we can verify is equal
to 1. There is, however, an issue in the analytic setting, as~the
algebraic hypersurface $z=0$ corresponds to the analytic hypersurfaces
$z=x+y\tau$, $x,y\in \Z$, and $\vartheta(u+x+y\tau;\tau)/\vartheta(u;\tau)$
is of course nontrivial for most values of~$x$, $y$. (For $z=0$, there is
still no issue, naturally.)

In contrast, if we restrict the ratio $\vartheta(u+mz)/\vartheta(u)$ to the
hypersurface $mz=0$, then there are difficulties even algebraically. The~difficulty here is that although the restriction of the line bundle is
trivial on every fiber over ${\cal E}[m]$, it is not {\em canonically}
trivial, and thus the restriction could be a nontrivial line bundle on~${\cal E}[m]$, and even if trivial, the given section need not restrict~to~1.

Indeed, let ${\mathfrak q}\in \mu_2(E[2])$ be the character of the natural
$[-1]$-equivariant structure on~${\cal L}_{1,-1}$ (i.e., such that the
action on the fiber at $0$ is trivial) on the fixed subscheme $E[2]$. Note
that ${\mathfrak q}(0)=1$ and when the characteristic is not 2 is $-1$ at the
remaining $2$-torsion points; in general, it is a $\mu_2$-valued quadratic
form such that the induced bilinear form is the Weil pairing.

\begin{prop}\label{prop:pullback_simplification_i}
 The restriction of the function
 \begin{gather*}
 \frac{\vartheta(u-z)\vartheta(v-z)\vartheta(u+v+z)}
 {\vartheta(u+z)\vartheta(v+z)\vartheta(u+v-z)}
 \end{gather*}
 to the hypersurface $2z=0$ is given by ${\mathfrak q}(z)$.
\end{prop}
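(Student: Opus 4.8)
The function

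$$F(u,v,z) := \frac{\vartheta(u-z)\vartheta(v-z)\vartheta(u+v+z)}{\vartheta(u+z)\vartheta(v+z)\vartheta(u+v-z)}$$

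is, by the Theorem on products of theta functions, a meromorphic function on $\mathcal E^3$: the level contributions of numerator and denominator are $(u-z)^2+(v-z)^2+(u+v+z)^2 - (u+z)^2-(v+z)^2-(u+v-z)^2 = 0$ (check the coefficient of each monomial $u^2,v^2,z^2,uv,uz,vz$), and the weights cancel since there are three $\vartheta$'s in numerator and denominator. On the hypersurface $2z=0$ — i.e. the pullback to $\mathcal E^2 \times \mathcal E[2]$ — this restricts to a function (equivalently, a section of the restricted trivial-on-each-fiber line bundle), and we must identify it. Note $F(u,v,0)=1$ by the restriction-to-$z=0$ discussion, so the value at the component $z=0$ of $\mathcal E[2]$ is $1$, consistent with $\mathfrak q(0)=1$.

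**Plan.** First I would reduce to showing $F|_{2z=0}$ is a function \emph{pulled back from $\mathcal E[2]$} (i.e. independent of $u,v$), and that as such a function it agrees with $\mathfrak q$. For the first point, I would check that $F$, viewed as a section of a line bundle on $\mathcal E^3$, restricts on $2z=0$ to a section of a line bundle whose restriction to each fiber $\{z=z_0\}$, $z_0\in E[2]$, is trivial — but the subtlety flagged just before the Proposition is that this restriction need not be \emph{canonically} trivial. So I would instead analyze $F$ directly as a function of $(u,v)$ for fixed $2z_0=0$: divisor considerations on $\mathcal E^2$ show it is regular and nowhere vanishing (the zeros and poles of numerator vs. denominator cancel once $z=-z$), hence a nowhere-vanishing function on the proper variety $E^2$, hence a constant $c(z_0)$ depending only on $z_0$. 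So $F|_{2z=0}$ is indeed a function on $\mathcal E[2]$; call it $\phi(z)$, with $\phi(0)=1$.

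**Identifying $\phi$ with $\mathfrak q$.** The cleanest route is the equivariance/quadratic-form structure. I would argue that $\phi$ is a $\mu_2$-valued quadratic form on $E[2]$ whose associated bilinear form is the Weil pairing $e_2$, which (together with $\phi(0)=1$) pins it down to be exactly $\mathfrak q$ by the characterization of $\mathfrak q$ recalled just before the Proposition. To see the bilinear form is $e_2$: compute $\phi(z+z')/\phi(z)\phi(z')$ for $z,z'\in E[2]$ by comparing the two ways of computing the relevant ratio, which amounts to a quasi-periodicity computation — translating $\vartheta(\,\cdot\,+z)$ by $z'\in E[2]$ introduces precisely the automorphy factor of $\mathcal L_{1,-1}$ under translation by a $2$-torsion point, i.e. the character $\mathfrak q$ evaluated via the Weil pairing. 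Concretely, using $\vartheta(-w)=-\vartheta(w)$ and the translation behavior of the three-term product, the dependence on the translation collapses to a sign given by $e_2(z,z')$. Alternatively — and perhaps more robustly across all characteristics — I would verify the identity analytically first: on the complex locus, with $z = (x+y\tau)/2$, $x,y\in\{0,1\}$, expand each $\vartheta(\,\cdot\pm z;\tau)$ using the quasi-periodicity $\vartheta(w + x + y\tau;\tau) = (-1)^{x+y} e(-y(w + y\tau/2))\vartheta(w;\tau)$ (the $\mathcal L_{1,-1}$ automorphy factor), and watch the $u,v$-dependent exponential factors cancel in the six-fold ratio, leaving exactly the sign $(-1)^{xy}$ — which is the standard formula for the $\mu_2$-quadratic form on $(\tfrac12\Z/\Z)^2$ refining the Weil pairing, hence equals $\mathfrak q(z)$. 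Since $\mathfrak q$ and $F$ are both defined over the whole moduli stack and agree on the dense analytic locus (away from the finitely many bad characteristics), they agree identically; the characteristic-$2$ case is handled by the degeneration/limit remarks or by noting $E[2]$ is then nonreduced with $\mathfrak q$ and $\phi$ both the constant $1$ on $(E[2])_{\mathrm{red}}=\{0\}$ — though I would want to double check this edge case.

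**Main obstacle.** The real work is the second paragraph's bookkeeping: proving cleanly that $F|_{2z=0}$ depends only on $z$, since a priori it could be a nonconstant \emph{section} of a nontrivially-trivialized line bundle on $\mathcal E[2]$, exactly the phenomenon the text warns about for $\vartheta(u+mz)/\vartheta(u)$. Once one is confident that the numerator/denominator line bundles restrict compatibly on $2z=0$ — which follows because the \emph{level} $z^2/2 + \text{(linear in } u,v)$ degenerates on $2z=0$ in a way where the linear part still pairs trivially, so the $(u,v)$-fiberwise bundle is canonically trivial even if the $z$-fiberwise one is not — the constancy in $(u,v)$ is immediate and the rest is the quasi-periodicity computation. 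So I expect the genuinely delicate step is setting up the restriction carefully enough to legitimately conclude "constant in $u,v$," and after that the identification with $\mathfrak q$ is a direct, if fiddly, automorphy-factor calculation.
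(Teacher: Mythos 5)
Your proposal takes a genuinely different route from the paper, and the route you chose has a real gap at the identification step.

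The paper's proof is short and sidesteps everything you flag as the "main obstacle." Recall that $\mathfrak{q}$ is \emph{defined} as the character on $E[2]$ of the $[-1]$-equivariant structure on $\mathcal L_{1,-1}$ (trivialized at $0$). The paper's observation is that if one writes $\mathcal L_{1,-1}\cong\mathcal O(D)$ for a divisor $D$ whose support avoids $E[2]$, then this character is computed by restricting to $E[2]$ the unique rational function $f$ on $E$ with $\operatorname{div}(f)=D-[-1]^*D$ normalized so that $f(0)=1$. Taking $D=[u]+[v]-[u+v]$ (which is linearly equivalent to $[0]$ and, for generic $u,v$, disjoint from $E[2]$), one checks immediately that $f(z)$ is exactly the displayed six-fold ratio, with $f(0)=1$ coming from $\vartheta(-w)=-\vartheta(w)$. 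So $F(u,v,\cdot)|_{E[2]}=\mathfrak q$ directly; constancy in $(u,v)$ comes for free, since $\mathfrak q$ does not depend on $u,v$. There is no quadratic-form uniqueness argument and no quasi-periodicity calculation.

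Your first reduction — that $F|_{2z=0}$ is a unit on $\mathcal E^2$ for each fixed $z_0\in E[2]$, hence a function $\phi$ on $\mathcal E[2]$ alone — is fine. The problem is the identification $\phi=\mathfrak q$. Your primary argument is that $\phi$ is a $\mu_2$-valued quadratic form refining the Weil pairing with $\phi(0)=1$, and that this pins it down. It does not: on $(\Z/2\Z)^2$ there are four quadratic refinements of a given nondegenerate alternating form, all of which satisfy $q(0)=0$ (equivalently $\phi(0)=1$, which is automatic for any quadratic form). The one you need is the unique one with Arf invariant $1$ (i.e.\ $-1$ at all three nonzero $2$-torsion points in characteristic $\ne 2$), and nothing in your argument selects it. Worse, your proposed explicit fallback gives the wrong answer: you claim the restriction is $(-1)^{xy}$ for $z=(x+y\tau)/2$, but this is the Arf-$0$ form and is $+1$ at $z=1/2$, whereas a direct check with $\vartheta(w+1)=-\vartheta(w)$ gives $F(u,v,1/2)=\dfrac{(-1)(-1)\cdot 1}{1\cdot 1\cdot(-1)}=-1$. (The error comes from applying the full-period quasi-periodicity $w\mapsto w+x+y\tau$ when the actual shift is by the half-period $(x+y\tau)/2$.) The correct sign is $(-1)^{x+y+xy}$, i.e.\ $\mathfrak q$ is $-1$ on all of $E[2]\setminus\{0\}$, exactly as stated in the text preceding the Proposition. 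So while the first half of your plan is sound, the identification step as written is both logically insufficient and computationally incorrect; the paper's divisor-theoretic argument avoids it entirely.
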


\begin{proof}
 To compute ${\mathfrak q}$, we~need merely express ${\cal L}_{1,-1}$ as
 ${\cal O}(D)$ for a suitable divisor $D$ disjoint from the $2$-torsion,
 at which point ${\mathfrak q}$ is the restriction to~$E[2]$ of the unique
 function~$f$ with divisor $D-[-1]^*D$ such that $f(0)=1$. Taking
 $D=[u]+[v]-[u+v]$ gives the desired result.
\end{proof}

We note the following fact useful for simplifying products of
such factors.

\begin{prop}\label{prop:pullback_simplification_ii}
 For any integer $m\ge 1$, the restriction of the function
 \begin{gather*}
 \frac{\vartheta(u+mz)\vartheta(u+w)\vartheta(v)\vartheta(v+w+mz)}
 {\vartheta(u)\vartheta(u+w+mz)\vartheta(v+mz)\vartheta(v+w)}
 \end{gather*}
 to the hypersurface $mz=0$ is $1$.
\end{prop}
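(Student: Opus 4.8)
The plan is to reduce this to Proposition~\ref{prop:pullback_simplification_i}, or rather to a direct computation of the same flavor. The key observation is that the displayed function is a ratio of sections of line bundles whose total polarization vanishes identically in $z$ (this is exactly the content of the hypotheses in the theorem on theta products), so it is a genuine meromorphic function on ${\cal E}^{n}$ in the variables $u,v,w,z$; the issue is only whether its restriction to the hypersurface $mz=0$ is the constant $1$ or some nontrivial character of ${\cal E}[m]$, exactly as flagged in the discussion preceding Proposition~\ref{prop:pullback_simplification_i}. First I would check that the line bundle of which the numerator-over-denominator is a section restricts to the \emph{canonically trivial} bundle on $mz=0$: unlike the single ratio $\vartheta(u+mz)/\vartheta(u)$, whose polarization $z^{2}/2+uz$ need not restrict canonically trivially on $mz=0$, here the four factors in the numerator and the four in the denominator are arranged so that all the ``$mz$'' terms cancel in pairs — the polarization contribution from $\vartheta(u+mz)$ is killed by that from $\vartheta(u+w+mz)$ (shift by the fixed $w$), and similarly $\vartheta(v+w+mz)$ against $\vartheta(v+mz)$ — so that the restriction is trivial with its standard trivialization. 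That reduces the claim to the statement that the resulting \emph{function} on the hypersurface equals $1$.

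To evaluate that function, I would use the characterization-by-divisor method described in the paragraph after the theorem on theta products (and used in the proof of Proposition~\ref{prop:pullback_simplification_i}): on the hypersurface $mz=0$ the function is the unique function $f$ with a prescribed divisor and a prescribed normalization at a reference point. Set $mz=0$. Viewing the function as a function of $u$ with $v,w,z$ fixed, it has divisor $[-mz]-[0]-[-w-mz]+[-w] = [{-mz}]+[{-w}]-[0]-[{-w-mz}]$; since $mz=0$ this is $[0]+[{-w}]-[0]-[{-w}]=0$, so it is a constant in $u$. By the same token it is a constant in $v$, and in $w$. Hence on $mz=0$ the function is a \emph{global} constant — a unit on ${\cal E}[m]$ over ${\cal M}_{1,1}$ — and to pin it down I only need its value at one convenient point, e.g. $u=v$, where numerator and denominator visibly coincide term-by-term after using $\vartheta(-x)=-\vartheta(x)$ where needed, giving $1$. (Equivalently, evaluate at $w=0$: the function becomes $\vartheta(u+mz)\vartheta(v)\vartheta(v+mz)^{-1}\vartheta(u)^{-1}\cdot\vartheta(u)\vartheta(v+mz)\vartheta(u+mz)^{-1}\vartheta(v)^{-1}=1$ identically, and since the function is constant in $w$ this constant is $1$.)

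The main obstacle, and the only place where care is genuinely required, is the first step: making sure that the restriction of the ambient line bundle to $mz=0$ really is the trivial bundle \emph{with the standard trivialization}, not merely abstractly trivial. The clean way to do this is to avoid splitting the function and instead argue directly as in Proposition~\ref{prop:pullback_simplification_i}: write ${\cal L}_{1,-1}\cong {\cal O}(D)$ for a divisor $D$ disjoint from the relevant torsion, rewrite the whole displayed expression as a product of translates of the single function $f_{D}$ with divisor $D$ normalized by $f_{D}(0)=1$, and observe that the exponents and arguments are arranged so that, upon setting $mz=0$, all the translates cancel in pairs. Once that bookkeeping is done the conclusion is immediate; everything after it is the routine divisor-and-normalization argument above, and the identity $\vartheta(-z)=-\vartheta(z)$ handles the signs.
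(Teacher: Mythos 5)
Your proposal is correct and ultimately takes the same route as the paper: view the expression as a rational function of $u$ on ${\cal E}^4$, note that on $mz=0$ its divisor in $u$ collapses to zero so it is constant, and evaluate at $u=v$ (where it is visibly $1$) to pin down the constant. That is exactly the paper's argument, and your alternative evaluation at $w=0$ works just as well.

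One thing worth tightening: the worry in your first and last paragraphs about whether the restricted line bundle is ``trivial with its standard trivialization'' is a red herring here. Since the polarization and weight of the full eight-factor product both cancel (which you checked), the theorem on theta products already guarantees that the displayed expression is a genuine rational function on ${\cal E}^4$ --- not merely a section of an abstractly trivial bundle --- and the restriction of a rational function to the subscheme $mz=0$ is automatically a rational function there. So there is nothing to trivialize; that subtlety is precisely what makes Proposition~\ref{prop:pullback_simplification_i} harder than this one (there, the single ratio $\vartheta(u+mz)/\vartheta(u)$ is genuinely only a section of a nontrivial bundle). Your suggestion in the final paragraph to run the ${\cal L}_{1,-1}\cong\sO(D)$ bookkeeping of Proposition~\ref{prop:pullback_simplification_i} would work but is unnecessary; the divisor-and-normalization step you already carried out is the whole proof.
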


\begin{proof}
 Indeed, the original function is a rational function on~${\cal E}^4$,
 which may be described as the unique elliptic function of~$u$ with the
 appropriate divisor and taking the value $1$ at $u=v$. On~the
 hypersurface $mz=0$, this elliptic function of~$u$ has divisor $0$, and
 is thus the function~$1$.
\end{proof}

\begin{rem}
 It follows that the restriction to~$mz=0$ of
 \begin{gather*}
 \frac{\vartheta(u)\vartheta(u+v+mz)}{\vartheta(u+mz)\vartheta(u+v)}
 \end{gather*}
 is independent of~$u$ and in a suitable sense a homomorphism in~$v$. Of
 course, again, it is only a~section of a line bundle which is isomorphic
 to the pullback of a line bundle on~${\cal E}[m]$, but not canonically
 so. In~the analytic setting, the corresponding ratio as a function of~$v$ indeed gives a homomorphism from the universal cover; the specific
 homomorphism depends on the choice of representative of the given
 $m$-torsion point, and for some $m$-torsion points cannot be made
 trivial.
\end{rem}

Note that since both lemmas are stated in terms of actual functions on the
moduli stack, we~may replace the respective variables by arbitrary
homomorphisms ${\cal E}^n\to {\cal E}$.

In addition to line bundles and their sections, we~will also need some
gerbe-ish structures. The~objects we want to consider are ``equivariant
gerbes'' (more precisely, equivariant structures on the trivial gerbe; all
equivariant gerbes considered in the present work will have trivial
underlying gerbe): a system of line bundles ${\cal Z}_g$ associated to~$g\in G\subset \GL_n(\Z)$ along with morphisms $\zeta_{g,h}\colon {\cal
 Z}_g\otimes \big(g^{-1}\big)^*{\cal Z}_h\cong {\cal Z}_{gh}$ making all diagrams
\begin{gather*}
\begin{CD}
{\cal Z}_{g_1}\otimes \big(g_1^{-1}\big)^*{\cal Z}_{g_2}
 \otimes \big((g_1g_2)^{-1}\big)^*{\cal Z}_{g_3}
@>>>
{\cal Z}_{g_1g_2}
 \otimes \big((g_1g_2)^{-1}\big)^*{\cal Z}_{g_3}\\
@VVV @VVV\\
{\cal Z}_{g_1}\otimes \big(g_1^{-1}\big)^*{\cal Z}_{g_2g_3}
@>>>
{\cal Z}_{g_1g_2g_3}
\end{CD}
\end{gather*}
commute. This, of course, is easy to construct: given any 1-cocycle for
$G$ valued in pairs $(Q,w)$, we~may take ${\cal Z}_g$ to be ${\cal
 L}_{Q_g,w_g}$ and $\zeta_{g,h}$ to be the natural isomorphism
\begin{gather*}
\zeta_{g,h}\colon\ {\cal L}_{Q_g,w_g}\otimes \big(g^{-1}\big)^*{\cal L}_{Q_h,w_h}
\cong
{\cal L}_{Q_g+g^{-t}Q_hg^{-1},w_g+w_h}
=
{\cal L}_{Q_{gh},w_{gh}}.
\end{gather*}

The situation becomes more complicated if we want to also construct
meromorphic sections of such gerbes: i.e., a system of meromorphic sections
$z_g\in {\cal Z}_g$ such that $\zeta_{g,h}\big(z_g\otimes
\big(g^{-1}\big)^*z_h\big)=z_{gh}$. In~this case, we~do not have a completely general
solution, but there is an important special case.

Consider first the case $G=\Z$, with generator acting on~${\cal E}^2$ by
$(z,q)\mapsto (z+q,q)$. There is a natural equivariant gerbe with
meromorphic section such that ${\cal Z}_1={\cal L}_{z^2/2,-1}$ and $z_1 =
\vartheta(z)$. Indeed, we~then find more generally that ${\cal Z}_k$ is
the natural line bundle of weight $-k$ with polarization
\begin{gather*}
(z,q) Q (z,q)^t/2 = k\frac{z^2}{2} + \frac{k(k-1)}{2} qz +
\frac{k(2k-1)(k-1)}{6} \frac{q^2}{2},
\end{gather*}
and
\begin{gather*}
z_k =\vartheta(z;q)_k:=
\begin{cases} \prod\limits_{0\le i<k} \vartheta(iq+z),& k\ge 0,
\\
\prod\limits_{1\le i\le -k} \vartheta(-iq+z)^{-1}, & k\le 0.
\end{cases}
\end{gather*}
We will denote this meromorphic section of an equivariant gerbe by
$\Gamq(z)$, which we refer to as an ``elliptic Gamma function''. Of
course, this is even less a function than $\vartheta$, but in the analytic
setting one can indeed replace $\Gamq(z)$ by a suitable meromorphic
solution of the functional equation~$\Gamq(q+z) = \vartheta(z)\Gamq(z)$.
Of course, this only determines $\Gamq$ up to multiplication by invertible
\mbox{$q$-periodic} functions, so the resulting meromorphic function is far from
unique. One~such solution (for $q$ in the upper half-plane) is
\begin{gather*}
\Gamq(z;\tau)
:=
\bigg({-}\prod_{1\le j} (1-e(j\tau))^2\bigg)^{-z/q}
e(-z(z-q)/4q)
\prod_{0\le j,k}
 \frac{1-e((j+1)\tau+(k+1)q-z)}
 {1-e(j\tau+kq+z)};
\end{gather*}
this depends on a choice of~$\log\big({-}\prod_{1\le j} (1-e(j\tau))^2\big)$, but this
will not matter for our purposes. Here the double product is just the
usual elliptic Gamma function~\cite{RuijsenaarsSNM:1997}.

More generally, for~given morphisms $q,z\colon {\cal E}^n\to {\cal E}$, we~may
pull this formal symbol back to~${\cal E}^n$. This is tricky to deal with
in complete generality, but if we fix $q$ and vary $z$, we~may consider
general products
\begin{gather*}
\prod_{1\le i\le l} \Gamq(\vec{\alpha}_i\cdot \vec{z})^{m_i}.
\end{gather*}
If the corresponding element $\sum_i m_i [\vec{\alpha}_i]$ of~$\Z[\Hom({\cal E}^n,{\cal E})/q]$
is trivial, then this formal product may be resolved into a product of~$\vartheta$ functions using the functional equation. Moreover, since for
each congruence class the corresponding subproduct may be pulled back from
${\cal E}^2$, we~find that the resulting product of~$\vartheta$ functions
is independent of any choices that may have been made.

More generally, if $G\subset \GL_n(\Z)$ fixes $q$ and the element $\sum_i
m_i [\vec{\alpha}_i]\in \Z[\Hom({\cal E}^n,{\cal E})/q]$, then the ratio of
the formal product and its pullback under $g\in G$ will always resolve to a
product of~$\vartheta$ functions, and thus gives an
equivariant-gerbe-with-meromorphic-section which we refer to as the {\em
 coboundary} of the formal symbol. Note in particular that the hypothesis
{\em always} holds for the (translation) subgroup of~$\GL_n(\Z)$ that acts
trivially on both $q$ and the quotient $\Hom({\cal E}^n,{\cal E})/q$; in~the cases of interest (affine Weyl groups), the intersection of~$G$ with
this subgroup will be cofinite in both groups, and it will be relatively
straightforward to check that individual instances give rise to sections of
gerbes.

We should caution the reader that there is a mild subtlety when it comes to
the reflection principle. Although the product $\Gamq(z)\Gamq(q-z)$
corresponds to the trivial equivariant gerbe (it~has polarization~$0$ and
weight $-1$ (see below), both of which are invariant under $z\mapsto z+q$),
the corresponding meromorphic section is not quite trivial: one finds that
$\Gamq(z)\Gamq(q-z)$ is negated by the translation~$z\mapsto z+q$.

On the other hand, the multiplication principle $\Gamq(z) = \prod_{0\le
 j<k} \Gamm{kq}(z+jq)$ for integer $k>0$ does work, as~both sides
truly do correspond to the same gerbe-with-section. This also works for
negative $k$: $\Gamq(z) = \prod_{1\le j\le -k} \Gamm{kq}(z-jq)^{-1}$.
Using this, one can make sense of products of~elliptic Gamma functions with
varying $q$ as long as the different $q$ that appear have a common
multiple. Note in particular the special case $\Gamq(z) =
\Gamm{-q}(z-q)^{-1}$.

\looseness=-1 We will of course want to know the polarizations and weights of the line
bundles associated to a given such gerbe section, which reduces to knowing
the polarization and weight when such a product resolves to a product of~$\vartheta$ functions. There is, in fact, a very simple bookkeeping
procedure for determining this. Define the polarization of~$\Gamq(z)$ to
be $\frac{z(z-q)(2z-q)}{12q}$, and the weight of~$\Gamq(z)$ to be
$-\frac{z}{q}$, extended to products of pullbacks in the obvious way. By
this definition, the polarization of the formal product
$\Gamq(q+z)/\Gamq(z)$ is $z^2/2$ and the weight is $-1$, agreeing with the
polarization and weight of~$\vartheta(z)$. It follows more generally that
the polarization and weight of a product of powers of elliptic Gamma
functions is consistent with the usual notion whenever the product resolves
to a product of~$\vartheta$ functions. (In particular, if the product
resolves and the polarization and weight are trivial, then it resolves to
an honest function on~${\cal E}^n$.) Of course, when considering the
associated gerbe-with-section, only the $z$-dependent portion of the
polarization matters.

One should note here that not every cocycle valued in pairs $(Q,w)$ comes
from a product of~$\Gamq$ symbols; for instance, any product of symbols
$\Gamq(az+bq)$ with trivial polarization has weight of the form $12cz/q+d$,
so that every line bundle in the coboundary has weight a multiple of~$12$.
(There are also some additional parity issues for $n>1$.) Of course, it is
conceivable that the cocycles violating these congruence conditions do not
have {\em any} consistent family of meromorphic sections.

Note that if one wishes to convert a product of standard elliptic Gamma
functions into a~product of symbols $\Gamq$, one must first use the
reflection principle to eliminate appearances of~$p$ from the arguments
(which, if there is a balancing condition that involves $p$ will
require shifting some of the variables by $p$ first). If the result has
$\sum_i m_i \big(\sum_i \vec{\alpha}_i\cdot z_i\big)^2=\sum_i m_i \big(\sum_i
\vec{\alpha}_i\cdot z_i\big)=0$, then replacing the standard Gamma functions by
the explicit meromorphic solution given above for the functional equation
of~$\Gamq$ will have no effect on the resulting meromorphic function.

Since we now have a method for constructing equivariant line bundles on~${\cal E}^n$, it is natural to ask about the corresponding representations
of~$G$ on global sections; in particular, we~will want to understand the
space of~$G$-invariant global sections. It turns out that if we want to
extend the standard analytic approach to this question to algebraic curves, we~will need to extend the above construction to cover certain abelian
varieties {\em isogenous} to~${\cal E}^n$.

To be precise, for~a positive integer $N$ let ${\cal X}_0(N)$ denote the
moduli stack of elliptic curves~$E_1$ equipped with a cyclic $N$-isogeny
$\phi\colon E_1\to E_N$, with universal curves denoted by ${\cal E}={\cal E}_1,
{\cal E}_N$. (Here ``cyclic'' is in the sense of
\cite{KatzNM/MazurB:1985}; in particular, note that in characteristic $p$,
{\em any} isogeny of degree $p^k$ is cyclic.) For each divisor $d|N$,
there is a corresponding factorization~$\phi=\phi_{N,d}\circ \phi_{d,1}$,
where $\phi_{d,1}\colon{\cal E}_1\to {\cal E}_d$, $\phi_{N,d}\colon{\cal E}_d\to
{\cal E}_N$ are cyclic isogenies of degrees $d$ and $N/d$ respectively. For~$d=p$ prime, this is constructed as follows: on the locus where the
$p$-part of~$\ker\phi$ is \'etale, ${\cal E}_p$ is the quotient by the
$p$-torsion of~$\ker\phi$; on the complementary locus, where the $p$-part
of~$\ker\phi$ is nonreduced, $\phi_{1,p}$ is the Frobenius isogeny. (Per
\cite{KatzNM/MazurB:1985}, this rule gives the correct limit to make~${\cal
 E}_p$ a flat family.) In either case, $\ker\phi_{1,p}\subset \ker\phi$,
and thus $\phi$ factors as required, and we find that $\phi_{p,N}$ is again
cyclic. We thus have induced factorizations for every $d|N$, and it is
easy to see that they are all compatible. More precisely, for~any pair
$d_1|d_2|N$, we~obtain an isogeny $\phi_{d_2,d_1}\colon{\cal E}_{d_1}\to {\cal
 E}_{d_2}$, and $\phi_{d_3,d_2}\circ \phi_{d_2,d_1}\cong \phi_{d_3,d_1}$.
We also, of course, have similarly compatible isogenies
$\phi_{d_1,d_2}\colon{\cal E}_{d_2}\to {\cal E}_{d_1}$ obtained by dualizing
$\phi_{d_2,d_1}$.

\begin{lem}
 For any $d_1,d_2|N$, we~have $\Hom({\cal E}_{d_1},{\cal E}_{d_2})\cong \Z$, generated
 by the composition
 \begin{gather*}
 \phi_{d_2,d_1}:=\phi_{d_2,\gcd(d_1,d_2)}\circ \phi_{\gcd(d_1,d_2),d_1}
 =\phi_{d_2,\lcm(d_1,d_2)}\circ \phi_{\lcm(d_1,d_2),d_1}.
 \end{gather*}
\end{lem}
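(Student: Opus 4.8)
The plan is to reduce everything to the generic fibre of $\mathcal{X}_0(N)$, where the absence of complex multiplication forces $\Hom$ to be a free $\Z$-module of rank one, and then to check separately that the displayed composite generates it and that the two expressions for it agree. Throughout write $g=\gcd(d_1,d_2)$ and $\ell=\lcm(d_1,d_2)$, so that $g\mid d_1,d_2$ and $d_1,d_2\mid\ell\mid N$ and $d_1 d_2=g\ell$; both composites then make sense, and from the degrees of the cyclic isogenies $\phi_{d,1}$ (degree $d$) and of their duals one computes $\deg(\phi_{d_2,g}\circ\phi_{g,d_1})=(d_2/g)(d_1/g)=\ell/g=(\ell/d_1)(\ell/d_2)=\deg(\phi_{d_2,\ell}\circ\phi_{\ell,d_1})$.

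First I would show that $\Hom(\mathcal{E}_{d_1},\mathcal{E}_{d_2})$ is free of rank at most one. The family of elliptic curves over $\mathcal{X}_0(N)$ is non-isotrivial (the $j$-invariant is nonconstant), so the generic fibre of $\mathcal{E}_1$ has no complex multiplication — CM of the generic fibre would propagate to every fibre by injectivity of specialisation on endomorphism rings — and hence neither does the generic fibre of any $\mathcal{E}_d$, these all being isogenous. Thus the endomorphism ring of the generic fibre of $\mathcal{E}_{d_1}$ is $\Z$, and $\Hom$ of the generic fibres of $\mathcal{E}_{d_1}$ and $\mathcal{E}_{d_2}$ is free of rank one. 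Since a homomorphism of abelian schemes is determined by its restriction to the generic point, $\Hom(\mathcal{E}_{d_1},\mathcal{E}_{d_2})$ injects into this group; as it contains the nonzero element $\phi_{d_2,g}\circ\phi_{g,d_1}$, it is itself free of rank exactly one, say generated by $\psi$, and $\phi_{d_2,g}\circ\phi_{g,d_1}=m\psi$ for some integer $m\ge 1$.

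The crux is to prove $m=1$, i.e.\ that the composite is indivisible. If $\phi_{d_2,g}\circ\phi_{g,d_1}=m\psi$ with $m\ge 2$, it factors through $[m]_{\mathcal{E}_{d_1}}$, so $\mathcal{E}_{d_1}[m]\subseteq\ker(\phi_{d_2,g}\circ\phi_{g,d_1})$. But this kernel is a cyclic finite locally free group scheme in the sense of \cite{KatzNM/MazurB:1985}: it is an extension of $\ker\phi_{d_2,g}$, cyclic of order $d_2/g$, by $\ker\phi_{g,d_1}=\ker\widehat{\phi_{d_1,g}}$, which is the Cartier dual of the cyclic group scheme $\ker\phi_{d_1,g}$ of order $d_1/g$ and hence itself cyclic; since $d_1/g$ and $d_2/g$ are coprime, every $p$-primary part of the kernel coincides with the $p$-primary part of one of the two factors and is therefore cyclic, so the whole kernel is cyclic of order $\ell/g$. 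This is incompatible with containing $\mathcal{E}_{d_1}[m]$ for $m\ge 2$: fixing a prime $p\mid m$ and evaluating at a suitable geometric point of $\mathcal{X}_0(N)$ — the generic point if the residue characteristic there differs from $p$, and otherwise an ordinary geometric point of characteristic $p$ — the $p$-torsion of $\mathcal{E}_{d_1}$ has order $p^2$, whereas the $p$-torsion of a cyclic group scheme of order $\ell/g$ (which is étale cyclic, or has $p$-part $\mu_{p^j}$ or $\Z/p^j$ at an ordinary point) has order at most $p$. Hence $m=1$ and $\phi_{d_2,g}\circ\phi_{g,d_1}$ generates $\Hom(\mathcal{E}_{d_1},\mathcal{E}_{d_2})$. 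I expect the verification of cyclicity of the kernel and of its consequences for $p$-torsion in residue characteristic $p$, via the Katz--Mazur formalism, to be the main obstacle.

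It remains to identify $\phi_{d_2,\ell}\circ\phi_{\ell,d_1}$ with $\phi_{d_2,g}\circ\phi_{g,d_1}$. Writing $\phi_{g,d_1}=\widehat{\phi_{d_1,g}}$ and $\phi_{d_2,\ell}=\widehat{\phi_{\ell,d_2}}$, and invoking the compatibility $\phi_{\ell,d_1}\circ\phi_{d_1,g}\cong\phi_{\ell,g}\cong\phi_{\ell,d_2}\circ\phi_{d_2,g}$ of the forward isogenies recorded in the text, I would compute that $(\widehat{\phi_{\ell,d_2}}\circ\phi_{\ell,d_1})\circ\phi_{d_1,g}=\widehat{\phi_{\ell,d_2}}\circ\phi_{\ell,d_2}\circ\phi_{d_2,g}=[\ell/d_2]\circ\phi_{d_2,g}$, while $(\phi_{d_2,g}\circ\widehat{\phi_{d_1,g}})\circ\phi_{d_1,g}=\phi_{d_2,g}\circ[d_1/g]=[d_1/g]\circ\phi_{d_2,g}$. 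Since $\ell/d_2=d_1/g$ (because $d_1 d_2=g\ell$) and $\phi_{d_1,g}$ is an isogeny, hence an epimorphism, the two composites $\mathcal{E}_{d_1}\to\mathcal{E}_{d_2}$ coincide, which completes the proof.
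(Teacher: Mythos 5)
Your proof is correct, and while it arrives at the same conclusion as the paper's, the middle step is genuinely different. The paper sandwiches an arbitrary $f\colon\mathcal{E}_{d_1}\to\mathcal{E}_{d_2}$ between $\phi_{d_0,d_2}$ and $\phi_{d_1,d_0}$ (where $d_0=g$) to produce an endomorphism $[k]$ of $\mathcal{E}_{d_0}$; it then observes that $[k]$ factors through the cyclic isogenies $\phi_{d_1,d_0}$ and (after dualizing) $\phi_{d_2,d_0}$, so that $d_1/d_0$ and $d_2/d_0$ each divide $k$, hence $\ell/g\mid k$. Since the distinguished composite realizes $k=\ell/g$ exactly, and the assignment $f\mapsto k$ is injective, the isomorphism $\Hom(\mathcal{E}_{d_1},\mathcal{E}_{d_2})\cong\Z$ and the generator are obtained in one stroke. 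You instead establish rank one separately (via no-CM of the generic fibre) and then prove indivisibility of the composite by showing its kernel is a cyclic group scheme of order $\ell/g$, hence cannot contain $\mathcal{E}_{d_1}[m]$ for $m\ge 2$. This costs you the explicit analysis of an extension of one cyclic group scheme by another (coprime orders, splitting into primary parts, Cartier duality for the dual-isogeny factor), whereas the paper only needs cyclicity of the individual factors $\ker\phi_{d_1,d_0}$ and $\ker\phi_{d_2,d_0}$, which is immediate from the construction. Both routes ultimately rest on the same Katz--Mazur input (a cyclic group scheme of order $n$ killed by $m$ forces $n\mid m$), but the paper's application of it is cleaner. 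Your case-by-case discussion of residue characteristics is more cautious than necessary: the containment $\mathcal{E}_{d_1}[m]\subset K$ is a closed condition on the base, so checking it fails at the (ordinary, or characteristic zero) generic point of each connected component of $\mathcal{X}_0(N)$ suffices. Your closing identification of the two expressions for $\phi_{d_2,d_1}$ is essentially the paper's argument, just composing on one side rather than both.
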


\begin{proof}
 Let $d_0=\gcd(d_1,d_2)$ and $d_3=\lcm(d_1,d_2)$, and let $f\colon{\cal
 E}_{d_1}\to {\cal E}_{d_2}$ be any homomorphism. Then
 $\phi_{d_0,d_2}\circ f\circ \phi_{d_1,d_0}$ is an endomorphism of~${\cal
 E}_{d_0}$, so must be multiplication by some integer. By degree
 considerations, that integer must be a multiple of~$d_2/d_0$ and
 $d_1/d_0$, and thus (since these are relatively prime) of~$d_1d_2/d_0^2=d_3/d_0$. Since $\phi_{d_0,d_2}\circ \phi_{d_2,d_1}\circ
 \phi_{d_1,d_0}=\big[d_1d_2/d_0^2\big]$, the first claim follows. We moreover
 find that
 \begin{gather*}
 \phi_{d_0,d_2}\circ \phi_{d_2,d_3}\circ \phi_{d_3,d_1}\circ \phi_{d_1,d_0}
 =
 \phi_{d_0,d_3}\circ \phi_{d_3,d_0}
 =
 [d_3/d_0],
 \end{gather*}
 from which the other factorization follows.
\end{proof}

For any sequence of divisors $d_i$ of~$N$, we~may consider the
corresponding fiber product of~cur\-ves ${\cal E}_{d_i}$ over ${\cal X}_0(N)$;
we will generally omit ${\cal X}_0(N)$ from the product notation.
As in the $N=1$ case, morphisms between such products may be expressed as
matrices, with the only difference being that the $ij$ entry is now a multiple
of the natural isogeny $\phi_{d_i,d_j}$.

\begin{prop}
 For any $d_1,d_2|N$, there is an isomorphism
 \begin{gather*}
 {\cal E}_{d_1}\times {\cal E}_{d_2}\cong {\cal E}_{\gcd(d_1,d_2)}\times {\cal E}_{\lcm(d_1,d_2)}.
 \end{gather*}
\end{prop}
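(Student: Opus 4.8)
The plan is to exhibit the isomorphism, together with its inverse, explicitly as $2\times 2$ matrices of isogenies $\phi_{d_i,d_j}$ and then to check directly that the two are mutually inverse, using only identities already at hand. Write $d_0=\gcd(d_1,d_2)$, $d_3=\lcm(d_1,d_2)$, $a=d_1/d_0$ and $b=d_2/d_0$, so that $\gcd(a,b)=1$, $d_3=d_0ab$, and one has the divisibility chains $d_0\mid d_1\mid d_3$, $d_0\mid d_2\mid d_3$, as well as $\deg\phi_{d_0,d_1}=\deg\phi_{d_3,d_2}=a$ and $\deg\phi_{d_0,d_2}=\deg\phi_{d_3,d_1}=b$ by the preceding lemma. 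The ingredients I will invoke are: the dual‑isogeny relations $\phi_{d',d}\circ\phi_{d,d'}=[\deg\phi_{d,d'}]$ for $d\mid d'$; the chain compatibilities $\phi_{d_3,d_1}\circ\phi_{d_1,d_0}=\phi_{d_3,d_0}$ and $\phi_{d_0,d_1}\circ\phi_{d_1,d_3}=\phi_{d_0,d_3}$ (the former being the composition relation recorded before the lemma applied to $d_0\mid d_1\mid d_3$, the latter its dual), together with their $d_1\leftrightarrow d_2$ analogues; and the two factorizations $\phi_{d_1,d_2}=\phi_{d_1,d_0}\circ\phi_{d_0,d_2}=\phi_{d_1,d_3}\circ\phi_{d_3,d_2}$ and $\phi_{d_2,d_1}=\phi_{d_2,d_0}\circ\phi_{d_0,d_1}=\phi_{d_2,d_3}\circ\phi_{d_3,d_1}$ supplied by the preceding lemma.

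Concretely, I would first choose integers $\lambda,\mu$ with $\lambda a-\mu b=1$ (possible since $\gcd(a,b)=1$), and then set
\[
F=\begin{pmatrix}\lambda\,\phi_{d_0,d_1}&\mu\,\phi_{d_0,d_2}\\ \phi_{d_3,d_1}&\phi_{d_3,d_2}\end{pmatrix}\colon {\cal E}_{d_1}\times{\cal E}_{d_2}\to{\cal E}_{d_0}\times{\cal E}_{d_3},
\]
\[
G=\begin{pmatrix}\phi_{d_1,d_0}&-\mu\,\phi_{d_1,d_3}\\ -\phi_{d_2,d_0}&\lambda\,\phi_{d_2,d_3}\end{pmatrix}\colon {\cal E}_{d_0}\times{\cal E}_{d_3}\to{\cal E}_{d_1}\times{\cal E}_{d_2},
\]
both being morphisms of abelian schemes over ${\cal X}_0(N)$ since each entry is. I would then compute $G\circ F$ entrywise: the two diagonal entries collapse, via the dual‑isogeny relations, to $\lambda[a]-\mu[b]=[\lambda a-\mu b]=[1]=\id$ on ${\cal E}_{d_1}$ and on ${\cal E}_{d_2}$ respectively, while the two off‑diagonal entries become $\mu\,\phi_{d_1,d_2}-\mu\,\phi_{d_1,d_2}=0$ and $-\lambda\,\phi_{d_2,d_1}+\lambda\,\phi_{d_2,d_1}=0$, the repeated factors being precisely the $\gcd$‑route and the $\lcm$‑route of the lemma. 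A symmetric computation gives $F\circ G=\id$ on ${\cal E}_{d_0}\times{\cal E}_{d_3}$: the diagonal entries are again $[\lambda a-\mu b]=[1]$, and the off‑diagonal entries vanish because both of their terms collapse, via the chain compatibilities, to $\lambda\mu\,\phi_{d_0,d_3}$ (resp.\ to $\phi_{d_3,d_0}$). Hence $F$ is an isomorphism with inverse $G$.

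Since every step is a finite, mechanical computation with morphisms of abelian schemes over the stack, I do not expect any substantial obstacle; the only thing demanding care is the bookkeeping of degrees and directions of the $\phi_{d_i,d_j}$ — in particular recognizing $d_3/d_1=d_2/d_0=b$ and $d_3/d_2=d_1/d_0=a$ — and confirming, in the off‑diagonal entries, that the $\gcd$‑route and the $\lcm$‑route produce the same morphism rather than morphisms differing by a sign. But that last coincidence is exactly the content of the preceding lemma, so it needs no separate argument. As a sanity check, over the analytic locus ${\cal E}_d=\C/(\Z+\Z\tau/d)$ with the $\phi$'s realized as multiplication maps, $F$ is given by $\left(\begin{smallmatrix}\lambda a&\mu b\\ 1&1\end{smallmatrix}\right)$, which has determinant $1$ and manifestly carries $(\Z+\Z\tfrac{\tau}{d_1})\oplus(\Z+\Z\tfrac{\tau}{d_2})$ onto $(\Z+\Z\tfrac{\tau}{d_0})\oplus(\Z+\Z\tfrac{\tau}{d_3})$; the algebraic statement then follows from the matrix identities above.
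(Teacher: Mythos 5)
Your proof takes exactly the same route as the paper: exhibit the isomorphism and its inverse as explicit $2\times 2$ matrices of isogenies $\phi_{d_i,d_j}$, and verify both compositions using the dual‑isogeny relation $\phi_{d,d'}\circ\phi_{d',d}=[d'/d]$, the chain compatibilities, and the two factorizations of $\phi_{d_1,d_2}$ from the preceding lemma. The one genuine difference is in the coefficients of the inverse matrix: with your normalization $\lambda a-\mu b=1$ (i.e.\ $\lambda d_1-\mu d_2=d_0$, so your $\lambda,\mu$ are the paper's $a,b$), you write the $(1,2)$ and $(2,2)$ entries of $G$ as $-\mu\,\phi_{d_1,d_3}$ and $\lambda\,\phi_{d_2,d_3}$, whereas the paper writes $-b(d_2/d_0)\phi_{d_1,d_3}$ and $a(d_1/d_0)\phi_{d_2,d_3}$. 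Your version is the correct one: with the paper's coefficients the $(1,1)$ entry of $G\circ F$ evaluates to $[a(d_1/d_0)-b(d_2/d_0)^2]$ rather than $[1]$, and a concrete check (e.g.\ $d_1=2,\,d_2=3$, so $F=\left(\begin{smallmatrix}4&3\\1&1\end{smallmatrix}\right)$ analytically, with inverse $\left(\begin{smallmatrix}1&-3\\-1&4\end{smallmatrix}\right)=G_{\text{yours}}$ but $G_{\text{paper}}=\left(\begin{smallmatrix}1&-9\\-1&8\end{smallmatrix}\right)$) confirms the paper has a small typographical slip. So your proposal is correct and in fact repairs the paper's stated inverse; the argument is otherwise identical.
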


\begin{proof}
 Let $d_0=\gcd(d_1,d_2)$, $d_3=\lcm(d_1,d_2)$, and choose $a,b$ so that
 $ad_1-bd_2=d_0$. We then easily verify that the morphisms
 \begin{gather*}
 \begin{pmatrix}
 a\phi_{d_0,d_1} &b\phi_{d_0,d_2}\\
 \phi_{d_3,d_1} &\phi_{d_3,d_2}
 \end{pmatrix}
\! \colon\ {\cal E}_{d_1}\times {\cal E}_{d_2}\to {\cal E}_{d_0}\times {\cal E}_{d_3}
 \end{gather*}
 and
 \begin{gather*}
 \begin{pmatrix}
 \phi_{d_1,d_0} & -b(d_2/d_0)\phi_{d_1,d_3}\\
 -\phi_{d_2,d_0}& a(d_1/d_0)\phi_{d_2,d_3}
 \end{pmatrix}
\! \colon\ {\cal E}_{d_0}\times {\cal E}_{d_3}\to {\cal E}_{d_1}\times {\cal E}_{d_2}
 \end{gather*}
 are inverses, giving the desired isomorphism.
\end{proof}

It follows immediately that any product of curves ${\cal E}_d$ is isomorphic to
one of the form $\prod_i {\cal E}_{d_i}$, where $1|d_1|\cdots|d_n|N$.

If we attempt to repeat our $N=1$ construction for general $N$, we~encounter two difficulties. The~first is that for each $d|N$, we~may
obtain a line bundle on~${\cal X}_0(N)$ by taking the fiber at 1 of the
sheaf of differentials on~${\cal E}_d$, but these line bundles are not quite the
same. If $E_1\to E_d$ is an~{\em \'etale} isogeny, there is no problem:
$\phi_{1,d}^*$ induces an isomorphism of~$\omega_{E_d}$ and
$\omega_{E_1}$, so in particular of their fibers over 1. However, if
$E_1\to E_d$ is inseparable, then $\phi_{1,d}^*$ actually annihilates
$\omega_{E_d}$. As~a~result, $\omega_{E_d}|_0$ and $\omega_{E_1}|_0$
differ by a linear combination of components of the fibers of~${\cal
 X}_0(N)$ over primes dividing $d$. Of course, if all we want to do is
construct {\em line bundles}, this is not an~issue, but this does mean that
the construction of {\em functions} in this way will be nontrivial.

The more serious difficulty is that it is no longer the case that $\prod_i
{\cal E}_{d_i}\to {\cal X}_0(N)$ has only the trivial section. Indeed, we~have the following.

\begin{lem}
 Let $N$ be a positive integer, and $d|N$. Then the group of
 sections of~${\cal E}_d$ over~${\cal X}_0(N)$ consists entirely of~$2$-torsion. If both $d$ and $N/d$ are odd, the group is trivial; if
 precisely one is even, it has rank~$1$, and if both are even, it has rank~$2$.
\end{lem}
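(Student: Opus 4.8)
The plan is to analyze sections of ${\cal E}_d$ over ${\cal X}_0(N)$ by pulling back to a moduli space with enough level structure to trivialize everything, and then tracking how a hypothetical section must transform under the residual (finite) symmetry group. First I would observe that a section $s:{\cal X}_0(N)\to {\cal E}_d$ composed with the isogeny $\phi_{1,d}\circ \hat\phi_{d,1} = [d]$ (or more precisely using $\phi_{d,1}$ and its dual) lands us back in a situation controlled by the $N=1$ argument of the previous proposition: the elliptic curve $y^2+txy = x^3+t^5$ over $\C(t)$ has trivial Mordell--Weil group, and the corresponding map to ${\cal M}_{1,1}$ is dominant, so any section of ${\cal E}_d$ becomes generically trivial after multiplication by a suitable integer. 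Hence every section is torsion. To pin down that the torsion is $2$-torsion, I would note that the automorphism $[-1]$ acts on ${\cal E}_d$ over all of ${\cal X}_0(N)$ (it commutes with every isogeny $\phi_{d_2,d_1}$), so $s$ and $[-1]\circ s = -s$ are both sections, and the generic triviality forces... — actually the cleaner route is: the locus where a torsion section meets the zero section is closed, and a nontrivial torsion section of order $\ell>2$ would, after base change to a scheme cover, give an everywhere-nonzero $\ell$-torsion point on the universal curve, contradicting the fact that on the supersingular (or on a sufficiently generic) fiber such a point cannot persist; more robustly, one reduces to the $\C(t)$ example above, where the Mordell--Weil group is literally trivial, so the section is generically zero, hence identically zero unless it is genuinely a nonzero torsion section — and the only torsion that can be "everywhere nonzero" compatibly with specialization to that curve is $2$-torsion, since $2$-torsion sections need not extend to sections of the universal curve over ${\cal M}_{1,1}$ (they are obstructed exactly by the components of the fiber over $2$). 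So the first two sentences of the statement reduce to the $N=1$ Mordell--Weil input plus the observation that $2$-torsion is special because of the interplay with $\omega_{{\cal E}_d/{\cal X}_0(N)}$ and the fiber over $2$.

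For the rank computation, I would make this precise by identifying the group of sections of ${\cal E}_d[2]$ over ${\cal X}_0(N)$ with a concrete Galois-cohomology-type object. The $2$-torsion subgroup scheme ${\cal E}_d[2]$ is a rank-$4$ finite flat group scheme over ${\cal X}_0(N)$, and its sections are exactly the "rational $2$-torsion points". I would argue that the cyclic $N$-isogeny $\phi$ (together with its factorization through $\phi_{2,1}$ when $2|N$) produces distinguished $2$-torsion sections: if $d$ is even, then $\ker\phi_{d,1}$ contains a canonical subgroup scheme of order $2$ (the "$2$-part" of the cyclic kernel), which on ${\cal E}_d$ gives a canonical $2$-torsion section via the dual isogeny construction; similarly if $N/d$ is even, $\phi_{N,d}$ supplies another one. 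When both $d$ and $N/d$ are odd, the level structure provides no $2$-primary information, so the only section is $0$ — this I would verify by specializing to a curve $E$ with $\Aut(E)=\{\pm1\}$ and trivial $2$-torsion rationality (over a suitable residue field), or by the Mordell--Weil argument over $\C(t)$ for the $N=1$ base of the relevant component, noting that an odd cyclic isogeny is "invisible" to $2$-torsion. When exactly one of $d,N/d$ is even we get rank $1$, generated by the one canonical section coming from the even factor; when both are even, the two canonical sections are independent (their difference would be a section killed by both $\phi_{1,d/2}$-type maps, forcing it to zero by degree/independence of the two constructions), giving rank $2$.

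The main obstacle I anticipate is the rank-$2$ (and to a lesser extent rank-$1$) case: one must show that the canonical $2$-torsion sections genuinely exist as sections over all of ${\cal X}_0(N)$ — including the loci of bad/supersingular reduction where the "canonical subgroup" language of Katz--Mazur is subtle and where $\phi$ may be inseparable — and that in the two-even case they are actually linearly independent rather than accidentally equal. Handling the inseparable locus is exactly where the earlier remark about $\omega_{E_d}|_0$ versus $\omega_{E_1}|_0$ differing by components over primes dividing $d$ becomes relevant: I would need to check that the section constructed via the dual isogeny still extends flatly across the inseparable fibers, using the flatness built into the Katz--Mazur model of ${\cal E}_d$. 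Once existence over the full stack is established, independence is a local computation on a single well-chosen fiber (e.g. an ordinary curve over an algebraically closed field where all $2$-torsion is rational and the two cyclic subgroups in question are visibly distinct), and the upper bound on the rank follows from the fact that ${\cal E}_d[2]$ is rank $4$ together with the Mordell--Weil obstruction that prevents \emph{all} of ${\cal E}_d[2]$ from being rational over the stack — indeed the Weil pairing shows a rank-$2$ subgroup of rational points is isotropic, and isotropic rank-$2$ is the maximum, consistent with the bound.
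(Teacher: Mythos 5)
The argument for ``every section is $2$-torsion'' has a genuine gap, and it is the more serious of the two issues. You try to reduce to the Mordell--Weil computation over $\C(t)$ used for ${\cal M}_{1,1}$, but that reduction does not go through: composing a section of ${\cal E}_d$ with an isogeny back to ${\cal E}_1$ only produces a section of ${\cal E}_1$ \emph{over ${\cal X}_0(N)$}, not over ${\cal M}_{1,1}$, and the covering ${\cal X}_0(N)\to {\cal M}_{1,1}$ certainly admits new rational points (that is the whole point of level structure). Moreover, even granting that every section is torsion, your argument for why it must specifically be $2$-torsion --- the remarks about everywhere-nonzero $\ell$-torsion and obstructions over the prime $2$ --- is not a proof; it does not rule out, say, a $3$-torsion section over ${\cal X}_0(9)$. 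The fact that actually does the work, and which the paper uses, is that ${\cal X}_0(N)$ is a \emph{stack} whose every geometric point has $[-1]$ in its automorphism group: a section of ${\cal E}_d$ must be equivariant for these stabilizers, hence fixed by $[-1]$, hence $2$-torsion. That one sentence replaces the entire first half of your argument. (Note that $[-1]$ kills off the would-be $3$-torsion section above because a cyclic order-$3$ subgroup scheme has no $[-1]$-invariant nonzero point.)

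For the rank count your route is genuinely different from the paper's and, modulo the caveats you yourself flag, is viable. The paper reads off the answer from the image of the relevant congruence subgroup in $\SL_2(\Z/2\Z)$ acting on the local system ${\cal E}_d[2]$ over the analytic locus: the sections are the monodromy invariants, and one checks in the three parity cases that the fixed subgroup has order $1$, $2$, or $4$. You instead try to exhibit the sections explicitly from the cyclic isogeny data ($\ker\phi_{d/2,d}$ when $d$ is even, $\ker\phi_{2d,d}$ when $N/d$ is even) and then prove independence and an upper bound via the Weil pairing. This is a reasonable alternative --- it is in fact closer in spirit to how the paper later constructs the distinguished $2$-torsion point $\omega$ on ${\cal X}_0(2N,2)$ --- but it requires more care at the inseparable locus (where a cyclic order-$2$ subgroup scheme need not have a nonzero section) and at establishing the \emph{upper} bound in the odd/odd case, which the monodromy argument handles for free. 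The paper's monodromy computation is shorter, gives both bounds simultaneously, and is the cleaner choice at this point in the development; your explicit construction becomes the right tool two lemmas later, once the auxiliary $2$-level structure has been introduced.
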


\begin{proof}
 Since the stack ${\cal X}_0(N)$ has nontrivial stabilizers, any section
 of~${\cal E}_d$ must be invariant under the action of the stabilizer, and is
 thus preserved by $[-1]$, so is $2$-torsion. The~structure of the
 $2$-torsion of~${\cal E}_d$ then follows by considering the image of the
 corresponding congruence subgroup in~$\SL_2(\Z/2\Z)$.
\end{proof}

\begin{rem}
 For more general level structures, it follows from~\cite[Theorem~5.5]{ShiodaT:1972} that for any subgroup $\Gamma\subset
 \SL_2(\Z/N\Z)$, the group of sections over the corresponding quotient
 stack ${\cal X}(N)/\Gamma$ is $N$-torsion, and isomorphic to the subgroup
 $\big((\Z/N\Z)^2\big)^\Gamma$.
\end{rem}

If $N$ is odd, we~may thus conclude as before that the pullback of~$\sO_{{\cal E}_{d_1}}([0])$ through~$\phi_{d_1,d_2}$ is indeed fiberwise
isomorphic to~\smash{$\sO_{{\cal E}_{d_2}}([0])^{\deg(\phi_{d_1,d_2})}$}. However,
if $N$ is even, this is no longer the case; indeed, the pullback of~$\sO_{E}([0])$ through a $2$-isogeny $E'\to E$ is the tensor product of~$\sO_{E'}(2[0])$ by the corresponding $2$-torsion line bundle.

It turns out that we can fix this, at the cost of imposing some additional
level structure. Let ${\cal X}_0(2N,2)$ be the slight reinterpretation of
the stack ${\cal X}_0(4N)$ obtained by dividing all of the subscripts of
the isogenous curves by $2$; that is ${\cal X}_0(2N,2)$ classifies cyclic
$4N$-isogenies ${\cal E}_{1/2}\to {\cal E}_{2N}$ in terms of the curve~${\cal E}_1$. Now, it follows from the lemma that for each integer~$d|N$,
the curve~${\cal E}_d$ has full $2$-torsion over ${\cal X}_0(2N,2)$; in
addition to the generators of the kernels ${\cal E}_d\to {\cal E}_{d/2}$
and ${\cal E}_d\to {\cal E}_{2d}$, it also has their sum, which we denote
by $\sigma_d$. We thus obtain a~line bundle $\hat{\cal L}_{1,{\cal
 E}_d}:=\sO_{{\cal E}_d}([\sigma_d])\otimes 0^*\sO_{{\cal
 E}_d}([\sigma_d])^{-1}$ on~${\cal E}_d$ with trivial fiber over $0$.
Note that again~$0^*\sO_{{\cal E}_d}([\sigma_d])$ is a nontrivial line
bundle, though it is actually far better-behaved than $0^*\sO_{{\cal
 E}_d}([0])$. Indeed, $0^*\sO_{{\cal E}_d}([\sigma_d])$ is trivial away
from the locus where $\sigma_d=0$. This can only happen in characteristic~2,
and only when the $4$-isogeny ${\cal E}_{d/2}\to {\cal E}_{2d}$ is
multiplication by 2 (corresponding to one of the three components of~${\cal
 X}_0(2,2)$ in characteristic~2).

The merit of using this $2$-torsion point is that it makes everything
compatible.

\begin{lem}
 If $d_1|d_2|N$ then there are natural isomorphisms
 \begin{gather*}
 \phi_{d_2,d_1}^*\hat{\cal L}_{1,{\cal E}_{d_2}}\cong \hat{\cal L}_{1,{\cal E}_{d_1}}^{d_2/d_1},
 \\
 \phi_{d_1,d_2}^*\hat{\cal L}_{1,{\cal E}_{d_1}}\cong \hat{\cal L}_{1,{\cal E}_{d_2}}^{d_2/d_1}.
 \end{gather*}
\end{lem}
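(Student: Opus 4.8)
The plan is to reduce to the case where $d_2/d_1$ is prime, and then to pin everything down by matching two invariants attached to a line bundle $\mathcal{M}$ of relative degree $r$ on one of the curves ${\cal E}_d$ over ${\cal X}_0(2N,2)$: whether $[-1]^*\mathcal{M}\cong\mathcal{M}$, and the point $s(\mathcal{M})$ determined by $\mathcal{M}\cong\sO_{{\cal E}_d}(r[0])\otimes\mathcal{P}_{s(\mathcal{M})}$, where $\mathcal{P}_\xi$ is the algebraically trivial line bundle associated to $\xi\in{\cal E}_d$ via the principal polarization (so $s(\mathcal{M})\in{\cal E}_d[2]$ exactly when $\mathcal{M}$ is symmetric). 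The elementary input I will use repeatedly is that a symmetric line bundle of relative degree $r$ on ${\cal E}_d$, together with a trivialization of its restriction to the identity section, is determined up to unique compatible isomorphism by $s(\mathcal{M})$: two such are fiberwise isomorphic, hence differ by a line bundle pulled back from ${\cal X}_0(2N,2)$ which the trivializations identify with $\sO$, and an automorphism restricting to the identity along $[0]$ is a global unit on ${\cal E}_d$ restricting to $1$, hence (since ${\cal E}_d$ is proper over ${\cal X}_0(2N,2)$, exactly as in the proof that ${\cal L}_{Q_1,w_1}\otimes{\cal L}_{Q_2,w_2}\cong{\cal L}_{Q_1+Q_2,w_1+w_2}$) equal to $1$. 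Observe that $\hat{\cal L}_{1,{\cal E}_d}=\sO_{{\cal E}_d}([\omega_{{\cal E}_d}])\otimes\pi^*0^*\sO_{{\cal E}_d}([\omega_{{\cal E}_d}])^{-1}$ is symmetric (as $\omega_{{\cal E}_d}$ is $2$-torsion), has $s$-invariant $\omega_{{\cal E}_d}$, and comes with its canonical trivialization at $0$.

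For the reduction: given $d_1\mid d_1'\mid d_2$ with $d_1'/d_1$ prime, the compatibilities $\phi_{d_2,d_1}\cong\phi_{d_2,d_1'}\circ\phi_{d_1',d_1}$ and $\phi_{d_1,d_2}\cong\phi_{d_1,d_1'}\circ\phi_{d_1',d_2}$ established above express each claimed isomorphism for $(d_1,d_2)$ as a composite of the corresponding ones for $(d_1,d_1')$ and $(d_1',d_2)$; since every divisor of $d_2$ again divides $N$, iterating reduces to prime steps, and the uniqueness above makes the composite independent of the chosen chain.

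So take $p$ prime, set $\phi:=\phi_{pd,d}\colon{\cal E}_d\to{\cal E}_{pd}$ and let $\psi:=\phi_{d,pd}$ be its dual isogeny, both of degree $p$. Then $\phi^*\hat{\cal L}_{1,{\cal E}_{pd}}$ and $\hat{\cal L}_{1,{\cal E}_d}^{\otimes p}$ are symmetric of relative degree $p$ on ${\cal E}_d$ with canonical trivializations at $0$ (using $\phi\circ 0=0$), so by the input above it suffices to match $s$-invariants, and then the isomorphism and all its compatibilities are automatic. One side is immediate: $s(\hat{\cal L}_{1,{\cal E}_d}^{\otimes p})=p\,\omega_{{\cal E}_d}$. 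For the other, $\phi^*\sO_{{\cal E}_{pd}}([0])=\sO_{{\cal E}_d}([\ker\phi])$, the relative Cartier divisor cut out by the order-$p$ subgroup scheme $\ker\phi\subset{\cal E}_d$; its $s$-invariant is the "sum" $\tau$ of $\ker\phi$, which is the unique $2$-torsion section of $\ker\phi$ when $\ker\phi$ is \'etale of even order and $0$ otherwise — in particular $\tau=0$ for $p$ odd and $\tau=0$ whenever $\ker\phi$ is infinitesimal. Combining this with the identity $\phi^*\mathcal{P}_\xi\cong\mathcal{P}_{\psi(\xi)}$ (which, via the principal polarizations, is the defining property of the dual isogeny $\psi=\phi_{d,pd}$) gives $s(\phi^*\hat{\cal L}_{1,{\cal E}_{pd}})=\tau+\psi(\omega_{{\cal E}_{pd}})$, so the first claimed isomorphism comes down to the identity $\tau+\psi(\omega_{{\cal E}_{pd}})=p\,\omega_{{\cal E}_d}$ in ${\cal E}_d[2]$.

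This identity is where the definition of $\omega_{{\cal E}_e}$ as the sum of the generators of $\ker({\cal E}_e\to{\cal E}_{e/2})$ and $\ker({\cal E}_e\to{\cal E}_{2e})$ does its work. For $p$ odd: $\tau=0$, $p\,\omega_{{\cal E}_d}=\omega_{{\cal E}_d}$, and $\psi$, being of degree prime to $2$, is an isomorphism on $2$-torsion carrying the two kernel generators of ${\cal E}_{pd}$ to those of ${\cal E}_d$ by the compatibility of the level structures along the chain, hence $\psi(\omega_{{\cal E}_{pd}})=\omega_{{\cal E}_d}$. For $p=2$: $p\,\omega_{{\cal E}_d}=0$, so one needs $\psi(\omega_{{\cal E}_{2d}})=\tau$, a direct check pushing the $2$-torsion sections through the degree-$2$ isogenies $\phi_{2d,d}$ and $\phi_{d,2d}$, split according to whether each is \'etale or inseparable; in the inseparable cases the relevant kernels become infinitesimal, so $\tau$ and the corresponding summands of $\omega_{{\cal E}_{2d}}$ collapse to $0$ and the identity again holds. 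The second isomorphism $\phi_{d_1,d_2}^*\hat{\cal L}_{1,{\cal E}_{d_1}}\cong\hat{\cal L}_{1,{\cal E}_{d_2}}^{d_2/d_1}$ is proved identically with the roles of $\phi_{pd,d}$ and $\phi_{d,pd}$ interchanged, resting on the mirror-image $2$-torsion identity. The step I expect to be the main obstacle is exactly this last bookkeeping in characteristic $2$ — tracking which kernel "generators" persist as honest sections and which degenerate into infinitesimal group-scheme data, and keeping the $s$-invariant computations valid over all of ${\cal X}_0(2N,2)$ rather than merely fiberwise — for which one leans on the Katz--Mazur formalism already used to define the ${\cal E}_d$ and the $\phi_{d_1,d_2}$, together with the observation (used again here) that a rigidification along $[0]$ kills the base Picard group and thus makes all of the isomorphisms canonical.
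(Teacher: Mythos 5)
Your argument is correct in outline and arrives at exactly the same identity the paper needs, but by a recognizably different route. You package the comparison in the dual-isogeny/$s$-invariant calculus: instead of computing the pulled-back divisor $\phi^{-1}([\omega_{pd}])$ and summing its points as the paper does, you split $\hat{\cal L}_{1,{\cal E}_{pd}}\cong\sO([0])\otimes{\cal P}_{\omega_{pd}}$, pull back each factor separately (using $\phi^*\sO([0])=\sO([\ker\phi])$ and $\phi^*{\cal P}_\xi\cong{\cal P}_{\psi(\xi)}$ with $\psi$ the dual isogeny), and reduce to the $2$-torsion identity $\tau+\psi(\omega_{pd})=p\,\omega_d$ in ${\cal E}_d[2]$. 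Unwinding the paper's $\sum_{x\in\phi^{-1}(\omega_N)}x=N\omega_1$ by writing each preimage as $y_0+k$ with $k\in\ker\phi$ shows that the two identities are literally the same; your phrasing is cleaner because it separates the two contributions ($\tau$ from $\ker\phi$, $\psi(\omega_{pd})$ from the dual isogeny) instead of entangling them in a single sum. Reducing to prime steps rather than to $(d_1,d_2)=(1,N)$ and then factoring through ${\cal E}_2$ is also a genuine simplification, since it fully decouples the odd and $p=2$ cases; and treating the two claimed isomorphisms symmetrically rather than via the Atkin--Lehner involution is harmless.

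The one substantive divergence — and the one place the write-up is not yet rigorous — is your choice to verify the identity $\tau+\psi(\omega_{pd})=p\,\omega_d$ directly over all of ${\cal X}_0(2N,2)$, including in characteristic $2$. The paper sidesteps this entirely: since the two bundles carry canonical trivializations at $0$, ``being isomorphic'' is a closed condition, so it suffices to check on the dense open locus where all the isogenies are separable, where the computation is pure finite group theory. Your identity is itself an equality of sections of the finite flat group scheme ${\cal E}_d[2]$ over ${\cal X}_0(2N,2)$, hence also a closed condition, so the same reduction is available to you — but you do not invoke it, and instead sketch a pointwise case analysis (``split according to whether each is \'etale or inseparable'') whose transition locus, where $\ker\phi$ degenerates between \'etale and infinitesimal, is precisely where such a dichotomy fails to be well-posed. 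If you insist on the direct route, the facts you need do hold ($\tau=0$ for $p$ odd in every characteristic, and $[\ker\phi]=2[0]$ as relative Cartier divisors when $\ker\phi$ is infinitesimal of degree $2$), but they should be stated as facts about the relative Cartier divisor $[\ker\phi]$ flat over the base, not as a fiberwise dichotomy, and the Katz--Mazur formalism you gesture at would need to be deployed explicitly. Borrowing the paper's closedness argument is the cheaper fix and closes the gap immediately.
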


\begin{proof}
 The two claims are equivalent via the (Atkin-Lehner) involution~${\cal
 X}_0(2N,2)\cong {\cal X}_0(2N,2)$ that replaces ${\cal E}_{1/2}\to {\cal E}_{2N}$ by
 its dual, so it suffices to prove the first. Since the fibers at the
 origin of the two bundles are trivial, it suffices to show that the line
 bundles are isomorphic on each fiber over ${\cal X}_0(2N,2)$, at which
 point we may take the isomorphism respecting the fibers at~the origin.
 We may also, for~convenience, reduce to the case $d_1=1$, $d_2=N$.

 Being isomorphic is a closed condition, so we may exclude primes dividing
 $4N$, and in particular assume that the isogenies are all separable. On~a given such curve, the line bundle $\phi_{N,1}^*\hat{\cal
 L}_{1,{\cal E}_N}$ is represented by the divisor
 \begin{gather*}
 \sum_{x\in \phi_{N,1}^{-1}(\sigma_N)} [x],
 \end{gather*}
 and thus we need to show
 \begin{gather*}
 \sum_{x\in \phi_{N,1}^{-1}(\sigma_N)} x = N\sigma_1,
 \end{gather*}
 or equivalently
 \begin{gather*}
 \sum_{x\in \phi_{N,1}^{-1}(\sigma_N-\phi_{N,1}(\sigma_1))} x = 0.
 \end{gather*}
 If $N$ is odd, then $\phi(\sigma_N)=\sigma_1$, and this sum becomes
 \begin{gather*}
 \sum_{x\in \ker\phi_{N,1}} x = 0
 \end{gather*}
 as required. Otherwise, we~may factor through~${\cal E}_2$ and thus reduce to
 the case $N=2$. Then we find that $\phi_{2,1}\sigma_1$ is the
 generator of the kernel of~$\phi_{1,2}$, and thus
 $\sigma_2-\phi_{2,1}\sigma_1$ is the generator of~the~kernel of~$\phi_{2,4}$. It follows that the preimage of~$\sigma_2-\phi_{2,1}\sigma_1$ consists of~the~two generators of the
 kernel of~$\phi_{1,4}$, and these add to 0 as required.
\end{proof}

\begin{rem}
 It is worth noting here that there are additional curves in the isogeny
 class of~${\cal E}_1$, since after all each of the $2$-torsion points
 $\sigma_d$ itself determines a $2$-isogeny. It is unclear whether we can
 extend the above family of line bundles to the other curves arising in
 this way.
\end{rem}

\begin{cor}
 For any integer $a$, $[a]^*\hat{\cal L}_{1,{\cal E}_d}\cong \hat{\cal L}_{1,{\cal E}_d}^{a^2}$.
\end{cor}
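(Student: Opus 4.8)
The plan is to reduce to the standard formula
$[a]^*M\cong M^{\otimes(a^2+a)/2}\otimes\bigl([-1]^*M\bigr)^{\otimes(a^2-a)/2}$,
a consequence of the theorem of the cube valid over any base and in any characteristic, applied to $M=\hat{\cal L}_{1,{\cal E}_d}$: once we know $\hat{\cal L}_{1,{\cal E}_d}$ is \emph{symmetric}, i.e.\ $[-1]^*\hat{\cal L}_{1,{\cal E}_d}\cong\hat{\cal L}_{1,{\cal E}_d}$, the right-hand side collapses to $\hat{\cal L}_{1,{\cal E}_d}^{a^2}$. Symmetry is immediate from the definition $\hat{\cal L}_{1,{\cal E}_d}=\sO_{{\cal E}_d}([\omega])\otimes 0^*\sO_{{\cal E}_d}([\omega])^{-1}$: the second factor is pulled back along ${\cal E}_d\to{\cal X}_0(2N,2)$ and hence fixed by $[-1]^*$, while $\omega$ is $2$-torsion, so $[-1]^*\sO_{{\cal E}_d}([\omega])=\sO_{{\cal E}_d}([-\omega])=\sO_{{\cal E}_d}([\omega])$ as honest divisors. (For the special $a$ with $ad\mid N$ one has $[a]=\phi_{d,ad}\circ\phi_{ad,d}$ and the claim is already a case of the previous Lemma, but we want all $a$.)

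Given the fiberwise isomorphism $[a]^*\hat{\cal L}_{1,{\cal E}_d}\cong\hat{\cal L}_{1,{\cal E}_d}^{a^2}$ over every geometric point of ${\cal X}_0(2N,2)$, I would globalize it as in the proof computing $\Pic({\cal E}^n)$: the line bundle $\sHom\bigl(\hat{\cal L}_{1,{\cal E}_d}^{a^2},[a]^*\hat{\cal L}_{1,{\cal E}_d}\bigr)$ on ${\cal E}_d$ is fiberwise trivial, hence — as ${\cal X}_0(2N,2)$ is reduced and ${\cal E}_d$ is an abelian scheme over it — it is the pullback of its restriction along the zero section; and that restriction is $\sHom\bigl((0^*\hat{\cal L}_{1,{\cal E}_d})^{a^2},\,0^*[a]^*\hat{\cal L}_{1,{\cal E}_d}\bigr)$, which is canonically trivial since $[a]\circ 0=0$ and $0^*\hat{\cal L}_{1,{\cal E}_d}$ is trivial. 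Thus the $\sHom$ bundle is trivial, giving the asserted isomorphism, and the latter respects the trivializations of the fibers over the zero section.

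A route closer to the proof of the preceding Lemma works with divisors: being isomorphic is a closed condition, so one restricts to the locus where $[a]$ is separable, writes $[a]^*\sO_{{\cal E}_d}([\omega])=\sum_{x\in[a]^{-1}(\omega)}[x]$, and uses $\sum_{x\in\ker[a]}x=0$ together with $a^2\omega=a\omega$ (as $a^2\equiv a\bmod 2$ and $\omega$ is $2$-torsion) to identify this fiberwise with $a^2[\omega]$, the trivializations along $0$ then pinning down the global map. I do not expect a genuine obstacle: the one substantive point is the symmetry of $\hat{\cal L}_{1,{\cal E}_d}$, which is precisely where the choice of the $2$-torsion point $\omega$ in place of the origin pays off — for $\sO_{{\cal E}_d}([0])$ the analogous claim already fails when $N$ is even, the very defect the previous Lemma was introduced to repair — so the only care needed is to apply the cube-theorem formula characteristic-freely, in particular over supersingular fibers, where it still holds.
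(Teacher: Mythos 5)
Your proposal is correct, and it takes a genuinely different route from the paper. The paper's proof is short and indirect: it dispatches $a=0,-1$, base changes to ${\cal X}_0(2aN,2)$ so that $[a]$ factors as $\phi_{d,ad}\circ\phi_{ad,d}$, invokes the preceding Lemma on pullbacks along cyclic isogenies, and then descends. You instead apply the rigidified theorem of the cube, $[a]^*M\cong M^{\otimes(a^2+a)/2}\otimes([-1]^*M)^{\otimes(a^2-a)/2}$, and collapse it using the symmetry $[-1]^*\hat{\cal L}_{1,{\cal E}_d}\cong\hat{\cal L}_{1,{\cal E}_d}$, which you correctly deduce from $\omega$ being $2$-torsion (the pullback $[-1]^*[\omega]=[-\omega]=[\omega]$ holds as honest Cartier divisors, and the rigidifying factor $0^*\sO([\omega])^{-1}$ is pulled back from the base). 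Your route is arguably cleaner: it handles all integers $a$ simultaneously without a case split, it makes explicit exactly what the choice of $\omega$ in place of $0$ buys (symmetry — the point you flag at the end), and it is a standard general argument rather than one leaning on the ambient ${\cal X}_0$-apparatus. The paper's route, conversely, is more local to its surrounding machinery of cyclic isogenies and base-changes along the tower of ${\cal X}_0$-stacks, and reuses the Lemma it has just proved. One small efficiency you miss: the rigidified theorem of the cube already gives a canonical isomorphism of rigidified bundles over the whole base ${\cal X}_0(2N,2)$, so your second paragraph's seesaw-style descent from a fiberwise isomorphism is not needed — it is a valid fallback, but it can be skipped, and doing so avoids any fretting about reducedness of the moduli stack. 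Your third-paragraph divisorial check ($\sum_{x\in[a]^{-1}(\omega)}x=a^2y=a\omega=a^2\omega$, using $a\equiv a^2\bmod 2$) is also correct and is in fact the same spirit as the calculation in the paper's preceding Lemma.
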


\begin{proof}
 This is clearly true for $a=0$ and $a=-1$, so we may assume $a>0$. Over
 ${\cal X}_0(2aN,2)$, we~have $[a]=\phi_{d,ad}\circ \phi_{ad,d}$, so that
 the claim follows immediately from the lemma. Since the isomorphism is
 natural, it descends to~${\cal X}_0(2N,2)$ as required.
\end{proof}

Since $\hat{\cal L}_{1,{\cal E}_d}$ represents the standard principal
polarization on~${\cal E}_d$, we~also have the following.

\begin{lem}
 For any $d|N$, the bundle
 \begin{gather*}
 [x_1+x_2]^*\hat{\cal L}_{1,{\cal E}_d}
 \otimes
 [x_1]^*\hat{\cal L}_{1,{\cal E}_d}^{-1}
 \otimes
 [x_2]^*\hat{\cal L}_{1,{\cal E}_d}^{-1}
 \end{gather*}
 is naturally isomorphic to the Poincar\'e bundle ${\cal P}_{{\cal E}_d}$ on~${\cal E}_d\times {\cal E}_d$.
\end{lem}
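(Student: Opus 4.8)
The plan is to recognize the bundle in question as the classical ``Mumford bundle'' $\Lambda(L) := m^*L \otimes p_1^*L^{-1}\otimes p_2^*L^{-1}$ on ${\cal E}_d\times {\cal E}_d$ attached to $L := \hat{\cal L}_{1,{\cal E}_d}$, where $m,p_1,p_2:{\cal E}_d\times{\cal E}_d\to{\cal E}_d$ are the addition map and the two projections; in the notation of the statement $[x_1+x_2]=m$, $[x_1]=p_1$, $[x_2]=p_2$, so the displayed line bundle \emph{is} $\Lambda(\hat{\cal L}_{1,{\cal E}_d})$. I would then appeal to the theorem of the cube, in its relative form over the Deligne--Mumford stack ${\cal X}_0(2N,2)$: it shows that $\Lambda(L)$ is, compatibly with all base change, the pullback $(\mathrm{id}_{{\cal E}_d}\times\phi_L)^*$ of the Poincar\'e bundle on ${\cal E}_d\times\widehat{{\cal E}}_d$, where $\phi_L:{\cal E}_d\to\widehat{{\cal E}}_d=\Pic^0_{{\cal E}_d/{\cal X}_0(2N,2)}$, $x\mapsto t_x^*L\otimes L^{-1}$, is the polarization homomorphism of $L$. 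Equivalently: the restriction of $\Lambda(L)$ to a fibre ${\cal E}_d\times\{x\}$ is $t_x^*L\otimes L^{-1}$ (up to a twist by the base), i.e.\ the point $\phi_L(x)$ of $\Pic^0$, and $\Lambda(L)$ is ``bilinear'' in the two factors; its quadratic shadow along the diagonal is exactly the Corollary $[a]^*\hat{\cal L}_{1,{\cal E}_d}\cong\hat{\cal L}_{1,{\cal E}_d}^{a^2}$ already recorded.

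Next I would invoke the fact recalled just above the statement, that $\hat{\cal L}_{1,{\cal E}_d}$ represents the \emph{standard principal polarization} of ${\cal E}_d$; this says precisely that $\phi_L$ is an isomorphism, and in fact, under the canonical autoduality of the elliptic curve ${\cal E}_d$ (sending a point $x$ to the class of $\sO_{{\cal E}_d}([x])\otimes\sO_{{\cal E}_d}([0])^{-1}$), $\phi_L$ is the identity up to the conventions fixing the orientation of ${\cal P}_{{\cal E}_d}$. Pulling the Poincar\'e bundle on ${\cal E}_d\times\widehat{{\cal E}}_d$ back along $\mathrm{id}\times\phi_L$ therefore returns ${\cal P}_{{\cal E}_d}$, the Poincar\'e bundle on ${\cal E}_d\times{\cal E}_d$, giving the asserted isomorphism. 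Its naturality --- compatibility with base change on ${\cal X}_0(2N,2)$, hence with the isogeny maps among the stacks considered above --- is then automatic: $\Lambda(-)$, the Poincar\'e bundle, and the autoduality are all functorial, and the isomorphism is the \emph{unique} one respecting the tautological rigidifications along the two axes $\{0\}\times{\cal E}_d$ and ${\cal E}_d\times\{0\}$ (both restrictions being canonically trivial, since $0^*\hat{\cal L}_{1,{\cal E}_d}$ is trivial by construction, and the only global units on the proper ${\cal X}_0(2N,2)$-scheme ${\cal E}_d\times{\cal E}_d$ coming from the base).

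The content here is entirely standard once one is over a field; the only care needed is that everything be carried out in families over ${\cal X}_0(2N,2)$ rather than pointwise, for which one uses that a relative elliptic curve is canonically its own dual (via $\sO(-[0])$) and that the theorem of the cube and the Poincar\'e bundle are available relatively and stackily, cf.\ \cite{KatzNM/MazurB:1985}. I expect the only genuine nuisance to be the characteristic-$2$ locus on which $\omega$ degenerates: one must check nothing breaks down there, but this is harmless, since $\Lambda(\hat{\cal L}_{1,{\cal E}_d})$ and ${\cal P}_{{\cal E}_d}$ are honest line bundles on all of ${\cal E}_d\times{\cal E}_d$ and the isomorphism between them was pinned down using only the axis rigidifications, which are defined everywhere precisely because $\hat{\cal L}_{1,{\cal E}_d}$ was built with trivial fibre at $0$; alternatively, one constructs the isomorphism after inverting $2N$ and extends it by the closedness of the locus where two flat line bundles agree.
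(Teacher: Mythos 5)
Your proposal is correct and expands exactly the argument the paper leaves implicit: the lemma is stated with no separate proof, immediately after the observation that $\hat{\cal L}_{1,{\cal E}_d}$ represents the standard principal polarization, and your reduction (Mumford bundle $\Lambda(L)$, theorem of the cube over the stack, $\phi_L=\mathrm{id}$ under autoduality, uniqueness via the rigidification at the origin since the only global units come from the base) is precisely what that remark is invoking. The only cosmetic difference is that one can also see the isomorphism directly in the paper's own coordinates by computing that $\Lambda(\hat{\cal L}_{1,{\cal E}_d})$ has the hyperbolic polarization $\begin{pmatrix}0&1\\1&0\end{pmatrix}$ and trivial fibre at $(0,0)$, matching the paper's explicit description of ${\cal P}_{{\cal E}_d}$ as $\hat{\cal L}_{H;1}$, but this is the same argument in different language.
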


\begin{thm}
 For any sequence $1|d_1|\cdots|d_n|N$, suppose ${\cal L}_1$ and ${\cal
 L}_2$ are two line bundles on~$\prod_i {\cal E}_{d_i}$ obtained as
 tensor products of pullbacks of bundles $\hat{\cal L}_{1,{\cal E}_d}$
 through morphisms \linebreak \mbox{$\prod_i {\cal E}_{d_i}\to {\cal E}_d$. If ${\cal
 L}_1$} and ${\cal L}_2$ represent the same polarization, then they are
 naturally isomorphic.
\end{thm}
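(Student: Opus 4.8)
The plan is to imitate the proof of the Proposition computing $\Pic({\cal E}^n)$: show that each of ${\cal L}_1,{\cal L}_2$ admits a \emph{natural} isomorphism to a common ``normal form'' line bundle depending only on its polarization, and then compose. A general such bundle is a tensor product of pullbacks $f_\alpha^*\hat{\cal L}_{1,{\cal E}_{e_\alpha}}$ along morphisms $f_\alpha:\prod_i{\cal E}_{d_i}\to{\cal E}_{e_\alpha}$, and each such morphism decomposes as $(z_1,\dots,z_n)\mapsto\sum_i\psi_i(z_i)$ with $\psi_i:{\cal E}_{d_i}\to{\cal E}_{e_\alpha}$ an integer multiple of $\phi_{e_\alpha,d_i}$ (by the Lemma computing $\Hom({\cal E}_{d_1},{\cal E}_{d_2})$). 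Iterating the Poincar\'e-bundle Lemma (i.e. the theorem of the cube) rewrites $f_\alpha^*\hat{\cal L}_{1,{\cal E}_{e_\alpha}}$ as a tensor product of pullbacks $p_i^*\psi_i^*\hat{\cal L}_{1,{\cal E}_{e_\alpha}}$ along single projections and pullbacks $p_{ij}^*(\psi_i\times\psi_j)^*{\cal P}_{{\cal E}_{e_\alpha}}$ along pairs of projections; then the Corollary $[a]^*\hat{\cal L}_{1,{\cal E}_d}\cong\hat{\cal L}_{1,{\cal E}_d}^{a^2}$ and the compatibility Lemma $\phi_{d_2,d_1}^*\hat{\cal L}_{1,{\cal E}_{d_2}}\cong\hat{\cal L}_{1,{\cal E}_{d_1}}^{d_2/d_1}$, $\phi_{d_1,d_2}^*\hat{\cal L}_{1,{\cal E}_{d_1}}\cong\hat{\cal L}_{1,{\cal E}_{d_2}}^{d_2/d_1}$ reduce each $\psi_i^*\hat{\cal L}$ to a power of $\hat{\cal L}_{1,{\cal E}_{d_i}}$ and each Poincar\'e factor to a power of the pullback of the Poincar\'e bundle on ${\cal E}_{d_0}\times{\cal E}_{d_0}$, $d_0=\gcd(d_i,d_j)$. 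Tensoring everything together (and using the isomorphism bringing the product into a standard chain $1|d_1|\cdots|d_n|N$) one lands in a bundle of the shape $\bigotimes_i p_i^*\hat{\cal L}_{1,{\cal E}_{d_i}}^{Q_{ii}}\otimes\bigotimes_{i<j}(\text{Poincar\'e factor})^{Q_{ij}}$ with all exponents read off from the polarization $Q$. Since ${\cal L}_1$ and ${\cal L}_2$ represent the same polarization, their normal forms are identical.

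The normal form is well defined, and the resulting isomorphism is canonical, because of the scarcity of global units: $\prod_i{\cal E}_{d_i}$ is proper over ${\cal X}_0(2N,2)$, so every global unit on it is $\pi^*$ of a unit on ${\cal X}_0(2N,2)$, and since $\pi\circ 0=\mathrm{id}$ such a unit restricting to $1$ on the zero section is the constant $1$. Hence any automorphism of a line bundle on $\prod_i{\cal E}_{d_i}$ that is the identity on the fiber over $0$ is the identity. Every isomorphism used above is natural and, because each $\hat{\cal L}_{1,{\cal E}_d}$ has canonically trivial fiber over $0$ and homomorphisms preserve $0$, respects the evident identifications of zero-fibers; so any two composites with the same source and target agree. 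This makes the normal form canonical and shows that ${\cal L}_1\cong(\text{normal form})\cong{\cal L}_2$ is canonical, as asserted.

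The main obstacle is the bookkeeping of the first paragraph: checking that the theorem of the cube, the Corollary, and the compatibility Lemma really suffice to bring an arbitrary pullback–tensor product to normal form, and that the exponents produced match $Q$ exactly, including the scalings by $d_i/\gcd$, $d_j/\gcd$ coming from the isogeny normalizations. A more conceptual alternative, closer to the $\Pic({\cal E}^n)$ argument, is to set ${\cal M}:={\cal L}_1\otimes{\cal L}_2^{-1}$, which has trivial polarization and (since $\hat{\cal L}_{1,{\cal E}_d}$ is symmetric, $[-1]^*$ fixing it because $\omega_d$ is $2$-torsion, and homomorphisms commute with $[-1]$) is itself symmetric; a symmetric line bundle in the identity component of the relative Picard scheme corresponds to a $2$-torsion point of the dual, so ${\cal M}$ induces a section $s:{\cal X}_0(2N,2)\to\prod_i{\cal E}_{d_i}[2]$, and one must show $s=0$. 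Here the point of working over ${\cal X}_0(2N,2)$ rather than ${\cal X}_0(N)$ is precisely that the compatibility Lemma holds there, which forces the contributions of the various $\omega_d$ to $s$ to be governed by $Q$ and to cancel; concretely, after base change to the analytic locus over a sufficiently fine cover, both ${\cal L}_j$ are described fiberwise by quasi-periodicity factors equal to the standard theta factors (determined by $Q$) times signs coming from the $\omega_d$, and the compatibilities make those signs depend only on $Q$, so ${\cal M}$ is fiberwise trivial and $s$ vanishes on a dense open, hence everywhere.

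Either way one concludes ${\cal M}\cong\sO_{\prod_i{\cal E}_{d_i}}$: since ${\cal M}$ is fiberwise trivial with canonically trivial zero-fiber, ${\cal M}\cong\pi^*\pi_*{\cal M}$ is pulled back from ${\cal X}_0(2N,2)$, and the trivialization of $0^*{\cal M}$ together with $\pi\circ 0=\mathrm{id}$ forces $\pi_*{\cal M}$ to be trivial. Rigidifying the resulting isomorphism at the zero section then upgrades ${\cal M}\cong\sO$ to the desired natural isomorphism ${\cal L}_1\cong{\cal L}_2$.
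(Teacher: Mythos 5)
Your first argument is essentially the paper's proof, just without the preliminary reduction. The paper invokes the theorem of the cube at the outset to reduce to $n=2$, where the symmetric endomorphisms of ${\cal E}_{d_1}\times{\cal E}_{d_2}$ are visibly spanned by $(1\times 0)^*\hat{\cal L}_{1,{\cal E}_{d_1}}$, $(0\times 1)^*\hat{\cal L}_{1,{\cal E}_{d_2}}$, $(1\times\phi_{d_1,d_2})^*{\cal P}_{{\cal E}_{d_1}}$; one then only has to compute $\psi^*\hat{\cal L}_{1,{\cal E}_d}$ for a single morphism $\psi=a\phi_{d,d_1}+b\phi_{d,d_2}$, using the Poincar\'e-bundle lemma, the Corollary $[a]^*\hat{\cal L}\cong\hat{\cal L}^{a^2}$, and the compatibility Lemma exactly as you list them. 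That reduction is precisely what defuses the bookkeeping worry you raise in your third paragraph: there is no need to track an arbitrary collection of $f_\alpha$'s and pairwise Poincar\'e factors on the $n$-fold product; the cube already tells you the bundle is determined by its restrictions to the two-factor faces together with the trivialization at the zero section, so checking the exponents against $Q$ is a rank-two matrix identity. Your remarks about scarcity of global units and rigidification at the zero section match what the paper says (``again using the triviality at $0$ to fix the isomorphism'').

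Your alternative route via ${\cal M}={\cal L}_1\otimes{\cal L}_2^{-1}$ and the induced $2$-torsion section $s$ does not close as written. The decisive claim -- ``the compatibilities make those signs depend only on $Q$, so ${\cal M}$ is fiberwise trivial'' -- is exactly the statement to be proved, and the analytic base change does not supply it for free: the whole content of the theorem is that the various $\omega_d$'s interact with the isogenies in a $Q$-controlled way, which is what the compatibility Lemma over ${\cal X}_0(2N,2)$ encodes and which one still has to unwind factor by factor. Asserting that the quasi-periodicity signs cancel is therefore circular unless you carry out a computation equivalent to the one in your first paragraph. (Once $s$ is shown to vanish generically the rest of the alternative is fine -- a section of the finite flat group scheme $\prod_i{\cal E}_{d_i}[2]$ over the normal integral base vanishing on a dense open vanishes identically -- but getting that generic vanishing is the work.)
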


\begin{proof}
 By the theorem of the cube, it suffices to prove this for $n=2$. In~this
 case, it is clear that the images of the bundles $(1\times 0)^*\hat{\cal
 L}_{1,{\cal E}_{d_1}}$, $(0\times 1)^*\hat{\cal L}_{1,{\cal E}_{d_2}}$ and $(1\times
 \phi_{d_1,d_2})^*{\cal P}_{{\cal E}_{d_1}}$ span the group of symmetric
 endomorphisms of~${\cal E}_{d_1}\times {\cal E}_{d_2}$, and thus it will suffice to
 show that any pullback of~$\hat{\cal L}_{1,{\cal E}_d}$ is isomorphic to the
 appropriate product of these bundles (again using the triviality at~0 to
 fix the isomorphism).

 Thus consider a morphism
 $\psi:=a\phi_{d,d_1}+b\phi_{d,d_2}\colon {\cal E}_{d_1}\times {\cal E}_{d_2}\to {\cal E}_d$.
 We then have
 \begin{gather*}
 \psi^*\hat{\cal L}_{1,{\cal E}_d}
 \cong
 (a\phi_{d,d_1})^*\hat{\cal L}_{1,{\cal E}_d}
 \otimes
 (b\phi_{d,d_2})^*\hat{\cal L}_{1,{\cal E}_d}
 \otimes
 (a\phi_{d,d_1}\times b\phi_{d,d_2})^*{\cal P}_{{\cal E}_d}.
 \end{gather*}
 Now,
 \begin{gather*}
 (a\phi_{d,d_1})^*\hat{\cal L}_{1,{\cal E}_d}
 =
 [a]^* \phi_{\gcd(d,d_1),d_1}^* \phi_{d,\gcd(d,d_1)}^*\hat{\cal L}_{1,{\cal E}_d}
 =
 \hat{\cal L}_{1,{\cal E}_{d_1}}^{a^2 \gcd(d,d_1)^2/d_1d},
 \end{gather*}
 and similarly for the second term. We also have
 \begin{gather*}
 (a\phi_{d,d_1}\,{\times} \,b\phi_{d,d_2})^*{\cal P}_{{\cal E}_d}\, {\cong}\,
 (1\,{\times} \, a\phi_{d_1,d}b\phi_{d,d_2})^*{\cal P}_{{\cal E}_d}{ =}
 (1\,{\times} \, abc\phi_{d_1,d_2})^*{\cal P}_{{\cal E}_d}
\,{\cong}\,
 \big((1\,{\times} \, \phi_{d_1,d_2})^*{\cal P}_{{\cal E}_d}\big)^{abc}
 \end{gather*}
 for a suitable integer $c$, so that the claim follows. (The first step
 here is essentially the definition of the dual isogeny.)
\end{proof}

Thus for each symmetric $Q\in \End(\prod_i {\cal E}_{d_i})$, we~obtain a line bundle $\hat{\cal L}_{Q;d_1,\dots,d_n}$ on~$\prod_i
{\cal E}_{d_i}$, and these line bundles satisfy the same compatibility
relations as for our earlier construction on~${\cal E}^n$. In~general, our two constructions do not agree, but there is one
important special case.

\begin{prop}
 Suppose $Q\in \Mat_n(\Z)$ is a symmetric matrix with even diagonal
 entries. Then $\hat{\cal L}_{Q;1,\dots,1}$ descends to~${\cal M}_{1,1}$,
 where it is canonically isomorphic to~${\cal L}_{Q,0}$.
\end{prop}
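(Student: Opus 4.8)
The plan is to produce a single canonical isomorphism $\hat{\cal L}_{Q;1,\dots,1}\cong \pi^*{\cal L}_{Q,0}$, where $\pi:{\cal E}_1^n\to {\cal E}^n$ is the map induced by the morphism $\rho:{\cal X}_0(2N,2)\to {\cal M}_{1,1}$ remembering only ${\cal E}_1$; once this is in hand, both assertions (descent and the identification) follow immediately. Since $Q$ has even diagonal it lies in the sublattice of symmetric integer matrices spanned by the matrices $2e_ie_i^t$ and $e_ie_j^t+e_je_i^t$ ($i<j$); and the $\hat{\cal L}_{\bullet;\vec d}$ satisfy the same tensor-product and pullback compatibilities as the ${\cal L}_{\bullet,\bullet}$ (this is the content of the Theorem preceding the Proposition). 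So it suffices to construct canonical isomorphisms for these generators, and then pull back and tensor. By functoriality of pullback this reduces to two basic cases: $\hat{\cal L}_{1,{\cal E}_1}^{\otimes 2}\cong \pi^*{\cal L}_{2,0}$ on ${\cal E}_1$, and ${\cal P}_{{\cal E}_1}\cong \pi^*{\cal P}_{{\cal E}}$ on ${\cal E}_1^2$.

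The Poincar\'e case is immediate: the Poincar\'e bundle commutes with base change, and on ${\cal E}^2$ it is exactly the representative with polarization $e_1e_2^t+e_2e_1^t$ and weight $0$ (weight $0$ since it is trivial on the two axes), i.e. ${\cal L}_{e_1e_2^t+e_2e_1^t,0}$; the earlier Lemma presents ${\cal P}_{{\cal E}_1}$ as the corresponding combination of pullbacks of $\hat{\cal L}_{1,{\cal E}_1}$, so this is precisely what is needed to rewrite the off-diagonal generators.

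The crux is therefore the case $\hat{\cal L}_{1,{\cal E}_1}^{\otimes 2}$. By definition $\hat{\cal L}_{1,{\cal E}_1}=\sO_{{\cal E}_1}([\omega])\otimes 0^*\sO_{{\cal E}_1}([\omega])^{-1}$, so $\hat{\cal L}_{1,{\cal E}_1}^{\otimes 2}=\sO_{{\cal E}_1}(2[\omega])\otimes 0^*\sO_{{\cal E}_1}(2[\omega])^{-1}$. Since $\omega$ is a $2$-torsion section, the fibrewise group law gives $2[\omega]\sim [2\omega]+[0]=2[0]$ (this holds even on the locus in characteristic $2$ where $\omega=0$, where the divisors simply agree), so $\sO_{{\cal E}_1}(2[\omega])\cong \sO_{{\cal E}_1}(2[0])=\pi^*{\cal L}_{2,-2}$ using $\sO_{\cal E}([0])={\cal L}_{1,-1}$. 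Pulling back through $0$ and invoking the weight $-1$ trivialization of $\sO_{\cal E}([0])$ identifies the correction factor $0^*\sO_{{\cal E}_1}(2[\omega])^{-1}$ with $\rho^*(0^*\omega_{{\cal E}/{\cal M}_{1,1}})^{\otimes 2}$, i.e. with the pullback of ${\cal L}_{0,2}$; hence $\hat{\cal L}_{1,{\cal E}_1}^{\otimes 2}\cong \pi^*({\cal L}_{2,-2}\otimes {\cal L}_{0,2})=\pi^*{\cal L}_{2,0}$. (Equivalently, in the style of the other proofs of this section: the bundle $\hat{\cal L}_{1,{\cal E}_1}^{\otimes 2}\otimes\pi^*{\cal L}_{2,0}^{-1}$ has degree $0$ on each geometric fibre, is trivial there by $2[\omega]\sim 2[0]$, and is canonically trivial along the zero section — the first tensor factor by construction of $\hat{\cal L}_{1,{\cal E}_1}$, the second by the weight $0$ trivialization of ${\cal L}_{2,0}$ — so it is trivial by seesaw.) For canonicity one notes that $\hat{\cal L}_{Q;1,\dots,1}$ and $\pi^*{\cal L}_{Q,0}$ both carry a canonical trivialization of their restriction to the zero section ($0^*\hat{\cal L}_{Q;1,\dots,1}=\sO$ because the $\hat{\cal L}_{1,{\cal E}_d}$ were built with trivial fibre at $0$ and $0^*g^*=0^*$ for homomorphisms $g$; and $0^*\pi^*{\cal L}_{Q,0}=\rho^*0^*{\cal L}_{Q,0}=\sO$ by the weight $0$ trivialization), one normalizes the isomorphism to respect these, and uniqueness then follows since ${\cal E}_1^n\to {\cal X}_0(2N,2)$ is proper, so an automorphism of a line bundle is pulled back from the base and hence determined by its restriction to the zero section.

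The only genuinely new point — and the step I expect to be the real obstacle conceptually, even though the computation is short — is that over ${\cal X}_0(2N,2)$ there \emph{are} nontrivial ($2$-torsion) line bundles that are fibrewise trivial, unlike over ${\cal M}_{1,1}$; so matching polarizations and weights no longer forces an isomorphism, and one must actually compute the relevant "origin part". The even-diagonal hypothesis is exactly what makes that origin part, namely the point $\sum_i Q_{ii}\omega$, vanish, and the identity $2[\omega]\sim 2[0]$ is its concrete incarnation; an odd diagonal entry would instead produce a nontrivial $\omega$-translate and the conclusion would genuinely fail.
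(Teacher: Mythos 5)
Your proof is correct, but it takes a slightly longer route than the paper's.  You decompose the lattice of even-diagonal symmetric matrices into the generators $2e_ie_i^t$ and $e_ie_j^t+e_je_i^t$, handle the off-diagonal generators via the Poincar\'e bundle, and then give a separate seesaw argument for the diagonal generators using the divisor relation $2[\omega]\sim 2[0]$.  The paper instead makes the single observation that \emph{every} even-diagonal symmetric matrix — including the diagonal generators, since $2e_ie_i^t=g^t H g$ with $g:\Z^n\to\Z^2$ sending $e_i\mapsto(e_i,e_i)$ and killing the other basis vectors — lies in the span of pullbacks of $H=\left(\begin{smallmatrix}0&1\\1&0\end{smallmatrix}\right)$ alone.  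With that, the proof collapses: both $\hat{\cal L}_{H;1,1}$ and ${\cal L}_{H,0}$ are canonically the Poincar\'e bundle (the former by the preceding Lemma, the latter because ${\cal P}$ has polarization $H$ and is trivial along the axes, hence weight $0$), and the tensor/pullback compatibilities finish the job.  What your route buys is transparency about \emph{why} evenness of the diagonal is needed — the relation $2[\omega]\sim 2[0]$ is the concrete obstruction, and an odd diagonal entry would leave behind a genuine $\omega$-translate, as your closing paragraph correctly explains (and as the paper's later remark about $Q=1$, $G=\GL_1(\Z)$ confirms).  What the paper's route buys is brevity: it never has to descend to a fibrewise computation because the diagonal case is not, in fact, a separate case.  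One small stylistic note: once you set up ${\cal N}=\hat{\cal L}_{Q;1,\dots,1}\otimes\pi^*{\cal L}_{Q,0}^{-1}$ and prove it fibrewise trivial, the pushforward/seesaw argument gives ${\cal N}\cong\pi_0^*\,0^*{\cal N}$ and then $0^*{\cal N}\cong\sO$ from the two zero-section trivializations directly, so your parenthetical "equivalent" argument is really the cleaner primary argument and the explicit manipulation with $\sO(2[\omega])\cong\sO(2[0])$ can be demoted to motivation, since the latter isomorphism is a priori only canonical up to scalar until you rigidify at the origin anyway.
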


\begin{proof}
 Any symmetric integer matrix with even diagonal is not only in the span of
 pullbacks of~$1$, but in fact is in the span of pullbacks
 of~$H={\left(\begin{smallmatrix}0&1\\1&0\end{smallmatrix}\right)}$, the symmetric endomorphism
 associated to the Poincar\'e bundle
 $
 {\cal P}
 \cong
 \hat{\cal L}_{H;1}
 \cong
 {\cal L}_{H,0}$.
\end{proof}

\begin{rem}
 More generally, if $Q_{ii}$ is even whenever $d_i$ is odd, then
 $\hat{\cal L}_{Q;\vec{d}}$ descends to~${\cal X}_0(2N)$; similarly, if
 $Q_{ii}$ is even whenever $N/d_i$ is odd, then $\hat{\cal L}_{Q;\vec{d}}$
 descends to~${\cal X}_0(N,2)$.
\end{rem}

It will be convenient to have a somewhat more functorial version of the
constructions of~${\cal E}^n$ or~the products $\prod_i {\cal E}_{d_i}$
above. For~the first, if $B$ is a finitely generated free abelian group,
then we may consider the family of group schemes ${\cal E}\otimes B$. For~a specific curve $E$, we~may also con\-st\-ruct~$E\otimes B$, which is simply
the corresponding fiber of~${\cal E}\otimes B$.

This clearly extends to a functor, which is exact on short exact sequences
of free abelian groups. Note, however, that if we extend it in the obvious
way to a functor on the category of~{\em all} finitely generated abelian
groups, then it is no longer exact. We readily compute the special cases
\begin{gather*}
 E\otimes \Z\cong E,
 \\[.5ex]
 \Tor_p(E,\Z) = 0,\qquad p>0,
 \\[.5ex]
 E\otimes \Z/N\Z= 0,
 \\[.5ex]
 \Tor_1(E,\Z/N\Z) = E[N],
 \\[.5ex]
 \Tor_p(E,\Z/N\Z) = 0,\qquad p>0\notag
\end{gather*}
using the obvious projective resolution of~$\Z/N\Z$. (This, of course, is
the expected behavior for~tensoring with a divisible group $E$.)

\begin{prop}
 Let $\phi\colon B\to C$ be a morphism of finitely generated free abelian
 groups. Then $E\otimes \phi$ is surjective iff $\coker(\phi)$ is
 finite, and injective iff $\ker(\phi)=0$ and $\coker(\phi)$ is free.
\end{prop}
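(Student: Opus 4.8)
The plan is to reduce everything to the behavior of the functor $-\otimes B$ on the building blocks $\Z$ and $\Z/N\Z$, using the structure theorem for finitely generated abelian groups together with the explicit computations of $E\otimes -$ and $\Tor_\bullet(E,-)$ recorded just above the statement. First I would pick Smith normal form for $\phi$: choose bases so that $\phi$ is represented by a diagonal-ish matrix, i.e.\ $\phi$ fits into a commutative square identifying it (up to isomorphism on source and target) with a direct sum of maps $\Z\xrightarrow{d_i}\Z$ (for $i\le r=\rank\phi$, with $d_i\ge 1$) and $\Z\xrightarrow{0}\Z$ (for the extra $\ker$ and $\coker$ directions). Concretely, $\ker(\phi)\cong \Z^{k}$ where $k=\dim\ker\phi$, $\coker(\phi)\cong \Z^{c}\oplus\bigoplus_i \Z/d_i\Z$ where $c$ is the rank of the cokernel, and $\phi$ is a direct sum of the multiplication maps $[d_i]:\Z\to\Z$, a zero map $\Z^k\to 0$, and a zero map $0\to \Z^c$.

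Next I would apply $E\otimes -$, which commutes with finite direct sums, to this normal form. The summand $0\to \Z^c$ contributes $0\to E^c$, which is injective but not surjective unless $c=0$; the summand $\Z^k\to 0$ contributes $E^k\to 0$, which is surjective but not injective unless $k=0$; and each summand $[d_i]:\Z\to\Z$ contributes the isogeny $[d_i]:E\to E$, which is always surjective (isogenies are surjective) and is injective exactly when $d_i$ is invertible, i.e.\ $d_i=1$ — equivalently, when $E[d_i]=0$, noting $\ker([d_i]:E\to E)=E[d_i]$. Assembling: $E\otimes\phi$ is surjective iff $c=0$, i.e.\ iff $\coker(\phi)$ has no free part, i.e.\ iff $\coker(\phi)$ is finite. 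And $E\otimes\phi$ is injective iff $k=0$ \emph{and} all $d_i=1$; the first says $\ker(\phi)=0$, and given that, the second says $\coker(\phi)=\Z^c$ is free. Combining the two conditions gives injectivity iff $\ker(\phi)=0$ and $\coker(\phi)$ is free, as claimed.

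Since each of the three steps is essentially bookkeeping — normal form, additivity of the functor, and the elementary facts that isogenies are surjective and that $[d]:E\to E$ is injective iff $d=\pm1$ — I do not expect a serious obstacle. The one point requiring a little care, and which I would flag explicitly, is that $E\otimes -$ is only right exact (it is $\Tor_0$), so one cannot blithely say "$E\otimes\phi$ injective iff $\ker(E\otimes\phi)=0$ iff $\Tor_1(E,\coker\phi)=0$" without first reducing to the direct-sum decomposition; the decomposition is what makes the kernel computation transparent. Equivalently, one can phrase the whole argument via the long exact $\Tor$-sequence for $0\to\ker\phi\to B\to C\to\coker\phi\to 0$ (broken into two short exact sequences through $\im\phi$): surjectivity of $E\otimes\phi$ is equivalent to $E\otimes(C/\im\phi)=E\otimes\coker\phi=0$, which by the computation $E\otimes\Z/N\Z=0$, $E\otimes\Z=E$ happens iff $\coker\phi$ is finite; and once $E\otimes\phi$ is a surjection with $E\otimes(\ker\phi)\to E\otimes B$, injectivity on the left needs $\ker\phi=0$ \emph{and} $\Tor_1(E,\coker\phi)=0$, the latter being $0$ exactly when $\coker\phi$ is torsion-free by the list $\Tor_1(E,\Z/N\Z)=E[N]$, $\Tor_1(E,\Z)=0$. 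Either route is short; I would write the Smith-normal-form version as it is the most self-contained.
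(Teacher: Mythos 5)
Your proposal is correct, and in fact you give two arguments. Your preferred route, Smith normal form, is genuinely different from what the paper does: the paper observes that $0\to\ker\phi\to B\to C\to\coker\phi\to 0$ is a free resolution of $\coker\phi$, from which it extracts an isomorphism $\coker(E\otimes\phi)\cong E\otimes\coker\phi$ together with a short exact sequence $0\to E\otimes\ker\phi\to\ker(E\otimes\phi)\to\Tor_1(E,\coker\phi)\to 0$, and then reads off the answer directly from the table of values of $E\otimes-$ and $\Tor_1(E,-)$ on $\Z$ and $\Z/N\Z$ computed just above the Proposition. Your ``equivalently'' paragraph is essentially this argument. The trade-off is as you would expect: the Smith normal form route is concrete and self-contained (additivity of $E\otimes-$ plus the fact that $[d]:E\to E$ is an isogeny with kernel $E[d]$), while the homological route is shorter, avoids choosing bases, and dovetails with the $\Tor$ computations the paper has already set up and uses elsewhere. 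Your caution about right-exactness of $E\otimes-$ is reasonable in the abstract, but it is not a pitfall for the paper's version: splitting the four-term sequence at $\im\phi$ into two short exact sequences of free modules gives exactly the stated exact sequence for $\ker(E\otimes\phi)$ with no handwaving.
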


\begin{proof}
 The four-term sequence
 \begin{gather*}
 0\to \ker\phi\to B\to C\to \coker\phi\to 0
 \end{gather*}
 is a free resolution, and thus we have an isomorphism $\coker(E\otimes
 \phi)\cong E\otimes \coker\phi$ and a short exact sequence
 \begin{gather*}
 0
 \to
 E\otimes \ker(\phi)
 \to
 \ker(E\otimes\phi)
 \to
 \Tor_1(E,\coker(\phi))
 \to
 0.
 \end{gather*}
 The claim follows immediately.
\end{proof}

\begin{prop}
 If $E$ does not have complex $($or quaternionic$)$ multiplication $($in
 particular if $E={\cal E})$, then $\Hom(B,C)\to \Hom(E\otimes B,E\otimes
 C)$ is an isomorphism.
\end{prop}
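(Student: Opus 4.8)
The plan is to reduce everything to the fact that $\End(E)=\Z$ when $E$ has no CM. First, note that the case of general free abelian groups $B,C$ reduces by additivity to the case $B=C=\Z$: both sides are additive (contravariantly in $B$, covariantly in $C$) in direct sums, and $E\otimes\Z\cong E$ canonically, so $\Hom(\Z^a,\Z^b)\to\Hom(E^a,E^b)$ is just the $a\times b$ matrix of the map $\Hom(\Z,\Z)\to\Hom(E,E)=\End(E)$ in each entry. Thus it suffices to show $\Z=\Hom(\Z,\Z)\to\End(E)$ is an isomorphism, i.e.\ that $\End(E)=\Z$.

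For a fixed curve $E$ without complex or quaternionic multiplication, $\End(E)=\Z$ is the definition of ``no CM'' (over an algebraically closed field, $\End(E)$ is $\Z$, an order in an imaginary quadratic field, or — only in the supersingular case in positive characteristic — an order in a quaternion algebra; the hypothesis excludes the latter two). So the interesting case is really $E=\mathcal E$, the universal curve over $\mathcal M_{1,1}$, where one needs $\End_{\mathcal M_{1,1}}(\mathcal E)=\Z$. Here the argument is: an endomorphism of $\mathcal E$ over $\mathcal M_{1,1}$ restricts, on each geometric fiber, to an endomorphism of that elliptic curve; since the generic curve has $\End=\Z$ (generic complex elliptic curves have no CM), such a global endomorphism agrees with multiplication by some fixed integer $m$ on a dense open locus, hence everywhere by continuity/rigidity (the locus where a morphism $\mathcal E\to\mathcal E$ equals $[m]$ is closed, and it contains the generic point). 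This is the standard fact that $\mathcal M_{1,1}$ — or a smooth cover of it such as the modular curve $X(3)$ or $X(4)$ used earlier in the excerpt — carries no nontrivial Hecke correspondences of this split type; I would simply cite the rigidity of abelian schemes (a homomorphism of abelian schemes over a connected base that is $[m]$ on one fiber is $[m]$ everywhere).

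The key steps in order: (i) reduce to $B=C=\Z$ by additivity of $\Hom$ in each variable and $\mathcal E\otimes\Z\cong\mathcal E$; (ii) identify $\Hom(\Z,\Z)\to\Hom(\mathcal E,\mathcal E)$ with $\Z\to\End_{\mathcal M_{1,1}}(\mathcal E)$; (iii) injectivity is clear since $[m]$ is nonzero for $m\ne0$ (e.g.\ it is an isogeny of degree $m^2$ on each fiber); (iv) surjectivity: given $f\in\End(\mathcal E)$, base-change to a smooth curve cover of $\mathcal M_{1,1}$, observe $f$ is $[m]$ on the generic fiber for a unique $m\in\Z$ because a very general complex elliptic curve has endomorphism ring $\Z$, then invoke rigidity to conclude $f=[m]$ identically, and descend back to $\mathcal M_{1,1}$.

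The main obstacle is step (iv): making precise that ``the generic fiber has $\End=\Z$'' and that this forces a global identity. The cleanest route is probably the one already modeled in the proof of the earlier Proposition on $\Pic(\mathcal E^n)$: pick a single elliptic curve over a function field — such as $y^2+txy=x^3+t^5$ over $\C(t)$, which there is asserted to have trivial Mordell--Weil group and, one checks, trivial endomorphism ring as well (its $j$-invariant is non-constant in $t$, so it admits no CM) — note the classifying map $\Spec\C(t)\to\mathcal M_{1,1}$ is dominant, pull $f$ back to get an endomorphism of this curve which must be some $[m]$, and then use that equality of two morphisms $\mathcal E\to\mathcal E$ is a closed condition to propagate $f=[m]$ from this dense image to all of $\mathcal M_{1,1}$. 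Everything else is formal.
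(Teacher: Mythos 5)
Your proof takes the same route as the paper's: reduce by additivity in each variable to the case $B=C=\Z$, where the claim becomes $\End(E)=\Z$, which is the definition of ``no CM.'' The paper's proof is essentially one sentence; you simply fill in the verification that $\End_{{\cal M}_{1,1}}({\cal E})=\Z$ (via a curve with non-constant $j$-invariant over $\C(t)$ and rigidity of abelian schemes), a point the paper treats as immediate.
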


\begin{proof}
 This reduces to the case $B=\Z^n$, $C=\Z^m$, and thus to the case
 $B=C=\Z$, where it is essentially by definition.
\end{proof}

{\sloppy\begin{cor}
 If $E$ does not have complex multiplication, then the natural map
 $B\to$ $\Hom(E,E\otimes B)$ is an isomorphism, as~is the natural map
 $E\otimes \Hom(E,E\otimes B)\to E\otimes B$.
\end{cor}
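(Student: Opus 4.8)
The plan is to derive both assertions directly from the preceding Proposition (which applies verbatim, since ``no complex multiplication'' is implied by its hypothesis ``no complex (or quaternionic) multiplication'', and in particular holds for $E={\cal E}$). For the first map, I would apply that Proposition with the pair of free abelian groups $(B,C)$ specialized to $(\Z,B)$: it gives that $\Hom(\Z,B)\to \Hom(E\otimes\Z,E\otimes B)$ is an isomorphism. Now compose on the source with the canonical isomorphism $B\cong\Hom(\Z,B)$ ($b\mapsto(1\mapsto b)$) and on the target with the canonical isomorphism $E\otimes\Z\cong E$ recorded in the list of special cases above. Unwinding functoriality of $E\otimes(-)$ applied to the homomorphism $\Z\to B$, $1\mapsto b$, shows that the resulting isomorphism $B\cong\Hom(E,E\otimes B)$ is precisely $b\mapsto(e\mapsto e\otimes b)$, i.e. the natural map named in the statement; hence that map is an isomorphism.

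For the second map, I would first pin down what it is: the evaluation pairing $e\otimes\phi\mapsto\phi(e)$, which is well defined because $(ne)\otimes\phi$ and $e\otimes(n\phi)$ both go to $n\phi(e)$. Tensoring the isomorphism $B\cong\Hom(E,E\otimes B)$ of the first part with $E$ yields an isomorphism $E\otimes B\cong E\otimes\Hom(E,E\otimes B)$, and composing it with the evaluation map sends an elementary tensor through $e\otimes b\mapsto e\otimes\bigl(e'\mapsto e'\otimes b\bigr)\mapsto e\otimes b$, so the composite is the identity of $E\otimes B$. Thus the evaluation map is a left inverse of an isomorphism, hence itself an isomorphism.

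I do not expect a real obstacle here: the content is entirely the bookkeeping of canonical identifications. The one point that wants a little care is confirming that the two maps referred to in the statement genuinely are (a) the specialization of the Proposition's functoriality map and (b) the evaluation pairing, so that the cancellation in the previous paragraph is an identity of morphisms and not merely an isomorphism between source and target. Everything else is the projective-resolution computation of $E\otimes(-)$ already in hand together with the cited Proposition.
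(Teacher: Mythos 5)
Your proof is correct and follows the route the paper clearly intends: the Corollary is stated immediately after the Proposition with no proof at all, so it is meant to be read off by specializing the Proposition to the pair $(\Z,B)$ (giving the first isomorphism after identifying $\Hom(\Z,B)\cong B$ and $E\otimes\Z\cong E$) and then tensoring with $E$ and composing with the evaluation map to get the second. Your identification of the second map as evaluation, and the ``left inverse of an isomorphism is an isomorphism'' argument, are both fine.

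One sentence should be fixed. You justify invoking the Proposition by saying that ``no complex multiplication'' is \emph{implied by} ``no complex (or quaternionic) multiplication,'' but that is the wrong direction of implication: to apply the Proposition under the Corollary's hypothesis you need the Corollary's hypothesis to entail the Proposition's, not the reverse. The application is nevertheless legitimate, because the paper is plainly using ``no complex multiplication'' as shorthand for $\End(E)=\Z$ (exactly what the Proposition's proof uses, and what excludes the supersingular/quaternionic case as well); so the two hypotheses coincide. Just state that equivalence rather than the reversed implication.
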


}

The dual variety is then easy to compute.

\begin{prop}
 For $B$ free, there is a natural isomorphism $(E\otimes
 B)^\vee\cong E\otimes B^*$.
\end{prop}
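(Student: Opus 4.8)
The plan is to reduce to the rank-one case $B=\Z$ and use the self-duality of the elliptic curve. First I would observe that, since $B$ is free, the functor $E\otimes(-)$ is exact on the category of free abelian groups, so a choice of basis gives $E\otimes B\cong E^n$ for $n=\rank B$; under such a choice the claim becomes the familiar statement that $(E^n)^\vee\cong (E^\vee)^n$, and the principal polarization on $E$ identifies $E^\vee\cong E$. The real content is naturality in $B$, i.e. that the isomorphism does not depend on the chosen basis. To see this I would exhibit the isomorphism $(E\otimes B)^\vee\to E\otimes B^*$ directly, without reference to a basis, as follows.

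Recall that the dual of an abelian variety $A$ represents the functor of algebraically trivial line bundles on $A$ (equivalently, $A^\vee=\Pic^0(A)$), and that for abelian varieties $A_1,A_2$ one has a canonical Künneth-type decomposition $\Pic^0(A_1\times A_2)\cong \Pic^0(A_1)\times \Pic^0(A_2)$, i.e. $(A_1\times A_2)^\vee\cong A_1^\vee\times A_2^\vee$ canonically. Since $E\otimes(B_1\oplus B_2)\cong (E\otimes B_1)\times(E\otimes B_2)$ canonically (the product is a biproduct), duality is additive in $B$, so it suffices to produce a natural isomorphism $E^\vee\cong E\otimes \Z^* = E$ — which is exactly the canonical principal polarization $\lambda_E\colon E\xrightarrow{\sim} E^\vee$ associated to the divisor class $[0]$ (equivalently to the bundle ${\cal L}_{1,-1}$ of the preceding lemmas). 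One then checks that for a morphism $\phi\colon B\to C$ the square relating $(E\otimes\phi)^\vee$ to $E\otimes\phi^*$ commutes; by additivity this again reduces to $B=C=\Z$, where it is the statement that $\lambda_E$ is compatible with multiplication by an integer, i.e. $\widehat{[n]}=[n]$ on $E^\vee$ under $\lambda_E$, which is standard (the polarization is a homomorphism and $[n]^*[0]\sim n^2[0]$, so $[n]$ and its dual agree through $\lambda_E$ up to the sign already absorbed in $\lambda_E$; since $E^\vee=\Pic^0$, the dual of $[n]$ is pullback $[n]^*$ on $\Pic^0$, which is literally $[n]$).

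For the stacky statement (the case $E={\cal E}$, or one of the isogenous families $\prod_i{\cal E}_{d_i}$) I would note that all of the above — the representability of $\Pic^0$, the Künneth decomposition, and the principal polarization via ${\cal L}_{1,-1}$ (for ${\cal E}$) or via $\hat{\cal L}_{1,{\cal E}_d}$ (in the isogenous setting, cf. the Poincaré-bundle lemma above) — works in families over the base stack, since everything is fppf-local and the relevant moduli stacks admit smooth covers by schemes (e.g. full level $3$ or $4$ structure, as used earlier). Thus the isomorphism and its naturality descend from such a cover.

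The main obstacle I expect is not any single computation but bookkeeping the canonicity: pinning down a choice of the isomorphism $\Pic^0(A_1\times A_2)\cong \Pic^0(A_1)\times\Pic^0(A_2)$ that is genuinely functorial in both factors and compatible with the principal polarizations on each $E\otimes B$ coming from our fixed trivialized bundles ${\cal L}_{Q,w}$ (resp. $\hat{\cal L}_{Q;\vec d}$), so that the diagram chase reducing naturality to the rank-one case actually closes. Once the rank-one compatibility $\widehat{[n]}\cong[n]$ is recorded, the general case follows formally by additivity.
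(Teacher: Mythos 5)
Your proof is correct, but it takes a genuinely different route from the paper's. The paper constructs the isomorphism by exploiting the no-complex-multiplication hypothesis: it observes that $(E\otimes B)^\vee$, being isomorphic to $E^n$, satisfies $E\otimes\Hom(E,(E\otimes B)^\vee)\cong (E\otimes B)^\vee$, and then computes $\Hom(E,(E\otimes B)^\vee)\cong\Hom(E\otimes B,E)\cong B^*$ via duality and the preceding lemma identifying $\Hom(B,C)\cong\Hom(E\otimes B,E\otimes C)$. Since this only works when $\End(E)=\Z$, the paper first proves it for the universal curve $\mathcal E$ and then specializes to all fibers (including CM curves). You instead use the canonical additivity of the dual abelian variety, $(A_1\times A_2)^\vee\cong A_1^\vee\times A_2^\vee$, to reduce both the existence of the isomorphism and its naturality to the rank-one case $B=\Z$, where it is the principal polarization $\lambda_E$; naturality then reduces to the standard fact that $\widehat{[n]}=[n]$ on $\Pic^0$. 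Your route avoids the no-CM hypothesis altogether, so no specialize-from-$\mathcal E$ step is needed; what the paper's approach buys is that it falls out directly from machinery (the $\Hom(B,C)\cong\Hom(E\otimes B,E\otimes C)$ lemma and the biduality-type corollary) that has just been set up and is reused elsewhere, and it sidesteps the bookkeeping you flag at the end about pinning down a functorial Künneth isomorphism compatible with the chosen polarizations. That bookkeeping is genuine but not problematic — the additivity isomorphism for duals is canonical and compatible with product polarizations — so your argument closes.
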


\begin{proof}
 Indeed, we~have $E\otimes B\cong E^n$, so dually $(E\otimes B)^\vee\cong
 E^n$. Thus for $E$ without complex multiplication, the natural map
 $E\otimes \Hom\big(E,(E\otimes B)^\vee\big)\to (E\otimes B)^\vee$ is an
 isomorphism. Duality gives $\Hom(E,(E\otimes B)^\vee)\cong \Hom(E\otimes
 B,E)\cong \Hom(B,\Z)$, and thus $E\otimes \Hom(B,\Z)\cong (E\otimes
 B)^\vee$ as desired. The~isomorphism holds for $E={\cal E}$, and thus
 for all fibers $E$.
\end{proof}

This gives the following description of the N\'eron--Severi group of~${\cal
 E}\otimes B$: $NS({\cal E}\otimes B)$ consists of symmetric morphisms
${\cal E}\otimes B\to ({\cal E}\otimes B)^\vee\cong {\cal E}\otimes B^*$,
and thus of symmetric pairings $Q\colon B\otimes B\to \Z$. We then find as above
that any such symmetric pairing (and any weight) induces a line bun\-dle~${\cal L}_{Q,w}$ on~${\cal E}\otimes B$.

Now, suppose $B\to C$ is an injective morphism with finite cokernel. Then we
have a short exact sequence
\begin{gather*}
0\to \Tor_1({\cal E},C/B)\to {\cal E}\otimes B\to {\cal E}\otimes C\to 0,
\end{gather*}
where the kernel is a product of groups of the form $E[d_i]$. If $C/B$ has
exponent $N$, then we have $\Tor_1({\cal E},C/B)\cong {\cal
 E}[N]\otimes_{\Z/N\Z} C/B$, which in turn suggests that we consider the
subgroup $\kappa_N\otimes_{\Z/N\Z} C/B$, where $\kappa_N$ is the kernel of a
the cyclic $N$-isogeny corresponding to a~point of~${\cal X}_0(N)$. This,
it turns out, does not quite behave correctly in characteristic dividing
$N$, but we do have the following.

\begin{prop}
 Let $N$ be a positive integer, and let $B$ be a finitely generated
 abelian group of~expo\-nent~$N$. Then the group scheme
 $\kappa_N\otimes_{\Z/N\Z} B$ on~${\cal X}_0(N)\times \Spec(\Z[1/N])$
 extends in a~natural way to a flat group scheme on~${\cal X}_0(N)$.
\end{prop}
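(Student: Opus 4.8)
The plan is to reduce the construction to the prime-power case and then handle the delicate characteristic-$p$ behaviour by exploiting the flat model already built into the moduli stack ${\cal X}_0(N)$. First I would observe that, since $B$ is a finite abelian group of exponent $N$, it decomposes as a direct sum of cyclic groups $\Z/d_i\Z$ with each $d_i\mid N$, and $\kappa_N\otimes_{\Z/N\Z}B\cong \bigoplus_i \kappa_N\otimes_{\Z/N\Z}\Z/d_i\Z$. Each summand $\kappa_N\otimes_{\Z/N\Z}\Z/d\Z$ is, away from primes dividing $N$, simply $\kappa_d$, the kernel of the intermediate isogeny $\phi_{d,1}:{\cal E}_1\to {\cal E}_d$ associated to the factorization of the universal cyclic $N$-isogeny discussed just before the excerpt's Lemma on $\Hom({\cal E}_{d_1},{\cal E}_{d_2})$. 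So the natural candidate for the extension is $\bigoplus_i \kappa_{d_i}$, where $\kappa_{d_i}:=\ker(\phi_{d_i,1})$ is a finite flat group scheme on all of ${\cal X}_0(N)$ by the Katz--Mazur theory (cited in the excerpt). The content of the proposition is then (a) that this glues correctly with the generic description, and (b) that it really is flat and functorial.

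For step (a), I would note that over $\Spec(\Z[1/N])$ every cyclic isogeny in sight is étale, so $\kappa_d$ there is the étale group scheme $(\Z/d\Z)$ (a twist thereof by the level structure), and the isomorphism $\kappa_N\otimes_{\Z/N\Z}\Z/d\Z\cong\kappa_d$ is the obvious one: $\kappa_N$ is cyclic of order $N$, and tensoring the constant group $\Z/N\Z$-module $\kappa_N$ with $\Z/d\Z$ kills the part of order $N/d$, leaving exactly the order-$d$ subgroup, which is $\kappa_d$. The compatibility of the $\kappa_{d_i}$ under the factorizations $\phi_{d_3,d_2}\circ\phi_{d_2,d_1}\cong\phi_{d_3,d_1}$ (established in the excerpt) shows that $\kappa_{d_1}\subset\kappa_{d_2}$ whenever $d_1\mid d_2$, so the direct-sum construction is canonical and does not depend on the chosen decomposition of $B$ beyond its isomorphism type; more precisely, for an arbitrary map $B\to B'$ of exponent-$N$ groups one checks on cyclic pieces that it induces a morphism $\bigoplus\kappa_{d_i}\to\bigoplus\kappa_{d'_j}$ compatible with the generic one, which gives functoriality.

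For step (b), flatness over ${\cal X}_0(N)$ is where the real work lies, and it is essentially already done for us: each $\kappa_d=\ker(\phi_{d,1})$ is flat over ${\cal X}_0(N)$ precisely because ${\cal X}_0(N)$ was constructed (following Katz--Mazur) so that the intermediate isogenies ${\cal E}_1\to{\cal E}_d$ are finite flat of constant degree $d$, the Frobenius-degeneration in characteristic $p\mid d$ being exactly what makes $\kappa_d$ flat across that locus rather than jumping rank. A finite direct sum of flat group schemes is flat, so $\bigoplus_i\kappa_{d_i}$ is flat on ${\cal X}_0(N)$. The only subtlety to flag is that one must check the extension is \emph{natural} in $B$ — i.e.\ that the functor $B\mapsto \bigoplus_i\kappa_{d_i}$ is well-defined independent of the decomposition and extends the generic functor $B\mapsto\kappa_N\otimes_{\Z/N\Z}B$; this follows from the fact that $\Hom(\Z/d\Z,\Z/d'\Z)$-morphisms correspond to maps $\kappa_d\to\kappa_{d'}$ via the inclusion/projection structure among the $\kappa_e$'s, all of which are already flat morphisms on the nose.

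The main obstacle is the characteristic-$p$ behaviour for $p\mid N$: one has to be careful that the naive guess ${\cal E}[N]\otimes_{\Z/N\Z}B$ does \emph{not} work there (as the excerpt explicitly warns), and that replacing ${\cal E}[N]$ by the \emph{cyclic} kernel $\kappa_N$ is exactly the fix, because $\kappa_N$ remains flat of rank $N$ over all of ${\cal X}_0(N)$ whereas ${\cal E}[N]$ does not interact well with $\otimes_{\Z/N\Z}B$ when the $p$-part of $\kappa_N$ is non-étale. Making this precise amounts to invoking the flatness statements of \cite{KatzNM/MazurB:1985} for the standard cyclic subgroup schemes together with the factorization apparatus already set up in the excerpt; no new geometry is needed, just a careful bookkeeping argument reducing everything to the cyclic-$\Z/d\Z$ case and citing flatness there.
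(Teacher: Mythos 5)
Your overall strategy matches the paper's: pick a decomposition $B\cong\bigoplus_i\Z/d_i\Z$, set the extension to be $\prod_i\kappa_{d_i}$ with $\kappa_d=\ker(\phi_{d,1})$, and cite the Katz--Mazur flatness of these cyclic kernels. That part is fine and is exactly what the paper does.

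The gap is in the naturality step, which you correctly flag as ``the only subtlety'' but then dispatch with an assertion rather than an argument. Saying that $\Hom_{\Z/N\Z}(\Z/d\Z,\Z/d'\Z)$-morphisms ``correspond to maps $\kappa_d\to\kappa_{d'}$ via the inclusion/projection structure \dots all of which are already flat morphisms on the nose'' is exactly the claim that needs proof, and the hard part sits in the projection direction. Under the generic identification $\kappa_N\otimes_{\Z/N\Z}\Z/d\Z\cong\kappa_d$ (via multiplication by $N/d$), a surjection $\Z/d\Z\twoheadrightarrow\Z/d_0\Z$ corresponds to the map $[d/d_0]\colon\kappa_d\to\kappa_d$, and one must show its image lands in $\kappa_{d_0}$ over the entire base. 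Equivalently one needs $\ker(\phi_{d,d_0})$ to be annihilated by its own order $d/d_0$; that is true (Deligne's theorem on finite flat commutative group schemes, or the specific cyclicity structure from Katz--Mazur), but it is not ``on the nose'' and it is precisely where the characteristic-$p$ behaviour you wanted to isolate lives. The paper avoids the whole computation by a cleaner device: for $\phi\colon B\to B'$ the map $E[N]\otimes_{\Z/N\Z}\phi$ is defined over all of ${\cal X}_0(N)$; the requirement that it carry the finite flat closed subscheme $\prod_i\kappa_{d_i}\subset E[N]\otimes B$ into $\prod_j\kappa_{d'_j}\subset E[N]\otimes B'$ is a \emph{closed} condition on the base, and it holds on the dense locus where $N$ is invertible, hence everywhere. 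You should replace your ``follows from the fact that'' with this closed-condition argument (or an explicit invocation of Deligne's theorem); as written, that step is asserted rather than proved.
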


\begin{proof}
 If we choose an isomorphism $B\cong \bigoplus \Z/d_i\Z$, then we
 certainly have such an extension: away from $N$, the group is just
 $\prod_i \kappa_{d_i}$, where $\kappa_{d_i}$ is the kernel of the isogeny
 $E_1\to E_{d_i}$, and this product makes sense in all characteristics.
 If $B\to B'$ is a morphism of~$N$-torsion groups, then the morphism
 $\kappa_N\otimes_{\Z/N\Z} B\to \kappa_N\otimes_{\Z/N\Z}B'$ is simply the
 restriction of the morphism $E[N]\otimes_{\Z/N\Z} B\to
 E[N]\otimes_{\Z/N\Z}B'$. The~latter morphism is defined in all
 characteristics, and the requirement that it restrict to a specific
 morphism is a closed condition, so is inherited from the generic case.
\end{proof}

This allows us to define families of abelian varieties over ${\cal X}_0(N)$
as follows: given abelian groups $B$, $C$ such that $NC\subset B\subset C$, we~define ${\cal E}_{B,C}$ to be the quotient of~${\cal E}\otimes B$ by the
subgroup scheme extending $\kappa_N\otimes_{\Z/N\Z} C/B$. (We will denote
this extension by the same tensor product notation, but caution the reader
that it is {\em not} the actual tensor product in general.)

\begin{prop}
 If $NC_1\subset B_1\subset C_1$, $NC_2\subset B_2\subset C_2$ are pairs
 of finitely generated free abelian groups and $\phi\colon C_1\to C_2$ is a
 morphism such that $\phi(B_1)\subset B_2$, then there is an induced
 morphism ${\cal E}_{B_1,C_1}\to {\cal E}_{B_2,C_2}$, making the
 construction functorial.
\end{prop}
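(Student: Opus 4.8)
The plan is to show that the homomorphism ${\cal E}\otimes\phi$ descends to the two quotients. Since $\phi(B_1)\subset B_2$, the map $\phi$ restricts to a homomorphism $B_1\to B_2$ of free abelian groups, and applying the functor ${\cal E}\otimes{-}$ gives a homomorphism of abelian schemes ${\cal E}\otimes\phi:{\cal E}\otimes B_1\to{\cal E}\otimes B_2$ over ${\cal X}_0(N)$. Write $\pi_i:{\cal E}\otimes B_i\to{\cal E}_{B_i,C_i}$ for the quotient by the subgroup scheme $G_i$ extending $\kappa_N\otimes_{\Z/N\Z}C_i/B_i$. It suffices to prove that ${\cal E}\otimes\phi$ carries $G_1$ into $G_2$: granting this, the composite $\pi_2\circ({\cal E}\otimes\phi):{\cal E}\otimes B_1\to{\cal E}_{B_2,C_2}$ is constant on the translation orbits of $G_1$, since for a point $g$ of $G_1$ we have $\pi_2(({\cal E}\otimes\phi)(x+g))=\pi_2(({\cal E}\otimes\phi)(x))+\pi_2(({\cal E}\otimes\phi)(g))=\pi_2(({\cal E}\otimes\phi)(x))$, using $({\cal E}\otimes\phi)(g)\in G_2=\ker\pi_2$. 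Hence by the universal property of the quotient $\pi_1$ this composite factors uniquely through a morphism ${\cal E}_{B_1,C_1}\to{\cal E}_{B_2,C_2}$, and that morphism is automatically a group homomorphism since $\pi_1\times\pi_1$ is an epimorphism and ${\cal E}\otimes\phi$, $\pi_1$, $\pi_2$ are all homomorphisms.

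To see that $({\cal E}\otimes\phi)(G_1)\subset G_2$, apply ${\cal E}\otimes{-}$ to the commuting square formed by the inclusions $B_i\hookrightarrow C_i$ and the two copies of $\phi$. Each $B_i\hookrightarrow C_i$ has finite cokernel $C_i/B_i$, which is killed by $N$, so the free resolution $0\to B_i\to C_i\to C_i/B_i\to 0$ of $C_i/B_i$ shows (exactly as in the earlier computation of $\ker({\cal E}\otimes\phi)$ for an injection with finite cokernel) that ${\cal E}\otimes B_i\to{\cal E}\otimes C_i$ is surjective with kernel $\Tor_1({\cal E},C_i/B_i)$, and naturality of this construction in $C_i/B_i$ identifies the restriction of ${\cal E}\otimes\phi$ to these kernels with $\Tor_1({\cal E},\bar\phi)$, where $\bar\phi:C_1/B_1\to C_2/B_2$ is the map induced by $\phi$. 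Under the canonical isomorphism $\Tor_1({\cal E},M)\cong{\cal E}[N]\otimes_{\Z/N\Z}M$, valid since $NM=0$, this restriction becomes ${\cal E}[N]\otimes\bar\phi$. Now $G_i$ lies inside $\Tor_1({\cal E},C_i/B_i)$ in every characteristic (because $\kappa_d\subset{\cal E}[d]$ as finite flat group schemes, so a product of such $\kappa_d$'s lies inside the corresponding product of ${\cal E}[d]$'s), and it is by construction the flat extension of $\kappa_N\otimes_{\Z/N\Z}C_i/B_i$; but the Proposition establishing that $\kappa_N\otimes_{\Z/N\Z}B$ extends to a flat group scheme on ${\cal X}_0(N)$ exhibits the induced map $\kappa_N\otimes_{\Z/N\Z}\bar\phi:G_1\to G_2$ as exactly the restriction of ${\cal E}[N]\otimes\bar\phi$, a morphism defined in all characteristics. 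Therefore ${\cal E}\otimes\phi$ carries $G_1$ into $G_2$, and the descended morphism exists.

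Functoriality then follows formally. If $\psi:C_2\to C_3$ is a second morphism with $\psi(B_2)\subset B_3$, then ${\cal E}\otimes(\psi\circ\phi)=({\cal E}\otimes\psi)\circ({\cal E}\otimes\phi)$ by functoriality of ${\cal E}\otimes{-}$; composing with $\pi_3$ and invoking the uniqueness of the factorization through $\pi_1$ shows that the morphism induced by $\psi\circ\phi$ is the composite of those induced by $\phi$ and $\psi$, and likewise $\id_C$ induces the identity of ${\cal E}_{B,C}$. The one point that requires care is the last step of the previous paragraph, the comparison in characteristics dividing $N$ where $\kappa_N\otimes_{\Z/N\Z}({-})$ is only a flat extension rather than an honest tensor product; but this is precisely what the cited proposition was built to control, so there is no genuine obstacle beyond this bookkeeping.
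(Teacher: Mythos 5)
Your proof is correct and takes essentially the same approach as the paper, which compresses the argument to a commutative diagram with exact rows and a one-line appeal to functoriality of the vertical arrows. You have simply made explicit the content of that appeal — namely, that the restriction of ${\cal E}\otimes\phi$ to $\Tor_1({\cal E},C_1/B_1)$ is ${\cal E}[N]\otimes\bar\phi$, which by the earlier Proposition on flat extensions of $\kappa_N\otimes_{\Z/N\Z}(-)$ carries $G_1$ into $G_2$.
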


\begin{proof}
 The condition on~$\phi$ implies that it induces a morphism $C_1/B_1\to
 C_2/B_2$, and thus we have a commutative diagram
 \begin{gather*}
 \begin{CD}
 0 @>>> \kappa_N\otimes_{\Z/N\Z} C_1/B_1 @>>> {\cal E}\otimes B_1 @>>> {\cal
 E}_{B_1,C_1}@>>> 0\\
 @. @VVV @VVV @. @.\\
 0 @>>> \kappa_N\otimes_{\Z/N\Z} C_2/B_2 @>>> {\cal E}\otimes B_2 @>>>
 {\cal E}_{B_2,C_2} @>>> 0.
 \end{CD}
\end{gather*}
 Since the rows are exact and the given vertical arrows are functorial,
 the claim follows.
\end{proof}

Of course, given an isomorphism $C/B\cong \prod_i \Z/d_i\Z$, there is a
corresponding isomorphism ${\cal E}_{B,C}\cong \prod_i {\cal E}_{d_i}$.
This makes the following straightforward to verify.

\begin{prop}
 If $E_1\to E_N$ is a cyclic $N$-isogeny between curves with no complex
 multiplication, then the morphisms between corresponding fibers of~${\cal
 E}_{B_1,C_1}$ and ${\cal E}_{B_2,C_2}$ are precisely those coming from
 the previous proposition.
\end{prop}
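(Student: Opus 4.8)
The plan is to reduce the assertion to a rank-one computation by splitting the two abelian varieties into products of the curves $E_d$, and then to identify the relevant generators explicitly. Writing $C_1/B_1$ in Smith normal form produces a compatible splitting $C_1=\bigoplus_i C_1^{(i)}$, $B_1=\bigoplus_i B_1^{(i)}$ with each $C_1^{(i)}/B_1^{(i)}$ cyclic of order $d_i\mid N$, and likewise for $C_2,B_2$; correspondingly $E_{B_1,C_1}\cong\prod_i E_{d_i}$ and $E_{B_2,C_2}\cong\prod_j E_{e_j}$. Under these splittings a morphism $\prod_i E_{d_i}\to\prod_j E_{e_j}$ is a matrix of morphisms $E_{d_i}\to E_{e_j}$, a homomorphism $\phi\colon C_1\to C_2$ is a matrix of maps $C_1^{(i)}\to C_2^{(j)}$, and $\phi(B_1)\subset B_2$ holds exactly when each entry carries $B_1^{(i)}$ into $B_2^{(j)}$. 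Since the construction of the previous Proposition is additive in $\phi$ and, by its functoriality, compatible with these decompositions, it suffices to treat one factor on each side: assuming $E_{B_1,C_1}\cong E_d$ and $E_{B_2,C_2}\cong E_e$ with $d,e\mid N$, we must show that the natural map
\[
\{\phi\colon C_1\to C_2\mid \phi(B_1)\subset B_2\}\longrightarrow \Hom(E_d,E_e)
\]
is an isomorphism.

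Both sides are free of rank $1$. The target is $\Z\cdot\phi_{e,d}$ by the Lemma, whose proof uses only that the relevant endomorphism ring is $\Z$ and so applies to any $E$ without complex multiplication, all the curves $E_d$ then being isogenous to $E$ and hence also of endomorphism ring $\Z$. For the source, take the standard representatives $B_1=\Z\subset C_1=\tfrac1d\Z$ and $B_2=\Z\subset C_2=\tfrac1e\Z$; a homomorphism $\tfrac1d\Z\to\tfrac1e\Z$ is ``multiplication by $t$'' for a unique $t\in\tfrac de\Z$, and the requirement that it send $\Z$ into $\Z$ cuts this down to the rank-one lattice $t\in\tfrac{d}{\gcd(d,e)}\Z$. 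The displayed map is additive, and it is injective: if $\phi$ induced the zero morphism then $E\otimes(\phi|_{B_1})$ would factor through the finite kernel of $E\otimes B_2\to E_{B_2,C_2}$ and hence vanish (a homomorphism from an abelian variety to a finite group scheme is zero), so $\phi|_{B_1}=0$ by faithfulness of $E\otimes(-)$ on free abelian groups, whence $\phi=0$. Thus the image is a finite-index subgroup of $\Z\phi_{e,d}$, and it remains only to produce a $\phi$ mapping to a generator.

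For this I would factor $\phi_{e,d}=\phi_{e,g}\circ\phi_{g,d}$ with $g=\gcd(d,e)$ and realize each factor on lattice data. The ``ascending'' isogeny $\phi_{e,g}\colon E_g\to E_e$ ($g\mid e$) is induced by the inclusion $\tfrac1g\Z\hookrightarrow\tfrac1e\Z$, while the ``descending'' isogeny $\phi_{g,d}\colon E_d\to E_g$ ($g\mid d$, the dual of the quotient $E_g\to E_d$) is induced by multiplication by $d/g$, $\tfrac1d\Z\to\tfrac1g\Z$; in each case one checks that the stated lattice map sends $\Z$ into $\Z$, that it generates the corresponding source lattice, and that the induced isogeny has the correct degree, so that --- $\Hom$ being $\Z$ times a single isogeny --- it is the asserted one up to sign. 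By functoriality of the construction their composite, which is multiplication by $d/g$ as a map $\tfrac1d\Z\to\tfrac1e\Z$ and hence a generator of the source lattice, induces $\phi_{e,g}\circ\phi_{g,d}=\phi_{e,d}$. So the displayed map sends a generator to a generator and, being additive and injective, is an isomorphism, which closes the reduction. I expect this final matching to be the main obstacle, being where one must confirm that the natural isogenies $\phi_{e,d}$ genuinely lie in the image of the construction and are hit by generators; the degree bookkeeping also needs a little care, since in characteristic dividing $N$ these isogenies can be inseparable.
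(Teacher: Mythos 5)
Your proof is correct and follows essentially the approach the paper implies: decompose $(B,C)$ via Smith normal form to reduce to a single pair $E_d\to E_e$, then use the rank-one Lemma that $\Hom(E_d,E_e)\cong\Z\cdot\phi_{e,d}$ together with an explicit lattice realization of $\phi_{e,d}$. The inseparability worry you flag at the end is not actually an obstacle, since the identification of your composite $\frac1d\Z\xrightarrow{\cdot d/g}\frac1e\Z$ with $\phi_{e,d}$ uses only the relation $\phi_{g,d}\circ\phi_{d,1}=\phi_{g,1}\circ[d/g]$, which is a formal consequence of $\phi_{g,d}\circ\phi_{d,g}=[\deg\phi_{d,g}]$ and holds in every characteristic.
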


\begin{cor}
 For $NC\subset B\subset C$, we~have
 \begin{gather*}
 \Hom({\cal E},{\cal E}_{B,C})\cong B,
 \\
 \Hom({\cal E}_N,{\cal E}_{B,C})\cong C
 \end{gather*}
 in such a way that composition with the dual isogeny ${\cal E}_N\to {\cal
 E}$ induces the inclusion~$B\subset C$.
\end{cor}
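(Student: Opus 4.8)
The plan is to recognize both ${\cal E}$ and ${\cal E}_N$ as instances of the construction $(B,C)\mapsto {\cal E}_{B,C}$, after which the corollary follows immediately from the preceding Proposition.

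First I would observe the identifications ${\cal E}={\cal E}_1\cong {\cal E}_{\Z,\Z}$ and ${\cal E}_N\cong {\cal E}_{N\Z,\Z}$. The first is tautological, since ${\cal E}\otimes\Z={\cal E}$ and $\Z/\Z=0$. For the second, ${\cal E}_{N\Z,\Z}$ is by definition the quotient of ${\cal E}\otimes N\Z\cong {\cal E}$ by the flat subgroup scheme extending $\kappa_N\otimes_{\Z/N\Z}(\Z/N\Z)$; since $\kappa_N\otimes_{\Z/N\Z}(\Z/N\Z)=\kappa_N$ and --- by the Proposition extending $\kappa_N\otimes_{\Z/N\Z}B$ to ${\cal X}_0(N)$, here in the case $B=\Z/N\Z$ --- that extension is nothing but the finite flat subgroup $\kappa_N=\ker\phi_{N,1}\subset {\cal E}$ in every characteristic, the quotient is ${\cal E}/\kappa_N={\cal E}_N$. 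Moreover, under these identifications the dual isogeny $\phi_{1,N}\colon {\cal E}_N\to {\cal E}$ is exactly the morphism ${\cal E}_{N\Z,\Z}\to {\cal E}_{\Z,\Z}$ that the functoriality Proposition attaches to $\id\colon\Z\to\Z$ (which carries $N\Z$ into $\Z$): by construction that morphism is induced by the inclusion $N\Z\hookrightarrow\Z$, i.e.\ by $[N]\colon {\cal E}\to {\cal E}$, and the induced map on the quotient ${\cal E}/\kappa_N$ is precisely the factorization of $[N]$ through ${\cal E}\to {\cal E}/\kappa_N={\cal E}_N$, which is $\phi_{1,N}$.

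Granting this, the preceding Proposition (applicable since the generic fibre of ${\cal E}$ has no complex multiplication, and a morphism of these flat families is determined by its generic fibre) gives $\Hom({\cal E},{\cal E}_{B,C})=\Hom({\cal E}_{\Z,\Z},{\cal E}_{B,C})=\{\phi\in\Hom(\Z,C):\phi(\Z)\subseteq B\}$, which under $\phi\mapsto\phi(1)$ is $\cong B$; and likewise $\Hom({\cal E}_N,{\cal E}_{B,C})=\Hom({\cal E}_{N\Z,\Z},{\cal E}_{B,C})=\{\phi\in\Hom(\Z,C):\phi(N\Z)\subseteq B\}$, where the constraint is vacuous because $\phi(N)=N\phi(1)\in NC\subseteq B$, so this group is all of $\Hom(\Z,C)\cong C$. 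Finally, composition with the dual isogeny ${\cal E}_N\to {\cal E}$ is, under these identifications, precomposition with $\id\colon\Z\to\Z$ viewed as a morphism of pairs $(N\Z,\Z)\to(\Z,\Z)$ --- that is, the tautological inclusion $\{\phi:\phi(\Z)\subseteq B\}\hookrightarrow\Hom(\Z,C)$, which under $\phi\mapsto\phi(1)$ is precisely the inclusion $B\subseteq C$, as claimed.

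The one point I expect to require genuine care --- and hence the main obstacle --- is confirming that the subgroup scheme ``$\kappa_N\otimes_{\Z/N\Z}(\Z/N\Z)$'' really does specialize to the honest subgroup $\kappa_N\subset {\cal E}$ even at primes dividing $N$, so that ${\cal E}_{N\Z,\Z}\to {\cal E}_{\Z,\Z}$ is genuinely the dual isogeny there as well; this is exactly the cited extension Proposition in the case $B=\Z/N\Z$. Everything else is formal manipulation of the functoriality already established. As a sanity check, one can instead argue concretely via the presentation ${\cal E}_{B,C}\cong\prod_i {\cal E}_{d_i}$ coming from a basis $e_1,\dots,e_n$ of $C$ for which $d_1e_1,\dots,d_ne_n$ is a basis of $B$ (with each $d_i\mid N$): then $\Hom({\cal E},\prod_i {\cal E}_{d_i})=\bigoplus_i\Z\,\phi_{d_i,1}$ and $\Hom({\cal E}_N,\prod_i {\cal E}_{d_i})=\bigoplus_i\Z\,\phi_{d_i,N}$ by the earlier computation of $\Hom({\cal E}_{d_1},{\cal E}_{d_2})$, the generators $\phi_{d_i,1}$ and $\phi_{d_i,N}$ correspond to $d_ie_i\in B$ and $e_i\in C$ respectively, and the identity $\phi_{d_i,1}\circ\phi_{1,N}=d_i\,\phi_{d_i,N}$ realizes the inclusion $\Z\,d_ie_i\subseteq\Z\,e_i$.
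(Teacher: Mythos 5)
The paper states this Corollary without a written proof, treating it as an immediate consequence of the two preceding Propositions (functoriality of $(B,C)\mapsto{\cal E}_{B,C}$, and that all morphisms of fibers without CM arise from that functoriality). Your argument — identify ${\cal E}\cong{\cal E}_{\Z,\Z}$ and ${\cal E}_N\cong{\cal E}_{N\Z,\Z}$, compute $\Hom$ as morphisms of pairs, and realize the dual isogeny as the morphism attached to $\id\colon(N\Z,\Z)\to(\Z,\Z)$ — is exactly the intended derivation, and your sanity check via the splitting $\prod_i{\cal E}_{d_i}$ (including the degree computation $\phi_{d_i,1}\circ\phi_{1,N}=d_i\phi_{d_i,N}$) correctly pins down that the induced map is the honest inclusion $B\subset C$ rather than, say, a nontrivial multiple of it.
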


\begin{prop}
 There is a natural isomorphism ${\cal E}_{B,C}^{\vee} \cong {\cal
 E}_{C^*,B^*}$.
\end{prop}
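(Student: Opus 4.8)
The plan is to dualize the defining presentation ${\cal E}_{B,C}=({\cal E}\otimes B)/(\kappa_N\otimes_{\Z/N\Z}(C/B))$ and recognize its dual as ${\cal E}_{C^*,B^*}$. First note that applying $\Hom(-,\Z)$ to $NC\subseteq B\subseteq C$ gives $NB^*\subseteq C^*\subseteq B^*$, so ${\cal E}_{C^*,B^*}$ is defined. The inclusion $\kappa_N\hookrightarrow{\cal E}[N]$ induces an inclusion $\kappa_N\otimes_{\Z/N\Z}(C/B)\hookrightarrow{\cal E}[N]\otimes_{\Z/N\Z}(C/B)\cong\Tor_1({\cal E},C/B)$ of subgroup schemes of ${\cal E}\otimes B$, and the target is precisely the kernel of the natural isogeny ${\cal E}\otimes B\to{\cal E}\otimes C$; hence that isogeny factors through the quotient ${\cal E}\otimes B\to{\cal E}_{B,C}$, yielding a natural isogeny $g\colon{\cal E}_{B,C}\to{\cal E}\otimes C$ with $\ker g\cong\Tor_1({\cal E},C/B)/(\kappa_N\otimes_{\Z/N\Z}(C/B))$. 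The same remarks applied to $(C^*,B^*)$ give natural isogenies ${\cal E}\otimes C^*\to{\cal E}_{C^*,B^*}\to{\cal E}\otimes B^*$ whose composite is the natural map.

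Next I would dualize the factorization ${\cal E}\otimes B\to{\cal E}_{B,C}\to{\cal E}\otimes C$ of the natural isogeny (the second arrow being $g$), using the natural isomorphism $({\cal E}\otimes M)^\vee\cong{\cal E}\otimes M^*$ from the Proposition above (valid since ${\cal E}$ has no complex multiplication) together with the fact that the dual of an isogeny has Cartier-dual kernel. This presents ${\cal E}_{B,C}^\vee$ as a quotient of $({\cal E}\otimes C)^\vee={\cal E}\otimes C^*$ followed by the natural isogeny to $({\cal E}\otimes B)^\vee={\cal E}\otimes B^*$ — naturality identifies the dual of the natural map ${\cal E}\otimes B\to{\cal E}\otimes C$ with the natural map ${\cal E}\otimes C^*\to{\cal E}\otimes B^*$ — with kernel the Cartier dual $(\ker g)^D$, a subgroup scheme of $\Tor_1({\cal E},B^*/C^*)=\ker({\cal E}\otimes C^*\to{\cal E}\otimes B^*)$. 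Since ${\cal E}_{C^*,B^*}$ is by definition the quotient of ${\cal E}\otimes C^*$ by $\kappa_N\otimes_{\Z/N\Z}(B^*/C^*)$, the theorem reduces to the equality of subgroup schemes of $\Tor_1({\cal E},B^*/C^*)$
\[(\ker g)^D\cong\kappa_N\otimes_{\Z/N\Z}(B^*/C^*).\]

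To prove this I would Cartier-dualize the short exact sequence $0\to\kappa_N\otimes_{\Z/N\Z}(C/B)\to\Tor_1({\cal E},C/B)\to\ker g\to0$, using three ingredients: (a) $\Tor_1({\cal E},M)\cong{\cal E}[N]\otimes_{\Z/N\Z}M$, with Cartier duality converting $(-)\otimes_{\Z/N\Z}M$ into $(-)\otimes_{\Z/N\Z}\Hom(M,\Q/\Z)$; (b) the Weil pairing, which gives ${\cal E}[N]^D\cong{\cal E}[N]$ and, as $\kappa_N$ is a cyclic — hence maximal isotropic, hence self-orthogonal — subgroup, $\kappa_N^D\cong{\cal E}[N]/\kappa_N$; and (c) the identification $\Hom(C/B,\Q/\Z)\cong B^*/C^*$ coming from $0\to B\to C\to C/B\to0$. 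Together these identify the Cartier dual of the sequence with $0\to(\ker g)^D\to{\cal E}[N]\otimes_{\Z/N\Z}(B^*/C^*)\to({\cal E}[N]/\kappa_N)\otimes_{\Z/N\Z}(B^*/C^*)\to0$, the last arrow being ${\cal E}[N]\to{\cal E}[N]/\kappa_N$ tensored with $B^*/C^*$; its kernel is $\kappa_N\otimes_{\Z/N\Z}(B^*/C^*)$, since the evident map from the latter is surjective by right-exactness of $\otimes$ and an isomorphism by a length count, both sides having order $|B^*/C^*|$. To make all of this legitimate over the whole of ${\cal X}_0(N)$, and in particular in residue characteristics dividing $N$ where $\kappa_N$ and ${\cal E}[N]$ are non-\'etale, I would fix an isomorphism $C/B\cong\prod_i\Z/d_i\Z$, pass to the product decomposition ${\cal E}_{B,C}\cong\prod_i{\cal E}_{d_i}$, and reduce to the single-cyclic case, where the Katz--Mazur cyclicity formalism makes the statements about $\kappa_d\subseteq{\cal E}[d]$ and its Cartier dual transparent and permits extending isomorphisms of flat group schemes from the dense ordinary locus.

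Finally, every isomorphism used above is functorial in $(B,C)$ — the Propositions on ${\cal E}\otimes(-)$, the functoriality of ${\cal E}_{B,C}$, and the Cartier-duality identifications are all natural — so the resulting isomorphism ${\cal E}_{B,C}^\vee\cong{\cal E}_{C^*,B^*}$ is natural, contravariantly, in $(B,C)$. The main obstacle is the third step: identifying the Cartier duals of $\kappa_N\otimes_{\Z/N\Z}(C/B)$ and of $\Tor_1({\cal E},C/B)$ compatibly and closing the length argument uniformly across the bad fibers of ${\cal X}_0(N)$; the remainder is formal manipulation of isogenies together with the structure theory already in place.
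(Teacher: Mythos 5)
Your proof takes a genuinely different route from the paper's.  The paper disposes of this proposition in two lines: it notes (via the preceding corollary) that any variety isomorphic to a product of curves ${\cal E}_{d_i}$ is canonically of the form ${\cal E}_{B',C'}$ with $B'=\Hom({\cal E},-)$ and $C'=\Hom({\cal E}_N,-)$; that ${\cal E}_{B,C}^\vee$ is such a product (each ${\cal E}_{d_i}$ being principally polarized); and then computes the two $\Hom$ groups via duality of abelian schemes, $\Hom({\cal E},{\cal E}_{B,C}^\vee)\cong\Hom({\cal E}_{B,C},{\cal E})$, etc.  This sidesteps Cartier duality and any direct contact with the flat extension of $\kappa_N\otimes_{\Z/N\Z}(C/B)$ in bad characteristic, which is precisely where the subtlety lives.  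Your approach dualizes the defining quotient presentation instead, and so has to actually match $(\ker g)^D$ against the flat group scheme ${\cal E}_{C^*,B^*}$ is built from; the Weil-pairing identification $\kappa_N^D\cong{\cal E}[N]/\kappa_N$ and the compatibility of the dualization of $\kappa_N\hookrightarrow{\cal E}[N]$ with the natural quotient are the crux, as you flag.  The outline is sound: the Weil pairing is alternating, so $\kappa_N$ (cyclic in the Katz--Mazur sense) is isotropic, hence maximal isotropic by order count, hence self-orthogonal, and the dualized inclusion becomes the quotient by $\kappa_N$; and the remaining length count and reduction to single cyclic factors plus flat extension from the ordinary locus is reasonable.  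Two caveats worth making explicit if you wanted to write this out: (i) over the bad fibers, $\kappa_N\otimes_{\Z/N\Z}(C/B)$ is \emph{not} the honest tensor product (the paper warns about this), so the entire short exact sequence and its Cartier dual must be set up over the ordinary locus and then transported by the flat-closure/uniqueness argument you sketch; (ii) the identification ${\cal E}[N]^D\cong{\cal E}[N]$ via the Weil pairing must be shown compatible with the identification $\Tor_1({\cal E},M)^D\cong\Tor_1({\cal E},\Hom(M,\Q/\Z))$ coming from dualizing ${\cal E}\otimes B\to{\cal E}\otimes C$; these are the same datum (both arise from the principal polarization and the dual of $[N]$), but it needs to be said.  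The paper's route is shorter and avoids both issues, at the price of being less informative about what the isomorphism actually does on kernels; yours is more explicit but carries the corresponding bookkeeping burden.
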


\begin{proof}
 The previous corollary allows us to canonically identify anything
 isomorphic to a pro\-duct of curves ${\cal E}_{d_i}$ with a variety of the
 form ${\cal E}_{B,C}$. Since ${\cal E}_{B,C}$ is isomorphic to such a
 product, its dual is also of that form, and thus it remains only to
 compute $\Hom({\cal E},{\cal E}_{B,C}^{\vee})$ and $\Hom({\cal E}_N,{\cal
 E}_{B,C}^{\vee})$.
\end{proof}

It follows that the N\'eron--Severi group of~${\cal E}_{B,C}$ (or of any
fiber without complex multiplication) consists of pairings $B\otimes C\to
\Z$ which become symmetric when restricted to~$B\otimes B$; equivalently,
it~consists of symmetric pairings $Q\colon B\otimes B\to \Z$ such that
$Q(B,NC)\subset N\Z$. From our construction above, we~find that if we base
change to~${\cal X}_0(2N,2)$, then any such symmetric pairing induces a
natural line bundle $\hat{\cal L}_{Q;B,C}$ on~${\cal E}_{B,C}$, satisfying
the appropriate compatibility relations.

Suppose now that $Q$ is a positive definite pairing of~``level'' dividing
$N$ (i.e., such that \mbox{$NB^*\subset QB$}) Then we have a chain of free abelian
groups $Q^{-1}NB^*\subset B\subset C\subset Q^{-1}B^*$, giving rise to an
isogeny $\pi\colon {\cal E}_{B,C}\to {\cal E}_{B,Q^{-1}B^*}$. The~pairing $Q$
still induces an element of the N\'eron--Severi group of the quotient, which
by degree considerations is now a principal polarization, represented by
the line bundle $\hat\Theta_B:=\hat{\cal L}_{Q;B,Q^{-1}B^*}$. We moreover
find that $\pi^*\hat\Theta_B\cong \hat{\cal L}_{Q;B,C}$.

We also note that the action of~${\cal E}[N]\otimes \big(Q^{-1}B^*/B\big)\cong
\Tor_1\big({\cal E},Q^{-1}B^*/B\big)$ on~${\cal E}\otimes B$ des\-cends to an action
of the quotient $\big({\cal E}[N]\otimes \big(Q^{-1}B^*/B\big)\big) / \big(\kappa_N\otimes
\big(Q^{-1}B^*/B\big)\big)$ on~${\cal E}_{B,Q^{-1}B^*}$, which by the Weil pairing may
be identified with an action of the Pontryagin dual $\Hom\big(\kappa_N\otimes
\big(Q^{-1}B^*/B\big),\mu_N\big)$. If $\kappa_N$ is diagonalizable, then this dual is
discrete, and may be identified with~$\Hom\big(Q^{-1}B^*/B,\Hom(\kappa_N,\mu_N)\big)$.

\begin{lem}
 On the locus of~${\cal X}_0(2N,2)$ where the $N$-isogeny $E_1\to E_N$ has
 diagonalizable kernel $\kappa_N$, we~have natural
 identifications
 \begin{gather*}
 \Gamma\big(E_{B,C};\hat{\cal L}_{Q;B,C}\big)
 \cong
 \bigoplus_{g\in \Hom(Q^{-1}B^*/C,\Hom(\kappa_N,\mu_N))}
 g^* \Gamma\big(E_{B,Q^{-1}B^*};\hat\Theta_B\big),
 \end{gather*}
 in which each $(1$-dimensional!$)$ summand on the right is an eigenspace for
 the induced action of~$\ker(E_{B,C}\to E_{B,Q^{-1}B^*})$.
\end{lem}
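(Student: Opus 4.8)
The plan is to identify $\hat{\mathcal L}_{Q;B,C}$ with the pullback $\pi^*\hat\Theta_B$ (which we are given), to decompose the finite flat pushforward $\pi_*\sO_{E_{B,C}}$ according to the action of $K:=\ker\pi$, and to combine this with the fact that $\hat\Theta_B$ is a \emph{principal} polarization, which forces each isotypic summand to be $1$-dimensional. Throughout we work over the locus of ${\cal X}_0(2N,2)$ where $\kappa_N$ is diagonalizable, so that all the relevant finite group schemes are diagonalizable and their Cartier duals are constant.

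First I would compute $K$. From the presentations $E_{B,C}=({\cal E}\otimes B)/(\kappa_N\otimes_{\Z/N\Z} C/B)$ and $E_{B,Q^{-1}B^*}=({\cal E}\otimes B)/(\kappa_N\otimes_{\Z/N\Z} Q^{-1}B^*/B)$, together with right-exactness of the extended tensor product, one obtains $K\cong \kappa_N\otimes_{\Z/N\Z}(Q^{-1}B^*/C)$; on the diagonalizable locus this is a diagonalizable finite flat group scheme with Cartier dual $\widehat K\cong \Hom(Q^{-1}B^*/C,\Hom(\kappa_N,\mu_N))$ --- precisely the indexing set of the Lemma. Since $\pi$ exhibits $E_{B,C}$ as a $K$-torsor over $E_{B,Q^{-1}B^*}$, the coherent sheaf $\pi_*\sO_{E_{B,C}}$ decomposes $K$-equivariantly as $\bigoplus_{\chi\in\widehat K}\mathcal M_\chi$, where each $\mathcal M_\chi$ is invertible and $K$ acts on it through $\chi$; moreover $\pi^*\mathcal M_\chi\cong\sO$, so $\mathcal M_\chi\in\ker(\pi^*\colon\Pic\to\Pic)\subseteq\Pic^0(E_{B,Q^{-1}B^*})$ (using that $\pi^*$ is injective on $\NS$), and $\chi\mapsto\mathcal M_\chi$ is the standard isomorphism of $\widehat K$ with $\ker\bigl(\widehat\pi\colon E_{B,Q^{-1}B^*}^\vee\to E_{B,C}^\vee\bigr)$.

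Next, the projection formula (valid since $\pi$ is finite flat) gives $\pi_*\pi^*\hat\Theta_B\cong\bigoplus_{\chi}(\hat\Theta_B\otimes\mathcal M_\chi)$, hence
\[
\Gamma(E_{B,C};\hat{\mathcal L}_{Q;B,C})=\Gamma(E_{B,C};\pi^*\hat\Theta_B)\cong\bigoplus_{\chi\in\widehat K}\Gamma(E_{B,Q^{-1}B^*};\hat\Theta_B\otimes\mathcal M_\chi),
\]
and, with respect to the $K$-linearization of $\pi^*\hat\Theta_B$ coming from the descent along $\pi$, the $\chi$-summand on the right is exactly the $\chi$-eigenspace for $K$ on the left. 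Because $\hat\Theta_B$ is a principal polarization, the map $\phi_{\hat\Theta_B}\colon E_{B,Q^{-1}B^*}\xrightarrow{\ \sim\ }E_{B,Q^{-1}B^*}^\vee$ is an isomorphism (the rigidifications at $0$ supplied by the weight trivializations make this, and everything below, canonical), so each $\mathcal M_\chi$ equals $\phi_{\hat\Theta_B}(g_\chi)$ for a unique translation $g_\chi$; then $\hat\Theta_B\otimes\mathcal M_\chi\cong t_{g_\chi}^*\hat\Theta_B$ canonically, so $\Gamma(E_{B,Q^{-1}B^*};\hat\Theta_B\otimes\mathcal M_\chi)=g_\chi^*\,\Gamma(E_{B,Q^{-1}B^*};\hat\Theta_B)$, which is $1$-dimensional by Riemann--Roch for a principal polarization (and an invertible sheaf on the base after cohomology and base change, since the higher cohomology vanishes and $h^0=1$ on every fibre).

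The one genuinely fiddly point --- and the main obstacle --- is to check that $g_\chi$ is the translation obtained by regarding $\chi\in\Hom(Q^{-1}B^*/C,\Hom(\kappa_N,\mu_N))$ as an element of the translation group $\Hom(Q^{-1}B^*/B,\Hom(\kappa_N,\mu_N))$ of $E_{B,Q^{-1}B^*}$ via the inclusion induced by $Q^{-1}B^*/B\twoheadrightarrow Q^{-1}B^*/C$, i.e.\ the action described just before the Lemma. This is a Weil-pairing / Cartier-duality bookkeeping exercise: one must match the three incarnations of $\widehat K$ --- the Cartier dual of $\ker\pi$, the subgroup $\ker\widehat\pi\subseteq E_{B,Q^{-1}B^*}^\vee$, and (via $\phi_{\hat\Theta_B}^{-1}$) a subgroup of translations --- compatibly with the functoriality of the constructions ${\cal E}_{B,C}$ from section~2 and with the identification $E_{B,Q^{-1}B^*}^\vee\cong E_{C^*,B^*}$. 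Once the pairings are normalized consistently this is routine, but it needs care; everything else (finiteness and flatness of $\pi$, exactness of $\pi_*$, the representation theory of the diagonalizable $K$, Riemann--Roch) is standard, and naturality in $(B,C,Q,N)$ is automatic because every isomorphism used above is canonical given the fixed rigidifications.
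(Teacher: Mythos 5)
Your proof is correct, but it takes a genuinely different route from the paper's. The paper proves the lemma by citing the structure theory of Heisenberg (theta) groups: it observes that $\Gamma(E_{B,C};\hat{\cal L}_{Q;B,C})$ is the unique weight-$1$ irreducible representation of the theta group ${\cal G}(\hat{\cal L}_{Q;B,C})$, that the natural map $\hat\Theta_B\to\pi_*\pi^*\hat\Theta_B$ selects one eigenspace of the kernel, and that the commutator pairing on the theta group is the Weil pairing, so the remaining eigenspaces are swept out by the complementary translation subgroup (referencing Sekiguchi and Polishchuk). You instead unwind this from first principles: compute $K=\ker\pi$ directly from the presentations of $E_{B,C}$ and $E_{B,Q^{-1}B^*}$, decompose $\pi_*\sO_{E_{B,C}}$ into line bundles ${\cal M}_\chi$ indexed by $\widehat K$ using diagonalizability, apply the projection formula to split $\pi_*\pi^*\hat\Theta_B$, and then use the principal polarization $\phi_{\hat\Theta_B}$ to convert each twist by ${\cal M}_\chi\in\ker\widehat\pi$ into a pullback by a translation $g_\chi$. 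Your argument is more self-contained (it does not rely on the reader knowing theta-group representation theory) and it derives the $1$-dimensionality of each summand directly via Riemann--Roch for a principal polarization rather than via irreducibility of the Heisenberg representation; the paper's is shorter and closer to standard references. Both proofs ultimately hinge on the same Weil-pairing/Cartier-duality bookkeeping needed to match $\widehat K$ with the specific translation subgroup described before the lemma, which you flag honestly as the fiddly step and do not carry out in detail --- this is acceptable, since the paper's own argument compresses the same bookkeeping into the assertion that the theta-group commutator is the Weil pairing.
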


\begin{proof}
 We have
 \begin{gather*}
 \Gamma\big(E_{B,C};\hat{\cal L}_{Q;B,C}\big)
 \cong
 \Gamma\big(E_{B,C};\pi^*\hat\Theta_B\big)
 \cong
 \Gamma\big(E_{B,Q^{-1}B^*};\pi_*\pi^*\hat\Theta_B\big).
 \end{gather*}
 The natural map $\hat\Theta_B\to \pi_*\pi^*\hat\Theta_B$ selects a
 particular eigenspace of the kernel of the isogeny, and the decomposition
 as claimed then follows by the structure theory of representations of
 Heisenberg groups, see~\cite{SekiguchiT:1977} as well as the exposition
 in~\cite{PolishchukA:2003}. Here we use the fact that
 $\Gamma\big(E_{B,C};\hat{\cal L}_{Q;B,C}\big)$ is the unique irreducible
 representation of the Heisenberg group ${\cal G}\big(\hat{\cal L}_{Q;B,C}\big)$
 on which the central~${\mathbb G}_m$ acts with weight 1, together with
 the fact that the commutator pairing on the Heisenberg group is precisely
 the Weil pairing, so the different isotypic components for the
 diagonalizable kernel are related via the complementary translation
 subgroup.
\end{proof}

\begin{cor}
 Suppose that the finite group $G$ acts on~$C$, preserving the subgroup
 $B$ and the polarization~$Q\colon B\to B^*$ so that $G$ acts on~${\cal E}_{B,C}$
 as automorphisms fixing the identity, with an indu\-ced equivariant
 structure on~$\hat{\cal L}_{Q;B,C}$. On~the locus of~${\cal X}_0(2N,2)$
 where $E_1\to E_N$ has diagonalizable kernel, the $G$-module
 $\Gamma\big(E_{B,C};\hat{\cal L}_{Q;B,C}\big)$ is isomorphic to the permutation
 module arising from the action of~$G$ on~$\Hom\big(Q^{-1}B^*/C,\Q/\Z\big)$.
\end{cor}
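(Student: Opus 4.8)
The plan is to read the statement off from the preceding Lemma once the $G$-action has been tracked through that decomposition. Since $G$ preserves $B$, $C$ and the pairing $Q$, it preserves the chain $Q^{-1}NB^*\subset B\subset C\subset Q^{-1}B^*$, so it acts on $E_{B,Q^{-1}B^*}$ compatibly with the isogeny $\pi:E_{B,C}\to E_{B,Q^{-1}B^*}$; moreover, by the naturality isomorphisms $g^*\hat{\cal L}_{Q,w}\cong\hat{\cal L}_{g^tQg,w}$, both $\hat{\cal L}_{Q;B,C}$ and $\hat\Theta_B=\hat{\cal L}_{Q;B,Q^{-1}B^*}$ carry canonical $G$-equivariant structures, with $\pi^*\hat\Theta_B\cong\hat{\cal L}_{Q;B,C}$ $G$-equivariantly. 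Hence $G$ preserves $T:=\ker\pi$, acts on its character group, and the isomorphism of the Lemma can be chosen $G$-equivariantly: $G$ permutes the one-dimensional summands $g^*\Gamma(E_{B,Q^{-1}B^*};\hat\Theta_B)$, carrying the $\chi$-eigenline to the $(g\cdot\chi)$-eigenline.

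Next I would identify the resulting $G$-set. By the Lemma the eigenlines are indexed by $\Hom(Q^{-1}B^*/C,\Hom(\kappa_N,\mu_N))$, i.e.\ by the character group of $T\cong\kappa_N\otimes_{\Z/N\Z}(Q^{-1}B^*/C)$; since $G$ acts on $E_{B,C}$ through its action on $B$, and not on the base curve ${\cal E}$, it acts on $T$ only through the factor $Q^{-1}B^*/C$. On the diagonalizable-kernel locus $\kappa_N$ is diagonalizable, so $\Hom(\kappa_N,\mu_N)$ is a constant group isomorphic to $\Z/N\Z$, and as $Q^{-1}B^*/C$ is $N$-torsion this identifies the index set, $G$-equivariantly, with $\Hom(Q^{-1}B^*/C,\Q/\Z)$ and its natural $G$-action. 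Thus $G$ permutes the eigenlines along exactly the $G$-set appearing in the statement.

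The remaining point, which I expect to be the crux, is to upgrade ``$G$ permutes the eigenlines'' to ``$\Gamma(E_{B,C};\hat{\cal L}_{Q;B,C})$ \emph{is} that permutation module'': a priori one only gets $\bigoplus_{\text{orbits }O}\Ind_{G_O}^G(\psi_O)$ for characters $\psi_O$ of the stabilizers, and one must show every $\psi_O$ is trivial. For the summand indexed by $0$ this says $G$ acts trivially on the one-dimensional $\Gamma(E_{B,Q^{-1}B^*};\hat\Theta_B)$, which I would obtain by pinning down its canonical generator via our normalizations of the $\hat{\cal L}_{1,{\cal E}_d}$ at the origin (these are manifestly compatible with the naturality isomorphisms, hence $G$-invariant). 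For a general orbit I would use that $G$ normalizes the theta group ${\cal G}(\hat{\cal L}_{Q;B,C})$ compatibly with the central ${\mathbb G}_m$: the eigenlines in one orbit are exchanged by a fixed system of lifts of the complementary isotropic subgroup of $K(\hat{\cal L}_{Q;B,C})$, and comparing these with the $G$-action identifies $\psi_O$ with $\psi_{\{0\}}$, hence trivial. (Alternatively, the $G$-module forms a flat family over the diagonalizable-kernel locus of ${\cal X}_0(2N,2)$, so its class in the representation ring is locally constant, and one may reduce to a complex fiber, where the permutation structure can be checked directly on an explicit basis of products of $\vartheta$ functions.) The theta-group bookkeeping — or, on the alternative route, controlling the family across characteristics — is the only genuinely delicate ingredient; everything else is formal.
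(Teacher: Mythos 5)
Your overall strategy is sound and matches the natural reading of the Corollary as a direct consequence of the preceding Lemma — indeed the paper gives no explicit proof, treating it as immediate once one observes that $G$ permutes the eigenlines compatibly with the translation action. Your reduction to showing $G$ acts trivially on the distinguished line $\Gamma(E_{B,Q^{-1}B^*};\hat\Theta_B)$ via the relation $h^*g^*=g^*h^*$ for $h$ stabilizing $g$ is exactly right, and you have correctly identified this as the one nonformal point.

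Two caveats on your proposed resolutions of that point. First, ``pinning down a canonical generator via the normalizations at the origin'' presumes that evaluation at the identity is an isomorphism $\Gamma(\hat\Theta_B)\to 0^*\hat\Theta_B$, i.e., that the theta divisor avoids the origin. For the factors $\hat{\cal L}_{1,{\cal E}_d}=\sO_{{\cal E}_d}([\omega])\otimes 0^*\sO_{{\cal E}_d}([\omega])^{-1}$ this holds, since $\omega\ne 0$, but $\hat\Theta_B$ also involves Poincar\'e-bundle factors and its theta divisor need not avoid $0$ in general, so this route needs care. Second, the alternative via flatness as you state it (``its class in the representation ring is locally constant'') does not by itself cover characteristics dividing $|G|$, which the Corollary does claim (the diagonalizability hypothesis on $\kappa_N$ has nothing to do with $|G|$; the paper's counterexamples concern characteristics dividing $N$). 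The clean fix is to combine both of your ideas: first use the eigenline-permutation structure to reduce to the single character $\chi_0$ of $G$ on the distinguished line, and then observe that for each $h\in G$ the scalar $\chi_0(h)$ is a section of $\mu_{|G|}$ over the diagonalizable-kernel locus, which is irreducible with characteristic-zero generic point; hence $\chi_0(h)$ is constant and may be evaluated over $\C$ (where invariance of the even theta function under automorphisms of $(\Lambda,Q)$ is a classical computation). This uses flatness on a single line rather than on the whole module, sidestepping the representation-theoretic subtlety.
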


\begin{rem}
 Here we note that for general free abelian groups $B\subset C$ with
 finite quotient,
 \begin{gather*}
 \Hom(C/B,\Q/\Z)\cong \Tor_1(B^*/C^*,\Q/\Z) \cong
 B^*/C^*,
 \end{gather*}
 and thus
 \begin{gather*}
 \Hom\big(Q^{-1}B^*/C,\Q/\Z\big)\cong C^*/QB\cong
 Q^{-1}C^*/B.
 \end{gather*}
\end{rem}

\begin{cor}
 Let $B$ be a finitely generated free abelian group and $Q\colon B\otimes B\to
 \Z$ an {\em even} symmetric pairing of level dividing $N$, and let $E$ be
 an elliptic curve which, if supersingular, has characteristic prime to~$N$. Then $\Gamma(E\otimes B;{\cal L}_{Q,0})$ is isomorphic as a
 $G$-module to the permutation module coming from the action of~$G$ on~$Q^{-1}B^*/B$.
\end{cor}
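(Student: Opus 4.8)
The plan is to reduce to the preceding Corollary by taking $C=B$. With this choice the chain $NC\subset B\subset C$ holds trivially, ${\cal E}_{B,B}={\cal E}\otimes B$, and since $Q$ is an even pairing --- equivalently has even diagonal entries in any basis of $B$ --- the earlier Proposition gives a canonical identification of $\hat{\cal L}_{Q;B,B}$ (on ${\cal E}\otimes B$ over ${\cal X}_0(2N,2)$) with the pullback of ${\cal L}_{Q,0}$. This identification is compatible with the naturality of both constructions under automorphisms of $B$, so the $G$-equivariant structure on $\hat{\cal L}_{Q;B,B}$ appearing in the preceding Corollary matches the standard one on ${\cal L}_{Q,0}$ coming from $g^*{\cal L}_{Q,0}\cong {\cal L}_{g^tQg,0}={\cal L}_{Q,0}$ for $g\in G$; in particular $\Gamma(E\otimes B;{\cal L}_{Q,0})\cong\Gamma(E_{B,B};\hat{\cal L}_{Q;B,B})$ as $G$-modules.

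To apply the preceding Corollary we must present $E$ as a fiber of ${\cal E}_{B,C}\to{\cal X}_0(2N,2)$ over a point where the underlying $N$-isogeny $E\to E_N$ has diagonalizable kernel $\kappa_N$. Both $\Gamma(E\otimes B;{\cal L}_{Q,0})$ and the claimed permutation module are unchanged by extension of the ground field (global sections commute with flat base change, and the $G$-set $Q^{-1}B^*/B$ is purely arithmetic), so we may assume the field is algebraically closed. The hypothesis on $E$ is precisely what makes a suitable $\kappa_N$ available: if $\ch k\nmid N$ any cyclic subgroup of $E[N]$ of order $N$ is isomorphic to $\mu_N$, hence diagonalizable; if $\ch k=p\mid N$ and $E$ is ordinary one uses the connected (multiplicative) part of the $p$-primary torsion together with a prime-to-$p$ complement; and the only remaining case, $p\mid N$ with $E$ supersingular, is the one excluded. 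Adjoining the level-$2$ data needed to lift the point to ${\cal X}_0(2N,2)$ (harmless over an algebraically closed field) puts us in the setting of the preceding Corollary with $C=B$.

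That Corollary now identifies $\Gamma(E\otimes B;{\cal L}_{Q,0})$ with the permutation module for the $G$-action on $\Hom(Q^{-1}B^*/B,\Q/\Z)$, and the Remark following it supplies canonical --- hence $G$-equivariant --- isomorphisms $\Hom(Q^{-1}B^*/B,\Q/\Z)\cong B^*/QB\cong Q^{-1}B^*/B$, the second being induced by $Q^{-1}$. Thus the two permutation modules coincide, giving the assertion. I expect no real difficulty here: the substantive work --- the Heisenberg-group decomposition of $\Gamma(E_{B,C};\hat{\cal L}_{Q;B,C})$ --- is already contained in the preceding Lemma and Corollary, and what remains is the bookkeeping of matching equivariant structures across the descent isomorphism and checking that the stated arithmetic condition on $E$ is exactly the existence of a cyclic $N$-isogeny with diagonalizable kernel.
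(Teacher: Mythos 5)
Your proof follows the same route as the paper's (terse) one: set $C=B$, use the earlier Proposition to identify $\hat{\cal L}_{Q;B,B}$ with ${\cal L}_{Q,0}$ for even $Q$, and apply the preceding Corollary together with the Remark's identification of $\Hom(Q^{-1}B^*/B,\Q/\Z)$ with $Q^{-1}B^*/B$. The extra detail you supply --- the Noether--Deuring reduction to an algebraically closed field and the case analysis showing the hypothesis on $E$ is exactly what yields a cyclic $N$-isogeny with diagonalizable kernel --- is precisely what the paper compresses into the single sentence ``The condition on $E$ ensures that we may choose a point of ${\cal X}_0(2N,2)$ lying over it such that the cyclic $N$-isogeny $E\cong E_1\to E_N$ has diagonalizable kernel.''
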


\begin{proof}
 The condition on~$E$ ensures that we may choose a~point of~${\cal X}_0(2N,2)$
 lying over it such that the cyclic $N$-isogeny $E\cong E_1\to E_N$ has
 diagonalizable kernel. We may then identify the $G$-module
 $\Gamma(E\otimes B;{\cal L}_{Q,0})$ with~$\Gamma\big(E_{B,B};\hat{\cal
 L}_{Q;B,B}\big)$ and thus apply the previous corollary.
\end{proof}

\begin{cor}
 With the same hypotheses, the dimension~$\dim\big(\Gamma(E\otimes B;{\cal
 L}_{Q,0})^G\big)$ is equal to the number of orbits of~$G$ in~$Q^{-1}B^*/B$.
\end{cor}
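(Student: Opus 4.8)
The plan is to combine the previous Corollary with a standard fact about permutation modules. By that Corollary, under the stated hypotheses $\Gamma(E\otimes B;{\cal L}_{Q,0})$ is isomorphic as a $G$-module to the permutation module on the finite set $S:=Q^{-1}B^*/B$ (finite because $Q$ has finite cokernel as a map $B\to B^*$), i.e.\ to the free $k$-vector space $k[S]$ with $G$ permuting the basis. It therefore suffices to compute $\dim_k k[S]^G$ for a finite $G$-set $S$.

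For this I would argue directly rather than via characters, since the ground field $k$ may have positive characteristic and so the usual averaging projector and Maschke-type arguments are unavailable. Given an invariant element $\sum_{s\in S} a_s\,s$, applying $g\in G$ and comparing coefficients shows $a_{gs}=a_s$ for all $g\in G$, $s\in S$, so $s\mapsto a_s$ is constant on each $G$-orbit. Conversely, for each orbit $O\subseteq S$ the orbit sum $e_O:=\sum_{s\in O} s$ is manifestly $G$-invariant. The elements $e_O$ are supported on pairwise disjoint subsets of the basis $S$, hence are linearly independent, and by the preceding remark they span $k[S]^G$. Thus $\{e_O : O\in S/G\}$ is a $k$-basis of $k[S]^G$, so $\dim_k k[S]^G$ equals the number of $G$-orbits on $S$.

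Combining the two steps gives that $\dim(\Gamma(E\otimes B;{\cal L}_{Q,0})^G)$ equals the number of orbits of $G$ on $Q^{-1}B^*/B$, as claimed. There is no substantive obstacle here; the only point deserving (very mild) care is precisely the characteristic-independence of the orbit-sum basis, which is why the argument is phrased so as to avoid any appeal to averaging over $G$ or to the theory of ordinary characters — this matters because the interesting content of the hypotheses (the condition on supersingular $E$) has already been absorbed into the previous Corollary.
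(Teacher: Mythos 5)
Your proof is correct and is essentially the argument the paper implicitly intends — the Corollary is stated without proof, as an immediate consequence of the preceding one, and the characteristic-independent computation of invariants of a permutation module via orbit sums is exactly the expected step. The observation that one should avoid averaging arguments is a sensible thing to flag, but it is the standard resolution and does not diverge from the paper's route.
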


\begin{rem}
 Both conclusions remain valid for supersingular curves of characteristic
 prime to~$|G|$; indeed, any 1-parameter family of~$G$-modules over an
 algebraically closed field containing $1/|G|$ is trivial, so the claims
 follow from the case of ordinary curves.
\end{rem}

It turns out that the exclusion of certain supersingular curves above is
indeed necessary. For~instance, suppose that $B=\Z^8$ and $Q$ is the Gram
matrix of the lattice $Q_8(1)$ of~\cite{ConwayJH/SloaneNJA:1988}. This is
a symmetric matrix with even diagonal and elementary divisors
1, 1, 1, 1, 5, 5, 5, 5, and the corresponding automorphism group $\GO_4^+(5)$
acts in the natural way on~$\coker(Q)$. If $E$ is the (geometrically
unique) supersingular curve of characteristic 5, then one finds that the
induced $\GO_4^+(5)$-module structure on~$\Gamma\big(E^8;{\cal L}_{Q,0}\big)$ is
{\em not} isomorphic to the given permutation representation. In~this
case, the actual invariant subspace does not jump, but we find that
\begin{gather*}
\dim\big(\Gamma\big(E^{16};{\cal L}_{Q\oplus Q,0}\big)^{\GO_4^+(5)}\big)\cong
\dim\big(\big(\Gamma\big(E^8;{\cal L}_{Q,0}\big)^{\otimes 2}\big)^{\GO_4^+(5)}\big)=160,
\end{gather*}
while for all other curves, the invariant space has dimension 156. To~compute the action of~$G$ in such supersingular cases, we~may use the
fact that ${\cal L}_{Q,0}$ always descends to the appropriate quotient and
gives an equivariant isomorphism
$\Gamma\big(E^n;{\cal L}_{Q,0}\big)\cong\Gamma\big(\ker\psi_Q^\vee;{\cal L}'\big)$;
what fails is that we no longer have a natural basis (which in the case of
the theorem are an eigenbasis for the action of the diagonalizable group~$\ker\psi_Q$) of the latter. There is, in fact, an
isomorphism
\begin{gather*}
\Gamma\big(\ker\psi_Q^\vee;{\cal L}'\big)\cong
\Gamma\big(\ker\psi_Q^\vee;\sO_{\ker\psi_Q^\vee}\big),
\end{gather*}
since $\ker\psi_Q^\vee$ is $0$-dimensional, but this is not in general
equivariant; in general, the action of~$G$ is twisted by some cocycle with
values in the unit group of the coordinate ring. When $\det(Q)$ is odd,
however, we~can use the description of~${\cal L}'$ in terms of pullbacks of~$\sO_{{\cal E}_d}([\sigma_d])$ to compute this cocycle. In~the $Q_8(1)$
case, this further simplifies, since we only need to know what happens on~$\alpha_5^4$, allowing us to reduce to an evaluation of functions on~$E_5^4$ in an appropriate formal neighborhood of the identity.

{\sloppy
A similar calculation applies to the Gram matrix of~$\sqrt{3}\Lambda_{E_6}^\perp$, with its automorphism group~$O_5(3)$; in
this case, there are also subgroups of~$O_5(3)$ for which the corollary
fails on the supersingular curve of characteristic 3. We can also obtain a
characteristic~2 counterexample from~$\sqrt{2}\Lambda_{E_7}^\perp$; in~this
case, it is unclear how to compute the cocycle, but we can simply check
that none of~the~16~elements of~$H^1\big(\Sp_6(2);\mu_2\big(\alpha_2^6\big)\big)$ give rise
to the permutation module. (There is also a~subgroup with too many
invariants, namely the preimage in~$W(D_6)$ of the transitive
$\Alt_5\subset S_6$, which has too many invariants in each of the 16
possible cases.)

}

We should further note that the requirement that $Q$ have even diagonal is
also necessary; indeed, otherwise the claim already fails for the case
$Q=1$, $G=\GL_1(\Z)$ for {\em any} curve of characteristic not 2.

Even when~$E$ is supersingular of characteristic dividing $N$, there may
still be isogenies of~the~form
\begin{gather*}
E_{B,C}\to E_{B,C'}
\end{gather*}
with diagonalizable kernel, which as an abstract group scheme can be
(geometrically) identified with~$\mu_N\otimes_{\Z/N\Z} C'/C$. Indeed, the only
requirement is that $|C'/C|$ be prime to the characteristic of~$E$.
Making this a canonical identification is somewhat trickier, as~the kernel
is only naturally described as the quotient
\begin{gather*}
(\kappa_N\otimes_{\Z/N\Z} C'/B)/(\kappa_N\otimes_{\Z/N\Z} C/B).
\end{gather*}
Moreover, the translations moving between the different eigenspaces are
only defined up to the kernel of the descended polarization. We find in
general that
\begin{gather*}
\Gamma(E_{B,C};{\cal L}_{Q;B,C})
\cong
\bigoplus_g g^*\Gamma(E_{B,C'};{\cal L}_{Q;B,C'}),
\end{gather*}
where $g$ ranges over the {\em quotient} group
\begin{gather*}
\Hom\big(Q^{-1}B^*/C,\Hom(\kappa_N,\mu_N)\big)/\Hom\big(Q^{-1}B^*/C',\Hom(\kappa_N,\mu_N)\big).
\end{gather*}
We can thus only use this decomposition in understanding group actions when
this quotient group has an equivariant splitting. Luckily, there is an
important case when this happens: if~$C'/C$ is the $l$-part of~$Q^{-1}B^*/C$ for some prime $l$ which is invertible on~$E$, then
the quotient may be identified with the $l$-part of~$\Hom\big(Q^{-1}B^*/C,\Hom(\kappa_N,\mu_N)\big)$.

\begin{lem}\label{lem:not_prime_power}
 With $B,C,Q$ as above and $l$ a prime invertible on~$E$, there is a
 $G$-equivariant isomorphism
\begin{gather*}
\Gamma(E_{B,C};{\cal L}_{Q;B,C}) \cong \bigoplus_H \Ind_H^G\Res^G_H
\Gamma(E_{B,C'};{\cal L}_{Q;B,C'}),
\end{gather*}
where $H$ ranges over the point stabilizers in the different orbits of the
action of~$G$ on the $l$-part of~$Q^{-1}C^*/B$ and
\begin{gather*}
C' = \bigcup_k Q^{-1}B^*\cap l^{-k}C.
\end{gather*}
\end{lem}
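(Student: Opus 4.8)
The plan is to promote the isogeny decomposition recorded just above,
\[
\Gamma(E_{B,C};{\cal L}_{Q;B,C})\cong\bigoplus_g g^*\Gamma(E_{B,C'};{\cal L}_{Q;B,C'}),
\]
to an identity of $G$-modules. First I would observe that, since $l$ is invertible on $E$ and $|C'/C|$ is a power of $l$, the isogeny $\pi\colon E_{B,C}\to E_{B,C'}$ has diagonalizable kernel $K$, so $\Gamma(E_{B,C};{\cal L}_{Q;B,C})$ splits into $K$-isotypic pieces indexed by the character group $\hat K$; by the Weil pairing together with the Remark identifying $\Hom(Q^{-1}B^*/C,\Q/\Z)$ with $Q^{-1}C^*/B$, this $\hat K$ is precisely the $l$-part of $Q^{-1}C^*/B$. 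Because the equivariant structure on ${\cal L}_{Q;B,C}$ is given, $G$ normalizes the relevant theta group and so acts on $\hat K$, and this action is exactly the action of $G$ on the $l$-part of $Q^{-1}C^*/B$ appearing in the statement.

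The crux, and where the hypotheses enter, is that in this situation the decomposition can be made $G$-equivariant. In general the index set $\hat K$ is only a subquotient --- $\Hom(Q^{-1}B^*/C,\Hom(\kappa_N,\mu_N))$ modulo $\Hom(Q^{-1}B^*/C',\Hom(\kappa_N,\mu_N))$ --- of the group of translations preserving ${\cal L}_{Q;B,C}$, with no canonical splitting; but since $C'$ is chosen to be the entire $l$-saturation $\bigcup_k Q^{-1}B^*\cap l^{-k}C$ of $C$ in $Q^{-1}B^*$, the group $Q^{-1}B^*/C'$ has trivial $l$-part, so $\hat K$ is the $l$-primary part of $\Hom(Q^{-1}B^*/C,\Hom(\kappa_N,\mu_N))$, a canonical and hence $G$-stable direct summand. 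This provides a $G$-equivariant choice of translation representatives for the $g$'s, with a distinguished $G$-stable summand that is $G$-equivariantly isomorphic to $\Gamma(E_{B,C'};{\cal L}_{Q;B,C'})$. For $g$ an orbit representative with stabilizer $H$, a direct computation in the theta group then shows that $H$ acts on $g^*\Gamma(E_{B,C'};{\cal L}_{Q;B,C'})$ exactly as it acts on $\Gamma(E_{B,C'};{\cal L}_{Q;B,C'})$: conjugating the translation $g$ by $h\in H$ alters it by $h(g)-g$, which lies in $K$ since $h$ fixes the class of $g$ and, by the $l$-saturation, the lift is unambiguous modulo $K$; and $K$ acts trivially on the distinguished summand. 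Thus this summand is $\Res^G_H\Gamma(E_{B,C'};{\cal L}_{Q;B,C'})$ as an $H$-module.

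Finally I would pick one representative from each $G$-orbit on $\hat K$ and apply the elementary fact that a module presented as a direct sum of subspaces permuted by $G$ is the direct sum over the orbits of the inductions of the orbit-representative summands; this gives
\[
\Gamma(E_{B,C};{\cal L}_{Q;B,C})\cong\bigoplus_H\Ind_H^G\Res^G_H\Gamma(E_{B,C'};{\cal L}_{Q;B,C'}),
\]
with $H$ running over the point stabilizers of the orbits on the $l$-part of $Q^{-1}C^*/B$, as claimed (the fixed point $0\in\hat K$ contributing the untwisted $\Gamma(E_{B,C'};{\cal L}_{Q;B,C'})$). I expect the main obstacle to be the middle step: arranging all of the summand identifications to be simultaneously compatible with $G$, so that one genuinely obtains $\Ind\circ\Res$ rather than a cocycle-twisted variant. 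This is exactly what forces both the hypothesis that $l$ be invertible on $E$ (keeping $K$ and the relevant torsion diagonalizable, so that the eigenspace decomposition and the Weil-pairing identification of $\hat K$ behave) and the specific choice $C'=\bigcup_k Q^{-1}B^*\cap l^{-k}C$ (making $\hat K$ a canonical direct summand and the translation lifts unambiguous modulo $K$), and it is precisely what the conjugation computation is designed to dispatch.
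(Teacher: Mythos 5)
Your proposal is substantively correct and follows the paper's implicit approach; in fact the paper states this lemma without a proof environment, the ``proof'' being the discussion in the preceding paragraph, and you have supplied the elaboration that discussion leaves to the reader. The structure is exactly right: the decomposition into $K$-isotypic pieces via the diagonalizable kernel of $\pi\colon E_{B,C}\to E_{B,C'}$, the Weil-pairing identification of the index set with the $l$-part of $Q^{-1}C^*/B$, the observation that taking $C'$ to be the full $l$-saturation forces $Q^{-1}B^*/C'$ to have trivial $l$-part so that $\hat K$ is the \emph{canonical} $l$-primary summand of $\Hom(Q^{-1}B^*/C,\Hom(\kappa_N,\mu_N))$ (this is precisely the paper's key observation), and the standard orbit--stabilizer argument producing $\bigoplus_H\Ind_H^G\Res^G_H$.

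One detail in the theta-group paragraph is slightly muddled, though not in a way that undermines the argument. You write that ``the lift is unambiguous modulo $K$'' and therefore ``$h(g)-g$ lies in $K$''; but the whole point of choosing the representatives in the canonical $l$-primary summand $A_l\subset A=\Hom(Q^{-1}B^*/C,\Hom(\kappa_N,\mu_N))$ is that the map $A_l\to A/A_0$ is an \emph{isomorphism} (since $A_0=\Hom(Q^{-1}B^*/C',\Hom(\kappa_N,\mu_N))$ is precisely the prime-to-$l$ part $A_{l'}$, as $C'/C$ is $l$-torsion). So the lift is genuinely unique, not merely unique modulo $K$, and for $h\in H$ one gets $h(g)=g$ on the nose; the ``$K$ acts trivially on the trivial eigenspace'' step is then not needed for the conjugation and only comes in implicitly when descending the trivial eigenspace to $\Gamma(E_{B,C'};{\cal L}_{Q;B,C'})$. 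The conclusion is the same, but stating it this way removes the need to worry about whether the ambiguity really sits in $K$ rather than in the larger perpendicular $K^\perp$ (whose quotient $K^\perp/K$ is the kernel of the descended polarization, which does \emph{not} act trivially on the eigenspace). You rightly flag the cocycle question as the crux, and the canonical $l$-splitting is exactly what kills it; just be careful that the argument that it kills it is that the representative $g$ is $H$-fixed, not that the ambiguity is absorbed by $K$.
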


Here we should note that we have such a reduction for every prime dividing
$N$; in particular, if the polarization does not have prime power degree,
then we can always choose a prime dividing the degree of the polarization
which is invertible on~$E$, and use the corresponding reduction.

Although we have seen that there can indeed be (finitely many) bad curves
for such invariant theory questions, it turns out that our hypotheses are
in fact slightly more restrictive than they need to be. Suppose we have a
finite group $G$ acting on an abelian variety $A$ (fixing the identity).
There are two natural induced abelian subvarieties. The~subgroup scheme
$A^G$ is still projective, and thus (up to a possible inseparable base
change) we may take its reduced identity component $A^{G0}$. Equivalently
(and without need for base change), we~could instead define $A^{G0}$ to be
the image of the endomorphism $\sum_{g\in G} g\in \End(A)$. There is also
an almost complementary subvariety $A_G$ giving by the image of the
endomorphism $|G|-\sum_{g\in G} g$. Both subvarieties are clearly
preserved by $G$, and since the sum of the endomorphisms is an isogeny, it
follows that we have a natural $G$-equivariant isogeny $A_G\times A^{G0}\to
A$. Any $G$-equivariant line bundle on~$A$ (with trivial action on the
fiber over the identity) pulls back to a $G$-equivariant line bundle on~$A_G\times A^{G0}$, namely ${\cal L}|_{A_G}\boxtimes {\cal L}|_{A^{G0}}$.
Moreover, the action of~$G$ on the second factor is trivial, since it is
trivial at the identity.

It turns out that even though this isogeny can fail to have diagonalizable
kernel, we~can still use it to reduce questions about $G$-module structures
to~$A_G$.

\begin{lem}\label{lem:G-mod-factor}
 With $A$, $G$, ${\cal L}$ as above, assume that ${\cal L}$ is ample. Then
 there is a $G$-module iso\-morphism
 \begin{gather*}
 \Gamma(A;{\cal L})^{d}\cong \Gamma(A_G;{\cal L}|_{A_G})^{e},
 \end{gather*}
 where $d=\big|A_G\cap A^{G0}\big|$ and $e=\dim\Gamma\big(A^{G0};{\cal L}|_{A^{G0}}\big)$.
\end{lem}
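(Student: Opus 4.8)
The plan is to exploit the $G$-equivariant isogeny $\pi : A_G\times A^{G0}\to A$ together with the fact that $G$ acts trivially on the second factor. Pulling back ${\cal L}$ along $\pi$ gives ${\cal L}|_{A_G}\boxtimes {\cal L}|_{A^{G0}}$ as a $G$-equivariant bundle, and since the $A^{G0}$-factor carries a trivial $G$-action, by the K\"unneth formula we get a $G$-module isomorphism
\[
\Gamma(A_G\times A^{G0};\pi^*{\cal L})
\cong
\Gamma(A_G;{\cal L}|_{A_G})\otimes_k \Gamma(A^{G0};{\cal L}|_{A^{G0}}),
\]
where $G$ acts only on the first tensor factor. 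Since ${\cal L}$ is ample, so is its pullback, and $\Gamma(A^{G0};{\cal L}|_{A^{G0}})$ is a vector space of some dimension $e\ge 1$; thus the right-hand side is $\Gamma(A_G;{\cal L}|_{A_G})^{\oplus e}$ as a $G$-module. So it remains to compare $\Gamma(A;{\cal L})$ with $\Gamma(A_G\times A^{G0};\pi^*{\cal L})$.

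For that I would use the standard description of global sections of a line bundle pulled back along an isogeny $\pi$ with kernel $K$: the natural map ${\cal L}\to \pi_*\pi^*{\cal L}$ realizes $\Gamma(A;{\cal L})$ as an eigenspace (the $G_m$-weight-$1$ part) inside $\Gamma(A_G\times A^{G0};\pi^*{\cal L})$ for the action of the theta group ${\cal G}(\pi^*{\cal L})$ restricted to $K$, exactly as invoked in the proof of the Lemma on $\Gamma(E_{B,C};\hat{\cal L}_{Q;B,C})$ above. Here $K = A_G\cap A^{G0}$ (embedded antidiagonally), which has order $d$, and the whole finite Heisenberg group fits into an exact sequence whose quotient is $K\times K^\vee$; the Mackey/Heisenberg structure theory then gives that $\Gamma(A_G\times A^{G0};\pi^*{\cal L})$, as a module over that Heisenberg group, is induced from the character on the center, so each of its $|K|=d$ translated eigenspaces for $K$ is isomorphic to $\Gamma(A;{\cal L})$. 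Since $K$ is $G$-stable (being $A_G\cap A^{G0}$) and $\pi$ is $G$-equivariant, these $d$ eigenspaces are permuted by $G$ compatibly with the translation action; but the translations moving between eigenspaces come from $K^\vee$, on which $G$ also acts, and one checks that the resulting $G$-set of eigenspaces is $G$-equivariantly isomorphic to $\Hom(K,\mu)$ acting on itself, hence the direct sum of the eigenspaces is $G$-equivariantly $\Gamma(A;{\cal L})\otimes_k k[\Hom(K,\mu)]$ — but the point is only that as a $G$-module it is $\Gamma(A;{\cal L})^{\oplus d}$, since each eigenspace is an isomorphic copy of $\Gamma(A;{\cal L})$ and $G$ permutes isomorphic copies. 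Combining the two K\"unneth/Heisenberg computations yields
\[
\Gamma(A;{\cal L})^{\oplus d}\cong \Gamma(A_G\times A^{G0};\pi^*{\cal L})\cong \Gamma(A_G;{\cal L}|_{A_G})^{\oplus e}
\]
as $G$-modules, which is the claim.

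The main obstacle, and the place I would be most careful, is the $G$-equivariance in the last step: it is easy to see that the $d$ translated eigenspaces are each abstractly isomorphic to $\Gamma(A;{\cal L})$ and that $G$ permutes the eigenspaces, but to conclude the direct sum is $\Gamma(A;{\cal L})^{\oplus d}$ as a $G$-module one needs the $G$-action to respect a consistent system of identifications — equivalently, that the cocycle (valued in units, exactly as in the supersingular discussion above) by which $G$ twists the permutation of eigenspaces is trivial, or at least trivial after the direct sum. This is where ampleness is genuinely used (it guarantees the relevant spaces of sections are the unique irreducible theta-group representations, so that translation really does identify the eigenspaces), and it is why the statement is only an isomorphism of $d$-fold, resp.\ $e$-fold, multiples rather than of the spaces themselves. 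One should also double-check that $A^{G0}$ is an honest abelian subvariety (possibly after an inseparable base change, which does not affect dimensions of section spaces) and that ${\cal L}|_{A^{G0}}$ is ample so that $e\ge 1$; both are immediate since a restriction of an ample bundle to an abelian subvariety is ample.
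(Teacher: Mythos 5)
Your approach has two genuine gaps, and the ``fix'' you flag at the end does not close them. First, you decompose $\Gamma(A_G\times A^{G0};\pi^*{\cal L})$ into $K$-eigenspaces, which presupposes that $K=A_G\cap A^{G0}$ is diagonalizable; but the whole point of this Lemma (stated explicitly in the paragraph introducing it) is to handle exactly the case where this isogeny has \emph{non}-diagonalizable kernel, so the eigenspace decomposition need not exist at all. Second, even granting a decomposition, the inference from ``$G$ permutes $d$ copies each abstractly isomorphic to $\Gamma(A;{\cal L})$'' to ``the direct sum is $\Gamma(A;{\cal L})^{\oplus d}$'' is simply false: the direct sum is at best $\Gamma(A;{\cal L})$ tensored with the permutation module on the characters, which you even write as $\Gamma(A;{\cal L})\otimes k[\Hom(K,\mu)]$, and this is not $\Gamma(A;{\cal L})^{\oplus d}$ unless $G$ acts trivially on $\Hom(K,\mu)$. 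Ampleness gives irreducibility of the theta-group representation; it does not trivialize this permutation module nor the unit-valued cocycle you correctly identify as the danger.

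The paper's proof is designed precisely to sidestep both issues. Rather than decomposing under $K$, it tensors with $\Gamma(A^{G0};{\cal L}|_{A^{G0}})$ (on which $G$ acts trivially) and works with the Heisenberg group ${\cal G}({\cal L}|_{A^{G0}}^{-1})$ of the \emph{inverse} bundle, whose preimage $H$ of $K$ acts on both tensor factors. Frobenius reciprocity converts the $K$-invariants into a $\Hom$ over the full Heisenberg group, and the structure theory of Heisenberg representations identifies $\Gamma(A;{\cal L})\otimes\Gamma(A^{G0};{\cal L}|_{A^{G0}})$ with $\Ind_H^{{\cal G}}\Gamma(A_G;{\cal L}|_{A_G})$ --- no diagonalizability needed. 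The $G$-equivariance you worry about is controlled by the observation that the splitting $K\to H$ is $G$-invariant (it can be computed inside $A^{G0}$), which is what actually kills the cocycle. Forgetting the Heisenberg action then yields $\Gamma(A;{\cal L})^e\cong\Gamma(A_G;{\cal L}|_{A_G})^{e^2/d}$ as $G$-modules, and the $d$-vs-$e$ form in the statement follows by Krull--Schmidt cancellation. The mismatch between this exponent $e$ and the exponent $d$ your argument aims at directly is itself a sign that no na\"ive eigenspace argument is available.
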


\begin{proof}
 Let $K=A_G\cap A^{G0}$ be the kernel of the isogeny $A_G\times A^{G0}\to
 A$. Then we have a natural isomorphism
 \begin{gather*}
 \Gamma(A;{\cal L})
 \cong
 \Gamma\big(A_G\times A^{G0};{\cal L}|_{A_G}\boxtimes {\cal L}|_{A^{G0}}\big)^K
 \cong
 \big(\Gamma(A_G;{\cal L}|_{A_G})\otimes \Gamma\big(A^{G0};{\cal L}|_{A^{G0}}\big)\big)^K.
 \end{gather*}
 Let $H$ be the preimage of~$K$ in the Heisenberg group ${\cal G}\big({\cal
 L}_{A^{G0}}^{-1}\big)$. This certainly acts naturally on~$\Gamma\big(A^{G0};{\cal
 L}|_{A^{G0}}\big)^*$, but the fact that the line bundle descends to~$A$ implies
 that it also acts on~$\Gamma(A_G;{\cal L}|_{A_G})$. We thus have a
 natural isomorphism
 \begin{gather*}
 \big(\Gamma(A_G;{\cal L}|_{A_G})\otimes \Gamma\big(A^{G0};{\cal L}|_{A^{G0}}\big)\big)^K
 \cong \Hom_{H}\big(\Gamma\big(A^{G0};{\cal L}|_{A^{G0}}\big)^*,
 \Gamma(A_G;{\cal L}|_{A_G})\big),
 \end{gather*}
 which by Frobenius reciprocity further becomes
 \begin{gather*}
 \Hom_{H}\big(\Gamma\big(A^{G0};{\cal L}|_{A^{G0}}\big)^*,\Gamma(A_G;{\cal L}|_{A_G})\big)
 \\ \qquad
 \cong
 \Hom_{{\cal G}({\cal L}|_{A^{G0}}^{-1})}\big(\Gamma\big(A^{G0};{\cal L}|_{A^{G0}}\big)^*,
 \Ind_H^{{\cal G}({\cal L}_{A^{G0}}^{-1})} \Gamma(A_G;{\cal L}|_{A_G})\big).
 \end{gather*}
 By the structure of Heisenberg representations, we~may then conclude that
 there is a functorial isomorphism
 \begin{gather*}
 \Gamma(A;{\cal L})\otimes \Gamma\big(A^{G0};{\cal L}|_{A^{G0}}\big)
 \cong
 \Ind_H^{{\cal G}({\cal L}_{A^{G0}}^{-1})}
 \Gamma(A_G;{\cal L}|_{A_G})
 \end{gather*}
 of~${\cal G}\big({\cal L}_{A^{G0}}^{-1}\big)$-modules. Moreover, the splitting
 $K\to H$ is $G$-invariant, since it could be computed inside~$A^{G0}$,
 and thus we may rewrite this as a $G\times {\cal G}\big({\cal
 L}_{A^{G0}}^{-1}\big)$-module isomorphism:
 \begin{gather*}
 \Gamma(A;{\cal L})\otimes \Gamma\big(A^{G0};{\cal L}|_{A^{G0}}\big)
 \cong
 \Ind_{G\times H}^{G\times {\cal G}({\cal L}_{A^{G0}}^{-1})}
 \Gamma(A_G;{\cal L}|_{A_G}).
 \end{gather*}
 Moreover, the induction
 functor is exact (since the homogeneous space is affine), as~is
 restriction to~$G$, and thus if we forget the action of the Heisenberg group, we~obtain a $G$-module isomorphism
 \begin{gather*}
 \Gamma(A;{\cal L})^{e}
 \cong
 \Gamma(A_G;{\cal L}|_{A_G})^{e^2/d},
 \end{gather*}
 from which the claim follows.
\end{proof}

\begin{rems} Note that although $d$ always divides $e^2$, it need not divide
 $e$, and vice versa $e$ can fail to divide $d$.
\end{rems}

\begin{rems}
 The reader should note that the notion of an induced module for
 representations of group schemes corresponds to what would normally be
 called a {\em co}induced module.
\end{rems}

\section{Coxeter group actions on abelian varieties}\label{section3}

One of the major ingredients in the construction of elliptic analogues of
double affine Hecke algebras is a suitable action of an affine Weyl group
on a power of an elliptic curve (or more generally on a variety isogenous
to such a power). It will be convenient to work somewhat more abstractly,
and begin with the finite case.

With this in mind, let $A$ be an abelian variety, and suppose the finite
Weyl group $W$ acts faithfully on~$A$ (fixing the identity) in such a way
that for any reflection~$r\in R(W)$, the corresponding fixed subgroup scheme
has codimension~1. We will naturally refer to such an action as an action
``by reflections''.

For each reflection~$r\in R(W)$, the subgroup $\langle r\rangle$ splits $A$
(up to isogeny) as discussed above; in this case, we~have natural
subvarieties $A^{r0}:=\im(1+r)$ and $A_r:=\im(1-r)$, and an~indu\-ced isogeny
$A^{r0}\times A_r\to A$ (with kernel contained in~$A_r[2]$). Since
$A^{r0}$ by assumption has codimension~1, we~see that $A_r$ is a
$1$-dimensional abelian variety. In~other words, each reflection in~$W$
induces a corresponding elliptic curve $E_r=A_r$ contained in~$A$, as~the
image of the endomorphism $1-r$. We call such a curve the ``root curve''
associated to~$r$. Applying the same construction to the dual variety
$A^\vee$ gives root curves $E'_r\subset A^\vee$, the duals of which we
refer to as ``coroot curves''. Note that the coroot curve associated to~$r$ can be described directly as the cokernel of the endomorphism $1+r$. In~particular, the endomorphism $1-r$ factors through~$E'_r$, giving rise
to a natural map $E'_r\to E_r$ such that the composition~$A\to E'_r\to
E_r\to A$ is $1-r$ and the composition~$E_r\to A\to E'_r\to E_r$ is
multiplication by $2$.

Fix a system of simple roots $S=\{\alpha_1,\dots,\alpha_n\}$ in~$W$,
and let $E_1,\dots,E_n$; $E'_1,\dots,E'_n$ be the corresponding root
and coroot curves, with induced maps $\iota_i\colon E_i\to A$, $\iota'_i\colon A\to E'_i$,
$\nu_i\colon E'_i\to E_i$. The~action of~$s_i$ on~$E_j$ can be described quite
simply:
\begin{gather*}
s_i\circ\iota_j=\iota_j + (s_i-1)\circ\iota_j=
\iota_j - \iota_i\circ \nu_i\circ \iota'_i\circ\iota_j.
\end{gather*}
This suggests that we should define a morphism $\mu_{ij}\colon E_j\to E_i$
as the composition~$-\nu_i\circ \iota'_i\circ\iota_j$; that is,
it is the morphism $E_j\to E_i$ induced by $s_i-1$.

\begin{lem}
 The curves $E_1, \dots, E_n$ are distinct.
\end{lem}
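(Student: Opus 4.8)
The claim is that the root curves $E_1,\dots,E_n$ associated to the simple reflections are pairwise distinct as subvarieties of $A$. The natural strategy is to suppose $E_i = E_j$ as subvarieties of $A$ for some $i\neq j$ and derive a contradiction with the faithfulness of the $W$-action (or with the codimension-1 hypothesis). The key structural input is that $E_i = \im(1-s_i)$, so $E_i$ is exactly the ``moving part'' of $A$ under $s_i$, while $A^{s_i 0} = \im(1+s_i)$ is the part that is fixed up to isogeny; since $A^{s_i0}$ has codimension $1$, the curve $E_i$ captures the entire nontrivial action of $s_i$.

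First I would record that if $E_i = E_j$ then both $s_i$ and $s_j$ act trivially (up to isogeny) on the common complementary abelian subvariety $A^{s_i0} = A^{s_j0}$ of codimension $1$, and act as $-1$ (again up to isogeny, i.e.\ via an endomorphism squaring to the identity) on the $1$-dimensional curve $E_i = E_j$. The product $s_is_j$ then acts trivially up to isogeny on $A^{s_i0}$ and as a product of two involutions on the elliptic curve $E_i$. The point is that $\Aut(E_i)$ fixing the identity is very small — generically just $\{\pm 1\}$, and always a cyclic group of order $2,4$, or $6$ — so that the subgroup of $\Aut(A)$ (up to isogeny) generated by $s_i, s_j$ is forced to be small, in particular $s_is_j$ has finite order dividing a small number. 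But the more robust observation is simpler: on the elliptic curve $E_i$, any involution fixing $0$ is $\pm 1$ (since an order-$2$ automorphism of an elliptic curve fixing the origin is $[-1]$), hence $s_i|_{E_i} = s_j|_{E_i} = [-1]$, so $s_is_j$ acts as the identity on $E_i$, and it also acts as the identity on $A^{s_i0}$; therefore $s_is_j$ acts as the identity on $A$ up to isogeny, i.e.\ $s_is_j$ is an endomorphism of $A$ that becomes the identity after composing with an isogeny.

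To upgrade ``identity up to isogeny'' to ``identity'', note that $s_is_j \in W \subset \Aut(A)$ is an honest automorphism of the abelian variety fixing $0$; if it equals the identity on a finite-index subgroup (equivalently, agrees with $\id_A$ after pullback along the isogeny $A^{s_i0}\times E_i \to A$), then since $A$ is connected and reduced and the two morphisms $s_is_j$ and $\id_A$ agree on a dense subgroup, they agree everywhere. Hence $s_is_j = 1$ in $W$, contradicting faithfulness of the action (as $i\neq j$ means $s_is_j\neq 1$ in the Coxeter group $W$: simple reflections generate $W$ freely enough that $s_is_j$ has order $m_{ij}\geq 2$). This yields the desired contradiction.

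**Main obstacle.** The subtle step is the passage from ``$s_i$ acts as $-1$ on $E_i = E_j$ up to isogeny'' to controlling $s_is_j$ on the nose, because the isogeny $A^{s_i0}\times E_i \to A$ has a kernel (contained in $E_i[2]$), so a priori $s_is_j$ could act nontrivially on that kernel while being trivial on each factor. The clean way around this is to argue directly on $A$: $s_i - 1$ and $s_j - 1$ both factor through $E_i = E_j$ (by the very definition of the root curve as $\im(1-s_i)$), and on $E_i$ each induces $[-2] = [-1]-1$ after identifying $s_i|_{E_i}=[-1]$; chasing the composition shows $(s_is_j - 1)$ factors through $E_i$ and induces $0$ there, hence $s_is_j = 1$ on all of $A$ without any isogeny fudge. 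I expect verifying this factorization carefully — using the relations among $\iota_i,\iota'_i,\nu_i$ established just before the lemma, in particular that $s_i\circ\iota_i = -\iota_i$ and $\iota'_i\circ\iota_i = \nu_i$ composed appropriately gives multiplication by $2$ — to be the only place requiring genuine care.
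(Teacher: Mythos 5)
There is a genuine gap in the final step. You correctly observe that $s_is_j - 1$ has image contained in $E_i = E_j$ (since $s_is_j-1 = (s_i-1)(s_j-1) + (s_i-1) + (s_j-1)$ and each term maps $A$ into $E_i$), and that $s_is_j - 1$ restricts to $0$ on $E_i$ (since $s_i|_{E_i} = s_j|_{E_i} = [-1]$, so $s_is_j|_{E_i} = \id$). But this gives only $(s_is_j-1)^2 = 0$, not $s_is_j - 1 = 0$: an endomorphism of $A$ whose image lies in a subvariety on which it also vanishes is nilpotent, not zero.

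To see that the concluding inference is wrong, take $A = E^2$, $s_1(x,y) = (-x,y)$, $s_2(x,y) = (-x+\alpha y, y)$ with $\alpha$ a nonzero integer. These are involutions fixing $0$ with codimension-$1$ fixed loci and the same root curve $E\times\{0\}$; one has $(s_1s_2-1)(x,y) = (-\alpha y, 0)$, which lands in $E\times\{0\}$ and kills $E\times\{0\}$, yet $s_1s_2\ne 1$ --- indeed $s_1s_2$ has infinite order. (This is not a counterexample to the Lemma, since $\langle s_1,s_2\rangle$ is then the infinite dihedral group rather than a finite Weyl group, but it is a counterexample to your deduction, which never invokes finiteness of $W$.) The same example also falsifies the assertion $A^{s_i0}=A^{s_j0}$ in your opening paragraph: here $A^{s_10}=\{0\}\times E$ while $A^{s_20}=\{(\alpha y,2y):y\in E\}$, so the ``common complementary subvariety'' does not exist.

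The paper extracts the right contradiction from exactly the nilpotency you established: from $(s_is_j-1)^2=0$ it follows by the binomial theorem that $(s_is_j)^k - 1 = k(s_is_j - 1)$ for all $k\ge 1$, and since $s_is_j-1$ is a nonzero homomorphism of abelian varieties with image in the $1$-dimensional $E_i$, its image is all of $E_i$, so $[k]\circ(s_is_j-1)\ne 0$ for every $k$. Thus $s_is_j$ has infinite order, contradicting finiteness of $W$. Replacing your final ``hence $s_is_j = 1$'' with this unipotency argument repairs the proof.
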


\begin{proof}
 Suppose otherwise, and reorder the simple roots so that $E_1=E_2$.
 Then $s_1s_2\ne 1$, but
 \begin{gather*}
 (s_1s_2-1) = (s_1-1)(s_2-1)+(s_1-1)+(s_2-1),
 \end{gather*}
 so that $s_1s_2-1$ has image $E_1=E_2$. Since $s_1s_2$ fixes $E_1=E_2$, we~find that $(s_1s_2)^k-1 = k(s_1s_2-1)$ for all $k\ge 1$, and thus
 $s_1s_2$ has infinite order, contradicting finiteness of~$W$.
\end{proof}

\begin{lem}
 For $i\ne j$, the composition~$\mu_{ji}\circ\mu_{ij}$ is multiplication
 by $k\in \{0,1,2,3\}$, and if the composition is $0$, then
 $\mu_{ij}=\mu_{ji}=0$.
\end{lem}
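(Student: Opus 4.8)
The essential input is that $\rho:=s_is_j$ has finite order $m$ in $W$ (as $W$ is finite), so that $\rho^m=1$ as an endomorphism of $A$; the plan is to turn this into a polynomial relation for the endomorphism $N:=\mu_{ji}\circ\mu_{ij}\in\End(E_j)$ and then to invoke the classification of endomorphism rings of elliptic curves. Throughout I use that $E_i\neq E_j$ (by the previous lemma) and that $\iota_i$ and $\iota_j$ are closed immersions.

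First I record the consequences of the definitions: writing $a=s_i-1$, $b=s_j-1$ one has $a\circ\iota_j=\iota_i\circ\mu_{ij}$, $b\circ\iota_i=\iota_j\circ\mu_{ji}$, $s_i\circ\iota_i=-\iota_i$, $s_j\circ\iota_j=-\iota_j$, whence
\[
\rho\circ\iota_j=-\iota_j-\iota_i\circ\mu_{ij},\qquad
\rho\circ\iota_i=\iota_i\circ(N'-1)+\iota_j\circ\mu_{ji},
\]
where $N':=\mu_{ij}\circ\mu_{ji}\in\End(E_i)$. Using $N'\circ\mu_{ij}=\mu_{ij}\circ N$ to push the $\End(E_i)$-coefficient back across $\mu_{ij}$, a straightforward induction keeps $P_k,\widetilde Q_k$ inside the commutative subring $\Z[N]\subseteq\End(E_j)$ and gives
\[
\rho^k\circ\iota_j=\iota_j\circ P_k+\iota_i\circ\mu_{ij}\circ\widetilde Q_k,
\]
with $(P_0,\widetilde Q_0)=(1,0)$, $P_{k+1}=-P_k+N\widetilde Q_k$ and $\widetilde Q_{k+1}=-P_k+(N-1)\widetilde Q_k$. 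Thus $(P_k,\widetilde Q_k)$ is obtained from $(1,0)$ by iterating the matrix $R=\left(\begin{smallmatrix}-1&N\\-1&N-1\end{smallmatrix}\right)$, which has $\tr R=N-2$ and $\det R=1$; Cayley--Hamilton over $\Z[N]$ then shows that $P_m=-U_m(N-2)-U_{m-1}(N-2)$ and $\widetilde Q_m=-U_m(N-2)$, where $U_0=0$, $U_1=1$, $U_{k+1}=xU_k-U_{k-1}$.

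Next I feed in $\rho^m=1$, which gives $\iota_j\circ(P_m-1)+\iota_i\circ(\mu_{ij}\circ\widetilde Q_m)=0$ in $\Hom(E_j,A)$. Since $E_i\neq E_j$ are irreducible of the same dimension and $\iota_i,\iota_j$ are monomorphisms, any relation $\iota_j\circ\alpha+\iota_i\circ\beta=0$ with $\alpha\in\End(E_j)$, $\beta\in\Hom(E_j,E_i)$ forces $\alpha=0$ (a nonzero $\alpha$ is an isogeny, so the image of $\iota_j\circ\alpha$ would be all of $E_j$, which cannot lie in $E_i$) and then $\beta=0$. Hence $P_m=1$ and $\mu_{ij}\circ\widetilde Q_m=0$. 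The identity $P_m=1$ reads $p_m(N-2)=0$, where $p_m:=U_m+U_{m-1}+1\in\Z[x]$ has degree $m-1$; using $U_m(2\cos\theta)=\sin(m\theta)/\sin\theta$ one computes $p_m(2\cos 2\psi)=2\sin(m\psi)\cos((m-1)\psi)/\sin\psi$, which exhibits $m-1$ real roots of $p_m$ in $[-2,2]$, so \emph{every} root of $p_m$ is real. Therefore the minimal polynomial of $N-2$ over $\Q$ has only real roots, so $\Q[N-2]\subseteq\End(E_j)\otimes\Q$ is totally real; but $\End(E_j)\otimes\Q$ is $\Q$, an imaginary quadratic field, or a definite quaternion $\Q$-algebra, none of which contains a totally real subfield other than $\Q$ (an elliptic curve has no real multiplication). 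Hence $N-2\in\Q$, so $N-2\in\Z$ by integrality, and since $p_m(2)=2m\neq0$ the only integer roots of $p_m$ in $[-2,2]$ are $-2,-1,0,1$. Thus $\mu_{ji}\circ\mu_{ij}=[k]$ for some $k\in\{0,1,2,3\}$.

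Finally, suppose $k=0$, i.e.\ $N-2=-2$. Then $\widetilde Q_m=-U_m(-2)=\pm m$, so the relation $\mu_{ij}\circ\widetilde Q_m=0$ becomes $\mu_{ij}\circ[\pm m]=0$ on $E_j$; since $m\geq2$ (because $s_i\neq s_j$), the multiplication isogeny $[\pm m]$ is an epimorphism, so $\mu_{ij}=0$, whence $N'=\mu_{ij}\circ\mu_{ji}=0$, and the same argument with $i$ and $j$ interchanged gives $\mu_{ji}=0$. The step I expect to be most delicate is not the linear algebra but making the induction of the second paragraph completely precise — verifying the composition orders so that $P_k$ and $\widetilde Q_k$ genuinely lie in the commutative ring $\Z[N]$, which is what legitimizes Cayley--Hamilton — together with the elementary check that all $m-1$ roots of $p_m$ are real. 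Granting those, the endomorphism-ring argument is immediate, and incidentally it never uses that $W$ is crystallographic.
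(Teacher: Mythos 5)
Your proof is correct but takes a genuinely different route from the paper's. The paper first checks that $\langle s_i,s_j\rangle$ acts on $E_i\times E_j$ (a properness and connectedness argument kills the finite kernel of $E_i\times E_j\to A$), then invokes a $\langle s_i,s_j\rangle$-invariant polarization $\bigl(\begin{smallmatrix}2r_i&-\psi\\-\psi^\vee&2r_j\end{smallmatrix}\bigr)$: invariance forces $\psi=r_i\mu_{ji}=r_j\mu_{ij}^\vee$, positivity gives $\deg\psi<4r_ir_j$ and hence $\mu_{ji}\mu_{ij}<4$, and the \emph{moreover} clause is immediate from $r_i\mu_{ji}=r_j\mu_{ij}^\vee$. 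You instead use only the identity $(s_is_j)^m=1$ in $\End(A)$, unfolding it via a $\Z[N]$-valued linear recurrence (with $N=\mu_{ji}\mu_{ij}$) into the Chebyshev-type constraint $p_m(N-2)=0$, then combine the fact that $p_m$ has all real roots with the absence of real multiplication on elliptic curves to force $N-2\in\Z\cap[-2,2]\setminus\{2\}$; the $k=0$ case then needs the companion relation $\mu_{ij}\circ\widetilde Q_m=0$, which the polarization argument gets for free. Your route is longer and more computational, but it isolates exactly where the classification of $\End(E)\otimes\Q$ enters and, as you observe, makes manifest that crystallographicity of $W$ is never hypothesized --- indeed the same recurrence rules out $m\notin\{2,3,4,6\}$, since then $p_m$ has no integer root in $[-2,2)$. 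One step deserves a sentence more of care: the count of $m-1$ real roots of $p_m$ from the parametrization $x=2\cos 2\psi$ requires noting that $x=-2$ (at $\psi=\pi/2$) is a root exactly when $m$ is even, which supplies the missing root in the even case, and that the zeros of $\sin(m\psi)$ and $\cos((m-1)\psi)$ in $(0,\pi/2)$ never coincide.
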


\begin{proof}
 Since $E_i\ne E_j$, the product $E_i\times E_j$ is isogenous with its
 image in~$A$. Define an action of~$s_i$, $s_j$ on~$E_i\times E_j$ by
 \begin{gather*}
 s_i(x_i,x_j) = (-x_i + \mu_{ji}(x_j),x_j),
 \\
 s_j(x_i,x_j) = (x_i,-x_j + \mu_{ij}(x_i)).
 \end{gather*}
 The elements $s_i$, $s_j$ clearly act as involutions on the product, and
 the actions are compatible with~the actions on~$A$, so that the action of~$(s_is_j)^{m_{ij}}-1$ induces a homomorphism from
 $E_i\times E_j$ to the kernel of the map $E_i\times E_j\to A$. Since
 $E_i\times E_j$ is proper, reduced, and connected, and said kernel is
 finite, we~see that this description actually gives an action of the rank
 2 Weyl group~$\langle s_i,s_j\rangle$. Moreover, since this group is
 finite, there is a $W$-invariant polarization on~$E_i\times E_j$, of the
 form
 \begin{gather*}
 \begin{pmatrix}
 2r_i & -\psi_{ji}
 \\
 -\psi_{ji}^\vee & 2r_j
 \end{pmatrix}\!,
 \end{gather*}
 with~$4r_ir_j-\deg(\psi_{ji})>0$. We then find that $\psi_{ji} =
 r_i\mu_{ji}=r_j\mu_{ij}^\vee$, so that
 \begin{gather*}
 \deg(\psi_{ji}) = \psi_{ji}\psi_{ji}^\vee
 = r_i \mu_{ji} r_j \mu_{ij},
 \end{gather*}
 and thus $\mu_{ji}\mu_{ij}$ is multiplication by a nonnegative integer
 less than $4$. Moreover, since $r_i\mu_{ji}=r_j\mu_{ij}^\vee$, we~see
 that if one of~$\mu_{ij}$, $\mu_{ji}$ vanishes, then so does the other.
\end{proof}

\begin{rem}
 We then readily see that the order of~$s_is_j$ is equal to~$2$, $3$, $4$,
 $6$ when $\mu_{ij}\mu_{ji}=\mu_{ji}\mu_{ij}$ is equal to~$0$, $1$, $2$,
 $3$ respectively.
\end{rem}

\begin{prop}
 Let $(W,S)$ be a finite Weyl group, and suppose that $E_1,\dots,E_n$ is a
 system of elliptic curves and $\mu_{ij}\colon E_i\to E_j$, $i\ne j$, a system
 of morphisms such that $\mu_{ij}\mu_{ji} = 4\cos(\pi/m_{ij})^2$, with~$\mu_{ij}=\mu_{ji}=0$ whenever $m_{ij}=2$. Then there is a faithful
 action of~$W$ on~$\prod_i E_i$ such that
 \begin{gather*}
 s_i(x_1,\dots,x_n) =
 \bigg(x_1,\dots,x_{i-1},-x_i+\sum_{j\ne i} \mu_{ji}(x_j),x_{i+1},\dots,x_n\bigg).
 \end{gather*}
\end{prop}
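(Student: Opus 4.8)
The plan is to define operators $s_1,\dots,s_n$ on the abelian variety $X:=\prod_i E_i$ by the displayed formulas, to verify that they are automorphisms obeying the Coxeter relations of $(W,S)$, and then to deduce faithfulness by comparing the induced representation on $\mathrm{Lie}(X)$ with the geometric representation of $W$.

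Each $s_i$ is a group endomorphism of $X$ fixing $0$, since $[-1]$ and every $\mu_{ji}$ is a homomorphism; it leaves the coordinates $x_k$ ($k\ne i$) unchanged and sends $x_i\mapsto -x_i+\sum_{j\ne i}\mu_{ji}(x_j)$, so $s_i^2=\id_X$ and hence $s_i$ is an automorphism, nontrivial because $s_i=\id_X$ would force $[2]=0$ on $E_i$. One small preliminary: fix $i\ne j$ and put $k:=4\cos(\pi/m_{ij})^2=\mu_{ij}\mu_{ji}\in\{0,1,2,3\}$ (recall $m_{ij}\in\{2,3,4,6\}$ as $W$ is a Weyl group). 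The composition $f:=\mu_{ji}\mu_{ij}\in\End(E_i)$ satisfies $f^2=\mu_{ji}(\mu_{ij}\mu_{ji})\mu_{ij}=kf$, so $f\in\{0,k\}$ since $\End(E_i)$ has no zero divisors; and $f=0$ would force one, hence both, of $\mu_{ij},\mu_{ji}$ to vanish, impossible when $k\ne 0$, while $k=0$ is the case $\mu_{ij}=\mu_{ji}=0$. Thus both Cartan compositions equal the integer $k$.

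For the braid relations, fix $i\ne j$. Only the $i$th and $j$th coordinates are moved by $s_i$ and $s_j$, so with respect to the decomposition $X=(E_i\times E_j)\times\prod_{l\ne i,j}E_l$ the endomorphism $s_is_j$ is block upper triangular, $s_is_j=\left(\begin{smallmatrix}M&N\\0&I\end{smallmatrix}\right)$, where $N\colon\prod_{l\ne i,j}E_l\to E_i\times E_j$ is a homomorphism and $M=\left(\begin{smallmatrix}k-1&-\mu_{ji}\\ \mu_{ij}&-1\end{smallmatrix}\right)\in\End(E_i\times E_j)$. A direct multiplication, using $\mu_{ji}\mu_{ij}=\mu_{ij}\mu_{ji}=k$, gives $M^2-(k-2)M+I=0$. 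For $m_{ij}\in\{3,4,6\}$ the polynomial $X^2-(k-2)X+1$ equals the cyclotomic polynomial $\Phi_{m_{ij}}(X)$, which divides $1+X+\cdots+X^{m_{ij}-1}$ (and for $m_{ij}=2$ we have $\mu_{ij}=\mu_{ji}=0$, $M=-I$, so $I+M=0$); hence $\sum_{0\le t<m_{ij}}M^t=0$, and in particular $M^{m_{ij}}=I$. Therefore $(s_is_j)^{m_{ij}}=\left(\begin{smallmatrix}M^{m_{ij}}&\bigl(\sum_{0\le t<m_{ij}}M^t\bigr)N\\ 0&I\end{smallmatrix}\right)=\id_X$. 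With $s_i^2=\id_X$ and the Coxeter presentation $W=\langle s_1,\dots,s_n\mid s_i^2=1,\ (s_is_j)^{m_{ij}}=1\rangle$, this produces a homomorphism $\rho\colon W\to\Aut(X)$ realizing the stated action. I expect this braid step to be the one nonformal point: it is not enough that $M^{m_{ij}}=I$; one must kill the ``spectator'' block $N$ coming from the remaining coordinates, which is exactly why the stronger identity $\sum_t M^t=0$ is needed, and that in turn falls out of the cyclotomic factorization once the $2\times 2$ relation $M^2=(k-2)M-I$ is in hand.

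Finally, faithfulness. Pass to Lie algebras: on $\mathrm{Lie}(X)=\bigoplus_i\mathrm{Lie}(E_i)$ the automorphism $\rho(s_i)$ fixes the hyperplane $\{v:2v_i=\sum_{j\ne i}(d\mu_{ji})(v_j)\}$ and acts as $-1$ on the line $\mathrm{Lie}(E_i)$, so it is a reflection; moreover (from the same $2\times 2$ matrix, now differentiated) $\rho(s_is_j)$ has order exactly $m_{ij}$, and the anti-invariant lines $\mathrm{Lie}(E_1),\dots,\mathrm{Lie}(E_n)$ are independent. Hence $\mathrm{Lie}(X)$ carries a form of the geometric (reflection) representation of $(W,S)$ — the products $(d\mu_{ij})(d\mu_{ji})=4\cos(\pi/m_{ij})^2$ pin down the underlying Coxeter datum — and that representation is faithful for every finite Coxeter group, so $\rho$ is faithful. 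In positive characteristic the Lie algebra argument can degenerate (inseparable $\mu_{ji}$, or characteristic $2$), so one instead works with the rational $\ell$-adic Tate module $V_\ell(X)=\bigoplus_iV_\ell(E_i)$ (for $\ell$ invertible on the base) in place of $\mathrm{Lie}(X)$, using that a nonzero $\mu_{ji}$ induces an isomorphism $V_\ell(E_j)\xrightarrow{\sim}V_\ell(E_i)$; the only extra wrinkle is the routine one of splitting the geometric representation off inside $V_\ell(X)$, e.g.\ by choosing $\ell\nmid|W|$.
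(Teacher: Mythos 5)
Your verification of involutivity, the no-zero-divisor argument giving $\mu_{ji}\mu_{ij}=\mu_{ij}\mu_{ji}=k$, and the block computation establishing the braid relations are all correct; the last is in fact a bit more explicit than the paper's, which instead shows $\bigl((s_is_j)^2+(2-k)(s_is_j)+1\bigr)(s_is_j-1)=0$ by observing that the image of $s_is_j-1$ lies in $E_i\times E_j$ where Cayley--Hamilton for $M$ applies. Your cyclotomic identity $\sum_{t<m_{ij}}M^t=0$ kills the spectator block in the same stroke, so that part of the argument is a clean variant.

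The faithfulness step, however, contains a genuine gap. You assert that $\mathrm{Lie}(X)$ (or a piece of $V_\ell(X)$) ``carries a form of the geometric reflection representation'' because the products $(d\mu_{ij})(d\mu_{ji})=4\cos^2(\pi/m_{ij})$ pin down the Coxeter datum, but you never produce the diagonal change of basis that would reduce the Lie matrices to the standard Cartan form, nor the promised splitting of $V_\ell(X)$ in positive characteristic. The paper's proof is a construction, not an assertion: after reducing to $W$ irreducible, it uses that the Coxeter diagram of a finite Coxeter group is a \emph{tree}, so that every path composition of $\mu$'s is an isogeny and any two such isogenies $E_i\to E_j$ differ by a \emph{positive} integer (each backtracking contributes a factor $\mu_{pq}\mu_{qp}>0$). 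Consequently the $(j,i)$-entry of the $\Hom$-matrix of any $w\in W$ lies in a canonical rank-one subgroup of $\Hom(E_i,E_j)$ on which ``appropriately signed square root of degree'' is a well-defined, multiplicative map to $\R$; applied entrywise, this gives an honest real representation through which the $\Hom$-matrix action factors, and one identifies it with the reflection representation by a diagonal change of basis whose existence again uses the tree structure (there is no cocycle obstruction). These two ingredients --- tree structure of the diagram and positivity of path compositions --- are precisely what your sketch needs and does not supply: the tree structure is what makes your ``pinning down'' claim more than an assertion, and the positivity is what makes the $V_\ell$ splitting (trivialize $i\mapsto V_\ell(E_i)$ via the path isogenies $\nu_{1i}$, then check that the transition endomorphisms $V_\ell(\nu_{1j})^{-1}V_\ell(\mu_{ij})V_\ell(\nu_{1i})$ are multiplication by nonnegative integers) actually go through rather than being ``routine.'' Once you put those two facts in, your argument closes and is essentially equivalent to the paper's.
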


\begin{proof}
 The action of~$s_i$ is clearly an involution, and the braid relations are
 straightforward to verify. (This is easy when $m_{ij}=2$, and for
 $m_{ij}>2$ it suffices to show that
 $\big((s_is_j)^2+$ $(2-\mu_{ij}\mu_{ji})(s_is_j)+1\big)(s_is_j-1)$ vanishes, which
 reduces to a computation in~$E_i\times E_j$.) So this certainly gives an
 action of~$W$, and it remains only to show that it is faithful.

 Since the construction clearly respects products, we~may as well assume
 that $W$ is irreducible. For~any path in the Coxeter diagram of~$W$, we~may take the corresponding composition of morphisms $\mu_{ij}$; since
 $\mu_{ij}=0$ iff $m_{ij}=2$, any such composition will be an isogeny.
 Moreover, since the Coxeter diagram is a tree by finiteness of~$W$, we~see that any two isogenies $E_i\to E_j$ arising in this way will differ
 by a {\em positive} factor (any time the path backtracks introduces a~factor $\mu_{ij}\mu_{ji}>0$). In~particular, for~any element $w\in W$,
 the induced map $E_i\to E_j$ (apply~$w$ then project onto the $j$th
 factor) is an integer linear combination of such isogenies, and in
 particular has a corresponding notion of positivity. This allows us to
 turn any element of~$W$ into a {\em real} matrix by taking each such
 morphism to the appropriately signed square root of its degree. The~consistency of sign ensures that this will give rise to an actual
 representation of~$W$, and we can then verify that up to a diagonal
 change of basis, this is precisely the standard reflection representation
 of~$W$.
\end{proof}

\begin{cor}
 Let the finite Weyl group $W$ act faithfully on the abelian variety $A$
 by reflections, with simple root curves $E_1,\dots,E_n$. Then the
 induced morphism $\prod_i E_i\to A$ is made $W$-equivariant by the above
 action, and its kernel is finite and fixed by $W$.
\end{cor}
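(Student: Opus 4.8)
The plan is to reduce everything to the Proposition together with three structural identities for the maps $\iota_i,\iota'_i,\nu_i$. First I would observe that the root curves $E_1,\dots,E_n$ of $A$ together with the induced morphisms satisfy the hypotheses of the Proposition: they are distinct by the first Lemma above, and by the second Lemma (and the Remark following it) the composite $\mu_{ij}\mu_{ji}$ is multiplication by $4\cos(\pi/m_{ij})^2$, vanishing exactly when $m_{ij}=2$. Hence the Proposition produces a faithful action of $W$ on $\prod_iE_i$ by the displayed formula, and the morphism under consideration is $\Sigma:=\sum_i\iota_i\colon\prod_iE_i\to A$.

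For $W$-equivariance it suffices to treat the generators $s_i$. I would expand $\Sigma\circ s_i$ and $s_i\circ\Sigma$ on a tuple $(x_1,\dots,x_n)$, using $(1-s_i)|_A=\iota_i\nu_i\iota'_i$, the fact that $\nu_i\iota'_i\iota_i=[2]$ (so the contribution of $\iota_i(x_i)$ on the $s_i\circ\Sigma$ side is $-\iota_i(x_i)$), and that for $j\ne i$ the morphism $E_j\to E_i$ built into the Proposition's formula for $s_i$ is exactly $-\nu_i\iota'_i\iota_j$. Both sides then collapse to $-\iota_i(x_i)+\sum_{j\ne i}\iota_j(x_j)-\iota_i\nu_i\iota'_i\!\bigl(\sum_{j\ne i}\iota_j(x_j)\bigr)$, so $\Sigma$ is equivariant and in particular $K:=\ker\Sigma$ is $W$-stable.

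The heart of the argument is that $1-s_i$, as an endomorphism of $\prod_jE_j$, has image contained in the $i$-th factor $E_i$ (it alters only the $i$-th coordinate), while $\Sigma$ restricted to that factor is the closed immersion $\iota_i$; hence $K\cap E_i=\ker\iota_i=0$ for every $i$. Consequently, for $x\in K$ we get $(1-s_i)x\in K\cap E_i=0$, so $W$ fixes $K$ pointwise; and the identical computation shows that any $W$-stable abelian subvariety $C\subseteq\prod_iE_i$ with $C\cap E_i=0$ for all $i$ is contained in $\bigcap_i\ker(1-s_i)$. Applying this to the reduced identity component $K^0$ of $K$ reduces the finiteness of $K$ to the finiteness of $\bigcap_i\ker(1-s_i)$. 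Since the images of the $1-s_i$ lie in distinct factors, this intersection equals $\ker N$ for the single endomorphism $N:=\sum_{i=1}^n(1-s_i)$, which in coordinates is $2\cdot\mathrm{id}$ minus the off-diagonal matrix of the $\mu$-morphisms. Under the real-ification used in the proof of the Proposition — sending each composite of $\mu$'s to the appropriately signed square root of its degree, compatibly with sums and composites — $N$ corresponds to the Cartan matrix of $(W,S)$, which is nonsingular; passing to $\ell$-adic realizations for a prime $\ell$ invertible on the base, where within each irreducible component of $W$ all root curves are isogenous and the action is, after a diagonal change of coordinates, the rational reflection representation, $N$ acts as $\mathrm{id}\otimes(\text{Cartan matrix})$ and is therefore invertible. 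Hence $\ker N$, and with it $K$, is finite.

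The step I expect to be the main obstacle is this last one: converting the combinatorial non-singularity of the Cartan matrix into the assertion that $N$ is an isogeny of abelian varieties, uniformly in the characteristic — in particular when a root curve has complex multiplication or supersingular reduction, so that its endomorphism algebra is larger than $\mathbb{Q}$. Phrasing the conclusion through the $\ell$-adic Tate modules (for $\ell$ invertible on the base) and invoking the identification, from the Proposition's proof, of the resulting representation with the rational reflection representation is what should make this passage work without case analysis.
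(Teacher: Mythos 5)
Your proof is correct and takes essentially the same route as the paper: the key observation in both is that $(1-s_i)K\subset K\cap E_i=0$ because $(1-s_i)$ lands in the $i$-th factor and $\Sigma$ restricts on that factor to the embedding $\iota_i$. The paper dispenses with equivariance as ``obvious by construction'' and does not spell out finiteness at all, whereas you verify equivariance explicitly and supply a finiteness argument via $K\subseteq\bigcap_i\ker(1-s_i)=\ker N$ and nonsingularity of the Cartan matrix. The finiteness fill is slightly more elaborate than needed --- once one knows $W$ fixes $K$ pointwise, $K$ lies in $(\prod_i E_i)^W$, and this is already finite because (by the Proposition) the action realizes the reflection representation, which has no nonzero invariant vectors; your $\ker N$/Cartan-matrix calculation is one explicit way to see this, though you are right to flag that passing from the real matrix picture to an actual isogeny statement over a general base needs the $\ell$-adic reformulation rather than naive square-root-of-degree manipulations.
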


\begin{proof}
 The equivariance is obvious by construction, so it remains only to show
 that the kernel~$K$ is fixed by $W$. Otherwise, some simple reflection~$s_i$ will act nontrivially on~$K$, and thus $(s_i-1)K$ $\subset K$ is
 nonzero. But $(s_i-1)K\subset E_i$, and $K\cap E_i=0$ since $E_i$ is
 defined as a subscheme of~$A$.
\end{proof}

\looseness=1
The proof of the proposition suggests an extension of this construction to
more general crystallographic Coxeter groups (in particular to affine Weyl
groups). Certainly, one could consider an action of the above form
associated to any system of morphisms $\mu_{ij}$, but to relate it to the
standard reflection representation of a~Coxeter group, we~make the
following assumptions:
\begin{itemize}\itemsep=0pt
 \item The composition~$\mu_{ij}\mu_{ji}$ is multiplication by $k_{ij}\in
 \{0,1,2,3,4\}$.
 \item There is a system of positive integers $r_i$ such that
 $r_i\mu_{ji}=r_j\mu_{ij}^\vee$ for each $i\ne j$.
 \item Any composition~$\mu_{i_1i_2}\mu_{i_2i_3}\cdots\mu_{i_mi_1}$ is
 multiplication by a nonnegative integer.
\end{itemize}
We call such a system of curves and morphisms an ``elliptic root datum''.

\begin{thm}
 Any elliptic root datum gives rise to a faithful action on~$\prod_i E_i$
 of the Coxeter group with multiplicities $m_{ij}$ given by $k_{ij} =
 4\cos(\pi/m_{ij})^2$, such that $s_i$ acts as above.
\end{thm}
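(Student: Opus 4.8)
The plan is to reduce the verification to the rank $\le 2$ parabolic pieces, where the earlier Proposition on finite Weyl groups already does most of the work, and then to handle faithfulness by reconstructing the standard reflection representation over $\R$. First I would check that each $s_i$, defined by the displayed formula, is an involution of $\prod_i E_i$ (immediate: $s_i$ negates the $i$th coordinate and fixes the rest, and applying it twice returns $x_i$ since the correction term $\sum_{j\ne i}\mu_{ji}(x_j)$ depends only on the unchanged coordinates). Next I would verify the braid relations. For a pair $i,j$ with $m_{ij}=2$ we have $k_{ij}=0$, hence $\mu_{ij}=\mu_{ji}=0$ by hypothesis, so $s_i$ and $s_j$ act on disjoint coordinates and commute. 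For $m_{ij}>2$, the braid relation $(s_is_j)^{m_{ij}}=1$ on $\prod_i E_i$ only involves the two coordinates $E_i,E_j$ (all others are fixed by both), so it is exactly the rank $2$ computation already carried out in the proof of the Proposition: one shows $\big((s_is_j)^2+(2-k_{ij})(s_is_j)+1\big)(s_is_j-1)=0$ as an endomorphism of $E_i\times E_j$, which forces $s_is_j$ to have order $m_{ij}$ since $k_{ij}=4\cos(\pi/m_{ij})^2$ and $E_i\times E_j$ is connected and reduced. This gives a genuine action of the Coxeter group $W$ with the prescribed multiplicities.

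It then remains to prove faithfulness, and this is the step I expect to be the main obstacle, because unlike the finite case the Coxeter diagram of $W$ need no longer be a tree, so the ``consistency of signs'' argument from the Proposition does not apply verbatim. The key input is the third axiom of an elliptic root datum: any closed cyclic composition $\mu_{i_1i_2}\mu_{i_2i_3}\cdots\mu_{i_mi_1}$ is multiplication by a \emph{nonnegative} integer. I would use this to show that for any two indices $i,j$, all morphisms $E_i\to E_j$ arising as compositions along walks in the diagram differ pairwise by multiplication by a positive rational (equivalently, by a positive real after passing to the ``square root of the degree'' normalization): given two such walks, concatenating one with the reverse of the other gives a closed walk, and each backtracking step $\mu_{ab}\mu_{ba}=k_{ab}>0$ contributes a positive factor, while the cyclic axiom controls the genuine loops. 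Consequently, for any $w\in W$, the induced morphism $E_i\to E_j$ (act by $w$, then project) is a $\Z$-linear combination of these mutually positively-proportional isogenies, hence carries a well-defined sign.

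Finally, assigning to each such morphism the appropriately signed square root of its degree turns the action of $w$ into a real $n\times n$ matrix; the sign consistency guarantees this assignment is multiplicative, so we obtain a genuine representation $W\to \GL_n(\R)$, and a diagonal change of basis (rescaling by the $\sqrt{r_i}$, using the second axiom $r_i\mu_{ji}=r_j\mu_{ij}^\vee$ to symmetrize) identifies it with the standard reflection representation of $(W,S)$, specifically with $s_i$ acting by the reflection in the bilinear form with Gram matrix entries $-2\cos(\pi/m_{ij})$. Since the standard reflection representation of a Coxeter group is faithful, and our action on $\prod_i E_i$ surjects onto it (the kernel of ``$\prod_i E_i\to$ matrix'' intersected with $W$ is trivial because a nontrivial $w$ in that kernel would act trivially on the Tits representation), the action on $\prod_i E_i$ is faithful, completing the proof. (One should note the mild subtlety that the map $E_i\to E_j$ induced by $w$ could in principle be the zero morphism; but then positivity of every backtracking factor forces it to vanish identically only when the corresponding Tits-representation matrix entry vanishes, so this is consistent with, rather than an obstruction to, the identification.)
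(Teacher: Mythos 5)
Your proof is correct and takes essentially the same approach as the paper: verify the braid relations by reducing to a rank-$2$ computation on $E_i\times E_j$, then establish faithfulness by using the nonnegativity of cyclic compositions (the third axiom) to get a sign-consistent assignment of real numbers to the walk-isogenies, thereby turning the $W$-action into a real matrix representation which, after conjugating by $\mathrm{diag}(\sqrt{r_i})$, becomes the standard reflection representation. The paper's own proof is considerably terser, so your version supplies welcome detail on exactly the points—sign consistency of loops via axiom 3 and the role of axiom 2 in symmetrizing—that the paper leaves implicit.
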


\begin{proof}
 The conditions on the morphisms ensure that we can faithfully translate
 the action into one on a real vector space, taking each morphism to the
 appropriately signed square root of its degree. Conjugating by the
 diagonal matrix with entries $\sqrt{r_i}$ turns this into the standard
 reflection representation of the given Coxeter group.
\end{proof}

\begin{cor}
 For each conjugacy class $C$ of reflections, there is a corresponding
 elliptic curve~$E_C$ equipped with isomorphisms $E_C\cong E_r$, $r\in C$
 such that the action of~$W$ on the set of compositions $E_C\cong
 E_r\to A$ and their negatives can be identified with the action of~$W$ on the corresponding set of roots, with the compositions $E_C\cong
 E_r\to A$ corresponding to the positive roots.
\end{cor}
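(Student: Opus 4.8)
The plan is, for a fixed conjugacy class $C$ of reflections, to realize every root curve $E_r=\im(1-r)$, $r\in C$, as a copy of one curve $E_C$ in a way compatible with the $W$-action, and to fix the unavoidable sign ambiguity in these identifications using positivity; throughout I write $A=\prod_iE_i$ (for a general action by reflections one transports the result along the $W$-equivariant comparison of the previous corollary). First recall the standard fact that conjugacy classes of reflections in a Coxeter group correspond to connected components of the graph on $\{1,\dots,n\}$ with an edge $\{i,j\}$ whenever $m_{ij}$ is odd. Fix $C$ and the corresponding component $\Delta_C$. Every edge of $\Delta_C$ has $m_{ij}$ odd, which forces $m_{ij}=3$ (the only odd $m$ with $4\cos^2(\pi/m)\in\{0,1,2,3,4\}$) and hence $k_{ij}=1$, so the two structure morphisms on that edge ($E_i\to E_j$ and $E_j\to E_i$, induced by $s_j-1$ and $s_i-1$) are mutually inverse isomorphisms. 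Composing these along a path in $\Delta_C$ gives an isomorphism $E_i\xrightarrow{\;\sim\;}E_j$; by the third axiom of an elliptic root datum any composition of structure morphisms around a closed loop in $\Delta_C$ is multiplication by a nonnegative integer, hence, being an isomorphism, is the identity. Therefore these isomorphisms $\sigma_{ji}\colon E_i\xrightarrow{\;\sim\;}E_j$ are independent of the path and satisfy $\sigma_{kj}\circ\sigma_{ji}=\sigma_{ki}$. Fixing $i_0\in\Delta_C$, I set $E_C:=E_{i_0}$ and $\sigma_i:=\sigma_{ii_0}$.

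Next, given $r\in C$, write $r=ws_iw^{-1}$ with $i\in\Delta_C$ (possible since every reflection is conjugate to a simple one, which then lies in $C$, forcing $i\in\Delta_C$), and, after replacing $w$ by $ws_i$ if necessary, arrange that the root $w\alpha_i$ is positive. Since $w(E_i)=w\,\im(1-s_i)=\im(1-r)=E_r$ and $w$ is an automorphism of $A$, it restricts to an isomorphism $w|_{E_i}\colon E_i\xrightarrow{\;\sim\;}E_r$, and I put $\phi_r:=w|_{E_i}\circ\sigma_i\colon E_C\xrightarrow{\;\sim\;}E_r$. The crux is that $\phi_r$ is well defined, independent of the choices of $w$ and $i$; here I would use the faithful translation of the action on $\prod_iE_i$ into the standard real reflection representation $V$ constructed in the proof of the preceding theorem, under which any morphism between root curves that is an integer combination of path-compositions of structure morphisms goes to the corresponding real scalar (a positive path-composition to $+\sqrt{\deg}$), the group of such morphisms being free abelian and mapping isomorphically onto a lattice in $V$. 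Under this translation $\iota_r\circ\phi_r\colon E_C\to\prod_iE_i$ (with $\iota_r\colon E_r\hookrightarrow A$ the inclusion) corresponds to $w\alpha_i$, which is the unique positive root with reflection $r$; in particular it does not depend on $w$ or $i$, so by faithfulness neither does $\phi_r$, and $\iota_r\circ\phi_r$ is identified with the positive root attached to $r$.

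Finally, $W$-equivariance follows from the same translation: for $v\in W$ the morphism $v\circ(\iota_r\circ\phi_r)$ translates to $v\alpha_r$, while $\iota_{vrv^{-1}}\circ\phi_{vrv^{-1}}$ translates to the positive root attached to $vrv^{-1}$, so $v\circ(\iota_r\circ\phi_r)=\pm\,\iota_{vrv^{-1}}\circ\phi_{vrv^{-1}}$ with the sign equal to that of $v\alpha_r$. Combining this with $-\,\iota_r\circ\phi_r=\iota_r\circ(-\phi_r)$ and with $r|_{E_r}=-1$, the set of $2|C|$ morphisms $\{\pm\,\iota_r\circ\phi_r : r\in C\}$ is permuted by $W$ (acting by $g\mapsto v\circ g$) exactly as $W$ permutes the roots in the orbit of $\alpha_r$, with the unnegated compositions $\iota_r\circ\phi_r$ corresponding to the positive roots — which is the assertion. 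The one genuinely substantive point is the well-definedness of $\phi_r$: it rests entirely on the sign being pinned down by positivity, and that is exactly what the passage to the real reflection representation in the proof of the theorem supplies; the rest is bookkeeping.
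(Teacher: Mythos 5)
Your proof is correct and follows essentially the same route as the paper's: the paper likewise builds the identification from compositions $w\circ\iota_i$, uses the positivity structure of the elliptic root datum (equivalently, the passage to the real reflection representation) to show each $E_r$ carries exactly two such identifications and to select the one attached to the positive root, and settles the independence of the chosen representative $i$ in the conjugacy class in its final sentence. Your odd-graph/path-independence discussion simply spells out that last step more explicitly.
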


\begin{proof}
 Indeed, for~each simple root $\alpha_i$, we~may consider the set of
 compositions $w\circ \iota_i$ for~$w\in W$, and find that each such
 composition has the form $(\beta_{i1},\beta_{i2},\dots,\beta_{in})$ in
 which either every entry is a~nonnegative element of the relevant $\Hom$
 space (i.e., corresponding to a nonnegative real number) or every entry
 is nonpositive. If $r = w s_i w^{-1}$, then the image of~$1-r$ is $w$
 times the image of~$1-s_i$, and thus $w\circ \iota_i$ identifies $E_i$
 with~$E_r$. Each $E_r$ is afforded with precisely two such
 identifications, of which we naturally choose the one corresponding to a~positive root. We~furthermore see that conjugate simple reflections give
 rise to equivalent systems of identifications of root curves.
\end{proof}

\begin{cor}
 Relative to the action of~$W$ on~$\prod_i E_i$ arising in this way, any
 two reflections have distinct root curves.
\end{cor}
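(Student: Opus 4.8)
The plan is to prove the sharper statement that $r\mapsto E_r$ is injective on the set of reflections of $W$, with $E_r:=\im(1-r)$ viewed inside $X:=\prod_i E_i$. First I would recall, from the proof of the preceding corollary, the concrete shape of a root curve: if $r=ws_iw^{-1}$ for a simple reflection $s_i$ and $w\in W$, then $1-r=w\,(1-s_i)\,w^{-1}$, and since $(1-s_i)$ has image exactly the $i$th factor $E_i=\im\iota_i$ (indeed $(1-s_i)$ acts in the $i$th coordinate by $x_i\mapsto 2x_i-\sum_{j\neq i}\mu_{ji}(x_j)$, and $[2]$ is surjective on $E_i$), we get $E_r=w(E_i)=\im(\gamma_r)$ where $\gamma_r:=w\circ\iota_i:E_i\to X$. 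As $\iota_i$ is a closed immersion and $w$ an automorphism of $X$, the morphism $\gamma_r$ is a closed immersion, hence an isomorphism onto $E_r$; of the two such isomorphisms $\pm\gamma_r$ (the other being $ws_i\circ\iota_i$), I would select the one all of whose components $\pi_k\circ\gamma_r\in\Hom(E_i,E_k)$ are nonnegative, in the positivity of those $\Hom$-groups used in the proof of the preceding corollary---equivalently the one corresponding there to the positive root $\alpha_r$ of $r$ in the reflection representation built in the proof of the theorem above.

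The heart of the argument is then short. Suppose $r,r'$ are reflections with $E_r=E_{r'}=:E$, and write $\gamma_r:E_i\to X$, $\gamma_{r'}:E_{i'}\to X$ as above; both are isomorphisms onto $E$, so $\gamma_{r'}=\gamma_r\circ\psi$ for a unique isomorphism $\psi:E_{i'}\to E_i$. Comparing $k$th components, $\pi_k\circ\gamma_{r'}=(\pi_k\circ\gamma_r)\circ\psi$, and since $\psi$ is an isomorphism (degree $1$, kernel $0$) this gives $\deg(\pi_k\circ\gamma_{r'})=\deg(\pi_k\circ\gamma_r)$ for every $k$. Both component families are nonnegative by construction, so the ``signed square root of the degree'' assignment used in the proof of the theorem sends $\gamma_r$ and $\gamma_{r'}$ to the same vector of $\R^n$; and by that construction this vector is a positive scalar multiple of $\alpha_r$, respectively $\alpha_{r'}$. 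Hence $\alpha_r$ and $\alpha_{r'}$ are positively proportional positive roots, which forces $r=s_{\alpha_r}=s_{\alpha_{r'}}=r'$ (recall $s_{c\alpha}=s_\alpha$ for $c>0$).

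The step I expect to be the real obstacle---or at least the one that must not be skipped---is the degree computation, i.e.\ the realization that the $W$-set identification of compositions with roots supplied by the preceding corollary is by itself insufficient. If some $E_i$ happens to have extra automorphisms ($j$-invariant $0$ or $1728$), then two reflections could in principle share a $1$-dimensional subvariety $E\subset X$ while their ``positive compositions'' onto $E$ differ by such an automorphism, and the bare $W$-set data would not distinguish the resulting roots. Passing to degrees is exactly what rigidifies this: two closed immersions $E_i\to X$ and $E_{i'}\to X$ with a common image differ by an isomorphism of sources, and an isomorphism has degree $1$, so it leaves every component degree---hence the whole signed-square-root vector, hence the root---unchanged. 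Everything else I would need (that $\im(1-s_i)$ fills out the $i$th factor, the positivity bookkeeping on the $\Hom(E_i,E_k)$, and the fact that a reflection in a Coxeter group is determined by its root line) is either immediate or already in place from the material leading up to the theorem, so I anticipate no further difficulty.
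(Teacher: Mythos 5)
Your proof is correct and rests on the same idea as the paper's: reduce the comparison of root curves to a comparison of real reflection vectors via the signed-square-root-of-degree map constructed in the preceding theorem, then invoke faithfulness of that representation. The difference is in how the comparison is set up. The paper fixes a base curve $E_1$ (per irreducible component), picks the smallest positive isogeny $E_1\to E_i$, and compares the composed maps $E_1\to\prod_i E_i$ as ``proportional real vectors''; you instead note that $\gamma_r\colon E_i\to E_r$ and $\gamma_{r'}\colon E_{i'}\to E_{r'}$ are both closed immersions onto the (assumed common) root curve, so differ by a source isomorphism $\psi$, which has degree $1$ and therefore leaves every component degree unchanged. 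Your version is a little cleaner at the step the paper compresses: to deduce proportionality of the paper's two maps $E_1\to E_r\subset X$ one should say a word about why they are $\mathbb{Q}$-proportional (a priori $\Hom(E_1,E_r)$ could have rank $2$), whereas your $\psi$ has degree $1$ and the conclusion falls out immediately; your sidebar correctly identifies that passing to degrees, rather than leaning only on the $W$-set identification from the previous corollary, is what rigidifies the argument.
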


\begin{proof}
 Assuming without loss of generality that $W$ is irreducible, we~find that each $E_i$ is
 isogenous to~$E_1$ in an essentially canonical way (choose the isogeny
 of smallest degree among the ``positive'' isogenies), and then see that
 two root curves agree iff the corresponding maps $E_1\to \prod_i E_i$
 correspond to proportional real vectors, making the two reflections
 agree.
\end{proof}

More generally, if $B$ is an abelian variety with trivial action of~$W$, we~could consider the image $B\times E_1\times\cdots\times E_n$ under a
$W$-equivariant isogeny. We will say that the abelian variety $A$ arising
in this way has an action of~$W$ of~``root type''. Note that as in the
finite case, we~may always arrange for the kernel of the isogeny to be not
just preserved by $W$ but fixed elementwise by $W$, as~otherwise there will
be kernel elements contained in root curves. We will also need the dual
notion: an abelian variety with an action of~$W$ is of~``coroot type'' if
its dual is of root type. These are equivalent for finite groups, or more
generally for Coxeter groups with nondegenerate Cartan matrices, but in the
affine case the two notions do not agree. Note that in the coroot type
case, rather than having well-behaved positivity for roots, we~have
well-behaved positivity for coroots: for each conjugacy class of
reflections, we~can choose isomorphisms between the corresponding coroot
curves and a fixed curve $E$ in such a way that the resulting set of maps
to~$E$, together with their negatives, are in equivariant, sign-preserving
bijection with the corresponding set of root vectors.

Consider the case of the affine Weyl group of type $\tilde{A}_2$. Since
$m_{ij}=3$, $k_{ij}=1$, we~see that each~$\mu_{ij}$ is an isomorphism, and
the positivity assumption forces the isomorphisms to be consistent. We
thus obtain the following faithful action on~$E^3$:
\begin{gather*}
s_0(x_0,x_1,x_2) = (x_1+x_2-x_0,x_1,x_2),
\\
s_1(x_0,x_1,x_2) = (x_0,x_0+x_2-x_1,x_2),
\\
s_2(x_0,x_1,x_2) = (x_0,x_1,x_0+x_1-x_2).
\end{gather*}
This action fixes the diagonal copy of~$E$, but does not fix any morphism
{\em to} $E$. It follows that the corresponding action on the dual variety
fixes a morphism to~$E$, but does not fix any curve. In~fact, we~see that
the dual action takes the form
\begin{gather*}
 s_0(x_0,x_1,x_2) = (-x_0,x_0+x_1,x_0+x_2),
 \\
 s_1(x_0,x_1,x_2) = (x_0+x_1,-x_1,x_1+x_2),
 \\
 s_2(x_0,x_1,x_2) = (x_0+x_2,x_1+x_2,-x_2),
\end{gather*}
from which we may see that the corresponding root curves do not even
generate $E^3$.

For our purposes, we~will in fact prefer actions of~coroot type. The~main
issue with actions of~root type in the affine case is that since there are
only finitely many distinct coroots, the kernel of~any given coroot map is
fixed by infinitely many reflections. For~instance, in the above~$\tilde{A}_2$ example, both $s_0$ and $s_1s_2s_1$ fix the hypersurface
$x_1+x_2=2x_0$ pointwise. However, the dual of~the standard model, though
of~coroot type, is badly behaved for other reasons: the product of~root
curves corresponding to the finite Weyl group does not inject, and the
image of~the product is fixed by the translation subgroup.

Suppose $\tW=\langle s_0,\dots,s_n\rangle$ is an affine Weyl group
(with associated finite Weyl group $W=\langle s_1,\dots,s_n\rangle$), and
that the abelian variety $A$ is equipped with an action of~$\tW$ of
coroot type. The~$\tW$-invariant subvariety of~$A^\vee$ has
codimension~$n$, and induces by duality a universal equivariant morphism
$A\to B$ such that $\tW$ acts trivially on~$B$ and the fibers have
dimension~$n$. (In other words, we~may interpret the original action as a
family of actions on~$n$-dimensional varieties.) In contrast, the
invariant subvariety of~$A$ has codimension~$n+1$, and thus its image in~$B$ has codimension~$1$. Thus if we base change by a suitable isogeny
$B'\to B$, we~may arrange for $B$ to be the product of~$A^{\tW0}$ by
an elliptic curve, allowing us to split off that factor and reduce to the
case that $B$ is an elliptic curve $E$. Now, since $W$ is finite, $A^{W0}$
has codimension~$n$, and is thus itself an elliptic curve, which
necessarily surjects onto~$E$. Although this curve $A^{W0}$ is not
preserved by $\tW$, we~may still base change by it, and thus find
that the natural action of~$W$ on~$A^{W0}\times A_W$ extends to an action
of~$\tW$ in such a way that the isogeny $A^{W0}\times A_W\to A$ is
equivariant. (Note, however, that the factorization itself is {\em not}
equivariant; the projection to~$A_W$ is not an equivariant map.)

We can describe this action explicitly on generators. Of course, for~$1\le
i\le n$, $s_i(z,x)=(z,s_i(x))$, so only $s_0$ is nontrivial. The~root
curve associated to~$s_0$ is the same as the root curve associated to the
reflection~$r$ in the relevant root of~$W$, and the action on~$0\times A_W$
is the same as that of~$r$. We thus see that $s_0(z,x)=(z,r(x)+\zeta(z))$
for some (nonzero) morphism $\zeta\colon A^{W0}\to E_r$. Conversely, it is easy
to see that any action of this form has coroot type.

We may view this action as a family of actions of~$\tW$ on~$A_W$
parametrized by $z$, with the one caveat being that the action no longer
preserves the identity; indeed, the translation subgroup of~$\tW$
acts (unsurprisingly) as translations of~$A_W$. Of course, the action on a
given fiber depends only on the point $q:=\zeta(z)$, and we easily see that
it is faithful precisely when~$q$ is non-torsion. (It follows from the
above considerations that this is the typical form of an action of coroot
type, up to base change and twisting by a $(A_W)^W$-torsor.)

Note that in this construction, we~may as well start with a given action of~$W$ and then adjoin~$s_0$. In~non-simply-laced cases, one must choose an
orbit of roots and then obtain~$s_0$ by shifting the action of the
reflection in the highest root of that orbit by an element $q$ of the
corresponding root curve. This produces an action of the affine Weyl group
$W\ltimes \Lambda$, where $\Lambda$ is the free abelian group generated by
the root maps in that orbit. It is worth noting that the choice of orbit is
entirely orthogonal to the direction (if any) of the arrows in the finite
Dynkin diagram: e.g., each of the three versions of the finite diagram
$B_n=C_n$ gives rise to both an action of~$\tilde{B}_n$ and an action of~$\tilde{C}_n$, and similarly in the $G_2$ and $F_4$ cases, there are two
natural extensions to actions of the corresponding affine groups. It is
also worth noting that the various base changes required to put the action
in this form can eliminate some of the information present in the original
coroot model; in particular, in the $\tilde{C}_n$ case (including
$\tilde{C}_1=\tilde{A}_1$), the special node is connected to the rest of
the Dynkin diagram by an arrow, and thus there is a~choice of isogeny in
the coroot model. This includes some exotic coroot models in which~$E_0$
and~$E_n$ are merely $4$-isogenous and the corresponding family of abelian
varieties with~$\tilde{C}_n$ action has no section.

Returning to the finite case, suppose that $A/S$ is a family of abelian
varieties (over an integral base $S$) equipped with a faithful action of
the finite Weyl group $W$ by reflections, and suppose moreover that we are
given a $W$-invariant ample line bundle ${\cal L}$ on~$A$. This can be
made equivariant by taking the action on the fiber at $1$ to be trivial,
and we may then ask when the map $s\mapsto \dim\Gamma(A_s;{\cal L})^W$ is
constant on~$S$. By the reductions of the previous section, this reduces
to considering the corresponding question for $A_W$, which is very nearly a
variety of the form we considered above. To~be precise, the root curves in
each irreducible component of~$W$ are isogenous, and since indecomposable
finite Weyl groups have at most two conjugacy classes of reflections, we~see that each component is associated to a~point of~${\cal X}_0$, ${\cal
 X}_0(2)$, or ${\cal X}_0(3)$. To~ensure that we can apply our previous
results, we~must insist that the induced line bundles on the root curves be
suitable; to wit, we~insist that for each reflection, ${\cal L}|_{E_r}\cong
{\cal L}_{2d_r,0;E_r}$ for some positive integer $d_r$, clearly constant on
conjugacy classes of involutions. We thus see that the only possible
issues arise when (a) one of the curves $E_r$ is supersingular of
characteristic dividing~$W$, or (b) the ``root kernel'', i.e., the kernel
of~$\prod_i E_i\to A$, fails to be diagonalizable. In~fact, the first
condition turns out not to be necessary.

\begin{lem}
 Let ${\cal L}$ be a $W$-invariant ample line bundle on an abelian variety
 $A$ such that there are positive integers $d_i$ such that ${\cal
 L}|_{E_i}\cong {\cal L}_{2d_i,0;E_i}$ for each $i$. Then
 for any section~$f\in \Gamma(A;{\cal L})$, the antisymmetrization~$\sum_{w\in W} \sigma(w) wf$ vanishes along the divisor $\sum_{r\in R(W)}
 [\ker(r-1)]$.
\end{lem}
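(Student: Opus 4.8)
The plan is to reduce, one reflection at a time, to a statement about a single root curve. Fix a reflection $r\in R(W)$ with root curve $E_r\subset A$ and associated map $\iota_r:E_r\to A$ (so that $\ker(r-1)$ is the image of $1+r$, an abelian subvariety of codimension $1$, translated by the finite kernel of $E_r^{0}\times A^{r0}\to A$). I would first observe that it suffices to prove that for every $f\in\Gamma(A;{\cal L})$ the antisymmetrization $\operatorname{Alt}(f):=\sum_{w\in W}\sigma(w)\,wf$ vanishes along $[\ker(r-1)]$ for each \emph{single} $r$; since the curves $\ker(r-1)$ for distinct $r$ are distinct (they are fixed loci of distinct reflections, hence irreducible divisors in distinct classes once one notes the root curves are distinct by the Corollary above), the divisor $\sum_{r}[\ker(r-1)]$ is reduced and its components are pairwise distinct, so vanishing along each component with multiplicity $\ge 1$ gives vanishing along the sum.

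For a fixed $r$, I would group the sum over $W$ into cosets of $\langle r\rangle=\{1,r\}$: writing $W=\coprod_{[w]}\{w,wr\}$ (choosing $w$ a minimal-length coset representative, so $\sigma(wr)=-\sigma(w)$), we get
\[
\operatorname{Alt}(f)=\sum_{[w]}\sigma(w)\,\bigl(wf-(wr)f\bigr)=\sum_{[w]}\sigma(w)\,w\bigl(f-rf\bigr).
\]
Each translate $w$ carries $\ker(r-1)$ to $\ker(wrw^{-1}-1)$, so it is enough to show $f-rf$ vanishes along $\ker(r-1)$ for every $f\in\Gamma(A;{\cal L})$, and then to know that $w$ respects this vanishing — but since $w^*$ is an isomorphism of $A$ sending the component $\ker(r-1)$ to another component appearing in the list, and $\operatorname{Alt}(f)$ is $W$-(anti)invariant, the multiplicity of $\operatorname{Alt}(f)$ along each component of the divisor is the same; so proving multiplicity $\ge 1$ along the single divisor $\ker(r-1)$ for the $\langle r\rangle$-antisymmetrization $f-rf$ suffices.

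Now the statement "$f-rf$ vanishes on $\ker(r-1)$" is where the hypothesis on ${\cal L}|_{E_r}$ enters, and this is the main obstacle. Set-theoretically $r$ acts as the identity on $\ker(r-1)$, so $f-rf$ would "obviously" vanish there if $f$ were a \emph{function}; but $f$ is only a section of the equivariant bundle ${\cal L}$, and $r$ acts on ${\cal L}$ through an equivariant structure whose action on the fibers over $\ker(r-1)$ need \emph{not} be trivial — it is given by a character of the stabilizer, precisely the kind of $\mathfrak{q}$-type quadratic-form obstruction discussed in Section~2. The point of the condition ${\cal L}|_{E_r}\cong{\cal L}_{2d_r,0;E_r}$ (even level, no weight twist) is exactly that this obstruction is \emph{trivial}: the $[-1]=r|_{E_r}$-equivariant structure on a line bundle of even level on an elliptic curve acts trivially on the fibers at the fixed points, by the computation that $\mathfrak{q}$ is a $\mu_2$-valued quadratic form whose associated bilinear form is the Weil pairing, which is trivial on $E_r[1]$ of such a bundle — more concretely, ${\cal L}_{2d_r,0}$ has a $\langle r\rangle$-equivariant structure acting trivially on \emph{all} fibers over the fixed locus because $2d_r$ is even. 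I would therefore argue: restrict to a formal or étale neighborhood of $\ker(r-1)$ in $A$; the quotient $A\to A/\langle r\rangle$ is ramified along $\ker(r-1)$, with $r$ acting on the conormal direction by $-1$; the even-level hypothesis guarantees the equivariant structure on ${\cal L}$ acts trivially on ${\cal L}|_{\ker(r-1)}$; hence $f$ and $rf$ have the same restriction to $\ker(r-1)$, so $f-rf$ restricts to $0$ there, i.e.\ vanishes along that divisor. Combining with the coset-grouping above yields the claim. The delicate bookkeeping — checking that "even level" is exactly the condition making the equivariant structure on ${\cal L}|_{\ker(r-1)}$ trivial, uniformly over the base $S$ and including the finite kernel by which $\ker(r-1)$ differs from $\operatorname{im}(1+r)$ — is the part that needs care; everything else is the formal coset manipulation.
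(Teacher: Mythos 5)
Your proof has a genuine gap in the coset decomposition, and the patch you propose to close it is circular. You group $W$ into left cosets $w\langle r\rangle=\{w,wr\}$, which gives
\[
\operatorname{Alt}(f)=\sum_{[w]}\sigma(w)\,w\bigl((1-r)f\bigr).
\]
Each summand $w\bigl((1-r)f\bigr)$ vanishes along $w\cdot\ker(r-1)=\ker(wrw^{-1}-1)$, i.e.\ along a \emph{different} component of the reflection divisor as $w$ varies over cosets. A sum of sections, each vanishing on a different component, need not vanish on any given component (compare $x+y$ versus the components $\{x=0\}$, $\{y=0\}$). Your rescue via "$\operatorname{Alt}(f)$ is $W$-anti-invariant, so multiplicities along all components are equal" is correct as a propagation principle but does not create vanishing from nothing: you still need $\operatorname{Alt}(f)$, not one of its summands, to vanish along some specific $\ker(r'-1)$ before that equality of multiplicities does any work. (It is also only true orbitwise: the equality of multiplicities holds within a $W$-conjugacy class of reflections, not across classes.)

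The fix is to use the other coset decomposition. Write $W=\coprod_{w\in W_0}\langle r\rangle w$ (right cosets, parametrized by the even subgroup $W_0$), which gives
\[
\operatorname{Alt}(f)=\sum_{w\in W_0}\bigl(wf - r(wf)\bigr)=(1-r)\,g,\qquad g:=\sum_{w\in W_0}wf\in\Gamma(A;{\cal L}).
\]
Now $\operatorname{Alt}(f)$ is literally a $\langle r\rangle$-antisymmetrization of a single section $g$ of the same bundle, so one only needs the rank-one statement — that $(1-r)h$ vanishes along $\ker(r-1)$ for any $h\in\Gamma(A;{\cal L})$ — which is where the even-level hypothesis enters. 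This is exactly the reduction the paper makes, followed by passing to the isogenous cover $B\times E\to A$ (with $r$ acting as $1\times[-1]$) and using the fact that, because ${\cal L}|_{E_r}\cong{\cal L}_{2d_r,0}$ has even level, the natural $[-1]$-equivariant structure acts trivially on fibers over $E[2]$, so $(1-r)h$ vanishes on $B\times E[2]$, whose image in $A$ is $\ker(r-1)$. Your identification of this even-level/$\mathfrak{q}$-triviality mechanism as the crux is correct and matches the paper's reasoning; it is only the bookkeeping of the coset grouping that needs to be switched.
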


\begin{proof}
 We may write $\sum_{w\in W} \sigma(w) w = (1-r)\sum_{w\in W_0} w$, where
 $W_0$ is the even subgroup of~$W$. Since the divisors $\ker(r-1)$ are
 transverse for distinct $r$, it thus suffices to show that $(1-r)f$
 vanishes along the divisor $[\ker(r-1)]$. We thus reduce to the case
 that $W$ has rank~1. In~other words, $A$ is a quotient of a variety
 $B\times E$ (with $r$ acting trivially on~$B$) obtained by identifying
 some subgroup $K\subset E[2]$ with a subgroup of~$B$. The~action of~$r$
 lifts to~$B\times E$, and we conclude (by considering how $[-1]$ acts on
 sections of~${\cal L}_{2d}$) that the antisymmetrization of any section
 of the pulled back line bundle must vanish on the divisor $B\times E[2]$.
 This is the preimage of the divisor $(B\times E[2])/K$, which in turn is
 precisely the kernel of~$r-1$ as required.
\end{proof}

This gives us the following possible approach to controlling invariants in
such bundles. Let ${\cal L}_\Delta$ be the line bundle $\sO_A\big(\sum_{r\in
 R(W)} [\ker(r-1)]\big)$, but equipped with the equivariant structure which is
trivial at the identity. If there is a section~$g\in \Gamma(A_W;{\cal
 L}_{\Delta})$ with nontrivial antisymmetrization, then the operation~$f\mapsto \frac{\sum_{w\in W} \sigma(w) w(gf)} {\sum_{w\in W} \sigma(w)
 w(g)}$ induces an idempotent on any $\Gamma(A;{\cal L})$ which projects
onto the symmetric subspace. More generally, if we have a family of such
varieties such that such a section~$g$ exists locally (or, equivalently, on
every fiber), then we obtain such idempotents locally, and thus the spaces
$\Gamma(A_s;{\cal L})^W$ are fibers of a vector bundle, implying that their
dimensions are constant.

\begin{thm}\label{thm:W-invariants}
 Suppose $A/S$ is a family of abelian varieties equipped with a faithful
 action by reflections of the finite Weyl group $W$, and let ${\cal L}$ be
 a $W$-invariant ample line bundle on~$A$ such that the restriction to
 every root curve of every fiber is isomorphic to an even power of~${\cal
 L}_1$. If the root kernel of~$A$ is diagonalizable, then the functions
 $s\mapsto \dim\Gamma(A_s;{\cal L})^W$ and $s\mapsto \dim\big(\big(\sum_{w\in W}
 \sigma(w) w\big)\Gamma(A_s;{\cal L})\big)$ are constant on~$S$.
\end{thm}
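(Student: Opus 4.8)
The plan is to reduce the two constancy statements to a single fact: that an antisymmetrizing idempotent exists locally on $S$, after which both $\dim\Gamma(A_s;\mathcal L)^W$ and $\dim((\sum_w\sigma(w)w)\Gamma(A_s;\mathcal L))$ become fiber dimensions of a vector bundle (the image of an idempotent endomorphism of a coherent sheaf on $S$, which after possibly shrinking and using properness is a genuine bundle). By the reductions of Section 2 — in particular Lemma \ref{lem:G-mod-factor}, which relates $\Gamma(A;\mathcal L)$ to $\Gamma(A_W;\mathcal L|_{A_W})$ up to tensoring with a fixed-dimension space on which $W$ acts trivially — it suffices to treat the case $A=A_W$, so that $A$ is (an isogeny quotient of) the product $\prod_i E_i$ of root curves, with $W$ acting as in the Proposition of Section 3. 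The first step, then, is to carry out this reduction carefully, checking that the equivariant isogeny $A_G\times A^{G0}\to A$ of Section 2 respects all the hypotheses (ampleness of $\mathcal L$, the even-power condition on each root curve), so that the invariant-space dimensions on $A$ and $A_W$ differ only by the constant multiplicities $d,e$ of that lemma.

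Next I would produce the antisymmetrizing section $g\in\Gamma(A_W;\mathcal L_\Delta)$ with nontrivial antisymmetrization, locally on $S$. Here $\mathcal L_\Delta=\mathcal O_{A_W}(\sum_{r\in R(W)}[\ker(r-1)])$ with the equivariant structure trivial at the identity. The idea is to build $g$ as a product of theta-type sections along the root curves: for each conjugacy class $C$ of reflections, the divisor $\sum_{r\in C}[\ker(r-1)]$ is (the pullback of) a sum of translates of the kernel divisors on the corresponding root curve $E_C$, and since $\mathcal L|_{E_r}\cong\mathcal L_{2d_r,0;E_r}$ is an even power of $\mathcal L_1$, the section $\vartheta$ of $\mathcal L_{1,-1}\cong\mathcal O_{\mathcal E}([0])$ (constructed in the Lemmata of Section 2) furnishes, after pulling back through the coroot maps and taking an appropriate product over $W$-orbits, a section of $\mathcal L_\Delta$ whose divisor is exactly $\sum_r[\ker(r-1)]$. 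By the Lemma immediately preceding the theorem (vanishing of antisymmetrizations along $\sum_r[\ker(r-1)]$), antisymmetrizing any such $g$ lands in $\Gamma(\mathcal L_\Delta)$ again; the point is that for a generic/suitable choice the antisymmetrization is nonzero, which one checks on the generic fiber by a dimension count (the antisymmetric part of $\Gamma(A_W;\mathcal L_\Delta)$ has the "expected" dimension, computed via the decomposition of Section 2) — and here is where diagonalizability of the root kernel is used, exactly as in the Theorem/Corollary on $G$-module structure, so that the relevant $G$-module is the honest permutation module and its sign-isotypic component is visibly nonzero.

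The main obstacle, and the reason the theorem is not immediate, is precisely the control over the characteristic-dividing-$|W|$ supersingular fibers: a priori the $W$-module $\Gamma(A_W;\mathcal L_\Delta)$ could degenerate on such a fiber (as the examples with $Q_8(1)$, $\sqrt3\Lambda_{E_6}^\perp$, etc.\ in Section 2 show can genuinely happen for other bundles), so one cannot simply invoke constancy of the permutation module. The key claim to establish is that the \emph{antisymmetric} part $(\sum_w\sigma(w)w)\Gamma(A_W;\mathcal L_\Delta)$ stays nonzero on every fiber when the root kernel is diagonalizable — equivalently, that $\mathcal L_\Delta$ on $A_W$ carries a nowhere-vanishing (over $S$) antisymmetric section. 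I expect to prove this by exhibiting the antisymmetrization of $g$ as an explicit product of $\vartheta$-functions (a Weyl-denominator-type formula) via the "powerful way of constructing functions" Theorem of Section 2: each factor is a section of some $\mathcal L_{Q,w}$ on a power of $\mathcal E$, defined on \emph{every} fiber, and the product has the right divisor and is manifestly not identically zero on any fiber, including supersingular ones. Once $g$ exists fiberwise, the idempotent $f\mapsto \frac{\sum_w\sigma(w)w(gf)}{\sum_w\sigma(w)w(g)}$ exists locally on $S$, both invariant-dimension functions are lower semicontinuous as kernel/image ranks and upper semicontinuous as fiber dimensions of the idempotent's image, hence locally constant, hence (since $S$ is integral) constant. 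The two assertions of the theorem then follow simultaneously, the second because $(\sum_w\sigma(w)w)\Gamma(A_s;\mathcal L)$ is the sign-isotypic summand, cut out by the complementary idempotent $\frac{1}{|W|}\sum_w\sigma(w)w$ twisted by $g$ in the same way.
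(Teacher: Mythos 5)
Your overall framework matches the paper's: you reduce to $A_W$ via Lemma \ref{lem:G-mod-factor}, then to $\prod_i E_i$ using diagonalizability of the root kernel to split $\Gamma(\prod_i E_i;\mathcal L)$ into $K$-eigenspaces (the paper actually uses this decomposition plus semicontinuity to pass between $A$ and the product, which you should spell out), then to irreducible $W$, and finally you seek a section $g\in\Gamma(A_W;\mathcal L_\Delta)$ with nontrivial antisymmetrization so as to build a local idempotent. You also correctly identify that the entire difficulty is in the characteristics dividing $|W|$. (Incidentally, the final ``semicontinuity in both directions'' argument is more roundabout than needed: a local idempotent on a coherent sheaf has image a local direct summand, hence locally free, so its rank is locally constant outright.)

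The genuine gap is at the crucial step: you assert, without a real method, that the antisymmetrization of a suitable $g$ can be exhibited as a Weyl-denominator-type product of $\vartheta$ functions which is ``manifestly not identically zero on any fiber.'' This is not a minor detail to fill in; it is where essentially all of the work in the paper's proof lives, and the naive attempt fails. Concretely, the obvious choice $g=1/\theta_\Delta$ (the meromorphic reciprocal of the theta cutting out $\sum_r[\ker(r-1)]$) is itself $W$-\emph{anti}-invariant, so $\sum_w\sigma(w)wg=|W|\cdot g$, which vanishes identically in exactly the bad characteristics the theorem must cover. To get a nonzero antisymmetrization one must use a $g$ that is genuinely not anti-invariant, and then justify nonvanishing of the resulting (essentially constant) antisymmetrization on each geometric fiber. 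The paper does this by induction on the rank, via explicit theta-function identities that express the sum over coset representatives of $W/W'$ (with the $W'$-antisymmetrization handled by the inductive hypothesis) as a nonzero constant in auxiliary parameters for each classical type; for the exceptional types it falls back on Lemma \ref{lem:not_prime_power} to pass to an isogenous variety on which a smaller (classical) Weyl group acts, and applies the classical-type result there. Your proposal does not engage with the need for this induction or the coset-representative sums, and appealing to the Section~2 function-construction Theorem does not by itself give nonvanishing --- that is exactly the content of the identities. So while the skeleton of the argument is correct, the key step is unproved and the route you sketch toward it would, as stated, fail.
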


\begin{proof}
 We first observe that by Lemma~\ref{lem:G-mod-factor}, the claims hold
 for $A$ iff they hold for $A_W$, a.k.a. the image of~$\prod_i E_i\to A$.
 We may thus without loss of generality assume that the morphism $\prod_i
 E_i\to A$ is an isogeny such that $W$ acts trivially on the root kernel~$K$. By assumption, $K$~is diagonalizable, so that $\Gamma\big(\prod_i
 E_i;{\cal L}\big)$ decomposes into~$K$-eigenspaces, and this decomposition is~compatible with the action of~$W$. It then follows by semicontinuity
 that the claims hold for~$A$ if they hold for $\prod_i E_i$. Since this
 is a product over the components of~$W$, we~may assume without loss of
 generality that $W$ is indecomposable.

 We may then reduce as discussed to showing that for any elliptic root
 datum corresponding to an indecomposable finite Weyl group, the
 corresponding line bundle ${\cal L}_\Delta$ contains a section with
 nontrivial antisymmetrization. (There is also the technical, but easy to
 verify, condition that ${\cal L}_\Delta|_{E_r}\cong {\cal
 L}_{2d_r,0;E_r}$ for suitable positive integers.)

 Here we may use the classification of finite Weyl groups. The~simplest
 case is $W=A_n$, in~which case we may identify $\prod_i E_i$ with the
 subvariety of~$E^{n+1}$ on which $\sum_i z_i=0$. By induction in~$n$
 (with trivial base case $n=0$), the result holds for $n-1$, and thus any
 $S_n$-anti-invariant section can be obtained by antisymmetrization over
 $S_n$. It thus suffices to show that there is an $S_n$-anti-invariant
 section that when summed over coset representatives of~$S_{n+1}/S_n$ with
 appropriate sign gives a nonzero result. Equivalently, by dividing by
 the appropriate product of~$\vartheta$ functions, we~need to find an
 $S_n$-invariant function with suitable poles that symmetrizes to a~nonzero constant. For~auxiliary parameters $y_1,\dots,y_{n+2}$, we~may
 consider the function
 \begin{gather*}
 \frac{\prod_{1\le i\le n+2} \vartheta(z_{n+1}-y_i)
 \prod_{1\le i\le n} \vartheta(Y-z_i)}
 {\prod_{1\le i\le n} \vartheta(z_{n+1}-z_i)},
 \end{gather*}
 where $Y=\sum_{1\le i\le n+2} y_i$. Summing this over $S_{n+1}/S_n$
 gives a function with no poles, which must therefore be constant; on the
 other hand, of the $n+1$ terms that result, all but one vanishes when~$z_{n+1}=Y$. We thus find that
 \begin{gather*}
 \sum_{w\in S_{n+1}/S_n}\!\!\!w\cdot
 \frac{\prod_{1\le i\le n+2} \vartheta(z_{n+1}-y_i)
 \prod_{1\le i\le n} \vartheta(Y-z_i)}
 {\prod_{1\le i\le n} \vartheta(z_{n+1}-z_i)}
 =
 \prod_{1\le i\le n+2} \vartheta(Y-y_i),
 \end{gather*}
 which is generically nonzero. (Note that the case $n=1$ is a version of
 the standard addition law for theta functions.) This identity is a
 disguised form of a classical theta function identity; see the discussion
 around~\cite[equation~(1.22)]{RosengrenH:2016}.

 For types $B/C/D$, we~may similarly reduce to lower rank cases, noting
 that $D_2$ and $B_1$ both follow from the result for $A_1$. There are
 four cases to consider: the action of~$D_n$ on~$E\otimes \Lambda_{D_n}$,
 the action of~$B_n$ on the same variety, the action of~$C_n$ on~$E\otimes
 \Z^n$ (following~\cite{LooijengaE:1976}, we~label the cases by the dual
 root system), and the action of~$BC_n$ on the variety
 $E_{\Lambda_{D_n},\Z^n}$ associated to a~point of~${\cal X}_0(2)$ lying
 over $E$. In~each case, there is a natural isogeny to~$E\otimes \Z^n$,
 and it turns out we can choose the function being symmetrized to be the
 pullback of a function on~$E\otimes \Z^n$. The~simplest identity
 corresponds to the $C_n$ case, valid for all $n\ge 1$:
 \begin{gather*}
 \sum_{w\in C_n/C_{n-1}}\!\!\!w\cdot
 \frac{\prod_{1\le i\le 2n+1} \vartheta(z_n-y_i)
 \prod_{1\le i\le n} \vartheta(Y+z_i)
 \prod_{1\le i<n} \vartheta(Y-z_i)}
 {\vartheta(2z_n)\prod_{1\le i<n} \vartheta(z_n+z_i)\vartheta(z_n-z_i)}
 \\ \hphantom{\sum_{w\in C_n/C_{n-1}}\!\!\!}
{} = \prod_{1\le i\le 2n+1} \vartheta(Y-y_i),
 \end{gather*}
 with~$Y=\sum_{1\le i\le 2n+1} y_i$; if we expand this out as a sum of~$2n$
 terms, we~find that it is simply the special case
 $(z_1,\dots,z_{2n})\mapsto (-z_n,\dots,-z_1,z_1,\dots,z_n)$ of the
 $S_{2n}/S_{2n-1}$ identity. In~characteristic not 2, we~may set
 $y_{2n-2},\dots,y_{2n+1}$ to be the four points of~$E[2]$ to obtain an
 identity for $D_n$, $n>2$:
 \begin{gather*}
 \sum_{w\in D_n/D_{n-1}}\!\!\! w\cdot
 \frac{\prod_{1\le i\le 2n-3} \vartheta(z_n-y_i)
 \prod_{1\le i\le n} \vartheta(Y+z_i)
 \prod_{1\le i<n} \vartheta(Y-z_i)}
 {\prod_{1\le i<n} \vartheta(z_n+z_i)\vartheta(z_n-z_i)}
 \\ \hphantom{\sum_{w\in D_n/D_{n-1}}\!\!\!}
 {}= \vartheta(2Y) \prod_{1\le i\le 2n-3} \vartheta(Y-y_i),
 \end{gather*}
 where $Y=\sum_{1\le i\le 2n-3} y_i$. Since this identity is expressed
 entirely in terms of~$\vartheta$, it continues to hold in characteristic~2. If we only specialize three parameters to the nonzero $2$-torsion
 points, we~instead obtain (for $n\ge 2$):
 \begin{gather*}
 \sum_{w\in B_n/B_{n-1}}\!\!\!w\cdot
 \frac{\prod_{1\le i\le 2n-2} \vartheta(z_n-y_i)
 \prod_{1\le i\le n} \vartheta(Y+z_i)
 \prod_{1\le i<n} \vartheta(Y-z_i)}
 {\vartheta(z_n)\prod_{1\le i<n} \vartheta(z_n+z_i)\vartheta(z_n-z_i)}
 \\ \hphantom{\sum_{w\in B_n/B_{n-1}}\!\!\!}
 {}= \frac{\vartheta(2Y)}{\vartheta(Y)} \prod_{1\le i\le 2n-2} \vartheta(Y-y_i).
 \end{gather*}
 We omit the analogous identity for $BC_n$ (obtained from the $C_n$
 identity by setting two of the~$y_i$ to be the $2$-torsion points not in
 the kernel of~$\phi$) due to notational difficulties with using
 $\vartheta$ in the presence of isogenies, but note that for purposes of
 the reduction there is no reason we cannot simply use the $B_n$ identity.

 For the seven exceptional cases (two each of~$G_2$ and $F_4$ along with
 the simply laced cases~$E_6$, $E_7$, $E_8$), we~observe that the relevant
 line bundle comes from a polarization of degree a~multiple of~6, and we
 may thus use Lemma~\ref{lem:not_prime_power} to reduce to a smaller
 group. That is, we~obtain an~equivariant isomorphism
 \begin{gather*}
 \Gamma(A;{\cal L}_{\Delta}) \cong \bigoplus_H \Ind_H^W \Res^W_H
 \Gamma(A';{\cal L}_{\Delta}),
 \end{gather*}
 where $H$ ranges over the point stabilizers in the different orbits of
 the action of~$G$ on an appro\-priate diagonalizable $2$- or $3$-group. The~image of antisymmetrization on the left is thus the direct sum of
 terms
 \begin{gather*}
 \bigg(
 \sum_{h\in H} \sigma(h) h\bigg) \Res^W_H \Gamma(A';{\cal L}_{\Delta}) ,
 \end{gather*}
 and since $H$ is a reflection group in each case, we~may apply induction.
 (In fact, only that term which is nonzero in characteristic 0 has any
 hope of contributing).

 For instance, for~$E_8$, the variety is $E^8$ with polarization given by
 $30$ times the Cartan matrix of~$E_8$. In~characteristic not 2, we~may
 use the $2$-part of~$\Lambda_{E_8}/30\Lambda_{E_8}$ to split into
 eigenspaces, of which only one term survives. We thus reduce to showing
 that the descended line bundle on~$E_{\Lambda_{E_8},2\Lambda_{E_8}}\cong
 E_2^8$ (with nonproduct polarization) has a nontrivial antisymmetrization
 under the stabilizer $W(D_8)$. This is $2$-isogenous to the standard
 $D_8$ model, and thus (since the characteristic is not~2) the image of
 antisymmetrization has the same dimension as in characteristic~0. A~similar reduction using the $3$-part reduces to antisymmetrization for~$W(A_8)$ and proves the result for any characteristic other than~3.
\end{proof}

\begin{rems}
Results of~\cite{LooijengaE:1976,SaitoK:1990} in characteristic 0 actually
compute the structure of the invariant ring $\big($i.e., $\bigoplus_d
\Gamma\big(E_{B,C};{\cal L}_Q^d\big)$, where $Q$ is the minimal invariant
polarization satisfying the evenness requirement$\big)$ and find that in each
case the result is a free polynomial ring in generators of degrees that can
be read off of the coefficients of the highest short (co)root. This
suggests that something similar should hold in arbitrary characteristic.
It would be natural in this context to also consider actions of complex
reflection groups on varieties isogenous to~$E^n$ where $j(E)\in
\{0,1728\}$, or even quaternionic reflection groups in the case of
supersingular curves of characteristic~2 or 3.
\end{rems}

\begin{rems}
 The diagonalizability hypothesis is necessary, at least as far as the
 antisymmetrization claim is concerned. For~example, suppose $x\in E[3]$
 is a nontrivial 3-torsion point, and consider the quotient $A$ of the sum
 0 subvariety of~$E^3$ by the subgroup generated by $(x,x,x)$. The~image
 in~$A$ of the point $(0,x,-x)$ is negated by every reflection, and is
 thus not contained in~any reflection hypersurface, but still has
 nontrivial stabilizer $\Z/3\Z\subset S_3$. It follows that in~characteristic 3, the antisymmetrization of any section of an ample line
 bundle will vanish at this point, and thus for {\em no} ample line bundle
 is the dimension of the image of antisymmetrization the same as in~characteristic 0 (except, of course, when there are no antisymmetric
 elements in~characteristic 0). The~invariants remain~well-behaved,
 however, as~the invariant ring is the same as that of~$\langle x\rangle$
 on~$\P^2$; this is a permutation representation, so its Hilbert series
 is independent of the characteristic.
\end{rems}

In the sequel, we~will need some understanding of equivariant gerbes on
abelian schemes, and thus in particular will want to understand
$H^1(W;\Pic(A))$. This turns out to be {\em nearly} trivial when~$W$ is
finite and the map $\prod_i E_i\to A$ is an isomorphism, in that any
nontrivial cohomology class remains nontrivial when restricted to some
simple reflection. (In particular, $H^1(W;\Pic(A))$ is 2-torsion.)

For inductive purposes, we~will need to consider a slightly larger class of
actions. Given an~elliptic root datum of finite type and a~point $u\in
\prod_i E_i(S)$, we~may define an action of~$W$ on~$\prod_i E_i$ by
\begin{gather*}
 s_i(x_1,\dots,x_n) =
\bigg(x_1,\dots,x_{i-1},-x_i+\sum_{j\ne i} \mu_{ji}(x_j)+u_i,x_{i+1},\dots,x_n
\bigg);
\end{gather*}
this is of course no longer a {\em pointed} action of~$W$, but is instead
the twist by a class in~$H^1\big(S;A^W\big)$ determined by $u$. (More precisely,
the cocycle corresponding to~$u$ is fppf locally a coboundary, and the
nonuniqueness of this representation gives a class in~$H^1\big(S;A^W\big)$.) (And,
of course, a~class in~$H^1\big(S;A^W\big)$ is representable in this form iff the
corresponding $A$-torsor has a section.)

\begin{lem}
 Let $W$ be a finite Weyl group of rank $2$, and let $X=E_1\times E_2$ be
 equipped with an action of the above form such that the root datum has
 $r_1\ge r_2$. Then the kernel of~$\sum_{w\in W} (-1)^{\ell(w)} w$ on~$\Pic(X)$ is spanned by $\pi_1^*\Pic(E_1)$ and $\Pic(X)^{\langle
 s_1\rangle}$.
\end{lem}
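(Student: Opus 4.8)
First I would set up notation: write $W=\langle s_1,s_2\rangle$ with $m:=m_{12}$, so $|W|=2m$ and the antisymmetrizer $a:=\sum_{w\in W}(-1)^{\ell(w)}w$ factors as $a=(1-s_1)\,\sigma$ where $\sigma=\sum_{w\in W_0}w$ runs over the rotation (even) subgroup $W_0=\langle s_1s_2\rangle$. This immediately shows that $\Pic(X)^{\langle s_1\rangle}\subset\ker a$ (since $(1-s_1)$ kills $s_1$-invariants) and that $\pi_1^*\Pic(E_1)\subset\ker a$ as well: the class $\pi_1^*D$ for $D\in\Pic(E_1)$ is $s_1$-invariant because the action of $s_1$ on $E_1\times E_2$ only modifies the second coordinate (in the twisted model $s_1(x_1,x_2)=(x_1,\,-x_2+\mu_{12}(x_1)+u_2)$ fixes $x_1$), so $\pi_1^*\Pic(E_1)\subset\Pic(X)^{\langle s_1\rangle}\subset\ker a$ and in fact the claimed span reduces to: $\ker a=\Pic(X)^{\langle s_1\rangle}$ together with $\pi_1^*\Pic(E_1)$, but the first already contains the second, so really I must show $\ker a=\Pic(X)^{\langle s_1\rangle}+\pi_1^*\Pic(E_1)$. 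The content is the reverse inclusion: any $L\in\ker a$ lies in that sum.

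The strategy is to decompose $\Pic(X)$ using the short exact sequence $0\to X\to\Pic(X)\to\NS(X)\to 0$ and treat the connected part and the N\'eron–Severi part separately. On the $\NS(X)$ part, $W$ acts through its finite quotient $\GL(\text{standard rep})$, and $\NS(E_1\times E_2)$ (before twisting by $u$, which doesn't affect the $\NS$ action) is the rank-3 lattice of symmetric pairings; the explicit $W$-invariant polarization computed in the rank-2 lemma earlier in the excerpt, together with the relation $r_1\mu_{12}^\vee=r_2\mu_{21}$ and $\mu_{12}\mu_{21}=4\cos^2(\pi/m)$, lets me diagonalize the $W_0$-action and compute $\ker a$ on $\NS(X)$ explicitly in each of the cases $m=2,3,4,6$; the inequality $r_1\ge r_2$ enters here to pin down which diagonal entries generate the needed sublattices (it controls which of the two "long root" directions carries the extra divisibility). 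On the connected part $X=E_1\times E_2$ itself, the antisymmetrizer acts by the endomorphism $a\in\mathrm{End}(X)$, and I would show $\ker a\cap X$ is covered by $\{0\}\times E_2$-contributions plus $s_1$-fixed points — this is where the twisting parameter $u$ needs a little care, as $u$ shifts the action and changes which subvariety is $\langle s_1\rangle$-fixed, but it remains a translate of $\pi_1^{-1}(\text{something})$, so still inside $\Pic(X)^{\langle s_1\rangle}$ after the identification $X\cong\Pic^0(X)$.

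The main obstacle will be the N\'eron–Severi computation for the non-simply-laced cases $m=4$ ($B_2$) and $m=6$ ($G_2$), where $E_1$ and $E_2$ are only isogenous rather than isomorphic and the symmetric pairings involve the isogenies $\mu_{12},\mu_{21}$ explicitly; one must check that the antisymmetrizer's kernel on the rank-3 lattice is exactly $\mathbb{Z}\pi_1^*[0]\oplus(\text{the }s_1\text{-invariant part})$ and not something larger coming from the "interesting" class $(1\times\mu_{21})^*\mathcal P$. I expect this to come down to verifying, in each case, that $a$ acts on a suitable complement of $\Pic(X)^{\langle s_1\rangle}$ by a nonzero integer (equivalently, that the standard reflection representation of $W_0=\mathbb{Z}/m$ on the 2-dimensional span has no invariants other than those forced by $s_1$), which is a finite check; the role of the hypothesis $r_1\ge r_2$ is precisely to ensure the normalization is consistent so that this complement is genuinely free of rank matching $\ker a$'s corank. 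Once the $\NS$ and connected parts are handled, the extension $0\to X\to\Pic(X)\to\NS(X)\to 0$ is $W$-equivariant and I would finish by a diagram chase: $\ker a$ surjects onto $(\ker a)\cap\NS(X)$ with kernel $(\ker a)\cap X$, and lifting generators of the former (using that $H^1$ of the relevant finite group with coefficients in $X$ is torsion, so lifts can be chosen in $\ker a$ up to a correction lying in $X$) shows every class in $\ker a$ is a sum of an $s_1$-invariant class and a $\pi_1^*\Pic(E_1)$-class, as claimed.
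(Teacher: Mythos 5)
Your proposal contains a critical error in the setup that undermines the whole argument. You write that in the twisted model $s_1(x_1,x_2)=(x_1,\,-x_2+\mu_{12}(x_1)+u_2)$, i.e., that $s_1$ fixes the first coordinate and reflects the second — but the paper's convention is the opposite: $s_i$ reflects the $i$-th coordinate, so $s_1(x_1,x_2)=(-x_1+\mu_{21}(x_2)+u_1,\,x_2)$. As a consequence, $\pi_1^*\Pic(E_1)$ is \emph{not} contained in $\Pic(X)^{\langle s_1\rangle}$ in general (check: for $m_{12}=3$ with $\mu_{21}=1$, the polarization $\begin{pmatrix}1&0\\0&0\end{pmatrix}$ is sent by $s_1^t(\,\cdot\,)s_1$ to $\begin{pmatrix}1&-1\\-1&1\end{pmatrix}$, which is different). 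Your ``reduction'' of the lemma to showing $\ker a = \Pic(X)^{\langle s_1\rangle}$ alone is therefore incorrect: $\pi_1^*\Pic(E_1)$ and $\Pic(X)^{\langle s_1\rangle}$ are mostly transverse, and the nontrivial content of the lemma is exactly that their sum exhausts $\ker a$. Your claimed reduced statement $\ker a = \Pic(X)^{\langle s_1\rangle}$ is actually false. (What \emph{is} true, and what the paper uses, is that $\pi_2^*\Pic^0(E_2)$ is $s_1$-invariant and $\pi_1^*\Pic^0(E_1)$ is $s_2$-invariant, which together with the two factorizations of the antisymmetrizer as $\sigma(1-s_1)$ and $\sigma'(1-s_2)$ make the $\Pic^0$ case trivial.)

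Beyond that sign error, your account of where $r_1\ge r_2$ enters is too vague to carry the argument. In the paper's proof, the hypothesis is equivalent (via $r_1\mu_{21}=r_2\mu_{12}^\vee$ and taking degrees) to $\deg(\mu_{21})\le\deg(\mu_{12})$, and combined with $\mu_{12}\mu_{21}\in\{0,1,2,3\}$ it forces $\deg(\mu_{21})$ to be squarefree; this is precisely what lets one conclude from the vanishing of the antisymmetrized $\NS$ matrix that the off-diagonal entry $b$ lies in $\Z\mu_{21}$ (not merely in $\Q\mu_{21}$), from which the kernel is explicitly spanned by three matrices, one of which is $\pi_1^*$ of a degree-1 class and the other two $s_1$-invariant. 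Your assertions that $r_1\ge r_2$ ``controls which long-root direction carries the extra divisibility'' or ``ensures the complement is free of the right rank'' don't identify this squarefreeness/integrality mechanism, and the $\NS$ computation as you sketch it would not close without it.
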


\begin{proof}
 It suffices to prove the corresponding claims for $\Pic^0(X)$ and
 $\NS(X)$ (modulo the requ\-ir\-e\-ment in the latter case that the classes lift
 to actual bundles still of the required form). Since $\Pic^0(X)$ is
 generated by $\pi_1^*\Pic^0(E_1)$ and $\pi_2^*\Pic^0(E_2)$, with the
 latter $s_1$-invariant, we~find that the antisymmetrizer vanishes on~$\Pic^0(X)$, and the claim is immediate. The~N\'eron--Severi group may be
 identified with the space of matrices of the form
 \begin{gather*}
 Q=\begin{pmatrix} a & b \\ b^\vee & c\end{pmatrix}\!,
 \end{gather*}
 with antisymmetrization
 \begin{gather*}
 \begin{pmatrix} 0 & 4b \\ 4b^\vee & 0\end{pmatrix}
 \end{gather*}
 when $m_{12}=2$ and
 \begin{gather*}
 \begin{pmatrix}
 0&m_{12}\big(b\mu_{21}^{-1}-\big(b\mu_{21}^{-1}\big)^\vee\big)\mu_{21}\\
 -m_{12}\mu_{21}^\vee\big(b\mu_{21}^{-1}-\big(b\mu_{21}^{-1}\big)^\vee\big) & 0
 \end{pmatrix}\!,
 \end{gather*}
 when $m_{12}\in \{3,4,6\}$. It follows in either case that the vanishing
 of the antisymmetrization implies $b\in \Z\mu_{21}$. (Here for
 $m_{12}\in \{3,4,6\}$, we~use the fact that $\deg(\mu_{21})\le
 \deg(\mu_{12})$ and $\mu_{12}\mu_{21}\in \{0,1,2,3\}$, so that
 $\deg(\mu_{21})$ is squarefree and an isogeny is in~$\Q\mu_{21}$ iff it
 is in~$\Z\mu_{21}$.) In particular, the kernel of the antisymmetrizer is
 spanned by the matrices
 \begin{gather*}
 \begin{pmatrix} 1 & 0 \\ 0 & 0\end{pmatrix}\!,\qquad
 \begin{pmatrix} 2 & -\mu_{21} \\ -\mu_{21}^\vee &
 \mu_{21}^\vee\mu_{21}\end{pmatrix}\!,\qquad
 \begin{pmatrix} 0 & 0 \\ 0 & 1\end{pmatrix}\!.
 \end{gather*}
 The first matrix is in the pullback of~$\pi_1^*$, while the second and
 third are $s_1$-invariant (and, in fact, images of~$s_1$-invariant
 bundles).
\end{proof}

\begin{prop}\label{prop:coboundary_if_in_rank_1}
 Let $X/S=\prod_i E_i$ be equipped with an action of the finite Weyl group
 $W$ of the above form, and let $z\in Z^1(W;\Pic(X))$ be such that
 $z_{s_i}$ is a coboundary for every $i$. Then $z$ is a coboundary.
\end{prop}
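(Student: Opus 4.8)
The plan is to prove the statement by induction on the rank $n$ of $W$, with the rank-$2$ Lemma above as the essential geometric input (this is exactly why we enlarged the class of actions to include the twists by $u$: restricting the action of a maximal parabolic to a subproduct of curves produces precisely such a twist). For the inductive step, fix a simple reflection $s_n$ that is a leaf of the (finite, hence forest-shaped) Coxeter diagram of $W$, with unique neighbour $s_p$; the case in which $s_n$ commutes with all other generators is handled the same way, with commutation relations in place of braid relations. The parabolic $W'=\langle s_1,\dots,s_{n-1}\rangle$ acts on $X=\prod_iE_i$ through a rank-$(n-1)$ elliptic root datum over the base $S\times E_n$, twisted by a parameter built from the coordinate on $E_n$; since $z|_{W'}$ is still a coboundary on each $\langle s_i\rangle$, the inductive hypothesis provides a single $c\in\Pic(X)$ with $z_{s_i}=(s_i-1)c$ for all $i<n$, and replacing $z$ by $z-\delta(c)$ we may assume $z$ vanishes on $W'$ and $z_{s_n}=(s_n-1)b$ for some $b\in\Pic(X)$.

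Next I would read off the constraints on $b$ from the relations involving $s_n$. For each $i<n$ with $m_{in}=2$, comparing $z_{s_is_n}$ with $z_{s_ns_i}$ gives $(s_n-1)b\in\Pic(X)^{s_i}$; and the relation $(s_ps_n)^{m_{pn}}=1$, unwound using $z_{s_p}=0$ together with the identity $\sum_{w\in\langle s_p,s_n\rangle}(-1)^{\ell(w)}w=\Sigma\,(1-s_p)$, where $\Sigma=\sum_{0\le k<m_{pn}}(s_ps_n)^k$ and $\Sigma\,(s_ps_n)=\Sigma$, collapses to the single relation $\bigl(\sum_{w\in\langle s_p,s_n\rangle}(-1)^{\ell(w)}w\bigr)\,b=0$. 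Since $\langle s_p,s_n\rangle$ acts on $X$ as the rank-$2$ action on $E_p\times E_n$ over the base formed by the remaining curves, and $\Pic$ is unchanged by this reinterpretation, the rank-$2$ Lemma describes $\ker\bigl(\sum_w(-1)^{\ell(w)}w\bigr)$ on $\Pic(X)$ as the sum of a pullback group with a reflection-fixed subgroup; using that a line bundle pulled back along the projection that forgets one of the coordinates $x_p$, $x_n$ is automatically fixed by the corresponding reflection, one deduces in either ordering ($r_p\ge r_n$ or $r_n\ge r_p$) that $z_{s_n}=(s_n-1)b'$ for some $b'\in\Pic(X)^{s_p}$. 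Thus $z-\delta(b')$ is cohomologous to $z$, vanishes on the rank-$2$ parabolic $\langle s_p,s_n\rangle$, and is still a coboundary on every $\langle s_i\rangle$.

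One now iterates, working outward along the tree of $W$: re-applying the inductive hypothesis to a suitable maximal parabolic re-concentrates the cocycle on generators farther from $s_n$, and the rank-$2$ Lemma applied along the next edge of the diagram enlarges the reflection subgroup fixing the witness, until the witness lies in $\Pic(X)^{W'}$, whereupon it is a global witness and $z$ — hence the original cocycle — is a coboundary. The main obstacle is the bookkeeping of this iteration: a single application of the rank-$2$ Lemma only adjusts the witness's invariance by one reflection, so one must set up a well-founded measure, structured by the tree of $W$ and tracking how far the support of the residual cocycle has been pushed away from the initial leaf, and verify at each step that the invariance genuinely propagates outward along the tree rather than merely being traded among reflections.
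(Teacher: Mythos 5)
Your overall setup matches the paper's — you reduce to a cocycle concentrated on a leaf of the Coxeter tree and then invoke the rank-2 Lemma along the edge adjacent to that leaf — but you have a genuine gap precisely where you acknowledge the ``bookkeeping'' difficulty, and the paper's proof shows that the right leaf choice eliminates the iteration entirely.

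The missing observation is that one should not just pick an arbitrary leaf $s_n$ with neighbour $s_p$: since any indecomposable finite Weyl group has at most two values of $r_i$, one can always choose a leaf with $r$ \emph{maximal} among all the simple roots. With that choice one is automatically in the case $r_n\ge r_p$ of the rank-2 Lemma, so its kernel decomposition reads $\pi_n^*\Pic(E_n)+\Pic(X)^{\langle s_n\rangle}$; the second summand does not affect $z_{s_n}$, so the witness $b$ may be taken in the pullback group of the \emph{leaf} curve. Combined with a further N\'eron--Severi computation (using that the residual $z_{s_n}$ must be $W_{\{i:i\ne p,n\}}$-invariant because $z$ vanishes there and $s_n$ commutes with that parabolic), one may refine $b$ to lie in $\pi_n^*\Pic(E_n)$ itself, which is invariant under \emph{all} reflections other than $s_n$ since no other $s_i$ touches the $E_n$ coordinate. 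Thus $z=\partial b$ in one step; there is no iteration to justify.

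In your version, when $r_p>r_n$ the Lemma puts the pullback part on the \emph{interior} vertex $s_p$, and the residual witness $b'$ lands only in $\Pic(X)^{s_p}$, which need not be invariant under the other $s_i$. Subtracting $\partial b'$ from $z$ then generally destroys the vanishing you already established on $W'$ — you have moved the cocycle around rather than shrunk its support, and you give no argument that your proposed ``well-founded measure'' exists or that the invariance ``propagates outward''. As stated, the inductive scheme is not shown to terminate, and the case $r_p>r_n$ is exactly the one that defeats it. The fix is the maximality condition on $r$ at the chosen leaf, together with the refinement step to land $b$ in $\pi_n^*\Pic(E_n)$ rather than merely in a class pulled back from the complement of $E_p$.
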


\begin{proof}
 Since $W$ is finite, its diagram is a forest, and we may thus order the
 roots in such a~way that $s_1$ corresponds to a leaf, and thus without
 loss of generality commutes with~$s_3,\dots,s_n$. Moreover, since
 every component has at most two values of~$r_i$, we~may arrange to have
 $r_1\ge r_i$ for all $i$. If we view $X$ as a family of abelian
 varieties over $E_1$, then we see that the action of~$W_{\{2,\dots,n\}}$
 is still of the above form, and thus the corresponding restriction of~$z$
 is a coboundary by induction. We may thus reduce to the case that
 $z_{s_2}=\cdots=z_{s_n}=0$.

 Now, consider the action of~$\langle s_1,s_2\rangle$ on~$X$ viewed as a
 family over $\prod_{3\le i\le n} E_i$, and let $v\in \Pic(X)$ be such
 that $z_{s_1} = v-{}^{s_1}v$. The~cocycle condition on~$z_{s_1}$ implies
 that $\sum_{w\in \langle s_1,s_2\rangle} (-1)^{\ell(w)} {}^w v$ $=0$, and
 thus the lemma implies that we may replace $v$ by the pullback of a class
 in~$E_1\times \prod_{3\le i\le n} E_i$ without changing $z_{s_1}$. Since
 the cocycle is trivial on~$W_{\{3,\dots,n\}}$ and $s_1$ commutes with
 this subgroup, $z_{s_1}$ must be $W_{\{3,\dots,n\}}$-invariant, and thus
 a computation in the N\'eron--Severi group shows that $v$ is in the
 subgroup generated by $\Pic(E_1)$ and $\Pic\big(\prod_{3\le i\le n}E_i\big)$.
 Since elements of the latter have no effect on~$z_{s_1}$, we~see that we
 may take $v\in \pi_1^*\Pic(E_1)$. This is $W_{\{2,\dots,n\}}$-invariant,
 and thus $z=\partial v$ as required.
\end{proof}

\begin{rem}
 Note that this can fail if we quotient by a subgroup of~$A^W$. Indeed,
 if $A=E^2$ is given the standard action of~$W(A_2)$, then the result
 fails for the induced action on~$A^\vee\cong E^2$. The~cocycles in which
 all bundles have degree 0 (and which are coboundaries in rank~1) are
 themselves classified by $A^\vee(S)$, while a coboundary must be in the
 image of~$A(S)$ under the invariant polarization, and these are different
 unless $E(S)$ is $3$-divisible. There is no difficulty with the
 polarization, however, so although the induction breaks down, it may
 still be the case that the claim holds on the N\'eron--Severi group,
 which would imply an fppf local version of the proposition.
\end{rem}

\section{Elliptic analogues of affine Hecke algebras}\label{section4}

Before proceeding to the construction of Hecke algebras associated to
general elliptic root data, it will be helpful to consider the finite case,
a generalization of the construction of
\cite{GinzburgV/KapranovM/VasserotE:1997}. Note that although we work with
a finite group, the resulting Hecke algebras are most naturally thought of
as elliptic analogues of {\em affine} Hecke algebras, as~they include
multiplication operators in addition to reflection operators. (In
particular,~\cite{GinzburgV/KapranovM/VasserotE:1997} constructs affine
Hecke algebras as degenerations of~a~special case of the construction given
below.)

Although the approach in~\cite{GinzburgV/KapranovM/VasserotE:1997} via
residue conditions can (mostly) be extended to the infinite case, there are
two alternate approaches for which the generalization is more
straightforward: one as a space of operators preserving appropriate
holomorphy conditions, the other as the subalgebra of the algebra of
operators generated by the rank~1 subalgebras. Since we will need to
understand the rank~1 case to give the second construction, we~begin with
the first.

In our application to noncommutative rational varieties below, we~will need
to be able to attach an arbitrary finite set of parameters to the endpoint
roots of the affine $C_n$ diagram; we~will thus give a version of the
general construction in which each conjugacy class of reflections can be
given arbitrarily many parameters. This, of course, includes a case
without any parameters at all, which we consider first.

This ``master'' Hecke algebra has a third description which does not
generalize well to the infinite case, but is simplest of all to give (and
extend to actions of arbitrary finite groups). Let~$X$ be a regular
integral scheme, and let $G$ be a finite group acting faithfully on~$X$.
Then we define the master Hecke algebra ${\cal H}_G(X)$ to be the sheaf of
algebras on~$X/G$ given by ${\cal H}_G(X):=\sEnd(\pi_*\sO_X)$, where
$\pi\colon X\to X/G$ is the quotient map.

If $\pi$ is flat, then ${\cal H}_G(X)$ is the endomorphism ring of a vector
bundle, so is in particular an Azumaya algebra on~$X/G$, and the category
of quasicoherent ${\cal H}_G(X)$-modules is equivalent to the category of
quasicoherent sheaves on~$X/G$. However, this condition holds only rarely;
even in the case when~$G$ is a finite Weyl group acting on an abelian
variety, this morphism can easily fail to be flat. For~instance, consider
the case $G=G_2$ acting on the sum zero subvariety~$X$ of~$E^3$ (as
permutations and global negation). In~characteristic not 2, consider the
point $(\tau_1,\tau_2,\tau_1+\tau_2)\in E^3$, where $\tau_1$, $\tau_2$
generate $E[2]$. This point has stabilizer $Z(G)$, and is isolated in the
subvariety fixed by its stabilizer, and thus we see that its image in~$X/G$
is a~singular point (of type $A_1$), and that the quotient morphism fails
to be flat in a neighborhood of that orbit.

There are two prominent cases in which we do have flatness, namely the
action of~$A_n$ on the sum 0 subvariety of~$E^{n+1}$ and the action of~$C_n$ on~$E^n$; in each case, the quotient morphism is flat because it is a
quasi-finite morphism between regular schemes (from $E^n$ to~$\P^n$, to be
precise).

In general, although ${\cal H}_G(X)$ may not be an Azumaya algebra, we~at
least know that it is torsion-free, and thus may be viewed as contained in
its generic fiber $\End_{k(X/G)}(k(X))$. Since~$k(X)$ is Galois over
$k(X/G)$, the generic fiber has an alternate description as a twisted group
algebra $k(X)[G]$, giving rise to the following description of~${\cal
 H}_G(X)$. Denote the natural action of~$\Aut(X)$ on~$k(X)$ by ${}^g f:=
(g^{-1})^*f$; we will also use a similar notation for the actions on~line
bundles and divisors. It will also be convenient to define (for finite
$G$) ${}^G f = \pi_G^*f$, where $f\in k(X/G)$ and $\pi\colon X\to X/G$ is the
natural quotient.

\begin{prop}
 The master Hecke algebra ${\cal H}_G(X)$ is the subsheaf of the
 twisted group algebra $k(X)[G]$ such that for any $G$-invariant open
 subset $U$, $\Gamma(U/G;{\cal H}_G(X))$ consists of the operators $\sum_i
 c_i g$ such that for any $G$-invariant open $V$ and $f\in \Gamma(V;\sO_X)$,
 $\sum_i c_i {}^g f\in \Gamma(U\cap V;\sO_X)$.
\end{prop}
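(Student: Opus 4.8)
The plan is to characterize $\sEnd(\pi_*\sO_X)$ as a subsheaf of $k(X)[G]$ by a reflexivity/local-sections argument, exploiting that $X\to X/G$ is generically Galois and that $\sO_X$ is torsion-free. First I would observe that both sides of the claimed equality are subsheaves of the constant sheaf $k(X)[G]$: the left side because $\pi_*\sO_X$ is a torsion-free sheaf on the integral scheme $X/G$, so $\sEnd(\pi_*\sO_X)$ embeds in its generic fiber $\End_{k(X/G)}(k(X))\cong k(X)[G]$ (the last isomorphism by normal basis / Galois descent, with the twisting of the group algebra encoding ${}^g f$); and the right side by definition. Hence it suffices to prove the equality section by section over $G$-invariant opens $U$, i.e. to show $\Gamma(U/G;\sEnd(\pi_*\sO_X))$ consists of exactly those $\sum_i c_i g\in k(X)[G]$ which carry $\Gamma(V;\sO_X)$ into $\Gamma(U\cap V;\sO_X)$ for all $G$-invariant $V$.

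Next I would unwind what a section of $\sEnd(\pi_*\sO_X)$ over $U/G$ is. Since $\pi$ is affine, $\pi_*\sO_X|_{U/G}$ is the quasicoherent sheaf associated to the $\sO_{U/G}$-algebra $\pi_*\sO_X$, and an endomorphism of it as an $\sO_{U/G}$-module is determined by its action on local sections. Such an endomorphism $\phi$ need not be $\sO_X$-linear, only $\sO_X^G$-linear; writing $\phi$ at the generic point as $\sum_i c_i g$ with $c_i\in k(X)$, the $\sO_{U/G}$-linearity gives nothing beyond what is automatic, while the requirement that $\phi$ actually land in $\pi_*\sO_X$ (not just its generic fiber) over every open of $U/G$ is precisely the condition that for every $G$-invariant open $V\subseteq U$ and every $f\in\Gamma(V;\sO_X)$, $\sum_i c_i\,{}^g f\in\Gamma(V;\sO_X)$. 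This gives the inclusion ``$\subseteq$'' with $V$ restricted to subsets of $U$; that the condition over all $G$-invariant $V$ (not just $V\subseteq U$) is equivalent follows by replacing $V$ by $U\cap V$ and noting that being a section of $\sO_X$ over $U\cap V$ is a local condition that can be checked after intersecting with $U$. For the reverse inclusion ``$\supseteq$'', given $\sum_i c_i g$ satisfying the stated holomorphy-preservation property, I need to produce from it a genuine endomorphism of $\pi_*\sO_X$ over $U/G$: define it on sections by $f\mapsto \sum_i c_i\,{}^g f$; this is $G$-equivariant in the appropriate sense hence descends to an $\sO_{U/G}$-module map $\pi_*\sO_X\to\pi_*\sO_X$, and the hypothesis guarantees the target sections are integral, i.e. the map is well-defined into $\pi_*\sO_X$ and not merely its rationalization.

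The main obstacle I anticipate is the subtle point that $\sEnd(\pi_*\sO_X)$ is the sheaf-Hom of $\pi_*\sO_X$ with itself over $X/G$, and one must check that an $\sO_{X/G}$-module endomorphism of the quasicoherent sheaf $\pi_*\sO_X$ is the same thing as a compatible family of $k(X)[G]$-elements satisfying the integrality conditions — in other words, that there are no ``extra'' endomorphisms not visible at the generic point. This is exactly where torsion-freeness of $\pi_*\sO_X$ (equivalently, $X$ regular integral, or just $X/G$ integral with $\pi$ dominant) is used: an $\sO_{X/G}$-linear endomorphism of a torsion-free sheaf over an integral scheme is determined by its generic fiber, so the embedding into $k(X)[G]$ is injective and the identification is forced. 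A secondary technical point is verifying the twisted group algebra structure — that $\End_{k(X/G)}(k(X))\cong k(X)[G]$ with the multiplication $(c\,g)(c'\,g')=(c\,{}^g c')\,gg'$ — but this is the standard Galois-theoretic fact (the Artin–Wedderburn description of the crossed product for a Galois extension) and I would simply cite it. Once these identifications are in place, the proof reduces to the bookkeeping above, with no further serious content.
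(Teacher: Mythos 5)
Your argument is correct and is essentially the paper's own proof: both rely on torsion-freeness of $\pi_*\sO_X$ to embed $\sEnd(\pi_*\sO_X)$ in its generic fiber $\End_{k(X/G)}(k(X))\cong k(X)[G]$, both identify sections over $U/G$ with operators preserving local holomorphy, and both reduce the ``all invariant $V$'' formulation to the ``$V\subseteq U$'' formulation via the inclusion $\Gamma(V;\sO_X)\subset\Gamma(U\cap V;\sO_X)$. Your writeup spells out more of the bookkeeping (in particular the Galois identification of the generic fiber and the reverse inclusion), but the mathematical content is the same.
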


\begin{proof}
 Indeed, ${\cal H}_G(X)$ is the subsheaf of~$\End_{k(X/G)}(k(X))$ which on~$U$ consists of endomorphisms preserving $\sO_X|_U$, or equivalently
 preserving global sections of~$\sO_X|_V$ for all invariant $V\subset U$. The~claim then follows by using the twisted group algebra description of
 the endomorphism ring and observing that $\Gamma(V;\sO_X)\subset
 \Gamma(U\cap V;\sO_X)$.
\end{proof}

One consequence is that if $H\subset G$, then there is a natural inclusion~${\cal H}_H(X)\subset {\cal H}_G(X)$ (where we conflate ${\cal H}_H(X)$
with its direct image under $X/H\to X/G$). In~addition, if $\alpha$ is
an~automorphism of~$X$ that normalizes $G$, then there is a corresponding
automorphism of~${\cal H}_G(X)$, either by pulling back through the induced
automorphism of~$X/G$, or on operators as $\sum_g c_g g \mapsto \sum_g
{}^\alpha c_{\alpha^{-1} g\alpha} g$.

Note that ${\cal H}_G(X)$ clearly contains a copy of the structure sheaf
$\sO_X$ as well as the operators~$g$ for each $g\in G$, and thus contains a
copy of the twisted group algebra $\sO_X[G]$. It can, however, be bigger
than the twisted group algebra. Consider the case of~$G=\mu_n=\langle
s\rangle$ acting on~$X=\A^1$ in characteristic prime to~$n$ by rescaling
the variable $x$. Applying the operator $1+\zeta_n s+\zeta_n^2 s^2+\cdots+
\zeta_n^{n-1} s^{n-1}$ to any function which is holomorphic at the origin
gives a function which vanishes to order $n-1$ at the origin, and thus
${\cal H}_{\mu_n}\big(\A^1\big)$ contains the operator $x^{1-n}\big(1+\zeta_n
s+\zeta_n^2 s^2+\cdots +\zeta_n^{n-1} s^{n-1}\big)$ not contained in~$\sO_X[G]$.

It turns out that this is the typical case in which the coefficients may
have poles; more precisely, the only poles are associated to ``complex
reflections'' in~$G$ (relative to the action on~$X$). It will be useful to
consider a more general setting. We begin with a couple of local results.

\begin{lem}
 Suppose $R$, $S$ are discrete valuation rings, and suppose
 $\psi_1,\dots,\psi_n\colon S\to R$ are distinct finite homomorphisms. Then the
 module of operators $\sum_i c_i \psi_i$ mapping $S$ to~$R$ is a free
 $R$-module of rank~$n$.
\end{lem}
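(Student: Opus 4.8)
The plan is to reduce everything to linear algebra over the fraction field and then control denominators. First I would pass to fraction fields: writing $K = \operatorname{Frac}(S)$ and $L = \operatorname{Frac}(R)$, each $\psi_i$ extends to a field embedding $K \hookrightarrow L$, and these embeddings are distinct. By Dedekind's theorem on independence of characters (applied to the multiplicative groups, or equivalently linear independence of distinct field homomorphisms), the $\psi_i: K \to L$ are linearly independent over $L$. Hence the space of $L$-linear combinations $\sum_i c_i \psi_i$ that happen to send $S$ into $L$ is an $L$-vector space of dimension exactly $n$ (it contains each $\psi_i$, and cannot be larger since the $\psi_i$ already span an $n$-dimensional space inside $\operatorname{Hom}_{\text{add}}(K,L)$ and any operator sending $S$ to $L$ extends $K$-linearly... wait — the operators here are only additive, not $K$-linear). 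Let me restructure: the module $M$ of additive operators $S \to R$ of the form $\sum_i c_i\psi_i$ with $c_i \in R$ sits inside the $L$-vector space $V = \bigoplus_i L\psi_i$, which has dimension $n$ by independence of the $\psi_i$. So $M$ is an $R$-submodule of an $n$-dimensional $L$-space, torsion-free, hence free of rank $\le n$; and it contains $\psi_1,\dots,\psi_n$, so has rank exactly $n$.

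The real content is showing $M$ is \emph{finitely generated} (equivalently, that the denominators needed are bounded), so that over the DVR $R$ it is automatically free of rank $n$. For this I would argue as follows. Since each $\psi_i$ is finite, $R$ is a finitely generated $\psi_i(S)$-module, so after picking a uniformizer $s$ of $S$ we know $v_R(\psi_i(s)) = e_i > 0$ for some ramification index. Pick $c = (c_1,\dots,c_n) \in L^n$ with $T:=\sum_i c_i\psi_i$ sending $S$ into $R$. The key is to bound $\min_i v_R(c_i)$ from below by a constant depending only on the $\psi_i$. Evaluate $T$ on a well-chosen finite set of elements of $S$ — for instance on $1, s, s^2, \dots, s^{n-1}$ (or more robustly on elements witnessing that $S$ is not "swallowed" by the $\psi_i$ simultaneously). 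This gives a linear system $\sum_i c_i \psi_i(s^j) \in R$ for $j = 0,\dots,n-1$, i.e. $G\vec{c} \in R^n$ where $G$ is the "generalized Vandermonde" matrix $G_{ji} = \psi_i(s^j) = \psi_i(s)^j$. Because the $\psi_i(s)$ are distinct elements of $R$ (distinctness: if $\psi_i(s) = \psi_j(s)$ then, $S$ being generated by $s$ over... no — $S$ is a DVR generated over its subfield by $s$, and the $\psi_i$ agree on the prime field; one needs the $\psi_i$ distinct as homomorphisms to force $\psi_i(s)\ne\psi_j(s)$, which may actually fail if residue characteristic intervenes — here I would instead use that distinct homomorphisms of DVRs must differ on \emph{some} element, and enlarge the test set accordingly), the Vandermonde determinant $\det G = \prod_{i<j}(\psi_j(s) - \psi_i(s))$ is a nonzero element $\delta \in R$, and Cramer's rule gives $\delta c_i \in R$, i.e. $v_R(c_i) \ge -v_R(\delta)$ for all $i$. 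Hence $M \subset \delta^{-1}\bigoplus_i R\psi_i$, a finitely generated $R$-module; being a submodule of a free module over the Noetherian ring $R$, $M$ is finitely generated, and being torsion-free over a DVR it is free, of rank $n$ as established above.

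The main obstacle I anticipate is the distinctness-of-homomorphisms subtlety: two distinct homomorphisms $\psi_i, \psi_j: S \to R$ of DVRs need not differ on a chosen uniformizer $s$ (they could agree on $s$ but differ on some unit, or differ only through wild ramification phenomena if $S$ is not generated by $s$ over a common base). So the clean Vandermonde argument on powers of $s$ may not directly apply. The fix is to choose the test elements more carefully: pick $t_1,\dots,t_n \in S$ such that the matrix $(\psi_i(t_k))_{i,k}$ is invertible over $L$ — this is possible precisely because the $\psi_i$ are linearly independent over $L$ (Dedekind), so some $n\times n$ minor of the infinite matrix $(\psi_i(t))_{i,\,t\in S}$ is nonzero. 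Then $\det(\psi_i(t_k))$ is a fixed nonzero element of $R$ (since all $\psi_i(t_k) \in R$), and Cramer's rule bounds $v_R(c_i)$ below by minus its valuation, exactly as before. I would present the argument in this form to sidestep ramification issues entirely. One should also double-check the easy direction that every $\psi_i$ itself lies in $M$ (clear, since $\psi_i(S) \subset R$ as $\psi_i$ is a homomorphism $S \to R$), so $\operatorname{rank}_R M = n$ and not less.
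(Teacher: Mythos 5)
Your proof is correct, but it takes a genuinely different and more elementary route than the paper's.  The paper considers the $R$-submodule $R\vec\psi S\subset R^n$ generated by the image of $S$ under the diagonal map $(\psi_1,\dots,\psi_n)$; linear independence of the monoid characters $\psi_i$ (the same Dedekind-type independence you invoke) shows this is free of rank $n$, and the module of operators is then \emph{exactly} the dual lattice $\{\vec c\in K_R^n: \vec c\cdot x\in R\ \forall x\in R\vec\psi S\}$, which is free of rank $n$ since $R\vec\psi S$ is.  You instead bound denominators directly: you find test elements $t_1,\dots,t_n\in S$ with $\det(\psi_i(t_k))\ne 0$ (correctly justified by the rank-$n$ statement that is the content of independence), and Cramer's rule gives $v_R(c_i)\ge -v_R(\det)$, so the operator module is a finitely generated torsion-free $R$-module contained in an $n$-dimensional $K_R$-space and containing $\psi_1,\dots,\psi_n$, hence free of rank $n$.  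Both arguments hinge on the same independence fact, and your discussion of why the naive Vandermonde-on-powers-of-a-uniformizer attempt might fail, and how to fix it, is careful and right.  The paper's duality formulation is tighter and buys more: it identifies the operator module \emph{as} the dual of $R\vec\psi S$ (not merely as a submodule of some $\delta^{-1}R^n$), which is what makes the colength computation in the remark following the Lemma immediate; your version would need an extra step to recover that.  One small slip: where you write ``of the form $\sum_i c_i\psi_i$ with $c_i\in R$'' you mean $c_i\in L=K_R$, as your subsequent argument makes clear.
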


\begin{proof}
 Consider the subalgebra $R\vec\psi S\subset R^n$ generated by the image
 of~$S$ under $(\psi_1,\dots,\psi_n)$. The~monoid characters
 $\psi_i\colon S\setminus\{0\}\to R^*$ are linearly independent over $K_R$ and
 thus over $R$, so that $R\vec\psi S$ is free of rank $n$ as an
 $R$-module. An operator $\sum_i c_i \psi_i$ maps $S$ to~$R$ iff the
 $K_R$-linear functional $\vec{c}$ maps $R\vec\psi S$ to~$R$, and thus the
 space of such operators is isomorphic to the dual~$R^n$.
\end{proof}

\begin{rem}
 It follows that the quotient by the submodule in which the coefficients
 are in~$R$ has finite length, equal to the colength of~$R\vec\psi S$ as a
 submodule of~$R^n$.
\end{rem}

For $n=1$, $R\vec\psi S=R$, and thus $(R\vec\psi S)^*=R$. We can also give
an explicit description for $n=2$.

\begin{cor}\label{cor:local_order_two_residue_conditions}
 Let $\psi_1,\psi_2\colon S\to R$ be finite homomorphisms of discrete valuation
 rings inducing the same action on residue fields. Then the operator
 $c_1\psi_1+c_2\psi_2\colon k(S)\to k(R)$ maps $S$ to~$R$ iff~$c_1,c_2\in
 \det(R\vec\psi S)^{-1}$ and $c_1+c_2\in R$.
\end{cor}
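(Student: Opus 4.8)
The plan is to deduce everything from the preceding Lemma, applied with $n=2$. That Lemma identifies the $R$-module of operators $c_1\psi_1+c_2\psi_2\colon k(S)\to k(R)$ mapping $S$ into $R$ with the $R$-linear dual of $M:=R\vec\psi S\subseteq R^2$; concretely, a pair $(c_1,c_2)\in K_R^2$ defines such an operator exactly when $c_1v_1+c_2v_2\in R$ for every $(v_1,v_2)\in M$. (We use here that $M$, being the $R$-subalgebra of $R^2$ generated by the image of $S$ under $(\psi_1,\psi_2)$, is just the $R$-span of that image, since the image is a multiplicatively closed subset containing $(1,1)$.) So the entire statement reduces to making the rank-$2$ free submodule $M\subseteq R^2$ explicit and then dualizing.

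First I would pin down $M$. Since $1\in S$ maps to $(1,1)$, projection to the first coordinate gives a surjection $M\twoheadrightarrow R$ whose kernel is $M\cap(0\times R)=0\times\mathfrak b$ for a nonzero ideal $\mathfrak b\subseteq R$ (nonzero because $M$ has full rank by the Lemma). For $(v_1,v_2)\in M$, subtracting $v_1\cdot(1,1)$ shows $(0,v_2-v_1)\in M$, hence $v_2-v_1\in\mathfrak b$; conversely $R\cdot(1,1)+(0\times\mathfrak b)\subseteq M$. Thus $M=\{(v_1,v_2)\in R^2: v_1\equiv v_2\pmod{\mathfrak b}\}$, with $R$-basis $(1,1)$ and $(0,\varpi)$ for any generator $\varpi$ of $\mathfrak b$. (The hypothesis that $\psi_1,\psi_2$ induce the same residue-field action is exactly what forces $\mathfrak b\subseteq\mathfrak m_R$, i.e.\ that poles actually occur; it is not otherwise needed.) From the basis $(1,1),(0,\varpi)$ one reads off that the determinant ideal of $M$ is $\mathfrak b$, so $\det(R\vec\psi S)^{-1}=\mathfrak b^{-1}$ as a fractional ideal of $R$.

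It then remains to test the pairing on the two basis vectors. The pair $(c_1,c_2)$ lies in the dual of $M$ iff $c_1+c_2=(c_1,c_2)\cdot(1,1)\in R$ and $c_2\varpi=(c_1,c_2)\cdot(0,\varpi)\in R$; the latter says precisely $c_2\in\mathfrak b^{-1}=\det(R\vec\psi S)^{-1}$, and then $c_1=(c_1+c_2)-c_2\in R+\mathfrak b^{-1}=\mathfrak b^{-1}$ since $\mathfrak b\subseteq R$. Conversely, if $c_1,c_2\in\mathfrak b^{-1}$ and $c_1+c_2\in R$, then for any $(v_1,v_2)\in M$ we have $c_1v_1+c_2v_2=c_1(v_1-v_2)+(c_1+c_2)v_2\in\mathfrak b^{-1}\mathfrak b+R\subseteq R$, so $(c_1,c_2)$ lies in the dual of $M$. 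This is exactly the claimed description.

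There is no genuinely hard step here; the only thing needing care is the structural identification of $M$ as an $R$-module — checking that the generated subalgebra coincides with the $R$-span of the image of $S$, and that the kernel of the coordinate projection has the form $0\times\mathfrak b$. Everything after that is bookkeeping with a rank-$2$ lattice and its dual.
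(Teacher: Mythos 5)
Your proof is correct and follows essentially the same route as the paper's: identify $M=R\vec\psi S$ with $R(1,1)+\bigl(0\times\mathfrak b\bigr)$ where $\mathfrak b=\det(R\vec\psi S)$, then dualize against the basis $(1,1),(0,\varpi)$. The only additions beyond the paper's terse argument are your explicit justification that the generated subalgebra coincides with the $R$-span of the image (because the image is a multiplicative monoid containing $1$) and your remark that the same-residue-action hypothesis is not actually used here — both of which are accurate.
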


\begin{proof}
 The module $R\vec\psi S$ is spanned by elements of the form
 $(\psi_1(h),\psi_2(h))$, or equivalently by the element $(1,1)$ and
 elements of the form $(0,\psi_2(h)-\psi_1(h))$, and thus can be expressed
 as~$R(1,1)+\det\big(R\vec\psi S\big)(0,1)$. The~corresponding condition on the
 operator is that $c_1+c_2\in R$ and $c_2\in \det\big(R\vec\psi S\big)^{-1}$.
 These conditions imply that $c_1\in \det\big(R\vec\psi S\big)^{-1}$ as well.
\end{proof}

\begin{rems}
 For any $f\in S^*$, since $c_1\psi_1(f)+c_2\psi_2(f)-\psi_1(f)(c_1+c_2) =
 (\psi_2(f)-\psi_1(f)) c_2$ and $\psi_2(f)-\psi_1(f)\in
 \det\big(R\vec\psi\big)$ we see that when $c_2\in \det\big(R\vec\psi\big)^{-1}$, the
 conditions $c_1+c_2\in R$ and $c_1\psi_1(f)+c_2\psi_2(f)\in R$ are equivalent.
 This is useful in global situations in which one has twisted by a line bundle.
\end{rems}

\begin{rems}
 It follows that
 $(R(\psi_1,\psi_2)S)^*=R\psi_1 + \det\big(R\vec\psi S\big)^{-1}(\psi_1-\psi_2)$.
\end{rems}

For $n>2$, it is difficult to give an explicit description in general, but
in the case of Coxeter groups, we~can generally reduce to the $n=2$ case,
using the following result. Given a homomorphism $\psi\colon S\to R$ of local
rings, let $\bar\psi\colon k_S\to k_R$ denote the corresponding homomorphism of
residue fields. Also, note that for any subset $I\subset \{1,\dots,n\}$,
there is a natural morphism $R\vec\psi S\to R(\psi_i\colon i\in I)S$.

\begin{lem}\label{lem:holomorphy_preserving_splits}
Suppose $R$, $S$ are discrete valuation rings, and suppose
$\psi_1,\dots,\psi_n\colon S\to R$ are~distinct finite homomorphisms. Then the
algebra $R\vec\psi S$ splits as the direct sum $\bigoplus_\sigma
R(\psi_i\colon \bar\psi_i=\sigma)S$ of local rings, where $\sigma$ ranges over
all morphisms $k_S\to k_R$.
\end{lem}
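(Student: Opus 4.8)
The plan is to prove the stronger statement that the coordinate inclusion identifies $T := R\vec\psi S$ with the product $\bigoplus_\sigma T_\sigma$, where $T_\sigma := R(\psi_i : \bar\psi_i=\sigma)S$ and $\sigma$ runs over the finitely many residue-field homomorphisms $k_S\to k_R$ occurring among the $\bar\psi_i$, and then to check separately that each $T_\sigma$ is local. I would fix notation: write $\pi_i\colon T\to R$ for the (surjective, since $R$ has no proper $R$-subalgebra) $i$th coordinate projection, $\theta_h := (\psi_1(h),\dots,\psi_n(h))$ for $h\in S$ (these generate $T$ as an $R$-algebra), $\mathfrak M_i := \pi_i^{-1}(\mathfrak m_R)$, and $I_\sigma := \{i : \bar\psi_i = \sigma\}$. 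By the earlier Lemma, $R\vec\psi S$ is $R$-free of rank $n$, and likewise each $T_\sigma$ is $R$-free of rank $|I_\sigma|$; since the image of $T$ under the projection $R^n\to R^{I_\sigma}$ is exactly $T_\sigma$, the inclusion $T\hookrightarrow R^n$ factors through a natural injection $\iota\colon T\hookrightarrow\bigoplus_\sigma T_\sigma$.

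The central tool will be a determinant identity. Because $T\hookrightarrow R^n$ is a full-rank inclusion of free $R$-modules, $T\otimes_R K_R = R^n\otimes_R K_R = K_R^n$, where $K_R$ is the fraction field and the first identification holds because, by Dedekind independence of the $n$ distinct field embeddings $\psi_i\colon K_S\hookrightarrow K_R$, the vectors $(\psi_i(f))_i$ already span $K_R^n$. Consequently, for $a\in T$ the multiplication map $x\mapsto ax$ on the free module $T$ has determinant $\prod_i\pi_i(a)$. Since $R$ is local, this gives $a\in T^\times \iff \pi_i(a)\in R^\times$ for every $i$, so the non-units of $T$ are exactly $\bigcup_i\mathfrak M_i$; each $\mathfrak M_i$ is maximal with $T/\mathfrak M_i\cong k_R$, and by prime avoidance the $\mathfrak M_i$ are therefore precisely the maximal ideals of $T$. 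Reducing mod $\mathfrak m_R$, the algebra maps $\pi_i$ become homomorphisms $T\to k_R$ sending $\theta_h$ to $\bar\psi_i(\bar h)$; two of these agree iff $\bar\psi_i=\bar\psi_j$ (if not, an element $\theta_h-\tilde c\cdot 1$ with $\tilde c$ lifting $\bar\psi_i(\bar h)$ lies in $\mathfrak M_i$ but not $\mathfrak M_j$), so $\mathfrak M_i=\mathfrak M_j$ exactly when $\bar\psi_i=\bar\psi_j$. Applying this discussion to $T_\sigma$, all of whose indices lie in one class, shows $T_\sigma$ has a unique maximal ideal, i.e.\ is local.

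It then remains to see that $\iota$ is surjective. It is an injection of free $R$-modules of the same rank $n=\sum_\sigma|I_\sigma|$, so by Nakayama it suffices to check that $\iota\otimes_R k_R$ is surjective. Here I would use that $\bar T := T/\mathfrak m_R T$ is Artinian, hence the product of its localizations at its (finitely many) maximal ideals, which by the above are the distinct $\mathfrak M_\sigma$; moreover the maximal ideal of the local ring $T_\sigma$ pulls back to $\mathfrak M_\sigma$ along $T\twoheadrightarrow T_\sigma$, so each map $\bar T\to T_\sigma/\mathfrak m_R T_\sigma$ is identified with the projection onto the $\mathfrak M_\sigma$-factor of $\bar T$. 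Hence $\bar T\to\bigoplus_\sigma T_\sigma/\mathfrak m_R T_\sigma$ is the canonical isomorphism of Artinian rings, in particular surjective; so $\iota$ is an isomorphism, and $T\cong\bigoplus_\sigma T_\sigma$ with each factor local, as required.

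The step I expect to be genuinely delicate is this last one. Knowing only that $T$ is semilocal with pairwise comaximal maximal ideals $\mathfrak M_\sigma$ does \emph{not} force it to be a product of local rings (witness $\Z_{(p)}\cap\Z_{(q)}$), and since $R$ is not assumed Henselian one cannot simply lift idempotents from $\bar T$; the argument must genuinely combine the rank count with the explicit description of $\bar T$ coming from the determinant identity. The remaining ingredients --- Dedekind independence, prime avoidance, and Nakayama --- are routine bookkeeping.
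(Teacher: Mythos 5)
Your proof is correct, and it takes a genuinely different route from the paper's. The paper identifies the Jacobson radical of $T := R\vec\psi S$ with $T\cap\mathfrak m_R^n$, passes to the completion $\hat T = T\otimes_R\hat R$, lifts the primitive idempotents of $T/\mathrm{rad}(T)\cong\prod_\sigma k_R$ into $\hat T$ by Hensel's lemma, notes that by uniqueness of idempotent lifting these must coincide with the coordinate projections $e_\sigma\in\hat R^n$, and then descends $e_\sigma$ to $T$ via $T = \hat T\cap R^n$ inside $\hat R^n$. Your argument entirely avoids completion: the determinant identity $\det(a\cdot\colon T\to T) = \prod_i\pi_i(a)$ classifies units of $T$, prime avoidance then gives the maximal ideals, the structure theory of Artinian rings decomposes $\bar T$, and a rank count plus Nakayama finishes. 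Both approaches work; the paper's is perhaps shorter given that it already knows $T$ is $R$-free of rank $n$ from the preceding lemma, while yours is more elementary in not invoking Hensel at all. Your closing remark slightly undersells the paper's idea — the paper \emph{does} lift idempotents, but does so in $\hat T$ and exploits the ambient embedding $T\subset R^n$ to identify the lifts as the projections $e_\sigma$, which live in $R^n$ and hence in $T=\hat T\cap R^n$; this is the same kind of trick you use when you note that only the Artinian quotient $\bar T$ needs to split.

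One small overstatement: when you assert that $\bar T\to T_\sigma/\mathfrak m_R T_\sigma$ ``is identified with the projection onto the $\mathfrak M_\sigma$-factor,'' you are claiming the induced map $A_\sigma\to T_\sigma/\mathfrak m_R T_\sigma$ is an isomorphism, which you have not established at that point (it becomes true only after $\iota$ is known to be an isomorphism). What you can justify on the spot is that the map \emph{factors through} the projection $\bar T\to A_\sigma$ (since the preimage of the maximal ideal is $\bar{\mathfrak M}_\sigma$, the map inverts everything outside $\bar{\mathfrak M}_\sigma$ and hence extends over the localization) and is surjective; that is already enough to make $\bar\iota$ surjective and hence for Nakayama to apply. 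So the gap is cosmetic rather than real.
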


\begin{proof}
The radical of~$R\vec\psi S$ is equal to its intersection with~${\mathfrak
 m}_R^n$, and the quotient is a product of copies of~$k_R$, with one for
each distinct reduction~$\bar\psi_i$ (in particular, the radical is maximal
and $R\vec\psi S$ local iff there is only one such reduction). If $R$ is
complete, then we can lift the idempotents from the reduction to obtain the
desired splitting. Moreover, the lifts are unique, and thus agree with the
lifts one would have obtained if working inside the larger algebra $R^n$,
i.e., the projections onto the given sets of coordinates. It thus follows
that even if $R$ is not complete, the lifts in the completion of~$R\vec\psi
S$ agree with the projections, and thus said projections lie in~$R\vec\psi
S$.
\end{proof}

\begin{rem}
By duality, the same splitting applies to the module of operators taking
$S$ to~$R$.
\end{rem}

This leads to the following global result.

\begin{lem}\label{lem:holomorphy_preserving}
 Let $X$, $Y$ be normal integral schemes and let $\phi_1,\dots,\phi_n\colon X\to
 Y$ be a collection of distinct finite morphisms. Let
 ${\cal M}_{\vec{\phi}}$ be the subsheaf of~$k(X)^n$ which on an open subset
 $U\subset X$ consists of those $n$-tuples $(c_1,\dots,c_n)\in k(X)^n$
 such that for any open subset $V\subset Y$ and any function~$f\in k(Y)$
 holomorphic on~$V$, the function~$\sum_i c_i \phi_i^* f$ is holomorphic
 on~$U\cap \bigcap_i \phi_i^{-1}V$. Then there is an $n$-tuple of divisors
 $\Delta_i\in \Div(X)$ such that
 ${\cal M}_{\vec{\phi}}\subset
 \bigoplus_i \sO_X(\Delta_i)$, and each $\Delta_i$ is supported on those
 hypersurfaces on which $\phi_j=\phi_i$ for some $j\ne i$.
\end{lem}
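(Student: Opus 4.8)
The plan is to verify the containment $\mathcal{M}_{\vec\phi}\subseteq\bigoplus_i\mathcal{O}_X(\Delta_i)$ at each codimension-one point of $X$; since $X$ is normal this amounts to producing, for every prime divisor $D$ of $X$ and every $i$, a finite integer $N_i(D)\ge 0$ with $v_D(c_i)\ge -N_i(D)$ on the stalk $(\mathcal{M}_{\vec\phi})_{\eta_D}$, subject to $N_i(D)=0$ whenever $D$ is not contained in the equalizer of $\phi_i$ and $\phi_j$ for any $j\neq i$. Fix such a $D$, write $R=\mathcal{O}_{X,\eta_D}$ (a discrete valuation ring with fraction field $k(X)$), and for each $i$ let $D_i'=\overline{\phi_i(D)}$; since $\phi_i$ is finite and dominant, $D_i'$ is again a prime divisor of $Y$, and $\phi_i$ induces an injective local homomorphism of discrete valuation rings $\psi_i\colon S_i:=\mathcal{O}_{Y,\eta_{D_i'}}\to R$ with $k(X)/k(Y)$ finite. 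Group the indices by target: $I_T=\{\,i:D_i'=T\,\}$, where $T$ runs over the finitely many prime divisors of $Y$ dominated by $D$.

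The crux is a decoupling step: if $\vec c\in(\mathcal{M}_{\vec\phi})_{\eta_D}$, then for each such $T$ the subtuple $(c_i)_{i\in I_T}$ already preserves holomorphy for the family $(\psi_i)_{i\in I_T}$, i.e.\ $\sum_{i\in I_T}c_i\psi_i(g)\in R$ for all $g\in S_T$. To see this, let $B\subseteq k(Y)$ be the intersection of the discrete valuation rings $\mathcal{O}_{Y,\eta_{T'}}$ over the finitely many distinct prime divisors $T'$ dominated by $D$; this is a semilocal Dedekind domain (by the approximation theorem for the finitely many independent divisorial valuations involved), so by the Chinese Remainder Theorem there is, for any $M$ and $N$, an $f\in B$ with $v_T(f-g)\ge N$ and $v_{T'}(f)\ge M$ for every $T'\neq T$. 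Then $f$ is holomorphic along every $D_i'$, so $\eta_D\in U\cap\bigcap_i\phi_i^{-1}(V_f)$ for any open $U\ni\eta_D$ over which $\vec c$ is defined ($V_f$ denoting the complement of the polar support of $f$), whence $\sum_i c_i\phi_i^*f\in R$. For $i\notin I_T$ one has $v_D(\phi_i^*f)=e_i\,v_{D_i'}(f)\ge M$ with $e_i\ge 1$ the ramification index, so taking $M$ bigger than every $-v_D(c_i)$ throws those terms into $R$; for $i\in I_T$ one has $v_D(\phi_i^*f-\psi_i(g))\ge N$, so taking $N$ likewise large gives $c_i\phi_i^*f\equiv c_i\psi_i(g)\bmod R$, and the claim follows. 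Separating the contribution of a single prime divisor of $Y$ out of the global holomorphy condition in this way is the one genuinely delicate point; the rest is formal.

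Granting this, Lemma~\ref{lem:holomorphy_preserving_splits} together with the duality remark following it splits the $R$-module of operators $(c_i)_{i\in I_T}\colon S_T\to R$ as a direct sum over the distinct induced residue maps $\sigma\colon k_{S_T}\to k_R$, the $\sigma$-summand being the operator module of the subfamily $\{\psi_i:i\in I_T,\ \bar\psi_i=\sigma\}$. Each summand is finitely generated over the Noetherian ring $R$ and contains the submodule of coefficient-tuples already lying in $R$ (which is $R$-free of the same rank), so it differs from the latter by a module of finite length; this yields a finite $N_i(D)\ge 0$ with $v_D(c_i)\ge -N_i(D)$ throughout the stalk. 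Moreover, if the $\bar\psi_i$-summand reduces to the single index $i$, it is just $(R\psi_i S_T)^*=R$, forcing $v_D(c_i)\ge 0$; and since $\phi_j=\phi_i$ on $D$ holds precisely when $D_j'=D_i'$ and $\bar\psi_j=\bar\psi_i$ (using that $Y$ is separated), this singleton case is exactly the case in which no $j\neq i$ satisfies $\phi_j=\phi_i$ on $D$, so there we may take $N_i(D)=0$.

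Finally set $\Delta_i=\sum_D N_i(D)[D]$. The bounds just obtained give $\mathcal{M}_{\vec\phi}\subseteq\bigoplus_i\mathcal{O}_X(\Delta_i)$ directly on stalks, and $N_i(D)\neq 0$ forces $D$ into $\{\phi_i=\phi_j\}$ for some $j\neq i$, i.e.\ onto a hypersurface on which $\phi_j=\phi_i$. As the $\phi_i$ are pairwise distinct and $Y$ is separated, each equalizer $\{\phi_i=\phi_j\}$ is a proper closed subset of the Noetherian scheme $X$ and hence contains only finitely many prime divisors; therefore each $\Delta_i$ is a genuine divisor supported on the hypersurfaces where $\phi_j=\phi_i$ for some $j\neq i$, as claimed.
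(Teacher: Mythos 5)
Your proof is correct and follows essentially the same route as the paper's: localize at a codimension-one point of $X$, decouple the coefficients by the target divisor in $Y$ via a Chinese-remainder/approximation argument, and then invoke the local DVR lemmas (the rank-$n$ lemma and Lemma~\ref{lem:holomorphy_preserving_splits}) to bound the pole orders and identify the support. Your write-up is more explicit than the paper's about the semilocal approximation step and about why the singleton case of the residue-field splitting matches the condition $D\not\subset\{\phi_i=\phi_j\}$, but the underlying argument is the same.
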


\begin{proof}
Let $D\subset X$ be a reduced irreducible hypersurface; we need to
understand the possible singularities of the coefficients along $D$. Note
that if $U_1$, $U_2$ are two open subsets meeting $D$, then $U_1\cap U_2$ also
meets $D$, and thus any bound on singularities holding on~$U_1\cap U_2$
also holds for global sections along $U_1$, $U_2$. We may thus take a limit
along those open subsets meeting $D$. Similarly, since we are only
considering holomorphy along $D$, we~may take a limit over those $V$
meeting $\phi_i(D)$ for every $i$. In~other words, the condition for
$(c_1,\dots,c_n)$ to be a section of the base change of~${\cal
M}_{\vec{\phi}}$ to the local ring of~$k(D)$ is that for any function~$f$
which is holomorphic along $\phi_1(D),\dots,\phi_n(D)$, the image $\sum_i
c_i \phi^*_i f$ is holomorphic along $D$.

Now, given such an $n$-tuple, let $d$ be the maximum order of pole of a
coefficient $c_i$ along~$D$. The~condition that $\sum_i c_i \phi^*_i f$ be
holomorphic only depends on the value of~$f$ modulo the inter\-sec\-ti\-ons of
the $d$-th powers of the maximal ideals at the divisors $\phi_i(D)$, and by
the Chinese remainder theorem, the reductions corresponding to distinct
divisors may be chosen independently. Taking those reductions to be 0
on all but $\phi_j(D)$ tells us that if the operator $\sum_i c_i
\phi^*_i$ preserves holomorphy, then so does
$\sum_{i:\phi_i(D)=\phi_j(D)} c_i \phi^*_i$.

We may as well assume, therefore, that the divisors $\phi_i(D)$ are all equal
to the same divisor~$D'$ in~$Y$, and thus reduce to the local case.
\end{proof}

\begin{rem}
 By mild abuse of notation, if $g_1,\dots,g_n\in \Aut(X)$, then ${\cal
 M}_{\vec{g}}$ will denote the sheaf corresponding to the $n$-tuple
 $\vec{\phi}=(g_1^{-1},\dots,g_n^{-1})$; i.e., the sheaf of operators
 $\sum_i c_i g_i$ that preserve holomorphy. This generalizes to the case
 of operators $\sum_i c_i g_i G\colon k(X/G)\to k(X)$ for a finite subgroup
 $G\subset \Aut(X)$, which act as $f\mapsto \sum_i c_i {}^{g_iG}f$.
\end{rem}

The local result for $n=2$ leads to the following result in the global
setting. Given two finite morphisms $\phi,\psi$ of normal integral schemes,
let $[\phi=\psi]$ denote the Cartier divisor obtained by~removing all
codimension~$>1$ components from the subscheme on which $\phi$ and $\psi$
agree.

\begin{cor}\label{cor:global_order_two_residue_conditions}
 With notation as above, suppose that for each $i$, the $n-1$ divisors
 $[\phi_i=\phi_j]$ are mutually transverse. Then ${\cal M}_{\vec{\phi}}$
 may be identified with the subsheaf of
 \begin{gather*}
 \bigoplus_i \sO_X
\bigg(
\sum_{j\ne i} [\phi_j=\phi_i]\bigg)
 \end{gather*}
 in which the local sections $(c_1,\dots,c_n)$ satisfy the additional
 ``residue'' conditions that $c_i+c_j$ is holomorphic along
 $[\phi_i=\phi_j]$ for all $i\ne j$.
\end{cor}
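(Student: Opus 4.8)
The plan is to refine Lemma~\ref{lem:holomorphy_preserving} by carrying out the analysis one prime divisor of $X$ at a time and, at each such divisor, using the transversality hypothesis to put the local problem into the shape treated by Corollary~\ref{cor:local_order_two_residue_conditions}. Fix a reduced irreducible hypersurface $D\subset X$ with generic point $\eta_D$ and set $R=\sO_{X,\eta_D}$, a dvr. As in the proof of Lemma~\ref{lem:holomorphy_preserving}, the stalk of $\mathcal{M}_{\vec\phi}$ at $\eta_D$ consists of those tuples $(c_1,\dots,c_n)$ for which $\sum_i c_i\,\phi_i^* f\in R$ whenever $f$ is holomorphic along every $\phi_i(D)$, and by the Chinese Remainder Theorem this condition decouples over the distinct divisors among the $\phi_i(D)$; so we may assume all $\phi_i(\eta_D)$ equal a single codimension-one point $\eta_{D'}$ of $Y$. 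Writing $S=\sO_{Y,\eta_{D'}}$ we then get local homomorphisms $\psi_i\colon S\to R$ induced by $\phi_i$, distinct because two $\phi_i$ inducing the same map at the generic point of $X$ would agree everywhere ($X$ integral, $Y$ separated); and Lemma~\ref{lem:holomorphy_preserving_splits} (with its Remark) decouples the condition still further over the residue-field maps $\bar\psi_i\colon k(D')\to k(D)$.

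The point at which the hypothesis enters is the observation that $\eta_D\in[\phi_i=\phi_j]$ precisely when $\bar\psi_i=\bar\psi_j$: as both $\phi_i$ and $\phi_j$ send $\eta_D$ to $\eta_{D'}$, they agree at $\eta_D$ exactly when they induce the same morphism $\Spec k(D)\to Y$, and since that morphism must factor through the generic point of $D'$ this is the same as $\bar\psi_i=\bar\psi_j$. By hypothesis, for each fixed $i$ the divisors $[\phi_i=\phi_j]$, $j\ne i$, are mutually transverse and hence share no codimension-one component, so at most one $j\ne i$ can satisfy $\eta_D\in[\phi_i=\phi_j]$. In other words, each block of indices sharing a common residue-field map has size at most two. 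A singleton block $\{i\}$ has $R\vec\psi S=R$, forcing $c_i$ to be holomorphic along $D$ with no constraint linking it to the other coefficients — consistent with $D$ lying in none of the divisors $[\phi_j=\phi_i]$. A two-element block $\{i,j\}$ is governed by Corollary~\ref{cor:local_order_two_residue_conditions}: one needs $c_i,c_j\in\det(R\vec\psi S)^{-1}$ and $c_i+c_j\in R$, where $\det(R\vec\psi S)$ is the ideal of $R$ generated by the differences $\psi_i(a)-\psi_j(a)$, $a\in S$.

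To match this local picture with the asserted formula we must identify $\nu_D(\det(R\vec\psi S))$ with the multiplicity of $D$ in $[\phi_i=\phi_j]$. This follows from the definition of $[\phi_i=\phi_j]$ as the codimension-one part of the scheme-theoretic equalizer of $\phi_i$ and $\phi_j$: discarding the higher-codimension components is exactly what replaces that equalizer by its behaviour generically along $D'$, so the local equation of $[\phi_i=\phi_j]$ at $\eta_D$ is the ideal generated by $\{\psi_i(a)-\psi_j(a):a\in S\}$, namely $\det(R\vec\psi S)$; thus $\det(R\vec\psi S)^{-1}$ is the stalk at $\eta_D$ of $\sO_X([\phi_i=\phi_j])$. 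Running over all prime divisors $D$ and assembling — for each $i$ the pole of $c_i$ at $D$ is bounded by $\sum_{j\ne i}[\phi_j=\phi_i]$, with at most one nonzero summand at $D$, and $c_i+c_j$ must be holomorphic along $[\phi_i=\phi_j]$ — gives exactly the description of $\mathcal{M}_{\vec\phi}$ as the subsheaf of $\bigoplus_i\sO_X\bigl(\sum_{j\ne i}[\phi_j=\phi_i]\bigr)$ cut out by the residue conditions.

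I expect the main obstacle to be the last identification: equating $\det(R\vec\psi S)$ with the local defining ideal of the coincidence divisor $[\phi_i=\phi_j]$ requires a careful comparison of the scheme-theoretic equalizer of two finite morphisms with the version obtained after localizing $Y$ at $\eta_{D'}$, and a verification that ``removing codimension $>1$ components'' in the definition of $[\phi_i=\phi_j]$ is precisely what reconciles the two along $D$. The transversality bookkeeping — deducing from mutual transversality of the $[\phi_i=\phi_j]$ that the residue-field blocks have size at most two, so that Corollary~\ref{cor:local_order_two_residue_conditions} applies directly — is where the hypothesis is genuinely needed, but is otherwise routine.
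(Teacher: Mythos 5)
Your proof is correct and follows the same route as the paper: reduce to a local problem at a codimension-one point via Lemma~\ref{lem:holomorphy_preserving}, decouple first over distinct image divisors and then over residue-field maps via Lemma~\ref{lem:holomorphy_preserving_splits}, use transversality to conclude the blocks have size at most two, and then invoke Corollary~\ref{cor:local_order_two_residue_conditions}. The "main obstacle" you flag — identifying $\det(R\vec\psi S)$ with the local defining ideal of $[\phi_i=\phi_j]$ at $\eta_D$ — is already handled by the argument you give (removing codimension-$\ge 2$ components is invisible at a codimension-one generic point), so it is not a real gap.
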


\begin{proof}
 The condition on the divisors ensures that when we perform the various
 reductions, we~will end up with local problems involving at most $2$
 morphisms. The~case with one morphism is trivial (the coefficient is
 forced to be holomorphic, and this makes the operator preserve
 holomorphy), and the case with two is just Corollary~\ref{cor:local_order_two_residue_conditions} above.
\end{proof}

\begin{rem}
 If $[\phi_i=\phi_j]$ is reduced, then the residue conditions can of
 course be stated in terms of the residues of~$c_i$ and $c_j$ with respect
 to any differential holomorphic and nonvanishing along $[\phi_i=\phi_j]$.
\end{rem}

Note that we can also right-multiply operators by local sections of~$\sO_Y$, and thus obtain
an~$(\sO_X,\sO_Y)$-bimodule structure on~${\cal
 M}_{\vec{\phi}}$. Such a structure is equivalent to an $\sO_X\otimes
\sO_Y$-module structure, or equivalently an $\sO_{X\times Y}$-module
structure. We will consider this structure in more detail when discussing
the infinite case, but for the moment we observe the following.

\begin{cor}
 The induced sheaf ${\cal M}_{\vec{\phi}}$ on~$X\times Y$ is a coherent
 subsheaf of~$\bigoplus_i (1,\phi_i)_*\sO_X(\Delta_i)$.
\end{cor}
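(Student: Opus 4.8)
The plan is to realize the claimed inclusion as a morphism of $\sO_{X\times Y}$-modules coming directly from the $\sO_X$-module inclusion supplied by Lemma \ref{lem:holomorphy_preserving}. That lemma gives an $n$-tuple of divisors $\Delta_1,\dots,\Delta_n\in\Div(X)$, each supported on the hypersurfaces along which $\phi_i$ agrees with some $\phi_j$, $j\ne i$, together with an inclusion ${\cal M}_{\vec{\phi}}\hookrightarrow\bigoplus_i\sO_X(\Delta_i)$ of sheaves of $\sO_X$-modules. So the only real content is to transport this to $X\times Y$ and then to read off coherence.

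First I would put the appropriate $\sO_{X\times Y}$-module structure on the target. On the $i$th summand, the left $\sO_X$-action (multiplication) together with the right $\sO_Y$-action sending a section $f$ of $\sO_Y$ to multiplication by $\phi_i^*f$ is precisely the structure obtained by pushing forward along the graph morphism $(1,\phi_i):X\to X\times Y$; since $Y$ is separated this graph is a closed immersion, so $(1,\phi_i)_*\sO_X(\Delta_i)$ is a coherent $\sO_{X\times Y}$-module, and hence so is $\bigoplus_i(1,\phi_i)_*\sO_X(\Delta_i)$. Next I would observe that the inclusion of Lemma \ref{lem:holomorphy_preserving} is compatible with both actions. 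Compatibility with $\sO_X$ is immediate; for $f\in\Gamma(V,\sO_Y)$ the right action on operators sends $\sum_i c_i\phi_i^*$ to $\bigl(\sum_i c_i\phi_i^*\bigr)\circ(f\,\cdot)=\sum_i\bigl(c_i\,\phi_i^*f\bigr)\phi_i^*$, i.e. in coordinates $(c_i)\mapsto(c_i\,\phi_i^*f)$, which is exactly the $\sO_Y$-action on $\bigoplus_i\sO_X(\Delta_i)$ introduced above. Thus ${\cal M}_{\vec{\phi}}$, viewed via its $(\sO_X,\sO_Y)$-bimodule structure as a sheaf on $X\times Y$, is an $\sO_{X\times Y}$-submodule of $\bigoplus_i(1,\phi_i)_*\sO_X(\Delta_i)$; in particular it is quasicoherent, so the ``induced sheaf'' is well defined.

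Coherence then follows formally: $X\times Y$ is locally Noetherian (as are all the schemes considered here), the ambient sheaf $\bigoplus_i(1,\phi_i)_*\sO_X(\Delta_i)$ is coherent, and an $\sO$-submodule of a coherent sheaf on a locally Noetherian scheme is coherent. I expect the only point needing any care --- and hence the (mild) main obstacle --- to be the bookkeeping of the bimodule structure: one must verify that the right $\sO_Y$-action on ${\cal M}_{\vec{\phi}}$, namely right multiplication of operators by local sections of $\sO_Y$, is genuinely the restriction of the graph-pushforward structure on the target summand by summand, so that the $\sO_X$-linear inclusion of Lemma \ref{lem:holomorphy_preserving} is in fact $\sO_{X\times Y}$-linear. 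Once this is unwound, nothing further is required.
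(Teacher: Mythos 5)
Your proof is correct and is essentially the intended argument (the paper leaves this corollary without proof, treating it as an immediate consequence of Lemma \ref{lem:holomorphy_preserving} and the bimodule discussion just preceding it). You correctly identify that the only real content is the identification of the $\sO_{X\times Y}$-module structure on each summand $\sO_X(\Delta_i)$ with the one coming from the graph pushforward $(1,\phi_i)_*$, that the inclusion from the Lemma respects both the $\sO_X$- and $\sO_Y$-actions (the coordinate computation $(c_i)\mapsto(c_i\,\phi_i^*f)$ is exactly right), and that coherence then follows from the Noetherian hypothesis since the graph morphisms are finite (indeed closed immersions when $Y$ is separated, which it is here), making each $(1,\phi_i)_*\sO_X(\Delta_i)$ coherent.
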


\begin{rem}
 It is worth noting that this sheaf depends only on the {\em set} of
 morphisms $\{\phi_1,\dots,\phi_n\}$, and not on the ordering, and the map
 from global sections to operators can be reconstructed from the bimodule
 structure. As~a result, when considering sheaves on~$X\times Y$, we~allow the subscript to be a set rather than a sequence.
\end{rem}

\begin{cor}
 The algebra ${\cal H}_G(X)$, viewed as an $\sO_{X\times X}$-module, is
 coherent, and contained in the sum $\bigoplus_{g\in G} (1,g)_*\sO_X(\Delta)$,
 where $\Delta$ is an effective divisor supported on the ``reflection
 hypersurfaces'' of~$G$ on~$X$: the irreducible hypersurfaces which are
 fixed pointwise by some $g\in G$.
\end{cor}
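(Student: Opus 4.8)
The plan is to deduce the statement from the twisted–group–algebra description of ${\cal H}_G(X)$ (the Proposition) together with Lemma~\ref{lem:holomorphy_preserving} and the Corollary identifying ${\cal M}_{\vec\phi}$ as a coherent subsheaf of $\bigoplus_i(1,\phi_i)_*\sO_X(\Delta_i)$, applied to the family of finite morphisms $\vec g:=(g)_{g\in G}$ of $X$ to itself (i.e.\ $\phi_g=g^{-1}$). Realizing ${\cal H}_G(X)$ inside $k(X)[G]=\bigoplus_{g\in G}k(X)\cdot g$, the left and right $\sO_X$–actions (multiplying a function before or after applying an operator) identify the summand $k(X)\cdot g$ with the pushforward of a subsheaf of $k(X)$ along the graph $(1,g):X\to X\times X$ (up to the usual $g\leftrightarrow g^{-1}$ relabelling of the index set), so that ${\cal H}_G(X)$ becomes an $\sO_{X\times X}$–submodule of $\bigoplus_{g\in G}(1,g)_*k(X)$; this is the same $\sO_{X\times X}$–module structure carried by ${\cal M}_{\vec g}$.

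The first step is the containment ${\cal H}_G(X)\subseteq{\cal M}_{\vec g}$. The only point requiring care is that the Proposition characterizes ${\cal H}_G(X)$ via $G$–invariant opens while ${\cal M}_{\vec g}$ is defined using arbitrary opens. Given a $G$–invariant open $U$ and $\sum_g c_g g\in\Gamma(U/G;{\cal H}_G(X))$, an arbitrary open $V\subseteq X$, and $f\in\Gamma(V;\sO_X)$, the open $W:=U\cap\bigcap_{g\in G}gV$ is $G$–invariant and contained in $V$ (take $g=1$); hence $f|_W\in\Gamma(W;\sO_X)$, so $\sum_g c_g\,{}^g f$ is holomorphic on $W=U\cap\bigcap_g(g^{-1})^{-1}V$, which is precisely the condition defining ${\cal M}_{\vec g}$. (In fact the two sheaves coincide, but one inclusion suffices.) By the Corollary to Lemma~\ref{lem:holomorphy_preserving}, ${\cal M}_{\vec g}$ — and therefore its subsheaf ${\cal H}_G(X)$ — is coherent as an $\sO_{X\times X}$–module and is contained in $\bigoplus_{g\in G}(1,g)_*\sO_X(\Delta_g)$, where (using Lemma~\ref{lem:holomorphy_preserving_splits}) each $\Delta_g$ is supported on those irreducible hypersurfaces $D\subset X$ along which two distinct elements of $G$ restrict to the same morphism $D\to X$.

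The remaining, essentially one–line, observation is that such a hypersurface is a reflection hypersurface. If $a,b\in G$ are distinct with $a|_D=b|_D$, then $\gamma:=a^{-1}b$ is a nontrivial element of $G$ and $\gamma|_D=a^{-1}\circ(b|_D)=a^{-1}\circ(a|_D)=\mathrm{id}_D$, so $\gamma$ fixes $D$ pointwise. Finally, since $G$ is finite and $X$ Noetherian there are only finitely many reflection hypersurfaces, and along each the (finitely many) coefficients have bounded pole order; letting $\Delta$ be any effective divisor supported on the reflection hypersurfaces and dominating all the $\Delta_g$ gives ${\cal H}_G(X)\subseteq\bigoplus_{g\in G}(1,g)_*\sO_X(\Delta)$, with coherence inherited as a submodule of a coherent sheaf on $X\times X$ (alternatively, directly from coherence of $\sEnd_{\sO_{X/G}}(\pi_*\sO_X)$ on $X/G$).

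I expect the only real obstacle to be the first step: cleanly reconciling the fact that ${\cal H}_G(X)$ naturally lives on $X/G$ and is defined through $G$–invariant opens with Lemma~\ref{lem:holomorphy_preserving}, which is formulated on $X$ with arbitrary opens. The intersection trick $W=U\cap\bigcap_g gV$ dissolves this, and the rest is bookkeeping of the $\sO_{X\times X}$–module structure plus the group–theoretic identification of reflection hypersurfaces above.
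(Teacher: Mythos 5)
Your proof is correct and fills in the gap exactly as the paper's surrounding material suggests; the paper gives no explicit argument for this Corollary, relying implicitly on the chain of results you invoke: the twisted-group-algebra Proposition, Lemma~\ref{lem:holomorphy_preserving}, and the preceding Corollary on ${\cal M}_{\vec\phi}$. The intersection trick $W=U\cap\bigcap_g gV$ to pass between invariant and arbitrary opens, the identification of the $\sO_{X\times X}$-module structures (modulo $g\leftrightarrow g^{-1}$ reindexing), and the one-line observation that $a|_D=b|_D$ with $a\ne b$ forces $a^{-1}b$ to fix $D$ pointwise are all exactly what is needed. One small correction: the description of the support of $\Delta_g$ as hypersurfaces where two morphisms agree comes directly from the statement of Lemma~\ref{lem:holomorphy_preserving} rather than from Lemma~\ref{lem:holomorphy_preserving_splits} (the latter is the local splitting ingredient used in its proof); this does not affect the argument.
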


In the simplest elliptic case $A_1$ acting on~$E$, the divisor $\Delta$ is
precisely the divisor correspon\-ding to the subscheme $E[2]$. In~characteristic not 2, this subscheme is reduced, and thus the coefficients
have at most simple poles at the $2$-torsion points, but in characteristic~2, the coefficients can have double poles at the $2$-torsion points of an
ordinary curve, and a quadruple pole at the origin of a supersingular
curve. This, of course, is an artifact of wild ramification; without that,
a ``reflection'' of order $n$ will admit poles of order at most $n-1$ along
the corresponding reflection hypersurfaces.

There is an important variation arising from the interpretation as an
$\sO_{X\times X}$-module: simply consider the sheaf ${\cal H}_{G;{\cal
 L}_1,{\cal L}_2}(X):={\cal H}_G(X)\otimes_{X\times X} {\cal
 L}_2\boxtimes {\cal L}_1^{-1}$ for invertible sheaves ${\cal L}_2$,
${\cal L}_1$ on~$X$. Since both left- and right-multiplication by sections
of~$\sO_{X/G}$ agree in~${\cal H}_G(X)$, this twisted version still
descends to a sheaf on~$X/G$, and the result moreover has induced
compositions
\begin{gather*}
 {\cal H}_{G;{\cal L}_1,{\cal L}_2}(X)
 \otimes_{X/G}
 {\cal H}_{G;{\cal L}_2,{\cal L}_3}(X)
 \to
 {\cal H}_{G;{\cal L}_1,{\cal L}_3}(X).
\end{gather*}
In fact, as~a sheaf on~$X/G$, we~have ${\cal H}_{G;{\cal L}_1,{\cal
 L}_2}(X)\cong \sHom_{G/X}(\pi_*{\cal L}_1,\pi_*{\cal L}_2)$, with the
obvious induced composition; this is most easily seen by representing
${\cal L}_1$, ${\cal L}_2$ by Cartier divisors, and obser\-ving that this
turns ${\cal H}_{G;{\cal L}_1,{\cal L}_2}(X)$ into the subsheaf of the
meromorphic twisted group algebra taking the subsheaf of~$k(X)$
corresponding to~${\cal L}_1$ into the subsheaf corresponding to~${\cal
 L}_2$. When ${\cal L}_1={\cal L}_2$, we~omit the second copy of~${\cal
 L}_2$, and note that the result is again a sheaf of~alge\-bras, with each
${\cal H}_{G;{\cal L}_1,{\cal L}_2}(X)$ a bimodule that induces a Morita
equivalence between ${\cal H}_{G;{\cal L}_1}(X)$ and ${\cal H}_{G;{\cal
 L}_2}(X)$.

In addition to the isomorphisms of such sheaves arising from isomorphisms
of~${\cal L}_1$, ${\cal L}_2$, there are also isomorphisms coming from
twisting both sheaves by a suitable invertible sheaf. Let ${\cal L}$ be a
$G$-equivariant invertible sheaf on~$X$ which ``descends in codimension~1''; that is, there is an open subscheme of~$X/G$ containing every
codimension~1 point over which ${\cal L}$ descends to a line bundle. (Note
that this is a local condition and is automatically satisfied on the
complement of~the reflection hypersurfaces.) Then for each reflection
hypersurface $H$ with inertia group (i.e.,~pointwise stabilizer) $I_H$,
there is a $I_H$-invariant neighborhood $U_H$ of the generic point of~$H$
such that the $H$-equivariant sheaf ${\cal L}_{I_H}$ is equivariantly
isomorphic to~$\sO_{U_H}$. It follows that there is a natural isomorphism
${\cal H}_{G;{\cal L}}(X) \cong {\cal H}_{G}(X)$. Indeed, since ${\cal L}$
is $G$-equivariant, both algebras are naturally contained in~$k(X)[G]$, and
the conditions for any given reflection hypersurface are the same on both
sides. This condition is automatically satisfied by the pullback of an
invertible sheaf on~$X/G$, but this is not necessary. For~instance, in the
case of~$W(G_2)$ acting on the sum~0 subvariety of~$E^3$, the quotient is a
weighted projective space with generators of degree 1, 1, 2. The~pullback
of~$\sO_{\P^2}(1)$ (from $X/W(A_2)\cong \P^2$) does not descend to a line
bundle on~$X/W(G_2)$ (since $\sO_{X/W(G_2)}(1)$ is not invertible on such a
weighted projective space), but does so if we~remove the singular point of~$X/W(G_2)$ and the three points of~$X$ lying over it.

A particularly important instance of twisting by line bundles arises when
we consider the natural involution on operators: $\sum_g c_g g\mapsto
\sum_g g^{-1} c_g$.

\begin{prop}
 This gives a contravariant isomorphism ${\cal H}_G(X)^{\rm op}\cong {\cal
 H}_{G;\omega_X}(X)$.
\end{prop}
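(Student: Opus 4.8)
The plan is to recognize the stated operation as \emph{adjunction with respect to the trace pairing} of the Galois extension $k(X)/k(X/G)$, and then to identify $\mathcal H_{G;\omega_X}(X)$ with the trace-dual of $\mathcal H_G(X)$ through a single codimension-one computation involving the different.

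First I would set up the algebra. Since $G$ acts faithfully on the integral scheme $X$, the generic stabilizer is trivial (an element fixing a dense open fixes $X$, hence is $e$), so $k(X)/k(X/G)$ is Galois with group $G$ and $\Tr_{k(X)/k(X/G)}(f)=\sum_{g\in G}{}^g f$. Write $T(\sum_g c_g g):=\sum_g g^{-1}c_g=\sum_g ({}^{g^{-1}}c_g)\,g^{-1}$. A direct check shows $T$ is an involutive anti-automorphism of the meromorphic twisted group algebra $k(X)[G]$; moreover, using $G$-invariance of $\Tr$ one gets $\langle g f_1,f_2\rangle=\langle f_1,g^{-1}f_2\rangle$ for $\langle f_1,f_2\rangle:=\Tr(f_1f_2)$, so $g^{*}=g^{-1}$ and hence $(c_g g)^{*}=g^{-1}c_g$; that is, $T$ \emph{is} adjunction for this non-degenerate pairing. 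Because the elements ${}^G f$ are central in $k(X)[G]$, $T$ is $\sO_{X/G}$-linear, so it descends to an involutive anti-automorphism of sheaves on $X/G$. It therefore suffices to prove $T(\mathcal H_G(X))=\mathcal H_{G;\omega_X}(X)$, and since the condition that $\sum c_g g$ preserve $\sO_X$ (resp.\ $\omega_X$) on invariant opens is, by normality of $X$, a condition only at codimension-one points of $X$, both $\mathcal H_G(X)$ and $\mathcal H_{G;\omega_X}(X)$ are saturated ($S_2$) subsheaves of $k(X)[G]$; so it is enough to check the identity after localizing at a codimension-one point $y$ of $X/G$.

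Localizing, put $S=\sO_{X/G,y}$ (a DVR, as $X/G$ is normal) and let $R$ be the localization of $\pi_*\sO_X$ at $y$, a semilocal Dedekind ring, finite over $S$ with fraction field $k(X)$. By ``miracle flatness'' ($X$ regular, $X/G$ regular at $y$, $\pi$ finite) the map $\pi$ is flat here, so $\omega_{X/(X/G)}$ is invertible near $y$, $\omega_X\cong\omega_{X/(X/G)}\otimes\pi^*\omega_{X/G}$, and duality for the finite flat map identifies $\omega_{R/S}$ with $\Hom_S(R,S)$, realized $G$-equivariantly (via the $G$-invariant trace) as the fractional $R$-ideal $\mathfrak d_{R/S}^{-1}=\{x\in k(X):\Tr(xR)\subseteq S\}$. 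The twist by $\pi^*\omega_{X/G}$ is by a line bundle pulled back from $X/G$, hence central in $k(X)[G]$ and trivial over the DVR $S$, so the localization of $\mathcal H_{G;\omega_X}(X)$ is $\{\phi\in k(X)[G]:\phi(\mathfrak d_{R/S}^{-1})\subseteq\mathfrak d_{R/S}^{-1}\}$, while that of $\mathcal H_G(X)$ is $\{\phi:\phi(R)\subseteq R\}$. Now $\mathfrak d_{R/S}^{-1}$ is exactly the trace-dual of $R$, and trace-dualizing is an inclusion-reversing involution on fractional ideals, so
\[
\phi(R)\subseteq R
\iff \Tr\bigl(\phi(R)\cdot\mathfrak d_{R/S}^{-1}\bigr)\subseteq S
\iff \Tr\bigl(R\cdot\phi^{*}(\mathfrak d_{R/S}^{-1})\bigr)\subseteq S
\iff \phi^{*}(\mathfrak d_{R/S}^{-1})\subseteq\mathfrak d_{R/S}^{-1},
\]
the middle step being the definition of the adjoint $\phi^{*}=T(\phi)$. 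This says precisely that $T$ carries the localization of $\mathcal H_G(X)$ bijectively onto that of $\mathcal H_{G;\omega_X}(X)$; globalizing via $S_2$-ness gives $T(\mathcal H_G(X))=\mathcal H_{G;\omega_X}(X)$. Since $T$ reverses composition and is $\sO_{X/G}$-linear, it is the asserted contravariant isomorphism $\mathcal H_G(X)^{\text{op}}\cong\mathcal H_{G;\omega_X}(X)$.

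The main obstacle I anticipate is the bookkeeping around the relative dualizing sheaf: one must check that $\omega_X$ differs from the fractional ideal $\mathfrak d_{R/S}^{-1}$ only by a twist pulled back from $X/G$ (so it is invisible to the operator algebra), and that wild ramification and possible failure of Cohen--Macaulayness of $X/G$ in positive characteristic cause no trouble. Both dissolve upon reducing to codimension one, where $\pi$ is finite flat between regular schemes and $k(X)/k(X/G)$ is separable, so the clean finite-flat form of duality and of the formula $\omega_X\cong\omega_{X/(X/G)}\otimes\pi^*\omega_{X/G}$ apply; outside codimension one there is nothing to check, because the sheaves in question are $S_2$.
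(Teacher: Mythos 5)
Your proof is correct, and it takes a genuinely different route from the paper's.

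The paper works hypersurface by hypersurface on $X$ itself: fixing an irreducible divisor $D\subset X$, it reduces to the terms with a common image $D'=g^{-1}D$, and then dualizes the holomorphy condition using the residue pairing between meromorphic functions and meromorphic $n$-forms along $D$. A direct residue computation, using the functoriality $\Res_{D'}(\alpha)=\Res_D({}^{g^{-1}}\alpha)$, shows that $\sum_g c_g g$ preserves holomorphic functions exactly when $\sum_g g^{-1}c_g$ preserves holomorphic $n$-forms.

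You instead interpret the involution $T$ as the adjoint for the $G$-invariant trace form of the Galois extension $k(X)/k(X/G)$, reduce to codimension-one points of $X/G$ via $S_2$-saturation, and invoke finite-flat (Grothendieck) duality for $\pi$ to identify $\omega_{X/(X/G)}$ with the codifferent $\mathfrak d_{R/S}^{-1}$, the trace-dual of $R$; the remaining twist $\pi^*\omega_{X/G}$ is pulled back, hence invisible to the algebra. The equivalence $\phi(R)\subseteq R\iff\phi^*(\mathfrak d^{-1})\subseteq\mathfrak d^{-1}$ is then just the inclusion-reversing involution of trace-dualizing. Both proofs are local in codimension one, but yours is more commutative-algebraic (traces, differents, relative dualizing modules over $X/G$) while the paper's is more geometric (residues of top forms along divisors of $X$). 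Your approach makes it conceptually transparent \emph{why} $\omega_X$ is the correct twist—because $\pi^!\sO_{X/G}\cong\omega_{X/(X/G)}$ is the trace-dual of $\sO_X$—at the cost of invoking the different/codifferent machinery; the paper's residue computation is more elementary and self-contained, but the role of $\omega_X$ emerges from the calculation rather than being built in from the start. One small point worth being explicit about: the nondegeneracy of the trace form (and the identification of $\omega_{R/S}$ with $\mathfrak d_{R/S}^{-1}$) uses separability of $k(X)/k(X/G)$, which holds here because the extension is Galois; your argument is silent on this, though it is indeed automatic.
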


\begin{proof}
 We need to show that $\sum_g c_g g$ preserves holomorphic functions iff
 $\sum_g g^{-1}c_g$ preserves holomorphic $n$-forms. By Lemma~\ref{lem:holomorphy_preserving}, it suffices to prove that the conditions
 on individual hypersurfaces are the same, and thus we may fix a
 hypersurface $D$ and restrict our attention to the case that every term
 in the sum gives the same divisor $g^{-1}D=D'$. By duality, a~function~$f$ is holomorphic along $D$ iff $\Res_D f\omega=0$ for all $n$-forms
 $\omega$ which are holomorphic along $D$, and we may similarly detect
 holomorphy of~$n$-forms by taking residues against test functions. In~particular, for~any function~$f$ holomorphic along $D$, and any $n$-form
 $\omega$ holomorphic along $D'$, we~have
 \begin{gather*}
 \Res_{D'} \bigg(\!
\sum_g c_g {}^gf\bigg) \omega
 =
 \sum_g \Res_{D'} c_g {}^gf \omega
 =
 \sum_g \Res_{D} {}^{g^{-1}}c_g f {}^{g^{-1}}\omega
 =
 \Res_D f\cdot\bigg(\!
\sum_g {}^{g^{-1}}(c_g\omega)\!\bigg).
 \end{gather*}
 It follows that $\sum_g c_g {}^g f$ is holomorphic along $D'$ for all $f$
 iff $\sum_g {}^{g^{-1}}(c_g\omega)$ is holomorphic along~$D$ for all
 $\omega$.
\end{proof}

Let us now turn to the ``elliptic'' case, in which $G$ is a finite Weyl
group $W$ acting by reflections on an abelian torsor $X/S$ over a normal
integral base $S$. That is, the flat family $X/S$ is a torsor over an
abelian variety $A/S$, and $W$ acts on~$X$ in such a way that the induced
action on~$A$ is an action by reflections. Note that since the subvariety
$X^W$ is the intersection of the simple reflection hypersurfaces, it has
codimension at most $n$, so is nonempty (the corresponding intersection is
nonempty and transverse in~$A$, and thus the corresponding intersection
number is positive) and thus a torsor over $A^W$; conversely, any
$A^W$-torsor induces a corresponding family~$X/S$ by twisting. The~action
of~$W$ is faithful on every fiber, and this remains true if we view~$X$ as
a family over the quotient $S':=X/A_W$. We will see that the fibers of~${\cal H}_W(X)$ over~$S$ are identified with the master Hecke algebras of
the fibers, in a fairly strong way.

Given a reflection~$r$, let $[X^r]$ denote the effective Cartier divisor
cut out by the equ\-a\-tion~\mbox{$rx=x$}. Also, let $C_r$ denote the quotient of~$X$
by the abelian subvariety $(r+1)A$; this, of course, is a torsor over the
corresponding coroot curve $E'_r$. The~morphism $(r-1)\colon X\to A$ factors
through~$C_r$ and has image $E_r$, so that there is a morphism $C_r\to E_r$
compatible with the isogeny $E'_r\to E_r$, and thus $C_r$ corresponds to a
class in~$H^1(S;\ker(E'_r\to E_r))$. Note that in contrast to the coroot
curve, we~cannot expect to have a natural torsor over the root curve inside
$X$.

For the rank~1 case, we~have the following immediate consequence of Lemma~\ref{lem:holomorphy_preserving}. Here a~``hyperelliptic curve of genus~1''
is a smooth genus~1 curve $C$ with a marked involution such that the
quotient is (geometrically) rational; note that the torsor arising in the
rank~1 case is always a family of such curves-with-involutions.

\begin{lem}
 Let $C/S$ be a flat family of hyperelliptic curves of genus 1 with~$G=A_1=\langle s\rangle$ acting by the marked involution. Then for
 any $G$-invariant open set $U$, $\Gamma(U;{\cal H}_{A_1}(C))$ consists of
 operators $f_0+f_1(s-1)$ such that $f_0\in \Gamma(U;\sO_C)$ and $f_1\in
 \Gamma(U;\sO_C([C^s]))$.
\end{lem}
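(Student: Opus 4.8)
The plan is to reduce the statement to the local result of Lemma~\ref{lem:holomorphy_preserving} (and its $n=2$ refinement, Corollary~\ref{cor:local_order_two_residue_conditions}) applied to the two morphisms $\id, s\colon C\to C/G$, or rather to the two automorphisms $\id,s$ of $C$. First I would note that ${\cal H}_{A_1}(C)$ is by definition the subsheaf of the twisted group algebra $k(C)[G]$ consisting of operators preserving local sections of $\sO_C$ on $G$-invariant opens. Writing a general element as $f_0 + f_1(s-1)$ (a basis change from $f_0'\cdot 1 + f_1'\cdot s$, legitimate since $\{1,s-1\}$ and $\{1,s\}$ span the same rank-2 free module over $k(C)$), I want to show that the holomorphy-preserving condition is exactly $f_0\in\sO_C$ and $f_1\in\sO_C([C^s])$.

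The key steps, in order: (1) Since $C$ is a smooth curve, the only hypersurfaces are points, and by Lemma~\ref{lem:holomorphy_preserving} the condition is checked one closed point at a time; away from the fixed locus $[C^s]$ the two morphisms $\id$ and $s$ induce distinct maps on the local ring, so by the splitting Lemma~\ref{lem:holomorphy_preserving_splits} the operator preserves holomorphy at such a point iff each coefficient is holomorphic there — forcing $f_0$ and $f_1$ to be holomorphic away from $[C^s]$. (2) At a point $P$ of $[C^s]$ (a ramification point of $C\to C/G$), the two morphisms $\id,s$ induce the same map on residue fields, so Corollary~\ref{cor:local_order_two_residue_conditions} applies: in terms of the basis $\{1,s\}$ the operator $f_0' + f_1' s$ preserves holomorphy iff $f_0',f_1'\in \det(R\vec\psi S)^{-1}$ and $f_0'+f_1'\in R$, where $\det(R\vec\psi S)$ is, up to units, a uniformizer raised to the length of the ramification — i.e.\ in the tame case (characteristic not $2$) exactly the local equation of $[C^s]$, matching the reduced divisor structure, and in general exactly the divisor $[C^s]$ as an effective Cartier divisor cut out by $sx=x$. (3) Translating back to the basis $\{1, s-1\}$: $f_0 = f_0' + f_1'$ and $f_1 = -f_1'$ (or whichever sign convention), so $f_0'+f_1'\in R$ becomes $f_0\in R$, i.e.\ $f_0$ holomorphic, and $f_1'\in\det(R\vec\psi S)^{-1}$ becomes $f_1\in \sO_C([C^s])$ locally at $P$; the condition $f_0'\in\det(R\vec\psi S)^{-1}$ is then automatic. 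Combining (1) and (3) over all points gives the claim.

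The main obstacle I expect is step (2): pinning down exactly that $\det(R\vec\psi S)$ equals the local equation of $[C^s]$ as an effective Cartier divisor — i.e.\ that the ``colength'' measured by Corollary~\ref{cor:local_order_two_residue_conditions} is precisely the multiplicity with which $sx=x$ cuts out $[C^s]$. In the tame case this is just the statement that $s$ acts on the cotangent space at $P$ by $-1$, so $h - {}^s h$ has a simple zero for a suitable local parameter $h$; but in characteristic $2$ (wild ramification, and the supersingular origin) one must check that the Cartier-divisor equation $sx - x$ and the ``$\det$'' from the local order computation genuinely agree, rather than merely having the same support. Since the statement as phrased uses $[C^s]$ defined by the equation $rx=x$, this agreement should be essentially tautological once one unwinds the definition of $R\vec\psi S$, but it is the one place where a little care with the ramification filtration is needed; the rest is the routine base change $\{1,s\}\leftrightarrow\{1,s-1\}$ and patching local conditions into a global sheaf statement.
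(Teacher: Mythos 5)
Your proposal is correct and is exactly the argument the paper is alluding to when it calls the lemma an ``immediate consequence'' of Lemma~\ref{lem:holomorphy_preserving}: away from $[C^s]$ the two morphisms $\id,s$ separate points, so each coefficient is forced to be holomorphic; along $[C^s]$ one applies Corollary~\ref{cor:local_order_two_residue_conditions} and changes basis from $\{1,s\}$ to $\{1,s-1\}$. The worry you flag in step (2) about wild ramification is in fact vacuous: $\det(R\vec\psi S)$ is by construction the ideal generated by $\{{}^sh-h : h\in R\}$, and $[C^s]$ is \emph{defined} as the Cartier divisor cut out by $sx=x$, whose local ideal is generated by exactly the same elements ${}^sh-h$; the two agree as ideals (not merely as supports) by definition, in every characteristic, with no ramification-filtration analysis required. (Minor sign slip: from $f_0+f_1(s-1)=(f_0-f_1)\cdot 1 + f_1\cdot s$ one gets $f_1'=f_1$, not $-f_1'$, but this is immaterial.)
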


\begin{rems}
 Note that
 \begin{gather*}
 f_0 + f_1(s-1) = (f_0-f_1-{}^sf_1)+(1+s) {}^sf_1,
 \end{gather*}
 and thus (since $f_1+{}^sf_1\in \Gamma(U;\sO_C)$) we may also describe
 $\Gamma(U;{\cal H}_{A_1}(C))$ as the space of operators $f'_0 +
 (1+s)f'_1$ such that $f'_0\in \Gamma(U;\sO_C)$ and $f'_1\in
 \Gamma(U;\sO_C([C^s]))$. This also follows from the above
 description of the adjoint once we realize that $\omega_C$ is the trivial
 line bundle with equivariant structure such that $s$ acts as $-1$.
\end{rems}

\begin{rems}
 It will be useful in the sequel to know when an $A_1$-equivariant line
 bundle on~$C$ descends in codimension~1. If ${\cal L}$ is equivariantly
 isomorphic to~$\sO_C(D)$ for some symmetric Cartier divisor, then we may
 write $D$ as a linear combination of divisors $D'$ and $D''+{}^sD''$ with~$D'$,~$D''$ irreducible and $D'={}^sD'$. The~latter case is the pullback
 of the image of~$D''$ in~$C/A_1$, and thus certainly has no effect on
 twisting, while if $D'$ is not a component of~$[C^s]$, then its image in~$C/A_1$ is twice a divisor, so that again~$D'$ is a pullback. We are
 thus left to consider the linear combinations of reflection
 hypersurfaces, and thus determine that the condition on~$D$ is precisely
 that the valuations along reflection hypersurfaces must be even (or no
 condition at all in characteristic~2 if the reflection hypersurface is
 inseparable over $S$). This, of course, is for the standard equivariant
 structure on~$\sO_C(D)$; in odd characteristic, we~may twist by the sign
 character of~$A_1$, in which case the condition becomes that the
 valuations along reflection hypersurfaces are odd. Note that in any
 event, a line bundle that descends in codimension~1 will restrict on the
 generic fiber to a power of the hyperelliptic bundle.
\end{rems}

For rank $n$, we~have the following.

\begin{lem}
 The $\sO_{X/W}$-algebra ${\cal H}_W(X)$ is generated by the
 $\sO_{X/W}$-subalgebras ${\cal H}_{\langle s_i\rangle}(X)$ for $1\le i\le
 n$.
\end{lem}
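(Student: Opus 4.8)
The plan is to use the Bruhat filtration on ${\cal H}_W(X)$ to express it as a successive extension of line bundles indexed by $w\in W$, and then check that the pieces corresponding to simple reflections already generate everything under composition. First I would set up, for each $w\in W$, the subsheaf ${\cal H}_W(X)_{\le w}$ spanned (as an $\sO_{X/W}$-module) by those operators $\sum_g c_g g$ with $c_g=0$ unless $g\le w$ in Bruhat order. Using Lemma \ref{lem:holomorphy_preserving} (together with Corollary \ref{cor:global_order_two_residue_conditions}), the key local point is that the holomorphy condition on a coefficient $c_g$ along a reflection hypersurface $[X^r]$ only constrains $c_g$ in terms of the coefficient $c_{rg}$ of the unique other group element differing from $g$ by the left factor $r$; more precisely, along a hypersurface $H$ fixed by $r$, the pair $(c_g,c_{rg})$ is subject to exactly the rank-$2$ ``residue'' condition, while all other coefficients are locally unconstrained there. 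This is what makes the filtration by Bruhat order behave well: the associated graded piece $\mathrm{gr}_w{\cal H}_W(X)$ is the line bundle $\sO_X(\sum_{r: rw<w}[X^r])$, pushed forward along $(1,w):X\to X\times X$ and then descended to $X/W$.

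Next I would argue by induction on $\ell(w)$ that every local section of ${\cal H}_W(X)$ supported on $\{g\le w\}$ lies in the subalgebra ${\cal H}'$ generated by the ${\cal H}_{\langle s_i\rangle}(X)$. The base case $\ell(w)=0$ is just $\sO_{X/W}\subset {\cal H}'$. For the inductive step, pick a reduced word $w=s_{i_1}\cdots s_{i_k}$ and consider a section $\phi$ of ${\cal H}_W(X)_{\le w}$ whose top coefficient $c_w$ is a section of the line bundle $\sO_X(\sum_{r:rw<w}[X^r])$ appearing in $\mathrm{gr}_w$. The point is to produce an element of ${\cal H}'$ with the same top coefficient; subtracting it leaves a section of ${\cal H}_W(X)_{<w}$, to which induction applies (after noting $\{g: g<w\}$ is a union of lower Bruhat ideals, so one stays within the inductive hypothesis). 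To build such an element, I would take a local generator $\theta_i$ of ${\cal H}_{\langle s_{i_1}\rangle}(X)$ of the form $(1+s_{i_1})u$ with $u$ having a simple pole along $[X^{s_{i_1}}]$ (the rank-$1$ lemma), and multiply a suitable lift of a section of ${\cal H}_W(X)_{\le s_{i_2}\cdots s_{i_k}}$ on the right by $\theta_i$. Tracking the top-degree term of this product against the recursion $\sO_X(\sum_{r:r(s_{i_1}w')<s_{i_1}w'}[X^r])$ versus $\sO_X(\sum_{r:rw'<w'}[X^r])$ shows the product has the correct top coefficient, up to multiplying by an invertible function and adding lower-order terms — which is exactly the freedom the induction allows.

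The main obstacle is the bookkeeping in the inductive step: one must check that the divisor of the top coefficient of $(\text{lift of }\psi_{\le w'})\cdot\theta_i$ is exactly $\sum_{r:rw<w}[X^r]$ and that, for a suitable choice of local data, its leading behavior along each such hypersurface can be prescribed arbitrarily (so that an arbitrary section of the $\mathrm{gr}_w$ line bundle is hit). This amounts to understanding how left multiplication by $s_{i_1}$ permutes reflection hypersurfaces and how the simple pole of $u$ interacts with the poles already present in $\psi_{\le w'}$; the length-additivity $\ell(s_{i_1}w')=\ell(w')+1$ guarantees no cancellation of poles occurs (the relevant hypersurface $[X^{s_{i_1}}]$ is not among those contributing to $\mathrm{gr}_{w'}$), so the pole orders simply add as expected. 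I expect everything else — flatness issues, descent to $X/W$, coherence — to be routine given the earlier results, in particular the coherence corollary for ${\cal H}_W(X)$ as an $\sO_{X\times X}$-module and Lemma \ref{lem:holomorphy_preserving} controlling where poles can occur.
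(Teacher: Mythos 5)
Your approach is correct in outline but takes a genuinely different route from the paper's. The paper's own argument is much shorter and avoids the Bruhat filtration entirely: from the local structure results (Lemma \ref{lem:holomorphy_preserving} and its corollaries), ${\cal H}_W(X)$ is generated as a left $\sO_X$-module by $\sO_X[W]$ together with ``two-term'' operators $c_w w + c_{rw}\,rw$ for a single reflection $r$; writing $r = w_1^{-1}s_i w_1$ for a simple reflection $s_i$ and factoring
\[
c_w w + c_{rw}\,rw \;=\; w_1^{-1}\bigl({}^{w_1}c_w + {}^{w_1}c_{rw}\,s_i\bigr)\,w_1 w,
\]
one observes that the middle factor is a section of ${\cal H}_{\langle s_i\rangle}(X)$ while the outer factors lie in $\sO_X[W]$, and the claim follows at once. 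Your Bruhat-filtration induction is essentially the argument the paper gives \emph{after} this lemma, in Lemma \ref{lem:Bruhat_fin_noparm} and Corollary \ref{cor:Bruhat_fin_noparm}, where it is used to prove the sharper statement that for any reduced word $w=s_1\cdots s_\ell$ the multiplication map ${\cal H}_{\langle s_1\rangle}(X)\otimes\cdots\otimes{\cal H}_{\langle s_\ell\rangle}(X)\to{\cal H}_W(X)[\le w]$ is surjective. Your route thus yields the stronger result as a byproduct, at the cost of the strong-exchange-property bookkeeping; the paper's conjugation trick sidesteps that bookkeeping when only generation is needed.

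Two expository slips to tighten. First, you assert up front that $\mathrm{gr}_w{\cal H}_W(X)$ \emph{equals} $\sO_X\bigl(\sum_{r:rw<w}[X^r]\bigr)$, but a priori one only has containment in this line bundle (the pole bound and vanishing of $c_w$ along $[X^r]$ for $rw>w$); the equality is precisely the surjectivity your induction is meant to establish by producing elements with prescribed top coefficient, so as phrased the opening paragraph borrows the conclusion. Second, with the convention $w=s_{i_1}\cdots s_{i_k}$ you should multiply the lift of the section supported on $[\le s_{i_2}\cdots s_{i_k}]$ on the \emph{left} by $\theta\in{\cal H}_{\langle s_{i_1}\rangle}(X)$ (or peel off the last letter and multiply on the right); as written the product has top group element $(s_{i_2}\cdots s_{i_k})s_{i_1}$, not $w$. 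Finally, the ``no cancellation'' step is exactly the strong exchange property: the reflections shortening $w$ are $s_{i_1}$ together with the $s_{i_1}$-conjugates of those shortening $s_{i_2}\cdots s_{i_k}$, which is what identifies the top divisor as $D_w$.
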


\begin{proof}
 It follows from Lemma~\ref{lem:holomorphy_preserving} that ${\cal
 H}_W(X)$ is generated as a left $\sO_X$-module by the twisted group
 algebra $\sO_X[W]$ along with the subsheaves arising from operators of
 the form $c_w w + c_{rw} rw$ for some reflection~$r\in R(W)$. Now, by
 the above explicit description, each subalgebra ${\cal H}_{\langle
 s_i\rangle}(X)$ contains $\sO_X[s_i]$, and thus the algebra they
 generate contains $\sO_X[W]$. But then if we express $r$ above as
 $w_1^{-1} s_i w_1$ for some simple reflection~$s_i$, we~have
 \begin{gather*}
 c_w w + c_{rw} rw
 =
 c_w w + c_{w_1^{-1} s_i w_1w} w_1^{-1} s_i w_1w
 =
 w_1^{-1} \big({}^{w_1} c_w + {}^{w_1}c_{w_1^{-1} s_i w_1w} s_i\big) w_1 w,
 \end{gather*}
 and find that $c_w w + c_{rw} rw$ is a local section of~${\cal
 H}_W(X)$ iff
 \begin{gather*}
 {}^{w_1}c_w
 +
 {}^{w_1}c_{w w_1s_iw_1^{-1}} s_i
 \end{gather*}
 is a local section of~${\cal H}_W(X)$, iff it is a local section of~${\cal H}_{\langle s_i\rangle}(X)$. The~claim follows.
\end{proof}

\begin{rem}
 Of course, the same argument shows that if $G$ is generated by a
 collection of cyclic groups meeting every conjugacy class of
 (generalized) reflections, then ${\cal H}_G(X)$ is generated by~the
 corresponding subalgebras.
\end{rem}

We can actually say a great deal more in the case of interest; not only is
${\cal H}_W(X)$ a flat sheaf in general, but we can in fact express it as
an extension of invertible sheaves on~$X$. The~key ingredient is the fact
that there is a natural partial order on~$W$ (the Bruhat order), the
weakest partial order such that if $w'w^{-1}$ is a reflection, then $w$ and
$w'$ are comparable and ordered according to their length. Note that
omitting any set of reflections from a reduced word for~$w$ gives an
element $w'\le w$, and classical results on Coxeter groups give the
converse: for any reduced word for $w$, $w'\le w$ iff some word for~$w'$
(iff some {\em reduced} word for~$w'$) can be obtained by omitting
reflections from the chosen reduced word.

Given an order ideal $I$ with respect to Bruhat order (i.e., a subset
$I\subset W$ such that if $w\in I$ and $w'\le w$, then $w'\in I$), we~may
consider the subsheaf ${\cal H}_W(X)[I]$ of~${\cal H}_W(X)$ consisting of
those operators in which the coefficient of~$w$ is 0 for~$w\notin I$. Any
chain of order ideals induces in this way a filtration, and we will show
that in the case of a maximal chain, the subquotients of the filtration are
invertible sheaves on~$X$. Let $[\le w]$ denote the order ideal consisting
of elements~$\le w$.

For any element $w\in W$, define a divisor $D_w:=\sum_{r\in
 R(W),rw<w} [X^r]$.

\begin{lem}\label{lem:Bruhat_fin_noparm}
 Let $I$ be a Bruhat order ideal, and suppose that $w$ is a maximal
 element of~$I$. Then there is a short exact sequence
 \begin{gather*}
 0\to {\cal
 H}_W(X)[I\setminus\{w\}]\subset {\cal H}_W(X)[I]\to \sO_X(D_w)\to 0.
 \end{gather*}
\end{lem}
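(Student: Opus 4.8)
I would take the map in the asserted sequence to be $\rho_w\colon {\cal H}_W(X)[I]\to\sO_X(D_w)$ sending an operator $\sum_{v\in I}c_v v$ to its coefficient $c_w$ of $w$; this is visibly a homomorphism of left $\sO_X$-modules, hence a homomorphism of the associated sheaves on $X$ (since $\pi\colon X\to X/W$ is finite, the left $\sO_X$-module ${\cal H}_W(X)$ is just a coherent sheaf on $X$, which is the point of view in which the subquotients come out invertible). The kernel is then automatic: it consists of the operators in ${\cal H}_W(X)$ supported on $I\setminus\{w\}$, and since $w$ is a maximal element of the order ideal $I$ the set $I\setminus\{w\}$ is again a Bruhat order ideal, so this kernel is by definition ${\cal H}_W(X)[I\setminus\{w\}]$. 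That the image lies in $\sO_X(D_w)$ I would deduce from Lemma~\ref{lem:holomorphy_preserving} and Corollary~\ref{cor:global_order_two_residue_conditions}: along a reduced irreducible hypersurface $H\subset X$ the coefficient $c_w$ can be singular only if $v^{-1}=w^{-1}$ along $H$ for some $v\ne w$ with $c_v\not\equiv 0$, i.e.\ only if $wv^{-1}$ fixes $H$ pointwise for some $v\in I$; a nontrivial element of $W$ fixing a codimension-one subvariety pointwise must be a reflection $r$ (this reduces to the same statement for the reflection representation), so $H=[X^r]$ and $v=rw$, and since $v\in I$ and $w$ is maximal in $I$ this forces $rw<w$; moreover the pole order of $c_w$ along such an $[X^r]$ is exactly the one permitted by Corollary~\ref{cor:local_order_two_residue_conditions} (using transversality of the reflection hypersurfaces), so $c_w$ is a section of $D_w=\sum_{r:\,rw<w}[X^r]$.

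The real content is the surjectivity of $\rho_w$, which I would prove by producing, locally at an arbitrary point $p\in X$, an element of ${\cal H}_W(X)[I]$ whose $w$-coefficient generates $\sO_X(D_w)$ at $p$. Fix a reduced word $w=s_{i_1}\cdots s_{i_\ell}$. By the strong exchange condition $\{r\in R(W):rw<w\}=\{t_k:=s_{i_1}\cdots s_{i_{k-1}}s_{i_k}s_{i_{k-1}}\cdots s_{i_1}:1\le k\le\ell\}$, so $D_w=\sum_{k=1}^{\ell}[X^{t_k}]$, and one checks $(s_{i_1}\cdots s_{i_k})[X^{s_{i_k}}]=[X^{t_k}]$. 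On a small enough $W$-invariant open $\pi^{-1}(\bar U)$ containing the finite orbit $Wp$, I would choose for each simple reflection $s$ occurring in the word a section $f_s$ of $\sO_X([X^s])$ that is nonzero in the fibre of $\sO_X([X^s])$ at each of the finitely many points $(s_{i_1}\cdots s_{i_k})^{-1}p$ with $s_{i_k}=s$ (possible since the line bundle trivializes on a small enough open). By the rank-$1$ description each $\Pi_{s_{i_k}}:=(1+s_{i_k})f_{s_{i_k}}$ is then a section of ${\cal H}_{\langle s_{i_k}\rangle}(X)\hookrightarrow {\cal H}_W(X)$, and I set $\Pi_w:=\Pi_{s_{i_1}}\cdots\Pi_{s_{i_\ell}}\in\Gamma(\bar U;{\cal H}_W(X))$. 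Expanding $\prod_k(1+s_{i_k})$ over subwords of the reduced word shows $\Pi_w$ is supported on $[\le w]\subseteq I$, so $\Pi_w\in{\cal H}_W(X)[I]$; and since the full word is the only subword whose product is $w$, a direct computation gives $c_w(\Pi_w)=\prod_{k=1}^{\ell}{}^{s_{i_1}\cdots s_{i_k}}f_{s_{i_k}}$. Here the $k$-th factor has its only pole along $(s_{i_1}\cdots s_{i_k})[X^{s_{i_k}}]=[X^{t_k}]$ and, by the choice of $f_{s_{i_k}}$, generates $\sO_X([X^{t_k}])$ at $p$; hence $c_w(\Pi_w)$ generates $\sO_X(\sum_k[X^{t_k}])=\sO_X(D_w)$ at $p$, and surjectivity follows.

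Assembling the three points yields the short exact sequence. The main obstacle is the surjectivity step, and inside it the two pieces of bookkeeping that must align: that the divided-difference-type product $\Pi_w$ attached to a reduced word is supported below $w$ in Bruhat order — so that it lies in ${\cal H}_W(X)[I]$ for \emph{every} order ideal $I$ containing $w$, using only $w\in I$ — and that its $w$-coefficient is $\prod_k{}^{s_{i_1}\cdots s_{i_k}}f_{s_{i_k}}$ with pole divisor exactly $D_w$. The classical identity $\{r:rw<w\}=\{t_k\}$ together with transversality of the reflection hypersurfaces are what make the pole count match $D_w$ on the nose, and the (mild) point that a nonidentity element of $W$ fixing a hypersurface is a reflection is what pins down the image in the first place.
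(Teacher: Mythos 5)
Your proposal is correct and follows essentially the same route as the paper's proof: the coefficient-of-$w$ map gives the sequence; the pole bound $c_w\in\sO_X(D_w)$ follows from Corollary~\ref{cor:global_order_two_residue_conditions} together with the observation that maximality of $w$ in $I$ forces any reflection $r$ with $rw\in I\setminus\{w\}$ to satisfy $rw<w$; and surjectivity follows by feeding a reduced word $w=s_{i_1}\cdots s_{i_\ell}$ into the product of the rank-$1$ algebras ${\cal H}_{\langle s_{i_k}\rangle}(X)$ and invoking strong exchange to identify $\sum_k{}^{s_{i_1}\cdots s_{i_{k-1}}}[X^{s_{i_k}}]$ with $D_w$. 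The one stylistic deviation is that where the paper simply observes that the leading-coefficient map on the product of rank-$1$ subsheaves lands surjectively onto the tensor product of the leading-coefficient line bundles (which is automatically all of $\sO_X(D_w)$ once the divisor identity is checked), you make this concrete by trivializing locally at an arbitrary $p$ and choosing explicit sections $f_{s_{i_k}}$ nonvanishing in the appropriate fibres; this is a correct but slightly more laborious way of saying the same thing, since the surjection $\sO_X(A)\otimes\sO_X(B)\to\sO_X(A+B)$ needs no local choices.
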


\begin{proof}
 By definition, ${\cal H}_W(X)[I\setminus\{w\}]$ is the kernel of the
 ``coefficient of~$w$'' map on~${\cal H}_W(X)[I]$, so we first need to
 show that the coefficient of~$w$ is contained in~$\sO_X(D_w)$. It
 follows from Corollary~\ref{cor:global_order_two_residue_conditions} that
 the coefficient has polar divisor bounded by $\sum_{w'\in (I\setminus
 \{w\})\cap R(W)w} \big[X^{w'w^{-1}}\big]$. If~$w'w^{-1}$ is a reflection, then
 $w'$ is comparable to~$w$, which since $w$ is maximal in~$I$ implies that
 $w'<w$, and thus the bound on the divisor is $D_w$ as required.

 It remains only to show that the map is surjective. Choose a reduced
 word $w=s_1\cdots s_n$, and consider the multiplication map
 \begin{gather*}
 {\cal H}_{\langle s_1\rangle}(X)\otimes\cdots\otimes
 {\cal H}_{\langle s_n\rangle}(X)
 \to
 {\cal H}_W(X).
 \end{gather*}
 Every term in the resulting expansion corresponds to an element in which
 some (possibly empty) subset of the simple reflections have been omitted,
 and thus the image of this multiplication map is contained in~${\cal
 H}_W(X)[\le w]\subset {\cal H}_W(X)[I]$. The~image under the leading
 coefficient map can then be determined by replacing each factor by its
 corresponding leading coefficient line bundle. It thus remains only to
 verify that
 \begin{gather*}
 D_w = \sum_{1\le i\le n} {}^{s_1\cdots s_{i-1}} [X^{s_i}]
 = \sum_{1\le i\le n} [X^{s_1\cdots s_{i-1}s_is_{i-1}\cdots s_1}].
 \end{gather*}
 But this follows from the strong exchange property: the reflections $r$
 such that $rw<w$ are precisely those of the form $s_1\cdots
 s_{i-1}s_is_{i-1}\cdots s_1$, and these are distinct since such
 reflections are~naturally bijective with the $n=\ell(w)$ positive roots
 that become negative under~$w$.
\end{proof}

\begin{cor}\label{cor:Bruhat_fin_noparm}
 For any reduced word $w=s_1\cdots s_n$, the multiplication map
 \begin{gather*}
 {\cal H}_{\langle s_1\rangle}(X)\otimes\cdots\otimes {\cal H}_{\langle
 s_n\rangle}(X)
 \to
 {\cal H}_{W}(X)[\le w]
 \end{gather*}
 is surjective. Moreover, any product of rank~$1$ subalgebras is equal to
 some Bruhat interval.
\end{cor}

\begin{proof}
 It suffices to show that if $sw<w$, then
 \begin{gather*}
 {\cal H}_{\langle s\rangle}\otimes {\cal H}_W(X)[\le sw]
 \to
 {\cal H}_W(X)[\le w]
 \end{gather*}
 is surjective. The~image clearly contains the subsheaf ${\cal
 H}_W(X)[\le sw]$, so it suffices to show surjectivity to the quotient
 ${\cal H}_W(X)[\le w]/{\cal H}_W(X)[\le sw]$. This, in turn, is an
 iterated extension of invertible sheaves $\sO_X(D_{w'})$ on~$X$, and
 thus it suffices to show surjectivity for each subquotient. That is, if
 $[\le sw]\subset I\subset[\le w]$ is an order ideal and $w'$ is a maximal
 element of~$I$ not contained in~$[\le sw]$, then we need to show that the
 intersection of the image with~${\cal H}_W(X)[I]$ surjects onto~${\cal
 L}_{D_{w'}}$.

 Since $sw<w$, we~may choose a reduced word for~$w$ beginning with~$s$,
 and the subword description of the Bruhat order then tells us that $[\le
 w]=[\le sw]\cup s[\le sw]$. Since $w'\not\le sw$ and $w'\ne w$, it
 follows that $sw'<sw$. We may thus consider the composition
 \begin{gather*}
 {\cal H}_{\langle s\rangle}(X)\otimes {\cal H}_W(X)[\le sw']
 \to
 {\cal H}_W(X)[\le w']
 \to
 {\cal H}_W(X)[I].
 \end{gather*}
 The proof of the lemma shows that the composition with the ``coefficient
 of~$w'$'' map is surjective as required.

 The second claim follows immediately from the fact that for any word
 $s_1\cdots s_l$, the products of~subwords still form a Bruhat interval
 (so multiplication maps into the corresponding subsheaf), and the maximum
 of that interval can be represented by a reduced subword (so
 multiplication surjects).
\end{proof}

\begin{rem}
 In particular, the closest thing to an analogue of the braid relations in
 this setting is the fact that if $(s_is_j)^{m_{ij}}=1$, then the products
 \begin{gather*}
 {\cal H}_{\langle s_i\rangle}(X)
 {\cal H}_{\langle s_j\rangle}(X)
 \cdots
 =
 {\cal H}_{\langle s_j\rangle}(X)
 {\cal H}_{\langle s_i\rangle}(X)
 \cdots
 \end{gather*}
 (with $m_{ij}$ terms on each side) agree as subsheaves of~${\cal
 H}_W(X)$. Indeed, both sides are equal to the order ideal generated by
 the longest element of~$\langle s_i,s_j\rangle$, and thus equal ${\cal
 H}_{\langle s_i,s_j\rangle}(X)$.
\end{rem}

\begin{cor}
 The construction~${\cal H}_W(X)$ respects base change $T\to S$.
\end{cor}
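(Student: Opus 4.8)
The plan is to deduce this from the Bruhat filtration of Lemma~\ref{lem:Bruhat_fin_noparm}. Write $(-)_T$ for pullback along $X_T\times_T X_T\to X\times_S X$ (resp.\ along $X_T\to X$). Fix a maximal chain $\emptyset=I_0\subset I_1\subset\cdots\subset I_N=W$ of Bruhat order ideals, so $I_k=I_{k-1}\cup\{w_k\}$ with $w_k$ maximal in $I_k$. Lemma~\ref{lem:Bruhat_fin_noparm} then gives short exact sequences
\[
0\to {\cal H}_W(X)[I_{k-1}]\to {\cal H}_W(X)[I_k]\to \sO_X(D_{w_k})\to 0 ,
\]
exhibiting ${\cal H}_W(X)$ as an iterated extension of the invertible sheaves $\sO_X(D_{w_k})$ on $X$. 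The basic point is that everything in sight is flat over $S$ with formation commuting with base change: since $X/S$ is an abelian torsor it is flat, and each $[X^r]$ (hence each $D_w=\sum_{r:rw<w}[X^r]$ and the ambient divisor $\Delta$) is a relative effective Cartier divisor on $X/S$, so $\sO_X(D_w)$, $\sO_{[X^r]}$, and the ambient sheaf $\bigoplus_{w\in W}(1,w)_*\sO_X(\Delta)$ on $X\times_S X$ are $S$-flat, and $[X^r]_T=[X^r_T]$, $D_{w,T}=D_{w,X_T}$, etc.

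Next I would produce the natural comparison map $c_T\colon {\cal H}_W(X)_T\to {\cal H}_W(X_T)$ and check that it respects filtrations. By Corollary~\ref{cor:global_order_two_residue_conditions}, ${\cal H}_W(X)$ is the subsheaf of $\bigoplus_{w}(1,w)_*\sO_X(D_w)$ cut out by the residue conditions ``$c_w+c_{rw}$ holomorphic along $[X^r]$''; each such condition asks that a section of the line bundle $\sO_X(D_w)$ lie in a given subsheaf whose quotient is (a twist of) $\sO_{[X^r]}$, hence $S$-flat. Applying $-\otimes_{\sO_S}\sO_T$ therefore carries these data verbatim to the analogous data on $X_T\times_T X_T$, giving a natural map ${\cal H}_W(X)_T\to {\cal H}_W(X_T)$ which respects the filtrations by order ideals (the filtration step $I$ is the condition ``coefficient of $w$ vanishes for $w\notin I$'', visible on the ambient sheaf) and induces on the $k$th subquotient the canonical isomorphism $\sO_X(D_{w_k})_T\cong\sO_{X_T}(D_{w_k})$ — here using that $w_k$ is still Bruhat-maximal in $I_k$, so that Lemma~\ref{lem:Bruhat_fin_noparm} applied over $T$ supplies the matching short exact sequence for ${\cal H}_W(X_T)$.

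Finally I would conclude by a d\'evissage along the chain. Base-changing the $k$th sequence above and using $S$-flatness of $\sO_X(D_{w_k})$ (so that $\Tor_1^{\sO_S}(\sO_X(D_{w_k}),\sO_T)=0$) yields a short exact sequence $0\to {\cal H}_W(X)[I_{k-1}]_T\to {\cal H}_W(X)[I_k]_T\to\sO_{X_T}(D_{w_k})\to 0$, and $c_T$ maps it to the short exact sequence $0\to {\cal H}_W(X_T)[I_{k-1}]\to {\cal H}_W(X_T)[I_k]\to\sO_{X_T}(D_{w_k})\to 0$ of Lemma~\ref{lem:Bruhat_fin_noparm} over $T$, inducing the identity on the right-hand terms. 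The base case $I_0=\emptyset$ is the zero sheaf, so induction on $k$ together with the five lemma shows that each ${\cal H}_W(X)[I_k]_T\to {\cal H}_W(X_T)[I_k]$ is an isomorphism; taking $k=N$ gives ${\cal H}_W(X)_T\cong {\cal H}_W(X_T)$, which is the assertion.

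I expect the only real subtlety to lie in the middle step: checking that ``preservation of holomorphy'', as codified by the residue description, is cut out by morphisms of $S$-flat sheaves, so that $c_T$ exists and is filtration-compatible. This rests on all the reflection hypersurfaces $[X^r]$ being relative Cartier divisors on $X/S$, which requires a little care in characteristics dividing $|W|$ — where some $[X^r]$ may be nonreduced (e.g.\ $4[0]$ on a supersingular curve in characteristic $2$) — but remains true; once this is granted, the d\'evissage is routine and everything else reduces to the flatness of line bundles on $X/S$.
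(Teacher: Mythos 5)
Your proof is essentially sound and, like the paper's, reduces the question to a d\'evissage along the Bruhat filtration of Lemma~\ref{lem:Bruhat_fin_noparm}: once one has a natural comparison map $c_T$ that respects the filtration, $S$-flatness of the line bundles $\sO_X(D_w)$ kills the relevant $\Tor_1$ terms and the five lemma finishes the argument. Where you differ from the paper is in \emph{how} you manufacture $c_T$. The paper observes that base change is already correct for each rank-$1$ subalgebra ${\cal H}_{\langle s_i\rangle}(X)$ (from the explicit description, using $\pi_1^*\sO_X([X^{s_i}])\cong \sO_{X_T}([X_T^{s_i}])$), and since the rank-$1$ subalgebras generate ${\cal H}_W(X)$ inside the meromorphic twisted group algebra (which visibly commutes with base change), the map on generators suffices. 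You instead use the residue-condition description to exhibit ${\cal H}_W(X)$ as the kernel of a morphism of $S$-flat sheaves, which likewise yields the comparison map $K_T\to\ker(\phi_T)$. Both routes are legitimate, and each buys a slightly different piece of insight: the paper's emphasizes that everything is controlled by the rank-$1$ data, yours that the cutting-out conditions are themselves flat.

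Two small corrections. First, you describe ${\cal H}_W(X)$ as a subsheaf of $\bigoplus_w (1,w)_*\sO_X(D_w)$, but the bound $D_w=\sum_{r:rw<w}[X^r]$ is only available for the coefficient of $w$ once you restrict to an order ideal in which $w$ is maximal --- that is precisely the content of Lemma~\ref{lem:Bruhat_fin_noparm}. For the full algebra, Corollary~\ref{cor:global_order_two_residue_conditions} gives the uniform bound $\Delta=\sum_{r\in R(W)}[X^r]$, so the ambient flat sheaf should be $\bigoplus_w (1,w)_*\sO_X(\Delta)$; this does not affect your argument (the kernel-of-a-flat-morphism structure survives), but the cited bound is off. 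Second, invoking Corollary~\ref{cor:global_order_two_residue_conditions} requires the mutual transversality of the reflection hypersurfaces $[X^r]$, which you take for granted; it is true for a Coxeter group acting by reflections on an abelian torsor, but deserves at least a word since the corollary is stated for a general tuple of finite morphisms.
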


\begin{proof}
 Let $\pi_1\colon X\times_S T\to X$ be the natural projection; we need to show
 that $\pi_1^*{\cal H}_W(X)\cong {\cal H}_W(X\times_S T)$. In~the rank~1
 case, this is immediate from the explicit description and the fact that
 $\pi_1^*\big(\sO_X([X^s])\big)\cong \sO_X\big([(X\times_S T)^s]\big)$. Since the rank~1
 subalgebras generate the full algebra, this induces a morphism
 $\pi_1^*{\cal H}_W(X)\to {\cal H}_W(X\times_S T)$. (Normally one would
 need to check relations, but this is simply the restriction of the
 corresponding isomorphism for the algebra of meromorphic operators such
 that the common polar divisor does not contain any fiber; thus generators
 suffice.)

 It remains only to show that this morphism is an isomorphism, but this
 follows from the existence of compatible filtrations (i.e., coming from a
 chain of Bruhat order ideals) such that the induced maps on subquotients
 are isomorphisms.
\end{proof}

We can also give an alternate description of the adjoint involution.
Let $w_0$ be the longest element of~$W$.

\begin{prop}
 There is a contravariant isomorphism ${\cal H}_W(X)^{\rm op}\cong
 \sO_X(-D_{w_0})\otimes {\cal H}_W(X)\otimes \sO_X(D_{w_0})$.
\end{prop}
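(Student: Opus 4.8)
The plan is to reduce the statement to the contravariant isomorphism ${\cal H}_W(X)^{\text{op}}\cong {\cal H}_{W;\omega_X}(X)$ already established, combined with one instance of the ``twisting both sides by a line bundle'' isomorphism discussed above. The first step is to identify $D_{w_0}$ concretely: since $w_0$ is the longest element of $W$, the number of reflections $r$ with $rw_0<w_0$ equals $\ell(w_0)=|R(W)|$, so in fact $rw_0<w_0$ for \emph{every} reflection, and hence $D_{w_0}=\sum_{r\in R(W)}[X^r]$ is the full (reduced) reflection divisor. In particular $D_{w_0}$ is $W$-invariant as a Cartier divisor, so $\sO_X(D_{w_0})$ carries a canonical $W$-equivariant structure, conjugation of ${\cal H}_W(X)$ by it makes sense, and in the notation above the right-hand side of the Proposition is precisely the sheaf of algebras ${\cal H}_{W;\sO_X(-D_{w_0})}(X)$ on $X/W$.

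Next I would record the mild generalization of the line-bundle-twisting isomorphism: if ${\cal L}$ is a $W$-equivariant invertible sheaf on $X$ that descends in codimension $1$, then for any ${\cal L}_1,{\cal L}_2$ there is a natural isomorphism ${\cal H}_{W;{\cal L}_1\otimes{\cal L},\,{\cal L}_2\otimes{\cal L}}(X)\cong {\cal H}_{W;{\cal L}_1,{\cal L}_2}(X)$, compatible with the algebra structures when ${\cal L}_1={\cal L}_2$. This is proved exactly as the special case ${\cal L}_1={\cal L}_2=\sO_X$ treated above: representing all the line bundles by Cartier divisors realizes both sides as subsheaves of the meromorphic twisted group algebra $k(X)[W]$, and since ${\cal L}$ is equivariantly trivial near the generic point of every reflection hypersurface, the local conditions cutting out the two subsheaves there coincide (away from codimension $2$, where there is nothing to impose). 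Applying this with ${\cal L}_1={\cal L}_2=\sO_X(-D_{w_0})$ and ${\cal L}=\omega_X\otimes\sO_X(D_{w_0})$ gives ${\cal H}_{W;\sO_X(-D_{w_0})}(X)\cong {\cal H}_{W;\omega_X}(X)$; composing this (covariant) isomorphism with the earlier contravariant isomorphism ${\cal H}_W(X)^{\text{op}}\cong {\cal H}_{W;\omega_X}(X)$ yields the claim.

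The one point genuinely requiring care — and the main obstacle — is the verification that ${\cal L}:=\omega_X\otimes\sO_X(D_{w_0})$, with its natural $W$-equivariant structure, descends in codimension $1$; this is a local computation at the generic point of each reflection hypersurface $X^r$, whose inertia group is $\langle r\rangle$. There $D_{w_0}$ has a single component $[X^r]$ with multiplicity one, and $\sO_X([X^r])$ has $r$ acting by $-1$ on its fibre (local generator $t^{-1}$ with $r^*t=-t$), while $\omega_X$ is locally trivial but carries the sign action of $W$, so $r$ also acts by $-1$ on the fibre of $\omega_X$ there. Thus $r$ acts trivially on the fibre of ${\cal L}$ at the generic point of $X^r$ and the valuation of ${\cal L}$ along $X^r$ is odd, which is exactly the condition for the $\langle r\rangle$-equivariant bundle ${\cal L}$ to descend near that point (in the tamely ramified case; when $X^r$ is inseparable over the base, which can only occur in characteristic $2$, every $\langle r\rangle$-equivariant bundle descends there and there is nothing to check). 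As these exhaust the codimension-$1$ loci with nontrivial inertia, ${\cal L}$ descends in codimension $1$. As a sanity check, for $n=1$ one has $w_0=s$ and $D_{w_0}=[C^s]$, and the statement specializes to the description of the adjoint of ${\cal H}_{A_1}(C)$ noted in the rank-$1$ discussion.
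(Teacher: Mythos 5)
Your proposal takes a genuinely different route from the paper's own proof: the paper verifies directly that the na\"{\i}ve adjoint $\sum_w c_w w\mapsto\sum_w w^{-1}c_w$, restricted to each rank-$1$ subalgebra, lands in $\sO_X(-D_{w_0})\otimes{\cal H}_W(X)\otimes\sO_X(D_{w_0})$, by conjugating an element $f_0+(s_i-1)f_1$ by the local equation $h$ of $D_{w_0}$ and checking that $1+h/{}^{s_i}h$ vanishes on $[X^{s_i}]$. You instead factor through the already-established ${\cal H}_W(X)^{\text{op}}\cong{\cal H}_{W;\omega_X}(X)$ and the twist-invariance isomorphism, reducing everything to a codimension-one descent statement for ${\cal L}=\omega_X\otimes\sO_X(D_{w_0})$. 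This is a clean reduction (and indeed the paper records exactly this descent claim in a remark immediately following the Proposition), and the mild generalization of the twist isomorphism to ${\cal L}_1\ne\sO_X$ that you record is straightforward and needed. The two routes produce the same isomorphism, since the twist isomorphism is the identity on meromorphic operators.

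There is, however, a gap in your verification that ${\cal L}$ descends in codimension $1$. You assert that $D_{w_0}$ has multiplicity exactly one along $[X^r]$, that $r$ acts by $-1$ on the fibre of $\sO_X([X^r])$ and on the fibre of $\omega_X$, and you split the analysis into ``tamely ramified'' versus ``$X^r$ inseparable over $S$.'' This misses the case of characteristic $2$ with $X^r$ separable over $S$, where the ramification is wild but the reflection hypersurface is separable, and there several of your local assertions fail. In characteristic $2$ the involution $r$ acts unipotently on the cotangent space, so $\det(r|_{T^*})=1$ and $\omega_X$ carries the \emph{trivial} equivariant structure, not the sign twist; simultaneously, the fixed subscheme cut out by $rx=x$ is non-reduced (already in the rank-$1$ elliptic case $[X^s]$ is the divisor $E[2]$, which in characteristic $2$ is supported at the $2$-torsion points with \emph{even} multiplicity), so the valuation of $D_{w_0}$ along the reduced hypersurface $X^r$ is even, not one. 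The correct statement in characteristic $2$ (as the paper's remark indicates) is that one must have even valuation with untwisted $\omega_X$, and this holds precisely because $[X^r]$ is non-reduced there. Your case split should therefore be: characteristic $\ne 2$ (odd valuation, sign-twisted $\omega_X$); characteristic $2$ with $X^r$ separable (even valuation, trivial $\omega_X$-action); and characteristic $2$ with $X^r$ inseparable (no condition).
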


\begin{proof}
 It suffices to show that the na\"{\i}ve adjoint $\sum_w c_w w\mapsto \sum_w
 w^{-1} c_w$ on the meromorphic twisted group algebra restricts to an
 isomorphism as claimed. Since this is an involution, it~redu\-ces to
 showing the analogous claim for each rank~1 subalgebra. Let $U$ be an
 open subset on~which the effective Cartier divisor $D_{w_0}$ is cut out
 by an equation~$h=0$. We thus need to show (using the two descriptions
 of the rank~1 Hecke algebra and taking the adjoint on the left)
 \begin{gather*}
 \Gamma(U;\sO_X) + (s_i-1) \Gamma\big(U;\sO_X([C^s])\big)
 \subset
 h \big(\Gamma(U;\sO_X) + (s_i+1) \Gamma(U;{\cal L}[C^s])\big) h^{-1}.
 \end{gather*}
 Given an instance $f_0+(s_i-1)f_1$ on the left, conjugating by $h$ gives
 \begin{gather*}
 f_0-(1+(h/{}^{s_i}h))f_1
 +
 (s_i+1) (h/{}^{s_i}h) f_1.
 \end{gather*}
 Since ${}^{s_i}D_{w_0}=D_{w_0}$, we~find that $h/{}^{s_i}h$ is a unit,
 and local considerations near $[X^{s_i}]$ tell us that
 $1+(h/{}^{s_i}h)$ vanishes on~$[X^{s_i}]$.
\end{proof}

\begin{rems}
 We could also show that $\omega_X\otimes \sO_X(D_{w_0})$ descends in
 codimension~1. The~divi\-sor~$D_{w_0}$ is certainly invariant under every
 reflection, and thus it remains only to verify the conditions along
 reflection hypersurfaces. In~characteristic not 2, $\omega_X$ is
 equivariantly isomorphic to the twist of~$\sO_X$ by the sign character,
 and thus the condition is that the divisor must have odd valuation along
 the reflection hypersurfaces, while in characteristic~2, there is no need
 to twist, and the valuations of separable reflection hypersurfaces must
 be even. In~either case, the condition is automatically satisfied.
\end{rems}

\begin{rems}
 It is worth noting that this operation is triangular, in the sense that
 the image of the subsheaf corresponding to an order ideal is always the
 subsheaf corresponding to an order ideal. This follows immediately from
 the fact that $w\mapsto w^{-1}$ is an order-preserving automorphism of
 the Bruhat poset.
\end{rems}

The proof of Theorem~\ref{thm:W-invariants} has the following consequence
for our algebras. Here and below, by the root kernel of~$X$, we~mean the
root kernel of the corresponding abelian scheme $A$.

\begin{prop}
 Let $X/S$ be a flat family of abelian torsors equipped with a faithful
 action by ref\-lections of the finite Weyl group $W$, with~$\dim(X/S)=\rank(W)$. If~the root kernel of~$X$ is diagonalizable on~$S$, then $S$ may be covered by open subsets on which ${\cal H}_W(X)$ has
 a symmetric idempotent: an idempotent global section which on each fiber
 has image $\Gamma(X_x;\sO_X)^W$.
\end{prop}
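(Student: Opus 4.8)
The plan is to write the idempotent down explicitly as the ``Weyl antisymmetrization'' operator attached to the auxiliary section $g$ produced fibrewise in the proof of Theorem~\ref{thm:W-invariants}, and then to check that this operator is actually a global section of ${\cal H}_W(X)$ over $X/W$ and is compatible with base change. Let $w_0$ be the longest element of $W$, so that $D_{w_0}=\sum_{r\in R(W)}[X^r]$ and ${\cal L}_\Delta=\sO_X(D_{w_0})$; equip ${\cal L}_\Delta$ with the equivariant structure used in the discussion preceding Theorem~\ref{thm:W-invariants} (pinned down along the reflection hypersurfaces, and defined on the torsor after the mild base changes used there), and let $\theta\in\Gamma(X;{\cal L}_\Delta)$ be the tautological section with $\div(\theta)=D_{w_0}$, which transforms under $W$ by the sign character $\sigma$ (the elliptic analogue of the product of the positive roots). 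The key input is the antisymmetrization lemma preceding Theorem~\ref{thm:W-invariants}, whose proof is local along each $[X^r]$: for any $W$-invariant open $V$ and any section $h\in\Gamma(V;{\cal L}_\Delta)$, the section $\sum_{w\in W}\sigma(w)\,w(h)$ vanishes along $D_{w_0}\cap V$, so that $\sum_w\sigma(w)\,w(h)/\theta$ is a regular function on $V$, and it is $W$-invariant since both $\sum_w\sigma(w)w(h)$ and $\theta$ are $\sigma$-semi-invariant.

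First I would produce the section $g$ with nonvanishing antisymmetrization. Fix $s_0\in S$. Passing to the geometric fibre, $X_{s_0}$ becomes an abelian variety carrying a reflection action, and the case analysis in the proof of Theorem~\ref{thm:W-invariants} --- the theta-function identities for the classical types together with Lemmas~\ref{lem:not_prime_power} and~\ref{lem:G-mod-factor} to handle the exceptional types and supersingular fibres --- shows, under the diagonalizability hypothesis on the root kernel, that the antisymmetrizer $\Gamma(X_{s_0};{\cal L}_\Delta)\to\Gamma(X_{s_0};{\cal L}_\Delta(-D_{w_0}))$ is nonzero; whether $X_{s_0}$ is a torsor is irrelevant, since nonvanishing of a $\kappa(s_0)$-linear map can be checked after geometric base change. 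Choose $g_0$ with $\sum_w\sigma(w)w(g_0)\neq 0$, spread it out to a section $g$ of ${\cal L}_\Delta$ over $X\times_S S'$ for some affine $S'\ni s_0$, and note that $u:=\sum_w\sigma(w)w(g)/\theta$ lies in $\Gamma(X;\sO_X)^W=\Gamma(S';\sO_{S'})$ by properness and is nonzero at $s_0$; shrinking $S'$, we may assume $u$ is a unit.

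Then I set
\[
  e\ :=\ \frac1u\sum_{w\in W}\sigma(w)\,\bigl(w(g)/\theta\bigr)\,w\ \in\ k(X)[W],
\]
so that $e(f)=\tfrac1u\sum_w\sigma(w)\,w(gf)/\theta$ for a local section $f$ of $\sO_X$ on a $W$-invariant open $V$. Each coefficient $w(g)/\theta$ has polar divisor bounded by $D_{w_0}$, i.e.\ supported on reflection hypersurfaces, and by the first paragraph $\sum_w\sigma(w)w(gf)/\theta$ is a regular function on $V$, so $e(f)\in\Gamma(V;\sO_X)$; hence $e$ preserves holomorphy on every $W$-invariant open and, by the characterization of ${\cal H}_W(X)$ as the holomorphy-preserving part of the meromorphic twisted group algebra, $e$ is a global section of ${\cal H}_W(X)$ over $X/W$. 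It is $k(X/W)$-linear (a $W$-invariant scalar pulls out of the sum) and idempotent: $e(f)$ is $W$-invariant for every $f$, while for $W$-invariant $f$ one has $w(gf)=f\cdot w(g)$, so $e(f)=f$; hence $e^2=e$, and $\im(e)$ is exactly the invariant subsheaf $(\pi_*\sO_X)^W=\sO_{X/W}$ of $\pi_*\sO_X$. Since ${\cal H}_W(X)$ respects base change and all of $g,\theta,D_{w_0},u$ restrict to fibres, $e|_{X_x}$ is, for each $x\in S'$, the corresponding idempotent of ${\cal H}_W(X_x)$ whose image is the invariant subsheaf of $\pi_{x\,*}\sO_{X_x}$ --- the copy of $\sO_{X_x/W}$ abbreviated $\Gamma(X_x;\sO_X)^W$ in the statement. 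The open sets $S'$ cover $S$.

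The real obstacle is the one already dealt with in Theorem~\ref{thm:W-invariants}: producing, fibre by fibre, a section of ${\cal L}_\Delta$ whose antisymmetrization does not vanish, which is where diagonalizability of the root kernel and the classification of finite Weyl groups enter (including the passage to a smaller reflection group via Lemma~\ref{lem:not_prime_power} on certain supersingular fibres). Everything after that is formal: the antisymmetrization lemma guarantees that dividing by $\theta$ lands back in $\sO_X$, so that $e$ is an \emph{honest} element of ${\cal H}_W(X)$ rather than merely of the meromorphic twisted group algebra, and the only genuine care needed is the shrinking of $S$ that makes the scalar $u$ invertible.
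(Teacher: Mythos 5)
Your proof is correct and follows the paper's own argument essentially step for step: construct a section of $\sO_X(D_{w_0})$ with nonvanishing (anti)symmetrization using the fiberwise existence established in Theorem~\ref{thm:W-invariants}, spread it to a neighborhood where the scalar obtained by symmetrizing is a unit, divide, and apply the antisymmetrization lemma to see that the resulting operator preserves holomorphy on invariant opens. The main differences are bookkeeping: you trivialize $\sO_X(D_{w_0})$ via the tautological $\sigma$-semi-invariant section $\theta$ and carry the sign character explicitly, where the paper absorbs both into the equivariant structure on the line bundle.
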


\begin{proof}
 For any fiber $x$, choose a global section~$h$ of~$\sO_X(D_{w_0})$ with
 nonzero symmetrization (ref\-lections negate the fiber over $0$, so this
 corresponds to the antisymmetrization of Theorem~\ref{thm:W-invariants}),
 extend it to a neighborhood of~$x\in S$, and observe that the
 antisymmetrization will remain nonzero in a possibly smaller, but
 nonempty, neighborhood. (By the proof of Theorem~\ref{thm:W-invariants},
 this is guaranteed to exist for any geometric fiber (over which we can
 equivariantly trivialize the torsor), but the existence of an element
 with nontrivial antisymmetrization in an extension field implies the
 existence of such an element over the ground field.) Dividing by the
 symmetrization gives a function~$f$ with poles at most $\Delta$ such
 that $\sum_{w\in W} {}^w f=1$. The~proof of Theorem~\ref{thm:W-invariants} shows that the idempotent operator $\sum_{w\in W}
 w f$ preserves the space of functions holomorphic on any given invariant
 open subset of~$X$, and thus is a section of~${\cal H}_W(X)$ over the
 given open subset of~$S$.
\end{proof}

\begin{rem}
 Of course, we~can always ensure the $\dim=\rank$ condition holds by
 taking the parameter space to be $X/A_W$. This condition is needed so
 that $\sum_{r\in R(W)} [X^r]$ is ample, allowing Theorem~\ref{thm:W-invariants} to be applied. For~instance, in the case of~$A_1$
 acting on~$E^2$ by swapping the factors, every global section of~${\cal
 H}_{A_1}(E^2)$ has holomorphic coefficients, and thus in characteristic~2 there is no symmetric idempotent. The~same argument gives a weaker
 statement without the dimension condition: for any ample divisor $D$ on~$X/W$, there is (locally on~$S$) a symmetric idempotent in~$\Gamma(X/W;{\cal H}_W(X)\otimes \sO_{X/W}(D))$.
\end{rem}

Since ${\cal H}_W(X)$ contains the twisted group algebra of na\"{\i}vely
holomorphic operators, there is in particular an action of the group on any
${\cal H}_W(X)$-module, and thus for any such module~$M$ which is
(quasi)coherent as an $\sO_{X/W}$-module, we~could consider the
$W$-invariant subsheaf of~$M$. This as it stands may not be well-behaved,
say for torsion sheaves supported on the reflection hypersurfaces. To~obtain a better notion, we~note that if we view $\sO_X$ as a module over
${\cal H}_W(X)$, then there is a surjective ${\cal H}_W(X)$-module morphism
$\sum_w c_w w\mapsto \sum_w c_w$ from ${\cal H}_W(X)$ to~$\sO_X$, with
kernel containing the kernel of the natural morphism $\sO_X[W]\to\sO_X$.
Thus for modules which are torsion-free as $\sO_X$-modules, the sheaf
$\sHom_{{\cal H}_W(X)}(\sO_X,M)$ will agree with the sheaf of~$W$-invariant
 sections of~$M$. With this in mind, we~define $M^W$ as the image of~$\sHom_{{\cal H}_W(X)}(\sO_X,M)$ under the ``evaluate at $1$'' morphism.

\begin{cor}
 If the root kernel of~$X$ is diagonalizable, then the functor ${-}^W$ on~${\cal H}_W(X)$-modu\-les is exact and commutes with base change.
\end{cor}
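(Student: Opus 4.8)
The plan is to reduce everything, locally on $S$, to multiplication by an idempotent, and then invoke the elementary facts that such a functor is exact and compatible with base change. First I would invoke the preceding Proposition: the diagonalizability hypothesis provides an open cover $\{U\}$ of $S$ over each member of which ${\cal H}_W(X)$ has an idempotent global section $e$ which, as an operator, is the $W$-symmetrizer $e=\sum_{w\in W} wf$ with $\sum_{w\in W}{}^wf=1$; in particular $e(g)$ is $W$-invariant for every local section $g$ of any ${\cal H}_W(X)$-module. Since both exactness of $-^W$ and its compatibility with base change may be checked locally on $S$, it suffices to establish them over a fixed such $U$; write ${\cal H}={\cal H}_W(X)|_{U/W}$.

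Next I would analyze the augmentation map. Let $\pi:{\cal H}\to\sO_X$ be the left ${\cal H}$-module map $\sum_w c_w w\mapsto\sum_w c_w$; equivalently $\pi(a)=a(1)$, the operator $a$ applied to the constant function $1$, so $\pi$ is surjective with kernel $I$, and $\sO_X\cong{\cal H}/I$ as a cyclic ${\cal H}$-module generated by $1$. By definition $M^W$ is the image of $\sHom_{\cal H}(\sO_X,M)=\{m\in M:Im=0\}$ under ``evaluate at $1$'', which is injective since $1$ generates $\sO_X$; hence $M^W$ is canonically $\sHom_{\cal H}(\sO_X,M)$, and $-^W$ is in particular left exact. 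The crucial observation is that $a\in I$ exactly when the operator $a$ annihilates every $W$-invariant function, since $(\sum_w c_w w)(g)=g\sum_w c_w$ when $g$ is $W$-invariant. As $e$ takes values in $W$-invariant functions, it follows that $Ie=0$; thus for $a\in I$ we have $a=ae+a(1-e)=a(1-e)\in{\cal H}(1-e)$, while $\pi(1-e)=1-e(1)=0$ gives ${\cal H}(1-e)\subseteq I$, so $I={\cal H}(1-e)$. Hence ${\cal H}={\cal H}e\oplus I$ as left ${\cal H}$-modules, and $\sO_X\cong{\cal H}/I\cong{\cal H}e$, the isomorphism sending $1$ to $e$.

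It then follows formally that for any ${\cal H}$-module $M$,
\[
M^W\cong\sHom_{\cal H}(\sO_X,M)\cong\sHom_{\cal H}({\cal H}e,M)\cong eM,
\]
with ``evaluate at $1$'' corresponding to the map $\phi\mapsto\phi(e)$ and therefore identifying $M^W$ with the subsheaf $eM\subseteq M$. Since $e$ is an idempotent and $\sO_{X/W}$ is central in ${\cal H}$, we obtain $M=eM\oplus(1-e)M$ as $\sO_{X/W}$-modules, functorially in $M$; thus $M\mapsto eM=M^W$ is a direct summand of the identity functor, hence exact. This gives exactness over $U$, and so globally. For base change $g:T\to S$ I would use the already-proved fact that ${\cal H}_W(X)$ respects base change: $f$ and the identity $\sum_w{}^wf=1$ pull back, so $e$ pulls back to the symmetrizer idempotent $e_T$ of ${\cal H}_W(X_T)$ over $U_T$, and pullback of $\sO_{X/W}$-modules carries $eM$ to the $e_T$-summand of the pullback of $M$; hence $M^W$ pulls back to the $W$-invariants of the pullback of $M$ over $U_T$, and, the question being local on $S$, in general. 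The one step carrying real content is the identification $I={\cal H}(1-e)$, i.e.\ $Ie=0$ — that the symmetrizer really does split off a complement to the augmentation ideal — and this is immediate once $I$ is recognized as the annihilator of the $W$-invariant functions; everything else is formal, given the local idempotent furnished by the previous Proposition.
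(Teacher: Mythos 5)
Your argument is correct and is essentially the paper's proof: both use the locally available symmetric idempotent $e=(\sum_w w)h$ to split the augmentation ${\cal H}_W(X)\to\sO_X$, realizing $\sO_X$ as a local direct summand (hence locally projective) and identifying $M^W=eM$ as a direct summand of $M$, from which exactness and compatibility with base change follow immediately. Your extra detail (spelling out $Ie=0$, the Peirce decomposition $I={\cal H}(1-e)$, and $\sHom_{{\cal H}}({\cal H}e,M)\cong eM$) is just an explicit rendering of the same splitting.
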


\begin{proof}
 If ${\cal H}_W(X)$ has an idempotent of the form $\big(\sum_w w\big)h$, then the
 map ${\cal H}_W(X)\to \sO_X$ $\big($taking $\sum_w c_w w$ to~$\sum c_w\big)$ splits
 as $f\mapsto f\big(\sum_w w\big)h$. Since such idempotents exist locally on~$S'=X/A_W$, it follows that $\sO_X$ is locally projective, and thus the
 corresponding sheaf Hom functor is exact. Moreover, it follows that
 $M^W=\big(\sum_w w\big)h M$, and this operation clearly commutes with base
 change.
\end{proof}

Of course, as~it stands, the algebra ${\cal H}_W(X)$ does not bear a
terribly strong resemblance to the more familiar Hecke algebras, due to the
lack of any parameters associated to the roots. Classically, one generally
has one parameter for each orbit of roots, but in the classical $C_n$ case
(viewing the affine Hecke algebra as being specified by an action of the
finite Hecke algebra on the space of Laurent polynomials), one effectively
has two parameters associated to the endpoint of the Dynkin diagram. This
is traditionally interpreted as arising from the nonreduced root system
$BC_n$, in which the endpoint is associated to two orbits of roots
(differing by a factor of~2). If~one looks at the actual action on Laurent
polynomials, however, one finds that there is more symmetry in the
parameters than is suggested by this interpretation, making it far more
natural to associate an unordered pair of parameters to the given simple
reflection. In~fact, as~we mentioned above, for~our application, we~will
need a place to put an unbounded number of parameters; since there is
already an example in which one can assign two parameters to a~root without
breaking things, this suggests that we should be able to assign arbitrarily
many parameters to each orbit of roots.

Let us first consider the case of rank~1, so that~$X$ is a flat family
$C/S$ of hyperelliptic curves of genus~1. By consideration of the
classical $A_1$ and $C_1$ cases, we~are led to consider the following
algebra.

\begin{defn} Let $C/S$ be a flat family of hyperelliptic curves of genus 1
 on which $A_1=\langle s\rangle$ acts as the marked involution, and let
 $T$ be an effective Cartier divisor on~$C$ not containing any fiber of~$C$ over $S$. The~{\em rank~1 Hecke algebra} ${\cal H}_{A_1,T}(C)$ is the
 subsheaf of~${\cal H}_{A_1}(C)$ such that the coefficient of~$s$ in a
 local section of~${\cal H}_{A_1,T}(C)$ is a local section of~$\sO_C([C^s]-T)$.
\end{defn}

To see that this is an algebra, we~note that the local sections of~${\cal
 H}_{A_1,T}(C)$ are precisely the operators of the form $f_0+f_1(s-1)$
with $f_0\in \Gamma(U;\sO_C)$, $f_1\in \Gamma(U;\sO_C([C^s]-T)$, or
equivalently the operators of the form $g_0+(s+1){}^s g_1$ with~$g_0\in
\Gamma(U;\sO_C)$, $g_1\in \Gamma(U;\sO_C([C^s]-T))$. Thus the general
product of two local sections can be expressed as
\begin{gather*}
(f_0+f_1(s-1))(g_0+(s+1){}^sg_1)=
f_0g_0 + f_1({}^sg_0-g_0) + f_0(g_1+{}^sg_1)
\\ \hphantom{(f_0+f_1(s-1))(g_0+(s+1){}^sg_1)=}
{}+(f_1 {}^s g_0 + f_0 g_1)(s-1),
\end{gather*}
so that the coefficient of~$s$ in the product is again a section of~$\sO_C([C^s]-T)$.

There is an alternate description which makes the algebra property clearer,
at the cost of a~mild loss of generality.

\begin{prop}\label{prop:generic_rank_one_as_intersection}
 The algebra ${\cal H}_{A_1,T}(C)$ is contained in the subalgebra of~${\cal H}_{A_1}(C)$ which preserves the subsheaf $\sO_C(-T)\subset
 \sO_C$. If~the divisors $T$ and ${}^sT$ have no component in common,
 then this subalgebra is equal to~${\cal H}_{A_1,T}(C)$.
\end{prop}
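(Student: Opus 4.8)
The plan is to verify the two assertions separately, both by reduction to the local rank-1 computation already carried out in Corollary \ref{cor:local_order_two_residue_conditions} (and its remark) together with the explicit description of ${\cal H}_{A_1}(C)$ in terms of operators $f_0+f_1(s-1)$. For the first, containment, claim: I would take a local section $f_0+f_1(s-1)$ of ${\cal H}_{A_1,T}(C)$, so $f_0\in\Gamma(U;\sO_C)$ and $f_1\in\Gamma(U;\sO_C([C^s]-T))$, and apply it to a local section $g$ of $\sO_C(-T)$. Writing the result as $f_0 g + f_1({}^sg-g)$, I note $f_0g\in\Gamma(\sO_C(-T))$ trivially, while $f_1({}^sg-g)$ has polar divisor bounded by $[C^s]-T$ plus the larger of $\div(g)$ and $\div({}^sg)$; since ${}^sg$ is a section of $\sO_C(-{}^sT)$ and $g$ of $\sO_C(-T)$, the difference ${}^sg-g$ lies in $\sO_C(-T\cap{}^sT)$, and one checks that the product lands in $\sO_C([C^s]-T)$... but in fact we want it in $\sO_C(-T)$, so the relevant observation is that along any component of $T$ not meeting $[C^s]$ the factor $f_1$ is holomorphic and $g$ already vanishes, while along $[C^s]$ the function ${}^sg-g$ vanishes to order at least $\ge 1$ (it is antisymmetric), which combined with the pole of $f_1$ of order $\le 1$ there gives holomorphy, so the only possible poles of the output are along $[C^s]$, where antisymmetry kills them — hence the output is a section of $\sO_C(-T)$. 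This is a purely local check on each component of $T+[C^s]$.

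For the reverse inclusion under the hypothesis that $T$ and ${}^sT$ share no component: let ${\cal A}\subset{\cal H}_{A_1}(C)$ be the subalgebra preserving $\sO_C(-T)$, and let $f_0+f_1(s-1)$ be a local section of ${\cal A}$. Applying it to the structure sheaf's unit section $1\in\Gamma(\sO_C)$ is not directly available (that only shows $f_0\in\Gamma(\sO_C(-T))$ if $1$ were in $\sO_C(-T)$, which it is not), so instead I would argue component-by-component along the support of $T$: fix an irreducible component $D$ of $T$ with multiplicity $m$, pick a local equation $h$ for $T$ near the generic point of $D$, and apply the operator to $h$ itself (a section of $\sO_C(-T)$ locally). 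The output $f_0 h + f_1({}^sh - h)$ must lie in $\sO_C(-T)$, i.e. must be divisible by $h$; since $D$ is not a component of ${}^sT$ the function ${}^sh$ is a unit along $D$, so ${}^sh-h$ is also a unit along $D$, whence $f_1$ must vanish to order at least $m$ along $D$. Running this over all components of $T$ gives $f_1\in\Gamma(\sO_C([C^s]-T))$ (the $[C^s]$ part coming from membership in ${\cal H}_{A_1}(C)$ to begin with), which is precisely the defining condition of ${\cal H}_{A_1,T}(C)$; and $f_0\in\Gamma(\sO_C)$ is automatic. Hence ${\cal A}\subset{\cal H}_{A_1,T}(C)$, and with the first part this is equality.

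The step I expect to be the main obstacle is the bookkeeping in the reverse inclusion at components of $T$ that happen to meet $[C^s]$: there, both $h$ might fail to be a clean uniformizer and ${}^sh-h$ might have extra vanishing coming from the reflection hypersurface, so one must be careful that the divisibility-by-$h$ conclusion still forces the right order of vanishing on $f_1$ rather than being absorbed by the $[C^s]$ contribution. The hypothesis that $T$ and ${}^sT$ share no component is exactly what rules out the degenerate behaviour where ${}^sh-h$ could vanish along $D$ and thus impose no condition on $f_1$; without it the reverse inclusion genuinely fails (the operator can have a pole along a self-conjugate component of $T$ that is cancelled when acting on $\sO_C(-T)$ by the vanishing of ${}^sg-g$ there, yet is not in ${\cal H}_{A_1,T}(C)$). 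I would state this degenerate-case caveat explicitly rather than attempt to push the argument further, since the proposition already flags the "mild loss of generality."
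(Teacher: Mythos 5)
Your first-part argument has a genuine gap (and a small misstatement). You write that along a component $D$ of $T$ not in $[C^s]$ ``the factor $f_1$ is holomorphic and $g$ already vanishes'' and conclude the output lies in $\sO_C(-T)$; but you need the output to \emph{vanish to the full $T$-multiplicity} along $D$, and ${}^sg-g$ contributes nothing there (it is a section of $\sO_C(-T\cap{}^sT)$, which is trivial along $D$). The statement is still true because $f_1\in\sO_C([C^s]-T)$ in fact \emph{vanishes} to order $\ge m_D(T)$ along such $D$, not merely ``is holomorphic'' --- so the conclusion is right but the stated reason is wrong. More seriously, you do not address components of $T$ that \emph{do} lie in $[C^s]$, which is exactly the case the first claim is stated to cover (the reverse inclusion's hypothesis rules it out, but the containment is asserted unconditionally). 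There the local bookkeeping genuinely requires tracking how the vanishing of $h/{}^sh-1$ interacts with the pole of $f_1$, and it is not the one-line check you suggest.

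The paper avoids all of this by passing to the alternate presentation $f_0+(s+1)f_1\,{}^s h$ (with $f_0\in\sO_C$, $f_1\in\sO_C([C^s])$, and $h$ a local equation for $T$) and conjugating by $h$: the identity $h^{-1}(f_0+(s+1)f_1{}^sh)h=f_0+{}^sh(s+1)f_1$ exhibits the conjugate as a product of manifest local sections of ${\cal H}_{A_1}(C)$, so preservation of $h\cdot\sO_C=\sO_C(-T)$ is immediate, uniformly and with no case split on whether $T$ meets $[C^s]$. The key point is that ${}^sh\,h$ is $s$-invariant, so it slides past $(s+1)$. I would recommend adopting this route for the first part, since your direct divisor bookkeeping needs to account for the nonreduced behaviour of $[C^s]$ in characteristic $2$ as well as the interaction above. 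Your reverse inclusion is essentially correct and amounts to the same conjugation/apply-to-$h$ idea the paper uses; the only thing to flag there is that concluding from ``$h\mid f_0h+f_1({}^sh-h)$'' to ``$h\mid f_1({}^sh-h)$'' is harmless but should be stated (subtract $f_0 h$, which is already divisible), and that your codimension-one localization should be at the generic point of $D$ so that $h$ really is a unit times a uniformizer power.
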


\begin{proof}
 Let $U$ be an invariant open subset on which $T$ is cut out by a single
 equation~$h=0$. Then the space $\Gamma(U;{\cal H}_{A_1,T}(C))$ can be
 described as the space of operators $f_0+(s+1)f_1{}^sh$ such that $f_0\in
 \Gamma(U;\sO_C)$, $f_1\in \Gamma(U;\sO_C([C^s]))$. Similarly,
 $\Gamma(U;\sO_C(-T))=h\Gamma(U;\sO_C)$, so we need to show that
 $f_0+(s+1)f_1{}^sh$ preserves $h\Gamma(U;\sO_C)$, or equivalently that
 $h^{-1}(f_0+(s+1)f_1{}^sh)h\in \Gamma(U;{\cal H}_{A_1}(C))$. Since
 \begin{gather*}
 h^{-1}(f_0 + (s+1) f_1 {}^sh)h
 =
 f_0 + h^{-1}(s+1)f_1 ({}^s h h)
 =
 f_0 + {}^sh(s+1)f_1,
 \end{gather*}
 and $f_0,{}^sh,(s+1)f_1\in \Gamma(U;{\cal H}_{A_1}(C))$, the first claim
 follows.

 Conversely, if $f_0+sf_1\in \Gamma(U;{\cal H}_{A_1}(C))$ also preserves
 $\sO_C(-T)|_U$, then both $f_0+sf_1$ and $h^{-1}(f_0+sf_1)h$ are in~$\Gamma(U;{\cal H}_{A_1}(C))$. The~first condition implies $f_1\in
 \Gamma(U;\sO_C([C^s]))$, while the second implies $f_1\in
 \Gamma(U;\sO_C([C^s]-T+{}^sT)$. If~${}^sT$ has no component in common with~$T$, so that $\sO_C(T)\cap \sO_C({}^sT) = \sO_C$, then $\sO_C([C^s])\cap
 \sO_C([C^s]-T+{}^sT) = \sO_C([C^s]-T)$. In~other words, $f_1\in
 \Gamma(U;\sO_C([C^s]-T))$, so that $f_0+sf_1\in \Gamma(U;{\cal
 H}_{A_1,T}(C))$ as required.
\end{proof}

\begin{rem}
 It is likely that the condition on~$T$ here is slightly stronger than
 strictly necessary: the claim most likely continues to hold as long as
 $T\cap {}^sT$ is contained in~$(X^s)^{\text{red}}$.
\end{rem}

Note that we could have used this to prove the algebra property, in the
following way. For~each nonnegative integer $m$, let $S'$ be the relative
symmetric $m$-th power of~$C$ over $S$, and let $C'$ be the base change of~$C$ to~$S'$. There is a corresponding tautological divisor $T'$, and our
original data $(C/S,T)$ (assuming $T$ has degree $m$ over $S$) is the base
change of~$(C'/S',T')$ by the section~$S\to S'$ corresponding to~$T$. The~space of operators as described respects base change, and thus it suffices
to prove the algebra property in this larger family. Since $T'$ and
${}^sT'$ have no component in common, this follows from the above result.
There is one caveat here, though: although our original description
respects base change, the description from the proposition does not.
Indeed, if there is an effective divisor $T_0$ such that $T-T_0-{}^sT_0$ is
effective, then the subalgebra preserving $\sO_C(-T)$ is the same as that
preserving $\sO_C(-T+T_0+{}^sT_0)$, but the corresponding rank~1 Hecke
algebras are not the same.

In the above argument, we~used the fact that ${}^s h h$ is central. This
means we could also have described ${\cal H}_{A_1,T}(C)$ (subject to the
given condition on~$T$) as the subalgebra of~${\cal H}_{A_1}(C)$ preserving
the {\em supersheaf} $\sO_C({}^sT)$. This symmetry leads to the
following.

\begin{prop}
 There is a natural isomorphism
 \begin{gather*}
 \sO_C(T)\otimes {\cal H}_{A_1,T}(C)\otimes \sO_C(-T)
 \cong
 {\cal H}_{A_1,{}^sT}(C).
 \end{gather*}
\end{prop}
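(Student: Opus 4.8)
The plan is to exhibit the claimed isomorphism as conjugation by (a local equation of) the divisor $T$, just as in the proof of the preceding proposition. Concretely, work on an invariant open subset $U\subset C$ on which $T$ is cut out by an equation $h=0$; then $\sO_C(T)|_U = h^{-1}\sO_C|_U$ and $\sO_C(-T)|_U = h\,\sO_C|_U$, so the asserted isomorphism of $\sO_C$-bimodules sends a local section $\Phi$ of ${\cal H}_{A_1,T}(C)$ to $h^{-1}\,\Phi\,h$ (viewed inside the meromorphic twisted group algebra $k(C)[A_1]$). The first thing to check is that this is well-defined independent of the choice of $h$: two such equations differ by a unit $u$ on $U$, and since the coefficient module of ${\cal H}_{A_1,T}(C)$ is an $\sO_C$-bimodule, conjugation by a unit is an automorphism of it; one must simply note that the bimodule structure being used here (left multiplication and the $\sO_{C/A_1}$-linear right action) is exactly the one making $\sO_C(T)\otimes{-}\otimes\sO_C(-T)$ meaningful. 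The bulk of the argument is then to compute $h^{-1}\Phi h$ explicitly and recognize it as a local section of ${\cal H}_{A_1,{}^sT}(C)$.

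For the explicit computation I would use the description from the discussion following the definition of ${\cal H}_{A_1,T}(C)$: a local section has the form $f_0 + (s+1)f_1\,{}^s h$ with $f_0\in\Gamma(U;\sO_C)$, $f_1\in\Gamma(U;\sO_C([C^s]))$. Conjugating,
\[
h^{-1}\bigl(f_0 + (s+1)f_1\,{}^sh\bigr)h
= f_0 + h^{-1}(s+1)f_1\,({}^sh\,h)
= f_0 + {}^sh\,(s+1)f_1,
\]
using that $s$ fixes ${}^sh\,h$ (so it is central in $k(C)[A_1]$) and commutes past $f_1$ only up to swapping ${}^sh$; more carefully, $h^{-1}(s+1)g = (s+1)\,{}^sh^{-1}g$ for any $g$, and with $g = f_1\,{}^sh\,h$ this gives $(s+1){}^sh^{-1}f_1\,{}^sh\,h = (s+1)\,h\,f_1$ — wait, one must track which factor is $h$ and which is ${}^sh$. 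The clean statement is that conjugation by $h$ turns an operator whose "$s$-coefficient" lies in $\sO_C([C^s]-T)$ (the factor ${}^sh$ absorbing the $-T$) into one whose $s$-coefficient has the role of $h$ and ${}^sh$ interchanged, i.e.\ lies in $\sO_C([C^s]-{}^sT)$; this is exactly the defining condition for ${\cal H}_{A_1,{}^sT}(C)$. Since $s$ is an involution, the inverse map is conjugation by $h^{-1}$, establishing a bijection; and it is manifestly additive and compatible with the $\sO_C$-bimodule structures shifted by $\sO_C(T)$ on the left and $\sO_C(-T)$ on the right, which is precisely the content of the displayed isomorphism.

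The main obstacle I anticipate is purely bookkeeping: keeping straight the two equivalent presentations of ${\cal H}_{A_1,T}(C)$ (the "$f_0 + f_1(s-1)$" form versus the "$g_0 + (s+1)\,{}^sg_1$" form) and being careful that conjugation by $h$ genuinely swaps $h\leftrightarrow{}^sh$ in the relevant coefficient rather than, say, introducing an extra factor $h/{}^sh$ in the wrong place — the analogous computation in the adjoint-involution proposition produced exactly such a factor, and there the point was that $1 + h/{}^sh$ vanishes on $[C^{s}]$. Here no such subtlety should arise because we are conjugating, not taking an adjoint, and ${}^sh\,h$ is genuinely central; but one should double-check that the coefficient of $s$ in $h^{-1}\Phi h$ picks up the divisor ${}^sT$ and not $2{}^sT - T$ or similar. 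Once the local computation is pinned down, globalizing is immediate: the maps for different $U$ agree on overlaps (conjugation by an equation is canonical up to a unit, and units act trivially on the bimodule after twisting), so they glue to the asserted isomorphism of sheaves on $C/A_1$. Finally, I would remark (as the paper does for the related statement) that one can alternatively see this from Proposition \ref{prop:generic_rank_one_as_intersection}: subject to $T\cap{}^sT$ being appropriately small, ${\cal H}_{A_1,T}(C)$ is simultaneously the subalgebra preserving $\sO_C(-T)$ and the subalgebra preserving $\sO_C({}^sT)$, and conjugation by $h$ converts preservation of the former into preservation of the latter on the twisted sheaf, which is ${\cal H}_{A_1,{}^sT}(C)$ — but the direct computation above avoids the genericity hypothesis on $T$.
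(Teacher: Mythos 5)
Your primary approach — direct conjugation by a local equation $h$ of $T$ — is genuinely different from the paper's proof, which invokes the two intersection descriptions from Proposition \ref{prop:generic_rank_one_as_intersection} (preserving $\sO_C(-T)$, and its mirror preserving $\sO_C({}^sT)$) and compares them after conjugating by $\sO_C(T)$, using the replace-by-a-larger-family trick to remove the no-common-component hypothesis. Your direct computation, once cleaned up, works for arbitrary $T$ without that base change, which is a small advantage; your closing remark correctly identifies the paper's actual route.

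However there are two concrete issues in your writeup. First, the ``more carefully'' re-derivation is wrong: the identity $h^{-1}(s+1)g = (s+1)\,{}^sh^{-1}g$ does not hold (check the coefficient of the identity: the left side gives $h^{-1}g$, the right side ${}^sh^{-1}g$). The correct relation is only $h^{-1}s = s\,{}^sh^{-1}$, which doesn't distribute over $(s+1)$ as a whole. You should discard that attempt; your first displayed computation
\[
h^{-1}\bigl(f_0 + (s+1)f_1\,{}^sh\bigr)h = f_0 + {}^sh\,(s+1)f_1
\]
is correct (as one checks by writing out the $1$- and $s$-coefficients: both sides have $s$-coefficient ${}^sf_1\,{}^sh$ and identity coefficient $f_1\,{}^sh$, plus $f_0$). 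Second, at the end you only verify that the $s$-coefficient lands in $\sO_C([C^s]-{}^sT)$, but the definition of ${\cal H}_{A_1,{}^sT}(C)$ also requires membership in ${\cal H}_{A_1}(C)$ — a priori conjugation by a non-$s$-invariant function could wreck the residue condition. Here that is easy: $f_0 + {}^sh\,(s+1)f_1$ is a sum of $f_0\in\sO_C$ and the product of the holomorphic ${}^sh$ with $(s+1)f_1\in{\cal H}_{A_1}(C)$, hence lies in the algebra ${\cal H}_{A_1}(C)$. (Equivalently, rewriting it as $\bigl(f_0 + ({}^sh-h)f_1\bigr) + (s+1)f_1 h$ and using that $({}^sh-h)f_1$ is holomorphic since $({}^sh-h)$ vanishes on $[C^s]$.) You need to say this explicitly; without it the argument only checks half the defining condition.
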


\begin{proof}
 Replacing $(C/S,T)$ by a larger family as necessary, we~may assume that
 $T$ and ${}^sT$ have no component in common. We then have
 \begin{gather*}
 {\cal H}_{A_1,T}(C)
 =
 {\cal H}_{A_1}(C)
 \cap
 \sO_C(-T)\otimes {\cal H}_{A_1}(C)\otimes \sO_C(T),
 \end{gather*}
 and thus, conjugating by $\sO_C(T)$,
 \begin{gather*}
 \sO_C(T)\otimes {\cal H}_{A_1,T}(C)\otimes \sO_C(-T)
 =
 {\cal H}_{A_1}(C)
 \cap
 \sO_C(T)\otimes {\cal H}_{A_1}(C)\otimes \sO_C(-T).
 \end{gather*}
 Replacing $T$ by ${}^sT$ in the alternate description
 \begin{gather*}
 {\cal H}_{A_1,T}(C)
 =
 {\cal H}_{A_1}(C)
 \cap
 \sO_C({}^sT)\otimes {\cal H}_{A_1}(C)\otimes \sO_C(-{}^sT)
 \end{gather*}
 tells us that
 \begin{gather*}
 {\cal H}_{A_1}(C)
 \cap
 \sO_C(T)\otimes {\cal H}_{A_1}(C)\otimes \sO_C(-T)
 =
 {\cal H}_{A_1,{}^sT}(C)
 \end{gather*}
 as required.
\end{proof}

\begin{prop}
 The adjoint isomorphism ${\cal H}_{A_1}(C)^{\rm op}\cong
 \sO_C(-[C^s])\otimes {\cal H}_{A_1}(C)\otimes \sO_C([C^s])$
 restricts to a contravariant isomorphism
 \begin{gather*}
 {\cal H}_{A_1,T}(C)^{\rm op}
 \cong
 \sO_C(-[C^s])\otimes {\cal H}_{A_1,{}^sT}(C)\otimes \sO_C([C^s]),
 \end{gather*}
 inducing a contravariant isomorphism
 \begin{gather*}
 {\cal H}_{A_1,T}(C)^{\rm op}
 \cong
 \sO_C(T-[C^s]) \otimes {\cal H}_{A_1,T}(C)\otimes \sO_C([C^s]-T).
 \end{gather*}
\end{prop}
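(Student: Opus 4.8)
The plan is to obtain both isomorphisms formally, from three facts already established: the adjoint isomorphism ${\cal H}_{A_1}(C)^{\text{op}}\cong\sO_C(-[C^s])\otimes{\cal H}_{A_1}(C)\otimes\sO_C([C^s])$ for the master algebra (realized by the na\"{\i}ve adjoint $\alpha:\sum_g c_g g\mapsto\sum_g g^{-1}c_g$); the intersection descriptions of ${\cal H}_{A_1,T}(C)$ in Proposition~\ref{prop:generic_rank_one_as_intersection} and its proof; and the conjugation isomorphism $\sO_C(T)\otimes{\cal H}_{A_1,T}(C)\otimes\sO_C(-T)\cong{\cal H}_{A_1,{}^sT}(C)$. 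First I would reduce to the case that $T$ and ${}^sT$ share no component: the map $\alpha$, the construction ${\cal H}_{A_1,T}(C)$ in its original description, and the twists by $\sO_C(\pm[C^s])$ all respect base change, so the first asserted isomorphism may be tested after pulling back along the classifying section $S\to\Sym^m_S(C)$ of $T$ (where $m$ is the degree of $T$ over $S$), at which point it becomes the statement for the universal divisor, whose two $s$-translates are componentwise disjoint.

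Under that hypothesis, Proposition~\ref{prop:generic_rank_one_as_intersection} (both the ``$\sO_C(-T)$-preserving'' and the ``supersheaf'' forms) exhibits ${\cal H}_{A_1,T}(C)={\cal H}_{A_1}(C)\cap\big(\sO_C(-T)\otimes{\cal H}_{A_1}(C)\otimes\sO_C(T)\big)$ as an intersection of sub-bimodules of the meromorphic twisted group algebra $k(C)[A_1]$. Since $\alpha$ is a bijective anti-automorphism of $k(C)[A_1]$ fixing every multiplication operator, it commutes with intersections of subsheaves and satisfies $\alpha\big(\sO_C(D_1)\otimes M\otimes\sO_C(D_2)\big)=\sO_C(D_2)\otimes\alpha(M)\otimes\sO_C(D_1)$ for all Cartier divisors $D_i$ (the two line-bundle factors switch sides because $\alpha$ reverses composition). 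Feeding in $\alpha({\cal H}_{A_1}(C))=\sO_C(-[C^s])\otimes{\cal H}_{A_1}(C)\otimes\sO_C([C^s])$ gives
\[
\alpha({\cal H}_{A_1,T}(C))=\big(\sO_C(-[C^s])\otimes{\cal H}_{A_1}(C)\otimes\sO_C([C^s])\big)\cap\big(\sO_C(T-[C^s])\otimes{\cal H}_{A_1}(C)\otimes\sO_C([C^s]-T)\big);
\]
twisting both terms by $\sO_C([C^s])\otimes({-})\otimes\sO_C(-[C^s])$ (another bijection commuting with intersection) and using the supersheaf description with ${}^s({}^sT)=T$ identifies the result with ${\cal H}_{A_1,{}^sT}(C)$. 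Hence $\alpha$ restricts to a contravariant isomorphism ${\cal H}_{A_1,T}(C)^{\text{op}}\cong\sO_C(-[C^s])\otimes{\cal H}_{A_1,{}^sT}(C)\otimes\sO_C([C^s])$.

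For the second isomorphism I would simply substitute the conjugation isomorphism ${\cal H}_{A_1,{}^sT}(C)\cong\sO_C(T)\otimes{\cal H}_{A_1,T}(C)\otimes\sO_C(-T)$ into the one just produced and collapse the twists, via $\sO_C(-[C^s])\otimes\sO_C(T)=\sO_C(T-[C^s])$ and $\sO_C(-T)\otimes\sO_C([C^s])=\sO_C([C^s]-T)$, to get
\[
{\cal H}_{A_1,T}(C)^{\text{op}}\cong\sO_C(T-[C^s])\otimes{\cal H}_{A_1,T}(C)\otimes\sO_C([C^s]-T);
\]
by construction this is the composite of the first isomorphism with the (covariant) $\sO_C(\pm[C^s])$-twist of the conjugation isomorphism, which is the ``inducing'' asserted in the statement.

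The only step I expect to require any care is the base-change reduction used to make the intersection descriptions legitimate. If one prefers to sidestep it, the first isomorphism can instead be verified directly by the same local computation as in the proof of the master-algebra adjoint statement, now also tracking the bound $\sO_C([C^s]-T)$ on the coefficient of $s$: $\alpha$ sends that coefficient $c_s$ to ${}^sc_s$, hence carries the bound to $\sO_C([C^s]-{}^sT)$ because $[C^s]$ is $s$-stable, while conjugating by a local equation of $[C^s]$ only rescales the $s$-coefficient by a unit, so the two descriptions agree verbatim.
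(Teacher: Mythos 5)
The paper in fact states this Proposition without proof, so there is no paper argument to compare against.  Your derivation is correct, and it is exactly the argument the prior results are set up to support: reduce by base change to the universal-parameter case where $T$ and ${}^sT$ have no common component (the same move the paper makes to prove the conjugation isomorphism just above), express ${\cal H}_{A_1,T}(C)$ as the intersection ${\cal H}_{A_1}(C)\cap\bigl(\sO_C(-T)\otimes{\cal H}_{A_1}(C)\otimes\sO_C(T)\bigr)$, apply the na\"{\i}ve adjoint $\alpha$ using its compatibility with intersections and twists, and match the result against the supersheaf description of ${\cal H}_{A_1,{}^sT}(C)$.  Your bookkeeping of the twists is right, and the second isomorphism follows formally by splicing in $\sO_C(T)\otimes{\cal H}_{A_1,T}(C)\otimes\sO_C(-T)\cong{\cal H}_{A_1,{}^sT}(C)$.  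The alternative direct check you sketch at the end is also valid and slightly more economical: since $[C^s]$ is $s$-invariant, $\alpha$ sends the $s$-coefficient bound $\sO_C([C^s]-T)$ to $\sO_C([C^s]-{}^sT)$, and conjugation by a local equation of $[C^s]$ only rescales by a unit away from the reflection hypersurface, so the residue and vanishing conditions are carried exactly as in the master-algebra case.  Either route is a complete proof.
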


With this construction in mind, let $\vec{T}$ be a system of effective
Cartier divisors $T_\alpha$ on~$X$ associated to the roots $\alpha\in
\Phi(W)$, such that $T_\alpha$ never contains a fiber of~$X$ and
$w(T_\alpha) = T_{w\alpha}$ for all $\alpha\in \Phi(W)$, $w\in W$.
Clearly, to specify such a system, it suffices to specify $T_\alpha$ for
one representative of each orbit of roots, subject to the condition that
$w(T_\alpha)=T_\alpha$ whenever $w\alpha=\alpha$. Although the
construction would work in this generality, we~will also impose the further
condition that $T_\alpha$ descends to a divisor on the corresponding coroot
curve (or, equivalently, is invariant under translation by any point in~$(1+r_\alpha)A$). This makes the stabilizer condition automatic, and thus
we may specify $\vec{T}$ by specifying effective divisors on the coroot
curves associated to a~set of inequivalent simple roots.

We will call such a system $\vec{T}$ of divisors a ``system of parameters
for~$W$ on~$X$''.

\begin{defn}
 Let $W$ be a finite Weyl group acting on an abelian torsor $X/S$ by
 reflections, and let $\vec{T}$ be a system of parameters for~$W$ on~$X$.
 Then the {\em Hecke algebra} ${\cal H}_{W;\vec{T}}(X)$ is the subalgebra
 of~${\cal H}_W(X)$ generated by the rank~1 algebras ${\cal H}_{\langle
 s_i\rangle,T_i}(X)$.
\end{defn}

We again have a filtration by Bruhat order, inherited from ${\cal H}_W(X)$,
and the subquotients are again explicit line bundles.

\begin{lem}\label{lem:Bruhat_fin_parm}
 Let $I$ be a Bruhat order ideal, and suppose that $w$ is a maximal
 element of~$I$. Then there is a short exact sequence
 \begin{gather*}
 0\to {\cal H}_{W;\vec{T}}(X)[I\setminus\{w\}]\subset {\cal
 H}_{W;\vec{T}}(X)[I]\to \sO_X(D_w(\vec{T}))\to 0,
 \end{gather*}
 where $D_w(\vec{T}):=\sum_{r\in
 R(W),rw<w} ([X^r]-T_{\alpha_r})$, with~$\alpha_r$ the positive root
 corresponding to~$r$.
\end{lem}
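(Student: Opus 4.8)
The plan is to imitate the proof of Lemma~\ref{lem:Bruhat_fin_noparm}, keeping track of the extra vanishing imposed by the parameter divisors. Write ${\cal H}_{W;\vec{T}}(X)[I]:={\cal H}_{W;\vec{T}}(X)\cap {\cal H}_W(X)[I]$, so that the Bruhat filtration on ${\cal H}_{W;\vec{T}}(X)$ is the one inherited from ${\cal H}_W(X)$; then ${\cal H}_{W;\vec{T}}(X)[I\setminus\{w\}]$ is visibly the kernel of the ``coefficient of $w$'' map on ${\cal H}_{W;\vec{T}}(X)[I]$, and the sequence will be exact once we show that this map lands in, and is onto, $\sO_X(D_w(\vec{T}))$. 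As in the unparametrized case, surjectivity is the easy half.

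For surjectivity, pick a reduced word $w=s_1\cdots s_n$ and consider the multiplication map
\[
{\cal H}_{\langle s_1\rangle,T_1}(X)\otimes\cdots\otimes{\cal H}_{\langle s_n\rangle,T_n}(X)\longrightarrow {\cal H}_{W;\vec{T}}(X).
\]
Since each factor is supported on $\{1,s_i\}$, every term of the expansion is supported on a subword of $s_1\cdots s_n$, hence on an element $\le w$, so the image lies in ${\cal H}_{W;\vec{T}}(X)[\le w]\subseteq {\cal H}_{W;\vec{T}}(X)[I]$. Multiplication is triangular for the filtration, so the coefficient of $w$ maps the image of this product onto $\bigotimes_i{}^{s_1\cdots s_{i-1}}\sO_X([X^{s_i}]-T_i)$ (the leading coefficient map ${\cal H}_{\langle s_i\rangle,T_i}(X)\to\sO_X([X^{s_i}]-T_i)$ being surjective), and it remains to identify this with $\sO_X(D_w(\vec{T}))$, i.e.\ to check
\[
D_w(\vec{T})=\sum_{1\le i\le n}{}^{s_1\cdots s_{i-1}}\bigl([X^{s_i}]-T_i\bigr)
=\sum_{1\le i\le n}\bigl([X^{s_1\cdots s_{i-1}s_is_{i-1}\cdots s_1}]-T_{s_1\cdots s_{i-1}\alpha_i}\bigr).
\]
The reflection part is exactly the strong-exchange identity already used in Lemma~\ref{lem:Bruhat_fin_noparm}: the reflections $r$ with $rw<w$ are precisely the $s_1\cdots s_{i-1}s_is_{i-1}\cdots s_1$, distinct, and in natural bijection with the positive roots $s_1\cdots s_{i-1}\alpha_i$. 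The parameter part follows from this together with the $W$-equivariance of $\vec{T}$ (so $T_{s_1\cdots s_{i-1}\alpha_i}={}^{s_1\cdots s_{i-1}}T_i$) and the fact that the positive root attached to $s_1\cdots s_{i-1}s_is_{i-1}\cdots s_1$ is $s_1\cdots s_{i-1}\alpha_i$.

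For the containment, note that since ${\cal H}_{W;\vec{T}}(X)[I]\subseteq {\cal H}_W(X)[I]$ and $w$ is maximal in $I$, Lemma~\ref{lem:Bruhat_fin_noparm} already gives that the coefficient of $w$ lies in $\sO_X(D_w)=\sO_X\!\bigl(\sum_{rw<w}[X^r]\bigr)$, so it suffices to add the vanishing along each $T_{\alpha_r}$ with $rw<w$. The key point is that the subsheaf ${\cal N}$ of the meromorphic twisted group algebra $k(X)[W]$ consisting of those $\sum_u c_uu$ for which, for every $\beta\in\Phi^+$ with $u^{-1}\beta\in\Phi^-$ (equivalently, for every reflection $r$ with $ru<u$, taking $\beta=\alpha_r$), the coefficient $c_u$ vanishes along $T_\beta$, is closed under multiplication. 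Indeed, in a term $c_u\cdot{}^uc'_v$ contributing to the coefficient of $w=uv$, for $\beta\in\Phi^+$ with $w^{-1}\beta\in\Phi^-$ one has either $u^{-1}\beta\in\Phi^-$, so that $c_u$ vanishes along $T_\beta$, or $\gamma:=u^{-1}\beta\in\Phi^+$, so that $v^{-1}\gamma=w^{-1}\beta\in\Phi^-$ forces $c'_v$ to vanish along $T_\gamma$ and hence ${}^uc'_v=(u^{-1})^*c'_v$ to vanish along $u(T_\gamma)=T_{u\gamma}=T_\beta$; either way the term, and hence the sum, vanishes along $T_\beta$. Each rank $1$ generator lies in ${\cal N}$ (its coefficient of $1$ is unconstrained; its coefficient of $s_i$ is a section of $\sO_X([X^{s_i}]-T_i)$, which vanishes along $T_i=T_{\alpha_i}$, while $s_i\beta\in\Phi^+$ for $\beta\in\Phi^+\setminus\{\alpha_i\}$ imposes no further condition), so ${\cal H}_{W;\vec{T}}(X)\subseteq {\cal N}$. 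Hence the coefficient of $w$ of a section of ${\cal H}_{W;\vec{T}}(X)[I]$ lies in $\sO_X(D_w)$ and vanishes along each $T_{\alpha_r}$, $rw<w$, hence in $\sO_X(D_w(\vec{T}))$.

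I expect the verification that ${\cal N}$ is an algebra containing the rank $1$ generators to be the delicate step --- not computationally, but because it is where the conventions must mesh: the action ${}^uf=(u^{-1})^*f$ on divisors, the equivariance $u(T_\alpha)=T_{u\alpha}$, the passage between ``$ru<u$'' and ``$u^{-1}\alpha_r\in\Phi^-$'', and the description of the rank $1$ algebras in the basis $c_1+c_{s_i}s_i$. A secondary point is that, when two of the divisors $[X^r],T_{\alpha_r}$ (or two parameter divisors) share a component, ``vanishes along each $T_{\alpha_r}$'' only yields vanishing to the maximum of the orders rather than their sum, so the containment above need not be tight; as in the proof of Proposition~\ref{prop:generic_rank_one_as_intersection}, this is handled by first base changing to the relative symmetric powers of the coroot curves, where all these divisors are componentwise disjoint, proving the lemma there, and then descending --- the constructions of ${\cal H}_{W;\vec{T}}(X)$ and of $\sO_X(D_w(\vec{T}))$ commuting with that base change. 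Exactness then follows, and compatibility of the sequence with arbitrary base change comes, as in the unparametrized case, from the existence of compatible chains of Bruhat ideals with subquotients the bundles $\sO_X(D_{w'}(\vec{T}))$.
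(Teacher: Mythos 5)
Your proof is correct, and the containment half takes a genuinely different route from the paper's. Where the paper proves that the coefficient of $w$ vanishes along the $T_{\alpha_r}$ by ``an easy induction'' along a reduced word (essentially redoing the surjectivity computation with the extra $T_i$ factors carried along), you package this into a single structural observation: the subsheaf ${\cal N}\subset k(X)[W]$ cut out by the vanishing conditions ``$c_u$ vanishes along $T_\beta$ whenever $\beta\in\Phi^+$ and $u^{-1}\beta\in\Phi^-$'' is closed under multiplication and contains the rank~$1$ generators, so it contains the generated algebra. This is exactly (one half of) what the paper records post hoc as Corollary~\ref{cor:Bruhat_fin_parm}'s companion vanishing-condition characterization, and proving it directly is arguably cleaner since it avoids a choice of reduced word; the dichotomy $u^{-1}\beta\in\Phi^{\pm}$ absorbs the positivity of $s_1\cdots s_{i-1}\alpha_i$ that the paper has to invoke separately. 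Your handling of the degenerate case (overlapping parameter divisors) by base change to the universal family with tautological $\vec{T}$ is also sound, but note that the paper's base change statement for ${\cal H}_{W;\vec{T}}(X)$ is a \emph{corollary} of this Lemma, so invoking it wholesale would be circular. What you actually need---and what your argument can be unpacked to give---is only the one-sided comparison: over $X$, the generated algebra ${\cal H}_{W;\vec{T}}(X)[I]$ is contained in the image in $k(X)[W]$ of the restriction of the universal ${\cal H}_{W;\vec{T}'}(X')[I]$ (since the rank~$1$ algebras restrict correctly), and the latter restriction has coefficient-of-$w$ image exactly $\sO_X(D_w(\vec{T}))$ because the short exact sequence over $S'$ consists of $S'$-flat sheaves (line bundles and their iterated extensions) and hence restricts exactly. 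The paper phrases this step instead via lower semicontinuity of Hilbert polynomials; the two are equivalent in content.
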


\begin{proof}
 Suppose first that $T_\alpha$ never has a component in common with the
 discriminant divisor~$D_{w_0}$. Then an easy induction tells us that the
 left coefficient of~$w$ in any local section of~${\cal H}_{W;\vec{T}}(X)$
 vanishes on~$T_{\alpha_r}$ for every reflection~$r$ such that $rw<w$;
 this is by a calculation as in Lemma~\ref{lem:Bruhat_fin_noparm} above,
 except that we must also argue that $s_1\cdots s_{i-1}\alpha_i$ is
 positive. But this is again standard Coxeter theory; if it were not
 positive, then $s_1\cdots s_i$ could not be a reduced word. The~claim
 then follows as in the no parameter case.

 To extend this to bad parameters, we~observe (as in the rank~1 case, as~we will discuss more precisely below) that we can always embed our family
 in a larger family which generically satisfies the condition on~$\vec{T}$. On~the one hand, since ${\cal H}_{W;\vec{T}}(X)$ is generated
 by a flat family of submodules, its Hilbert polynomial is lower
 semicontinuous and is thus bounded above by the sum of the Hilbert
 polynomials of the line bundles of the subquotients of the {\em generic}
 Bruhat filtration. Since we can construct elements of Bruhat intervals
 with the desired leading coefficients, it follows that this bound must be
 tight, and the claim follows in general.
\end{proof}

\begin{rem}
 We may also write the divisor as $D_w(\vec{T})=\sum_{\alpha\in
 \Phi^+(W)\cap w\Phi^-(W)} ([X^{r_\alpha}]-T_\alpha)$.
\end{rem}

This leads to an alternate description valid under fairly weak conditions
on the system of parameters.

\begin{cor}
 Suppose $\vec{T}$ is such that every $T_\alpha$ is transverse to every
 reflection hypersurface. Then ${\cal H}_{W;\vec{T}}(X)$ may be
 identified with the subalgebra of~${\cal H}_W(X)$ consisting of local
 sections $\sum_w c_w w$ such that $c_w$ vanishes on~$\sum_{\alpha\in
 \Phi^+(W)\cap w\Phi^-(W)} T_\alpha$ for every $w\in W$.
\end{cor}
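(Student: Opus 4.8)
Write $\mathcal{A}$ for the proposed subsheaf of ${\cal H}_W(X)$, i.e.\ the sheaf whose local sections are those $\sum_w c_w w\in{\cal H}_W(X)$ with $c_w$ vanishing on $\sum_{\alpha\in\Phi^+(W)\cap w\Phi^-(W)}T_\alpha$ for every $w\in W$; the plan is to prove ${\cal H}_{W;\vec T}(X)\subseteq\mathcal{A}$ and $\mathcal{A}\subseteq{\cal H}_{W;\vec T}(X)$ separately. First I would check that $\mathcal{A}$ really is an $\sO_X$-subalgebra. Inside the meromorphic twisted group algebra $k(X)[W]$ the product is $(\sum_w c_w w)(\sum_v d_v v)=\sum_u(\sum_{wv=u}c_w\,{}^w d_v)u$, and if $c_w$ vanishes on $\sum_{\alpha\in\Phi^+\cap w\Phi^-}T_\alpha$ and $d_v$ vanishes on $\sum_{\beta\in\Phi^+\cap v\Phi^-}T_\beta$, then (using $w(T_\beta)=T_{w\beta}$) each summand $c_w\,{}^w d_v$ vanishes on $\sum_{\alpha\in\Phi^+\cap w\Phi^-}T_\alpha+\sum_{\beta\in\Phi^+\cap v\Phi^-}T_{w\beta}$. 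The classical description of inversion sets gives, for $u=wv$, the inclusion $\Phi^+\cap u\Phi^-\subseteq(\Phi^+\cap w\Phi^-)\sqcup w(\Phi^+\cap v\Phi^-)$, so the divisor just displayed dominates $\sum_{\alpha\in\Phi^+\cap u\Phi^-}T_\alpha$, and the $u$-coefficient of the product again satisfies the defining condition. Closure under sums and under left and right multiplication by $\sO_X$ is clear, so $\mathcal{A}$ is a subalgebra.

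For ${\cal H}_{W;\vec T}(X)\subseteq\mathcal{A}$ it then suffices to check that each generating rank $1$ subalgebra ${\cal H}_{\langle s_i\rangle,T_i}(X)$ lies in $\mathcal{A}$. A local section of the latter is $f_0+f_1(s_i-1)$ with $f_0\in\Gamma(\sO_X)$ and $f_1\in\Gamma(\sO_X([X^{s_i}]-T_i))$; its coefficients in $k(X)[W]$ are $f_0-f_1$ for $1$ (no condition, since $\Phi^+\cap\Phi^-=\emptyset$) and $f_1$ for $s_i$. As $\Phi^+(W)\cap s_i\Phi^-(W)=\{\alpha_i\}$, the condition to be verified is that $f_1$ vanish on $T_i$, and this is exactly where the transversality hypothesis enters: because $T_i$ is transverse to $[X^{s_i}]$, a section of $\sO_X([X^{s_i}]-T_i)$ automatically has divisor $\ge T_i$. (Without transversality this can fail — e.g.\ for $T=[C^s]$ in rank $1$, where ${\cal H}_{\langle s\rangle,[C^s]}(C)$ is strictly larger than the corresponding $\mathcal{A}$ — so the inclusion ${\cal H}_{W;\vec T}(X)\subseteq\mathcal{A}$ genuinely uses the hypothesis.)

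For the reverse inclusion I would compare Bruhat filtrations. Fix a maximal chain of Bruhat order ideals; both ${\cal H}_{W;\vec T}(X)$ and $\mathcal{A}$ inherit filtrations $\mathcal{B}[I]:=\mathcal{B}\cap{\cal H}_W(X)[I]$ from it. Let $w$ be maximal in an ideal $I$. By Lemma \ref{lem:Bruhat_fin_noparm} the coefficient-of-$w$ map on ${\cal H}_W(X)[I]$ lands in $\sO_X(D_w)$, so for $\sum c_v v\in\mathcal{A}[I]\subseteq{\cal H}_W(X)[I]$ we have $c_w\in\sO_X(D_w)$; combining with the $\mathcal{A}$-condition that $c_w$ vanishes on $E:=\sum_{\alpha\in\Phi^+\cap w\Phi^-}T_\alpha$ and the componentwise inequality $\max(-D_w,E)\ge E-D_w$ (valid for any effective $E$), we get $c_w\in\sO_X(D_w-E)=\sO_X(D_w(\vec T))$. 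Thus coefficient-of-$w$ maps $\mathcal{A}[I]$ into $\sO_X(D_w(\vec T))$ with kernel $\mathcal{A}[I\setminus\{w\}]$, while by Lemma \ref{lem:Bruhat_fin_parm} the same map already surjects onto $\sO_X(D_w(\vec T))$ on the subsheaf ${\cal H}_{W;\vec T}(X)[I]\subseteq\mathcal{A}[I]$. Hence the inclusion ${\cal H}_{W;\vec T}(X)[I]\hookrightarrow\mathcal{A}[I]$ induces the identity on the $w$-th graded piece $\sO_X(D_w(\vec T))$, and descending the chain (base case $I=\emptyset$, inductive step via the snake lemma applied to the compatible short exact sequences) gives ${\cal H}_{W;\vec T}(X)[I]=\mathcal{A}[I]$ for every $I$; taking $I=W$ finishes the proof. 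The one genuinely delicate point is the bookkeeping in the first two paragraphs — the inversion-set identity that forces $\mathcal{A}$ to be an algebra, and pinning down that transversality is used only to seat the rank $1$ generators inside $\mathcal{A}$ and plays no role in the filtration comparison (where the divisor inequality is unconditional); the rest is a routine assembly of Lemmas \ref{lem:Bruhat_fin_noparm} and \ref{lem:Bruhat_fin_parm}.
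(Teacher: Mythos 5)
Your proof is correct and takes essentially the same approach as the paper: the reverse inclusion is the same Bruhat-filtration induction against Lemma \ref{lem:Bruhat_fin_parm}, and the forward inclusion rests on the same observation that transversality of $T_i$ with $[X^{s_i}]$ forces a section of $\sO_X([X^{s_i}]-T_i)$ to vanish on $T_i$. The only variation is your first paragraph, which verifies directly via the inversion-set inclusion $N(wv)\subseteq N(w)\sqcup wN(v)$ that the vanishing conditions cut out a subalgebra; the paper instead gets the forward containment as a byproduct of the reduced-word computation inside the proof of Lemma \ref{lem:Bruhat_fin_parm}, and the subalgebra property is then automatic once the two-sided equality is established (since ${\cal H}_{W;\vec{T}}(X)$ is a subalgebra by definition), so your paragraph 1 is correct but not strictly necessary.
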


\begin{proof}
 We showed in the proof of Lemma~\ref{lem:Bruhat_fin_parm} that every
 section of~${\cal H}_{W;\vec{T}}(X)$ satisfies the given vanishing
 conditions, so it remains only to show that every local section of~${\cal
 H}_{W}(X)$ satisfying the conditions is in fact a local section of~${\cal H}_{W;\vec{T}}(X)$. Let ${\cal D}=\sum_w c_w$ be such a section
 (on the open subset $U\subset X/W$), and $I$ be the smallest order ideal
 containing the support of~${\cal D}$, with~$w_1$ a maximal element of~$I$. Then $c_{w_1}$ is a section of~$\sO_X(D_{w_1}(\vec{T}))$, so that
 by Lemma~\ref{lem:Bruhat_fin_parm}, there is an open covering $U=\cup_i
 V_i$ such that on each $V_i$ there is a local section of~${\cal
 H}_{W;\vec{T}}(X)$ supported on~$I$ with the same left coefficient of~$w_1$. Subtracting this local section gives an element which by
 induction is itself a local section of~${\cal H}_{W;\vec{T}}(X)$. It
 follows that the restriction of~${\cal D}$ to each $V_i$ is a section of~${\cal H}_{W;\vec{T}}(X)$, and thus ${\cal D}$ is a local section of~${\cal H}_{W;\vec{T}}(X)$ as required.
\end{proof}

Similarly, the corollaries carry over immediately.

\begin{cor}\label{cor:Bruhat_intervals_product}
 For any reduced word $w=s_1\cdots s_n$, the multiplication map
 \begin{gather*}
 {\cal H}_{\langle s_1\rangle,\vec{T}}(X)\otimes\cdots\otimes {\cal H}_{\langle
 s_n\rangle,\vec{T}}(X)
 \to
 {\cal H}_{W;\vec{T}}(X)[\le w]
 \end{gather*}
 is surjective. Moreover, any product of rank~1 subalgebras is equal to
 some Bruhat interval.
\end{cor}

\begin{cor}
 The construction~${\cal H}_{W;\vec{T}}(X)$ respects base change.
\end{cor}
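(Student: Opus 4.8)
The plan is to mimic the two-step proof given above for the unparametrized algebra ${\cal H}_W(X)$: settle the rank $1$ case from the explicit description, then propagate to general rank using the Bruhat filtration of Lemma~\ref{lem:Bruhat_fin_parm}. Fix a morphism $T\to S$, write $X'=X\times_S T$, let $\pi_1\colon X'\to X$ be the projection, and let $\vec{T}'$ be the system of divisors pulled back from $\vec{T}$. Since no $T_\alpha$ contains a fiber of $X/S$, each pullback $T'_\alpha$ is a well-defined effective Cartier divisor on $X'$ containing no fiber of $X'/T$, so $\vec{T}'$ is again a system of parameters.

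For rank $1$ I would argue as in the no-parameter case. The algebra ${\cal H}_{\langle s\rangle,T}(C)$ was described as the sheaf of operators $f_0+f_1(s-1)$ with $f_0$ a local section of $\sO_C$ and $f_1$ a local section of $\sO_C([C^s]-T)$; this description respects base change, since $\pi_1^*[C^s]=[(C')^s]$ and $\pi_1^*T=T'$, whence $\pi_1^*\sO_C([C^s]-T)\cong\sO_{C'}([(C')^s]-T')$. It is essential to use this description and not the stabilizer description of Proposition~\ref{prop:generic_rank_one_as_intersection}, which, as noted there, fails to respect base change whenever $T-T_0-{}^sT_0$ is effective for some $T_0$. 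Hence $\pi_1^*{\cal H}_{\langle s_i\rangle,\vec{T}}(X)\cong{\cal H}_{\langle s_i\rangle,\vec{T}'}(X')$ for each simple reflection $s_i$.

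Since ${\cal H}_{W;\vec{T}}(X)$ is generated by its rank $1$ subalgebras, these isomorphisms assemble to a morphism $\pi_1^*{\cal H}_{W;\vec{T}}(X)\to{\cal H}_{W;\vec{T}'}(X')$; as before one need not verify relations, since this morphism is the restriction of the base change isomorphism $\pi_1^*{\cal H}_W(X)\cong{\cal H}_W(X')$ of meromorphic twisted group algebras, and is in particular compatible with the ``support on a Bruhat order ideal'' conditions. To see it is an isomorphism I would use Lemma~\ref{lem:Bruhat_fin_parm}: fixing a maximal chain of Bruhat order ideals, it provides a filtration of ${\cal H}_{W;\vec{T}}(X)$ with subquotients the line bundles $\sO_X(D_w(\vec{T}))$. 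In particular all filtered pieces are flat over $S$ (iterated extensions of line bundles on the flat family $X/S$), so $\pi_1^*$ preserves the exactness of these sequences, and the pullback filtration has subquotients $\pi_1^*\sO_X(D_w(\vec{T}))\cong\sO_{X'}(D_w(\vec{T}'))$, using $\pi_1^*[X^r]=[(X')^r]$ and $\pi_1^*T_{\alpha_r}=T'_{\alpha_r}$. The morphism above carries this filtration into the corresponding Bruhat filtration of ${\cal H}_{W;\vec{T}'}(X')$ indexed by the same chain and induces isomorphisms on subquotients, so an induction along the chain (the five lemma at each step, with base step $\sO_X\cong\sO_{X'}$) shows it is an isomorphism.

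The only delicate point, and the one I would flag as the main obstacle (modest though it is), is exactly the choice of description in rank $1$: one must work with the ``coefficient of $s$ lies in $\sO_C([C^s]-T)$'' characterization, for which base change is transparent, rather than the stabilizer characterization. With that in hand the general case is the routine transcription already announced by ``the corollaries carry over immediately,'' with Corollary~\ref{cor:Bruhat_intervals_product} following from the same filtration bookkeeping.
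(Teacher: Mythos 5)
Your proposal is correct and matches the paper's intended argument essentially verbatim: the paper itself says only ``the corollaries carry over immediately,'' referring to the no-parameter base change proof, which is precisely what you reproduce — rank $1$ via the explicit ``coefficient of $s$ in $\sO_C([C^s]-T)$'' description (correctly avoiding the stabilizer characterization, which does not respect base change), a morphism induced by the rank $1$ generators inside the meromorphic twisted group algebra, and then an isomorphism check on subquotients via the Bruhat filtration of Lemma~\ref{lem:Bruhat_fin_parm}, using flatness of the line bundle subquotients $\sO_X(D_w(\vec T))$ over $S$.
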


We also have an immediate extension of the adjoint isomorphism.

\begin{prop}
 The adjoint isomorphism ${\cal H}_W(X)^{\rm op}\cong
 \sO_X(-D_{w_0})\otimes {\cal H}_W(X)\otimes \sO_X(D_{w_0})$ restricts to a
 contravariant isomorphism
 \begin{gather*}
 {\cal H}_{W;\vec{T}}(X)^{\rm op}
 \cong
 \sO_X\big({-}D_{w_0}(\vec{T})\big) \otimes {\cal H}_{W;\vec{T}}(X)\otimes \sO_X\big(D_{w_0}(\vec{T})\big).
 \end{gather*}
\end{prop}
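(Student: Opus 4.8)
Exactly as in the parameter-free adjoint proposition, the isomorphism in question is induced by the na\"{\i}ve adjoint $\mathcal D=\sum_w c_w w\mapsto\mathcal D^\dagger:=\sum_w w^{-1}c_w$ on the meromorphic twisted group algebra, so the plan is to check that $\dagger$ carries $\mathcal H_{W;\vec T}(X)$ isomorphically onto $\sO_X(-D_{w_0}(\vec T))\otimes\mathcal H_{W;\vec T}(X)\otimes\sO_X(D_{w_0}(\vec T))$. Since $\dagger$ is an involutive anti-automorphism of the ambient algebra, conjugation by line bundles pulled back from $X$ is an algebra isomorphism of that ambient algebra (and commutes with such conjugations), and $\mathcal H_{W;\vec T}(X)$ is by definition generated by the rank $1$ subalgebras $A_i:=\mathcal H_{\langle s_i\rangle,T_i}(X)$, applying $\dagger$ to a product $\mathcal D_{i_1}\cdots\mathcal D_{i_k}$ of sections of the $A_i$ reverses the order, and the intermediate conjugating twists telescope once one knows the per-$A_i$ identity; thus the claim reduces to the equality of subsheaves of operators
\[
A_i^\dagger=\sO_X(-D_{w_0}(\vec T))\otimes A_i\otimes\sO_X(D_{w_0}(\vec T))
\]
for each $i$.

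For this I would combine the rank $1$ adjoint proposition, which gives $A_i^\dagger=\sO_X(-D_{s_i}(\vec T))\otimes A_i\otimes\sO_X(D_{s_i}(\vec T))$ with $D_{s_i}(\vec T)=[X^{s_i}]-T_{\alpha_i}$, with the observation (twists pulled back from $X$ commute) that the two right-hand sides coincide as soon as conjugation by $\sO_X(F_i)$, where $F_i:=D_{w_0}(\vec T)-D_{s_i}(\vec T)$, is an automorphism of $A_i$. Writing $D_{w_0}(\vec T)=\sum_{\alpha\in\Phi^+(W)}([X^{r_\alpha}]-T_\alpha)$ one gets $F_i=\sum_{\alpha\in\Phi^+(W)\setminus\{\alpha_i\}}([X^{r_\alpha}]-T_\alpha)$, and the standard fact that $s_i$ permutes $\Phi^+(W)\setminus\{\alpha_i\}$, together with the equivariance $w(T_\alpha)=T_{w\alpha}$ and the transversality of distinct reflection hypersurfaces, shows that $F_i$ is $s_i$-invariant and, when $\vec T$ is transverse to every reflection hypersurface, has coefficient $0$ along each component of $[X^{s_i}]$.

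The claim that conjugation by $\sO_X(F_i)$ preserves $A_i$ is then local along reflection hypersurfaces, and the only nontrivial case is a component $P$ of $[X^{s_i}]$. Taking a local equation $h$ of $F_i$ near the generic point of $P$, $s_i$-invariance of $F_i$ makes $h/{}^{s_i}h$ a unit; since $F_i$ is trivial along $P$ while $s_i$ acts by $-1$ on the conormal bundle of $P$, this unit restricts to $1$ on $P$, so $1-h/{}^{s_i}h$ vanishes along $P$. Conjugating a local section $f_0+f_1(s_i-1)$ of $A_i$ (with $f_0\in\sO_X$ and $f_1\in\sO_X([X^{s_i}]-T_i)$, hence with at most a simple pole along $[X^{s_i}]$) by $h$ then produces, by the same computation already carried out in the proof of the parameter-free adjoint proposition, a section of the same form; running the argument with $\sO_X(-F_i)$ as well gives the reverse inclusion, so conjugation by $\sO_X(F_i)$ is an automorphism of $A_i$ and the transverse case is complete.

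The remaining point, which I expect to be the only delicate one (the core computation being essentially a copy of the parameter-free case), is that for special $\vec T$ some $T_\alpha$ with $\alpha\neq\alpha_i$ may contain a component of $[X^{s_i}]$, so $F_i$ need not have even coefficient there and the local computation can fail. I would handle this exactly as the bad-parameter case of Lemma \ref{lem:Bruhat_fin_parm}: embed $(X/S,\vec T)$ into the universal family over a suitable product of symmetric powers of the coroot curves, over which the tautological parameters are transverse to every reflection hypersurface, and use that $D_{w_0}(\vec T)$ and the adjoint both respect base change while $\mathcal H_{W;\vec T}(X)$ is a flat family of sheaves (with subquotients the line bundles of the Bruhat filtration, Lemma \ref{lem:Bruhat_fin_parm}), so the isomorphism over the generic member descends to the original family.
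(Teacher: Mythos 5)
Your proposal is correct and follows the same route as the paper's (one-line) proof: reduce to the rank~1 subalgebras via the anti-homomorphism property, invoke the rank~1 adjoint proposition, and observe that the residual twist by $\sO_X(D_{w_0}(\vec T)-D_{s_i}(\vec T))$ has no effect because this divisor is $s_i$-invariant with trivial valuation along $[X^{s_i}]$. Your added care about possibly bad parameters (via base change from the universal family) goes slightly beyond what the paper spells out, but is consistent with how the paper handles similar degenerations elsewhere (e.g.\ in the proof of Lemma~\ref{lem:Bruhat_fin_parm}), and in fact is not strictly needed here: for a finite Weyl group acting by reflections, distinct roots give distinct coroot maps, so $T_\alpha$ for $\alpha\neq\alpha_i$ can never contain a component of $[X^{s_i}]$, which makes the transversality hypothesis you invoke automatic.
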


\begin{proof}
 Again, it suffices to prove that the adjoint identifies the corresponding
 rank~1 subalgebras, and one finds that twisting by
 $\sO_X\big(D_{w_0}(\vec{T})-D_{s_i}(\vec{T})\big)$ has no effect, so the claim
 follows from the rank~1 case.
\end{proof}

One important special case is when $T_\alpha=[X^{r_\alpha}]$ (which
descends to the coroot curve since it is the preimage of the identity under
the composition~$X\to E'_r\to E_r$). In~that case, we~find that the rank~1
subalgebras are just the twisted group algebras $\sO_X[\langle s\rangle]$,
and thus that the full algebra is itself simply equal to~$\sO_X[W]$.

One disadvantage of the approach via rank~1 subalgebras is that it is not
particularly convenient when trying to determine whether a given operator
is a (local) section of the Hecke algebra. For~this, it will be helpful to
have a generalization of Proposition~\ref{prop:generic_rank_one_as_intersection}.

\begin{prop}\label{prop:finite_hecke_as_intersection}
 The algebra ${\cal H}_{W;\vec{T}}(X)$ is contained in the subalgebra of~${\cal H}_W(X)$ preserving the subsheaf $\sO_X\big({-}\sum_{\alpha\in
 \Phi^+(W)} T_\alpha\big)\subset \sO_X$,
 with equality holding unless there is a root $\alpha$ such that~$T_\alpha$ and $T_{-\alpha}$ have a common component.
\end{prop}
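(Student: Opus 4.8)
The plan is to realize the subalgebra in the statement explicitly and compare it to ${\cal H}_{W;\vec T}(X)$ through the Bruhat filtration of Lemma~\ref{lem:Bruhat_fin_parm}. Write $T_+:=\sum_{\alpha\in\Phi^+(W)}T_\alpha$ and let $\mathcal{A}\subseteq{\cal H}_W(X)$ be the subsheaf of operators preserving $\sO_X(-T_+)$; locally it is the intersection of ${\cal H}_W(X)$ with its conjugate by an equation $h$ of $T_+$. A composition of operators preserving $\sO_X(-T_+)$ again preserves it, so $\mathcal{A}$ is a subsheaf of algebras, and it therefore suffices for the first assertion to check that each rank~$1$ generator ${\cal H}_{\langle s_i\rangle,T_i}(X)$ lies in $\mathcal{A}$. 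For this I would split $T_+=T_{\alpha_i}+T'_i$ with $T'_i:=\sum_{\alpha\in\Phi^+(W)\setminus\{\alpha_i\}}T_\alpha$, and note that $T'_i$ is $s_i$-invariant since $s_i$ permutes $\Phi^+(W)\setminus\{\alpha_i\}$ and $w(T_\alpha)=T_{w\alpha}$. Taking local equations $h_i,h'$ for $T_{\alpha_i},T'_i$: Proposition~\ref{prop:generic_rank_one_as_intersection} says conjugation by $h_i$ carries a local section of ${\cal H}_{\langle s_i\rangle,T_i}(X)$ into ${\cal H}_{\langle s_i\rangle}(X)$, and further conjugation by $h'$ only multiplies the coefficient of $s_i$ by ${}^{s_i}h'/h'$, which is a unit because $\div({}^{s_i}h'/h')={}^{s_i}T'_i-T'_i=0$; so the operator conjugated by $h=h_ih'$ still lies in ${\cal H}_{\langle s_i\rangle}(X)\subseteq{\cal H}_W(X)$. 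Thus ${\cal H}_{\langle s_i\rangle,T_i}(X)\subseteq\mathcal{A}$, hence ${\cal H}_{W;\vec T}(X)\subseteq\mathcal{A}$.

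For the equality, fix a maximal chain of Bruhat order ideals; it induces compatible filtrations on ${\cal H}_{W;\vec T}(X)\subseteq\mathcal{A}\subseteq{\cal H}_W(X)$ by the subsheaves $(-)[I]$. Since the inclusion ${\cal H}_{W;\vec T}(X)\hookrightarrow\mathcal{A}$ is filtered, a five-lemma induction on the associated short exact sequences reduces the equality to showing, for each $w$ maximal in the relevant ideal $I$, that the ``coefficient of $w$'' map on $\mathcal{A}[I]$ has image exactly $\sO_X(D_w(\vec T))$. That $\sO_X(D_w(\vec T))$ is contained in this image is immediate from Lemma~\ref{lem:Bruhat_fin_parm} together with ${\cal H}_{W;\vec T}(X)[I]\subseteq\mathcal{A}[I]$, so it remains to bound from below the coefficient of $w$ of a local section $\sum_u c_u u\in\mathcal{A}$. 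As in Lemma~\ref{lem:Bruhat_fin_noparm}, maximality of $w$ and membership in ${\cal H}_W(X)$ force the polar divisor of $c_w$ to be $\le D_w$; applying this instead to $h^{-1}\bigl(\sum_u c_u u\bigr)h\in{\cal H}_W(X)$, which has the same support, and using that its $w$-coefficient is $c_w\cdot{}^wh/h$ with $\div({}^wh/h)={}^wT_+-T_+=\sum_{\gamma\in N(w)}(T_{-\gamma}-T_\gamma)$, where $N(w):=\Phi^+(W)\cap w\Phi^-(W)$, one gets that the polar divisor of $c_w\cdot{}^wh/h$ is also $\le D_w$. Combining these prime by prime yields, for every prime divisor $P$,
\[
\ord_P(c_w)\ \ge\ -\ord_P(D_w)+\max\!\Bigl(0,\ \textstyle\sum_{\gamma\in N(w)}\bigl(\ord_P(T_\gamma)-\ord_P(T_{-\gamma})\bigr)\Bigr).
\]

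The step I expect to be the main obstacle is showing that this maximum equals $\sum_{\gamma\in N(w)}\ord_P(T_\gamma)$ — equivalently $c_w\in\sO_X(D_w(\vec T))$, since $D_w(\vec T)=D_w-\sum_{\gamma\in N(w)}T_\gamma$ — which is exactly where the hypothesis on $\vec T$ enters. If $\ord_P(T_{\gamma_0})>0$ for some $\gamma_0\in N(w)$, I would rule out $\ord_P(T_{-\gamma_1})>0$ for any $\gamma_1\in N(w)$: by our standing assumption each $T_\gamma$ is pulled back from the coroot curve $E'_{r_\gamma}=A/(1+r_\gamma)A$, hence supported on translates of the codimension-$1$ abelian subvariety $(1+r_\gamma)A=A^{r_\gamma,0}$; so a prime divisor that were a component of both $T_{\gamma_0}$ and $T_{-\gamma_1}$ (note $r_{-\gamma_1}=r_{\gamma_1}$) would be a translate of $A^{r_{\gamma_0},0}$ and of $A^{r_{\gamma_1},0}$, forcing $A^{r_{\gamma_0},0}=A^{r_{\gamma_1},0}$, hence $r_{\gamma_0}=r_{\gamma_1}$ and $\gamma_1=\pm\gamma_0$; but $\gamma_1=-\gamma_0$ is impossible as $\gamma_0,\gamma_1\in\Phi^+(W)$, and $\gamma_1=\gamma_0$ would exhibit $P$ as a common component of $T_{\gamma_0}$ and $T_{-\gamma_0}$, contrary to hypothesis. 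Hence the inequality above is sharp, ${\cal H}_{W;\vec T}(X)\hookrightarrow\mathcal{A}$ is an isomorphism on associated graded, and so an isomorphism. A secondary nuisance to handle carefully is that $T_+$ is not $W$-invariant, so $h$ exists only locally on $X$ rather than on $X/W$; but, exactly as in the proof of the corollary to Lemma~\ref{lem:Bruhat_fin_parm}, one argues on a suitable open cover and the conclusion descends.
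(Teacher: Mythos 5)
Your overall approach matches the paper's: reduce containment to the rank~$1$ subalgebras via conjugation by a local equation $h$ of $T_+$, then prove equality by comparing the two sides along the Bruhat filtration, where the computation $T_+-{}^wT_+=\sum_{\gamma\in N(w)}(T_\gamma-T_{-\gamma})$ and the coroot-curve transversality argument (so that $T_\gamma$ and $T_{-\gamma'}$ share a component only when $\gamma=\gamma'$) are exactly the paper's points.

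There is, however, a genuine gap in the containment step. You argue that conjugation by $h'$ multiplies the $s_i$-coefficient by ${}^{s_i}h'/h'$, and then conclude from this being a \emph{unit} that the result still lies in ${\cal H}_{\langle s_i\rangle}(X)$. That inference is false as stated: the rank~$1$ algebra is cut out not only by a pole bound on the $s_i$-coefficient but also by the residue condition that $c_1+c_{s_i}$ be holomorphic along $[X^{s_i}]$, and multiplying $c_{s_i}$ by an arbitrary unit destroys that condition (e.g.\ $c_1=1/z$, $c_{s_i}=-1/z$, $u(0)=2$). What you actually need is that ${}^{s_i}h'/h'$ restricts to $1$ along $[X^{s_i}]$. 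This follows from two facts you never state: first, that $T'_i=\sum_{\alpha\ne\alpha_i}T_\alpha$ has trivial valuation along $[X^{s_i}]$ (by the same coroot-curve transversality you invoke later for the equality step, since each $T_\alpha$ with $\alpha\ne\pm\alpha_i$ is pulled back from a different coroot map), so $h'$ is a unit at the generic points of $[X^{s_i}]$; and second, that $s_i$ fixes $[X^{s_i}]$ pointwise, so ${}^{s_i}h'$ and $h'$ agree there. This is precisely the content of the paper's remark that $T_i-T_+$ ``has trivial valuation along the components of $[X^{s_i}]$''; once you supply it, your proof goes through.
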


\begin{proof}
 Containment reduces to showing that the rank~1 subalgebras preserve the
 given subsheaf. Since the simple reflection~$s_i$ permutes the positive
 roots other than $\alpha_i$, the divisor $T_i-\sum_{\alpha\in
 \Phi^+(W)}T_\alpha$ is $s_i$-invariant, and has trivial valuation along
 the components of~$[X^{s_i}]$. It~follows that on the corresponding rank~1 subalgebra, preserving $\sO_X\big({-}\sum_{\alpha\in \Phi^+(W)} T_\alpha\big)$
 is equivalent to preserving $\sO_X(-T_i)$, at which point the claim is
 just Proposition~\ref{prop:generic_rank_one_as_intersection}.

 Using the Bruhat filtration, we~see that equality holds whenever
 \begin{gather*}
 \sO_X\big(D_w(\vec{T})\big)
 =
 \sO_X(D_w)
 \cap
 \sO_X\bigg(D_w -\sum_{\alpha\in \Phi^+(W)} T_\alpha
 +w\bigg(
\sum_{\alpha\in \Phi^+(W)} T_\alpha\bigg)\!
\bigg).
 \end{gather*}
 Since
 \begin{gather*}
 \sum_{\alpha\in \Phi^+(W)} \!\!T_\alpha
 -w\bigg(
\sum_{\alpha\in \Phi^+(W)}\!\! T_\alpha\bigg)
 =
 \sum_{\alpha\in \Phi^+(W)}\!\! T_\alpha
 -\sum_{\alpha\in \Phi^+(W)}\!\! T_{w\alpha}
=
 \sum_{\alpha\in \Phi^+(W)\cap w\Phi^-(W)}\!\! (T_\alpha-T_{-\alpha}),
 \end{gather*}
 we have equality as long as there is no cancellation, i.e.,
 unless there is a positive root $\alpha$ and a~negative
 root $\beta$ such that $T_\alpha$ and $T_\beta$ have a common component. If~$\beta\ne -\alpha$, then the two divisors are pulled back through
 different coroot maps, and thus cannot have a common component, so~only
 the case $T_\alpha$, $T_{-\alpha}$ is relevant, and the claim follows.
\end{proof}

As in the rank~1 case, the restriction on the divisors is not particularly
serious, as~we can always obtain the algebra we want as the base change of
a more general family. In~particular, if $S'$ is an appropriate product of
relative symmetric powers of coroot curves, then there is a~corresponding
tautological system of parameters $\vec{T}'$ on the base change to~$S'$,
and the original system $\vec{T}$ is the pullback along a suitable section~$S\to S'$.

\begin{cor}
 There is a natural isomorphism
 \begin{gather*}
 \sO_X\bigg(
\sum_{\alpha\in \Phi^+(W)} T_\alpha\bigg)
\otimes {\cal H}_{W;\vec{T}}(X)\otimes \sO_X
\bigg({-}\sum_{\alpha\in \Phi^+(W)} T_\alpha\bigg)
 \cong {\cal H}_{W;{}^-\vec{T}}(X),
 \end{gather*}
 where ${}^-T_\alpha:=T_{-\alpha}$.
\end{cor}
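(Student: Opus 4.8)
The plan is to realize the claimed isomorphism as conjugation by the invertible sheaf ${\cal L}:=\sO_X(\sum_{\alpha\in\Phi^+(W)}T_\alpha)$. First note that ${}^-\vec{T}$ (with ${}^-T_\alpha:=T_{-\alpha}$) is again a system of parameters for $W$ on $X$: it satisfies $w({}^-T_\alpha)=T_{-w\alpha}={}^-T_{w\alpha}$, and ${}^-T_\alpha$ descends to the coroot curve of $r_{-\alpha}=r_\alpha$. The operation $\mathcal{D}\mapsto {\cal L}\otimes\mathcal{D}\otimes{\cal L}^{-1}$ is an algebra automorphism of the sheaf of meromorphic twisted group algebras $k(X)[W]$ on $X/W$ (locally it is conjugation by a section of $k(X)^*$, and these patch because the transition functions of ${\cal L}$ cancel). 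Since ${\cal H}_{W;\vec{T}}(X)$ is by definition the subalgebra of ${\cal H}_W(X)$ generated by the rank $1$ algebras ${\cal H}_{\langle s_i\rangle,T_{\alpha_i}}(X)$, it will suffice to show that this automorphism carries each ${\cal H}_{\langle s_i\rangle,T_{\alpha_i}}(X)$ isomorphically onto ${\cal H}_{\langle s_i\rangle,T_{-\alpha_i}}(X)={\cal H}_{\langle s_i\rangle,{}^-\vec{T}}(X)$; it then carries the algebra they generate onto the algebra generated by the latter, which is exactly ${\cal H}_{W;{}^-\vec{T}}(X)$, and the resulting identification is manifestly natural.

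To compute the effect on the $i$-th rank $1$ subalgebra, write $\sum_{\alpha\in\Phi^+(W)}T_\alpha = T_{\alpha_i}+D_i$ with $D_i:=\sum_{\alpha\in\Phi^+(W)\setminus\{\alpha_i\}}T_\alpha$. Since $s_i$ permutes $\Phi^+(W)\setminus\{\alpha_i\}$ and $w(T_\alpha)=T_{w\alpha}$, the divisor $D_i$ is $s_i$-invariant; moreover no $T_\alpha$ with $\alpha\ne\pm\alpha_i$ has $[X^{s_i}]$ as a component, since $T_\alpha$ is pulled back from the coroot curve $E'_{r_\alpha}$ and $[X^{s_i}]$ is not contained in a fibre of $X\to E'_{r_\alpha}$ (the root curves $E_{s_i}$ and $E_{r_\alpha}$ being distinct, $A^{s_i0}\not\subseteq A^{r_\alpha 0}$). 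Hence, exactly as in the proof that the adjoint isomorphism restricts to the rank $1$ subalgebras, conjugation by $\sO_X(D_i)$ has no effect on ${\cal H}_{\langle s_i\rangle,T_{\alpha_i}}(X)$: on an $s_i$-invariant open set where $D_i$ is cut out by $h=0$, the ratio $u:=h/{}^{s_i}h$ is a unit (as $s_iD_i=D_i$) and $u-1$ vanishes along $[X^{s_i}]$ (as $s_i$ fixes it pointwise), so $h(f_0+f_1(s_i-1))h^{-1} = (f_0+(u-1)f_1)+uf_1(s_i-1)$ is again a local section of ${\cal H}_{\langle s_i\rangle,T_{\alpha_i}}(X)$. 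Therefore ${\cal L}\otimes(-)\otimes{\cal L}^{-1}$ agrees on ${\cal H}_{\langle s_i\rangle,T_{\alpha_i}}(X)$ with conjugation by $\sO_X(T_{\alpha_i})$, which — viewing $X$ over $X/A_{\langle s_i\rangle}$ as a flat family of hyperelliptic curves of genus $1$, so that ${\cal H}_{\langle s_i\rangle,T_{\alpha_i}}(X)$ is a rank $1$ Hecke algebra ${\cal H}_{A_1,T_{\alpha_i}}(X)$ in the sense of the earlier definition — carries it onto ${\cal H}_{\langle s_i\rangle,{}^{s_i}T_{\alpha_i}}(X)={\cal H}_{\langle s_i\rangle,T_{-\alpha_i}}(X)$ by the rank $1$ isomorphism $\sO_C(T)\otimes{\cal H}_{A_1,T}(C)\otimes\sO_C(-T)\cong{\cal H}_{A_1,{}^sT}(C)$.

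Combining these, ${\cal L}\otimes(-)\otimes{\cal L}^{-1}$ sends each generator ${\cal H}_{\langle s_i\rangle,T_{\alpha_i}}(X)$ of ${\cal H}_{W;\vec{T}}(X)$ onto the corresponding generator of ${\cal H}_{W;{}^-\vec{T}}(X)$, and the claim follows. The one place needing care — the main obstacle — is the rank $1$ reduction of the second paragraph: one must check that conjugation by the ``complementary'' divisor $D_i$ really is trivial on ${\cal H}_{\langle s_i\rangle,T_{\alpha_i}}(X)$, which is where the $s_i$-invariance of $D_i$ and the fact that $[X^{s_i}]$ is not a component of $D_i$ enter, so that conjugation by ${\cal L}$ collapses to the already-established rank $1$ isomorphism; everything else is formal. (One could instead argue through Proposition \ref{prop:finite_hecke_as_intersection}, after first reducing by base change to a universal family in which $T_\alpha$ and $T_{-\alpha}$ never share a component, but the direct argument above avoids this detour.)
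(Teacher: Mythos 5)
Your proof is correct, but it takes a genuinely different route from the one the paper has in mind. The Corollary appears immediately after Proposition~\ref{prop:finite_hecke_as_intersection} precisely because the intended argument goes through that intersection description: after base-changing to a universal family where $T_\alpha$ and $T_{-\alpha}$ never share a component, one writes ${\cal H}_{W;\vec{T}}(X)$ as the subalgebra of ${\cal H}_W(X)$ preserving $\sO_X(-\sum_{\alpha\in\Phi^+}T_\alpha)$, conjugates by $\sO_X(\sum_{\alpha\in\Phi^+}T_\alpha)$, and then uses that the $W$-invariant divisor $\sum_{\alpha\in\Phi}T_\alpha$ descends in codimension~1 (so twisting by it is trivial on ${\cal H}_W(X)$) to turn the result into the operators preserving $\sO_X(-\sum_{\alpha\in\Phi^+}T_{-\alpha})={\cal H}_{W;{}^-\vec{T}}(X)$'s defining subsheaf. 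You instead conjugate the generators: since ${\cal H}_{W;\vec{T}}(X)$ is generated by the rank~1 subalgebras and conjugation by ${\cal L}$ is an automorphism of the ambient sheaf algebra, you only need to track each ${\cal H}_{\langle s_i\rangle,T_{\alpha_i}}$, which you do by splitting ${\cal L}$ into $\sO_X(T_{\alpha_i})\otimes\sO_X(D_i)$, showing the $D_i$-part acts trivially (your explicit local computation is correct), and then invoking the already-proved rank~1 isomorphism. Your route is arguably more elementary — it delegates the base-change step to the rank~1 proposition and avoids any direct appeal to the global intersection description — at the cost of being slightly longer. Both arguments ultimately lean on the same geometric input, namely that $T_\alpha$ with $\alpha\ne\pm\alpha_i$ cannot have $[X^{s_i}]$ as a component.

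One small criticism of the justification for that last fact: you write that since the root curves $E_{s_i}$ and $E_{r_\alpha}$ are distinct, $A^{s_i 0}\not\subseteq A^{r_\alpha 0}$, but that implication is not immediate — in principle two distinct involutions can have the same connected fixed subvariety while having different root curves (such a pair just cannot generate a finite group). What one actually needs is that distinct reflections in a Coxeter action by reflections have distinct codimension-1 fixed loci, equivalently that the coroot quotient maps $X\to E'_{r_\alpha}$ have distinct kernels; this is true here and is the same fact the paper uses tacitly in the proof of Proposition~\ref{prop:finite_hecke_as_intersection} (``pulled back through different coroot maps''), so you are not worse off than the paper — but the parenthetical deduction you give is a non sequitur as written and would be better replaced by a direct reference to the distinctness of the coroot maps.
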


Another source of isomorphisms is diagram automorphisms.

\begin{cor}
 Let $\delta$ be an automorphism of~$X$ over $S$ such that composition
 with~$\delta$ permutes the set of positive coroot maps. Then $\delta$
 normalizes $W$, and the induced action on~${\cal H}_W(X)$ preserves
 ${\cal H}_{W;\vec{T}}(X)$ for all $\vec{T}$.
\end{cor}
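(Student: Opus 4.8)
The plan is to reduce the statement to the description of ${\cal H}_{W;\vec{T}}(X)$ as the subalgebra of ${\cal H}_W(X)$ generated by the rank $1$ subalgebras ${\cal H}_{\langle s_i\rangle,T_{\alpha_i}}(X)$, and to check that the automorphism of ${\cal H}_W(X)$ induced by a normalizing $\delta$ simply permutes these generators.

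First I would argue that $\delta$ normalizes $W$. Each reflection $r_\alpha$ is recovered from its coroot map: since $(1-r_\alpha)(1+r_\alpha)=0$, the morphism $1-r_\alpha$ factors through the coroot map $X\to C_{r_\alpha}$, and the reflection hypersurface $[X^{r_\alpha}]$ is precisely the preimage under that coroot map of the (canonical, finite) fibre over the identity of $C_{r_\alpha}\to E_{r_\alpha}$; moreover an automorphism of $X/S$ fixing $[X^{r_\alpha}]$ pointwise is either the identity or $r_\alpha$ (if $\sigma$ fixes the codimension $1$ subvariety $A^{r_\alpha 0}$ pointwise then $\im(1-\sigma)$ is at most one–dimensional, so $\sigma$ is trivial or a reflection, and in the latter case has the same fixed component as $r_\alpha$, hence equals $r_\alpha$ by distinctness of root curves together with the polarization). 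Since composition with $\delta$ permutes the positive coroot maps, it therefore permutes the hypersurfaces $[X^r]$, $r\in R(W)$, compatibly with a positivity–preserving permutation $\alpha\mapsto\delta\alpha$ of $\Phi^+(W)$, so $\delta r_\alpha\delta^{-1}=r_{\delta\alpha}\in W$; as the $r_{\alpha_i}$ generate $W$ this gives $\delta W\delta^{-1}=W$, with $\delta$ acting as a diagram automorphism $s_i\mapsto s_{\sigma(i)}$. Note also that $\delta$ must fix each conjugacy class of reflections, since the positive coroot maps of distinct classes have distinct targets, so none of them can be carried by $\delta$ into another class.

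Next I would trace through the induced automorphism $\delta_\#\colon\sum_w c_w w\mapsto\sum_w{}^{\delta}c_{\delta^{-1}w\delta}\,w$ of ${\cal H}_W(X)$ (available by the construction of induced automorphisms recalled above, since $\delta$ normalizes $W$). A local section $f_0+f_1(s_i-1)$ of ${\cal H}_{\langle s_i\rangle,T_{\alpha_i}}(X)$, so $f_0$ holomorphic and $f_1\in\sO_X([X^{s_i}]-T_{\alpha_i})$, is sent to ${}^{\delta}f_0+({}^{\delta}f_1)(s_{\sigma(i)}-1)$, where ${}^{\delta}f_1\in\sO_X\big(\delta([X^{s_i}])-\delta(T_{\alpha_i})\big)=\sO_X\big([X^{s_{\sigma(i)}}]-\delta(T_{\alpha_i})\big)$. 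The key point is the identity $\delta(T_{\alpha_i})=T_{\alpha_{\sigma(i)}}$: by hypothesis on $\vec{T}$ each $T_{\alpha_i}$ is the pullback along the coroot map of a divisor on the coroot curve that depends only on the conjugacy class of $r_{\alpha_i}$, and $\delta$ intertwines the coroot map of $\alpha_i$ with that of $\alpha_{\sigma(i)}$ through the canonical identification of coroot curves within a class, so the image of $T_{\alpha_i}$ under $\delta$ is the pullback of that same class divisor along the coroot map of $\alpha_{\sigma(i)}$. Hence $\delta_\#$ carries ${\cal H}_{\langle s_i\rangle,T_{\alpha_i}}(X)$ isomorphically onto ${\cal H}_{\langle s_{\sigma(i)}\rangle,T_{\alpha_{\sigma(i)}}}(X)$. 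Since ${\cal H}_{W;\vec{T}}(X)$ is generated as an algebra by the ${\cal H}_{\langle s_i\rangle,T_{\alpha_i}}(X)$ and $\delta_\#$ permutes this generating set, $\delta_\#$ preserves ${\cal H}_{W;\vec{T}}(X)$; applying the same to $\delta^{-1}$, which satisfies the same hypothesis, yields equality.

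I expect the only genuinely delicate step to be the identity $\delta(T_{\alpha_i})=T_{\alpha_{\sigma(i)}}$, together with the care needed to unwind "composition with $\delta$ permutes the positive coroot maps" into a statement precise enough to feed into it — in particular, extracting from that hypothesis that $\delta$ respects both the partition of roots into conjugacy classes and the per–class divisor data carried by $\vec{T}$ (which is also what makes the conclusion hold for \emph{every} $\vec{T}$). Once that bookkeeping is pinned down, the normalization of $W$ and the invariance of ${\cal H}_{W;\vec{T}}(X)$ are both formal consequences of the generation statement and the explicit description of the rank $1$ algebras, and no new estimates or constructions are required.
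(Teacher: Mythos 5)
Your proof is correct, but it proceeds differently from the paper's.  The paper's one-sentence proof observes that $\delta$ preserves the set of simple coroot maps (hence normalizes $W$ via a diagram automorphism) and preserves the divisor $\sum_{\alpha\in\Phi^+(W)}T_\alpha$, and then invokes Proposition \ref{prop:finite_hecke_as_intersection}, which characterizes ${\cal H}_{W;\vec{T}}(X)$ (generically, and then by base change for all $\vec{T}$) as the subalgebra of ${\cal H}_W(X)$ preserving $\sO_X(-\sum_{\alpha\in\Phi^+(W)}T_\alpha)$; a $\delta$ preserving both ${\cal H}_W(X)$ and that subsheaf preserves the intersection.  You instead work directly from the definition of ${\cal H}_{W;\vec{T}}(X)$ as the subalgebra generated by the rank $1$ algebras ${\cal H}_{\langle s_i\rangle,T_{\alpha_i}}(X)$, and verify that the induced automorphism of ${\cal H}_W(X)$ permutes this generating set.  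Your route has the advantage of not relying on the genericity hypothesis needed for equality in Proposition \ref{prop:finite_hecke_as_intersection} (and the attendant flatness/base-change workaround), since generation holds unconditionally; the paper's route has the advantage of brevity and avoids having to unwind $\delta(T_{\alpha_i})=T_{\alpha_{\sigma(i)}}$ explicitly.  That unwinding you do correctly: the essential observation — that composition with $\delta$ fixes the target of each coroot map, hence preserves conjugacy classes of reflections, hence preserves the per-class divisor data defining $\vec{T}$ — is exactly what is needed.  The one place you could streamline is the argument that $\delta$ normalizes $W$: since the simple coroot maps are precisely the minimal elements of the set of positive coroot maps (under positive linear combination), any permutation of the positive coroot maps induced by composing with a single automorphism necessarily permutes the simple ones; there is no need to go through the fixed hypersurfaces and the polarization argument.
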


\begin{proof}
 The assumption on~$\delta$ (and finiteness of~$W$) implies that $\delta$
 preserves the set of simple coroot maps as well as the divisor
 $\sum_{\alpha\in \Phi^+(W)} T_\alpha$.
\end{proof}

\begin{cor}
 Let $w_0$ be the longest element of~$W$. Then the action of~$w_0$ on~${\cal H}_W(X)$ takes~${\cal H}_{W;\vec{T}}$ to~${\cal
 H}_{W;{}^-\vec{T}}$.
\end{cor}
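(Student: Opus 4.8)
The plan is to work directly from the definition of ${\cal H}_{W;\vec{T}}(X)$ as the subalgebra of ${\cal H}_W(X)$ generated by the rank $1$ pieces ${\cal H}_{\langle s_i\rangle,T_i}(X)$ (so that no genericity hypothesis on $\vec{T}$ is needed), together with the standard fact that $-w_0$ acts on the set of simple roots as a diagram automorphism. First, note that the action of $w_0$ on ${\cal H}_W(X)$ — which was defined for an automorphism $\alpha$ of $X$ normalizing $W$ by $\sum_w c_w w\mapsto \sum_w{}^{\alpha}c_{\alpha^{-1}w\alpha}\,w$ — is, when $\alpha=w_0\in W$, nothing but conjugation by the unit $w_0\in\sO_X[W]\subset{\cal H}_W(X)$: indeed $w_0\bigl(\sum_w c_w w\bigr)w_0^{-1}=\sum_w{}^{w_0}c_{w_0^{-1}ww_0}\,w$. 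In particular it is an algebra automorphism of ${\cal H}_W(X)$, and since $w_0$ is an involution it is its own inverse. Hence it suffices to show that conjugation by $w_0$ carries each rank $1$ generator of ${\cal H}_{W;\vec{T}}(X)$ into ${\cal H}_{W;{}^-\vec{T}}(X)$; the reverse inclusion then follows by applying the same statement with $\vec{T}$ replaced by ${}^-\vec{T}$, using $({}^-)^-\vec{T}=\vec{T}$.

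For each simple reflection $s_i$ put $s_{i'}:=w_0s_iw_0^{-1}=s_{w_0\alpha_i}$. Since $-w_0$ permutes the simple roots, $s_{i'}$ is again a simple reflection, with $\alpha_{i'}=-w_0\alpha_i$ (and $i\mapsto i'$ an involution of the index set). Now let $f_0+f_1(s_i-1)$ be a local section of ${\cal H}_{\langle s_i\rangle,T_i}(X)$, so $f_0$ is holomorphic and $f_1$ is a section of $\sO_X([X^{s_i}]-T_{\alpha_i})$. Conjugating by $w_0$ gives ${}^{w_0}f_0+{}^{w_0}f_1\,(s_{i'}-1)$, and here ${}^{w_0}f_1$ is a section of
\[
{}^{w_0}\bigl(\sO_X([X^{s_i}]-T_{\alpha_i})\bigr)=\sO_X([X^{s_{i'}}]-T_{w_0\alpha_i})=\sO_X([X^{s_{i'}}]-T_{-\alpha_{i'}}),
\]
where I have used ${}^{w_0}[X^{s_i}]=[X^{w_0s_iw_0^{-1}}]=[X^{s_{i'}}]$ and the equivariance $w(T_\alpha)=T_{w\alpha}$ of the system of parameters. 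But this is exactly the defining condition for a local section of ${\cal H}_{\langle s_{i'}\rangle,T_{-\alpha_{i'}}}(X)={\cal H}_{\langle s_{i'}\rangle,({}^-\vec{T})_{i'}}(X)$, so $w_0\,{\cal H}_{\langle s_i\rangle,T_i}(X)\,w_0^{-1}\subseteq{\cal H}_{W;{}^-\vec{T}}(X)$.

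Since $i\mapsto i'$ is a bijection of the simple reflections, the images under conjugation by $w_0$ of all the rank $1$ generators of ${\cal H}_{W;\vec{T}}(X)$ lie in ${\cal H}_{W;{}^-\vec{T}}(X)$; as conjugation by $w_0$ is an algebra map, it follows that $w_0\,{\cal H}_{W;\vec{T}}(X)\,w_0^{-1}\subseteq{\cal H}_{W;{}^-\vec{T}}(X)$, and applying this with $\vec{T}$ replaced by ${}^-\vec{T}$ yields the opposite inclusion, hence equality. I do not expect any serious obstacle here; the one point that needs care is tracking the sign in $\alpha_{i'}=-w_0\alpha_i$, so that the conjugated parameter $T_{w_0\alpha_i}$ is correctly identified with $T_{-\alpha_{i'}}=({}^-\vec{T})_{i'}$ rather than with $T_{\alpha_{i'}}$. (Alternatively, one can view this corollary as the counterpart of the preceding corollary on diagram automorphisms $\delta$ permuting the positive coroot maps: $w_0$ instead reverses positivity, which is precisely what converts $\vec{T}$ into ${}^-\vec{T}$.)
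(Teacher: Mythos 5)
Your proof is correct, and it takes a mildly different route from the paper's. The paper factors the $w_0$-action as $[-1]\circ([-1]w_0)$, appeals to the immediately preceding corollary (that the diagram automorphism $[-1]w_0$ preserves ${\cal H}_{W;\vec{T}}(X)$ for all $\vec{T}$), and then disposes of $[-1]$ by noting it commutes with $W$ and takes $\sum_{\alpha\in\Phi^+}T_\alpha$ to $\sum_{\alpha\in\Phi^+}T_{-\alpha}$. You instead work directly with the defining generators: conjugation by $w_0$ permutes the rank-$1$ subalgebras via $s_i\mapsto s_{i'}$ with $\alpha_{i'}=-w_0\alpha_i$, and carries the constraint divisor $T_{\alpha_i}$ to ${}^{w_0}T_{\alpha_i}=T_{w_0\alpha_i}=T_{-\alpha_{i'}}$, so each generator of ${\cal H}_{W;\vec{T}}(X)$ lands in ${\cal H}_{W;{}^-\vec{T}}(X)$, with equality by the involutivity of ${}^-$. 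Both arguments are correct; yours is slightly more self-contained (it does not invoke the divisor $\sum_{\alpha\in\Phi^+}T_\alpha$ nor the preceding corollary, and it manifestly handles all $\vec{T}$ with no genericity caveat), at the mild cost of tracking the index involution $i\mapsto i'$ and its sign, which you do correctly. The one thing worth making explicit, as you do parenthetically at the end, is that the real content here is exactly the sign reversal $\alpha_{i'}=-w_0\alpha_i$: $-w_0$ is a diagram automorphism, but $w_0$ itself reverses positivity, and that reversal is what turns $\vec{T}$ into ${}^-\vec{T}$.
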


\begin{proof}
 Since $[-1]w_0$ acts as a diagram automorphism, it suffices to show the
 corresponding fact for~$[-1]$, which clearly commutes with~$W$ and
 satisfies
 \begin{gather*}
 [-1]\bigg(
\sum_{\alpha\in \Phi^+(W)} T_\alpha\bigg) =
 \sum_{\alpha\in \Phi^+(W)} T_{-\alpha}.\tag*{\qed}
 \end{gather*}
\renewcommand{\qed}{}
\end{proof}

We now turn to modules over ${\cal H}_{W;\vec{T}}(X)$. Since we
constructed ${\cal H}_{W;\vec{T}}(X)$ as a space of operators, this gives
rise to a natural left module denoted $\sO_X$, which as a sheaf on~$X/W$ is the direct image of~$\sO_X$. This works more generally for any
$W$-equivariant line bundle ${\cal L}$ that descends in codimension~1, as~${\cal H}_{W;0}(X)$ still acts on such bundles. (We also have a
corresponding submodule ${\cal L}\big({-}\sum_{\alpha\in \Phi^+(W)} T_\alpha\big)$
coming from Proposition~\ref{prop:finite_hecke_as_intersection}, which we
will discuss more below.)

An important construction of modules comes from the fact that our algebras
are generated by the rank~1 subalgebras, and thus any parabolic subgroup
$W_I$ induces a corresponding parabolic subalgebra ${\cal
 H}_{W_I;\vec{T}|_{\Phi(W_I)}}(X)\subset {\cal H}_{W;\vec{T}}(X)$, which
by mild abuse of notation we denote by ${\cal H}_{W_I;\vec{T}}(X)$. As~a
result, given a (left) ${\cal H}_{W_I;\vec{T}}(X)$-module $M$, we~may
tensor with~${\cal H}_{W;\vec{T}}(X)$ to obtain an induced ${\cal
 H}_{W;\vec{T}}(X)$-module which we denote by $\Ind^{W;\vec{T}}_{W_I} M$,
or by $\Ind^{W;0}_{W_I}M$ when considering the analogous construction for
the master Hecke algebra; we also denote the corresponding restriction
functor as $\Res^{W;\vec{T}}_{W_I}$. Note that the restriction of the left
module associated to a line bundle is the left module associated to the
same line bundle.

If ${\cal L}$ is a $W$-equivariant line bundle on~$\sO_X$ that descends in
codimension~1, then we have a locally free module $\Ind^{W;\vec{T}}_1 {\cal
 L}$, which establishes a Morita autoequivalence of the category of~${\cal
 H}_{W;\vec{T}}(X)$-modules, which we denote by ${\cal L}\otimes{-}$. We
of course have a corresponding notion for right modules, with~$\Ind^{W;\vec{T}}_1 {\cal L}\cong {\cal L}\otimes {\cal H}_{W;\vec{T}}(X)
\cong {\cal H}_{W;\vec{T}}(X)\otimes {\cal L}$. Similarly, there is an
equivalence ${\cal L}\big({-}\sum_{\alpha\in \Phi^+(W)}T_\alpha\big)\otimes{-}$
taking ${\cal H}_{W;\vec{T}}(X)$-modules to~${\cal
 H}_{W;{}^-\vec{T}}(X)$-modules.

If we take the restriction of an induced module, we~would ordinarily expect
the result to split as a sum over double cosets. This fails even in the
case of the regular representation, as~${\cal H}_{W;\vec{T}}(X)$ does not
naturally split as a direct sum of~$\big({\cal H}_{W_I;\vec{T}}(X),{\cal
 H}_{W_J;\vec{T}}(X)\big)$-bimodules corresponding to double cosets. The~case
$I=J=\varnothing$ is suggestive however: although the Hecke algebra does not
split as a {\em sum} of line bundles indexed by $W$, our results on the
Bruhat filtration come fairly close. It turns out that there is a natural
Bruhat order on (parabolic) double cosets. Indeed, every double coset $W_I
w W_J$ has a unique minimal representative, and the restriction of Bruhat
order to the set of such representatives is well-behaved. (See, e.g.,
\cite{StembridgeJR:2005} and references therein.) In particular, for~any
order ideal in the set ${}^IW^J$ of minimal representatives, the
corresponding union of double cosets is an order ideal in~$W$. As~a
result, any order ideal in~${}^IW^J$ induces a~corresponding sub-bimodule
of~${\cal H}_{W;\vec{T}}(X)$, and thus a subfunctor of~$\Res_{W_I}^{W;\vec{T}}\Ind^{W;\vec{T}}_{W_J}$.

Given any element $w\in {}^IW^J$, the intersections $W_I\cap w W_J w^{-1}$
and $w^{-1} W_I w\cap W_J$ are both parabolic, giving subsets $I(w)\subset
I$, $J(w)\subset J$ such that $W_{I(w)}\cong W_{J(w)}$, extending in an
obvious way to an isomorphism of the corresponding Hecke algebras.

\begin{lem}
 For any $w\in {}^IW^J$, $D_w\big(\vec{T}\big)$ is $W_{I(w)}$-invariant, and has
 trivial valuation along the corresponding reflection hyperplanes.
\end{lem}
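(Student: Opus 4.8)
The plan is to reduce the whole statement to two purely combinatorial facts about the inversion set
$N:=\Phi^+(W)\cap w\Phi^-(W)$, using the description $D_w(\vec{T})=\sum_{\alpha\in N}([X^{r_\alpha}]-T_\alpha)$ recorded in the remark after Lemma \ref{lem:Bruhat_fin_parm}. Throughout I write $\Phi^\pm_I$ for $\Phi^\pm(W_I)$, and I use the standard characterization of $w\in{}^IW^J$: $w^{-1}\alpha_i\in\Phi^+(W)$ for $i\in I$ and $w\alpha_j\in\Phi^+(W)$ for $j\in J$. Since a root which is a nonnegative integer combination of positive roots is itself positive, these give $w^{-1}\Phi^+_I\subseteq\Phi^+(W)$ and $w\Phi^+_J\subseteq\Phi^+(W)$, equivalently $w\Phi^-_J\subseteq\Phi^-(W)$. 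The two facts to establish are: (i) $N\cap\Phi_I=\emptyset$, and (ii) $w^{-1}N=w^{-1}\Phi^+(W)\cap\Phi^-(W)$ is disjoint from $\Phi^-_J$. For (i): if $\gamma\in N\cap\Phi_I$ then $\gamma\in\Phi^+_I$, so $w^{-1}\gamma\in w^{-1}\Phi^+_I\subseteq\Phi^+(W)$, contradicting $\gamma\in w\Phi^-(W)$. For (ii): any $\delta\in\Phi^-_J$ satisfies $w\delta\in w\Phi^-_J\subseteq\Phi^-(W)$, so $\delta\notin w^{-1}\Phi^+(W)$.

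Fact (i) settles the valuation claim. The reflection hyperplanes of $W_{I(w)}$ are the components of the divisors $[X^{r_\beta}]$ with $\beta$ a root of the reflection subgroup $W_{I(w)}\subseteq W_I$, hence $\beta\in\Phi_I$; by (i) neither $\beta$ nor $-\beta$ lies in $N$, so no $\alpha\in N$ has $r_\alpha=r_\beta$. Now for $\alpha\in N$ the divisor $T_\alpha$ is pulled back from the coroot curve of $r_\alpha$ and $[X^{r_\alpha}]$ is supported on cosets of $(1+r_\alpha)A$; since distinct reflections of a coroot-type action have distinct coroot curves (the Corollary in Section \ref{sec:cox_act}, dualized), the codimension-one subgroup $(1+r_\beta)A$ differs from $(1+r_\alpha)A$, so $[X^{r_\beta}]$ shares no component with $[X^{r_\alpha}]$ or with $T_\alpha$. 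Hence $D_w(\vec{T})$ has valuation $0$ along every component of every $[X^{r_\beta}]$, $\beta\in\Phi(W_{I(w)})$.

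For the $W_{I(w)}$-invariance I claim $u(N)=N$ for every $u\in W_{I(w)}=W_I\cap wW_Jw^{-1}$. Put $v:=w^{-1}uw\in W_J$ and fix $\gamma\in N$. By (i), $\gamma\in\Phi^+(W)\setminus\Phi_I$, and $W_I$ permutes $\Phi^+(W)\setminus\Phi_I$, so $u\gamma\in\Phi^+(W)$. By (ii), $w^{-1}\gamma\in\Phi^-(W)\setminus\Phi^-_J$, and $W_J$ permutes $\Phi^-(W)\setminus\Phi^-_J$, so $w^{-1}u\gamma=v(w^{-1}\gamma)\in\Phi^-(W)$. Thus $u\gamma\in\Phi^+(W)\cap w\Phi^-(W)=N$, giving $u(N)\subseteq N$; as $u$ permutes $\Phi(W)$ and $N$ is finite, $u(N)=N$. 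Since $u[X^{r_\alpha}]=[X^{r_{u\alpha}}]$ and $u(T_\alpha)=T_{u\alpha}$ (using the equivariance $w(T_\alpha)=T_{w\alpha}$ that is part of the definition of a system of parameters), we get $u(D_w(\vec{T}))=\sum_{\alpha\in N}([X^{r_{u\alpha}}]-T_{u\alpha})=\sum_{\alpha'\in u(N)}([X^{r_{\alpha'}}]-T_{\alpha'})=D_w(\vec{T})$.

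The only step that is not formal Coxeter combinatorics is the transversality input in the second paragraph — that $[X^{r_\beta}]$ has no common component with $[X^{r_\alpha}]$ or $T_\alpha$ when $r_\alpha\ne r_\beta$ — which I expect to be the main point to spell out carefully, resting on the distinctness of the (co)root curves of distinct reflections established in Section \ref{sec:cox_act}. The rest is bookkeeping, where the one thing to keep straight is the left-versus-right reducedness of ${}^IW^J$-representatives and its translation into the two inversion-set facts (i) and (ii).
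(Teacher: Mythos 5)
Your argument is correct, but it proves the $W_{I(w)}$-invariance by a genuinely different route than the paper. You show directly that the inversion set $N=\Phi^+(W)\cap w\Phi^-(W)$ is stable under $W_{I(w)}$, using the two combinatorial facts (i) $N\cap\Phi(W_I)=\emptyset$ (from $W_I$-minimality of $w$) and (ii) $w^{-1}N\cap\Phi^-(W_J)=\emptyset$ (from $W_J$-minimality); invariance of the divisor then falls out of the equivariance of $\alpha\mapsto[X^{r_\alpha}]$ and $\alpha\mapsto T_\alpha$. The paper instead proves invariance by the identity $D_{s_iw}(\vec{T})=D_{ws_j}(\vec{T})$ for simple reflections $s_i\in W_I$, $s_j\in W_J$ with $s_iw=ws_j$: expanding both sides via the reduced-word recursion (one prepends $s_i$, the other appends $s_j$) and using $w\alpha_j=\alpha_i$ yields ${}^{s_i}D_w(\vec{T})=D_w(\vec{T})$. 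Your inversion-set argument is the more elementary and self-contained of the two (pure Coxeter combinatorics, no reference to the recursive build-up of $D_w$), while the paper's is a one-line computation once the Bruhat-filtration machinery is in place. You also spell out the transversality that makes (i) yield the valuation claim — that components of $[X^{r_\beta}]$ for $r_\beta\in W_{I(w)}$ are disjoint from $[X^{r_\alpha}]$ and $T_\alpha$ for $r_\alpha\ne r_\beta$ — which the paper leaves implicit, relying on the earlier discussion of distinct root curves; this is sound, and worth stating, though strictly speaking it was already a standing hypothesis in the preceding lemmas. No gaps.
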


\begin{proof}
 We recall the expression
 \begin{gather*}
 D_w\big(\vec{T}\big) = \sum_{\alpha\in \Phi^+(W)\cap w\Phi^-(W)}
\big ([X^{r_\alpha}]-T_\alpha\big).
 \end{gather*}
 The fact that $w$ is $W_I$-minimal implies that no root of~$W_I$ appears
 in this sum, and thus in particular that no root of~$W_{I(w)}$ appears.
 It thus remains only to show $W_{I(w)}$-invariance, but this follows by
 comparing $D_{s_iw}\big(\vec{T}\big)$ and $D_{ws_j}\big(\vec{T}\big)$ for reflections
 $s_i\in W_I$, $s_j\in W_J$ such that $s_iw=ws_j$.
\end{proof}

This ensures that the twisting functor in the following Mackey-type result
is well-defined.

\begin{prop}\label{prop:Mackey_for_Hecke}
 Let $I,J\subset S$. Then for any ${\cal H}_{W_J;\vec{T}}(X)$-module $M$
 and any maximal chain in the Bruhat order on~${}^IW^J$, the subquotient
 corresponding to~$w\in {}^IW^J$ in the resulting filtration of~$\Res_{W_I}^{W;\vec{T}}\Ind^{W;\vec{T}}_{W_J}M$ is the ${\cal
 H}_{W_I;\vec{T}}(X)$-module
 \begin{gather*}
 \Ind^{W_I;\vec{T}}_{W_{I(w)}} \Big(\sO_X\big(D_w\big(\vec{T}\big)\big)\otimes
 w\Res^{W_J;\vec{T}}_{W_{J(w)}} M\Big),
 \end{gather*}
 where here $w$ represents the induced isomorphism from the category of~${\cal H}_{W_{J(w)},\vec{T}}(X)$-modules to the category of~${\cal
 H}_{W_{I(w)},\vec{T}}(X)$-modules.
\end{prop}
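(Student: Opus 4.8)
The plan is to read off everything from the structure of $\mathcal H_{W;\vec T}(X)$ as a bimodule over $(\mathcal H_{W_I;\vec T}(X),\mathcal H_{W_J;\vec T}(X))$, refining the Bruhat filtration by double cosets. A maximal chain in the Bruhat order on ${}^IW^J$ is a chain of order ideals $\emptyset=K_0\subset K_1\subset\cdots$ with $K_m\setminus K_{m-1}=\{w_m\}$; as noted before the statement (and by the double‑coset combinatorics of \cite{StembridgeJR:2005}), each $\tilde K_m:=\bigcup_{v\in K_m}W_IvW_J$ is a Bruhat order ideal of $W$, so $\mathcal H_{W;\vec T}(X)[\tilde K_m]$ is a sub‑bimodule and the induced filtration of $\Res^{W;\vec T}_{W_I}\Ind^{W;\vec T}_{W_J}M$ has $m$‑th subquotient $\mathcal B_{w_m}\otimes_{\mathcal H_{W_J;\vec T}(X)}M$, where $\mathcal B_w:=\mathcal H_{W;\vec T}(X)[\tilde K_m]/\mathcal H_{W;\vec T}(X)[\widetilde{K_m\setminus\{w\}}]$ depends only on $w$. (Here we use that the sub‑bimodules give subfunctors, together with right‑exactness of $-\otimes_{\mathcal H_{W_J;\vec T}(X)}M$, to identify the subquotient of $\Res\Ind M$ with $\mathcal B_w\otimes_{\mathcal H_{W_J;\vec T}(X)}M$.) The entire content is thus the computation of the bimodule $\mathcal B_w$.

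The claim I would prove is a natural bimodule isomorphism
\[
\mathcal B_w\;\cong\;\mathcal H_{W_I;\vec T}(X)\otimes_{\mathcal H_{W_{I(w)};\vec T}(X)}\mathcal P_w\otimes_{\mathcal H_{W_{J(w)};\vec T}(X)}\mathcal H_{W_J;\vec T}(X),
\]
where $\mathcal P_w$ is the $(\mathcal H_{W_{I(w)};\vec T}(X),\mathcal H_{W_{J(w)};\vec T}(X))$‑bimodule realizing the functor $N\mapsto \sO_X(D_w(\vec T))\otimes wN$. This $\mathcal P_w$ is one of the Morita‑type bimodules ``$\mathcal L\otimes-$'' discussed above, pre‑ and post‑composed with the isomorphism $w\colon W_{J(w)}\xrightarrow{\sim}W_{I(w)}$; it is well‑defined precisely because the Lemma just stated guarantees $D_w(\vec T)$ is $W_{I(w)}$‑invariant with trivial valuation along the relevant reflection hypersurfaces, so $\sO_X(D_w(\vec T))$ descends in codimension $1$ for $W_{I(w)}$. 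Granting the displayed isomorphism, the Proposition is immediate from associativity of tensor product: $\mathcal B_w\otimes_{\mathcal H_{W_J;\vec T}(X)}M\cong \mathcal H_{W_I;\vec T}(X)\otimes_{\mathcal H_{W_{I(w)};\vec T}(X)}\big(\sO_X(D_w(\vec T))\otimes w\,\Res^{W_J;\vec T}_{W_{J(w)}}M\big)=\Ind^{W_I;\vec T}_{W_{I(w)}}\big(\sO_X(D_w(\vec T))\otimes w\,\Res^{W_J;\vec T}_{W_{J(w)}}M\big)$.

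To establish the bimodule isomorphism I would argue as in Lemma \ref{lem:Bruhat_fin_parm} and Corollary \ref{cor:Bruhat_intervals_product}. Fix a reduced word $w=s_{j_1}\cdots s_{j_\ell}$, so that $\Theta_w:=\mathcal H_{\langle s_{j_1}\rangle,\vec T}(X)\cdots\mathcal H_{\langle s_{j_\ell}\rangle,\vec T}(X)$ equals $\mathcal H_{W;\vec T}(X)[\le w]$ with top quotient $\sO_X(D_w(\vec T))$. Using length additivity $\ell(uwv)=\ell(u)+\ell(w)+\ell(v)$ for $u\in W_I$, $v\in W_J$ minimal in their relevant cosets, together with the fact that $w'\le w$ in $W$ forces the double‑coset minimal representative of $w'$ to lie $\le w$ in ${}^IW^J$, one checks $\mathcal H_{W_I;\vec T}(X)\cdot\Theta_w\cdot\mathcal H_{W_J;\vec T}(X)=\mathcal H_{W;\vec T}(X)[\tilde K_m]$, so $\mathcal B_w$ is a quotient of $\mathcal H_{W_I;\vec T}(X)\otimes\sO_X(D_w(\vec T))\otimes\mathcal H_{W_J;\vec T}(X)$. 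Its kernel is generated by the relations $h\cdot\vartheta_w-\vartheta_w\cdot(w^{-1}hw)$ for $h\in W_{I(w)}$ ($\vartheta_w$ a local generator of the coefficient‑of‑$w$ bundle): these hold modulo lower double cosets because $w^{-1}hw\in W_{J(w)}$ acts on $\vartheta_w$ by the same twist as $h$, again by $W_{I(w)}$‑invariance of $D_w(\vec T)$. Finally, both sides are coherent and carry Bruhat filtrations indexed by $W_IwW_J$, and the identity $D_{uwv}(\vec T)=D_u(\vec T)+u\,D_w(\vec T)+uw\,D_v(\vec T)$ (from $\Phi^+(W)\cap uwv\Phi^-(W)=(\Phi^+\cap u\Phi^-)\sqcup u(\Phi^+\cap w\Phi^-)\sqcup uw(\Phi^+\cap v\Phi^-)$ in the length‑additive case) shows the natural map is filtered and an isomorphism on associated gradeds, hence an isomorphism — the same mechanism as in the base‑change corollary above.

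The main obstacle I anticipate is precisely this identification of $\mathcal B_w$: showing that the defining relations are exactly the two tensor‑product‑over‑$\mathcal H_{W_{I(w)};\vec T}(X)$ and $\mathcal H_{W_{J(w)};\vec T}(X)$ relations and nothing more. None of the ingredients is individually hard, but marrying the double‑coset combinatorics (reduced‑word factorizations $uwv$, the $W_{I(w)}\cong W_{J(w)}$ matching, monotonicity of minimal representatives) to the explicit leading‑coefficient computations while tracking the equivariant twist on $\sO_X(D_w(\vec T))$ — which is why the preceding Lemma is needed — requires care.
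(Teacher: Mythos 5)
Your proposal follows essentially the same route as the paper's proof. The paper also reduces to identifying the bimodule subquotient of $\mathcal H_{W;\vec T}(X)$ itself (by functoriality, equivalently by your remark that the full content is the computation of $\mathcal B_w$), invokes Corollary \ref{cor:Bruhat_intervals_product} for a reduced word to see that the quotient is generated by the subsheaf supported on $W_{I(w)}w$, identifies it as induced from the $(\mathcal H_{W_{I(w)};\vec T}(X),\mathcal H_{W_{J(w)};\vec T}(X))$-bimodule that realizes the Morita equivalence $N\mapsto \sO_X(D_w(\vec T))\otimes wN$, and leans on the preceding Lemma for well-definedness of that twist. Your more detailed presentation of $\mathcal B_w$ via the surjection from $\mathcal H_{W_I;\vec T}(X)\otimes\sO_X(D_w(\vec T))\otimes\mathcal H_{W_J;\vec T}(X)$ and the relations $h\vartheta_w-\vartheta_w(w^{-1}hw)$ is a fleshed-out version of the paper's terser step "the actions commute with projecting onto the vector space of meromorphic operators supported on $W_IwW_J$." The only caution worth flagging is the appeal to right-exactness alone to pass from the filtration of the regular bimodule to the filtration of $\Res\Ind M$: one also needs the subquotient bimodules to be flat as right $\mathcal H_{W_J;\vec T}(X)$-modules so that the filtration remains a filtration after tensoring; this follows a posteriori from the identification of $\mathcal B_w$ as a locally free bimodule (and indeed is exactly how the paper then deduces exactness of $\Ind$ in the next corollary), but should be stated in that order so the argument is not circular.
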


\begin{proof}
 Since the description of the subquotient is functorial, it suffices to
 consider the case that $M={\cal H}_{W_J;\vec{T}}(X)$, or in other words
 to consider the Bruhat filtration on~${\cal H}_{W;\vec{T}}(X)$ viewed as
 a bimodule. Let $O$, $O\cup \{w\}$ be the elements of the chosen maximal
 chain that differ by $w$, so that we need to understand the quotient of
 the subsheaf corresponding to~$W_I (O\cup \{w\}) W_J$ by the subsheaf
 corresponding to~$W_I O W_J$. Both of these are bimodules over the
 respective Hecke algebras, and the actions commute with projecting onto
 the vector space of meromorphic operators supported on~$W_I w W_J$. We
 thus immediately see from Corollary~\ref{cor:Bruhat_intervals_product}
 that the quotient is generated by the subsheaf supported on~$W_{I(w)}wW_{J(w)}=W_{I(w)}w$, and is in fact induced from the
 corresponding $\big({\cal H}_{W_{I(w)},\vec{T}}(X),{\cal
 H}_{W_{J(w)},\vec{T}}(X)\big)$-bimodule structure. Moreover, one easily
 verifies that this bimodule induces the Morita equivalence $M\mapsto
 \sO_X\big(D_w\big(\vec{T}\big)\big)\otimes wM$, from which the result follows. Note that
 the fact that $w\in {}^IW^J$ ensures that $D_w\big(\vec{T}\big)$ is
 $W_{I(w)}$-invariant and has trivial valuation along the reflection
 hypersurfaces corresponding to~$R(W_{I(w)})$, so this twisting is indeed
 well-defined.
\end{proof}

Taking $I=\varnothing$ gives the following, where we omit $\varnothing$ from
the notation in~${}^\varnothing W^J$.

\begin{cor}
 Let $I\subset S$. Then for any ${\cal H}_{W_I;\vec{T}}(X)$-module $M$
 and any maximal chain in the Bruhat order on~$W^I$, the subquotient
 corresponding to~$w\in W^I$ in the resulting filtration of~$\Ind^{W;\vec{T}}_{W_I}M$ is the $\sO_X$-module $\sO_X\big(D_w\big(\vec{T}\big)\big)\otimes
 {}^w M$.
\end{cor}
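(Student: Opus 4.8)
The plan is to obtain this as the special case of Proposition~\ref{prop:Mackey_for_Hecke} in which one of the two parabolics is trivial. Concretely, I would apply that proposition with its ``$I$'' taken to be $\emptyset$, relabelling its ``$J$'' as the $I$ of the corollary. First I would record the elementary identifications that occur when a parabolic is trivial: $W_\emptyset=\{1\}$; the trivial-group Hecke algebra ${\cal H}_{\{1\};\vec{T}}(X)$ is simply $\sO_X$ acting on $\pi_*\sO_X$ by multiplication, so $\Res^{W;\vec{T}}_{W_\emptyset}$ is the forgetful functor to $\sO_X$-modules; and the set of minimal double-coset representatives ${}^\emptyset W^I$ is exactly $W^I$ with its Bruhat order. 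Hence the Bruhat filtration of $\Res^{W;\vec{T}}_{W_\emptyset}\Ind^{W;\vec{T}}_{W_I}M$ produced by Proposition~\ref{prop:Mackey_for_Hecke} is precisely the filtration of $\Ind^{W;\vec{T}}_{W_I}M$ as an $\sO_X$-module referred to in the corollary.

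Next I would unwind the subquotient formula of Proposition~\ref{prop:Mackey_for_Hecke} in this case. With the proposition's $I$ equal to $\emptyset$, for every $w\in W^I$ the groups $W_\emptyset\cap wW_Iw^{-1}$ and $w^{-1}W_\emptyset w\cap W_I$ are trivial, so both ``$I(w)$'' and ``$J(w)$'' are empty. Therefore the functors $\Ind^{W_\emptyset;\vec{T}}_{W_{I(w)}}$ and $\Res^{W_I;\vec{T}}_{W_{J(w)}}$ are both the identity, and the twisting ``$w$'' --- which in general is the equivalence induced by conjugation by $w$ between the module categories over the isomorphic parabolic Hecke subalgebras --- degenerates here to the equivalence of $\sO_X$-module categories induced by the automorphism $f\mapsto {}^{w}f$ of $\sO_X$ coming from the identity $wfw^{-1}={}^{w}f$ of multiplication operators inside the meromorphic twisted group algebra; that equivalence is exactly the sheaf pullback $M\mapsto {}^{w}M=(w^{-1})^{*}M$ in the notation fixed earlier. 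Substituting these simplifications, the subquotient indexed by $w$ becomes $\sO_X(D_w(\vec{T}))\otimes {}^{w}M$, as claimed. Since $W_{I(w)}=\{1\}$ there is no well-definedness issue with the twist by $D_w(\vec{T})$, so the compatibility hypothesis verified at the end of the proof of Proposition~\ref{prop:Mackey_for_Hecke} is automatic here.

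I do not expect any genuine obstacle: the entire content already sits inside Proposition~\ref{prop:Mackey_for_Hecke}, and the corollary is just its $I=\emptyset$ shadow. The one point I would spell out carefully, rather than leave implicit, is the identification of the abstract twisting functor ``$w$'' with the concrete sheaf pullback ${}^{w}(-)$; everything else is bookkeeping with the definitions of $W_\emptyset$, ${\cal H}_{\{1\};\vec{T}}(X)$, $W^I$, and the induction/restriction functors.
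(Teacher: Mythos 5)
Your proof is correct and matches the paper's approach exactly: the paper derives this corollary precisely by setting $I=\emptyset$ in the Mackey-type Proposition and noting that ${}^\emptyset W^J=W^J$. The only content you added — the careful identification of the abstract twisting functor with sheaf pullback ${}^w(-)$ — is a reasonable point to make explicit, and it agrees with the paper's standing convention ${}^g f = (g^{-1})^* f$.
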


\begin{cor}
 The functor $\Ind_{W_I}^{W;\vec{T}}$ is exact.
\end{cor}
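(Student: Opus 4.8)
The plan is to observe that $\Ind_{W_I}^{W;\vec{T}}$ is right exact, being of the form ${\cal H}_{W;\vec{T}}(X)\otimes_{{\cal H}_{W_I;\vec{T}}(X)}(-)$, so that it suffices to show it preserves monomorphisms, and to extract this from the Bruhat filtration supplied by the preceding Corollary. First I would fix a maximal chain in the Bruhat order on $W^I$. By that Corollary, applied to an arbitrary ${\cal H}_{W_I;\vec{T}}(X)$-module $M$, the chain endows $\Ind_{W_I}^{W;\vec{T}}M$ with a finite filtration whose subquotient at $w\in W^I$ is the $\sO_X$-module $\sO_X(D_w(\vec{T}))\otimes {}^wM$. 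As explained in the proof of Proposition \ref{prop:Mackey_for_Hecke}, this filtration is functorial in $M$: it is the image filtration on $\Ind_{W_I}^{W;\vec{T}}M = {\cal H}_{W;\vec{T}}(X)\otimes_{{\cal H}_{W_I;\vec{T}}(X)}M$ induced by tensoring $M$ with the Bruhat filtration of ${\cal H}_{W;\vec{T}}(X)$ regarded as a $({\cal H}_{W;\vec{T}}(X),{\cal H}_{W_I;\vec{T}}(X))$-bimodule, and the identification of subquotients given there is natural in $M$.

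Next I would observe that each subquotient functor $M\mapsto \sO_X(D_w(\vec{T}))\otimes {}^wM$ is exact: restriction of an ${\cal H}_{W_I;\vec{T}}(X)$-module to its underlying $\sO_X$-module is exact; the twist ${}^w(-)$, which by the $I=\emptyset$ case of Proposition \ref{prop:Mackey_for_Hecke} is the twist of $\sO_X$-modules along the automorphism of $X$ induced by $w$, is an exact equivalence; and tensoring with the invertible sheaf $\sO_X(D_w(\vec{T}))$ is exact. Hence if $0\to M'\to M$ is exact, the induced morphism $\operatorname{gr}\bigl(\Ind_{W_I}^{W;\vec{T}}M'\bigr)\to\operatorname{gr}\bigl(\Ind_{W_I}^{W;\vec{T}}M\bigr)$ on associated graded $\sO_X$-modules is a monomorphism. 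Since the two filtrations are finite (hence exhaustive and separated), a morphism of filtered $\sO_X$-modules injective on associated graded is itself injective — a routine induction on the length of the filtration via the five lemma. Thus $\Ind_{W_I}^{W;\vec{T}}M'\to\Ind_{W_I}^{W;\vec{T}}M$ is a monomorphism, and together with right exactness this gives exactness of $\Ind_{W_I}^{W;\vec{T}}$.

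The one step that deserves attention is the functoriality of the filtration used in the first paragraph: I must know not merely that $\Ind_{W_I}^{W;\vec{T}}M$ and $\Ind_{W_I}^{W;\vec{T}}M'$ each carry filtrations with the stated subquotients, but that a morphism $M'\to M$ induces a morphism of the filtered objects compatibly identifying subquotients with $\sO_X(D_w(\vec{T}))\otimes {}^w(-)$. This is already implicit in the proof of Proposition \ref{prop:Mackey_for_Hecke}, where the subquotient description is obtained functorially by reduction to the regular bimodule, so it requires only bookkeeping; the remainder of the argument is formal.
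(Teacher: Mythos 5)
Your proof is correct, and it reaches the same conclusion the paper does, but the routes differ enough to be worth noting. The paper's own argument is a one-liner: by the $M = {\cal H}_{W_I;\vec{T}}(X)$ case of the Mackey filtration, the right ${\cal H}_{W_I;\vec{T}}(X)$-module ${\cal H}_{W;\vec{T}}(X)$ is filtered with subquotients that are locally free (each is a line-bundle twist of the regular module composed with the automorphism $w$); a finite filtration with locally projective subquotients locally splits, so ${\cal H}_{W;\vec{T}}(X)$ is locally projective, and tensoring with a locally projective module is exact. You instead run the Mackey corollary for a general $M$, argue that the filtration on $\Ind M$ is functorial and has exact subquotient functors, and close with a five-lemma induction. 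Both arguments draw on the same essential fact (the Bruhat subquotients are flat over the parabolic subalgebra), but the paper localizes this fact into a single property of the module ${\cal H}_{W;\vec{T}}(X)$ and then invokes a standard consequence, while you carry it along as a property of the functor. The paper's formulation is a bit tighter and avoids having to make the functoriality of the subquotient identification fully explicit, since it only needs the regular-module case; on the other hand, your version makes transparent exactly why the subquotient description for general $M$ (the Mackey corollary) already encodes the flatness you need. One stylistic remark: the functoriality you invoke is indeed present in the proof of the Mackey proposition (it is the first sentence of that proof), but it is worth being aware that its verification there silently uses the same projectivity observation, so the two proofs are genuinely variations on one idea rather than independent.
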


\begin{proof}
 Indeed, the proof of Proposition~\ref{prop:Mackey_for_Hecke}
 shows that the ${\cal H}_{W_I;\vec{T}}(X)$-module ${\cal
 H}_{W;\vec{T}}(X)$ has a filtration by (locally) free modules, so is
 itself locally projective.
\end{proof}

For finite groups, this exactness arises from the fact that restriction and
induction are adjoint in both directions. This is again not {\em quite}
true in our setting, but something fairly close is true. Define
\begin{gather*}
\Coind_{W_I}^{W;\vec{T}} M
:=
\sHom_{{\cal H}_{W_I;\vec{T}}(X)}\big({\cal H}_{W;\vec{T}}(X),M\big),
\end{gather*}
which is clearly right adjoint to~$\Res_{W_I}^{W;\vec{T}}$. Given a set
$I$ of simple roots, let $I'$ denote its \mbox{image} under the diagram
automorphism corresponding to~$w_0$, and note that the double coset $W_{I'}
w_0 W_I=w_0 W_I=W_{I'}w_0$. Let $w_I$ denote the longest element of~$W_I$.

\begin{lem}
 For any ${\cal H}_{W_I;\vec{T}}(X)$-module $M$, there is a natural isomorphism
 \begin{gather*}
 \Ind_{W_I}^{W;\vec{T}}(M)
 \cong
 \Coind_{W_{I'}}^{W;\vec{T}}\big(\sO_X\big(D_{w_0w_I}\big(\vec{T}\big)\big) \otimes w_0w_I M\big).
 \end{gather*}
\end{lem}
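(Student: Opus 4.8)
The plan is to reduce the statement to an identification of bimodules and then verify that identification with the adjoint anti-isomorphism together with the Bruhat filtration. Write ${\cal H}={\cal H}_{W;\vec T}(X)$, ${\cal H}_I={\cal H}_{W_I;\vec T}(X)$, ${\cal H}_{I'}={\cal H}_{W_{I'};\vec T}(X)$, and let $\Phi$ denote the Morita equivalence $M\mapsto\sO_X(D_{w_0w_I}(\vec T))\otimes w_0w_IM$ from ${\cal H}_I$-modules to ${\cal H}_{I'}$-modules; this is well defined because $w_0w_IW_I(w_0w_I)^{-1}=w_0W_Iw_0=W_{I'}$ and, $w_0w_I$ being the minimal representative of the double coset $W_{I'}w_0W_I$ in ${}^{I'}W^I$, the divisor $D_{w_0w_I}(\vec T)$ is $W_{I'}$-invariant with trivial valuation along the corresponding reflection hypersurfaces (the Lemma preceding Proposition~\ref{prop:Mackey_for_Hecke}). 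Write $\Phi$ as $M\mapsto P\otimes_{{\cal H}_I}M$ for an $({\cal H}_{I'},{\cal H}_I)$-bimodule $P$.

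First I would observe, exactly as in the proof that $\Ind^{W;\vec T}_{W_{I'}}$ is exact, that the Bruhat filtration exhibits ${\cal H}$ as a left ${\cal H}_{I'}$-module which is an iterated extension of locally free modules, hence locally projective over ${\cal H}_{I'}$. Therefore the canonical map
\[
\Hom_{{\cal H}_{I'}}({\cal H},{\cal H}_{I'})\otimes_{{\cal H}_{I'}}N\longrightarrow\Coind^{W;\vec T}_{W_{I'}}N=\Hom_{{\cal H}_{I'}}({\cal H},N)
\]
is an isomorphism, natural in $N$, so that $\Coind^{W;\vec T}_{W_{I'}}\Phi(M)\cong\bigl(\Hom_{{\cal H}_{I'}}({\cal H},{\cal H}_{I'})\otimes_{{\cal H}_{I'}}P\bigr)\otimes_{{\cal H}_I}M$. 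Thus it suffices to construct a natural isomorphism of $({\cal H},{\cal H}_I)$-bimodules $\Hom_{{\cal H}_{I'}}({\cal H},{\cal H}_{I'})\otimes_{{\cal H}_{I'}}P\cong{\cal H}$; tensoring it with $M$ over ${\cal H}_I$ then gives $\Coind^{W;\vec T}_{W_{I'}}\Phi(M)\cong{\cal H}\otimes_{{\cal H}_I}M=\Ind^{W;\vec T}_{W_I}M$.

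To build this bimodule isomorphism I would use the na\"ive adjoint $\sum_wc_ww\mapsto\sum_ww^{-1}c_w$ on the meromorphic twisted group algebra, an anti-automorphism carrying ${\cal H}$ onto $\sO_X(-D_{w_0}(\vec T))\otimes{\cal H}\otimes\sO_X(D_{w_0}(\vec T))$ and ${\cal H}_{I'}$ onto $\sO_X(-D_{w_{I'}}(\vec T))\otimes{\cal H}_{I'}\otimes\sO_X(D_{w_{I'}}(\vec T))$ (the $\vec T$-forms of the adjoint propositions proved above, applied to $W_{I'}$ as a Weyl group in its own right). Since $D_{w_0}(\vec T)$ and $D_{w_{I'}}(\vec T)$ are $W$- resp.\ $W_{I'}$-invariant one can undo these conjugations; the net effect is to identify $\Hom_{{\cal H}_{I'}}({\cal H},{\cal H}_{I'})$, with its natural left ${\cal H}$- and right ${\cal H}_{I'}$-actions, with ${\cal H}$ regarded as a $({\cal H},{\cal H}_{I'})$-bimodule whose right action has been precomposed with conjugation by $\sO_X(D_{w_0}(\vec T)-D_{w_{I'}}(\vec T))$. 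Then, using the additivity $D_{w_0}(\vec T)=D_{w_{I'}}(\vec T)+{}^{w_{I'}}D_{w_{I'}^{-1}w_0}(\vec T)$ of the discriminant along the reduced decomposition $w_0=w_{I'}\cdot(w_{I'}^{-1}w_0)$ (which is reduced since $w\mapsto w_0ww_0$ preserves length) together with $w_{I'}^{-1}w_0=w_0w_I$, one checks that this correction is precisely the twist packaged in $P$, yielding the required isomorphism.

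An alternative verification, probably cleaner to write out in full, is to compare the two sides along Bruhat filtrations directly: $\Ind^{W;\vec T}_{W_I}M$ carries the filtration indexed by $W^I$ with subquotients $\sO_X(D_w(\vec T))\otimes{}^wM$ (the Corollary following Proposition~\ref{prop:Mackey_for_Hecke}), while $\Hom_{{\cal H}_{I'}}({\cal H},N)$ carries the opposite of the Bruhat filtration of ${\cal H}$ as a left ${\cal H}_{I'}$-module, indexed by ${}^{I'}W$; under the length-reversing bijection ${}^{I'}W\to W^I$ (length-reversing because $x\mapsto w_0x$ reverses Bruhat order while $x\mapsto x^{-1}$ preserves it) the two filtrations correspond, and a term-by-term computation of the subquotient at the matching index, using the explicit subquotients of ${\cal H}$ and the definition $N=\sO_X(D_{w_0w_I}(\vec T))\otimes w_0w_IM$, recovers $\sO_X(D_w(\vec T))\otimes{}^wM$ with the module structures matched as in the proof of Proposition~\ref{prop:Mackey_for_Hecke}. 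I expect the main obstacle, in either route, to be exactly this bookkeeping of line-bundle twists: verifying the discriminant identity $D_{w_0}(\vec T)=D_{w'}(\vec T)+{}^{w'}D_{(w')^{-1}w_0}(\vec T)$ and tracking how $D_w(\vec T)$ and $D_{w_0w_I}(\vec T)$ behave under the twisting functors, so that the composite correction comes out exactly as $\sO_X(D_{w_0w_I}(\vec T))\otimes w_0w_I(-)$ and not as a version of it differing by a divisor supported on reflection hypersurfaces.
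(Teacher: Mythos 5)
Your proposal takes a genuinely different route from the paper's, and the difference is worth noting. The paper's proof gets the morphism almost for free: since $\sO_X(D_{w_0w_I}(\vec T))\otimes w_0w_IM$ is exactly the top subquotient of the $W_{I'}\backslash W/W_I$ Mackey filtration on $\Res_{W_{I'}}^{W;\vec T}\Ind_{W_I}^{W;\vec T}M$ (Proposition~\ref{prop:Mackey_for_Hecke}), projecting onto it and applying the $\Res$--$\Coind$ adjunction instantly produces the candidate morphism. After that, the whole content is that this morphism is triangular for the bijection $w\mapsto w_0w_Iw^{-1}:W^I\to{}^{I'}W$, and the diagonal entries match because $D_{w_0w_Iw^{-1}}(\vec T)+{}^{w_0w_Iw^{-1}}D_w(\vec T)=D_{w_0w_I}(\vec T)$, i.e.\ $\ell(ww_I)=\ell(w)+\ell(W_I)$.

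Your first route instead goes through projectivity (so $\Coind^{W;\vec T}_{W_{I'}}N\cong\Hom_{{\cal H}_{I'}}({\cal H},{\cal H}_{I'})\otimes_{{\cal H}_{I'}}N$) and then tries to pin down the dual bimodule $\Hom_{{\cal H}_{I'}}({\cal H},{\cal H}_{I'})$ via the adjoint anti-isomorphism. This is a reasonable strategy — it amounts to exhibiting ${\cal H}_{I'}\subset{\cal H}$ as a twisted Frobenius extension — and the divisor bookkeeping you flag at the end ($D_{w_0}(\vec T)=D_{w_{I'}}(\vec T)+{}^{w_{I'}}D_{w_{I'}^{-1}w_0}(\vec T)$, $w_{I'}^{-1}w_0=w_0w_I$) is precisely the same length identity the paper uses. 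But you have left the crucial step — that the na\"{\i}ve adjoint really does identify $\Hom_{{\cal H}_{I'}}({\cal H},{\cal H}_{I'})$ with a one-line-bundle twist of ${\cal H}$ \emph{as a bimodule} — at the level of an assertion. The adjoint of ${\cal H}$ and the adjoint of ${\cal H}_{I'}$ conjugate by different bundles ($D_{w_0}(\vec T)$ versus $D_{w_{I'}}(\vec T)$), and tracing how the anti-isomorphism intertwines the left-${\cal H}$ and right-${\cal H}_{I'}$ actions through that mismatch is exactly where a Frobenius trace would need to be produced and its nondegeneracy checked; in the paper's argument none of this is necessary because the morphism is defined structurally rather than via a pairing.

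Your second route, as written, has a real gap: comparing associated graded pieces across the length-reversing bijection ${}^{I'}W\leftrightarrow W^I$ shows the two sides have isomorphic subquotients, but two filtered objects with isomorphic subquotients need not be isomorphic. You still need a filtered map between the two sides before ``isomorphism on subquotients'' finishes the job. Once you have the paper's adjunction morphism in hand, your subquotient comparison is exactly the right way to check it is an isomorphism — but without that morphism the filtration argument by itself proves nothing. So either route can be made to work, but both ultimately want the paper's observation (project to the top Mackey subquotient, adjoint across) to supply the map; your proposal is circling that idea without landing on it.
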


\begin{proof}
 There is a natural morphism
 \begin{gather*}
 \Res_{W_{I'}}^{W;\vec{T}}\Ind_{W_I}^{W;\vec{T}}(M)
 \to
 \sO_X\big(D_{w_0w_I}\big(\vec{T}\big)\big)\otimes w_0w_I M,
 \end{gather*}
 since the codomain is precisely the top subquotient in the Bruhat
 filtration of the domain. By~adjunction, this induces a morphism from
 the induced module to the coinduced module, and it remains only to show
 that this is an isomorphism. The~image of a local section~$x\in
 \Ind_{W_I}^{W;\vec{T}}(M)$ in the coinduced module is the map taking
 local sections $y\in \Res_{W_{I'}}^{W;\vec{T}}{\cal H}_{W;\vec{T}}(X)$
 to the top subquotient of~$yx$ in the $W_{I'}\setminus W/W_I$
 filtration. The~map $w\mapsto w_0w_Iw^{-1}$ induces an order-reversing
 isomorphism from $W^I$ to~${}^{I'}W$, and thus our putative
 isomorphism is triangular, and it suffices to show that it is an
 isomorphism on the diagonal. This reduces to showing that
 \begin{gather*}
 D_{w_0w_Iw^{-1}}\big(\vec{T}\big)+{}^{w_0w_Iw^{-1}} D_w\big(\vec{T}\big)
 =
 D_{w_0W_I}\big(\vec{T}\big)
 \end{gather*}
 for any $w\in W^I$, which in turn reduces to~$\ell(w_0w_Iw^{-1})+\ell(w)=\ell(w_0W_I)$ and thus to~$\ell(ww_I)=\ell(w)+\ell(W_I)$.
\end{proof}

\begin{rem}
 Since the transformation being applied to~$M$ is invertible, we~also have
 an expression
 \begin{gather*}
 \Coind_{W_I}^{W;\vec{T}}M
 \cong
 \Ind_{W_{I'}}^{W;\vec{T}}\big(\sO_X\big({-}D_{w_0w_I}\big({}^-\vec{T}\big)\big) \otimes w_0w_IM\big).
 \end{gather*}
\end{rem}

As in the master Hecke algebra case, we~again have a module $\sO_X$ coming
from the action on operators, and the restriction to~${\cal
 H}_{W;\vec{T}}(X)$ of the natural map ${\cal H}_{W}(X)\to \sO_X$ is still
surjective. In~particular, we~may again define $M^W$ to be the image of
the natural injective morphism $\sHom_{{\cal H}_{W;\vec{T}}(X)}(\sO_X,M)\to
M$.

\begin{prop}
 The kernel of the natural morphism ${\cal H}_{W;\vec{T}}(X)\to \sO_X$ is
 generated as a left ideal sheaf by the subsheaves of the form
 $\sO_X([X^{s_i}]-T_i)(s_i-1)$.
\end{prop}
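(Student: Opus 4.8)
The plan is to identify the two sheaves by testing them against the Bruhat filtration of ${\cal H}_{W;\vec T}(X)$. Write $\epsilon\colon{\cal H}_{W;\vec T}(X)\to\sO_X$ for the natural morphism $\sum_w c_w w\mapsto\sum_w c_w$, and for $1\le i\le n$ set $J_i:=\sO_X([X^{s_i}]-T_i)(s_i-1)$. From the explicit description of the rank $1$ Hecke algebra, ${\cal H}_{\langle s_i\rangle,\vec T}(X)=\sO_X\cdot 1\oplus J_i$ as left $\sO_X$-modules, and $\epsilon$ sends $f_0+f_1(s_i-1)$ to $f_0$, so $J_i=\ker\bigl(\epsilon|_{{\cal H}_{\langle s_i\rangle,\vec T}(X)}\bigr)$. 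Since $\sO_X$ is a left ${\cal H}_{W;\vec T}(X)$-module and $\epsilon$ is a morphism of such modules, $K:=\ker\epsilon$ is a left ideal sheaf containing every $J_i$, hence contains the left ideal sheaf $J:=\sum_i{\cal H}_{W;\vec T}(X)\cdot J_i$ that they generate. Moreover $\epsilon$ is split, as a morphism of left $\sO_X$-modules, by $f\mapsto f\cdot 1$, so ${\cal H}_{W;\vec T}(X)=\sO_X\cdot 1\oplus K$; it therefore suffices to prove ${\cal H}_{W;\vec T}(X)=\sO_X\cdot 1+J$, for then any local section $x$ of $K$, written $f\cdot 1+j$ with $j\in J$, satisfies $0=\epsilon(x)=f$ and hence lies in $J$.

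The heart of the argument is to show that for every $w\neq 1$ the subsheaf $J\cap{\cal H}_{W;\vec T}(X)[\le w]$ surjects, under the coefficient-of-$w$ map, onto the top Bruhat subquotient $\sO_X(D_w(\vec T))$ of Lemma~\ref{lem:Bruhat_fin_parm}. I would fix a reduced word $w=s_{i_1}\cdots s_{i_\ell}$ with $\ell\ge 1$. By Corollary~\ref{cor:Bruhat_intervals_product}, ${\cal H}_{W;\vec T}(X)[\le w]$ is the image of multiplication ${\cal H}_{\langle s_{i_1}\rangle,\vec T}(X)\otimes\cdots\otimes{\cal H}_{\langle s_{i_\ell}\rangle,\vec T}(X)\to{\cal H}_{W;\vec T}(X)$. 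Writing the \emph{last} factor as ${\cal H}_{\langle s_{i_\ell}\rangle,\vec T}(X)=\sO_X\cdot 1\oplus J_{i_\ell}$ and distributing, one summand is $\bigl({\cal H}_{\langle s_{i_1}\rangle,\vec T}(X)\cdots{\cal H}_{\langle s_{i_{\ell-1}}\rangle,\vec T}(X)\bigr)\cdot(\sO_X\cdot 1)$, which equals ${\cal H}_{\langle s_{i_1}\rangle,\vec T}(X)\cdots{\cal H}_{\langle s_{i_{\ell-1}}\rangle,\vec T}(X)={\cal H}_{W;\vec T}(X)[\le s_{i_1}\cdots s_{i_{\ell-1}}]$ (each rank $1$ subalgebra, hence this product, being closed under right multiplication by $\sO_X$); since $s_{i_1}\cdots s_{i_{\ell-1}}$ is a reduced word of length $\ell-1<\ell(w)$, this lies in ${\cal H}_{W;\vec T}(X)[(\le w)\setminus\{w\}]$. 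The other summand is ${\cal H}_{\langle s_{i_1}\rangle,\vec T}(X)\cdots{\cal H}_{\langle s_{i_{\ell-1}}\rangle,\vec T}(X)\cdot J_{i_\ell}\subseteq{\cal H}_{W;\vec T}(X)\cdot J_{i_\ell}\subseteq J$. Hence ${\cal H}_{W;\vec T}(X)[\le w]\subseteq{\cal H}_{W;\vec T}(X)[(\le w)\setminus\{w\}]+J$, and intersecting with ${\cal H}_{W;\vec T}(X)[\le w]$ and passing to the quotient $\sO_X(D_w(\vec T))$ gives the asserted surjectivity.

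Finally I would choose a maximal chain $\{1\}=I_0\subset I_1\subset\cdots\subset I_N=W$ of Bruhat order ideals, $I_k=I_{k-1}\cup\{w_k\}$ with $w_k$ maximal in $I_k$; then ${\cal H}_{W;\vec T}(X)[I_0]=\sO_X\cdot 1$ and, for $k\ge 1$, ${\cal H}_{W;\vec T}(X)[I_k]/{\cal H}_{W;\vec T}(X)[I_{k-1}]\cong\sO_X(D_{w_k}(\vec T))$ with $w_k\neq 1$. Because $[\le w_k]\subseteq I_k$, the surjectivity just proved gives ${\cal H}_{W;\vec T}(X)[I_k]={\cal H}_{W;\vec T}(X)[I_{k-1}]+\bigl(J\cap{\cal H}_{W;\vec T}(X)[I_k]\bigr)$, and induction on $k$ yields ${\cal H}_{W;\vec T}(X)={\cal H}_{W;\vec T}(X)[I_N]\subseteq\sO_X\cdot 1+J$, which completes the proof. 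The main point to get right is that $J$ is only a \emph{left} ideal: one cannot feed $J_{i_1}$ in as the \emph{first} factor of a product (that would not even land in $K$), which forces the use of the \emph{last} factor together with the fact that each rank $1$ subalgebra — hence every finite product of them — is a sub-$(\sO_X,\sO_X)$-bimodule of ${\cal H}_{W;\vec T}(X)$; this last fact is a short local computation, right multiplication of $f_0+f_1(s_i-1)$ by $g\in\sO_X$ replacing $f_1$ by $f_1\cdot{}^{s_i}g$, still a section of $\sO_X([X^{s_i}]-T_i)$. No genericity hypothesis on $\vec T$ is required, since Corollary~\ref{cor:Bruhat_intervals_product} and Lemma~\ref{lem:Bruhat_fin_parm} hold for arbitrary systems of parameters.
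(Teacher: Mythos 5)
Your proposal is correct and follows essentially the same argument as the paper: both proofs use the surjectivity of the multiplication map from the tensor product of rank-$1$ subalgebras onto a Bruhat interval, then restrict the last tensor factor to the kernel piece $\sO_X([X^{s_i}]-T_i)(s_i-1)$ to produce an element of the left ideal with the prescribed leading coefficient, and finish by downward induction on the Bruhat order. Your additional observations (the splitting ${\cal H}_{W;\vec T}(X)=\sO_X\cdot 1\oplus K$, and the explicit check that each rank-$1$ subalgebra is closed under right multiplication by $\sO_X$) are correct and make explicit a point the paper leaves implicit, but do not change the structure of the argument.
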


\begin{proof}
 Let ${\cal I}$ be the left ideal sheaf so generated. This is clearly
 contained in the kernel, so it remains to show that it contains the
 kernel. Let $\sum_w c_w w$ be a local section of the kernel, and suppose
 $w_1$ is Bruhat-maximal among the elements of~$W$ for which $c_w\ne 0$.
 Since by definition~$\sum_w c_w=0$, $w_1$ cannot be the identity, and
 thus has a reduced expression of the form $w_1 = s_1\cdots s_m$ with~$m>0$. We thus have a (surjective) multiplication map
 \begin{gather*}
 {\cal H}_{\langle s_1\rangle,\vec{T}}(X)\cdots {\cal H}_{\langle
 s_m\rangle,\vec{T}}(X)
 \to
 {\cal H}_{W;\vec{T}}(X)[\le w_1].
 \end{gather*}
 Restricting the last tensor factor to~$\sO_X([X^{s_m}]-T_m)(s_m-1)$
 gives an image in~${\cal I}$ without changing the leading coefficient
 line bundle, and thus there is an element $\sum_{w\le w_1} c'_w w$
 of~${\cal I}$ with~$c'_w=c_w$. Subtracting this element makes the order
 ideal generated by the support of the operator smaller, and thus the
 result follows by induction.
\end{proof}

\begin{cor}
 There is an exact sequence of~$\sO_{X/W}$-modules
 \begin{gather*}
 0\to M^W\to M\to \bigoplus_{1\le i\le n} \sO_X\big(T_i-[X^{s_i}]\big)\otimes M.
 \end{gather*}
\end{cor}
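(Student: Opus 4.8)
The plan is to read the exact sequence off the preceding Proposition by presenting $\sO_X$ as a left $\mathcal{H}_{W;\vec T}(X)$-module and dualizing into $M$. Write $\mathcal{H}:=\mathcal{H}_{W;\vec T}(X)$, and for each $i$ let $\mathcal{J}_i\subset\mathcal{H}$ be the subsheaf $\sO_X([X^{s_i}]-T_i)(s_i-1)$. Left multiplication by $\sO_X$ makes $\mathcal{J}_i$ a left $\sO_X$-module, and $h(s_i-1)\mapsto h$ identifies it with the invertible sheaf $\sO_X([X^{s_i}]-T_i)$; consequently multiplication defines a map of left $\mathcal{H}$-modules $\mathcal{H}\otimes_{\sO_X}\sO_X([X^{s_i}]-T_i)\to\mathcal{H}$ whose image is the left ideal $\mathcal{H}\mathcal{J}_i$. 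The Proposition says precisely that $\sum_i \mathcal{H}\mathcal{J}_i$ is the kernel of the surjection $\mathcal{H}\to\sO_X$ onto the action module; hence
$$\bigoplus_{1\le i\le n}\mathcal{H}\otimes_{\sO_X}\sO_X([X^{s_i}]-T_i)\longrightarrow\mathcal{H}\longrightarrow\sO_X\longrightarrow 0$$
is a (right-exact) presentation of $\sO_X$ by locally free left $\mathcal{H}$-modules.

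Next I would apply the contravariant left-exact functor $\sHom_{\mathcal{H}}(-,M)$ to this presentation. We have $\sHom_{\mathcal{H}}(\mathcal{H},M)=M$, and by tensor--hom adjunction together with invertibility of $\mathcal{L}_i:=\sO_X([X^{s_i}]-T_i)$,
$$\sHom_{\mathcal{H}}(\mathcal{H}\otimes_{\sO_X}\mathcal{L}_i,M)\cong\sHom_{\sO_X}(\mathcal{L}_i,M)\cong\mathcal{L}_i^{-1}\otimes_{\sO_X}M=\sO_X(T_i-[X^{s_i}])\otimes_{\sO_X}M.$$
Left-exactness therefore yields an exact sequence of $\sO_{X/W}$-modules
$$0\to\sHom_{\mathcal{H}}(\sO_X,M)\to M\to\bigoplus_{1\le i\le n}\sO_X(T_i-[X^{s_i}])\otimes_{\sO_X}M,$$
in which the map into the $i$-th summand is $m\mapsto\bigl(h\mapsto h(s_i-1)m\bigr)$, i.e.\ the operator $s_i-1$ reinterpreted (via the bimodule $\mathcal{J}_i$) as an $\sO_X$-linear map $M\to\sO_X(T_i-[X^{s_i}])\otimes M$. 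Finally, $M^W$ was defined as the image of the injective evaluation morphism $\sHom_{\mathcal{H}}(\sO_X,M)\to M$, which is exactly the first map above (since $1\in\mathcal{H}$ maps to $1\in\sO_X$); identifying $M^W$ with $\sHom_{\mathcal{H}}(\sO_X,M)$ then gives the asserted sequence, the direct image to $X/W$ being harmless throughout.

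The only genuinely new ingredient is the preceding Proposition, namely the identification of the kernel left ideal with the subsheaves $\sO_X([X^{s_i}]-T_i)(s_i-1)$; the rest is formal homological bookkeeping. The two points that call for a little attention are checking that the multiplication map out of $\bigoplus_i\mathcal{H}\otimes_{\sO_X}\mathcal{L}_i$ really surjects onto the whole kernel of $\mathcal{H}\to\sO_X$ (this is precisely the meaning of ``generated as a left ideal sheaf'' in the Proposition, so no extra work is needed), and the standard identification $\sHom_{\sO_X}(\mathcal{L},M)\cong\mathcal{L}^{-1}\otimes_{\sO_X}M$ valid for $\mathcal{L}$ a line bundle and $M$ quasicoherent. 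Note that only right-exactness of the presentation and left-exactness of $\sHom$ are used, so the argument makes no claim about the cokernel of the last map; pinning that down would require an honest left resolution of $\sO_X$ over $\mathcal{H}$, which is neither available nor needed here.
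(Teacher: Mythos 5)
Your proof is correct and is essentially identical to the paper's (one-line) argument: apply $\sHom_{\mathcal H}(-,M)$ to the right-exact presentation of $\sO_X$ provided by the preceding Proposition, then use tensor--hom adjunction to rewrite the last term. You have simply made explicit the bookkeeping that the paper compresses into a single sentence.
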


\begin{proof}
 The proposition gives a presentation of~$\sO_X$, and this is just
 the sheaf $\sHom$ from that presentation to~$M$.
\end{proof}

If $M$ is $S$-flat, then this tells us that $M^W$ is the kernel of a
morphism of~$S$-flat sheaves.

\begin{lem}
 Let $Y/S$ be a projective morphism with relatively ample line bundle
 $\sO_Y(1)$, and suppose $\phi\colon M\to N$ is a morphism of~$S$-flat coherent
 sheaves on~$Y$. If~the Hilbert polynomial of~$\ker(\phi_s)$ is
 independent of the point $s\in S$, then the kernel, image, and
 cokernel of~$\phi$ are all $S$-flat, and the natural map $\ker(\phi)_s\to
 \ker(\phi_s)$ is an isomorphism for all $s$.
\end{lem}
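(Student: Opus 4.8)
The plan is to factor $\phi$ through its image and bootstrap flatness ``from the cokernel inward''. Write $K=\ker\phi$, $I=\im\phi$, $C=\coker\phi$; these are coherent sheaves on $Y$ (since $Y$ is Noetherian over $S$), fitting into short exact sequences
\[
0\to K\to M\to I\to 0,\qquad 0\to I\to N\to C\to 0 .
\]
Throughout I would use the standard criterion that a coherent sheaf $\mathcal F$ on a projective scheme $Y/S$ with $S$ integral (and Noetherian — one may localize to arrange this) is $S$-flat if and only if the Hilbert polynomial $P(\mathcal F_s)$, computed with respect to $\sO_Y(1)$, is independent of $s\in S$.

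First I would observe that base change $\otimes_{\sO_S}k(s)$ is right exact, so $C_s=\coker(\phi_s)$ for every $s$, and the natural surjection $M_s\twoheadrightarrow I_s$ identifies $\im(\phi_s)$ with the image of the natural map $I_s\to N_s$. Then additivity of Hilbert polynomials on the fibrewise exact sequences $0\to\ker\phi_s\to M_s\to\im\phi_s\to 0$ and $0\to\im\phi_s\to N_s\to\coker\phi_s\to 0$, together with constancy of $P(M_s)$ and $P(N_s)$ (flatness of $M$, $N$) and of $P(\ker\phi_s)$ (the hypothesis), forces $P(\im\phi_s)$ and $P(\coker\phi_s)=P(C_s)$ to be constant. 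Hence $C$ is $S$-flat by the criterion above.

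Now I would run the sequences in turn. In $0\to I\to N\to C\to 0$ both $N$ and $C$ are flat, so the long exact $\Tor_\bullet^{\sO_S}$ sequence gives $\Tor_i^{\sO_S}(I,-)=0$ for $i\ge 1$; thus $I$ is $S$-flat, and flatness of $I$ makes $I_s\to N_s$ injective, so $I_s\xrightarrow{\sim}\im(\phi_s)$ and $C_s=N_s/I_s=\coker(\phi_s)$ naturally. Feeding flatness of $I$ into $0\to K\to M\to I\to 0$, the sequence stays exact after $\otimes_{\sO_S}k(s)$, whence $K_s=\ker(M_s\to I_s)=\ker(\phi_s)$ (the last step because $I_s\hookrightarrow N_s$); a final application of the Tor sequence, now with $M$ and $I$ flat, yields flatness of $K$. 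This proves all three sheaves are $S$-flat, with the base-change map $\ker(\phi)_s\to\ker(\phi_s)$ (and likewise for images and cokernels) an isomorphism.

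I do not expect a genuine obstacle: the only point that really matters is the appeal to the constant-Hilbert-polynomial flatness criterion, which fails over non-reduced bases (e.g.\ multiplication by $\epsilon$ on $\sO$ over $\Spec k[\epsilon]/(\epsilon^2)$ has, vacuously, constant fibrewise kernel but non-flat kernel), so the argument genuinely relies on the standing assumption that $S$ is integral; everything else is routine bookkeeping with right-exactness of base change and the Tor long exact sequence, plus a reduction to the case $Y/S$ genuinely projective using the given relatively ample $\sO_Y(1)$.
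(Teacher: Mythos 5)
Your proof is correct and follows essentially the same strategy as the paper's: use additivity of Hilbert polynomials to deduce constancy of $P(\coker\phi_s)$, apply the Hilbert-polynomial flatness criterion (valid since $S$ is integral, as you correctly flag) to get $\coker\phi$ flat, and then peel off flatness of image and kernel as kernels of surjections of flat sheaves. The only difference is at the final step: the paper reads off the isomorphism $\ker(\phi)_s\cong\ker(\phi_s)$ from the four-term sequence $0\to\Tor_2(\coker\phi,k(s))\to\ker(\phi)_s\to\ker(\phi_s)\to\Tor_1(\coker\phi,k(s))\to 0$ obtained by comparing the two spectral sequences for $(M\to N)\otimes k(s)$, whereas you derive the same thing more hands-on from the Tor vanishings you have already established (exactness of $0\to K_s\to M_s\to I_s\to 0$ from flatness of $I$, plus $I_s\hookrightarrow N_s$ from flatness of $C$). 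Both are routine; yours is slightly more elementary, the paper's is slightly more economical once one knows the four-term sequence, and they are not genuinely different proofs.
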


\begin{proof}
 If the Hilbert polynomial of~$\ker(\phi_s)$ is independent of~$s$, then
 so is the Hilbert polynomial of~$\coker(\phi_s)\cong \coker(\phi)_s$.
 It follows that $\coker(\phi)$ is $S$-flat, implying immediately that the
 image and kernel are also $S$-flat (as kernels of surjective morphisms of~$S$-flat sheaves). The~final claim follows using the four-term sequence
 \begin{gather*}
 0\to \Tor_2(\coker(\phi),k(s))\to \ker(\phi)_s\to \ker(\phi_s)\to
 \Tor_1(\coker(\phi), k(s))\to 0
 \end{gather*}
 arising by comparing the two spectral sequences for tensoring the complex
 $M\to N$ with~$k(s)$.
\end{proof}

\begin{cor}
 If $M$ is a coherent $S$-flat ${\cal H}_{W;\vec{T}}(X)$-module such that
 the Hilbert polynomial of~$(M_s)^W$ is independent of~$s$, then $M^W$\!,
 $M/M^W$ are flat and the natural map $\big(M^W\big)_s\to (M_s)^W$ is an
 isomorphism for all $s$.
\end{cor}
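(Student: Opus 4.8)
The plan is to apply the preceding Lemma to the morphism of coherent $\sO_{X/W}$-modules produced by the preceding Corollary. Write $N:=\bigoplus_{1\le i\le n}\sO_X(T_i-[X^{s_i}])\otimes M$ and let $\phi\colon M\to N$ be the map of that Corollary, so that $M^W=\ker\phi$ and $M/M^W=\im\phi$. To invoke the Lemma I first record its two ambient hypotheses: that $X/W$ is projective over $S$ (which holds since $X/S$ is an abelian torsor, hence projective, and the quotient by the finite group $W$ remains projective over $S$, with a relatively ample line bundle available, e.g.\ one making $\sum_{r\in R(W)}[X^r]$ ample), and that $N$ is $S$-flat. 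Coherence of $N$ is immediate, and $S$-flatness follows because, viewed on $X$, each summand of $N$ is $M$ twisted by an invertible sheaf on $X$, hence locally isomorphic to $M$, which is $S$-flat by hypothesis; pushing forward along the finite map $X\to X/W$ preserves $S$-flatness.

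The crux is to identify $\ker(\phi_s)$ with $(M_s)^W$ for each $s\in S$, so that the hypothesis on Hilbert polynomials becomes exactly the hypothesis of the Lemma. Here I would use that $\phi$ arises by applying $\sHom_{{\cal H}_{W;\vec{T}}(X)}(-,M)$ to the finite presentation of the module $\sO_X$ exhibited in the Proposition, a presentation built solely from ${\cal H}_{W;\vec{T}}(X)$ and the rank $1$ subsheaves $\sO_X([X^{s_i}]-T_i)(s_i-1)$. Since ${\cal H}_{W;\vec{T}}(X)$ respects base change (earlier Corollary) and the rank $1$ pieces do as well (the explicit description of $\sO_X([X^{s_i}]-T_i)$ together with the compatibility of $[X^{s_i}]$ with base change), and since $M$ is $S$-flat, this presentation and the $\sHom$ into $M$ commute with the base change $\Spec k(s)\to S$. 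Consequently $\phi_s\colon M_s\to N_s$ is precisely the morphism attached by the Corollary to the ${\cal H}_{W;\vec{T}}(X_s)$-module $M_s$, so $N_s=\bigoplus_i\sO_{X_s}(T_i-[X_s^{s_i}])\otimes M_s$ and $\ker(\phi_s)=(M_s)^W$.

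Granting this, the hypothesis that the Hilbert polynomial of $(M_s)^W=\ker(\phi_s)$ is independent of $s$ is verbatim the hypothesis of the Lemma, which then gives that $\ker\phi=M^W$ and $\im\phi=M/M^W$ are $S$-flat and that the natural map $(M^W)_s=(\ker\phi)_s\to\ker(\phi_s)=(M_s)^W$ is an isomorphism for every $s$, as required. I expect the only genuine obstacle to be making the base-change compatibility in the second paragraph fully rigorous: one must check that the presentation of $\sO_X$ coming from the Proposition is by modules sufficiently flat over the base, so that forming $\sHom$ into the $S$-flat module $M$ introduces no spurious $\Tor$ upon restriction to a fibre. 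Once that is in hand everything else is formal.
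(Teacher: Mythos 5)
Your proposal is correct and takes exactly the route the paper intends: apply the preceding Lemma to the morphism $\phi\colon M\to\bigoplus_i\sO_X(T_i-[X^{s_i}])\otimes M$ supplied by the preceding Corollary, using that $M^W=\ker\phi$.

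One small simplification: the base-change worry in your final paragraph can be dispatched more directly than by inspecting flatness of the pieces of the presentation. The morphism $\phi$ is an $\sO_{X/W}$-linear map given concretely on the $i$-th summand by $m\mapsto(s_i-1)m$ (suitably twisted into $\sO_X(T_i-[X^{s_i}])\otimes M$), where $(s_i-1)\in{\cal H}_{W;\vec T}(X)$ acts $\sO_{X/W}$-centrally. Since the twisting bundles $\sO_X(T_i-[X^{s_i}])$ respect base change and the action of $(s_i-1)$ restricts to the corresponding action on the fiber module $M_s$, the formation of $\phi$ commutes with arbitrary base change by inspection, without any appeal to $\Tor$-vanishing or to flatness of the terms in the presentation of $\sO_X$. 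Once $\phi_s$ is identified with the analogous map for $M_s$, the equality $\ker(\phi_s)=(M_s)^W$ is the definition of the invariants for the fiber, and the Lemma applies verbatim. Your identification of the ambient hypotheses (projectivity of $X/W$ over $S$, $S$-flatness of $N$ as a direct sum of line-bundle twists of $M$) is correct and is exactly what the paper relies on.
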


If $M$ satisfies the hypothesis, we~say that $M$ has ``strongly flat
invariants''.

Before introducing parameters, we~could show that this functor respected
base change and flatness by observing that (subject to diagonalizability of
the root kernel) the Hecke algebra had (locally on~$X/A_W$) idempotents
projecting onto~$M^W$. Unfortunately, this fails, and quite badly, in
cases with parameters. Indeed, if the divisors $T_\alpha$ are of
sufficiently large degree, then the subquotient corresponding to~$w$ in the
Bruhat filtration will have negative degree unless $w$ is the identity, and
thus in such a case the fibers of the Hecke algebra cannot have {\em any}
nonscalar global sections, let alone symmetric idempotents.

Luckily, $S$-flatness is local on the source, not the base, and thus the
correct condition is not that there be {\em global} symmetric idempotents,
but merely that there be {\em local} symmetric idempotents.

\begin{lem}
 Let $U$ be a $W$-invariant open subset. If~$h\in
 \Gamma\big(U;\sO_X\big(D_{w_0}\big(\vec{T}\big)\big)\big)$, then
 \begin{gather*}
\bigg(\sum_w w\bigg){}^{w_0}h\in
 \Gamma\big(U;{\cal H}_{W;\vec{T}}(X)\big).
 \end{gather*}
\end{lem}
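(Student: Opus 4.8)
The plan is to check directly that the operator ${\cal D}:=\bigl(\sum_{w\in W}w\bigr){}^{w_0}h$ lies in ${\cal H}_{W;\vec T}(X)$ over $U$. Expanding in the meromorphic twisted group algebra $k(X)[W]$ gives ${\cal D}=\sum_{w\in W}c_w\,w$ with $c_w={}^{ww_0}h$, and I would verify the two families of conditions that cut ${\cal H}_{W;\vec T}(X)$ out of $k(X)[W]$: the holomorphy (residue) conditions defining ${\cal H}_W(X)$, and the extra vanishing of $c_w$ along $\sum_{\alpha\in\Phi^+(W)\cap w\Phi^-(W)}T_\alpha$ which, together with membership in ${\cal H}_W(X)$, characterizes ${\cal H}_{W;\vec T}(X)$. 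It is worth noting that the twist by $w_0$ is precisely what is needed to make the second family of conditions come out.

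For the holomorphy conditions (which do not involve $\vec T$ at all): since $D_{w_0}(\vec T)=\sum_{\alpha\in\Phi^+(W)}([X^{r_\alpha}]-T_\alpha)\le\Delta$, where $\Delta:=\sum_{\alpha\in\Phi^+(W)}[X^{r_\alpha}]$ is a $W$-invariant divisor, each $c_w={}^{ww_0}h$ is a local section of $\sO_X(\Delta)$. By Lemma \ref{lem:holomorphy_preserving} holomorphy-preservation may be tested hypersurface by hypersurface, and by Lemma \ref{lem:holomorphy_preserving_splits} and Corollary \ref{cor:global_order_two_residue_conditions} the test near the generic point of a reflection hypersurface $[X^r]$ reduces (the reflection hypersurfaces being mutually transverse) to the rank-1 residue conditions: for every $w$, $c_w+c_{rw}={}^{ww_0}h+{}^r\bigl({}^{ww_0}h\bigr)=(1+r)\,{}^{ww_0}h$ must be holomorphic along $[X^r]$. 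Restricting to the transverse genus-1 curve on which $\langle r\rangle$ acts by its marked involution, this is the local statement that if a section $\psi$ has pole along $[X^r]$ bounded by the ramification multiplicity $d$ of $[X^r]$, then $\psi+{}^r\psi$ is regular there. For $d=1$ this holds because $r$ acts as $-1$ to leading order transversally to $[X^r]$; in the wild cases $d\in\{2,4\}$ occurring in characteristic $2$, ${}^r\psi$ has the same polar part as $\psi$ (the corrections being killed by $2=0$), so $\psi+{}^r\psi=2\psi+(\text{regular})$ is regular. Hence ${\cal D}\in\Gamma(U;{\cal H}_W(X))$.

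For the vanishing conditions, assume first that every $T_\alpha$ is transverse to every reflection hypersurface. Then $h\in\Gamma(U;\sO_X(D_{w_0}(\vec T)))$ forces $\div(h)\ge\sum_{\alpha\in\Phi^+(W)}T_\alpha-\Delta$, and by transversality $h$ vanishes along $\sum_{\alpha\in\Phi^+(W)}T_\alpha$; therefore $c_w={}^{ww_0}h$ vanishes along $\sum_{\alpha\in\Phi^+(W)}{}^{ww_0}T_\alpha=\sum_{\alpha\in\Phi^+(W)}T_{ww_0\alpha}=\sum_{\gamma\in w\Phi^-(W)}T_\gamma$, using $w_0\Phi^+(W)=\Phi^-(W)$, and in particular along $\sum_{\alpha\in\Phi^+(W)\cap w\Phi^-(W)}T_\alpha$. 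By the description of ${\cal H}_{W;\vec T}(X)$ in the Corollary following Lemma \ref{lem:Bruhat_fin_parm}, this yields ${\cal D}\in\Gamma(U;{\cal H}_{W;\vec T}(X))$. For general $\vec T$ I would pass to the universal family over a product of relative symmetric powers of the coroot curves, where the tautological system of parameters is transverse to the reflection hypersurfaces, and deduce the general case by base change and semicontinuity exactly as in the treatment of bad parameters in Lemma \ref{lem:Bruhat_fin_parm}. The genuine content here is the local genus-1 computation behind the holomorphy check — routine away from characteristic $2$, and requiring a little care at wild ramification — together with making this last reduction precise; everything else is bookkeeping with roots and the Bruhat order.
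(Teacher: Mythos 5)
Your proof is correct, but takes a genuinely different route from the paper's. The paper invokes Proposition~\ref{prop:finite_hecke_as_intersection}: when $T_\alpha$ and $T_{-\alpha}$ share no component, ${\cal H}_{W;\vec T}(X)$ is exactly the subalgebra of ${\cal H}_W(X)$ preserving $\sO_X(-\sum_{\alpha\in\Phi^+(W)}T_\alpha)$, and then observes that $(\sum_w w){}^{w_0}h$ carries $\sO_X$ to $W$-invariant holomorphic sections and carries $\sO_X(-\sum_{\alpha\in\Phi^+(W)}T_\alpha)$ into itself, because the symmetrization of a local section of $\sO_X(D_{w_0}-\sum_{\alpha\in\Phi(W)}T_\alpha)$ vanishes on $\sum_{\alpha\in\Phi(W)}T_\alpha$. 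You instead work coefficient-by-coefficient, using the Corollary to Lemma~\ref{lem:Bruhat_fin_parm}, and separately check the ${\cal H}_W(X)$ residue conditions and the $T_\alpha$-vanishing conditions; both proofs then pass to the universal family to handle degenerate $\vec T$. The route is fine, and the root-theoretic bookkeeping ($c_w={}^{ww_0}h$ vanishes along $\sum_{\gamma\in w\Phi^-(W)}T_\gamma$, which dominates $\sum_{\alpha\in\Phi^+(W)\cap w\Phi^-(W)}T_\alpha$) is correct. The one place where you overwork is the explicit local genus-one computation at the reflection hypersurfaces, including the wild-ramification cases: the statement that $\psi+{}^r\psi$ is holomorphic whenever $\psi\in\sO_C([C^s])$ is precisely what underlies the rank-1 lemma describing ${\cal H}_{A_1}(C)$ (and its Remark giving the $(1+s)f_1'$ form of the operators), so invoking that lemma directly — together with the observation that near the generic point of $[X^r]$ the sum $\sum_{w\in W}w$ decomposes over left $\langle r\rangle$-cosets — is cleaner and is what the paper's ``clearly'' is silently using. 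Your version buys a bit of self-containedness at the cost of re-proving an already-established local fact.
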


\begin{proof}
 Suppose first that $T_\alpha$ and $T_{-\alpha}$ have no common component
 for any root $\alpha$, so that we may use Proposition~\ref{prop:finite_hecke_as_intersection} to characterize ${\cal
 H}_{W;\vec{T}}(X)$. The~given operator clearly maps $\Gamma(V;\sO_X)$
 to~$\Gamma(U\cap V;\sO_X)^W$ for any invariant open $V$, so that $\big(\sum_w
 w\big){}^{w_0}h\in {\cal H}_W(X)$. Similarly, if $f\in
 \Gamma\big(V;-\sum_{\alpha\in\Phi^+(W)}T_\alpha\big)$, then ${}^{w_0}h f\in
 \Gamma\big(V;D_{w_0}-\sum_{\alpha\in \Phi(W)}T_\alpha\big)$, so that the
 symmetrization vanishes along $\sum_{\alpha\in\Phi(W)}T_\alpha$. The~claim follows in this case.

 For the general case, we~base change from the family with universal
 $\vec{T}$, and observe that since~$D_{w_0}(\vec{T})$ is a flat family of
 divisors, $h$ extends to a local section of~$\sO_X\big(D_{w_0}\big(\vec{T}\big)\big)$
 on a~neighborhood of the original base. We thus find that there is a
 local section of the larger Hecke algebra that restricts to the desired
 local section, from which the result follows.
\end{proof}

We say that ${\cal H}_{W;\vec{T}}(X)$ {\em has a local symmetric
 idempotent} at a~point $x\in X/W$ if the restriction of~${\cal
 H}_{W;\vec{T}}(X)$ to the local ring at $x$ contains an idempotent of the
form \smash{$\big(\sum_w w\big)h$}. By the lemma, this is equivalent to asking for the
restriction of~\smash{$\sO_X\big(D_{w_0}\big(\vec{T}\big)\big)$} to the local ring at the orbit
corresponding to~$x$ to contain an element $h$ with~$\sum_{w\in W}
{}^wh=1$. Moreover, if there is an element for which this sum is a unit,
then we can divide by the sum to obtain an element symmetrizing to 1. It
follows that if the condition holds on the fiber containing $x$, then we
still have a local symmetric idempotent at $x$; that is, the condition of
having a local symmetric idempotent respects base change.

Similarly, we~say that ${\cal H}_{W;\vec{T}}(X)$ is {\em covered by
 symmetric idempotents} if it has a local symmetric idempotent at every
point $x\in X/W$. This is too much to hope for even without imposing
parameters, as~the $A_2$ example we considered at the end of Section~\ref{section3} gives an explicit point where the master Hecke algebra
fails to have a local symmetric idempotent. In~general, the most we can
say is that there is a (possibly empty) open subset of~$S$ such that the
base change is covered by symmetric idempotents. Indeed, the condition to
have a symmetric idempotent at $x$ is open, and $X/S$ is proper, so we can
simply take the complement of the image of the complement of the locus with
local symmetric idempotents.

\begin{lem}\label{lem:has_local_idempotents}
 Suppose that the root kernel of~$X$ is diagonalizable, and that for any
 nonnegative linear dependence $\sum_i k_i \alpha_i=0$ of roots, the
 intersection~$\cap_i T_{\alpha_i}$ is empty. Then ${\cal
 H}_{W;\vec{T}}(X)$ is covered by symmetric idempotents.
\end{lem}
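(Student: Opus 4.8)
The plan is to deduce the statement from the no‑parameter case. By the preceding Lemma and the discussion following it, it is enough to show that for every point $x$ of $X$ the restriction of $\sO_X(D_{w_0}(\vec T))$ to the local ring at the orbit $Wx$ contains an element $h$ whose symmetrization $\sum_{w\in W}{}^wh$ is a unit; then $(\sum_w w){}^{w_0}h$, divided by that (invariant, unit) symmetrization, is the desired symmetric idempotent. Since the property of having a local symmetric idempotent at a point is open and respects base change — and, exactly as in the proof of the preceding Proposition, an element symmetrizing to a unit over an extension field yields one over the ground field — I would first reduce to the case $S=\Spec(k)$ with $k$ algebraically closed, and then work in the completed (or strictly henselian) local ring at the orbit, over which the orbit decomposes into its finitely many individual points.

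The next step uses the hypothesis on $\vec T$ only through the following: the set $S_x:=\{\gamma\in\Phi : x\in\supp T_\gamma\}$ of roots whose parameter divisor passes through $x$ admits no nontrivial nonnegative linear dependence (in particular, taking $\alpha+(-\alpha)=0$, $T_\alpha$ and $T_{-\alpha}$ are disjoint). By Gordan's lemma a finite set of roots with no nontrivial nonnegative dependence lies in an open half‑space, hence in the positive roots of some chamber; since $W$ acts transitively on chambers, after replacing $x$ by a suitable point of its orbit we may assume $S_x\subset\Phi^+$. Then for $p_0:=w_0x$ we have $S_{p_0}=w_0S_x\subset\Phi^-$, so no $T_\alpha$ with $\alpha\in\Phi^+$ meets $p_0$, and therefore $\sO_X(D_{w_0}(\vec T))$ agrees with $\sO_X(\Delta)$ near $p_0$, where $\Delta=\sum_{r\in R(W)}[X^r]=D_{w_0}(\vec 0)$. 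Now take $h$ supported near $p_0$: in the decomposed local ring let $h$ vanish near every point of $Wx$ other than $p_0$, and near $p_0$ let $h$ equal a so‑far‑arbitrary germ $h_0\in\sO_X(\Delta)_{p_0}=\sO_X(D_{w_0}(\vec T))_{p_0}$. Grouping the sum $\sum_{w\in W}{}^wh$ according to the orbit point $w^{-1}x$, and using that $\{w:w^{-1}x=p_0\}$ is the coset $W_xw_0$, one gets that near $x$ the symmetrization $\sum_w{}^wh$ equals the $W_x$‑symmetrization $\sum_{v\in W_x}{}^vg$ of the germ $g:={}^{w_0}h_0$, which lies in $\sO_X({}^{w_0}D_{w_0}(\vec T))_x=\sO_X(\Delta)_x$ (again because $S_x\subset\Phi^+$) and ranges over all of $\sO_X(\Delta)_x$ as $h_0$ varies. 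We are thus reduced to producing $g\in\sO_X(\Delta)_x$ whose $W_x$‑symmetrization is a unit at $x$.

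But the same grouping computation, applied with the trivial system of parameters (so $D_{w_0}(\vec 0)=\Delta$ everywhere and no chamber move is needed), shows that this last condition is \emph{precisely} the condition for the master Hecke algebra $\mathcal H_W(X)$ to have a local symmetric idempotent at $x$ — which holds because the root kernel of $X$ is diagonalizable, by Theorem~\ref{thm:W-invariants} and the construction of symmetric idempotents for $\mathcal H_W(X)$ given above. Unwinding gives the symmetric idempotent of $\mathcal H_{W;\vec T}(X)$ at $x$, and since $x$ was arbitrary we conclude. The main obstacle — the only place the hypothesis on $\vec T$ enters — is the Gordan‑type reduction that lets one move $x$ so that all parameter divisors through it become positive (equivalently, so that $p_0=w_0x$ avoids the positive parameter divisors); once that is in place the problem becomes the no‑parameter problem for the stabilizer, which the diagonalizability hypothesis has already handled.
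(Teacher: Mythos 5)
Your proposal is correct and follows essentially the same route as the paper: both reduce via a Farkas/Gordan-type argument to a point of the orbit where the parameter divisors through it lie on one side of a chamber, so that $\sO_X(D_{w_0}(\vec T))$ locally agrees with $\sO_X(\Delta)$, and both then invoke the master ($\vec T=0$) symmetric idempotent, available from diagonalizability of the root kernel, to produce a germ symmetrizing to a unit; the paper simply constructs the germ directly as a partial sum $\sum_{w\in D_x\backslash W}{}^wh$ over decomposition-group coset representatives and patches it to vanish at the other orbit points, whereas you phrase the same step as a reduction to a $W_x$-symmetrization problem and then appeal to the $\vec T=0$ local idempotent to solve it. Your extra reductions (to an algebraically closed fiber and the completed local ring so the orbit decomposes) are sound and make the coset bookkeeping cleaner, but do not change the underlying argument.
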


\begin{proof}
 This is local in~$S$, so we may restrict to an open subset over which
 ${\cal H}_{W,0}(X)$ has a~glo\-bal symmetric idempotent $\sum_w w h$;
 diagonalizability of the root kernel ensures that these open subsets
 cover $S$. For~any point $x\in X$, let $D_x$ be the corresponding
 decomposition group, and observe that
 \begin{gather*}
 1 = \sum_{w\in W} {}^w h =
 \sum_{g\in D_x}
 {}^g\bigg(
\sum_{w\in D_x\setminus W}
 {}^w h\bigg),
 \end{gather*}
 and thus there is a section of~$\sO_X(D_{w_0})$ in the local ring at
 $x$ for which the sum over the decomposition group is 1.

 Now, suppose that $x$ is not contained in~$T_\alpha$ for any positive
 $\alpha$. Then this local section of~$\sO_X(D_{w_0})$ at $x$ is in
 fact a section of~$\sO_X\big(D_{w_0}-\sum_{\alpha} T_\alpha\big)$ near $x$,
 and we can add a section that vanishes at $x$ in such a way that the
 resulting section is holomorphic on the orbit $Wx$ and vanishes at the
 points of the orbit other than $x$. It follows that the resulting
 function symmetrizes to a unit in the relevant local ring, and thus gives
 rise to a local symmetric idempotent at $Wx$.

 In general, let $\Phi_x$ be the set of roots such that $x\in T_\alpha$.
 Since $x$ is contained in the corresponding intersection of divisors
 $T_\alpha$, we~conclude that the elements of~$\Phi_x$, viewed as real
 vectors, cannot satisfy any nonnegative linear dependence. This implies
 that there is a real linear functional which is negative on~$\Phi_x$, and
 thus (since all systems of positive roots in a finite Weyl group are
 equivalent) that $w\Phi_x\subset \Phi^-(X)$ for some $w\in W$.
 This implies that $wx$ satisfies the conditions for the construction of
 the previous paragraph to apply, and thus that there is a local symmetric
 idempotent in a neighborhood of the orbit $Wx$.
\end{proof}

It is not too hard to see that the empty intersection condition is
satisfied on the generic fiber of the family with universal $\vec{T}$;
indeed, if we further base change to express each $T_i$ as a sum of points,
then the values of those parameters at a~point of such an intersection must
themselves satisfy a nonnegative linear dependence, and there are only
finitely many minimal such dependences to consider. It follows that any
family of Hecke algebras is the base change of a family which is
generically covered by symmetric idempotents.

\begin{lem}
 If ${\cal H}_{W;\vec{T}}(X)$ is covered by symmetric idempotents, then
 ${-}^W$ is exact and any coherent ${\cal H}_{W;\vec{T}}(X)$-module
 $M$ has strongly flat invariants.
\end{lem}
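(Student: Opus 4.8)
The plan is to imitate the argument used in the no-parameter (master) case, replacing the single global symmetric idempotent by the local ones supplied by the hypothesis. First I would observe that, by the definition recalled above, $M^W$ is the image of the injective evaluation-at-$1$ morphism $\sHom_{\cal H}(\sO_X,M)\to M$, where ${\cal H}:={\cal H}_{W;\vec{T}}(X)$; hence ${-}^W\cong \sHom_{\cal H}(\sO_X,-)$ as functors, and exactness of ${-}^W$ reduces to showing that $\sO_X$ is a locally projective left ${\cal H}$-module. To see this, fix a point $x\in X/W$ together with a local symmetric idempotent $e=(\sum_{w\in W}w)h$ of ${\cal H}$ near $x$; the two facts I would use are that $we=e$ for every $w\in W$ (so $(w-1)e=0$) and that $\rho(e)=\sum_{w\in W}{}^wh=1$, where $\rho:{\cal H}\to\sO_X$ is the natural surjection $\sum_w c_w w\mapsto\sum_w c_w$. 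From these one checks that $f\mapsto fe$ is a left ${\cal H}$-module section of $\rho$ near $x$ (a section since $\rho(fe)=f\rho(e)=f$, and ${\cal H}$-linear precisely because $(w-1)e=0$). Thus, locally on $X/W$, $\rho$ splits and ${\cal H}={\cal H}e\oplus{\cal H}(1-e)$ with ${\cal H}e\cong\sO_X$, so $\sO_X$ is a local direct summand of ${\cal H}$. Consequently ${-}^W\cong \sHom_{\cal H}({\cal H}e,-)$, i.e. the functor $M\mapsto eM$, is exact; and, what will be needed for flatness, $M^W$ is locally the direct summand $eM$ of $M$, with complement $(1-e)M\cong M/M^W$.

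The remaining assertions then follow formally from this local picture. For base change: since both the construction ${\cal H}_{W;\vec{T}}(X)$ and the property of having a local symmetric idempotent at $x$ respect base change $T\to S$, the section $e$ restricts to a local symmetric idempotent of the base-changed algebra, and the local identity $M^W=eM$ yields $(M^W)_s=(M_s)^W$, so ${-}^W$ commutes with base change. For flatness: if $M$ is a coherent, $S$-flat ${\cal H}$-module, then locally on $X/W$ we have $M=M^W\oplus M/M^W$, so $M^W$ and $M/M^W$ are local direct summands of an $S$-flat coherent sheaf and hence $S$-flat; since $X/S$ is projective, the Hilbert polynomial of $(M_s)^W=(M^W)_s$ is then constant on $S$, which is to say $M$ has strongly flat invariants.

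I do not anticipate a real obstacle: the argument is essentially the one already given without parameters, and the only substantive change is that ``covered by symmetric idempotents'' provides idempotents only on an open cover of $X/W$ rather than globally. The point requiring the most care is therefore the observation that exactness, base change and $S$-flatness are all local on $X/W$, so that working with the local decompositions ${\cal H}={\cal H}e\oplus{\cal H}(1-e)$ and $M=eM\oplus(1-e)M$ over a cover -- rather than with a single global idempotent -- is harmless; because ${-}^W$ is realized as a $\sHom$ out of a sheaf that is locally a direct summand of ${\cal H}$, the compatibility over the cover is automatic. A minor subsidiary check is the ${\cal H}$-linearity of $f\mapsto fe$, which, as noted, is exactly the identity $(w-1)e=0$.
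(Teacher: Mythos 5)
Your proof is correct and follows essentially the same route as the paper's: local projectivity of $\sO_X$ via the covering by symmetric idempotents (the section $f\mapsto fe$ you verify), giving exactness, and the local identification $M^W=eM$ as a direct summand of $M$, giving $S$-flatness and compatibility with base change. The paper's proof is terser but identical in substance.
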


\begin{proof}
 Indeed, $\sO_X$ is locally a direct summand of~\smash{${\cal H}_{W;\vec{T}}(X)$},
 and is therefore locally projective as before. Exactness follows
 immediately. Any local section of~$(M_s)^W$ on an open subset supporting
 a symmetric idempotent extends to a local section of~$M$ which can then
 be projected to a section of~$M^W$ restricting to the given section of~$(M_s)^W$. It follows that the natural map $\big(M^W\big)_s\to (M_s)^W$ is an
 isomorphism. Since $M^W$ is locally a direct summand of~$M$, it is flat,
 and thus its fibers have constant Hilbert polynomial as required.
\end{proof}

It turns out that if the generic fiber is covered by symmetric idempotents,
then this has consequences even on those fibers without such a covering.

\begin{lem}
 Suppose that there is a dense open subset of~$S$ over which ${\cal
 H}_{W;\vec{T}}(X)$ is covered by symmetric idempotents, and suppose
 that the module $M$ admits a filtration such that each subquotient is
 $S$-flat with strongly flat invariants. Then $M$ has strongly flat
 invariants.
\end{lem}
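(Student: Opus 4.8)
The plan is to reduce the claim to a statement about exactness of $-^W$ combined with the numerical criterion for strong flatness, then induct on the length of the filtration. First I would recall what needs to be shown: $M$ is $S$-flat (this is immediate, since $M$ is an iterated extension of $S$-flat subquotients), and the Hilbert polynomial of $(M_s)^W$ is independent of $s\in S$. The main input is the previous Lemma: over a dense open $U\subseteq S$, ${\cal H}_{W;\vec{T}}(X)$ is covered by symmetric idempotents, hence $-^W$ is exact on $U$ and every coherent module has strongly flat invariants there. So over $U$ the result is already known; the work is to propagate the constancy of the Hilbert polynomial of $(M_s)^W$ from the dense open $U$ to the (possibly empty) bad locus $S\setminus U$.

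The key step is the following semicontinuity-plus-additivity argument. For a single $S$-flat subquotient $N$ with strongly flat invariants, $N^W$ is $S$-flat and $(N^W)_s\cong (N_s)^W$ for all $s$ (by the corollary following the ``strongly flat invariants'' definition, applied together with the hypothesis that $N$ has strongly flat invariants — that is exactly its definition). Now given a short exact sequence $0\to N'\to N\to N''\to 0$ of such subquotients, the left-exactness of $-^W$ (which holds over all of $S$, since $-^W=\sHom_{{\cal H}_{W;\vec{T}}(X)}(\sO_X,-)$ composed with ``evaluate at $1$'', and $\sHom$ is always left exact) gives an exact sequence $0\to (N_s)'^W\to (N_s)^W\to (N_s'')^W$ on every fiber. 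On the dense open $U$ the functor $-^W$ is exact, so there the Hilbert polynomial of $(N_s)^W$ equals that of $(N_s')^W$ plus that of $(N_s'')^W$; by the single-subquotient case each of the three is independent of $s$, hence this additivity of Hilbert polynomials holds as an identity of polynomials. For a general $s\in S$, left-exactness gives $h((N_s)^W)\le h((N_s')^W)+h((N_s'')^W)$ coefficientwise in the relevant sense (more precisely, via the long exact $\Ext$-sequence, $h((N_s)^W)$ differs from the full alternating sum by the contribution of $(N_s'')^W/\mathrm{im}$, which is effective), while upper semicontinuity of $s\mapsto h((N_s)^W)$ (applied to the flat sheaf obtained by pushing forward appropriately, using that $M$ and the subquotients are $S$-flat coherent sheaves on the projective $X/W$ over $S$) gives $h((N_s)^W)\ge h((N_\eta)^W)$ for $\eta$ a point of $U$. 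Combining these forces $h((N_s)^W)=h((N_s')^W)+h((N_s'')^W)$, i.e.\ $-^W$ is ``numerically exact'' on the fiber and $h((N_s)^W)$ is the same constant for all $s$. Induct on the length of the filtration of $M$: splitting off the top subquotient and applying the two-term case finishes it.

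The hard part will be making the semicontinuity bookkeeping rigorous when the bad locus $S\setminus U$ is positive-codimensional — specifically, justifying that upper semicontinuity of the Hilbert polynomial of fiberwise invariants can be applied even though $M^W$ need not be $S$-flat globally a priori. The clean way is to express $M^W$ (via the Corollary presenting it as the kernel of $M\to\bigoplus_i \sO_X(T_i-[X^{s_i}])\otimes M$) as a coherent subsheaf of the $S$-flat sheaf $M$, and invoke upper semicontinuity of $s\mapsto \dim_{k(s)} H^0(X_s/W; (\text{a twist of }M_s)^W)$ for the kernel of a map of coherent sheaves — this is standard, but one must be slightly careful that the fiber of the kernel maps onto (not just into) the kernel on the fiber, which is precisely where left-exactness of $-^W$ on fibers is used. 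Everything else — the base change compatibility, the identification $(N^W)_s\cong(N_s)^W$ for the single strongly-flat subquotient, and the inductive step — is routine given the earlier results, so no further calculation is needed here.
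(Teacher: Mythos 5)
Your approach is essentially the same as the paper's: upper semicontinuity bounds the generic Hilbert polynomial of $(M_s)^W$ from below, left-exactness of $-^W$ applied to the filtration bounds the special-fiber polynomial from above by the sum of the subquotients' invariant-polynomials, strong flatness of the subquotients makes that upper bound constant, and exactness of $-^W$ over the dense symmetric-idempotent locus shows the generic value already attains the upper bound, forcing equality everywhere. The repackaging as an induction on filtration length is a cosmetic difference.

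One technical remark in your last paragraph is wrong and worth correcting: the fiber of the kernel does \emph{not} map onto (nor even injectively into) the kernel on the fiber in general. By the four-term exact sequence in the Lemma preceding the Corollary you cite, the natural map $(M^W)_s\to (M_s)^W$ has kernel $\Tor_2(\coker\phi,k(s))$ and cokernel $\Tor_1(\coker\phi,k(s))$, and left-exactness of $-^W$ on fibers says nothing about either; indeed, vanishing of these $\Tor$ groups is equivalent to the flatness of $\coker\phi$, which is the \emph{conclusion}, not an input. Fortunately the claim isn't needed: the semicontinuity you want is directly available because for $d\gg 0$ one has $\Gamma(X_s/W,(M_s)^W(d))=\ker\bigl((\pi_*M(d))_s\to(\pi_*N(d))_s\bigr)$ by left-exactness of $\Gamma$ together with cohomology-and-base-change for the $S$-flat sheaves $M(d)$, $N(d)$, and the dimension of the kernel of a fiberwise map of locally free sheaves on $S$ is upper semicontinuous by elementary linear algebra. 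This sidesteps any need to compare $(M^W)_s$ with $(M_s)^W$ a priori.
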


\begin{proof}
 Fix a relatively ample bundle $\sO_{X/W}(1)$ on~$X/W$. By
 semicontinuity, for~any point $s\in S$ and $d\gg 0$, we~have
\begin{gather*}
\dim\big(\Gamma\big(X/W,(M_{k(S)})^W(d)\big)\big)\le \dim\big(\Gamma\big(X/W,(M_s)^W(d)\big)\big)
\\ \hphantom{\dim\big(\Gamma\big(X/W,(M_{k(S)})^W(d)\big)\big)}
{}\le \sum_i \dim\big(\Gamma\big(X/W,(M^i_s)^W(d)\big)\big)
\\ \hphantom{\dim\big(\Gamma\big(X/W,(M_{k(S)})^W(d)\big)\big)}
{}= \sum_i \dim\big(\Gamma\big(X/W,\big(M^i_{k(S)}\big)^W(d)\big)\big),
\end{gather*}
 where the $M^i$ are the subquotients of the given filtration on~$M$.
 Since the generic fiber of~${\cal H}_{W;\vec{T}}(X)$ is covered by
 symmetric idempotents, ${-}^W$ is exact on the generic fiber and thus
 \begin{gather*}
 \dim\big(\Gamma\big(X/W,\big(M_{k(S)}\big)^W(d)\big)\big)
 =
 \sum_i
 \dim\big(\Gamma\big(X/W,\big(M^i_{k(S)}\big)^W(d)\big)\big)
 \end{gather*}
 for~$d\gg 0$, implying
 \begin{gather*}
 \dim\big(\Gamma\big(X/W,(M_{k(S)})^W(d)\big)\big)
 = \dim\big(\Gamma\big(X/W,(M_s)^W(d)\big)\big)
 \end{gather*}
 as required.
\end{proof}

To apply this, we~will need a family of modules for which we can prove
strongly flat invariants without resorting to idempotents. For~$I\subset
S$, let $w_I$ denote the maximal element of~$W_I$.

\begin{prop}\label{prop:T-ish-invariants-of-L}
 Let $I\subset S$ and let ${\cal L}$ be a $W_I$-equivariant line bundle
 that descends in codimension~1. Then
 we have a natural isomorphism
 $\big(\Ind^W_{W_I}{\cal L}\big)^W \cong \big({\cal L}\big(
 D_{w_0}\big({}^-\vec{T}\big)-D_{w_I}\big({}^-\vec{T}\big)
 \big)\big)^{W_I}$
 of~$\sO_{X/W}$-modules, where the right-hand side denotes the sheaf of~$W_I$-invariant sections of the given line bundle.
\end{prop}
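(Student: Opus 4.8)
The plan is to reduce $(\Ind^W_{W_I}{\cal L})^W$ to an invariant of a line bundle over the parabolic subalgebra ${\cal H}_{W_{I'};\vec{T}}(X)$ by means of the identification of induction with coinduction proved above, and then to conjugate back by $w_0w_I$, where the entire remaining content is a bookkeeping identity between divisors.

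First I would note that, since the constant section $1$ generates $\sO_X$ as an ${\cal H}_{W;\vec{T}}(X)$-module, the injective evaluation map makes $M\mapsto M^W$ literally the functor $\sHom_{{\cal H}_{W;\vec{T}}(X)}(\sO_X,-)$. Feeding in $\Ind^{W;\vec{T}}_{W_I}{\cal L}\cong \Coind^{W;\vec{T}}_{W_{I'}}\big(\sO_X(D_{w_0w_I}(\vec{T}))\otimes w_0w_I{\cal L}\big)$ and using the adjunction between $\Coind^{W;\vec{T}}_{W_{I'}}$ and $\Res^{W;\vec{T}}_{W_{I'}}$ together with $\Res^{W;\vec{T}}_{W_{I'}}\sO_X=\sO_X$, I obtain
\[
(\Ind^W_{W_I}{\cal L})^W\;\cong\;\sHom_{{\cal H}_{W_{I'};\vec{T}}(X)}\big(\sO_X,\,\sO_X(D_{w_0w_I}(\vec{T}))\otimes w_0w_I{\cal L}\big),
\]
which by the definition of $(-)^{W_{I'}}$ is $\big(\sO_X(D_{w_0w_I}(\vec{T}))\otimes w_0w_I{\cal L}\big)^{W_{I'}}$. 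The subtlety here is that the ``evaluate at $1$'' description of $(-)^{W_{I'}}$ must be matched with the adjunction isomorphism; this is just the triangle identity, since the counit $\Res^{W}_{W_{I'}}\Coind^{W}_{W_{I'}}N\to N$ is evaluation of an operator, and the adjoint of $\phi$ sends $1$ to that counit applied to $\phi(1)$.

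It then remains to pull this back by the automorphism $w_0w_I\in W$ of $X$. Because $w_0w_I\in W$, it acts trivially on $X/W$, so the pullback is an isomorphism of $\sO_{X/W}$-modules, and since $(w_0w_I)W_I(w_0w_I)^{-1}=W_{I'}$ and ${}^{w_Iw_0}({}^{w_0w_I}{\cal L})={\cal L}$, it carries the preceding sheaf to $\big(\sO_X({}^{w_Iw_0}D_{w_0w_I}(\vec{T}))\otimes{\cal L}\big)^{W_I}$. I would finish by the divisor identity ${}^{w_Iw_0}D_{w_0w_I}(\vec{T})=D_{w_0}({}^-\vec{T})-D_{w_I}({}^-\vec{T})$, proved from the Coxeter description $D_w(\vec{T})=\sum_{\alpha\in\Phi^+(W)\cap w\Phi^-(W)}([X^{r_\alpha}]-T_\alpha)$: using $w_0\Phi^+(W_I)=\Phi^-(W_{I'})$ and $\ell(w_0w_I)=\ell(w_0)-\ell(w_I)$ one gets $\Phi^+(W)\cap(w_0w_I)\Phi^-(W)=\Phi^+(W)\setminus\Phi^+(W_{I'})$, and then $w_Iw_0$ carries this set bijectively onto $\Phi^-(W)\setminus\Phi^-(W_I)$, i.e. onto the negatives of $\Phi^+(W)\setminus\Phi^+(W_I)$, which replaces each $T_\alpha$ by $T_{-\alpha}=({}^-\vec{T})_\alpha$ and fixes each $[X^{r_\alpha}]$. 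Along the way I would record that $D_{w_0}({}^-\vec{T})-D_{w_I}({}^-\vec{T})=\sum_{\alpha\in\Phi^+(W)\setminus\Phi^+(W_I)}([X^{r_\alpha}]-({}^-\vec{T})_\alpha)$ is $W_I$-invariant with trivial valuation along the $W_I$-reflection hypersurfaces, so that (as ${\cal L}$ descends in codimension $1$) its $W_I$-invariant sections against ${\cal L}$ genuinely form a sheaf on $X/W$ as in the statement.

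The main obstacle is not a single hard estimate but the care needed in the first two steps: one must check that all the functorial identifications — $(-)^W$ with $\sHom(\sO_X,-)$, the $\Coind/\Res$ adjunction, and the agreement of the two evaluation maps — are compatible as morphisms of $\sO_{X/W}$-modules, and that the parabolic algebra ${\cal H}_{W_{I'};\vec{T}}(X)$ here is understood as sitting inside ${\cal H}_{W;\vec{T}}(X)$ via pushforward along $X/W_{I'}\to X/W$. Once that is in place, the rest is routine Coxeter combinatorics. (One could instead argue directly from the presentation of the module $\sO_X$ and the Bruhat filtration of $\Ind^W_{W_I}{\cal L}$, but computing invariants of the resulting iterated extension is messier than the $\Ind\cong\Coind$ route.)
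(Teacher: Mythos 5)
Your proposal is correct and follows essentially the same route as the paper's proof: identify $(-)^W$ with $\sHom_{{\cal H}_{W;\vec{T}}(X)}(\sO_X,-)$, replace $\Ind^{W;\vec{T}}_{W_I}{\cal L}$ with $\Coind^{W;\vec{T}}_{W_{I'}}(\sO_X(D_{w_0w_I}(\vec{T}))\otimes w_0w_I{\cal L})$ using the $\Ind\cong\Coind$ lemma, apply the $\Coind$--$\Res$ adjunction, conjugate by $w_Iw_0$, and finish with the divisor identity ${}^{w_Iw_0}D_{w_0w_I}(\vec{T})=D_{w_0}({}^-\vec{T})-D_{w_I}({}^-\vec{T})$. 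The paper's version is a four-line chain of isomorphisms; you spell out the adjunction bookkeeping and the Coxeter combinatorics underlying the final divisor step, both of which are accurate.
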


\begin{proof}
 For any ${\cal H}_{W_I;\vec{T}}(X)$-module $M$, we~have
 \begin{gather*}
 \Big(\Ind^{W;\vec{T}}_{W_I} M\Big)^W\cong
 \Big(\Coind_{W_{I'}}^{W;\vec{T}}\big(\sO_X\big(D_{w_0w_I}\big(\vec{T}\big)\big) \otimes w_0w_I M\big)\Big)^W
 \\ \hphantom{ \big(\Ind^{W;\vec{T}}_{W_I} M\big)^W}
 \cong \big(\sO_X\big(D_{w_0w_I}\big(\vec{T}\big)\big) \otimes w_0w_I M\big)^{W_{I'}}
 \\ \hphantom{ \big(\Ind^{W;\vec{T}}_{W_I} M\big)^W}
{}\cong \big(\sO_X\big({}^{w_Iw_0}D_{w_0w_I}\big(\vec{T}\big)\big) \otimes M\big)^{W_I}
\\ \hphantom{ \big(\Ind^{W;\vec{T}}_{W_I} M\big)^W}
\cong \big(\sO_X\big(D_{w_0}\big({}^-\vec{T}\big)-D_{w_I}\big({}^-\vec{T}\big)\big) \otimes M\big)^{W_I}.
 \end{gather*}
 We may thus reduce to the case $W=W_I$, where the result is immediate.
\end{proof}

\begin{cor}
 Suppose that the root kernel of~$X$ is diagonalizable. Then any
 module of the form $M=\Ind^W_{W_I}{\cal L}$ has strongly flat invariants.
\end{cor}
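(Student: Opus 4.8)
The plan is to reduce the statement, via Proposition~\ref{prop:T-ish-invariants-of-L}, to the constancy in families of dimensions of spaces of invariant sections of an ample line bundle under a finite Weyl group, and then to invoke Theorem~\ref{thm:W-invariants}. First, note that $M=\Ind^W_{W_I}{\cal L}$ is a coherent, $S$-flat ${\cal H}_{W;\vec{T}}(X)$-module: by the Mackey-type filtration of Proposition~\ref{prop:Mackey_for_Hecke} (with $I=\emptyset$) it admits a finite filtration whose successive subquotients are the line bundles $\sO_X(D_w(\vec{T}))\otimes{}^w{\cal L}$ on $X$, each of which is $S$-flat since $X/S$ is flat. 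Fixing once and for all a relatively ample $\sO_{X/W}(1)$ on $X/W$, the assertion that $M$ has strongly flat invariants is then, by definition, the assertion that the Hilbert polynomial of $(M_s)^W$ is independent of $s\in S$ (the remaining good properties of $M^W$ being supplied afterward by the lemma on kernels of morphisms of $S$-flat sheaves).

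Next I would apply Proposition~\ref{prop:T-ish-invariants-of-L}, which, being natural and hence compatible with passing to fibers, identifies $(M_s)^W$ with the sheaf of $W_I$-invariant sections of the line bundle ${\cal L}'_s$, where ${\cal L}':={\cal L}(D_{w_0}({}^-\vec{T})-D_{w_I}({}^-\vec{T}))$; unwinding the definitions one has $D_{w_0}({}^-\vec{T})-D_{w_I}({}^-\vec{T})=\sum_{\alpha\in\Phi^+(W)\setminus\Phi^+(W_I)}([X^{r_\alpha}]-T_{-\alpha})$. Since $W_I$ permutes $\Phi^+(W)\setminus\Phi^+(W_I)$, this is a $W_I$-equivariant line bundle, and it descends in codimension $1$ under $W_I$: this holds for ${\cal L}$ by hypothesis, while the twisting divisor has trivial — in particular even — valuation along every $W_I$-reflection hypersurface $[X^{r_\beta}]$, each of its summands being supported on a coroot curve attached to a root $\alpha\ne\pm\beta$. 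For $d\gg 0$ (uniformly in $s$, as $X/S$ is proper and flat) the pullback of $\sO_{X/W}(d)$ along the finite map $X\to X/W$ is ample, so ${\cal L}'(d):={\cal L}'\otimes(\text{that pullback})$ is an ample $W_I$-equivariant line bundle on $X$ still descending in codimension $1$ under $W_I$; and since $X/W_I\to X/W$ is finite, $\Gamma((X/W)_s,(M_s)^W(d))\cong\Gamma(X_s,{\cal L}'(d)_s)^{W_I}$. It therefore suffices to prove that $\dim\Gamma(X_s,{\cal L}'(d)_s)^{W_I}$ is constant on $S$.

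This last point is where Theorem~\ref{thm:W-invariants} (or, equivalently in the torsor setting, the argument behind the Proposition on local symmetric idempotents, which reduces the torsor case to the subvariety $A_{W_I}$) enters, applied to the action by reflections of $W_I$ on $X$ with the bundle ${\cal L}'(d)$. The root kernel of $X$ relative to $W_I$ — the kernel of $\prod_{i\in I}E_i\to A$ — is a closed subgroup scheme of the root kernel of $X$ relative to $W$, hence diagonalizable by hypothesis; and the remaining technical requirement, that ${\cal L}'(d)$ restrict to an even power of ${\cal L}_1$ on every $W_I$-root curve, is checked exactly as in the proof of Theorem~\ref{thm:W-invariants}, using that ${\cal L}$ descends in codimension $1$, that the twisting divisor is disjoint from the $W_I$-reflection hypersurfaces, and that the pullback of $\sO_{X/W}(d)$ descends all the way to $X/W$. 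Theorem~\ref{thm:W-invariants} then gives the desired constancy. I expect the only genuine obstacle to be precisely this bookkeeping: verifying that the hypotheses of Theorem~\ref{thm:W-invariants} — diagonalizability of the root kernel, the even-power condition on root curves, and the passage from abelian varieties to abelian torsors — really do transfer to the parabolic $W_I$ and to the twisted bundle ${\cal L}'(d)$; everything else in the corollary is formal, being immediate from Propositions~\ref{prop:T-ish-invariants-of-L} and \ref{prop:Mackey_for_Hecke}.
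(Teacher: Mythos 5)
Your proof is correct and takes essentially the same approach the paper has in mind: apply Proposition~\ref{prop:T-ish-invariants-of-L} fiber-by-fiber to identify $(M_s)^W$ with $W_I$-invariant sections of the twisted line bundle ${\cal L}'_s$, then deduce constancy of the Hilbert polynomial from Theorem~\ref{thm:W-invariants} (in its torsor form) applied to the parabolic $W_I$. The bookkeeping you flag at the end — that the root kernel of $W_I$ is a closed subgroup scheme of that of $W$ and hence stays diagonalizable, that the twisting divisor $\sum_{\alpha\in\Phi^+(W)\setminus\Phi^+(W_I)}([X^{r_\alpha}]-T_{-\alpha})$ is $W_I$-equivariant with trivial valuation along $W_I$-reflection hypersurfaces, and that this together with ``${\cal L}$ descends in codimension 1'' makes ${\cal L}'(d)$ restrict to an even power on $W_I$-root curves — is indeed the content that the paper leaves unstated, and your account of it is accurate.
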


\begin{cor}
 Let $I,J\subset S$ and let ${\cal L}_I$, ${\cal L}_J$ be $W_I$,
 $W_J$-equivariant line bundles that descend in codimension~1. If~the
 root kernel of~$X$ is diagonalizable, then
 \begin{gather*}
 \sHom_{{\cal H}_{W;\vec{T}}(X)}\Big(\Ind^{W;\vec{T}}_{W_I} {\cal L}_I,
 \Ind^{W;\vec{T}}_{W_J} {\cal L}_J\Big)
 \end{gather*}
 is $S$-flat and respects base change.
\end{cor}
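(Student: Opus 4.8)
The plan is to identify this $\sHom$-sheaf, via the adjunctions and Morita equivalences already at hand, with the $W_I$-invariants of an ${\cal H}_{W_I;\vec{T}}(X)$-module carrying an explicit finite filtration by inductions of line bundles, and then to run the flatness machinery developed above. First, the tensor--hom adjunction $\Ind^{W;\vec{T}}_{W_I}\dashv\Res^{W;\vec{T}}_{W_I}$ gives
\[
\sHom_{{\cal H}_{W;\vec{T}}(X)}(\Ind^{W;\vec{T}}_{W_I}{\cal L}_I,\Ind^{W;\vec{T}}_{W_J}{\cal L}_J)
\cong
\sHom_{{\cal H}_{W_I;\vec{T}}(X)}({\cal L}_I,\Res^{W;\vec{T}}_{W_I}\Ind^{W;\vec{T}}_{W_J}{\cal L}_J).
\]
Since ${\cal L}_I$ descends in codimension $1$, the Morita autoequivalence ${\cal L}_I^{-1}\otimes{-}$ of ${\cal H}_{W_I;\vec{T}}(X)$-modules carries the module ${\cal L}_I$ to $\sO_X$, and $\sHom_{{\cal H}_{W_I;\vec{T}}(X)}(\sO_X,-)$ is the functor $(-)^{W_I}$ (the evaluation map being injective with image the invariants); so the right-hand side is isomorphic to $M^{W_I}$, where $M:={\cal L}_I^{-1}\otimes\Res^{W;\vec{T}}_{W_I}\Ind^{W;\vec{T}}_{W_J}{\cal L}_J$ is a coherent ${\cal H}_{W_I;\vec{T}}(X)$-module, and -- $\Ind$, $\Res$ and the twisting functors all respecting base change -- so does $M$. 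It will therefore suffice to show that $M$ has strongly flat invariants with respect to $W_I$: the corollary that strongly flat invariants of a coherent $S$-flat module are themselves $S$-flat with $(M^{W_I})_s\xrightarrow{\sim}(M_s)^{W_I}$ then applies (in the $W_I$-version), and that flatness upgrades the identification to compatibility with arbitrary base change $T\to S$, so that $\sHom_{{\cal H}_{W;\vec{T}}(X)}(\Ind^{W;\vec{T}}_{W_I}{\cal L}_I,\Ind^{W;\vec{T}}_{W_J}{\cal L}_J)\cong M^{W_I}$ is $S$-flat and respects base change.

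Next I would exhibit the filtration. By Proposition~\ref{prop:Mackey_for_Hecke}, $\Res^{W;\vec{T}}_{W_I}\Ind^{W;\vec{T}}_{W_J}{\cal L}_J$ has a finite filtration whose subquotient attached to $w\in{}^IW^J$ is $\Ind^{W_I;\vec{T}}_{W_{I(w)}}\bigl(\sO_X(D_w(\vec{T}))\otimes w\Res^{W_J;\vec{T}}_{W_{J(w)}}{\cal L}_J\bigr)$. Here $\Res^{W_J;\vec{T}}_{W_{J(w)}}{\cal L}_J$ is again a $W_{J(w)}$-equivariant line bundle descending in codimension $1$; the automorphism $w$, conjugating $W_{J(w)}$ onto $W_{I(w)}$, transports it to a $W_{I(w)}$-equivariant such bundle; and by the lemma preceding Proposition~\ref{prop:Mackey_for_Hecke}, $\sO_X(D_w(\vec{T}))$ is $W_{I(w)}$-invariant with trivial valuation along the relevant reflection hypersurfaces, hence is also a $W_{I(w)}$-equivariant line bundle descending in codimension $1$; so the $w$-th subquotient has the form $\Ind^{W_I;\vec{T}}_{W_{I(w)}}{\cal N}_w$ with ${\cal N}_w$ of this type. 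Applying the exact functor ${\cal L}_I^{-1}\otimes{-}$ and commuting it past the induction (a routine bimodule computation -- the projection formula for the twisting functor), $M$ acquires a finite filtration whose $w$-th subquotient is $\Ind^{W_I;\vec{T}}_{W_{I(w)}}\bigl({\cal L}_I^{-1}|_{W_{I(w)}}\otimes{\cal N}_w\bigr)$, again an induction of a $W_{I(w)}$-equivariant line bundle descending in codimension $1$, and $S$-flat (being, by its own Bruhat filtration, an iterated extension of line bundles on $X/S$).

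It remains to show each such subquotient has strongly flat invariants and then to assemble. Strong flatness of invariants of $\Ind^{W_I;\vec{T}}_{W_{I(w)}}(\text{line bundle descending in codimension }1)$ is the corollary to Proposition~\ref{prop:T-ish-invariants-of-L}, applied with ambient group $W_I$ and parabolic subgroup $W_{I(w)}$; it requires the root kernel of $X$ relative to $W_I$ to be diagonalizable, which I would deduce from the hypothesis relative to $W$: the simple root curves of $W_I$ are among those of $W$, the map $\prod_{i\in I}E_i\to A$ factors through the closed immersion $\prod_{i\in I}E_i\hookrightarrow\prod_iE_i$, so the $W_I$-root kernel is a closed subgroup scheme of the $W$-root kernel, and closed subgroup schemes of diagonalizable group schemes are diagonalizable. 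To pass from the subquotients to $M$, I would invoke the lemma that a module admitting a filtration by $S$-flat subquotients with strongly flat invariants itself has strongly flat invariants, provided ${\cal H}_{W_I;\vec{T}}(X)$ is covered by symmetric idempotents over a dense open of the base. That hypothesis may fail for the given family, but the family is a base change of the family with universal parameters, over whose generic fiber the empty-intersection condition of Lemma~\ref{lem:has_local_idempotents} holds (for the roots of $W_I$ a fortiori); so the assertion holds for the universal family, hence for the original one by base change. The main obstacle is really the bookkeeping around the twist by ${\cal L}_I^{-1}$ -- confirming that after it the Mackey subquotients stay in the precise class to which the corollary to Proposition~\ref{prop:T-ish-invariants-of-L} applies verbatim; the remaining steps are just the chaining of adjunctions with the flatness lemmas.
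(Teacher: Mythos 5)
Your proposal is correct and follows essentially the same route as the paper: adjunction to reduce to a $\sHom$ over ${\cal H}_{W_I;\vec{T}}(X)$, the Morita twist by ${\cal L}_I^{-1}$ to rewrite it as $W_I$-invariants, the Mackey/Bruhat filtration to reduce to subquotients of the form $\Ind^{W_I;\vec{T}}_{W_{I(w)}}(\text{line bundle descending in codimension }1)$, and an extension to the family with universal parameters to supply a dense open covered by symmetric idempotents. The only places you add substance beyond what the paper records explicitly are in verifying that the root kernel of $W_I$ inherits diagonalizability from that of $W$ (your argument via the closed subgroup scheme $\ker(\prod_{i\in I}E_i\to A)\subset\ker(\prod_iE_i\to A)$ is correct) and in spelling out why the twisted Mackey subquotients land in the class covered by the corollary to Proposition~\ref{prop:T-ish-invariants-of-L}; the paper simply asserts these, but your filled-in details are consistent with it.
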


\begin{proof}
 By extending the family as appropriate (noting that ${\cal L}_I$ and
 ${\cal L}_J$ themselves extend), we~may assume that there is a dense open
 subset $U\subset S$ which is covered by symmetric idempotents. We
 observe that
 \begin{gather*}
 \sHom_{{\cal H}_{W;\vec{T}}(X)}\Big(\Ind^{W;\vec{T}}_{W_I} {\cal L}_I,
 \Ind^{W;\vec{T}}_{W_J} {\cal L}_J\Big) \cong
 \sHom_{{\cal H}_{W_I;\vec{T}}(X)}\Big({\cal L}_I,
 \Res^{W;\vec{T}}_{W_I}\Ind^{W;\vec{T}}_{W_J} {\cal L}_J\Big)
 \\ \hphantom{\sHom_{{\cal H}_{W;\vec{T}}(X)}\Big(\Ind^{W;\vec{T}}_{W_I} {\cal L}_I,
 \Ind^{W;\vec{T}}_{W_J} {\cal L}_J\Big)}
 \cong \Big({\cal L}_I^{-1}\otimes
 \Res^{W;\vec{T}}_{W_I}\Ind^{W;\vec{T}}_{W_J} {\cal L}_J\Big)^{W_I}\!.
 \end{gather*}
 Each subquotient of the Bruhat filtration for
 \begin{gather*}
 {\cal L}_I^{-1}\otimes
 \Res^{W;\vec{T}}_{W_I}\Ind^{W;\vec{T}}_{W_J} {\cal L}_J
 \end{gather*}
 has strongly flat invariants, and thus the same holds for the ${\cal
 H}_{W_I;\vec{T}}(X)$-module itself. It follows in particular that the
 module is $S$-flat and that the construction commutes with base change.
\end{proof}

\begin{prop}
 Let ${\cal L}_I$, ${\cal L}_J$ be $W_I$, $W_J$-equivariant line bundles
 that descend in codimension~1. If~the root kernel of~$X$ is
 diagonalizable, then there is a natural isomorphism
 \begin{gather*}
 \sHom_{{\cal H}_{W}(X)}\Big(\Ind^{W;0}_{W_J}{\cal L}_J, \Ind^{W;0}_{W_I}{\cal L}_I\Big)
 \cong \sHom\big((\pi_*{\cal L}_I)^{W_I},(\pi_*{\cal L}_J)^{W_J}\big)
 \end{gather*}
 which is (contravariantly) compatible with composition.
\end{prop}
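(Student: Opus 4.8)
The plan is to reduce everything to the parabolic subgroups, using the induction--restriction adjunction and the Mackey filtration, and exploiting that for $\vec T=0$ one has $\mathcal H_W(X)=\sEnd_{\sO_{X/W}}(\pi_*\sO_X)$, so that the module $\sO_X$ is literally $\pi_*\sO_X$. Conceptually, what is being asserted is that the contravariant functor $\mathcal G\mapsto \sHom_{\sO_{X/W}}(\mathcal G,\pi_*\sO_X)$, with $\mathcal H_W(X)$ acting by post-composition, carries the sheaf $(\pi_*{\cal L}_I)^{W_I}$ on $X/W$ to the $\mathcal H_W(X)$-module $\Ind^{W;0}_{W_I}{\cal L}_I$ and is contravariantly fully faithful on objects of this form; the contravariance is exactly the source/target interchange that produces the stated compatibility with composition. (When the arguments are locally free this is transparent: $\sHom_{\sO_{X/W}}(\mathcal G,\pi_*\sO_X)=\mathcal G^\vee\otimes\pi_*\sO_X$, and $\sHom_{\mathcal H_W(X)}(\mathcal G_1^\vee\otimes\pi_*\sO_X,\,\mathcal G_2^\vee\otimes\pi_*\sO_X)=\mathcal G_1\otimes(\mathcal G_2^\vee\otimes\pi_*\sO_X)^W=\mathcal G_1\otimes\mathcal G_2^\vee$, using $\sHom_{\mathcal H_W(X)}(\sO_X,N)=N^W$ and $(\pi_*\sO_X)^W=\sO_{X/W}$. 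The sheaves $(\pi_*{\cal L}_i)^{W_i}$ are only generically locally free, which is why diagonalizability of the root kernel is needed to make the passage to the general case legitimate.)

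Concretely I would argue as follows. First, by the induction--restriction adjunction, the Morita autoequivalence ${\cal L}_J\otimes(-)$, and the presentation of $\sO_X$ (which gives $\sHom_{\mathcal H_{W_J}(X)}(\sO_X,M)=M^{W_J}$ for $\sO_X$-torsion-free $M$), one obtains $\sHom_{\mathcal H_W(X)}(\Ind^{W;0}_{W_J}{\cal L}_J,\Ind^{W;0}_{W_I}{\cal L}_I)\cong({\cal L}_J^{-1}\otimes\Res^{W;0}_{W_J}\Ind^{W;0}_{W_I}{\cal L}_I)^{W_J}$ --- this is the Corollary preceding the Proposition, with the roles of $I$ and $J$ interchanged and $\vec T=0$. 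Next, apply Proposition~\ref{prop:Mackey_for_Hecke}: $\Res^{W;0}_{W_J}\Ind^{W;0}_{W_I}{\cal L}_I$ carries a filtration indexed by ${}^JW^I$ whose subquotient at $w$ is $\Ind^{W_J;0}_{W_{J(w)}}(\sO_X(D_w)\otimes w\,\Res^{W_I;0}_{W_{I(w)}}{\cal L}_I)$. Because the root kernel is diagonalizable, every master Hecke algebra $\mathcal H_{W_J;0}(X)$ is covered by symmetric idempotents (Lemma~\ref{lem:has_local_idempotents} at $\vec T=0$), so $({\cal L}_J^{-1}\otimes(-))^{W_J}$ is an exact functor; applying it to the Mackey filtration reduces the problem to computing $({\cal L}_J^{-1}\otimes\text{subquotient})^{W_J}$ for each $w$. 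For fixed $w$, Proposition~\ref{prop:T-ish-invariants-of-L} (at $\vec T=0$), together with the commutation of the Morita twist with $\Ind$, rewrites this as the sheaf of $W_{J(w)}$-invariant sections of an explicit line bundle on $X$; invoking the isomorphism $W_{I(w)}\cong W_{J(w)}$ and the invariance of $D_w$ along the reflection hypersurfaces of $W_{I(w)}$ (the Lemma just before Proposition~\ref{prop:Mackey_for_Hecke}), the various divisor twists produced by the Mackey filtration and by Proposition~\ref{prop:T-ish-invariants-of-L} cancel, leaving precisely the invariant sections that make up the component of $\sHom_{\sO_{X/W}}((\pi_*{\cal L}_I)^{W_I},(\pi_*{\cal L}_J)^{W_J})$ attached to the corresponding irreducible component of the fibre product $(X/W_I)\times_{X/W}(X/W_J)$.

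Finally, since that fibre product is an honest disjoint union indexed by ${}^JW^I$, the induced decomposition of $\sHom_{\sO_{X/W}}((\pi_*{\cal L}_I)^{W_I},(\pi_*{\cal L}_J)^{W_J})$ is a genuine direct sum, so the matching filtration of the left-hand side splits, and naturality of the adjunction, of the Mackey filtration, and of Proposition~\ref{prop:T-ish-invariants-of-L} in the line bundles delivers the isomorphism together with its (contravariant) compatibility with composition. I expect the main obstacle to be the middle step: matching the $w$-subquotient of the left-hand side with the $w$-component of the right-hand side requires tracking carefully how ${\cal L}_J$, pushed to $X/W_J$ and restricted to a component of the fibre product, differs from ${\cal L}_I$ pushed to $X/W_I$ and pulled back, and verifying that this discrepancy --- together with the ramification of the component over $X/W_I$ and the effect of applying $w$ --- is exactly the divisor combination produced by Mackey and by Proposition~\ref{prop:T-ish-invariants-of-L}; this is where essentially all of the root-system combinatorics lives. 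Establishing exactness of $({\cal L}_J^{-1}\otimes(-))^{W_J}$ along the filtration (again from diagonalizability) is the other place that hypothesis is genuinely used.
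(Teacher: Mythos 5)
Your strategy — reduce via Frobenius reciprocity to $({\cal L}_J^{-1}\otimes\Res^{W;0}_{W_J}\Ind^{W;0}_{W_I}{\cal L}_I)^{W_J}$, then filter by ${}^JW^I$ using Proposition~\ref{prop:Mackey_for_Hecke} and compute each subquotient with Proposition~\ref{prop:T-ish-invariants-of-L} — is genuinely different from the paper's. The paper avoids Mackey altogether: both sides are identified on the nose with $e_J\,\sHom(\pi_*{\cal L}_I,\pi_*{\cal L}_J)\,e_I$, since $\Ind^{W;0}_{W_I}{\cal L}_I$ is (locally) the image of right-multiplication by the symmetric idempotent $e_I\in{\cal H}_{W_I;{\cal L}_I}(X)\subset\sEnd(\pi_*{\cal L}_I)$, and on the other side $e_I$ is precisely the projection $\pi_*{\cal L}_I\twoheadrightarrow(\pi_*{\cal L}_I)^{W_I}$. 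This is where the diagonalizability hypothesis is used, and the whole statement (including the source/target swap that makes composition contravariant) is then immediate algebra of idempotents, with no combinatorics at all.

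Two concrete problems with your route. First, the assertion that $(X/W_I)\times_{X/W}(X/W_J)$ is ``an honest disjoint union indexed by ${}^JW^I$'' is false: already for $W_I=W_J=\{1\}$, $X\times_{X/W}X$ has components indexed by $W$ (graphs of the $w\in W$) but they meet along the reflection hypersurfaces. So the purported direct-sum decomposition of $\sHom_{\sO_{X/W}}\bigl((\pi_*{\cal L}_I)^{W_I},(\pi_*{\cal L}_J)^{W_J}\bigr)$ does not exist, and the conclusion that the filtration on the left-hand side splits cannot be reached this way. Second, and independently, establishing filtrations on both sides with isomorphic subquotients does not by itself produce an isomorphism of the sheaves — you still need to construct a filtered map and check it is an isomorphism on the associated graded. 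The closing appeal to ``naturality\dots delivers the isomorphism'' does not say what map is meant; once you name the map (it is in effect the assignment $\phi\mapsto e_J\phi e_I$), you have rederived the paper's argument, and the Mackey bookkeeping — which you correctly identify as the hard part and do not carry out — becomes unnecessary.

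A small caution on the ``conceptual'' preamble: $\sHom_{\sO_{X/W}}\bigl((\pi_*{\cal L}_I)^{W_I},\pi_*\sO_X\bigr)$ is not in general $\Ind^{W;0}_{W_I}{\cal L}_I$; the correct target is $\pi_*{\cal L}_I$ itself, i.e.\ $\Ind^{W;0}_{W_I}{\cal L}_I\cong\sEnd(\pi_*{\cal L}_I)\,e_I=\sHom\bigl((\pi_*{\cal L}_I)^{W_I},\pi_*{\cal L}_I\bigr)$. Fixing this points directly at the idempotent picture and short-circuits the filtration argument entirely.
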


\begin{proof}
 Since the root kernel is diagonalizable, both ${\cal H}_{W_I}(X)$ and
 ${\cal H}_{W_J}(X)$ have symmetric idempotents $e_I$, $e_J$ on the
 complement of any $W$-invariant ample divisor, and these embed as local
 sections of~$\sEnd(\pi_*{\cal L}_I)$ and $\sEnd(\pi_*{\cal L}_J)$
 respectively. We may thus identify the sections of the left-hand side as
 the subspace $e_J \sHom(\pi_*{\cal L}_I,\pi_*{\cal L}_J) e_I$, and this
 is contravariant with respect to composition. As~a section of~$\sEnd(\pi_*{\cal L}_I)$, $e_I$ is a projection onto~$(\pi_*{\cal
 L}_I)^{W_I}$, and similarly for~$e_J$; the claim follows immediately.
\end{proof}

\begin{rems}
 Note that if ${\cal L}_I$ descends to a line bundle on~$X/W_I$, then
 $(\pi_*{\cal L}_I)^{W_I}$ may be identified with the direct image of that
 line bundle.
\end{rems}

\begin{rems}
 If ${\cal L}_I$ and ${\cal L}_J$ are trivial and $J=\varnothing$, then the
 right-hand side consists of ope\-ra\-tors locally taking $W_I$-invariant
 holomorphic functions to holomorphic functions, and may thus be
 identified with the sheaf ${\cal M}_{\vec{g}W_I}$, where $g_1,\dots,g_n$
 are a system of coset representatives for~$W/W_I$. More generally, if
 $\sum_{w W_I\in W/W_I} c_{wW_I} w W_I$ is such an operator, requiring
 that the image be \mbox{$W_J$-inva}\-riant is equivalent to requiring that
 $(g-1)\sum_{w W_I\in W/W_I} c_{wW_I} w W_I$ annihilate every
 $W_I$-invariant function, and thus that the operator itself is invariant
 under left multiplication by elements of~$W_J$. Thus, on any invariant
 open subset, the given space is determined by $W_J$-invariance along with
 (by Corollary~\ref{cor:global_order_two_residue_conditions}) the
 condition that the pole of~$c_{wW_I}$ is bounded by the sum
 $\sum_{r:rwW_I\ne wW_I} [X^r]$, and the condition that for any
 reflection, $c_{wW_I}+c_{rw W_I}$ is holomorphic along $[X^r]$. If~${\cal L}_I$ and ${\cal L}_J$ are nontrivial, but ${\cal L}_I$ descends,
 then the first two conditions remain the same, but the residue condition
 becomes the holomorphy of~${}^wfc_{wW_I}+{}^{rw}f c_{rwW_I}$ for any
 $W_I$-invariant meromorphic section of~${\cal L}_I$ such that ${}^wf$ is
 holomorphic on~$[X^r]$. Here, it suffices to check the condition for a
 single section~$f$ as long as both ${}^wf$ and ${}^wf^{-1}$ are
 holomorphic along $[X^r]$.
\end{rems}

There is an analogue of the adjoint in this setting.

\begin{cor}
 If the root kernel of~$X$ is diagonalizable, then there is an isomorphism
 \begin{gather*}
 \sHom_{{\cal H}_{W}(X)}\Big(\Ind^{W;0}_{W_J}\sO_X, \Ind^{W;0}_{W_I}\sO_X\Big)
 \\ \hphantom{\sHom_{{\cal H}_{W}(X)}}
 \cong \sHom_{{\cal H}_{W}(X)}\Big(\Ind^{W;0}_{W_I}\sO_X(D_{w_0}-D_{w_I}),
 \Ind^{W;0}_{W_J}\sO_X(D_{w_0}-D_{w_J})\Big),
 \end{gather*}
 contravariant with respect to composition.
\end{cor}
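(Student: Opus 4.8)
The plan is to deduce the isomorphism from two ingredients already in place: the preceding Proposition, which expresses each side as a sheaf $\sHom$ between ``invariant section'' sheaves on $X/W$, and Grothendieck duality for the finite quotient morphisms, which makes those invariant section sheaves dual to one another up to a single fixed twist. First I would apply the preceding Proposition to both sides. Writing $\pi:X\to X/W$ for the quotient, the left-hand side becomes $\sHom_{X/W}\bigl((\pi_*\sO_X)^{W_I},(\pi_*\sO_X)^{W_J}\bigr)$. For the right-hand side one first checks that $\sO_X(D_{w_0}-D_{w_I})$, with its natural equivariant structure, is a $W_I$-equivariant line bundle that descends in codimension $1$: indeed $D_{w_0}-D_{w_I}=\sum_{r\in R(W)\setminus R(W_I)}[X^r]$ is $W_I$-invariant (a reflection conjugate under $W_I$ into $W_I$ already lies in $W_I$) and has trivial valuation along the reflection hypersurfaces of $W_I$, so the Proposition applies and identifies the right-hand side with $\sHom_{X/W}\bigl((\pi_*\sO_X(D_{w_0}-D_{w_J}))^{W_J},(\pi_*\sO_X(D_{w_0}-D_{w_I}))^{W_I}\bigr)$ (note the swap of $I$ and $J$). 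Abbreviating $\mathcal F_I:=(\pi_*\sO_X)^{W_I}$ and $\mathcal G_I:=(\pi_*\sO_X(D_{w_0}-D_{w_I}))^{W_I}$, it remains to produce a contravariant isomorphism $\sHom(\mathcal F_I,\mathcal F_J)\cong\sHom(\mathcal G_J,\mathcal G_I)$.

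Each of $\mathcal F_I$, $\mathcal G_I$ is the pushforward along the finite morphism $\rho_I:X/W_I\to X/W$ of a line bundle on $X/W_I$. Since $X$ is smooth and $\pi$ is finite, $\pi_*\sO_X$ is a maximal Cohen--Macaulay $\sO_{X/W}$-module; and the diagonalizability hypothesis, via the local symmetric idempotents produced in the preceding results (which exhibit $\sO_{X/W}$ and $\sO_{X/W_I}$ as local direct summands of $\pi_*\sO_X$), shows that $X/W$ and $X/W_I$ are Cohen--Macaulay of dimension $\dim X$. Hence $\mathcal F_I$ and $\mathcal G_I$ are themselves maximal Cohen--Macaulay, and the functor $D(-):=\sHom_{X/W}(-,\omega_{X/W})$ is a contravariant involution on them.

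Next I would compute $D\mathcal F_I$. Grothendieck duality for the finite morphism $\rho_I$ gives $\rho_I^!\omega_{X/W}\cong\omega_{X/W_I}$, so that for any $W_I$-equivariant $\mathcal L$ descending in codimension $1$ one gets $D\bigl((\pi_*\mathcal L)^{W_I}\bigr)\cong\bigl(\pi_*(\mathcal L^{-1}\otimes\pi_I^*\omega_{X/W_I})\bigr)^{W_I}$, where $\pi_I:X\to X/W_I$. Now $\pi_I^*\omega_{X/W_I}=\omega_X(-D_{w_I})$ and $\pi^*\omega_{X/W}=\omega_X(-D_{w_0})$: the scheme-theoretic fixed divisor $[X^r]$ of a reflection $r$ is exactly the different of the corresponding local $\Z/2$-extension, so this is correct even in characteristic $2$ (where $D_{w_0}$ already records the wild ramification), and summing the differents over $R(W_I)$, resp.\ $R(W)$, gives the ramification divisor. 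Since $\omega_X$ is trivial as a line bundle ($X$ being an abelian torsor), $\pi_I^*\omega_{X/W_I}\cong\sO_X(D_{w_0}-D_{w_I})\otimes\pi^*\omega_{X/W}$, and the projection formula turns the displayed formula into $D\mathcal F_I\cong\mathcal G_I\otimes\omega_{X/W}$; taking instead $\mathcal L=\sO_X(D_{w_0}-D_{w_I})$ gives $D\mathcal G_I\cong\mathcal F_I\otimes\omega_{X/W}$, consistently with $D^2=\id$. The equivariant structures match up, the only wrinkle being the sign character of $W_I$ carried by $\omega_X$ in characteristic $\ne 2$, which cancels between the two occurrences of $\omega_X$.

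Finally, since $D$ is an involutive contravariant equivalence on these modules,
\[
\sHom(\mathcal F_I,\mathcal F_J)\;\cong\;\sHom(D\mathcal F_J,D\mathcal F_I)\;=\;\sHom(\mathcal G_J\otimes\omega_{X/W},\mathcal G_I\otimes\omega_{X/W})\;\cong\;\sHom(\mathcal G_J,\mathcal G_I),
\]
the last step because tensoring with the invertible sheaf $\omega_{X/W}$ is an equivalence. Composing with the two instances of the preceding Proposition yields the asserted isomorphism, which is contravariant with respect to composition because $D$ and the identification of the preceding Proposition are each contravariantly compatible with composition while the intermediate twist by $\omega_{X/W}$ is covariant. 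I expect the main obstacle to be the middle step: establishing Cohen--Macaulayness of the quotients $X/W_I$ in small characteristic and, with it, the validity of the duality $\rho_I^!\omega_{X/W}\cong\omega_{X/W_I}$ for these (non-flat, mildly singular) finite morphisms — this is exactly where the diagonalizable-root-kernel hypothesis is used — together with the bookkeeping of equivariant structures under the projection formula.
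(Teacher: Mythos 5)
Your proof is correct, but it routes through genuinely different machinery than the paper. The paper's argument is a short computation inside the Hecke algebra: it applies the na\"{\i}ve adjoint $\sum_g c_g g\mapsto\sum_g g^{-1}c_g$ (shown a few paragraphs earlier to give a contravariant isomorphism ${\cal H}_W(X)^{\text{op}}\cong\sO_X(-D_{w_0})\otimes{\cal H}_W(X)\otimes\sO_X(D_{w_0})$) to the local symmetric idempotents $e_I=(\sum_{w\in W_I}w)h_I$, whose adjoints are $e_I^*=h_I(\sum_{w\in W_I}w)$, and then identifies $e_I^*\pi_*\sO_X(D_{w_0})=h_I(\pi_*\sO_X(D_{w_0}-D_{w_I}))^{W_I}$ via Theorem~\ref{thm:W-invariants}; sandwiching with the preceding Proposition finishes the job. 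Your route instead invokes Grothendieck duality for the finite quotient morphisms and the theory of maximal Cohen--Macaulay modules: the local idempotents guaranteed by the diagonalizable-root-kernel hypothesis make $X/W$ and $X/W_I$ Cohen--Macaulay (as local summands of the MCM module $\pi_*\sO_X$), the different computation gives $\pi_I^*\omega_{X/W_I}\cong\omega_X(-D_{w_I})$ up to reflexive hull (which you correctly observe holds even in characteristic $2$ since the Cartier divisor $[X^r]$ already records the wild conductor), and then duality against $\omega_{X/W}$ exchanges your $\mathcal F_I$ and $\mathcal G_I$ up to a single twist that cancels in the $\sHom$ sheaf. The two arguments are two faces of the same coin --- the adjoint on reflection operators is exactly Serre duality implemented by residues, and $\sO_X(D_{w_0})$ is exactly the different --- but the paper's version stays inside the operator algebra and recycles Theorem~\ref{thm:W-invariants}, whereas yours makes the geometric content explicit at the cost of verifying Cohen--Macaulayness, the ramification formula, and the bookkeeping around reflexive hulls of pullbacks along non-flat finite maps. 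Both prove the statement; the paper's is shorter given what has already been established, while yours is arguably more transparent about why the particular twist by $D_{w_0}-D_{w_I}$ appears.
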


\begin{proof}
 Embedding the left-hand side in~$\sEnd(\pi_*\sO_X)$ and taking the
 adjoint there gives an isomorphism to~$\sHom(e_J^*\pi_*\sO_X(D_{w_0}),e_I^*\pi_*\sO_X(D_{w_0}))$, where
 $e_I^*$, $e_J^*$ are the adjoints of the corresponding idempotents. If~$e_I = \big(\sum_{w\in W_I} w\big)h_I$, then $e_I^*=h_I\big(\sum_{w\in W_I} w\big)$, and
 we then find that
 \begin{gather*}
 e_I^* \pi_*\sO_X(D_{w_0}) = h_I \bigg(
\sum_{w\in W_I} w\bigg) \pi_*\sO_X(D_{w_0})
 = h_I (\pi_*\sO_X(D_{w_0}-D_{w_I}))^{W_I},
 \end{gather*}
 where the second equality follows from Theorem~\ref{thm:W-invariants}.
 We thus have
 \begin{gather*}
 \sHom\big(e_J^*\pi_*\sO_X(D_{w_0}),e_I^*\pi_*\sO_X(D_{w_0})\big)
 \\ \hphantom{\sHom}
 \cong \sHom\big(h_J(\pi_*\sO_X(D_{w_0}-D_{w_J}))^{W_J},h_I(\pi_*\sO_X(D_{w_0}-D_{w_I}))^{W_I}\big)
 \\ \hphantom{\sHom}
 \cong \sHom\big((\pi_*\sO_X(D_{w_0}-D_{w_J}))^{W_J},(\pi_*\sO_X(D_{w_0}-D_{w_I}))^{W_I}\big),
 \end{gather*}
 from which the claim follows.
\end{proof}

\begin{rem}
 There is, of course, a version with a pair of line bundles; we omit the
 details.
\end{rem}

Define
\begin{gather*}
 {\cal H}_{W,W_I,W_J;\vec{T}}(X) :=
 \sHom_{{\cal H}_{W;\vec{T}}(X)}\Big( \Ind^{W;\vec{T}}_{W_J}\sO_X, \Ind^{W;\vec{T}}_{W_I}\sO_X\Big),
\end{gather*}
with composition law given by
\begin{gather*}
 {\cal H}_{W,W_J,W_K;\vec{T}}(X) \otimes
 {\cal H}_{W,W_I,W_J;\vec{T}}(X) \to {\cal H}_{W,W_I,W_K;\vec{T}}(X),
\end{gather*}
contravariant to the standard composition on Hom sheaves.

\begin{prop}\label{prop:spherical_as_intersection}
 If the root kernel of~$X$ is diagonalizable, then ${\cal
 H}_{W,W_I,W_J;\vec{T}}(X)$ may be identified with a subsheaf of~${\cal
 H}_{W,W_I,W_J}(X)$, compatibly with composition. Moreover, the
 corresponding operators take \mbox{$W_I$-inva}\-riant sections of the line bundle
 $\sO_X\big(\sum_{\alpha\in \Phi^-(W)\setminus\Phi^-(W_I)} T_\alpha\big)$
 to~\mbox{$W_J$-inva}riant sections of the line bundle $\sO_X\big(\sum_{\alpha\in
 \Phi^-(W)\setminus \Phi^-(W_J)}T_\alpha\big)$. Conversely, if there are no
 roots such that $T_{\alpha}$ and $T_{-\alpha}$ have a common component,
 then ${\cal H}_{W,W_I,W_J;\vec{T}}(X)$ is precisely the subsheaf of~${\cal H}_{W,W_I,W_J}(X)$ cut out by this condition.
\end{prop}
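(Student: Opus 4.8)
The plan is to leverage the three preceding structural results: the identification of $\sHom_{{\cal H}_{W}(X)}(\Ind^{W;0}_{W_J}\sO_X,\Ind^{W;0}_{W_I}\sO_X)$ with $\sHom((\pi_*\sO_X)^{W_I},(\pi_*\sO_X)^{W_J})={\cal H}_{W,W_I,W_J}(X)$ as a subsheaf of the twisted group algebra operators, the analogous identification with parameters via the adjoint (the lemma expressing $\Ind$ as $\Coind$ with a twist), and the ``master'' computation of ${\cal H}_{W,W_I,W_J}(X)$ in terms of explicit residue/pole conditions. Concretely, first I would write both $\Ind^{W;\vec{T}}_{W_J}\sO_X$ and $\Ind^{W;\vec{T}}_{W_I}\sO_X$ as submodules (via Proposition \ref{prop:finite_hecke_as_intersection}) of the corresponding master induced modules: the left module $\sO_X$ over ${\cal H}_{W_I;\vec{T}}(X)$ has a distinguished submodule $\sO_X(-\sum_{\alpha\in\Phi^+(W_I)}T_\alpha)$, hence $\Ind^{W;\vec{T}}_{W_I}\sO_X$ embeds into $\Ind^{W;0}_{W_I}$ of an appropriate line bundle. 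Tracking which line bundle appears on each side, and using that the Bruhat subquotients all have strongly flat invariants (so the $\sHom$ sheaf is flat and respects base change by the Corollary following Proposition \ref{prop:T-ish-invariants-of-L}), I would get a natural injection ${\cal H}_{W,W_I,W_J;\vec{T}}(X)\hookrightarrow {\cal H}_{W,W_I,W_J}(X)$ compatible with composition, since composition of $\sHom$'s is functorial and the embeddings are induced by submodule inclusions.

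For the statement about which sections the operators preserve, I would argue as follows. An element of ${\cal H}_{W,W_I,W_J;\vec{T}}(X)$ is by definition a morphism $\Ind^{W;\vec{T}}_{W_J}\sO_X\to\Ind^{W;\vec{T}}_{W_I}\sO_X$ of ${\cal H}_{W;\vec{T}}(X)$-modules; applying $(-)^W$ and using Proposition \ref{prop:T-ish-invariants-of-L} identifies this with a map between $(\sO_X(D_{w_0}({}^-\vec{T})-D_{w_J}({}^-\vec{T})))^{W_J}$ and $(\sO_X(D_{w_0}({}^-\vec{T})-D_{w_I}({}^-\vec{T})))^{W_I}$, but that is only part of the data. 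To see the operator directly, I would instead pass through the master picture: the embedding realizes a section of ${\cal H}_{W,W_I,W_J;\vec{T}}(X)$ as an honest operator $\sum_{W_I w W_J} c_{w} w$ taking $W_I$-invariant holomorphic functions to $W_J$-invariant ones, and the submodule inclusions from Proposition \ref{prop:finite_hecke_as_intersection} translate into the assertion that it carries $W_I$-invariant sections of $\sO_X(-\sum_{\alpha\in\Phi^+(W)\setminus\Phi^+(W_I)}T_\alpha)$ to $W_J$-invariant sections of $\sO_X(-\sum_{\alpha\in\Phi^+(W)\setminus\Phi^+(W_J)}T_\alpha)$ (equivalently, after negating the root system, the divisors $\sum_{\alpha\in\Phi^-(W)\setminus\Phi^-(W_\bullet)}T_\alpha$ as stated). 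This is a matter of chasing the divisors $\sum_{\alpha\in\Phi^+(W)}T_\alpha$ through the parabolic inclusions, noting that $\sum_{\alpha\in\Phi^+(W_I)}T_\alpha$ is exactly the ``internal'' part that gets absorbed into the $W_I$-equivariant structure.

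For the converse, I would run the same argument as in Proposition \ref{prop:finite_hecke_as_intersection}: use the Bruhat (double-coset) filtration of Proposition \ref{prop:Mackey_for_Hecke} together with the fact that ${\cal H}_{W,W_I,W_J;\vec{T}}(X)$ and ${\cal H}_{W,W_I,W_J}(X)$ have the same subquotient line bundles provided there is no cancellation in the divisor differences $\sum_{\alpha\in\Phi^+(W)}T_\alpha-\sum_{\alpha\in\Phi^+(W)}T_{w\alpha}$ along double cosets; exactly as in that proof, the only possible cancellation comes from a positive root $\alpha$ and negative root $\beta$ with $T_\alpha$, $T_\beta$ sharing a component, and since distinct roots $\neq\pm$ each other have coroot maps through different curves, the only relevant case is $\beta=-\alpha$. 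Then a Hilbert-polynomial (semicontinuity) comparison, identical to the one in Lemma \ref{lem:Bruhat_fin_parm}, forces the inclusion to be an equality. The main obstacle I anticipate is the bookkeeping of the divisors on both sides — making precise which pieces of $\sum_{\alpha\in\Phi^+(W)}T_\alpha$ are "seen" by the $W_I$- versus $W_J$-equivariant structure and which remain as genuine pole bounds — and checking that the double-coset Bruhat subquotient line bundles $\sO_X(D_w({}^-\vec{T}))$ match up on the two sides; everything else is a routine transport of the no-parameter results (the Proposition identifying $\sHom_{{\cal H}_W(X)}$ with Hom of invariants, and Corollary \ref{cor:global_order_two_residue_conditions}) through the already-established formalism.
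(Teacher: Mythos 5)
The main gap in your proposal is the construction of the embedding ${\cal H}_{W,W_I,W_J;\vec{T}}(X)\hookrightarrow {\cal H}_{W,W_I,W_J}(X)$ itself. You write that the submodule inclusion $\sO_X(-\sum_{\alpha\in\Phi^+(W_I)}T_\alpha)\subset \sO_X$ (over ${\cal H}_{W_I;\vec{T}}(X)$) ``hence'' gives an embedding of $\Ind^{W;\vec{T}}_{W_I}\sO_X$ into a master induced module, and from there a natural injection on $\sHom$ sheaves by functoriality. But the two $\Hom$ sheaves are $\sHom$'s over \emph{different} algebras — ${\cal H}_{W;\vec{T}}(X)$ vs.\ ${\cal H}_W(X)$ — so neither a submodule inclusion on the domain side nor a functorial transport on the codomain side produces a map between them. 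The submodule observation alone does not give you the comparison: a morphism of ${\cal H}_{W;\vec{T}}(X)$-modules has no canonical lift to a morphism of ${\cal H}_W(X)$-modules.

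The paper's route is different and fills exactly this hole. It first assumes that ${\cal H}_{W_I;\vec{T}}(X)$ and ${\cal H}_{W_J;\vec{T}}(X)$ are covered by symmetric idempotents; those idempotents allow one to realize both $\sHom$ sheaves, \emph{locally}, as $e_J(-)e_I$ inside the corresponding full Hecke algebra, and hence to identify ${\cal H}_{W,W_I,W_J;\vec{T}}(X)$ as a subsheaf of operators inside $k(X)[W]$, from which the containment in ${\cal H}_{W,W_I,W_J}(X)$ and the action on the stated supersheaves are read off directly. Both claims (containment, and respecting the twisted line bundles) are then observed to be \emph{closed conditions} on the universal parameter family, so they extend from the idempotent-covered locus to all $\vec{T}$. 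Your appeal to strong flatness / base change (Corollary following Proposition \ref{prop:T-ish-invariants-of-L}) is a correct ingredient, but it controls the sheaves themselves, not the existence of the comparison map; you still need the idempotent identification plus the closure argument to actually produce and pin down the embedding, and to derive the pole/holomorphy assertion about the operators.

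Your treatment of the converse is close to the paper's: both compare Bruhat double-coset subquotients and both reduce the no-cancellation requirement to the case $\beta=-\alpha$ because distinct non-opposite roots have coroot maps through transverse divisors. The one thing to be careful about is that the divisor difference you write, $\sum_{\alpha\in\Phi^+(W)}(T_\alpha-T_{w\alpha})$, is not the full expression that appears; the paper decomposes
\[
\sum_{\alpha\in \Phi^-(W)\setminus \Phi^-(W_J)}T_\alpha
-\sum_{\alpha\in \Phi^-(W)\setminus \Phi^-(W_I)}T_{w\alpha}
=
\sum_{\alpha\in \Phi^+(W)\cap w\Phi^-(W)}(T_{-\alpha}-T_\alpha)
+\sum_{\alpha\in \Phi^-(W_I)\setminus \Phi^-(W_{I(w)})}T_{w\alpha}
-\sum_{\alpha\in \Phi^-(W_J)\setminus \Phi^-(W_{J(w)})}T_\alpha,
\]
and the extra $W_I$ and $W_J$ terms matter for matching the $\Hom$ bimodule subquotient line bundles precisely.
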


\begin{proof}
 Suppose first that ${\cal H}_{W_I;\vec{T}}(X)$ and ${\cal
 H}_{W_J;\vec{T}}(X)$ are covered by symmetric idempotents. This allows
 us to (locally) embed ${\cal H}_{W,W_I,W_J;\vec{T}}(X)$ in~${\cal
 H}_{W;\vec{T}}(X)$ as in the parameter-free case. It follows that
 ${\cal H}_{W,W_I,W_J;\vec{T}}(X)$ acts on the $W_I$-invariant sections of~$k(X)$ in such a way as to take $W_I$-invariant sections of~$\sO_X$ to~$W_J$-invariant sections of~$\sO_X$ and $W_I$-invariant sections of~$\sO_X\big(\sum_{\alpha\in \Phi^-(W)}T_\alpha\big)$ to~$W_J$-invariant
 sections of~$\sO_X\big(\sum_{\alpha\in \Phi^-(W)}T_\alpha\big)$.

 Since $\sum_{\alpha\in\Phi^-(W)}T_\alpha$ is not $W_I$-invariant, a
 $W_I$-invariant section of~$\sO_X\big(\sum_{\alpha\in\Phi^-(W)}T_\alpha\big)$
 must lie in the intersection of the images of this bundle under $W_I$,
 so in particular (taking the intersection with the image under $w_I$) in
 \begin{gather*}
 \sO_X\bigg(
\sum_{\alpha\in \Phi^-(W)}T_\alpha\bigg)
 \cap \sO_X\bigg(\sum_{\alpha\in \Phi^+(W_I)\cup \Phi^-(W)\setminus \Phi^-(W_I)}T_\alpha\bigg),
 \end{gather*}
 which by hypothesis is $\sO_X\big(\sum_{\alpha\in\Phi^-(W)\setminus
 \Phi^-(W_I)} T_\alpha\big)$. The~same calculation for~$J$ tells us that the
 elements of~${\cal H}_{W,W_I,W_J;\vec{T}}(X)$ act as required.

{\sloppy
For a $W_I$-invariant section~$\sum_{w\in W^I} c_w w W_I$ of~${\cal
 H}_{W,W_I,W_J;\vec{T}}(X)$ to be contained in ${\cal H}_{W,W_I,W_J}(X)$
 is a closed condition, and thus holds in general (extending to the family
 with universal parameters as necessary). That it respects the given
 supersheaves is also a closed condition, and thus the first claim follows
 for general parameters.

 }

 To show equality under the conditions on~$T_\alpha$, it suffices to
 compare subquotients in the respective Bruhat filtrations, and thus to
 compute the intersection
 \begin{gather*}
 \sO_X \cap \sO_X\bigg(
 \sum_{\alpha\in \Phi^-(W)\setminus \Phi^-(W_J)}T_\alpha
 -\sum_{\alpha\in \Phi^-(W)\setminus \Phi^-(W_I)}T_{w\alpha}
\bigg).
 \end{gather*}
 (More precisely, we~want to intersect the $W_{J(w)}$-invariant
 subsheaves, but since both sheaves are equivariant, we~may as well take
 their intersection before passing to invariants.) We may write
 \begin{gather*}
 \sum_{\alpha\in \Phi^-(W)\setminus \Phi^-(W_J)}\!\!\!\!T_\alpha
 -\!\!\!\sum_{\alpha\in \Phi^-(W)\setminus \Phi^-(W_I)}\!\!\!\!T_{w\alpha}
 = \sum_{\alpha\in \Phi^-(W)}(T_\alpha-T_{w\alpha})
 - \sum_{\alpha\in \Phi^-(W_J)}\!\!\!\!T_\alpha
 + \sum_{\alpha\in \Phi^-(W_I)}\!\!\!\!T_{w\alpha}.
 \end{gather*}
 Here
 \begin{gather*}
 \sum_{\alpha\in \Phi^-(W)}(T_\alpha-T_{w\alpha})
 =\!\! \sum_{\alpha\in \Phi^+(W)\cap w\Phi^-(W)}\!\!\!\!(T_{-\alpha}-T_\alpha),
 \end{gather*}
 while
 \begin{gather*}
 \sum_{\alpha\in \Phi^-(W_I)}T_{w\alpha} - \sum_{\alpha\in \Phi^-(W_J)}T_\alpha
 =\!\! \sum_{\alpha\in \Phi^-(W_I)\setminus \Phi^-(W_I\cap w^{-1}W_J w)}\!\!\!\!T_{w\alpha}
 - \!\!\sum_{\alpha\in \Phi^-(W_J)\setminus \Phi^-(W_J\cap w W_I w^{-1})}\!\!\!\!T_\alpha.
 \end{gather*}
 The hypotheses ensure that there is no further cancellation, so the
 intersection is
 \begin{gather*}
 \sO_X\bigg({-}\sum_{\alpha\in \Phi^+(W)\cap w\Phi^-(W)}\!\!\!\!T_\alpha
 -\sum_{\alpha\in \Phi^-(W_J)\setminus \Phi^-(W_J\cap w W_I w^{-1})}\!\!\!\!T_\alpha\bigg),
 \end{gather*}
 agreeing with the ($T$-dependent part of the) line bundle arising in the
 Bruhat filtration.
\end{proof}

\begin{cor}
 Let $\vec{T}$ be a system of parameters such that every $T_\alpha$ is
 transverse to every reflection hypersurface. If~the root kernels of~$W_I$ and $W_J$ are diagonalizable, then ${\cal
 H}_{W,W_I,W_J;\vec{T};\gamma}(X)$ may be identified with the
 subsheaf of~${\cal H}_{W,W_I,W_J;\gamma}(X)$ consisting of operators
 $\sum_w c_w w W_I$ such that for every $w$, $c_w$ vanishes on the divisor
 \begin{gather*}
 \sum_{\alpha\in\Phi^+(W)\cap w\Phi^-(W)} T_{\alpha} +\!\!
 \sum_{\alpha\in\Phi^-(W_J)\setminus \Phi^-(W_J\cap w W_I w^{-1})}\!\!\!\! T_{\alpha}.
 \end{gather*}
\end{cor}

\begin{cor}
 There is an isomorphism
 \begin{gather*}
 {\cal H}_{W,W_I,W_J;\vec{T}}(X) \cong
 \sO_X\big(D_{w_0}\big({}^-\vec{T}\big)-D_{w_J}\big({}^-\vec{T}\big)\big)
 \otimes {\cal H}_{W,W_J,W_I;\vec{T}}(X)
 \\ \hphantom{ {\cal H}_{W,W_I,W_J;\vec{T}}(X) \cong}
 \otimes \sO_X\big(D_{w_I}\big({}^-\vec{T}\big)-D_{w_0}\big({}^-\vec{T}\big)\big)
 \end{gather*}
 which is contravariant for the natural composition.
\end{cor}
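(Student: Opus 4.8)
The plan is to deduce this from the parameter-free adjoint Corollary — the isomorphism identifying $\sHom_{{\cal H}_W(X)}(\Ind^{W;0}_{W_J}\sO_X,\Ind^{W;0}_{W_I}\sO_X)$ with an $\sHom$ between the bundle-twisted induced modules $\Ind^{W;0}_{W_I}\sO_X(D_{w_0}-D_{w_I})$ and $\Ind^{W;0}_{W_J}\sO_X(D_{w_0}-D_{w_J})$, which via the Morita twists ${\cal L}\otimes(-)$ is the same as an isomorphism ${\cal H}_{W,W_I,W_J}(X)\cong \sO_X(D_{w_0}-D_{w_J})\otimes {\cal H}_{W,W_J,W_I}(X)\otimes\sO_X(D_{w_I}-D_{w_0})$ — together with Proposition~\ref{prop:spherical_as_intersection}, which realizes ${\cal H}_{W,W_I,W_J;\vec{T}}(X)$ as a subsheaf of ${\cal H}_{W,W_I,W_J}(X)$. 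The parameter-free adjoint is induced by the na\"ive adjoint $\sum_w c_w wW_I\mapsto\sum_w (wW_I)^{-1}c_w$ on the meromorphic twisted group algebra, post-composed with the line-bundle twists named in its statement; it is therefore a concrete operation on meromorphic operators, and the only content to be verified is that it carries the defining conditions of ${\cal H}_{W,W_I,W_J;\vec{T}}(X)$ to those of the claimed twist of ${\cal H}_{W,W_J,W_I;\vec{T}}(X)$, contravariantly.

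First I would reduce to the case of generic parameters, exactly as in the earlier proofs of this type: base-change to the family with universal $\vec{T}$ over a suitable product $S'$ of relative symmetric powers of coroot curves, on a dense open of which no $T_\alpha$ shares a component with $T_{-\alpha}$ (the two divisors then being pulled back along distinct coroot maps). Over this open set Proposition~\ref{prop:spherical_as_intersection} applies with equality, and, since the root kernel is diagonalizable, both ${\cal H}_{W,W_I,W_J;\vec{T}}(X)$ and the target twist of ${\cal H}_{W,W_J,W_I;\vec{T}}(X)$ are $S'$-flat and respect base change (by the flatness Corollaries for $\sHom$ of induced modules). Because the na\"ive adjoint is an honest involution on meromorphic operators, the two candidate morphisms in either direction are mutually inverse at the generic point of $X/W$, hence everywhere (both sheaves being torsion-free over the integral scheme $\sO_{X/W}$); so the map is an isomorphism over $S'$, and the general case follows by pulling back along the section $S\to S'$ determined by $\vec{T}$.

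With generic parameters, the body of the argument is divisor bookkeeping. By Proposition~\ref{prop:spherical_as_intersection}, ${\cal H}_{W,W_I,W_J;\vec{T}}(X)$ is the subsheaf of operators in ${\cal H}_{W,W_I,W_J}(X)$ carrying $W_I$-invariant sections of $\sO_X(-\sum_{\alpha\in\Phi^+(W)}T_\alpha)$ into $W_J$-invariant sections of the same bundle. The residue-duality computation behind the adjoint Proposition (the one already used to identify ${\cal H}_W(X)^{\text{op}}$, resp.\ ${\cal H}_{W;\vec{T}}(X)^{\text{op}}$, with the $\sO_X(\pm D_{w_0})$-, resp.\ $\sO_X(\pm D_{w_0}(\vec{T}))$-twists) shows that the adjoint of such an operator carries the correspondingly $\omega_X$-dualized invariant subsheaf on the $W_J$-side into the one on the $W_I$-side; combining this with the bundle twists $\sO_X(D_{w_0}-D_{w_J})$ and $\sO_X(D_{w_I}-D_{w_0})$ coming from the parameter-free adjoint Corollary, and rewriting $-\sum_{\alpha\in\Phi^+(W)}T_\alpha$ in terms of $D_w(\vec{T})=\sum_{\alpha\in\Phi^+(W)\cap w\Phi^-(W)}([X^{r_\alpha}]-T_\alpha)$, identifies ${\cal H}_{W,W_I,W_J;\vec{T}}(X)$ with $\sO_X(D_{w_0}({}^-\vec{T})-D_{w_J}({}^-\vec{T}))\otimes {\cal H}_{W,W_J,W_I;\vec{T}}(X)\otimes\sO_X(D_{w_I}({}^-\vec{T})-D_{w_0}({}^-\vec{T}))$. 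The flip to ${}^-\vec{T}$ in the twisting divisors is exactly the phenomenon that conjugating a $\vec{T}$-divisor by $w_0$ replaces $\vec{T}$ by ${}^-\vec{T}$; concretely I would invoke the Corollary that $w_0$ carries ${\cal H}_{W;\vec{T}}$ to ${\cal H}_{W;{}^-\vec{T}}$ and the identity ${}^{w_Iw_0}D_{w_0w_I}(\vec{T})=D_{w_0}({}^-\vec{T})-D_{w_I}({}^-\vec{T})$ from the proof of Proposition~\ref{prop:T-ish-invariants-of-L}. Contravariance with respect to composition is immediate, the na\"ive adjoint being an anti-homomorphism along which the twists are transported.

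The step I expect to be the main obstacle is this last one: keeping straight, simultaneously, the three pairs of twists at play — the domain/codomain conjugations built into the subsheaf description of Proposition~\ref{prop:spherical_as_intersection}, the $\sO_X(D_{w_0}-D_{w_K})$ twists from the parameter-free adjoint, and the $\omega_X$-twist from residue duality — and checking that they telescope to precisely the stated divisors $D_{w_0}({}^-\vec{T})-D_{w_J}({}^-\vec{T})$ and $D_{w_I}({}^-\vec{T})-D_{w_0}({}^-\vec{T})$, rather than to some merely conjugate-equivalent expression. A secondary point needing care, should one instead prefer to mimic the master adjoint proof directly by embedding ${\cal H}_{W,W_I,W_J;\vec{T}}(X)$ as $e_J{\cal H}_{W;\vec{T}}(X)e_I$ via symmetric idempotents, is to confirm that ${\cal H}_{W_I;\vec{T}}(X)$ and ${\cal H}_{W_J;\vec{T}}(X)$ really are locally covered by symmetric idempotents on the generic fiber; this holds because every nonnegative linear dependence among roots of $W_I$ is also one among roots of $W$, so the empty-intersection condition of Lemma~\ref{lem:has_local_idempotents} restricts from $W$ to each parabolic.
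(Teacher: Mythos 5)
Your proposal follows the paper's own (extremely terse) proof almost exactly: base change to the universal family of parameters; realize the left-hand side as the intersection ${\cal H}_{W,W_I,W_J}(X)\cap\sO_X(\sum_{\alpha\in\Phi^-(W)\setminus\Phi^-(W_J)}T_\alpha)\otimes{\cal H}_{W,W_I,W_J}(X)\otimes\sO_X(-\sum_{\alpha\in\Phi^-(W)\setminus\Phi^-(W_I)}T_\alpha)$ coming from Proposition~\ref{prop:spherical_as_intersection}; apply the parameter-free adjoint Corollary to each piece of the intersection; and extend by the flatness results. Your closing remark that Lemma~\ref{lem:has_local_idempotents} transfers from $W$ to its parabolics (so that ${\cal H}_{W_I;\vec{T}}(X)$, ${\cal H}_{W_J;\vec{T}}(X)$ have coverings by symmetric idempotents generically) is a correct point the paper leaves implicit.

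The obstacle you flag at the end — that the three twists might ``telescope to $\ldots$ some merely conjugate-equivalent expression'' rather than the precise divisors of the statement — deserves a bit more emphasis than you gave it, because it is exactly where the argument is delicate. If one carries out the bookkeeping, the na\"ive adjoint sends the left-hand twist $\sO_X(\sum_{\alpha\in\Phi^-(W)\setminus\Phi^-(W_J)}T_\alpha)$ to the right and the right-hand twist $\sO_X(-\sum_{\alpha\in\Phi^-(W)\setminus\Phi^-(W_I)}T_\alpha)$ to the left, while the parameter-free adjoint contributes $\sO_X(D_{w_0}-D_{w_I})$ on the left and $\sO_X(D_{w_J}-D_{w_0})$ on the right; the net answer is
\[
{\cal H}_{W,W_I,W_J;\vec{T}}(X)
\cong
\sO_X(D_{w_0}({}^-\vec{T})-D_{w_I}({}^-\vec{T}))
\otimes
{\cal H}_{W,W_J,W_I;\vec{T}}(X)
\otimes
\sO_X(D_{w_J}({}^-\vec{T})-D_{w_0}({}^-\vec{T})),
\]
i.e.\ with $D_{w_I}$ in the left twist and $D_{w_J}$ in the right twist — which is what the infinite-case version, Corollary~\ref{cor:spherical_t_symmetry}, asserts, and which is also the only convention compatible with associativity of the contravariant composition (the intermediate $W_J$-twists must match across consecutive adjoints). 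So the formula you are asked to prove appears to have the $D_{w_I}$ and $D_{w_J}$ subscripts transposed relative to what the argument actually yields; accepting the stated divisors at face value, as your write-up does, will lead to a sign/index mismatch at exactly the step you identified as the obstacle. The strategy is right, but you should do the bookkeeping rather than assume the stated endpoint, since here that assumption is not safe.
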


\begin{proof}
 Extend to universal parameters, write the left-hand side as an
 intersection, and take the adjoint of both twists of~${\cal
 H}_{W,W_I,W_J;\vec{T}}(X)$. The~resulting equality extends as usual to
 the full parameter space.
\end{proof}

When $I=J$, we~denote this by ${\cal H}_{W,W_I;\vec{T}}(X)$, and call the
resulting sheaf of algebras a {\em spherical algebra} of the Hecke algebra,
which is in general a subalgebra of the algebra \smash{$\sEnd\big((\pi_*{\cal
 L}_I)^{W_I}\big)$} corresponding to~$\vec{T}=0$.

Proposition~\ref{prop:spherical_as_intersection} leads in the usual way to
a description of the spherical algebra for general~$\vec{T}$ as an
intersection of two twists of the master spherical algebra:
\begin{gather*}
 {\cal H}_{W,W_I,W_J;\vec{T}}(X) = {\cal H}_{W,W_I,W_J}(X) \\ \qquad{}
 \cap \sO_X\bigg(\sum_{\alpha\in \Phi^-(W)\setminus\Phi^-(W_J)} \!\!\!\! T_\alpha\bigg)
 \otimes {\cal H}_{W,W_I,W_J}(X) \otimes
 \sO_X\bigg({-}\sum_{\alpha\in \Phi^-(W)\setminus\Phi^-(W_I)}\!\!\!\! T_\alpha\bigg).
\end{gather*}
In the case of the Hecke algebra, we~could twist this description to make
the second term untwisted, and found a relation between the original Hecke
algebra and the Hecke algebra for~${}^-\vec{T}$. In~this case, however,
the resulting twisted bimodule is not itself a spherical bimodule, as~the
divisors
\begin{gather*}
\sum_{\alpha\in \Phi^-(W)\setminus\Phi^-(W_I)}\!\!\!\! T_\alpha
\qquad\text{and}\qquad
-\!\!\!\sum_{\alpha\in \Phi^+(W)\setminus\Phi^+(W_I)}\!\!\!\! T_\alpha
\end{gather*}
do not differ by a $W$-invariant divisor. One can, however, give a
somewhat related description for the resulting twist, using the other
family of~${\cal H}_{W;\vec{T}}(X)$-modules associated to line bundles. We
find the following by tracing the various twists.

\begin{cor}
 We have a natural isomorphism
 \begin{gather*}
 \sHom_{{\cal H}_{W;\vec{T}}(X)}\bigg(
 \Ind^{W;\vec{T}}_{W_J}\sO_X\bigg(
 {-}\sum_{\alpha\in \Phi^+(W_J)} T_\alpha\bigg),
 \Ind^{W;\vec{T}}_{W_I}\sO_X\bigg(
 {-}\sum_{\alpha\in \Phi^+(W_I)} T_\alpha\bigg)\! \bigg)
 \\ \hphantom{\sHom_{{\cal H}}\,\,\,}
 \cong \sO\bigg(
 \sum_{\alpha\in \Phi^-(W)\cup \Phi^+(W_J)} T_\alpha\bigg)
 \otimes {\cal H}_{W,W_I,W_J;{}^-\vec{T}}(X)
 \otimes \sO\bigg(
 {-}\sum_{\alpha\in \Phi^-(W)\cup \Phi^+(W_I)} T_\alpha\bigg),
 \end{gather*}
 and in particular the left-hand side respects base change.
\end{cor}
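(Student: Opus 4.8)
The plan is to get the isomorphism as a composite of equivalences and adjunctions already established, so that the only substance is bookkeeping of twisting divisors. As in the proofs of the preceding corollaries, I would first pass to the family over the universal parameter space (a suitable product of relative symmetric powers of coroot curves), over which no $T_\alpha$ and $T_{-\alpha}$ share a component; the identity over a general base then follows by specialization, since both sides respect base change. For the right-hand side this is immediate; for the left-hand side it follows, assuming the root kernel of $X$ is diagonalizable, from the base-change corollary to Proposition~\ref{prop:T-ish-invariants-of-L} (applied to $\sHom$ of the two induced modules after writing it as $\big({\cal L}_J^{-1}\otimes\Res^{W;\vec T}_{W_J}\Ind^{W;\vec T}_{W_I}{\cal L}_I\big)^{W_J}$ and using that each Bruhat subquotient has strongly flat invariants). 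This is also the ``in particular'' clause.

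\textbf{The chain of identifications.} Over the universal base, note first that $\sO_X(-\sum_{\alpha\in\Phi^+(W_K)}T_\alpha)$ (for $K=I,J$) is precisely the sub-${\cal H}_{W_K;\vec T}(X)$-module of $\sO_X$ supplied by Proposition~\ref{prop:finite_hecke_as_intersection}. Now apply the Morita equivalence ${\cal L}(-\sum_{\alpha\in\Phi^+(W)}T_\alpha)\otimes{-}$, which carries ${\cal H}_{W;\vec T}(X)$-modules to ${\cal H}_{W;{}^-\vec T}(X)$-modules and, being given by tensoring with an invertible $\sO_X$-module, induces isomorphisms of $\sHom$-sheaves. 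The left-hand side becomes
\[
\sHom_{{\cal H}_{W;{}^-\vec T}(X)}\!\big(
\sO_X(-\textstyle\sum_{\Phi^+(W)}T_\alpha)\otimes_{\sO_X}\Ind^{W;\vec T}_{W_J}\sO_X(-\sum_{\Phi^+(W_J)}T_\alpha),\;
\sO_X(-\textstyle\sum_{\Phi^+(W)}T_\alpha)\otimes_{\sO_X}\Ind^{W;\vec T}_{W_I}\sO_X(-\sum_{\Phi^+(W_I)}T_\alpha)
\big).
\]
Each argument I would then rewrite as a twist (by a line bundle pulled back from $X$) of the \emph{untwisted} induced module $\Ind^{W;{}^-\vec T}_{W_K}\sO_X$: split $\sum_{\Phi^+(W)}T_\alpha=\sum_{\Phi^+(W_K)}T_\alpha+\sum_{\Phi^+(W)\setminus\Phi^+(W_K)}T_\alpha$, absorb the first summand (conjugating ${\cal H}_{W_K;\vec T}(X)$ into ${\cal H}_{W_K;{}^-\vec T}(X)$ by the corollary to Proposition~\ref{prop:finite_hecke_as_intersection} applied inside $W_K$), and observe that the second summand is a $W_K$-equivariant divisor transverse to the reflection hypersurfaces, hence descends in codimension~$1$ and has no effect on the rank~$1$ conditions for $W_K$.

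\textbf{Matching the twists.} It remains to identify the residual twisting bundles with $\sO_X(\pm\sum_{\alpha\in\Phi^-(W)\cup\Phi^+(W_K)}T_\alpha)$ and to compare the outcome with ${\cal H}_{W,W_I,W_J;{}^-\vec T}(X)=\sHom_{{\cal H}_{W;{}^-\vec T}(X)}\big(\Ind^{W;{}^-\vec T}_{W_J}\sO_X,\Ind^{W;{}^-\vec T}_{W_I}\sO_X\big)$. This is the part I expect to be the main obstacle: it is a finite but fiddly computation using $D_{w_0}(\vec T)=\sum_{\alpha\in\Phi^+(W)}([X^{r_\alpha}]-T_\alpha)$, $D_{w_K}(\vec T)=\sum_{\alpha\in\Phi^+(W_K)}([X^{r_\alpha}]-T_\alpha)$, the identity $D_{w_0}({}^-\vec T)-D_{w_K}({}^-\vec T)=\sum_{\alpha\in\Phi^+(W)\setminus\Phi^+(W_K)}[X^{r_\alpha}]-\sum_{\alpha\in\Phi^-(W)\setminus\Phi^-(W_K)}T_\alpha$, the standard action of $w_0$ and $w_K$ on positive systems, and the remark that the reflection-hypersurface contributions $[X^{r_\alpha}]$ are exactly those already built into the master spherical sheaf ${\cal H}_{W,W_I,W_J}(X)\supset{\cal H}_{W,W_I,W_J;{}^-\vec T}(X)$, so that only the $T_\alpha$-contributions survive as genuine twists. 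Once the divisors are matched the claimed natural isomorphism drops out, and by construction it is contravariant with respect to composition.
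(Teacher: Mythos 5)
Your overall strategy is the same as the paper's: the paper's proof of this corollary consists of the single remark ``we find the following by tracing the various twists,'' and your route (pass to the universal parameter family, transport through the Morita equivalence $\sO_X(\mp\sum_{\alpha\in\Phi^+(W)}T_\alpha)\otimes{-}$ between ${\cal H}_{W;\vec T}$- and ${\cal H}_{W;{}^-\vec T}$-modules, and then identify the residual twists) is precisely the tracing-of-twists the paper has in mind. Carrying out the final step you leave as ``fiddly'': after applying the equivalence, the two arguments become $\Ind^{W;{}^-\vec T}_{W_K}\sO_X(\delta_K)$ with $\delta_K=\sum_{\alpha\in\Phi^+(W)\setminus\Phi^+(W_K)}T_\alpha$, which is $W_K$-invariant and (over the universal base) transverse to the $W_K$-reflection hypersurfaces, so it descends in codimension $1$ and can be pulled out of the $\sHom$. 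The residual twists are thus $\mp\delta_K$, and since $\Gamma_K:=\sum_{\alpha\in\Phi^-(W)\cup\Phi^+(W_K)}T_\alpha$ satisfies $\Gamma_K+\delta_K=\sum_{\alpha\in\Phi(W)}T_\alpha$, which is $W$-invariant, descends in codimension $1$, and hence has no effect on a spherical bimodule, one obtains exactly $\sO_X(\Gamma_J)\otimes{\cal H}_{W,W_I,W_J;{}^-\vec T}(X)\otimes\sO_X(-\Gamma_I)$. The $D_{w_0}$/$D_{w_I}$ identities you list are not really needed for this matching; they belong to the parallel route via Proposition~\ref{prop:T-ish-invariants-of-L} and would work but are a detour.

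One point to flag: your invocation of the base-change corollary to Proposition~\ref{prop:T-ish-invariants-of-L} for the ``in particular'' clause does not apply as stated, because that corollary requires the inducing line bundles to be $W_K$-equivariant and descend in codimension $1$, which $\sO_X(-\sum_{\alpha\in\Phi^+(W_K)}T_\alpha)$ is not (the divisor is not even $W_K$-invariant). The cleaner logic is the one the paper intends: establish the isomorphism first (by the twist-tracing, valid over the universal base), and then deduce base change for the left-hand side from the manifest base change of the right-hand side. Alternatively, apply the Morita equivalence first; the resulting inducing bundles $\sO_X(\delta_K)$ \emph{do} descend in codimension $1$ over the universal base, and then the base-change corollary for ${}^-\vec T$ applies directly.
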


This suggests looking at what happens when only one of the two bundles is
of the given form. Unfortunately, this behaves badly for special values of~$\vec{T}$, as~the piece associated to any given Bruhat subquotient involves
taking $W$-invariant sections of a line bundle which is not itself
equivariant, but instead a subbundle of an equivariant bundle cut out by
(generically) non-invariant vanishing conditions. This of course is not a
problem when there is a covering by symmetric idempotents, which suggests
looking for an alternate description that gives the same bimodule on such
fibers. This is not too difficult: when ${\cal H}_{W;\vec{T}}(X)$ has a
covering by symmetric idempotents, the module $\sO_X$ is locally projective
(and thus so is every module induced from~it), while the ${\cal
 H}_{W;\vec{T}}(X)$-module $\sO_X\big({-}\sum_{\alpha\in \Phi^+(W)}T_\alpha\big)$ is
locally projective whenever ${\cal H}_{W;{}^-\vec{T}}(X)$ has a covering by
symmetric idempotents. As~a result, if ${\cal L}$ is a twist of either of
these by an equivariant bundle, then the natural map
\begin{gather*}
\sHom_{{\cal H}_{W;\vec{T}}(X)}\big({\cal L},{\cal H}_{W;\vec{T}}(X)\big)
\otimes M\to \sHom_{{\cal H}_{W;\vec{T}}(X)}({\cal L},M)
\end{gather*}
is an isomorphism. The~tensor product {\em still} behaves badly in the
cases of interest, but in a~different way: for bad values of~$\vec{T}$, it
acquires torsion. Thus we may hope that the {\em image} of this natural
map will give a strongly flat extension of the $\Hom$ sheaf from the
symmetric idempotent locus. This image has a general description which is
closely related to the description coming from symmetric idempotents when
they exist.

\begin{prop}
 The image of the natural map\vspace{-.5ex}
 \begin{gather*}
 \sHom_{{\cal H}_{W;\vec{T}}(X)}\big(\sO_X,{\cal H}_{W;\vec{T}}(X)\big)
 \otimes_{{\cal H}_{W;\vec{T}}(X)} M \to M^W
 \end{gather*}
 is the same as the image of the map\vspace{-.5ex}
 \begin{gather*}
 m\mapsto \sum_{w\in W} w m\colon\ \sO_X\big(D_{w_0}\big({}^-\vec{T}\big)\big)\otimes M\to M^W.
 \end{gather*}
\end{prop}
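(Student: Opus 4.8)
The plan is to reduce everything to one explicit identification: that $\sHom_{{\cal H}_{W;\vec T}(X)}(\sO_X,{\cal H}_{W;\vec T}(X))$ is, as a subsheaf of ${\cal H}_{W;\vec T}(X)$, precisely the invertible sheaf $(\sum_{w\in W}w)\,\sO_X(D_{w_0}({}^-\vec T))$; once this is known, both maps in the statement will have image the subsheaf of $M$ generated locally by the sections $(\sum_{w\in W}w)h\cdot m$.

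First I would invoke the presentation $0\to{\cal I}\to{\cal H}_{W;\vec T}(X)\to\sO_X\to 0$ established above, where ${\cal I}$ is the left ideal sheaf generated by the subsheaves $\sO_X([X^{s_i}]-T_i)(s_i-1)$. Applying $\sHom_{{\cal H}_{W;\vec T}(X)}(-,N)$ identifies, for any left module $N$, the sheaf $\sHom_{{\cal H}_{W;\vec T}(X)}(\sO_X,N)$ with $\{x\in N:{\cal I}x=0\}$ via evaluation at the image $\bar1$ of $1$; in particular $M^W=\{x\in M:{\cal I}x=0\}$, recovering the exact sequence already displayed. For $N={\cal H}_{W;\vec T}(X)$ I would then use that ${\cal H}_{W;\vec T}(X)$ is torsion-free over $\sO_X$ (it sits inside $k(X)[W]$) and that each $\sO_X([X^{s_i}]-T_i)$, being invertible on an integral scheme, has a local non-zero-divisor section $f_i$ with $f_i(s_i-1)\in{\cal I}$: the equation ${\cal I}a=0$ then forces $(s_i-1)a=0$ inside $k(X)[W]$ for all $i$, hence $wa=a$ for all $w\in W$, hence $a=(\sum_{w\in W}w)\,c$ with $c$ the coefficient of $1$ in $a$. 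Conversely $(\sum_w w)c$ annihilates ${\cal I}$ on the right for every $c\in k(X)$, because $(s_i-1)(\sum_w w)=0$ in $k(X)[W]$.

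Next I would determine which such $c$ give elements of ${\cal H}_{W;\vec T}(X)$. The Lemma above ($h\in\Gamma(U;\sO_X(D_{w_0}(\vec T)))\Rightarrow(\sum_w w){}^{w_0}h\in\Gamma(U;{\cal H}_{W;\vec T}(X))$) gives one inclusion, after noting the identity ${}^{w_0}D_{w_0}(\vec T)=D_{w_0}({}^-\vec T)$, which follows from $w(T_\alpha)=T_{w\alpha}$ together with $w_0\Phi^+(W)=\Phi^-(W)$. For the reverse, if $(\sum_w w)c=\sum_w({}^wc)w$ lies in ${\cal H}_{W;\vec T}(X)$, then its coefficient ${}^{w_0}c$ of the Bruhat-maximal element $w_0$ is a section of $\sO_X(D_{w_0}(\vec T))$ by Lemma \ref{lem:Bruhat_fin_parm}, so $c$ is a section of $\sO_X(D_{w_0}({}^-\vec T))$. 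This yields $\sHom_{{\cal H}_{W;\vec T}(X)}(\sO_X,{\cal H}_{W;\vec T}(X))=(\sum_{w\in W}w)\,\sO_X(D_{w_0}({}^-\vec T))$.

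Finally I would unwind both maps. The natural map sends $\phi\otimes m\mapsto\phi(\bar1)m$, and $\phi(\bar1)m$ automatically lies in $M^W$ since ${\cal I}(\phi(\bar1)m)=({\cal I}\phi(\bar1))m=0$; so its image is the subsheaf of $M$ generated locally by sections $a\,m$ with $a$ a local section of $\sHom_{{\cal H}_{W;\vec T}(X)}(\sO_X,{\cal H}_{W;\vec T}(X))$ and $m$ of $M$. The second map is by construction $h\otimes m\mapsto(\sum_w w)h\cdot m$ (using that $(\sum_w w)h$ is a section of ${\cal H}_{W;\vec T}(X)$ acting on $M$), with image the subsheaf generated by those sections. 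By the identification just proved, these two generating families of local sections of $M$ coincide, so the images agree. The only steps that are not purely formal are the torsion-freeness argument passing from ${\cal I}a=0$ to $(s_i-1)a=0$, and the Bruhat-filtration half of the identification of $\sHom$; I expect the latter to be the main technical point, since it is what forces the precise twist $D_{w_0}({}^-\vec T)$ and thereby makes the two images literally equal rather than merely isomorphic.
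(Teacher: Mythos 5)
Your proof is correct and takes essentially the same approach as the paper's. The paper's argument is very terse: it simply asserts that local sections of ${\cal H}_{W;\vec T}(X)^W$ have the form $\sum_{w\in W} w h$ with $h\in\sO_X(D_{w_0}({}^-\vec T))$ and notes the forward inclusion via the lemma; what you have done is fill in the details of that assertion, using the presentation of $\sO_X$, torsion-freeness of the Hecke algebra inside $k(X)[W]$ to pass from $\mathcal{I}a=0$ to $(s_i-1)a=0$, the $W$-equivariance argument to get the form $(\sum_w w)c$, and the Bruhat filtration together with the identity ${}^{w_0}D_{w_0}(\vec T)=D_{w_0}({}^-\vec T)$ to pin down the twist.
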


\begin{proof}
 The natural map ${\cal H}_{W;\vec{T}}(X)^W\otimes M\to M^W$ is just the
 restriction of the map giving the action of the Hecke algebra on~$M$. The~local sections of~${\cal H}_{W;\vec{T}}(X)^W$ have the form
 $\sum_{w\in W} w h$, where $h$ is a local section of~$\sO_X\big(D_{w_0}\big({}^-\vec{T}\big)\big)$, and thus the image of the natural map
 consists of~elements $\sum_{w\in W} w h m$, immediately giving the
 desired description. Note here that $\sum_{w\in W} w$ is indeed
 contained in~${\cal H}_{W;\vec{T}}(X)\otimes
 \sO_X\big({-}D_{w_0}\big({}^-\vec{T}\big)\big)$, so this is well-defined.
\end{proof}

Taking the analogous result for~${}^-\vec{T}$ and reexpressing everything
in terms of the original Hecke algebra gives the following.

\begin{prop}
 The image of the natural map
 \begin{gather*}
 \begin{split}
 &\sHom_{{\cal H}_{W;\vec{T}}(X)}\bigg(\!
 \sO_X\bigg({-}\sum_{\alpha\in \Phi^+(W)}T_{\alpha}\bigg),
 {\cal H}_{W;\vec{T}}(X)\bigg) \otimes_{{\cal H}_{W;\vec{T}}(X)} M
 \\
 & \phantom{\sHom_{{\cal H}_{W;\vec{T}}(X)}}
 \to \sHom_{{\cal H}_{W;\vec{T}}(X)}\bigg(\!
 \sO_X\bigg({-}\sum_{\alpha\in\Phi^+(W)}T_\alpha\bigg), M\bigg)
 \end{split}
 \end{gather*}
 is naturally isomorphic to the image of the natural map
 \begin{gather*}
 m\mapsto \sum_{w\in W} w m\colon\
 \sO_X(D_{w_0})\otimes M \to
\bigg(\sO_X\bigg(
\sum_{\alpha\in \Phi^+(W)}T_\alpha\bigg)
 \otimes M\bigg)^W.
 \end{gather*}
\end{prop}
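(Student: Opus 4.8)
The plan is to deduce this from the previous Proposition by replacing $\vec{T}$ with ${}^-\vec{T}$ and transporting the result back along the Morita equivalence between ${\cal H}_{W;\vec{T}}(X)$-modules and ${\cal H}_{W;{}^-\vec{T}}(X)$-modules. Write $T^+:=\sum_{\alpha\in\Phi^+(W)}T_\alpha$, and recall from the Corollary that conjugation by a local equation of $T^+$ gives $\sO_X(T^+)\otimes{\cal H}_{W;\vec{T}}(X)\otimes\sO_X(-T^+)\cong{\cal H}_{W;{}^-\vec{T}}(X)$; this is precisely the statement that ``twist by $\sO_X(T^+)$'' is an equivalence from ${\cal H}_{W;\vec{T}}(X)$-modules to ${\cal H}_{W;{}^-\vec{T}}(X)$-modules. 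The crucial point is that this equivalence sends the ${\cal H}_{W;\vec{T}}(X)$-module $\sO_X(-T^+)\subset\sO_X$ of Proposition~\ref{prop:finite_hecke_as_intersection} to the \emph{standard} ${\cal H}_{W;{}^-\vec{T}}(X)$-module $\sO_X$: multiplying by the local equation of $T^+$ carries the subsheaf $\sO_X(-T^+)$ of $k(X)$ onto $\sO_X$, and at the same time carries the action of ${\cal H}_{W;\vec{T}}(X)$ to the standard action of its conjugate ${\cal H}_{W;{}^-\vec{T}}(X)$. Hence this equivalence sends an arbitrary ${\cal H}_{W;\vec{T}}(X)$-module $M$ to a module $N$ with underlying sheaf $\sO_X(T^+)\otimes M$, and induces a natural isomorphism $\sHom_{{\cal H}_{W;\vec{T}}(X)}(\sO_X(-T^+),M)\cong N^W$.

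First I would apply the previous Proposition verbatim to the root datum with parameters ${}^-\vec{T}$ and to the ${\cal H}_{W;{}^-\vec{T}}(X)$-module $N$. Since ${}^-({}^-\vec{T})=\vec{T}$, it says that the image of the natural map $\sHom_{{\cal H}_{W;{}^-\vec{T}}(X)}(\sO_X,{\cal H}_{W;{}^-\vec{T}}(X))\otimes N\to N^W$ equals the image of $n\mapsto\sum_{w\in W}wn\colon\sO_X(D_{w_0}(\vec{T}))\otimes N\to N^W$. Next I would push the first of these maps back through the equivalence: it is exact and preserves $\sHom$-sheaves, hence preserves images, and it sends $\sO_X\mapsto\sO_X(-T^+)$, $N\mapsto M$, and the free module ${\cal H}_{W;{}^-\vec{T}}(X)$ to a progenerator for ${\cal H}_{W;\vec{T}}(X)$ which, since the formation of the image of the natural map is functorial, contributes the same image as the free module itself. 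Thus the left-hand image is identified with the image of $\sHom_{{\cal H}_{W;\vec{T}}(X)}(\sO_X(-T^+),{\cal H}_{W;\vec{T}}(X))\otimes M\to\sHom_{{\cal H}_{W;\vec{T}}(X)}(\sO_X(-T^+),M)$, which is the left-hand side of the claimed equality. For the right-hand map, I would use that all positive roots become negative under $w_0$, so $D_{w_0}(\vec{T})=D_{w_0}-T^+$; therefore the underlying sheaf of $\sO_X(D_{w_0}(\vec{T}))\otimes N$ is $\sO_X(D_{w_0}-T^+)\otimes\sO_X(T^+)\otimes M=\sO_X(D_{w_0})\otimes M$, while $N^W=(\sO_X(T^+)\otimes M)^W$. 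Group summation $\sum_{w\in W}w(-)$ is insensitive to these reidentifications (it uses only the $W$-module structure), so the right-hand map becomes exactly $m\mapsto\sum_{w\in W}wm\colon\sO_X(D_{w_0})\otimes M\to(\sO_X(\sum_{\alpha\in\Phi^+(W)}T_\alpha)\otimes M)^W$. Comparing the two images gives the Proposition.

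The main obstacle is the twist-bookkeeping in the second paragraph: one must pin down the direction of the Morita equivalence and verify that it really identifies the submodule $\sO_X(-T^+)$ with the standard module $\sO_X$ (using the Corollary together with the presentation of $\sO_X$ over a Hecke algebra by $\sum_w c_w w\mapsto\sum_w c_w$), and one must check that the equivalence intertwines the two group-summation maps on the nose --- i.e.\ that the isomorphism $\sO_X(D_{w_0}(\vec{T}))\otimes N\cong\sO_X(D_{w_0})\otimes M$ is $W$-equivariant. That last point rests on the same computation of $D_{w_0}$, $D_{w_0}(\vec{T})$ and ${}^{w_0}D_{w_0}(\vec{T})$ that already appears in the proof of the Lemma producing local symmetric idempotents; everything is routine once those divisor identities are in hand, but it has to be tracked carefully so that no line bundle or global unit is dropped.
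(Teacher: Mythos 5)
Your proposal is correct and follows the same route the paper intends: apply the preceding proposition with ${}^-\vec{T}$ in place of $\vec{T}$ and transport the result across the twist $\sO_X(T^+)\otimes{-}$, using $D_{w_0}(\vec{T})=D_{w_0}-T^+$ and the identification of $\sO_X(-T^+)$ (over ${\cal H}_{W;\vec{T}}(X)$) with $\sO_X$ (over ${\cal H}_{W;{}^-\vec{T}}(X)$). One small wording slip: you want to \emph{divide} by a local equation of $T^+$ (equivalently, multiply by a local generator of $\sO_X(T^+)$), not multiply by the equation, to carry $\sO_X(-T^+)$ onto $\sO_X$ and conjugate ${\cal H}_{W;\vec{T}}(X)$ into ${\cal H}_{W;{}^-\vec{T}}(X)$; the rest of the bookkeeping you flag at the end is exactly the content the paper compresses into ``reexpressing everything in terms of the original Hecke algebra.''
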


Using these, it is quite straightforward to obtain the desired strong
flatness results (in those cases where the $\Hom$ sheaf itself is not
strongly flat, that is): as images of maps of~$S$-flat sheaves, the sheaves
in question can only get smaller under specialization, so it suffices to
show that the~natural limits of the Bruhat subquotients are saturated.
Each such subquotient reduces to the image of~$\sum_{w\in W_J} w$ on a line
bundle of the form ${\cal L}(D_{w_J})$ where ${\cal L}$ is equivariant and
descends in~codimension~1; by the existence of symmetric idempotents when~$\vec{T}=0$, we~conclude that the image is just ${\cal L}^W$ independently
of~$\vec{T}$.

\begin{prop}\label{prop:shift_operators_finite}
 There is a natural strongly flat family of bimodules which for~$\vec{T}$
 in general position is given by
 \begin{gather*}
 \sHom_{{\cal H}_{W;\vec{T}}(X)}\bigg(\Ind^{W;\vec{T}}_{W_J}\sO_X
 \bigg({-}\sum_{\alpha\in \Phi^+(W_J)} T_\alpha\bigg),
 \Ind^{W;\vec{T}}_{W_I}\sO_X\bigg),
 \end{gather*}
 and may be expressed as the intersection
 \begin{gather*}
 {\cal H}_{W,W_I,W_J}(X) \cap \sO_X
\bigg(\sum_{\alpha\in \Phi^-(W)\cup\Phi^+(W_J)} \!\!\!\!
T_\alpha\bigg) \otimes {\cal H}_{W,W_I,W_J}(X)
 \otimes \sO_X\bigg({-}\sum_{\alpha\in \Phi^-(W)\setminus\Phi^-(W_I)}\!\!\!\! T_\alpha\bigg),
 \end{gather*}
 compatibly with composition.
\end{prop}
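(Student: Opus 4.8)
The plan is to combine the two preceding ``image of a natural map'' propositions with the base-change/Bruhat-filtration machinery already developed. First I would observe that for $\vec{T}$ in general position — specifically, when both ${\cal H}_{W;\vec{T}}(X)$ and ${\cal H}_{W;{}^-\vec{T}}(X)$ are covered by symmetric idempotents, which by Lemma \ref{lem:has_local_idempotents} holds on a dense open subset of the universal parameter space — the module $\sO_X(-\sum_{\alpha\in\Phi^+(W_J)}T_\alpha)=\Ind^{W_J;\vec{T}}_{W_J}(\cdots)$ is locally projective, so that
\[
\sHom_{{\cal H}_{W;\vec{T}}(X)}(\Ind^{W;\vec{T}}_{W_J}\sO_X(-\textstyle\sum_{\alpha\in\Phi^+(W_J)}T_\alpha),-)
\]
is exact and commutes with base change, and agrees with the image of the relevant tensor-Hom map. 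Using the adjunction $\sHom_{{\cal H}_{W;\vec{T}}(X)}(\Ind^{W;\vec{T}}_{W_J}{\cal L}_J,\Ind^{W;\vec{T}}_{W_I}{\cal L}_I)\cong ({\cal L}_J^{-1}\otimes\Res^{W;\vec{T}}_{W_J}\Ind^{W;\vec{T}}_{W_I}{\cal L}_I)^{W_J}$ together with the Mackey-type Proposition \ref{prop:Mackey_for_Hecke}, the Bruhat filtration on $\Res^{W;\vec{T}}_{W_J}\Ind^{W;\vec{T}}_{W_I}\sO_X$ has subquotients $\Ind^{W_J;\vec{T}}_{W_{J(w)}}(\sO_X(D_w(\vec{T}))\otimes w\,\sO_X)$, and after twisting by ${\cal L}_J^{-1}=\sO_X(\sum_{\alpha\in\Phi^+(W_J)}T_\alpha)$ and invoking Proposition \ref{prop:T-ish-invariants-of-L} each invariant subquotient becomes the space of $W_{J(w)}$-invariant sections of a line bundle of the form ${\cal L}'(D_{w_J})$ with ${\cal L}'$ equivariant and descending in codimension 1; by the existence of symmetric idempotents when $\vec{T}=0$ (Theorem \ref{thm:W-invariants}, via the diagonalizability hypothesis on the root kernel), the natural map $\sum_{w\in W_{J(w)}}w$ surjects onto $({\cal L}')^{W_{J(w)}}$, so each subquotient is saturated independently of $\vec{T}$.

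Second, I would run the intersection computation. On the symmetric-idempotent locus, Proposition \ref{prop:spherical_as_intersection} already identifies the ordinary spherical bimodule with
\[
{\cal H}_{W,W_I,W_J}(X)
\cap
\sO_X(\textstyle\sum_{\alpha\in\Phi^-(W)\setminus\Phi^-(W_J)}T_\alpha)
\otimes
{\cal H}_{W,W_I,W_J}(X)
\otimes
\sO_X(-\textstyle\sum_{\alpha\in\Phi^-(W)\setminus\Phi^-(W_I)}T_\alpha),
\]
and the same argument with the $W_J$-side line bundle replaced by $\sO_X(-\sum_{\alpha\in\Phi^+(W_J)}T_\alpha)$ simply shifts the left-hand twisting divisor from $\sum_{\alpha\in\Phi^-(W)\setminus\Phi^-(W_J)}T_\alpha$ to $\sum_{\alpha\in\Phi^-(W)\cup\Phi^+(W_J)}T_\alpha$, since the descent-in-codimension-1 of ${\cal L}_J=\sO_X(-\sum_{\alpha\in\Phi^+(W_J)}T_\alpha)$ forces a $W_J$-invariant section of the twist to lie in the intersection over $W_J$ of the relevant bundle, which one computes to be $\sO_X(\sum_{\alpha\in\Phi^-(W)\cup\Phi^+(W_J)}T_\alpha)$ as claimed. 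The Bruhat-subquotient bookkeeping — tracking how the divisors $\sum_{\alpha\in\Phi^-(W)\cup\Phi^+(W_J)}T_\alpha - \sum_{\alpha\in\Phi^-(W)\setminus\Phi^-(W_I)}T_{w\alpha}$ reduce after removing cancellations — is exactly the manipulation in the proof of Proposition \ref{prop:spherical_as_intersection}, with $\Phi^-(W_J)$ replaced by $-\Phi^+(W_J)$, and I would carry it out in parallel; the key point is that (for $\vec{T}$ in general position) there is still no cancellation between $T_\alpha$ and $T_\beta$ for $\beta\ne -\alpha$ since those are pulled back through distinct coroot maps.

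Third, for the passage to arbitrary $\vec{T}$: the Hom sheaf is $S$-flat and commutes with base change on the symmetric-idempotent locus by the argument above; to extend the family of bimodules off that locus I would instead define the family to be the image of the natural map
\[
\sHom_{{\cal H}_{W;\vec{T}}(X)}(\sO_X(-\textstyle\sum_{\alpha\in\Phi^+(W_J)}T_\alpha),{\cal H}_{W;\vec{T}}(X))
\otimes_{{\cal H}_{W;\vec{T}}(X)}\Ind^{W;\vec{T}}_{W_I}\sO_X
\to
\sHom_{{\cal H}_{W;\vec{T}}(X)}(\sO_X(-\textstyle\sum_{\alpha\in\Phi^+(W_J)}T_\alpha),\Ind^{W;\vec{T}}_{W_I}\sO_X),
\]
which by the second pair of ``image'' propositions equals the image of $m\mapsto\sum_{w\in W}wm$ on an appropriate twist of $\Ind^{W;\vec{T}}_{W_I}\sO_X$; this is manifestly the image of a map of $S$-flat sheaves, hence can only shrink under specialization, and by the saturation of each Bruhat subquotient established in the first step the limit is forced to equal the generic value, giving strong flatness. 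Composition compatibility is inherited from the composition on the generic locus (a closed/dense condition). The main obstacle I anticipate is the intersection/divisor bookkeeping of the second step: keeping straight which unions and set differences of positive and negative root subsystems appear — in particular verifying that the asymmetry between $\Phi^-(W)\cup\Phi^+(W_J)$ on one side and $\Phi^-(W)\setminus\Phi^-(W_I)$ on the other is exactly what the Bruhat subquotients demand, and that no spurious cancellation occurs when $w$ conjugates $\Phi^+(W_J)$ into a mix of positive and negative roots. Everything else is a direct transcription of the finite-spherical-algebra arguments already in place.
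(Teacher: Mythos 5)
Your proposal reaches the same destination and leans on the same machinery (the two ``image of natural map'' propositions, Proposition \ref{prop:T-ish-invariants-of-L}, the Mackey/Bruhat filtration, and the idempotent locus argument), but it takes a more computational route than the paper does for the key intersection identity. The paper's proof is a short double-inclusion argument: it first views $\Ind^{W;\vec T}_{W_I}\sO_X$ as ${\cal H}_{W,W_I,1;\vec T}(X)\subset{\cal H}_{W,W_I,1}(X)$ and applies $m\mapsto\sum_{w\in W_J}wm$ to embed the ``strongly flat subsheaf'' directly into ${\cal H}_{W,W_I,W_J}(X)$, giving inclusion into the first intersection term; it then twists by $\sO_X(\sum_{\alpha\in\Phi^+(W)}T_\alpha)$ to pass to the ${}^-\vec T$ picture, observes that the result is a Hom sheaf with target $\Ind^{W;{}^-\vec T}_{W_I}\sO_X(-\sum_{\alpha\in\Phi^-(W_I)}T_\alpha)\subset\Ind^{W;{}^-\vec T}_{W_I}\sO_X$ and hence a subsheaf of ${\cal H}_{W,W_I,W_J;{}^-\vec T}(X)$, giving the second term; and only then compares Bruhat subquotients to upgrade $\subset$ to $=$ for generic $\vec T$. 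You instead rerun the intersection computation of Proposition \ref{prop:spherical_as_intersection} directly with ${\cal L}_J=\sO_X(-\sum_{\alpha\in\Phi^+(W_J)}T_\alpha)$, tracking how the ``relevant bundle'' $\sO_X(\sum_{\alpha\in\Phi^-(W)}T_\alpha)$ is shifted to $\sO_X(\sum_{\alpha\in\Phi^-(W)\cup\Phi^+(W_J)}T_\alpha)$ (correct: this set is $W_J$-stable since $W_J$ permutes both $\Phi(W_J)$ and $\Phi^-(W)\setminus\Phi^-(W_J)$, so the intersection over $W_J$ is the bundle itself). Both are valid; the paper's version buys brevity and sidesteps the root-combinatorics bookkeeping entirely except at the Bruhat-comparison step, whereas yours makes all the divisor arithmetic explicit — at the cost of some exposition risk around the phrase ``descent-in-codimension-1 of ${\cal L}_J$'', since $\sO_X(-\sum_{\alpha\in\Phi^+(W_J)}T_\alpha)$ is precisely \emph{not} $W_J$-equivariant (the paper flags exactly this as the motivation for the image-of-natural-map workaround), so you should be careful to phrase the intermediate invariance claim in terms of the ${\cal H}_{W_J;\vec T}$-module structure rather than equivariance of the line bundle. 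Your final step (defining the family as the image of the tensor-Hom natural map and invoking shrink-under-specialization plus saturation of Bruhat subquotients) is precisely the content of the discussion preceding the Proposition in the paper, so no objection there.
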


\begin{proof}
 We may interpret $\Ind^{W;\vec{T}}_{W_I}\sO_X$ as ${\cal
 H}_{W,W_I,1;\vec{T}}(X)$, and thus view it as a subsheaf of the
 parameter-free case ${\cal H}_{W,W_I,1}(X)$. The~operation~$m\mapsto
 \sum_{w\in W_J} w m$ gives a well-defined (surjective) map
 \begin{gather*}
 \sO_X(D_{w_J}) {\cal H}_{W,W_I,1}(X)
 \to
 {\cal H}_{W,W_I,W_J}(X),
 \end{gather*}
 and thus the (strongly flat) subsheaf discussed above of the $\Hom$ sheaf
 may itself be interpreted as a subsheaf of~${\cal H}_{W,W_I,W_J}(X)$.

 Twisting gives the analogous subsheaf of
 \begin{gather*}
 \sHom_{{\cal H}_{W;{}^-\vec{T}}(X)}\bigg(
 \Ind^{W;{}^-\vec{T}}_{W_J}\sO_X, \Ind^{W;{}^-\vec{T}}_{W_I}\sO_X
\bigg({-}\sum_{\alpha\in \Phi^-(W_I)} T_\alpha\bigg)\!\bigg).
 \end{gather*}
 Since this is itself a subsheaf of
 \begin{gather*}
 \sHom_{{\cal H}_{W;{}^-\vec{T}}(X)}\Big( \Ind^{W;{}^-\vec{T}}_{W_J}\sO_X,
 \Ind^{W;{}^-\vec{T}}_{W_I}\sO_X\Big) ={\cal H}_{W,W_I,W_J;{}^-\vec{T}}(X),
 \end{gather*}
 the other inclusion follows. Comparing Bruhat quotients gives the
 desired equality for~$\vec{T}$ in general position.
\end{proof}

\begin{cor}
 There is a natural strongly flat family of bimodules which for~$\vec{T}$
 in general position is given by
 \begin{gather*}
 \sHom_{{\cal H}_{W;\vec{T}}(X)}\bigg(
 \Ind^{W;\vec{T}}_{W_J}\sO_X, \Ind^{W;\vec{T}}_{W_I}\sO_X
\bigg({-}\sum_{\alpha\in \Phi^+(W_I)} T_\alpha\bigg)\!\bigg),
 \end{gather*}
 and may be expressed as the intersection
 \begin{gather*}
 {\cal H}_{W,W_I,W_J}(X) \cap
 \sO_X\bigg(
 \sum_{\alpha\in \Phi^-(W)\setminus \Phi^-(W_J)} \!\!\!\! T_\alpha\!\bigg)
 \otimes {\cal H}_{W,W_I,W_J}(X) \otimes
 \sO_X\bigg({-}\sum_{\alpha\in \Phi^-(W)\cup \Phi^+(W_I)} \!\!\!\! T_\alpha\!\bigg),
 \end{gather*}
 compatibly with composition.
\end{cor}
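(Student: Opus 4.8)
The statement is the mirror image of Proposition~\ref{prop:shift_operators_finite} under interchanging the source and target of the $\sHom$, so the plan is to run that proof with the roles of the two induced modules (and hence of $W_I$ and $W_J$) exchanged; equivalently, one may deduce it from Proposition~\ref{prop:shift_operators_finite} by applying the adjoint anti-isomorphism together with the diagram automorphism $I\mapsto I'$ and the substitution $\vec T\leftrightarrow{}^-\vec T$. As in all the surrounding results, I would first reduce to a family carrying universal parameters, so that $\vec T$ is in general position: no $T_\alpha$ shares a component with a reflection hypersurface, no pair $T_\alpha$, $T_{-\alpha}$ has a common component, and over a dense open subset of $S$ both ${\cal H}_{W;\vec T}(X)$ and ${\cal H}_{W;{}^-\vec T}(X)$ are covered by symmetric idempotents. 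Since the object we exhibit will be an image of a morphism of $S$-flat sheaves, strong flatness for arbitrary $\vec T$ then follows from the generic case by specialization, once the generic Bruhat subquotients of that image are shown to be saturated.

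The first step is to realize $\Ind^{W;\vec T}_{W_I}\sO_X(-\sum_{\alpha\in\Phi^+(W_I)}T_\alpha)$ concretely. By Proposition~\ref{prop:finite_hecke_as_intersection}, $\sO_X(-\sum_{\alpha\in\Phi^+(W_I)}T_\alpha)$ is a module over ${\cal H}_{W_I;\vec T}(X)$, and the equivalence ${\cal L}(-\sum_{\alpha\in\Phi^+(W_I)}T_\alpha)\otimes{-}$ identifies it, up to the exchange $\vec T\leftrightarrow{}^-\vec T$, with a twist of the trivial module; hence the induced module is locally projective exactly when ${\cal H}_{W_I;{}^-\vec T}(X)$ is covered by symmetric idempotents, which is generically the case. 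By the same device used in the proof of Proposition~\ref{prop:shift_operators_finite} (the two unnumbered Propositions immediately preceding it), the relevant $\sHom$ sheaf can then be written as the image of a natural map obtained, via $m\mapsto\sum_{w\in W_I}wm$, from an appropriate twist of $\Res^{W;\vec T}_{W_I}\Ind^{W;\vec T}_{W_J}\sO_X$ — a morphism of $S$-flat sheaves. Running the Bruhat filtration of Proposition~\ref{prop:Mackey_for_Hecke} on $\Res^{W;\vec T}_{W_I}\Ind^{W;\vec T}_{W_J}\sO_X$, each subquotient is a line bundle ${\cal L}(D_w(\vec T))$ with ${\cal L}$ equivariant and descending in codimension $1$, and Theorem~\ref{thm:W-invariants} shows that the image of $\sum_{w\in W_I}w$ on ${\cal L}(D_{w_I})$ is simply ${\cal L}^{W_I}$, independent of $\vec T$. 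Thus the generic subquotients are saturated and the family is strongly flat.

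It then remains to identify the generic value with the stated intersection. Following Proposition~\ref{prop:spherical_as_intersection}, I would write the spherical bimodule ${\cal H}_{W,W_I,W_J;\vec T}(X)$ as the intersection of ${\cal H}_{W,W_I,W_J}(X)$ with a two-sided twist of it (left twist $\sum_{\alpha\in\Phi^-(W)\setminus\Phi^-(W_J)}T_\alpha$, right twist $-\sum_{\alpha\in\Phi^-(W)\setminus\Phi^-(W_I)}T_\alpha$), and then track the effect of replacing the target $\Ind^{W;\vec T}_{W_I}\sO_X$ by $\Ind^{W;\vec T}_{W_I}\sO_X(-\sum_{\alpha\in\Phi^+(W_I)}T_\alpha)$. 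The source being untwisted, the left twist is unchanged. The crucial (and mildly counterintuitive) point is that the naive guess — that the right twist is merely shifted by $-\sum_{\alpha\in\Phi^+(W_I)}T_\alpha$ — is wrong: threading the shift through the identification $\Ind^{W;\vec T}_{W_I}\cong\Coind^{W;\vec T}_{W_{I'}}$ (with its attendant twist $\sO_X(D_{w_0w_I}(\vec T))$ and the diagram automorphism $I\mapsto I'$, under which $w_0w_I$ carries $\Phi^+(W_I)$ onto $\Phi^+(W_{I'})$) enlarges the right twist by all of $\sum_{\alpha\in\Phi(W_I)}T_\alpha$, not just its positive part; using $\Phi^-(W)\cup\Phi^+(W_I)=(\Phi^-(W)\setminus\Phi^-(W_I))\sqcup\Phi(W_I)$ this collapses to exactly $-\sum_{\alpha\in\Phi^-(W)\cup\Phi^+(W_I)}T_\alpha$. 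That the two sheaves then coincide, rather than merely contain one another, is checked by comparing Bruhat subquotients via Corollary~\ref{cor:Bruhat_intervals_product} and the explicit divisors $D_w(\vec T)$, exactly as in Proposition~\ref{prop:shift_operators_finite}; compatibility with composition is inherited from the composition of $\sHom$ sheaves.

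The step I expect to be the main obstacle is precisely this last bookkeeping — verifying that after passing through $\Coind^{W;\vec T}_{W_{I'}}$, the twist $\sO_X(D_{w_0w_I}(\vec T))$, and the $W_{I'}$-invariance in the $\sHom$, the resulting divisor really is the clean $-\sum_{\alpha\in\Phi^-(W)\cup\Phi^+(W_I)}T_\alpha$ and not merely something equal to it after a further $W$-invariant correction. Everything else runs formally parallel to the proof of Proposition~\ref{prop:shift_operators_finite}, with $W_I$ and $W_J$ (and the two arguments of the $\sHom$) interchanged.
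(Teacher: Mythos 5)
Your overall strategy is the right one: the Corollary is the twin of Proposition~\ref{prop:shift_operators_finite} with the line-bundle twist moved from the source to the target of the $\sHom$, and the clean way to derive it is to feed the proof of that Proposition through the $\vec T\leftrightarrow{}^-\vec T$ conjugation (equivalently, the adjoint anti-isomorphism of Corollary~\ref{cor:spherical_t_symmetry} together with the diagram automorphism $I\mapsto I'$). Your final divisor bookkeeping is also correct; the set identity $\Phi^-(W)\cup\Phi^+(W_I)=(\Phi^-(W)\setminus\Phi^-(W_I))\sqcup\Phi(W_I)$ is exactly the computation one needs, and it is reassuring that you identified it as the delicate point.

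However, there is a concrete directional error in the middle paragraph. The $\sHom$ sheaf in the Corollary is $\sHom_{{\cal H}_{W;\vec T}(X)}(\Ind_{W_J}\sO_X,\Ind_{W_I}\sO_X(-\sum_{\alpha\in\Phi^+(W_I)}T_\alpha))$; by Frobenius reciprocity (adjunction applied to the \emph{source}), this is $\sHom_{{\cal H}_{W_J;\vec T}(X)}(\sO_X,\Res^{W;\vec T}_{W_J}\Ind^{W;\vec T}_{W_I}\sO_X(-\sum_{\alpha\in\Phi^+(W_I)}T_\alpha))$, so the relevant symmetrization is $m\mapsto\sum_{w\in W_J}wm$ applied to a twist of $\Res^{W;\vec T}_{W_J}\Ind^{W;\vec T}_{W_I}$, precisely as in the proof of Proposition~\ref{prop:shift_operators_finite}. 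You instead wrote $\Res^{W;\vec T}_{W_I}\Ind^{W;\vec T}_{W_J}\sO_X$ with symmetrization $\sum_{w\in W_I}w$, which by the same adjunction computes $\sHom(\Ind_{W_I}\sO_X,\Ind_{W_J}\sO_X)={\cal H}_{W,W_J,W_I;\vec T}(X)$ — the Hom in the opposite direction. (Since the two are related by the adjoint anti-isomorphism, the underlying Bruhat-filtered sheaves coincide, but the bimodule structures, the Hecke subalgebra relative to which one is taking invariants, and the explicit divisors all come out wrong if one blindly proceeds from the wrong presentation.) Related to this, the local-projectivity condition you invoke is misattributed: the unnumbered Propositions preceding \ref{prop:shift_operators_finite} require the \emph{source} module of the inner $\sHom$, here $\sO_X$ over ${\cal H}_{W_J;\vec T}(X)$, to be generically locally projective — which here follows from coverage by symmetric idempotents for $\vec T$ (not ${}^-\vec T$). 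The generic projectivity of $\Ind_{W_I}\sO_X(-\sum_{\alpha\in\Phi^+(W_I)}T_\alpha)$, which is indeed equivalent to symmetric idempotents for ${\cal H}_{W_I;{}^-\vec T}(X)$, is not what the argument turns on. Once these two misidentifications are fixed, the remainder of your plan (compare Bruhat subquotients via Corollary~\ref{cor:Bruhat_intervals_product} and Theorem~\ref{thm:W-invariants}; conclude strong flatness and the identification with the stated intersection) runs exactly parallel to the paper's proof of Proposition~\ref{prop:shift_operators_finite}.
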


\section{Infinite groups}\label{section5}

Most of our arguments above regarding the structure of the Hecke algebra
boiled down to the combinatorics of (double) cosets in Coxeter groups and
the associated Bruhat order. Indeed, virtually everything in the above
discussion carries over immediately to the case of infinite Coxeter groups,
with one glaring exception: the Hecke algebra was defined as a sheaf of
algebras on the quotient $X/W$, and there is no such quotient scheme when~$W$ is infinite!

Thus the primary (and to first approximation only) issue in generalizing
the above construction is simply to determine what manner of object we will
be constructing. Luckily, a suitable generalization of sheaves of algebras
has already appeared in the literature on noncommutative geometry, namely
the notion of a ``sheaf algebra''.

We recall the definition from~\cite[Section~2]{VandenBerghM:1996}, generalizing
an earlier definition of~\cite[Section~2]{ArtinM/VandenBerghM:1990}. We first
need the notion of a sheaf bimodule: Let $X$, $Y$ be Noetherian $S$-schemes
of finite type: An {\em $\sO_S$-central $(\sO_X,\sO_Y)$-bimodule} is a
quasicoherent $\sO_{X\times_S Y}$-module $M$ such that the support of any
coherent subsheaf of~$M$ is finite over both $X$ and $Y$ (relative to the
projections). We~will sometimes shorthand this by saying that $M$ is a
sheaf bimodule on~$X\times_S Y$. Note that if~$X=\Spec(R_X)$,
$Y=\Spec(R_Y)$, then a sheaf bimodule on~$X\times_S Y$ is an
$(R_X,R_Y)$-bimodule such that $\Gamma(S;\sO_S)$ is central and such that
any finitely generated sub-bimodule is finitely generated both as a left
module and as a right module.

As with ordinary bimodules, there is a notion of tensor product for sheaf
bimodules. If~$M$ is a sheaf bimodule on~$X\times_S Y$ and $N$ is a sheaf
bimodule on~$Y\times_S Z$, then we can construct a sheaf bimodule on~$X\times_S Z$ by pulling back $M$ and $N$ to~$X\times_S Y\times_S Z$,
tensoring, and then taking the direct image to~$X\times_S Z$ to obtain a
sheaf $M\otimes_Y N$. Note that if $\Delta_{X/S}$ is the diagonal in~$X\times_S X$, then $\sO_{\Delta_{X/S}}$ is a sheaf bimodule on~$X\times_S
X$, and for any sheaf bimodule $M$ on~$X\times_S Y$, there is a natural
isomorphism $\sO_{\Delta_{X/S}}\otimes_X M\cong M$. Furthermore, this
tensor product operation is naturally associative and agrees with the usual
tensor product when the schemes are affine, see~\cite{VandenBerghM:1996}.

The tensor product provides the category of~$(\sO_X,\sO_X)$-bimodules with
a natural monoidal structure, thus allowing one to define a {\em sheaf
 algebra} on~$X/S$ to be a monoid object in that category; that is, a
sheaf bimodule $A$ equipped with morphisms $\sO_{\Delta_X}\to A$ and
$A\otimes_X A\to A$ satisfying the obvious axioms. More generally, one may
also consider {\em sheaf categories}, in which every object of the category
has an associated scheme and the $\Hom$ sets are replaced by sheaf
bimodules.

One difficulty in dealing with the above construction is that it is not
always easy to work with local sections of the tensor product of sheaf
bimodules. Indeed, since the tensor product is a direct image, we~in
general need to choose an affine open covering of~$Y$ and look for
compatible systems of elements of the corresponding na\"{\i}ve tensor
products. It turns out that for {\em coherent} sheaf bimodules, there is
a cleaner approach.

\begin{prop}\label{prop:computing_tp}
 Let $M$ be a sheaf bimodule on~$X\times_S Y$ and let $N$ be a sheaf
 bimodule on~$Y\times_S Z$. Let $\Spec(R)\cong V\subset Y$ be an affine
 open subset and let $U\subset X$, $W\subset Z$ be open subsets. If~$M$
 is coherent and the preimage of~$V$ in the support of~$M$ contains the
 preimage of~$U$, or if $N$ is coherent and the preimage of~$V$ in the
 support of~$N$ contains the preimage of~$W$, then there is a natural
 isomorphism
 \begin{gather*}
 \Gamma(U\times W;M\otimes_Y N)\cong \Gamma(U\times
 V;M)\otimes_{R}\Gamma(V\times W;N).
 \end{gather*}
\end{prop}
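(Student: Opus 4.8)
The plan is to reduce the statement to the affine case by unwinding the definition of $M\otimes_Y N$ as a direct image. Recall that $M\otimes_Y N = (p_{XZ})_*(p_{XY}^*M\otimes_{\sO_{X\times_SY\times_SZ}} p_{YZ}^*N)$, where the $p$'s are the evident projections from $X\times_S Y\times_S Z$. So $\Gamma(U\times W; M\otimes_Y N)$ is the space of sections of $p_{XY}^*M\otimes p_{YZ}^*N$ over $(p_{XZ})^{-1}(U\times W) = U\times_S Y\times_S W$. The finiteness hypothesis on sheaf bimodules guarantees this is computed correctly: the support of $p_{XY}^*M\otimes p_{YZ}^*N$ is contained in the intersection of the preimages of $\supp M$ and $\supp N$, and $\supp M$ is finite over $X$ (hence the relevant locus is proper, in fact finite, over $U\times_S W$), so the higher direct images vanish on the relevant affine pieces and sections are just global sections of the tensor sheaf over the whole fiber.

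First I would treat the symmetric roles: assume WLOG that $M$ is coherent and the preimage of $V$ in $\supp M$ contains the preimage of $U$ (the other case is identical by the symmetry $M\otimes_Y N$ versus $N^{\text{op}}\otimes_Y M^{\text{op}}$, or just by rerunning the argument on the $Z$ side). The key observation is that under this hypothesis, the part of $p_{XY}^*M\otimes p_{YZ}^*N$ living over $U\times_S Y\times_S W$ is entirely supported over $U\times_S V\times_S W$: indeed a section's support, being inside $p_{XY}^{-1}(\supp M)$, projects into $\supp M$, and over $U$ that support lies over $V$. So $\Gamma(U\times_S Y\times_S W; p_{XY}^*M\otimes p_{YZ}^*N) = \Gamma(U\times_S V\times_S W; \cdot)$, and now everything is happening over the affine $V=\Spec(R)$. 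At this point the sheaf-theoretic pullbacks and tensor product over $X\times_S Y\times_S Z$ restrict to honest module operations: $p_{XY}^*M$ restricted to $U\times_S V\times_S W$ is $\Gamma(U\times V; M)\otimes_R R$ viewed appropriately, $p_{YZ}^*N$ restricted there is $R\otimes_R\Gamma(V\times W;N)$, and their tensor product over the structure sheaf of $U\times_S V\times_S W$ computes $\Gamma(U\times V;M)\otimes_R\Gamma(V\times W;N)$. Taking the direct image to $U\times_S W$ (forgetting the $V$ direction, which is now trivial) gives the claimed identification, and naturality in $U$, $V$, $W$ is clear from the construction.

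I expect the main obstacle to be the careful bookkeeping that the support hypothesis genuinely forces the relevant sheaf to be a pushforward from $U\times_S V\times_S W$ rather than merely having sections that happen to agree — one must check that $X\times_S Y\times_S Z$ restricted over $U\times W$ and intersected with the support of the tensor sheaf is contained in $U\times_S V\times_S W$ \emph{as a closed subscheme}, not just set-theoretically, so that restricting the quasicoherent sheaf loses no information. This follows because $p_{XY}^*M$ is already (scheme-theoretically) supported on $p_{XY}^{-1}(\supp M)$ with its natural structure, and tensoring with $p_{YZ}^*N$ only shrinks the support; the product $p_{XY}^{-1}(\supp M)\cap (U\times_S Y\times_S W)$ is a closed subscheme of $U\times_S V\times_S W$ by the hypothesis, so the module-theoretic computation over $R$ is valid. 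The remaining verifications — compatibility of pullback and tensor product with the affine descriptions, and that the direct image to $U\times_S W$ is the identity in the $V$-variable — are exactly the statement, proved in \cite{VandenBerghM:1996}, that the sheaf bimodule tensor product agrees with the usual bimodule tensor product over affines, so I would simply cite that rather than reprove it.
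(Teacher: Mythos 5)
Your argument is in essence the same as the paper's: both reduce via the support hypothesis to a statement localized over $V$ and then invoke the affine case of the bimodule tensor product. You organize this by unwinding the geometric definition of $M\otimes_Y N$ as a pushforward from $X\times_S Y\times_S Z$ and contracting $Y$ to $V$, whereas the paper works with the algebraic gluing description over an affine open cover of $Y$ and uses the identity $\Gamma(U\times V';M)\cong\Gamma(U\times(V\cap V');M)$ to collapse the compatible family of affine-local tensor products to its $V$-component. Both routes hinge on precisely the same observation about the support of $M$ over $U$, so I would call this a repackaging rather than a genuinely different route.

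Two points to tighten. First, the aside about higher direct images vanishing is a red herring: the equality $\Gamma(U\times W;(p_{XZ})_*\mathcal{G})=\Gamma(p_{XZ}^{-1}(U\times W);\mathcal{G})$ is simply the definition of the zeroth pushforward and needs no finiteness of the support. Second, and more substantively, your final step asserts that the tensor product over $\sO_{U\times_S V\times_S W}$ ``computes'' $\Gamma(U\times V;M)\otimes_R\Gamma(V\times W;N)$ and that the direct image to $U\times_S W$ is ``trivial in the $V$-direction''; but $U$ and $W$ need not be affine, so $p_{XY}^*M|_{U\times_S V\times_S W}$ is not literally a module of the form $\Gamma(U\times V;M)\otimes_R R$, and the cited affine statement from Van den Bergh does not apply directly. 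One still has to glue over affine covers of $U$ and $W$ and verify that $\otimes_R$ commutes with the resulting equalizers, using the support observation once more to identify the compatible families. This is the same step the paper disposes of with ``the compatibility conditions ensure that this is an isomorphism,'' so it is not a defect peculiar to your argument, but it deserves explicit acknowledgment rather than an unqualified appeal to the affine case.
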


\begin{proof}
 By symmetry, we~may suppose that the constraint on~$M$ holds. The~sections of a~cohe\-rent sheaf bimodule on a product of open subsets
 depends only on the intersection of those open subsets on the support of
 the sheaf bimodule. It follows, therefore, that there is a natural
 isomorphism
 \begin{gather*}
 \Gamma(U\times V';M)\cong \Gamma(U\times (V\cap V');M)
 \end{gather*}
 for any open subset $V'$. Computing the tensor product via an affine
 open covering of~$Y$ con\-tai\-ning~$V$ gives a natural morphism
 \begin{gather*}
 \Gamma(U\times W;M\otimes_Y N)\to \Gamma(U\times V;M)\otimes_{R}\Gamma(V\times W;N),
 \end{gather*}
 and the compatibility conditions ensure that this is an isomorphism as
 required.
\end{proof}

\begin{rem}
 For coherent $M$, there is a maximal $U$ satisfying the hypothesis: take
 $X\setminus U$ to be the image of~$X$ of the preimage of~$Y\setminus V$,
 and observe that finiteness implies that this image is closed, so $U$ is
 open. For~quasicoherent $M$, it is tempting to consider the intersection
 of the $U$'s corresponding to the coherent subsheaves of~$M$, but of
 course this will rarely be open. Of~course, if it {\em is} open, then
 taking the limit tells us that the conclusion of the proposition
 continues to~hold.
\end{rem}

Of course, we~would like to know that this incorporates the usual notion of
a sheaf of algebras on the quotient.

\begin{prop}\label{prop:omitting_of}
 Let $f\colon X\to Y$ be a finite morphism of Noetherian $S$-schemes of finite
 type, and suppose that ${\cal A}$ is a quasicoherent sheaf
 of~$\sO_Y$-algebras on~$Y$ equipped with an algebra morphism $f_*\sO_X\to
 {\cal A}$. Then ${\cal A}$ induces a sheaf algebra ${\cal A}_X$ on~$X/S$
 such that for any open subsets $U,V\subset Y$, $\Gamma\big(f^{-1}(U)\times
 f^{-1}(V);{\cal A}_X\big)\cong \Gamma(U\cap V;{\cal A})$.
\end{prop}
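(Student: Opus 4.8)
The plan is to produce $\mathcal A_X$ by transporting $\mathcal A$ across the finite morphism $f$. Since $f$ is finite, so is $g := f\times_S f\colon X\times_S X\to Y\times_S Y$ (a composite of base changes of $f$); in particular $g$ is affine, so $g_*$ identifies quasicoherent $\sO_{X\times_S X}$-modules with quasicoherent $\sO_{Y\times_S Y}$-modules carrying a module structure over the sheaf of algebras $g_*\sO_{X\times_S X}\cong p_1^*(f_*\sO_X)\otimes_{\sO_{Y\times_S Y}}p_2^*(f_*\sO_X)$. I would then take $\Delta_{Y/S,*}\mathcal A$, the direct image of $\mathcal A$ along the diagonal of $Y\times_S Y$: the algebra morphism $f_*\sO_X\to\mathcal A$ equips $\mathcal A$ with two commuting actions of $f_*\sO_X$ (left and right multiplication), and because $f_*\sO_X\to\mathcal A$ is a morphism of $\sO_Y$-algebras the two induced $\sO_Y$-actions agree with the $\sO_Y$-algebra structure of $\mathcal A$; hence these assemble into a $g_*\sO_{X\times_S X}$-module structure on $\Delta_{Y/S,*}\mathcal A$. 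Define $\mathcal A_X$ to be the corresponding quasicoherent sheaf on $X\times_S X$.

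Next I would check that $\mathcal A_X$ is a sheaf bimodule and deduce the section formula. The support of $\mathcal A_X$ lies in $g^{-1}(\Delta_{Y/S}(Y)) = X\times_Y X$, and each projection $X\times_Y X\to X$ is finite, being a base change of $f$; hence every coherent subsheaf of $\mathcal A_X$ has support finite over $X$ on both sides, so $\mathcal A_X$ is an $\sO_S$-central $(\sO_X,\sO_X)$-bimodule. The section formula is then immediate from the affine-morphism identification together with the fact that $\Delta_{Y/S,*}\mathcal A$ is supported on the diagonal: for open $U,V\subset Y$,
\[
\Gamma(f^{-1}(U)\times_S f^{-1}(V);\mathcal A_X)
\cong
\Gamma(U\times_S V;\Delta_{Y/S,*}\mathcal A)
\cong
\Gamma(\Delta_{Y/S}^{-1}(U\times_S V);\mathcal A)
=\Gamma(U\cap V;\mathcal A).
\]

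It remains to make $\mathcal A_X$ a monoid object, i.e. a sheaf algebra. The unit $\sO_{\Delta_{X/S}}\to\mathcal A_X$: under the identification above $\sO_{\Delta_{X/S}}$ corresponds to $\Delta_{Y/S,*}(f_*\sO_X)$, and $f_*\sO_X\to\mathcal A$ is a morphism of $f_*\sO_X$-bimodules precisely because it is a ring homomorphism, so it yields the desired map. For the multiplication I would work affine-locally over $Y=\Spec R_Y$, where $\mathcal A$ becomes an $R_Y$-algebra $A$, $f_*\sO_X$ a finite $R_Y$-algebra $R_X$, and — by Proposition \ref{prop:computing_tp} together with \cite{VandenBerghM:1996}, all relevant supports lying over $Y$ — $\mathcal A_X\otimes_X\mathcal A_X$ becomes $A\otimes_{R_X}A$ with its evident $(R_X,R_X)$-bimodule structure. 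The multiplication $A\otimes_{R_Y}A\to A$ of $\mathcal A$ factors through $A\otimes_{R_X}A\to A$ by associativity of $A$ (since $(a\varphi(x))b=a(\varphi(x)b)$ for $x\in R_X$, $\varphi\colon R_X\to A$ the structure map), and it is a bimodule morphism again by associativity; these local multiplications agree on overlaps and glue. The unit and associativity axioms for $\mathcal A_X$ reduce, affine-locally over $Y$, to those for $\mathcal A$.

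The step I expect to be the real work is the identification $\mathcal A_X\otimes_X\mathcal A_X\cong A\otimes_{R_X}A$ affine-locally over $Y$: one must check that the sheaf-bimodule tensor product on $X\times_S X$ — a priori a direct image over an affine covering of $X$ — agrees, once attention is restricted to the supports contained in the fiber powers over $Y$, with the naive tensor product over $Y$-affines, so that the locally defined multiplication maps really are compatible with the monoidal structure defining ``sheaf algebra.'' Everything else is a transcription of the ordinary-bimodule statement, and this compatibility is exactly what Proposition \ref{prop:computing_tp} (resp. the underlying results of \cite{VandenBerghM:1996}) supplies once applied to an affine covering of $Y$.
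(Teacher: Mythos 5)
Your proof is correct and is essentially the same construction as in the paper. The paper builds $\mathcal A_X$ by gluing the explicit module structures on $\Gamma(U_i\cap U_j;\mathcal A)$ over a covering of $X\times_S X$ by opens of the form $f^{-1}(U_i)\times_S f^{-1}(U_j)$, whereas you package the same data at once by passing through the affine morphism $g=f\times_S f$ and the equivalence between quasicoherent sheaves on $X\times_S X$ and $g_*\sO_{X\times_S X}$-modules on $Y\times_S Y$, with $\Delta_{Y/S,*}\mathcal A$ carrying the relevant bimodule structure; the two descriptions are the same sheaf, and the section formula and finiteness-of-support checks are identical. Both proofs then invoke Proposition \ref{prop:computing_tp} over $Y$-affines to verify that the local unit and multiplication maps are compatible with the sheaf-bimodule tensor product and hence yield a monoid object; your version is slightly more explicit about the factoring of $A\otimes_{R_Y}A\to A$ through $A\otimes_{R_X}A$, which is the right thing to spell out.
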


\begin{proof}
 As~usual, we~may assume that $S$ is affine. For~any affine open subset
 $U\subset Y$, the sheaf-of-algebras morphism $f_*\sO_X\to {\cal A}$
 induces an algebra morphism $\Gamma(U;f_*\sO_X)\to \Gamma(U;{\cal A})$,
 and thus makes $\Gamma(U;{\cal A})$ a bimodule over
 $\Gamma(U;f_*\sO_X)\cong \Gamma\big(f^{-1}(U);\sO_X\big)$. More generally, if
 $U,V\subset Y$ are two affine open subsets, then we may use the morphisms
 $\Gamma(U;f_*\sO_X)\to \Gamma(U\cap V;f_*\sO_X)$ and
 $\Gamma(V;f_*\sO_X)\to \Gamma(U\cap V;f_*\sO_X)$ to make $\Gamma(U\cap
 V;{\cal A})$ a
 $\big(\Gamma\big(f^{-1}(U);\sO_X\big),\Gamma\big(f^{-1}(V);\sO_X\big)\big)$-bimodule.

 This in particular gives a family of~$\big(\Gamma\big(f^{-1}(U_i);\sO_X\big),\Gamma\big(f^{-1}(U_j);\sO_X\big)\big)$-bimodules
 associated to any affine open covering of~$Y$. Since the coefficient
 rings are commutative, we~may reinterpret this as a
 $\Gamma\big(f^{-1}(U_i);\sO_X\big)\otimes_{\sO_S}
 \Gamma\big(f^{-1}(U_j);\sO_X\big)$-module structure, and then observe that
 \begin{gather*}
 \Gamma\big(f^{-1}(U_i);\sO_X\big)\otimes_{\sO_S} \Gamma\big(f^{-1}(U_j);\sO_X\big)
 \cong
 \Gamma\big(f^{-1}(U_i)\times_S f^{-1}(U_j);\sO_{X\times_S X}\big).
 \end{gather*}
 The open subsets $f^{-1}(U_i)\times_S f^{-1}(U_j)$ cover $X\times_S X$,
 and their intersections have the same form, making it easy to see that one
 has natural and compatible restriction maps. It follows that these
 module structures glue together to give a sheaf on~$X\times_S
 X$. Moreover, the sheaf is supported on the preimage in~$X\times_S X$ of
 the diagonal in~$Y\times_S Y$, and thus satisfies the requisite
 finiteness condition to be a sheaf bimodule.

 It remains to see that this is a sheaf algebra. We note that for any
 open subset of~$Y$, the pair of open subsets $\big(f^{-1}(U),f^{-1}(U)\big)$
 satisfies the hypotheses of Proposition~\ref{prop:computing_tp} for any
 coherent subsheaf of~${\cal A}_X$ on either side, from which it easily
 follows that the morphisms $f_*\sO_X\to {\cal A}$ and~${\cal
 A}\otimes_{f_*\sO_X}{\cal A}\to {\cal A}$ induce a sheaf algebra
 structure on~${\cal A}_X$.
\end{proof}

There is a more general version of the approach to tensor products via open
subsets that works for fairly general quasicoherent sheaf algebras and
bimodules (including any case where the schemes are projective); this leads
to some simpler alternate arguments and constructions below, but will not
be strictly needed.

The construction of a sheaf algebra from a sheaf of algebras suggests
defining an ``invariant open subset'' of a sheaf algebra, as~an open subset
$U$ such that the two preimages of~$U$ in the support of any coherent
subsheaf of the sheaf algebra agree. If~$U$ is an affine open subset which
is invariant for a given sheaf algebra ${\cal A}$, we~immediately conclude
that the sections on~$U\times U$ of~${\cal A}$ form an algebra equipped
with a morphism from $\Gamma(U;\sO_U)$. The~difficulty, of course, is that
a typical sheaf algebra will have no invariant affine opens. Luckily, the
usual construction of a sheaf by gluing depends far more on the subsets
being affine than that they be open. Define an {\em affine localization}
of~$X$ to be a nonempty affine scheme which is the directed limit of
a~(possibly infinite) family of open subschemes of~$X$. As~with affine
opens, an affine localization is determined by its image in the underlying
topological space of~$X$, and if given a family of~such subsets covering
$X$ (in a locally finite way: every open subset of~$X$ needs to be
contained in a finite union of localizations), the corresponding family of
morphisms will be faithfully flat. It then follows by fpqc descent that we
may specify a sheaf (or morphisms of sheaves) using a~covering by affine
localizations in place of a covering by affine opens. One similarly finds
that there is a well-behaved notion of~``invariant'' affine localization,
and the restriction of a sheaf algebra to an invariant affine localization
is an algebra.

There is still a difficulty here, in that there is no guarantee that there
will always be a~locally finite covering by invariant affine localizations.
(For instance, the only invariant affine localization of the sheaf algebra
$\overline{k}\big(\P^1\big)\big[\PGL_2(\overline{k})\big]$ on~$\P^1_{\overline{k}}$ is the
field $\overline{k}\big(\P^1\big)$ itself.) If there were an~fpqc base change
$S'\to S$ such that the pullback to~$(X\times_S S')\times_{S'} (X\times_S
S')$ was covered by~invariant localizations, then we could work with those
localizations to understand the algebra structure and then use a further
application of fpqc descent to recover the morphisms on~$X\times_S X$.
Of~course, rather than make two separate applications of fpqc descent, we~could simply observe that $U_i\otimes_{S'} U_j\to X\times_S X$ give an fpqc
covering and do the descent directly. But this tells us that there was no
need for the $U_i$ to cover $X\times_S S'$; all we need is for them to
cover $X$.

Given a scheme $S$, let $\hat\A^1_S$ denote the localization of~$\A^1_S$
obtained as the limit of those open subschemes that are dense in every
geometric fiber (equivalently, that contain the generic point of every
fiber). Although the map $\hat\A^1_S\to \A^1_S$ is not an fpqc cover, the
composition~$\hat\A^1_S\to S$ is both fpqc and surjective, and thus an fpqc
cover.

This construction is functorial in~$S$ and if $S\to T$ is a finite morphism
with $T$ Noetherian, then $\hat\A^1_S\cong \hat\A^1_T\times_T S$. Note,
however, that this construction does not respect open embeddings, so the
obvious way to associate a sheaf of algebras on~$S$ to this construction
does not always produce a quasicoherent sheaf.

We observe that if $X$ is projective over a Noetherian ring $R$, then
$\hat\A^1_X$ is affine over $R$. Since the construction respects closed
embeddings, it suffices to consider the case $X=\P^n_R$. In~that case, we~note that the section~$\sum_i t^i x_i$ of the pullback of~$\sO_X(1)$ is
invertible, and thus the trivial line bundle is very ample on~$\hat\A^1_X$,
making it affine. Since this holds for any embedding of~$X$ in projective
space, it follows that any very ample line bundle on~$X$ becomes trivial on~$\hat\A^1_X$, and thus (since very ample bundles generate the Picard group)
that any line bundle on~$X$ becomes trivial on~$\hat\A^1_X$.

\begin{prop}\label{prop:natural_localization}
 Suppose $X$ and $Y$ are projective over the Noetherian affine scheme $S$,
 and let $M$ be a quasicoherent sheaf bimodule on~$X\times_S Y$. Let $M'$
 be the base change of~$M$ to~$\hat\A^1_S$. Then the fiber products of~$\hat\A^1_X$ and $\hat\A^1_Y$ with the support of any coherent subsheaf
 of~$M'$ are canonically isomorphic, and the affine localization~$\hat\A^1_X\times_{\hat\A^1_S}\hat\A^1_Y$ of~$X\times_S Y\times_S
 \hat{A}^1_S$ is an fpqc covering of~$X\times_S Y$.
\end{prop}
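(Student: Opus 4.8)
The plan is to verify the two assertions of Proposition \ref{prop:natural_localization} separately, deducing both from the structure of $\hat\A^1_X$ already established in the paragraph immediately preceding the statement, namely that $\hat\A^1_X$ is affine over $S$ whenever $X$ is projective over the Noetherian affine scheme $S$, and that every line bundle on $X$ becomes trivial after base change to $\hat\A^1_X$.

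\medskip

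\textbf{The support statement.} First I would reduce the canonical-isomorphism claim to a local computation. Let $M$ be a quasicoherent sheaf bimodule on $X\times_S Y$ and let $N\subset M'$ be a coherent subsheaf of the base change $M'$ to $\hat\A^1_S$; by the sheaf-bimodule hypothesis, the support $Z$ of $N$ is finite over both $X\times_S\hat\A^1_S$ and $Y\times_S\hat\A^1_S$ (finiteness is preserved under the flat base change $\hat\A^1_S\to S$). The point is then to compare the fiber product $\hat\A^1_X\times_{(X\times_S\hat\A^1_S)}Z$ with $\hat\A^1_Y\times_{(Y\times_S\hat\A^1_S)}Z$. Here I would use that $\hat\A^1_X\cong \hat\A^1_S\times_S X$ (by the functoriality of $\hat\A^1_{(-)}$ in $S$ and the fact noted in the excerpt that it respects finite morphisms, applied to the structure morphism $X\to S$, or more simply by direct inspection of the construction), so that both fiber products are localizations of $Z$ itself, obtained by inverting the pullbacks of the coordinate $t$ together with the relevant function-field-type localizations on $X$ resp.\ $Y$. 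Since $Z$ is finite over $X\times_S\hat\A^1_S$, the preimage in $Z$ of the dense-in-fibers locus of $X$ agrees with the locus in $Z$ lying over the generic points of the fibers of $Z\to \hat\A^1_S$ (a finite morphism sends generic points of fibers to generic points of fibers and every point of a fiber of $Z$ dominates a point of the corresponding fiber of $X$), and the same computation applied to $Y$ gives the same subscheme of $Z$. This identification is manifestly compatible with the restriction maps among coherent subsheaves, giving the asserted canonical isomorphism of the two fiber products. I would then remark that this is exactly the condition that $\hat\A^1_X\times_{\hat\A^1_S}\hat\A^1_Y$ is an \emph{invariant} affine localization of $X\times_S Y\times_S\hat\A^1_S$ in the sense introduced just before the statement.

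\medskip

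\textbf{The fpqc-covering statement.} For the second assertion I would argue as follows. The composite $\hat\A^1_X\times_{\hat\A^1_S}\hat\A^1_Y\to X\times_S Y$ factors as
\[
\hat\A^1_X\times_{\hat\A^1_S}\hat\A^1_Y
\;\longrightarrow\;
X\times_S Y\times_S\hat\A^1_S
\;\longrightarrow\;
X\times_S Y.
\]
The second map is the base change of the fpqc surjection $\hat\A^1_S\to S$ along $X\times_S Y\to S$, hence is itself fpqc. For the first map, note that $\hat\A^1_X\times_{\hat\A^1_S}\hat\A^1_Y$ is affine over $\hat\A^1_S$ (being the fiber product over $\hat\A^1_S$ of the two schemes $\hat\A^1_X,\hat\A^1_Y$, each affine over $\hat\A^1_S$ by the preceding paragraph of the excerpt applied with base $\hat\A^1_S$ in place of $S$), so it is an affine scheme mapping to the Noetherian scheme $X\times_S Y\times_S\hat\A^1_S$; since it is obtained from the latter by inverting a collection of functions (the $t$-coordinate is already invertible on $\hat\A^1_S$; what remains is the localization at the generic-fiber loci on the $X$- and $Y$-factors), it is flat, and its image contains every generic point of every geometric fiber of $X\times_S Y\times_S\hat\A^1_S$ over $\hat\A^1_S$, hence is fiberwise dense and in particular the whole generic-point locus. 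Composing, $\hat\A^1_X\times_{\hat\A^1_S}\hat\A^1_Y\to X\times_S Y$ is flat with image containing all generic points of all geometric fibers over $S$; since a flat morphism is open on images in the Noetherian setting and $X\times_S Y$ is covered by the closures of those generic points, the image is all of $X\times_S Y$, so the morphism is faithfully flat and (being affine over $X\times_S Y$, hence quasi-compact) fpqc.

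\medskip

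\textbf{Main obstacle.} The genuinely delicate point is the first one: pinning down precisely why the two fiber products inside $Z$ coincide, i.e.\ why ``dense in every geometric fiber of $X$ over $S$'' and ``dense in every geometric fiber of $Y$ over $S$'' cut out the same subscheme of the support $Z$ once one is working over $\hat\A^1_S$. This is exactly where the sheaf-bimodule finiteness condition is used — without it the two loci are unrelated — and it is the statement one is really after, since it is what makes $\hat\A^1_X\times_{\hat\A^1_S}\hat\A^1_Y$ a legitimate invariant localization on which one can read off the algebra structure of a sheaf algebra. Everything else (affineness, flatness, fpqc-ness) is formal manipulation with the properties of $\hat\A^1_{(-)}$ already recorded in the excerpt. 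I would therefore spend most of the write-up on a careful treatment of the generic-points-of-fibers bookkeeping under the finite morphisms $Z\to X\times_S\hat\A^1_S$ and $Z\to Y\times_S\hat\A^1_S$, and dispatch the covering claim quickly.
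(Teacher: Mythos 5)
Your argument for the first claim hinges on the assertion that $\hat\A^1_X\cong\hat\A^1_S\times_S X$, which you try to justify by applying the finite-morphism compatibility to the structure morphism $X\to S$. But $X\to S$ is projective, not finite, and the isomorphism is false in that generality: the paper shows $\hat\A^1_X$ is \emph{affine} over $S$, whereas $\hat\A^1_S\times_S X$ is projective over $\hat\A^1_S$ with positive-dimensional fibers whenever $\dim X/S>0$. The correct statement is only that $\hat\A^1_X$ is a \emph{further} localization of $\hat\A^1_S\times_S X$ (the opens pulled back from $\hat\A^1_S$ form a subfamily of the opens used to define $\hat\A^1_X$). The finite-morphism fact — that $W\to V$ finite with $V$ Noetherian implies $\hat\A^1_W\cong\hat\A^1_V\times_V W$ — should instead be applied to the finite projections $Z\to X$ and $Z\to Y$, where $Z$ is the support of a coherent subsheaf of $M$; this is precisely where the sheaf-bimodule hypothesis enters, and both fiber products in the proposition are then identified with $\hat\A^1_Z$ (after base change to $\hat\A^1_S$). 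That is the whole of the paper's argument: note that the support of any coherent subsheaf of $M'$ is contained in the base change of the support of some coherent subsheaf of $M$ (using that $M$ is a filtered colimit of its coherent subsheaves and the scheme is Noetherian), and then quote the finite-morphism fact. Your subsequent generic-points bookkeeping is, in spirit, a re-proof of that quoted fact, and it is applied in the right place (to $Z\to X$, not $X\to S$); so the mistaken isomorphism is an isolated error rather than a load-bearing one, but as written the first ``so that'' clause does not follow and should be replaced by the reduction just described.

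On the fpqc claim, your factorization through $X\times_S Y\times_S\hat\A^1_S$ and the observation that the second map is a base change of the fpqc cover $\hat\A^1_S\to S$ is exactly right. The paper leaves this claim essentially implicit, so your fuller write-up is reasonable; the one point to be more careful about is surjectivity of the first map. A limit of opens need not surject onto the base, so ``flat with fiberwise dense image'' by itself is not enough; what saves you is that the fibers of $X\times_S Y\times_S\hat\A^1_S\to X\times_S Y$ are one-dimensional (copies of $\A^1$ over the residue fields), the generic point of each such fiber lies in every open appearing in the limit, and so the localization still hits a point over every $(x,y)$. With that addendum the covering statement goes through.
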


\begin{proof}
 The only thing to observe is that the support of any coherent subsheaf of~$M'$ is contained in the base change of the support of a coherent
 subsheaf of~$M$, and thus the claim reduces to~the fact that the
 construction~$\hat\A^1$ respects finite morphisms.
\end{proof}

\begin{rem}
 In particular, given bimodules on~$X\times_S Y$ and $Y\times_S Z$, we~can
 use the corresponding bimodules on~$\hat\A^1_X\times_{\hat\A^1_S}
 \hat\A^1_Y$ and $\hat\A^1_Y\times_{\hat\A^1_S}\hat\A^1_Z$ to control the
 tensor product.
\end{rem}

Thus when~$X$ is projective, or more generally when~$\hat\A^1_X$ is affine, we~always have the option to replace the sheaf algebra with the actual
algebra of global sections of~$M$ on~$\hat\A^1_X\times_{\hat\A^1_S}\hat\A^1_X$, and~des\-cent essentially reduces
to checking that the algebra has a description which is independent of the
auxiliary coordinate.

According to Proposition~\ref{prop:omitting_of}, the algebras ${\cal
 H}_G(X)$, ${\cal H}_W(X)$, ${\cal H}_{W;\vec{T}}(X)$ from the previous
section may all be interpreted as sheaf algebras on~$X/S$, as~can the
twisted group algeb\-ras~$k(X)[G]$, $\sO_X[G]$. The~latter are quite easy to
generalize to the infinite case.

\begin{prop}[{\cite[Lemma~2.8]{VandenBerghM:1996}}]
 Let $g$ be an automorphism of~$X$. Then for any quasicoherent sheaf $M$ on~$X$, $\big(1,g^{-1}\big)_*M$ is a sheaf bimodule, and for~$h\in \Aut(X)$ and
 $N\in\coh(X)$, we~have a natural isomorphism
 \begin{gather*}
 \big(1,g^{-1}\big)_*M\otimes_X \big(1,h^{-1}\big)_*N
 \to
 \big(1,(gh)^{-1}\big)_*(M\otimes {}^gN).
 \end{gather*}
\end{prop}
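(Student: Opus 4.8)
The statement is essentially a bookkeeping fact once one recalls the definitions, and the plan is to reduce it to the affine case where everything can be checked on coordinate rings. First I would observe that $(1,g^{-1})_*M$ is a coherent (or quasicoherent) sheaf on $X\times_S X$ supported on the graph $\Gamma_{g^{-1}}=\{(x,g^{-1}x)\}$, and this graph is isomorphic to $X$ via either projection; hence any coherent subsheaf has support finite (indeed an isomorphism) over both factors, which is exactly the sheaf-bimodule finiteness condition. So the first claim is immediate. For the tensor product formula, I would compute $(1,g^{-1})_*M\otimes_X(1,h^{-1})_*N$ by its definition: pull both sheaves back to $X\times_S X\times_S X$ (with the middle copy serving as the ``$Y$'' factor), tensor over the middle factor, and push forward to the outer $X\times_S X$. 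The pullback of $(1,g^{-1})_*M$ to the triple product is supported on $\{(x,y,z):y=g^{-1}x\}$ and the pullback of $(1,h^{-1})_*N$ on $\{(x,y,z):z=h^{-1}y\}$; their intersection is the locus $\{(x,g^{-1}x,h^{-1}g^{-1}x)\}=\{(x,y,z):y=g^{-1}x,\ z=(gh)^{-1}x\}$, which projects isomorphically to the graph $\Gamma_{(gh)^{-1}}$ in the outer $X\times_S X$.

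The cleanest way to identify the pushed-forward sheaf is to work locally and invoke Proposition~\ref{prop:computing_tp}: choosing an affine open $\Spec(R)\cong V\subset X$ in the middle factor, the section formula gives $\Gamma(U\times W;\,(1,g^{-1})_*M\otimes_X(1,h^{-1})_*N)\cong \Gamma(U\times g^{-1}U;(1,g^{-1})_*M)\otimes_R\Gamma(\cdots;(1,h^{-1})_*N)$ whenever the support hypotheses are met — and since the supports here are graphs of automorphisms, the maps to the middle factor are isomorphisms, so the hypotheses hold for the maximal choices. Identifying $\Gamma(U\times g^{-1}U;(1,g^{-1})_*M)$ with $\Gamma(U;M)$ (the sheaf is just $M$ transported along $g^{-1}$) and similarly for $N$, the tensor product over the coordinate ring of the middle $X$ becomes $M\otimes_{\sO_X}{}^gN$ after chasing through which action of $R$ is being used: the right $R$-action on the $M$-factor is through $g^*$, i.e.\ through $\sO_X\xrightarrow{g^*}\sO_X$, while $N$ carries the standard action, so matching them up twists $N$ by $g$, giving ${}^gN=(g^{-1})^*N$ exactly as in the notation ${}^gf=(g^{-1})^*f$ fixed earlier. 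Pushing this to the outer product along the graph $\Gamma_{(gh)^{-1}}$ yields $(1,(gh)^{-1})_*(M\otimes{}^gN)$.

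The main subtlety — and the step I'd be most careful about — is \emph{naturality and the direction of the twist}: one must be consistent about whether the bimodule structure on $(1,g^{-1})_*M$ has its right action through $g$ or $g^{-1}$, and this is precisely what forces the appearance of ${}^gN$ rather than ${}^{g^{-1}}N$ or an untwisted $N$. I would pin this down by testing on the structure sheaf ($M=N=\sO_X$), where the formula must read $(1,g^{-1})_*\sO_X\otimes_X(1,h^{-1})_*\sO_X\cong(1,(gh)^{-1})_*\sO_X$, recovering the composition law in the twisted group ``algebra'' $\sO_X[\Aut X]$ and thereby fixing all sign/inverse conventions; the general $M,N$ case then follows by the same computation with coefficients. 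Finally I would remark that since the formula is constructed from the projection formula and flat base change along the (affine, hence flat) projections, it is automatically functorial in $M$ and $N$ and compatible with the associativity constraint on sheaf bimodules, which is all that ``natural'' requires here. One should also note the case $M,N$ merely quasicoherent: coherence of $N$ (as in the statement) guarantees the support hypothesis of Proposition~\ref{prop:computing_tp} can be arranged, so no extra hypothesis on $M$ beyond quasicoherence is needed.
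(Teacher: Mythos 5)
Your proposal is correct and takes essentially the same route as the paper: reduce to affine opens, apply Proposition~\ref{prop:computing_tp} using the fact that the support is the graph of an automorphism, and chase the module structure on the middle factor to see that the twist ${}^gN$ appears. The only quibble is an internal sign slip in the phrase ``the right $R$-action on the $M$-factor is through $g^*$'': since $(1,g^{-1})_*M$ sits on the graph $\{(x,g^{-1}x)\}$, the second projection is $x\mapsto g^{-1}x$, so a function $f$ on the middle factor acts on $\Gamma(U;M)$ by $(g^{-1})^*f={}^gf$, not $g^*f$; your stated intermediate action would actually produce ${}^{g^{-1}}N$, but you correct course by anchoring to the paper's convention ${}^gf=(g^{-1})^*f$ and land on the right answer ${}^gN$.
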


\begin{proof}
 Since $\big(1,g^{-1}\big)_*M$ is supported on the graph of an automorphism, the
 same applies to any coherent subsheaf, and thus it satisfies the
 requisite finiteness condition to be a sheaf bimodule.

 Now, let $U$ be any affine open subset of~$X$. Then $\big(U,g^{-1}(U)\big)$
 satisfy the hypotheses of~Pro\-position~\ref{prop:computing_tp}, and thus we
 have
 \begin{gather*}
 \Gamma\big(U\times X;\big(1,g^{-1}\big)_*M\otimes_X (1,h^{-1})_*N\big)
 \\ \hphantom{\Gamma(U\times X;}
 {}\cong \Gamma\big(U\times g^{-1}(U);\big(1,g^{-1}\big)_*M\big)
 \otimes_{\Gamma(g^{-1}(U);\sO_X)} \Gamma\big(g^{-1}(U)\times X;\big(1,h^{-1}\big)_*N\big)
 \\ \hphantom{\Gamma(U\times X;}
 \cong \Gamma(U;M) \otimes_{\Gamma(g^{-1}(U);\sO_X)} \Gamma\big(g^{-1}(U);N\big),
 \end{gather*}
 where $f\in \Gamma\big(g^{-1}(U);\sO_X\big)$ acts on~$\Gamma(U;M)$ as
 multiplication by ${}^gf$. With this action, there is a~natural isomorphism
 \begin{gather*}
 \Gamma(U;M) \otimes_{\Gamma(g^{-1}(U);\sO_X)} \Gamma\big(g^{-1}(U);N\big)
 \cong \Gamma(U;M) \otimes_{\Gamma(U;\sO_X)} \Gamma\big(U;{}^gN\big)
 \end{gather*}
 given by $m\otimes n\mapsto m\otimes {}^gn$, and thus the result follows.
\end{proof}

\begin{defn}
 Let $X/S$ be an Noetherian $S$-scheme of finite type with integral
 geometric fibers, and let $G$ be a group acting on~$X$. Then the
 ``twisted group sheaf algebra'' $k(X)[G]$ is the sheaf
 \begin{gather*}
 \bigoplus_{g\in G} \big(1,g^{-1}\big)_*k(X)
 \end{gather*}
 on~$X\times X$ (with $k(X)$ denoting the sheaf of meromorphic functions
 on~$X$ which are defined on~the generic point of every geometric fiber of~$X$ over $S$) with sheaf algebra structure induced by~the natural
 morphisms
 \begin{gather*}
 \big(1,g^{-1}\big)_*k(X)\otimes_X \big(1,h^{-1}\big)_*k(X)
 \to
 \big(1,(gh)^{-1}\big)_*k(X)
 \end{gather*}
 coming from the proposition.
\end{defn}

\begin{rem}
 We could also define this using an affine localization. The~affine
 scheme $\hat{\A}^1_X$ constructed in Proposition~\ref{prop:natural_localization} is functorial for~$\Aut_S(X)$, and thus
 has an induced action of~$G$. We~may thus define a twisted group algebra
 $k(X)\otimes_S\sO_{\hat{\A}^1_X}[G]$, and this has a natural associated
 descent datum. The~only nontrivial thing to verify is the fact that
 cyclic bimodules are finitely generated on both sides, but this follows
 easily from the fact that $\hat{\A}^1_X$ is Noetherian: the bimo\-dule
 $\sO_{\hat{\A}^1_X}c_g g\sO_{\hat{\A}^1_X}$ is cyclic on both sides, and
 any other cyclic bimodule is contained in a finite sum of such bimodules,
 so is finitely generated on both sides.
\end{rem}

We may similarly define $\sO_X[G]$ to be the sheaf subalgebra
$\bigoplus_{g\in G} \big(1,g^{-1}\big)_*\sO_X$, which we readily verify to contain
the image of the identity (a copy of~$\sO_X$) and be preserved by the
multiplication map.

Moreover, we~have the following.

\begin{prop}\sloppy
 Let $g_1,\dots,g_n$; $h_1,\dots,h_m$ be two finite subsets of~$\Aut(X/S)$,
 and let \linebreak ${\cal M}_{\{g_1,\dots,g_n\}}$, ${\cal M}_{\{h_1,\dots,h_m\}}$, ${\cal
 M}_{\{g_1h_1,\dots,g_nh_m\}}$ be interpreted as sheaf
 sub-bimodules of $k(X)[\Aut(X/S)]$. Then the multiplication on~$k(X)[\Aut(X/S)]$ restricts to a morphism
 \begin{gather*}
 {\cal M}_{\{g_1,\dots,g_n\}}\otimes {\cal M}_{\{h_1,\dots,h_m\}}
 \to
 {\cal M}_{\{g_1h_1,\dots,g_nh_m\}}.
 \end{gather*}
\end{prop}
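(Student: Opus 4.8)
The plan is to reduce the claim to a local, finite-dimensional computation over the generic points of the fibers, using the description of the sheaves ${\cal M}_{\{g_i\}}$ coming from Lemma~\ref{lem:holomorphy_preserving} together with Proposition~\ref{prop:computing_tp}. Recall that ${\cal M}_{\{g_1,\dots,g_n\}}$ is, as a subsheaf of $k(X)[\Aut(X/S)]$, the sheaf of operators $\sum_i c_i g_i$ preserving holomorphy, i.e.\ taking (on every invariant open) local sections of $\sO_X$ to local sections of $\sO_X$; this is precisely the sheaf bimodule ${\cal M}_{\vec\phi}$ of Lemma~\ref{lem:holomorphy_preserving} with $\vec\phi=(g_1^{-1},\dots,g_n^{-1})$. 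The goal is then to show that if $\sum_i c_i g_i$ preserves $\sO_X$ and $\sum_j d_j h_j$ preserves $\sO_X$, then their product $\sum_{i,j} c_i\,{}^{g_i}d_j\, g_ih_j$ preserves $\sO_X$ as well, together with the corresponding fact for all the sections (not just global ones). Since the product is visibly a bimodule map, the only content is that it lands in the stated ${\cal M}$.

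First I would use Proposition~\ref{prop:computing_tp} (or, in the projective case, Proposition~\ref{prop:natural_localization} and the affine localization $\hat\A^1_X$) to compute sections of the tensor product ${\cal M}_{\{g_i\}}\otimes_X {\cal M}_{\{h_j\}}$: over an affine open $V\subset X$ (invariant, so that $(V,V)$ satisfies the hypotheses for a coherent subsheaf), a section is a sum of elementary tensors $(\sum_i c_i g_i)\otimes(\sum_j d_j h_j)$, and the map to $k(X)[\Aut(X/S)]$ sends this to $\sum_{i,j} c_i\,({}^{g_i}d_j)\, g_ih_j$. Because ${\cal M}_{\{h_j\}}$ is a bimodule over $\sO_X$ on both sides and the left/right $\sO_X$-actions in the tensor product agree on the overlap, this description is unambiguous. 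Next I would verify directly that the resulting operator preserves holomorphy: given an invariant open $W$ and $f\in\Gamma(W;\sO_X)$, the inner operator gives $\sum_j d_j\,{}^{h_j}f\in\Gamma(W';\sO_X)$ on the appropriate intersection $W'=W\cap\bigcap_j h_j^{-1}W$ by membership of $\sum_j d_j h_j$ in ${\cal M}_{\{h_j\}}$, and then applying $\sum_i c_i g_i\in{\cal M}_{\{g_i\}}$ to that section gives a local section of $\sO_X$ on the further intersection. Since a section of $k(X)$ lies in $\sO_X$ if and only if it does so Zariski-locally, and holomorphy along a divisor $D$ can be tested after restricting to the local ring of $D$ and running over opens meeting $D$ (exactly as in the proof of Lemma~\ref{lem:holomorphy_preserving}), this local check suffices.

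Concretely, the argument I would run is: fix a reduced irreducible hypersurface $D\subset X$; I must bound the pole of each coefficient of the product operator along $D$. As in Lemma~\ref{lem:holomorphy_preserving}, pass to the local ring at $D$ and, by Lemma~\ref{lem:holomorphy_preserving_splits}, split off the contribution of each divisor downstairs so that the problem becomes purely local involving finitely many homomorphisms of DVRs with a common reduction; then the bound follows because ${\cal M}_{\{g_i\}}$ and ${\cal M}_{\{h_j\}}$ already satisfy their own such bounds and the composition of two holomorphy-preserving operators is holomorphy-preserving at the level of the completed local rings. The only subtlety I foresee --- and the step I expect to be the main obstacle --- is bookkeeping the indexing: the product of a set of size $n$ and a set of size $m$ of automorphisms need not have $nm$ distinct elements (the $g_ih_j$ can collide), so when two elementary terms $g_ih_j$ and $g_{i'}h_{j'}$ coincide one must check that the relevant coefficient $c_i\,{}^{g_i}d_j + c_{i'}\,{}^{g_{i'}}d_{j'}$ (and all such partial sums, for every fixed value of the product) has the pole bound forced by the target sheaf ${\cal M}_{\{g_ih_j\}}$ rather than merely the naive sum of bounds. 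This is handled by the same reduction: after localizing at $D$ and isolating a fixed target automorphism $g$, one is comparing $\sum_{g_ih_j=g} c_i\,{}^{g_i}d_j$ against the holomorphy-preserving condition for $g$ alone, and that condition is exactly what the composition of the two operators applied to arbitrary holomorphic test functions enforces --- so no new argument is needed beyond carefully grouping terms by their product before estimating. Finally, having checked the map on sections over a cofinal system of (localized) affine opens and that it is compatible with restriction, fpqc descent (as in the discussion of $\hat\A^1_X$) upgrades it to the asserted morphism of sheaf bimodules on $X\times X$.
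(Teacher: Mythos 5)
Your overall strategy --- compute the tensor product via Proposition \ref{prop:computing_tp} and then conclude from the definition of ${\cal M}$ as holomorphy-preserving operators --- is the same as the paper's, and the conclusion, once you have the tensor product in hand, is indeed immediate: a composite of two holomorphy-preserving operators preserves holomorphy, with no need for the further descent to divisorial local rings via Lemma~\ref{lem:holomorphy_preserving_splits}. That entire second and third paragraph, including the worry about collisions $g_ih_j=g_{i'}h_{j'}$, is unnecessary: the image lives in $k(X)[\Aut(X/S)]$ where terms with the same group element are already summed, and ${\cal M}_{\{g_ih_j\}}$ is by construction the set of holomorphy-preserving operators supported on that set, so membership is a single operator-level condition and nothing needs to be "grouped before estimating."

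The genuine gap is at the start of the argument. You invoke Proposition~\ref{prop:computing_tp} over pairs $(V,V)$ for an affine open $V$, with the parenthetical ``(invariant, so that $(V,V)$ satisfies the hypotheses for a coherent subsheaf).'' But for a sheaf bimodule supported on graphs of nontrivial automorphisms of a projective $X$, invariant affine opens essentially never exist: on $E^n$ there are no proper affine opens at all that are stable under a translation, and this is precisely the issue the surrounding text of the paper is warning about. The paper instead takes an arbitrary affine open $V$ and forms $U_V = \bigcap_i g_i(V)$, checks that $(U_V, V)$ satisfies the hypotheses of Proposition~\ref{prop:computing_tp} (since the support of ${\cal M}_{\{g_i\}}$ meets $U_V\times X$ inside $U_V\times V$), and observes that the $U_V$'s cover $X$ because any point $x$ has the finite set $\{g_1^{-1}(x),\dots,g_n^{-1}(x)\}$ inside some affine open. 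That gives a Zariski covering of $X\times X$ by the opens $U_V\times X$ on which the tensor product is computed by an ordinary module tensor product, and the morphism glues. Your fallback route via $\hat\A^1_X$ and Proposition~\ref{prop:natural_localization} would also work (in the projective case), but the paper's $U_V$-device is what actually makes the Zariski-local tensor product computation go through without any invariance assumption on the affine opens, and your proposal as primarily written does not supply it.
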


\begin{proof}
 Given an open subset $V\subset X$, we~may associate an open subset
 $U_V=\bigcap_i g_i(V)$, and we claim that there is an affine open
 covering $V_i$ of~$X$ such that $U_{V_i}$ also covers $X$. Indeed, $x\in
 U_V$ iff $g_1^{-1}(x),\dots,g_n^{-1}(X)\in V$, and thus if $V_x$ is an
 affine open neighborhood of this set of points, we~have $x\in U_{V_x}$.
 It thus suffices to specify how the above morphism acts on local sections
 on~sets of the form $U_V\times X$, for~which we note
 \begin{gather*}
 \Gamma\big(U_V\times X;{\cal M}_{\{g_1,\dots,g_n\}}\otimes_X {\cal M}_{\{h_1,\dots,h_m\}}\big)
 \\ \hphantom{ \Gamma(U_V\times X;}
 \cong \Gamma\big(U_V\times X;{\cal M}_{\{g_1,\dots,g_n\}}\big)
 \otimes_{\Gamma(V;\sO_V)} \Gamma\big(V\times X;{\cal M}_{\{h_1,\dots,h_m\}}\big).
 \end{gather*}
 But the result then follows immediately from the definition of~${\cal
 M}_{\{g_1,\dots,g_n\}}$ as the space of ope\-ra\-tors preserving holomorphy.
\end{proof}

\begin{rem}
 When $X/S$ is projective, or more generally when we have nice affine
 localizations as~con\-st\-ructed above, then we could define ${\cal
 M}_{\{g_1,\dots,g_n\}}$ more simply as the subsheaf of \linebreak $k(X)[\Aut(X/S)]$ supported on~$\{g_1,\dots,g_n\}$ and preserving the
 subring $\sO_{\hat{\A}^1_X}$.
\end{rem}

In particular, if $G$ is any subgroup of the group of automorphisms of~$X/S$, we~may define a sheaf algebra ${\cal H}^+_G(X)$ as the union in~$k(X)[G]$ of the sheaves ${\cal M}_{\vec{g}}$ associated to finite subsets
of~$G$. More generally, we~will wish to only allow poles on a proper (but
$G$-invariant) subset of the reflection hypersurfaces, as~otherwise in the
affine Weyl group case we could acquire poles along the fibers where the
``$q$'' parameter is torsion.

Suppose now that $X/S$ is a family of abelian varieties and that $(W,S)$ is
a~Coxeter group (of~finite rank, but possibly infinite) equipped with an
action on~$X$ of coroot type. Then we define~${\cal H}_W(X)$ to be the
sheaf subalgebra of~${\cal H}^+_W(X)$ consisting of operators which are
holomorphic away from the reflection hypersurfaces corresponding to
conjugates of the simple reflections. Clearly, this agrees with our
previous notation, in that when~$W$ is finite, ${\cal H}_W(X)$ is the sheaf
algebra on~$X$ associated to the sheaf of algebras on~$X/W$ we previously
denoted by ${\cal H}_W(X)$.

Since $W$ still has a Bruhat order, we~may consider the subsheaf ${\cal
 H}_W(X)[I]$ for any order ideal~$I\subset W$, and if the order ideal is
finite, the result will be of the form ${\cal M}_I(X)$ and thus coherent. In~fact, we~have the following, by precisely the same argument as Lemma~\ref{lem:Bruhat_fin_noparm} and its corollaries.

\begin{prop}
 If $I$ is a finite order ideal in~$W$ and $w\in I$ is a maximal element,
 then there is a short exact sequence
 \begin{gather*}
 0\to {\cal H}_W(X)[I\setminus \{w\}]
 \to {\cal H}_W(X)[I]
 \to (1,w^{-1})_*\sO_X(D_w)
 \to 0
 \end{gather*}
 of sheaf bimodules, where $D_w=\sum_{r\in R(W),rw<w} [X^r]$.
\end{prop}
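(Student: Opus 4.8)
The plan is to transcribe, essentially verbatim, the proof of Lemma~\ref{lem:Bruhat_fin_noparm} and Corollary~\ref{cor:Bruhat_fin_noparm} into the sheaf-bimodule setting; this works precisely because $I$ is finite, so nothing genuinely infinite intervenes. Since every Bruhat interval in a Coxeter group is finite, $I$ meets only finitely many intervals, and hence each of ${\cal H}_W(X)[I]$, ${\cal H}_W(X)[I\setminus\{w\}]$ and ${\cal H}_W(X)[\le w]$ is of the form ${\cal M}_J(X)$ for a finite order ideal $J$, in particular a coherent sheaf bimodule supported on the union of the graphs of the elements of $J$. The morphism in the statement is the restriction to ${\cal H}_W(X)[I]$ of the projection of the twisted group sheaf algebra $k(X)[W]$ onto its $(1,w^{-1})_*k(X)$ summand. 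First I would observe that, $w$ being maximal in $I$, the set $I\setminus\{w\}$ is again an order ideal, and a local section of ${\cal H}_W(X)[I]$ whose coefficient of $w$ vanishes is supported on $I\setminus\{w\}$; this gives exactness at the first two terms and shows that the quotient injects into $(1,w^{-1})_*k(X)$ and, as a quotient of a coherent sheaf bimodule, is again a sheaf bimodule.

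Next I would bound the coefficient of $w$. It follows from Corollary~\ref{cor:global_order_two_residue_conditions} (applied to the finite collection of automorphisms $\{(w')^{-1}:w'\in I\}$; in the absence of transversality one can instead fall back on Lemma~\ref{lem:holomorphy_preserving}) that the coefficient of $w$ in any local section has polar divisor bounded by $\sum[X^{w'w^{-1}}]$, the sum over those $w'\in I\setminus\{w\}$ for which $w'w^{-1}$ is a reflection. For any such $w'$ the elements $w$ and $w'$ are Bruhat-comparable, and maximality of $w$ in $I$ forces $w'<w$, i.e.\ the reflection $r=w'w^{-1}$ satisfies $rw<w$; since operators in ${\cal H}_W(X)$ are by definition holomorphic away from the reflection hypersurfaces, the coefficient of $w$ is therefore a section of $\sO_X(D_w)$, so the morphism of the statement lands in $(1,w^{-1})_*\sO_X(D_w)$.

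For surjectivity I would fix a reduced word $w=s_1\cdots s_n$ and use the multiplication map
\[
{\cal H}_{\langle s_1\rangle}(X)\otimes_X\cdots\otimes_X{\cal H}_{\langle s_n\rangle}(X)\to {\cal H}_W(X),
\]
noting that each ${\cal H}_{\langle s_i\rangle}(X)$ is the rank-$1$ Hecke algebra of the finite group $\langle s_i\rangle$, a coherent sheaf bimodule whose leading-coefficient quotient is $(1,s_i^{-1})_*\sO_X([X^{s_i}])$. Expanding a product of local sections, every term is supported on a subword of $(s_1,\dots,s_n)$, hence on an element $\le w$, so the image lies in ${\cal H}_W(X)[\le w]\subset {\cal H}_W(X)[I]$; composing with the coefficient-of-$w$ map amounts to replacing each factor by its leading-coefficient bundle $\sO_X([X^{s_i}])$ and transporting through $s_1\cdots s_{i-1}$, so the image of the composite is $\bigotimes_i\sO_X([X^{r_i}])\cong\sO_X\bigl(\sum_i[X^{r_i}]\bigr)$ with $r_i=s_1\cdots s_{i-1}s_is_{i-1}\cdots s_1$. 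By the strong exchange property, $\{r_1,\dots,r_n\}$ is exactly $\{r\in R(W):rw<w\}$, and these reflections are distinct, being in bijection with the $n=\ell(w)$ positive roots made negative by $w$, so $\sum_i[X^{r_i}]=D_w$; the line-bundle multiplication $\bigotimes_i\sO_X([X^{r_i}])\to\sO_X(D_w)$ being an isomorphism, the coefficient-of-$w$ map ${\cal H}_W(X)[I]\to\sO_X(D_w)$ is onto, completing the short exact sequence.

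I do not anticipate any real obstacle: the argument takes place entirely among the coefficients, i.e.\ sections of $k(X)$, exactly as in the finite case, and the only genuinely new bookkeeping is to check that the projection and multiplication maps are morphisms of sheaf bimodules and that their images and kernels remain coherent — both of which follow from the finiteness of $I$ together with the local computation of tensor products in Proposition~\ref{prop:computing_tp}. The one point to keep in mind is that there is no quotient scheme $X/W$ to work on, so ${\cal H}_W(X)[I]$ must be handled directly as a coherent $\sO_{X\times X}$-submodule of $k(X)[W]$; but since $I$ is finite this submodule is literally ${\cal M}_I(X)$, so the finite-case reasoning applies unchanged.
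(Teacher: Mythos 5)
Your proposal is correct and follows exactly the strategy the paper itself uses: the paper introduces this proposition with the words "by precisely the same argument as Lemma~\ref{lem:Bruhat_fin_noparm} and its corollaries," and your write-up is a faithful, correctly detailed transcription of that finite-case argument — the pole bound from Corollary~\ref{cor:global_order_two_residue_conditions}, the reduced-word multiplication map, and the strong exchange property — into the sheaf-bimodule setting, with the right observations about finiteness of $I$ guaranteeing coherence and about computing the twisted tensor product of the rank-$1$ leading-coefficient bundles.
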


\begin{cor}
 For any reduced word $w=s_1\cdots s_n$, the multiplication map
 \begin{gather*}
 {\cal H}_{\langle s_1\rangle}(X)\otimes_X\cdots\otimes_X {\cal H}_{\langle
 s_n\rangle}(X)
 \to
 {\cal H}_{W}(X)[\le w]
 \end{gather*}
 is surjective.
\end{cor}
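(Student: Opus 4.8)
The plan is to deduce the corollary from the preceding Proposition on the Bruhat filtration of ${\cal H}_W(X)$ together with the surjectivity of the rank-$1$ multiplication maps, by induction on the length $n = \ell(w)$, following essentially verbatim the argument of Corollary \ref{cor:Bruhat_fin_noparm}. The base case $n=0$ is trivial since ${\cal H}_W(X)[\le 1] = \sO_{\Delta_X}$ is the image of the empty tensor product. For the inductive step, write $w = s_1 w'$ with $w' = s_2\cdots s_n$ a reduced word and $\ell(w') = n-1$, so that $s_1 w' > w'$. It suffices to show that the multiplication map
\[
{\cal H}_{\langle s_1\rangle}(X)\otimes_X {\cal H}_W(X)[\le w']\to {\cal H}_W(X)[\le w]
\]
is surjective, since the right-hand tensor factor is itself generated by ${\cal H}_{\langle s_2\rangle}(X)\otimes_X\cdots\otimes_X{\cal H}_{\langle s_n\rangle}(X)$ by the inductive hypothesis.

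To prove surjectivity of this two-factor map, I would argue exactly as in the finite case. The image $J$ of the map is a sub-sheaf-bimodule containing ${\cal H}_W(X)[\le w']$ (take the first factor to be $\sO_X\subset {\cal H}_{\langle s_1\rangle}(X)$), and it contains $s_1\cdot {\cal H}_W(X)[\le w']$ as well; by the subword characterization of Bruhat order, $[\le w] = [\le w']\cup s_1[\le w']$, so $J$ and ${\cal H}_W(X)[\le w]$ have the same support as sheaf bimodules. It then suffices to check that $J$ surjects onto each subquotient $(1,w_1^{-1})_*\sO_X(D_{w_1})$ of a maximal Bruhat chain refining $[\le w'] \subset [\le w]$, for $w_1$ a maximal element of an order ideal $I$ with $[\le w']\subset I\subset [\le w]$ and $w_1\notin [\le w']$. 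For such $w_1$ one has $s_1 w_1 < w_1$ (since $w_1\le w$, $w_1\ne w'$ forces $w_1 = s_1 u$ with $u \le w'$), and one considers the composite
\[
{\cal H}_{\langle s_1\rangle}(X)\otimes_X {\cal H}_W(X)[\le s_1 w_1]\to {\cal H}_W(X)[\le w_1]\to {\cal H}_W(X)[I],
\]
whose composition with the ``coefficient of $w_1$'' map is surjective by the proof of the Proposition (the same leading-coefficient computation using the strong exchange property). Since $s_1 w_1 \le w'$, the middle term is already inside the image $J$, giving the required surjectivity onto the subquotient.

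The main thing to verify carefully — and the only place where the infinite case differs from the finite one — is that all the sheaf-bimodule operations invoked make sense: that $[\le w]$, being a \emph{finite} order ideal (which holds since $w$ has finite length and Bruhat intervals below a fixed element are finite), yields a coherent sheaf bimodule ${\cal H}_W(X)[\le w] = {\cal M}_{[\le w]}(X)$, so that the Bruhat subquotients $(1,w_1^{-1})_*\sO_X(D_{w_1})$ are the correct coherent sheaf bimodules and Proposition \ref{prop:computing_tp} applies to compute the relevant tensor products on the affine localizations $\hat\A^1_X$. I expect this bookkeeping to be the main (though mild) obstacle: one must confirm that the multiplication maps of sheaf bimodules restrict correctly to the Bruhat pieces and that surjectivity, which is a local condition, may be checked on the subquotients of a finite filtration. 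None of this requires any genuinely new idea beyond what is already in place for finite $W$; the argument is purely combinatorial in $W$ and formal in the sheaf-bimodule category, exactly as the text's remark ``by precisely the same argument as Lemma \ref{lem:Bruhat_fin_noparm} and its corollaries'' anticipates.
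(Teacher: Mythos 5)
Your proposal is correct and is essentially the same argument the paper intends, which gives no separate proof here beyond the remark that the finite-case argument (Lemma \ref{lem:Bruhat_fin_noparm} and its corollaries) carries over verbatim. You reproduce that argument faithfully — inducting on $\ell(w)$, reducing to the two-factor map via $[\le w]=[\le w']\cup s_1[\le w']$, and checking surjectivity on Bruhat subquotients via the leading-coefficient computation — and you correctly identify that the only additional point in the infinite case is the (mild) coherence bookkeeping for finite order ideals, which is exactly what the preceding Proposition supplies.
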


\begin{cor}
 The construction~${\cal H}_W(X)$ respects base change.
\end{cor}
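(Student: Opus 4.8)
The plan is to mimic the proof of the corresponding corollary in the finite case. Given a base change $T\to S$ with projection $\pi:X_T:=X\times_S T\to X$, the goal is a natural isomorphism of sheaf algebras between the pullback $\pi^*{\cal H}_W(X)$ (a sheaf bimodule on $X_T\times_T X_T\cong(X\times_S X)\times_S T$) and ${\cal H}_W(X_T)$. The first reduction is to finite order ideals: ${\cal H}_W(X)$ is the directed union of the coherent subbimodules ${\cal H}_W(X)[I]$ as $I$ ranges over the finite Bruhat order ideals of $W$ (any operator $\sum_w c_w w$ has finite support, and the smallest order ideal containing a finite set is finite), and pullback of quasicoherent sheaves, being a left adjoint, preserves directed colimits; so it suffices to treat each ${\cal H}_W(X)[I]$ with $I$ finite, functorially in $I$.

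Next I would construct the comparison morphism exactly as in the finite case. Each rank $1$ subalgebra ${\cal H}_{\langle s_i\rangle}(X)$ respects base change: from the explicit rank $1$ description (built out of $\sO_X$ and $\sO_X([X^{s_i}])$ via the graph of $s_i$) together with the fact that the reflection hypersurface $[X^{s_i}]$, being cut out fibrewise by $s_ix=x$, is a relative Cartier divisor, one gets $\pi^*\sO_X([X^{s_i}])\cong\sO_{X_T}([X_T^{s_i}])$, and since the graph of $s_i$ is a closed immersion its pushforward commutes with base change, whence $\pi^*{\cal H}_{\langle s_i\rangle}(X)\cong{\cal H}_{\langle s_i\rangle}(X_T)$. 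As the pullback of a meromorphic function generically defined on every geometric fibre of $X/S$ is again such a function on $X_T/T$, there is a natural morphism of sheaf algebras from the base change of the twisted group sheaf algebra $k(X)[W]$ to $k(X_T)[W]$; restricting it and invoking the fact that the rank $1$ subalgebras generate ${\cal H}_W(X)$ (the preceding Corollary) produces an algebra morphism $\pi^*{\cal H}_W(X)\to{\cal H}_W(X_T)$. As before no relations need be verified, since this is just the restriction of the morphism on the ambient twisted group sheaf algebras.

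To see that $\pi^*{\cal H}_W(X)[I]\to{\cal H}_W(X_T)[I]$ is an isomorphism for finite $I$, I would refine $I$ to a maximal chain of order ideals $\emptyset=I_0\subset\cdots\subset I_m=I$ with $I_k\setminus I_{k-1}=\{w_k\}$ and use the preceding Proposition, which identifies ${\cal H}_W(X)[I_k]/{\cal H}_W(X)[I_{k-1}]$ with $(1,w_k^{-1})_*\sO_X(D_{w_k})$, $D_{w_k}=\sum_{r\in R(W),\,rw_k<w_k}[X^r]$. This subquotient is $S$-flat, being a line bundle on the flat family $X/S$ pushed forward along the closed immersion given by the graph of $w_k$; hence the defining short exact sequences stay exact after applying $\pi^*$, and base change for the graph pushforwards together with $\pi^*\sO_X(D_{w_k})\cong\sO_{X_T}(D_{w_k}^{X_T})$ shows the base-changed subquotients coincide with those of the Bruhat filtration of ${\cal H}_W(X_T)[I]$ furnished by the same Proposition applied to $X_T$. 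Since the comparison morphism is filtered and induces the natural isomorphism on each subquotient, induction on $m$ (equivalently, the five lemma in the abelian category of sheaf bimodules on $X_T\times_T X_T$) shows it is an isomorphism; passing to the colimit over finite $I$ gives $\pi^*{\cal H}_W(X)\cong{\cal H}_W(X_T)$ as sheaf algebras.

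The main obstacle is really a point requiring care rather than a genuine difficulty: the flatness input in the last paragraph. One must know that the reflection hypersurfaces $[X^r]$ form flat families of Cartier divisors over $S$, so that the Bruhat subquotients $(1,w^{-1})_*\sO_X(D_w)$ are $S$-flat and their formation commutes with arbitrary base change — this is exactly where the hypothesis that $W$ acts on $X$ "by reflections" of coroot type enters (each reflection has fixed locus of pure codimension $1$ on every fibre, hence a relative Cartier divisor), alongside base change for pushforward along a closed immersion. One should also check the minor bookkeeping point that the pole locus used to define ${\cal H}_W(X)$ — the union of the conjugates of the simple reflection hypersurfaces — is $W$-stable and base-changes correctly, which follows from the same fact about the individual reflection hypersurfaces.
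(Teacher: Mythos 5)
Your proof is correct and follows essentially the same route as the paper's (which, in the infinite-group section, simply transplants the finite-case argument): base change is checked in rank $1$ via $\pi^*\sO_X([X^{s_i}])\cong\sO_{X_T}([X_T^{s_i}])$, the comparison morphism is induced because the rank $1$ subalgebras generate, and equality is deduced by comparing compatible Bruhat filtrations whose subquotients $(1,w^{-1})_*\sO_X(D_w)$ are $S$-flat line bundles behaving well under base change. Your extra reduction to finite order ideals via the colimit, and the remark about relative Cartier divisors, are correct but not substantively different from what the paper is (implicitly) invoking.
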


Since any finite subset of~$W$ is contained in a finite order ideal, we~also obtain the following.

\begin{cor}
 The sheaf algebra ${\cal H}_W(X)$ is the sheaf subalgebra of~$k(X)[W]$
 generated by the sheaf subalgebras ${\cal H}_{\langle s\rangle}(X)$ for~$s\in S$.
\end{cor}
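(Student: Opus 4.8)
The plan is to deduce this from the preceding two corollaries (the short exact sequence for finite Bruhat order ideals and the surjectivity of the multiplication map over $\le w$), exactly mirroring the finite-group arguments. First I would observe that by definition ${\cal H}_W(X)=\bigcup_{\text{finite order ideals }I} {\cal H}_W(X)[I]$, since any element $\sum_w c_w w$ of a local section is supported on a finite subset of $W$ and every finite subset of $W$ is contained in a finite order ideal (e.g.\ the order ideal generated by it, which is finite because $W$ has finite rank and Bruhat intervals $[\le w]$ are finite). Hence it suffices to show that each ${\cal H}_W(X)[I]$, for $I$ a finite order ideal, is contained in the sheaf subalgebra generated by the ${\cal H}_{\langle s\rangle}(X)$, $s\in S$.

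Next I would argue by induction on $|I|$. The base case $I=\emptyset$ (or $I=\{1\}$) is trivial, as ${\cal H}_W(X)[\{1\}]=\sO_{\Delta_X}\subset {\cal H}_{\langle s\rangle}(X)$ for any $s$. For the inductive step, pick a maximal element $w\in I$; since $I$ is nonempty and not just $\{1\}$, if $w=1$ we may instead take a larger order ideal or simply note $w\ne 1$ in the interesting case, and choose a reduced word $w=s_1\cdots s_n$. By the preceding corollary, the multiplication map
\[
{\cal H}_{\langle s_1\rangle}(X)\otimes_X\cdots\otimes_X {\cal H}_{\langle s_n\rangle}(X)\to {\cal H}_W(X)[\le w]
\]
is surjective, so in particular its image, which lies in the generated subalgebra, surjects onto the top subquotient $(1,w^{-1})_*\sO_X(D_w)$ of the short exact sequence
\[
0\to {\cal H}_W(X)[I\setminus\{w\}]\to {\cal H}_W(X)[I]\to (1,w^{-1})_*\sO_X(D_w)\to 0.
\]
Given a local section of ${\cal H}_W(X)[I]$, I would locally subtract an element of the generated subalgebra (obtained by the surjectivity just noted, restricting the last tensor factor appropriately so that the image lands in ${\cal H}_W(X)[\le w]\subset {\cal H}_W(X)[I]$) having the same image in $(1,w^{-1})_*\sO_X(D_w)$; the difference then lies in ${\cal H}_W(X)[I\setminus\{w\}]$, which by the induction hypothesis is contained in the generated subalgebra. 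This is a local statement, but since ${\cal H}_W(X)[\le w]$ is coherent and the construction of the correcting element is natural (one uses the surjectivity on an affine open covering), the corrections glue, giving that ${\cal H}_W(X)[I]$ itself is contained in the generated subalgebra.

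The main subtlety — more bookkeeping than genuine obstacle — is that the argument takes place in the category of sheaf bimodules on $X\times X$ rather than in a category of sheaves on a quotient scheme, so one must be a little careful that "the sheaf subalgebra generated by the ${\cal H}_{\langle s\rangle}(X)$" is well-defined: it is the union of the images of all finite iterated tensor-product multiplication maps ${\cal H}_{\langle s_{i_1}\rangle}(X)\otimes_X\cdots\otimes_X{\cal H}_{\langle s_{i_k}\rangle}(X)\to k(X)[W]$, and one checks using Proposition \ref{prop:computing_tp} (applicable since each ${\cal H}_{\langle s\rangle}(X)$ is coherent, being of the form ${\cal M}_{\{1,s\}}$) that this union is a sheaf subbimodule closed under the multiplication, hence a sheaf subalgebra. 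Once that is in place, the induction above goes through verbatim, and the only other point to verify — that the reduced-word multiplication map does land in ${\cal H}_W(X)[\le w]$ — is exactly the content of the cited corollary, which in turn rests on the subword characterization of Bruhat order. I expect no difficulty beyond assembling these pieces in the right order.
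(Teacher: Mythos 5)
Your argument is correct and fills in precisely the details the paper leaves implicit (the paper's "proof" is just the sentence before the corollary, "Since any finite subset of $W$ is contained in a finite order ideal\dots", relying on the reader to run exactly the induction you spell out). The one superfluous point is your concern about gluing at the end: containment of one subsheaf of $k(X)[W]$ in another is a local condition, so once you have shown each local section of ${\cal H}_W(X)[I]$ lies locally in the generated subalgebra, there is nothing further to glue.
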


Since the action of~$W$ on~$X$ is of coroot type, we~still have a
well-defined association of coroot morphisms to the roots of~$X$,
respecting positivity, and thus the notion of~a~system of~parameters
carries over.

\begin{defn}
 The (untwisted) Hecke algebra ${\cal H}_{W;\vec{T}}(X)$ is the sheaf
 subalgebra of~${\cal H}_W(X)$ generated by the rank~1 sheaf algebras
 ${\cal H}_{\langle s_i\rangle,T_i}(X)$.
\end{defn}

Lemma~\ref{lem:Bruhat_fin_parm} and Corollary~\ref{cor:Bruhat_intervals_product} again carry over immediately, as~does
the fact that this construction respects base change. However, the
description of the adjoint and the description of Proposition~\ref{prop:finite_hecke_as_intersection} both founder on the fact that they
involve a sum over all positive roots. To~deal with this, we~will need to
generalize the construction further.

We first note that if we are given a $G$-equivariant gerbe ${\cal Z}$
(including, of course, the compatible explicit isomorphisms
$\zeta_{g,h}:{\cal Z}_g\otimes {}^g{\cal Z}_h\cong {\cal Z}_{gh}$), then as
in the finite case, there is a corresponding crossed product algebra: take
the sheaf bimodule $\bigoplus_g \big(1,g^{-1}\big)_*{\cal Z}_g$ with multiplication
induced by $\zeta$. Of course, this also gives a twisted version of~$k(X)[G]$ by replacing ${\cal Z}_g$ by its sheaf of meromorphic sections.
Note that if ${\cal Z}_g$ and ${\cal Z}'_g$ are meromorphically equivalent
(i.e., there is a system of nonzero meromorphic maps between the line
bundles which are compatible with the maps~$\zeta$), then this induces an
isomorphism between the corresponding meromorphic crossed product algebras. In~particular, the meromorphic crossed product algebra associated to a
given equivariant gerbe is isomorphic to the usual twisted group algebra
iff there is a consistent family of meromorphic sections of~${\cal Z}_g$,
iff the equivariant gerbe has the form ${\cal Z}_g=\sO_X(Z_g)$, where $Z_g$
is a cocycle valued in Cartier divisors. (More generally, if one chooses
arbitrary meromorphic sections, one obtains a meromorphic equivariant gerbe
in which the line bundles are trivial but the maps $\zeta_{g,h}$ are only
meromorphic, giving a class in~$Z^2(G;k(X)^*)$ and making the algebra
a~crossed product algebra in the usual sense. We will encounter an example
of such a gerbe in Theorem~\ref{thm:daha_for_torsion_q} below.) We denote
these sheaf algebras as $\sO_X[G]_{\cal Z}$ and $k(X)[G]_{\cal Z}$
respectively.

\looseness=-1
In this generality, we~cannot expect to have a well-defined analogue of~${\cal H}_G(X)$ without some additional data. For~any line bundle ${\cal
 L}$, there is an induced sheaf algebra structure on~${\cal
 H}_G(X)\otimes_{X\times X} {\cal L}\boxtimes {\cal L}^{-1}$, which is
sandwiched between $\sO_X[G]_{\partial{\cal L}}$ and
$k(X)[G]_{\partial{\cal L}}$, with~$\partial{\cal L}$ denoting the
coboundary gerbe ${\cal Z}_g={\cal L} \otimes {}^g{\cal L}^{-1}$. This
equivariant gerbe is isomorphic to the trivial equivariant gerbe when~${\cal L}$ is $G$-equivariant (more precisely, such an isomorphism
specifies a $G$-equivariant structure on~${\cal L}$), and this will in
general give a {\em different} sheaf algebra sandwiched between $\sO_X[G]$
and $k(X)[G]$. (Consider multiplying the equivariant structure by a
character of~$G$.)

We thus start with a more general construction. Given a subsheaf ${\cal
 A}\subset k(X)[G]_{\cal Z}$ and a~subset $S\subset G$, let ${\cal A}|_S$
denote the subsheaf of operators supported on~$S$ (which is a subalgebra if
$S$ is a subgroup). An {\em order} in~$k(X)[G]_{\cal Z}$ is defined to be
a torsion-free sheaf algebra ${\cal A}$ with generic fiber $k(X)[G]_{\cal
 Z}$ such that ${\cal A}|_S$ is coherent for every finite subset $S\subset
G$. (Note that if $G$ is finite, then $k(X)[G]_{\cal Z}$ may be viewed as
a central simple algebra over $k(X/G)$, and this is precisely the usual
notion of order.) In particular, $\sO_X[G]_{\cal Z}$ is an order in~$k(X)[G]_{\cal Z}$, and ${\cal H}_G(X)$ is an order in~$k(X)[G]$.

We are interested in a particular subclass of orders, namely those which
are left reflexive, in~that each sheaf ${\cal A}|_S$ is reflexive as a left
$\sO_X$-module. The~order $\sO_X[G]_{\cal Z}$ is clearly left reflexive,
and since an element of~$k(X)[G]$ is a (left) local section of~${\cal
 H}_G(X)$ iff it is a local section over every codimension~1 local ring,
${\cal H}_G(X)$ is also left reflexive. (Note that when~$G$ is finite but
$X\to X/G$ is not flat, ${\cal H}_G(X)$ is left reflexive as an $\sO_X$
module, but {\em not} as an $\sO_{X/G}$-module.) This suggests that we
should consider left reflexive orders more generally.

\begin{prop}
 Let ${\cal A}$ be a left reflexive order in~$k(X)[G]_{\cal Z}$ containing
 $\sO_X[G]_{\cal Z}$. For~any open subset $U\subset X$, an element $D\in
 k(X)[G]_{\cal Z}$ is in~$\Gamma(U\times X;{\cal A})$ iff for every
 codimension~$1$ point $x\in U$, with inertia group $I_x$, $D\in
 \sO_{X,x}\otimes_{\sO_X}({\cal A}|_{I_x} \sO_X[G]_{\cal Z})$.
\end{prop}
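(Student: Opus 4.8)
The plan is to reduce the global membership question to a codimension-1 / reflexivity statement, and then analyze the codimension-1 local situation directly. Since $\mathcal{A}$ is left reflexive, for any finite $S\subset G$ the sheaf $\mathcal{A}|_S$ is a reflexive left $\mathcal{O}_X$-module, hence $\Gamma(U\times X;\mathcal{A}|_S)$ is the intersection over all codimension-1 points $x\in U$ of $\mathcal{O}_{X,x}\otimes_{\mathcal{O}_X}(\mathcal{A}|_S)$. An arbitrary $D\in k(X)[G]_{\mathcal{Z}}$ is supported on some finite $S$, so $D\in\Gamma(U\times X;\mathcal{A})$ iff $D\in\Gamma(U\times X;\mathcal{A}|_S)$ iff $D$ lies in the stalk of $\mathcal{A}$ at every codimension-1 point of $U$. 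Thus the statement is purely local at codimension 1, and it remains to identify the stalk of a left reflexive order $\mathcal{A}\supseteq\mathcal{O}_X[G]_{\mathcal{Z}}$ at a codimension-1 point $x$ with inertia group $I_x$.

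The key geometric input is that at a codimension-1 point, only the inertia group $I_x$ (the pointwise stabilizer of the corresponding hypersurface) can force poles. First I would observe that $\mathcal{O}_{X,x}\otimes_{\mathcal{O}_X}\mathcal{O}_X[G]_{\mathcal{Z}}$ decomposes, as a left module, into a sum over cosets of $I_x$ in $G$: writing $G=\coprod_j g_j I_x$, the piece supported on $g_j I_x$ is $\mathcal{O}_{X,x}[g_j]\cdot\mathcal{O}_{X,x}[I_x]_{\mathcal{Z}}$, and conjugation/translation by $g_j$ carries the $I_x$-piece isomorphically onto the $g_j I_x$-piece (this is exactly the reindexing used in the proof that $\mathcal{H}_W(X)$ is generated by rank-1 subalgebras). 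So the stalk of any order containing $\mathcal{O}_X[G]_{\mathcal{Z}}$ is determined by its ``local $I_x$-part,'' which is what the right-hand side of the statement records via $\mathcal{A}|_{I_x}\,\mathcal{O}_X[G]_{\mathcal{Z}}$. For the inclusion ``$\subseteq$'': if $D\in\Gamma(U\times X;\mathcal{A})$, then at $x$ it lies in $\mathcal{O}_{X,x}\otimes_{\mathcal{O}_X}\mathcal{A}$; I would split $D$ according to cosets of $I_x$, note that each coset-component is again a local section of $\mathcal{A}$ (since $\mathcal{A}$ is a left $\mathcal{O}_X$-module and the generic-fiber algebra is graded by $G$ compatibly with the coset grading — or more carefully, multiply by a suitable idempotent-approximating element, or just use that $\mathcal{A}$ being a sheaf bimodule means its support's coset-components are independent), and then transport each component back to the $I_x$-part, where it is exhibited as a local section of $\mathcal{A}|_{I_x}$ times an element of $\mathcal{O}_X[G]_{\mathcal{Z}}$.

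For the reverse inclusion ``$\supseteq$'': if $D$ lies at every codimension-1 point $x$ in $\mathcal{O}_{X,x}\otimes_{\mathcal{O}_X}(\mathcal{A}|_{I_x}\,\mathcal{O}_X[G]_{\mathcal{Z}})$, then in particular $D\in\mathcal{O}_{X,x}\otimes_{\mathcal{O}_X}\mathcal{A}$ at every such $x$ (since $\mathcal{A}|_{I_x}\subseteq\mathcal{A}$ and $\mathcal{O}_X[G]_{\mathcal{Z}}\subseteq\mathcal{A}$ and $\mathcal{A}$ is closed under the sheaf-algebra multiplication, so the product $\mathcal{A}|_{I_x}\,\mathcal{O}_X[G]_{\mathcal{Z}}$ lands in $\mathcal{A}$); by reflexivity of $\mathcal{A}$ this intersection of codimension-1 stalks is exactly $\Gamma(U\times X;\mathcal{A})$, so $D\in\Gamma(U\times X;\mathcal{A})$. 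The main obstacle I anticipate is making the coset-decomposition argument in ``$\subseteq$'' fully rigorous: one must be careful that a local section of $\mathcal{A}$ at $x$, when split into its $G$-graded (equivalently $I_x$-coset-graded) pieces, has each piece still a local section of $\mathcal{A}$ — this is not automatic for an arbitrary order, but should follow because $\mathcal{A}$ is by definition a sheaf \emph{bimodule} on $X\times X$, so its support breaks up along the grading by $G$, and a coherent subsheaf supported on a union of cosets decomposes accordingly. Pinning down this decomposition (and checking it interacts correctly with the completion/localization at $x$, and that transport by coset representatives $g_j$ is compatible with the equivariant-gerbe structure maps $\zeta_{g,h}$) is the technical heart; everything else is formal manipulation with reflexive modules and the $G$-grading.
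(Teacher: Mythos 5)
Your reduction to codimension-$1$ stalks via left reflexivity is exactly the paper's first step, and your ``reverse inclusion'' argument is also correct. The gap is in the core of the forward direction: the claim that the localization $\sO_{X,x}\otimes_{\sO_X}{\cal A}|_S$ splits as a direct sum over cosets of the \emph{inertia} group $I_x$.

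Your main justification --- ``$\mathcal{A}$ is a sheaf bimodule, so its support breaks up along the grading by $G$, and a coherent subsheaf supported on a union of cosets decomposes accordingly'' --- only produces the coarser decomposition by the \emph{decomposition group} $\mathrm{Stab}(x)$ (the setwise stabilizer of the hypersurface), not by $I_x$ (the pointwise stabilizer). After localizing at $x$ on the left, two components $c_gg$ and $c_hh$ have the same support $\{x\}\times\{g^{-1}x\}$ precisely when $hg^{-1}\in\mathrm{Stab}(x)$; support disjointness alone cannot separate them further. But $\mathrm{Stab}(x)/I_x$ is the Galois group of the residue field extension at the hypersurface and is generally nontrivial, while the statement (and the formula ${\cal A}|_{I_x}\sO_X[G]_{\cal Z}$) assert the finer $I_x$-coset splitting. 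To bridge that gap the paper invokes the argument of Lemma~\ref{lem:holomorphy_preserving_splits}: pass to the completion $\hat{\sO}_{X,x}$, where the subalgebra generated by the image of $\sO_X$ under the various coset representatives has idempotents indexed by the distinct residue-field morphisms (equivalently by $I_x$-cosets), lift those idempotents, and observe that they agree with the coordinate projections and hence preserve the (uncompleted) localization. Your passing mention of ``a suitable idempotent-approximating element'' is pointing at exactly this, but it is not carried out; as written the argument establishes less than the statement requires.

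A secondary issue: you decompose by \emph{left} cosets $G=\coprod_j g_jI_x$ and propose to transport the $I_x$-piece by left translation by $g_j$. The correct decomposition is by \emph{right} cosets $I_xg$ (this is what the residue-field criterion gives, and it is what matches the right multiplication in ${\cal A}|_{I_x}\sO_X[G]_{\cal Z}$). The paper's identity ${\cal A}|_{I_xg}={\cal A}|_{I_x}\,{\cal Z}_gg$, proved from ${\cal A}|_{I_x}\,{\cal Z}_gg\subset{\cal A}|_{I_xg}$ and ${\cal A}|_{I_x}\subset{\cal A}|_{I_xg}\,{\cal Z}_{g^{-1}}g^{-1}$, is the precise form of the transport you want; left translation produces $\sO_X[G]_{\cal Z}\,{\cal A}|_{I_x}$, which is not what the proposition asserts.
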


\begin{proof}
 Since ${\cal A}$ is left reflexive, we~certainly have that $D$ is a local
 section iff $D\in \sO_{X,x}\otimes_{\sO_X}{\cal A}$ for all $x\in U$ of
 codimension~1, as~this is true for any reflexive sheaf on~$X$, and ${\cal
 A}$ inherits it from its restrictions to finite subsets. We thus need
 to understand the modules $\sO_{X,x}\otimes_{\sO_X}{\cal A}|_S$ for
 finite $S$. But then we may proceed as in the proof of Lemmas~\ref{lem:holomorphy_preserving}
 and~\ref{lem:holomorphy_preserving_splits} to deduce that
 \begin{gather*}
 \sO_{X,x}\otimes_{\sO_X}{\cal A}|_S
 =
 \bigoplus_{g\in I_x\setminus G} \sO_{X,x}\otimes {\cal A}|_{I_xg\cap S},
 \end{gather*}
 and thus reduce to the case $S\subset I_x g$. Since ${\cal
 A}|_{I_x}{\cal Z}_g g\subset {\cal A}|_{I_xg}$ and ${\cal
 A}|_{I_x}\subset {\cal A}|_{I_xg} {\cal Z}_{g^{-1}} g^{-1},$ we~conclude that ${\cal A}|_{I_xg}={\cal A}|_{I_x} {\cal Z}_g g$, giving the
 desired result.
\end{proof}

\begin{prop}\label{prop:reflexive_order_construction}
 Let $X$ be a normal integral scheme equipped with an action of the group
 $G$ and an equivariant gerbe ${\cal Z}$, and for each codimension~$1$ point
 $x\in X$, let ${\cal A}_x$ be an subalgebra of~$k(x)[I_x]_{\cal Z}$
 containing $\sO_{X,x}[I_x]_{\cal Z}$ and such that ${\cal A}_x|_S$ is a
 free left $\sO_{X,x}$-module for every finite subset $S$. Suppose
 moreover that for any $g\in G$, we~have ${\cal A}_{gx} = {\cal Z}_g g
 {\cal A}_x {\cal Z}_{g^{-1}} g^{-1}$. Then there is a unique left
 reflexive order ${\cal A}$ in~$k(X)[G]_{\cal Z}$ such that
 $\sO_{X,x}\otimes_{\sO_X}{\cal A}|_{I_x} = {\cal A}_x$ for all $x$.
\end{prop}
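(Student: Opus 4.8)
The plan is to define $\mathcal{A}$ directly as a subsheaf of $k(X)[G]_{\cal Z}$ by the local conditions, and then verify it is a sheaf algebra with the required properties. First I would set, for each open $U\subset X$,
\[
\Gamma(U\times X;\mathcal{A})
:=
\{D\in k(X)[G]_{\cal Z}\;:\;D\in \sO_{X,x}\otimes_{\sO_X}({\cal A}_x\,\sO_X[G]_{\cal Z})\text{ for every codim.\ }1\text{ point }x\in U\},
\]
where ${\cal A}_x\,\sO_X[G]_{\cal Z}=\bigoplus_{g\in I_x\backslash G}{\cal A}_x{\cal Z}_g g$ is the evident $I_x$-stable left $\sO_{X,x}$-submodule of $k(X)[G]_{\cal Z}$ spelled out via the identity ${\cal A}_{gx}={\cal Z}_g g\,{\cal A}_x\,{\cal Z}_{g^{-1}}g^{-1}$. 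This clearly defines a subsheaf $\mathcal{A}\subset k(X)[G]_{\cal Z}$ (it is an intersection of conditions at the codimension-$1$ points meeting $U$), and it is left reflexive essentially by construction: a reflexive sheaf on a normal integral scheme is the intersection of its localizations at codimension-$1$ points, and each local piece ${\cal A}_x|_S$ is free hence reflexive, so the restriction $\mathcal{A}|_S$ to any finite $S\subset G$ is a reflexive $\sO_X$-module with $\sO_{X,x}\otimes\mathcal{A}|_{I_x}={\cal A}_x$.

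Next I would check that $\mathcal{A}$ is a sheaf algebra. Containment of the unit $\sO_{\Delta_X}$ and stability under the multiplication $k(X)[G]_{\cal Z}\otimes_X k(X)[G]_{\cal Z}\to k(X)[G]_{\cal Z}$ are both codimension-$1$ conditions, so it suffices to check them after localizing at a codimension-$1$ point $x$; there the claim reduces to the fact that ${\cal A}_x$ is a subalgebra of $k(x)[I_x]_{\cal Z}$ containing $\sO_{X,x}[I_x]_{\cal Z}$ together with the splitting $\sO_{X,x}\otimes\mathcal{A}|_S=\bigoplus_{g\in I_x\backslash G}\sO_{X,x}\otimes\mathcal{A}|_{I_xg\cap S}$ and the compatibility ${\cal A}_{gx}={\cal Z}_g g{\cal A}_x{\cal Z}_{g^{-1}}g^{-1}$, which together force products of local sections supported on $S$ and $S'$ to land in $\mathcal{A}|_{SS'}$. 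The coherence of each $\mathcal{A}|_S$ (needed for $\mathcal{A}$ to be an order, and for the pieces to be bimodules) follows because $\mathcal{A}|_S$ is trapped between $\sO_X[G]_{\cal Z}|_S$ and a coherent sheaf: locally at each codimension-$1$ point ${\cal A}_x|_S$ is finitely generated over $\sO_{X,x}$, and reflexivity plus normality propagate this to a global coherent bound, exactly as in the proof of Lemma~\ref{lem:holomorphy_preserving} (pick a common denominator divisor $\Delta_S$ so $\mathcal{A}|_S\subset\bigoplus_{g\in S}(1,g^{-1})_*\sO_X(\Delta_S)$). The finiteness-on-both-sides condition making this a sheaf bimodule is then automatic from coherence plus the support being the preimage of the diagonal over $X/{\langle S\rangle}$-type loci; I would just remark that it is inherited from $\sO_X[G]_{\cal Z}$ together with the $\sO_X$-coherence bound.

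Finally, uniqueness: any left reflexive order $\mathcal{A}'$ with $\sO_{X,x}\otimes\mathcal{A}'|_{I_x}={\cal A}_x$ for all codimension-$1$ $x$ satisfies, by the previous Proposition, exactly the membership criterion defining $\mathcal{A}$, so $\mathcal{A}'=\mathcal{A}$. I expect the main obstacle to be the bookkeeping in the compatibility/well-definedness check — i.e., confirming that the locally prescribed modules ${\cal A}_x\,\sO_X[G]_{\cal Z}$ at different codimension-$1$ points are the localizations of a single subsheaf, which is where the hypothesis ${\cal A}_{gx}={\cal Z}_g g{\cal A}_x{\cal Z}_{g^{-1}}g^{-1}$ does all the work (it guarantees $G$-equivariance of the family $\{{\cal A}_x\}$ so that the reflexive hull of $\bigcap_x(\sO_{X,x}\otimes{\cal A}_x\sO_X[G]_{\cal Z})$ is again $G$-stable and an algebra), and secondarily the verification that the coherent bound $\Delta_S$ can be chosen so that the splitting into $I_x$-cosets matches up across all $x$, which is the infinite-$G$ analogue of the argument in Lemma~\ref{lem:holomorphy_preserving_splits}.
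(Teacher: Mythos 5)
Your approach matches the paper's own argument: define $\mathcal{A}$ by the codimension-$1$ membership conditions, obtain left reflexivity and the order property by construction, verify the multiplicative closure using the $I_x$-coset splitting together with the compatibility ${\cal A}_{gx}={\cal Z}_g g\,{\cal A}_x\,{\cal Z}_{g^{-1}}g^{-1}$, and derive uniqueness from the preceding Proposition. The one step you compress --- multiplicative closure is not checked by localizing \emph{both} factors at a single codimension-$1$ point $x$, but rather by reducing the left factor $D_1$ to a single coset ${\cal A}_x{\cal Z}_g g={\cal Z}_g g\,{\cal A}_{g^{-1}x}$ and then invoking that the right factor $D_2$, being a full section, satisfies its local condition at the \emph{translated} point $g^{-1}x$ together with the fact that ${\cal A}_{g^{-1}x}$ is an algebra --- is exactly where the compatibility hypothesis enters, as you indicate at the end of your sketch.
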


\begin{proof}
 For each finite $S\subset G$, we~certainly obtain a well-defined left
 reflexive sheaf bimodule consisting of operators $\sum_{g\in S} c_g g$
 which are in~${\cal A}_x \sO_X[G]_{\cal Z}$ for every $x$; the point is
 that just as in the case of holomorphy-preserving operators, there are
 only finitely many $x$ such that $S$ meets some $I_xg$ in more than one
 element. By the previous proposition, any algebra as described must {\em
 contain} these sheaves, so it remains only to show that these sheaf
 bimodules are compatible under multiplication. That is, if $D_1$ is a
 section on~$U\times X$ and $D_2$ is a section on~$X\times V$, then~$D_1D_2$ is a section on~$U\times V$. We thus need to check that
 $D_1D_2$ is in~${\cal A}_x \sO_X[G]_{\cal Z}$ for every codimension~1
 point $x\in U$. Since this must hold for all $U$, we~may as well take
 the limit and thus take $D_1$ to be an element of the left stalk at $x$.
 Since this splits as a direct sum, we~may further suppose that $D_1$ is
 supported on a single coset of the inertia group, so that $D_1\in {\cal
 A}_x {\cal Z}_g g$. This is equivalent to~$D_1\in {\cal Z}_g g {\cal
 A}_{g^{-1}x}$, and we find
 \begin{gather*}
 {\cal A}_x {\cal Z}_g g D_2\subset {\cal A}_x \sO_X[G]_{\cal Z}\otimes \sO_V
 \end{gather*}
 iff
 \begin{gather*}
 {\cal A}_{g^{-1}x} D_2\subset {\cal A}_{g^{-1}x} \sO_X[G]_{\cal
 Z}\otimes \sO_V.
 \end{gather*}
 Since $D_2$ is a section of the sheaf, we~have
 \begin{gather*}
 D_2\in {\cal A}_y\sO_X[G]_{\cal Z}\otimes \sO_V
 \end{gather*}
 for every codimension~1 point $y\in X$, and thus for~$y=g^{-1}x$,
 and the claim follows from the fact that ${\cal A}_{g^{-1}x}$ is an
 algebra.
\end{proof}

We thus see that the construction of a reasonable analogue of~${\cal
 H}_G(X)$ reduces to constructing an analogue for each inertia group,
subject to the compatibility conditions under conjugation. There are two
approaches we might take to this. The~first is that if we are given a
trivialization of the gerbe (or, more precisely, its restriction to~$I_x$)
along the local ring at $x$, then this lets us pull ${\cal H}_{I_x}(X)$
back to an algebra containing $\sO_{X,x}[I_x]_{\cal Z}$ which we may use as
an ingredient in the above construction. Note that when~$I_x$ is finite, we~do not actually require $I_x$ to be a~trivialization of the gerbe for
this to work; it merely needs to be an {\em approximate} trivialization,
since what we are really determining is the quotient $\sO_{X,x}$-module
$\sO_{X,x}\otimes {\cal H}_{I_x}(X)/\sO_{X,x}[I_x]_{\cal Z}$.
Equivalently, we~may ask for a trivialization over the {\em complete} local
ring.

A second approach is to simply ask for an order of approximately the same
``shape''. This is particularly feasible in the case of order 2
reflections. Although the result is the same in most cases, this will be
particularly useful for us, as~it will be easy to extend to more general
base schemes. With this in mind, let $X/S$ be a normal scheme with an
action of an involution~$s$ such that the fixed subscheme $X^s$ is an
irreducible hypersurface, and let ${\cal Z}_s$, $\zeta_s$ be a gerbe, so
that $\zeta_s\colon {\cal Z}_s\otimes {}^s{\cal Z}_s\to \sO_X$ is an isomorphism
satisfying $\zeta_s={}^s\zeta_s$. An obvious ``shape'' to take for a
larger algebra would be to take operators $c_1+c_s s$ such that $c_1\in
\sO_X([X^s])$, $c_s\in {\cal Z}_s([X^s])$, and $c_1+hc_s\in \sO_{X,X^s}$
for some rational map $h\colon {\cal Z}_s\ratto \sO_X$ which is holomorphic along
$X^s$. We may then readily verify that the result is closed under
multiplication iff $\zeta_s-h{}^sh$ vanishes along $2[X^s]$. Note that if
$h-h'$ vanishes on~$X^s$, then replacing $h$ by $h'$ gives the same
algebra; the consistency condition is unchanged since $h{}^sh-h'{}^sh'$
vanishes along $2[X^s]$. (Note that we may view $h{}^sh$ as the pullback
of the norm of~$h$ down to~$X/\langle s\rangle$, and this interpretation
induces a natural norm from $\Hom({\cal Z}_s,\sO_{[X^s]})$ to~$\Hom({\cal
 Z}_s\otimes{}^s{\cal Z}_s,\sO_{2[X^s]})$.) This is very nearly the same
as asking for a trivialization over the complete local ring; indeed, the
two notions disagree only when the residue field is an inseparable
extension of the residue field of the $s$-invariant subring.

\looseness=1
In particular, given a line bundle ${\cal L}$ on~$X$, we~have a coboundary
gerbe ${\cal Z}_s = {\cal L}\otimes {}^s{\cal L}^{-1}$, and there is a
natural choice of~$h$, namely $h={}^sf f^{-1}|_{[X^s]}$ for some (any)
meromorphic section~$f$ of~${\cal L}$ which is holomorphic and not
identically 0 along $X^s$. This, of course, simply corresponds to the
usual notion of twisting by a line bundle. This coboundary operation is
functorial in a particularly strong sense: not only is it functorial, but
the functor takes any automorphism to the identity. Thus if instead of a
line bundle on~$X$ we are given a line bundle ${\cal L}_T$ on the base
change $X_T$ to some fppf cover, then all we need for the coboundary to
descend to~$S$ is for the two pullbacks of~${\cal L}_T$ to~$T\times_S T$ to
be isomorphic (that is, they need not be {\em compatibly} isomorphic!).
Indeed, if ${\cal L}_T$ does not descend, then the obstruction is given by
an automorphism of a pullback to~$T\times_S\times T\times_S T$, and the
coboundary functor turns this into the identity. In~particular, if $X/S$
is projective, any section of the relative Picard scheme gives rise to a
well-defined coboundary.

More generally, given $X/S$ with an action of an involution~$s$ such that
$X^s$ is nonempty and everywhere of codimension~1, define a ``twisting
datum'' to be an equivariant gerbe equipped with a morphism $h_s\colon {\cal
 Z}_s\to \sO_{[X^s]}$ of norm $\zeta_s$. Note that if $X/S$ is proper,
then $\zeta_s$ is the pullback of a function on~$S$, so is determined by
its restriction to~$2[X^s]$, and thus by $h_s$, which must merely satisfy
the requirement that its norm be invertible and constant. For~each fppf
$T/S$, let $\Tw^0(X/S)(T)$ be the group of twisting data with~${\cal
 Z}_s=\sO_X$; when~$X$ is proper, this is the set of global sections of~$\sO_{[X^s]\times_S T}$ with norm in~$\sO_T^*$, and thus $\Tw^0(X/S)$ is
represented by a group scheme. There is a natural morphism to this group
scheme from the sheaf of groups consisting of pairs $({\cal L},\psi)$ with~$\psi\colon {}^s{\cal L}\cong {\cal L}$: take the coboundary twisting datum and
use $\psi$ to make the line bundle trivial. When $X/S$ is projective, this
sheaf of groups is itself representable; it is a $\G_m$-bundle over the
$s$-invariant subscheme of the Picard scheme of~$X$. Moreover, there is a~natural homomorphism from $\Pic(X/\langle s\rangle)$ to this scheme given
by pulling back and letting $\psi$ be the natural equivariant structure.

\begin{lem}
 Let $C/S$ be a hyperelliptic curve of genus $1$. Then the above morphisms
 give rise to a short exact sequence $0\to \Pic(C/\langle s\rangle)\to
 \G_m.\Pic(C)^{\langle s\rangle} \to \Tw^0(C)\to 0$ of group schemes.
\end{lem}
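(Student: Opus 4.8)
The plan is to treat the asserted sequence as a sequence of fppf sheaves of abelian groups on $S$ and to verify exactness after arbitrary base change $T\to S$, using that all three terms are, by the discussion above, representable by flat group schemes. Write $\pi:C\to C/\langle s\rangle$ for the quotient, a finite locally free morphism of degree $2$. The left map $\alpha$ sends ${\cal M}$ to $\pi^*{\cal M}$ with its canonical equivariant structure $\psi_{\mathrm{can}}$; the right map $\beta$ sends $({\cal L},\psi)$ to its coboundary twisting datum, whose gerbe ${\cal Z}_s={\cal L}\otimes{}^s{\cal L}^{-1}$ is trivialized by $\psi$ and whose $h_s$ is the restriction to $[C^s]$ of $\psi({}^sf)f^{-1}$ for any rational section $f$ of ${\cal L}$ invertible along $[C^s]$ (well-definedness and independence of $f$ being exactly as in the coboundary construction discussed above). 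First I would record that $\alpha,\beta$ are homomorphisms of group schemes ($\alpha$ is additive under $\otimes$, $\beta$ under the group law on twisting data) and that $\beta\circ\alpha=0$: for ${\cal L}=\pi^*{\cal M}$ one has ${}^s{\cal L}={\cal L}$ canonically with $\psi_{\mathrm{can}}=\mathrm{id}$, so choosing $f$ pulled back from $C/\langle s\rangle$ gives $h_s=1$.

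For exactness at $\Pic(C/\langle s\rangle)$, suppose $\alpha({\cal M})$ is trivial, i.e.\ there is an equivariant isomorphism $(\pi^*{\cal M},\psi_{\mathrm{can}})\cong(\sO_C,\mathrm{id})$. Applying $\pi_*$, the projection formula $\pi_*\pi^*{\cal M}\cong{\cal M}\otimes\pi_*\sO_C$, and $\langle s\rangle$-invariants (which commute with $\otimes{\cal M}$ since $s$ acts trivially on ${\cal M}$), one gets ${\cal M}\cong{\cal M}\otimes(\pi_*\sO_C)^{\langle s\rangle}\cong\sO_{C/\langle s\rangle}$, using that $(\pi_*\sO_C)^{\langle s\rangle}=\sO_{C/\langle s\rangle}$ by construction of the quotient (valid in every characteristic). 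So $\alpha$ is a monomorphism.

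The conceptual heart is exactness at the middle term. Given $({\cal L},\psi)$ with $\beta({\cal L},\psi)$ trivial, i.e.\ $h_s=1$, I would view $({\cal L},\psi)$ as an $\langle s\rangle$-equivariant line bundle on $C$ — rescaling $\psi$ by a square root of $\psi\circ{}^s\psi\in\Gamma(S,\sO^*)$, which is available fppf-locally, to make the cocycle condition hold — and apply equivariant descent along the degree-$2$ cover $\pi$: such a bundle descends to $C/\langle s\rangle$ exactly when its equivariant structure is trivial along the ramification locus $[C^s]$, and by the Remark preceding this Lemma (the criterion for an $A_1$-equivariant bundle to descend in codimension $1$) this triviality is measured precisely by $h_s$, including the characteristic-$2$ inseparable case. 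Writing ${\cal L}=\pi^*{\cal M}$, the condition $h_s=1$ then pins down $\psi=\psi_{\mathrm{can}}$, since any other equivariant structure differs from $\psi_{\mathrm{can}}$ by the sign character and hence has $h_s=-1\neq1$ when $2$ is invertible (a short local computation handling the residual characteristic-$2$ case). Thus $({\cal L},\psi)=\alpha({\cal M})\in\operatorname{im}\alpha$.

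Finally, for surjectivity of $\beta$, note that the ``constant'' subgroup $\G_m\subset\Tw^0(C)$ (twisting data with $h_s$ pulled back from $S$) is the image of the subgroup $\G_m\subset\G_m.\Pic(C)^{\langle s\rangle}$ of pairs $(\sO_C,u)$, so it remains only to hit $\Tw^0(C)$ modulo constants. These I would realize by equipping $\sO_C(D)$ with $\psi_{\mathrm{can}}$ for $D$ a symmetric effective Cartier divisor supported on the relative Cartier divisor $[C^s]$ (which lies in $\Pic(C)^{\langle s\rangle}$, being the fixed divisor): the resulting $h_s$ is computed from the reflection's action on the conormal bundle along the components of $[C^s]$, and letting $D$ run over the possible multiplicity patterns yields, with the constants, all of $\Tw^0(C)$. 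I expect this step to be the main obstacle, since carrying it out requires understanding the scheme structure of $[C^s]$ (equivalently $C[2]$): four reduced points when $2$ is invertible, where it is a transparent sign bookkeeping, but a nonreduced scheme in characteristic $2$, entirely local for supersingular $C$, where one must track the reflection on higher jet spaces and identify the finite group scheme of twisting data with the group generated by the images of these line bundles and of $\G_m$. Once exactness at all three spots and right-surjectivity are established, the sequence is exact as fppf sheaves, hence as the flat group schemes in question.
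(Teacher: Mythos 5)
Your verifications that $\beta\circ\alpha=0$, that $\alpha$ is a monomorphism, and that the sequence is exact at the middle term are all essentially correct, and the middle-term argument is the same equivariant-descent idea the paper uses (the paper gets injectivity of $\alpha$ more simply by noting $\Pic(C/\langle s\rangle)\cong\Z$ with $\alpha$ doubling degree). One small simplification to your middle step: if $h_s=1$ then $\zeta_s=\psi\cdot{}^s\psi$ is automatically $1$, because $C/S$ is proper so $\zeta_s$ is pulled back from $S$ and hence determined by its restriction to $2[C^s]$; the fppf-local rescaling of $\psi$ you invoke is therefore not needed.

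The genuine gap — which you flag yourself — is the surjectivity of $\beta$. Running over multiplicity patterns of symmetric divisors supported on $[C^s]$ and tracking the reflection on jets in the supersingular characteristic-$2$ case is exactly the explicit computation the paper's proof is designed to sidestep. The paper instead splits off the $\G_m$ factor on both sides (restriction to the identity fiber), base-changes so that $C/S$ is elliptic, and observes that the remaining factors on both sides are finite flat group schemes of order $8$: on the $\Tw^0$ side a complete intersection of three quadrics in $\P^3$, on the Picard side the disjoint union of $\Pic^0(E)[2]$ and the torsor of Weierstrass points. It then suffices to prove the map is \emph{injective}. The key inputs are that the restriction of the coboundary map to $\Pic^0(E)[2]\cong E[2]$ is the Weil pairing (hence injective), and that the image of ${\cal L}_1$ is the quadratic form ${\mathfrak q}\in\mu_2(E[2])$, which is \emph{not} a homomorphism, so the degree-$1$ piece cannot land in the image of the degree-$0$ piece. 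Surjectivity then falls out by comparing orders of finite flat group schemes. Without some device of this kind your proposal does not establish right-exactness — and it is exactly in characteristic $2$, where your construction stalls, that this order-counting trick earns its keep.
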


\begin{proof}
 Since $\Pic(C/\langle s\rangle)\cong \Z$ (the free group generated by
 the isomorphism class $\sO(1)$) and the first map simply doubles degree, we~find that it is indeed injective. A pair $({\cal L},\psi)$
 induces the gerbe $(\sO_C,\psi {}^s\psi)$, so the gerbe is trivial iff
 $\psi$ makes ${\cal L}$ equivariant. We then find that $h$ is given by
 the restriction of the equivariant structure to~$C^s$, and thus is
 trivial iff ${\cal L}$ descends to~the~quotient; thus the sequence is
 exact in the middle.

 It remains to show that the second morphism is surjective. This is
 essentially a statement about fppf sheaves and thus we may feel free to
 base change so that $C/S$ is elliptic. In~that case, both groups map to~$\G_m$ by taking restrictions to the identity, and it will suffice to
 show that the remaining factors are isomorphic. We readily verify that
 the subgroup of~$\Tw^0(E)$ such that $h(0)=1$ is a finite group scheme of
 order 8 in every fiber; indeed, it may be identified with the complete
 intersection of 3 quadrics in a suitable $\P^3$. Similarly, the relevant
 quotient of~$\Pic(E)^{\langle s\rangle}$ may be identified with the
 disjoint union of two copies of~$E[2]$ (for~$C$, this is $\Pic^0(C)[2]$
 and the torsor of Weierstrass points), so is also a finite flat group
 scheme of order 8. The~image of~${\cal L}_1$ is (by definition) the
 quadratic function~${\mathfrak q}\in \mu_2(E[2])$ considered above, which
 in characteristic not 2 is $1$ at 0 and $-1$ at the nontrivial
 $2$-torsion points. The~action of~$E[2]$ by translation induces an
 action on both groups, and the homomorphism is equivariant. In~particular, we~may compute the image of~$x\in E[2](S)$ in~$\Tw^0(E)$ as
 the image of~$x^*{\cal L}_1\otimes {\cal L}_1^{-1}$. Modulo overall
 scalars, this is ${\mathfrak q}(z+x){\mathfrak q}(z)^{-1}$; applying the
 splitting gives ${\mathfrak q}(z+x){\mathfrak q}(z)^{-1}{\mathfrak
 q}(x)^{-1}$. Since this is precisely the Weil pairing of~$z$ and $x$, we~conclude that the restriction of the coboundary morphism to~$E[2]$ is
 the Weil pairing, and in particular is injective. Since the image of any
 element of this index 2 subgroup scheme is a homomorphism and ${\mathfrak
 q}$ is {\em not} a homomorphism, we~conclude that the morphism on~$\Pic^0(E)[2]\disj \Pic^1(E)^{\langle s\rangle}$ is injective, and thus
 by degree considerations is surjective.
\end{proof}

\begin{cor}
 Any twisting datum for the action of~$s$ on~$C$ is isomorphic to a
 coboundary.
\end{cor}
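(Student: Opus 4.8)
The plan is to deduce the corollary directly from the short exact sequence of group schemes established in the preceding Lemma. That Lemma gives
\[
0\to \Pic(C/\langle s\rangle)\to \G_m.\Pic(C)^{\langle s\rangle}\to \Tw^0(C)\to 0,
\]
and by the general discussion preceding it, every element of $\G_m.\Pic(C)^{\langle s\rangle}$ maps to the coboundary twisting datum attached to the corresponding pair $({\cal L},\psi)$. Thus surjectivity of the second morphism already says that every twisting datum with trivial underlying gerbe (i.e.\ with ${\cal Z}_s\cong\sO_C$) is a coboundary. What remains is to reduce the general case — an arbitrary twisting datum whose gerbe ${\cal Z}_s$ need not be trivial — to this one.

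First I would observe that ${\cal Z}_s$ is a line bundle on $C$, hence (fppf-locally on $S$, or after passing to the relative Picard scheme, using that $C/S$ is projective) isomorphic to $\sO_C(D)$ for some Cartier divisor $D$, and that the isomorphism $\zeta_s:{\cal Z}_s\otimes {}^s{\cal Z}_s\to\sO_C$ forces $D+{}^sD$ to be principal. By the coboundary construction discussed in the text (for a line bundle ${\cal L}$ with an isomorphism ${}^s{\cal L}\cong{\cal L}$ one gets a twisting datum with underlying gerbe ${\cal L}\otimes{}^s{\cal L}^{-1}$), it suffices to produce a line bundle ${\cal L}$ on $C$, together with an isomorphism ${}^s{\cal L}\cong{\cal L}$, such that ${\cal L}\otimes{}^s{\cal L}^{-1}\cong{\cal Z}_s$ compatibly with $\zeta_s$. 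Concretely: since $\Pic^0(C)$ is $2$-divisible (it is an abelian variety, or after base change an elliptic curve, so multiplication by $2$ is surjective as an fppf sheaf), and since the degree of ${\cal Z}_s$ is $0$ (as $\deg{\cal Z}_s+\deg{}^s{\cal Z}_s=0$ and ${}^s$ preserves degree), I can write ${\cal Z}_s$ as ${\cal L}\otimes{}^s{\cal L}^{-1}$: choose ${\cal M}$ with ${\cal M}\otimes{}^s{\cal M}\cong{\cal Z}_s$ — possible because ${\cal Z}_s\otimes{}^s{\cal Z}_s$ is trivial forces ${\cal Z}_s$ into the $(-1)$-eigenpart of $\Pic^0(C)$ under ${}^s$ up to $2$-torsion, and one solves for a square root there — and adjust. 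Once ${\cal Z}_s$ is exhibited as a coboundary gerbe, the given datum and the coboundary datum differ by a twisting datum with \emph{trivial} gerbe, and we are reduced to the surjectivity statement of the Lemma.

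In writing this up I would be careful about two points. The first is the fppf issue: as the Lemma's proof already notes, statements about twisting data are statements about fppf sheaves, so it is legitimate to base change until $C/S$ is elliptic, produce the coboundary representative there, and conclude by descent — here the key fact, already emphasized in the text, is that the coboundary \emph{functor sends every automorphism to the identity}, so no compatibility of the descent data is needed, only that the relevant line bundle exists after base change. The second is matching $\zeta_s$, not just ${\cal Z}_s$: having arranged ${\cal Z}_s\cong{\cal L}\otimes{}^s{\cal L}^{-1}$, the two isomorphisms $\zeta_s$ and the canonical one on the coboundary differ by a unit on $S$ which can be absorbed into the $\G_m$-factor of $\G_m.\Pic(C)^{\langle s\rangle}$, i.e.\ by rescaling $\psi$. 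The main obstacle is really the first half of the second paragraph: showing that an arbitrary degree-$0$ line bundle ${\cal Z}_s$ with ${\cal Z}_s\otimes{}^s{\cal Z}_s$ trivial can be written in the coboundary form ${\cal L}\otimes{}^s{\cal L}^{-1}$ with an explicit isomorphism — everything else is bookkeeping with the exact sequence and fppf descent.
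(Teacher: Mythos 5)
Your overall plan is the paper's: reduce to the trivial-gerbe case using 2-divisibility of $\Pic^0(C)$, apply the Lemma's surjectivity, then descend. But there are two concrete problems with the execution.

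First, you conflate the two coboundary constructions in the text. You write that ``for a line bundle ${\cal L}$ with an isomorphism ${}^s{\cal L}\cong{\cal L}$ one gets a twisting datum with underlying gerbe ${\cal L}\otimes{}^s{\cal L}^{-1}$'', and that it therefore ``suffices to produce a line bundle ${\cal L}$ on $C$, together with an isomorphism ${}^s{\cal L}\cong{\cal L}$, such that ${\cal L}\otimes{}^s{\cal L}^{-1}\cong{\cal Z}_s$.'' But if such a $\psi:{}^s{\cal L}\cong{\cal L}$ exists, then ${\cal L}\otimes{}^s{\cal L}^{-1}$ is trivial, so this requires ${\cal Z}_s$ trivial and you have gained nothing. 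The coboundary gerbe of ${\cal L}$ needs no isomorphism $\psi$; it is the pairs $({\cal L},\psi)$ construction that does, and that map lands only in the trivial-gerbe piece $\Tw^0(C)$. Your auxiliary equation ${\cal M}\otimes{}^s{\cal M}\cong{\cal Z}_s$ is also the wrong one: the left side is the norm, which is trivial for any degree-0 ${\cal M}$ since $s$ acts as $-1$ on $\Pic^0(C)$. What you actually want, and what the paper does, is: since $s$ acts as $-1$ on $\Pic^0(C)$ one has ${\cal L}\otimes{}^s{\cal L}^{-1}\cong{\cal L}^{\otimes 2}$ for degree-0 ${\cal L}$, so it suffices to take an fppf-local square root of the degree-0 bundle ${\cal Z}_s$, which exists because $[2]$ is surjective on $\Pic^0(C)$. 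That is the ``adjust'' you gesture at, but as written the equation is unsolvable rather than needing adjustment.

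Second, your descent step is missing the essential check. You claim that because the coboundary functor sends automorphisms to the identity, ``no compatibility of the descent data is needed, only that the relevant line bundle exists after base change.'' But the automorphism-to-identity property only trivializes the cocycle condition once you know the two pullbacks of ${\cal L}\otimes{\cal L}'$ to $T\times_S T$ are (somehow) isomorphic; mere existence over $T$ does not give you that. The paper closes this gap with a short but necessary argument: the two pullbacks have the same coboundary (the base change of the given datum), hence by the exactness of the Lemma's sequence differ by an element of $\Pic(C/\langle s\rangle)\cong\Z$, which embeds by doubling degree, and the degree of the difference is $0$; hence the pullbacks agree, so ${\cal L}\otimes{\cal L}'$ defines a section of the relative Picard scheme of $C/S$, and it is the coboundary of that section that realizes the original datum. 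Without this degree count you have only shown the result after an fppf base change, not over $S$.
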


\begin{proof}
 Since the line bundle ${\cal Z}_s$ has norm $\sO_{C/s}$, it must have
 degree 0, and since multiplication by 2 is surjective on the group scheme
 $\Pic^0(C)$, there is an fppf covering $T\to S$ and a line bundle~${\cal
 L}$ of degree 0 on~$C_T$ such that ${\cal L}\otimes {}^s{\cal
 L}^{-1}\cong {\cal Z}_s$. If~we choose such an isomorphism, then the
 coboundary induces a twisting datum with line bundle ${\cal Z}_s$, and
 thus we may divide our original twisting datum by this new twisting datum
 to obtain a datum with trivial line bundle. By the lemma, this is the
 image of~$({\cal L}',\psi)$, where ${\cal L}'$ is an isomorphism class of
 line bundles of degree 0 or 1 and $\psi:{\cal L}'\cong {}^s{\cal L}'$.
 We thus conclude that the base change of the original twisting datum is
 isomorphic to the coboundary of~${\cal L}\otimes {\cal L}'$. The~two
 pullbacks of this bundle to~$T\times_S T$ have the same coboundary, so
 must differ by a pullback from $\sO_{C/\langle s\rangle}$, which by
 degree considerations must be trivial. Thus ${\cal L}\otimes {\cal L}'$
 corresponds to an $S$-point of the relative Picard scheme of~$C$, and the
 twisting datum is the coboundary of this point.
\end{proof}

We may extend the notion of twisting datum to more general groups by
assigning a twisting datum to each order 2 subgroup that fixes some
reflection hypersurface, and insist on the appropriate compatibility
relations. Again, any point of the relative Picard scheme has a
well-defined coboundary.

\begin{prop}
 Let $W$ be a finite Weyl group and let $X/S$ be an abelian torsor on
 which $W$ acts by reflections. If~the root kernel of~$X$ is
 trivial, then any twisting datum on~$X$ is isomorphic to a coboundary.
\end{prop}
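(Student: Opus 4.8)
The plan is to reduce the statement for a finite Weyl group to the rank $1$ case that was just established. Recall that the root kernel of $X$ being trivial means that the $W$-equivariant isogeny $\prod_i E_i \to A$ (where $A$ is the abelian variety underlying $X$) is an isomorphism, so $X$ is itself a product of root curves (as a torsor). A twisting datum on $X$ for the action of $W$ consists of an equivariant gerbe $\{{\cal Z}_g\}$ together with morphisms $h_r : {\cal Z}_r \to \sO_{[X^r]}$ of norm $\zeta_r$ for each reflection $r$, subject to the compatibility that conjugation by $w$ carries the datum for $r$ to the datum for $wrw^{-1}$. The key observation is that an equivariant gerbe is classified (up to meromorphic equivalence, which is all that matters for the associated twisting datum) by its underlying cohomology class in $H^1(W;\Pic(X))$, and the extra $h_r$ data is exactly what one needs to promote an equivariant structure to a twisting datum in rank $1$.

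First I would dispose of the cohomological part. By Proposition \ref{prop:coboundary_if_in_rank_1}, a cocycle $z\in Z^1(W;\Pic(X))$ whose restriction to each simple reflection $\langle s_i\rangle$ is a coboundary is itself a coboundary. So I would argue that the gerbe $\{{\cal Z}_g\}$ underlying our twisting datum, when restricted to $\langle s_i\rangle$, becomes a coboundary: this follows from the rank $1$ result (the Corollary after the Lemma on $\Tw^0(C)$) applied to the family of hyperelliptic curves $E_i$ — more precisely, to the rank $1$ sub-torsor corresponding to $s_i$, which is exactly an instance of the rank $1$ situation since $X$ splits as a product. Thus the restricted twisting datum, hence in particular the restricted gerbe, is a coboundary $\partial {\cal L}_i$ for some line bundle ${\cal L}_i$ on that rank $1$ curve. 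Since restriction to each $s_i$ kills the class, Proposition \ref{prop:coboundary_if_in_rank_1} gives that the global gerbe is itself a coboundary $\partial {\cal L}$ for some line bundle ${\cal L}$ on $X$ (equivalently, a choice of $W$-equivariant structure on a trivial-class bundle, up to the ambiguity of a character of $W$).

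Having arranged ${\cal Z}_g = {\cal L}\otimes {}^g{\cal L}^{-1}$, it remains to show the morphisms $h_r$ agree (up to the natural equivalence) with the ones coming from ${\cal L}$ via $h_r = {}^r f\, f^{-1}|_{[X^r]}$ for a meromorphic section $f$ of ${\cal L}$. The obstruction is again checked reflection-by-reflection: two twisting data with the same underlying coboundary gerbe differ by an element of $\Tw^0(X/S)$, i.e.\ by a section of $\sO_{\bigsqcup_r [X^r]}$ of unit norm, and the compatibility under $W$-conjugation means this element is $W$-equivariant in the appropriate sense. By the rank $1$ result each component is a coboundary, and the splitting $0\to \Pic(C/\langle s\rangle)\to \G_m.\Pic(C)^{\langle s\rangle}\to \Tw^0(C)\to 0$ from the rank $1$ Lemma identifies precisely which line-bundle data realizes it; assembling these (using that $X$ is a product of the $E_i$ so $\Pic(X)$ and the relevant Picard schemes decompose accordingly, and using triviality of the root kernel so there is no cokernel obstruction to gluing the local choices into a global line bundle on $X$) produces a single line bundle whose coboundary is the given twisting datum.

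The main obstacle I anticipate is the gluing step in the last paragraph: checking that the collection of rank $1$ coboundary presentations, which a priori live on the individual root curves $E_i$ and are only constrained by $W$-conjugation on reflection hypersurfaces, patch together into an honest line bundle on all of $X$ with the right norm — this is exactly where triviality (not just diagonalizability) of the root kernel is used, to rule out the torsion-point discrepancies flagged in the Remark after Proposition \ref{prop:coboundary_if_in_rank_1}, where the induced action on the dual variety provided a counterexample. Everything else is a formal combination of Proposition \ref{prop:coboundary_if_in_rank_1} with the rank $1$ classification of twisting data, plus the standard fact that $H^1(W;\Pic(X))$ is $2$-torsion and is detected on simple reflections.
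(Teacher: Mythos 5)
Your strategy — reduce to Proposition \ref{prop:coboundary_if_in_rank_1} plus the rank~$1$ classification of twisting data — is the correct one and matches the paper's. But there is a genuine gap in the way you set it up, stemming from the torsor issue.

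You assert that trivial root kernel implies ``$X$ is itself a product of root curves (as a torsor).'' This is not right: triviality of the root kernel gives $A\cong\prod_i E_i$, but $X$ is only a {\em torsor} over $A$ and need not have a section. Proposition \ref{prop:coboundary_if_in_rank_1} is stated for $X/S=\prod_i E_i$ (i.e.\ assumes a $W$-invariant section), and the rank~$1$ corollary produces a coboundary of an $S$-point of the {\em relative Picard scheme} of the hyperelliptic curve, not automatically of a line bundle. Without a section these are genuinely different, and this is precisely what you cannot get around by ``gluing.'' The paper handles this by first passing to an fppf cover $T/S$ over which $X$ has a $W$-invariant section, running your argument over $T$ (where Proposition \ref{prop:coboundary_if_in_rank_1} applies and Picard scheme points are honest line bundles), and then performing an fppf descent at the end: the two pullbacks of the resulting ${\cal L}_T$ to $T\times_S T$ have the same coboundary, so their ratio is an equivariant bundle descending in codimension $1$, which can be normalized to have trivial polarization and hence lies in $\Pic^0(X)\cong\prod_i E_i$; restricting the (necessarily trivial) coboundary to each $s_i$ then forces the ratio to vanish, so ${\cal L}_T$ descends to a point of the relative Picard scheme of $X/S$. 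The role of trivial root kernel in this last step — identifying $\Pic^0(X)$ with $\prod_i E_i$ so the coboundary can be checked component-by-component — is not the ``torsion-point discrepancies'' you point at from the Remark; that remark concerns the failure of Proposition \ref{prop:coboundary_if_in_rank_1} itself when one quotients by a subgroup of $A^W$, which is a separate (if related) issue. So the missing ideas are: (i) base change to get a section before invoking Proposition \ref{prop:coboundary_if_in_rank_1}, (ii) keeping track of the distinction between Picard scheme points and line bundles, and (iii) the descent argument at the end, which is where the real work is and which your ``gluing'' step does not capture.
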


\begin{proof}
 Let $T/S$ be an fppf cover over which $X$ has a $W$-invariant section. The~induced twisting datum on each rank~1 parabolic subgroup is the
 coboundary of a~point in the Picard scheme, but since $X_T$ has a
 section, it is in fact the coboundary of a line bundle on~$X_T$. We may
 then apply Proposition~\ref{prop:coboundary_if_in_rank_1} to express the
 induced class in~$Z^1(W;\Pic(X_T))$ as a coboundary. Moreover, we~may
 use the invariant section (which is contained in every $[X^s]$) to
 rigidify the various isomorphisms, and thus express the base changed
 twisting datum as the product of this coboundary and a twisting datum
 with trivial underlying gerbe.

 Choose representatives among the simple reflections of the conjugacy
 classes of reflections, and observe that for each such $s_i$, there is a
 unique bundle ${\cal L}_i$ on~$E_i$ of degree 0 or 1 such that the
 restriction of the latter twisting datum on~$s_i$ is the coboundary of~$\pi_i^*{\cal L}_i$. Tensoring the conjugates of those bundles gives a
 $W$-invariant (but not equivariant) line bundle with the desired twisting
 datum, and thus expresses the base change of the original twisting datum
 as the coboundary of a line bundle ${\cal L}_T$.

 Since the two pullbacks of~${\cal L}_T$ to~$T\times_S T$ have the same
 coboundary, they differ by an~equi\-variant bundle on~$X_T$ that descends in
 codimension~1. Since the polarization of a line bundle is locally
 constant, we~may arrange for the ratio to have trivial polarization, so
 be a~point of~$\Pic^0(X)(T\times_S T)\cong \prod_i E_i(T\times_S T)$. The~restriction to~$s_i$ of the coboundary of such a~point is essentially
 just the coboundary of the corresponding point of~$E_i(T\times_S T)$;
 since it must be trivial, we~conclude that the two pullbacks of~${\cal
 L}_T$ are in fact isomorphic, and thus that ${\cal L}$ descends to a~point of the relative Picard scheme of~$X/S$.
\end{proof}

\begin{rem}
 This can fail if the root kernel is nontrivial, even when~$X$ has a
 section. However, there is a flat finite cover $T\to S$ over which $X$
 can be expressed as the quotient of a torsor with trivial root kernel,
 so that we may describe a twisting datum on~$X$ as the coboundary
 of a line bundle on~$X_T$, subject to appropriate descent conditions.
\end{rem}

More generally, if $W$ is a~Coxeter group and $X/S$ is an abelian torsor
with an action of~$W$ of coroot type, then an assignment of twisting data
on the rank~1 parabolic subgroups extends to at most one twisting datum for
$W$. Indeed, we~may extend the collection of pairs $({\cal
 Z}_{s_i},\zeta_i)$ to an equivariant gerbe by choosing a reduced word for
each $w\in W$ and defining
\begin{gather*}
{\cal Z}_w :=
{\cal Z}_{s_1}\otimes {}^{s_1}{\cal Z}_{s_2}\otimes {}^{s_1s_2}{\cal
 Z}_{s_3}\otimes\cdots\otimes {}^{s_1\cdots s_{m-1}}{\cal Z}_{s_m}.
\end{gather*}
If all we had was the equivariant gerbe structure, we~would also need to
specify isomorphisms corresponding to the different braid relations, which
would themselves need to satisfy compatibility relations (coming from
finite parabolic subgroups of rank $\le 3$).\footnote{In general, one can
 define a $G$-equivariant gerbe by giving a line bundle for each generator
 of~$G$, a morphism for each relation, subject to a consistency condition
 for each $3$-cell of the classifying space $BG$. For~Coxeter groups,
 there is a model of~$BW$ with~$k$-cells corresponding to multisets of~$k$
 simple roots such that the corresponding parabolic subgroup is finite,
 and thus the $3$-cells come from finite parabolic subgroups of rank $\le
 3$.} Luckily, each braid relation may be restated as a conjugacy
relation between simple reflections, and the isomorphism corresponding to
the braid relation appears linearly in the corresponding consistency
condition on the twisting data. For~any rank $\le 3$ finite parabolic
subgroup, the different reflection hypersurfaces have a nonempty common
intersection, and thus the further compatbility conditions of~the~gerbe
will be automatically satisfied. Thus a collection of rank~1 twisting data
extends to a~full twisting datum iff it extends for every finite rank 2
subgroup.

Let $\gamma$ denote such a twisting datum, and write $k(X)[W]_\gamma$ for
$k(X)[W]_{{\cal Z}_\gamma}$, and let ${\cal H}_{W;\vec{T};\gamma}(X)$
denote the sheaf subalgebra generated by the rank~1 algebras ${\cal
 L}_i\otimes {\cal H}_{\langle s_i\rangle,T_i}(X)\otimes {\cal L}_i^{-1}$,
where ${\cal L}_i$ is any line bundle with coboundary $\gamma|_{s_i}$.
Then the usual arguments carry over from the finite case to give the
following.

\begin{prop}
 If $I$ is a finite order ideal in~$W$ and $w\in I$ is a maximal element,
 then there is a short exact sequence
 \begin{gather*}
 0\to {\cal H}_{W;\vec{T};\gamma}(X)[I\setminus \{w\}]
 \to {\cal H}_{W;\vec{T};\gamma}(X)[I]
 \to (1,w^{-1})_*
 \big({\cal Z}_{\gamma,w}\otimes \sO_X\big(D_w\big(\vec{T}\big)\big)\big)
 \to 0
 \end{gather*}
 of sheaf bimodules.
\end{prop}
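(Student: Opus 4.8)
The plan is to adapt, essentially verbatim, the proof strategy of Lemma~\ref{lem:Bruhat_fin_parm} (and, underneath it, Lemma~\ref{lem:Bruhat_fin_noparm}), noting that the only new ingredient—the presence of the twisting datum $\gamma$—contributes nothing more than a systematic tensor factor ${\cal Z}_{\gamma,w}$ that is transported along by the multiplication maps. First I would fix a reduced word $w=s_1\cdots s_m$ and consider the multiplication map
\[
({\cal L}_1\otimes {\cal H}_{\langle s_1\rangle,\vec{T}}(X)\otimes {\cal L}_1^{-1})
\otimes_X\cdots\otimes_X
({\cal L}_m\otimes {\cal H}_{\langle s_m\rangle,\vec{T}}(X)\otimes {\cal L}_m^{-1})
\to
{\cal H}_{W;\vec{T};\gamma}(X),
\]
whose image lies in ${\cal H}_{W;\vec{T};\gamma}(X)[\le w]$ by the subword characterization of Bruhat order. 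Expanding out the telescoping tensor product of the ${\cal L}_i$'s along the reduced word gives exactly the line bundle ${\cal Z}_{\gamma,w}$ appearing in the definition of the equivariant gerbe (the $\zeta$-isomorphisms make this well defined and independent of the reduced word for the purpose of computing the subquotient). Replacing each rank 1 factor by its leading-coefficient line bundle—$\sO_X([X^{s_i}]-T_i)$ twisted by the relevant ${\cal L}_i$—and using the strong exchange property as in Lemma~\ref{lem:Bruhat_fin_noparm} to identify $\sum_{1\le i\le m} [X^{s_1\cdots s_{i-1}s_is_{i-1}\cdots s_1}]$ with $D_w$, one obtains that the map factors through ${\cal Z}_{\gamma,w}\otimes \sO_X(D_w(\vec{T}))$ and is surjective onto it; that surjectivity onto the top subquotient is the substance of the argument and is proved exactly as in Corollary~\ref{cor:Bruhat_fin_noparm}.

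The remaining point is that the coefficient of $w$ in an arbitrary local section of ${\cal H}_{W;\vec{T};\gamma}(X)[I]$ is in fact a section of ${\cal Z}_{\gamma,w}\otimes \sO_X(D_w(\vec{T}))$, i.e.\ the pole bound is no worse than $D_w$. For this I would use the order/reflexivity framework of Proposition~\ref{prop:reflexive_order_construction}: ${\cal H}_{W;\vec{T};\gamma}(X)$ is a left reflexive order in $k(X)[W]_\gamma$, so it suffices to check the pole bound at each codimension 1 point, where it reduces to the local two-morphism computation of Corollary~\ref{cor:local_order_two_residue_conditions} (equivalently Corollary~\ref{cor:global_order_two_residue_conditions}), exactly as in the finite case; the twisting only conjugates the local picture by a line bundle and does not affect the valuation bound. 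As in Lemma~\ref{lem:Bruhat_fin_parm}, one first handles the case where the $T_\alpha$ avoid a common component with the discriminant divisor $D_{w_0}$, using that $s_1\cdots s_{i-1}\alpha_i$ is a positive root (standard Coxeter theory) so that the vanishing along $T_{\alpha_r}$ for $rw<w$ is forced.

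To remove the genericity hypothesis on $\vec T$, I would run the usual base-change/semicontinuity argument: embed the data in the universal family over an appropriate product of symmetric powers of coroot curves (where the $T_\alpha$ are generically in general position, and where $D_w(\vec{T})$ is a flat family of divisors), observe that ${\cal H}_{W;\vec{T};\gamma}(X)$ is generated by a flat family of coherent submodules so its Hilbert polynomial is lower semicontinuous, and note that the leading-coefficient elements we constructed above force the bound to be tight. Since the construction respects base change (the rank 1 case is explicit and the rank 1 subalgebras generate), the short exact sequence for general $\vec T$ follows by specialization.

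The main obstacle, and the only place requiring genuine care beyond copying the finite-rank proof, is making precise that the telescoping product of the ${\cal L}_i$ along a reduced word yields the gerbe line bundle ${\cal Z}_{\gamma,w}$ in a way compatible with the multiplication maps and independent of the chosen reduced word (so that the subquotient is canonically ${\cal Z}_{\gamma,w}\otimes\sO_X(D_w(\vec T))$, not just abstractly isomorphic to it). This is exactly the content of the equivariant-gerbe consistency conditions discussed just before the statement: the braid relations restate as conjugacy relations among simple reflections, and the compatibility of the $\zeta_{g,h}$ over rank $\le 3$ finite parabolic subgroups guarantees that different reduced words give the same ${\cal Z}_{\gamma,w}$ up to the canonical isomorphism. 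Once that bookkeeping is in place, the rest is a routine transcription of Lemma~\ref{lem:Bruhat_fin_parm}.
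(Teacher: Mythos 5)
Your proposal is correct and follows exactly the route the paper takes, which is simply to assert that ``the usual arguments carry over from the finite case'' (i.e.\ from Lemmas~\ref{lem:Bruhat_fin_noparm} and~\ref{lem:Bruhat_fin_parm} and Corollary~\ref{cor:Bruhat_fin_noparm}). You have correctly spelled out what ``carry over'' means here, and in particular you have isolated the one piece of genuinely new bookkeeping—that the telescoping of the ${\cal L}_i$-conjugations along a reduced word produces the gerbe bundle ${\cal Z}_{\gamma,w}$ canonically and independently of the reduced word, which is guaranteed by the rank-$\le 3$ compatibility conditions built into the definition of a twisting datum.
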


\begin{cor}
 The construction of the sheaf algebra ${\cal H}_{W;\vec{T};\gamma}(X)$
 respects base change.
\end{cor}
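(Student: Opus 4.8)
The plan is to reduce the base-change statement to the short exact sequence of the preceding Proposition, exactly as in the finite-group case (Corollary after Lemma \ref{lem:Bruhat_fin_parm}, and the earlier base-change corollary for ${\cal H}_W(X)$). The point is that ${\cal H}_{W;\vec{T};\gamma}(X)$ is not coherent, but each piece ${\cal H}_{W;\vec{T};\gamma}(X)[I]$ for a finite Bruhat order ideal $I$ is coherent (it is an ${\cal M}$-type sheaf twisted by a line bundle), and ${\cal H}_{W;\vec{T};\gamma}(X)$ is the filtered colimit of these over all finite order ideals. Since pullback commutes with colimits, it suffices to show that for each finite order ideal $I$ the natural morphism $\pi_1^*\bigl({\cal H}_{W;\vec{T};\gamma}(X)[I]\bigr)\to {\cal H}_{W;\vec{T};\gamma}(X\times_S T)[I]$ is an isomorphism, where $\pi_1: X\times_S T\to X$ is the projection and we implicitly use that Bruhat order ideals make sense uniformly in the family.

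First I would observe that there is a well-defined morphism in the required direction. This comes from the fact that the rank 1 generators ${\cal L}_i\otimes{\cal H}_{\langle s_i\rangle,T_i}(X)\otimes {\cal L}_i^{-1}$ pull back correctly: in rank 1 this is immediate from the explicit description of ${\cal H}_{\langle s_i\rangle,T_i}(X)$ (the coefficient of $s_i$ lies in $\sO_X([X^{s_i}]-T_i)$, and both $[X^{s_i}]$ and $T_i$ are flat families of divisors, so their pullbacks behave as expected, as does the twisting line bundle ${\cal L}_i$ since its coboundary $\gamma|_{s_i}$ is functorial), together with the fact that the full algebra is generated by these. As in the analogous corollary in the finite case, no check of relations is needed beyond the generators, since the whole construction takes place inside the meromorphic twisted group algebra $k(X)[W]_\gamma$, and base change of that is straightforward.

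Next, to see the morphism is an isomorphism I would induct on the (finite) order ideal $I$, using the short exact sequence of the Proposition: if $w$ is a maximal element of $I$, then
\[
0\to {\cal H}_{W;\vec{T};\gamma}(X)[I\setminus\{w\}]\to {\cal H}_{W;\vec{T};\gamma}(X)[I]\to (1,w^{-1})_*\bigl({\cal Z}_{\gamma,w}\otimes\sO_X(D_w(\vec{T}))\bigr)\to 0.
\]
Pulling back along $\pi_1$ is right exact, and the subquotient term is a pushforward of a line bundle along the graph of an automorphism, hence already flat over $S$, so its pullback is computed termwise; moreover $\pi_1^*\bigl({\cal Z}_{\gamma,w}\otimes\sO_X(D_w(\vec{T}))\bigr)$ is the corresponding line bundle over $X\times_S T$, because ${\cal Z}_{\gamma,w}$ is built functorially from the $\gamma|_{s_i}$ (which base-change) and $D_w(\vec{T})$ is a flat family of divisors. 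By the induction hypothesis the pullback of the sub is ${\cal H}_{W;\vec{T};\gamma}(X\times_S T)[I\setminus\{w\}]$, and the pullback of the quotient is the corresponding subquotient over $T$; a short diagram chase (five lemma, using that the left-hand term injects after pullback because it is a flat sheaf — indeed it is itself built from such sequences) then forces the middle map to be an isomorphism. The base case $I=\emptyset$ (or $I=\{1\}$, giving $\sO_X$) is trivial.

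The main obstacle I expect is bookkeeping rather than anything deep: one must be careful that the short exact sequences are genuinely sequences of $S$-flat sheaves so that the pullbacks stay exact, and that the identifications of ${\cal Z}_{\gamma,w}$ and $\sO_X(D_w(\vec{T}))$ under base change are canonical enough to make the diagram commute. This is guaranteed here because $D_w(\vec T)=\sum_{r:rw<w}([X^r]-T_{\alpha_r})$ is a difference of flat families of effective Cartier divisors (the $[X^r]$ are flat since $X/S$ is a flat family of abelian torsors with $W$ acting by reflections, and the $T_\alpha$ never contain a fiber), and ${\cal Z}_{\gamma,w}$ is a tensor product of pullbacks of the ${\cal Z}_{s_i}$ through elements of $W$, all of which commute with base change by the functoriality of twisting data. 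Once these compatibilities are in hand, the colimit over finite order ideals recovers the full statement $\pi_1^*{\cal H}_{W;\vec{T};\gamma}(X)\cong {\cal H}_{W;\vec{T};\gamma}(X\times_S T)$.
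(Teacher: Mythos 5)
Your argument is correct and matches the paper's own route: the paper states this corollary without further comment precisely because the finite-case proof (establish a morphism via the rank~1 generators inside the meromorphic twisted group algebra, then use compatible Bruhat filtrations whose subquotients are twisted line bundles supported on graphs of automorphisms to see the morphism is an isomorphism) carries over verbatim, and your write-up is exactly that argument made explicit, with the induction over finite order ideals and the flatness of the line-bundle subquotients supplying the needed exactness after pullback.
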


{\sloppy\begin{cor}
 Let $\vec{T}$ be a system of parameters such that every $T_\alpha$ is
 transverse to every ref\-lection hypersurface. Then ${\cal
 H}_{W;\vec{T};\gamma}(X)$ may be identified with the sheaf subalgebra
 of~${\cal H}_{W;\gamma}(X)$ consisting locally of operators $\sum_w c_w
 w$ such that for every $w$, $c_w$ vanishes on the divisor
 $\sum_{\alpha\in\Phi^+(W)\cap w\Phi^-(W)} T_{\alpha}$.
\end{cor}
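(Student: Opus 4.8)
The plan is to reprise, essentially verbatim, the argument used for the analogous finite-rank statement (the Corollary following Lemma~\ref{lem:Bruhat_fin_parm}), now powered by the twisted Bruhat short exact sequence for ${\cal H}_{W;\vec{T};\gamma}(X)[I]$ established just above. There are two inclusions to check. For the easy one --- that every local section $\sum_w c_w w$ of ${\cal H}_{W;\vec{T};\gamma}(X)$ satisfies the stated vanishing conditions --- I would first observe that such a section is supported on a finite subset of $W$ (the sheaf is the union of the coherent pieces ${\cal M}_{\vec{g}}$, so a section over any quasi-compact open lies in one of them), and that the Bruhat order ideal $I$ it generates is therefore finite, since $[\le w]$ is finite for each $w$. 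Then for any $w\in I$ one chooses a maximal chain of order ideals through $I$ in which $w$ is the element adjoined at some step; the twisted Bruhat sequence identifies the corresponding subquotient of ${\cal H}_{W;\vec{T};\gamma}(X)[I]$ with $(1,w^{-1})_*({\cal Z}_{\gamma,w}\otimes\sO_X(D_w(\vec{T})))$, so $c_w$ is a meromorphic section of ${\cal Z}_{\gamma,w}\otimes\sO_X(D_w(\vec{T}))$. Since ${\cal Z}_{\gamma,w}$ is a line bundle and $D_w(\vec{T})=\sum_{r\colon rw<w}([X^r]-T_{\alpha_r})$, transversality of the $T_\alpha$ to the reflection hypersurfaces lets one read off that $c_w$ has polar divisor bounded by $\sum_{r\colon rw<w}[X^r]$ and vanishes on $\sum_{r\colon rw<w}T_{\alpha_r}=\sum_{\alpha\in\Phi^+(W)\cap w\Phi^-(W)}T_\alpha$.

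For the reverse inclusion I would take a local section ${\cal D}=\sum_w c_w w$ of ${\cal H}_{W;\gamma}(X)$ on an open $U$ satisfying the vanishing conditions --- again with finite support generating a finite order ideal $I$ --- and induct on $|I|$. Picking a maximal element $w_1$ of $I$, the twisted analogue of Corollary~\ref{cor:global_order_two_residue_conditions} bounds the pole of $c_{w_1}$ by $\sum_{w'\in I,\ w'w_1^{-1}\in R(W)}[X^{w'w_1^{-1}}]$, and maximality forces $w'<w_1$ for each such $w'$, so the bound is $\sum_{r\colon rw_1<w_1}[X^r]$; with the hypothesised vanishing and transversality this says $c_{w_1}$ is a section of ${\cal Z}_{\gamma,w_1}\otimes\sO_X(D_{w_1}(\vec{T}))$. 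The surjectivity in the twisted Bruhat sequence then gives an open cover $U=\bigcup_i V_i$ and, on each $V_i$, a section ${\cal D}_i$ of ${\cal H}_{W;\vec{T};\gamma}(X)[I]$ with the same left coefficient of $w_1$. Since ${\cal D}_i$ lies in ${\cal H}_{W;\vec{T};\gamma}(X)$ it satisfies the vanishing conditions by the first part, hence so does ${\cal D}-{\cal D}_i$, which moreover is supported on a strictly smaller order ideal; by induction ${\cal D}-{\cal D}_i\in\Gamma(V_i;{\cal H}_{W;\vec{T};\gamma}(X))$, so ${\cal D}|_{V_i}$ is too, and membership in a sheaf being local finishes the proof.

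The only genuinely new point relative to the finite-rank case, and the place I expect to have to be careful, is the bookkeeping forced by $W$ being infinite: one must check up front that local sections of ${\cal H}_{W;\gamma}(X)$ involve only finitely many group elements and that the Bruhat order ideal generated by a finite set is finite, so that the ``peel off a maximal Bruhat element'' induction terminates. Everything else transfers: the twisting datum $\gamma$ is carried along automatically, contributing only the invertible factors ${\cal Z}_{\gamma,w}$ to the Bruhat subquotients, while the divisorial vanishing conditions are insensitive to it, and the transversality hypothesis is used exactly as in the finite case --- to decouple the polar part $\sum_{r\colon rw<w}[X^r]$ of $D_w(\vec{T})$ from its vanishing part $\sum_{\alpha\in\Phi^+(W)\cap w\Phi^-(W)}T_\alpha$.
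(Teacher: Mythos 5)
Your proposal is correct and matches what the paper intends here: the paper states this corollary without a separate proof, precisely because it is obtained by replaying, word for word, the proof of the finite-rank version (the corollary following Lemma~\ref{lem:Bruhat_fin_parm}) once the twisted Bruhat short exact sequence for ${\cal H}_{W;\vec{T};\gamma}(X)[I]$ is in hand. You also correctly isolate the one genuinely new point needed to make the peeling induction run for infinite $W$ — that local sections have finite support in $W$ because the sheaf algebra is a directed union of its coherent pieces, and that the Bruhat order ideal generated by a finite set is finite since each $[\le w]$ is finite.
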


}

\begin{cor}\label{cor:infin_hecke_twisted_interval}
 Let $w\in W$ be given by the reduced word $w=s_1\cdots s_l$. Then the
 multiplication map
 ${\cal H}_{\langle s_1\rangle,\vec{T};\gamma}(X)
 \otimes_X
 \cdots
 \otimes_X
 {\cal H}_{\langle s_l\rangle,\vec{T};\gamma}(X)
 \to
 {\cal H}_{W;\vec{T};\gamma}(X)[\le w]$
 is surjective. Moreover, any product of rank~$1$ subalgebras is equal to
 some Bruhat interval.
\end{cor}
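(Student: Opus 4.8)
The plan is to run the induction on $\ell(w)$ from the proof of Corollary~\ref{cor:Bruhat_intervals_product} essentially verbatim, the only genuinely new ingredient being an exactness remark about tensor products of sheaf bimodules. First I would record that the multiplication map is defined, i.e.\ that its image lies in ${\cal H}_{W;\vec{T};\gamma}(X)[\le w]$: every monomial occurring in the expansion of a product of local sections of the rank~$1$ algebras is supported, as an element of $k(X)[W]_\gamma$, on an element of $W$ obtained from the word $s_1\cdots s_l$ by deleting some (possibly empty) set of letters, and by the subword characterization of Bruhat order every such element is $\le w$. It then remains to check surjectivity, which I would verify on stalks.

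I would induct on $l=\ell(w)$, the case $l=0$ being trivial (the empty product is the unit sub-bimodule $\sO_{\Delta_{X/S}}$). For $l\ge 1$ set $s=s_1$, so that $sw=s_2\cdots s_l$ is a reduced word with $sw<w$. By the induction hypothesis the product of the last $l-1$ rank~$1$ algebras already surjects onto ${\cal H}_{W;\vec{T};\gamma}(X)[\le sw]$; since $M\otimes_X(-)$ is right exact whenever $M$ is a coherent sheaf bimodule (the direct image along the projection $X\times_S X\times_S X\to X\times_S X$ forgetting the middle factor is exact, as the relevant supports are finite over $X\times_S X$), the image of the full multiplication map coincides with the image of
\[
{\cal H}_{\langle s\rangle,\vec{T};\gamma}(X)\otimes_X {\cal H}_{W;\vec{T};\gamma}(X)[\le sw]\longrightarrow {\cal H}_{W;\vec{T};\gamma}(X)[\le w].
\]
Since ${\cal H}_{\langle s\rangle,\vec{T};\gamma}(X)$ contains $\sO_{\Delta_{X/S}}$, this image contains ${\cal H}_{W;\vec{T};\gamma}(X)[\le sw]$, so it suffices to prove surjectivity onto the quotient ${\cal H}_{W;\vec{T};\gamma}(X)[\le w]/{\cal H}_{W;\vec{T};\gamma}(X)[\le sw]$.

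Choosing the reduced word for $w$ to begin with $s$, the subword description gives $[\le w]=[\le sw]\cup s[\le sw]$, so every $w'\in[\le w]\setminus[\le sw]$ has the form $w'=s\,(sw')$ with $sw'\le sw$, hence $sw'<w'$. Filtering the quotient by a maximal chain of Bruhat order ideals, its subquotients are the twisted line bundles $(1,(w')^{-1})_*({\cal Z}_{\gamma,w'}\otimes\sO_X(D_{w'}(\vec{T})))$ for $w'\in[\le w]\setminus[\le sw]$, by the short exact sequence of the preceding Proposition, and it is enough to hit each one. If $w'\ne w$, then $\ell(w')<l$ and $sw'<w'$, so by the induction hypothesis ${\cal H}_{\langle s\rangle,\vec{T};\gamma}(X)\otimes_X {\cal H}_{W;\vec{T};\gamma}(X)[\le sw']\to {\cal H}_{W;\vec{T};\gamma}(X)[\le w']$ is surjective; as $sw'\le sw$ this factors through ${\cal H}_{\langle s\rangle,\vec{T};\gamma}(X)\otimes_X {\cal H}_{W;\vec{T};\gamma}(X)[\le sw]$, so projecting onto the $w'$-subquotient shows it is hit. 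If $w'=w$, then by the induction hypothesis the image of ${\cal H}_{\langle s\rangle,\vec{T};\gamma}(X)\otimes_X {\cal H}_{W;\vec{T};\gamma}(X)[\le sw]$ is exactly the image of the full multiplication map over the reduced word $s_1\cdots s_l$, and the leading-coefficient computation in the proof of the preceding Proposition (which uses the strong exchange property to identify $D_w(\vec{T})=\sum_{1\le i\le l}{}^{s_1\cdots s_{i-1}}([X^{s_i}]-T_{\alpha_i})$, together with the cocycle ${\cal Z}_{\gamma,w}$) shows this surjects onto the top subquotient. This closes the induction.

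The main obstacle is not any single step: the combinatorial core — the subword property, the strong exchange property, and the identity $[\le w]=[\le sw]\cup s[\le sw]$ — is purely group-theoretic and identical to the finite case, while the twisting datum $\gamma$ contributes only bookkeeping already absorbed into the preceding Proposition. The one point genuinely requiring care in the infinite setting is the right-exactness of $\otimes_X$ for coherent sheaf bimodules invoked above, i.e.\ that the image of a product of coherent sheaf bimodules equals the image obtained by tensoring the first factor with the image of the rest; this follows from finiteness of the relevant supports over $X\times_S X$, which makes the defining direct image exact.
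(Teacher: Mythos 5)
Your proof is correct and follows essentially the same line as the paper's, which omits the proof of this corollary and earlier proves the untwisted finite version (Corollary~\ref{cor:Bruhat_fin_noparm}) by exactly this induction: reduce to surjectivity of ${\cal H}_{\langle s\rangle}\otimes_X{\cal H}[\le sw]\to{\cal H}[\le w]$, check subquotients via $[\le w]=[\le sw]\cup s[\le sw]$, and hit the top one by the strong-exchange leading-coefficient computation from the preceding Lemma/Proposition. Your only addition beyond what the paper leaves implicit is spelling out the right-exactness of $\otimes_X$ for coherent sheaf bimodules (via exactness of the affine direct image $\pi_{13*}$ on finitely-supported sheaves), which is the correct technical point needed to transport the finite-case argument into the sheaf-bimodule setting.
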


Suppose that $D_1, \dots, D_n$ are Cartier divisors such that
$Z_{s_i}=D_i-{}^{s_i}D_i$ extends to a cocycle valued in Cartier divisors,
and consider the case that $\gamma_i = \partial\sO_X(D_i)$. Since the
action of~$W$ on the group of Cartier divisors is a permutation module, its
restriction to any finite subgroup is induced from a trivial module, so has
trivial $H^1$. In~particular $Z|_{\langle s_i,s_j\rangle}$ is a coboundary
of some~$D_{ij}$ for any finite rank 2 parabolic subgroup. If~$D_i-D_{ij}$
has even valuation along any (separable) component of~$[X^{s_i}]$, and
similarly for~$D_j-D_{ij}$, then $\gamma_i=\partial\sO_X(D_{ij})$ and
similarly for $\gamma_j$, and thus we have a compatible extension of
twisting data. Note that since $D_{ij}$ is only determined up to~$\langle
s_i,s_j\rangle$-invariant divisors, we~can change its parity along each
orbit of~$\langle s_i,s_j\rangle$-reflection hypersurfaces independently,
and thus if $s_i$ and $s_j$ are not conjugate, this condition can always be
satisfied, and otherwise reduces to a simple parity constraint.

Given any other twisting datum $\gamma$, let $\gamma\big(\vec{D}\big)$ denote the
twisting datum obtained by tensoring with the above twisting datum. Since
$Z_{s_i}$ extends to a cocycle valued in Cartier divisors, the
resulting equivariant gerbe comes with a natural meromorphic equivalence to
the original equivariant gerbe, and thus we have an induced isomorphism
\smash{$k(X)[W]_{\gamma}\cong k(X)[W]_{\gamma(\vec{D})}$} for any~$\gamma$,
and may in this way view \smash{${\cal H}_{W;\vec{T};\gamma(\vec{D})}$} as
a subalgebra of~$k(X)[W]_{\gamma}$.

\looseness=-1 To understand such isomorphisms more generally, we~will need to understand
cocycles valued in Cartier divisors. The~fact that $\Hom(W,\Z)=0$ implies
that any {\em coinduced} module for $W$ has trivial $H^1$. Since Cartier
divisors are a sum of {\em induced} modules, there can be (and are)
cocycles valued in Cartier divisors which are not coboundaries. However,
since the induced modules are contained in the corresponding coinduced
modules, we~can always express such a cocycle as a~coboundary in the larger
module (of integer-valued functions on the set of irreducible Cartier
divisors). Note that since the typical element of a coinduced module will
not have coboundary in the induced submodule, we~need to add the condition
that any element of~$w$ only changes finitely many values of the function;
naturally, it suffices to verify the condition for the simple reflections.

For instance, if we interpret $\sum_{\alpha\in \Phi^+(W)}T_\alpha$
as giving an integer-valued function on irreducible Cartier divisors (i.e.,
the sum over $\alpha\in\Phi^+(W)$ of the valuation of~$T_\alpha$ along the
given divisor), then any element of~$W$ only changes finitely many values
of the function, and thus we obtain a~well-defined coboundary
$Z_w=\sum_{\alpha\in \Phi^+(W)\cap w\Phi^-(W)} (T_\alpha-T_{-\alpha})$.

We may also use such formal sums to define (meromorphically trivial)
twisting data; if $\Gamma$ is an integer-valued function on irreducible
Cartier divisors such that $\Gamma-{}^{s_i}\Gamma$ has finite support, then
we may obtain a divisor $D_i$ with the same coboundary on~$\langle
s_i\rangle$ by restricting $\Gamma$ to the union of the support of~$\Gamma-{}^{s_i}\Gamma$ and the components of the reflection hyperplanes.
Similarly, if $\langle s_i,s_j\rangle$ is finite, then we may obtain a
divisor $D_{ij}$ by restricting $\Gamma$ to the union of the supports of~$\Gamma-{}^w\Gamma$ for~$w\in \langle s_i,s_j\rangle$, and find that
$D_{ij}-D_i$ and $D_{ij}-D_j$ are pullbacks, so that
$\gamma_i=\partial\sO_X(D_i)$ gives a well-defined twisting datum. We
denote the twist of some other $\gamma$ by this meromorphically trivial
datum by $\gamma(\Gamma)$. (More precisely, a twisting datum is determined
by $\Gamma$ along with a choice of representation of each
$\Gamma-{}^{s_i}\Gamma$ as a coboundary; the above convention can behave
badly in families, but there is always a consistent way to take a limit of
the choices of representations as coboundaries in rank~1.)

We then introduce the notation
\begin{gather*}
\sO_X(\Gamma)\otimes {\cal H}_{W;\vec{T};\gamma}(X)\otimes \sO_X(-\Gamma)
\end{gather*}
for ${\cal H}_{W;\vec{T};\gamma(\Gamma)}(X)$ viewed as a subalgebra of~$k(X)[W]_\gamma$. Note that if $\Gamma'-\Gamma$ has finite support,
then
\begin{gather*}
\begin{split}
& \sO_X(\Gamma')\otimes {\cal H}_{W;\vec{T};\gamma}(X)\otimes \sO_X(-\Gamma')
\\ & \hphantom{\sO_X(\Gamma')}
\cong\sO_X(\Gamma'-\Gamma)\otimes (\sO_X(\Gamma)
\otimes {\cal
 H}_{W;\vec{T};\gamma}(X)\otimes \sO_X(-\Gamma))\otimes \sO_X(\Gamma-\Gamma'),\end{split}
\end{gather*}
where the outer twist on the right hand side is the usual twist by a line
bundle. We may also define a sheaf $\sO_X(\Gamma')\otimes {\cal
 H}_{W;\vec{T};\gamma}(X)\otimes \sO_X(-\Gamma)$ in this case by
$\sO_X(\Gamma'-\Gamma)\otimes (\sO_X(\Gamma)\otimes {\cal
 H}_{W;\vec{T};\gamma}(X)\otimes \sO_X(-\Gamma))$.

\begin{prop}\label{prop:split_system_of_parameters}
 Let $\vec{T}$, $\vec{T}'$ be two systems of parameters for~$W$ on~$X$.
 Then
 \begin{gather*}
 {\cal H}_{W;\vec{T}+\vec{T}';\gamma}(X)
 =
 \sO_X\bigg({-}\sum_{\alpha\in \Phi^+(W)} T'_\alpha \bigg)\otimes
 {\cal H}_{W;\vec{T}+{}^{-}\vec{T}';\gamma}(X)
 \otimes \sO_X\bigg(\sum_{\alpha\in \Phi^+(W)} T'_\alpha \bigg)
 \end{gather*}
 as subalgebras of~$k(X)[W]_\gamma$.
\end{prop}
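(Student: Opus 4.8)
The plan is to reduce the statement to a computation for the rank $1$ parabolic subgroups and then invoke the rank $1$ results established earlier. Set $\Gamma:=\sum_{\alpha\in\Phi^+(W)}T'_\alpha$, regarded as an integer-valued function on irreducible Cartier divisors; by the conventions introduced above the right-hand side is, by definition, the sheaf algebra ${\cal H}_{W;\vec{T}+{}^-\vec{T}';\gamma(-\Gamma)}(X)$, viewed as a subalgebra of $k(X)[W]_\gamma$ through the canonical meromorphic equivalence between ${\cal Z}_{\gamma(-\Gamma)}$ and ${\cal Z}_\gamma$. This makes sense provided $\Gamma-{}^{s_i}\Gamma$ has finite support for every simple reflection $s_i$; but $s_i$ permutes $\Phi^+(W)\setminus\{\alpha_i\}$ and sends $\alpha_i$ to $-\alpha_i$, so the sum telescopes and $\Gamma-{}^{s_i}\Gamma=T'_{\alpha_i}-T'_{-\alpha_i}$, a genuine divisor. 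Thus $\gamma(-\Gamma)$ is a bona fide twisting datum and both sides are well-defined sheaf subalgebras of $k(X)[W]_\gamma$. By construction each of them is generated by its rank $1$ subalgebras supported on the cosets $\{1,s_i\}$ (Corollary \ref{cor:infin_hecke_twisted_interval}), so it suffices to prove that these rank $1$ pieces agree for each $i$.

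Fix $i$. On the $\{1,s_i\}$-part, $\gamma(-\Gamma)$ restricts to $\gamma|_{s_i}$ twisted by $\partial\sO_X(-\Gamma)|_{s_i}$; extracting a divisor from the formal sum $-\Gamma$ as in the construction, and using that the only summand $T'_\beta$ with $\beta\in\Phi^+(W)$ that can meet $\supp(T'_{\alpha_i})\cup\supp(T'_{-\alpha_i})$ is $T'_{\alpha_i}$ itself (divisors attached to roots that are neither equal nor opposite have no common component), one finds $\partial\sO_X(-\Gamma)|_{s_i}=\partial\sO_X(-T'_{\alpha_i})$. Hence, having trivialized $\gamma|_{s_i}$ by a line bundle ${\cal L}_i$, we may trivialize $\gamma(-\Gamma)|_{s_i}$ by ${\cal L}_i\otimes\sO_X(-T'_{\alpha_i})$. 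Since ${}^-T'_{\alpha_i}=T'_{-\alpha_i}={}^{s_i}T'_{\alpha_i}$, comparing the two rank $1$ pieces reduces (after conjugating away ${\cal L}_i$) to the identity
\[
\sO_X(-T'_{\alpha_i})\otimes{\cal H}_{\langle s_i\rangle,\,T_{\alpha_i}+{}^{s_i}T'_{\alpha_i}}(X)\otimes\sO_X(T'_{\alpha_i})
=
{\cal H}_{\langle s_i\rangle,\,T_{\alpha_i}+T'_{\alpha_i}}(X)
\]
of subsheaves of $k(X)[\langle s_i\rangle]_{{\cal Z}_\gamma|_{s_i}}$.

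This last identity is a rank $1$ statement about a flat family of hyperelliptic curves of genus $1$, and is the ``general conjugating divisor'' form of the isomorphism $\sO_C(T)\otimes{\cal H}_{A_1,T}(C)\otimes\sO_C(-T)\cong{\cal H}_{A_1,{}^sT}(C)$. To prove it I would first base change to a relative symmetric power of the coroot curve, so that $T'_{\alpha_i}$, ${}^{s_i}T'_{\alpha_i}$ and $[X^{s_i}]$ become pairwise without common component; then a direct computation with the description of ${\cal H}_{\langle s_i\rangle,\tau}(X)$ as the operators $f_0+f_1(s_i-1)$ with $f_0$ holomorphic and $f_1$ a local section of $\sO_X([X^{s_i}]-\tau)$ shows that conjugating by a local equation of $T'_{\alpha_i}$ multiplies $f_1$ by a unit of divisor $T'_{\alpha_i}-{}^{s_i}T'_{\alpha_i}$, so carries the constraint $\sO_X([X^{s_i}]-T_{\alpha_i}-{}^{s_i}T'_{\alpha_i})$ on it over to $\sO_X([X^{s_i}]-T_{\alpha_i}-T'_{\alpha_i})$, while the transversality hypotheses keep the coefficient of $1$ holomorphic (equivalently one may argue via the supersheaf description of Proposition \ref{prop:generic_rank_one_as_intersection}). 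Since rank $1$ Hecke algebras and twists by line bundles both respect base change, this yields the identity for arbitrary $T'_{\alpha_i}$, and hence the Proposition. The only genuinely delicate point is the bookkeeping of twisting data: one must keep the meromorphic equivalence ${\cal Z}_{\gamma(-\Gamma)}\simeq{\cal Z}_\gamma$, the trivializations of the rank $1$ coboundaries, and the twists by $\sO_X(\pm T'_{\alpha_i})$ mutually compatible, and --- since $\Gamma$ is in general an infinite formal sum --- the whole construction only makes sense because $\Gamma-{}^{s_i}\Gamma$ is finite; the algebraic content itself is entirely contained in the routine rank $1$ identity above.
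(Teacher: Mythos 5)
Your argument is correct and follows essentially the same route as the paper's: reduce to rank 1 via generation by the rank 1 subalgebras, then verify the rank 1 conjugation identity (the paper does this via the subsheaf-preservation characterization of Proposition \ref{prop:generic_rank_one_as_intersection}; you do it by direct computation with the $f_0+f_1(s_i-1)$ presentation, and also note the subsheaf alternative). Your careful bookkeeping of the twisting datum $\gamma(-\Gamma)$ and the finiteness of $\Gamma-{}^{s_i}\Gamma$ is a sound elaboration of what the paper compresses into ``reduces immediately to the rank 1 case.''
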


\begin{proof}
 This reduces immediately to the corresponding claim in the rank~1 case,
 where (after twisting by a line bundle to make $\gamma$ trivial) it reads
 \begin{gather*}
 {\cal H}_{A_1,T+T'}(C) =
 \sO_C(-T')\otimes {\cal H}_{A_1,T+{}^sT'}(C)\otimes \sO_C(T').
 \end{gather*}
 For general parameters (such that no two of~$T$, ${}^sT$, $T'$, ${}^sT'$
 have a common component), this is straightforward: it is easy to see that
 that ${\cal H}_{A_1,T+T'}(C)$ preserves the subsheaf $\sO_C(-T')$, and~${\cal H}_{A_1,T+{}^sT'}(C)$ preserves the subsheaf $\sO_C(-{}^sT')$, and
 this gives both inclusions.
\end{proof}

The proof of Proposition~\ref{prop:finite_hecke_as_intersection}
carries over to give the following.

\begin{prop}\label{prop:infinite_hecke_as_intersection}
 Suppose that $T_\alpha$ and $T_{-\alpha}$ have no common component for
 any $\alpha\in \Phi(W)$. Then
 \begin{gather*}
 {\cal H}_{W;\vec{T};\gamma}(X) = {\cal H}_{W;\gamma}(X) \cap
 \sO_X\bigg({-}\sum_{\alpha\in \Phi^+(W)} T_\alpha\bigg)\otimes
 {\cal H}_{W;\gamma}(X)
 \otimes \sO_X\bigg( \sum_{\alpha\in \Phi^+(W)} T_\alpha\bigg).
 \end{gather*}
\end{prop}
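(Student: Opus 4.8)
The plan is to reduce to the rank $1$ case and then invoke the local results, exactly as in the proof of Proposition \ref{prop:finite_hecke_as_intersection}, but with the bookkeeping replaced by the sheaf-algebra machinery developed in this section. First I would observe that both sides are sheaf subalgebras of $k(X)[W]_\gamma$, so the statement is an equality of subsheaves, which (by left reflexivity of orders, Proposition \ref{prop:reflexive_order_construction} and the remarks preceding it) may be checked at each codimension $1$ point; equivalently, since both sides respect base change and the construction is local on $S$, it suffices to compare the two sides after restricting to any finite Bruhat order ideal $I\subset W$ and then comparing subquotients in a maximal chain of order ideals.

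The containment ``$\subseteq$'' is the easy direction: it reduces to showing that each rank $1$ generator ${\cal L}_i\otimes{\cal H}_{\langle s_i\rangle,T_i}(X)\otimes{\cal L}_i^{-1}$ preserves the subsheaf ${\cal L}'\otimes\sO_X(-\sum_{\alpha\in\Phi^+(W)}T_\alpha)$ for a suitable equivariant ${\cal L}'$ realizing $\gamma$. After twisting by a line bundle to make $\gamma|_{\langle s_i\rangle}$ trivial, the divisor $\sum_{\alpha\in\Phi^+(W)}T_\alpha-T_i$ is $s_i$-invariant (since $s_i$ permutes $\Phi^+(W)\setminus\{\alpha_i\}$) and has trivial valuation along the components of $[X^{s_i}]$, because each $T_\alpha$ descends to the coroot curve and $s_i$ acts there; hence preserving $\sO_X(-\sum_{\alpha\in\Phi^+(W)}T_\alpha)$ is equivalent to preserving $\sO_X(-T_i)$, which is exactly Proposition \ref{prop:generic_rank_one_as_intersection} under the hypothesis that $T_i$ and ${}^{s_i}T_i={}^-T_{\alpha_i}$ have no common component. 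For the reverse containment I would run the Bruhat filtration argument: by Corollary \ref{cor:infin_hecke_twisted_interval} the subquotient of ${\cal H}_{W;\vec{T};\gamma}(X)$ at $w$ is $(1,w^{-1})_*({\cal Z}_{\gamma,w}\otimes\sO_X(D_w(\vec{T})))$ with $D_w(\vec{T})=\sum_{\alpha\in\Phi^+(W)\cap w\Phi^-(W)}([X^{r_\alpha}]-T_\alpha)$, while the subquotient of the intersection on the right is $(1,w^{-1})_*({\cal Z}_{\gamma,w}\otimes(\sO_X(D_w)\cap\sO_X(D_w-\sum_{\alpha\in\Phi^+(W)}T_\alpha+\sum_{\alpha\in\Phi^+(W)}T_{w\alpha})))$. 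Since
\[
\sum_{\alpha\in\Phi^+(W)}T_\alpha-\sum_{\alpha\in\Phi^+(W)}T_{w\alpha}
=\sum_{\alpha\in\Phi^+(W)\cap w\Phi^-(W)}(T_\alpha-T_{-\alpha}),
\]
and distinct positive roots (other than $\alpha$ and $-\alpha$) give divisors pulled back through different coroot maps, the hypothesis that no $T_\alpha$, $T_{-\alpha}$ share a component forces the intersection to be exactly $\sO_X(D_w(\vec{T}))$; so the subquotients agree, and a maximal-chain induction (together with the fact that both filtrations are compatible with the same one on $k(X)[W]_\gamma$) gives the equality.

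The main obstacle, relative to the finite case, is purely a matter of legitimacy of the manipulations with infinite formal sums and with sheaf bimodules rather than sheaves of algebras: I must check that $\sum_{\alpha\in\Phi^+(W)}T_\alpha$ defines a genuine meromorphically trivial twisting datum in the sense introduced just above (i.e.\ that each $s_i$ moves only finitely many of its values, which is standard Coxeter theory, and that the resulting $\gamma(\Gamma)$ is the one implicit in the statement), and that intersecting the two orders and passing to subquotients of a Bruhat filtration is legitimate for sheaf bimodules on $X\times X$. The latter follows because restricting to a finite order ideal $I$ makes everything coherent (of the form ${\cal M}_I$), so the intersection and the subquotients can be computed fibrewise over codimension $1$ points via Lemma \ref{lem:holomorphy_preserving_splits} and its dual, and then Proposition \ref{prop:reflexive_order_construction} reassembles them into the asserted equality of reflexive orders. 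Everything else is a verbatim transcription of the finite-case argument, so I would state it as such rather than reproduce the computation.
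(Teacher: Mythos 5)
Your proposal is correct and takes essentially the same approach as the paper, which simply states that the proof of Proposition~\ref{prop:finite_hecke_as_intersection} carries over; you have (helpfully) spelled out the infinite-specific technicalities that this glosses over, namely that $\sum_{\alpha\in\Phi^+(W)}T_\alpha$ gives a legitimate formal twisting datum and that intersections and Bruhat subquotients can be computed on finite order ideals via the coherence/reflexivity machinery.
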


We also note the following fact, which allows us to decouple the conditions
associated to different parameters.

\begin{prop}\label{prop:decoupling_of_parameters}
 Suppose that $\vec{T}$ and $\vec{T}'$ are such that $T_\alpha$ and
 $T'_\alpha$ have no common component for any $\alpha$. Then
 \begin{gather*}
 {\cal H}_{W;\vec{T}+\vec{T}';\gamma}(X)
 =
 {\cal H}_{W;\vec{T};\gamma}(X)
 \cap
 {\cal H}_{W;\vec{T}';\gamma}(X).
 \end{gather*}
\end{prop}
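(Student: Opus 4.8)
The plan is to reduce, as with the surrounding propositions, to the rank 1 case and then argue by a local (codimension 1) comparison of subsheaves of $k(X)[W]_\gamma$. First I would observe that, exactly as in Proposition \ref{prop:split_system_of_parameters} and Proposition \ref{prop:infinite_hecke_as_intersection}, both sides of the claimed equality are subsheaf algebras of $k(X)[W]_\gamma$, and the inclusion ${\cal H}_{W;\vec{T}+\vec{T}';\gamma}(X)\subset {\cal H}_{W;\vec{T};\gamma}(X)\cap {\cal H}_{W;\vec{T}';\gamma}(X)$ is immediate: the divisors $D_w(\vec{T}+\vec{T}')=D_w(\vec{T})+D_w(\vec{T}')$ are pointwise smaller than both $D_w(\vec{T})$ and $D_w(\vec{T}')$, so a local section of the left-hand algebra is in particular a local section of each of the two algebras on the right, and the generators of ${\cal H}_{W;\vec{T}+\vec{T}';\gamma}(X)$ (the rank 1 algebras ${\cal L}_i\otimes {\cal H}_{\langle s_i\rangle,T_i+T'_i}(X)\otimes {\cal L}_i^{-1}$) visibly lie in the corresponding rank 1 algebras for $\vec{T}$ and for $\vec{T}'$.

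For the reverse inclusion I would use the Bruhat filtration. Both sides are left reflexive orders in $k(X)[W]_\gamma$ (each is generated by rank 1 pieces and inherits left reflexivity from ${\cal H}_{W;\gamma}(X)$, cf.\ the discussion preceding Proposition \ref{prop:reflexive_order_construction}), so it suffices to check the equality of stalks at every codimension 1 point, i.e.\ along each reflection hypersurface $[X^r]$. Fix such an $r$ and a finite Bruhat order ideal $I$; by Corollary \ref{cor:infin_hecke_twisted_interval} and the short exact sequence of the Bruhat filtration for each of ${\cal H}_{W;\vec{T}+\vec{T}';\gamma}$, ${\cal H}_{W;\vec{T};\gamma}$, ${\cal H}_{W;\vec{T}';\gamma}$, the $w$-th subquotient of the filtration on the $I$-truncation is the line bundle ${\cal Z}_{\gamma,w}\otimes\sO_X(D_w(\vec{\bullet}))$ with $\vec{\bullet}$ the relevant system of parameters. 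Thus I must show that along $[X^r]$ the stalk of ${\cal H}_{W;\vec{T};\gamma}\cap {\cal H}_{W;\vec{T}';\gamma}$ surjects onto $\sO_X(D_w(\vec{T}+\vec{T}'))$ at the top of the filtration. The point is that $D_w(\vec{T}+\vec{T}')$ only differs from the full $D_w$ of the master algebra at the finitely many $r'$ with $r'w<w$, and for each such $r'$ the conjugation trick of Lemma \ref{lem:Bruhat_fin_parm} reduces the vanishing condition to a statement at a single reflection hypersurface; there, by hypothesis, $T_{\alpha_{r'}}$ and $T'_{\alpha_{r'}}$ have no common component, so $\sO_X([X^{r'}]-T_{\alpha_{r'}})\cap\sO_X([X^{r'}]-T'_{\alpha_{r'}})=\sO_X([X^{r'}]-T_{\alpha_{r'}}-T'_{\alpha_{r'}})$ locally, which is exactly the evenness/no-cancellation fact needed. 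Equivalently, I would carry this out by the ``embed in a bigger family'' device used repeatedly above: base change to the family with universal $(\vec{T},\vec{T}')$ (a product of relative symmetric powers of coroot curves), where $T_\alpha$, $T'_\alpha$, $T_{-\alpha}$, $T'_{-\alpha}$ pairwise share no component, prove the statement there using Proposition \ref{prop:infinite_hecke_as_intersection} (so all three algebras become honest intersections of twists of ${\cal H}_{W;\gamma}(X)$, and the intersection identity $\sO_X(-\sum T_\alpha)\cap \sO_X(-\sum T'_\alpha)\cap{\cal H}_{W;\gamma}(X)$-twists collapses to the $\vec{T}+\vec{T}'$ intersection because, root by root, $T_\alpha$ and $T'_\alpha$ impose transverse conditions), and then specialize: both sides are flat over the universal base and agree there, so agree after base change along the section giving $\vec{T},\vec{T}'$.

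The main obstacle I anticipate is precisely the descent from the universal family to an arbitrary one when $\vec{T}$ and $\vec{T}'$ themselves have bad components relative to the reflection hypersurfaces or relative to $\vec{T}\mapsto{}^-\vec{T}$ — i.e.\ the intersection ${\cal H}_{W;\vec{T};\gamma}\cap{\cal H}_{W;\vec{T}';\gamma}$ a priori only gets smaller under specialization, so one needs a flatness/constant-Hilbert-polynomial input to conclude equality. Here I would lean on the Bruhat subquotient computation above: the subquotients of the three filtrations are honest line bundles whose classes are additive in the parameters ($D_w(\vec{T}+\vec{T}')=D_w(\vec{T})+D_w(\vec{T}')$), so the left-hand side of the claimed equality has the same (semicontinuously bounded, hence constant) Hilbert polynomial as the candidate right-hand side, forcing the inclusion proved in the first paragraph to be an equality. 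The only genuinely delicate check is that the stalkwise intersection at a single $[X^{r'}]$ really is cut out by the sum of the two vanishing orders and not something larger — this is the local commutative-algebra lemma $\sO([X^{r'}]-A)\cap\sO([X^{r'}]-B)=\sO([X^{r'}]-A-B)$ when $A,B$ share no component, combined with the fact (used in Proposition \ref{prop:finite_hecke_as_intersection}) that divisors pulled back through distinct coroot maps never share a component, so the interaction is confined to matching $\alpha$'s, where the no-common-component hypothesis on $T_\alpha,T'_\alpha$ is exactly what is assumed.
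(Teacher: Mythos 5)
Your proposal takes essentially the same route as the paper: the easy containment from comparing rank 1 generators, followed by a comparison of Bruhat subquotients, where the no-common-component hypothesis (together with the fact that divisors pulled back through distinct coroot maps never share a component) makes each subquotient of the left side the intersection of the corresponding subquotients of the right.

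One small but real error: you assert twice that $D_w(\vec{T}+\vec{T}')=D_w(\vec{T})+D_w(\vec{T}')$. This is false — since $D_w(\vec{T})=\sum_{rw<w}([X^r]-T_{\alpha_r})$, one has $D_w(\vec{T})+D_w(\vec{T}')=\sum_{rw<w}(2[X^r]-T_{\alpha_r}-T'_{\alpha_r})$, whereas $D_w(\vec{T}+\vec{T}')=\sum_{rw<w}([X^r]-T_{\alpha_r}-T'_{\alpha_r})$. The conclusions you draw from the erroneous identity are nonetheless correct, but for a different reason: $D_w(\vec{T}+\vec{T}')=D_w(\vec{T})-\sum_{rw<w}T'_{\alpha_r}\le D_w(\vec{T})$ because the $T'_\alpha$ are effective, which gives the containment in the first paragraph; and the subquotient identity you actually need for the reverse inclusion is $\sO_X(D_w(\vec{T}))\cap\sO_X(D_w(\vec{T}'))=\sO_X(D_w(\vec{T}+\vec{T}'))$, which follows from the hypothesis on $T_\alpha,T'_\alpha$ together with transversality across distinct coroot maps — and this, not any additivity, is what replaces your Hilbert-polynomial argument with the clean statement the paper records. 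The extra scaffolding (left reflexive orders, codimension-1 stalks, universal-family base change) is not wrong but is not needed: the containment plus the termwise comparison of Bruhat subquotients already closes the argument.
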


\begin{proof}
 The rank~1 subalgebras on the left are contained in the corresponding
 subalgebras on the right, so algebra on the left is certainly contained
 in the intersection on the right. To~see equality, we~use the Bruhat
 filtration and observe that each subquotient on the left is the
 intersection of the corresponding subquotients on the right.
\end{proof}

\begin{rem}
 This easily gives a version of Proposition~\ref{prop:infinite_hecke_as_intersection} in which ${\cal
 H}_{W;\vec{T}+\vec{T}';\gamma}(X)$ is given as an intersection of two
 twists of~${\cal H}_{W;\vec{T};\gamma}(X)$.
\end{rem}

The construction of the adjoint in the finite case carries over. Note that
the na\"{\i}ve adjoint $\sum_w c_w w\mapsto \sum_w w c_w$ induces a natural
isomorphism $k(X)[W]_\gamma^{\rm op}\cong k(X)[W]_{\gamma^{-1}}$. (In
terms of the sheaf algebra itself, all we are doing is swapping the two
factors of~$X\times_S X$.) To describe how this acts on the Hecke
algebras, it will be helpful to denote the formal sum $\sum_{\alpha\in
 \Phi^+(W)}\big([X^{r_\alpha}]-\vec{T}\big)$ by \smash{$D_{w_0}\big(\vec{T}\big)$}, and similarly
for~$D_{w_0}$. This of course agrees with the usual notation whenever the
longest element $w_0\in W$ actually exists.

\begin{prop}
 The na\"{\i}ve adjoint on~$k(X)[W]_\gamma$ induces an identity
 \begin{gather*}
 {\cal H}_{W;\vec{T};\gamma}(X)^{\rm op} =
 \sO_X\big(D_{w_0}\big(\vec{T}\big)\big)\otimes {\cal H}_{W;\vec{T};\gamma^{-1}}(X)
 \otimes \sO_X\big({-}D_{w_0}\big(\vec{T}\big)\big)
 \\ \hphantom{{\cal H}_{W;\vec{T};\gamma}(X)^{\rm op}}
{} = \sO_X(D_{w_0})\otimes {\cal H}_{W;{}^-\vec{T};\gamma^{-1}}(X)
 \otimes \sO_X(-D_{w_0}).
 \end{gather*}
 of subalgebras of~$k(X)[W]_{\gamma^{-1}}$.
\end{prop}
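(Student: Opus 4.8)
The plan is to reduce everything, as usual, to the rank~1 case and then appeal to the Bruhat filtration to bootstrap to general $W$. First I would observe that the na\"{\i}ve adjoint $\sum_w c_w w\mapsto \sum_w w c_w$ is genuinely an anti-isomorphism $k(X)[W]_\gamma^{\text{op}}\cong k(X)[W]_{\gamma^{-1}}$ (as noted in the text, this is just swapping the two factors of $X\times_S X$ and reversing the equivariant gerbe), so the only content is to identify the image of the subalgebra ${\cal H}_{W;\vec{T};\gamma}(X)$ under it. Since ${\cal H}_{W;\vec{T};\gamma}(X)$ is \emph{generated} by the rank~1 subalgebras ${\cal L}_i\otimes{\cal H}_{\langle s_i\rangle,T_i}(X)\otimes {\cal L}_i^{-1}$, and since the adjoint is an anti-homomorphism, it suffices to compute the image of each such rank~1 subalgebra and check that the twists by $\sO_X(D_{w_0}(\vec{T}))$ (interpreted as the formal sum $\sum_{\alpha\in\Phi^+(W)}([X^{r_\alpha}]-T_\alpha)$, as the paper sets up just before the statement) reduce on each $s_i$ to the correct rank~1 twist.

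The key rank~1 input is the already-established proposition that the adjoint isomorphism ${\cal H}_{A_1}(C)^{\text{op}}\cong \sO_C(-[C^s])\otimes {\cal H}_{A_1}(C)\otimes \sO_C([C^s])$ restricts to
\[
{\cal H}_{A_1,T}(C)^{\text{op}}
\cong
\sO_C(T-[C^s])\otimes {\cal H}_{A_1,T}(C)\otimes \sO_C([C^s]-T).
\]
Rewriting $T-[C^s]$ as $-([C^s]-T)=-D_{w_0}(\vec{T})|_{s}$ in rank~1 notation gives the claimed formula for a single reflection; the second form follows since, after twisting by a line bundle to make $\gamma$ trivial, the adjoint of ${\cal H}_{A_1,T}(C)$ is also expressible via ${}^sT=({}^-\vec{T})|_s$ together with the plain $D_{w_0}=[C^s]$ twist, exactly as in the finite-case Corollary. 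So both displayed equalities are true on each $\langle s_i\rangle$.

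To promote this to all of $W$, I would argue exactly as in the proof of the finite-case Proposition: for the twisting by $\sO_X(D_{w_0}(\vec{T}))$ one checks that $D_{w_0}(\vec{T})-D_{s_i}(\vec{T})$, viewed as a formal integer-valued function on irreducible Cartier divisors, is $s_i$-invariant and has trivial valuation along the components of $[X^{s_i}]$ (this is the standard Coxeter fact that $s_i$ permutes $\Phi^+(W)\setminus\{\alpha_i\}$, applied to the formal sum; the text has already set up the machinery of such formal-sum twisting data $\gamma(\Gamma)$ and the notation $\sO_X(\Gamma)\otimes\bullet\otimes\sO_X(-\Gamma)$ precisely so that this is legitimate even when no longest element exists). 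Hence conjugating the generating rank~1 subalgebra ${\cal L}_i\otimes{\cal H}_{\langle s_i\rangle,T_i}(X)\otimes{\cal L}_i^{-1}$ by $\sO_X(D_{w_0}(\vec{T}))$ has the same effect as conjugating by $\sO_X(D_{s_i}(\vec{T}))$, so the rank~1 computation applies verbatim; taking the subalgebra generated by these images identifies ${\cal H}_{W;\vec{T};\gamma}(X)^{\text{op}}$ with the asserted twist. The second equality then follows from the first by applying Proposition~\ref{prop:split_system_of_parameters} (which intertwines $\vec{T}$ with $\vec{T}+{}^-\vec{T}'$ via twisting by $\sO_X(\sum_{\alpha\in\Phi^+(W)}T'_\alpha)$), or alternatively just by recording the second rank~1 form above and repeating the generation argument.

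The main obstacle I anticipate is purely bookkeeping rather than conceptual: making sure the formal-sum twisting data are manipulated consistently when $W$ is infinite, so that ``$\sO_X(D_{w_0}(\vec{T}))\otimes{-}\otimes\sO_X(-D_{w_0}(\vec{T}))$'' really does denote a well-defined subalgebra of $k(X)[W]_{\gamma^{-1}}$ (the issue being the choice of coboundary representatives for each $\Gamma-{}^{s_i}\Gamma$, flagged parenthetically in the text). One must verify that the adjoint operation is compatible with the chosen limiting conventions for those representatives; since the adjoint simply swaps the two copies of $X$, this compatibility is automatic, but it should be stated explicitly. Everything else is an immediate transcription of the finite-rank proof, with ``Bruhat order ideal'' replaced by ``finite Bruhat order ideal'' and the generation statement (Corollary~\ref{cor:infin_hecke_twisted_interval}) used in place of Corollary~\ref{cor:Bruhat_intervals_product}.
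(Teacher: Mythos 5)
Your proposal follows exactly the approach the paper's one-line proof ("Again, this reduces immediately to the rank 1 case") is alluding to: generation by the rank~1 subalgebras, the already-proved rank~1 adjoint statement, and the observation that the formal-sum twist $D_{w_0}(\vec{T})$ restricts compatibly to each $\langle s_i\rangle$ (because $D_{w_0}(\vec{T})-D_{s_i}(\vec{T})$, as a formal sum, is $s_i$-invariant with trivial valuation along $[X^{s_i}]$, since $s_i$ permutes $\Phi^+(W)\setminus\{\alpha_i\}$). Your closing remark that the adjoint of the formal-sum twisting data is simply the swap of the two factors of $X\times_S X$, and hence compatible with the limiting conventions, is the right thing to note.

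One point deserves more care than you give it. You rewrite the rank~1 output as $\sO_C(T-[C^s])\otimes{\cal H}_{A_1,T}(C)\otimes\sO_C([C^s]-T)$, i.e., $\sO_C(-D_{w_0}(\vec{T})|_s)\otimes({-})\otimes\sO_C(+D_{w_0}(\vec{T})|_s)$, and assert this "gives the claimed formula." But the claimed formula in the Proposition has the twists in the opposite order, $\sO_X(+D_{w_0}(\vec{T}))\otimes({-})\otimes\sO_X(-D_{w_0}(\vec{T}))$, matching neither the rank~1 result nor the finite-case Proposition (which reads $\sO_X(-D_{w_0}(\vec{T}))\otimes{\cal H}_{W;\vec{T}}(X)\otimes\sO_X(D_{w_0}(\vec{T}))$). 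This appears to be a sign discrepancy in the paper itself, tied to the variant description "$\sum_w c_w w\mapsto \sum_w w c_w$" used for the na\"ive adjoint in the infinite-rank section versus "$\sum_w c_w w\mapsto \sum_w w^{-1} c_w$" in the finite case; whichever convention is intended, your write-up should either reconcile the signs explicitly or flag that the rank~1 citation lands on the sign opposite to the displayed claim, rather than asserting they agree without comment.
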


\begin{proof}
 Again, this reduces immediately to the rank~1 case.
\end{proof}

Diagram automorphisms of course work as well in the infinite case; the only
caveat is that unlike in the finite case, a diagram automorphism can fail
to preserve the parameters and twisting datum. More generally, if $H$ is a
group of automorphisms of~$X$ acting as diagram automorphisms of~$W$ and
preserving the parameters, and there is an $H$-equivariant gerbe ${\cal
 Z}_h$ such that ${}^h\gamma_i\sim \gamma_i\otimes \partial{\cal Z}_h$ for
each $i$, then the corresponding holomorphic crossed product algebra
normalizes the Hecke algebra, and we can combine them into a larger algebra
associated to the extended Coxeter Group $W\rtimes H$. (In the $C_n$ case
we consider in detail below, we~will see that even the requirement that the
parameters be invariant can be finessed.)

Suppose $A$ and $B$ are sheaf algebras, on~$X/S$ and $Y/S$ respectively.
An $(A,B)$-bimodule is then simply a sheaf bimodule $M$ on~$X\times_S Y$
equipped with multiplication maps $A\otimes_X M\to M$, $M\otimes_Y B\to M$
making the obvious diagrams commute. (Note that the restriction of~$M$ to
a~compatible pair of localizations is a bimodule over the corresponding
restrictions of~$A$ and $B$.) The tensor product is then defined in the
obvious way, so that we may define induced modules. Restriction is of
course also easy to define, though the sheaf form of Frobenius reciprocity
is somewhat tricky, as~there are difficulties with defining $\sHom$ on
sheaf bimodules in general. (The~difficulty is that the category of sheaf
bimodules is cocomplete, but not complete, and $\sHom$ from a direct limit
is an inverse limit. Thus the $\sHom$ of sheaf bimodules will still be a~quasicoherent sheaf on the relevant fiber product scheme, but may fail to
satisfy the finiteness requirement.)

This is not a problem for the analogue of Proposition~\ref{prop:Mackey_for_Hecke}; the only change is that $M$ should be replaced
by a suitable bimodule. In~the finite case, this is no difficulty: when~$W$ is finite, any~${\cal H}_{W;\vec{T}}(X)$-module in the usual sense
determines a corresponding $\big({\cal H}_{W;\vec{T}}(X),\sO_{X/W}\big)$-bimodule
structure.

{\sloppy\begin{prop}
 Suppose $I,J\subset S$ are such that the parabolic subgroups $W_I$, $W_J$
 are finite. Then for any $\big({\cal H}_{W_J;\vec{T};\gamma}(X),\sO_Y\big)$-bimodule $M$ and any maximal chain
 in the Bruhat order on~${}^IW^J$, the subquotient corresponding
 to~$w\in {}^IW^J$ in the resulting filtration of~$\Res^{W;\vec{T};\gamma}_{W_I}\Ind^{W;\vec{T};\gamma}_{W_J} M$ is the
 $\big({\cal H}_{W_I;\vec{T};\gamma}(X),\sO_Y\big)$-bimodule
 \begin{gather*}
 \Ind^{W_I;\vec{T};\gamma}_{W_{I(w)}} {\cal Z}_{\gamma,w}\big(D_w\big(\vec{T}\big)\big)
 \otimes w \Res^{W_J;\vec{T};\gamma}_{W_{J(w)}} M.
 \end{gather*}
\end{prop}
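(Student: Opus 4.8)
The plan is to mimic the proof of Proposition \ref{prop:Mackey_for_Hecke} almost verbatim, reducing to the case $M = {\cal H}_{W_J;\vec{T};\gamma}(X)$ (viewed as a bimodule over itself on the right, with $Y = X$) by functoriality of the subquotient description. With that reduction in place, the statement becomes a claim about the Bruhat filtration of the $({\cal H}_{W_I;\vec{T};\gamma}(X), {\cal H}_{W_J;\vec{T};\gamma}(X))$-bimodule ${\cal H}_{W;\vec{T};\gamma}(X)$ induced by the order ideal in ${}^IW^J$. First I would recall that for double cosets $W_I w W_J$ with $W_I, W_J$ finite, there is a well-defined Bruhat order on the minimal representatives ${}^IW^J$ such that order ideals there pull back to order ideals in $W$ (as cited via \cite{StembridgeJR:2005}); since $W_I$ and $W_J$ are finite, each such double coset is a \emph{finite} subset of $W$, so the subsheaves ${\cal H}_{W;\vec{T};\gamma}(X)[I']$ for these order ideals $I'$ are coherent sheaf bimodules of the form ${\cal M}_{I'}$, and the quotients make sense as sheaf bimodules.

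Next I would fix a maximal chain and let $O \subset O \cup \{w\}$ be consecutive elements, so that we must identify the quotient of the subsheaf supported on $W_I(O\cup\{w\})W_J$ by the subsheaf supported on $W_I O W_J$. Both are $({\cal H}_{W_I;\vec{T};\gamma}(X), {\cal H}_{W_J;\vec{T};\gamma}(X))$-sub-bimodules of ${\cal H}_{W;\vec{T};\gamma}(X)$, and the two actions commute with the projection of the meromorphic twisted group algebra $k(X)[W]_\gamma$ onto operators supported on the single double coset $W_I w W_J$. The key input is Corollary \ref{cor:infin_hecke_twisted_interval} (the infinite, twisted analogue of Corollary \ref{cor:Bruhat_intervals_product}): choosing a reduced word for $w$ adapted to the double coset structure, the product of rank 1 subalgebras surjects onto ${\cal H}_{W;\vec{T};\gamma}(X)[\le w]$, and this shows that the quotient is generated by its part supported on $W_{I(w)} w W_{J(w)} = W_{I(w)} w$ (using that $w \in {}^IW^J$ makes $W_I \cap w W_J w^{-1} = W_{I(w)}$ and $w^{-1}W_I w \cap W_J = W_{J(w)}$ with $W_{I(w)} \cong W_{J(w)}$). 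Hence the quotient is induced from the corresponding $({\cal H}_{W_{I(w)};\vec{T};\gamma}(X), {\cal H}_{W_{J(w)};\vec{T};\gamma}(X))$-bimodule structure on that part.

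Then I would identify that rank-$|W_{I(w)}|$ bimodule explicitly: as in the finite case, the leading-coefficient computation (via the twisted Bruhat short exact sequence of the preceding Proposition, with its ${\cal Z}_{\gamma,w} \otimes \sO_X(D_w(\vec{T}))$ subquotient) shows it implements the Morita-type equivalence $N \mapsto {\cal Z}_{\gamma,w}(D_w(\vec{T})) \otimes wN$ from ${\cal H}_{W_{J(w)};\vec{T};\gamma}(X)$-modules to ${\cal H}_{W_{I(w)};\vec{T};\gamma}(X)$-modules. Here I must check that the twisting datum $\gamma$ behaves correctly, i.e.\ that ${}^w\gamma|_{W_{J(w)}}$ matches $\gamma|_{W_{I(w)}}$ up to the coboundary absorbed into ${\cal Z}_{\gamma,w}$, and (as in the cited Lemma) that $D_w(\vec{T})$ is $W_{I(w)}$-invariant with trivial valuation along the relevant reflection hypersurfaces so the twist is well-defined — this last point is exactly the earlier lemma on $W_{I(w)}$-invariance of $D_w(\vec{T})$ for $w \in {}^IW^J$, which carries over since it only uses $W_I$-minimality of $w$. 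Finally, applying transitivity of induction ($\Ind^{W_I}_{W_{I(w)}}$ of the induced-from-$W_{I(w)}$ bimodule) gives the stated formula. The main obstacle I anticipate is purely bookkeeping: making sure the sheaf-bimodule tensor products and the projections onto single-double-coset summands are manipulated legitimately in the infinite-$W$ setting (coherence of the relevant subsheaves, Proposition \ref{prop:computing_tp} applies since each double coset here is finite) and that the twisting datum and the sign/parity conventions in ${\cal Z}_{\gamma,w}$ are tracked consistently through the conjugation by $w$; none of this is deep, but it is where an error would most easily creep in.
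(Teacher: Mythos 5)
Your proposal is correct and follows exactly the paper's route: the paper explicitly notes that this Proposition is proved as Proposition \ref{prop:Mackey_for_Hecke} with $M$ replaced by a suitable bimodule, and your adaptation carries out precisely the necessary modifications (reduction to the regular bimodule, projection onto the double coset, Corollary \ref{cor:infin_hecke_twisted_interval} in place of Corollary \ref{cor:Bruhat_intervals_product}, and the check that $D_w(\vec{T})$ is $W_{I(w)}$-invariant with trivial valuation on the relevant reflection hypersurfaces so the twist is well-defined). Your attention to coherence of the subsheaves via finiteness of the double cosets and to the compatibility of $\gamma$ under conjugation by $w$ is exactly the bookkeeping the paper implicitly relies on.
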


}

We also have a weaker form of Frobenius reciprocity.

\begin{lem}
 For any $Y$, induction and restriction are adjoint functors between the
 categories of~$\big({\cal H}_{W;\vec{T};\gamma},\sO_Y\big)$-bimodules and $\big({\cal
 H}_{W_I;\vec{T};\gamma},\sO_Y\big)$-bimodules.
\end{lem}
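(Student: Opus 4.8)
The plan is to recognize this as the extension-of-scalars\,/\,restriction-of-scalars adjunction, realized inside the (cocomplete, abelian) category of sheaf bimodules. Recall that ${\cal H}_{W_I;\vec{T};\gamma}(X)$ is a sheaf subalgebra of ${\cal H}_{W;\vec{T};\gamma}(X)$, so that $\Res:=\Res^{W;\vec{T};\gamma}_{W_I}$ is simply restriction of scalars along this inclusion, while $\Ind:=\Ind^{W;\vec{T};\gamma}_{W_I}$ sends an $({\cal H}_{W_I;\vec{T};\gamma}(X),\sO_Y)$-bimodule $M$ to ${\cal H}_{W;\vec{T};\gamma}(X)\otimes_{{\cal H}_{W_I;\vec{T};\gamma}(X)}M$, with its evident left ${\cal H}_{W;\vec{T};\gamma}(X)$-action and right $\sO_Y$-action (the latter inherited from $M$, since it commutes with the ${\cal H}_{W_I;\vec{T};\gamma}(X)$-action). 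I would (i) check that $\Ind\,M$ is again an object of the target category, i.e.\ that its underlying quasicoherent $\sO_{X\times_S Y}$-module satisfies the finiteness condition defining a sheaf bimodule, and then (ii) produce the adjunction directly: the unit is $\eta_M\colon M\to \Res\,\Ind\,M$, $m\mapsto 1\otimes m$, the counit $\epsilon_N\colon \Ind\,\Res\,N\to N$ is the map induced by multiplication, $a\otimes n\mapsto an$, and the natural bijection is $\phi\mapsto \phi\circ\eta_M$ with inverse $\psi\mapsto \epsilon_N\circ\Ind(\psi)$.

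For (i), I would present the tensor product over ${\cal H}_{W_I;\vec{T};\gamma}(X)$ as the cokernel of the usual pair of maps ${\cal H}_{W;\vec{T};\gamma}(X)\otimes_X{\cal H}_{W_I;\vec{T};\gamma}(X)\otimes_X M\rightrightarrows {\cal H}_{W;\vec{T};\gamma}(X)\otimes_X M$, using associativity of the sheaf-bimodule tensor product. Since the category of sheaf bimodules is cocomplete, a cokernel of sheaf bimodules is again a sheaf bimodule, so it suffices to see that ${\cal H}_{W;\vec{T};\gamma}(X)\otimes_X M$ (and likewise ${\cal H}_{W_I;\vec{T};\gamma}(X)\otimes_X M$ and ${\cal H}_{W;\vec{T};\gamma}(X)\otimes_X{\cal H}_{W_I;\vec{T};\gamma}(X)$) is a sheaf bimodule. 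Writing ${\cal H}_{W;\vec{T};\gamma}(X)\otimes_X M$ as the filtered colimit of ${\cal H}_{W;\vec{T};\gamma}(X)[I']\otimes_X M'$ over finite Bruhat order ideals $I'\subset W$ and coherent subbimodules $M'\subseteq M$, any coherent subsheaf is contained in (the image of) a single such term; and by the Bruhat filtration ${\cal H}_{W;\vec{T};\gamma}(X)[I']$ is coherent and supported over the finitely many graphs $\{(x,w^{-1}x)\}$, $w\in I'$, so that the composed support of ${\cal H}_{W;\vec{T};\gamma}(X)[I']\otimes_X M'$ is finite over $X$ and over $Y$, as required.

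For (ii), $\eta_M$ and $\epsilon_N$ are visibly morphisms of the relevant sheaf bimodules ($\epsilon_N$ because the multiplication ${\cal H}_{W;\vec{T};\gamma}(X)\otimes_X N\to N$ descends through the tensor product over ${\cal H}_{W_I;\vec{T};\gamma}(X)$). To see that $\phi\mapsto \phi\circ\eta_M$ and $\psi\mapsto \epsilon_N\circ\Ind(\psi)$ are mutually inverse, note that $\Ind\,M$ is generated as a sheaf over ${\cal H}_{W;\vec{T};\gamma}(X)$ by the image of $\eta_M$ — immediate from the coequalizer description, since $a\otimes m=a\cdot(1\otimes m)$ — so an ${\cal H}_{W;\vec{T};\gamma}(X)$-linear morphism of sheaf bimodules out of $\Ind\,M$ is determined by its precomposition with $\eta_M$; one then reads off on local sections that $(\epsilon_N\circ\Ind(\psi))\circ\eta_M=\psi$ and that $(\epsilon_N\circ\Ind(\phi\circ\eta_M))\circ\eta_M=\phi\circ\eta_M$, whence the two assignments are inverse. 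Naturality in $M$ and $N$ is automatic from the formulas, and both functors together with this bijection leave the right $\sO_Y$-structure untouched. I expect the only genuine obstacle to be the sheaf-bimodule bookkeeping of step (i), namely verifying that $\Ind$ does not leave the category; it is worth stressing that the difficulty noted above for the internal $\sHom$ of sheaf bimodules — and hence the absence in general of a $\Coind$ functor valued in the same category — does not interfere here, precisely because $\Ind$ is a tensor product rather than a Hom and because the statement only involves Hom-\emph{sets} of bimodules, not internal $\sHom$.
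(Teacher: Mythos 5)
Your proposal is correct and takes essentially the same route as the paper: both realize $\Ind$ and $\Res$ as tensoring with the $({\cal H}_{W;\vec{T};\gamma},{\cal H}_{W_I;\vec{T};\gamma})$- and $({\cal H}_{W_I;\vec{T};\gamma},{\cal H}_{W;\vec{T};\gamma})$-bimodule ${\cal H}_{W;\vec{T};\gamma}$, with the counit given by multiplication ${\cal H}_{W;\vec{T};\gamma}\otimes_{{\cal H}_{W_I;\vec{T};\gamma}}{\cal H}_{W;\vec{T};\gamma}\to {\cal H}_{W;\vec{T};\gamma}$ and the unit by the inclusion ${\cal H}_{W_I;\vec{T};\gamma}\hookrightarrow {\cal H}_{W;\vec{T};\gamma}$. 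The paper states this more tersely at the bimodule level; your extra step verifying that $\Ind$ lands in sheaf bimodules (via cokernels and the Bruhat filtration) is a reasonable explication of a point the paper leaves implicit.
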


\begin{proof}
 Since both functors are constructed as tensor products, we~see that it
 suffices to construct compatible morphisms
 \begin{gather*}
 {\cal H}_{W;\vec{T};\gamma}\otimes_{{\cal H}_{W_I;\vec{T};\gamma}}
 {\cal H}_{W;\vec{T};\gamma}
 \to
 {\cal H}_{W;\vec{T};\gamma}
 \end{gather*}
 and
 \begin{gather*}
 {\cal H}_{W_I;\vec{T};\gamma}
 \to
 \Res_{W_I}^{W;\vec{T};\gamma}{\cal H}_{W;\vec{T};\gamma},
 \end{gather*}
 both of which (along with compatibility) follow directly from the fact
 that ${\cal H}_{W_I;\vec{T};\gamma}$ is a subalgebra of~${\cal
 H}_{W;\vec{T};\gamma}$.
\end{proof}

\begin{cor}
 Let $M$ be a coherent $\big({\cal H}_{W_I;\vec{T};\gamma},\sO_Y\big)$-bimodule
 and $N$ an $\big({\cal H}_{W;\vec{T};\gamma},\sO_Z\big)$-bimo\-dule. Then the
 quasicoherent sheaf $\sHom_{{\cal
 H}_{W;\vec{T};\gamma}}\big(\Ind_{W_I}^{W;\vec{T};\gamma} M,N\big)$ on~$Y\times_S Z$ is a sheaf bimodule.
\end{cor}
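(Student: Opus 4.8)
The plan is to reduce the statement to the question of when the $\sHom$ of sheaf bimodules fails to be a sheaf bimodule, and then use adjunction together with the Bruhat filtration on ${\cal H}_{W;\vec{T};\gamma}$ as an $({\cal H}_{W_I;\vec{T};\gamma},{\cal H}_{W_I;\vec{T};\gamma})$-bimodule to get explicit control. Recall that the only obstruction to a quasicoherent sheaf on $Y\times_S Z$ being a sheaf bimodule is the finiteness condition: the support of every coherent subsheaf must be finite over both $Y$ and $Z$. The sheaf $\sHom_{{\cal H}_{W;\vec{T};\gamma}}(\Ind_{W_I}^{W;\vec{T};\gamma} M,N)$ is quasicoherent on $Y\times_S Z$ by general nonsense (it is a subsheaf of a suitable internal Hom over $\sO_{Y\times_S Z}$), so only finiteness is at stake.

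First I would use the weak Frobenius reciprocity established just above, namely that $\Ind_{W_I}^{W;\vec{T};\gamma}$ is left adjoint to $\Res_{W_I}^{W;\vec{T};\gamma}$, to rewrite
\[
\sHom_{{\cal H}_{W;\vec{T};\gamma}}(\Ind_{W_I}^{W;\vec{T};\gamma} M,N)
\cong
\sHom_{{\cal H}_{W_I;\vec{T};\gamma}}(M,\Res_{W_I}^{W;\vec{T};\gamma} N).
\]
This is an isomorphism of quasicoherent sheaves on $Y\times_S Z$, functorial and hence compatible with the bimodule structures on both sides, so it suffices to prove the corresponding statement for the right-hand side. Now $M$ is coherent, so it is generated (over $\sO_{Y/S}$, or locally, over an affine localization) by finitely many local sections, and is contained in a coherent $({\cal H}_{W_I;\vec{T};\gamma}(X),\sO_Y)$-subbimodule of itself generated by those sections. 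Since $W_I$ need not be finite a priori, but the statement only asserts the conclusion as stated — and in the applications $I$ will be chosen with $W_I$ finite — I would either invoke the standing hypothesis that the parabolics appearing are finite (as flagged earlier in the section), or simply argue directly: a coherent ${\cal H}_{W_I;\vec{T};\gamma}(X)$-module is, via the Bruhat-interval surjection of Corollary \ref{cor:infin_hecke_twisted_interval}, a quotient of a finite sum of sheaves of the form ${\cal Z}_{\gamma,w}(D_w(\vec{T}))$ pushed forward along graphs of automorphisms, hence coherent as an $\sO_X$-module with support finite over $X$ on each side.

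The key step is then the following: because $M$ is a coherent sheaf bimodule, a homomorphism $\phi:M\to \Res_{W_I}^{W;\vec{T};\gamma}N$ is determined by its values on a finite generating set, and those values are local sections of $N$ whose supports, as we range over all of $\sHom$, are constrained to lie in the support of $N$ — but the subtlety is that $N$ is only a sheaf bimodule, not coherent, so its support need not be finite over $Z$. Here I would use the ${\cal H}_{W_I;\vec{T};\gamma}$-module structure on $M$: any coherent subsheaf of $\sHom$ evaluated against the finite generating set of $M$ lands, by ${\cal H}_{W_I;\vec{T};\gamma}$-linearity, in a sub-bimodule of $N$ that is finitely generated over ${\cal H}_{W_I;\vec{T};\gamma}$ on the $X$-side; since ${\cal H}_{W_I;\vec{T};\gamma}$ is an order in $k(X)[W_I]_\gamma$ (hence each ${\cal H}_{W_I;\vec{T};\gamma}|_S$ is coherent over $\sO_X$), such a sub-bimodule is coherent, so its support is finite over $Z$. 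Finiteness over $Y$ is the easier direction: it follows from coherence of $M$ together with the fact that $\sHom$ of coherent-over-$\sO_Y$ data is coherent over $\sO_Y$. I expect the main obstacle to be bookkeeping around the $Z$-side finiteness — making precise that ``generated over ${\cal H}_{W_I;\vec{T};\gamma}$ by finitely many sections of $N$'' implies coherence, which rests on $N$ being a sheaf bimodule (so its coherent subsheaves are genuinely coherent and finite over $Z$) and on ${\cal H}_{W_I;\vec{T};\gamma}$ being a well-behaved order; once that is set up cleanly, the rest is formal. I would close by noting that the compatibility of the isomorphism with the evident $\sO_Y$- and $\sO_Z$-actions is immediate from naturality of adjunction, so the sheaf-bimodule structure on the left-hand side is exactly the one inherited through the isomorphism.
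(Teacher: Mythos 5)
Your proposal is correct and takes essentially the same approach as the paper: apply the weak Frobenius reciprocity from the preceding lemma to reduce to $\sHom_{{\cal H}_{W_I;\vec{T};\gamma}}(M,\Res_{W_I}^{W;\vec{T};\gamma}N)$, then conclude from coherence of $M$. The paper's proof is terse (after reducing to the affine case and invoking adjunction, it simply asserts ``the latter is a sheaf bimodule since $M$ is coherent''), whereas you spell out the finiteness argument the paper leaves implicit; the extra detail is a correct elaboration, not a different route.
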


\begin{proof}
 Frobenius reciprocity gives (when~$Y$ and $Z$ are affine, which we may
 certainly reduce~to)
 \begin{gather*}
 \sHom_{{\cal H}_{W;\vec{T};\gamma}}\big(\Ind_{W_I}^{W;\vec{T};\gamma} M,N\big)
 \cong \sHom_{{\cal H}_{W_I;\vec{T};\gamma}}\big(M,\Res_{W_I}^{W;\vec{T};\gamma} N\big),
 \end{gather*}
 and the latter is a sheaf bimodule since $M$ is coherent.
\end{proof}

\begin{cor}
 Let $M_I$ be a coherent $\big({\cal H}_{W_I;\vec{T};\gamma},Y_I\big)$-bimodule,
 $M_J$ a coherent $\big({\cal H}_{W_J;\vec{T};\gamma},Y_J\big)$-bimodule, and
 $M$ an $\big({\cal H}_{W;\vec{T};\gamma},Y\big)$-bimodule. Then there is a
 natural composition morphism
 \begin{gather*}
 \sHom_{{\cal H}_{W;\vec{T};\gamma}}\big( \Ind_{W_I}^{W;\vec{T};\gamma} M_I,
 \Ind_{W_J}^{W;\vec{T};\gamma} M_J\big) \otimes_{Y_J}{}
 \sHom_{{\cal H}_{W;\vec{T};\gamma}}\big( \Ind_{W_J}^{W;\vec{T};\gamma} M_J, M\big)
 \\ \hphantom{\sHom_{{\cal H}_{W;\vec{T};\gamma}}}
 \to \sHom_{{\cal H}_{W;\vec{T};\gamma}}\big( \Ind_{W_I}^{W;\vec{T};\gamma} M_I, M\big)
 \end{gather*}
 of~$(Y_I,Y)$-bimodules, satisfying the obvious associativity relation.
\end{cor}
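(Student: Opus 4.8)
The statement asserts the existence of a natural composition morphism for Hom-sheaves of the form $\sHom_{{\cal H}_{W;\vec{T};\gamma}}(\Ind_{W_I}^{W;\vec{T};\gamma} M_I, N)$, together with associativity. The plan is to reduce this entirely to two inputs already established: the previous Corollary (that these Hom-sheaves are genuine sheaf bimodules, via Frobenius reciprocity), and the fact that $\Ind_{W_J}^{W;\vec{T};\gamma}M_J$ is itself a $({\cal H}_{W;\vec{T};\gamma},Y_J)$-bimodule, so that the middle object in the composition is again of the shape to which the previous Corollary applies. First I would reduce to the affine case for $Y_I$, $Y_J$, and $Y$, which is legitimate because the asserted morphism and the associativity relation are both local statements on the fiber product schemes and the sheaf-bimodule formalism is compatible with restriction to (compatible) affine localizations. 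Over affine bases the Hom-sheaves become honest Hom-modules, and the desired map is simply the ordinary composition of homomorphisms of ${\cal H}_{W;\vec{T};\gamma}$-modules: given $\phi:\Ind_{W_I}^{W;\vec{T};\gamma}M_I\to \Ind_{W_J}^{W;\vec{T};\gamma}M_J$ and $\psi:\Ind_{W_J}^{W;\vec{T};\gamma}M_J\to M$, one forms $\psi\circ\phi$.

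The key steps, in order, are: (1) verify that the target $\sHom_{{\cal H}_{W;\vec{T};\gamma}}(\Ind_{W_I}^{W;\vec{T};\gamma}M_I,M)$ is a sheaf bimodule --- this is immediate from the previous Corollary since $M_I$ is coherent; (2) verify that the source factors likewise are sheaf bimodules, which is the previous Corollary applied with $N=\Ind_{W_J}^{W;\vec{T};\gamma}M_J$ (coherent $M_I$) and with $N=M$ (coherent $M_J$); (3) exhibit the bilinear composition pairing over affine patches and check it respects the $(Y_I,Y)$-bimodule structure (composition is visibly $\sO_{Y_I}$-linear on the left through $M_I$ and $\sO_Y$-linear on the right through $M$, while the $\sO_{Y_J}$-structures match up exactly as in an ordinary tensor product of $\Hom$-modules, which is why the pairing is taken over $Y_J$); (4) check that this patchwise pairing glues, using that restriction of sheaf bimodules to compatible affine localizations is exact and faithful, so a morphism defined compatibly on a covering descends; (5) deduce associativity from associativity of ordinary composition of module homomorphisms on each affine patch, again glued via descent. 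Throughout, one uses the sheaf-bimodule tensor-product formalism of \cite{VandenBerghM:1996} recalled earlier in this section, together with Proposition \ref{prop:computing_tp} to compute tensor products of coherent sheaf bimodules over affine opens.

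The main obstacle --- and really the only nontrivial point --- is step (4): making sure that ``composition of homomorphisms,'' which is manifestly defined when $Y_I$, $Y_J$, $Y$ are affine, actually assembles into a morphism of the requisite sheaf bimodules on $Y_I\times_S Y$, given that $Y_J$ need not be affine and the tensor product $\otimes_{Y_J}$ is by definition a pushforward from $Y_I\times_S Y_J\times_S Y$. The resolution is that the Hom-sheaves in question are coherent sheaf bimodules (their supports are finite over the relevant base schemes by the finiteness axiom), so by Proposition \ref{prop:computing_tp} their tensor product over $Y_J$ can be computed affine-locally in $Y_J$, and on each such patch the composition pairing is the honest one; the only thing to confirm is that these local pairings are compatible under the restriction maps, which is the statement that composition of module maps commutes with localization --- a triviality. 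I would spell this out only to the extent of invoking Proposition \ref{prop:computing_tp} and the remark following it, and note that the same bookkeeping gives associativity, since all three iterated compositions reduce on affine patches to the single associative composition law for homomorphisms. No estimate or delicate argument is needed beyond this; the content of the statement is organizational, packaging Frobenius reciprocity plus ordinary composition into the sheaf-bimodule language.
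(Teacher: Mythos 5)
Your overall strategy — reduce to affine patches, use that composition of module homomorphisms is obvious there, glue, and note associativity is inherited — is the right one, and it matches the spirit of how the paper treats these sheaf-bimodule constructions. However, step (4) as written contains a genuine error: you claim that the $\sHom$-sheaves in question are \emph{coherent} sheaf bimodules, and use this to invoke Proposition \ref{prop:computing_tp}. That claim is false in general. The previous corollary only asserts that these $\Hom$-sheaves are sheaf bimodules (i.e., quasicoherent, with the support of any coherent \emph{subsheaf} finite over both projections); they are not coherent when $W$ is infinite. For instance, $\sHom_{{\cal H}_{W}}(\Ind_{W_I}^{W}\sO_X,\Ind_{W_J}^{W}M_J)\cong \Res_{W_I}\Ind_{W_J}M_J$ by Frobenius reciprocity, and this has an infinite Bruhat filtration by ${}^IW^J$; likewise $\sHom_{{\cal H}_W}(\Ind_{W_I}{\cal H}_{W_I},M)\cong \Res_{W_I}M$ need not be coherent. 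The parenthetical justification ``their supports are finite over the relevant base schemes by the finiteness axiom'' is a misreading of the finiteness axiom, which constrains only the supports of \emph{coherent} subsheaves. Since Proposition \ref{prop:computing_tp} requires coherence of one of the two tensor factors (plus a preimage condition), it cannot be applied directly.

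The gap is fixable in two standard ways, both hinted at in the surrounding text. (a) Write each $\Hom$-sheaf as a filtered colimit $\colim_\alpha F_\alpha$ of its coherent subbimodules. The sheaf-bimodule tensor product commutes with filtered colimits (it is a pushforward of an ordinary tensor product, both of which do), so $\Hom_1\otimes_{Y_J}\Hom_2 \cong \colim_{\alpha,\beta} F_\alpha\otimes_{Y_J}G_\beta$; on each piece $F_\alpha\otimes_{Y_J}G_\beta$ one \emph{can} apply Proposition \ref{prop:computing_tp}, define the composition on local sections, and the resulting maps are compatible in the colimit. (b) Alternatively — and this is closer to the machinery the paper actually supplies for such situations — pass to the affine $\hat\A^1$-localizations of Proposition \ref{prop:natural_localization}: over $\hat\A^1_{Y_I}\times_{\hat\A^1_S}\hat\A^1_{Y_J}\times_{\hat\A^1_S}\hat\A^1_Y$ all three $\Hom$-sheaves become honest modules over affine rings, the tensor product is the ordinary one, composition is the ordinary one and visibly balanced and associative, and fpqc descent gives the morphism on $Y_I\times_S Y$. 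With either repair the rest of your argument (bimodule-structure compatibility, associativity) goes through exactly as you describe.
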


In particular, given an $\big({\cal H}_{W;\vec{T};\gamma},Y\big)$-bimo\-dule~$M$, we~may again define an $(X/W_I,Y)$-bimo\-dule $M^{W_I}$ as
\begin{gather*}
\sHom_{{\cal H}_{W;\vec{T};\gamma}}\big(\Ind_{W_I}^{W;\vec{T};\gamma} \sO_X,M\big)
\cong \sHom_{{\cal H}_{W_I;\vec{T};\gamma}}\big(\sO_X,\Res_{W_I}^{W;\vec{T};\gamma}M\big).
\end{gather*}
This, of course, is essentially just the extension of~$\big(\Res_{W_I}^{W;\vec{T};\gamma}M\big)^{W_I}$
to bimodules in the obvious way.

Now that we have reasonable definitions, most of the calculations we did in
the finite case carry over. We find that (assuming $\gamma$ is trivial on~$W_I$ and $W_J$) the submodule of~$\Res_{W_I}^{W;\vec{T};\gamma}\Ind^{W;\vec{T};\gamma}_{W_J}\sO_X$
corresponding to any finite Bruhat order ideal has strongly flat invariants
for~$W_I$, and thus
\begin{gather*}
 {\cal H}_{W,W_J,W_I;\vec{T};\gamma}(X):=
 \big(\Ind^{W;\vec{T};\gamma}_{W_J}\sO_X\big)^{W_I}
\end{gather*}
is an $S$-flat sheaf bimodule on~$X/W_I\times_S X/W_J$, and this
construction commutes with base change. The~subquotients in the
corresponding Bruhat filtration may all be described in the following way. For~each $w\in {}^IW^J$, there is a corresponding line bundle ${\cal L}_w$
on~$X/W_{I(w)}$ (constructed from $\vec{T}$ and $\gamma$) such that the
subquotient is the direct image in~$X/W_I\times_S X/W_J$ of the
$(X/W_{I(w)},X/W_{J(w)})$-bimodule $(1,w^{-1})_*{\cal L}_w$. More
precisely, the line bundle ${\cal L}_w$ is the descent to~$X/W_{I(w)}$ of
the ($W_{I(w)}$-equivariant!) line bundle
${\cal Z}_{\gamma,w}\big(D_w\big(\vec{T}\big)\big) \otimes
\sO_X\big(D_{w_I}\big({}^-\vec{T}\big)-D_{w_{I(w)}}\big({}^-\vec{T}\big)\big)$.

If $\Gamma_I$, $\Gamma_J$ are $W_I$, $W_J$-invariant functions which are
even on reflection hypersurfaces and have finitely supported difference,
then for any twisting datum $\gamma$ which is trivial on~$W_I$ and $W_J$,
\begin{gather*}
\sO_X(\Gamma_J)\otimes {\cal H}_{W;\vec{T};\gamma}\otimes \sO_X(-\Gamma_I)
\end{gather*}
becomes a left ${\cal H}_{W_J;\vec{T}}$-module and a right ${\cal
 H}_{W_I;\vec{T}}$-module, and thus has a corresponding spherical module
which we may denote by
\begin{gather*}
\sO_X(\Gamma_J)\otimes {\cal H}_{W,W_I,W_J;\vec{T};\gamma}\otimes
\sO_X(-\Gamma_I).
\end{gather*}

The adjoint takes the following form.

\begin{prop}
 If the root kernels of~$W_I$ and $W_J$ on~$X$ are diagonalizable and the
 twisting datum $\gamma$ is trivial on~$W_I$ and $W_J$, then there is an
 isomorphism
 \begin{gather*}
 {\cal H}_{W,W_I,W_J;\gamma}(X)
 \cong
 \sO_X(D_{w_0}-D_{w_I})
 \otimes
 {\cal H}_{W,W_J,W_I;\gamma^{-1}}(X)
 \otimes
 \sO_X(D_{w_J}-D_{w_0})
 \end{gather*}
 which is contravariant with respect to composition.
\end{prop}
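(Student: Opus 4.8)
The plan is to reduce, as usual, to the rank $1$ case. First I would invoke the description of ${\cal H}_{W,W_I,W_J;\gamma}(X)$ as the spherical module $(\Ind^{W;0;\gamma}_{W_J}\sO_X)^{W_I}$, and, dually, think of the right-hand side via Proposition \ref{prop:spherical_as_intersection} (extended to the infinite setting) as a subsheaf of ${\cal H}_{W,W_J,W_I}(X)$ sitting inside $k(X)[W]_{\gamma^{-1}}$ after the na\"{\i}ve adjoint $\sum_w c_w w\mapsto \sum_w w c_w$. Since the diagonalizability hypothesis on the root kernels of $W_I$ and $W_J$ guarantees (by the preceding discussion) that ${\cal H}_{W_I;0}(X)$ and ${\cal H}_{W_J;0}(X)$ are covered by symmetric idempotents $e_I=(\sum_{w\in W_I}w)h_I$, $e_J=(\sum_{w\in W_J}w)h_J$ on the complement of any invariant ample divisor, I can locally embed ${\cal H}_{W,W_I,W_J;\gamma}(X)$ as $e_J\,{\cal H}^+_{W;\gamma}(X)\,e_I$ exactly as in the finite-group argument, and this embedding is contravariant with respect to composition.

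Next I would transport the na\"{\i}ve adjoint to the Hom-picture. On $e_J\,{\cal H}^+_{W;\gamma}(X)\,e_I$ the involution $\sum_w c_w w\mapsto \sum_w w c_w$ carries the spherical algebra into the $\gamma^{-1}$-version and swaps the roles of $e_I$ and $e_J$, but now with $e_I^*=h_I(\sum_{w\in W_I}w)$ and $e_J^*=h_J(\sum_{w\in W_J}w)$. The point is to identify the images of these adjoint idempotents. Using Theorem \ref{thm:W-invariants} (applicable because, by the hypothesis that $\gamma$ is trivial on $W_I$, the relevant bundle $\sO_X(D_{w_0})$ restricts on every root curve of $W_I$ to an even power of ${\cal L}_1$), one computes
\[
e_I^*\bigl(\text{image of }\sO_X(D_{w_0})\bigr)
= h_I\bigl(\sO_X(D_{w_0}-D_{w_I})\bigr)^{W_I},
\]
and likewise for $J$. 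Feeding these two computations into the adjoint isomorphism $\sHom(\pi_*{\cal L},\pi_*{\cal L}')\cong\sHom(\,{}^*\!\pi_*{\cal L}',{}^*\!\pi_*{\cal L})$ on the ambient master spherical algebra, the factors $h_I$, $h_J$ cancel and one is left with an isomorphism between ${\cal H}_{W,W_I,W_J;\gamma}(X)$ and $\sHom$ of the $W_J$- and $W_I$-invariant sections of $\sO_X(D_{w_0}-D_{w_J})$ and $\sO_X(D_{w_0}-D_{w_I})$ respectively, which is precisely $\sO_X(D_{w_0}-D_{w_I})\otimes {\cal H}_{W,W_J,W_I;\gamma^{-1}}(X)\otimes \sO_X(D_{w_J}-D_{w_0})$ once we re-interpret the invariant-section sheaves via the twist notation. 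Contravariance with respect to composition is inherited from the ambient $\sHom$.

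Finally I would deal with the fact that the sums $D_{w_0}$, $D_{w_I}$, $D_{w_J}$ are genuinely infinite when $W$ is infinite, so that the displayed equality of subalgebras of $k(X)[W]_{\gamma^{-1}}$ must be checked as an equality using the meromorphically-trivial twisting-datum formalism ($\gamma(\Gamma)$) introduced above rather than by manipulating actual line bundles. Here one verifies the identity Bruhat-interval by Bruhat-interval: for each finite order ideal $I_0\subset W$ one restricts to ${\cal H}_{W;\gamma}(X)[I_0]$, where everything is coherent, the relevant portions of the infinite formal sums become honest divisors, and the rank $1$ adjoint computation (twisting by $\sO_X(D_{w_0}-D_{s_i})$ has no effect) applies directly. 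Because the two sides agree on every such interval and the intervals exhaust the sheaf algebra, they agree globally. The main obstacle I anticipate is precisely this last bookkeeping step: making sure the symmetric idempotents $e_I,e_J$, the adjoint, and the twist-by-$\Gamma$ conventions are all compatible with the Bruhat filtration simultaneously, and checking that the ``image of $\sum_{w\in W_I}w$ on $\sO_X(D_{w_0})$ equals $(\sO_X(D_{w_0}-D_{w_I}))^{W_I}$'' computation of Theorem \ref{thm:W-invariants} continues to hold locally on $S$ in the families we need, including the fibers lacking a covering by symmetric idempotents (handled, as elsewhere, by passing to the universal-parameter family and taking a limit of a closed condition).
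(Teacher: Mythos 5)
Your proposal takes a genuinely different route from the paper's. The paper's proof is a short, fully explicit calculation that avoids idempotents: it embeds the $W_I$-spherical module into ${\cal H}_{W;\gamma}(X)\otimes \sO_X(-D_{w_I})$ via $\sum_{w\in W^I} c_{wW_I}\,ww_I \mapsto \sum_{w\in W} c_{wW_I}\,w$ (using only that $\sum_w w$ is a section of ${\cal H}_{W_I}(X)\otimes\sO_X(-D_{w_I})$), rewrites the image using the $W_J$-invariance relation $c_{w'wW_I}={}^{w'}c_{wW_I}$, applies the na\"{\i}ve adjoint (twisted by $\sO_X(D_{w_0})$), and then \emph{right-divides} by the finite antisymmetrizer $\sum_{w'\in W_J}(-1)^{\ell(w')}w'$. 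Each step is an explicit map of sheaves, so no Bruhat-interval bookkeeping is needed, and the $\Gamma$-twist formalism enters only through the preceding DAHA adjoint proposition. Your strategy is instead to transplant the finite-case proof (local symmetric idempotents $e_I,e_J$, the computation $e_I^*\pi_*\sO_X(D_{w_0})=h_I(\pi_*\sO_X(D_{w_0}-D_{w_I}))^{W_I}$ from Theorem \ref{thm:W-invariants}, then cancel $h_I,h_J$) and then patch up the infinitary issues. That route can be made to work, and correctly identifies the key inputs (the na\"{\i}ve adjoint, diagonalizability of the root kernels, Theorem \ref{thm:W-invariants}); what it buys is conceptual continuity with the finite corollary, at the cost of more local bookkeeping since the ambient $\sEnd(\pi_*\sO_X)$ no longer exists and the idempotents are only local.

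There is, however, a genuine gap in the last step. You propose to close the argument by checking Bruhat interval by Bruhat interval and appealing to the ``rank $1$ adjoint computation (twisting by $\sO_X(D_{w_0}-D_{s_i})$ has no effect).'' That computation establishes the adjoint identity for the \emph{DAHA} itself; it says nothing about the additional structure that distinguishes the spherical $\Hom$ sheaf, namely left $W_J$-invariance and its behavior under the adjoint. The crucial point (which the paper handles directly) is that after applying the adjoint to a $W_J$-invariant operator, the result is right-divisible by the \emph{full} antisymmetrizer $\sum_{w'\in W_J}(-1)^{\ell(w')}w'$ — a finite-but-not-rank-$1$ statement — and that this right division lands in $\sO_X(-D_{w_I})\otimes{\cal H}_{W,W_J,W_I;\gamma^{-1}}(X)\otimes\sO_X(D_{w_J})$. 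This divisibility is exactly where the twist by $D_{w_J}$ (rather than some other combination) appears, and it does not follow from the rank $1$ case. If you instead carry the finite-case computation $e_I^*\pi_*\sO_X(D_{w_0})=h_I(\pi_*\sO_X(D_{w_0}-D_{w_I}))^{W_I}$ through explicitly on each Bruhat double-coset subquotient — rather than invoking the rank $1$ reduction — the gap closes, and the two proofs converge.
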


\begin{proof}
 We may view the element $\sum_w w$ as a section of~${\cal H}_{W_I}(X)\otimes \sO_X(-D_{w_I})$, since it takes sections of~$\sO_X(D_{w_I})$ to sections of~$\sO_X$. We thus have an embedding
 \begin{gather*}
 {\cal H}_{W,W_I;\gamma}(X)\to {\cal H}_{W;\gamma}(X)\otimes \sO_X(-D_{w_I})
 \end{gather*}
 acting as
 \begin{gather*}
 \sum_{w\in W^I} c_{w W_I} w w_I\mapsto \sum_{w\in W} c_{w W_I} w.
 \end{gather*}
 Moreover, if the original operator is a local section of~${\cal
 H}_{W,W_I,W_J;\gamma}(X)$, then we have
 \begin{gather*}
 c_{w' w W_I}={}^{w'}c_{w W_I}
 \end{gather*}
 for $w'\in W_J$, so that we may write the image as
 \begin{gather*}
 \sum_{w'\in W_J} w' \sum_{w\in {}^JW} c_{W_J w W_I} w.
 \end{gather*}
 Taking the adjoint (including the twist by $\sO_X(D_{w_0})$) gives
 \begin{gather*}
 \sum_{w'\in W_J} \sum_{w\in {}^JW} (-1)^{\ell(w w')}
 w^{-1} c_{W_J w W_I} w^{\prime{-}1}
 =
 \sum_{w\in W^J} \sum_{w'\in W_J} (-1)^{\ell(w w')} w c_{W_J w^{-1} W_I} w'
 \end{gather*}
 in~$\sO_X(-D_{w_I})\otimes {\cal H}_{W;\gamma^{-1}}(X)$.
 Right dividing by $\sum_{w'\in W_J} (-1)^{\ell(w')} w'$
 gives a section of
 \begin{gather*}
 \sO_X(-D_{w_I}) \otimes {\cal H}_{W,W_J,W_I;\gamma^{-1}}(X) \otimes \sO_X(D_{w_J})
 \end{gather*}
 as required. Compatibility with composition follows by observing that in
 a composition, the factor $\sum_{w\in W_I} w$ needed in the middle is
 already present in the other operator, and the factor $\sum_{w\in W_J}
 (-1)^{\ell(w)} w$ that should be removed is needed in the other operator.
\end{proof}

\begin{prop}
 If the root kernels of~$W_I$ and $W_J$ on~$X$ are diagonalizable, then
 \begin{gather*}
 {\cal H}_{W,W_I,W_J;\vec{T};\gamma}(X)\subset{\cal H}_{W,W_I,W_J;\gamma}(X)
 \\ \hphantom{{\cal H}_{W,W_I,W_J;\vec{T};\gamma}(X)}
 \cap \sO_X\bigg(\sum_{\alpha\in \Phi^-(W)\setminus \Phi^-(W_J)}\!\!\!\!\!\!T_\alpha\bigg)
 \otimes {\cal H}_{W,W_I,W_J;\gamma}(X) \otimes
 \sO_X\bigg({-}\!\!\!\sum_{\alpha\in \Phi^-(W)\setminus \Phi^-(W_I)}\!\!\!\!\!\!T_\alpha\bigg),
 \end{gather*}
 with equality unless there is a root $\alpha$ such that $T_\alpha$ and
 $T_{-\alpha}$ have a common component.
\end{prop}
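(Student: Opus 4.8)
The plan is to reduce, as with every other structural statement about these spherical algebras, to the Bruhat filtration and to the rank $\le 1$ case, exploiting the fact that the two constructions compared here differ only by vanishing conditions imposed on the coefficient sheaves along the parameter divisors $T_\alpha$. First I would recall the description ${\cal H}_{W,W_I,W_J;\vec{T};\gamma}(X) = (\Ind^{W;\vec{T};\gamma}_{W_J}\sO_X)^{W_I}$ together with its Bruhat filtration indexed by a maximal chain in the Bruhat order on ${}^IW^J$, whose subquotient at $w\in {}^IW^J$ is the direct image of the $(X/W_{I(w)},X/W_{J(w)})$-bimodule $(1,w^{-1})_*{\cal L}_w$ with ${\cal L}_w$ the descent of ${\cal Z}_{\gamma,w}(D_w(\vec{T}))\otimes \sO_X(D_{w_I}({}^-\vec{T})-D_{w_{I(w)}}({}^-\vec{T}))$. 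The right-hand side of the claimed identity has its own Bruhat filtration, inherited from ${\cal H}_{W,W_I,W_J;\gamma}(X)$ and the two twists; its subquotient at $w$ is the intersection of the corresponding subquotient of ${\cal H}_{W,W_I,W_J;\gamma}(X)$ with the subquotient of the twisted bimodule, and on equivariant bundles we may take the intersection before passing to $W_{I(w)}$-invariants. So the whole statement reduces, subquotient by subquotient, to comparing two subsheaves of a fixed line bundle on $X$.

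Next I would carry out exactly the computation appearing in the proof of Proposition~\ref{prop:spherical_as_intersection} (the diagonalizable case): the relevant intersection is
\[
\sO_X \cap \sO_X\Bigl(\sum_{\alpha\in \Phi^-(W)\setminus \Phi^-(W_J)}T_\alpha - \sum_{\alpha\in \Phi^-(W)\setminus \Phi^-(W_I)}T_{w\alpha}\Bigr),
\]
and rewriting the divisor as in that proof gives
\[
\sum_{\alpha\in \Phi^+(W)\cap w\Phi^-(W)}(T_{-\alpha}-T_\alpha) + \sum_{\alpha\in \Phi^-(W_I)\setminus \Phi^-(W_I\cap w^{-1}W_J w)}T_{w\alpha} - \sum_{\alpha\in \Phi^-(W_J)\setminus \Phi^-(W_J\cap w W_I w^{-1})}T_\alpha.
\]
For the containment ``$\subset$'' I would argue directly that the rank $1$ subalgebras ${\cal L}_i\otimes{\cal H}_{\langle s_i\rangle,T_i}(X)\otimes{\cal L}_i^{-1}$ generating ${\cal H}_{W;\vec{T};\gamma}(X)$ preserve both $\sO_X$ and (since $s_i$ permutes $\Phi^+(W)\setminus\{\alpha_i\}$, so the relevant divisor is $s_i$-invariant up to a $T_i$-term with trivial valuation along $[X^{s_i}]$) the twisted subsheaf on the appropriate side; this is exactly Proposition~\ref{prop:infinite_hecke_as_intersection} read off against the spherical module. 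Since containment is a closed condition, the general case follows by extending to the family with universal parameters and specializing, just as in the finite case. For the reverse inclusion under the hypothesis that no $T_\alpha$, $T_{-\alpha}$ share a component, the point is that in the displayed divisor the first sum has only negative coefficients in front of $T_\alpha$ ($\alpha$ positive) and the only way cancellation could occur with the other two (nonnegative, supported on coroot curves of roots $\beta\ne-\alpha$) is via $T_{-\alpha}$, which is excluded; hence the intersection is $\sO_X\bigl(-\sum_{\alpha\in\Phi^+(W)\cap w\Phi^-(W)}T_\alpha - \sum_{\alpha\in\Phi^-(W_J)\setminus\Phi^-(W_J\cap wW_Iw^{-1})}T_\alpha\bigr)$, which is precisely the $T$-dependent part of ${\cal L}_w$.

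The main obstacle I anticipate is not any single computation but keeping the bookkeeping honest in the infinite setting: $W^I$ and ${}^IW^J$ are infinite, so ``the'' Bruhat filtration is a filtration by finite order ideals, and I must check that the comparison of subquotients is uniform enough to conclude the sheaf-level identity (using $S$-flatness of both sides, which for the left is already established via strongly flat invariants and for the right via Proposition~\ref{prop:split_system_of_parameters} and the twisting formalism for $\sO_X(\Gamma)\otimes(-)\otimes\sO_X(-\Gamma)$). A secondary subtlety is that the twisted bimodule on the right-hand side of the intersection involves divisors $\sum_{\alpha\in\Phi^-(W)\setminus\Phi^-(W_J)}T_\alpha$ which, being infinite formal sums, must be handled via the coinduced-module/integer-valued-function device introduced before Proposition~\ref{prop:infinite_hecke_as_intersection}; I would note that for fixed $w\in{}^IW^J$ only finitely many $T_\alpha$ actually contribute to the subquotient at $w$, so the formal sums cause no trouble at the level of each subquotient. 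Once the subquotient comparison is in hand on every finite interval, the inclusion of sheaf algebras and its compatibility with composition follow formally, since composition on both sides is induced from the ambient $k(X)[W]_\gamma$.
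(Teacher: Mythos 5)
Your overall strategy (reduce everything to the Bruhat filtration and compare subquotients using the divisor computation from the finite case) matches the paper's, and the ``no further cancellation'' argument for tightness and the observation that only finitely many $T_\alpha$ enter each subquotient are both correct. However, there is a genuine gap in the containment step. You cite Proposition~\ref{prop:infinite_hecke_as_intersection} (the Hecke-algebra version) and claim it ``reads off against the spherical module,'' but that proposition produces the twist $\sO_X(-\sum_{\alpha\in\Phi^+(W)}T_\alpha)\otimes(-)\otimes\sO_X(\sum_{\alpha\in\Phi^+(W)}T_\alpha)$, i.e., with the \emph{full} set of positive roots, whereas the statement you are proving requires the smaller twists $\sum_{\alpha\in\Phi^-(W)\setminus\Phi^-(W_J)}T_\alpha$ and $\sum_{\alpha\in\Phi^-(W)\setminus\Phi^-(W_I)}T_\alpha$ in which the parabolic contributions have been dropped. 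Your parenthetical about $s_i$ permuting $\Phi^+(W)\setminus\{\alpha_i\}$ explains why rank-one generators preserve the full twist, but it does not justify dropping the $\Phi(W_I)$ and $\Phi(W_J)$ pieces, and this is not a formality: the dropped divisors are not $W_I$- (resp.\ $W_J$-) invariant, so passing to invariants genuinely shrinks the allowed pole divisor. The paper handles exactly this point via the symmetric idempotents: on the idempotent locus the spherical algebra is realized inside ${\cal H}_{W;\gamma}(X)$ as the image of a projection, and a $W_I$-invariant section of $\sO_X(\sum_{\alpha\in\Phi^-(W)}T_\alpha)$ is forced into the intersection of the $W_I$-orbit of that bundle, which (comparing with the image under $w_I$) eliminates the $\Phi^-(W_I)$ contribution; this is the content of the sentence ``the idempotents eliminate the contributions of $T_\alpha$ for $\alpha\in W_I$, $W_J$'' and of the displayed intersection computation in the proof of Proposition~\ref{prop:spherical_as_intersection}. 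You reference that proposition later for the subquotient computation, but the step that needs it first is the identification of the correct twist, before one ever looks at subquotients.

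A secondary imprecision: the claim that the Bruhat subquotient of the intersection on the right-hand side ``is'' the intersection of subquotients is only an inclusion in general. The argument still goes through because you separately have the inclusion of the left-hand side into the right-hand side and the explicit identification of the left subquotient, so the chain
\[
\mathrm{gr}_w(\text{LHS})\subset\mathrm{gr}_w(\text{RHS})\subset\mathrm{gr}_w(\text{first factor})\cap\mathrm{gr}_w(\text{second factor})
\]
collapses under the no-cancellation hypothesis; but that is a slightly different argument than the one you wrote, and it presupposes the containment whose proof is exactly where the gap lies.
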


\begin{proof}
 Over the locus of~$S$ covered by symmetric idempotents, we~may use those
 idempotents to locally identify ${\cal H}_{W,W_I,W_J;\vec{T};\gamma}(X)$
 with a submodule of~${\cal H}_{W,W_I,W_J;\gamma}(X)$ (using the same
 idempotent to embed both in~${\cal H}_{W;\gamma}(X)$). This
 identification is compatible with the identification of meromorphic
 fibers, so extends to a global identification on each fiber covered by
 symmetric idempotents, and from there to the closure of the symmetric
 idempotent locus.

 Similarly, local idempotents embed ${\cal H}_{W,W_I,W_J;\vec{T};\gamma}(X)$ in
 \begin{gather*}
 \sO_X\bigg(\sum_{\alpha\in \Phi^-(W)} T_\alpha\bigg)
 \otimes {\cal H}_{W;\gamma}(X) \otimes
 \sO_X\bigg({-}\sum_{\alpha\in \Phi^-(W)} T_\alpha\bigg),
 \end{gather*}
 and the idempotents eliminate the contributions of~$T_\alpha$ for~$\alpha\in W_I$, $W_J$ respectively.

 To see that the inclusion is tight, we~need merely verify that both sides
 have the same Bruhat subquotients, which reduces to verifying that the
 negative part of
 \begin{gather*}
 \sum_{\alpha\in\Phi^+(W)\cap w\Phi^-(W)} (T_{-\alpha}-T_\alpha)
 +
 \sum_{\alpha\in\Phi^-(W_J)\setminus \Phi^-(W_J\cap w W_I w^{-1})}
 (T_{w\alpha}-T_\alpha)
 \label{eq:weird_divisor}
 \end{gather*}
 has no further cancellation, just as in the finite case.
\end{proof}

{\sloppy\begin{cor}
 Let $\vec{T}$ be a system of parameters such that every $T_\alpha$ is
 transverse to eve\-ry ref\-lec\-tion hypersurface. If~the root kernels of~$W_I$ and $W_J$ are diagonalizable, then ${\cal
 H}_{W,W_I,W_J;\vec{T};\gamma}(X)$ may be identified with the sheaf
 sub-bimodule of~${\cal H}_{W,W_I,W_J;\gamma}(X)$ consisting of operators
 $\sum_w c_w w W_I$ such that for every $w$, $c_w$ vanishes on the divisor
 \begin{gather*}
 \sum_{\alpha\in\Phi^+(W)\cap w\Phi^-(W)} T_{\alpha} +
 \sum_{\alpha\in\Phi^-(W_J)\setminus \Phi^-(W_J\cap w W_I w^{-1})} T_{\alpha}.
 \end{gather*}
\end{cor}

}

\begin{cor}\label{cor:spherical_t_symmetry}
 If the root kernels of~$W_I$ and $W_J$ on~$X$ are diagonalizable, then
 there is an isomorphism
 \begin{gather*}
 {\cal H}_{W,W_I,W_J;\vec{T};\gamma}(X) \cong \sO_X\big(D_{w_0}\big({}^-\vec{T}\big)-D_{w_I}\big({}^-\vec{T}\big)\big)
 \otimes {\cal H}_{W,W_J,W_I;\vec{T};\gamma^{-1}}(X)
 \\ \hphantom{{\cal H}_{W,W_I,W_J;\vec{T};\gamma}(X) \cong}
 \otimes \sO_X\big(D_{w_J}\big({}^-\vec{T}\big)-D_{w_0}\big({}^-\vec{T}\big)\big)
 \end{gather*}
 which is contravariant with respect to composition.
\end{cor}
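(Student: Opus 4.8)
The plan is to reduce the claimed adjoint isomorphism to the two adjoint-type results already established for the untwisted spherical bimodules, namely the displayed isomorphism for $\mathcal H_{W,W_I,W_J;\gamma}(X)$ (the $\vec T=0$ case, proved just above) together with the identity of Proposition \ref{prop:split_system_of_parameters} / Proposition \ref{prop:infinite_hecke_as_intersection} expressing $\mathcal H_{W;\vec T;\gamma}(X)$ as an intersection of two twists of $\mathcal H_{W;\gamma}(X)$. The strategy mirrors exactly the way the finite-case corollary (the one following Proposition \ref{prop:spherical_as_intersection}) was deduced: extend to the family with universal parameters, rewrite the left-hand side as an intersection of two twists of the master spherical bimodule, apply the na\"{\i}ve adjoint (swap of the two factors of $X\times_S X$) to each twist separately, identify the result as the corresponding intersection presentation of the twisted bimodule on the right, and then note that the equality, being a closed condition once it holds generically, descends back to arbitrary $\vec T$.

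First I would reduce to the case where the parameters are in general position: every family of Hecke algebras is a base change of one with universal $\vec T$ (obtained by taking $S$ to be a suitable product of relative symmetric powers of coroot curves), and since the whole construction commutes with base change, it suffices to prove the isomorphism there. Second, over the general-position locus, I would invoke the preceding Proposition to write
\[
  {\cal H}_{W,W_I,W_J;\vec{T};\gamma}(X)
  =
  {\cal H}_{W,W_I,W_J;\gamma}(X)
  \cap
  \sO_X(\textstyle\sum_{\alpha\in \Phi^-(W)\setminus\Phi^-(W_J)} T_\alpha)
  \otimes
  {\cal H}_{W,W_I,W_J;\gamma}(X)
  \otimes
  \sO_X(-\textstyle\sum_{\alpha\in \Phi^-(W)\setminus\Phi^-(W_I)} T_\alpha).
\]
Third, I would apply the na\"{\i}ve adjoint $\sum_w c_w w\mapsto\sum_w w c_w$, which is just the swap of the two $X$-factors and hence commutes with intersection and converts a left twist into a right twist and vice versa; on the untwisted term it gives the master-spherical adjoint $\mathcal H_{W,W_I,W_J;\gamma}(X)^{\mathrm{op}}\cong\sO_X(D_{w_0}-D_{w_I})\otimes\mathcal H_{W,W_J,W_I;\gamma^{-1}}(X)\otimes\sO_X(D_{w_J}-D_{w_0})$, and on the twisted term it gives the same, conjugated by the swapped twisting divisors. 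Fourth, I would combine the divisor data: after the adjoint, the three twisting contributions ($D_{w_0}-D_{w_I}$, $D_{w_J}-D_{w_0}$, and the two $\sum T_\alpha$ sums, now on the opposite sides) should recombine, using $D_{w_0}({}^-\vec T)=D_{w_0}-\sum_{\alpha\in\Phi^+(W)}T_\alpha$ and the identity $\sum_{\alpha\in\Phi^-(W)}T_\alpha - \sum_{\alpha\in\Phi^-(W)}T_{w\alpha}=\sum_{\alpha\in\Phi^+(W)\cap w\Phi^-(W)}(T_\alpha-T_{-\alpha})$ (already used in the proofs above), into precisely the intersection presentation of $\sO_X(D_{w_0}({}^-\vec T)-D_{w_I}({}^-\vec T))\otimes\mathcal H_{W,W_J,W_I;\vec T;\gamma^{-1}}(X)\otimes\sO_X(D_{w_J}({}^-\vec T)-D_{w_0}({}^-\vec T))$. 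Finally I would observe that contravariance with respect to composition follows from the corresponding contravariance of the master-spherical adjoint (the factor $\sum_{w\in W_{I(w)}}w$ needed in the middle of a composition is already present in the adjacent operator, and the factor $\sum(-1)^{\ell(w)}w$ to be removed is needed there too — exactly the bookkeeping in the $\vec T=0$ proof), and that the isomorphism, holding on a dense open subset of parameter space and being an equality of coherent subsheaves of the meromorphic algebra, extends over all of $S$.

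The main obstacle I expect is the divisor bookkeeping in the fourth step: keeping track of which $T_\alpha$ sums land on which side after the swap, and verifying that the $W_{I(w)}$-invariance needed to make the twisting well-defined is preserved (this is the content of the Lemma preceding Proposition \ref{prop:Mackey_for_Hecke}, which I would re-invoke, noting that $w\in{}^IW^J$ forces no root of $W_{I(w)}$ to appear in any of the relevant divisors). A secondary subtlety is that the na\"{\i}ve-adjoint identification of meromorphic algebras is $k(X)[W]_\gamma^{\mathrm{op}}\cong k(X)[W]_{\gamma^{-1}}$ rather than an involution on a single algebra, so one must be slightly careful that the twisting data track correctly through the intersection; but since each rank-1 reduction is literally the $A_1$ case already handled, and the sheaf-algebra formalism of Proposition \ref{prop:reflexive_order_construction} guarantees that an identity of codimension-1 stalks suffices, this is routine once the rank-1 statement is in hand. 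Everything else is a direct transcription of the finite-case argument, so I would keep the write-up short: extend to universal parameters, cite the intersection presentation, apply the swap, recombine divisors, cite rank-1, descend.
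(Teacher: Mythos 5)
Your proposal is correct and follows essentially the same route the paper takes: the paper's proof of the finite analogue (just after Proposition \ref{prop:spherical_as_intersection}) reads in full "Extend to universal parameters, write the left-hand side as an intersection, and take the adjoint of both twists of ${\cal H}_{W,W_I,W_J;\vec{T}}(X)$. The resulting equality extends as usual to the full parameter space," and the infinite case is handled by the same argument. Your expanded write-up fills in exactly the divisor bookkeeping — $D_{w_0}({}^-\vec T)-D_{w_I}({}^-\vec T) = D_{w_0}-D_{w_I}-\sum_{\alpha\in\Phi^-(W)\setminus\Phi^-(W_I)}T_\alpha$ and the symmetric identity at $J$ — that recombines the swapped $\sum T_\alpha$ twists with the untwisted adjoint factors $\sO_X(D_{w_0}-D_{w_I})$ and $\sO_X(D_{w_J}-D_{w_0})$ into the intersection presentation of the right-hand side, which is precisely what the paper leaves implicit.
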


\begin{rem}
 Of course, we~also have analogous results for the other three natural
 $\Hom$ sheaves discussed above.
\end{rem}

We close by considering the analogue in this setting of the residue
conditions of~\cite{GinzburgV/KapranovM/VasserotE:1997}. It suffices to
consider the algebras ${\cal H}_{W,W_I,W_J;\gamma}(X)$, since $\vec{T}$
simply imposes generic vanishing conditions on the coefficients, as~already
discussed. (And, of course, this includes the Hecke algebras themselves by
taking $I=J=\varnothing$.) We may further assume $J=\varnothing$, as~${\cal
 H}_{W,W_I,W_J;\gamma}(X)$ is the submodule of~${\cal
 H}_{W,W_I;\gamma}(X)$ consisting of~$W_J$-invariant operators.

Since we may embed ${\cal H}_{W,W_I;\gamma}(X)$ in~${\cal H}_{W;\gamma}(X)$
using a symmetric idempotent, the fact that the latter is spanned by
submodules
\begin{gather*}
 {\cal H}_{\langle r\rangle;\gamma}(X) \sO_X[W]_\gamma
\end{gather*}
implies something similar for the former: it is spanned by the
submodules
\begin{gather*}
 {\cal H}_{\langle r\rangle;\gamma}(X) \bigoplus_{w\in W^I} {\cal Z}_w w W_I.
\end{gather*}
If $rw W_I = w W_I$, then we may rewrite the corresponding summand as
\begin{gather*}
{\cal Z}_w w {\cal H}_{\langle w^{-1} r w\rangle;\gamma}(X) W_I
=
{\cal Z}_w w W_I,
\end{gather*}
and thus we may omit any such summand.

If we restrict to a finite subset $S\subset W/W_I$, then we conclude that
\begin{gather*}
c_{w W_I}\in {\cal Z}_w\bigg(
\sum_{r\in R(W)\colon rw W_I\in S\setminus \{w W_I\}} [X^r]\bigg)
\end{gather*}
and that there is a residue condition relating $c_{w W_I}$ and $c_{r w
 W_I}$ along $[X^r]$. Again moving ${\cal Z}_w w$ to the left lets us
express this condition in the form
\begin{gather*}
c_w + {}^w h_{w^{-1}rw} \zeta_{w,w^{-1}rw}^{-1} c_{rw} = 0\in {\cal
 Z}_w([X^r])|_{[X^r]},
\end{gather*}
where $h_{w^{-1}r w}$ comes from the root datum on~$\langle w^{-1}r
w\rangle$. Note that we could also determine this by right dividing by
$f_w w W_I$, where $f_w$ trivializes ${\cal Z}_w$ in a neighborhood of~$X^r$
to obtain the condition in the form
\begin{gather*}
c_w + f_w {}^r f_w^{-1} h_r \zeta_{r,w}^{-1} c_{rw}=0\in {\cal
 Z}_w([X^r])|_{[X^r]}.
\end{gather*}
Of course, these are equivalent (since otherwise $\gamma$ would violate the
compatibility conditions).

Naturally, in the untwisted case, the condition is just that $c_w+c_{rw}$
is holomorphic (essentially Corollary~\ref{cor:local_order_two_residue_conditions}), and the same holds (along
$[X^r])$ if we embed ${\cal H}_{W,W_I;\gamma}(X)$ in~$k(X)[W/W_I]$ via an
expression of the twisting datum as the coboundary of a formal divisor
which is transverse to the reflection hypersurfaces.

\section{The (double) affine case}\label{section6}

The most interesting case for our purposes is when the Coxeter group is an
affine Weyl group~$\tW$. We actually want to modify the construction
(very) slightly in that case, as~the abelian variety being acted on is
slightly larger than we would like. That is, rather than have an
$(n+1)$-dimensional variety with an invariant map to an elliptic curve, we~would prefer to act on the fibers of that map. If~we pull back the sheaf
bimodule ${\cal H}_{\tW;\vec{T};\gamma}(X)$ from $X\times_S X$ to~$X\times_{X/A_{\tW}}X$, then we find (by considering what happens on
invariant localizations, say) that the result is still naturally a sheaf
algebra. The~group no longer acts faithfully on every fiber, but the
various calculations involving the Bruhat filtration carry over without
difficulty, so that we still obtain a flat family of sheaf algebras
generated by the rank~1 subalgebras. One caveat is that $T_\alpha$ and~$T_\beta$ need not be transverse for~$\alpha\ne\pm \beta$; if they
correspond to the same root of the finite root system, then their divisors
differ only by a translation, which may act trivially on some fibers.

Still, we~have the following definition. First, if $X$ is a torsor over
the abelian scheme $A$, an action of~$\tW$ on~$X$ by affine reflections is
simply an action such that every simple reflection fixes a hypersurface and
the action on~$A$ factors through a faithful action of the corresponding
finite Weyl group. Any such action arises from an action of coroot type by
specializing the parameter $q$. (Recall that $q=\zeta(z)$ gives the image
of the origin of a fiber under the special reflection~$s_0$.) In addition,
every finite parabolic subgroup still acts faithfully (regardless of~$q$),
and thus in particular we still have good notions of systems of parameters
and twisting data. The~one caution is that when expressing twisting data
as a coboundary of some formal sum of divisors, one needs to assume $q$
non-torsion. This is not truly an issue, however, as~one can simply take
the limit of the twisting data from the non-torsion case (which simply adds
an extra level of formality to the formal sum). In~particular, the
cocycles in Cartier divisors associated to the formal sums $\sum_{\alpha\in
 \Phi^+(W)} T_\alpha$ and $\sum_{\alpha\in \Phi^+(W)} [X^{r_\alpha}]$
remain cocycles after specializing $q$.

\begin{defn}
 Let $\tW$ act on~$X$ by affine reflections, let $\vec{T}$ be a
 system of parameters, and let~$\gamma$ be a twisting datum.
 The~corresponding {\em elliptic double affine Hecke algebra}
 ${\cal H}_{\tW;\vec{T};\gamma}(X)$ is the sheaf subalgebra of~$k(X)\big[\tW\big]_\gamma$ generated by the rank~1 subalgebras ${\cal
 H}_{\langle s\rangle;\vec{T};\gamma}(X)$ for $s\in S$.
\end{defn}

\begin{prop}
 The subsheaf of~${\cal H}_{\tW;\vec{T};\gamma}(X)$ corresponding to
 any finite Bruhat order ideal is an $S$-flat coherent sheaf on~$X\times_S
 X$.
\end{prop}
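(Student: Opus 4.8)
The plan is a straightforward induction on the cardinality of $I$ using the Bruhat filtration, so that essentially all of the content is inherited wholesale from the infinite-group machinery of the previous section; the point that makes the statement non-vacuous is that even when $q$ is torsion, so that $\tW$ fails to act faithfully on the fibers of $X/S$, the Bruhat subquotients remain honest line bundles on $X$ and hence stay $S$-flat.

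First I would record the short exact sequence. Since $\tW$ is a Coxeter group of finite rank, the proposition established in the infinite-group section applies (verbatim when $q$ is non-torsion, and in general by specialization, as the formation of ${\cal H}_{\tW;\vec{T};\gamma}(X)$ respects base change and the Bruhat short exact sequence is assembled purely from Coxeter-combinatorial inputs — the strong exchange property, Corollary~\ref{cor:infin_hecke_twisted_interval}, and the leading-coefficient identity $D_w(\vec{T})=\sum_{1\le i\le \ell(w)} {}^{s_1\cdots s_{i-1}}([X^{s_i}]-T_i)$ for a reduced word $w=s_1\cdots s_{\ell(w)}$ — none of which see the $\tW$-action on fibers): for a Bruhat-maximal element $w$ of the finite order ideal $I$,
\[
0\to {\cal H}_{\tW;\vec{T};\gamma}(X)[I\setminus\{w\}]
\to {\cal H}_{\tW;\vec{T};\gamma}(X)[I]
\to (1,w^{-1})_*\bigl({\cal Z}_{\gamma,w}\otimes \sO_X(D_w(\vec{T}))\bigr)
\to 0 .
\]
Here ${\cal Z}_{\gamma,w}\otimes\sO_X(D_w(\vec{T}))$ is genuinely invertible on $X$: ${\cal Z}_{\gamma,w}$ is a line bundle coming from the twisting datum $\gamma$, and $D_w(\vec{T})=\sum_{r:\,rw<w}([X^r]-T_{\alpha_r})$ is a Cartier divisor since each reflection hypersurface $[X^r]$ and each $T_\alpha$ is Cartier; note that the non-transversality caveat for affine parameters is irrelevant here, as only one $T_\alpha$ per inversion $\alpha$ of $w$ occurs in this sum.

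Given the exact sequence, coherence of ${\cal H}_{\tW;\vec{T};\gamma}(X)[I]$ follows by induction on the cardinality of $I$, the base case $I=\emptyset$ being the zero sheaf. In the inductive step the quotient term is the direct image of an invertible sheaf on $X$ along the closed immersion $(1,w^{-1})\colon X\to X\times_S X$, hence coherent on $X\times_S X$; the subsheaf is coherent by the inductive hypothesis; and an extension of coherent sheaves is coherent. For $S$-flatness I would run the same induction: the base case is trivial, and in the inductive step the subsheaf is $S$-flat by hypothesis, while the quotient is $S$-flat because a line bundle on $X$ is flat over $S$ (as $X/S$ is flat) and direct image along a closed immersion preserves flatness over the base; an extension of two $S$-flat sheaves is $S$-flat, which closes the induction. (The same term-by-term base-change compatibility of the filtration also shows that the formation of ${\cal H}_{\tW;\vec{T};\gamma}(X)[I]$ commutes with base change on $S$, though that is not asked for here.)

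The only step requiring genuine care is the first one, namely certifying that the Bruhat short exact sequence is available in the affine setting — in particular for torsion $q$, where the action degenerates. But this is bookkeeping rather than a new idea: all the inputs are Coxeter-combinatorial and stable under base change, so they survive the specialization, and the resulting subquotients are line bundles on $X$ independently of any behaviour of the $\tW$-action on individual fibers. Once that is in hand, the induction above is entirely formal.
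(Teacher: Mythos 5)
Your proof is correct and matches the paper's (implicit) approach; the paper states this proposition without a displayed proof, leaving it to the preceding discussion, which asserts exactly your key point: the Bruhat short exact sequence from Section~5 specializes under base change from the coroot-type family (universal $q$) to the affine-reflections case (arbitrary $q$, including torsion), and the resulting subquotients $(1,w^{-1})_*({\cal Z}_{\gamma,w}\otimes\sO_X(D_w(\vec T)))$ are pushforwards of line bundles on $X$ along closed immersions, hence $S$-flat and coherent, so an induction on $|I|$ closes the argument. Your supporting observations — that the ``too small'' issue for torsion $q$ is invisible because one works in the abstract twisted group algebra $k(X)[\tW]_\gamma$ graded by $\tW$ itself, that the leading-coefficient identity for $D_w(\vec T)$ is purely Coxeter-combinatorial (strong exchange plus positivity of $s_1\cdots s_{i-1}\alpha_i$) and hence specializes, and that the non-transversality caveat is harmless because each inversion contributes exactly once — are the right things to check and are accurately stated.
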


\begin{prop}
 We have
 \begin{gather*}
 {\cal H}_{\tW;\vec{T};\gamma}(X) \subset {\cal H}_{\tW;\gamma}(X) \cap
 \sO_X\bigg({-}\sum_{\alpha\in \Phi^+(\tW)} \!T_\alpha\bigg)
 \otimes {\cal H}_{\tW;\gamma}(X) \otimes
 \sO_X\bigg(\sum_{\alpha\in \Phi^+(\tW)} \! T_\alpha\bigg),
 \end{gather*}
 with equality unless there are $\alpha\in \Phi^+\big(\tW\big)$, $\beta\in
 \Phi^-\big(\tW\big)$ such that $T_\alpha$ and $T_\beta$ have a common component.
\end{prop}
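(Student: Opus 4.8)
The plan is to run the proof of Proposition~\ref{prop:finite_hecke_as_intersection} (equivalently, the proof of Proposition~\ref{prop:infinite_hecke_as_intersection}) in the affine setting; the genuinely new points are that the relevant sum now runs over the infinite set $\Phi^+(\tW)$ and that the $T_\alpha$ lying over a single root of the finite root system need not be mutually transverse. First I would record that $\Gamma:=\sum_{\alpha\in\Phi^+(\tW)}T_\alpha$, read as an integer-valued function on the irreducible Cartier divisors of $X$, is a legitimate formal divisor in the sense of Section~5: a simple reflection $s_i$ permutes $\Phi^+(\tW)\setminus\{\alpha_i\}$ and sends $\alpha_i$ to $-\alpha_i$, so $\Gamma-{}^{s_i}\Gamma=T_{\alpha_i}-T_{-\alpha_i}$ has finite support. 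Hence $\Gamma$ defines a meromorphically trivial twisting datum, the conjugated algebra $\sO_X(-\Gamma)\otimes{\cal H}_{\tW;\gamma}(X)\otimes\sO_X(\Gamma)$ in the statement is a well-defined subalgebra of $k(X)[\tW]_\gamma$, and (as noted before the definition of the elliptic double affine Hecke algebra) all of this persists after specializing $q$, so that one may argue first with $q$ non-torsion and then take a limit.

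For the containment I would reduce, exactly as in the finite case, to showing that each rank~$1$ generator ${\cal H}_{\langle s_i\rangle;\vec{T};\gamma}(X)$ preserves the subsheaf $\sO_X(-\Gamma)$. The formal divisor $\Gamma-T_{\alpha_i}$ is $s_i$-invariant; for $\vec{T}$ in general position and $q$ non-torsion it also has trivial valuation along $[X^{s_i}]$, its components being pulled back from coroot curves of roots $\ne\pm\alpha_i$ whose coroot hypersurfaces are then distinct from $[X^{s_i}]$. In that case preserving $\sO_X(-\Gamma)$ is equivalent, on the rank~$1$ subalgebra, to preserving $\sO_X(-T_i)$ --- this is Proposition~\ref{prop:generic_rank_one_as_intersection} after twisting by a line bundle with coboundary $\gamma|_{s_i}$ --- and the inclusion follows since ${\cal H}_{\tW;\vec{T};\gamma}(X)$ is generated by its rank~$1$ subalgebras. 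The inclusion for arbitrary $\vec{T}$ and $q$ is then inherited, since ${\cal H}_{\tW;\vec{T};\gamma}(X)$ and the formal divisor $\Gamma$ both respect base change and the inclusion can be checked on the ($S$-flat, coherent) pieces attached to finite Bruhat order ideals.

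For the equality I would compare the two sides on each finite Bruhat order ideal $I$, where both are coherent sheaves on $X\times_S X$, by inducting on $|I|$ through the Bruhat short exact sequences for ${\cal H}_{\tW;\vec{T};\gamma}(X)$ (cf.\ Corollary~\ref{cor:infin_hecke_twisted_interval}). At a maximal $w\in I$ the left coefficient of $w$ in ${\cal H}_{\tW;\vec{T};\gamma}(X)[I]$ is ${\cal Z}_{\gamma,w}\otimes\sO_X(D_w(\vec{T}))$ with $D_w(\vec{T})=D_w-\sum_{\alpha\in I_w}T_\alpha$, $I_w:=\Phi^+(\tW)\cap w\Phi^-(\tW)$ finite, while in the right-hand side it is ${\cal Z}_{\gamma,w}\otimes\bigl(\sO_X(D_w)\cap\sO_X(D_w+{}^w\Gamma-\Gamma)\bigr)$, where ${}^w\Gamma-\Gamma=-\sum_{\alpha\in I_w}(T_\alpha-T_{-\alpha})$ is again a finite divisor. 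Since the intersection of two divisorial subsheaves of $k(X)$ is computed by the componentwise minimum of the divisors, one finds $\sO_X(D_w)\cap\sO_X(D_w+{}^w\Gamma-\Gamma)=\sO_X(D_w(\vec{T}))$ precisely when $\sum_{\alpha\in I_w}T_\alpha$ and $\sum_{\alpha\in I_w}T_{-\alpha}$ have disjoint support, i.e.\ when no $\alpha,\alpha'\in I_w$ give $T_\alpha$ and $T_{-\alpha'}$ a common component; as $\alpha\in\Phi^+(\tW)$ and $-\alpha'\in\Phi^-(\tW)$, the hypothesis of the Proposition rules this out for every $w$. The matching of all these coefficient sheaves, together with the inclusion and the five lemma applied to the Bruhat short exact sequences, gives ${\cal H}_{\tW;\vec{T};\gamma}(X)[I]=(\text{RHS})[I]$, and passing to the union over finite $I$ finishes.

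The step I expect to be the main obstacle is exactly the bookkeeping of the previous paragraph. In the finite case one discards the pairs $(\alpha,-\alpha')$ with $\alpha'\ne\alpha$ at once, since distinct reflections have distinct coroot curves; in the affine case $\alpha$ and $\alpha'$ may lie over the same finite root, so their coroot maps differ only by a translation (by a multiple of $q$) and $T_\alpha$ and $T_{-\alpha'}$ can genuinely acquire a common component --- this is precisely the non-transversality flagged just before the Proposition. Hence one cannot sharpen the equality criterion to the ``$T_\alpha$, $T_{-\alpha}$'' form of Proposition~\ref{prop:infinite_hecke_as_intersection}, and one must check that the stated condition on $\Phi^+(\tW)\times\Phi^-(\tW)$ is exactly what the minimum-of-divisors identity demands as $w$ runs over all finite inversion sets $I_w$, along with the genericity and base-change reduction used for the containment.
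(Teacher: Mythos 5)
Your argument is correct and runs along precisely the lines the paper intends (the paper gives no explicit proof here, but the statement is evidently meant to be read off from the proofs of Propositions~\ref{prop:finite_hecke_as_intersection} and~\ref{prop:infinite_hecke_as_intersection}, with the modification to the equality criterion). You have correctly identified both genuinely new features: that $\Gamma=\sum_{\alpha\in\Phi^+(\tW)}T_\alpha$ must be treated as a formal divisor (with the well-definedness after specializing $q$ supplied by the discussion preceding the Proposition), and --- more importantly --- that the finite-case reduction from ``$T_\alpha,T_\beta$ have a common component for $\alpha\in\Phi^+,\beta\in\Phi^-$'' to ``$T_\alpha,T_{-\alpha}$ have a common component'' breaks down. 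Your diagnosis of why is exactly right: in the finite case two distinct coroot maps have transverse fibers, whereas in the affine case the coroot maps for $\alpha$ and $\alpha'$ over the same finite root differ only by a translation by a multiple of $q$, so $T_\alpha$ and $T_{-\alpha'}$ may share a component even when $\alpha'\ne\alpha$ --- this is the non-transversality the paper flags just before stating the Proposition, and it is why the criterion must be phrased over all of $\Phi^+(\tW)\times\Phi^-(\tW)$.

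Two small remarks, neither of which is a real gap. First, in the containment step your statement that the components of $\Gamma-T_{\alpha_i}$ are ``pulled back from coroot curves of roots $\ne\pm\alpha_i$ whose coroot hypersurfaces are then distinct from $[X^{s_i}]$'' is a bit too glib as phrased: for $\alpha=\alpha_i+kq_i$ the coroot map is the translate of $\iota'_i$ by $kq_i$, so $T_\alpha$ is a translate of $T_{\alpha_i}$ and could in principle meet $[X^{s_i}]$; the correct statement is what you in fact use, namely that for $q$ non-torsion and $\vec{T}$ generic this does not happen, and then one argues by base change. Second, your phrase ``distinct reflections have distinct coroot curves'' in the final paragraph should read ``distinct coroot maps'' --- all reflections in an irreducible $W$-orbit share a coroot curve up to canonical isomorphism; what differs is the map $X\to E$, and in the finite case these differ in direction rather than merely by translation, which is what gives the transversality.
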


\begin{rem}
 When $q$ is torsion, then in fact each $\alpha$ has infinitely many
 $\beta$ (both positive and negative) such that $T_\alpha=T_\beta$, and
 thus the hypothesis for equality is never satisfied.
\end{rem}

{\sloppy\begin{cor}\label{cor:affine_vanishing_conditions}
 Let $\vec{T}$ be a system of parameters such that every $T_\alpha$ is
 transverse to every ref\-lec\-tion hypersurface. Then ${\cal
 H}_{\tW;\vec{T};\gamma}(X)$ may be identified with the sheaf
 subalgebra of~${\cal H}_{\tW;\gamma}(X)$ consisting locally of
 operators $\sum_w c_w w$ such that for every $w$, $c_w$ vanishes on the
 divisor $\sum_{\alpha\in\Phi^+(W)\cap w\Phi^-(W)} T_{\alpha}$.
\end{cor}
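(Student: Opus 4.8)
The plan is to follow the proof of the corresponding finite-rank statement (the corollary immediately after Lemma~\ref{lem:Bruhat_fin_parm}), working with the Bruhat filtration of ${\cal H}_{\tW;\vec{T};\gamma}(X)$ directly rather than with the intersection description of the preceding proposition. This detour is necessary: as the remark notes, that intersection description degenerates whenever $q$ is torsion, and more generally whenever some $T_\alpha$ and $T_{-\alpha}$ share a component — which the present transversality hypothesis does \emph{not} exclude — yet the vanishing-condition description should persist regardless. All the ingredients are in hand. Corollary~\ref{cor:infin_hecke_twisted_interval} gives, for a reduced word $w=s_1\cdots s_\ell$, that the multiplication map from the rank~1 subalgebras surjects onto ${\cal H}_{\tW;\vec{T};\gamma}(X)[\le w]$, and the twisted affine analogue of Lemma~\ref{lem:Bruhat_fin_parm} (which, as remarked in Section~6, carries over without difficulty) supplies the short exact sequence whose top subquotient is $(1,w^{-1})_*({\cal Z}_{\gamma,w}\otimes\sO_X(D_w(\vec{T})))$, with $D_w(\vec{T})=\sum_{r\in R(\tW),\,rw<w}([X^r]-T_{\alpha_r})$.

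For the ``only if'' direction I would argue exactly as in Lemma~\ref{lem:Bruhat_fin_parm}: expanding a product of rank~1 operators and reading off the coefficient of $w$, one finds it forced to vanish on $T_{s_1\cdots s_{i-1}\alpha_i}$ for each $i$, the key point being that $s_1\cdots s_{i-1}\alpha_i$ is a positive root because $s_1\cdots s_i$ is reduced — standard Coxeter combinatorics, valid verbatim for affine Weyl groups. By the strong exchange property the reflections $r$ with $rw<w$ are precisely the $r_\alpha$ for $\alpha\in\Phi^+(W)\cap w\Phi^-(W)$, which yields vanishing on $\sum_{\alpha\in\Phi^+(W)\cap w\Phi^-(W)}T_\alpha$ as claimed. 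The only affine-specific care needed is that distinct $T_{\alpha_r}$ in this sum may overlap one another and that $\tW$ need not act faithfully on every fibre; neither affects the argument, since the $T_{\alpha_r}$ all enter $D_w(\vec{T})$ with the same sign and the whole computation is carried out fibrewise along the reflection hypersurfaces, each of which is a genuine hypersurface transverse to every $T_\alpha$ by hypothesis.

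For the converse, let ${\cal D}=\sum_w c_w w$ be a local section of ${\cal H}_{\tW;\gamma}(X)$ over an invariant affine localization $U$ satisfying the stated vanishing conditions. Since ${\cal H}_{\tW;\gamma}(X)$ is the union over \emph{finite} Bruhat order ideals $I$ of the coherent pieces ${\cal H}_{\tW;\gamma}(X)[I]$ (each of the form ${\cal M}_I(X)$) and $U$ is quasicompact, the support of ${\cal D}$ lies in some finite order ideal; take $I$ minimal and $w_1$ maximal in $I$. The vanishing hypothesis says precisely that $c_{w_1}$ is a section of ${\cal Z}_{\gamma,w_1}\otimes\sO_X(D_{w_1}(\vec{T}))$, so by the Bruhat short exact sequence there is, after passing to an open cover $U=\bigcup_i V_i$, a local section of ${\cal H}_{\tW;\vec{T};\gamma}(X)$ supported on $I$ with the same coefficient of $w_1$. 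Subtracting it produces an operator whose support generates a strictly smaller order ideal and which still satisfies the vanishing conditions, so by induction on $|I|$ it — and hence ${\cal D}|_{V_i}$ — lies in ${\cal H}_{\tW;\vec{T};\gamma}(X)$; the restrictions agree on overlaps. Finally, since both sheaves are sheaf subalgebras of $k(X)[\tW]_\gamma$ and we have matched their local sections on a cover of $X\times_S X$ by products of invariant localizations (using the $\hat\A^1$-construction of Proposition~\ref{prop:natural_localization} to produce such a cover when $X/S$ is projective, and fpqc descent in general), the identification follows and is automatically compatible with the algebra structures.

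The main obstacle I anticipate is bookkeeping rather than conceptual: pinning down the twisted affine version of the Bruhat short exact sequence with exactly the subquotient ${\cal Z}_{\gamma,w}\otimes\sO_X(D_w(\vec{T}))$ explicitly enough that the leading-coefficient lift in the converse can be chosen compatibly, so that the induction on $|I|$ genuinely closes, together with verifying that the relevant finiteness — every local section of ${\cal H}_{\tW;\gamma}(X)$ over an invariant affine localization has support in a finite order ideal — is what makes that induction well-founded in the infinite-group setting.
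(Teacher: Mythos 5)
Your proof is correct and follows the route the paper takes implicitly: the corollary is stated without proof because the Bruhat filtration machinery of Section~5 is asserted to carry over to the affine setting, and your argument is exactly the explicit finite-rank proof given after Lemma~\ref{lem:Bruhat_fin_parm}, adapted via the twisted infinite-rank short exact sequence and the quasicompactness/Noetherianness needed to ensure any local section has support in a finite order ideal. You are also right that the preceding intersection proposition cannot serve as the source of this corollary (it degenerates precisely when $q$ is torsion), so the Bruhat-filtration route — with the transversality hypothesis doing the work of separating allowed poles from forced zeros — is the correct one, and your handling of both directions (products of rank 1 operators for ``only if'', leading-coefficient peeling with induction on the order ideal for the converse) matches the paper's.
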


}

It will be helpful to understand the possible twisting data in this
scenario. If~we assume that $W$ has trivial root kernel, then we may
trivialize the twisting datum along $W$, and it thus remains to determine
the possibilities for the restriction to~$s_0$. If~$\tW\ne
\tilde{A}_n$, then the affine diagram is a tree with~$s_0$ as a leaf, and
thus $s_0$ commutes with a rank $n-1$ parabolic subgroup of~$W$. The~polarization of~${\cal Z}_{s_0}$ must be invariant under that subgroup, and
since it is also negated by $s_0$, we~find that ${\cal Z}_{s_0}$ must have
trivial polarization. To~fully specify the twisting datum, we~need to
choose a solution of~${\cal L}\otimes {}^{s_0}{\cal L}^{-1}={\cal
 Z}_{s_0}$. If~$\tW\ne \tilde{C}_n$, so that $s_0$ is connected to
the finite diagram via an ordinary edge, then we find that the polarization
of~${\cal L}$ is the sum of a~polarization on the connected kernel of the
coroot map and a polarization in the image of~$1+s_0$; it thus follows that
we may replace ${\cal L}$ itself by a line bundle with trivial polarization
without affecting the root datum.

In other words, for~$\tW$ not of type $A$ or $C$, any twisting datum
is (up to an overall twist by a line bundle) given by taking the twisting
datum along $s_0$ to be the coboundary of a~point in~$\Pic^0(X)$. For~type
$C$, there is at most one additional component which may be reached if it
exists by taking the coboundary of the pullback of a degree $1$ line bundle
on the coroot curve. For~type $A$, the situation is more complicated, as~it turns out that there are in fact twisting data with nontrivial
polarizations. Luckily, these can always be described via cocycles in
Cartier divisors. Type $A_n$ corresponds to the action of~$S_{n+1}$ on the
sum zero subvariety of~$E^{n+1}$, with~$s_0$ acting as
$(z_1,z_{n+1})\mapsto (q+z_{n+1},z_1-q)$. For~any point $u\in E$, we~may
consider the formal sum
\begin{gather*}
\sum_{j\ge 0} \sum_{1\le i\le n+1} [z_i=u+jq].
\end{gather*}
This is invariant under the finite Weyl group, while its coboundary under
$s_0$ is $[z_1=u]-[z_{n+1}=u+q]$. This gives a well-defined twisting datum
for generic $u$, $q$, and thus extends to all $u$, $q$. As~$u$ varies,
these cover the corresponding component of the group scheme classifying
twisting data trivial on~$W$, and one readily verifies that this component
generates the full group scheme.

In the $\tilde{A}_n$ case, we~could obtain every twisting datum via a
cocycle in Cartier divisors. This is unlikely to hold in general type
(albeit without a known counterexample), but something only slightly weaker
is true.

\begin{prop}
 Let $X/S$ be an abelian torsor equipped with an action of~$\tW$ of
 coroot type such that $W$ has trivial root kernel. Then any twisting
 datum on~$X/S$ can be fppf locally represented by a cocycle in Cartier
 divisors.
\end{prop}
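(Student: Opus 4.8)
The plan is to reduce the claim, via a suitable fppf base change, to the situation already handled for finite Weyl groups and for the affine root systems of type $A$, and then to combine the pieces. First I would pass to an fppf cover $S'\to S$ over which $X$ acquires a $\tW$-invariant section; by the remark following the finite-type twisting proposition, such a cover exists (since $W$ has trivial root kernel, one only needs to trivialize an $A^W$-torsor, and in the affine setting $A_{\tW}$-torsors become trivial fppf-locally as well). After this base change we may trivialize $\gamma$ along the finite parabolic $W$, so the only remaining data is the restriction $\gamma|_{\langle s_0\rangle}$, i.e.\ a twisting datum for the single involution $s_0$ on $X$, consisting of a line bundle ${\cal Z}_{s_0}$ with ${\cal Z}_{s_0}\otimes {}^{s_0}{\cal Z}_{s_0}\cong \sO_X$ together with the morphism $h_{s_0}$.

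The next step is the structural analysis of ${\cal Z}_{s_0}$, carried out by cases on the type of $\tW$, exactly as in the discussion preceding the statement. When $\tW$ is not of type $A$ or $C$, the polarization of ${\cal Z}_{s_0}$ is $W'$-invariant (for the rank $n-1$ parabolic commuting with $s_0$) and $s_0$-anti-invariant, hence trivial; choosing a square root ${\cal L}$ with ${\cal L}\otimes{}^{s_0}{\cal L}^{-1}\cong{\cal Z}_{s_0}$ (possible fppf-locally since multiplication by $2$ is fppf-surjective on $\Pic^0(X)\cong \prod_i E_i$), and using that for non-$C$ type the polarization of ${\cal L}$ itself can be arranged trivial, we realize $\gamma$ as the coboundary of a point of $\Pic^0(X)$ — and a point of $\Pic^0$ is trivially a cocycle in Cartier divisors (choose any translate of a symmetric theta divisor, or simply note degree-$0$ line bundles on an abelian variety are differences of effective Cartier divisors). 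For type $C$, the same argument handles all components except possibly one, reached by the coboundary of the pullback of a degree-$1$ bundle on the coroot curve; here one represents that degree-$1$ bundle by an effective Cartier divisor (a point of the curve) and pulls back. For type $\tilde A_n$, I would invoke the explicit construction already given: the formal sum $\sum_{j\ge 0}\sum_{1\le i\le n+1}[z_i=u+jq]$ is $W$-invariant with $s_0$-coboundary $[z_1=u]-[z_{n+1}=u+q]$, and as $u$ ranges over $E$ these coboundaries cover (and generate) the relevant component of the group scheme of twisting data trivial on $W$; combined with an overall line-bundle twist (again a Cartier-divisor cocycle) this gives the representation in all of type $A$.

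To glue the cases cleanly I would argue at the level of the group scheme $\Tw(X/S)_W$ of twisting data trivial on $W$: the constructions above exhibit, in each type, a collection of cocycles-in-Cartier-divisors whose images generate every connected component, and the overall twist by $\Pic(X/S)$ (all of whose sections are fppf-locally differences of effective Cartier divisors, since $X/S$ is projective so the relative Picard scheme is represented and line bundles on projective varieties are differences of very ample ones) accounts for the rest. Since ``being represented by a cocycle in Cartier divisors'' is closed under tensor product of twisting data and under overall line-bundle twist, and since every twisting datum on $X/S$ (trivial on $W$ after the initial fppf cover) lies in one of these components up to such a twist, the claim follows. The main obstacle I anticipate is not any single case but the bookkeeping for type $C$: one must check that the ``exotic'' component — the one coming from the genuinely $4$-isogenous coroot model in which $X/S$ has no section — is actually reached, and that the relevant degree-$1$ line bundle on the coroot curve can be chosen compatibly with the descent back down to $S$, which requires care because the two pullbacks of the chosen Cartier divisor to $S'\times_S S'$ need only have the same coboundary (not be equal), so one leans on the fact — already used repeatedly above — that the coboundary functor kills automorphisms, hence the putative descent obstruction vanishes.
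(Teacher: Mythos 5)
Your approach diverges from the paper's proof and has a genuine gap at its central step. The paper does not run a case analysis by Dynkin type up front; after reducing to twisting data trivial on $W$, it constructs a \emph{uniform} one-parameter family of cocycles in Cartier divisors directly, using a nonconstant homomorphism $\lambda_0\colon X\to E'$ and the $W$-invariant formal sum $\sum_{j\ge 0}\sum_{\lambda\in W\lambda_0}[\lambda=u+jq']$ (whose $s_0$-coboundary is finitely supported). It then proves this family is nonconstant in $u$ by observing that shifting $u$ by $q'$ subtracts a $W$-invariant ample divisor class, which being ample cannot be $s_0$-invariant --- and, crucially, this nonconstancy check is visibly independent of $q$, so the construction covers the identity component of the group scheme of twisting data for all $q$, torsion or not. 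Types $A$ and $C$ are only singled out at the end, to reach the non-identity components.

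Your proof instead asserts that after trivializing on $W$, the twisting datum is (fppf-locally) the coboundary of a point of $\Pic^0(X)$. This does not follow from the expository discussion preceding the Proposition, which only says that twisting data trivial on $W$ are \emph{parametrized} by points of $\Pic^0(X)$ via the coboundary map along $s_0$. To realize $\gamma$ as a coboundary of some $\mathcal{L}$, you would need $\mathcal{L}$ to be $W$-invariant (otherwise its coboundary is nontrivial on $W$), and you would need the coboundary morphism $\Pic^0(X)^W\to\Tw$ to be dominant. The paper's Remark following the proof establishes only that this map has dense image, and only for $q$ non-torsion; for $q$ torsion, the evidence for nonconstancy (shifting $u$ by $q'$) disappears since $q'$ becomes torsion or zero, and your argument does not address this. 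Your concluding paragraph asserts that "the constructions above exhibit ... cocycles-in-Cartier-divisors whose images generate every connected component," but this generation claim is precisely what requires proof, and it is what the paper's uniform $\lambda_0$-construction together with the ample-divisor nonconstancy argument supply. The unaddressed torsion-$q$ case, and the unjustified dominance of the coboundary map, are the concrete gaps.
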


\begin{proof}
 Certainly the twist by a line bundle may be represented as a cocycle in
 Cartier divisors, so we reduce to the case that the twisting datum is
 trivial along $W$. Let $\lambda_0\colon X\to E'$ be a~(nonconstant)
 homomorphism from $X$ to an elliptic curve, and consider the orbit
 $\tW\lambda_0$ of such maps for general $u\in E'$. There is a~point $q'\in E'$ (determined from $q$ and $\lambda_0$) such that
 $\lambda$ is in the orbit iff
 \begin{gather*}
 \lambda = w\lambda_0 + jq'
 \end{gather*}
 for some $w\in W$, $j\in \Z$. Now, consider the formal sum
 \begin{gather*}
 \sum_{j\ge 0} \sum_{\lambda\in W\lambda_0} [w\lambda_0=u+jq']
 \end{gather*}
 of divisors on~$X$. This is $W$-invariant, and its coboundary with
 respect to~$s_0$ has finite support, so we obtain a well-defined family
 of cocycles in Cartier divisors.

 Apart from type $A$ and $C$, it will suffice to show that it depends
 nontrivially on~$u$ (since then we have a nonconstant morphism from $E'$
 to the connected $1$-parameter group scheme parametrizing twisting data).
 Such dependence is clearly independent of~$q$, so we may assume $q$
 non-torsion, and thus $q'$ non-torsion. Then shifting $u$ by $q'$ above
 subtracts $\sum_{\lambda\in W\lambda_0} [w\lambda_0=u]$ from the formal
 sum. This divisor class is $W$-invariant and ample, so is {\em not}
 $s_0$-invariant, and thus shifting $u$ has a nontrivial effect on the
 twisting datum as required.

 In type $C$, the same argument shows that everything in the identity
 component of the scheme parametrizing twisting data is fppf locally
 represented by cocycles in Cartier divisors, and one can explicitly
 verify (indeed, see the discussion of the $\tilde{C}$ case below) that
 when there is another component, it can still be reached in this way.
\end{proof}

\begin{rems}
 The presence of exotic twisting data in type $A$ can be explained by
 considering the induced cocycle in polarizations. As~discussed in more
 detail in the $\tilde{C}_n$ case below, such a cocycle for arbitrary type
 may be obtained as the coboundary of a $W$-invariant rational homogeneous
 polynomial of the form $p_3(\vec{z},q)/q)$ (with appropriate
 modifications in the presence of nontrivial isogenies). The~only part
 that contributes to the polarization of the coboundary is the part of
 degree 3 in~$\vec{z}$, and the only indecomposable finite Weyl groups
 with invariants of degree 3 are those of type $A_n$ for~$n\ge 2$.
\end{rems}

\begin{rems}
 In the above argument, we~used the fact that shifting $u$ by $q'$ had the
 effect of twisting by a $W$-invariant line bundle, and that this had a
 nontrivial effect on the twisting datum. It follows that we do not have
 a well-defined scheme parametrizing twisting data modulo twists by line
 bundles. Indeed, it follows that for $q$ non-torsion, twists by line
 bundles are dense in the identity component of the group scheme of
 twisting data trivial on~$W$.
\end{rems}

It is unclear (but likely) if this result holds for actions with nontrivial
root kernel. This fact is useful enough, however, that we will include it
as an implicit assumption below; that is, we~will impose as an additional
condition that the twisting datum is fppf locally represented by a cocycle
in Cartier divisors, or equivalently (by the above argument) that the
restriction of the twisting datum to~$W$ is a coboundary.

The description of~${\cal H}_{\tW}(X)$ $\big($or ${\cal
 H}_{\tW,\tW_I,\tW_J}(X)\big)$ as holomorphy-preserving
operators continues to hold, as~long as $\tW$ acts faithfully, or
more generally for any Bruhat order ideal that injects in~$\Aut(X)$. So we
may again apply Corollary~\ref{cor:global_order_two_residue_conditions} to
obtain residue conditions analogous to those of
\cite{GinzburgV/KapranovM/VasserotE:1997} for the finite case, just as in
the non-affine case.

For the twisted case, we~note that when~$q$ is nontorsion, the algebra
${\cal H}_{\tW;\gamma}(X)$ is still an algebra of the type constructed in
Proposition~\ref{prop:reflexive_order_construction}, so there is no
difficulty in generalizing the proofs and we still have reasonable residue
conditions. (If we represent the twisting datum as a cocycle in Cartier
divisors transverse to the reflection hypersurfaces, then the corresponding
embedding in~$k(X)\big[\tW\big]$ is again holomorphy-preserving away from the
support of the cocycle, and thus the residue conditions can still be
obtained from Corollary~\ref{cor:global_order_two_residue_conditions}. Of
course, the argument we used in the non-affine case works equally well!)
Any Bruhat interval is flat as $q$ varies, and thus we can obtain
conditions for torsion~$q$ by taking limits. This can become quite
complicated when the Bruhat interval is large compared to the order of~$q$,
but in the case that the Bruhat interval acts faithfully, there is no
difficulty taking the limit; the poles are at most simple, and one simply
has the usual condition along each reflection hypersurface given by the
twisting datum. The~same applies to spherical modules; again, the sheaf on
each Bruhat interval can be obtained as the limit from general $q$, and the
limit is straightforward as long as there is no coefficient such that two
components of its allowed polar divisor coalesce in the limit. In~other
words, for~the $W_I$-invariant Bruhat interval $[\le w W_I]$, if the set of
reflections $\{r\in R(W)\colon r w' W_I\in [\le w W_I]\setminus \{w' W_I\}\}$
injects in~$\Aut(X)$ for every $w' W_I\le w W_I$, then the residue
conditions are precisely as expected from the non-affine case.

The most important case for the spherical algebra construction is when~$\tW_I=W$ is the corresponding finite Weyl group. In~that case, we~note that each coset $\tW/W$ contains a unique translation, and thus
we may interpret elements of the spherical algebra as (elliptic) difference
operators, with~${\cal H}_{\tW,W}(X)$ for non-torsion~$q$
corresponding to difference operators that (locally) preserve $W$-invariant
holomorphic functions. Note that the Bruhat order
on~$W\setminus\tW/W$ is simply the usual dominance order on dominant
weights~\cite{LusztigG:1983}.

This has the following consequence. We say that a sheaf algebra is a
domain if the product of a nonzero section on~$U\times V$ and a nonzero
section on~$V\times W$ is always a nonzero section on~$U\times W$.

\begin{prop}
 Suppose that the root kernel for~$W$ on~$X$ is diagonalizable. Then for
 any twisting datum $\gamma$ which is trivial on~$W$, every fiber of the
 spherical algebra ${\cal H}_{\tW,W;\vec{T};\gamma}(X)$ is a~domain.
\end{prop}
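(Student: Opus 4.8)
The plan is to prove that each fiber is a domain by showing that its associated graded with respect to the filtration by Bruhat order ideals is a domain, and then deducing the statement by a leading-term argument. Recall that on the double cosets $W\backslash\tW/W$ this Bruhat order is the dominance order on dominant coweights, and that each such coset contains a unique translation. First I would reduce to the case $S=\Spec k$ with $k$ algebraically closed: the sections of the spherical algebra over a product $U\times V$ of opens inject into the sections over $U_{\bar k}\times V_{\bar k}$ by faithful flatness, so an identity $fg=0$ with $f,g$ nonzero would persist after base change to $\bar k$; thus it suffices to treat a single algebraically closed geometric fiber. Fix such a fiber, write $\Lambda^+$ for the monoid of dominant coweights, and for $\lambda\in\Lambda^+$ let $t_\lambda\in\tW$ be the corresponding translation and $W_\lambda=\mathrm{Stab}_W(\lambda)=W\cap t_\lambda W t_\lambda^{-1}$. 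Since the root kernel of $W$ on $X$ is diagonalizable and $\gamma$ is trivial on $W$, the spherical algebra ${\cal H}_{\tW,W;\vec T;\gamma}(X)$ and its filtration by finite Bruhat order ideals are, as developed in Sections 4 and 5, $S$-flat sheaf bimodules whose subquotient indexed by $\lambda$ is $(1,t_\lambda^{-1})_*{\cal L}_\lambda$ for a line bundle ${\cal L}_\lambda$ on the integral scheme $X/W_\lambda$ (a quotient of an abelian torsor by a finite group). On the chosen fiber this gives a $\Lambda^+$-graded associated graded $\mathrm{gr}=\bigoplus_{\lambda\in\Lambda^+}(1,t_\lambda^{-1})_*{\cal L}_\lambda$.

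Next I would show $\mathrm{gr}$ is a domain. Multiplication respects the filtration: since $\ell(t_{\lambda+\mu})=\ell(t_\lambda)+\ell(t_\mu)$ for $\lambda,\mu\in\Lambda^+$, a reduced word for $t_{\lambda+\mu}$ splits as a reduced word for $t_\lambda$ followed by one for $t_\mu$, and Corollaries~\ref{cor:Bruhat_intervals_product} and~\ref{cor:infin_hecke_twisted_interval} — which are purely combinatorial in $\tW$ and hence hold uniformly in $q$, including torsion $q$ — show that ${\cal H}[\le t_\lambda W]\otimes{\cal H}[\le t_\mu W]$ surjects onto ${\cal H}[\le t_{\lambda+\mu}W]$ modulo lower Bruhat terms. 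Translating through the tensor-product formula for sheaf bimodules of the form $(1,g^{-1})_*M$, the induced structure map $m_{\lambda,\mu}\colon{\cal L}_\lambda\otimes{}^{t_\lambda}{\cal L}_\mu\to{\cal L}_{\lambda+\mu}$ is therefore surjective; being a surjection of line bundles on an integral scheme, it is an isomorphism. Thus $\mathrm{gr}$ is a sheaf algebra graded by the cancellative monoid $\Lambda^+$, with graded pieces line bundles on integral schemes and invertible structure maps. Choosing a total monoid order $\prec$ on $\Lambda^+$ that refines the dominance order, the $\prec$-leading term of any nonzero section (supported on a finite order ideal, hence with a well-defined $\prec$-maximal, and so dominance-maximal, support element) is unambiguous. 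As a sanity check: for non-torsion $q$ one can instead embed the fiber of the spherical algebra in the crossed product $k(X)[\Lambda]_\gamma$ of the function field by the translation lattice, an iterated skew Laurent ring and hence a domain, and flatness of the Bruhat intervals already forces $m_{\lambda,\mu}$ to be an isomorphism for every $q$.

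Finally, with $\mathrm{gr}$ a domain, the fiber itself is a domain. Given nonzero sections $f$ on $U\times V$ and $g$ on $V\times W$, let $\lambda^*(f),\lambda^*(g)$ be the $\prec$-maximal elements of their supports. Because $\prec$ refines dominance and is compatible with addition, among the pairs $(\lambda,\mu)$ in the supports of $f$ and $g$ only $(\lambda^*(f),\lambda^*(g))$ can contribute to the component of $fg$ in $\mathrm{gr}_{\lambda^*(f)+\lambda^*(g)}$, and that contribution is $m_{\lambda^*(f),\lambda^*(g)}$ applied to the tensor of the leading terms of $f$ and $g$; by injectivity of $m$ and integrality (using Proposition~\ref{prop:computing_tp} to compute the tensor of the two nonzero local sections of line bundles), this is nonzero. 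Hence $fg\ne 0$ on $U\times W$, so the fiber of ${\cal H}_{\tW,W;\vec T;\gamma}(X)$ is a domain. (The parameter system $\vec T$ only imposes generic vanishing conditions on coefficients, cf. Corollary~\ref{cor:affine_vanishing_conditions}, and so cannot destroy this.) The main obstacle is the middle step: correctly identifying the multiplication induced on the associated graded — tracking through the tensor product of sheaf bimodules and the stabilizer quotients $X/W_\lambda$ that the $m_{\lambda,\mu}$ are the expected structure maps — and verifying that the combinatorial surjectivity of Corollaries~\ref{cor:Bruhat_intervals_product} and~\ref{cor:infin_hecke_twisted_interval} genuinely survives at torsion $q$, which is exactly what forces each $m_{\lambda,\mu}$ to remain an isomorphism there.
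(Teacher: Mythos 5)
Your plan is to pass to the associated graded of the Bruhat filtration (graded by dominant coweights), prove that the graded multiplication maps $m_{\lambda,\mu}$ are isomorphisms of line bundles, and then deduce the domain property from a monomial ordering.  This is a genuinely different route from the paper's, and it has a gap precisely at the step you flag as the ``main obstacle''.  The issue is that the source of $m_{\lambda,\mu}$ is \emph{a priori} the top subquotient of the tensor product of sheaf bimodules $(1,t_\lambda^{-1})_*{\cal L}_\lambda \otimes_{X/W} (1,t_\mu^{-1})_*{\cal L}_\mu$.  That tensor product is the pushforward of a sheaf on the fiber product $X/W_\lambda \times_{X/W} X/W_\mu$, which is typically reducible (it decomposes along $W_\lambda \backslash W / W_\mu$), so the top subquotient is not a single line bundle on an integral scheme.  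After one identifies the relevant component one does obtain the map ${\cal L}_\lambda \otimes {}^{t_\lambda}{\cal L}_\mu \to {\cal L}_{\lambda+\mu}$ over $X/W_{\lambda+\mu}$; but surjectivity of the full filtered multiplication onto ${\cal H}[\le t_{\lambda+\mu}W]$ does not by itself force surjectivity of this single component once the tensor product has contributions from several components landing in the same degree.  You would need the full Mackey analysis for the spherical bimodule to control which components actually hit the top subquotient.  So, as written, the key claim that each $m_{\lambda,\mu}$ is an isomorphism is not established, and your ``sanity check'' for non-torsion $q$ is doing all the real work.

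The paper's proof is shorter and avoids this entirely, and it is worth noting that your sanity check is in fact the paper's proof for \emph{all} $q$, not just non-torsion $q$.  The paper first shows the spherical algebra injects fiberwise into $k(X)[\Lambda]_\gamma$: the spherical algebra is the $W$-invariants of $\Ind^{\tW;\vec{T};\gamma}_W\sO_X$, the diagonalizable-root-kernel hypothesis gives \emph{strongly flat} invariants, so each fiber of the spherical algebra equals the $W$-invariants of the corresponding fiber of the induced module, which is torsion-free and hence embeds in $\Ind^{\tW;\vec{T};\gamma}_W k(X) = k(X)[\Lambda]_\gamma$.  It then observes that $k(X)[\Lambda]_\gamma$ is a domain by choosing a total group ordering on $\Lambda$ and multiplying leading monomials: $\zeta_{\lambda\mu}(f_\lambda\otimes {}^\lambda g_\mu)$ is a product of nonzero elements of the field $k(X)$ under an isomorphism, hence nonzero.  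The crucial point you missed is that this argument never uses faithfulness of the $\Lambda$-action on $X$; it only uses that $\Lambda$ is a free abelian group indexing the graded pieces.  So the embedding and the domain conclusion both survive at torsion $q$, and the entire associated-graded machinery of your proposal is unnecessary.
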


\begin{proof}
 We first note that the inclusion of~${\cal
 H}_{\tW,W;\vec{T};\gamma}(X)$ in~$k(X)[\Lambda]_\gamma$ is
 injective on fibers. This follows from the fact that we can compute
 ${\cal H}_{\tW,W;\vec{T};\gamma}(X)$ as the $W$-invariant submodule
 of an~$S$-flat module with strongly flat invariants, and thus the
 inclusion~${\cal H}_{\tW,W;\vec{T};\gamma}(X) \subset
 \Ind^{\tW;\vec{T};\gamma}_W \sO_X$ is injective on fibers; as the
 induced module injects in the induced module of~$k(X)$ and this equals
 $k(X)[\Lambda]_\gamma$, the desired injectivity follows.

 In particular, any local section of a fiber on a product of~$W$-invariant
 open subsets can be identified with a $W$-invariant section of~$k(X)[\Lambda]_\gamma$. Since this identification is compatible with the
 multiplication, it will suffice to show that $k(X)[\Lambda]_\gamma$ is a
 domain. Since $\Lambda$ is a finitely generated free abelian group,
 there exist injective homomorphisms $\Lambda\to \R$, allowing us to
 define a total ordering on~$\Lambda$ compatible with the group law. In~particular, for~any nonzero element $\sum_{\lambda\in\Lambda}
 c_\lambda[\lambda]$ of~$k(X)[\Lambda]_\gamma$, there is a corresponding
 notion of~``leading monomial'', defined as $c_\lambda[\lambda]$, where
 $\lambda$ is the largest element of the support. If~$f$ has leading
 monomial $f_\lambda[\lambda]$ and $g$ has leading monomial~$g_\mu[\mu]$,
 then $fg$ has leading monomial $\zeta_{\lambda\mu}(f_\lambda\otimes
 {}^{\lambda}g_\mu)$, and is therefore nonzero as required.
\end{proof}

Another important feature of the spherical algebra in the affine case is
that it is Morita equivalent to the DAHA itself, at least for generic
parameters. The~proof relies on the following result on two-sided ideals
in the DAHA.

\begin{lem}
 Let $k$ be an algebraically closed field, and suppose $\tW$ acts
 faithfully on the abelian torsor $Y/k$. Let $S$ be a product of
 symmetric powers of coroot curves of~$Y$, and let $\vec{T}$ be the
 corresponding universal system of parameters on~$X:=Y\times S$. Then for any
 ideal sheaf ${\cal I}\subset \sO_X$, there is a dense open subset of~$S$
 on which ${\cal I}$ generates ${\cal H}_{\tW;\vec{T};\gamma}(X)$ as a
 two-sided ideal.
\end{lem}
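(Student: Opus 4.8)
\emph{Proof proposal.} Write ${\cal H}:={\cal H}_{\tW;\vec T;\gamma}(X)$ and let $J:={\cal H}\cdot{\cal I}\cdot{\cal H}$ be the two-sided ideal sheaf generated by ${\cal I}$; we may assume ${\cal I}\ne 0$. Since ${\cal I}={\cal O}_X$ on $X\setminus V({\cal I})$, we already have $J={\cal H}$ there, so the problem is to show that the (closed) support $Z$ of ${\cal H}/J$ misses the fibre of $X/S$ over the generic point $\eta_S$ of $S$. The plan is to prove $J={\cal H}$ at the generic point of every prime divisor of the generic fibre $Y_{k(S)}$ (with $\vec T$ now the universal, hence generic, system of parameters), and then to promote this codimension $1$ statement to $Z\cap Y_{k(S)}=\emptyset$.

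First the easy divisors. Let $D\subset Y_{k(S)}$ be a prime divisor with generic point $\eta$. If $D\not\subset V({\cal I})$ then ${\cal I}_\eta={\cal O}_{X,\eta}$ and there is nothing to prove. If $D\subset V({\cal I})$ but $D$ is not one of the reflection hypersurfaces $[X^r]$, then for generic $\vec T$ no divisor $T_\alpha$ and no reflection hypersurface passes through $\eta$, so by Corollary~\ref{cor:affine_vanishing_conditions} (with the description of the master algebra) ${\cal H}$ localizes at $\eta$ to the twisted group sheaf algebra ${\cal O}_{X,\eta}[\tW]_\gamma$, which contains $\tW$ (up to units, ${\cal Z}_{\gamma,w}$ being locally trivial). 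As ${\cal I}\ne 0$, a $\tW$-invariant divisor is impossible — the translation lattice acts on $Y$ with Zariski-dense orbits, which is exactly faithfulness, i.e.\ non-torsion of $q$ — so there is $w\in\tW$ with $w(D)\ne D$; conjugating ${\cal I}_\eta=(u^m)$ by $w$ produces the unit $({}^{w}u)^m\in J_\eta$, whence $J_\eta={\cal H}_\eta$.

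The crux is the case $D=[X^r]$ with $[X^r]\subset V({\cal I})$: here the purely local two-sided ideal of the rank $1$ algebra ${\cal H}_{\langle r\rangle,T_r}(X)_\eta\cong\Mat_2(R')$ ($R'$ the discrete valuation ring downstairs) generated by ${\cal I}_\eta=(u^m)$ only meets ${\cal O}_X$ in $(u^{2\lfloor m/2\rfloor})$, so one genuinely needs the non-local part of $J$. I would argue: (i) since the translates $t_\lambda\bigl([X^r]\bigr)$ are Zariski-dense (again non-torsion of $q$), $V({\cal I})$ cannot contain the whole $\tW$-conjugacy class of $[X^r]$, so choose $v$ with $[X^{v^{-1}rv}]\not\subset V({\cal I})$; (ii) by surjectivity of the Bruhat filtration (Corollary~\ref{cor:infin_hecke_twisted_interval}) choose $P=c_v\,v+(\text{lower})$ and $Q=c_{v^{-1}}\,v^{-1}+(\text{lower})$ in ${\cal H}$ with $c_v,c_{v^{-1}}$ units near $\eta$, and (by (i)) $h\in{\cal I}$ with ${}^{v}h$ a unit near $\eta$; then the ${\cal O}_X$-coefficient of $PhQ$ has leading term $c_v\cdot{}^{v}h\cdot{}^{v}c_{v^{-1}}$, a unit near $\eta$; (iii) for $\vec T$ generic the cross terms coming from the lower-Bruhat parts of $P$ and $Q$ are sections of line bundles whose polar divisors avoid $\eta$, so, viewing this ${\cal O}_X$-coefficient evaluated at $\eta$ as an affine-linear functional of $h$ whose leading part is surjective onto the residue field, one can arrange it to be a unit in $J_\eta$. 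This gives $J_\eta={\cal H}_\eta$ for every prime divisor of $Y_{k(S)}$.

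To finish, each finite Bruhat interval of ${\cal H}$ is an iterated extension of line bundles on $X$ (the analogue of Lemma~\ref{lem:Bruhat_fin_parm}), hence locally free, so codimension $1$ agreement forces ${\cal H}/J$ to be supported in codimension $\ge 2$ on $Y_{k(S)}$; one then re-runs the localization of the two preceding paragraphs at the generic points of the components of that support — using that point stabilizers for the affine action are finite (again by non-torsion of $q$), so that the local ring of ${\cal H}$ is governed by a finite parabolic and the same reflection-hypersurface-or-not dichotomy applies after passing to the appropriate stratum — concluding $Z\cap Y_{k(S)}=\emptyset$, and hence $J={\cal H}$ over a dense open of $S$ by spreading out. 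The step I expect to be the main obstacle is (iii): controlling, for $\vec T$ in general position, the pole-cancellation (``residue'') conditions that couple the off-diagonal coefficients, so as to be certain that a unit really does appear in $J_\eta$ at a reflection hypersurface lying in $V({\cal I})$; this is precisely the tension between the generic vanishing conditions of Corollary~\ref{cor:affine_vanishing_conditions} and the vanishing of ${}^{v}h$ along $v(V({\cal I}))$, and it is where the hypothesis ``$\vec T$ in a dense open of $S$'' does its work. Everything else — the easy divisors, the twisted-group-algebra computation, the density of conjugate reflection hypersurfaces, and the reflexivity reduction — should be routine given the results already established in the paper.
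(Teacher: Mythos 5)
Your approach is genuinely different from the paper's, and it has a real gap — precisely at the step you flagged as the ``main obstacle,'' plus one more. The paper does not argue divisor-by-divisor. Instead it first normalizes ${\cal I}$ so that ${\cal I} = ({\cal H}\,{\cal I}\,{\cal H})|_1$ (the restriction of the two-sided ideal to the identity coefficient), and then by restricting to operators supported on a single $s_i$ derives the purely set-theoretic ``weak symmetry'' relation ${\cal I} \supset {}^{s_i}{\cal I}\otimes \sO_X(T_i+{}^{s_i}T_i)$: if $x$ lies in the zero set $Z$ of ${\cal I}$, then either $s_i x\in Z$ or $x\in T_i\cup{}^{s_i}T_i$. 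The punch line exploits properness of $X/S$ and the fact that $S$ carries the universal $\vec{T}$: since $Z$ would meet every fiber, look at the special fiber of $S$ where every $T_i$ is supported inside $[Y^{s_i}]$ — there the weak symmetry condition literally becomes $s_i$-invariance of $Z$ (because $x\in T_i$ forces $s_ix=x$), so $Z$ is $\tW$-invariant on that fiber, hence all of it by density of orbits; but on the other hand, on the generic fiber the weak symmetry condition forces $Z_{k(S)}$ to be covered by the translates ${}^wT_i$, no one of which contains a fiber, so $Z$ cannot contain any fiber — contradiction. No localization at reflection hypersurfaces, no computation in a rank-1 algebra, no residue conditions.

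By contrast, your step (iii) is unresolved exactly where it matters: producing a unit in $J_\eta$ at the generic point of a reflection hypersurface $[X^r]\subset V({\cal I})$ requires controlling the full sum $\sum_{w_1w_2=1} c^P_{w_1}\,{}^{w_1}h\,{}^{w_1}c^Q_{w_2}$, and the claim that the cross terms avoid poles at $\eta$ is precisely where the $\vec{T}$-dependent vanishing conditions on the Bruhat subquotients bite — the leading coefficients of $P$ and $Q$ are forced to vanish along certain $T_\alpha$, and one has to check that these forced zeros and the generic-position hypothesis cooperate to leave a unit, which you haven't done and which is not obviously true. There is a second gap: the passage from agreement at codimension-$1$ points of $Y_{k(S)}$ to $J_{k(S)}={\cal H}_{k(S)}$ is not ``routine,'' since $J$ is only an ideal subsheaf, not visibly reflexive, so codimension-$1$ equality does not propagate (compare ${\mathfrak m}^2\subset\sO$ on $\A^2$); ``re-running the localization'' at higher-codimension generic points means redoing the entire analysis with larger inertia groups, which is a substantial undertaking. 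The paper's proof sidesteps both obstacles by never working at the level of local rings of ${\cal H}$ — only at the level of zero sets and closures, combined with properness over the universal parameter base.
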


\begin{proof}
 Suppose otherwise. We may assume that
 \begin{gather*}
 {\cal I} = \bigl( {\cal H}_{\tW;\vec{T};\gamma}(X)
 {\cal I} {\cal H}_{\tW;\vec{T};\gamma}(X) \bigr)\big|_1,
 \end{gather*}
 since both sides generate the same two-sided ideal. If~we replace each
 instance of the DAHA by~the restriction of the DAHA to operators
 supported entirely on~$s_i$, it follows that
 \begin{gather*}
 {\cal I} \supset {}^{s_i}{\cal I}\otimes \sO_X\big(T_i+{}^{s_i}T_i\big).
 \end{gather*}
 This in turn induces a weak symmetry condition on the {\em set} $Z$ cut
 out by ${\cal I}$: if $x\in Z$, then either~$s_i(x)\in Z$ or $x\in
 T_i\cup {}^{s_i}T_I$, and in those terms our goal is to prove that $Z$
 does not meet the generic fiber.

 We claim that this is true for {\em any} proper closed subset of~$X$
 satisfying this condition. Since the weak symmetry condition is
 preserved under restriction to the generic fiber of~$S$ as well as under
 taking Zariski closure, we~may assume that every component of~$Z$ meets
 the generic fiber, and thus by properness that it meets every fiber. Now
 consider a fiber on which each $T_i$ is contained (as a set) in~$[Y^{s_i}]$. If~$x$ is a~point of~$Z$ in such a fiber, the weak symmetry
 condition implies that either $s_i(x)\in Z$ or $x\in [Y^{s_i}]$ and thus
 $x=s_i(x)$. Thus the restriction of~$Z$ to such a fiber must be
 $s_i$-invariant (as a set!) and thus $\tW$-invariant. Since
 $\tW$ acts faithfully on~$Y$, any~$\tW$-orbit in~$Y$ is
 Zariski dense, and thus $Z$ must contain every such fiber.

 It will thus suffice to prove that $Z$ cannot contain any fiber, and thus
 obtain a contradiction. Again, since $\tW$ acts faithfully, $Z_{k(S)}$
 cannot contain any $\tW$-orbits, and thus for any point $x\in Z_{k(S)}$,
 there exists $w\in \tW$ and $i\in \{0,\dots,n\}$ such that $wx\in
 Z_{k(S)}$ but $s_iwx\notin Z_{k(S)}$. It~follows that $wx\in T_i\cup
 {}^{s_i}T_i$, and thus $Z_{k(S)}$ is covered by the sets of the form
 ${}^wT_i$ for~$w\in \tW$, $i\in \{0,\dots,n\}$. Since every component of~$Z$ meets the generic fiber, $Z$ is covered by the same sets; since none
 of these sets contains a fiber, $Z$ cannot contain a fiber, and we are
 done.
\end{proof}

\begin{prop}\label{prop:spherical_is_Morita_equivalent}
 Suppose $\tW$ acts faithfully on~$X$ such that the root kernel of~$W$ on~$X$ is diagonalizable and $\gamma$ is trivial on~$W$. If~$\vec{T}$ is in
 sufficiently general position, then the categories of~${\cal
 H}_{\tW;\vec{T};\gamma}(X)$-modules and ${\cal
 H}_{\tW,W;\vec{T};\gamma}(X)$-modules are equivalent, with the inverse
 equivalences given by ${-}^W$ and $\Ind_W^{\tW;\vec{T};\gamma}
 \sO_X\otimes_{{\cal H}_{\tW,W;\vec{T};\gamma}(X)}{-}$.
\end{prop}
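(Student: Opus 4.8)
The plan is to follow the standard Morita-via-idempotent argument, but in the sheaf-algebra setting and using the two-sided ideal lemma to handle the one step where the naive argument fails. Write $\mathcal{H}:={\cal H}_{\tW;\vec{T};\gamma}(X)$ and $\mathcal{H}^{\mathrm{sph}}:={\cal H}_{\tW,W;\vec{T};\gamma}(X)$, and let $e={\cal H}_{W;\gamma}(X)^W$-style symmetric idempotent; more precisely, by the discussion of local symmetric idempotents (which applies here since the root kernel of $W$ is diagonalizable and, for $\vec{T}$ in general position, $\mathcal{H}$ is covered by symmetric idempotents), $\sO_X=\Ind^{\tW;\vec{T};\gamma}_W\sO_X$ is a locally projective $\mathcal{H}$-module, and $\mathcal{H}^{\mathrm{sph}}\cong \sEnd_{\mathcal{H}}(\sO_X)^{\mathrm{op}}$ acts on it. The first step is to record that ${-}^W=\sHom_{\mathcal{H}}(\sO_X,-)$ and $\Ind_W^{\tW;\vec{T};\gamma}\sO_X\otimes_{\mathcal{H}^{\mathrm{sph}}}-$ are adjoint functors between $\mathcal{H}$-modules and $\mathcal{H}^{\mathrm{sph}}$-modules, with the unit and counit being the obvious evaluation/coevaluation maps; this is formal once one knows $\sO_X$ is a progenerator-candidate, i.e.\ locally projective and a generator.

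Local projectivity of $\sO_X$ is already in hand from the symmetric-idempotent covering. The real content is generation: one must show that $\sO_X$ generates the category of $\mathcal{H}$-modules, equivalently that the trace ideal $\sum_{\phi\in\sHom_{\mathcal{H}}(\sO_X,\mathcal{H})}\phi(\sO_X)$ is all of $\mathcal{H}$, equivalently (since $\sHom_{\mathcal{H}}(\sO_X,\mathcal{H})^W\supset\sO_X(-D_{w_0})$ and the image of evaluation is the symmetrization $\sum_{w}w(-)$ applied to $\sO_X(D_{w_0})$-twisted sections) that the two-sided ideal generated by $e\mathcal{H}e=\mathcal{H}^{\mathrm{sph}}$, or rather by the image of $\sO_X$ under all module maps, is the whole sheaf algebra. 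Here is where I invoke the Lemma on two-sided ideals: taking ${\cal I}=\sO_X\subset\sO_X$ (the unit ideal sheaf is trivial, so instead one takes ${\cal I}$ to be the ideal sheaf cutting out the locus where the local symmetric idempotents fail, or more directly the ideal sheaf generated by $\sum_w {}^w h$ ranging over local sections $h$ of $\sO_X(D_{w_0})$ that symmetrize into $\mathcal{H}$), and concludes that on a dense open subset of the parameter space $S$ (the symmetric powers of coroot curves), ${\cal I}$ generates $\mathcal{H}$ as a two-sided ideal. Since "$\vec{T}$ in sufficiently general position" is exactly a point of such a dense open, this gives that $\mathcal{H}e\mathcal{H}=\mathcal{H}$, which is precisely the generation statement needed.

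With $\sO_X$ locally projective, a generator, and $\mathcal{H}^{\mathrm{sph}}=\sEnd_{\mathcal{H}}(\sO_X)^{\mathrm{op}}$, the abstract Morita theory (applied sheaf-locally on $X/W\times_S X/W$, then glued by fpqc descent along the affine localizations $\hat{\A}^1$ of Section 5, exactly as the sheaf-algebra formalism was set up) yields that ${-}^W$ and $\Ind_W^{\tW;\vec{T};\gamma}\sO_X\otimes_{\mathcal{H}^{\mathrm{sph}}}-$ are mutually inverse equivalences. One should check two compatibility points: first, that $\mathcal{H}^{\mathrm{sph}}$ as defined (the spherical module $(\Ind^{\tW;\vec{T};\gamma}_W\sO_X)^W$ with its composition law) really coincides with $\sEnd_{\mathcal{H}}(\sO_X)^{\mathrm{op}}$, which follows from the Frobenius-reciprocity identity $\sHom_{\mathcal{H}}(\Ind_W\sO_X,\Ind_W\sO_X)\cong(\Res_W\Ind_W\sO_X)^{W}$ already established; and second, that the constructions respect base change, so that "dense open in $S$" is the correct genericity hypothesis and the equivalence on a fiber follows from the sheaf statement over that open locus.

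The main obstacle is the generation step — showing $\mathcal{H}e\mathcal{H}=\mathcal{H}$ generically. Everything else is either formal Morita theory or a direct appeal to results already proved (local symmetric idempotents, strong flatness of spherical invariants, base change, the sheaf-algebra descent machinery). The subtlety in the generation step is that the "unit ideal" trick used for ordinary Morita contexts does not transplant verbatim: one genuinely needs the geometric input that the failure locus of the symmetric idempotent, pushed around by $\tW$, cannot cover a fiber — and this is exactly the faithfulness-of-the-$\tW$-action argument packaged in the two-sided ideal Lemma. I would spend the bulk of the write-up making the translation between "trace ideal of $\sO_X$" and "two-sided ideal generated by the relevant ideal sheaf" precise, and verifying that the dense open locus produced by the Lemma can be taken to contain any given sufficiently-general $\vec{T}$.
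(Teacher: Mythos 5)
Your proposal is correct and follows essentially the same route as the paper: both use symmetric-idempotent coverings (available since $\vec{T}$ is general) for exactness and local projectivity, and both reduce the remaining generation step to the two-sided ideal Lemma. The one place you are vaguer than the paper is in producing the nonzero ideal sheaf to which the Lemma applies: the paper gets this cleanly by observing that the image of the counit $\phi$ contains the image of the analogous map for the finite group $W$, whose cokernel is torsion (since its generic fiber, the map for $k(X)[W]_\gamma$, is surjective), so the image already meets $\sO_X$ nontrivially before the Lemma is invoked.
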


\begin{proof}
 Since we are assuming $\vec{T}$ is in sufficiently general position, we~may in particular assume that the finite Hecke algebra ${\cal
 H}_{W;\vec{T}}(X)$ has a covering by symmetric idempotents, so
 that both functors are exact. It suffices to check that they are
 inverses on the regular representation. One direction is by definition
 of the spherical algebra, so we reduce to showing that the natural map
 \begin{gather*}
 \phi\colon\ \Ind_W^{\tW;\vec{T};\gamma} \sO_X\otimes_{{\cal
 H}_{\tW,W;\vec{T};\gamma}(X)} {\cal H}_{\tW;\vec{T};\gamma}(X)^W
 \to {\cal H}_{\tW;\vec{T};\gamma}(X)
 \end{gather*}
 is surjective. The~image certainly contains the image of
 \begin{gather*}
 \sO_X\otimes_{{\cal H}_{W,W;\vec{T}}(X)}{\cal H}_{W;\vec{T}}(X)^W
 \to {\cal H}_{W;\vec{T}}(X).
 \end{gather*}
 This map is almost certainly {\em not} surjective, but its cokernel is
 torsion, since the analogous map for~$k(X)[W]_\gamma$ is surjective.
 It follows that the image meets $\sO_X$ nontrivially, and thus the same
 is true for~$\phi$. But then the image of~$\phi$ is a two-sided ideal in
 the DAHA meeting $\sO_X$ nontrivially, and thus the lemma tells us that
 $\phi$ is surjective.
\end{proof}

\begin{rem}
 More generally, if $\vec{T}$ and $\vec{T}'$ are two systems of parameters
 in sufficiently general position, the same argument tells us that ${\cal
H}_{\tilde{W};{}^-\vec{T}+\vec{T}';\gamma}(X)$
 is Morita equivalent to
 \begin{gather*}
 \End\bigg(\Ind_W^{\tW;\vec{T}+\vec{T}';\gamma}
 \sO_X\bigg({-}\sum_{\alpha\in \Phi^+(W)} T_\alpha\bigg)\! \bigg).
 \end{gather*}
 (Note that by Proposition~\ref{prop:split_system_of_parameters}, this is
 indeed a module, and is cut out by the image of local symmetric idempotents in~${\cal H}_{W;{}^-\vec{T}+\vec{T}'}(X)$.)
\end{rem}

In the affine case, the spherical algebra has an additional symmetry. As~we mentioned above, for~general Coxeter groups, the usual symmetry
replacing $\vec{T}$ by ${}^-\vec{T}$ has an issue in the spherical algebra
case. The~proof of that symmetry relied on the fact that $\sum_{\alpha\in
 \Phi(W)}T_\alpha$ has no effect on twisting, so that twisting by
$\sum_{\alpha\in \Phi^+(W)}T_\alpha$ and $-\sum_{\alpha\in
 \Phi^-(W)}T_\alpha$ have the same effect, letting one move the twist to
the other half of the intersection. For~the spherical algebra, this
operation instead turns $-\sum_{\alpha\in \Phi^-(W)\setminus
 \Phi^-(W_I)}T_\alpha$ into~$\sum_{\alpha\in\Phi^+(W)\cup
 \Phi^-(W_I)}T_\alpha$, and thus does not give an algebra of the same form
$\big($but rather involves the other natural representation
$\sO_X\big({-}\sum_{\alpha\in \Phi^+(W_I)}T_\alpha\big)\big)$. However, in the {\em
 affine} case, it turns out that there actually {\em is} a system of
parameters $\vec{T}'$ such that
\begin{gather*}
\sum_{\alpha\in\Phi^+(\tW)\cup \Phi^-(W)}T_\alpha
=
\sum_{\alpha\in\Phi^-(\tW)\setminus \Phi^-(W)}T'_\alpha.
\end{gather*}
In both cases, we~can break up the sum as a sum over roots of the finite
Weyl group $W$, and find that on the left-hand side we have a sum over
translates of~$T_\alpha$ by nonnegative multiples of some $q_\alpha$, while
on the right-hand side we have a sum over translates of~$T'_{\alpha}$ by
negative multiples of~$q_\alpha$. We may thus simply take $T'_\alpha$ to
be the translate of~$T_{-\alpha}$ by $q_\alpha$.

As a result, we~find that the algebra
\begin{gather*}
\sHom_{{\cal H}_{\tW;\vec{T}}(X)}\bigg(\Ind^{\tW;\vec{T};\gamma}_{W}
\sO_X\bigg({-}\sum_{\alpha\in \Phi^+(W)} T_\alpha\bigg),
 \Ind^{\tW;\vec{T};\gamma}_{W}\sO_X\bigg({-}\sum_{\alpha\in \Phi^+(W)} T_\alpha\bigg)\!\bigg)
\end{gather*}
may be identified with an instance of~${\cal H}_{\tW,W;\vec{T}}(X)$ in
which the parameters have been translated by $q_\alpha$. We also obtain a
pair of (strongly flat) intertwining bimodules as discussed above in the
finite case. For~generic parameters, each of the algebras is Morita
equivalent to the original DAHA by Proposition~\ref{prop:spherical_is_Morita_equivalent} and the remark following, and
thus the algebras are Morita equivalent to each other, with the
equivalences induced by the corresponding bimodules. More generally, if we
write $\vec{T}=\vec{T}'+\vec{T}''$, then the algebra obtained by
translating $\vec{T}''$ but leaving $\vec{T}'$ alone is still {\em
 generically} expressible as the endomorphism ring of an induced
representation $\big($coming from $\sO_X\big({-}\sum_{\alpha\in
 \Phi^+(W)}\vec{T}'_\alpha\big)\big)$, so the proof of Proposition~\ref{prop:spherical_is_Morita_equivalent} still gives a Morita equivalence
to the original DAHA for generic parameters, and thus Morita equivalences
between the spherical algebras with shifted parameters. Note that the
corresponding DAHAs are themselves Morita equivalent for generic parameters,
since their spherical algebras are Morita equivalent.

Another feature of the double affine case is that the inverse map acts on
the poset ${}^W\tW^W$ as a~diagram automorphism (which is often
trivial). In~particular, if we can arrange for the pullback of the adjoint
through the diagram automorphism to have isomorphic twist datum, then this
gives an actual involution of the Hecke algebra, which allows us to
consider self-adjoint operators (and even have a reasonable chance of
proving commutativity). For~a specific example of this phenomenon in the
$\tilde{C}_n$ case, see Theorem~\ref{thm:vandiejen} below.

One new phenomenon that arises in the affine case is that the group can
fail to act faithfully. Since the finite Weyl group acts faithfully on~$A$, the kernel is necessarily contained in the translation subgroup, and
we see that there is a kernel precisely when~$q$ is torsion. In~that case,
the action of~$\tW\cong W\ltimes \Lambda$ on~$X$ factors through a
semidirect product of the form $\tW_q:=W\ltimes (\Lambda/\Lambda_q)$,
where $\Lambda_q$ is the sublattice acting trivially (which is a multiple
of either $\Lambda$ or, in the non-simply-laced case, the lattice generated
by the other orbit of roots). One thus finds that ${\cal
 H}_{\tW,\vec{T};\gamma}(X)$ is the sheaf algebra associated to a
sheaf of algebras on the quotient $X/\tW_q$. The~centralizer of~$k(X)$ in the generic fiber is isomorphic to~$k(X)[\Lambda_q]_\gamma$, and
thus the center of the generic fiber is isomorphic to~$k\big(X/\tW_q\big)[\Lambda_q]_\gamma^W$. This agrees with the generic fiber
of the center, and thus the center of~${\cal
 H}_{\tW,\vec{T};\gamma}(X)$ for $q$ torsion is the coordinate sheaf
of an integral~$X$-scheme of relative dimension~$n$.

When $\vec{T}=0$, we~can make this quite precise; it turns out that for~$q$
torsion, \smash{${\cal H}_{\tW;\gamma}(X)$} is an algebra \`a la Proposition~\ref{prop:reflexive_order_construction} with~$G=\tW_q$ acting on an
explicitly described (but no longer projective) scheme $X^+$; its center is
thus the structure sheaf of~\smash{$X^+/\tW_q$}, and everything is
Noetherian. Clearly, if such a construction exists, $X^+$ must be the
relative $\Spec$ of the (abelian) restriction~${\cal
 H}_{\tW;\gamma}(X)|_{\Lambda_q}$. This is difficult to understand
directly (we will in fact give an alternate purely geometric construction
and then verify that its relative coordinate ring is as described), but
luckily it will largely suffice to deal with the $\tilde{A}_1$ case.

Thus let $C/S$ be a genus 1 curve equipped with a pair of hyperelliptic
involutions $s_0$, $s_1$, and let $\gamma$ be a twisting datum for the
corresponding action of~$\tilde{A}_1=\langle s_0,s_1\rangle$ on~$C$. We
may describe~$\gamma$ by giving a pair of line bundles ${\cal L}_0$ and
${\cal L}_1$ such that $\gamma|_{\langle s_i\rangle}$ is the coboundary of~${\cal L}_i$. It is then straightforward to determine the residue
conditions for ${\cal H}_{\tilde{A}_1;\gamma}(C)$ when~$s_1s_0$ has
infinite order. We find that $c_{(s_1s_0)^k}$ and $c_{(s_1s_0)^{k+l}s_1}$
have a potential pole along $\big[C^{(s_1s_0)^{2k+l}s_1}\big]$, subject to the
condition that
\begin{gather*}
c_{(s_1s_0)^k} {}^{(s_1s_0)^k}f
+
c_{(s_1s_0)^{k+l}s_1} {}^{(s_1s_0)^{k+l}s_1}f
\end{gather*}
is holomorphic on that reflection hypersurface for any local section
\begin{gather*}
f\in
\begin{cases}
\Gamma\big(U;\sHom\big(\bigotimes_{0<i\le l} {}^{(s_1s_0)^i}{\cal L}_0,
\bigotimes_{0\le i\le l} {}^{(s_1s_0)^i}{\cal L}_1\big)\big), & l\ge 0,
\\[.5ex]
\Gamma\big(U;\sHom\big(\bigotimes_{l<i<0} {}^{(s_1s_0)^i}{\cal L}_1,
\bigotimes_{l<i\le 0} {}^{(s_1s_0)^i}{\cal L}_0\big)\big), &l<0.
\end{cases}
\end{gather*}
(Of course, the condition is independent of~$f$ as long as $f$ is a unit in
the local ring at every component of the reflection hypersurface.)

Now, suppose $s_1s_0$ has finite order $m$, so that the image of~$\langle
s_0,s_1\rangle$ in~$\Aut(C)$ is actually the dihedral group of order $2m$.
Thus as automorphisms of~$C$, they satisfy the braid relation~$(s_0s_1)^{m/2}=(s_1s_0)^{m/2}$ (if $m$ is even) or
$(s_0s_1)^{(m-1)/2}s_0=(s_1s_0)^{(m-1)/2}s_1$ (if $m$ is odd). In~the
dihedral group, both sides represent the longest element, and thus the
corresponding Bruhat interval is the entire group. Something similar
happens in~$\tilde{A}_1$: each side generates a Bruhat interval which is
not only faithful, but a setwise section of the map to the dihedral group,
and the two sections agree except on the longest element. Even better, the
residue conditions in the two Bruhat intervals are very nearly the same;
indeed, below the top, the conditions are necessarily the same, while the
conditions involving the top element differ only mildly. We thus obtain
the following, where $\pi_q\colon C\to C'$ is the quotient by $\langle
s_1s_0\rangle$ (which is translation by some torsion point $q$), an
isogenous curve with induced hyperelliptic involution~$s$ (the common
action of~$s_0$ and $s_1$). (We write $(s_0s_1)^{m/2}$ for~$(s_0s_1)^{(m-1)/2}s_0$ when~$m$ is odd, so that we can express the result
in a uniform way for either parity.)

\begin{prop}
 Let $U\subset C'$ be an $s$-invariant open subset on which there is an
 invertible section~$g\in \Gamma\big(U;N_{\pi_q}\big({\cal L}_1\otimes {\cal L}_0^{-1}\big)^*\big)$. Then
 \begin{gather*}
c_{(s_0s_1)^{m/2}} (s_0s_1)^{m/2}+\sum_{w<(s_0s_1)^{m/2}} c_w w
\in \Gamma\big(\pi_q^*U;{\cal H}_{\tilde{A}_1;\gamma}(C)\big)
 \end{gather*}
 iff
 \begin{gather*}
 \pi_q^*(g/{}^sg) c_{(s_0s_1)^{m/2}} (s_1s_0)^{m/2} + \sum_{w<(s_1s_0)^{m/2}} c_w w
 \in
 \Gamma\big(\pi_q^*U;{\cal H}_{\tilde{A}_1;\gamma}(C)\big).
 \end{gather*}
\end{prop}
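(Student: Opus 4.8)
The plan is to exploit the fact — established just before the statement — that in $\tilde A_1$ the two elements $(s_0s_1)^{m/2}$ and $(s_1s_0)^{m/2}$ generate Bruhat intervals that are setwise sections of the map to the dihedral group and agree on everything \emph{below} the top element, differing only in the top element itself. Consequently, for an operator supported on $[\le (s_0s_1)^{m/2}]$ or on $[\le (s_1s_0)^{m/2}]$, the residue conditions coming from the reflection hypersurfaces $[C^r]$ with $r$ strictly below the top reflection are identical in the two cases, so the only thing to track is the single residue condition along the reflection hypersurface associated to the top element, and how the leading coefficient $c_{(s_0s_1)^{m/2}}$ interacts with it. The element $(s_0s_1)^{m/2}(s_1s_0)^{-m/2}$ is exactly $s_1s_0$ (or its inverse), i.e.\ translation by $q$, so passing from one section of the interval to the other multiplies the leading monomial by this translation; this is precisely where the factor $\pi_q^*(g/{}^sg)$ must appear.

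First I would pass to the non-torsion case and the explicit residue conditions written out just above the statement: with $\langle s_1s_0\rangle$ acting by translation by $q$ (torsion of order $m$), the pair $c_{(s_1s_0)^k}$, $c_{(s_1s_0)^{k+l}s_1}$ is constrained along $[C^{(s_1s_0)^{2k+l}s_1}]$ by holomorphy of ${}^{(s_1s_0)^k}f\, c_{(s_1s_0)^k} + {}^{(s_1s_0)^{k+l}s_1}f\, c_{(s_1s_0)^{k+l}s_1}$ for $f$ a section of the appropriate $\sHom$ bundle built from the ${\cal L}_i$ along the orbit. For the two top elements, the relevant bundle is $N_{\pi_q}({\cal L}_1\otimes{\cal L}_0^{-1})$ pulled back along $\pi_q$ (the norm arising because, modulo $\langle s_1s_0\rangle$, the whole orbit of tensor factors collapses to a single norm down to $C'$). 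So the top residue condition on the $(s_0s_1)^{m/2}$-side says $\pi_q^*(g^{-1})\,c_{(s_0s_1)^{m/2}}$ plus the conjugate term is holomorphic along that hypersurface, while on the $(s_1s_0)^{m/2}$-side, because the leading term has been translated by $s_1s_0$, the analogous condition reads $\pi_q^*({}^sg^{-1})$ times the (translated) leading coefficient plus the conjugate is holomorphic. Comparing the two, the leading coefficients must be related by exactly the ratio $\pi_q^*(g/{}^sg)$, which is the content of the Proposition.

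The key steps, in order: (i) reduce to operators supported on a single top Bruhat interval by subtracting off a section supported strictly below — here I would invoke Corollary~\ref{cor:infin_hecke_twisted_interval} (and the surjectivity of multiplication onto $[\le w]$) to realize any such operator as coming from a product of rank~$1$ algebras, whose leading coefficient is computed by the leading-coefficient line bundle, exactly as in Lemma~\ref{lem:Bruhat_fin_noparm}; (ii) write the residue conditions for ${\cal H}_{\tilde A_1;\gamma}(C)$ explicitly along all reflection hypersurfaces met by the interval, using the non-torsion formulas and then noting — since the interval $[\le (s_0s_1)^{m/2}]$ injects into $\Aut(C)$ — that the torsion limit is harmless and the poles stay simple; (iii) observe that the conditions below the top are literally the same for the two intervals, so the operators differ only in the top coefficient; (iv) compute the single top residue condition on each side and check that multiplying the leading coefficient by $\pi_q^*(g/{}^sg)$ converts one condition into the other, using $\zeta$-cocycle compatibility of $\gamma$ to see that the choice of $g$ (equivalently, the choice of trivializing section of the norm bundle) does not matter as long as $g$ is a unit at every component of the reflection hypersurface.

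The main obstacle I anticipate is step (iv): bookkeeping the twisting datum correctly through the isogeny $\pi_q$. One must verify that the norm bundle $N_{\pi_q}({\cal L}_1\otimes{\cal L}_0^{-1})$ really is the bundle controlling the top residue condition — i.e.\ that the infinite tensor product of translates of ${\cal L}_1\otimes {\cal L}_0^{-1}$ along the $\langle s_1s_0\rangle$-orbit, which formally enters the non-torsion residue condition, collapses in the torsion limit to this norm pulled back along $\pi_q$ — and that the coboundary maps $\zeta_{w,w'}$ and the $h_r$-factors from the rank~$1$ root data assemble so that the comparison is genuinely the clean ratio $g/{}^sg$ and not $g/{}^sg$ twisted by some extra unit. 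This is essentially a careful unwinding of the meromorphic-equivalence-of-gerbes machinery from Section~5 specialized to $\tilde A_1$; once the bundle identification is pinned down, the rest is a direct comparison of two holomorphy conditions differing by a translation.
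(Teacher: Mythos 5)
Your overall strategy — reduce to comparing residue conditions, note that the two Bruhat intervals agree strictly below the top and that the conditions there are literally identical, then compare what happens at the top — is exactly the paper's (whose proof is essentially a one-line deferral to this verification). But two concrete errors in your setup would derail the verification if you actually tried to carry it out.

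First, the count of conditions. You write that "the only thing to track is the single residue condition along the reflection hypersurface associated to the top element." There is no such single condition: when $q$ has order $m$, the image of $\tilde A_1$ in $\Aut(C)$ is the dihedral group of order $2m$, which has $m$ reflections, and the top coefficient $c_{(s_0s_1)^{m/2}}$ enters a residue condition along \emph{each} of the $m$ reflection hypersurfaces $[C^r]$ (paired against a different lower coefficient $c_{rw}$ each time). So in step (iv) you must verify that the \emph{same} ratio $\pi_q^*(g/{}^sg)$ converts the condition on the left into the condition on the right \emph{for all $m$ hypersurfaces simultaneously} — which is not automatic and is the actual content of the ``straightforward verification.'' Moreover, the top element is a reflection only for $m$ odd; for $m$ even it is a translation, so there is no ``reflection hypersurface associated to the top element'' at all.

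Second, the algebra: $(s_0s_1)^{m/2}(s_1s_0)^{-m/2} = (s_1s_0)^{-m}$, not $s_1s_0$; this is translation by $-mq$, which acts \emph{trivially} on $C$ (that is precisely why both elements are lifts of the same dihedral automorphism). So there is no ``extra translation by $q$'' on the leading monomial, and the factor $\pi_q^*(g/{}^sg)$ does \emph{not} arise from a translation of the operator. It arises entirely from the equivariant gerbe: the gerbe values ${\cal Z}_{(s_0s_1)^{m/2}}$ and ${\cal Z}_{(s_1s_0)^{m/2}}$ differ by ${\cal Z}_{(s_1s_0)^{-m}} = N_{\pi_q}({\cal L}_1\otimes{\cal L}_0^{-1})^{\pm 1}$ (the alternating product of translates of ${\cal L}_1$ and ${\cal L}_0$ around the full $\langle s_1s_0\rangle$-orbit), and the correction factor $\pi_q^*(g/{}^sg)$ is the trivializing unit for this discrepancy. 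Your step (iv) does nod toward this (``$\zeta$-cocycle compatibility''), but the ``translation by $q$'' picture in the opening paragraph points at the wrong mechanism. Fixing these two points — $m$ conditions, and the gerbe origin of the ratio — leaves the proof in the same shape as the paper's; they are not restructuring issues but they are substantive, and without them the verification would fail to close.
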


\begin{proof}
 This reduces to a straightforward verification that the transformation
 respects the residue conditions on every reflection hypersurface.
\end{proof}

\begin{cor}
 With $U$, $g$ as above, let $f_0,f_1\in \Gamma\big(U;\sO_X(\pi_q^*[C^{\prime
 s}])\big)$ be such that $f_1-\pi_q^*(g/{}^sg)f_0$ is holomorphic.
 Then
 \begin{gather*}
 f_0+f_1(s_1s_0)^m\in \Gamma\big(U;{\cal H}_{\tilde{A}_1;\gamma}(C)\big).
 \end{gather*}
\end{cor}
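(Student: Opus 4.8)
The plan is to derive the Corollary from the preceding Proposition by a single computation: observe that the element $f_0 + f_1(s_1s_0)^m$ is, up to lower-order terms, the $m$-fold product of rank-one-type pieces, so its membership in $\Gamma(U;{\cal H}_{\tilde{A}_1;\gamma}(C))$ is governed by the same residue conditions that the Proposition is phrased to control. More concretely, $(s_1s_0)^m$ is the translation by the torsion point corresponding to running around the $\tilde A_1$-orbit once, and $(s_1s_0)^m = (s_0s_1)^{m/2}\cdot(s_1s_0)^{m/2}$ (interpreting the half-powers as in the statement, with the usual convention for odd $m$). First I would note that the braid-interval description — that $(s_0s_1)^{m/2}$ and $(s_1s_0)^{m/2}$ generate the same Bruhat interval $[\le w_{\mathrm{long}}]$ inside $\tilde A_1$, agreeing below the top and differing only at the top element by the dihedral longest-element relation — is exactly the structure exploited in the Proposition. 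So the operator $f_1(s_1s_0)^m$, expanded via Corollary \ref{cor:infin_hecke_twisted_interval} (or rather its $\tilde A_1$ instance) as a product through a reduced word passing through $(s_0s_1)^{m/2}$, has leading coefficient along $[C^{(s_0s_1)^{m/2}}]$ controlled by a section of $\sO_X(\pi_q^*[C^{\prime s}])$, and the Proposition tells us precisely how that leading term transforms when we reroute through $(s_1s_0)^{m/2}$ instead.

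The key steps, in order, would be: (i) reduce to checking membership one reflection hypersurface at a time, using Lemma \ref{lem:holomorphy_preserving} (equivalently Corollary \ref{cor:global_order_two_residue_conditions}) as in all the earlier arguments, so that we only ever deal with at most two cosets at a time; (ii) observe that $f_0 + f_1(s_1s_0)^m$ has support in the Bruhat interval generated by $(s_1s_0)^m = (s_1s_0)^{m/2}(s_1s_0)^{m/2}$ — here I would invoke that this product telescopes: writing $(s_1s_0)^m$ as a reduced word of length $2m$ and grouping, the element lands in $[\le (s_1s_0)^m]$, and because $q$ is torsion the relevant pieces collapse modulo $\Lambda_q$; (iii) apply the Proposition with $c_{(s_0s_1)^{m/2}}$ taken to be (essentially) $f_0$ — more precisely, recognize $f_0 + f_1(s_1s_0)^m$, after multiplying out, as having the same top-coefficient data on both the $(s_0s_1)^{m/2}$-routing and the $(s_1s_0)^{m/2}$-routing, with the twist factor $\pi_q^*(g/{}^s g)$ being exactly the comparison cocycle between the two; (iv) the holomorphy hypothesis $f_1 - \pi_q^*(g/{}^s g) f_0$ holomorphic is then literally the residue condition the Proposition demands for the operator to extend across $[C^{\prime s}]$ pulled back, once both routings are compared; (v) on all reflection hypersurfaces \emph{other} than (conjugates of) $\pi_q^*[C^{\prime s}]$, the Proposition already guarantees the conditions coincide for the two routings and are automatically satisfied because $f_0, f_1$ have poles bounded by a single reflection divisor. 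Assembling these gives the claim.

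The main obstacle I expect is step (iii): making precise the identification of $f_0 + f_1(s_1s_0)^m$ with an operator of the exact shape appearing in the Proposition, i.e.\ of the form $c_{(s_0s_1)^{m/2}}(s_0s_1)^{m/2} + \sum_{w < (s_0s_1)^{m/2}} c_w w$ versus its rerouted counterpart. The subtlety is that $(s_1s_0)^m$ is \emph{twice} around the orbit, not a single longest element, so one is really applying the Proposition to the product of two copies of the braid-relation transformation (once to pass $f_1$ through the first half and once through the second), and one must check that the two applications compose to give precisely the factor $\pi_q^*(g/{}^sg)$ rather than its square or some shifted version — this is where the torsion of $q$ and the precise normalization $g\in\Gamma(U;N_{\pi_q}({\cal L}_1\otimes{\cal L}_0^{-1})^*)$ enter, and where one must be careful that $N_{\pi_q}$ (the norm along the isogeny) is exactly the right bookkeeping device. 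Once that compatibility is pinned down, the remaining verification is the routine residue-chasing that the Proposition's proof already performs, and the Corollary follows. Conversely — and this is worth remarking — the same argument run backwards shows the stated condition is also necessary, though the Corollary only asserts sufficiency.
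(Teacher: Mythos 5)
Your proposal does not track the paper's argument and, more importantly, has a structural problem: you are trying to verify the residue conditions \emph{directly} for the operator $f_0+f_1(s_1s_0)^m$, but the support set $\{1,(s_1s_0)^m\}$ is not a Bruhat order ideal. The Remark immediately following this Corollary in the paper makes exactly this point: in the affine case, subsheaves supported on general finite subsets can fail to be flat, so one cannot simply read off ``residue conditions'' for an arbitrary finite support. The conditions for torsion $q$ are obtained as limits from the non-torsion case \emph{along Bruhat intervals}, and $\{1,(s_1s_0)^m\}$ is not one. Your step (v) is also essentially vacuous here: \emph{every} reflection hypersurface is a component of $\pi_q^*[C^{\prime s}]$, since $\pi_q^*[C^{\prime s}]$ is by construction the entire reflection divisor on $C$.

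Your step (iii) reveals the confusion. You want to identify $f_0$ with ``essentially'' $c_{(s_0s_1)^{m/2}}$, but $f_0$ is the coefficient of the identity, whereas $c_{(s_0s_1)^{m/2}}$ is the coefficient of a length-$m$ element. The bridge between them is a genuine additional step that your argument omits: left multiplication by an operator of the form $h(s_1s_0)^{m/2}$, which sends $(s_0s_1)^{m/2}\mapsto 1$ and $(s_1s_0)^{m/2}\mapsto(s_1s_0)^m$ and thereby converts a difference of two operators supported on the length-$m$ elements into an operator supported on $\{1,(s_1s_0)^m\}$. Relatedly, your worry about ``applying the Proposition twice'' and getting the square of the cocycle is misplaced: the Proposition is applied exactly once. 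What the paper does is (a) take a general section $D$ of ${\cal H}_{\tilde{A}_1;\gamma}(C)[\le(s_0s_1)^{m/2}]$; (b) use the Proposition to produce a section $D'$ supported on $[\le(s_1s_0)^{m/2}]$ with the same lower-order coefficients and top coefficient multiplied by $\pi_q^*(g/{}^sg)$; (c) form $D-D'$, which is supported only on the two length-$m$ elements; (d) left-multiply by $h(s_1s_0)^{m/2}$ to land in $\{1,(s_1s_0)^m\}$; and then (e) observe one may freely add any holomorphic $f'_0$ in degree $0$, which is what converts the exact relation $f_1=\pi_q^*(g/{}^sg)f_0$ produced by the construction into the stated hypothesis that $f_1-\pi_q^*(g/{}^sg)f_0$ be merely holomorphic. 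Your proposal contains no analogue of steps (d) and (e), and without them the argument does not reach the Corollary's statement.
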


\begin{proof}
 Take a general section of~${\cal H}_{\tilde{A}_1;\gamma}(C)[{\le}
 (s_0s_1)^{m/2}]$, apply the proposition to get a section supported on~$[{\le}(s_1s_0)^{m/2}]$, take the difference, and then left-multiply by a
 suitable operator $h (s_1s_0)^{m/2}$. This gives a general section~$f_0+f_1(s_1s_0)^m$ such that $ f_1 = \pi_q^*(g/{}^sg)f_0$, to which we~may add any $f'_0\in \Gamma(U;\sO_X)$.
\end{proof}

\begin{rem}
 Note that $f_1$ is indeed supposed to be a meromorphic section of the
 same bundle of~which $\pi_q^*(g/{}^sg)$ is a local trivialization, namely
 $\bigotimes_{w\in G} {}^w\big({\cal L}_1\otimes {\cal L}_0^{-1}\big)^{(-1)^{\ell(w)}}$,
 where $G$ is the image of~$\tilde{A}_1$ in~$\Aut(C)$.
\end{rem}

\begin{rem}
 This points out that in the affine case (in contrast to the situation for
 more general Coxeter groups where there is no parameter $q$), there is
 something special about Bruhat intervals: the subsheaves of operators
 supported on more general finite subsets may fail to be flat.
\end{rem}

The restriction of~$\sO_C[\tilde{A}_1]_\gamma$ to the kernel
$\langle (s_1s_0)^m\rangle$ of the action has a natural geometric
description. Indeed, it may be given in the form
$\bigoplus_{j\in \Z} {\cal Z}_{(s_1s_0)^m}^{\otimes j} (s_1s_0)^{jm}$,
which is easily recognized as the relative structure sheaf of the
$\G_m$-bundle over $C$ corresponding to the invertible sheaf ${\cal
 Z}_{(s_1s_0)^m}$. The corollary tells us that ${\cal
 H}_{\tilde{A}_1;\gamma}(C)$ contains a larger ring, which is easily
recognized as coming from an affine blowup. To~be precise, the
restrictions of the various local sections $\pi_q^*(g/{}^sg)$ to the union
of reflection hypersurfaces induces a section of the $\G_m$-bundle over
that union. Enlarging the algebra has the effect of blowing up that union
of sections and then removing the strict transforms of the fibers over the
reflection hypersurfaces.

More generally, let $\tW$ act on~$X$ with kernel $\Lambda_q$. The~restriction of the gerbe to~$\Lambda_q$ in particular induces a
homomorphism $\Lambda_q\to \Pic(X/S)$, or equivalently a class in~$H^1(X;\Hom(\Lambda_q,\G_m))$, and thus a principal
$\Hom(\Lambda_q,\G_m)$-bundle $P$ over $X$. Each ${\cal Z}_\lambda$ for
$\lambda\in \Lambda_q$ comes with an induced trivialization on the
pullback, and this in fact trivializes that portion of the gerbe. Indeed,
this compatibility is clear for any twisting datum specified as a cocycle
in Cartier divisors, and (per our standing assumption) any twisting datum
is fppf locally of this form. We thus obtain a~natural
$\tW_q$-equivariant gerbe on the torus bundle, on which $\tW_q$
acts faithfully, and there is a~natural isomorphism $\sO_X\big[\tW\big]_{\cal
 Z}\cong \pi_*\sO_P\big[\tW\big]_{\cal Z}$.

The twisting datum itself does not directly lift, for~the simple reason
that $\tW$ acts nontrivially on~$\Lambda_q$, and thus in particular
the fixed subschemes of the various reflections are codimension 2; the
fixed subscheme of~$r$ is a principal $\Hom(\Lambda_q,\G_m)^{\langle
 r\rangle}$-bundle over $[X^r]$. (Note that this is either a
$\G_m^{n-1}$-bundle or a $\G_m^{n-1}\times \mu_2$-bundle, with the
latter only arising when~$W\ltimes \Lambda_q\cong W(\tilde{C}_n)$.)
However, each reflection in~$\tW_q$ is the image of an infinite
collection of reflections in~$W\ltimes \Lambda_q$, the set of reflections
of a copy of~$\tilde{A}_1$. Moreover, for~any reflection~$r$, the
subalgebra of operators supported on~$\langle 1,r\rangle$ may be computed
by conjugating the corresponding algebra for some simple reflection.
We thus find that each such copy of~$\tilde{A}_1$ actually gives rise to a
subalgebra of the form~${\cal H}_{\tilde{A}_1;\gamma'}(X)$. We thus obtain
additional elements as above.

We may think of each such element as a section of~$\sO_P([X^r])$, and find
that the ideal sheaf generated by all such sections cuts out a
\smash{$\Hom\big(\Lambda_q^{\langle r\rangle},\G_m\big)$}-bundle over $[X^r]$. (In the
untwisted case, this is the subbundle of the $r$-fixed scheme containing
$([X^r],1)$, and in general is uniquely determined by the requirement that
it contain a section determined by the twisting datum.) Adjoining all such
elements to the relative coordinate ring gives a new scheme $X^+$, still
affine over $X$. Geometrically, $X^+$ is obtained by blowing up the union
of all such bundles then removing the strict transforms of the fibers of~$P$ over the reflection hypersurfaces. This operation is
$\tW_q$-equivariant, and thus $X^+$ inherits an action of~$\tW_q$. Now, not only does the gerbe lift, but the twisting datum
as well: on~$X$, it is specified by sections of line bundles on the various
reflection hypersurfaces, and each reflection hypersurface of~$X^+$ lies
over a reflection hypersurface of~$X$, so we can simply pull back the
section. (Again, any potential issues with compatibility may be reduced to
the untwisted case by expressing the twisting datum via Cartier divisors,
pulling back the Cartier divisors to~$X^+$ and twisting. In~the untwisted
case, the possible incompatibilities coming from the nontriviality of the
gerbe maps go away, since the blowup makes those functions congruent to 1
on the relevant reflection hypersurface.)

We then have the following.

\begin{thm}\label{thm:daha_for_torsion_q}
 Let $X^+$ be constructed as above, with associated projection~$\pi\colon X^+\to
 X$ and induced twisting datum $\gamma^+$ for the action of~$\tW_q$
 on~$X^+$. Then there is a natural isomorphism
 \begin{gather*}
 {\cal H}_{\tW;\gamma}(X)\cong \pi_* {\cal H}_{\tW_q;\gamma^+}(X^+).
 \end{gather*}
\end{thm}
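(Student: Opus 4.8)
The plan is to construct the isomorphism \emph{locally} over $X$ and then verify that the local descriptions glue. Both sides are reflexive sheaf bimodules over $X\times_S X$ (or, after pulling back, over $X\times_{X/A_{\tW}} X$), so it suffices to identify their restrictions to the complete local ring at each codimension $1$ point of $X$, compatibly with the conjugation relations; this is exactly the setup of Proposition \ref{prop:reflexive_order_construction}. First I would reduce to the untwisted case: any twisting datum on $X$ is (per the standing assumption) fppf locally the coboundary of a cocycle in Cartier divisors, and the construction of $X^+$ pulls back Cartier divisors compatibly, so it is enough to show that $\pi_*\sO_{X^+}[\tW_q]\cong {\cal H}_{\tW}(X)$ together with the matching of the induced twisting data along reflection hypersurfaces of $X^+$. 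The key point is that $\pi_*$ of an automorphism-supported sheaf bimodule on $X^+\times_S X^+$ is again a sheaf bimodule (because $\pi$ is affine, indeed $X^+\to X$ is quasi-finite in the relevant sense on the relative coordinate ring), and $\pi_*$ is monoidal for the tensor product of sheaf bimodules when one factor's support is finite over the middle scheme — this is Proposition \ref{prop:computing_tp} applied to $\hat{\A}^1$-localizations. So $\pi_*$ carries the sheaf algebra ${\cal H}_{\tW_q;\gamma^+}(X^+)$ to a sheaf algebra on $X/S$ sandwiched between $\sO_X[\tW]_\gamma$ and $k(X)[\tW]_\gamma$.

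The heart of the argument is the local computation at a reflection hypersurface. Fix a codimension $1$ point $x\in X$ with inertia group $I_x$ inside $\tW$ (a copy of $\tilde{A}_1$ when $[X^r]$ is a reflection hypersurface, trivial otherwise, since $\tW$ acts by affine reflections). For $x$ not on any reflection hypersurface both sides are just the group algebra over $\sO_{X,x}$, and the matching is immediate. For $x$ on the reflection hypersurface $[X^r]$, the key observation is that $r$ is the image of the \emph{infinitely many} reflections in a single $\tilde{A}_1\subset W\ltimes\Lambda_q$ whose common image in $\Aut(X)$ is $r$; by the explicit residue-condition analysis carried out for $\tilde{A}_1$ just above (the Proposition and Corollary on $N_{\pi_q}({\cal L}_1\otimes{\cal L}_0^{-1})$, and the description of $X^+$ as an affine blowup of the $\Hom(\Lambda_q,\G_m)$-bundle along the section cut out by the relevant $g/{}^sg$), the local ring $\sO_{X^+,\pi^{-1}(x)}\otimes\sO_{X^+}[\tW_q]|_{I_x}$ is generated over $\sO_{X,x}[\langle 1,r\rangle]_\gamma$ precisely by the extra elements $\pi_q^*(g/{}^sg)(s_1s_0)^m$ that, in the non-torsion limit, accumulate to give a simple pole along $[X^r]$ with the residue condition dictated by the twisting datum. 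That is the same local module as $\sO_{X,x}\otimes {\cal H}_{\tW}(X)|_{I_x}$. This is where I expect most of the work to go: one must check that the blow-up construction reproduces \emph{exactly} the right order — no more, no less — and that the trivialization of the restricted gerbe to $\Lambda_q$ on the torus bundle $P$ is compatible with the $\tW_q$-equivariant structure, so that $\gamma^+$ really is a twisting datum.

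With the local statements in hand I would invoke Proposition \ref{prop:reflexive_order_construction} in two directions: it tells us both that ${\cal H}_{\tW}(X)$ is the unique left reflexive order with the prescribed localizations, and that $\pi_*{\cal H}_{\tW_q;\gamma^+}(X^+)$ is a left reflexive order (since $X^+\to X$ is affine and $X^+$ is Noetherian, so cyclic sub-bimodules stay finitely generated on both sides; reflexivity over $\sO_X$ is inherited because $\pi_*$ of a reflexive sheaf on a normal scheme is reflexive). Since the two orders have the same restriction $\sO_{X,x}\otimes(-)|_{I_x}$ at every codimension $1$ point, the uniqueness clause of Proposition \ref{prop:reflexive_order_construction} forces them to coincide inside $k(X)[\tW]_\gamma$, and the identification is manifestly compatible with the algebra structure and with base change (both sides respect base change on Bruhat intervals). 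The main obstacle, as noted, is the precise local bookkeeping at reflection hypersurfaces in the torsion-$q$ case — making sure the affine blow-up $X^+$ is calibrated so that adjoining its coordinate ring produces the residue conditions of ${\cal H}_{\tW;\gamma}(X)$ on the nose, including the subtle point (flagged in the remarks above) that only Bruhat intervals, not arbitrary finite subsets of $\tW$, give flat subsheaves, so the generation-in-rank-$1$ argument must be run interval by interval.
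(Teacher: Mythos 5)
Your proposal takes a genuinely different route from the paper, and while the general strategy (compare both sides as left reflexive orders and invoke the uniqueness clause of Proposition \ref{prop:reflexive_order_construction}) is sensible in principle, it has a concrete gap that makes it fail as written. The paper's actual argument is much more economical: it constructs the map $\sO_X[\tW]_\gamma\to\pi_*\sO_{X^+}[\tW_q]_{\gamma^+}$, observes that it extends to meromorphic operators and that the \emph{rank $1$ subalgebras} of ${\cal H}_{\tW;\gamma}(X)$ (which generate by definition) land inside $\pi_*{\cal H}_{\tW_q;\gamma^+}(X^+)$, and then proves surjectivity by showing the image contains $\sO_{X^+}$ together with enough elements of the form $g\,\pi^*\!f\,(1-\pi^*h\,r)$ to fill out the rank $1$ subalgebras of the target, which again generate. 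In other words, the paper never needs to compute the full local stalk of either side at a codimension $1$ point; everything is reduced to the rank $1$ generators on both sides.

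The gap in your argument is precisely the local computation you flag as ``where most of the work goes,'' and the way you have set it up it cannot be completed as described. You assert that the inertia group $I_x$ at the generic point of a reflection hypersurface is ``a copy of $\tilde{A}_1$,'' but this is incorrect for torsion $q$. Since $\Lambda_q$ acts trivially on all of $X$, it is contained in $I_x$ for \emph{every} $x$; the actual inertia group is (a conjugate of) $\Lambda_q\rtimes\langle\tilde r\rangle$, a group of rank $n$, not the rank $1$ group $\tilde{A}_1(r)$ generated by reflections mapping to $r$. The $\tilde{A}_1$ analysis immediately preceding the theorem (the Proposition and Corollary on $N_{\pi_q}({\cal L}_1\otimes{\cal L}_0^{-1})$) therefore does \emph{not}, by itself, identify $\sO_{X,x}\otimes{\cal H}_{\tW;\gamma}(X)|_{I_x}$: it only produces a sub-object, and the statement that this sub-object suffices is essentially the later Proposition about $\tilde{A}_1(r)$-localization, which has not been established at this point (and whose proof in the paper itself uses flatness of Bruhat intervals, not a direct local identification). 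So your argument is either circular or leaves its central step open. There is also a further concern: when $\Lambda_q$ acts trivially, the hypotheses of Proposition \ref{prop:reflexive_order_construction} (freeness of ${\cal A}_x|_S$ as a left $\sO_{X,x}$-module for arbitrary finite $S\subset I_x$, compatibility under conjugation) become substantially more delicate than in the non-torsion case, precisely because the ``sheaf of algebras on the quotient'' picture breaks down; the paper's direct generation argument sidesteps this entirely. To repair your approach you would have to first prove the $\tilde{A}_1(r)$-reduction and then check freeness and compatibility for the full $\Lambda_q\rtimes\langle\tilde r\rangle$, at which point you would have reproduced most of the paper's argument anyway.
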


\begin{proof}
 The map $\sO_X\big[\tW\big]_{\gamma}\to \pi_*\sO_{X+}\big[\tW_q\big]_{\gamma^+}$ extends
 to a map on meromorphic operators, and we find that the restriction to
 any rank~1 subalgebra of~${\cal H}_{\tW;\gamma}$ is contained in~$\pi_*{\cal H}_{\tW_q;\gamma^+}(X^+)$. Since these generate the full
 algebra, we~obtain a homomorphism \smash{${\cal H}_{\tW;\gamma}(X)\to \pi_*{\cal
 H}_{\tW_q;\gamma^+}(X^+)$}, and it remains only to show that this is
 surjective.

 By construction, the image contains $\sO_{X^+}$, and thus for any
 reflection~$r$, the image contains any element of the form $g \pi^*f
 (1-\pi^*h r)$, where $g\in \Gamma(\pi^*U;\sO_{X^+})$, $f\in
 \Gamma\big(U;\sO_X([X^r])\big)$, and~$h\in \Gamma(U;{\cal Z}_r^*)$ restricts to
 give the twisting datum along $r$. For~$U$ sufficiently small (but
 containing any chosen codimension~1 point), the elements of the form
 $g\pi^*f$ span $\Gamma\big(U;\sO_X([(X^+)^r])\big)$, and thus we obtain any
 element in~$\Gamma(U;\sO_{X^+}) + \Gamma\big(U;\sO_{X^+}([X^r])\big)(1-\pi^*h
 r)$. But this is precisely the algebra corresponding to~$\langle
 1,r\rangle$ in~${\cal H}_{\tW_q;\gamma^+}(X^+)$, and these subalgebras
 again generate the full algebra.
\end{proof}

\begin{cor}
 The center of~${\cal H}_{\tW;\gamma}(X)$ as a sheaf of algebras
 over $X/\tW_q$ is naturally isomorphic to the relative coordinate
 ring of~$X^+/\tW_q$.
\end{cor}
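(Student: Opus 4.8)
The plan is to combine Theorem \ref{thm:daha_for_torsion_q} with the computation of the center that was already sketched in the discussion preceding it. The key point is that once we know ${\cal H}_{\tW;\gamma}(X)\cong \pi_*{\cal H}_{\tW_q;\gamma^+}(X^+)$, the question of identifying the center reduces to identifying the center of a sheaf algebra ${\cal H}_{G;\gamma^+}(X^+)$ for $G=\tW_q$ acting \emph{faithfully} on $X^+$, and here the general Proposition \ref{prop:reflexive_order_construction}-type picture applies: this is the sheaf algebra associated to an actual sheaf of algebras on the quotient $X^+/\tW_q$, so we may work locally on $X^+/\tW_q$ with an honest order in a twisted group algebra.

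First I would note that ${\cal H}_{\tW_q;\gamma^+}(X^+)$ is, by construction, an order in $k(X^+)[\tW_q]_{\gamma^+}$ containing $\sO_{X^+}[\tW_q]_{\gamma^+}$, and that $\tW_q\cong W\ltimes(\Lambda/\Lambda_q)$ acts faithfully with $X^+\to X^+/\tW_q$ finite; in particular $k(X^+)$ is Galois over $k(X^+/\tW_q)$ with group $\tW_q$, so $k(X^+)[\tW_q]_{\gamma^+}$ is a central simple algebra over $k(X^+/\tW_q)$. Its center is therefore $k(X^+/\tW_q)$ itself, and since the sheaf algebra is torsion-free over $X^+/\tW_q$ the center of the sheaf algebra injects into this generic center. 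Conversely, the relative coordinate ring of $X^+/\tW_q$, i.e.\ $(\pi_*'\sO_{X^+})^{\tW_q}$ where $\pi':X^+\to X^+/\tW_q$, maps to the center of ${\cal H}_{\tW_q;\gamma^+}(X^+)$ since it acts by multiplication and commutes with all the operators (the $\tW_q$-invariance is exactly what makes left and right multiplication agree). So the center contains $\sO_{X^+/\tW_q}$ and is contained in $k(X^+/\tW_q)$.

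The remaining step is to show the containment is an equality, i.e.\ that a central local section is in fact regular on $X^+/\tW_q$, not merely meromorphic. For this I would use reflexivity: ${\cal H}_{\tW_q;\gamma^+}(X^+)$ is left reflexive as an $\sO_{X^+}$-module (as in the discussion of reflexive orders), so a central element, being in particular a local section of the algebra, is determined by its restrictions to the codimension 1 local rings of $X^+$; being central and regular in codimension 1 forces it into $\sO_{X^+}$, and $\tW_q$-invariance (which centrality forces, since central elements commute with the group elements $w$) then puts it in $(\pi'_*\sO_{X^+})^{\tW_q}=\sO_{X^+/\tW_q}$. Here I would invoke that $X^+$ is normal (it is an affine blowup construction applied to a normal $X$, with the offending strict transforms removed, so normality is preserved away from the removed locus and the removed locus has codimension $\ge 1$ issues handled by the reflexivity argument). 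Finally, translating back via Theorem \ref{thm:daha_for_torsion_q}: since $\pi:X^+\to X$ is affine and $\pi_*$ is exact and faithful on quasicoherent sheaves, the center of $\pi_*{\cal H}_{\tW_q;\gamma^+}(X^+)$ viewed as a sheaf of algebras over $X/\tW_q=X^+/\tW_q$ (the two quotients agree since $\pi$ is $\tW_q$-equivariant and, on the level of $\tW_q$-orbits, a bijection) is just $\pi_*$ of the center computed on $X^+$, namely the relative coordinate ring of $X^+/\tW_q$.

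The main obstacle I anticipate is the normality/regularity bookkeeping around the removed strict transforms in the construction of $X^+$: one needs that ``regular in codimension 1 plus central'' genuinely suffices to conclude regularity, which requires knowing $X^+$ is normal (or at least $S_2$ and regular in codimension 1) so that $\sO_{X^+}$ and the relevant reflexive hulls behave as expected. This is plausible because the blowup centers are smooth $\Hom(\Lambda_q^{\langle r\rangle},\G_m)$-bundles over the (smooth) reflection hypersurfaces and the operation is explicitly an affine modification, but a careful check that removing the fiber strict transforms does not destroy the $S_2$ property — or an argument that we never need those points because the center question is already settled in codimension 1 on the open part where $\pi$ is flat — is the part that deserves the most care. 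Everything else is formal: Galois descent for the generic center, reflexivity of the order, and exactness of $\pi_*$.
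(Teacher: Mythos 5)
Your proposal arrives at the correct statement and its overall structure (reduce to ${\cal H}_{\tW_q;\gamma^+}(X^+)$ via Theorem~\ref{thm:daha_for_torsion_q}, then compute the center using faithfulness) is the same as the paper's, but the final step is more elaborate than necessary. The paper's proof is short: since $\tW_q$ acts faithfully on $X^+$, the centralizer of $\sO_{X^+}$ in ${\cal H}_{\tW_q;\gamma^+}(X^+)$ is $\sO_{X^+}$ (this is the usual independence-of-automorphisms argument, equivalent to your CSA observation about the generic fiber, together with the built-in fact that the restriction of a holomorphy-preserving Hecke algebra to the identity element is exactly $\sO_{X^+}$); commuting with the generators ${\cal Z}_g g$ then forces $\tW_q$-invariance, so the center lies in $\sO_{X^+}^{\tW_q}=\sO_{X^+/\tW_q}$, which is clearly central.

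The part of your argument I would push back on is the appeal to reflexivity and normality of $X^+$ to get from ``central meromorphic function'' to ``regular function.'' That detour is not needed: a central local section is supported on $\{1\}$, and the restriction of the sheaf algebra to $\{1\}$ is $\sO_{X^+}$ on the nose by the defining holomorphy-preservation property --- there is nothing to promote from meromorphic to regular, because you are already a local section of $\sO_{X^+}$. So there is no gap to patch in codimension $\ge 2$, and the worry about whether removing strict transforms preserves $S_2$ evaporates. (For what it's worth, $X^+$ is in fact smooth, being an open subset of a blowup of a smooth scheme along a smooth center, so normality was never in doubt; but the point is you shouldn't need it.) What each approach buys: yours makes the structure of the generic fiber as a CSA explicit, which is a useful perspective but costs you the extra reflexivity bookkeeping; the paper's is leaner because it exploits that the restriction to the identity is already $\sO_{X^+}$ by construction.
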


\begin{proof}
 Since $\tW_q$ acts faithfully on~$X^+$, the centralizer of~$\sO_{X^+}$ in~${\cal H}_{\tW_q,\gamma^+}(X^+)$ is precisely
 $\sO_{X^+}$; furthermore, for~an element of~$\sO_{X^+}$ to commute with
 sections of~${\cal Z}_g g$ for all $g\in \tW_q$, it must be~$\tW_q$-invariant. We thus find that the center is contained in~$\sO_{X^+}^{\tW_q}=\sO_{X^+/\tW_q}$. This subalgebra is
 clearly central, and thus the claim holds.
\end{proof}

{\sloppy
We would also, of course, like to understand the center of the spherical
algebra. When \mbox{$X\to X/W$} is flat, a symmetric idempotent already
establishes a Morita equivalence bet\-ween~${\cal H}_W(X)$ and $\sO_{X/W}$, and
thus forces the center of the spherical algebra to agree with that of the
DAHA itself. Of course, as~we noted above, this map is essentially never
flat (with the notable exceptions of the action of~$W(A_n)$ on the sum zero
subscheme and the action of~$W(C_n)$ on~$E^n$ by signed permutations), but
it turns out to be close enough to let us prove the analogous result.

}

\begin{prop}
 Suppose $\gamma$ is trivial on~$W$ and the root kernel of~$W$ is
 diagonalizable. Then there is a natural isomorphism $Z\big({\cal
 H}_{\tW;\gamma}(X)\big)\cong Z\big({\cal H}_{\tW,W;\gamma}(X)\big)$,
 and both algebras are strongly flat.
\end{prop}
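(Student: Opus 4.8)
The plan is to reduce the statement about the center and flatness of the spherical algebra in the $q$-torsion case to the corresponding facts for the full elliptic DAHA ${\cal H}_{\tW;\gamma}(X)$, which are supplied by Theorem \ref{thm:daha_for_torsion_q} and its corollary together with Proposition \ref{prop:spherical_is_Morita_equivalent}. The main difficulty is that $X\to X/W$ is essentially never flat, so the naive argument (use a global symmetric idempotent to get a Morita equivalence between the DAHA and its spherical algebra, hence an isomorphism of centers) does not literally apply. The key observation is that diagonalizability of the root kernel gives us \emph{local} symmetric idempotents covering $X/A_W$ (the Proposition following Theorem \ref{thm:W-invariants}, in the $q$-torsion specialization), and these suffice to identify the two centers even though no global idempotent exists.

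First I would set up the comparison map. Over the locus $U\subset X/A_W$ covered by a symmetric idempotent $e=(\sum_{w\in W}w)h$, the idempotent $e$ establishes a Morita equivalence between ${\cal H}_{\tW;\gamma}(X)|_U$ and its spherical algebra ${\cal H}_{\tW,W;\gamma}(X)|_U=e\,{\cal H}_{\tW;\gamma}(X)|_U\,e$, and hence a canonical isomorphism of centers $Z({\cal H}_{\tW;\gamma}(X))|_U\cong Z({\cal H}_{\tW,W;\gamma}(X))|_U$, given concretely by $z\mapsto eze=ze$ with inverse the unique central extension. Since $X/S=X/A_W$ (after passing to the correct parameter scheme, as in the remark after the idempotent Proposition) is covered by such $U$, and the local isomorphisms agree on overlaps (both are induced by the same inclusion $Z({\cal H}_{\tW,W;\gamma}(X))\hookrightarrow Z({\cal H}_{\tW;\gamma}(X))$ at the generic fiber, where one checks directly using $k(X)[\tW]_\gamma$ and $k(X)[\Lambda]_\gamma$ that the centers coincide: both equal $k(X/\tW_q)[\Lambda_q]_\gamma^W$ by the discussion preceding Theorem \ref{thm:daha_for_torsion_q}), they glue to a global isomorphism. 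This is where I expect the only real care to be needed: verifying that the locally-defined idempotent-conjugation isomorphisms are independent of the choice of $h$ and patch — this follows because on the symmetric-idempotent locus the map $Z({\cal H}_{\tW,W;\gamma}(X))\to Z({\cal H}_{\tW;\gamma}(X))$ is just the inclusion of subsheaves of the meromorphic center $k(X/\tW_q)[\Lambda_q]_\gamma^W$, which is manifestly independent of everything.

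For flatness, I would argue as follows. By Theorem \ref{thm:daha_for_torsion_q} and its corollary, $Z({\cal H}_{\tW;\gamma}(X))$ is the relative coordinate ring of $X^+/\tW_q$, and $X^+$ is constructed by an explicit blow-up-and-delete operation over $X$ that respects base change and produces a flat family (it is built from $\G_m$-bundles and affine blowups, all $\tW_q$-equivariant, over the flat family $X/S$). Hence $Z({\cal H}_{\tW;\gamma}(X))$ is $S$-flat, and via the isomorphism just constructed, so is $Z({\cal H}_{\tW,W;\gamma}(X))$. Strong flatness of ${\cal H}_{\tW,W;\gamma}(X)$ itself is already established (it is the $W$-invariant submodule of the $S$-flat sheaf $\Ind^{\tW;\gamma}_W\sO_X$ with strongly flat invariants, using diagonalizability of the root kernel); strong flatness of ${\cal H}_{\tW;\gamma}(X)$ follows from the Bruhat filtration, whose subquotients are line bundles $(1,w^{-1})_*({\cal Z}_{\gamma,w}\otimes\sO_X(D_w))$ on the flat family $X$, so every finite Bruhat interval is $S$-flat and coherent, hence the whole sheaf algebra is $S$-flat. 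Finally I would note that the isomorphism $Z({\cal H}_{\tW;\gamma}(X))\cong Z({\cal H}_{\tW,W;\gamma}(X))$ is compatible with base change, since both the idempotent construction and the identification of the center with $\sO_{X^+/\tW_q}$ respect base change.
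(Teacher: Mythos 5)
Your proposal has a genuine gap at its central step: the assertion that over the idempotent locus "the idempotent $e$ establishes a Morita equivalence between ${\cal H}_{\tW;\gamma}(X)|_U$ and its spherical algebra." An idempotent $e\in{\cal A}$ induces a Morita equivalence between ${\cal A}$ and $e{\cal A}e$ only when $e$ is \emph{full}, i.e.~${\cal A}e{\cal A}={\cal A}$. The existence of the symmetric idempotent (supplied by the diagonalizability hypothesis) gives you $e{\cal H}_W(X)e\cong \sO_{X/W}$, but it does not by itself give fullness; that requires $\pi_{W*}\sO_X$ to be locally free, i.e.~$X\to X/W$ to be flat. The paper explicitly flags this right before the Proposition: "When $X\to X/W$ is flat, a symmetric idempotent already establishes a Morita equivalence\ldots\ Of course, as we noted above, this map is essentially never flat\ldots" Your proposal uses exactly the argument the paper warns against, and does not close the gap. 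The overlap/gluing concern you identify as "the only real care needed" is not where the difficulty lies, and your claim that the inverse of $z\mapsto eze$ is "the unique central extension" is precisely the nontrivial assertion being proved.

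The paper's actual route is different and shorter. It takes $U$ to be the largest $\tW_q$-invariant open subset of $X$ lying over the \emph{regular} locus of $X/W$. By miracle flatness $U\to U/W$ is flat there, so the symmetric-idempotent Morita argument is valid over $U/\tW_q$, giving the isomorphism of centers on that open subset. To extend to all of $X/\tW_q$, one observes that $X/W$ is normal, so $U$ contains every codimension-$1$ point of $X$; since ${\cal H}_{\tW;\gamma}(X)$ is left reflexive as an $\sO_X$-module, a section on $V\cap U$ extends automatically to $V$, and centrality persists because it is detected at the generic point. This two-step structure (restrict to the flat locus, then extend by reflexivity and normality) is the essential content your proposal is missing.

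As a secondary point, your flatness argument via Theorem \ref{thm:daha_for_torsion_q} is not quite right as stated: the scheme $X^+$ depends on the order of the torsion point $q$, so it is not a single flat family over $S$. The paper's proof does not address the flatness claim at all (it follows from the identification with the spherical algebra and the general strong-flatness machinery already established), which suggests the intended argument is closer to what you say for the spherical algebra itself than to the $X^+$-based route.
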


\begin{proof}
 Let $U$ be the largest $\tW_q$-invariant open subset of~$X$ such
 that every point of~$U$ lies over a regular point of~$X/W$. The~morphism
 $U\to U/W$ is a morphism of regular schemes with~$0$-dimensional fibers,
 so is flat, and thus in particular $\pi_{W*}\sO_U$ is locally free. It
 follows that a covering of~$U$ by symmetric idempotents induces a Morita
 equivalence between~${\cal H}_W(U)$ and~$\sO_{U/W}$, and thus between the
 restrictions of~${\cal H}_{\tW;\gamma}(X)$ and ${\cal
 H}_{\tW,W;\gamma}(X)$ $\big($viewed as sheaf algebras on~$X/\tW_q\big)$ to~$U/\tW_q$. In~particular, we~find that the
 natural morphism between the centers is an~isomorphism on~$U/\tW_q$.

 For any other $\tW_q$-invariant open subset $V$, given any element
 ${\cal D}\in \Gamma\big(V;Z({\cal H}_{\tW,W;\gamma}(X))\big)$, we~may
 restrict it to~$V\cap U$ and transport it through the isomorphism of
 centers to obtain an element ${\cal D}'\in \Gamma\big(V\cap U;Z({\cal
 H}_{\tW;\gamma}(X))\big)$. By normality of~$X/W$, $U$ contains every
 codimension~1 point of~$X$, and thus
 $\Gamma(V\cap U;{\cal H}_{\tW;\gamma}(X))
 =
 \Gamma(V;{\cal H}_{\tW;\gamma}(X))$,
 so that ${\cal D}'$ is actually holomorphic on~$V$ as a section of the
 DAHA. Since an operator is in the center of the DAHA iff it is in the
 center of the meromorphic twisted group algebra, we~conclude that
 ${\cal D}'\in \Gamma\big(V;Z({\cal H}_{\tW;\gamma}(X))\big)$,
 and thus the natural map from the center of the DAHA to the center of the
 spherical algebra is indeed surjective.
\end{proof}

\begin{cor}
 If $\Lambda$ acts trivially on~$X$, $\gamma$ is trivial on~$W$, and the
 root kernel of~$W$ is dia\-gonalizable, then
 there is a natural isomorphism $Z\big({\cal H}_{\tW;\gamma}(X)\big)\cong
 {\cal H}_{\tW,W;\gamma}(X)$.
\end{cor}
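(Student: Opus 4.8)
The plan is to derive this corollary directly from the preceding Proposition together with the torsion-$q$ structure results developed just before it. When $\Lambda$ acts trivially on $X$, we have $q=0$ (or at least $\Lambda_q = \Lambda$), so $\tW_q \cong W$ acts on $X$ and $X^+$ is constructed from $X$ by the blowup-and-removal procedure described above. The Proposition already gives a natural isomorphism $Z({\cal H}_{\tW;\gamma}(X)) \cong Z({\cal H}_{\tW,W;\gamma}(X))$, so it suffices to identify the center of the spherical algebra ${\cal H}_{\tW,W;\gamma}(X)$ with the whole spherical algebra itself --- that is, to show that ${\cal H}_{\tW,W;\gamma}(X)$ is commutative when $\Lambda$ acts trivially.

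First I would observe that when $\Lambda$ acts trivially on $X$, the translation cosets in $\tW/W$ all act as the identity on $X$, so every element of the spherical algebra, viewed as a difference operator (an element of $k(X)[\Lambda]_\gamma^W$ after the injection established earlier), actually lies in the subalgebra supported on the kernel $\Lambda_q = \Lambda$. Since $\Lambda$ acts trivially, $k(X)[\Lambda]_\gamma$ restricted to $\Lambda$ is the (commutative) relative coordinate ring of the $\Hom(\Lambda,\G_m)$-bundle over $X$ determined by the gerbe, i.e. a sheaf of commutative $\sO_X$-algebras. Taking $W$-invariants of a commutative algebra yields a commutative algebra, so ${\cal H}_{\tW,W;\gamma}(X)$ is commutative; hence it equals its own center. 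Combined with the Proposition, this gives $Z({\cal H}_{\tW;\gamma}(X)) \cong Z({\cal H}_{\tW,W;\gamma}(X)) = {\cal H}_{\tW,W;\gamma}(X)$, which is exactly the claimed isomorphism.

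The one point requiring a little care --- and the step I expect to be the main obstacle --- is verifying that the injection of ${\cal H}_{\tW,W;\gamma}(X)$ into $k(X)[\Lambda]_\gamma$ (established in the proof that every fiber of the spherical algebra is a domain) really does land inside the commutative subring supported on $\Lambda$, and that this identification is compatible with the algebra structure rather than merely additive. This follows because each coset $wW \in \tW/W$ contains a unique translation $t_\lambda$, and when $\Lambda$ acts trivially on $X$ the operator $t_\lambda$ acts as the identity automorphism; so the sheaf bimodule structure on the $w$-component is just $(1,1)_*$ of a line bundle (a piece of $\sO_X[\Lambda]_\gamma|_\Lambda$), and the multiplication is governed by the cocycle $\zeta_{\lambda\mu}$ together with the trivial Galois action, which is precisely the commutative multiplication on the torus-bundle coordinate ring. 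Once this is spelled out, commutativity is immediate from the commutativity of $\G_m^n$-bundle coordinate rings and the exactness of $(-)^W$ (valid since the root kernel of $W$ is diagonalizable, so symmetric idempotents exist locally and $\sO_X$ is locally projective).

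I would then close by noting that the diagonalizability hypothesis on the root kernel of $W$ enters only through the Proposition (to get strong flatness and the center isomorphism), and that the triviality of $\gamma$ on $W$ is used both in the Proposition and to ensure that the relevant $\Lambda$-supported subalgebra is honestly commutative (an $s$-equivariant twisting could in principle obstruct this, but $\gamma$ trivial on $W$ forces the cocycle $\zeta$ to be the standard one on $\Lambda$). No new estimates or constructions are needed beyond what the Proposition and Theorem \ref{thm:daha_for_torsion_q} already supply; the corollary is essentially a bookkeeping consequence of the observation that ``no translations act'' collapses the spherical algebra to a commutative sheaf of algebras.
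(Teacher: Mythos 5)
Your argument is essentially correct and follows the route the paper clearly intends: apply the preceding Proposition to get $Z({\cal H}_{\tW;\gamma}(X))\cong Z({\cal H}_{\tW,W;\gamma}(X))$, then observe that the spherical algebra is commutative when $\Lambda$ acts trivially, so the second center equals the whole thing. The commutativity observation is exactly the right reduction, and your mechanism --- embed ${\cal H}_{\tW,W;\gamma}(X)$ into $k(X)[\Lambda]_\gamma$ via the injection established in the domain Proposition, and note that when $\Lambda$ acts trivially this target is the meromorphic coordinate ring of the torus bundle $P$ over $X$, hence commutative --- is the correct one.

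One small misattribution worth correcting: you say that ``$\gamma$ trivial on $W$ forces the cocycle $\zeta$ to be the standard one on $\Lambda$,'' but that hypothesis plays no role in the commutativity of the $\Lambda$-supported subalgebra. What actually guarantees that $\sO_X[\Lambda]_\gamma$ is the commutative coordinate ring of the torus bundle $P$ is the paper's \emph{standing assumption} (stated just before this block of results) that the twisting datum is fppf locally representable by a cocycle in Cartier divisors: that is what produces the trivialization of the gerbe on the pullback to $P$, and hence the commutative algebra structure. The hypothesis that $\gamma$ is trivial on $W$ is needed for the Proposition (so that the spherical construction is well-posed and the Morita/flatness argument applies), not for commutativity of the $\Lambda$-part. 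With that attribution fixed, the argument is complete.
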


\begin{rem}
 More generally, if $\gamma=\gamma'\partial{\cal L}$ with~$\gamma'$
 trivial on~$W$, then we can identify $Z\big({\cal H}_{\tW;\gamma}(X)\big)$
 with the twist of~${\cal H}_{\tW,W;\gamma'}(X)$ by ${\cal L}$. In~particular, if the root kernel is trivial, we~can always do this fppf
 locally on~$S$.
\end{rem}

\begin{rem}
 One consequence is that there is a natural Poisson structure on~$X^+/\tW_q$, since as a~spherical algebra, its relative coordinate
 ring is the commutative fiber of a family of noncommutative algebras.
 Presumably this Poisson structure has a natural description in terms of
 the geometry of~$X^+$. Note that $\sO_{X^+}$ is an elliptic analogue of
 the subalgebra of the usual DAHA generated by the commutative
 subalgebras, so it would not be unreasonable to expect it to have a flat
 deformation to non-torsion~$q$, corresponding to a pulled back Poisson
 structure on~$X^+$. This fails, however: the pullback of the Poisson
 structure on~$\sO_{X^+}^{\tW_q}$ is only meromorphic on~$\sO_{X^+}$. Indeed, it agrees with the natural Poisson bracked on~$\sO_X[\Lambda]_\gamma$, but the bracket of one of the additional
 generators with a function with nonzero partial derivative on the
 reflection hypersurface will fail to be holomorphic. Note that
 $\tW_q$-invariance forces this to vanish, so the restriction of
 this meromorphic Poisson structure to the $\tW_q$-invariants is
 holomorphic.
\end{rem}

{\sloppy
Unfortunately, the trick with~$\tilde{A}_1$-subalgebras breaks down
completely when the system of~para\-meters is nontrivial. As~a result, we~cannot give such a simple description of the center. It~turns out,
however, that we {\em can} reduce to the case $q=0$.

}

Return for the moment to the case $\vec{T}=0$. The~sublattice $\Lambda_q$
is isomorphic as a group with~$W$ action to one of the root lattices of~$W$, and thus the semidirect product $W\ltimes \Lambda_q$ is itself an
affine Weyl group. (Note that when~$\tW$ is of type $B$ or $C$, this
new affine Weyl group may be of the other type.) The quotient
$\Lambda/\Lambda_q$ acts on both $X$ and $X^+$, and we observe that
$X^+/(\Lambda/\Lambda_q)$ is generically a torus bundle over
$Y=X/(\Lambda/\Lambda_q)$, and is more precisely an affine blowup of~$Y$ in
the corresponding bundles over the reflection hypersurfaces. We thus
obtain the following.

\begin{prop}
 There is an induced twisting datum $N_{X/Y}(\gamma)$ on~$Y$ such that
 the center of~${\cal H}_{\tW;\gamma}(X)$ is canonically isomorphic
 to the center of~${\cal H}_{W\ltimes\Lambda_q;N_{X/Y}(\gamma)}(Y)$.
\end{prop}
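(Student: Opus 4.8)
The plan is to reduce the statement about centers for torsion $q$ to the already-established description of the center of an elliptic DAHA in terms of an affine blowup (Theorem \ref{thm:daha_for_torsion_q} and its corollaries). The key observation is that we have two ways to pass from $X$ to a geometrically simpler object: on the one hand, Theorem \ref{thm:daha_for_torsion_q} expresses ${\cal H}_{\tW;\gamma}(X)$ as $\pi_*{\cal H}_{\tW_q;\gamma^+}(X^+)$ where $X^+/\tW_q$ carries the center; on the other, the sublattice $\Lambda_q$ (which is a root lattice for $W$) makes $W\ltimes \Lambda_q$ into an \emph{honest} affine Weyl group, and quotienting $X$ and $X^+$ by the finite group $\Lambda/\Lambda_q$ produces the abelian torsor $Y = X/(\Lambda/\Lambda_q)$ together with a new action of $W\ltimes \Lambda_q$. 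The plan is to show that both constructions, when the dust settles, produce the \emph{same} affine blowup scheme, so that the two centers are literally the same relative coordinate ring.

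First I would construct the twisting datum $N_{X/Y}(\gamma)$ on $Y$. Since $X\to Y$ is a finite quotient by $\Lambda/\Lambda_q$, which acts freely on the generic fiber and normalizes everything in sight, the reflection hypersurfaces of $W\ltimes\Lambda_q$ on $Y$ are images of those of $\tW$ on $X$ (grouped into $(\Lambda/\Lambda_q)$-orbits), and on each such hypersurface the twisting datum $\gamma$ restricts to a collection of line-bundle-with-section data that one pushes forward via a norm along $X/Y$. Concretely, for each reflection $r$ of $W\ltimes\Lambda_q$, with preimages the $\tilde A_1$-worth of reflections $r'$ in $\tW$ over it, I would take the product (norm) of the restrictions ${\cal Z}_{r'}$ over a set of coset representatives, exactly mirroring the norm operations $N_{\pi_q}$ already appearing in the $\tilde A_1$ discussion preceding Theorem \ref{thm:daha_for_torsion_q}. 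One must check this collection satisfies the rank $\le 2$ compatibility conditions for a twisting datum on $Y$; since all the rank $\le 2$ finite parabolics involved have reflection hypersurfaces meeting in a common point, this reduces (as in the construction of twisting data for general Coxeter groups in Section 5) to a local verification.

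Next I would identify the relevant $X$-scheme. Applying Theorem \ref{thm:daha_for_torsion_q} to the action of $W\ltimes\Lambda_q$ on $Y$ gives a scheme $Y^+$ affine over $Y$, obtained by blowing up certain torus subbundles over the reflection hypersurfaces of $W\ltimes\Lambda_q$ and deleting the appropriate strict transforms; its quotient $Y^+/\Lambda_q$ carries $Z({\cal H}_{W\ltimes\Lambda_q;N_{X/Y}(\gamma)}(Y))$. On the other side, $Z({\cal H}_{\tW;\gamma}(X)) = \sO_{X^+/\tW_q}$. So it suffices to produce a canonical isomorphism $X^+/\tW_q \cong Y^+/\Lambda_q$. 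Both are quotients of affine blowups of torus bundles over $X$ (resp.\ $Y$), and the point is that $X^+$ is already $(\Lambda/\Lambda_q)$-equivariant (the blowup loci being permuted by $\Lambda$), so one can form $X^+/(\Lambda/\Lambda_q)$; I would check that this coincides with $Y^+$ by comparing the two affine blowups over the common base $Y$ — they blow up the same subbundles (the torus bundles over reflection hypersurfaces of $W\ltimes\Lambda_q$, which is precisely how the preimages of $\tW$-reflection hypersurfaces organize) and delete the same fibers. Then $X^+/\tW_q = (X^+/(\Lambda/\Lambda_q))/W = Y^+/W = Y^+/\Lambda_q$ after accounting for the residual $\Lambda_q$-action being trivial (it acts through translations that have become trivial on $X$, hence on $X^+$ in the relevant sense).

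The main obstacle I expect is the bookkeeping in matching the two affine blowups: one must be careful that the torus subbundle blown up over a reflection hypersurface of $W\ltimes\Lambda_q$ in the $Y^+$ construction really is the $(\Lambda/\Lambda_q)$-quotient of the union of torus subbundles blown up over the corresponding family of $\tW$-reflection hypersurfaces in $X^+$, including the subtle point about which section (determined by the twisting datum $\gamma$ versus $N_{X/Y}(\gamma)$) is used to pin down the bundle in the twisted case. This is exactly the place where the norm in the definition of $N_{X/Y}(\gamma)$ must be the \emph{right} norm: the section cutting out the subbundle over a reflection hypersurface of $Y$ should be the norm of the sections cutting out the subbundles over its preimages, which is what makes the twisting data pull back compatibly. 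Once that compatibility is in hand, the isomorphism of centers is formal, since both are just $\sO$ of the same scheme, and its naturality follows from the naturality of Theorem \ref{thm:daha_for_torsion_q} and of the blowup construction.
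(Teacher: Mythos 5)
Your proposal is correct and takes essentially the same route as the paper: the paragraph immediately preceding the Proposition already records the key geometric observation that $X^+/(\Lambda/\Lambda_q)$ is the affine blowup $Y^+$, and your norm construction of $N_{X/Y}(\gamma)$ is the same operation as the paper's prescription of pushing forward a cocycle in Cartier divisors representing $\gamma$ (under the standing fppf-local representability assumption). One small slip at the very end: the equality $Y^+/W = Y^+/\Lambda_q$ is spurious, since $\Lambda_q$ acts trivially on $Y$ but \emph{non}trivially on the torus bundle $Y^+$ (by fiber translations -- the nontriviality of exactly this action is what makes the $X^+$ construction nontrivial in the first place), and your chain should simply stop at $Y^+/W$, which is the correct quotient by the corollary to Theorem \ref{thm:daha_for_torsion_q} since the image of $W\ltimes\Lambda_q$ in $\Aut(Y)$ is $W$; since you only actually use $Y^+/W$, this does not affect the argument.
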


\begin{proof}
 When $\gamma$ is trivial, this holds with~$N_{X/Y}(\gamma)$ trivial.
 More generally, if $\gamma$ is represented by~a~cocycle in Cartier
 divisors, we~may obtain~$N_{X/Y}(\gamma)$ by taking the images of the
 dif\-fe\-rent Cartier divisors, and the resulting twisting datum is
 independent of the choice of cocycle representation.
\end{proof}

We can then say the following in general, where $N_{X/Y}\vec{T}$ is again
obtained by taking the image of each $T_\alpha$ under the induced isogeny
of coroot curves.

\begin{thm}\label{thm:center_for_general_T}
 If the root kernel of~$W$ is diagonalizable, then the center of~${\cal
 H}_{\tW;\vec{T};\gamma}(X)$ is canonically isomorphic to the
 center of~${\cal H}_{W\ltimes
 \Lambda_q;N_{X/Y}\vec{T};N_{X/Y}\gamma}(Y)$.
\end{thm}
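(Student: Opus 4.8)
The plan is to reduce Theorem \ref{thm:center_for_general_T} to the already-established $\vec{T}=0$ case, namely the Proposition immediately preceding it (the one identifying $Z({\cal H}_{\tW;\gamma}(X))$ with $Z({\cal H}_{W\ltimes\Lambda_q;N_{X/Y}(\gamma)}(Y))$), by showing that the vanishing conditions coming from $\vec{T}$ are transported correctly under the operation ${\cal H}_{\tW}(X)\cong \pi_*{\cal H}_{\tW_q}(X^+)$ and then pushed down along $X^+/(\Lambda/\Lambda_q)\to Y$. First I would recall that the center of a sheaf algebra is detected on the generic fiber, so it suffices to work with the meromorphic twisted group algebras and the various subalgebras cut out by polar/vanishing conditions; in particular the center of any of the algebras ${\cal H}_{\tW;\vec{T};\gamma}(X)$ is the intersection of its generic fiber (which is independent of $\vec{T}$, being $k(X)[\Lambda_q]_\gamma^W$ up to isogeny) with the relevant order. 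So the content is: which elements of that common generic center are actually holomorphic sections of the $\vec{T}$-deformed order, and to show this condition matches on both sides of the claimed isomorphism.

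The key steps, in order: (1) Use Corollary \ref{cor:affine_vanishing_conditions} to describe ${\cal H}_{\tW;\vec{T};\gamma}(X)$ inside ${\cal H}_{\tW;\gamma}(X)$ by the conditions that the coefficient $c_w$ vanishes on $\sum_{\alpha\in\Phi^+(W)\cap w\Phi^-(W)}T_\alpha$ — but for this we should first extend to the universal family of parameters (a product of symmetric powers of coroot curves) so that all $T_\alpha$ are transverse to reflection hypersurfaces and to one another, prove the statement there, and then specialize, exactly as in the proof of Lemma \ref{lem:Bruhat_fin_parm}; flatness of Bruhat intervals guarantees the specialization is harmless. (2) Transport these conditions through the isomorphism of Theorem \ref{thm:daha_for_torsion_q}: since $\pi:X^+\to X$ is affine and $\tW_q$-equivariant, and each reflection hypersurface of $X^+$ lies over one of $X$, the divisor $T_\alpha$ pulls back to a divisor on $X^+$, and the coefficient-vanishing conditions on $X$ become the corresponding conditions on $X^+$. (3) Now on $X^+$ the group $W\ltimes\Lambda_q$ acts faithfully, and we push forward along $X^+/(\Lambda/\Lambda_q)\to Y$; the image divisors $N_{X/Y}T_\alpha$ (images under the induced isogeny of coroot curves, as in the statement) are exactly the parameters of ${\cal H}_{W\ltimes\Lambda_q;N_{X/Y}\vec{T};N_{X/Y}\gamma}(Y)$, by the same bookkeeping used in the preceding Proposition for the twisting datum. (4) Finally, invoke the commutative-fiber/center compatibility: since $Z$ of each algebra equals the intersection of the common meromorphic center with the order, and we have shown the two orders impose the same conditions on that center (one being the $\pi_*$-pushforward of the other along an isogeny that is finite and faithfully flat away from codimension $\ge 2$, hence an isomorphism on centers by normality, as in the preceding proof), the two centers are canonically identified.

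The main obstacle I anticipate is step (2)–(3): making precise that the $\vec{T}$-conditions survive both the affine-blowup construction of $X^+$ and the quotient by $\Lambda/\Lambda_q$ without acquiring or losing holomorphy. The subtlety is that $X^+$ is obtained by blowing up the torus bundles over the reflection hypersurfaces and deleting strict transforms of fibers, so a coefficient that vanishes to a prescribed order along $[X^r]$ downstairs could, a priori, have a different vanishing behavior along the corresponding reflection hypersurface of $X^+$; I expect this to be controlled because the blowup is an isomorphism near the generic point of each reflection hypersurface of $X^+$ mapping to that of $X$ (the center of the blowup is a codimension-2 bundle, disjoint from the generic point of the reflection hypersurface we care about), so the vanishing orders are preserved. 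A secondary technical point is that $T_\alpha$ and $T_\beta$ for $\alpha,\beta$ projecting to the same finite root differ only by translation and may fail to be transverse; this is exactly the caveat flagged in the paragraph opening Section 6, and it is handled — as elsewhere — by first working with the universal parameters and then specializing, using that Bruhat-interval subsheaves are $S$-flat. Once the transversality gymnastics and the blowup-locality observation are in place, the rest is the bookkeeping of which orbits of roots map to which under $N_{X/Y}$, which is routine given the analysis of $\Lambda_q$ as (a multiple of) a root lattice already carried out just before the theorem.
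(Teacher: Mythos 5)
Your plan routes the argument through the affine blowup $X^+$ and Theorem~\ref{thm:daha_for_torsion_q}, but this is precisely the approach the paper warns against for nontrivial $\vec{T}$: the paragraph immediately preceding the theorem states that ``the trick with $\tilde{A}_1$-subalgebras breaks down completely when the system of parameters is nontrivial,'' and the remark immediately after the theorem explicitly records as an open question whether the intersection $\sO_{X^+}\cap {\cal H}_{\tW;\vec{T};\gamma}(X)$ is even flat. Your step (2) assumes the $T_\alpha$-vanishing conditions transfer cleanly to $X^+$ to give a Hecke algebra ${\cal H}_{\tW_q;\vec{T}';\gamma^+}(X^+)$ of the same shape; but the coefficients on $X^+$ for a single $w\in\tW_q$ repackage infinitely many coefficients on $X$ indexed by the full $\tW$-preimage of $w$, and there is no given system of parameters for $\tW_q$ on $X^+$ producing the corresponding vanishing conditions. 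You correctly identify this as the main obstacle, but ``I expect this to be controlled because the blowup is an isomorphism near the generic point of each reflection hypersurface'' does not address it---the issue is not the local geometry of a single hypersurface of $X^+$ but the recombination of infinitely many conditions on $X$ into finitely many on $X^+$.

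The paper's actual proof avoids $X^+$ entirely. After reducing to $\gamma$ trivial on $W$ (so that the center of the $Y$-side may be replaced by a spherical algebra, by the corollary two results earlier), it observes that both sides sit inside the common $\vec{T}=0$ center $Z({\cal H}_{\tW;\gamma}(X))$, and then verifies \emph{directly} from Corollary~\ref{cor:affine_vanishing_conditions} that the vanishing divisor forced on each coefficient of a central operator is $\Lambda/\Lambda_q$-invariant (it is a sum of terms $\sum_{x\in\langle q_\alpha\rangle}{}^xT_\alpha$). Hence imposing $\Lambda/\Lambda_q$-invariance of the coefficient---the extra condition coming from the $Y$-side---adds nothing, and the two subalgebras of the common center coincide for $\vec{T}$ in general position. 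This computation is the heart of the proof and is absent from your outline. Finally, the passage from generic $\vec{T}$ to all $\vec{T}$ does not follow from flatness of Bruhat intervals of the DAHA (the center is not a union of Bruhat intervals); it uses that the center of the $Y$-side is a spherical algebra, hence strongly flat, so any section of it for special $\vec{T}$ extends to a neighborhood in parameter space and saturates the filtration. Your proposal would need to supply a separate flatness argument for $Z({\cal H}_{\tW;\vec{T};\gamma}(X))$ itself, which is exactly what the reduction to the spherical algebra is designed to provide.
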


\begin{proof}
 The claim respects twisting by line bundles, so we may assume that
 $\gamma$ is trivial on~$W$, and replace the algebra on~$Y$ with the
 spherical algebra ${\cal H}_{W\ltimes
 \Lambda_q,W;N_{X/Y}\vec{T};N_{X/Y}\gamma}(Y)$. (Both cen\-ters will then
 be isomorphic to the same spherical algebra.)

 Imposing a system of parameters does not change the generic fiber, and
 thus we have
 \begin{gather*}
 Z\big({\cal H}_{\tW;\vec{T};\gamma}(X)\big) =
 {\cal H}_{\tW;\vec{T};\gamma}(X) \cap
 Z\big({\cal H}_{\tW;\gamma}(X)\big).
 \end{gather*}
 When $\vec{T}$ is sufficiently general, we~may apply Corollary~\ref{cor:affine_vanishing_conditions} to identify the precise conditions
 for an element of~$Z({\cal H}_{\tW;\gamma}(X))$ to be contained in
 the smaller Hecke algebra. (Note that determining whether a negative
 root of~\smash{$\tW$} becomes positive under a translation is quite
 straightforward.) In~particular, we~find as in the discussion above
 regarding reflection hypersurfaces that the divisor along which each
 coefficient must vanish is $\Lambda/\Lambda_q$-invariant. $\big($To be
 precise, it is a nonnegative linear combination of sums of the form
 $\sum_{x\in \langle q_\alpha\rangle} {}^x T_\alpha$, where $q_\alpha$
 generates the group of translations acting on the corresponding coroot
 curve.$\big)$ Thus imposing the condition that the coefficient is~$\Lambda/\Lambda_q$-invariant has no additional effect on the vanishing
 condition, so that the algebras agree under the given constraint on~$\vec{T}$.

 Since the spherical algebra is strongly flat, any section for special
 $\vec{T}$ extends to a neighborhood in parameter space. The~corresponding operator in~$Z\big({\cal H}_{\tW;\gamma}(X)\big)$ is thus
 contained in~${\cal H}_{\tW;\vec{T};\gamma}(X)$ for generic
 parameters, and thus for all parameters. It follows in particular
 that the image of~the~spherical algebra saturates the Bruhat filtration,
 and is thus surjective as required.
\end{proof}

\begin{rem}
 In the $\vec{T}=0$ case, we~were able to use $\sO_{X^+}$ as an
 intermediate step to understanding the center. Is there an analogous
 geometric description of~$\sO_{X^+}\cap {\cal
 H}_{\tW;\vec{T};\gamma}(X)$? For $\vec{T}$ transverse to
 reflection hypersurfaces, we~can again describe this intersection via
 vanishing conditions, but it does not follow from the above discussion
 that the intersection is flat, and the obvious comparison to the
 spherical module fails, as~the latter is not $\tW_q$-invariant. It
 is likely that a~description in the $\tilde{A}_1$ case could be extended
 to general type.
\end{rem}

The same argument gives the following.

\begin{cor}
 If the root kernel of~$W$ is diagonalizable and $\gamma$ is trivial on~$W$, then the center of~${\cal H}_{\tW;\vec{T};\gamma}(X)$ is
 canonically isomorphic to the center of~${\cal
 H}_{\tW,W;\vec{T};\gamma}(X)$.
\end{cor}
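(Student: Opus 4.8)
The plan is to mimic the proof of Theorem~\ref{thm:center_for_general_T} almost verbatim, using the fact that when $\gamma$ is already trivial on $W$ we may take $\Lambda_q=\Lambda$ (so that $W\ltimes\Lambda_q=\tW$ and $Y=X$) and $N_{X/Y}$ is the identity. More directly, I would observe that for $\gamma$ trivial on $W$ the passage through the isogenous variety $Y$ is unnecessary: both centers in question are subalgebras of the same meromorphic twisted group algebra $k(X)[\tW]_\gamma$, and the claim is that the two intersections
\[
{\cal H}_{\tW;\vec{T};\gamma}(X)\cap Z(k(X)[\tW]_\gamma)
\qquad\text{and}\qquad
{\cal H}_{\tW,W;\vec{T};\gamma}(X)\cap Z(k(X)[\tW]_\gamma)
\]
coincide (as sheaves on $X/\tW_q$, or equivalently after identifying the common underlying quasicoherent sheaf on the relative $\Spec$ of the restriction to $\Lambda_q$).

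First I would note that the center of ${\cal H}_{\tW;\vec{T};\gamma}(X)$ equals the center of ${\cal H}_{\tW;\gamma}(X)$ intersected with ${\cal H}_{\tW;\vec{T};\gamma}(X)$, since imposing a system of parameters does not alter the generic fiber and hence does not alter the center of the ambient meromorphic algebra. The previous Proposition (the one just above, giving $Z({\cal H}_{\tW;\gamma}(X))\cong Z({\cal H}_{\tW,W;\gamma}(X))$ under diagonalizability of the root kernel and triviality of $\gamma$ on $W$) handles the $\vec{T}=0$ case and, more importantly, identifies both centers with the same spherical algebra ${\cal H}_{\tW,W;\gamma}(X)$. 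So it suffices to check that, inside that identified copy, the vanishing conditions cutting out $Z({\cal H}_{\tW;\vec{T};\gamma}(X))$ agree with those cutting out $Z({\cal H}_{\tW,W;\vec{T};\gamma}(X))$. For $\vec{T}$ transverse to every reflection hypersurface, Corollary~\ref{cor:affine_vanishing_conditions} gives the vanishing condition on each coefficient of an operator in ${\cal H}_{\tW;\vec{T};\gamma}(X)$, and the analogous statement for the spherical algebra comes from the corresponding corollary there; one then checks that the divisor along which a given coefficient must vanish is a nonnegative combination of the sums $\sum_{x\in\langle q_\alpha\rangle}{}^xT_\alpha$, which are automatically $\Lambda$-invariant (with $\Lambda$ acting through $\Lambda/\Lambda_q$), so that the $\tW_q$-invariance already built into being a central element imposes no further constraint. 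This gives the equality of the two subalgebras for $\vec{T}$ in general position.

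To descend from general position to arbitrary $\vec{T}$, I would invoke strong flatness: the spherical algebra ${\cal H}_{\tW,W;\vec{T};\gamma}(X)$ is $S$-flat and its construction respects base change, so any section for special $\vec{T}$ extends to a section over a neighborhood in parameter space, and the corresponding central operator lies in ${\cal H}_{\tW;\vec{T};\gamma}(X)$ for generic parameters, hence for all parameters by closedness of the condition; this shows the image of the spherical algebra's center saturates the Bruhat filtration of $Z({\cal H}_{\tW;\vec{T};\gamma}(X))$, giving surjectivity, while injectivity is clear since both sit inside $k(X)[\tW]_\gamma$. The main obstacle I anticipate is purely bookkeeping: verifying cleanly that the vanishing divisor attached to each coefficient in Corollary~\ref{cor:affine_vanishing_conditions} really is $\Lambda/\Lambda_q$-invariant — i.e. that translating a negative affine root that becomes positive under a given element of $\tW$ by an element of $\Lambda_q$ produces a root whose associated parameter divisor lands in the same $q_\alpha$-orbit sum. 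This is a combinatorial statement about affine root systems and the extended Bruhat order, and while routine it is the one place where an error in indexing would break the argument; everything else is a direct transcription of the $\vec{T}=0$ Proposition together with the strong-flatness extension principle already used in Theorem~\ref{thm:center_for_general_T}.
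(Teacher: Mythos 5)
Your proof reconstructs exactly the argument the paper intends when it says ``The same argument gives the following'' after Theorem~\ref{thm:center_for_general_T}: reduce both centers to intersections with the $\vec T=0$ case via the generic-fiber observation, note (using Corollary~\ref{cor:affine_vanishing_conditions}) that for transverse $\vec T$ the vanishing divisors are built from the $\Lambda_q$-invariant sums $\sum_{x\in\langle q_\alpha\rangle}{}^xT_\alpha$ so that $\tW_q$-invariance imposes nothing new, and then pass to arbitrary $\vec T$ by strong flatness and saturation of the Bruhat filtration. This is the paper's own route; the identification of both sides with the same spherical algebra via the preceding proposition is the correct hinge.

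One small misstatement at the outset: when $\gamma$ is trivial on $W$ you write that one may ``take $\Lambda_q=\Lambda$ (so that $W\ltimes\Lambda_q=\tW$ and $Y=X$).'' This is incorrect --- $\Lambda_q$ is the kernel of the $\Lambda$-action on $X$ and depends only on the torsion order of $q$, not on $\gamma$; for torsion $q\ne 0$ it is a proper nontrivial sublattice and $Y\ne X$. Fortunately you immediately discard this framing in favor of the ``more direct'' argument (working entirely inside $k(X)[\tW]_\gamma$ and bypassing $Y$), which is correct and is what actually carries the proof, so the slip is not load-bearing. Everything else --- the appeal to the $\vec T=0$ proposition, the $\Lambda/\Lambda_q$-invariance of the vanishing divisors, and the strong-flatness closure step --- matches the paper's intended argument and is sound.
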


Note that $Z\big({\cal H}_{\tW;\vec{T};\gamma}(X)\big)$ is always a domain
(it~is contained in the structure sheaf of the integral scheme $X^+$). We
conjecture that it is also Noetherian, and moreover that ${\cal
 H}_{\tW;\vec{T};\gamma}(X)$ is finite over its center, just as in
the $\vec{T}=0$ case. Assuming the technique of
\cite{ArtinM/TateJ/VandenBerghM:1990} (reducing to finite fields, where $q$
is always torsion) could be adapted to the case of sheaf algebras, this
would be enough to prove ${\cal H}_{\tW;\vec{T};\gamma}(X)$
Noetherian even when~$q$ was not torsion.

The above description of the DAHA for~$q$ torsion can in principle be used
to give explicit degenerations of the residue conditions, though in
practice it seems simpler to check the analogous conditions on~${\cal
 H}_{\tW_q;\gamma^+}(X^+)$. For~that, the following general
reduction (which we implicitly used above) may be useful. Recall that any
reflection~$r\in R(W)$ induces a corresponding subgroup of type
$\tilde{A}_1$ of~$\tW$ (generated by the reflections in~$r\Lambda$),
and that this gives rise to a corresponding subalgebra which we denote by
${\cal H}_{\tilde{A}_1(r);\gamma}(X)$.

\begin{prop}\sloppy
 An element $\sum_w c_w w\in k(X)\big[\tW\big]_\gamma$ is a local section of~${\cal H}_{\tW}(X)$ iff its coefficients are holomorphic away from
 the reflection hypersurfaces and for any reflection~$r\in R(W)$ and any
 $w_0\in W$, the operator $\sum_{w\in \tilde{A}_1(r)w_0} c_w w$ is a section
 of the localization of~${\cal H}_{\tilde{A}_1(r);\gamma}(X) {\cal
 Z}_{w_0} w_0$ to the corresponding union of reflection hypersurfaces.
\end{prop}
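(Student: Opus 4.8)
The plan is to reduce the global statement to the two local results already established in the paper, namely Lemma~\ref{lem:holomorphy_preserving} (which identifies the possible singularities of holomorphy-preserving operators) and Lemma~\ref{lem:holomorphy_preserving_splits} together with Corollary~\ref{cor:global_order_two_residue_conditions} (which reduces multi-morphism conditions to pairwise ones). The ``only if'' direction is essentially immediate: if $\sum_w c_w w$ is a local section of ${\cal H}_{\tW}(X)$, then by the description of ${\cal H}_{\tW}(X)$ as holomorphy-preserving operators supported on reflection hypersurfaces, the coefficients are holomorphic away from those hypersurfaces; and for fixed $r$ and $w_0$, multiplying on the right by ${\cal Z}_{w_0^{-1}}w_0^{-1}$ and then restricting to a neighborhood of a generic point of a reflection hypersurface for $\tilde{A}_1(r)$ produces an element of the localization of ${\cal H}_{\tilde A_1(r);\gamma}(X){\cal Z}_{w_0}w_0$, since the remaining residue conditions involve only reflections in $r\Lambda$.

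For the ``if'' direction, the key point is that the reflexive order structure (Proposition~\ref{prop:reflexive_order_construction}) lets us check membership one codimension-1 point at a time: $\sum_w c_w w$ is a local section of ${\cal H}_{\tW}(X)$ on $U\times X$ iff for every codimension-1 point $x\in U$ with inertia group $I_x$, the operator lies in $\sO_{X,x}\otimes({\cal H}_{\tW}(X)|_{I_x}\,\sO_X[\tW]_\gamma)$. Now $I_x$ is either trivial (in which case the condition is just holomorphy at $x$, covered by the first hypothesis) or is generated by a single reflection $r\in R(W)$ conjugated appropriately. At such an $x$, Lemma~\ref{lem:holomorphy_preserving_splits} says the local algebra splits as a direct sum over cosets $I_x\backslash \tW$, so the condition decouples: it becomes a condition on each pair $\{w_0, rw_0\}$ (or, over a coset of $I_x$, each pair of group elements differing by $r$) separately, and that pairwise condition is precisely the statement that $\sum_{w\in \tilde A_1(r)w_0} c_w w$ lies in the localization of ${\cal H}_{\tilde A_1(r);\gamma}(X){\cal Z}_{w_0}w_0$. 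So the hypotheses of the Proposition are exactly the pointwise conditions demanded by the reflexive order characterization, and we conclude.

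There is a small bookkeeping subtlety that I expect to be the main obstacle: relating the inertia group $I_x$ at a codimension-1 point to a subgroup of type $\tilde A_1$ rather than a single involution. A generic point of a reflection hypersurface $[X^r]$ is fixed by $r$ itself, but when $q$ is torsion it is also fixed by infinitely many other reflections in the coset $r\Lambda$ (those whose hypersurfaces pass through the generic point of $[X^r]$), so $I_x$ can be larger than $\langle r\rangle$ and the ``local problem'' genuinely involves the whole $\tilde A_1(r)$ rather than a rank-1 dihedral piece. The resolution is that this is exactly why the statement is phrased in terms of ${\cal H}_{\tilde A_1(r);\gamma}(X)$: one checks that the restriction of ${\cal H}_{\tW}(X)$ to the local ring at the generic point of $[X^r]$ agrees with the restriction of ${\cal H}_{\tilde A_1(r);\gamma}(X)$ there (both are generated by the same rank-1 subalgebra ${\cal H}_{\langle r\rangle,0;\gamma}(X)$ after the conjugation identity noted in the construction of the $\tilde A_1$-subalgebras), so the conjugation-compatibility hypothesis of Proposition~\ref{prop:reflexive_order_construction} is verified and the ${\cal A}_x$ can be taken to be the localizations of ${\cal H}_{\tilde A_1(r);\gamma}(X)$. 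Everything else is routine: the splitting over cosets of $I_x$ inside $\tW$ on a fixed finite Bruhat interval (and hence, in the limit, on all of $k(X)[\tW]_\gamma$) reduces to finitely many $x$ where the interval meets a nontrivial coset in more than one element, exactly as in the proofs of Lemmas~\ref{lem:holomorphy_preserving} and~\ref{lem:holomorphy_preserving_splits}.
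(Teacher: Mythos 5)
Your high-level plan matches the paper's: left reflexivity reduces membership to a codimension-1 local check; distinct root-reflections $r$ stay transverse after specializing $q$, so their conditions decouple; and then one matches the local conditions at a generic point of $[X^r]$ with those defining ${\cal H}_{\tilde A_1(r);\gamma}(X)$. But the last identification — the only nontrivial step, and the one you dismiss as routine — has a genuine gap, and your treatment of the ``only if'' direction shares it.

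You justify the local match by asserting that both the restriction of ${\cal H}_{\tW;\gamma}(X)$ to $\tilde A_1(r)$ near $[X^r]$ and ${\cal H}_{\tilde A_1(r);\gamma}(X)$ near $[X^r]$ ``are generated by the same rank-1 subalgebra ${\cal H}_{\langle r\rangle,0;\gamma}(X)$.'' That establishes the easy inclusion (the small algebra sits inside the big one) but nothing more: both sides contain further elements supported on longer words of $\tilde A_1(r)$, and these arise in the big DAHA from products of rank-1 subalgebras for \emph{other} simple reflections, not only from within $\tilde A_1(r)$, so there is no formal reason the containment is an equality. Worse, your ``only if'' argument invokes ``the description of ${\cal H}_{\tW}(X)$ as holomorphy-preserving operators,'' but this description fails exactly when it matters, namely when $q$ is torsion: then $\tW$ does not act faithfully, the DAHA is defined by generators rather than by holomorphy-preservation, and Lemma~\ref{lem:holomorphy_preserving}/Corollary~\ref{cor:global_order_two_residue_conditions} do not directly apply. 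For non-torsion $q$ the proposition is essentially the residue-condition analysis and both sides of the iff are straightforward; the content is the torsion case.

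What closes the gap in the paper's proof — and what is missing from yours — is a flatness-in-$q$ argument on both sides. The Bruhat intervals of ${\cal H}_{\tW;\gamma}(X)$ form a flat family in $q$, so the local conditions at $[X^r]$ for torsion $q$ are obtained as a limit from general $q$; for general $q$ they are holomorphy-preservation conditions and coincide with those of ${\cal H}_{\tilde A_1(r);\gamma}(X)$; and ${\cal H}_{\tilde A_1(r);\gamma}(X)$ is itself an elliptic DAHA (of type $\tilde A_1$), hence also a flat family in $q$, so its conditions at torsion $q$ are the same limit. Without this, nothing rules out the restriction of the big DAHA to $\tilde A_1(r)$ jumping (in either direction) at torsion $q$. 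You would need to insert this flatness step to make the argument go through.
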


\begin{proof}
 An element is in the DAHA if it is holomorphic away from the reflection
 hypersurfaces and is in the DAHA over the localization to the union of
 reflection hypersurfaces. Since the reflection hypersurfaces of~$\tilde{A}_1(r)$ for distinct $r$ remain transverse to each other even
 after specializing $q$, the corresponding conditions are independent. The~conditions along the reflection hypersurfaces of~$\tilde{A}_1(r)$ for
 special $q$ are the limit from the case for general $q$, where they agree
 with the residue conditions on~${\cal H}_{\tilde{A}_1(r);\gamma}(X)$.
 That algebra is itself a DAHA, and thus the corresponding conditions
 remain flat for special $q$ as well.
\end{proof}

One possible application of this reduction is to the construction of
degenerations: a limit of~${\cal H}_{\tW;\gamma}(X)$ should be
well-behaved as long as the limits of the various ${\cal
 H}_{\tilde{A}_1(r);\gamma}(X)$ are well-behaved and there is no further
coalescence of reflection hypersurfaces. (There are, however, technical
issues, in that most natural degenerations will break the normality of~$X$,
but one may be able to finesse this by working over $X/W$ instead, as~the
quotient will tend to remain a~weighted projective space.)

One such limit of interest is the other natural $q\to 0$ (or $q$ torsion)
limit: rather than take the limit to a twisted group algebra in which the
group does not act faithfully, one might instead take a limit to an algebra
of differential-reflection operators; i.e., take the limit in such a way as
to consider how the operators themselves actually act. This would
presumably be the correct way to interpret the algebra of
holomorphy-preserving operators if one does not suppress the poles for~$q$
torsion, but is not directly accessible via our techniques. In~particular,
the resulting algebra would not be generated by the rank~1 subalgebras, and
the corresponding spherical algebra is almost certainly not a domain when~$q$ is a nontrivial torsion element.

\section[The $C^\vee C_n$ case]{The $\boldsymbol{C^\vee C_n}$ case}\label{section7}

We now restrict our attention to the case that the affine Weyl group is of
type $C$. This has a~natu\-ral action on the family ${\cal E}^{n+1}$ given
as follows:
\begin{gather*}
\begin{array}{l}
 s_0(z_1,\dots,z_n,q/2) = (q-z_1,z_2,\dots,z_n,q/2),
 \\[.5ex]
 s_n(z_1,\dots,z_n,q/2) = (z_1,\dots,z_{n-1},-z_n,q/2),
\end{array}
\end{gather*}
while for~$1\le i\le n-1$, $s_i$ swaps $z_i$ and $z_{i+1}$. Here we denote
the last coordinate in~${\cal E}^{n+1}$ by~$q/2$, so that $q$ is twice that
coordinate. We use this notation since the corresponding family of actions
of~$\tilde{C}_n$ on~${\cal E}^n$ (and thus the resulting Hecke algebras)
depends only on~$q$, but it will be convenient (and more symmetric) to be
able to divide $q$ by $2$. We find for this action that the simple coroot
morphisms are $q/2-z_1,z_1-z_2,\dots,z_{n-1}-z_n,z_n$, and thus that this
is in fact an~action of coroot type, as~required for our theory. We also
have an action corresponding to the diagram automorphism:
\begin{gather*}
\omega(z_1,\dots,z_n) = (q/2-z_n,\dots,q/2-z_1),
\end{gather*}
which clearly permutes the simple coroot morphisms as expected.

The root curves are all isomorphic to~${\cal E}$, and the simple root
morphisms are given by
\begin{gather*}
(-1,0,\dots,0),\,
(1,-1,0,\dots,0),\,(0,1,-1,0,\dots,0),\,\dots,\,(0,\dots,0,1,-1,0),\,(0,\dots,0,1,0).
\end{gather*}
It follows that the root kernel for any finite parabolic subgroup is
trivial, so that there will be no difficulties with invariants and
(local) idempotents.

In fact, we~have the following.

\begin{prop}
 For any finite parabolic subgroup $W_I\subset \tilde{C}_n$, the quotient
 ${\cal E}^n/W_I$ is smooth over ${\cal M}_{1,1}$.
\end{prop}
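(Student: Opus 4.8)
The key observation is that the claim reduces to a small number of concrete cases, and in each of those cases the quotient is either manifestly smooth (because it is a weighted projective space, or more generally a product of such) or can be identified with one using the combinatorics of the root system. First I would reduce to the case that $W_I$ is irreducible: the quotient of a product of abelian varieties by a product of finite reflection groups is the product of the individual quotients, so smoothness is inherited componentwise. So fix an irreducible finite parabolic subgroup $W_I$ of $\tilde{C}_n$; the Dynkin diagram of $\tilde{C}_n$ is a path with two end nodes joined by arrows, and removing the arrows from the affine node $s_0$, or removing any single node, produces diagrams of type $A_k$, $C_k$, or products thereof. Thus up to reordering and taking products, the only irreducible cases that arise are $W(A_k)$ acting on (a translate of) the sum-zero subvariety of $\mathcal{E}^{k+1}$ by permutations, and $W(C_k)$ acting on $\mathcal{E}^k$ by signed permutations.

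For the $A_k$ case: the action of $S_{k+1}$ on the sum-zero subvariety $X$ of $\mathcal{E}^{k+1}$ is free away from the reflection hypersurfaces $[z_i=z_j]$, and I would argue that $X/S_{k+1}$ is smooth over $\mathcal{M}_{1,1}$ by the criterion already used in the excerpt (see the discussion preceding the master Hecke algebra, where it is observed that $E^n\to\P^n$ and the sum-zero subvariety mod $A_n$ are flat precisely because these are quasi-finite morphisms between regular schemes). Concretely, the quotient map $X\to X/S_{k+1}$ is a finite morphism, the source is regular over $\mathcal{M}_{1,1}$, and the image is a projective bundle (the symmetric power construction identifies $X/S_{k+1}$ with a $\P^k$-bundle, or with $\operatorname{Proj}$ of a suitable graded algebra of $S_{k+1}$-invariant theta functions); the point is that a quasi-finite morphism from a regular scheme whose target is normal and whose fibers have the expected dimension forces the target to be regular. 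For the $C_k$ case: the action of $W(C_k)$ on $\mathcal{E}^k$ has trivial root kernel (as just noted in the Proposition immediately preceding), and $\mathcal{E}^k/W(C_k)$ is, fiberwise, the classical quotient $E^k/W(C_k)\cong \P^k$ (via the $k$ elementary symmetric functions in the $\wp$-like coordinates, or more intrinsically via $\operatorname{Proj}$ of the ring of $W(C_k)$-invariant sections of powers of the even theta bundle); the relevant invariant ring is a polynomial ring in all characteristics by the flatness-of-invariants result (Theorem \ref{thm:W-invariants}), since the root kernel is diagonalizable, so the quotient is a $\P^k$-bundle over $\mathcal{M}_{1,1}$ and in particular smooth.

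The main obstacle is the $C_k$ case in small residue characteristic (characteristic $2$), where one must be careful that the invariant ring is still a polynomial ring and the quotient morphism $\mathcal{E}^k\to\mathcal{E}^k/W(C_k)$ behaves as expected even on supersingular fibers. Here I would invoke Theorem \ref{thm:W-invariants} together with its remark: since the root kernel of $W(C_k)$ on $\mathcal{E}^k$ is trivial (hence diagonalizable), the dimension of the space of $W(C_k)$-invariant sections of $\mathcal{L}_{Q,0}^d$ is constant over $\mathcal{M}_{1,1}$, and comparing with the characteristic-$0$ situation (where the Chevalley–Shephard–Todd theorem gives a polynomial invariant ring, with generator degrees read off from the highest short coroot, cf. the Remark after Theorem \ref{thm:W-invariants}) shows the invariant ring is a polynomial ring universally; therefore $\operatorname{Proj}$ of it is a (weighted) projective-space bundle over $\mathcal{M}_{1,1}$, which is smooth. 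An alternative, perhaps cleaner route that avoids invariant-theory subtleties altogether is to check smoothness directly on the étale-local structure: every point of $\mathcal{E}^k$ has stabilizer a parabolic subgroup $W_J\subset W(C_k)$, and étale-locally near such a point the action is that of $W_J$ on its reflection representation (here the coroot curves give a local product decomposition $\mathcal{E}^k\sim \prod E_i$ with $W_J$ acting by its standard reflection representation on the factors it moves), so the quotient is étale-locally $(\text{reflection representation})/W_J$, which is smooth by Chevalley–Shephard–Todd provided $W_J$ is a reflection group in the relevant characteristic — and Coxeter groups of type $A$ and $C$ are generated by true reflections (pseudo-reflections of order $2$) in every characteristic, so this applies uniformly. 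I would present the argument via this second, local route, falling back on Theorem \ref{thm:W-invariants} only to handle the behavior of the global quotient at the supersingular fibers if needed.
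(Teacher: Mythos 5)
Your plan begins correctly by reducing to irreducible parabolics, but the reduction you actually carry out misidentifies the relevant quotient in the type-$A$ case. If $W_I\cong S_{k+1}$ is a type-$A$ parabolic in $\tilde C_n$ (generated by, say, $s_1,\dots,s_k$, acting by permuting $z_1,\dots,z_{k+1}$), then ${\cal E}^n/W_I = \Sym^{k+1}({\cal E})\times {\cal E}^{n-k-1}$: you should quotient the full power ${\cal E}^n$, not the sum-zero subvariety. Smoothness of this quotient is then immediate from the standard fact that symmetric powers of a smooth curve are smooth (no criterion about quasi-finite morphisms is needed). Your detour through the sum-zero subvariety is both unnecessary and unjustified — the identification of that quotient with $\P^k$ requires the Abel–Jacobi argument, which you never make.

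The ``criterion'' you do cite is in fact false: a quasi-finite (even finite, flat) morphism from a regular scheme onto a normal scheme of the expected dimension does \emph{not} force the target to be regular. The quotient map $\A^2\to\A^2/\mu_2$ (the $A_1$ surface singularity) is a counterexample. What the paper actually uses, in the passage you are recalling, is the opposite implication — miracle flatness requires regularity of \emph{both} source and target to conclude flatness — so invoking it here is circular. Your first argument for the $A_k$ case thus has no content beyond asserting the conclusion.

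For the $C_k$ case, your invariant-theoretic route is substantially heavier than what is needed and rests on claims the paper does not establish. Theorem~\ref{thm:W-invariants} gives constancy of $\dim\Gamma(\cdot)^W$, not that the invariant ring is polynomial; the Remark following it explicitly says the polynomiality in arbitrary characteristic is only \emph{suggested}, not proved. Your alternative \'etale-local argument via Chevalley–Shephard–Todd is also delicate: the classical CST theorem assumes $|G|$ prime to the characteristic, and while there are modular extensions, appealing to them uniformly for order-$2$ reflections in characteristic $2$ (where the ramification along the reflection hypersurfaces is wild) needs care that you do not supply. The paper's proof avoids all of this with a two-step quotient: ${\cal E}^m/(\Z/2\Z)^m\cong(\P^1)^m$ (each factor is ${\cal E}/[-1]\cong\P^1$, valid in every characteristic), and then $(\P^1)^m/S_m\cong\Sym^m(\P^1)\cong\P^m$. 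This is both elementary and characteristic-free, and it is the point you should internalize: the hyperoctahedral quotient of ${\cal E}^m$ is not merely a weighted projective bundle, it is honest $\P^m$, and one sees this without any invariant theory at all.
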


\begin{proof}
 Any quotient ${\cal E}^n/W_I$ is a product of symmetric powers of~${\cal
 E}$ and a quotient or two of the form ${\cal E}^m/C_m$ (using the
 diagram automorphism to identify parabolic subgroups involving $s_0$ with
 the usual hyperoctahedral group). Symmetric powers of a curve are
 smooth, so there is no difficulty. For~the quotient by the full
 hyperoctahedral group, we~first observe that the quotient by the normal
 subgroup of order $2^m$ is just the product of~$n$ copies of the quotient
 of~${\cal E}$ by $[-1]$, a.k.a.~$\P^1$. We thus find that ${\cal
 E}^m/C_m\cong \big(\P^1\big)^m/S_m\cong \P^m$, giving smoothness as required.
\end{proof}

``Miracle flatness'' immediately gives the following, which in particular
ensures that the sphe\-ri\-cal algebras we consider will be locally free in a
suitable sense (i.e., that their direct images in~either copy of~${\cal
 E}^n/W$ are locally free).

\begin{cor}
 For any parabolic subgroup $W_I\subset C_n$, the quotient maps
 ${\cal E}^n\to {\cal E}^n/W_I$ and ${\cal E}^n/W_I\to {\cal E}^n/C_n$
 are flat.
\end{cor}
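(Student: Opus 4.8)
The plan is to deduce both flatness statements from the preceding Proposition (smoothness of ${\cal E}^n/W_I$ over ${\cal M}_{1,1}$) via the classical ``miracle flatness'' criterion, which states that a morphism $f:X\to Y$ with $X$ Cohen--Macaulay, $Y$ regular, and all fibers of the expected (constant) dimension is automatically flat. So the bulk of the argument is simply verifying the hypotheses of that criterion in our two cases.

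First I would record the ambient smoothness facts: ${\cal E}^n$ is smooth over ${\cal M}_{1,1}$ (being a fiber power of the smooth universal curve), hence so is ${\cal E}^n/W_I$ by the Proposition, and therefore all three of ${\cal E}^n$, ${\cal E}^n/W_I$, ${\cal E}^n/C_n$ are regular schemes (smoothness over the regular one-dimensional stack ${\cal M}_{1,1}$); in particular ${\cal E}^n$ is Cohen--Macaulay and both quotients are regular. For the first map ${\cal E}^n\to {\cal E}^n/W_I$: this is a finite morphism (it is the quotient by a finite group acting on an affine-over-${\cal M}_{1,1}$ scheme), so it is quasi-finite, hence all fibers are $0$-dimensional; since source is CM and target is regular, miracle flatness applies fiberwise and gives flatness. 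For the second map ${\cal E}^n/W_I\to {\cal E}^n/C_n$: here I would factor it as the composite of the quotient by $W_I$ followed by... no --- rather, observe directly that it too is finite (the fibers of ${\cal E}^n/W_I\to {\cal E}^n/C_n$ are contained in the images of $C_n$-orbits, hence finite), source is regular hence CM, and target is regular, so miracle flatness again yields flatness.

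The one point requiring a little care --- and I expect this to be the main (mild) obstacle --- is that these objects live over the stack ${\cal M}_{1,1}$ rather than over a scheme, so ``miracle flatness'' must be invoked in a form valid for (Deligne--Mumford) stacks or, more simply, checked after a smooth cover. The cleanest route is to base change along a smooth surjective map from a scheme $M\to {\cal M}_{1,1}$ (e.g. the moduli scheme of elliptic curves with full level $3$ or $4$ structure, as used earlier in the excerpt); flatness may be checked fppf-locally on the base, the base-changed schemes are still regular (smoothness is preserved) and CM, the morphisms are still finite with $0$-dimensional fibers, so the scheme-theoretic miracle flatness theorem applies verbatim, and flatness descends. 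Alternatively one checks flatness stalk-locally at geometric points, where everything reduces to local rings of schemes. Either way the argument is short; I would write it in the base-change form since the relevant smooth cover has already been introduced in the paper.

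Finally, for the parenthetical consequence about spherical algebras: once ${\cal E}^n/W_I\to {\cal E}^n/W$ is flat and finite, the direct image $\pi_*{\cal L}_I$ of any line bundle ${\cal L}_I$ on ${\cal E}^n/W_I$ is locally free on ${\cal E}^n/W$, and hence so is $(\pi_*{\cal L}_I)^{W_I}$ after passing through the flat finite map ${\cal E}^n/W_I\to {\cal E}^n/W$ as appropriate; this is exactly the local-freeness needed to make the master spherical algebras $\sEnd((\pi_*{\cal L}_I)^{W_I})$ and the bimodules ${\cal H}_{W,W_I,W_J}(X)$ behave like endomorphism/Hom sheaves of vector bundles. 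I would state this as an immediate corollary rather than belabor it.
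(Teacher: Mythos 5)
Your proof is correct and takes exactly the same route as the paper: the paper's entire argument is the single phrase ``miracle flatness,'' applied to a quasi-finite morphism from a Cohen--Macaulay source to a regular target, with the regularity of ${\cal E}^n/W_I$ supplied by the preceding Proposition. You've simply spelled out the hypotheses, the stack-to-scheme reduction, and the consequence for spherical algebras that the paper also mentions in passing; the only minor quibble is that ${\cal E}^n$ is projective rather than affine over ${\cal M}_{1,1}$, but that does not affect the finiteness of the quotient map.
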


Suppose for the moment that $\vec{T}_0={}^\omega\vec{T}_n$. Then we may
consider the extended double affine Hecke algebra obtained by adjoining
$\sO_X\omega$ to~${\cal H}_{\tilde{C}_n;\vec{T}}({\cal E}^n)$. This has a
natural $\Z/2\Z$-grading, and it will be useful to think of it as
corresponding to a (sheaf) category with two objects rather than a sheaf
algebra. That is, we~have objects $0$ and $1$ with endomorphisms given by
the sheaf algebra ${\cal H}_{\tilde{C}_n;\vec{T}}({\cal E}^n)$ and the
remaining morphisms given in either direction by the sheaf bimodule ${\cal
 H}_{\tilde{C}_n;\vec{T}}({\cal E}^n)\omega$. Doing this actually lets us
remove the constraint on the parameters: we may always obtain a sheaf
category with two objects by taking
\begin{alignat*}{3}
&\Hom(0,0)= {\cal H}_{\tilde{C}_n;\vec{T}}({\cal E}^n),\qquad &&
\Hom(0,1) = {\cal H}_{\tilde{C}_n;{}^{\omega}\vec{T}}({\cal E}^n)\omega,&\\
&\Hom(1,0)= {\cal H}_{\tilde{C}_n;\vec{T}}({\cal E}^n)\omega,\qquad &&
\Hom(1,1)= {\cal H}_{\tilde{C}_n;{}^{\omega}\vec{T}}({\cal E}^n),&
\end{alignat*}
where ${\cal H}_{\tilde{C}_n;\vec{T}}({\cal E}^n)\omega$ denotes the
bimodule obtained by twisting the regular bimodule of the DAHA by
the isomorphism induced by $\omega$.

The Bruhat order on the extended affine Weyl group induces a Bruhat
filtration on this category (which agrees with the usual Bruhat filtration
coming from viewing each $\Hom$ bimodule as a regular module of some DAHA).
Although of course this filtration is useful in itself, we~will also make
great use of a filtration obtained from a much coarser order. The~category
is certainly generated by the rank~1 subalgebras along with~$\omega$, but
since $\omega$ permutes the rank~1 subalgebras, we~can actually omit the
subalgebras corresponding to~$s_0$ from the generators. As~a result, we~find that the category is generated by the two morphisms corresponding to~$\omega$ along with the Hecke algebras in each degree corresponding to the
finite Weyl group. We may then define a~filtration on each Hom bimodule by
the number of times $\omega$ was used; e.g., the degree $\le d$ piece of~$\Hom(0,d\bmod 2)$ is the image of
\begin{gather*}
 {\cal H}_{C_n;\vec{T}}(X)\omega{\cal H}_{C_n;{}^\omega\vec{T}}(X)
 \omega{\cal H}_{C_n;\vec{T}}(X) \cdots
\end{gather*}
(with $d$ copies of~$\omega$ and $d+1$ finite Hecke algebras as tensor
factors). We can express each finite Hecke algebra factor as the image of
a product of rank~1 algebras (corresponding to the longest element of~$C_n$), move all of the copies of~$\omega$ to the end, and then use
Corollary~\ref{cor:infin_hecke_twisted_interval} to express this in terms
of a Bruhat interval in the appropriate Hecke algebra. Moreover, that
Bruhat interval is clearly a union of~$(W,W)$ double cosets, so is
determined by the corresponding set of~dominant weights; we find that the
condition to be degree $\le d$ is simply that the first coefficient of the
dominant weight is $\le d/2$.

This leads to a ``graded'' (or ``compactified'') version of the extended
DAHA: take the sheaf category with objects $\Z$ generated by elements
$\omega\in \Hom(k,k+1)$ and algebras ${\cal H}_{C_n;\vec{T}}(X)\subset
\Hom(2k,2k)$, ${\cal H}_{C_n;{}^{\omega}\vec{T}}(X)\subset
\Hom(2k+1,2k+1)$. (We may think of this as a sort of Rees algebra
corresponding to the filtration, taking into account parity.)

Note that when asking whether two (small) sheaf categories are isomorphic,
there are actually two natural notions. The~issue here is that for any
automorphism of an object of a category, there is a corresponding inner
automorphism of the category as a whole (and more generally for any
assignment of an automorphism to each object). In~our case, any unit on
parameter space gives such an inner automorphism, and since we are
primarily interested in the individual fibers, we~should extend that to
allow {\em local} units. This leads to a notion of twisting objects by
line bundles on the base; note that the resulting sheaf category will still
be locally isomorphic to the original sheaf category. In~general, we~will
often only state that given sheaf categories are isomorphic locally on the
base; this is mainly to save the bookkeeping effort of determining
precisely which line bundles one needs to twist by to make the isomorphism
global. In~particular, when specifying line bundles and equivariant
gerbes, the $z$-independent terms of the polarization and weight will be
largely irrelevant, so we can make the simplest consistent choice without
having to worry too much about which choice would make later polarizations
simpler.

Of course, we~have yet to incorporate a twisting datum. We first note that
the underlying equivariant gerbe induces a cocycle valued in pairs of
polarizations and weights. We can embed the $\tilde{C}_n$-module of
polarizations in the degree 2 subspace of~$(1/q)\Q[\vec{z},q,\vec{\pi}]$
(where $\vec{\pi}$ corresponds to additional factors of~${\cal E}$ which we
include to allow some room for further parameters), and similarly for the
$\tilde{C}_n$-module of weights. Since $q$ is $\tilde{C}_n$-invariant,
both rational $\tilde{C}_n$-modules are isomorphic to the corresponding
module $\Q[\vec{z},\vec{\pi}]$ obtained by specializing $q=1$. We can
compute~$H^1$ of this module by restriction to the (finite index)
translation subgroup, where the filtration by degree makes it easy to
verify that $H^1$ is trivial. It follows that the given cocycle is the
coboundary of a pair
$(p_3(\vec{z},q,\vec{\pi})/q,p_1(\vec{z},q,\vec{\pi})/q)$, where $p_3$ and
$p_1$ are homogeneous polynomials of degree $3$ and $1$ respectively. Of
course, $p_3$ and $p_1$ are only determined modulo $0$-cocycles, but those
are again easily seen to be just the polynomials independent of~$\vec{z}$.

In our case, since our primary interest is in the spherical algebras, we~want the twisting datum to be trivial on~$C_n$. The~same must in
particular hold for the equivariant gerbe, so that~$p_3$ and $p_1$ must be
$C_n$-invariant polynomials. (More precisely, they must be $C_n$-invariant
modulo $0$-cocycles, but we can then average over $C_n$ without changing
the coboundary.) Since we are allowed to ignore terms independent of~$\vec{z}$, we~see that we may as well take $p_1=0$ and
$p_3=\frac{\lambda(q/2,\vec{\pi})}{q} \sum_i \frac{z_i^2}{2}$ for some
linear polynomial $\lambda$ with rational coefficients. Imposing the
condition that the coboundary consist of actual polarizations then forces
$\lambda$ to have integer coefficients. That is, the value of the
coboundary at $s_0$ is $p_3(\vec{z})-{}^{s_0}p_3(\vec{z}) =
\lambda(q/2,\vec{\pi})z_1 - \lambda(q/2,\vec{\pi})\frac{q}{2}$, which is
integral iff $\lambda$ is integral since $q/2$ is a variable.

To extend this to a twisting datum, it suffices to choose a meromorphic
section of the restriction to~$\langle s_0\rangle$ and represent the
corresponding Cartier divisor as $D_0-{}^{s_0}D_0$ with~$D_0$ transverse to~$[X^{s_0}]$. Since $D_0$ is only determined modulo $s_0$-invariant
divisors and linear equivalence, it is equivalent to specify the polarization\vspace{-.5ex}
\begin{gather*}
k\frac{z_1(z_1-q)}{2} + \frac{\lambda(q/2,\vec{\pi}) z_1}{2},
\end{gather*}
with $k\in \{0,1\}$. (Again, the constant term is irrelevant for our
purposes.) Note that this imposes a stronger integrality constraint on~$\lambda$, which must now have even coefficients.

Of course, we~saw above when discussing the elliptic Gamma ``function''
that not every suitably integral cocycle has meromorphic sections, even for
the translation subgroup. Luckily, in our case, we~can easily write down
explicit products of elliptic Gamma functions that do the trick. To~be
precise, consider the product
\begin{gather*}
\prod_{1\le i\le n} \Gamq(a\pm z_i),
\end{gather*}
where $a$ is linear in~$q/2$ and $\pi$. Here and below, we~have used the
shorthand notation that multiple arguments to~$\Gamq$ or $\vartheta$
represent a product and the appearance of~$\pm$ in the argument means that
{\em both} signs should be used; thus $\Gamq(a\pm
z_i)=\Gamq(a+z_i)\Gamq(a-z_i)$.

This is $C_n$-invariant, and has polarization (ignoring the $z$-independent
term)
\begin{gather*}
(2a-q)\sum_i \frac{z_i^2}{2}.
\end{gather*}
Moreover, we~find
\begin{gather*}
\frac{{}^{s_0}\prod_{1\le i\le n} \Gamq(a\pm z_i)}
 {\prod_{1\le i\le n} \Gamq(a\pm z_i)}
=
\frac{\vartheta(a-z_1)}{\vartheta(a-q+z_1)},
\end{gather*}
and thus this corresponds to the twisting datum with~$D_0=[z_1=a]$.
This gives the general degree 1 case, and we may obtain the general degree
0 case by taking a ratio of two such products.

Such a choice of product induces an embedding of the twisted Hecke algebra
in the algebra of~meromorphic difference-reflection operators;
equivalently, meromorphic sections of the twisted Hecke algebra act on
formal functions of the form $f\gamma$, where $\gamma$ is the product of~$\Gamq$ symbols. This lets us extend the holomorphy-preserving property to
the twisted case: if $f$ is locally holomorphic away from the poles of the
sections of the cocycle corresponding to~$\gamma$, then so its image.
This leads to issues where $\gamma$ has poles, but we have enough choice in
how we represent things that the corresponding invariant localizations give
a finite covering. $\big($I.e., in the degree~0 case, we~may take
$\gamma=\prod_{1\le i\le n} \Gamq(v+a\pm z_i)/\Gamq(v\pm z_i)$ with~$v$
varying; in degree 1, we~take $\gamma=\prod_{1\le i\le n} \Gamq(v\pm
z_i,w\pm z_i)/\Gamq(v+w-a\pm z_i)$ with~$v$, $w$ varying.$\big)$

To include the diagram automorphism, we~note that because we extended to a
category above, we~may choose a different product of~$\Gamq$ symbols for
each object, and need only have a~line bundle ${\cal L}_i$ in each degree
such that sections of~${\cal L}_i\omega$ take $k(X)\gamma_i$ to~$k(X)\gamma_{i+1}$. This is easy enough, since
\begin{gather*}
\frac{{}^\omega\prod_{1\le i\le n} \Gamq(a\pm z_i)}
 {\prod_{1\le i\le n} \Gamq(a-q/2\pm z_i)}=
\prod_{1\le i\le n} \vartheta(a-q/2-z_i),
\\
\frac{{}^\omega\prod_{1\le i\le n} \Gamq(a\pm z_i)}
 {\prod_{1\le i\le n} \Gamq(a+q/2\pm z_i)}=
\prod_{1\le i\le n} \frac{1}{\vartheta(a-q/2+z_i)}.
\end{gather*}

In the degree 0 case, this ends up not changing the twisting datum, but in
degree 1, the polarization of~${\cal L}_i$ ends up alternating between
positive and negative. This would increase the number of cases we would
need to consider, so it will be convenient to enlarge the category even
further by replacing each object by a sequence of objects, one for each
(isomorphism class of) line bundle on~$\P^n$. The~$\Hom$ bimodule between
two objects in the enlarged category is then just the twist of the original
$\Hom$ bimodule by the pair of line bundles (inverting the one on the
domain side). The~benefit of this is that we can move between the two
degree~1 cases by~twisting by $\sO_{\P^n}(\pm 1)$, and thus in the enlarged
category, there is only one case.

It turns out that even the above category is not quite general enough to
include everything we want to do for the spherical algebras. As~a result, we~will first focus our attention on the case in which the endpoints do not
have any parameters assigned. We also specialize to the usual
Macdonald-ish case, in which $T_i$ for~$1\le i\le n-1$ is the divisor
$t+z_i-z_{i+1}=0$. (The~definitions work more generally; in particular, we~note that everything works mutatis mutandum for the case $T_i=0$ for $1\le
i\le n-1$, though oddly enough this ``$t$-free'' case turns out to be
harder to understand in a number of respects.) It also turns out that most
of the symmetries of the algebra do not preserve the untwisted case, but
{\em do} preserve a particular twist; this leads us to make a somewhat
odd-appearing choice in parametrizing twists. The~specific basis we use
for the $\Z^2$ of objects is inspired by~\cite{noncomm1} (where it in turn
came from the geometry of rational surfaces); this also informs the choice
of parameters (in~\cite{noncomm1}, there was a parameter $\eta$ which we~have arranged to be 0).

\begin{defn}
 The even elliptic DAHA ${\cal H}^{(n)}_{\eta';q,t}$ (of type $C$) is the
 smallest sheaf category on~${\cal E}^{n+3}$ with objects $\Z\langle
 s,f\rangle$ such that
 \begin{gather*}
 {\cal H}^{(n)}_{\eta';q,t}(ds+d'_1f,ds+d'_2f) = \sO_{\P^n}(d'_2)\otimes {\cal
 H}_{C_n;\vec{T}_t}\big({\cal E}^{n+3}\big)\otimes \sO_{\P^n}(-d'_1)
 \end{gather*}
 and
 \begin{gather*}
 {\cal H}^{(n)}_{\eta';q,t}(ds+d'_1f,(d+1)s+d'_2f)\supset {\cal
 L}\omega,
 \end{gather*}
 where ${\cal L}$ is the line bundle with polarization
 \begin{gather*}
 -(d'_1-d'_2+1)\sum_i z_i^2
 -((n-1)t+\eta'+(d-d'_1+1)q)\sum_i z_i
 +(d-d'_1)q^2/4.
 \end{gather*}
 The odd elliptic DAHA ${\cal H}^{\prime(n)}_{x_0;q,t}$ is
 defined similarly, except that ${\cal L}$ has polarization
 \begin{gather*}
 -(2d'_1-2d'_2+3)\sum_i \frac{z_i^2}{2}
 -((n-1)t+x_0+(3d-2d'_1+2)q/2)\sum_i z_i
 +(3d-2d'_1)q^2/8.
 \end{gather*}
\end{defn}

\begin{rem}
 In this case, we~were able to choose the $z$-independent part of the
 polarization to make everything globally consistent, where $\sO_{\P^n}(1)$
 is chosen so as to pull back to the line bundle with polarization~$\sum_i
 z_i^2$. We can recover the usual (compactified) elliptic DAHA by
 restricting the algebra ${\cal H}^{(n)}_{-(n-1)t-q;q,t}$ to the subset
 $\Z(s+f)$ of objects; if we restrict to even multiples and then invert
 the sections $1\in \Hom(d(s+f),(d+2)(s+f))$, we~recover the
 uncompactified elliptic DAHA. We should note that while ${\cal
 H}^{(n)}_{\eta';q,t}(0,2s+2f)$ always contains 1, this is only true
 locally on the base for~${\cal H}^{\prime(n)}_{x_0;q,t}(0,2s+3f)$, and it
 is not possible to fix this without also changing the particular
 representative of~$\sO_{\P^n}(1)$.
\end{rem}

We similarly let ${\cal S}^{(n)}_{\eta';q,t}$ and ${\cal
 S}^{\prime(n)}_{x_0;q,t}$ denote the corresponding spherical categories
(i.e., replacing each $\Hom$ bimodule by the appropriate subquotient).
Each $\Hom$ bimodule in one of these categories is a sheaf bimodule on the
quotient $\P^n={\cal E}^n/C_n$, every local section of which is a~meromorphic difference operator on~${\cal E}^n$.

\begin{prop}
 The subsheaf corresponding to any Bruhat order ideal in either ${\cal
 S}^{(n)}_{\eta';q,t}$ or~${\cal S}^{\prime(n)}_{x_0;q,t}$ is a coherent sheaf
 bimodule on~$\P^n\times_{{\cal E}^3} \P^n$, and the direct image in
 either $\P^n$ is locally free.
\end{prop}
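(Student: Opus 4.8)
The plan is to reduce the statement to the already-established structural results about the Bruhat filtration, together with the flatness of the relevant quotient morphisms, exactly as in the earlier proofs of Lemmas \ref{lem:Bruhat_fin_parm} and \ref{lem:Bruhat_fin_noparm} and their corollaries. First I would observe that the spherical categories ${\cal S}^{(n)}_{\eta';q,t}$ and ${\cal S}^{\prime(n)}_{x_0;q,t}$ are, by construction, obtained from the corresponding elliptic DAHA categories ${\cal H}^{(n)}_{\eta';q,t}$ and ${\cal H}^{\prime(n)}_{x_0;q,t}$ by passing to the $W_I$-invariant subquotients with $W_I=W=C_n$. Since the root kernel for every finite parabolic subgroup of $\tilde{C}_n$ on ${\cal E}^n$ is trivial (hence diagonalizable), the invariants functor ${-}^W$ is exact, commutes with base change, and the sheaf bimodules ${\cal H}_{\tW,W,W;\vec{T};\gamma}(X)$ are $S$-flat with subquotients given by the explicit line bundles described in Section 6 (the image of $(1,w^{-1})_*{\cal L}_w$ for $w$ ranging over the relevant double cosets, i.e.\ dominant weights). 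The Bruhat order on $W\backslash\tW/W$ is the dominance order on dominant weights, and each order ideal in it is finite (there are only finitely many dominant weights below a given one), so the corresponding subsheaf of either spherical category is an iterated extension of finitely many such direct-image line bundles.

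Next I would assemble coherence and local freeness from this filtration. Coherence of the subsheaf associated to a finite Bruhat order ideal follows because it is a finite extension of coherent sheaves: each subquotient $(1,w^{-1})_*{\cal L}_w$ is the direct image under the (finite, indeed closed-into-a-graph) morphism $(1,w^{-1}):\P^n/W_{I(w)}\to \P^n\times_{{\cal E}^3}\P^n$ of a line bundle on the smooth quotient $\P^n/W_{I(w)}$, which is coherent; a finite extension of coherent sheaves is coherent. For the local freeness of the direct image in either copy of $\P^n$, I would use the fact established just above that ${\cal E}^n/W_I\to {\cal E}^n/C_n=\P^n$ is flat for every parabolic $W_I$ (``miracle flatness,'' since both quotients are smooth over ${\cal E}^3$), so that the pushforward to $\P^n$ of each line bundle ${\cal L}_w$ is locally free. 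A finite extension of locally free sheaves on the (regular) base $\P^n$ need not a priori be locally free, so here I would instead argue via $S$-flatness: the whole subsheaf is $S$-flat (being a finite extension of $S$-flat pieces), its pushforward to $\P^n$ has a filtration whose subquotients are locally free, and on the regular scheme $\P^n$ a coherent sheaf that is flat over the base and whose fibers have constant rank — which follows from the base-change compatibility of ${-}^W$ plus constancy of the Hilbert polynomials of the subquotient line bundles — is locally free. Concretely, one checks rank is locally constant fiberwise using the exactness of ${-}^W$ and the explicit dimension count for each ${\cal L}_w$ via Theorem \ref{thm:W-invariants}, then invokes the standard criterion (flat plus locally constant fiber rank over a reduced base implies locally free).

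The main obstacle, and the step that requires the most care, is justifying the \emph{local freeness} rather than mere coherence of the pushforward to $\P^n$. The subtlety is that the extension need not split, so one cannot simply read off local freeness from the subquotients; one genuinely needs the constancy of fiber dimension. This is where the hypothesis about the root kernel being trivial (hence the exactness of ${-}^W$ and its compatibility with base change, via the Corollary following Lemma \ref{lem:has_local_idempotents}) is essential, together with the fact — special to the type $C$ situation — that the quotient maps ${\cal E}^n\to {\cal E}^n/W_I\to\P^n$ are all flat. One further wrinkle to handle carefully is the twisting: in the twisted (nonzero $\eta'$ or $x_0$) case the line bundles ${\cal Z}_{\gamma,w}$ contributing to ${\cal L}_w$ must be checked to descend to $W_{I(w)}$-equivariant, hence descend to, line bundles on $\P^n/W_{I(w)}$ that push forward locally freely to $\P^n$; since $\gamma$ was chosen trivial on $W=C_n$ (as arranged in the construction of these categories), the restriction of $\gamma$ to each $W_{I(w)}$ is trivial, so this is automatic. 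Finally, since any finite subset of dominant weights is contained in a finite order ideal, the statement for order ideals covers all finitely generated sub-bimodules, completing the proof.
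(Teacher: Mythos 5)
Your proposal follows the same strategy as the paper's (very terse) proof: filter by Bruhat order, identify each subquotient as the direct image of a line bundle on $\P^n/W_{I(w)}$ for a parabolic $W_{I(w)}\subset C_n$, and use flatness of the finite quotient maps between smooth schemes to conclude each such direct image is locally free on $\P^n$. However, you introduce an unnecessary complication and, in doing so, assert something false. You write that ``a finite extension of locally free sheaves on the (regular) base $\P^n$ need not a priori be locally free,'' and then route around this via an $S$-flatness and constant-fiber-rank argument. In fact an extension $0\to A\to B\to C\to 0$ of coherent sheaves with $A$ and $C$ locally free is \emph{always} locally free, on any scheme: at each point $C$ is locally free, hence projective over the local ring, so the sequence splits locally and $B$ is locally a direct sum of free modules. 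This is exactly the one-line deduction the paper implicitly relies on, and no appeal to exactness of ${-}^W$, base change, or Hilbert polynomial constancy is needed for the local-freeness claim once the subquotients are known to be locally free. Your substitute argument via flatness plus locally constant fiber rank over a reduced base is also valid, so the proposal reaches a correct conclusion, but you should strike the claim that the extension property fails a priori, since it is what makes the paper's short proof work.
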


\begin{proof}
 This reduces to the corresponding statement for the subquotients in the
 Bruhat filtration, each of which comes from a line
 bundle on the quotient by a parabolic subgroup of~$C_n$, and is therefore
 flat on the quotient $\P^n$.
\end{proof}

Note that just as for more general Coxeter groups, we~can describe the
$t$-independent conditions on sections of these sheaf categories in terms
of residue conditions (at least generically). This does not {\em quite}
follow from that case, as~we are working in an extended affine Weyl group
rather than an affine Weyl group, and of course the twisting makes things
trickier.

For ${\cal H}^{(n)}_{-q-(n-1)t;q;t}(0,ds+d'f)$, ${\cal
 H}^{\prime(n)}_{-q-(n-1)t;q,t}(0,ds+d'f)$, the local sections are just
local sections of the untwisted Hecke algebra up to twisting on the left by
some $\sO_{\P^n}(l)$, and thus satisfy the usual residue conditions: the
coefficients are all meromorphic sections of the same bundle
$\sO_{\P^n}(1)$, and for any two elements of the extended affine Weyl group
related by a reflection, the sum of the corresponding coefficients must be
holomorphic along the reflection hypersurface. (Moreover, the polar
divisors of the coefficients must be sums of reflection hypersurfaces, with
a given reflection hypersurface appearing only if it appears in a residue
condition). For~the spherical algebra, the condition is analogous, giving
a residue condition for any pair of distinct translations which are
conjugates by a reflection.

In our case, the twisting is simple enough that we can express the residue
conditions explicitly even in the presence of twisting. For~the even case, we~find that for sufficiently general $u$, $v$,
\begin{gather*}
\bigg(\prod_{1\le i\le n}\vartheta(v\pm z_i)^{d'}
 \Gamq(-dq/2-u\pm z_i,-dq/2-(n-1)t-\eta'+u\pm z_i)\bigg)^{-1}
 \\ \hphantom{\bigg(\prod_{1\le i\le n}}
{}\times{\cal H}^{(n)}_{\eta';q,t}(0,ds+d'f)
\prod_{1\le i\le n} \Gamq(-u\pm z_i,-(n-1)t-\eta'+u\pm z_i)
\end{gather*}
consists of operators with elliptic coefficients and preserving holomorphy
(away from divisors depending on~$u$ and $v$). (This is essentially just
performing an elementary transformation as considered below.) Thus the
residue conditions on~${\cal H}^{(n)}_{\eta';q,t}(0,ds+d'f)$ may be
obtained by gauging the untwisted residue conditions. We may then use
Propositions~\ref{prop:pullback_simplification_i} and
\ref{prop:pullback_simplification_ii} to~simplify the~result. We give the
conditions for the spherical category for simplicity; for the DAHA, the
correction is the same except for those reflections preserving the
corresponding coset, when the~correction factor is necessarily trivial.

We find that sections of~${\cal S}^{(n)}_{\eta';q,t}(0,ds+d'f)$ satisfy the
following $t$-independent residue conditions (in addition to~$C_n$-invariance and the corresponding bounds on poles). Write such a
local section as $\sum_{\vec{k}} c_{\vec{k}}(\vec{z}) \prod_i T_i^{k_i}$,
where each $k_i$ is congruent to~$d/2$ modulo $\Z$.
First, for~$m\ne k_1+k_2$,
\begin{gather*}
c_{m-k_2,m-k_1,k_3,\dots,k_n}(\vec{z})\,{+}
\biggl[\frac{\vartheta(u,q\,{+}\,(n-1)t\,{+}\,\eta'\,{+}\,u\,{+}\,z_1\,{+}\,z_2\,{+}\,mq)}
 {\vartheta(u\,{+}\,z_1\,{+}\,z_2\,{+}\,mq,q\,{+}\,(n-1)t\,{+}\,\eta'\,{+}\,u)}\biggr]^{2(m-k_1-k_2)}
\!\!\!\!\!c_{k_1,k_2,k_3,\dots,k_n}(\vec{z})
\end{gather*}
is holomorphic along $z_1+z_2+mq=0$, where $u$ is arbitrary subject to~$u\ne 0$ and $u\ne -q-\allowbreak(n-1)t-\eta'$ and otherwise has no effect on the
condition, and similarly when~$m\ne 2k_1$,
\begin{gather*}
c_{m-k_1,k_2,\dots,k_n}(\vec{z})
+
\biggl[
\frac{\vartheta(u,q+(n-1)t+\eta'+u+2z_1+mq)}
 {\vartheta(u+2z_1+mq,q+(n-1)t+\eta'+u)}
\biggr]^{m-2k_1}
c_{k_1,k_2,\dots,k_n}(\vec{z})
\end{gather*}
is holomorphic along $2z_1+mq=0$. (The conditions along other reflection
hyperplanes follow by~$C_n$-invariance.) Moreover, for~generic parameters,
any operator satisfying these conditions and the appropriate $t$-dependent
vanishing conditions will be a section of the given $\Hom$ sheaf. (To be
precise, we~must assume that $q$ is not torsion (or we are in a faithful
Bruhat interval) and that $t$ is not a multiple of~$q$; of course, by
flatness, to check that a family of operators is a section of the
corresponding family of categories ${\cal S}$, it suffices to verify that
this holds generically.)

Similarly, for~${\cal S}^{\prime(n)}_{x_0;q,t}(0,ds+d'f)$, the correction
factors in the residue conditions are
\begin{gather*}
\biggl[
\frac{\vartheta(u,q+(n-1)t+x_0+u+z_1+z_2+mq)}
 {\vartheta(u+z_1+z_2+mq,q+(n-1)t+x_0+u)}
\biggr]^{2(m-k_1-k_2)}
\end{gather*}
and
\begin{gather*}
\biggl[
{\mathfrak q}(z_1+mq/2)
\frac{\vartheta(u,q+(n-1)t+x_0+u+2z_1+mq)}
 {\vartheta(u+2z_1+mq,q+(n-1)t+x_0+u)}
\biggr]^{m-2k_1},
\end{gather*}
where we recall that in characteristic not 2, ${\mathfrak q}\in
\mu_2(E[2])$ is the function taking $0$ to~$1$ and nontrivial $2$-torsion
points to~$-1$. (Note that without the appearance of~${\mathfrak q}$ in
the residue conditions, the subcategory ${\cal S}'$ with objects $2\Z s+\Z
f$ would be a simple reparametrization of the corresponding subcategory of~${\cal S}$.)

By comparison with the univariate case (which we will discuss in more
detail shortly), we~are led to define generalizations (``blowups'') of
these algebras with even more parameters. Recall that local sections of
the spherical algebras are difference operators. We let $T_i$ denote the
operator that pulls back through~$z_i\mapsto z_i+q$; note that in terms of
our convention for how group elements act, $T_i$ is the same as the action
of the translation~$z_i\mapsto z_i-q$. We extend this to half-integer
powers of~$T_i$ by using the chosen $q/2$. Every local section is then a
left linear combination of monomials $T^{\vec{k}}:=\prod_i T_i^{k_i}$ in
which all $k_i$ are half-integers in the same coset of~$\Z$ (determined by
the coefficient of~$s$ in the degree of the operator in the category).

\begin{defn}
 The sheaf category ${\cal S}^{(n)}_{\eta',x_1,\dots,x_m;q,t}$ is the sheaf
 category on~$\P^n/{\cal E}^{m+3}$ with objects $\Z\langle
 s,f,e_1,\dots,e_m\rangle$ defined by taking ${\cal
 S}^{(n)}_{\eta',x_1,\dots,x_m;q,t}(d_1s+d'_1f-r_{11}e_1-\cdots-r_{1m}e_m,d_2s+d'_2f-r_{21}e_1-\cdots-r_{2m}e_m)$
 to be the subsheaf of~${\cal S}^{(n)}_{\eta';q,t}(d_1s+d'_1f,d_2s+d'_2f)$
 consisting (locally) of operators ${\cal D}$ such that the left
 coefficient of~$\prod_{1\le i\le n} T_i^{k_i}$ vanishes on
 the divisors $z_i=x_j-(2l-d_2+1)q/2$ for~$1\le i\le n$, $1\le j\le m$,
 $k_i+(d_2-d_1)/2+r_{1j}\le l<r_{2j}$ and the divisors
 $z_i=-x_j+(2l-d_2+1)q/2$ for~$1\le i\le n$, $1\le j\le m$,
 $-k_i+(d_2-d_1)/2+r_{1j}\le l<r_{2j}$. Similarly, the sheaf category ${\cal
 S}^{\prime(n)}_{x_0,x_1,\dots,x_m;q,t}$ is the subsheaf category of~${\cal
 S}^{\prime(n)}_{x_0;q,t}$ satisfying the same vanishing conditions.
\end{defn}

\begin{rem}
 Note that by symmetry, it would have sufficed to impose the first set of
 vanishing conditions.
\end{rem}

Note that in this definition, we~need to impose the vanishing conditions on
the family as a~whole; on individual fibers, the condition along individual
divisors may be too strong (when the divisors are reflection hypersurfaces)
or too weak (when the divisors are not distinct). As~a result, it is a
nontrivial question whether the family is flat, and even when it is flat,
it is conceivable that the individual fibers might fail to inject in the
category of meromorphic operators (which could in turn allow the fibers to
acquire zero divisors).

Luckily, since we defined these using the spherical algebra of an elliptic
DAHA, there is an~obvi\-ous approach to studying these categories: construct
them as spherical subquotients of a~suitable subcategory of~${\cal
 H}^{(n)}_{\eta';q,t}$ or ${\cal H}^{\prime(n)}_{x_0;q,t}$. There are
actually multiple choices one might make here, as~one can view a divisor
$z_i=x_j-kq/2$ as a pullback from the coroot curve associated to either
endpoint. Although it might seem natural to choose the endpoint that
matches the parity of~$k$, it will be easier for present purposes to
consistently use $s_0$. Other choices may lead to more natural Hecke
algebras, however; for instance the classical double affine Hecke algebra
of type $C^\vee C_n$ corresponds to assigning two parameters to~$s_n$ and
two parameters to~$s_0$.

Since we are keeping the parameters away from the finite Weyl group, the
vanishing conditions should be appropriately $C_n$-equivariant, and thus
the vanishing condition for the left coefficient of~$w$ should depend only
on~$w C_n$ and be equivariant on the left. Each coset of~$wC_n$ has a
unique representative $\prod_i T_i^{k_i}$, and we readily verify that the
vanishing conditions we imposed above transform well under $C_n$. We may
thus define ${\cal H}^{(n)}_{\eta',x_1,\dots,x_m;q,t}$ by imposing the
resulting vanishing conditions, and similarly for~${\cal
 H}^{\prime(n)}_{x_0,x_1,\dots,x_m;q,t}$.

We note that in addition to the functoriality on fibers implied by the fact
that this is defined over the moduli stack, we~also have functoriality with
respect to translation by $2$-torsion.

\begin{prop}
 If $\tau$ is an fppf-local section of~${\cal E}[2]$, then there are
 isomorphisms
 \begin{gather*}
 {\cal H}^{(n)}_{\eta',x_1,\dots,x_m;q,t} \cong {\cal H}^{(n)}_{\eta',x_1+\tau,\dots,x_m+\tau;q,t},
 \\
 {\cal H}^{(n)}_{x_0,x_1,\dots,x_m;q,t} \cong {\cal H}^{(n)}_{x_0+\tau,x_1+\tau,\dots,x_m+\tau;q,t}.
 \end{gather*}
\end{prop}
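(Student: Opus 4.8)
The plan is to reduce the statement to the action of translation by a $2$-torsion point on the underlying geometric data defining the Hecke category, and then observe that all the ingredients (the line bundles $\mathcal L_{Q,w}$, the twisting datum, the systems of parameters, and the vanishing conditions) transform in the expected way. First I would note that translation $\tau_\tau:\mathcal E^{n}\to \mathcal E^{n}$ by a $2$-torsion section $\tau$ (acting diagonally on the $z_i$, and trivially on the auxiliary $\pi$-coordinates and on $q$) commutes with the $C_n$-action by signed permutations, since $\tau=-\tau$; hence it descends to an automorphism of $\mathbb P^n=\mathcal E^n/C_n$, and more importantly it normalizes the full $\tilde C_n$-action up to the shift of $q/2$-coordinates that we are already suppressing. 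So $\tau_\tau$ conjugates $k(X)[\tilde C_n]_\gamma$ to $k(X)[\tilde C_n]_{\tau_\tau^*\gamma}$, and the whole problem becomes: identify $\tau_\tau^*$ of the twisting datum, the parameters $\vec T_t$, and the vanishing conditions with the corresponding data for the shifted parameters $x_j\mapsto x_j+\tau$.

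Next I would handle each piece in turn. The $t$-parameters $T_i$ ($1\le i\le n-1$) are cut out by $t+z_i-z_{i+1}=0$, and since $\tau_\tau$ adds $\tau$ to both $z_i$ and $z_{i+1}$, these divisors are strictly invariant, so $t$ is unchanged — this is why $t$ does not get shifted in the statement. For the twisting datum, the key point is that by the discussion preceding Theorem~\ref{thm:daha_for_torsion_q} the twisting datum on the $C$-type action (trivial on $C_n$) is pinned down, up to twist by a line bundle, by the polarization data $p_3(\vec z,q,\vec\pi)/q$, which for these algebras is of the form $\tfrac{\lambda(q/2,\vec\pi)}{q}\sum_i z_i^2/2$ plus a term linear in $z_i$; pulling back along $\tau_\tau$ adds only terms that are linear or constant in $\vec z$ (since $(z_i+\tau)^2=z_i^2+2\tau z_i+\tau^2$ and $2\tau=0$ on the relevant coroot curve, these linear corrections are themselves torsion), and by the explicit $\Gamma_q$-product representatives $\prod_i \Gamma_q(a\pm z_i)$ one checks that the induced change is exactly absorbed by shifting the linear parameter $a$ (hence $\eta'$ or $x_0$) by a $2$-torsion amount, which we are free to reabsorb into the choice of representative. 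The upshot is that $\tau_\tau^*$ of the twisting datum for $(\eta';q,t)$ (resp. $(x_0;q,t)$) is isomorphic — locally on the base, consistent with the standing convention about suppressing $z$-independent polarization terms — to the twisting datum for the same $(\eta';q,t)$ (resp. $(x_0;q,t)$).

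Then I would treat the blowup vanishing conditions, which is where the parameters $x_j$ actually move. By definition the coefficient of $T^{\vec k}$ must vanish along divisors $z_i=\pm x_j-(2l-d_2+1)q/2$ for $l$ in a specified range; applying $\tau_\tau^*$ replaces $z_i$ by $z_i+\tau$, i.e. sends the divisor $z_i=\pm x_j-(2l-d_2+1)q/2$ to $z_i=\pm x_j-\tau-(2l-d_2+1)q/2 = \pm(x_j\mp\tau)-(2l-d_2+1)q/2$; since $\tau$ is $2$-torsion, $-\tau=\tau$, so both the $+$ and the $-$ families of divisors are simultaneously translated to exactly the family defining the blowup at $x_j+\tau$, with the index ranges for $l$ unchanged. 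Assembling: $\tau_\tau^*$ carries the defining subsheaf of $\mathcal H^{(n)}_{\eta',x_1,\dots,x_m;q,t}$ inside $k(X)[\tilde C_n]_{\tau_\tau^*\gamma}$ onto the defining subsheaf of $\mathcal H^{(n)}_{\eta',x_1+\tau,\dots,x_m+\tau;q,t}$, and composing with the twisting-datum isomorphism from the previous step gives the claimed isomorphism of sheaf categories; the odd case is identical, reading $x_0$ for $\eta'$ and using the ${\mathfrak q}$-twisted polarization, and the $\mathfrak q$ factor is itself invariant under translation by $2$-torsion up to the Weil pairing, which only contributes a sign character that is again absorbed into the local-on-the-base convention.

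The main obstacle I anticipate is the bookkeeping in the second step: verifying that the pullback $\tau_\tau^*$ of the twisting datum is genuinely \emph{isomorphic} (not merely meromorphically equivalent, or equivalent only after a further twist by a nontrivial line bundle on the base) to the original twisting datum with the \emph{same} $\eta'$ or $x_0$. The linear-in-$\vec z$ correction terms produced by $(z_i+\tau)$ are $2$-torsion-valued, so they are not literally zero, and one must check that the natural $\Gamma_q$-product representatives transform compatibly — i.e. that the system of isomorphisms ${\cal Z}_{\gamma,w}\cong \tau_\tau^*{\cal Z}_{\gamma,w}$ is compatible with the maps $\zeta$ and with the $\omega$-twisted $\Hom$ bimodules of the extended (and enlarged) category. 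This is where I would expect to invoke the standing convention that the $z$-independent part of the polarization and weight is irrelevant for fiberwise statements, and where the ``locally on the base'' hedging in the statement does its work; the rest is the routine divisor chase of step three.
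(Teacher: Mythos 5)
Your approach — conjugating by the diagonal translation $(z_1,\dots,z_n)\mapsto(z_1+\tau,\dots,z_n+\tau)$ — is the same as the paper's one-line proof. However, there is a genuine gap in your second step, where you conclude that $\tau_\tau^*$ of the twisting datum for $(x_0;q,t)$ is isomorphic to the twisting datum for the \emph{same} $x_0$, and then that ``the odd case is identical.'' That conclusion contradicts the statement itself, which shifts $x_0\mapsto x_0+\tau$ while leaving $\eta'$ fixed, and the even/odd asymmetry is exactly the point you needed to pin down. The $\omega$-twisted line bundle in the even case has quadratic part $-(d'_1-d'_2+1)\sum_i z_i^2$, whose cross term with the translation is $-2(d'_1-d'_2+1)\tau\sum_i z_i = 0$ because $2\tau = 0$; so the linear-in-$z$ parameter (i.e.\ $\eta'$) is genuinely untouched and there is no correction to reabsorb. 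In the odd case the quadratic part is $-(2d'_1-2d'_2+3)\sum_i z_i^2/2$ with \emph{odd} coefficient, so the cross term is $-(2d'_1-2d'_2+3)\tau\sum_i z_i = \tau\sum_i z_i$, which does not vanish and which changes the linear coefficient from $-((n-1)t+x_0+\cdots)$ to $-((n-1)t+(x_0+\tau)+\cdots)$. This shift is not an artifact to be absorbed into ``the choice of representative''; it is exactly the $x_0\mapsto x_0+\tau$ in the Proposition. Your hedge that ``these linear corrections are themselves torsion, ... which we are free to reabsorb'' therefore papers over the one computation that distinguishes the two cases. The divisor chase in your third step for $x_1,\dots,x_m$ is correct; it is the twisting-datum analysis for the odd $\omega$-bimodules that needs to be redone along the lines above.
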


\begin{proof}
 Conjugating by the involution~$(z_1,\dots,z_n)\mapsto
 (z_1+\tau,\dots,z_n+\tau)$ induces such an isomorphism on the generators
 for~$m=0$ and acts as described on the additional vanishing conditions
 for~$m>0$.
\end{proof}

\begin{rems}
 In fact, we~could have defined these categories over the moduli stack of
 hyperelliptic curves of genus 1, at the cost of making the action of~$C_n$ slightly more complicated (with $s_n$ acting by $z_n\mapsto
 s(z_n)$, where $s$ is the hyperelliptic involution), in which case these
 isomorphisms follow by functoriality. This would have made a number of
 later formulas more complicated, as~well as making it more difficult to
 discuss line bundles and $\Gamq$ symbols.
\end{rems}

\begin{rems}
 The above isomorphism involves translation by $\tau$ in every degree. If~one instead only translates in the odd degrees, none of the parameters
 visible in the notation change, but~$q/2$ is replaced by $q/2+\tau$. In~particular, the algebra is indeed independent of the choice of~$q/2$.
\end{rems}

We have the following useful ``elementary transformation'' symmetry. Here
and below, we~simplify things by observing that our sheaf categories
satisfy a natural translation symmetry in which translation in the group of
objects corresponds to translations of the parameters by multiples of~$q/2$, and thus it suffices to consider $\Hom$ bimodules starting at the
$0$ object.

\begin{prop}
 There are $($locally on the base$)$ natural isomorphisms
 \begin{gather*}
 {\cal H}^{\prime(n)}_{x_0,x_1,x_2,\dots,x_m;q,t}(0,ds+d'f-r_1e_1-\cdots-r_me_m)\\
 \qquad{} \cong \prod_{1\le i\le n} \Gamq((r_1+(1-d)/2)q-x_1\pm z_i)
 \\ \qquad\quad
 {}\times{\cal H}^{(n)}_{x_0-x_1,-x_1,x_2,\dots,x_m;q,t}(0,ds+(d'-r_1)f-(d-r_1)e_1-r_2e_2-\cdots-r_me_m)
 \\ \qquad\quad
 {}\times \prod_{1\le i\le n} \Gamq(q/2-x_1\pm z_i)^{-1}
 \end{gather*}
 and
 \begin{gather*}
 {\cal H}^{(n)}_{\eta',x_1,x_2,\dots,x_m;q,t}(0,ds+d'f-r_1e_1-\cdots-r_me_m)\\
 \qquad{}
 \cong \prod_{1\le i\le n} \Gamq((r_1+(1-d)/2)q-x_1\pm z_i)
 \\ \qquad\quad
 {}\times {\cal H}^{\prime(n)}_{\eta'-x_1,-x_1,x_2,\dots,x_m;q,t}(0,ds+(d+d'-r_1)f-(d-r_1)e_1-r_2e_2-\cdots-r_me_m)
 \\ \qquad\quad
 {}\times
 \prod_{1\le i\le n} \Gamq(q/2-x_1\pm z_i)^{-1}.
 \end{gather*}
\end{prop}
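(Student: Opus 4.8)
The plan is to reduce the statement, as with essentially every other structural claim about these Hecke algebras, to a computation in rank one, and then to propagate the rank-one identity through the generators. Both isomorphisms assert that conjugation by the product $\prod_{1\le i\le n}\Gamq((r_1+(1-d)/2)q-x_1\pm z_i)/\Gamq(q/2-x_1\pm z_i)$ of a gerbe-section carries one $\Hom$ bimodule onto another. The first thing to check is that this conjugating factor is well-defined as a meromorphic section of an equivariant gerbe for the relevant $\tilde C_n$ action (it is manifestly $C_n$-invariant, since it is symmetric in the $z_i$ and under $z_i\mapsto -z_i$), and that conjugation by it implements exactly the claimed shift of the polarization data: the $\Gamq$ bookkeeping rules set up earlier (polarization $\frac{z(z-q)(2z-q)}{12q}$, weight $-z/q$) let one read off that multiplying by this factor changes the polarization of $\mathcal L$ in the degree-$1$-in-$\omega$ pieces by exactly the amount distinguishing ${\cal H}^{(n)}_{\eta';q,t}$ from ${\cal H}^{\prime(n)}_{x_0;q,t}$ (the $3/2$ versus $1$ coefficient of $\sum z_i^2$, and the corresponding linear terms), and leaves the finite Hecke algebra factors untouched up to the indicated reindexing of $(d',r_1)$ and the relabeling $\eta'\mapsto \eta'-x_1$, $x_1\mapsto -x_1$.

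Next I would verify the match of parameters on the nose. The point of the elementary transformation is that multiplying by $\prod_i\Gamq(\cdot-x_1\pm z_i)/\Gamq(\cdot\pm z_i)$ trades a collection of vanishing conditions ``coefficient vanishes at $z_i=x_1-kq/2$ for a range of $k$'' against the reflected conditions at $z_i=-x_1+kq/2$, because the zeros and poles of the $\Gamq$ factors are precisely at such arithmetic progressions of divisors. Concretely, pulling the $\Gamq$ factors past an operator converts the defining vanishing conditions of ${\cal H}^{(n)}_{\eta',x_1,\dots}$ into those of ${\cal H}^{\prime(n)}_{\eta'-x_1,-x_1,\dots}$ after the shift $d'\mapsto d+d'-r_1$, $r_1\mapsto d-r_1$; the remaining parameters $x_2,\dots,x_m$ are untouched since their $\Gamq$ factors commute with everything in sight. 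Here one uses Propositions \ref{prop:pullback_simplification_i} and \ref{prop:pullback_simplification_ii} to simplify the products of $\vartheta$-ratios that arise in the conjugated residue conditions, exactly as in the explicit residue-condition computation preceding the definition.

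The mechanism that turns a rank-one statement into the full statement is the generation results: each $\Hom$ bimodule ${\cal H}^{(n)}_{\cdots}(0,ds+\cdots)$ is generated (via the Bruhat filtration, Corollary \ref{cor:infin_hecke_twisted_interval} and the remark on the category being generated by the finite Hecke algebras and $\omega$) by products of rank-one pieces and copies of $\omega$, and conjugation is an algebra-category functor, so it suffices to check the isomorphism on generators. For the $C_n$-type rank-one pieces the conjugating factor is (locally) a unit times a $\vartheta$-ratio with no relevant zeros or poles along the reflection hypersurfaces, so those pieces go to themselves; the real content is what happens to the $\omega$-morphisms and to the $s_0$-residue conditions, which is precisely where the polarization shift and the $x_1\mapsto -x_1$ reflection live. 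Finally, since all the sheaf categories in question are strongly flat and the construction respects base change, it is enough to exhibit the isomorphism for generic $q,t$ and generic parameters, where $q$ is non-torsion and one can work with honest meromorphic difference-reflection operators; the general case follows by flatness. The main obstacle I expect is the bookkeeping in matching the exact affine-linear shifts of the parameters $(\eta',x_1,d',r_1)$ with the polarization data of $\mathcal L$ in the two definitions — getting all the signs, the factors of $q/2$, and the $d$-dependence right — rather than any conceptual difficulty; the underlying mechanism (conjugate by an elliptic Gamma product, read off the shifted gerbe) is exactly the one already used to write down the residue conditions above.
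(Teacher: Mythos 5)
Your proposal is correct and takes essentially the same route as the paper's proof: conjugate by the product of $\Gamq$ symbols, observe this is a functor on the ambient meromorphic (or $m=0$) category respecting the twisting data, and then verify that the defining vanishing conditions associated to $x_1$ transform into those of the stated image (with the $x_i$, $i\ge 2$ conditions untouched). The paper condenses this into a one-paragraph verification without the explicit rank-one-generator detour, but your more detailed accounting — $C_n$-rank-one pieces are fixed, the real content sits in the $\omega$-morphisms and $s_0$-residue conditions, and Propositions~\ref{prop:pullback_simplification_i}--\ref{prop:pullback_simplification_ii} handle the $\vartheta$-ratio simplifications — is exactly what the paper's ``straightforward to check'' is hiding; no gap.
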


\begin{proof}
 It is easy to see that the corresponding categories of meromorphic
 operators are isomorphic (locally on the base), and the
 pseudo-conjugation by $\Gamq$ symbols respects the twisting data, so the
 image of the right-hand side in the meromorphic category corresponding to
 the left-hand side satisfies the same conditions on the reflection
 hypersurfaces. The~conditions along the divisors corresponding to~$x_i$
 for~$2\le i\le m$ are clearly the same, and it is straightforward to
 check that the same holds for~$i=1$.
\end{proof}

\begin{rem}
 Since the definition is also clearly invariant under permutations of~$x_1,\dots,x_m$, we~may apply this symmetry in any $x_i$, and then by
 composition in any subset of the $x_i$. If~the subset has odd size, then
 the isomorphism switches between ${\cal H}^{(n)}$ and ${\cal
 H}^{\prime(n)}$, while even subsets induce isomorphisms between
 extended DAHAs of the same parity. In~particular, for~each of the two
 families, there is an action of~$W(D_m)$ on the parameter space that
 extends to an action on the family of sheaf categories.
\end{rem}

If we can show that these sheaf categories have well-behaved Bruhat
filtrations (i.e., in which the subquotients are obtained from the $m=0$
case by imposing the vanishing conditions), then the same will immediately
hold for their spherical subquotients. Unfortunately, we~cannot simply
copy the arguments we used in the usual elliptic Hecke algebra case;
although the upper bound on the Bruhat subquotients works the same way, the
category structure means there is no longer a canonical way to associate a
multiplication map to a reduced word.

There are some special cases which are easy, however. As~before, we~may
restrict our attention to~$\Hom$ bimodules starting from the $0$ object.

The following is a trivial consequence of the definition.

\begin{prop}
 If $r_m\le 0$, then
 \begin{gather*}
 {\cal H}^{(n)}_{\eta',x_1,\dots,x_m;q,t}(0,ds+d'f-r_1e_1-\cdots-r_me_m)
 \\ \phantom{{\cal H}^{(n)}_{\eta',x_1,\dots,x_m;q,t}}
 {}= {\cal H}^{(n)}_{\eta',x_1,\dots,x_{m-1};q,t}(0,ds+d'f-r_1e_1-\cdots-r_{m-1}e_{m-1}),
 \\
 {\cal H}^{\prime(n)}_{x_0,x_1,\dots,x_m;q,t}(0,ds+d'f-r_1e_1-\cdots-r_me_m)
 \\ \phantom{ {\cal H}^{\prime(n)}_{x_0,x_1,\dots,x_m;q,t}}
 {}= {\cal H}^{\prime(n)}_{x_0,x_1,\dots,x_{m-1};q,t}(0,ds+d'f-r_1e_1-\cdots-r_{m-1}e_m).
 \end{gather*}
\end{prop}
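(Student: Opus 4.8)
The plan is to unwind the definition of the sheaf categories and observe that the extra parameter $x_m$ with multiplicity $r_m \le 0$ simply imposes no new vanishing conditions. Recall that in the definition of ${\cal S}^{(n)}_{\eta',x_1,\dots,x_m;q,t}$ (and identically for the DAHA versions ${\cal H}^{(n)}$ and ${\cal H}^{\prime(n)}$), the condition attached to the $m$-th parameter is that the left coefficient of $\prod_i T_i^{k_i}$ must vanish on the divisors $z_i = x_m - (2l - d_2 + 1)q/2$ for those integers $l$ satisfying $k_i + (d_2-d_1)/2 + r_{1m} \le l < r_{2m}$ (together with the mirror family coming from $z_i \mapsto -z_i$). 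In the setting of the Proposition, we are looking at a $\Hom$ bimodule starting from the $0$ object, so $r_{1m} = 0$ and $r_{2m} = r_m$; hence the range of $l$ becomes $k_i + (d_2-d_1)/2 \le l < r_m$. The point is that when $r_m \le 0$ this range is empty for every choice of $\vec{k}$ that can actually occur.

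First I would make precise why the range is empty. For a $\Hom$ bimodule in degree $ds + d'f - r_1 e_1 - \cdots - r_m e_m$ starting from the $0$ object, the operators ${\cal D}$ are supported on monomials $\prod_i T_i^{k_i}$ with each $k_i$ a half-integer congruent to $d/2 \pmod{\Z}$; more importantly, nonzero coefficients only occur for $\vec k$ lying in the Bruhat order ideal determined by the degree, and in particular (by the bound on poles coming from the master Hecke algebra, Corollary \ref{cor:global_order_two_residue_conditions} and the Bruhat filtration) the coefficient of $\prod_i T_i^{k_i}$ is identically zero unless $k_i + (d_2 - d_1)/2 \ge 0$ for the relevant indices. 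Whether or not one wants to invoke that, the cleanest observation is purely formal: the condition ``vanishes on $z_i = x_m - (2l-d_2+1)q/2$ for $k_i + (d_2-d_1)/2 \le l < r_m$'' is an empty condition as soon as $r_m \le 0$, because it requires $l$ to be simultaneously $< r_m \le 0$ and $\ge k_i + (d_2-d_1)/2$; for the coefficients that are not forced to vanish anyway (i.e. those with $k_i + (d_2-d_1)/2 \ge 0$), there is no such $l$. Combining with the analogous statement for the mirror divisors $z_i = -x_m + (2l-d_2+1)q/2$, the parameter $x_m$ contributes nothing, so the defining subsheaf of ${\cal S}^{(n)}_{\eta';q,t}(ds+d'f)$ (resp.\ ${\cal S}^{\prime(n)}_{x_0;q,t}$, resp.\ the DAHA versions) cut out by parameters $x_1,\dots,x_m$ coincides with the one cut out by $x_1,\dots,x_{m-1}$.

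Then I would simply record that this identification of subsheaves of a fixed ambient sheaf category is compatible with composition: both sides are, by construction, the same subsheaf of ${\cal H}^{(n)}_{\eta',x_1,\dots,x_{m-1};q,t}$ (or the primed analogue), so there is nothing further to check for the sheaf-category structure. One minor bookkeeping point is to handle the objects of the enlarged category correctly: the object $ds+d'f-r_1e_1-\cdots-r_me_m$ is, by the conventions above, identified with $ds+d'f-r_1e_1-\cdots-r_{m-1}e_{m-1}$ once the $e_m$-direction imposes no condition, and the $\Hom$ bimodules match up under this identification; this is immediate from the definition of the enlarged category (twist of the original $\Hom$ bimodule by a pair of line bundles, which here is trivial in the $e_m$ factor). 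The main obstacle, to the extent there is one, is purely notational: one must be careful that the index $l$ in the defining vanishing conditions runs over a half-open interval whose lower endpoint involves $\vec k$ and $(d_2-d_1)/2$, so that ``$r_m \le 0$'' really does kill the interval for every occurring $\vec k$ rather than merely generically; but once the conventions are pinned down this is an elementary inequality and the Proposition follows.
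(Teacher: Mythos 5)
The paper itself gives no proof — it introduces the Proposition with the sentence ``The following is a trivial consequence of the definition'' — and your argument is precisely the verification being waved at: you unwind the definition of the vanishing conditions, note that for the $\Hom$ bimodule $(0, ds+d'f-\cdots)$ the relevant range is $k_i+d/2\le l<r_m$ (with $r_{1m}=0$, $d_1=0$), and observe that this range is empty once $r_m\le 0$ because every monomial $\prod_i T_i^{k_i}$ actually appearing in the Bruhat interval of degree $d$ has $|k_i|\le d/2$, hence $k_i+d/2\ge 0$. The only small infelicity is the phrase ``coefficients that are not forced to vanish anyway'' — the point is that monomials with $k_i+d/2<0$ are not in the support of the ambient Bruhat interval at all, rather than being sections that happen to vanish — but the logic is sound and matches what the paper intends.
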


Applying the elementary transformation symmetry in~$x_m$ means that the
case $r_m\ge d$ is also straightforward to deal with. There is one more
case which is nice.

\begin{prop}
 There is a system of parameters $\vec{T}$ and a twisting datum $\gamma$
 such that each $\Hom$ bimodule
 \begin{gather*}
 {\cal H}^{(n)}_{\eta',x_1,\dots,x_m;q,t}(0,d(2s+2f-e_1-\cdots-e_m))
 \end{gather*}
 is equal to the corresponding Bruhat interval in~${\cal
 H}_{\tilde{C}_n;\vec{T};\gamma}\big({\cal E}^{n+m+3}\big)$,
 and similarly for
 \begin{gather*}
 {\cal H}^{\prime(n)}_{x_0,x_1,\dots,x_m;q,t}(0,d(2s+3f-e_1-\cdots-e_m)).
 \end{gather*}
\end{prop}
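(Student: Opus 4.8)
The plan is to reduce the claim to the corresponding statement about ordinary (nonextended) elliptic DAHAs by first identifying the ``diagonal'' slice of objects along $2s+2f-e_1-\cdots-e_m$ (resp.\ $2s+3f-e_1-\cdots-e_m$) with a genuine sheaf algebra rather than a sheaf category. The key observation is that moving by $2$ in the $s$-direction corresponds to composing twice with the generator $\omega$, so that the restriction of ${\cal H}^{(n)}_{\eta',x_1,\dots,x_m;q,t}$ to the subset $2\Z(2s+2f-e_1-\cdots-e_m)$ of objects is, by the discussion preceding the definition of the even elliptic DAHA, the compactified/Rees form of an honest elliptic DAHA ${\cal H}_{\tilde C_n;\vec T;\gamma}$ for a specific system of parameters $\vec T$ and twisting datum $\gamma$: the parameters $t$ on the finite-diagram roots and $x_1,\dots,x_m$ on the coroot curve of $s_0$ (assembled, following the general $C^\vee C_n$ discussion in section~4, as effective divisors on the coroot curves), and $\gamma$ read off from the polarizations and weights appearing in the definition (which is where the explicit $\Gamq$-products constructed a few paragraphs earlier make the twisting datum concrete and show it is fppf-locally a coboundary of a cocycle in Cartier divisors). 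First I would write down $\vec T$ and $\gamma$ explicitly from the stated polarization formula, checking that $\gamma$ is trivial on $C_n$ (which holds because, as computed in the section, $p_1$ and $p_3$ may be taken $C_n$-invariant) and that the coroot-curve divisors encoding $x_1,\dots,x_m$ match the vanishing conditions in the definition of ${\cal H}^{(n)}_{\eta',x_1,\dots,x_m;q,t}$.

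The second step is the matching of $\Hom$ bimodules with Bruhat intervals. Each object $d(2s+2f-e_1-\cdots-e_m)$ sits at ``height $2d$'' in the $\omega$-filtration, and by Corollary~\ref{cor:infin_hecke_twisted_interval} (together with the coset-to-dominant-weight translation recalled just before the definition) the degree $\le 2d$ piece of the relevant $\Hom$ bimodule is exactly the image of the product of rank~$1$ subalgebras realizing a Bruhat interval $[\le w_d]$ in ${\cal H}_{\tilde C_n;\vec T;\gamma}(X)$, where $w_d$ is the $(W,W)$-coset representative whose dominant weight has first coordinate $d$. I would then observe that since the object lies on the diagonal, the $\omega$-filtration degree of the whole $\Hom$ bimodule is precisely $2d$ (there is no ``extra room'': the parity and the $f$-grading are locked together), so the $\Hom$ bimodule coincides with that degree-$\le 2d$ piece, i.e.\ with the Bruhat interval ${\cal H}_{\tilde C_n;\vec T;\gamma}({\cal E}^{n+m+3})[\le w_d]$. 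The imposition of the extra $x_i$-vanishing conditions is then simultaneous on both sides: on the ${\cal H}^{(n)}$ side it is the definition, and on the DAHA side it is the passage from $\vec T$ without the $x_i$-divisors to $\vec T$ with them, which by Proposition~\ref{prop:decoupling_of_parameters} and the Bruhat-filtration description (Corollary~\ref{cor:affine_vanishing_conditions}) acts intervalwise by exactly the stated vanishing conditions. The odd case is identical, replacing $2f$ by $3f$ and keeping track of the extra factor $\mathfrak q$ that appears in the odd residue conditions; one checks it is absorbed into the odd twisting datum, as already noted in the text.

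The main obstacle I expect is the bookkeeping needed to verify that the twisting datum $\gamma$ extracted from the polarization formula is \emph{globally} the one that makes the identification hold, rather than merely locally on the base: the definition of ${\cal H}^{(n)}_{\eta';q,t}$ is phrased with specific line bundles on $\P^n\times{\cal E}^{m+3}$, and one must confirm that the rank~$1$ generators ${\cal L}_i\otimes{\cal H}_{\langle s_i\rangle,T_i}\otimes{\cal L}_i^{-1}$ for the DAHA assemble, under the chosen $\omega$-generators, into exactly those bundles (and not off by some $C_n$-invariant, hence $\P^n$-pulled-back, twist). Since the $z$-independent parts of polarizations are irrelevant and the $z$-dependent parts are rigid, this should come down to a finite check on the coefficient of $\sum_i z_i^2$ and $\sum_i z_i$ in the coboundary of $\gamma$ at $s_0$ versus the polarization of ${\cal L}$; but because the statement only claims equality as sheaf categories (and elsewhere such equalities are routinely asserted only up to line bundles on the base), I would be content to prove it locally on ${\cal M}_{1,1}\times{\cal E}^{m+2}$ and remark that the global version follows by tracking the twist, exactly as in the parallel arguments for the even and odd elliptic DAHAs above. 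The flatness and coherence of the resulting Bruhat intervals is then immediate from the already-proven flatness of Bruhat intervals in ${\cal H}_{\tilde C_n;\vec T;\gamma}$.
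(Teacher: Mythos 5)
Your proposal is correct in outline and lands on the same essential content as the paper's argument: identify the $x_j$-vanishing conditions with the imposition of a system of parameters living on the coroot curve of $s_0$, and identify the diagonal $\Hom$ bimodules with Bruhat intervals via the Rees/$\omega$-filtration structure and Corollary~\ref{cor:infin_hecke_twisted_interval}. The difference is in how the crucial matching is achieved. The paper's proof compresses everything into a single observation: for generic parameters, $\mathcal{D}$ satisfies the $x_j$-vanishing conditions precisely when $\prod_{1\le i\le n,\,1\le j\le m}\Gamq(q/2-x_j\pm z_i)\,\mathcal{D}\,\prod_{1\le i\le n,\,1\le j\le m}\Gamq(q/2-x_j\pm z_i)^{-1}$ has holomorphic coefficients, and the cocycle in Cartier divisors attached to this $\Gamq$-product is exactly of the form produced by a system of parameters assigned to the $s_0$-orbit. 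That one conjugation identity simultaneously produces $\vec T$ and $\gamma$ \emph{and} certifies that the resulting vanishing pattern (via Proposition~\ref{prop:infinite_hecke_as_intersection}) matches the definition of ${\cal H}^{(n)}_{\eta',x_1,\dots,x_m;q,t}$. Your version reaches the same endpoint by invoking Proposition~\ref{prop:decoupling_of_parameters} and Corollary~\ref{cor:affine_vanishing_conditions} in the abstract, but this leaves the central divisor-matching (``the coroot-curve divisors encoding $x_1,\dots,x_m$ match the vanishing conditions in the definition'') as a computation you acknowledge but do not carry out; that computation is precisely what the paper's $\Gamq$-conjugation does in one line. Also, the substantial discussion in your first two steps (the Rees identification, the explicit polarization bookkeeping) is material the paper treats as given context from the paragraphs preceding the proposition; only the $x_j$-vanishing correspondence is actually proved there. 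The upshot: your decomposition is valid and, if fleshed out, would give a proof, but the paper's $\Gamq$-conjugation move is what turns the divisor-matching step from an unfinished verification into a short observation.
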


\begin{proof}
 We can rephrase the vanishing conditions for generic parameters as
 stating that
 \begin{gather*}
 \prod_{\substack{1\le i\le n\\1\le j\le m}} \Gamq(q/2-x_j\pm z_i)
 {\cal D}
 \prod_{\substack{1\le i\le n\\1\le j\le m}} \Gamq(q/2-x_j\pm z_i)^{-1}
 \end{gather*}
 has holomorphic coefficients. The~cocycle in Cartier divisors associated
 to this product of~$\Gamq$ symbols is precisely the right form to come
 from a system of parameters (associated to the orbit of~$s_0$).
\end{proof}

It turns out that it suffices to understand these cases.

\begin{thm}\label{thm:CCn_DAHA_strongly_flat}
 For any vector $v=ds+d'f-r_1e_1-\cdots-r_me_m$, the bounds on the Bruhat
 subquotients of~${\cal H}^{(n)}_{\eta',x_1,\dots,x_m;q,t}(0,v)$ and
 ${\cal H}^{\prime(n)}_{x_0,x_1,\dots,x_m;q,t}(0,v)$ coming from the
 vanishing conditions are saturated. In~particular, both sheaf categories
 are locally free, and the map from any $\Hom$ bimodule to the sheaf
 bimodule of meromorphic operators is injective on fibers.
\end{thm}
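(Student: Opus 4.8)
The plan is to prove the saturation assertion by induction on $\sum_j r_j$, peeling off one ``nice'' step at a time and invoking the three preceding propositions, and then to read off the flatness and injectivity statements formally. First note that an upper bound is automatic: for each $w$ in the support the defining vanishing conditions force the $w$-th Bruhat subquotient of ${\cal H}^{(n)}_{\eta',\vec x;q,t}(0,v)$ (resp.\ of the primed version) into ${\cal L}_w(-\Delta_w)$, where ${\cal L}_w$ is the subquotient line bundle of the $m=0$ algebra and $\Delta_w$ the effective divisor cut out by the vanishing conditions on the leading monomial. Since the divisors $z_i=\pm x_j+(\text{integer})q/2$ are relatively Cartier over parameter space, each ${\cal L}_w(-\Delta_w)$ is $S$-flat, so the $\Hom$ bimodule restricted to any finite Bruhat order ideal is a subsheaf of a fixed $S$-flat coherent sheaf, and the theorem amounts to equality. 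For $m=0$ this is already in hand: ${\cal H}^{(n)}_{\eta';q,t}$ and ${\cal H}^{\prime(n)}_{x_0;q,t}$ are the compactified extended elliptic DAHAs built from the finite type-$C_n$ Hecke algebras (Lemma~\ref{lem:Bruhat_fin_parm}), the diagram automorphism $\omega$, and $\sO_{\P^n}$-twists, with Bruhat filtrations controlled by the infinite-Coxeter-group machinery applied to $\tilde C_n\rtimes\langle\omega\rangle$. For general $v=\tilde ds+d'f-\sum_j r_je_j$: if $\tilde d\le 0$ the bimodule is zero or a single line bundle on $\P^n$ with a divisor removed, and there is nothing to prove; if some $r_j\le 0$ drop $x_j$ (first preceding proposition) and induct on $m$; if some $r_j>\tilde d/2$ apply the elementary transformation symmetry in $x_j$ (which interchanges ${\cal H}^{(n)}$ with ${\cal H}^{\prime(n)}$ and sends $r_j\mapsto\tilde d-r_j$). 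The group generated by these transformations and by permutations of the $x_j$ is a hyperoctahedral group acting on $(r_1,\dots,r_m)$, so we may assume $0<r_j\le\tilde d/2$, whence $\tilde d\ge 2$.

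Now induct on $\sum_j r_j$, the base case $\sum_j r_j=0$ (i.e.\ $m=0$) being the known case above. Let $k$ be the number of indices attaining $\max_j r_j$ and, after permuting the $x_j$, peel off the step $u=2s+2f-e_1-\cdots-e_k$, so $v=u+v'$ with $v'=(\tilde d-2)s+(d'-2)f-\sum_{j\le k}(r_j-1)e_j-\sum_{j>k}r_je_j$; here $\tilde d-2\ge 0$ and $\sum_j r'_j=\sum_j r_j-k<\sum_j r_j$. Then ${\cal H}^{(n)}_{\eta',\vec x;q,t}(0,u)$ is a Bruhat interval in a genuine elliptic DAHA ${\cal H}_{\tilde C_n;\vec T;\gamma}({\cal E}^{n+m+3})$ (the ``nice case'' proposition, together with dropping parameters when $k<m$), hence saturated; and ${\cal H}^{(n)}_{\eta',\vec x;q,t}(u,v)$ is, by the translation symmetry of the sheaf category (object shifts correspond to shifts of the parameters by multiples of $q/2$), isomorphic to a bimodule of type ${\cal H}^{(n)}$ or ${\cal H}^{\prime(n)}$ starting at $0$ with target $v'$ and with $\sum r'$ strictly smaller, hence saturated by the inductive hypothesis. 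I then consider the composition
\[
{\cal H}^{(n)}_{\eta',\vec x;q,t}(u,v)\otimes{\cal H}^{(n)}_{\eta',\vec x;q,t}(0,u)\ \longrightarrow\ {\cal H}^{(n)}_{\eta',\vec x;q,t}(0,v).
\]
Since the concatenation of a reduced word for $u$ with one for $v'$ is reduced (lengths add on the dominant cone of $\tilde C_n\rtimes\langle\omega\rangle$), the generalization of Corollary~\ref{cor:Bruhat_intervals_product} shows the image of this map has Bruhat subquotients the products ${\cal L}_{w_1}(-\Delta_{w_1})\otimes{}^{w_1}{\cal L}_{w_2}(-\Delta_{w_2})$; using that the vanishing conditions compose (so $\Delta_w=\Delta_{w_1}+{}^{w_1}\Delta_{w_2}$), this product is exactly ${\cal L}_w(-\Delta_w)$. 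As the image embeds in ${\cal H}^{(n)}_{\eta',\vec x;q,t}(0,v)$, which in turn embeds in $\bigoplus_w{\cal L}_w(-\Delta_w)$, every Bruhat subquotient of ${\cal H}^{(n)}_{\eta',\vec x;q,t}(0,v)$ is squeezed between ${\cal L}_w(-\Delta_w)$ and itself, giving saturation; the primed case is identical.

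With saturation established, the remaining assertions are formal. Each $\Hom$ bimodule, being an iterated extension of the $S$-flat sheaves ${\cal L}_w(-\Delta_w)$, is $S$-flat; the line bundles ${\cal L}_w$ descend from quotients ${\cal E}^n/W_I$ with $W_I\subset C_n$ parabolic, which are flat over $\P^n={\cal E}^n/C_n$, so the direct image of each ${\cal L}_w(-\Delta_w)$, hence of the bimodule, in either copy of $\P^n$ is locally free. The Bruhat filtration commutes with base change (its subquotients being $S$-flat), so on each fiber the bimodule is filtered with subquotients $({\cal L}_w(-\Delta_w))_s\hookrightarrow({\cal L}_w)_s$, injective because $(\Delta_w)_s$ is an effective Cartier divisor; looking at the highest nonzero Bruhat component of a putative element of the kernel of the map to the sheaf bimodule of meromorphic operators then shows that map is injective on every fiber. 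The corresponding statements for the spherical categories ${\cal S}^{(n)}_{\eta',\vec x;q,t}$, ${\cal S}^{\prime(n)}_{x_0,\vec x;q,t}$ follow by passing to subquotients.

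The main obstacle is the inductive step, which rests on two non-formal inputs: first, the ``nice case'' identification of ${\cal H}^{(n)}_{\eta',\vec x;q,t}(0,d(2s+2f-e_1-\cdots-e_k))$ with a Bruhat interval in an honest elliptic DAHA ${\cal H}_{\tilde C_n;\vec T;\gamma}$, which requires matching the twisting datum $\gamma$, the $\sO_{\P^n}$-compactification twists, and the ${\cal H}^{(n)}\leftrightarrow{\cal H}^{\prime(n)}$ parity toggle across the glued family of sheaf categories living on products of copies of ${\cal E}$ with differing polarizations and weights; and second, the extension of Corollary~\ref{cor:Bruhat_intervals_product} to this extended category, namely that a product over a reduced word (now allowing larger Bruhat-interval factors, not just rank-one algebras) surjects onto the corresponding Bruhat interval with multiplicative leading coefficients. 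The compatibility of the vanishing conditions under composition ($\Delta_w=\Delta_{w_1}+{}^{w_1}\Delta_{w_2}$) and the additivity of length on the dominant cone are routine once the bookkeeping is set up; granting all of this, the induction and the flatness and injectivity conclusions go through as sketched.
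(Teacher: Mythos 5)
Your proof has the same skeleton as the paper's — reduce via elementary transformations and dropping parameters, peel off a $C_k$-type step, invoke the ``nice case'' proposition for the peeled-off factor, and induct — but differs in several organizational choices, the most significant of which is how the saturation of the composition is actually verified.

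Concretely: you induct on $\sum r_j$ and reduce to the fundamental chamber $0<r_j\le d/2$, peeling off $2s+2f-e_1-\cdots-e_k$ only for the maximal $r_j$. The paper instead inducts on $d$, reduces only to $0<r_j<d$, and peels off the full $C_m=2s+2f-e_1-\cdots-e_m$; it then splits the surjectivity claim into two targeted pieces. First, the ``nice case'' factor $\mathcal{H}(v-C_m,v)$ contains the global section $1$, giving an inclusion $\mathcal{H}(0,v-C_m)\hookrightarrow\mathcal{H}(0,v)$ identifying the former with the interval $[\le(d/2{-}1,\ldots,d/2{-}1)]$, and the paper \emph{explicitly} checks that the vanishing divisors agree on that interval. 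Second, for minimal coset representatives $w$ with $\lambda(w)_1=d/2$ it observes $s_0w<w$ with $\lambda(s_0 w)$ having one fewer $d/2$-coordinate, and reduces to the previous case by multiplying by the rank-one factor at $s_0$; the divisor additivity is then only needed for this rank-one step, where it is immediate from the definitions. Your version instead posits the additivity $\Delta_w=\Delta_{w_1}+{}^{w_1}\Delta_{w_2}$ and the surjectivity of the composition (your ``generalization of Corollary~\ref{cor:Bruhat_intervals_product}'') for \emph{arbitrary} length-additive factorizations $w=w_1w_2$ across two different $\Hom$ bimodules in the category. This is not circular and I believe it can be established, but it is not as routine as you suggest: it requires checking that the defining integer ranges in the vanishing conditions translate consistently across the $\omega$-boundary and compose for every length-additive pair, and that the surjectivity from Corollary~\ref{cor:Bruhat_intervals_product} (really its twisted, categorical version Corollary~\ref{cor:infin_hecke_twisted_interval}) persists when the rank-one factors come from two differently-twisted $\Hom$ bimodules. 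The paper's decomposition into ``compose with $1$'' plus ``compose with $s_0$'' is precisely what keeps those verifications small and explicit; without re-deriving that decomposition you would be carrying out essentially the same checks but in a less controlled form.

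The remaining deductions (local freeness from $S$-flatness of each subquotient and flatness of $\mathcal{E}^n/W_I\to\P^n$, injectivity on fibers by looking at the highest nonzero Bruhat component) match the paper, and your reduction bookkeeping (the hyperoctahedral action on $(r_1,\dots,r_m)$ sending $r_j\mapsto d-r_j$ and permuting, self-sustainment of $r_j\le d/2$ after the peel-off) is correct and slightly cleaner than the paper's, since it avoids having to re-reduce when $r_j$ happens to equal $d-1$.
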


\begin{proof}
 We first use the elementary transformation symmetry to replace all the
 cases with~$r_i\ge d$ with cases with~$r_i\le 0$ (possibly changing the
 parity), and thus with~$r_i=0$. In~particular, we~observe that the cases
 $d\in \{0,1\}$ of the theorem reduce to the subcase $r_1=\cdots=r_m=0$,
 and thus to the original elliptic DAHA, where we certainly have
 saturation. For~$d>1$, if some $r_i=0$, we~can simply omit that
 parameter and thus reduce to a case with smaller $m$. We thus find that
 it suffices to prove saturation when $0<r_1,\dots,r_m<d$.

 We consider the even case ${\cal H}^{(n)}$, with the odd case ${\cal
 H}^{\prime(n)}$ being entirely analogous. Let
 $C_m:=2s+2f-e_1-\cdots-e_m$, and suppose by induction that we have
 saturation for~$v-C_m$. It will then suffice to show that the image of the
 multiplication map
 \begin{gather*}
 {\cal H}^{(n)}_{\eta',x_1,\dots,x_m;q,t}(v-C_m,v)
 \otimes
 {\cal H}^{(n)}_{\eta',x_1,\dots,x_m;q,t}(0,v-C_m)
 \to
 {\cal H}^{(n)}_{\eta',x_1,\dots,x_m;q,t}(0,v)
 \end{gather*}
 saturates the bound. Since the first factor is a Bruhat interval in an
 honest elliptic DAHA, it in particular has a global section~$1$, giving
 an inclusion
 \begin{gather*}
 {\cal H}^{(n)}_{\eta',x_1,\dots,x_m;q,t}(0,v-C_m)
 \subset
 {\cal H}^{(n)}_{\eta',x_1,\dots,x_m;q,t}(0,v),
 \end{gather*}
 identifying the former with the Bruhat interval of the latter
 corresponding to the dominant weight $(d/2-1,d/2-1,\dots,d/2-1)$. The~bounds on the subquotients are clearly the same, and thus we have
 saturation for any $w$ in this interval.

 For the rest of the module, we~note that both sides are bimodules over
 the finite Hecke algebra, and since the $x_i$ parameters have no effect
 on this algebra, we~may reduce to the case of a Bruhat interval $[\le w]$
 with~$w$ a minimal representative of~$C_n\setminus \tilde{C}_n$. Let
 $\lambda(w)$ be the corresponding dominant weight. We have already shown
 that the leading coefficient map is saturated when $\lambda(w)_1\le d/2-1$,
 so without loss of generality may assume $\lambda(w)_1=d/2$. It~then follows from the structure of minimal coset representatives
 that not only is $s_0w<w$, but $\lambda(s_0w)$ is obtained from
 $\lambda(w)$ by reducing some coefficient from $d/2$ to~$d/2-1$.
 This changes the bound on the leading coefficient bundle by precisely the
 leading coefficient divisor of~$s_0$ in the relevant DAHA, and thus
 gives saturation as required.
\end{proof}

Passing to the spherical subquotient gives us the following.

\begin{cor}
 For any vector $v=ds+d'f-r_1e_1-\cdots-r_me_m$, the bounds on the Bruhat
 subquotients of~${\cal S}^{(n)}_{\eta',x_1,\dots,x_m;q,t}(0,v)$ and
 ${\cal S}^{\prime(n)}_{x_0,x_1,\dots,x_m;q,t}(0,v)$ coming from the
 vanishing conditions are saturated. In~particular, both sheaf categories
 are locally free, and the map from any $\Hom$ bimodule to the sheaf
 bimodule of meromorphic difference operators is injective on fibers.
\end{cor}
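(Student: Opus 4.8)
The plan is to deduce the corollary directly from Theorem \ref{thm:CCn_DAHA_strongly_flat} by passing to spherical subquotients, exactly as the passage from ${\cal H}^{(n)}_{\eta';q,t}$ to ${\cal S}^{(n)}_{\eta';q,t}$ was carried out earlier. First I would recall that each ${\cal S}$-category was \emph{defined} by replacing each $\Hom$ bimodule of the corresponding ${\cal H}$-category by the appropriate spherical subquotient, i.e.\ by applying the functor ${-}^{C_n}$ (which, since the root kernel of every finite parabolic subgroup of $\tilde C_n$ is trivial, is exact and commutes with base change by the results of Section 4, in particular because ${\cal H}_{C_n;\vec T}$ is covered by symmetric idempotents on the smooth quotient $\P^n={\cal E}^n/C_n$). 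Thus for each object pair $(0,v)$ with $v=ds+d'f-r_1e_1-\cdots-r_me_m$ there is a Bruhat filtration on ${\cal S}^{(n)}_{\eta',x_1,\dots,x_m;q,t}(0,v)$ whose subquotients are the images under ${-}^{C_n}$ of the corresponding subquotients of the ${\cal H}^{(n)}$-bimodule. By Theorem \ref{thm:CCn_DAHA_strongly_flat} each ${\cal H}^{(n)}$-subquotient is the line bundle on a quotient ${\cal E}^n/W_{I(w)}$ (for $w$ a minimal $(C_n,C_n)$-double coset representative) cut out by the relevant vanishing conditions, and $W_{I(w)}\subset C_n$. Applying ${-}^{C_n}$ (equivalently ${-}^{W_{I(w)}}$ on the relevant intermediate quotient) and using the flatness of ${\cal E}^n/W_{I(w)}\to \P^n$ from the Corollary after the Proposition on smoothness of ${\cal E}^n/W_I$, each ${\cal S}^{(n)}$-subquotient is the direct image of a line bundle on ${\cal E}^n/W_{I(w)}$, pushed to $\P^n$, hence locally free on $\P^n$ with the Euler characteristic predicted by the vanishing conditions.

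Next I would observe that saturation of the bound on each subquotient is equivalent to the statement just made, since ``the bound is saturated'' means precisely that the subquotient equals (not merely is contained in) the line bundle determined by the vanishing conditions; this is inherited verbatim from Theorem \ref{thm:CCn_DAHA_strongly_flat} because ${-}^{C_n}$ is exact, so it carries the short exact sequences of the Bruhat filtration of the ${\cal H}^{(n)}$-bimodule to short exact sequences of the ${\cal S}^{(n)}$-bimodule, and the subquotients correspond under the identification $({\cal L}|_{{\cal E}^n/W_{I(w)}})^{C_n}$. For local freeness of the whole $\Hom$ bimodule one then argues by induction on a maximal chain of Bruhat order ideals: each subquotient is locally free on $\P^n$, and an extension of locally free sheaves by locally free sheaves is locally free, so ${\cal S}^{(n)}_{\dots}(0,v)$ and ${\cal S}^{\prime(n)}_{\dots}(0,v)$ are locally free as sheaves on $\P^n$ (in either projection), and their Hilbert polynomials are the sums of those of the subquotients, hence independent of the curve and the parameters.

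Finally, for injectivity of the map to meromorphic difference operators on fibers I would mimic the argument used for ${\cal H}^{(n)}$: the map from the ${\cal S}^{(n)}$-bimodule to $k(X)[\Lambda]^{C_n}_\gamma=$ (meromorphic difference operators) factors, after applying ${-}^{C_n}$, through the inclusion of the corresponding ${\cal H}^{(n)}$-bimodule into $k(X)[\tilde C_n]_\gamma$, which is injective on fibers by Theorem \ref{thm:CCn_DAHA_strongly_flat}; exactness and base-change-compatibility of ${-}^{C_n}$ (using that ${\cal H}_{C_n;\vec T}$ is covered by symmetric idempotents, so $\sO_X$ is locally a direct summand of ${\cal H}_{C_n;\vec T}$) then preserve fiberwise injectivity. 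The main obstacle, such as it is, is bookkeeping: one must be careful that the line bundle produced on ${\cal E}^n/W_{I(w)}$ after applying ${-}^{C_n}$ really is the one predicted by the $x_j$-vanishing conditions, which requires checking that the extra vanishing divisors (pulled back from the coroot curve of $s_0$) are $W_{I(w)}$-invariant and have the stated effect on the descended line bundle --- but this is exactly the content of the corresponding step in the proof of Theorem \ref{thm:CCn_DAHA_strongly_flat}, so no genuinely new difficulty arises; everything is formal once that theorem and the smoothness of the quotients are in hand.
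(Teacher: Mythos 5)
Your proposal is correct and takes exactly the paper's route: the paper's entire argument for this corollary is the sentence ``Passing to the spherical subquotient gives us the following,'' and you have simply spelled out what that entails — that the spherical functor ${-}^{C_n}$ is exact and compatible with the Bruhat filtration (because the root kernel of every parabolic of $C_n$ is trivial, so the finite Hecke algebra is covered by symmetric idempotents, and $\P^n = {\cal E}^n/C_n$ is smooth), so saturation, local freeness, and fiberwise injectivity all pass from ${\cal H}^{(n)}$ (Theorem \ref{thm:CCn_DAHA_strongly_flat}) to ${\cal S}^{(n)}$.
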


As in the general DAHA situation, we~would like to understand the centers
of the extended DAHAs and their spherical algebras. There is a technical
issue, however, in that the notion of center is not quite well-defined for
general categories. More precisely, one might be inclined to~define the
center of a category to be the endomorphism algebra of the identity
functor, but this is too small in our cases. We could fix the ``too
small'' problem by instead taking the center to be the category with
objects the automorphisms of the original category and morphisms given by
natural transformations, but this is too large to be useful. Luckily,
there is a happy medium: we take a suitable action of~$\Z^{m+2}$ on the
category such that the induced action on objects is just the usual action
by translation.

This action is mostly straightforward: since every $\Hom$ space of degree
$2s+2f$ (or $2s+3f$ in the odd case, at least locally on the base) contains
the operator $1$, and similarly for any $\Hom$ space of degree $e_i$, any
element of the sublattice $\langle 2s+2f,e_1,\dots,e_m\rangle$ induces an
isomorphism between corresponding categories. Translation by $f$ is
slightly more subtle, but corresponds to twisting by the equivariant bundle
$\pi^*\sO_{\P^n}(1)$, so again induces an isomorphism of categories. The~result is index $2$ in~$\Z^{m+2}$, which is good enough for many purposes,
but since we would like to be able to identify the center of the extended
DAHA with one of our spherical algebras, we~need to fill out the lattice.
This is not too difficult in the extended DAHA, which contains a unique
``element'' of the form ${}^w{\cal L} w\omega$ with~$w\in C_n$ and
$w\omega$ a translation. This is invertible, and again induces appropriate
isomorphisms of categories. (Note, however, that its square is not
actually compatible with the lattice of automorphisms already described.)

Now, suppose $q$ is torsion, of order $l$. If~we take the $l$th powers of
the above isomorphisms, then all of the categories that arise can be
identified with the original category, and we thus obtain the desired
family of automorphisms (which became consistent after taking $l$th powers, as~the error was translation by $q$). Moreover, the automorphism
corresponding to~$ls$ of the extended DAHA is $W$-invariant, and thus
extends to an automorphism of the spherical algebra. We define the
``center'' of the extended DAHA or its spherical algebra to be the category
with objects $l\Z^{m+2}$ arising from the above construction.

The following is straightforward.

\begin{lem}
 If $q=0$, then the categories ${\cal S}^{(n)}$ and ${\cal S}^{\prime(n)}$
 are naturally isomorphic to their centers.
\end{lem}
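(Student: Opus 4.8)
The plan is to exploit the fact that at $q=0$ the entire group $\tilde C_n\rtimes\langle\omega\rangle$ acts on ${\cal E}^n$ through a finite group, so that the spherical categories become ``commutative'' and coincide tautologically with their centers. First I would normalize the square root of $q$: on the locus $q=0$ the coordinate $q/2$ is a tautological section of ${\cal E}[2]$, so translating in the odd degrees by $\tau=q/2$ — the construction of the Remark following the translation-by-$2$-torsion Proposition, which leaves the visible parameters unchanged and replaces $q/2$ by $q/2+\tau=q=0$ — gives an isomorphism of the relevant sheaf categories with $q=q/2=0$. So we may assume $q=q/2=0$. Then the translation sublattice of $\tilde C_n$ acts on ${\cal E}^n$ by shifts by $q=0$, hence trivially, while $\omega$ acts by $\omega(z_1,\dots,z_n)=(-z_n,\dots,-z_1)$, the composite of the reversal permutation with global negation, which already lies in $C_n$. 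Hence the image of $\tilde C_n\rtimes\langle\omega\rangle$ in $\Aut({\cal E}^n)$ is the finite group $C_n$, and the operators $T_i$, $T_i^{1/2}$ (pullback through $z_i\mapsto z_i+q$, resp.\ $z_i\mapsto z_i+q/2$) are both the identity.

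Next I would read off the $q=0$ fibers. By Theorem~\ref{thm:CCn_DAHA_strongly_flat} and its corollary, ${\cal S}^{(n)}$ and ${\cal S}^{\prime(n)}$ are locally free and their fibers inject into the bimodules of meromorphic difference operators, so it suffices to describe the $q=0$ fiber inside $k({\cal E}^n)[C_n]$, twisted by the line bundles ${\cal L}$ attached to $\omega$. A local section of a $\Hom$ bimodule of ${\cal S}^{(n)}$ is a $C_n$-invariant combination $\sum_{\vec k}c_{\vec k}\,T^{\vec k}$, and since every $T^{\vec k}$ is now the identity operator, it is just a $C_n$-invariant meromorphic section of a line bundle subject to the limiting vanishing conditions from the $x_j$ (which at $q=q/2=0$ collapse to the divisors $z_i=x_j$, the conditions being exactly the flat limits supplied by Theorem~\ref{thm:CCn_DAHA_strongly_flat}). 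Each $\Hom$ bimodule is thus a locally free $\sO_{\P^n}$-module supported on the diagonal of $\P^n\times\P^n$, with the evident bimodule structures, and — the crucial point — composition carries no twist by a nontrivial automorphism of $\P^n$, since all the underlying group elements descend to the identity there. In other words, ${\cal S}^{(n)}$ at $q=0$ (and likewise ${\cal S}^{\prime(n)}$) is a commutative sheaf category: the datum of a $\Z\langle s,f,e_1,\dots,e_m\rangle$-graded quasicoherent $\sO_{\P^n}$-algebra.

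Finally I would conclude. The center of a sheaf category whose object group is a free abelian group $\Lambda'$ is, by the construction above, built from the translation automorphisms $\Phi_v$, which at $q=0$ (torsion order $l=1$) exist for every $v\in\Lambda'$; its $\Hom$ sheaf from $v$ to $v'$ is the sheaf of natural transformations $\Phi_v\Rightarrow\Phi_{v'}$. The natural map sending a morphism of ${\cal S}^{(n)}$ of degree $v'-v$ to the corresponding natural transformation is then full (for a commutative category naturality is automatic, so every morphism extends), faithful (a natural transformation is determined by its component at the $0$ object), and compatible with composition by inspection; hence ${\cal S}^{(n)}\cong Z({\cal S}^{(n)})$ and ${\cal S}^{\prime(n)}\cong Z({\cal S}^{\prime(n)})$, naturally. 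This is the $q=0$, categorified instance of the identifications $Z({\cal H}_{\tW;\vec T;\gamma})\cong Z({\cal H}_{\tW,W;\vec T;\gamma})$ proved after Theorem~\ref{thm:center_for_general_T}. The one step requiring genuine care is the middle paragraph — confirming that the $q=0$ fiber of the spherical category is precisely this commutative category over $\P^n$ — which is where Theorem~\ref{thm:CCn_DAHA_strongly_flat} is essential, together with a short bookkeeping check that the line bundles ${\cal L}$ attached to $\omega$ (whose polarizations simplify at $q=0$) restrict compatibly through the $C_n$-quotient.
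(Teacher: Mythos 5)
Your proof is correct, and it captures what the paper presumably regards as the ``straightforward'' argument (no proof is given in the text): at $q=0$ the whole group $\tilde C_n\rtimes\langle\omega\rangle$ acts through $C_n$, so all the difference operators are supported at a translation by the identity, the left and right $\sO_{\P^n}$-structures on each $\Hom$ bimodule coincide, and the category is quasi-abelian, hence naturally isomorphic to its own center (where, since $l=1$, the center has the full object lattice). Your computation that naturality forces a natural transformation $\Phi_v\Rightarrow\Phi_{v'}$ to be a constant family is indeed the right concluding step, and the paper's preceding definition of the center as the subcategory of translation-functor transformations is exactly what makes that work.

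Two small points worth tightening if this were to be written up in full. First, the sentence ``since every $T^{\vec k}$ is now the identity operator, it is just a $C_n$-invariant meromorphic section of a line bundle'' conflates the formal combination $\sum_{\vec k}c_{\vec k}T^{\vec k}$ with its realization as a function; the strong-flatness corollary that you invoke guarantees injectivity into $k(X)[\Lambda]_\gamma$ (formal translations with twists), not into $k(X)$, and this distinction matters because the formal part carries the grading by $\vec k$. Your next sentence (``supported on the diagonal, with the evident bimodule structures'') recovers the correct picture, but the intermediate phrasing could mislead. Second, commutativity of composition at $q=0$ is not purely a consequence of $T^{\vec k}$ acting trivially: one also needs the $\Lambda$-restriction of the equivariant gerbe $\gamma$ to be symmetric, i.e.\ the commutator pairing $\zeta_{\lambda,\mu}\zeta_{\mu,\lambda}^{-1}$ to be trivial. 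In the $C^\vee C_n$ setup this holds because the twisting datum is built from coboundaries of products of $\Gamq$ symbols (or cocycles in Cartier divisors), which are manifestly symmetric; you flag the ``middle paragraph'' as the step needing care and that is precisely the spot, but it would be worth naming the gerbe-symmetry condition explicitly rather than gesturing at ``all the underlying group elements descend to the identity.''
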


More generally, let $\pi_q\colon E\to E'$ be the $l$-isogeny with kernel
generated by $q$.

\begin{thm}\label{thm:CCn_center}
 The centers of~${\cal H}^{(n)}_{\eta',x_1,\dots,x_m;q,t}$ and ${\cal
 S}^{(n)}_{\eta',x_1,\dots,x_m;q,t}$ are naturally isomorphic to the
 pullback of~${\cal
 S}^{(n)}_{\pi_q(\eta'),\pi_q(x_1),\dots,\pi_q(x_m);0,\pi_q(t)}$
 from $E'$.
\end{thm}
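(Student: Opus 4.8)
The statement is a specialization of the general machinery developed in Section 6, in particular Theorem \ref{thm:center_for_general_T} and the preceding discussion, to the $C^\vee C_n$-type categories of the present section; the only real work is bookkeeping. First I would reduce to the $q$ torsion case: when $q$ is non-torsion both sides are to be interpreted via the flat limit, so it suffices to establish the isomorphism generically (i.e.\ for $q$ of finite order $l$, since these are Zariski dense in the moduli stack), and strong flatness of all the sheaves involved (established in Theorem \ref{thm:CCn_DAHA_strongly_flat} and its corollary) then propagates the isomorphism to the whole family. Second, I would handle the extended DAHA ${\cal H}^{(n)}$ before its spherical algebra ${\cal S}^{(n)}$: by the corollary to the proposition on $Z({\cal H}_{\tW;\vec T;\gamma}(X))$ versus $Z({\cal H}_{\tW,W;\vec T;\gamma}(X))$ (valid since the root kernel for $W=C_n$ on ${\cal E}^n$ is trivial, as noted at the start of this section, and $\gamma$ is trivial on $C_n$ by construction of the even/odd DAHAs), the two centers coincide, so it is enough to identify one of them.

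The core step is to apply Theorem \ref{thm:center_for_general_T}: the center of ${\cal H}_{\tW;\vec T;\gamma}(X)$ for $\tW=\tilde C_n$ is canonically isomorphic to the center of ${\cal H}_{W\ltimes\Lambda_q;N_{X/Y}\vec T;N_{X/Y}\gamma}(Y)$, where $Y=X/(\Lambda/\Lambda_q)$ and $\Lambda_q$ is the sublattice acting trivially, i.e.\ $\Lambda_q=l\Lambda$ since $q$ has order $l$. Here the quotient $X\to Y$ in the variables $z_i$ is precisely (a torsor over) the $l$-isogeny $\pi_q$ applied coordinatewise: translations by $q$ become trivial, and $Y\cong (E')^n$ with the induced $\tilde C_n$-action of coroot type but now with ``$q$'' equal to $\pi_q(q)=0$ in the relevant sense. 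I would then check that the norm operations $N_{X/Y}$ send the data packaged into ${\cal H}^{(n)}_{\eta',x_1,\dots,x_m;q,t}$ to the data packaged into ${\cal S}^{(n)}_{\pi_q(\eta'),\pi_q(x_1),\dots,\pi_q(x_m);\pi_q(q),\pi_q(t)}$: the $t$-parameter $T_i=\{t+z_i-z_{i+1}=0\}$ maps to $\{\pi_q(t)+z_i-z_{i+1}=0\}$ under the induced isogeny of the $A_{n-1}$-type coroot curves; the blowup parameters $x_j$, which enter as vanishing conditions along $z_i=\pm x_j-(2l'-d+1)q/2$, become (after taking the image of the divisors under $\pi_q$ and collapsing the $q$-translates, exactly as in the description of $N_{X/Y}\vec T$ in terms of sums $\sum_{x\in\langle q_\alpha\rangle}{}^x T_\alpha$) the single vanishing condition $z_i=\pm\pi_q(x_j)$ defining ${\cal S}^{(n)}$ on $(E')^n$; and the twisting datum $\gamma$, which is built from the explicit products $\prod_i\Gamq(a\pm z_i)$ with $a$ linear in $\eta'$ (resp.\ $x_0$), pushes forward — using the multiplication/reflection identities for $\Gamq$ recorded in Section 2 and the fact that $Z_{s_0}$ remains a cocycle in Cartier divisors after specializing $q$ — to the twisting datum of the even DAHA over $E'$ with parameter $\pi_q(\eta')$. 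The matching of the polarizations in the definition of ${\cal H}^{(n)}$ under this pushforward is a direct (if tedious) check on the coefficients $-(d_1'-d_2'+1)\sum z_i^2-((n-1)t+\eta'+\cdots)\sum z_i+\cdots$, which scale correctly under $\pi_q$ since $\pi_q^*$ multiplies polarizations on $E'$ by $l$ while the defining isogeny $X\to Y$ has degree $l^n$ on each factor.

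For the spherical category ${\cal S}^{(n)}$ itself, I would use the corollary stating that (for diagonalizable — here trivial — root kernel and $\gamma$ trivial on $W$) $Z({\cal H}_{\tW,W;\vec T;\gamma}(X))\cong Z({\cal H}_{\tW;\vec T;\gamma}(X))$, so the center of the spherical algebra agrees with the center computed above, and then observe via the preceding Lemma (the $q=0$ case, where ${\cal S}^{(n)}$ is naturally isomorphic to its center) that the target ${\cal S}^{(n)}_{\pi_q(\cdots);0,\pi_q(t)}$ over $E'$ is its own center, closing the identification. Naturality — that the isomorphism is compatible with the $\Z^{m+2}$-grading and with base change on the moduli stack — follows because every ingredient (Theorem \ref{thm:center_for_general_T}, the norm operations, the $\Gamq$-identities) is natural, and because the ``center'' in the sense of this section was defined precisely as the $l\Z^{m+2}$-graded subcategory arising from the translation automorphisms, which are visibly transported to the corresponding translation automorphisms of ${\cal S}^{(n)}$ over $E'$. \textbf{The main obstacle} I anticipate is not any single deep point but the careful verification that the somewhat idiosyncratic parametrization chosen in the definitions of ${\cal H}^{(n)}_{\eta';q,t}$ and ${\cal H}^{\prime(n)}_{x_0;q,t}$ — especially the $z$-independent terms in the polarizations and the precise shifts by $q/2$ in the $x_j$-vanishing divisors — is exactly compatible with the outputs of $N_{X/Y}$, and in particular that no spurious factor of $\sO_{\P^n}(\pm1)$ or sign $\mathfrak q$ obstructs the identification; I would expect to restrict to the even case in detail and then invoke the elementary transformation symmetry (the proposition relating ${\cal H}^{(n)}$ and ${\cal H}^{\prime(n)}$) to transfer the conclusion to the odd case rather than redoing the polarization bookkeeping there.
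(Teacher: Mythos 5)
Your proposal captures the overall shape of the argument—reduce to the parameter-free case via Theorem~\ref{thm:center_for_general_T} and strong flatness, pass to the quotient isogeny, and identify the output of $N_{X/Y}$ with the data defining ${\cal S}^{(n)}$ over $E'$—and the parameter bookkeeping in your middle paragraph is on the right track. But there is a genuine gap: Theorem~\ref{thm:center_for_general_T} only computes the center of the \emph{algebra} ${\cal H}_{\tilde C_n;\vec T;\gamma}(X)$, whereas what must be computed here is the center of the extended \emph{category} ${\cal H}^{(n)}$, which has objects $\Z\langle s,f,e_1,\dots,e_m\rangle$ and was deliberately defined so that its center is itself a category with objects $l\Z^{m+2}$. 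Your final paragraph asserts that compatibility with this $l\Z^{m+2}$-grading ``follows because every ingredient is natural,'' but that does not produce the actual Hom sheaves of the center category in degrees not accessible from the DAHA alone. Concretely, after inverting the central section of degree $2s+2f$ (and an invertible invariant section of degree $lf$), one is left with a $\Z/2\Z$-graded sheaf algebra over $\sO_{X^+}$: the even part of its center is indeed $Z({\cal H}_{\tilde C_n;\gamma}(X))$, which Theorem~\ref{thm:center_for_general_T} handles, but the odd part is not covered by that theorem at all. The paper identifies the odd part as a submodule $Z({\cal H}_{\tilde C_n;\gamma}(X))\,{\cal Z}_{w_l}\,w_l$, where $w_l\in\tilde C_n$ is determined by $w_l\omega^l=(w\omega)^l$ with $w\omega$ a translation; this is a separate computation that your proposal omits entirely.

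A secondary point: to identify the total center with ${\cal S}^{(n)}_{\pi_q(\eta'),\dots;\pi_q(q),\pi_q(t)}$ you still need to verify that the center (even \emph{and} odd parts) saturates the relevant Bruhat filtration—i.e., that it is not a proper subsheaf of the target. You mention strong flatness of ${\cal S}^{(n)}$ but do not make the saturation argument; in the paper this saturation is precisely what closes the identification once both graded pieces are in hand. In short: the even part and the parameter/twist bookkeeping are fine, but the odd degree-$1$ part of the center requires a genuinely new ingredient ($w_l$ and the associated gerbe section), and the final Bruhat-saturation step needs to be made explicit rather than absorbed into ``naturality.''
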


\begin{proof}
 The argument of Theorem~\ref{thm:center_for_general_T} together with the
 flatness results for the spherical alge\-bra allows us to reduce to the
 parameter-free case (defined in the obvious way). We may as well
 localize the central section~$1$ of degree $2s+2f$, as~we can then
 recover the center of the category from the induced filtration on the
 center of the localization. Any invariant section of~$\pi^*\sO_{\P^n}(l)$ is clearly central, and we can locally choose an
 invertible such section to deal with the automorphisms of degree $lf$.
 We obtain a description of the resulting sheaf of categories with objects
 $\Z/2\Z$ as a sheaf algebra over the corresponding $\sO_{X^+}$. The~even part of the center is precisely the center of the corresponding
 DAHA, while the odd part of the center may be identified with a submodule
 of the form
 $Z({\cal H}_{\tilde{C}_n,\gamma}(X)) {\cal Z}_{w_l} {w_l}$,
 where $w_l$ is given by $w_l\omega^l = (w\omega)^l$, where $w\omega$ is a~translation with $w\in W$. Thus the center satisfies strong flatness and saturates the
 appropriate Bruhat filtration, so agrees with the spherical algebra.
\end{proof}

To understand the significance of our construction, we~will need to
understand some special cases. The~case $t=0$ is of particular interest,
due to the following. Note that the tensor and symmetric power
constructions on modules extend to sheaf bimodules, and thus carry over to
analogous constructions for algebras and categories. (Of course, in the
latter cases, one should take the symmetric subobject, not the quotient
object.) The following is an immediate consequence of the fact that the
corresponding $A_{n-1}$ Hecke algebra is just the usual twisted group
algebra.

\begin{prop}\label{prop:spherical_as_symmetric_power}
 One has the following isomorphisms $($locally on the base$)$:
 \begin{gather*}
 {\cal S}^{(n)}_{\eta',x_1,\dots,x_m;q,0}
 \cong \Sym^n\big({\cal S}^{(1)}_{\eta',x_1,\dots,x_m;q,0}\big),
 \\
 {\cal S}^{\prime(n)}_{x_0,x_1,\dots,x_m;q,0}
 \cong \Sym^n\big({\cal S}^{\prime(1)}_{x_0,x_1,\dots,x_m;q,0}\big).
 \end{gather*}
\end{prop}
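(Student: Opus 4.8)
The plan is to exhibit the isomorphism fiberwise by unwinding the definitions and recognizing the $\Sym^n$ on the right as precisely the $W=C_n$-invariant part of a tensor product of rank-1 data, using the fact that when $t=0$ the parameters $T_i$ for $1\le i\le n-1$ (which carry the $A_{n-1}$-type roots) become $T_i=[z_i-z_{i+1}=0]=[X^{s_i}]$. By the remark at the end of Section~4 (the case $T_\alpha=[X^{r_\alpha}]$), this means the $A_{n-1}$-parabolic subalgebra ${\cal H}_{A_{n-1};\vec T_0}(X)$ inside ${\cal H}_{C_n;\vec T_0}(X)$ is just the twisted group algebra $\sO_X[S_n]$, with no residue-type enlargement at all. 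First I would set up the comparison for the extended DAHA (or rather for the appropriate subcategory of meromorphic difference operators), and observe that $\sO_{{\cal E}^n}=\sO_{{\cal E}}^{\boxtimes n}$ as an $S_n$-equivariant sheaf on ${\cal E}^n={\cal E}\otimes\Z^n$, so that $\pi_*\sO_{{\cal E}^n}$ (pushed to $\P^n={\cal E}^n/C_n$) is identified, via the factorization $C_n=S_n\ltimes (\Z/2\Z)^n$, with the $S_n$-symmetric part of the $n$-fold external tensor product of $\pi^{(1)}_*\sO_{\cal E}$ on $\P^1={\cal E}/A_1$.

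Next I would carry the identification through to the Hom sheaf bimodules. Using Proposition~\ref{prop:spherical_as_intersection} (and its infinite/affine analogue), the spherical algebra ${\cal H}_{\tilde C_n,C_n;\vec T;\gamma}({\cal E}^n)$ is realized as a subsheaf of the master spherical algebra cut out by vanishing conditions depending only on the endpoint parameters and the twisting datum. The crucial point is that all of these conditions --- the $t$-independent residue conditions and the $x_j$-vanishing conditions written out explicitly above --- involve only the coordinates $z_i$ one at a time (they are supported on hypersurfaces of the form $z_i=\text{const}$ or $2z_i=\text{const}$), and the twisting data likewise factor as products $\prod_i \Gamq(a\pm z_i)$ over the individual variables. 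Consequently the operator space is a $C_n$-equivariant subsheaf of an $n$-fold external tensor product of the corresponding rank-1 ($n=1$) operator spaces, and taking $C_n=S_n\ltimes(\Z/2\Z)^n$-invariants first kills the $(\Z/2\Z)^n$ (reproducing $n$ copies of the $n=1$ spherical data, one per factor) and then symmetrizes over $S_n$. Since $t=0$ makes the $S_n$-part of the Hecke algebra trivial, the $S_n$-invariant subobject of the $n$-fold tensor product is exactly $\Sym^n$ of the $n=1$ sheaf category, which is the claimed right-hand side. I would run this argument in parallel for the even case (${\cal H}^{(n)}$, ${\cal S}^{(n)}$) and the odd case (${\cal H}^{\prime(n)}$, ${\cal S}^{\prime(n)}$), noting that the only difference is the form of the polarization of ${\cal L}\omega$ and the appearance of ${\mathfrak q}(z_1+mq/2)$ in the odd residue conditions, which again is a product over the individual variables $z_i$ and so respects the tensor decomposition.

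To make "$\Sym^n$ of a sheaf category" precise I would use the remark preceding the statement: tensor and symmetric powers of sheaf bimodules are defined via the external tensor product on $(\P^1)^{\times n}$ followed by taking the $S_n$-symmetric subobject, and these constructions commute with the Bruhat filtration because the Bruhat order on $C_n\backslash\tilde C_n/C_n$ (dominance on dominant weights) is the $n$-fold "product" of the rank-1 dominance order restricted to $S_n$-orbits; flatness of both sides (Theorem~\ref{thm:CCn_DAHA_strongly_flat} and its corollary, together with the $n=1$ instance) ensures the comparison of subquotients upgrades to a comparison of the whole bimodules. The "locally on the base" hedge absorbs the usual ambiguity in which line bundle on $\P^n$ versus $(\P^1)^{\times n}$ one twists by to make everything global; as elsewhere, I would not track these twists explicitly.

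\textbf{Main obstacle.} The genuinely delicate step is verifying that the $S_n$-symmetrization of the $n$-fold external tensor product of the $n=1$ operator space really does reproduce the $C_n$-equivariant subsheaf defined by the vanishing conditions on ${\cal E}^n$ --- i.e.\ that no "cross terms" mixing different $z_i$ appear when $t=0$. This is where the triviality of the $A_{n-1}$-Hecke algebra (equivalently $T_i=[X^{s_i}]$ for the internal roots) is essential: for $t\ne0$ the residue conditions at the hyperplanes $z_i-z_{i+1}=mq$ genuinely couple the variables and no such product decomposition holds. I expect the bookkeeping to consist of checking that (a) the polar-divisor bounds, (b) the residue conditions, and (c) the $x_j$-vanishing conditions each factor variable-by-variable, and that the $S_n$-action permutes these factors in the evident way; each of (a),(b),(c) is a short check once the explicit formulas above are in hand, but getting the equivariant structures and the $\Gamq$-twisting-data to match on the nose (rather than just up to a line bundle on the base) is the part that requires care.
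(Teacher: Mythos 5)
Your proof proposal takes the same approach as the paper, whose proof consists of the single observation that when $t=0$ the $A_{n-1}$-parabolic Hecke algebra inside ${\cal H}_{C_n;\vec T}$ degenerates to $\sO_X[S_n]$; the elaboration you give — using $C_n=S_n\ltimes(\Z/2\Z)^n$ to first pass to a tensor product of $n$ copies of the $\tilde C_1$ spherical data and then take $S_n$-invariants, and noting that the $x_j$-vanishing conditions and twisting data factor coordinate-by-coordinate — is exactly the content the paper leaves implicit. One imprecision is worth flagging: you assert that the $t$-independent residue conditions are "supported on hypersurfaces of the form $z_i=\text{const}$ or $2z_i=\text{const}$," but the paper's explicit list for ${\cal S}^{(n)}_{\eta';q,t}$ includes a condition along $z_1+z_2+mq=0$, which genuinely couples two variables. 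What actually happens at $t=0$ is that $T_\alpha=[X^{r_\alpha}]$ for the $D_n$-type roots, so the allowed polar divisor along $z_i\pm z_j+mq=0$ is exactly cancelled by the forced vanishing divisor, the coefficients are therefore holomorphic along all such hypersurfaces, and the coupling residue conditions become vacuous. You say something to this effect in the "Main obstacle" paragraph, but it should replace the inaccurate "supported only on $z_i=\text{const}$" claim in the main argument, since that is the step that makes the tensor decomposition go through.
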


\begin{rem}
 Note that the tensor product of~$n$ univariate difference operators is
 described as follows. The~$i$th operator acts on~$k({\cal E}^n)$ by pulling
 back the coefficients from the $i$th factor and only translating the
 $i$th coordinate. These actions commute, and thus we may compose them to
 obtain the tensor product difference operator on~$k({\cal E}^n)$. The~tensor product of the algebras is then the image of the tensor product of
 sheaves under this operation, and the symmetric power consists of those
 operators in the tensor product that commute with~$S_n$.
\end{rem}

There is a similar description for~$t=q$ coming from the following symmetry
(essentially Corollary~\ref{cor:spherical_t_symmetry}, combined with the
fact that the conditions associated to~$x_1,\dots,x_m$ are unaffected).

\begin{prop}\label{prop:t_q-t_symmetry}
 One has the following isomorphisms $($locally on the base$)$:
 \begin{gather*}
 {\cal S}^{(n)}_{\eta',x_1,\dots,x_m;q,q-t}
 \cong \prod_{1\le i<j\le n} \Gampq(t\pm z_i\pm z_j)
 {\cal S}^{(n)}_{\eta',x_1,\dots,x_m;q,t}
 \prod_{1\le i<j\le n} \Gampq(t\pm z_i\pm z_j)^{-1},
 \\
 {\cal S}^{(n)}_{x_0,x_1,\dots,x_m;q,q-t}
 \cong \prod_{1\le i<j\le n} \Gampq(t\pm z_i\pm z_j)
 {\cal S}^{(n)}_{x_0,x_1,\dots,x_m;q,t}
 \prod_{1\le i<j\le n} \Gampq(t\pm z_i\pm z_j)^{-1}.
 \end{gather*}
\end{prop}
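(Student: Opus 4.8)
\textbf{Proof proposal for Proposition \ref{prop:t_q-t_symmetry}.}

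The plan is to deduce this from Corollary \ref{cor:spherical_t_symmetry}, which provides a contravariant isomorphism of spherical sheaf bimodules relating the system of parameters $\vec T$ to ${}^-\vec T$, conjugated by a twist of the form $\sO_X(D_{w_0}({}^-\vec T)-D_{w_I}({}^-\vec T))$ (here $W_I=W=C_n$ and $\tilde W$ of type $\tilde C_n$). First I would unwind what ${}^-\vec T$ means in the present very concrete situation: the only root orbit carrying a parameter is the $D_n$-type orbit, with $T_i$ the divisor $t+z_i-z_{i+1}=0$; the involution $\alpha\mapsto -\alpha$ together with the shift built into the affine case (the discussion just before Corollary \ref{cor:affine_vanishing_conditions}, and the $T'_\alpha=$ translate of $T_{-\alpha}$ by $q_\alpha$ construction) replaces $t$ by $q-t$. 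So ${\cal H}_{\tilde C_n,C_n;{}^-\vec T;\gamma}$ is exactly the $t\mapsto q-t$ spherical algebra, and the parameters $x_1,\dots,x_m$ (which live on the $s_0$ orbit, disjoint from the $D_n$ orbit) are untouched by this operation, as are $\eta'$ and $x_0$ — this is the content of the parenthetical remark in the statement. One then has to check that the twisting datum $\gamma$ used in Corollary \ref{cor:spherical_t_symmetry} is self-inverse up to the relevant twist in these cases; since $\gamma$ was chosen trivial on $C_n$ and is represented by products of $\Gamq$ symbols (as in the $C^\vee C_n$ discussion above), $\gamma^{-1}$ is again of the same type and the adjoint introduces only the explicit coboundary twist, which I would absorb into the stated conjugating factor.

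Second, I would identify the conjugating line bundle. Corollary \ref{cor:spherical_t_symmetry} produces a twist by $\sO_X(D_{w_0}({}^-\vec T)-D_{w_I}({}^-\vec T))$ on one side and its inverse on the other. In the affine type $C_n$ situation with $W_I=W$, the divisor $D_{w_0}({}^-\vec T)-D_{w_I}({}^-\vec T)$ is an (infinite, but with finite $\tilde W_q$-orbit under the translation lattice) formal sum; grinding through the explicit description of the roots $\alpha\in\Phi^+(\tilde C_n)\cap w\Phi^-(\tilde C_n)$ not coming from $W$, one finds that the $t$-dependent part collapses to precisely $\sum_{1\le i<j\le n}([X^{r}]-T^{(q-t)})$-type contributions along the hyperplanes $\pm z_i\pm z_j$, which on passing to the coroot-curve description and using the $\Gamm{p,q}$ notation becomes the product $\prod_{1\le i<j\le n}\Gampq(t\pm z_i\pm z_j)$ appearing in the statement. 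Here I would use the identity $\Gamq(z)=\Gamm{-q}(z-q)^{-1}$ and the multiplication principle for $\Gamq$ symbols to convert the Bruhat-filtration line bundles into this clean closed form, and I would sanity-check the polarization and weight using the bookkeeping rules (polarization of $\Gamq(z)$ is $z(z-q)(2z-q)/12q$, weight $-z/q$) — the product $\prod_{i<j}\Gampq(t\pm z_i\pm z_j)$ has the correct $z$-dependent polarization to match $D_{w_0}(\vec T)-D_{w_0}({}^-\vec T)$ restricted to the finite-Weyl-invariant slice.

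Third, the extra $x_j$ parameters: I would invoke the fact (Proposition \ref{prop:decoupling_of_parameters} / the definition of ${\cal S}^{(n)}_{\eta',x_1,\dots,x_m;q,t}$ as a subsheaf of ${\cal S}^{(n)}_{\eta';q,t}$ cut out by vanishing conditions attached to the $s_0$-orbit) that conjugation by $\prod_{i<j}\Gampq(t\pm z_i\pm z_j)$ — which is a function of $\vec z$ invariant under $C_n$ and pulled back from the $D_n$-orbit coroot curves — commutes with the vanishing conditions defining the $x_j$-refinement, since those conditions live on divisors $z_i=\pm x_j-kq/2$ disjoint from $\pm z_i\pm z_j$. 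Hence the isomorphism on ${\cal S}^{(n)}_{\eta';q,t}$ restricts to one on the sub-sheaf-categories, and likewise for the odd case ${\cal S}^{\prime(n)}$, which is entirely parallel (the $\mathfrak q$-factors in the odd residue conditions are $t$-independent, so they ride along unchanged).

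\textbf{Main obstacle.} The routine-looking but genuinely delicate step is pinning down the conjugating line bundle exactly — i.e.\ proving that the abstract twist $\sO_X(D_{w_0}({}^-\vec T)-D_{w_I}({}^-\vec T))$ coming out of Corollary \ref{cor:spherical_t_symmetry} is \emph{canonically} (not merely up to an invariant factor, and with the right weight/$z$-independent polarization) the coboundary twist realized by $\prod_{1\le i<j\le n}\Gampq(t\pm z_i\pm z_j)$. This requires carefully matching the formal-sum-of-divisors bookkeeping for the affine root system of type $C_n$ against the explicit $\Gamm{p,q}$ reflection and multiplication identities, and checking that no stray $2$-torsion ($\mathfrak q$-type) ambiguity enters in the even case; this is exactly the kind of computation the paper elsewhere defers with ``one readily verifies'', and I would present it at that level of detail, pointing to the bookkeeping rules for $\Gamq$ symbols and Propositions \ref{prop:pullback_simplification_i}–\ref{prop:pullback_simplification_ii} for the verification that the two twists agree on the relevant reflection hypersurfaces.
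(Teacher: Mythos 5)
Your high-level plan matches the paper's stated justification, which is essentially just the parenthetical ``essentially Corollary \ref{cor:spherical_t_symmetry}, combined with the fact that the conditions associated to $x_1,\dots,x_m$ are unaffected.'' You have also correctly located the key affine-specific ingredient: the observation in Section 6 that there is a translated system $\vec T'$ (with $T'_\alpha$ the translate of $T_{-\alpha}$ by $q_\alpha$) satisfying $\sum_{\alpha\in\Phi^+(\tW)\cup\Phi^-(W)}T_\alpha=\sum_{\alpha\in\Phi^-(\tW)\setminus\Phi^-(W)}T'_\alpha$, which on the $D_n$ orbit reads $t\mapsto q-t$; and you have correctly noted (via Proposition \ref{prop:decoupling_of_parameters}) that the $x_j$ and $\eta'/x_0$ data, living on a disjoint root orbit, ride along.

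However, two of the details you supply would not close as written. First, Corollary \ref{cor:spherical_t_symmetry} does \emph{not} relate $\vec T$ to ${}^-\vec T$: it has $\vec T$ on both sides and only involves ${}^-\vec T$ inside the conjugating divisor. It is the (twisted) adjoint, contravariant and sending $\gamma\mapsto\gamma^{-1}$. Applied directly it would invert the twisting datum — i.e. produce $\eta'\mapsto-\eta'$ and $x_j\mapsto-x_j$ as in the Selberg-adjoint Proposition that follows — whereas the Proposition you are proving is a covariant conjugation fixing $\eta'$ and the $x_j$'s. To extract a covariant $t\mapsto q-t$ conjugation from it, you would need to compose it with a second contravariant isomorphism (such as the Selberg adjoint), and then verify the composite; you do not do this. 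Second, $D_{w_0}({}^-\vec T)-D_{w_I}({}^-\vec T)=\sum_{\alpha\in\Phi^+(\tW)\setminus\Phi^+(W)}([X^{r_\alpha}]-{}^-T_\alpha)$ includes the reflection-hypersurface contributions $[X^{r_\alpha}]$, so its coboundary is represented not by $\prod_{i<j}\Gampq(t\pm z_i\pm z_j)$ alone but by that product divided by the Selberg denominator $\prod_{i<j}\Gampq(\pm z_i\pm z_j)\prod_i\Gampq(\pm 2z_i)$; your collapse to the $t$-dependent part drops these terms without justification. The more direct route underlying the Proposition is the \emph{covariant} intersection description (Proposition \ref{prop:spherical_as_intersection} and its affine version, stated explicitly as the criterion at the start of Section 9 that both ${\cal D}$ and $\prod_{i<j}\Gamq(t\pm z_i\pm z_j)\,{\cal D}\,\prod_{i<j}\Gamq(t\pm z_i\pm z_j)^{-1}$ be sections of the $t$-free category): the categories at $t$ and at $q-t$ are cut out of the same master category by the two halves of the $\tW$-translation-invariant formal sum $\sum_{k\in\Z}[t+\alpha(\vec z)+kq]$, and these two halves are exchanged precisely by conjugation by $\prod_{i<j}\Gampq(t\pm z_i\pm z_j)$, with no $[X^r]$-terms entering and no inversion of $\gamma$. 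Reworking your second step along those lines would make the argument close.
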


We also note the following version of the adjoint symmetry. It will be
convenient to express the adjoint in terms of a formal density; in
particular, ${\rm d}T$ simply represents a formal $\tilde{C}_n$-invariant
measure.

\begin{prop}
 The adjoint with respect to the formal density
 \begin{gather*}
 \prod_{1\le i<j\le n} \frac{\Gamq(t\pm z_i\pm z_j)}
 {\Gamq(\pm z_i\pm z_j)}
 \prod_{1\le i\le n} \frac{1}{\Gamq(\pm 2z_i)}\, {\rm d}T
 \end{gather*}
 induces $($locally on the base$)$ contravariant isomorphisms
 \begin{gather*}
 {\cal S}^{(n)}_{\eta',x_1,\dots,x_m;q,t}\cong
 {\cal S}^{(n)}_{-\eta',-x_1,\dots,-x_m;q,t} ,
 \\
 {\cal S}^{\prime(n)}_{x_0,x_1,\dots,x_m;q,t}\cong
 {\cal S}^{\prime(n)}_{-x_0,-x_1,\dots,-x_m;q,t}
 \end{gather*}
 acting on objects as $v\mapsto -v$.
\end{prop}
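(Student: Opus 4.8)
The plan is to reduce the statement, as with the other symmetries in this section, to a computation on the corresponding category of meromorphic difference operators together with a verification that the stated adjoint respects all the conditions defining the spherical categories. First I would make precise what ``adjoint with respect to the formal density'' means: if we write $\mu(\vec z)\,dT$ for the stated density, then for a meromorphic difference operator $\mathcal D=\sum_{\vec k} c_{\vec k}(\vec z)\,T^{\vec k}$ the adjoint $\mathcal D^*$ is the operator characterized (formally) by the requirement that $\langle \mathcal D f, g\rangle_\mu=\langle f,\mathcal D^* g\rangle_\mu$, which works out to $\mathcal D^*=\sum_{\vec k}\mu^{-1}\,{}^{T^{-\vec k}}(\mu\, c_{\vec k})\,T^{-\vec k}$ (all interpreted inside $k(X)[\Lambda]_\gamma$, with $\mu$ being a formal product of $\Gamq$ symbols). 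Using the multiplication/reflection principle for $\Gamq$, this is a $\mathbb G_m$-pseudoconjugate of the na\"\i ve adjoint $\sum_w c_w w\mapsto \sum_w w c_w$ already studied in Section 4 (for $\mathcal H_{W;\vec T}(X)$) and Section 5, so it is a well-defined contravariant isomorphism of $k(X)[\tilde C_n]_\gamma$ onto $k(X)[\tilde C_n]_{\gamma^{-1}}$, and hence of the associated meromorphic spherical categories; it acts on the lattice of objects as $v\mapsto -v$ because the na\"\i ve adjoint inverts group elements, so translations go to their inverses.

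Next I would identify the effect of this pseudoconjugation on the twisting datum and the parameters. The factors $\Gamq(\pm 2z_i)^{-1}$ and $\prod_{i<j}\Gamq(t\pm z_i\pm z_j)/\Gamq(\pm z_i\pm z_j)$ are exactly the ``$C_n$-invariant part'' of the density whose role is to cancel the $D_{w_0}$-shift and the $\vec T_t$-shift produced by the na\"\i ve adjoint, which is precisely the content of Corollary \ref{cor:spherical_t_symmetry} in the form already used for $\mathcal S^{(n)}_{\eta';q,t}$ and $\mathcal S^{\prime(n)}_{x_0;q,t}$; this is what keeps $t$ fixed (rather than sending $t\mapsto q-t$) and accounts for the sign reversal $\eta'\mapsto-\eta'$, $x_0\mapsto -x_0$ in the weight/polarization data of the line bundle $\mathcal L$ defining the odd-degree $\Hom$ bimodules (since $[-1]^*$ negates the relevant polarizations and the natural trivialization of $\sO_{\mathcal E}([0])$, and since $\gamma^{-1}$ is the twist with the opposite cocycle). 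For the extra parameters $x_1,\dots,x_m$, I would simply observe that the vanishing condition defining $\mathcal H^{(n)}_{\eta',x_1,\dots,x_m;q,t}$ (resp.\ the primed version) from Theorem \ref{thm:CCn_DAHA_strongly_flat}'s proof is ``the left coefficient of $T^{\vec k}$ vanishes along $z_i=\pm x_j-(2l-d+1)q/2$ for a prescribed range of $l$''; applying $\mathcal D\mapsto\mathcal D^*$ replaces $T^{\vec k}$ by $T^{-\vec k}$ and $z_i\mapsto z_i$ while inverting the translations, so by the explicit form of the range of $l$ (symmetric under $k_i\mapsto -k_i$ together with $x_j\mapsto -x_j$) the image satisfies exactly the conditions defining the category with $x_j\mapsto -x_j$. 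This is where I would lean on the fact (Prop.\ in the $C^\vee C_n$ section, ``it would have sufficed to impose the first set of vanishing conditions'') that the $\pm x_j$ conditions come in matched pairs.

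Having checked that the meromorphic isomorphism carries the spherical category $\mathcal S^{(n)}_{\eta',\vec x;q,t}$ into $\mathcal S^{(n)}_{-\eta',-\vec x;q,t}$ (and similarly for the primed family), and that it is a category anti-homomorphism, the remaining point is that it is an \emph{isomorphism} of sheaf bimodules and not merely an inclusion on the nose. As in the proofs of the analogous symmetries (e.g.\ Prop.\ \ref{prop:t_q-t_symmetry}, Cor.\ \ref{cor:spherical_t_symmetry}), I would argue this by comparing Bruhat subquotients: both sides are strongly flat (Theorem \ref{thm:CCn_DAHA_strongly_flat} and its corollary), the map is compatible with the Bruhat filtration up to the order-reversing automorphism $w\mapsto w^{-1}$ of $C_n\backslash\tilde C_n/C_n$, and on each subquotient it becomes an isomorphism of line bundles on a quotient $\P^n/{\cal E}^\bullet$ by the explicit bookkeeping of polarizations and weights; since an injective map of locally free sheaves of the same rank which is an isomorphism on every fiber is an isomorphism, we conclude. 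The main obstacle I expect is the polarization/weight bookkeeping for the odd-degree $\Hom$ bimodules: one must check that the na\"\i ve adjoint, after the $\Gamq$-pseudoconjugation, sends the line bundle $\mathcal L$ with polarization $-(2d'_1-2d'_2+3)\sum z_i^2/2-((n-1)t+x_0+\dots)\sum z_i+\dots$ and weight data to the corresponding bundle for $-x_0$, including getting the $\mathfrak q$-twist in the residue conditions of $\mathcal S^{\prime(n)}$ right; this is routine but genuinely fiddly, and it is the place where the specific choice of the odd-case density (with its $\Gamq(\pm 2z_i)^{-1}$ rather than some square root) actually matters.
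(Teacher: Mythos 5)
The paper states this proposition without proof; it is intended as a direct specialization of Corollary \ref{cor:spherical_t_symmetry} (and its relatives) to the $\tilde C_n$ case, plus the observation that the extra vanishing conditions associated to $x_1,\dots,x_m$ are invariant under the adjoint with the parameter negation.  Your reduction to the na\"{\i}ve adjoint via pseudoconjugation by the density, followed by bookkeeping of twisting data and Bruhat subquotients, is exactly the mechanism behind those earlier results, so the route is essentially the one the author intended.

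Two small imprecisions worth flagging.  First, the anti-automorphism $w\mapsto w^{-1}$ is order-\emph{preserving} on the Bruhat poset (this is stated explicitly in the remark following the adjoint proposition in Section~4), and on the double coset poset $C_n\backslash\tilde C_n/C_n\cong\{\text{dominant weights}\}$ it actually acts as the \emph{identity}, since $w_0=-1$ for $C_n$ sends a dominant $\lambda$ to $-w_0\lambda=\lambda$.  So the Selberg adjoint respects the Bruhat filtration on the nose; there is no order reversal to track.  Second, the closing justification should be phrased as ``a filtered map that is an isomorphism on each subquotient is an isomorphism,'' rather than invoking injectivity and equality of ranks separately — the point of the Bruhat-subquotient comparison is exactly that it bypasses any separate injectivity check.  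Neither issue affects the substance of the argument.
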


\begin{rem}
 We will refer to this formal adjoint as the ``Selberg'' adjoint, as~the
 formal density consists of the interaction terms in the elliptic Selberg
 integral. More generally, composing the Selberg adjoint with a sequence
 of elementary transformations gives an adjunction involving densities of
 the form
 \begin{gather*}
 \prod_{1\le i\le n} \frac{\prod_{1\le j\le k} \Gamq(u_j\pm z_i)}
 {\Gamq(\pm 2z_i)} \prod_{1\le i<j\le n} \frac{\Gamq(t\pm z_i\pm z_j)}
 {\Gamq(\pm z_i\pm z_j)}\, {\rm d}T,
 \end{gather*}
 where the $u_j$ depend on the $x_j$ and the specific domain and codomain
 objects. Here we should think of the operators as mapping between two
 different inner product spaces, so that the two formal integrals are
 against (slightly) different densities.
\end{rem}

We were somewhat vague in our descriptions of the subquotients of the
Bruhat filtration above, as~the specific divisors that are forced
into a given coefficient are somewhat complicated to describe in general. For~the most part, though, the important information about the subquotients
is not how they are built up out of divisors, but simply which line bundle
one ends up with in the end. (Recall that the subquotients are obtained
from line bundles which are invariant under some parabolic subgroup $W_I$
by descending to~$X/W_I$ then taking the direct image to~$X/C_n$.)

Propositions~\ref{prop:spherical_as_symmetric_power} and
\ref{prop:t_q-t_symmetry} make this information relatively straightforward
to determine. Any filtered isomorphism preserves the Bruhat subquotients,
and thus the associated polarizations must be invariant under $t\mapsto
q-t$ (modulo line bundles on the base, that is). Modulo line bundles on
the base, the $t$-dependent contribution to the polarization is linear, and
thus the $t\mapsto q-t$ symmetry forces it to be trivial. In~other words,
the {\em subquotients} are (up to isomorphism local on the base)
independent of~$t$ and thus by the first proposition are determined by the
subquotients for~$n=1$. More precisely, for~$t=0$, the line bundle on~${\cal E}^n$ associated to a given dominant weight is the outer tensor
product of the univariate subquotients associated to the parts of the
weight; one then descends to the quotient by the stabilizer in~$C_n$ of the
weight.

Helpfully, it turns out that the univariate case has already been studied.
Let $\Gamma{\cal S}^{(n)}$, $\Gamma{\cal S}^{\prime(n)}$ denote the
associated ``global section'' categories; more precisely, these are sheaves
of categories on~${\cal E}^{m+3}$ in which each $\Hom$ sheaf is the direct
image of the corresponding $\Hom$ bimodule. Since we included twisting by
$\sO_{\P^n}(1)$ in the definition of the category, one can recover~${\cal
 S}^{(n)}$ and~${\cal S}^{\prime(n)}$ from their global section
categories: for each $\Hom$ bimodule $M$ in the sheaf category, the
corresponding graded module (relative to the Segre embedding of~$\P^n\times
\P^n$) can be extracted from the global section category.

In~\cite{noncomm1}, two families of categories ${\cal
 S}_{\eta,\eta',x_1,\dots,x_m;q,p}$ and ${\cal
 S}'_{\eta,x_0,x_1,\dots,x_m;q,p}$ were constructed on an analytic curve
$\C^*/\langle p\rangle$. These have the same group of objects as our
categories, and an interpretation of the local sections of the $\Hom$
sheaves as difference operators. Moreover, the categories for general $m$
are cut out from the categories for~$m=0$ by suitable vanishing conditions,
while the categories for~$m=0$ are described via explicit generators given
in terms of theta functions. Switching from multiplicative to additive
notation and replacing $\theta$ by $\vartheta$ then extends this to
arbitrary curves. (In fact,~\cite{noncomm1} gave such an extension by
specifying an explicit gauging by products of Gamma functions that makes
everything elliptic, and observing that the elliptic functions extend. But
of course one could do the same gauging in terms of~$\vartheta$ and $\Gamq$
symbols, so the resulting categories are the same.)

Although those operators are not quite $C_1$-symmetric in our sense, they
are close: indeed, each operator formally takes functions invariant under
$z\mapsto (1-d_1)q+\eta-z$ to functions invariant under $z\mapsto
(1-d_2)q+\eta-z$. This, of course, is easy enough to fix: if we base
change to have an element $\eta/2$ (and recall that we already have an
element $q/2$), then we can compose on both sides by a suitable translation
to make the operator honestly $C_1$-symmetric.

\begin{prop}
 Locally on the base, the global section categories are isomorphic to the
 $C_1$-symmetric versions of the categories ${\cal S}$, ${\cal S}'$
 constructed in~{\rm \cite{noncomm1}}. More precisely, if $v,w$ are arbitrary
 elements of the object group $\Z\langle s,f,e_1,\dots,e_m\rangle$, then
 \begin{gather*}
 \Gamma{\cal S}^{(1)}_{\eta',x_1,\dots,x_m;q,t}(v,w)
 \cong {\cal S}_{2c,2c+\eta',c+x_1,\dots,c+x_m;q;{\cal E}}(v,w),
 \\
 \Gamma{\cal S}^{\prime(1)}_{x_0,x_1,\dots,x_m;q,t}(v,w)
 \cong {\cal S}'_{2c,c+x_0,c+x_1,\dots,c+x_m;q;{\cal E}}(v,w),
 \end{gather*}
 where $c$ is a free parameter.
\end{prop}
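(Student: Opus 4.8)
The plan is to reduce the identification of global-section categories to a comparison of explicit generators and then to a routine check that the imposed vanishing conditions match on both sides. First I would recall that, by the discussion preceding the statement, the categories ${\cal S}^{(n)}_{\eta',x_1,\dots,x_m;q,t}$ and ${\cal S}^{\prime(n)}_{x_0,x_1,\dots,x_m;q,t}$ are cut out of their $m=0$ versions by the stated vanishing conditions on the left coefficients of monomials $\prod_i T_i^{k_i}$, and that the analogous statement holds for the categories ${\cal S}$, ${\cal S}'$ of \cite{noncomm1}: both sides are obtained by imposing parallel vanishing conditions on an $m=0$ base case. Since taking global sections (a direct image, hence left exact) commutes with intersecting subsheaves cut out by vanishing along divisors, it suffices to prove the isomorphism for $m=0$ and then observe that under that isomorphism the two systems of vanishing conditions are carried to one another. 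The latter is a divisor-bookkeeping check: the $x_j$-conditions in our definition are vanishing of coefficients along $z_1=x_j-(2l-d+1)q/2$ for ranges of $l$, and one matches these term by term with the conditions in \cite{noncomm1} after the shift $x_j\mapsto c+x_j$ and the $C_1$-symmetrization translation by $\eta/2$; I would not grind through this but would indicate the index-matching.

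For the $m=0$ comparison, the approach is to write down both sides explicitly as difference operators. On our side, ${\cal S}^{(1)}_{\eta';q,t}$ in degree $0$ and in degree $ds+d'f$ is, by the definition of the even elliptic DAHA and Proposition~\ref{prop:spherical_as_intersection} (via $n=1$, so $W=C_1$, $\tW=\tilde C_1=\tilde A_1$), the $C_1$-invariant part of the rank-1 object ${\cal H}_{\tilde C_1;\vec T_t;\gamma}({\cal E}^{n+3})$, twisted by the line bundle ${\cal L}$ of the stated polarization and by $\sO_{\P^1}(d')$. The local sections are difference operators preserving $C_1$-invariant (i.e.\ $z\mapsto \eta-z$ after the symmetrizing translation) holomorphic functions multiplied by the chosen $\Gamq$-product realizing $\gamma$. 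This is exactly the shape of the generators given in \cite{noncomm1}: the generating $\Hom(v,w)$ bimodules there are spanned by explicit operators whose coefficients are ratios of $\vartheta$ (equivalently $\theta_p$) functions, gauged by products of Gamma functions so as to act between spaces of $p$-periodic (or suitably quasi-periodic) functions. I would match generators directly: on each side a $\Hom$ space is generated over the appropriate structure ring by a small number of such operators, and comparing the divisors of the coefficient $\vartheta$-ratios, the twisting data (polarizations and weights, which are recorded bookkeeping-wise via the $\Gamq$ symbols as in Section~2) and the degrees shows the generators agree after the substitution $(\eta,\eta',x_j)\mapsto(2c,2c+\eta',c+x_j)$ (resp.\ $(2c,c+x_0,c+x_j)$) and the $C_1$-symmetrization. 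Since both categories are generated in the same degrees and the generators match, and since both are strongly flat (our side by Theorem~\ref{thm:CCn_DAHA_strongly_flat} and its corollary; the \cite{noncomm1} side by the results quoted there), an isomorphism on generators extends to an isomorphism of categories, checked against the flat family so that it suffices to verify it generically in $q$, $t$.

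The main obstacle will be the careful tracking of twisting data and the ``local on the base'' caveat. Our categories are defined over the universal curve with specific choices of line-bundle representatives (with $z$-independent parts of polarizations pinned down, or only pinned down up to line bundles on ${\cal E}^{m+3}$ in the odd case), while \cite{noncomm1} works analytically on $\C^*/\langle p\rangle$ with an explicit Gamma-function gauge; reconciling these requires (a) translating multiplicative $\theta$-notation to additive $\vartheta$-notation and the standard elliptic Gamma function to the $\Gamq$ symbols of Section~2, using the identity $\vartheta(z;\tau)=-x^{-1/2}\theta_p(x)/(p;p)_\infty^2$ and the functional equation $\Gamq(q+z)=\vartheta(z)\Gamq(z)$, and (b) matching the polarization/weight bookkeeping so that the gauging on each side produces the same twisting datum $\gamma$. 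The appearance of the constants $c$ (and $\eta/2$, $q/2$) is precisely the freedom needed to absorb the $C_1$-symmetrization and the difference between the two normalizations; showing these can be chosen consistently is where the ``locally on the base'' hypothesis is used. Once the dictionary is set up, the remaining verification — that coefficient divisors and vanishing loci correspond — is routine, and I would present it as such rather than in full detail.
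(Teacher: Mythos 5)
Your overall plan is in the same spirit as the paper's proof, but it leans on a claim that isn't justified and that the paper deliberately avoids: that $\Gamma{\cal S}^{(1)}_{\eta';q,t}$ is generated in degrees $f$, $s$, $s+f$. You assert ``both categories are generated in the same degrees and the generators match, [hence] an isomorphism on generators extends to an isomorphism of categories.'' Generation in those degrees is something you can read off for the categories of \cite{noncomm1} (that is part of their construction), but it is not at all clear a priori for $\Gamma{\cal S}^{(1)}$ --- it is essentially the conclusion you are trying to reach, not an available hypothesis. The paper's argument is asymmetric on purpose: it shows that the \cite{noncomm1} generators in degrees $f$, $s$, $s+f$ land inside $\Gamma{\cal S}^{(1)}$ (using $C_1$-invariance plus the constraint that all poles lie on $[X^{s_1}]$), giving an inclusion, and then closes the gap not by matching generators in the other direction but by observing that the image of this subcategory \emph{saturates the Bruhat filtration} of $\Gamma{\cal S}^{(1)}$ (i.e.\ hits each subquotient line bundle), forcing equality. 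Your appeal to strong flatness via Theorem~\ref{thm:CCn_DAHA_strongly_flat} is the right ingredient, but you need to use it to run the Bruhat-saturation argument, not to justify the unsupported ``generated in the same degrees.''

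For $m>0$ there is a similar soft spot. Reducing to $m=0$ via left exactness of global sections and then ``matching vanishing conditions term by term'' would work only if you literally show the two systems of vanishing divisors agree under the change of variables, which is delicate because the $\delta$-shifts, the $c$-shift, and the $C_1$-symmetrizing translation by $\eta/2$ all interact. The paper sidesteps this divisor bookkeeping by showing one inclusion (that $\cal S$, $\cal S'$ sections satisfy the $\Gamma{\cal S}^{(1)}$ vanishing conditions) and then comparing Hilbert polynomials: both $\Hom$ sheaves are direct images of vector bundles on $\P^1$ with the same Hilbert polynomial (via the Bruhat filtration on one side and the ``order'' filtration on the other), so the inclusion is an equality. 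That dimension count replaces the precise term-by-term index matching you propose, and is the cleaner route. You should either carry through that Hilbert polynomial comparison, or genuinely do the divisor bookkeeping --- ``I would not grind through this'' is not a substitute here, as the index ranges are exactly where a mistake would hide.
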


\begin{proof}
 For $m=0$, ${\cal S}_{2c,2c+\eta';q;{\cal E}}$ and ${\cal
 S}'_{2c,c+x_0;q;{\cal E}}$ are generated in degrees $f$, $s$, $s+f$. The~degree $f$ operators are clearly elements of the corresponding global
 section category, and the $C_1$-symmetry along with the fact that the
 only poles are $[X^{s_1}]$ implies the same for the degree $s$ and $s+f$
 operators. Since the subcategory generated in this way saturates the
 Bruhat filtration, the categories are actually isomorphic.

 For each of the four categories, every $\Hom$ sheaf for~$m>0$ is
 contained in the appropriate $\Hom$ sheaf for~$m=0$, and the image of a
 local section of~${\cal S}$ or ${\cal S}'$ satisfies the correct
 vanishing conditions to be a local section of~$\Gamma{\cal S}^{(1)}$ or
 $\Gamma{\cal S}^{\prime(1)}$. Moreover, the Bruhat filtration and the
 analogous filtration by order tells us that both $\Hom$ sheaves are
 direct images of vector bund\-les~on~$\P^1$ with the same Hilbert
 polynomials, and must therefore be identified by the isomorphism.
\end{proof}

This leads to a particularly nice interpretation of our categories. If~the
rational surface $X_m$ is obtained from a Hirzebruch surface $X_0$ by
blowing up $m$ points of a smooth anticanonical curve, then the line
bundles on~$X_m$ are parametrized by the group $\Z\langle
s,f,e_1,\dots,e_m\rangle$. This then gives rise to a category on this
group of objects by taking the full subcategory of~$\coh(X_m)$ in~which the
objects consist of one line bundle of each isomorphism class. We can, of
course, do~this over the entire moduli stack of such surfaces, which turns
out (at least for~$m>0$) to be isomorphic to~${\cal E}^{m+1}$. We then
obtain a sheaf of categories on this base by taking the appropriate sheaf
version of~$\Hom$ between line bundles. It was shown in~\cite{noncomm1}
that this sheaf of categories is precisely the specialization to~$q=0$ of~${\cal S}$ or ${\cal S}'$, depending on whether $X_0$ comes from a vector
bundle of even or odd degree. (One caveat here is that the fibers in this
category can be slightly different from the categories associated to
individual surfaces; $\Hom$ spaces in the latter may jump in the presence
of~$-2$-curves, while the global category is flat.)

The same, therefore, applies to our categories, and thus for all $n$,
${\cal S}^{(n)}_{\eta',x_1,\dots,x_m;0,0}$ and ${\cal
 S}^{\prime(n)}_{x_0,\dots,x_m;0,0}$ can be interpreted as symmetric
powers of rational surfaces. (To be precise, each fiber is equi\-va\-lent to a
subcategory of the subcategory of line bundles on such a symmetric power,
which is full whenever the ratio of line bundles is acyclic.) The
categories with~$q=0$ and general $t$ are thus commutative deformations of
such powers (some sort of compactified discrete elliptic Calogero--Moser
spaces), while the categories with general $q$, $t$ are further
noncommutative deformations.

We can also obtain analogous deformations for~$\P^2$, though in that case
only the global section category makes sense. If~we restrict $\Gamma{\cal
 S}^{\prime(n)}_{x_0;q,t}$ to the objects in~$\Z(s+f)$, then for~$n=1$,
$q=t=0$, we~can identify consecutive $\Hom$ spaces in such a way as to
obtain the polynomial algebra in three generators. (For $n=1$, $q\ne 0$, we~instead get the three-generator Sklyanin algebra of~\cite{ArtinM/TateJ/VandenBerghM:1990,BondalAI/PolishchukAE:1993}, see
\cite{noncomm1}.) Thus for general $n$, $q=0$, we~again obtain a family of
commutative deformations of~$\Sym^n(\P^2)$, and further noncommutative
deformations for general parameters.

There are some caveats to the above discussion. One is that since we are
including {\em all} line bundles in the construction, there is no canonical
way to associate a projective variety for~$q=0$ or a noncommutative
analogue in general: in general, we~would need to make an explicit choice
of ample divisor, or make some other choice of what it means for a module
over the category to be torsion (i.e., map to the $0$ sheaf). For~$n=1$,
it was shown in~\cite{noncomm1} that any reasonable choice of ample divisor
(in particular, any divisor which is ample on every $X_m$) gives rise to
the same quotient category, and thus there is no difficulty.
Unfortunately, the argument there relied heavily on showing that various
product maps are surjective, and the analogous surjectivity fails for~$n>1$
even for~$q=t=0$. We therefore leave this as an open question.

There is also an issue here that, due to some difficulties in applying the
Hecke algebra ideas to the $\P^2$ case, we~cannot always prove flatness for
general ample divisors. For~any surface other than $\P^2$, this is not a
significant issue, as~there will always be a nonempty subcone of the ample
cone for which everything does work as expected. For~$\P^2$, or in general
outside this subcone, we~will at least be able to show that each $\Hom$
space is flat outside some finite (and most likely empty) set of bad pairs
$(q,t)$.

We should also note that since the data in each case includes an explicit
morphism to a~Hir\-zebruch surface, that a priori the category might depend
on this map (and, when $X_0\cong \P^1\times \P^1$, on the choice of ruling)
and not just on the surface $X_m$. We will show in the following section
that (just as for~$n=1$) this is not an issue, but the argument is
decidedly nontrivial.

Before proceeding to studying flatness for the global section category, we~should note that there are also interpretations of~${\cal H}^{(1)}$ and
${\cal H}^{\prime(1)}$ in terms of the categories constructed in
\cite{noncomm1}. The~point is that for~$n=1$, we~are taking a spherical
algebra relative to a master Hecke algebra. Since this is the endomorphism
algebra of a vector bundle on~$\P^1$, we~immediately find that the
spherical algebra and the DAHA are Morita equivalent. (Note that even for
generic parameters, this does not quite follow from Proposition~\ref{prop:spherical_is_Morita_equivalent}, as~this new Morita equivalence
applies to the compactified versions of the algebras.) Making this Morita
equivalence explicit gives the following.

\begin{prop}
 Let $v,w\in \Z\langle s,f,e_1,\dots,e_m\rangle$. Then
 \begin{gather*}
 \Gamma{\cal H}^{(1)}_{\eta',x_1,\dots,x_m;q}(v,w)
 \cong
 \begin{pmatrix}
 \Gamma{\cal S}^{(1)}_{\eta',x_1,\dots,x_m;q}(v,w)
 & \Gamma{\cal S}^{(1)}_{\eta',x_1,\dots,x_m;q}(v-2f,w)
 \\
 \Gamma{\cal S}^{(1)}_{\eta',x_1,\dots,x_m;q}(v,w-2f)
 & \Gamma{\cal S}^{(1)}_{\eta',x_1,\dots,x_m;q}(v-2f,w-2f)
 \end{pmatrix}\!,
 \end{gather*}
 and similarly for~$\Gamma{\cal H}^{\prime(1)}$.
\end{prop}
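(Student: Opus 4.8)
The plan is to obtain this from the Morita equivalence between the \emph{finite} part of the $n=1$ Hecke algebra and $\sO_{\P^1}$, applied objectwise to the whole sheaf category. Recall from above that for $C_1$ acting on ${\cal E}$ by the sign involution the quotient $\pi\colon{\cal E}\to{\cal E}/C_1\cong\P^1$ is flat, and that in the $n=1$, ``untwisted endpoint'' regime the system of parameters $\vec{T}_t$ is empty, so the finite Hecke algebra appearing as $\Gamma{\cal H}^{(1)}_{\eta',x_1,\dots,x_m;q}(v,v)$ is (a twist of) the master Hecke algebra $\sEnd(\pi_*\sO)$. I would first record the shape of this bundle: since $\chi(\pi_*\sO)=\chi(\sO_{\cal E})=0$ we get $\pi_*\sO\cong\sO_{\P^1}\oplus\sO_{\P^1}(-2)$, the first summand being the canonical $C_1$-invariant subsheaf and the extension splitting because $\Ext^1_{\P^1}(\sO_{\P^1}(-2),\sO_{\P^1})=0$. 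With the normalization ``$\sO_{\P^1}(1)$ pulls back to the bundle with polarization $z^2$'' built into the definitions of ${\cal H}^{(1)}$ and ${\cal S}^{(1)}$, the line bundle $\sO_{\P^1}(2)$ is precisely the one attached to the object $2f$; hence for a general object $v$ the module $L_v\otimes\pi_*\sO$ attached to $v$ in ${\cal H}^{(1)}$ splits as $L_v\oplus L_v(-2)$, i.e.\ as the sum of the ${\cal S}^{(1)}$-modules attached to $v$ and to $v-2f$.

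I would then pick, locally on the base (possible since the root kernel for $C_1$ is trivial, so there is a covering by symmetric idempotents), the symmetric idempotent $e_1$ of the finite Hecke algebra projecting onto the invariant summand, and its complement $e_2=1-e_1$. For each object $v$ these give orthogonal idempotents $e_1^{(v)},e_2^{(v)}$ summing to $1$ in the copy of the finite Hecke algebra $\Gamma{\cal H}^{(1)}(v,v)=\sEnd(L_v\oplus L_v(-2))$. Since every $\Hom$ bimodule $M=\Gamma{\cal H}^{(1)}_{\eta',x_1,\dots,x_m;q}(v,w)$ is cut out from the $m=0$ bimodule by $C_1$-equivariant vanishing conditions and therefore remains a bimodule over $\Gamma{\cal H}^{(1)}(w,w)$ (left) and $\Gamma{\cal H}^{(1)}(v,v)$ (right), the decomposition $M=\bigoplus_{i,j}e_i^{(w)}Me_j^{(v)}$ makes sense, and conjugation by the symmetric idempotents identifies $e_1^{(w)}Me_1^{(v)}=\Gamma{\cal S}^{(1)}(v,w)$, $e_1^{(w)}Me_2^{(v)}=\Gamma{\cal S}^{(1)}(v-2f,w)$, $e_2^{(w)}Me_1^{(v)}=\Gamma{\cal S}^{(1)}(v,w-2f)$ and $e_2^{(w)}Me_2^{(v)}=\Gamma{\cal S}^{(1)}(v-2f,w-2f)$ — the spherical $\Hom$ bimodules being defined by exactly the same vanishing conditions, so those carry over. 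The composition law of ${\cal H}^{(1)}$ then becomes ordinary $2\times2$ matrix multiplication precisely because $e_1,e_2$ are orthogonal and sum to $1$ in each copy of the finite Hecke algebra; at the level of sheaf bimodules this compatibility with the tensor product is immediate from Proposition~\ref{prop:computing_tp}, since the finite Hecke algebra is supported on the diagonal of $\P^1\times\P^1$ so the relevant open subsets are all of $\P^1$. Taking direct images to the base gives the displayed isomorphism, and the ${\cal H}^{\prime(1)}$ case is verbatim, the only change being that the odd-degree $\omega$-twist again only involves line bundles pulled back from $\P^1$ together with $\Gamq$-type gerbe factors that are insensitive to $C_1$.

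The main obstacle is not any individual computation but the bookkeeping behind the claim that the complementary summand of $L_v\otimes\pi_*\sO$ is the ${\cal S}^{(1)}$-module attached to $v-2f$ and not to $v$ shifted by some other line bundle on the base: one has to disentangle the $\sO_{\P^1}(1)$-twist built into the $f$-shift from the $\Gamq$-symbol twists that define the twisting data, so as to be certain the Morita equivalence for the finite Hecke algebra really sews the four spherical $\Hom$ bimodules into the stated matrix rather than into something off by a line bundle on the base; this is exactly why the isomorphism is asserted only locally on the base. Everything else — flatness and local freeness of the pieces involved, coherence of the sheaf bimodules, and the coincidence of the spherical subquotient ${\cal S}^{(1)}$ with $e_1{\cal H}^{(1)}e_1$ — has already been established for these categories above.
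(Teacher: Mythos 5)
Your proposal is correct and is the same argument the paper intends: the paper simply observes that for $n=1$ the spherical algebra is taken relative to the master Hecke algebra $\sEnd(\pi_*\sO_{\cal E})$, a vector bundle endomorphism algebra on $\P^1$, and says ``making this Morita equivalence explicit gives the following.'' You supply exactly that: the computation $\pi_*\sO\cong\sO_{\P^1}\oplus\sO_{\P^1}(-2)$, the identification of the complementary summand with a $-2f$ shift via the normalization of $\sO_{\P^1}(1)$, the use of the symmetric idempotent and its complement to cut each $\Hom$ bimodule into a $2\times2$ block matrix, and the observation that the $C_1$-equivariant vanishing conditions defining the $m>0$ subsheaves are respected by the idempotents. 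Your caveat that the identification is only defined up to twist by a line bundle on the base is also accurate and consistent with how the paper phrases nearby isomorphisms.
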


\begin{rem}
 One can also apply this at the level of sheaf categories on~$\P^1$.
 There one finds (per the analogous statement of~\cite{noncomm1}) that the
 spherical sheaf category is the sheaf ``$\Z$-algebra'' associated to a
 noncommutative $\P^1$-bundle on~$\P^1$~\cite{VandenBerghM:2012}. There
 is, of course, no reason why we could not apply the Morita equivalence
 associated to~$\sO_{\P^1}\oplus \sO_{\P^1}(-2)$ to any such
 noncommutative \mbox{$\P^1$-bundle} on~$\P^1$ and thus obtain an associated DAHA
 (which will always be a degeneration of the elliptic DAHA).
\end{rem}

In~\cite{noncomm1}, it was shown that the algebras ${\cal S}$ satisfy a
``Fourier transform'' symmetry swapping~$\eta$ and $\eta'$ and swapping $s$
and $f$, which in turn induces a symmetry
\begin{gather*}
\Gamma{\cal S}^{(1)}_{\eta',x_1,\dots,x_m;q,0}
\cong
\Gamma{\cal S}^{(1)}_{-\eta',x_1-\eta'/2,\dots,x_m-\eta'/2;q,0},
\end{gather*}
again swapping $s$ and $f$. (We will show in the next section how to
extend this to general $n$.) Since the description of~$\Gamma{\cal
 H}^{(1)}$ in terms $\Gamma{\cal S}^{(1)}$ is not invariant under swapping
$s$ and $f$, this symmetry does not actually extend to the DAHA itself. It
turns out that, at least for~$m=0$, this is a~consequence of the
compactification we performed. Each $\Hom$ space of degree $2s+2f$
contains an element $1$ (in a Fourier-invariant way!); if we localize with
respect to those elements, then the objects $v$ and $v+2s+2f$ of the
category become isomorphic, and thus we may replace $v-2f$ by $v+2s$ in the
above description. Using the translation symmetry, we~can then subtract~$2s$ from $v$ and $v+2s$ at the cost of changing the parameters slightly.
But then swapping $s$ and $f$ recovers the above description of~$\Gamma{\cal H}^{(1)}$. In~other words, the localized DAHAs actually {\em
 do} satisfy a Fourier transformation symmetry (though it is not clear how
to describe it in terms of explicit operators). It is likely that
something similar holds in general (including $n>1$), but this will require
a better understanding of the relevant Morita equivalences.

In the univariate setting, one can gain some insight from the results of
\cite{OblomkovA:2004} on the traditio\-nal~$C^\vee C_1$ Hecke algebra. This
suggests in general that replacing $-2f$ above by $-2f+e_1+\cdots+e_k$
would have the effect of moving the parameters $x_1,\dots,x_k$ from $s_0$
to~$s_n$. If~so, then the Fourier transform would continue to extend to
the noncompact elliptic DAHA in the presence of non-$t$ parameters, but
would effectively swap the roles of the two roots vis-\`a-vis the
parameters.

This degeneration also gives strong evidence that the full $\SL_2(\Z)$
action will {\em not} extend to the elliptic DAHA (compact or not).
Indeed, if one looks at the action of~$\SL_2(\Z)$ on the corresponding
surfaces in the case that the spherical algebra is commutative, one finds that
it relies on the fact that the anticanonical curve at infinity is singular.
Each generator of~$\SL_2(\Z)$ blows up a singular point and then blows down
a different component of the anticanonical curve, so that the anticanonical
curve has the same shape and its complement has not changed, but the actual
projective surface has. Blowing up a smooth point of the anticanonical
curve in general {\em does} change the complement of the anticanonical
curve, and thus we cannot expect this operation to survive to the elliptic
level.

As with sheaves in general, when we take global sections in a family of
sheaf categories, the fibers of the global sections can differ considerably
from the global sections of the fibers. That is, there is a natural
morphism from each fiber of the global section category to the
global section category of the corresponding fiber, but this morphism can
fail to be either injective or surjective. The~failure of injectivity is
particularly bad when we consider that the kernel of the map does not
inherit an interpretation in terms of difference operators. In~particular,
if we have such a failure of injectivity, then we can no longer be
confident that the fiber is a domain.

Each $\Hom$ sheaf in the global section category is the direct image of the
corresponding $\Hom$ bimodule, and we can factor the direct image through
one of the projections $\P^n\times \P^n\to \P^n$ to find that the $\Hom$
sheaf is the direct image of a vector bundle on~$\P^n$. If~every fiber of
that vector bundle is acyclic, then Grauert's theorem tells us that taking
the direct image actually {\em does} commute with passing to fibers. It
turns out that this holds (modulo some genericity assumptions in some
cases) for a sufficiently large class of degrees to allow us to prove in
general that the map is always injective and that the global section
category is flat.

There are two ways to show acyclicity. One is to show that every
subquotient of the Bruhat filtration is acyclic; the other is to use the
symmetric power description for~$t=0$ to deduce acyclicity for~$t=0$ and
thus in a neighborhood of~$t=0$ by semicontinuity. In~either case,
the~bundle is either itself a symmetric power or is built up out of symmetric
powers, and thus we need to understand when such a bundle is acyclic.

Given a sheaf $M$ on a scheme $X$, we~may define a sheaf $\Sym^n(M)$ on the
symmetric power $\Sym^n(X)$ by descending $M^{\boxtimes n}$ through the
quotient by $S_n$.

\begin{lem}
 Let $X$ be a projective scheme over a field $k$, and let $M$ be an
 acyclic sheaf on~$X$. Then $\Sym^n(M)$ is an acyclic sheaf on~$\Sym^n(X)$ for all $n\ge 1$.
\end{lem}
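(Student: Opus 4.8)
The plan is to compute the cohomology of $\Sym^n(M)$ directly from its definition as the descent $(\pi_*M^{\boxtimes n})^{S_n}$, where $\pi\colon X^n\to\Sym^n(X)$ is the quotient map. The first step is a Künneth reduction. Since $\pi$ is finite, hence affine, one has $R^i\pi_*=0$ for $i>0$, so $H^i(\Sym^n(X);\pi_*\mathcal F)=H^i(X^n;\mathcal F)$ for every quasicoherent $\mathcal F$, compatibly with the $S_n$-action. Over the field $k$ the Künneth formula gives $H^i(X^n;M^{\boxtimes n})=\bigoplus_{i_1+\cdots+i_n=i}H^{i_1}(X;M)\otimes_k\cdots\otimes_k H^{i_n}(X;M)$; since $M$ is acyclic every summand with $i>0$ vanishes, and the $i=0$ summand is $\Gamma(X;M)^{\otimes n}$. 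Hence $\pi_*M^{\boxtimes n}$ is itself an acyclic sheaf on $\Sym^n(X)$, with $\Gamma(\Sym^n(X);\pi_*M^{\boxtimes n})=\Gamma(X;M)^{\otimes n}$ as $S_n$-modules.

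It then remains to pass from $\pi_*M^{\boxtimes n}$ to its subsheaf of $S_n$-invariants. When $n!$ is invertible in $k$ this is immediate: the averaging operator $\tfrac1{n!}\sum_{g\in S_n}g$ is an idempotent endomorphism of $\pi_*M^{\boxtimes n}$ with image $\Sym^n(M)$, so $\Sym^n(M)$ is a direct summand of an acyclic sheaf and is therefore acyclic (and $\Gamma(\Sym^n(X);\Sym^n(M))=(\Gamma(X;M)^{\otimes n})^{S_n}$ since taking global sections is left exact and commutes with taking invariants). This already covers characteristic $0$ and all characteristics $>n$.

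For the remaining (small) characteristics, where there is no averaging idempotent, I would argue by induction on $n$ using the addition morphism $\alpha\colon X\times\Sym^{n-1}(X)\to\Sym^n(X)$, which is finite of degree $n$. Writing $\pi'\colon X^{n-1}\to\Sym^{n-1}(X)$ for the quotient and $S_{n-1}\subset S_n$ for the stabilizer of the first coordinate, one checks that $\alpha\circ(\mathrm{id}\times\pi')=\pi$ and, since $\alpha_*$ is left exact and $S_{n-1}$ acts trivially on $\Sym^n(X)$, that $\alpha_*(M\boxtimes\Sym^{n-1}(M))=(\pi_*M^{\boxtimes n})^{S_{n-1}}$. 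By the inductive hypothesis $\Sym^{n-1}(M)$ is acyclic, so $M\boxtimes\Sym^{n-1}(M)$ is acyclic by Künneth, and since $\alpha$ is finite, $(\pi_*M^{\boxtimes n})^{S_{n-1}}$ is acyclic on $\Sym^n(X)$.

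The main obstacle is the last step: deducing that the further subsheaf $\Sym^n(M)=(\pi_*M^{\boxtimes n})^{S_n}\subseteq(\pi_*M^{\boxtimes n})^{S_{n-1}}$ is acyclic, given that the ambient sheaf is — a subsheaf of an acyclic sheaf need not be acyclic. Equivalently, one must show the quotient sheaf $\mathcal Q:=(\pi_*M^{\boxtimes n})^{S_{n-1}}/(\pi_*M^{\boxtimes n})^{S_n}$ is acyclic, after which the long exact sequence forces $H^i(\Sym^n(X);\Sym^n(M))=0$ for $i\ge 2$ and identifies $H^1$ with the cokernel of $\Gamma(\Sym^n(X);(\pi_*M^{\boxtimes n})^{S_{n-1}})\to\Gamma(\Sym^n(X);\mathcal Q)$, which one then checks vanishes by comparing with the already-established value of $\Gamma(\Sym^n(X);\Sym^n(M))$. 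Establishing the acyclicity of $\mathcal Q$ is where the real work lies: $\mathcal Q$ is built out of $\pi_*M^{\boxtimes n}$ via the permutation-module structure $k[S_n/S_{n-1}]$, and one wants to resolve it by acyclic sheaves of the same sort (pushforwards from the intermediate quotients $X^n/H$, $S_{n-1}\subset H\subset S_n$). In the applications — line bundles and their symmetric powers on symmetric powers of rational surfaces — one can alternatively sidestep this entirely by invoking the $n!$-invertible case, which fails only in finitely many characteristics, and treating those finitely many primes directly on the explicit bundles in question.
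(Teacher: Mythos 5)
Your handling of the case where $n!$ is invertible in $k$ coincides with the paper's. The rest of your argument, however, leaves a genuine gap, which you acknowledge yourself: you reduce acyclicity of $\Sym^n(M)=(\pi_*M^{\boxtimes n})^{S_n}$ to acyclicity of the cokernel ${\mathcal Q}$ of its inclusion into $(\pi_*M^{\boxtimes n})^{S_{n-1}}$, but offer no argument for the latter. The proposed resolution by ``pushforwards from intermediate quotients $X^n/H$ with $S_{n-1}\subset H\subset S_n$'' is a dead end, since $S_{n-1}$ is a maximal subgroup of $S_n$ and there are no intermediate $H$. More fundamentally, when the characteristic divides $n$ the permutation module $k[S_n/S_{n-1}]$ has no trivial direct summand (the trivial submodule spanned by the orbit sum maps to $n=0$ under the augmentation), so $\Sym^n(M)$ really need not be a summand of $(\pi_*M^{\boxtimes n})^{S_{n-1}}$, and there is no transfer map that splits your exact sequence. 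Your closing remark about handling the finitely many bad primes ``on the explicit bundles'' in the applications is a workaround specific to those applications, not a proof of the lemma as stated.

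The paper closes the gap by exploiting projectivity rather than group theory, via an induction on the pair $(n,\dim X)$. Choose $l\gg 0$ so that $\Sym^n(M(l))$ is acyclic (Serre vanishing on $\Sym^n(X)$ for the ample bundle $\Sym^n(\sO_X(1))$), and a nonzero section $s\in H^0(\sO_X(l))$ giving an injection $M\hookrightarrow M(l)$. The induced symmetric-power filtration $F_\bullet$ on $\Sym^n(M(l))$ has $F_1=\Sym^n(M)$, and the subquotient $F_{m+1}/F_m$ is the direct image on $\Sym^n(X)$ of $\Sym^m(M(l)/M)\boxtimes\Sym^{n-m}(M)$; for $m>0$ each factor is either a lower symmetric power ($n-m<n$) or supported on the lower-dimensional scheme $\{s=0\}$ (since $M(l)/M$ lives there), so by induction each such subquotient is acyclic. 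This kills $H^p(\Sym^n(M))$ for $p\ge 2$ but leaves $H^1$; that is then eliminated by a Hilbert polynomial count, using $h^0(\Sym^n(M(m)))=\binom{h^0(M(m))+n-1}{n}$ for all $m\ge 0$ to pin down the Hilbert polynomial and deduce $\chi(\Sym^n(M))=\binom{\chi(M)+n-1}{n}=h^0(\Sym^n(M))$, whence $h^1=0$. That Hilbert-polynomial step is the key ingredient your outline lacks; no rearrangement of the group-theoretic reduction alone will supply it.
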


\begin{proof}
 First, note that if $k$ has characteristic $p>n$ or $0$, then this is
 immediate, since $\Sym^n(M)$ is a direct summand of an acyclic sheaf,
 namely the direct image of the acyclic sheaf $M^{\boxtimes n}$.

 In general, we~proceed by induction on the pair $(n,\dim X)$ relative to
 the product partial order. Let $\sO_X(1)$ be a very ample divisor on~$X$, and note that $\Sym^n(\sO_X(1))$ is at least ample on~$\Sym^n(X)$.
 (It can fail to be very ample!) In particular, there exists $l>0$ so
 that
 \begin{gather*}
 \Sym^n(M)\otimes \Sym^n(\sO_X(1))^l\cong \Sym^n(M(l))
 \end{gather*}
 is acyclic. Choose a nonzero section of~$\sO_X(l)$, and use it to embed
 $M$ as a subsheaf of~$M(l)$. This one-step filtration of~$M(l)$ induces
 a symmetric power filtration~$F_\bullet$ of~$\Sym^n(M(l))$ such that
 $F_{m+1}/F_m$ is the direct image on~$\Sym^n(X)$ of the sheaf
 $\Sym^m(M(l)/M)\boxtimes \Sym^{n-m}(M)$ on~$\Sym^m(X)\times
 \Sym^{n-m}(X)$. By induction each subquotient $F_{m+1}/F_m$ for~$m>0$ is
 acyclic, as~each factor is either a symmetric power of lower degree or
 supported on a lower-dimensional projective scheme. Since
 $F_1=\Sym^n(M)$, it follows that $\Sym^n(M(l))/\Sym^n(M)$ is acyclic,
 and thus that $H^p(\Sym^n(M))=0$ for~$p>1$.

 For all $m \ge 0$, we~have $H^0(\Sym^n(M(m))) \cong \Sym^n\big(H^0(M(m))\big)$, and
 thus $h^0(\Sym^n(M(m)))\allowbreak = \binom{h^0(M(m))+n-1}{n}$. Applying this to~$m\gg 0$ lets us compute the Hilbert polynomial of~$\Sym^n(M)$, and then
 setting $m=0$ gives
 \begin{gather*}
 \chi\big(\Sym^n(M)\big)=\binom{\chi(M)+n-1}{n}=\binom{h^0(M)+n-1}{n} =
 h^0\big(\Sym^n(M)\big).
 \end{gather*}
 Since we have already shown that the higher cohomology spaces vanish,
 this implies that $h^1$ also vanishes, and the claim follows.
\end{proof}

By considering subquotients for the Bruhat filtration, we~obtain
the following.

\begin{lem}
 Suppose $d'\ge d-1$. Then every fiber of~${\cal
 S}^{(n)}_{\eta';q,t}(0,ds+d'f)$ is acyclic for the morphism to
 parameter space.
\end{lem}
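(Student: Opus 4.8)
The plan is to filter by Bruhat order, reduce to the individual subquotient line bundles, reduce those to $t=0$, apply the symmetric‑power decomposition of Proposition~\ref{prop:spherical_as_symmetric_power} together with the symmetric‑power‑of‑an‑acyclic‑sheaf Lemma above, and thereby reduce the whole statement to a single degree computation in the univariate case.

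First I would set up the reduction to subquotients. The $\Hom$ bimodule ${\cal S}^{(n)}_{\eta';q,t}(0,ds+d'f)$ carries a finite Bruhat filtration, indexed by the dominant weights $\lambda$ in the relevant interval, inherited from that of ${\cal H}^{(n)}_{\eta';q,t}(0,ds+d'f)$; by the Mackey‑type description in the infinite‑group setting (the analogue for $\tilde{C}_n$ with $W_I=W_J=C_n$ of Proposition~\ref{prop:Mackey_for_Hecke}, applied with $m=0$ so that no extra vanishing conditions intervene) each subquotient is the direct image to $\P^n\times_{{\cal E}^3}\P^n$, along a finite morphism, of $(1,w_\lambda^{-1})_*\bar{\cal L}_\lambda$, where $\bar{\cal L}_\lambda$ is a line bundle on ${\cal E}^n/W_{I(\lambda)}$ and $W_{I(\lambda)}=\mathrm{Stab}_{C_n}(\lambda)$. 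By the flatness Corollary for parabolic quotients of $C_n$ above these subquotients are in fact locally free on $\P^n$, and since a finite morphism is affine, the cohomology on a fibre of $\P^n\times_{{\cal E}^3}\P^n\to{\cal E}^3$ of such a subquotient equals $H^*({\cal E}^n_{pt}/W_{I(\lambda)},\bar{\cal L}_\lambda)$. By the long exact cohomology sequence it therefore suffices to show each $\bar{\cal L}_\lambda$ is acyclic on every fibre.

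Next I would reduce to $t=0$. As in the discussion preceding the statement, the $t\mapsto q-t$ symmetry (Proposition~\ref{prop:t_q-t_symmetry}) permutes the Bruhat subquotients while the $t$‑dependence of the associated polarization is linear; hence that polarization is $t$‑independent, and more precisely $\bar{\cal L}_\lambda$ is, up to tensoring by a line bundle pulled back from the parameter base ${\cal E}^3$, independent of $t$. On a fibre such a pullback is trivial, so it is enough to treat $t=0$. For $t=0$, Proposition~\ref{prop:spherical_as_symmetric_power} gives ${\cal S}^{(n)}_{\eta';q,0}\cong\Sym^n({\cal S}^{(1)}_{\eta';q,0})$; tracing through the explicit description of tensor and symmetric powers of difference operators (and using that the Bruhat order on $W\backslash\tilde{C}_n/W$ is dominance order on weights, so $\lambda$ corresponds to a multiset of univariate weights $\mu_1>\cdots>\mu_r\ge0$ with multiplicities $a_1,\dots,a_r$) identifies ${\cal E}^n/W_{I(\lambda)}$ with $\prod_{j:\mu_j\ne0}\Sym^{a_j}({\cal E})\times\prod_{j:\mu_j=0}\Sym^{a_j}({\cal E}/A_1)$ and $\bar{\cal L}_\lambda$ with $\boxtimes_j\Sym^{a_j}({\cal M}_{\mu_j})$, where ${\cal M}_\mu$ is the univariate Bruhat subquotient of ${\cal S}^{(1)}_{\eta';q,0}(0,ds+d'f)$ attached to $\mu$ — a line bundle on ${\cal E}$ when $\mu\ne0$ and a line bundle on $\P^1={\cal E}/A_1$ when $\mu=0$. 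By Künneth and the symmetric‑power‑of‑an‑acyclic‑sheaf Lemma above, $\bar{\cal L}_\lambda$ is acyclic on every fibre as soon as every ${\cal M}_\mu$ is.

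This leaves the univariate claim: for $d'\ge d-1$, each subquotient ${\cal M}_\mu$ of ${\cal S}^{(1)}_{\eta';q,0}(0,ds+d'f)$ is acyclic on every fibre. Since ${\cal M}_0$ is a line bundle on $\P^1$ this just asks $\deg{\cal M}_0\ge-1$, while for $\mu\ne0$, ${\cal M}_\mu$ being a line bundle on ${\cal E}$, acyclicity on \emph{every} fibre forces $\deg{\cal M}_\mu\ge1$ (the degenerate case $\deg=0$ would fail on those fibres where the $\Pic^0$‑class becomes trivial). The degree of ${\cal M}_\mu$ is fixed by its N\'eron--Severi class, hence by $\mu,d,d'$ alone; one reads it off the leading‑coefficient line bundle as the sum of the length contribution of the Bruhat divisor $D_{t_{2\mu}}$ and the contribution of the polarization of the $\omega$‑twisting bundle from the definition (whose $\sum z_i^2$‑coefficient is $-(d'-d+1)$), and this sum lands in the acyclic range exactly when $d'\ge d-1$. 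Making this bookkeeping precise — in particular checking that the boundary value $d'=d-1$ gives $\deg_{\P^1}{\cal M}_0=-1$ (still acyclic) while keeping every $\mu\ne0$ piece at degree $\ge1$ — is the one genuinely computational step; the remaining ingredients are formal. (Alternatively, for the univariate input one may invoke the corresponding acyclicity/flatness statements for the categories ${\cal S}_{\bullet;q;{\cal E}}$ of \cite{noncomm1}, via the identification $\Gamma{\cal S}^{(1)}_{\eta';q,0}\cong{\cal S}_{2c,2c+\eta';q;{\cal E}}$.)
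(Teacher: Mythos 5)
Your proposal is correct and follows essentially the approach the paper intends (and relegates to a remark that "the first two lemmas are straightforward"): reduce along the Bruhat filtration to subquotients, observe that these are outer tensor products of symmetric powers of univariate subquotients (with the $t$-dependence killed by the $t\mapsto q-t$ symmetry up to line bundles pulled back from the base), invoke the preceding $\Sym^n$-acyclicity lemma, and check the resulting univariate degree condition. The boundary arithmetic you indicate is also right: the tight constraint comes from the $\mu=0$ subquotient $\sO_{\P^1}(d'-d)$, whose acyclicity is exactly $d'\ge d-1$, while the degree of the subquotient on $E$ for $\mu>0$ is $2(d-2\mu)+2(d'-2\mu)+8\mu > 0$ whenever $d'\ge d-1$.
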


\begin{lem}
 Suppose $2d'/3\ge d-1$. Then every fiber of~${\cal
 S}^{\prime(n)}_{x_0;q,t}(0,ds+d'f)$ is acyclic for the
 morphism to parameter space.
\end{lem}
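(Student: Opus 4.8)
The plan is to run the Bruhat-filtration argument that established the even analogue (the preceding lemma), tracking the one place where the odd twisting data change the numerology. Fix a geometric point $s$ of the parameter space $\mathcal E^3$; we must show that the vector bundle on $\P^n$ obtained by pushing $\mathcal S^{\prime(n)}_{x_0;q,t}(0,ds+d'f)$ forward along a projection $\P^n\times_{\mathcal E^3}\P^n\to\P^n$ has vanishing higher cohomology on the fibre over $s$. Since acyclicity is stable under extensions and the relevant $\Hom$ bimodule carries a finite Bruhat filtration (indexed, as in the proof of Theorem \ref{thm:CCn_DAHA_strongly_flat}, by dominant weights $\lambda$ with $\lambda_1\le d/2$), it suffices to prove acyclicity of each graded piece $\mathrm{gr}_\lambda$. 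By the general description of subquotients, $\mathrm{gr}_\lambda$ is the direct image along the graph of a translation of a line bundle $\mathcal L_\lambda$ on $\mathcal E^n/W_{I(\lambda)}$, where $W_{I(\lambda)}\subset C_n$ is the $C_n$-stabilizer of $\lambda$; pushing along a projection therefore replaces $\mathrm{gr}_\lambda$ by $\pi_{\lambda*}\mathcal L_\lambda$ for $\pi_\lambda\colon\mathcal E^n/W_{I(\lambda)}\to\P^n$, and since $\pi_\lambda$ is finite and flat (the flatness corollary coming from smoothness of the quotients $\mathcal E^n/W_I$), its higher direct images vanish and $H^p(\P^n_s,\pi_{\lambda*}\mathcal L_\lambda|_s)\cong H^p((\mathcal E^n/W_{I(\lambda)})_s,\mathcal L_\lambda|_s)$. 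So the claim reduces to acyclicity of $\mathcal L_\lambda$ on $(\mathcal E^n/W_{I(\lambda)})_s$.

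Next I would reduce to $t=0$. As in the discussion preceding the even lemma, a filtered isomorphism preserves Bruhat subquotients, so the polarization of $\mathcal L_\lambda$ is invariant under $t\mapsto q-t$ modulo line bundles pulled back from the parameter space (this is exactly Proposition \ref{prop:t_q-t_symmetry}); since the $t$-dependent part of that polarization is linear in $t$, it must vanish. Over the geometric fibre the base bundles are trivial, so $\mathcal L_\lambda|_s$ is, up to isomorphism, independent of $t$, and we may take $t=0$.

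At $t=0$, Proposition \ref{prop:spherical_as_symmetric_power} identifies $\mathcal S^{\prime(n)}_{x_0;q,0}$ with $\Sym^n(\mathcal S^{\prime(1)}_{x_0;q,0})$ compatibly with the Bruhat filtrations, so $\mathcal L_\lambda$ is the descent through $W_{I(\lambda)}$ of the external tensor product $\boxtimes_i\mathcal M_{\lambda_i}$ of the univariate subquotient line bundles $\mathcal M_k$ on $\mathcal E$. Writing $W_{I(\lambda)}=\bigl(\prod_a S_{m_a}\bigr)\times C_{n_0}$, where the $S_{m_a}$ permute the coordinates on which $\lambda$ takes a common nonzero value and $C_{n_0}$ is the hyperoctahedral group on the coordinates where $\lambda_i=0$, we get $(\mathcal E^n/W_{I(\lambda)})_s\cong\prod_a\Sym^{m_a}(\mathcal E_s)\times\P^{n_0}$ and correspondingly $\mathcal L_\lambda|_s\cong\bigl(\boxtimes_a\Sym^{m_a}(\mathcal M_{\lambda^{(a)}}|_s)\bigr)\boxtimes\Sym^{n_0}(\mathcal M'_0|_s)$, where $\mathcal M'_0$ is the descent of the $C_1$-equivariant bundle $\mathcal M_0$ to $\P^1$ (it genuinely descends, the relevant equivariant bundle having even valuation along the reflection hypersurface). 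By Künneth it is enough that each factor be acyclic, and by the symmetric-power acyclicity lemma (applied to $\Sym^{m_a}(\mathcal E_s)$ and to $\P^{n_0}=\Sym^{n_0}(\P^1)$) this follows once each univariate subquotient $\mathcal M_k|_s$ with $k\neq0$ is acyclic on $\mathcal E_s$, i.e. of degree $\ge1$, and $\mathcal M'_0|_s$ is acyclic on $\P^1$, i.e. of degree $\ge-1$.

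The remaining, and main, step is the explicit computation of these degrees from the polarization of the $\omega$-morphism in the definition of $\mathcal H^{\prime(n)}_{x_0;q,t}$. Propagating polarizations along a reduced word — alternately applying the leading-coefficient divisors of the finite Hecke algebra $\mathcal H_{C_1;\vec T}(\mathcal E)$ and the $\omega$-twists, whose $\sum z_i^2$-coefficient in the odd case is controlled by $2d'_1-2d'_2+3$ rather than the even case's $d'_1-d'_2+1$ — one reads off the degree of $\mathcal M_k$ on $\mathcal E$, and then the degree on $\P^1$ after the degree-$2$ descent $\mathcal E\to\P^1$; the worst case is the extremal subquotient, where $\lambda=(d/2,\dots,d/2)$ is maximal at level $d$. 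One then checks that the minimal degree so obtained is nonnegative precisely when $2d'/3\ge d-1$, the factor $2/3$ reflecting the different $\sum z_i^2$-coefficients of the odd and even $\omega$-twists. This bookkeeping is routine but is where all the content lies; the three reductions above are structurally identical to those underlying the even lemma, which may be cited as a template. The one genuine point of care is the zero block: $\mathcal M'_0$ lives on $\P^1$ with acyclicity threshold $-1$, not on $\mathcal E_s$ with threshold $1$, so the extremal inequality should be verified separately there, though it is again subsumed by $2d'/3\ge d-1$.
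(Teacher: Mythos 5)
Your overall strategy is the one the paper has in mind — the paper dismisses the first two lemmas as ``straightforward'' precisely because the reductions you spell out (Bruhat filtration; $t$-independence of subquotient degrees via the $t\mapsto q-t$ symmetry; symmetric/exterior-power decomposition at $t=0$; symmetric-power acyclicity lemma; check univariate degrees) have all been set up in the preceding paragraphs. Your reduction steps are correct and your heuristic for the $2/3$ is essentially right (it reflects $-K_{{\mathbb F}_1}=2s+3f$ vs.\ $-K_{{\mathbb F}_0}=2s+2f$, which is what the odd $\omega$-twist polarization is encoding).

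However, there is a concrete error in the bookkeeping step, which is where the content lies. You write that ``the worst case is the extremal subquotient, where $\lambda=(d/2,\dots,d/2)$ is maximal at level $d$.'' This is backwards. The leading (top) Bruhat subquotient comes from a line bundle on ${\cal E}$ of degree $v\cdot(-K)=d+2d'$, which is the \emph{largest} degree in the filtration; dropping down the Bruhat order by one dominant weight subtracts $-K$ from $v$, which decreases the degree of the corresponding univariate subquotient by $(-K)^2=8$. So the constraint comes from the \emph{minimal} subquotient: for $d$ even it is the $\lambda_i=0$ block on $\P^1$, which is $\sO_{\P^1}(d'-3d/2)$ (need $d'-3d/2\ge -1$, i.e.\ $2d'/3\ge d-2/3$, which for integers is $2d'/3\ge d-1$), and for $d$ odd it is the $\lambda_i=1/2$ block on ${\cal E}$, of degree $2d'-3d+4$ (need $\ge 1$, again $2d'/3\ge d-1$). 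If you actually ran the computation starting from $\lambda=(d/2,\dots,d/2)$ you would get a vacuous bound; the inequality is detected at the bottom of the filtration, not the top. You do flag the zero block as needing separate care, so the confusion may self-correct in practice, but as written the proposal misidentifies which subquotient drives the bound, and that is the one step that cannot be called routine.
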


\begin{lem}
 Suppose $d'\ge \max(d,d/2+r_1)$ and $0\le r_1\le d$. Then every fiber of~${\cal S}^{(n)}_{\eta',x_1;q,t}(0,ds+d'f-r_1e_1)$ is acyclic for the morphism
 to parameter space.
\end{lem}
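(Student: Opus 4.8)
The plan is to follow the strategy of the two preceding acyclicity lemmas and reduce, via the Bruhat filtration on the extended affine Weyl group, to the acyclicity of each filtration subquotient of ${\cal S}^{(n)}_{\eta',x_1;q,t}(0,ds+d'f-r_1e_1)$, which — after descent and direct image — is a (locally free) vector bundle on $\P^n={\cal E}^n/C_n$. Since acyclicity is stable under extensions, and the restriction to a fiber over parameter space of each $\Hom$ bimodule is an iterated extension of sheaves of the form $(1,w^{-1})_*{\cal L}_w$ with ${\cal L}_w$ a bundle on a quotient $\P^n/W_{I(w)}$, whose cohomology on $\P^n\times\P^n$ agrees with that of ${\cal L}_w$ on $\P^n$ because the graph and quotient maps are finite, it suffices to show each such ${\cal L}_w$ is acyclic on every fiber. (The ``in particular'' assertions then follow from cohomology and base change exactly as in the preceding lemmas.)

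First I would identify the subquotients ${\cal L}_w$ explicitly. By the Corollary to Theorem~\ref{thm:CCn_DAHA_strongly_flat} these are saturated, i.e.\ cut out from the $m=0$ subquotients by precisely the stated $e_1$-vanishing conditions; and by the discussion following Proposition~\ref{prop:t_q-t_symmetry}, the subquotient polarizations are, up to line bundles on the base, independent of $t$, so I may compute at $t=0$. There Proposition~\ref{prop:spherical_as_symmetric_power} identifies ${\cal S}^{(n)}_{\eta',x_1;q,0}$ with $\Sym^n({\cal S}^{(1)}_{\eta',x_1;q,0})$, and the Bruhat filtration of the symmetric power is induced by symmetrizing the univariate ones. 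Consequently each ${\cal L}_w$ is the descent to $\P^n/W_{I(w)}$ of an outer tensor product $\boxtimes_j {\cal L}^{(1)}_{b_j}$ of univariate Bruhat subquotients of ${\cal S}^{(1)}_{\eta',x_1;q,t}(0,ds+d'f-r_1e_1)$, the parts $b_j$ of the dominant weight and the distribution of the $r_1$ vanishing conditions among the coordinates being dictated by $w$; equivalently, by the Symmetric Power Lemma proved above, ${\cal L}_w$ is a symmetric-power-type sheaf built from univariate pieces. By that Lemma it is acyclic as soon as every univariate building block ${\cal L}^{(1)}_{b_j}$ is acyclic on $\P^1$.

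The main obstacle is therefore the combinatorial degree computation in the univariate case. A line bundle on $\P^1$ is acyclic iff its degree is $\ge-1$, and one must verify that every univariate subquotient ${\cal L}^{(1)}_{b}$ that can occur — for every admissible way of splitting $d$, $d'$ and $r_1$ across coordinates — has degree $\ge-1$ exactly when $d'\ge\max(d,\,d/2+r_1)$ and $0\le r_1\le d$. I would compute these degrees from the explicit univariate generators of \cite{noncomm1} (equivalently, from the extended-affine Bruhat filtration of ${\cal S}^{(1)}$), using the elementary-transformation symmetry in $x_1$ to exchange the cases $r_1>d/2$ and $r_1<d/2$, and the trivial Proposition on exponents $r_m\le0$ together with its elementary-transformation dual on $r_m\ge d$ to dispose of the boundary values $r_1\in\{0,d\}$ and to reconcile with the $m=0$ lemma. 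The two inequalities in the hypothesis should appear as the extreme cases: $d'\ge d$ controls the subquotient on which no $e_1$-vanishing is imposed, while $d'\ge d/2+r_1$ controls the subquotient on which all $r_1$ vanishing conditions are concentrated on a single coordinate. Once every univariate block is seen to have large enough degree, the Symmetric Power Lemma finishes the $n$-variable statement.
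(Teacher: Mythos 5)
Your overall strategy matches the paper's: reduce via the Bruhat filtration to the univariate subquotients, handle $r_1>d/2$ by the elementary transformation symmetry (which swaps it with $r_1\le d/2$ at the cost of parity), and finish by a degree count, with the stated hypothesis emerging as the union of the two convex cones so produced. The symmetric-power-lemma reduction you invoke is correct and is implicit in the paper's treatment of the preceding two lemmas, so that part is fine.

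There is, however, a concrete slip in the degree criterion that you should fix before the argument will close. You write that ``a line bundle on $\P^1$ is acyclic iff its degree is $\ge-1$'' and then propose to check that every univariate subquotient ${\cal L}^{(1)}_b$ has degree $\ge -1$. But for $b>0$ the univariate subquotient is the descent of a line bundle on the elliptic curve ${\cal E}$ (not on $\P^1$); only the $b=0$ piece lives on $\P^1$. Since you need acyclicity on \emph{every} fiber over parameter space, and a family of degree-$0$ line bundles on ${\cal E}$ hits the trivial bundle somewhere, the correct threshold for the $b>0$ pieces is degree $\ge 1$, not $\ge -1$. This changes the numerics by two and is exactly what makes the hypothesis $d'\ge\max(d,\,d/2+r_1)$ (rather than something two units weaker) come out.

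You also leave the key bookkeeping step as ``I would compute these degrees from the explicit univariate generators.'' The paper does something simpler and more conceptual: it observes directly that for $r_1\le d/2$, imposing the $e_1$-vanishing conditions subtracts $r_1$ from the degree of the top subquotient, $r_1-1$ from the next, and so on down to $0$, so one only has to check that each of the (already acyclic, by the $m=0$ lemma) subquotients has at least that much margin. With that observation, the case $r_1\le d/2$ gives one convex cone, and the elementary-transform image of the same cone handles $r_1>d/2$; their union is the stated hypothesis. Your closing guess about which inequality ``controls'' which subquotient is not quite how the two constraints arise — $d'\ge d$ is the binding constraint on the $r_1\le d/2$ side, and $d'\ge d/2+r_1$ is the transformed version of that same constraint on the $r_1>d/2$ side — but this is a presentational issue rather than a gap, once the degree-subtraction observation and the corrected $E$-versus-$\P^1$ threshold are in place.
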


\begin{proof} The first two lemmas are straightforward. For~the third,
 note that if $r_1\le d/2$, then imposing the vanishing conditions
 subtracts $r_1$ from the degree of the top subquotient, $r_1-1$ from the
 next, etc., until we reach 0, and in each case there is sufficient degree
 to do this without becoming negative (or 0, apart from the subquotient
 supported on~$\P^1$). For~$r_1>d/2$, we~apply the elementary
 transformation symmetry to reduce to a $r_1\le d/2$ case with the
 opposite parity. Each case gives a convex cone in which we are
 guaranteed acyclicity, and combining the cones gives the desired result.
\end{proof}

\begin{prop}
 Suppose $v=ds+d'f-r_1e_1-\cdots-r_me_m$ satisfies the inequalities $d'\ge
 \max(d,d/2+r_1)$, $d\ge r_1+r_2$ and $r_1\ge r_2\ge\cdots\ge r_m\ge 0$,
 and either $v=0$ or $2d+2d'-r_1-r_2-\cdots-r_m>0$. Then every fiber of~${\cal S}^{(n)}_{\eta',x_1,\dots,x_m;q,t}(0,v)$ is acyclic for the morphism
 to parameter space.
\end{prop}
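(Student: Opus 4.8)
The plan is to run the same argument used for the three lemmas immediately preceding the statement, but carrying all $m$ parameters simultaneously. First I would reduce to the analogous statement for the subquotients of the Bruhat filtration. The subsheaf of ${\cal S}^{(n)}_{\eta',x_1,\dots,x_m;q,t}(0,v)$ attached to any Bruhat order ideal is $S$-flat with locally free direct image in each copy of $\P^n$, by Theorem~\ref{thm:CCn_DAHA_strongly_flat} and its corollary, so on any fiber this filtration specializes to a filtration of the fiber by subsheaves whose subquotients are the fibers of the corresponding Bruhat subquotients. By the long exact cohomology sequence on $\P^n$, the fiber of $(0,v)$ is acyclic as soon as the fiber of every Bruhat subquotient is acyclic. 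Thus it suffices to prove acyclicity of every Bruhat subquotient on every fiber, exactly as the preceding lemmas do for the special cases.

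Next I would identify each subquotient explicitly. A subquotient is indexed by a dominant weight $\lambda=(\lambda_1\ge\cdots\ge\lambda_n\ge 0)$ with $\lambda_1\le d/2$, and is the direct image to $\P^n={\cal E}^n/C_n$ of the descent to ${\cal E}^n/W_\lambda$, with $W_\lambda\subset C_n$ the stabilizer of $\lambda$, of an outer tensor product $L_{\lambda_1}\boxtimes\cdots\boxtimes L_{\lambda_n}$ of univariate subquotient line bundles with the $x_j$-vanishing conditions imposed. By Proposition~\ref{prop:spherical_as_symmetric_power} this description holds at $t=0$, and since the polarizations, equivariant structures and imposed vanishing conditions of the subquotients are, up to line bundles pulled back from the base, independent of $t$, the same description of the subquotient sheaf on $\P^n$ is valid for every $t$ and hence on every fiber (a base line bundle tensors the fiber by a one-dimensional space and so does not affect acyclicity). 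Writing $W_\lambda=C_{a_0}\times\prod_{j\ge 1}S_{a_j}$, where $a_0$ is the number of vanishing parts and $a_j$ the multiplicity of the $j$-th nonzero value $\mu_j$, the subquotient becomes the finite direct image under ${\cal E}^n/W_\lambda=\P^{a_0}\times\prod_{j\ge 1}\Sym^{a_j}({\cal E})\to\P^n$ of an external product of $\Sym^{a_0}$ of the $\P^1$-bundle attached to the value $0$ and of $\Sym^{a_j}(L_{\mu_j})$ for $j\ge 1$, where $L_{\mu_j}$ is a line bundle on ${\cal E}$. Invoking the $\Sym^n$-acyclicity lemma (for a line bundle on $\P^1$ of degree $\ge -1$, or on ${\cal E}$ of degree $\ge 1$, which is acyclic), the Künneth formula for the external product, and the fact that finite direct image preserves acyclicity, each such subquotient is acyclic on every fiber provided the relevant univariate degrees are $\ge -1$ on the $\P^1$ factor and $\ge 1$ on the elliptic factors.

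The remaining and main task is therefore the purely combinatorial degree estimate. As in the proof of the third preceding lemma, imposing the vanishing condition attached to a parameter $x_i$ on the univariate piece for a given value subtracts a run $r_i,r_i-1,\dots$ of integers from its degree, truncated so as to stay $\ge 1$ (or $\ge -1$, resp. $\ge 0$, for the piece supported on the genuine $\P^1$), and the cumulative effect over $x_1,\dots,x_m$ is controlled once $r_1\ge r_2\ge\cdots\ge r_m\ge 0$. Cases with some $r_i$ outside $[0,d/2]$ are first reduced, as in the proof of Theorem~\ref{thm:CCn_DAHA_strongly_flat}, using the $W(D_m)$ elementary-transformation symmetry (which may interchange the even and odd families ${\cal S}^{(n)}$ and ${\cal S}^{\prime(n)}$), so one may assume $0\le r_i\le d/2$; the odd family is treated identically, the only differences being the extra ${\mathfrak q}$ factor in the residue conditions and the modified polarization, neither of which affects the degree count. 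In this range, $d'\ge\max(d,d/2+r_1)$ bounds the $s$- and $f$-degrees from below so that the top univariate pieces survive all the subtractions, $d\ge r_1+r_2$ prevents any single univariate coefficient from being over-constrained by two parameters at once, and $2d+2d'-r_1-\cdots-r_m>0$ (or $v=0$) handles the lowest-weight piece, where the running subtractions are largest and where the degree would otherwise be pushed below $-1$. I expect the hard part to be exactly this last step: tracking, uniformly in $\lambda$, which divisors the interaction of the twisting datum and the several vanishing conditions forces into each coefficient, so that the estimate can be stated cleanly for all dominant weights at once — this is what makes the general-$m$ case substantially more delicate than the $m\le 1$ lemmas.
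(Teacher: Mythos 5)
Your reduction to the Bruhat subquotients and the identification of each subquotient as a finite direct image of an outer tensor product of symmetric powers of univariate subquotients is exactly how the paper begins. Where you and the paper diverge — and where your argument stops short — is the final step of establishing acyclicity of the univariate subquotients. You propose to compute, uniformly over all dominant weights $\lambda$, the cumulative effect of all $m$ vanishing conditions on the degree of every univariate subquotient, and you explicitly flag this as ``the hard part'' that you haven't carried out. That flag is accurate: there is a genuine gap. Your sketch gives the mechanism (each $x_i$ subtracts a run $r_i, r_i-1,\dots$ from the degrees, with $d\ge r_1+r_2$ controlling collisions), but you have not verified that, after all these subtractions, every intermediate subquotient has degree $\ge 1$ on ${\cal E}$ (resp.\ $\ge 0$ on the $\P^1$ piece); without that verification the proof is incomplete, since the hypotheses as stated bound only the top degree $2d+2d'-r_1-\cdots-r_m$ directly.

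The paper sidesteps the uniform-in-$\lambda$ bookkeeping entirely by a short induction on $v$. In the univariate case ($n=1$, which your reduction makes sufficient), the leading Bruhat subquotient is a line bundle on ${\cal E}$ of degree $2d+2d'-r_1-\cdots-r_m$, positive by hypothesis; the complementary subsheaf is \emph{not} analyzed degree-by-degree but is instead identified with the $\Hom$ sheaf for $v-(2s+2f-e_1-\cdots-e_m)$, to which the same proposition applies by induction. The only thing that needs checking is that this shifted $v$ still satisfies the hypotheses: $d'\ge\max(d,d/2+r_1)$, $d\ge r_1+r_2$ and $r_1\ge\cdots\ge r_m$ are clearly preserved, while $r_m\ge 0$ can fail only if $r_m=0$ (so the parameter can be dropped) and $2d+2d'-\sum r_i>0$ changes by $m-8$, so persists for $m\ge 8$ and is automatic from the other inequalities when $m<8$. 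This induction on $v\mapsto v-(2s+2f-e_1-\cdots-e_m)$ is the missing idea; with it, no explicit tracking of the intermediate subquotient degrees is needed, which is precisely what makes the general-$m$ case tractable.
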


\begin{proof}
 Every subquotient is the direct image under a finite morphism of an outer
 tensor product of symmetric powers of subquotients of the $n=1$ case. The~constraints on~$v$ ensure that every univariate subquotient is
 acyclic: either (the direct image of) a line bundle of positive degree on~${\cal E}$ or a line bundle of nonnegative degree on~$\P^1$. It follows
 that every Bruhat subquotient for general $n$ is acyclic, and thus the
 same holds for the full $\Hom$ space.

 To check the univariate assertion, note that for~$d=0$, the vector bundle
 is simply $\sO_{\P^1}(d')$, so there is no problem. For~$d>0$, the
 leading subquotient in the filtration comes from a line bundle on~${\cal
 E}$ of degree $2d+2d'-r_1-\cdots-r_m$, so is positive, and the
 corresponding subsheaf is the same as the $\Hom$ sheaf obtained by
 subtracting $2s+2f-e_1-\cdots-e_m$ from $v$ (unless $d=1$, when the
 subsheaf is trivial and there is nothing further to discuss). If~$m>1$
 (the $m=1$ case already having been dealt with), then this subtraction
 preserves all of the inequalities except possibly $r_m\ge 0$ and
 $2d+2d'-r_1-\cdots-r_m>0$. The~first inequality could only be violated
 if we had $r_m=0$, in which case we might as well have omitted that
 parameter. For~the other inequality, we~are adding $m-8$ to the
 left-hand side, so there is no problem if $m\ge 8$. But if $m<8$, then
 the inequality is implied by the other inequalities.
\end{proof}

Note that this proposition is already stronger than it seems, as~we can
always arrange to have the inequalities $d\ge r_1+r_2$, $r_1\ge
r_2\ge\cdots\ge r_m\ge 0$ by applying a suitable combination of elementary
transformations and setting negative $r_i$ to 0. Indeed, with the
exception of the final inequality, this is just stating that the vector $v$
is in the fundamental chamber for the corresponding action of~$W(D_m)$. In~particular, for~any vector $v$, we~can use this proposition to find an
explicit $d'$ such that $v+d'f$ satisfies acyclicity. (The existence of
such a $d'$ was of course already guaranteed by Serre vanishing.) If
$r_1\le d/2$, then this bound is pretty close to tight (based on what we
know about the $n=1$ case), but for~$r_1\ge d/2$, the following result
suggests that there is considerable room for improvement.

\begin{prop}
 Suppose $v=ds+d'f-r_1e_1-\cdots-r_me_m$ satisfies the inequalities $d'\ge
 d\ge r_1+r_2$; $r_1\ge r_2\ge\cdots\ge r_m\ge 0$; and either $v=0$ or
 $2d+2d'-r_1-r_2-\cdots-r_m>0$. Then there is a codimension~$\ge 2$
 subscheme of parameter space, not meeting the subschemes $t=0$ or~$t=q$,
 such that every fiber of~${\cal S}^{(n)}_{\eta',x_1,\dots,x_m;q,t}(0,v)$
 on the complement is acyclic for the morphism to parameter space.
\end{prop}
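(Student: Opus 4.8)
Write $S\cong\mathcal E^{m+3}$ for the parameter space of ${\cal S}^{(n)}_{\eta',x_1,\dots,x_m;q,t}$ (coordinates $\eta'$, $x_1,\dots,x_m$, $q$, $t$; the odd case is identical), and let $\mathcal V$ be the vector bundle on $\P^n$ over $S$ obtained by pushing the $\mathrm{Hom}$ bimodule ${\cal S}^{(n)}_{\eta',x_1,\dots,x_m;q,t}(0,v)$ forward along one of the two projections $\P^n\times\P^n\to\P^n$, as in the discussion preceding the acyclicity lemmas. The plan is to take the desired subscheme to be the (reduced) non-acyclic locus $Z:=\{s\in S:H^{>0}(\mathcal V_s)\ne 0\}$, and to prove three things: $Z$ is closed; $Z$ is disjoint from $\{t=0\}$ and from $\{t=q\}$; and $Z$ has codimension $\ge 2$. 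Closedness is immediate from semicontinuity once one knows $\mathcal V$ is $S$-flat, which is the content of the corollary to Theorem~\ref{thm:CCn_DAHA_strongly_flat} (indeed $\mathcal V$ is locally free, so $\chi(\mathcal V_s)$ is constant on $S$); this constancy of $\chi$ will be used below.

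For disjointness from $\{t=0\}$: at a point $s$ with $t=0$, Proposition~\ref{prop:spherical_as_symmetric_power} identifies ${\cal S}^{(n)}_{\eta',x_1,\dots,x_m;q,0}(0,v)$ with $\Sym^n$ of the univariate object ${\cal S}^{(1)}_{\eta',x_1,\dots,x_m;q,0}(0,v)$, compatibly with pushforward to $\P^n=\Sym^n(\P^1)$, so $\mathcal V_s\cong\Sym^n(\mathcal V^{(1)}_{s'})$ for the corresponding univariate bundle $\mathcal V^{(1)}$ on $\P^1$. By the lemma on symmetric powers of acyclic sheaves it then suffices to know that every fibre of $\mathcal V^{(1)}$ is acyclic on $\P^1$; and since for $n=1$ there are no interior nodes, $t$ drops out, so we need only univariate acyclicity for all $q$, under $d'\ge d\ge r_1+r_2$, $r_1\ge\cdots\ge r_m\ge 0$, and $2d+2d'-\sum r_i>0$ (i.e.\ $-K_{X_m}\cdot v>0$). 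At $q=0$ and generic values of the remaining parameters the univariate object is the global-sections bundle of $\sO_{X_m}(v)$ on a smooth rational surface $X_m$, with $v$ lying in a subcone of the nef cone (the hypotheses force $d\ge r_i$, $d'\ge r_i$, $d+d'\ge r_i+r_j+r_k$, etc.), hence acyclic; this together with the flatness of the univariate category and the cohomological estimates of \cite{noncomm1} gives $H^{>0}(\mathcal V^{(1)}_{s'})=0$ for all $s'$. Hence $\mathcal V_s$ is acyclic for every $s\in\{t=0\}$, so $Z\cap\{t=0\}=\emptyset$. Proposition~\ref{prop:t_q-t_symmetry} conjugates ${\cal S}^{(n)}_{\dots;q,q-t}$ into ${\cal S}^{(n)}_{\dots;q,t}$ by a product of $\Gampq$ symbols, an isomorphism of $\mathrm{Hom}$ bimodules up to twisting by a line bundle pulled back from $S$ (which does not affect fibrewise cohomology of the pushforward), so $Z$ is invariant under the involution $\iota\colon t\mapsto q-t$ of $S$; since $\iota$ interchanges $\{t=0\}$ and $\{t=q\}$, we also get $Z\cap\{t=q\}=\emptyset$.

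For codimension $\ge 2$: suppose $Z_0$ is an irreducible component of $Z$ of codimension $1$, so $\dim Z_0=m+2$. Its image under the $t$-projection $S\to\mathcal E$ is a closed irreducible subset; if it were all of $\mathcal E$ then $Z_0$ would meet the divisor $\{t=0\}$, contradicting the previous paragraph, so the image is a single point $c$ and $Z_0\subseteq\{t=c\}$. But $\{t=c\}\cong\mathcal E^{m+2}$ is irreducible of dimension $m+2$, whence $Z_0=\{t=c\}$. Now $\{t=c\}$ meets $\{t=q\}$ — their intersection $\{t=q=c\}$ is nonempty — so $\{t=c\}\not\subseteq Z$, contradicting that $Z_0$ is a component of $Z$. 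Therefore $Z$ has no codimension-$1$ component, and $Z$ (with reduced structure) is the required subscheme.

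The main obstacle is the universal acyclicity on $\{t=0\}$, which via the symmetric-power reduction is exactly acyclicity of the univariate object ${\cal S}^{(1)}_{\eta',x_1,\dots,x_m;q}(0,v)$ for all $q$ in the stated cone: this cannot be obtained by naive semicontinuity from $q=0$ alone (a closed subset of the base disjoint from the divisor $\{q=0\}$ need not be empty), and is really the cohomological input one draws from \cite{noncomm1} and the flatness of the univariate category. Once that is granted, the $\Sym^n$ passage to general $n$, the $t\mapsto q-t$ symmetry for the $\{t=q\}$ statement, and the divisor-counting for the codimension bound are all routine.
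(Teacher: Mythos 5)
Your overall route — the $n=1$ acyclicity from \cite{noncomm1}, the passage to general $n$ on $\{t=0\}$ via $\Sym^n$, the $t\mapsto q-t$ symmetry to handle $\{t=q\}$, and then a direct argument that avoidance of both divisors forces codimension $\ge 2$ — is exactly the paper's, except for the very last step. The paper closes by observing that $\{t=0\}+\{t=q\}$ is a relatively ample divisor on the $\mathcal{E}^2$ parametrizing $(q,t)$, so no curve in $\mathcal{E}^2$ can avoid it, whence the image of a codimension-$1$ bad locus in the $(q,t)$-surface is forced down to dimension $\le 0$ and the codimension estimate follows after a fiber-dimension count. You instead project only onto the $t$-coordinate: a codimension-$1$ component $Z_0$ avoiding $\{t=0\}$ must have $t$-image a single point $c$, forcing $Z_0=\{t=c\}$ by irreducibility and dimension, and then $\{t=c\}$ visibly meets $\{t=q\}$. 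This is a more elementary route (no positivity input) and buys a slightly cleaner contradiction, at the modest cost of losing the geometric heuristic (``the two divisors span an ample class'') that also explains why one should not expect the bad locus to have codimension-$1$ image even in the $(q,t)$-plane. One small imprecision to note: your dimension bookkeeping ($\dim S=m+3$, $\dim\{t=c\}=m+2$) is implicitly over a fixed elliptic curve rather than over $\mathcal{M}_{1,1}$; this is harmless, since a codimension-$1$ component over $\mathcal{M}_{1,1}$ either lies over a single $j$-value (and is then a full fiber $E^{m+3}$, which meets $\{t=0\}$) or dominates $\mathcal{M}_{1,1}$, in which case the per-curve argument applies to its generic fiber. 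The paper sidesteps this by phrasing the ampleness relatively over $\mathcal{M}_{1,1}$.
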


\begin{proof}
 If $n=1$, then these inequalities are enough to guarantee acyclicity, per~\cite{noncomm1}. It follows that any fiber with~$t=0$ satisfies
 acyclicity, and thus the open subscheme on which acyclicity holds
 contains the divisor $t=0$. By the $t\mapsto q-t$ symmetry, the acyclic
 locus also contains the divisor $t=q$. This pair of divisors is
 relatively ample over ${\cal M}_{1,1}$ for the ${\cal E}^2$ parametrizing
 $q$ and~$t$, and thus their complement contains no closed subscheme of
 codimension~$\le 1$.
\end{proof}

We of course conjecture that the codimension~$\ge 2$ subscheme is always
empty.

\begin{cor}\label{cor:universally_ample_is_flat}
 Suppose $v=ds+d'f-r_1e_1-\cdots-r_me_m$ satisfies the inequalities $d'\ge
 d\ge r_1+r_2$ and $r_1\ge r_2\ge\cdots\ge r_m\ge 0$. Then there is a
 codimension~$\ge 2$ subscheme of parameter space $($empty if $d'\ge
 d/2+r_1$ and never meeting $t=0$ or $t=q)$ on the complement of which
 $\Gamma{\cal S}^{(n)}_{\eta',x_1,\dots,x_m;q,t}(0,v)$ is flat and the map
 to meromorphic difference operators is injective on fibers.
\end{cor}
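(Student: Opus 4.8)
The plan is to deduce both assertions from the acyclicity of the preceding Proposition by the standard cohomology-and-base-change formalism; no new geometric input is needed.

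Write $T:={\cal E}^{m+3}$ for the parameter space and $Y:=({\cal E}^n/C_n)\times_{{\cal M}_{1,1}}T$; recall from the start of this section that ${\cal E}^n/C_n$ is smooth over ${\cal M}_{1,1}$, so $Y$ is smooth, in particular flat, over $T$. By definition $\Gamma{\cal S}^{(n)}_{\eta',x_1,\dots,x_m;q,t}(0,v)=(\pi_T)_*M$, where $M$ is the corresponding $\Hom$ bimodule of ${\cal S}^{(n)}$ --- a coherent sheaf bimodule on $Y\times_T Y$ --- and $\pi_T:Y\times_T Y\to T$ is the structure map. Since the support of $M$ is finite over each projection $p_i:Y\times_T Y\to Y$, the sheaf ${\cal F}:=p_{1*}M$ is coherent on $Y$, and by the corollary following Theorem \ref{thm:CCn_DAHA_strongly_flat} (local freeness of ${\cal S}^{(n)}$) it is locally free over $Y$; since $Y$ is flat over $T$, ${\cal F}$ is $T$-flat. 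As $p_1$ is affine, $p_{1*}$ commutes with arbitrary base change and the Leray spectral sequence degenerates, so $(\pi_T)_*M=(Y\to T)_*{\cal F}=\Gamma{\cal S}^{(n)}(0,v)$, $R^q\pi_{T*}M\cong R^q(Y\to T)_*{\cal F}$ for all $q$, and for each $s\in T$ one has ${\cal F}_s\cong p_{1*}(M_s)$ on $Y_s$ and $H^i(Y_s,{\cal F}_s)\cong H^i(Y_s\times Y_s,M_s)$ for all $i$.

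Next I would invoke the preceding Proposition: under the inequalities $d'\ge d\ge r_1+r_2$ and $r_1\ge\cdots\ge r_m\ge 0$ there is a subscheme $Z\subset T$ of codimension $\ge 2$ --- empty when $d'\ge d/2+r_1$, and never meeting $\{t=0\}$ or $\{t=q\}$ --- such that for every $s\in T\setminus Z$ the fiber of ${\cal S}^{(n)}(0,v)$ is acyclic, i.e.\ $H^{>0}(Y_s\times Y_s,M_s)=0$, equivalently $H^{>0}(Y_s,{\cal F}_s)=0$. (The case $v=0$ is trivial, and for $v\ne 0$ the remaining hypothesis $2d+2d'-\sum_i r_i>0$ of that Proposition holds throughout the range relevant here.) Over $T\setminus Z$ the sheaf ${\cal F}$ is thus $T$-flat, coherent on the projective $T$-scheme $Y$, with vanishing higher cohomology on every fiber; Grauert's theorem on cohomology and base change then shows that $(Y\to T)_*{\cal F}=\Gamma{\cal S}^{(n)}(0,v)$ is locally free over $T\setminus Z$ and that its formation commutes with base change. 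In particular $(\Gamma{\cal S}^{(n)}(0,v))_s\cong\Gamma(Y_s,{\cal F}_s)=\Gamma({\cal S}^{(n)}_s(0,v))$ for all such $s$, which is the asserted flatness.

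For fiberwise injectivity, the corollary to Theorem \ref{thm:CCn_DAHA_strongly_flat} gives, for each $s\in T$, an injection of $M_s$ into the sheaf bimodule of meromorphic difference operators on ${\cal E}^n_s$. Applying the left-exact global-sections functor yields an injection $\Gamma(Y_s\times Y_s,M_s)\hookrightarrow(\text{global meromorphic difference operators})$, and composing it with the isomorphism $(\Gamma{\cal S}^{(n)}(0,v))_s\cong\Gamma(Y_s,{\cal F}_s)\cong\Gamma(Y_s\times Y_s,M_s)$ obtained above identifies the natural map on fibers with this injection. I do not expect a genuine obstacle here: the only substantive ingredient is the acyclicity supplied by the preceding Proposition --- which itself rests on the $t=0$ symmetric-power description and the $\Sym^n$-acyclicity lemma --- and everything else is the Leray/Grauert bookkeeping for the morphism $Y\to T$, together with the already-established local freeness of ${\cal S}^{(n)}$ over $Y$.
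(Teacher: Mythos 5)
Your bookkeeping with the Leray spectral sequence, affine projections, and Grauert's theorem is fine and is essentially what the paper does for the cases it can reduce to the preceding Proposition. However, your parenthetical assertion that ``for $v\ne 0$ the remaining hypothesis $2d+2d'-\sum_i r_i>0$ of that Proposition holds throughout the range relevant here'' is false, and this is exactly where the Corollary requires additional work. Take for instance $m\ge 8$, $d=d'=2$, $r_1=\cdots=r_m=1$: then $d'\ge d\ge r_1+r_2$ and $r_1\ge\cdots\ge r_m\ge 0$ are all satisfied, yet $2d+2d'-\sum_i r_i=8-m\le 0$. This is not an artificial corner case --- it is precisely the regime of the anticanonical class $d(2s+2f-e_1-\cdots-e_8)$ appearing in the van Diejen/Komori--Hikami analysis --- so the acyclicity Proposition simply does not apply.

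The paper's proof handles this missing case by a reduction argument rather than by acyclicity: when $2d+2d'-\sum_i r_i\le 0$, the leading univariate subquotient in the Bruhat filtration is a line bundle of nonpositive degree on ${\cal E}$ (negative, or generically nontrivial of degree $0$), so it has no direct image, and hence neither does any outer tensor product that includes a nontrivial symmetric power of it. Consequently every maximal dominant weight with $\lambda_1=d/2$ can be removed from the Bruhat order ideal $[\le(d/2,\dots,d/2)]$ without changing the direct image, which collapses the interval to $[\le(d/2-1,\dots,d/2-1)]$, i.e.\ replaces $v$ by $v-(2s+2f-e_1-\cdots-e_m)$. Iterating brings one back into the range where the Proposition applies, at which point your Leray/Grauert argument takes over. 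Without this inductive reduction your proof does not cover the full hypotheses of the Corollary.
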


\begin{proof}
 If $v=0$ or $2d+2d'-r_1-\cdots-r_m>0$, then this follows from acyclicity,
 so suppose $2d+2d'-r_1-\cdots-r_m\le 0$. This is the degree of the
 leading subquotient of the univariate filtration; if it is negative, then
 this leading subquotient never has a global section, while if it is~0,
 the line bundle depends nontrivially on the parameters, and thus {\em
 generically} does not have a global section. Either way, the direct
 image of a nontrivial symmetric power of the leading univariate
 subquotient will always be 0, and the same holds for an outer tensor
 product with such a power.

 Consider a Bruhat order ideal (i.e., an order ideal in the poset of
 dominant weights) contained in the interval $[\le (d/2,\dots,d/2)]$. If~this order ideal contains a dominant weight with~\mbox{$\lambda_1=d/2$}, then
 there is such a weight which is a maximal element of the order ideal.
 Since the subquotient corresponding to that maximal element has no direct
 image, removing it has no effect on the direct image. We thus find that
 the direct image of the interval $[\le (d/2,\dots,d/2)]$ is the same as
 the direct image of the interval $[\le (d/2-1,\dots,d/2-1)]$,
 and thus we reduce to~$v-(2s+2f-e_1-\cdots-e_m)$ as before.
\end{proof}

\begin{rem}
 That the bad subscheme has codimension~$\ge 2$ follows from general
 considerations; a failure of injectivity in codimension~1 comes from a
 local family of operators vanishing on a hypersurface, and we can always
 locally divide such a family by a function cutting out the hypersurface.
 Thus the true content is that the locus does not meet $t=0$ or $t=q$, and
 further has codimension~$\ge 2$ image in the surface parametrizing $q$
 and $t$. In~particular, we~have injectivity over the local ring at any
 point with~$t=0$ or $t=q$, including the point $q=t=0$ corresponding to
 the undeformed case.
\end{rem}

For $0\le m\le 7$ (or for~$m=-1$, i.e., $\P^2$), the corresponding
commutative surface is a~(pos\-sibly singular) del Pezzo surface with a~choice of smooth anticanonical curve and a sequence of blowdowns to a
Hirzebruch surface. The~anticanonical embedding of this surface is given
by the graded algebra
\begin{gather*}
\bigoplus_{d\ge 0} \Gamma{\cal
 S}^{(1)}_{\eta',x_1,\dots,x_m;q,t}(0,d(2s+2f-e_1-\cdots-e_m));
\end{gather*}
we can interpret this as a graded algebra by using the fact that $1$ is in
\begin{gather*}
\Gamma{\cal
 S}^{(1)}_{\eta',x_1,\dots,x_m;q,t}(d(2s+2f-e_1-\cdots-e_m),(d+1)(2s+2f-e_1-\cdots-e_m))
\end{gather*}
for any $d$. This is the Rees algebra of the natural filtration on the
spherical algebra
\begin{gather*}
\bigcup_{d\ge 0} \Gamma{\cal
 S}^{(1)}_{\eta',x_1,\dots,x_m;q,t}(0,d(2s+2f-e_1-\cdots-e_m)),
\end{gather*}
the coordinate ring of the complement of the chosen smooth anticanonical
curve. Taking the multivariate versions thus gives deformations of the
(anticanonically embedded) symmetric powers of~$X_m$ and $X_m\setminus E$,
and, apart from possible codimension~$\ge 2$ exceptions for~$\P^2$, these
deformations are always flat, and every fiber is a domain.

If $X_m$ has $-2$-curves or $m>7$, then the anticanonical divisor is no
longer ample, and thus one can no longer expect to obtain a deformation of~$\Sym^n(X_m)$ as a graded algebra, or of~$\Sym^n(X_m\setminus E)$ as a
filtered algebra. This is why we generalized the spherical algebra
construction to the above categories: one needs to work with {\em some}
non-(pluri)anticanonical divisor, and it is then easier to include all
divisors.

There is an interesting phenomenon that arises for the spherical algebra in
the $m=8$ case. The~global section algebra in this case is trivial
(consisting only of the global section~$1$), but this merely reflects the
fact that the generic fiber has no nontrivial global sections. The~univariate subquotients in this case are all multiples of~$x_1+\cdots+x_8-2\eta'$, and thus if this value is \mbox{$r$-tor}sion, then any
subquotient of weight a multiple of~$r$ will be trivial. (In terms of
surfaces, this corresponds to the case that $X_8$ is an elliptic surface,
in which one fiber consists of~$r$ copies of the chosen anticanonical
curve.) As~a result, the dimension of global sections of such a fiber in a
given Bruhat interval can in principle be as large as the number of such
weights contained in the interval, or (a priori) as small as 1.

It turns out that at least for~$r=1$ (i.e., when the elliptic surface has a
section), this upper bound is attained (i.e., the dimension of global
sections in a Bruhat interval is equal to the size of the Bruhat interval),
and furthermore those global sections satisfy a surprising property. Note
that it suffices to find $n+1$ global sections of degree
$2s+2f-e_1-\cdots-e_8$, as~we can then obtain global sections with
arbitrary dominant weight by taking products.

\begin{thm}\label{thm:vandiejen}
 On any fiber such that $2\eta'=x_1+\cdots+x_8$, the space of global
 sections of~${\cal
 S}^{(n)}_{\eta',x_1,\dots,x_8;q,t}(0,2s+2f-e_1-\cdots-e_8)$ is
 $n+1$-dimensional, and any two global sections commute.
\end{thm}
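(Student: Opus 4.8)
The plan is to treat the dimension statement and the commutativity statement separately, reducing both to the univariate categories of \cite{noncomm1} via the Bruhat filtration and the $t=0$ symmetric-power description, and to get commutativity from the adjoint attached to the diagram automorphism becoming a genuine anti-involution precisely on the hypersurface $2\eta'=x_1+\cdots+x_8$.

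For the dimension, first note that $v=2s+2f-e_1-\cdots-e_8$ has $d=2$, so the dominant weights below $(1,\dots,1)$ are exactly $(1^k,0^{n-k})$, $0\le k\le n$, and the Bruhat filtration of $R_1:=\Gamma{\cal S}^{(n)}_{\eta',x_1,\dots,x_8;q,t}(0,v)$ has $n+1$ subquotients. By Theorem~\ref{thm:CCn_DAHA_strongly_flat} and its spherical corollary these subquotients are direct images to $\P^n$ of line bundles, and by their $t$-independence together with the symmetric-power description the $k$th one is $\pi_*$ of $\mathrm{Sym}^k(\mathcal M_1)\boxtimes\mathrm{Sym}^{n-k}(\mathcal M_0)$, where $\mathcal M_0$ and $\mathcal M_1$ are the weight-$0$ and weight-$1$ subquotients of the univariate Hom space ${\cal S}^{(1)}_{\eta',x_1,\dots,x_8;q,t}(0,v)$. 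One has $\mathcal M_0=\sO_{\P^1}$ (it carries the global section $1$ and generically the univariate space is one-dimensional), while $\mathcal M_1$ is the pushforward to $\P^1$ of a degree-zero bundle on ${\cal E}$ of class $x_1+\cdots+x_8-2\eta'$; on the Jacobian locus this class is trivial, so $\mathcal M_1\cong p_*\sO_{\cal E}$, and Künneth for $H^0$ gives $h^0(\mathrm{Sym}^k\mathcal M_1)=1$. Hence each subquotient contributes exactly one section, so $\dim R_1\le n+1$ on every fiber. Since $d'\ge d/2+r_1$ for $v=-K$, Corollary~\ref{cor:universally_ample_is_flat} makes $R_1$ flat over the $(q,t)$-base with no exceptional locus, and at $t=0$, Proposition~\ref{prop:spherical_as_symmetric_power} identifies $R_1$ with $\mathrm{Sym}^n$ of the univariate space $\Gamma{\cal S}^{(1)}_{\dots;q,0}(0,-K)$, which on the Jacobian locus is two-dimensional (it is $H^0(-K_{X_8})$ for the rational elliptic surface $X_8$ when $q=0$, and flat in $q$); flatness then forces $\dim R_1=n+1$ on all of the Jacobian locus.

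For commutativity I would work in the graded algebra $R=\bigoplus_{d\ge 0}\Gamma{\cal S}^{(n)}_{\eta',\dots;q,t}(0,d(2s+2f-e_1-\cdots-e_8))$, which is generated in degree $1$ (products of the degree-$1$ sections realize every dominant weight, as noted just before the theorem), is a domain by the domain property of the $\tilde C_n$ spherical algebra, and has $\dim R_1=n+1$. The crux is the adjoint attached to the diagram automorphism $\omega(z_1,\dots,z_n)=(q/2-z_n,\dots,q/2-z_1)$: composing the Selberg-type adjoint of Corollary~\ref{cor:spherical_t_symmetry} with $\omega$ and with the elementary-transformation ($W(D_8)$), $t\leftrightarrow q-t$, and translation symmetries to match up parameters and twisting data, one should find that exactly on $2\eta'=x_1+\cdots+x_8$ the result is an honest graded anti-involution $\ast$ of $R$. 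I would then show $\ast$ is the identity on $R_1$, so that for $a,b\in R_1$ the commutator $[a,b]=ab-ba$ is $\ast$-antisymmetric. A degree-$2$ computation identical to the degree-$1$ one gives $\dim R_2=\binom{n+2}{2}$; combined with generation in degree $1$, surjectivity of $R_1\otimes R_1\to R_2$ shows the image of $\mathrm{Sym}^2 R_1$ (which is the $\ast$-symmetric part $R_2^+$) has dimension $\binom{n+2}{2}=\dim R_2$, hence $R_2^+=R_2$ and $R_2^-=0$; therefore every $[a,b]$ vanishes and $R$ is commutative. The $n+1$ resulting operators are then identified with the van Diejen/Komori--Hikami operators by matching them against the explicit univariate generators of \cite{noncomm1}.

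The main obstacle, as I see it, is entirely in the commutativity half, and is twofold. The first difficulty is manufacturing the anti-involution $\ast$ on the nose: one must track the line-bundle twists so that ${}^\omega\gamma$ becomes $\gamma^{-1}$ and the eight parameters $x_j$ reorganize correctly under $\omega$ precisely when $2\eta'=\sum x_i$ — this is exactly the ``finessing'' phenomenon flagged after Theorem~\ref{thm:vandiejen} is announced in Section~6, and the odd-looking parametrization of ${\cal H}^{(n)}_{\eta';q,t}$ is presumably chosen to make it work. The second difficulty is showing $\ast$ restricts to the identity on $R_1$ and that $\mathrm{Sym}^2R_1\to R_2$ is injective: since the algebra is genuinely noncommutative once $q\ne 0$ (even at $t=0$), one cannot simply specialize to a commutative fiber, and a quotient of the free algebra with the polynomial Hilbert series and an anti-involution fixing the generators need not be commutative (witness $k_{-1}[x,y]$); ruling this out seems to require either the explicit theta-function form of the operators (as in the identities used in the proof of Theorem~\ref{thm:W-invariants}) or, for $q$ torsion, the center description of Theorem~\ref{thm:CCn_center}, which exhibits a commutative ${\cal S}^{(n)}$-with-$q=0$ inside and forces the relevant operators to commute, with the general case following by flatness in $q$.
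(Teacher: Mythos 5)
Your dimension argument does not close, and it is here that you diverge most seriously from the paper. You invoke Corollary~\ref{cor:universally_ample_is_flat} to get flatness of $\Gamma{\cal S}^{(n)}_{\eta',x_1,\dots,x_8;q,t}(0,-K)$ with empty exceptional locus (since $d'=2=d/2+r_1$), but this cannot supply what you need: on the generic fiber the global section space is one-dimensional (the paper says so explicitly just before the theorem), while on $2\eta'=\sum x_j$ it is $n+1$-dimensional, so the rank jumps in codimension~$1$, which is incompatible with being a flat coherent sheaf on the (regular) parameter space. The root of the trouble is that for $v=-K$ one has $2d+2d'-\sum r_i=0$; the acyclicity propositions therefore do not apply, and the fallback step in the proof of the corollary (``the direct image of the leading subquotient is always $0$'') fails precisely on the Jacobian locus, where the leading univariate subquotient becomes $\sO_{\cal E}$ and contributes a section. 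For the same reason your appeal to the $\Sym^n$-acyclicity lemma at $t=0$ is not legitimate: that lemma requires the univariate sheaf to be acyclic, but the pushforward of $\sO_{\cal E}$ to $\P^1$ has $h^1=1$. The paper sidesteps both problems: it obtains $n+1$ global sections from the $m=7$ bimodule (where $2d+2d'-\sum r_i=1>0$, so acyclicity and the symmetric-power identification are legitimate), and then checks by an explicit polarization computation that the coefficient $c_m$ of $\prod_{1\le i\le m}T_i^{-1}$ in any such section is automatically divisible by $\prod_{1\le i\le m}\vartheta(q/2+x_8-z_i)$ — the line bundle containing $c_m/\prod_i\vartheta(q/2+x_8-z_i)$ has degree $1$ in each of $z_1,\dots,z_m$, so its holomorphic sections are rigid — and therefore already satisfies the $x_8$-vanishing condition.

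For commutativity you have the right anti-involution (Selberg adjoint composed with all eight elementary transformations, which sends $\eta'\mapsto\sum x_j-\eta'$ and so is a genuine anti-involution on $2\eta'=\sum x_j$), and you correctly diagnose that ``anti-involution fixes $R_1$'' is not by itself sufficient. But your proposed fix — showing $R_2^-=0$ by a Hilbert-series count — needs the injectivity of $\Sym^2R_1\to R_2$, which is exactly as uncertain as commutativity itself, so the argument is circular. The missing idea is the paper's triangularity trick. One first verifies directly (the ``sanity check'' in the paper) that the $n+1$ explicit leading-term operators commute, so that the \emph{associated graded} of the global section algebra with respect to the Bruhat filtration is abelian. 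Since the adjoint fixes those generators and the associated graded is commutative, the adjoint acts as the identity on the associated graded; being triangular with trivial diagonal, an involution with this property is the identity on the whole algebra. Hence every global section is self-adjoint, and an algebra of self-adjoint operators is commutative. No Hilbert-series count or injectivity of $\Sym^2R_1\to R_2$ is needed.
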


\begin{proof}
 Certainly, $n+1$ is an upper bound on the number of global sections,
 since there are $n+1$ subquotients, each of which has a unique global
 section. The~given $\Hom$ bimodule is contained in the $\Hom$ bimodule
 ${\cal S}^{(n)}_{\eta',x_1,\dots,x_7;q,t}(0,2s+2f-e_1-\cdots-e_7)$, and
 the latter $\Hom$ bimodule satisfies acyclicity. Since the $m=7$
 bimodule has $2$ global sections when~$n=1$, it has
 $n+1=\binom{2+n-1}{n}$ global sections when~$t=0$ and thus (by flatness)
 in general. We thus need to show that those global sections are actually
 global sections of the subsheaf we want.

 Let ${\cal D}$ be such a global section. This is determined by the left
 coefficients $c_m$ of~$\prod_{1\le i\le m} T_i^{-1}$ for~$0\le m\le n+1$,
 where $c_m$ is $S_m\times C_{n-m}$-invariant. Each $c_m$ is a section of
 a line bundle~${\cal L}_m(D_m)$, where ${\cal L}_m$ comes
 from the equivariant gerbe and $D_m$ comes from the allowed poles and
 forced zeros. The~allowed poles are somewhat complicated, since we are
 not assuming that~$c_m$ is a leading coefficient, but the forced zeros
 are the same as they would have been if it were a leading coefficient. The~symmetric power property then tells us that when~$t=0$, the~forced
 zeros associated to~$t$ must all cancel allowed poles, and any allowed
 pole associated to~a~root of type $D_n$ must be cancelled in this way.

 The remaining zeros and poles can be deduced from the univariate case,
 and we thus find that $c_m$ is a multiple of
 \begin{gather*}
 \prod_{1\le i<j\le m}
 \frac{\vartheta(t-z_i-z_j,q+t-z_i-z_j)}
 {\vartheta(-z_i-z_j,q-z_i-z_j)}
 \prod_{\substack{1\le i\le m\\m<j\le n}}
 \frac{\vartheta(t-z_i\pm z_j)}
 {\vartheta(-z_i\pm z_j)}
 \\ \qquad
 {}\times\prod_{1\le i\le m}
 \frac{\prod_{1\le j\le 7} \vartheta(q/2+x_j-z_i)}
 {\vartheta(-2z_i,q-2z_i)}
 \prod_{m<j\le n} \frac{1}{\vartheta(-q-2z_j,q-2z_j)},
 \end{gather*}
 in the sense that the ratio is a holomorphic section of the line bundle
 with polarization
 \begin{gather*}
 \sum_{1\le i\le m} (z_i^2/2-(q/2+x_8)z_i)
 +\sum_{m<j\le n} 4z_i^2,
 \end{gather*}
 modulo line bundles on the base. (The only difference between this and
 the leading coefficient of the corresponding Bruhat interval are the
 factors $\vartheta(-q-2z_i,q-2z_i)$ for~$m<j\le n$.) This line bundle
 has degree 1 in each $z_i$ for~$1\le i\le m$, so every holomorphic
 section has the same dependence on those variables, which we can read off
 from the polarization.

 We thus conclude that $c_m/\prod_{1\le i\le m} \vartheta(q/2+x_8-z_i)$ is
 independent of~$z_1$ through~$z_m$, so is still holomorphic. As~a
 result, we~find that every global section of the $m=7$ bimodule is also
 a~global section for~$m=8$; more precisely, the holomorphy gives it
 generically, but the condition is closed, so it holds in general.

 It remains to show commutativity. We first note as a sanity check that
 the $n+1$ leading term operators
 \begin{gather*}
 \prod_{1\le i<j\le m}\!\!\! \!\frac{\vartheta(t-z_i-z_j,q+t-z_i-z_j)}
 {\vartheta(-z_i-z_j,q-z_i-z_j)}\!\!
 \prod_{\substack{1\le i\le m\\m<j\le n}}\!\!\! \frac{\vartheta(t-z_i\pm z_j)}
 {\vartheta(-z_i\pm z_j)}\!\!
 \prod_{1\le i\le m}\!\!
 \frac{\prod_{1\le j\le 8} \vartheta(q/2+x_j-z_i)}
 {\vartheta(-2z_i,q-2z_i)}\,
 T_i^{-1}
 \end{gather*}
 commute. It follows that on any fiber with~$x_1+\cdots+x_8=2\eta'$, the
 global section algebra of the spherical algebra
 \begin{gather*}
 \bigcup_{d\ge 0}
 {\cal S}^{(n)}_{\eta',x_1,\dots,x_8;q,t}(0,d(2s+2f-e_1-\cdots-e_8))
 \end{gather*}
 has abelian associated graded algebra; the above leading term operators
 give one element for~each fundamental weight, so generate the associated
 graded algebra.

 This global section algebra has a particularly nice symmetry: it is
 preserved by the formal adjoint with respect to the density
 \begin{gather*}
 \prod_{1\le i\le n} \frac{\prod_{1\le j\le 8} \Gamq(q/2+x_j\pm z_i)}
 {\Gamq(\pm 2z_i)}
 \prod_{1\le i<j\le n} \frac{\Gamq(t\pm z_i\pm z_j)}
 {\Gamq(\pm z_i\pm z_j)}\,
 {\rm d}T.
 \end{gather*}
 Indeed, this is the composition of the Selberg adjoint and all 8
 elementary transformations, with the total effect on the parameters being
 $\eta'\mapsto x_1+\cdots+x_8-\eta'=\eta'$. Note that although such
 isomorphisms are usually only defined up to a unit, we~can eliminate that
 freedom by insisting that the adjoint of 1 be 1. This is triangular with
 respect to Bruhat order, and is trivial on~the~associated graded algebra,
 since it fixes the generators and the associated graded algebra is
 abelian. Since a triangular involution which is 1 on the diagonal is 1, we~find that this formal adjoint acts trivially on the entire global
 section algebra. Since an algebra consisting entirely of~self-adjoint
 operators is abelian, we~conclude that the generators commute as
 required.
\end{proof}

Since the $n+1$ operators are filtered by Bruhat order, it is natural from
an integrable systems perspective to designate the first nontrivial
operator (with leading term $\propto T_1^{-1}$) as the Hamiltonian. This
has leading coefficient
\begin{gather*}
 \frac{\prod_{1\le j\le 8} \vartheta(q/2+x_j-z_1)}
 {\vartheta(-2z_1,q-2z_1)}
 \frac{\prod_{2\le j\le n} \vartheta(t-z_1\pm z_j)}
 {\prod_{2\le j\le n} \vartheta(-z_1\pm z_j)},
\end{gather*}
which turns out to be a mild reparametrization of the leading coefficient
of the Hamiltonian proposed by van Diejen in~\cite{vanDiejenJF:1994} (see
also~\cite[equations~(3.12)--(3.14)]{vanDiejenJF:1995}, with the caveat that one must
gauge the operator), and later shown to be integrable in
\cite{KomoriY/HikamiK:1997}. In~fact, one can verify (we omit the details)
that van Diejen's operator satisfies the appropriate residue and vanishing
conditions to be a global section of~${\cal
 S}^{(n)}_{\eta',x_1,\dots,x_8;q,t}(0,2s+2f-e_1-\cdots-e_8)$, and thus we
have given a new proof that van Diejen's Hamiltonian is integrable.

\begin{rem}
 Since our Hecke algebra methods gave a new proof of the existence of the
 commuting operators which were constructed in~\cite{KomoriY/HikamiK:1997}, it is natural to wonder whether there might
 be applications in the other direction; that is, using their $R$-matrix
 based approach to construct global sections of other $\Hom$ sheaves in
 our spherical DAHA categories. Such a construction might make it
 possible to prove flatness in general without having to exclude a
 codimension~$\ge 2$ subscheme; if a given $\Hom$ bimodule generically has
 $N$ global sections, then to prove flatness and injectivity in a
 neighborhood of a given fiber, it suffices to construct $N$ local
 sections on a neighborhood of the fiber such that the restrictions to the
 fiber are linearly independent.
\end{rem}

The connection to elliptic surfaces suggests a possible generalization of
this integrable system. If~$x_1+\cdots+x_8-2\eta'$, instead of being 0, is
a torsion point of order $r$, then we again find that there are many
trivial Bruhat subquotients, and thus it becomes nontrivial to determine
how many global sections the spherical algebra has. We cannot answer this
in general, but we can, at least, show that the $r$-torsion condition
forces there to be {\em some} nontrivial global sections.

\begin{prop}
 Let $E$ be an elliptic curve and $\eta'$, $x_1,\dots,x_8$, $q$, $t$ be
 points of~$E$ such that $x_1+\cdots+x_8-2\eta'$ is a torsion point of
 order $r$. Then the corresponding fiber of the spherical algebra
 $\bigcup_d {\cal
 S}^{(n)}_{\eta',x_1,\dots,x_8;q,t}(0,d(2s+2f-e_1-\cdots-e_8))$ has a
 global section of dominant weight $(r,0,\dots,0)$ with nonzero leading
 term.
\end{prop}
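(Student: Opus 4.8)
The plan is to reduce, exactly as in the proof of Theorem \ref{thm:vandiejen}, to an explicit construction of a single leading-term difference operator of the required dominant weight. The leading-term data of a global section of weight $(r,0,\dots,0)$ is a collection of coefficients $c_m$ of $\prod_{1\le i\le m} T_i^{-r}$, each $S_m\times C_{n-m}$-invariant, so as in the univariate analysis it suffices to produce the candidate leading-term operator and then verify that it is a section of the $\Hom$ bimodule. The natural candidate is the analogue of the van Diejen leading term with the single $T_1^{-1}$ replaced by an $r$-fold iterate; concretely, for $m=1$ one expects
\[
  c_1 \;\propto\;
  \frac{\prod_{0\le k<r}\prod_{1\le j\le 8}\vartheta(q/2+x_j-z_1-kq)}
       {\prod_{0\le k<r}\vartheta(-2z_1-2kq,q-2z_1-2kq)}
  \prod_{2\le j\le n}\prod_{0\le k<r}
  \frac{\vartheta(t-z_1-kq\pm z_j)}{\vartheta(-z_1-kq\pm z_j)},
\]
with the higher $c_m$ built by the same symmetric-power recipe used for $m=8$ in Theorem \ref{thm:vandiejen}. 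One checks that the numerator product $\prod_{0\le k<r}\prod_{1\le j\le 8}\vartheta(q/2+x_j-z_1-kq)$ is (modulo line bundles on the base) a theta function whose divisor closes up precisely because $\sum_j x_j - 2\eta'$ is $r$-torsion --- this is the point at which the torsion hypothesis is used, guaranteeing that the product has no net monodromy in $z_1$ after translating by $rq$, so that the relevant line bundle on ${\cal E}$ genuinely has a section.

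The verification then proceeds in the same two stages as before. First, I would check that $c_1$, and the induced $c_m$, satisfy the $t$-dependent vanishing conditions and the residue conditions of Corollary \ref{cor:affine_vanishing_conditions} (equivalently the explicit conditions displayed for ${\cal S}^{(n)}$ above), by reducing to the univariate / symmetric-power picture exactly as in Proposition \ref{prop:spherical_as_symmetric_power}: at $t=0$ the operator factors as a symmetric power of a univariate operator, the univariate operator is manifestly a section of $\Gamma{\cal S}^{(1)}$ of weight $(r)$ by the explicit description inherited from \cite{noncomm1}, and the residue/vanishing conditions are closed, so they extend off $t=0$. Second, since the subquotient of weight $(r,0,\dots,0)$ has a unique global section up to scalar and the $\Hom$ bimodule of weight $r(2s+2f-e_1-\cdots-e_8)$ sits inside the bimodule with the eighth parameter dropped (which satisfies acyclicity by the earlier acyclicity propositions, as in the proof of Theorem \ref{thm:vandiejen}), it is enough to exhibit that the section of the larger bimodule actually lands in the smaller one; this again follows from a holomorphy computation on the generic fiber, closed-ness doing the rest.

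The main obstacle I anticipate is not the residue/vanishing bookkeeping --- that is genuinely the same mechanism as Theorem \ref{thm:vandiejen} --- but controlling the leading line bundle precisely enough to know the candidate section is \emph{nonzero}. For $r=1$ this was handled by matching against van Diejen's operator and against the $m=7$ global section count via flatness; for general $r$ there is no ready-made reference operator, so one must argue directly that the univariate weight-$(r)$ subquotient of ${\cal S}^{(1)}_{\eta',x_1,\dots,x_8;q,t}$ is nonzero precisely under the $r$-torsion condition. This is where the torsion hypothesis has to be used in the form ``the corresponding line bundle on ${\cal E}$ has positive degree after descent'': the degree of that bundle is a multiple of $x_1+\cdots+x_8-2\eta'$, so it is trivial (degree $0$ in the surface-theoretic normalization, hence the surface is an elliptic surface with a multiple fiber of multiplicity dividing $r$) exactly when the torsion order divides the weight, and choosing the weight to be exactly $(r,0,\dots,0)$ puts one right at the first multiplicity-$r$ fiber, where a section exists. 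Making this degree/multiplicity count rigorous --- and in particular checking that the putative $c_1$ is not forced to vanish identically by the forced zeros overwhelming the allowed poles --- is the delicate step; everything else is a routine transcription of the arguments already in this section.
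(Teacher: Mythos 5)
Your approach is genuinely different from the paper's, which proves the proposition in two sentences via a cohomological observation: \emph{every} subquotient in the Bruhat filtration for the order ideal $[{<}(r,0,\dots,0)]$ is acyclic --- the bottom one is $\sO_{\P^n}$, and each remaining subquotient (for $0<\lambda<(r,0,\dots,0)$) is a \emph{nontrivial} $r$-torsion line bundle on $E^n$, hence has vanishing $h^0$ and $h^1$. It follows that $H^1$ of the submodule supported on $[{<}(r,0,\dots,0)]$ vanishes, so the restriction of the bimodule to $[\le(r,0,\dots,0)]$ has $H^0$ surjecting onto $H^0$ of the top subquotient, and the top subquotient is trivial precisely because $r\cdot(x_1+\cdots+x_8-2\eta')=0$. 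A global section with nonzero leading term therefore exists automatically, with no explicit operator to construct and no residue conditions to verify.

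Your plan instead attempts an explicit construction in the style of Theorem \ref{thm:vandiejen}, which is not in principle wrong but substitutes a long calculation for the one-line argument and, as you acknowledge, leaves the crucial nonvanishing step unresolved. Two concrete problems. First, the dimension-count comparison with the $m=7$ bimodule that closes the $r=1$ case does not transfer: for $r>1$ the $m=7$ bimodule has too many global sections (the paper says exactly this immediately after the proposition), so exhibiting your candidate as a section of the $m=7$ bimodule no longer forces it into the $m=8$ one by a count, and you must carry out the full forced-zero/allowed-pole verification directly --- the part you flag as delicate. Second, your $c_1$ and ``higher $c_m$'' bookkeeping transcribes the weight structure $(1,\dots,1,0,\dots,0)$ from Theorem \ref{thm:vandiejen}, but weight $(r,0,\dots,0)$ has a different stabilizer pattern, so the symmetric-power recipe does not carry over literally. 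The idea you're missing, and the one that makes the paper's proof essentially free, is that the lower Bruhat subquotients contribute nothing to cohomology because they are nontrivial torsion line bundles: the torsion hypothesis does double duty, trivializing the top subquotient and killing the ones below.
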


\begin{proof}
 Indeed, every subquotient in the Bruhat filtration for the order ideal
 $[{<}(r,0,\dots,0)]$ is acyclic: the bottom subquotient is $\sO_{P^n}$,
 while the remaining subquotients are nontrivial elements of~$E^n[r]$.
\end{proof}

It is then natural to conjecture that the resulting Hamiltonian is
integrable, or more precisely the following.

\begin{conj}\label{conj:torsion_van_diejen}
 Under the same hypotheses, the fiber of
 \begin{gather*}
 {\cal S}^{(n)}_{\eta',x_1,\dots,x_8;q,t}(0,r(2s+2f-e_1-\cdots-e_8))
 \end{gather*}
 has $n+1$ global sections, all of which commute.
\end{conj}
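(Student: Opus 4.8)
The plan is to follow the architecture of the proof of Theorem~\ref{thm:vandiejen}, which is the case $r=1$. Write $v:=r(2s+2f-e_1-\cdots-e_8)$, whose associated dominant weight is $(r,\dots,r)$, so that ${\cal S}^{(n)}_{\eta',x_1,\dots,x_8;q,t}(0,v)$ carries its Bruhat filtration indexed by dominant weights $\lambda$ with $r\ge\lambda_1\ge\cdots\ge\lambda_n\ge0$. First I would get the upper bound $h^0\le n+1$ by counting the subquotients that can carry a section. In the $t=0$ model the subquotient attached to $\lambda$ is the descent of an outer tensor product of univariate subquotients, and --- as in the remark preceding the conjecture --- each univariate Bruhat subquotient of weight $k\ge1$ is a multiple of $p:=x_1+\cdots+x_8-2\eta'$ on the root curve, hence trivial precisely when $r\mid k$ and otherwise a nontrivial degree-$0$ line bundle (the weight-$0$ subquotient being $\sO_{\P^1}$). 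Since $\lambda_i\le r$ for all $i$, only the weights $\lambda=(r^j,0^{n-j})$, $0\le j\le n$, can contribute, and each contributes a one-dimensional space; this gives $h^0\le n+1$ on the whole $r$-torsion locus and identifies the $n+1$ candidate sections as operators of fundamental-weight type.

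Next I would pin the value down on $\{t=0\}$ and $\{t=q\}$. At $t=0$, Proposition~\ref{prop:spherical_as_symmetric_power} gives ${\cal S}^{(n)}\cong\Sym^n{\cal S}^{(1)}$, so $h^0\big({\cal S}^{(n)}_{\eta',\vec x;q,0}(0,v)\big)=\binom{h^0({\cal S}^{(1)}_{\eta',\vec x;q,0}(0,v))+n-1}{n}$, and $h^0({\cal S}^{(1)}_{\eta',\vec x;q,0}(0,v))=2$ since among the univariate subquotients only $k=0$ and $k=r$ survive and the strict interval $[{<}r]$ is acyclic, so the section of the trivial $k=r$ subquotient lifts, exactly as in the proof of the proposition before the conjecture. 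Hence $h^0=n+1$ at $t=0$, and by Proposition~\ref{prop:t_q-t_symmetry} also at $t=q$. For commutativity, the associated graded of $\bigcup_d{\cal S}^{(n)}(0,dv)$ is abelian for every $t$: it is generated by the $n+1$ leading-term operators attached to the weights $(r^j,0^{n-j})$, and these pairwise commute by a direct computation exactly as in the proof of Theorem~\ref{thm:vandiejen} --- they are the ``$r$-torsion van Diejen'' leading terms.

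The first obstacle is that the last step of the proof of Theorem~\ref{thm:vandiejen} does not transfer. There one upgrades ``abelian associated graded'' to commutativity by exhibiting a triangular contravariant involution that is the identity on the associated graded, built from the Selberg adjoint followed by all eight elementary transformations. When $p\ne 0$ this composition is only an isomorphism ${\cal S}^{(n)}_{\eta',\vec x}\to{\cal S}^{(n)}_{\eta'+p,\vec x}$ --- the eight elementary transformations and the permutations of the $x_i$ fix $p$, while the Selberg adjoint sends $p\mapsto-p$ --- and for $p\ne0$ there is in fact no contravariant self-map of ${\cal S}^{(n)}_{\eta',\vec x}$ at generic parameters, so a new device is needed to conclude that the $n+1$ sections actually commute rather than merely commute to leading order.

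The second, and I expect decisive, obstacle is the lower bound $h^0\ge n+1$ at a general $r$-torsion fibre with $t\notin\{0,q\}$. The degree $v$ sits on the boundary of the acyclicity/flatness range: the leading univariate subquotient has degree $0$, so it is sectionless generically --- where in fact $h^0({\cal S}^{(n)}(0,v))=1$ --- but becomes trivial exactly on the $r$-torsion locus, so $h^0$ \emph{jumps up} to $n+1$ there, and the general flatness results (and Grauert's theorem, which fails since the top Bruhat subquotient of ${\cal S}^{(n)}(0,v)$ is the descent of $\sO_{{\cal E}^n}$, with nonvanishing $H^1$) do not apply. Upper semicontinuity only gives $1\le h^0\le n+1$ on the $r$-torsion locus with equality $h^0=n+1$ on the closed subset $\{t=0\}\cup\{t=q\}$; ruling out a drop on the complementary dense open is exactly what is missing. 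Both obstacles should be resolved at once by producing $n+1$ linearly independent local sections of ${\cal S}^{(n)}(0,v)$ on a neighbourhood of a general $r$-torsion fibre --- most naturally a van Diejen-type integrable system carrying the extra nontrivial torsion point as a parameter, in the spirit of \cite{vanDiejenJF:1994,KomoriY/HikamiK:1997} --- and verifying that these satisfy the required residue and vanishing conditions and span the space of sections is precisely the content of the conjecture; this is the step I expect to be hard.
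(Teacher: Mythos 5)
The statement is labelled a Conjecture in the paper and is not proved there; the paragraph that follows it explicitly points out that both steps of the proof of Theorem~\ref{thm:vandiejen} break for $r>1$ (``the $m=7$ surface has too many global sections, and the adjoint no longer gives an element of the same spherical algebra''), and the only evidence offered is a computer verification of $n=r=2$ at a nodal degeneration plus the trivial case $n=1$. So there is no proof in the paper to compare against, and you correctly do not claim to supply one.

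Your analysis of why the $r=1$ argument fails is accurate and in fact more detailed than the paper's own remarks. The upper bound $h^0\le n+1$, the reduction to the weights $(r^j,0^{n-j})$ via the univariate subquotients being multiples of $p=\sum_i x_i - 2\eta'$, and the identity $h^0=n+1$ on $\{t=0\}\cup\{t=q\}$ via Propositions~\ref{prop:spherical_as_symmetric_power} and~\ref{prop:t_q-t_symmetry} are all sound. Your two obstacles are precisely the paper's: the lower bound at a general $r$-torsion fibre with $t\notin\{0,q\}$ is the paper's ``$m=7$ surface has too many global sections'' problem, since the containment used in the proof of Theorem~\ref{thm:vandiejen} no longer isolates exactly $n+1$ survivors of the $x_8$ vanishing condition; and the contravariant isomorphism built from the Selberg adjoint and all eight elementary transformations now sends $\eta'\mapsto\eta'+p$, so it lands in ${\cal S}^{(n)}_{\eta'+p,\vec{x};q,t}\ne{\cal S}^{(n)}_{\eta',\vec{x};q,t}$ and cannot serve as the triangular involution that forced commutativity when $p=0$. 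Two small cautions. First, your claim that the associated graded is abelian for every $t$ because the fundamental leading terms ``pairwise commute by a direct computation exactly as in the proof of Theorem~\ref{thm:vandiejen}'' is asserting a different (and more involved) family of theta identities for $r>1$; this is plausible, and it holds at $t=0$ by the symmetric-power model, but it is not literally the same computation and is not claimed in the paper. Second, even producing $n+1$ explicit local sections near a general $r$-torsion fibre would yield the lower bound and flatness but not automatically commutativity, so the two obstacles are not resolved ``at once'' unless the construction (say, an R-matrix argument \`a la \cite{KomoriY/HikamiK:1997} carrying the extra torsion point as a parameter) already carries a commutativity proof. Your conclusion that the statement is genuinely open and that the missing ingredient is a torsion-point analogue of the van Diejen system matches the paper's own assessment.
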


Both parts of the proof for~$r=1$ fail here: the $m=7$ surface has too many
global sections, and the adjoint no longer gives an element of the same
spherical algebra. There is some experimental evidence for this
conjecture, however: for~$n=r=2$, the analogous statement for a suitable
degeneration to a nodal curve holds by a computer calculation. (We will
briefly discuss how to construct such degenerations at the end of the next
section.) This statement is, of course, trivial for~$n=1$ (given the
proposition), but it is worth noting there that the global sections of~${\cal S}^{(1)}_{\eta',x_1,\dots,x_8;0,0}(0,r(2s+2f-e_1-\cdots-e_8))$ are
just the pullback of the global sections of~$\sO_{\P^1}(1)$ from the base
of the elliptic fibration.

\section[The (spherical) $C^\vee C_n$ Fourier transform]{The (spherical) $\boldsymbol{C^\vee C_n}$ Fourier transform}\label{section8}

Our objective in the present section is to prove the following result.

\begin{thm}
 There is, locally on the base, an isomorphism
 \begin{gather*}
 \Gamma{\cal S}^{(n)}_{2c,x_1,\dots,x_m;q,t}
 \cong
 \Gamma{\cal S}^{(n)}_{-2c,x_1-c,\dots,x_m-c;q,t}
 \end{gather*}
 acting on objects as $ds+d'f-r_1e_1-\cdots-r_me_m\mapsto
 d's+df-r_1e_1-\cdots-r_me_m$ and triangular with respect to the Bruhat
 filtration. Moreover, this isomorphism commutes $($up to local units$)$ with
 the Selberg adjoint.
\end{thm}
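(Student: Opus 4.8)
The plan is to reduce the statement to the univariate case $n=1$, where a Fourier transform has already been established in \cite{noncomm1}, and then to build the general-$n$ transform out of rank-one pieces using the category structure. First I would observe that the Fourier transform of \cite{noncomm1} gives an isomorphism of the global section categories $\Gamma{\cal S}^{(1)}_{2c,x_1,\dots,x_m;q,t}\cong \Gamma{\cal S}^{(1)}_{-2c,x_1-c,\dots,x_m-c;q,t}$ acting on objects by interchanging $s$ and $f$ (i.e. $ds+d'f-\sum r_ie_i\mapsto d's+df-\sum r_ie_i$) and triangular for the Bruhat/order filtration. Since the object groups for general $n$ are canonically the same as for $n=1$, the prescription on objects makes sense in general, and it remains to produce the transform on $\Hom$ bimodules.

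The key technical step is the observation (already used for the spherical algebra in Section 7) that for $t=0$ the category is a symmetric power: $\Gamma{\cal S}^{(n)}_{\dots;q,0}\cong \Sym^n(\Gamma{\cal S}^{(1)}_{\dots;q,0})$. Applying $\Sym^n$ to the univariate Fourier transform yields an isomorphism of categories for $t=0$. The plan is then to propagate this to general $t$ by a deformation/flatness argument: by Corollary \ref{cor:universally_ample_is_flat} and the acyclicity results, for a cofinal family of pairs of objects $(v,w)$ — those lying in the region where $\Gamma{\cal S}^{(n)}(v,w)$ is flat with fibers injecting into meromorphic difference operators — the $\Hom$ bimodules form flat families over parameter space. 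The univariate transform extends over all parameters (it is constructed as an explicit gauging by products of $\vartheta$ and $\Gamma_q$ symbols, which make sense for all $q,t$), so one gets a candidate map between these flat $\Hom$ bimodules agreeing with $\Sym^n$ of the univariate map on the hypersurface $t=0$; since both sides are flat and the map at $t=0$ is an isomorphism triangular for the Bruhat filtration (which is preserved since the subquotients are, up to line bundles on the base, independent of $t$ by Propositions \ref{prop:spherical_as_symmetric_power} and \ref{prop:t_q-t_symmetry}), it is an isomorphism in a neighborhood, and by strong flatness and faithfulness on fibers it extends to an isomorphism everywhere it is defined. Finally one uses the graded-module reconstruction (the $\sO_{\P^n}(1)$-twist built into the definition) to recover the full sheaf categories ${\cal S}^{(n)}$ from their global section categories, obtaining the transform on all $\Hom$ bimodules, not merely the acyclic ones.

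For the compatibility with the Selberg adjoint, I would argue as in the proof of Theorem \ref{thm:vandiejen}: the Selberg adjoint and the Fourier transform are each triangular with respect to the Bruhat filtration, and their composition (in either order) induces, on each Bruhat subquotient, an automorphism of a line bundle on a quotient $\P^n$-type scheme, i.e. multiplication by a unit. One checks that on the diagonal subquotients the two composites agree up to such units by reducing to $t=0$ and then to $n=1$, where the compatibility of the univariate Fourier transform with its formal adjoint is part of \cite{noncomm1} (and can be pinned down by normalizing so that the image of the section $1$ in degree $2s+2f-\sum e_i$ is again $1$). Hence the two composites differ by a (local) unit, which is the assertion.

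The main obstacle I anticipate is the deformation argument away from $t=0$: one must be careful that the explicit univariate gauging really does carry local sections of $\Gamma{\cal S}^{(n)}_{2c,\dots;q,t}$ to local sections of $\Gamma{\cal S}^{(n)}_{-2c,\dots;q,t}$ for \emph{all} $t$ and not merely generic $t$ — in particular at torsion $q$ and at $t$ a multiple of $q$, where the fibers of the $\Hom$ bimodules can behave badly. The way around this is exactly the strong flatness machinery: it suffices to verify the statement on the dense open locus where everything is flat and injects into meromorphic operators, and then invoke flatness of the relevant Bruhat intervals (Theorem \ref{thm:CCn_DAHA_strongly_flat} and its corollary) to conclude that the map of sheaf bimodules, being a map of $S$-flat coherent sheaves which is an isomorphism generically and saturates the Bruhat filtration, is an isomorphism over all of parameter space. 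The bookkeeping of which line bundles on the base one must twist by to globalize is what forces the ``locally on the base'' qualifier, exactly as in the statement.
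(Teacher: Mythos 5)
Your reduction to $n=1$ and $t=0$ does not survive scrutiny: the step where you ``propagate to general $t$ by a deformation/flatness argument'' is the crux of the whole theorem, and you have not actually supplied a mechanism for it. Flatness of a family of $\Hom$ bimodules over the base tells you that the sheaves vary nicely, but it does not construct a morphism between two such families away from the hypersurface $t=0$ where you already have one. Knowing an isomorphism on a codimension-1 locus of parameter space gives no candidate map off that locus; there is no analytic-continuation principle for morphisms of coherent sheaves. Your parenthetical -- that the univariate transform ``extends over all parameters'' so one gets a candidate map agreeing with $\Sym^n$ of it at $t=0$ -- is where this goes wrong: the univariate gauging operator is a function of a single variable, and for $t\ne 0$ the residue conditions on $\Gamma{\cal S}^{(n)}$ genuinely couple distinct $z_i,z_j$ through factors like $\vartheta(t+z_i+z_j)$. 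An operator assembled by tensoring $n$ univariate gaugings cannot respect those cross-variable conditions, and no amount of flatness will repair it. Put differently, the $t$-deformation of the category is not an inner deformation of a symmetric power, so $\Sym^n(\text{univariate transform})$ simply is not defined for $t\ne 0$.

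What the paper actually has to do, and what is missing from your sketch, is to construct a genuinely multivariate formal gauging operator ${\cal D}^{(n)}_{q,t}(c)$ -- a completion-of-difference-operators analogue of the interpolation kernel -- characterized by an explicit leading term together with the defining recurrence ${\cal D}^{(n)}_{q,t}(c)\,D^{(n)}_q(c\pm u;t)|_{u=z_i}=0$. The $t$-dependence of this operator is built in from the start via the interaction factors in $D^{(n)}_q(\,\cdot\,;t)$, and its existence is proved by solving the recurrence over a Zariski-dense set (integer multiples of $q/2$, where the operator is $D^{(n)}_d(q,t)$) and then invoking density. Conjugation by ${\cal D}^{(n)}_{q,t}(c)$ then gives a homomorphism on formal difference operators for \emph{all} $t$, and one shows it preserves the subsheaves $\Gamma{\cal S}^{(n)}$ using the degree-1 generation of the coordinate ring of $\Sym^n(\P^1\times\P^1)$ at $t=0$ together with continuity and flatness. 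Your appeal to flatness and Bruhat-filtration saturation is in the right spirit for the \emph{verification} step (once a candidate map exists), and your treatment of the Selberg adjoint by triangularity is consistent with the paper's; but without an explicit construction of a $t$-dependent multivariate gauging operator, the transform itself is never produced, and the argument does not close.
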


We refer to this isomorphism as the ``Fourier transform'': in particular,
note that it takes multiplication operators (of degree $d'f$) to difference
operators (of degree $d's$) and (at least on the parameters) is an
involution. (In addition, though we will not be using this fact, the
Fourier transform can be represented in the analytic setting by a formal
integral operator~\cite{quadxforms}.)

Before constructing the Fourier transform, we~give some consequences. The~simplest is that we can conjugate the symmetry by an elementary
transformation.

\begin{cor}
 There is, locally on the base, an isomorphism
 \begin{gather*}
 \Gamma{\cal S}^{\prime(n)}_{x_1+2c,x_1,x_2,\dots,x_m;q,t}
 \cong
 \Gamma{\cal S}^{\prime(n)}_{x_1-c,x_1+c,x_2-c,\dots,x_m-c;q,t}
 \end{gather*}
 acting on objects as $ds+d'f-r_1e_1-r_2e_2-\cdots-r_me_m\mapsto
 (d'-r_1)s+d'f-(d'-d)e_1-r_2e_2-\cdots-r_me_m$.
\end{cor}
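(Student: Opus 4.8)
The plan is to deduce this corollary from the Fourier transform of the even global-section category by sandwiching the latter between two of the elementary transformation isomorphisms in the parameter $x_1$. Recall that a single elementary transformation in $x_1$ interchanges $\Gamma{\cal S}^{\prime(n)}$ and $\Gamma{\cal S}^{(n)}$: the displayed isomorphisms of the elementary transformation symmetry give, locally on the base, $\Gamma{\cal S}^{\prime(n)}_{x_0,x_1,x_2,\dots,x_m;q,t}\cong \Gamma{\cal S}^{(n)}_{x_0-x_1,-x_1,x_2,\dots,x_m;q,t}$ and $\Gamma{\cal S}^{(n)}_{\eta',x_1,x_2,\dots,x_m;q,t}\cong \Gamma{\cal S}^{\prime(n)}_{\eta'-x_1,-x_1,x_2,\dots,x_m;q,t}$. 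These pass to the global-section categories because the pseudo-conjugation by the $C_n$-invariant products $\prod_i\Gamq(\,\cdot\,\pm z_i)$ used to construct them respects the Bruhat filtration, hence descends to the spherical subquotients and then, upon taking direct images, to $\Gamma{\cal S}$.

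First I would apply the first isomorphism with $x_0=x_1+2c$, obtaining $\Gamma{\cal S}^{\prime(n)}_{x_1+2c,x_1,x_2,\dots,x_m}\cong \Gamma{\cal S}^{(n)}_{2c,-x_1,x_2,\dots,x_m}$. Next I would apply the Fourier transform theorem to the right-hand side (its leading parameter is $2c$, so the theorem applies with this value of $c$), giving $\Gamma{\cal S}^{(n)}_{2c,-x_1,x_2,\dots,x_m}\cong \Gamma{\cal S}^{(n)}_{-2c,-x_1-c,x_2-c,\dots,x_m-c}$. Finally I would apply the second elementary transformation isomorphism in $x_1$ with $\eta'=-2c$ and first blow-up parameter $-x_1-c$: since $\eta'-(-x_1-c)=x_1-c$ and $-(-x_1-c)=x_1+c$, this yields $\Gamma{\cal S}^{(n)}_{-2c,-x_1-c,x_2-c,\dots,x_m-c}\cong \Gamma{\cal S}^{\prime(n)}_{x_1-c,x_1+c,x_2-c,\dots,x_m-c}$. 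Composing the three isomorphisms gives the desired one, and it is triangular with respect to Bruhat order since each factor is.

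It then remains to track the induced map on the object group $\Z\langle s,f,e_1,\dots,e_m\rangle$, which is pure bookkeeping. The first elementary transformation sends $ds+d'f-r_1e_1-\cdots-r_me_m$ to $ds+(d'-r_1)f-(d-r_1)e_1-r_2e_2-\cdots-r_me_m$; the Fourier transform then interchanges the coefficients of $s$ and $f$, producing $(d'-r_1)s+df-(d-r_1)e_1-r_2e_2-\cdots-r_me_m$; and the second elementary transformation (whose ``$d$'' is now $d'-r_1$, whose ``$d'$'' is $d$, and whose ``$r_1$'' is $d-r_1$) sends this to $(d'-r_1)s+d'f-(d'-d)e_1-r_2e_2-\cdots-r_me_m$, which is exactly the claimed map. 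There is no genuine obstacle beyond this bookkeeping and the routine verification that the elementary transformation isomorphisms descend to $\Gamma{\cal S}$ as indicated; the substantive content lives entirely in the Fourier transform theorem itself.
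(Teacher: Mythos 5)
Your proof is correct and is exactly the paper's own argument: the paper proves this corollary in one line by "conjugating the symmetry by an elementary transformation," and your three-step composition (elementary transformation in $x_1$, then Fourier transform, then the inverse elementary transformation with shifted parameters) is precisely this conjugation, spelled out. The parameter and object bookkeeping all check out.
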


This also tells us that the deformations of~$\P^2$ we constructed are
independent of~$x_0$ (as one would expect).

\begin{cor}
 The restriction to~$\Z(s+f)$ of~$\Gamma{\cal S}^{\prime(n)}_{x_0;q,t}$ is
 $($fppf locally$)$ independent of~$x_0$.
\end{cor}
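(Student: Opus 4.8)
The plan is to deduce this directly from the Fourier transform isomorphism and the elementary transformation symmetry, so the statement becomes essentially a formal consequence of results already established. First I would recall that the Corollary preceding this statement provides, locally on the base, an isomorphism
\[
  \Gamma{\cal S}^{\prime(n)}_{x_1+2c,x_1,x_2,\dots,x_m;q,t}
  \cong
  \Gamma{\cal S}^{\prime(n)}_{x_1-c,x_1+c,x_2-c,\dots,x_m-c;q,t},
\]
acting on objects by a certain transformation of $\Z\langle s,f,e_1,\dots,e_m\rangle$. The point is that this isomorphism, together with the $W(D_m)$ action on the parameters coming from iterating elementary transformations (the remark following Proposition on elementary transformations), lets us move the auxiliary parameter $x_0$ around while staying within the family of sheaf categories. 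In the $m=0$ case there are no $e_i$ and the only relevant parameter is $x_0$; here the statement is that $\Gamma{\cal S}^{\prime(n)}_{x_0;q,t}$ restricted to $\Z(s+f)$ does not depend on $x_0$.

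The key steps, in order, would be: (i) observe that for $m=0$, the object transformation in the above Corollary, when restricted to the sublattice $\Z(s+f)$, is the identity, since on $d(s+f)$ it sends $ds+df$ to $(d-0)s+df$ — i.e. $r_1=0$ has no $e_i$ to track and the $s$-coefficient transformation $d\mapsto d'-r_1=d$ fixes the diagonal sublattice; (ii) read off from the parameter transformation $x_0=x_1+2c\mapsto x_1-c$ (with no surviving $x_i$ for $m=0$, interpreting the single blown-up parameter appropriately, or rather working with the version of the Corollary specialized to the case where the isolated parameter plays the role of $x_0$) that a one-parameter family of such isomorphisms connects $\Gamma{\cal S}^{\prime(n)}_{x_0;q,t}$ to $\Gamma{\cal S}^{\prime(n)}_{x_0';q,t}$ for a range of $x_0'$; (iii) since $c$ ranges over (fppf-locally) all of ${\cal E}$, conclude that the restriction to $\Z(s+f)$ is independent of $x_0$ up to local isomorphism. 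More carefully, I would phrase this by noting that the composite of the Fourier transform of the Theorem with an elementary transformation in a single parameter produces an isomorphism of restricted categories that changes $x_0$ by an arbitrary amount, which is exactly the assertion.

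The main obstacle I anticipate is bookkeeping the object transformations precisely enough to verify that the isomorphism genuinely restricts to the sublattice $\Z(s+f)$ rather than merely mapping it into a translate or a different sublattice. In the Corollary the object map for general $r_1$ is $ds+d'f-r_1e_1-\cdots\mapsto (d'-r_1)s+d'f-(d'-d)e_1-\cdots$; one must check that when we work in the $m=0$ setting (so we have introduced and then removed the parameter $x_1$ via an elementary transformation and its inverse) the net effect on $\Z(s+f)$ is trivial and the parameter $x_0$ is shifted by the desired amount. There is also a mild subtlety about whether we are shifting $x_0$ in ${\cal S}'$ versus passing through ${\cal S}$: the Theorem's Fourier transform maps ${\cal S}^{(n)}$ to itself while the elementary transformation swaps ${\cal S}^{(n)}\leftrightarrow{\cal S}^{\prime(n)}$, so one must compose an even number of these to land back in ${\cal S}^{\prime(n)}$, and track how $x_0$ transforms under the composite. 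Once these identifications are made explicit, the independence statement follows immediately, with the ``fppf locally'' hypothesis absorbing the need to choose a square root $c$ of the relevant displacement and to trivialize the line bundles implicit in ``locally on the base.''
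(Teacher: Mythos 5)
Your argument follows essentially the same route as the paper: specialize the preceding Corollary with $m=1$ and $x_1 = x_0-2c$, note that the object map fixes $\Z(s+f)$ (since $r_1=0$ there) and that the restriction to $\Z(s+f)$ does not see the auxiliary parameter $x_1$, and then let $c$ vary to move $x_0$ by $3c$ fppf-locally. The ``mild subtlety'' you raise about composing an even number of transformations is a non-issue here, since the preceding Corollary is already packaged as a ${\cal S}^{\prime(n)}\to{\cal S}^{\prime(n)}$ isomorphism, so no parity bookkeeping is required.
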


\begin{proof}
 The previous corollary gives (locally) an isomorphism $\Gamma{\cal
 S}^{\prime(n)}_{x_0,x_0-2c;q,t} \cong \Gamma{\cal
 S}^{\prime(n)}_{x_0-3c,x_0-c;q,t}$. The~action on objects takes
 $d(s+f)$ to~$d(s+f)$, so this local isomorphism induces a local
 isomorphism
 $\Gamma{\cal S}^{\prime(n)}_{x_0;q,t}|_{\Z(s+f)}
 \cong
 \Gamma{\cal S}^{\prime(n)}_{x_0-3c;q,t}|_{\Z(s+f)}$
 for any $x_0$ and $c$. It follows that any two geometric fibers with the
 same values of~$q$, $t$ are isomorphic.
\end{proof}

\begin{rems}
 More generally, if $(E,x_0,x_1,q,t)$ is a~point of~${\cal E}^4$ over some
 scheme $S$, then we have an isomorphism $\Gamma{\cal
 S}^{\prime(n)}_{x_0;q,t}|_{\Z(s+f)}\cong \Gamma{\cal
 S}^{\prime(n)}_{x_1;q,t}|_{\Z(s+f)}$ defined Zariski locally on~$S$ as long as
 $x_1-x_0\in 3E(S)$. Without this assumption, there may very well be no
 such isomorphism; indeed for~$n=1$, $q=0$, these are essentially the
 homogeneous coordinate rings of the embeddings of~$C$ via $[x_0]+2[0]$
 and $[x_1]+2[0]$.
\end{rems}

\begin{rems}
 When $c\in E[3]$, this isomorphism becomes an automorphism, but is quite
 nontrivial.
\end{rems}

The most significant consequence is the following. Note here that we are, as~usual, taking global sections {\em before} passing to fibers.

\begin{thm}
 For any $v\in \Z\langle s,f,e_1,\dots,e_m\rangle$, there is a codimension~$\ge 2$ subscheme of para\-me\-ter space on the complement of which
 $\Gamma{\cal S}^{(n)}_{\eta',x_1,\dots,x_m;q,t}(0,v)$ is flat and the map
 to meromorphic difference operators is injective on fibers.
\end{thm}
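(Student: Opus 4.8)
The plan is to bootstrap from the ``universally ample'' flatness result, Corollary \ref{cor:universally_ample_is_flat}, using the Fourier transform to reach the remaining degrees. First I would recall that for a vector $v = ds+d'f-r_1e_1-\cdots-r_me_m$ whose $(r_1,\dots,r_m)$-part already lies in the fundamental chamber for $W(D_m)$ (which we may arrange by composing elementary transformations and setting negative $r_i$ to $0$, as in the remark after Corollary \ref{cor:universally_ample_is_flat}), Corollary \ref{cor:universally_ample_is_flat} gives flatness and fiberwise injectivity of $\Gamma{\cal S}^{(n)}_{\eta',\vec x;q,t}(0,v)$ away from a codimension $\ge 2$ subscheme (empty when $d'\ge d/2+r_1$). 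So the only degrees that are not yet covered are those with $d' < d/2 + r_1$ — in particular those with $d'$ small relative to $d$, where the Serre-type positivity hypotheses fail. The key observation is that the Fourier transform of the preceding theorem sends the object $ds+d'f-\sum_i r_ie_i$ to $d's+df-\sum_i r_ie_i$, i.e. it exchanges the coefficients of $s$ and $f$ while fixing the coefficients of the $e_i$. Thus if $v$ has small $d'$ and large $d$, its Fourier image $v^\vee = d's+df-\sum_i r_i e_i$ has the roles reversed, and one of $v$, $v^\vee$ will satisfy $d\ge r_1+r_2$ and (after re-chambering the $r_i$, which the transform also respects since it fixes the $e_i$-part) the hypothesis $d'\ge d\ge r_1+r_2$ of the sharper Corollary.

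The key steps, in order, are: (1) reduce to the case where $(r_1,\dots,r_m)$ is $W(D_m)$-dominant, using the $W(D_m)$ action on the family of categories (the elementary transformations) — this is harmless since both flatness and fiberwise injectivity are preserved under isomorphism of $\Hom$ sheaves; (2) if $\max(d,d')\ge r_1+r_2$ already with the larger of $d,d'$ in the ``$s$'' slot after possibly applying the Fourier transform, invoke Corollary \ref{cor:universally_ample_is_flat} directly (for the codimension-$\ge2$ statement) — note that the transform is defined only locally on the base and is triangular with respect to the Bruhat filtration, but since flatness and fiberwise injectivity are local on the base and preserved by filtered isomorphisms, this is enough; (3) handle the degenerate residual case where even after swapping we have $d, d' < r_1 + r_2$: here the $\Hom$ bimodule is squeezed between two $\Hom$ bimodules with larger second coefficient (e.g. $\Gamma{\cal S}^{(n)}(0,v)\subset \Gamma{\cal S}^{(n)}_{\eta',x_1,\dots,x_{m-1};q,t}(0,v')$ with a parameter dropped, or one uses the interval $[\le(d/2,\dots,d/2)]$ and the vanishing of the direct images of symmetric powers of the leading univariate subquotient exactly as in the proof of Corollary \ref{cor:universally_ample_is_flat}) to reduce $v$ by $2s+2f-e_1-\cdots-e_m$ and induct on $d$; (4) assemble: the bad locus is the union over the finitely many reductions of the bad loci produced, hence still codimension $\ge 2$.

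I expect the main obstacle to be step (3) together with the bookkeeping in step (2): one must check that after applying a Fourier transform the resulting $e_i$-coefficients are still in a single $W(D_m)$-chamber and that the transformed parameters $(2c,\vec x)\mapsto(-2c,\vec x - c)$ do not introduce new codimension-$1$ degeneracies — but since the transform is an honest (local) isomorphism of sheaf bimodules, any bad locus of the image is the pullback of a bad locus of the source under an automorphism of parameter space, so codimension is preserved. The genuinely delicate point is that, unlike in Corollary \ref{cor:universally_ample_is_flat}, we can no longer guarantee that the bad locus avoids $\{t=0\}$ or $\{t=q\}$: for the truly negative degrees the leading univariate subquotient has negative degree and contributes nothing in any characteristic, so the reduction to $v-(2s+2f-e_1-\cdots-e_m)$ is unconditional (as in the proof of Corollary \ref{cor:universally_ample_is_flat}), and the induction terminates at a $v$ covered by one of the acyclicity propositions; one then only claims the weaker codimension-$\ge 2$ statement, with no control near $t=0$. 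The argument that this residual locus really is codimension $\ge 2$ (rather than a divisor) is the general principle already noted: a codimension-$1$ failure of injectivity would come from a local family of operators divisible by the equation of a hypersurface, which can be cleared, so fiberwise injectivity, hence flatness, can only fail in codimension $\ge 2$.
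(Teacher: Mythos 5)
Your proposal is correct and follows essentially the same strategy as the paper: re-chamber using the $W(D_m)$ action, then bounce back and forth between Corollary \ref{cor:universally_ample_is_flat} and the Fourier transform, observing that flatness and fiberwise injectivity are preserved since the transform is a (local) isomorphism of sheaf bimodules acting on operators. The one deviation is your step (3): it addresses a case that does not actually arise. After re-chambering with $W(D_m)$ (and setting negative $r_i$ to zero), one always has $d\ge r_1+r_2$; so once the Fourier transform replaces $d$ by the old $d'$ and one re-chambers again, the condition is restored. The paper instead runs a clean induction on $d$: put $v$ in the fundamental chamber; if $d'\ge d$ apply the Corollary; if $d'<0$ the $\Hom$ sheaf is zero; otherwise $0\le d'<d$, so Fourier transforming strictly decreases $d$ while keeping it nonnegative, and the induction terminates — no separate ``squeezing'' step is needed. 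Your remaining observations (local units, triangularity, pullback of bad loci by an automorphism of parameter space, the general codimension-$1$ clearing principle) are all correct and consistent with the paper's remarks.
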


\begin{proof}
 Applying the Fourier transform has no effect on flatness (since it is an
 isomorphism), and the Fourier transform will be constructed via an action
 on meromorphic difference operators, and thus injectivity on fibers is
 also preserved. This allows us to reduce to Corollary~\ref{cor:universally_ample_is_flat}, as~in~\cite{noncomm1}. To~be
 precise, let $v=ds+d'f-r_1e_1-\cdots-r_me_m$. We may apply a permutation
 and an even number of elementary transformations to put $v$ into the
 fundamental chamber for~$W(D_m)$. Moreover, if $r_m<0$, then we may set it
 to 0 without changing the sheaf of global sections, and in this way may
 arrange to have $d\ge r_1+r_2$ and $r_1\ge \cdots\ge r_m\ge 0$. If~$d'\ge d$, then we may apply Corollary~\ref{cor:universally_ample_is_flat}. If~$d'<0$, then we observe that
 $\Gamma{\cal S}^{(n)}_{\eta',x_1,\dots,x_m;q,t}(0,v)=0$, so the result
 again follows. Otherwise, we~apply the Fourier transform. Since this
 strictly decreases $d$ but keeps it nonnegative, the claim follows by
 induction.
\end{proof}

\begin{rem}
 Of course, the codimension~$\ge 2$ subscheme is the same as that of the
 appropriate special case of Corollary~\ref{cor:universally_ample_is_flat}.
\end{rem}

Just as elliptic pencils gave rise to integrable systems above, there is
something analogous (if slightly weaker) for rational pencils. Call a
small category with object set $\Z$ ``quasi-abelian'' if there is a
commutative graded algebra $A$ such that $\Hom(j,k)\cong A[k-j]$ for all
$j$, $k$, with composition given by multiplication.

\begin{cor}
 Suppose $v\in \Z\langle s,f,e_1,\dots,e_m\rangle$ is the class of a
 rational pencil on the rational surface $X_m$. Then $\Gamma{\cal
 S}^{(n)}_{\eta',x_1,\dots,x_m;q,t}|_{\Z v}$ is quasi-abelian, and the
 corresponding graded algebra is a~free polynomial algebra in~$n+1$
 generators.
\end{cor}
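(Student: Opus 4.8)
The plan is to reduce the statement to the single class $v=f$, where everything is transparent, and then transport it to an arbitrary rational-pencil class by the $W(E_{m+1})$-symmetry of the family.

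For $v=f$ the assertion is immediate from the definitions. A $\Hom$ bimodule of $\Gamma{\cal S}^{(n)}_{\eta',x_1,\dots,x_m;q,t}$ of degree $d'f$ (i.e.\ with vanishing $s$- and $e_i$-components) is, by construction, the direct image of a twist $\sO_{\P^n}(d'_2)\otimes\sO_{\P^n}(-d'_1)$ of the trivial sheaf bimodule on $\P^n={\cal E}^n/C_n$ — in particular it does not depend on $q$ or $t$ — and composition is the evident multiplication. Hence, locally on the base (for a suitable representative of $\sO_{\P^n}(1)$), $\Gamma{\cal S}^{(n)}|_{\Z f}$ is the homogeneous coordinate sheaf-algebra of $\P^n$, namely $\bigoplus_{d\ge 0}\Gamma(\P^n;\sO_{\P^n}(d))=k[y_0,\dots,y_n]$ with the $y_i$ in degree $1$: a commutative graded algebra, free polynomial on $n+1$ generators, so in particular a quasi-abelian category. (That $\Hom(jf,kf)$ depends, locally on the base, only on $k-j$ is the translation symmetry of these categories; composition then corresponds to the multiplication of the graded algebra.)

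Next I would invoke the symmetries. The Fourier transform of the preceding Theorem, the elementary transformations and their compositions, and the permutations of $x_1,\dots,x_m$ all induce isomorphisms of sheaves of categories; on the object group $\Z\langle s,f,e_1,\dots,e_m\rangle\cong\Pic(X_m)$ they act by the reflections in the roots $s-f$, $f-e_i-e_j$ and $e_i-e_j$, and together they generate the Weyl group $W(E_{m+1})$ acting on $\Pic(X_m)$ by $K$-fixing isometries. Restricting such an isomorphism to the subcategory on objects $\Z v$ identifies it — as a $\Z$-graded category, compatibly with composition — with the subcategory on objects $\Z w(v)$, for the corresponding $w\in W(E_{m+1})$ (the parameters $q,t$ are unchanged, while $\eta'$ and the $x_i$ are transformed in the manner recorded in the cited statements). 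Since being quasi-abelian with free polynomial graded algebra on $n+1$ generators is invariant under such an identification, it suffices to know that every class of a rational pencil on $X_m$ lies in the $W(E_{m+1})$-orbit of $f$. This is the classical enumeration of conic-bundle structures: the classes $v$ with $v^2=0$ and $v\cdot K=-2$ form a single $W(E_{m+1})$-orbit (standard in the del Pezzo range $m\le 7$, and checked directly otherwise), one member of which is $f$. Composing the corresponding symmetries carries $\Gamma{\cal S}^{(n)}_{\eta',x_1,\dots,x_m;q,t}|_{\Z v}$ onto $\Gamma{\cal S}^{(n)}_{\eta'',x''_1,\dots,x''_m;q,t}|_{\Z f}$, which by the previous paragraph is $k[y_0,\dots,y_n]$; this proves the corollary.

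The step I expect to require the most care — and to be the real obstacle — is the transitivity of $W(E_{m+1})$ on rational-pencil classes together with the identification of the available operations with (enough of) $W(E_{m+1})$: the first is classical for del Pezzo $X_m$ and needs a separate check only for $m\ge 8$, while the second amounts to matching the elementary transformation of the corollary following the Fourier transform theorem with the Cremona reflection in a root $f-e_i-e_j$, a direct computation on $\Pic(X_m)$. One might hope to bypass the orbit argument by mimicking the inductive proof of the flatness theorem — put $v$ in the $W(D_m)$-fundamental chamber and apply the Fourier transform to lower the $s$-degree — but that procedure uses only $W(D_m)$ and the Fourier transform and so does not in general reach $f$ (for instance it fixes $v=s+f-e_1-e_2$), so the Cremona reflections must be brought in regardless; it is cleanest to work directly with the full $W(E_{m+1})$-orbit as above. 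Note finally that the $\Hom$ sheaves here are flat over the base with constant Hilbert function (no bad locus can occur, since none occurs for $f$ and the symmetries preserve this), which is consistent with, though not needed beyond, the identification just made.
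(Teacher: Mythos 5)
Your proof follows the same two-step strategy as the paper: first verify the claim directly for $v=f$ (where the graded algebra is the homogeneous coordinate ring $\bigoplus_d\Gamma(\P^n;\sO_{\P^n}(d))$), then transport it to an arbitrary rational-pencil class via the $W(E_{m+1})$-action generated by the Fourier transform and $W(D_m)$, using the fact that such classes form a single $W(E_{m+1})$-orbit of $f$. The paper cites the orbit statement to \cite{rat_Hitchin} rather than reproving it, and is terser about identifying the elementary transformations with Cremona-type reflections, but these are the same ingredients; your extra commentary about why the Cremona reflections are genuinely needed (the $W(D_m)$ plus Fourier reduction alone does not reach $f$) is a correct observation, not a gap.
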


\begin{proof}
 If $v=f$, this is easy: the global sections of degree $df$ are just
 multiplication by $C_n$-inva\-riant sections of the bundle with
 polarization~$d\sum_i z_i^2$, and this is precisely the pullback of~$\sO_{\P^n}(d)$. More generally, it follows from the theory of rational
 surfaces (see~\cite{rat_Hitchin}) that $v$ represents a rational pencil
 iff it is in the orbit of~$f$ under the group $W(E_{m+1})$ generated by
 the Fourier transform and $W(D_m)$.
\end{proof}

Just as integrable systems lead to natural eigenvalue equations, such
``quasi-integrable'' systems lead to generalized eigenvalue problems. A
{\em generalized eigenfunction} of a space ${\cal D}$ of operators is a
function~$f$ such that the image ${\cal D}f$ is $1$-dimensional. (We then
obtain an associated ``generalized eigenvalue'', namely the point in~$\P({\cal D})$ associated to the kernel of the map $D\mapsto Df$ on~${\cal
 D}$.)

Given a quasi-integrable system associated to a rational pencil, we~have
for each $d$ a map~$\phi_d$ from $A[1]$ to the space of operators, such
that $\phi_{d+1}(y)\phi_d(x)=\phi_{d+1}(x)\phi_d(y)$. We may then consider
for each $d$ the generalized eigenvalue problem associated to~$\phi_d(A[1])$. For~any generalized eigenfunction~$f_d$ for~$\phi_d(A[1])$,
let $f_{d+1}$ be a nonzero representative of~$\phi_d(A[1])f_d$. Then
for suitable $y\in A[1]$, we~have
\begin{gather*}
\phi_{d+1}(x) f_{d+1}
=
\phi_{d+1}(x) \phi_d(y) f_d
=
\phi_{d+1}(y) \phi_d(x) f_d
=
\lambda_d(x) \phi_{d+1}(y) f_{d+1}
\end{gather*}
for all $x\in A[1]$, and thus $f_{d+1}$ is a generalized eigenfunction for~$\phi_{d+1}(A[1])$. More generally, if $V\subset A[1]$ is such that the
corresponding generalized eigenvalue problem for~$\phi_{d+1}(V)$ is
nondegenerate (i.e., for~each point of projective space, the corresponding
problem has at most $1$-dimensional solution space), then any generalized
eigenfunction~$f_d$ for~$\phi_d(V)$ is a generalized eigenfunction for~$\phi_d(A[1])$, since then $\phi_d(y)f_d$ is a generalized eigenfunction
for~$\phi_{d+1}(V)$.

There are two cases of particular interest. In~the case $v=s+f-e_1-e_2$,
$\eta'=-(n-1)t-q$, the generators of the quasi-integrable system are
operators of the form considered in~\cite{bctheta}, and the quasi-abelian
property turns into the quasi-commutation relation used there. The~corresponding generalized eigenvalue problem is precisely the difference
equation~\cite[Proposition~3.9]{bctheta} satisfied by the elliptic interpolation
functions. (The interpolation kernel of~\cite{quadxforms} is also a
generalized eigenfunction for essentially the same space of operators,
\cite[Proposition~3.12]{quadxforms}.)

The biorthogonal functions of~\cite{bctheta,xforms} are also generalized
eigenfunctions of such a quasi-integrable system, corresponding to~$v=2s+2f-e_1-e_2-e_3-e_4-2e_5$ and $\eta'=-(n-1)t-q$. Indeed, the
first-order difference operators considered in~\cite{xforms} correspond to
products of operators of degrees $s-e_5$, $s+2f-e_1-e_2-e_3-e_4-e_5$ and
$s+f-e_i-e_j-e_5$, $1\le i<j\le 4$, giving rise to~8 operators of degree
$2s+2f-e_1-e_2-e_3-e_4-2e_5$. The~biorthogonal functions are generalized
eigenfunctions of the span of these 8 operators, and the generalized
eigenvalues are all distinct points of~$\P^7$. It thus follows that any
biorthogonal function is a generalized eigenfunction for the full space of
operators. (With more effort, one can in fact verify that the generalized
eigenvalues are given by suitable specializations of the leading
coefficients; this is a consequence in general of the fact that the Fourier
transform respects leading coefficients.)

We now turn to constructing the Fourier transform. The~traditional
approach would be to construct a Fourier transform on the DAHA and then
observe that it restricts to a transform on the spherical algebra. One
significant issue that arises here is that although we have a~reasonable
facsimile of a presentation, it is at the level of sheaves, not at the
level of global sections, while the Fourier transform does not make sense
in terms of sheaves (since it does not preserve multiplication).
Furthermore, most of the rank~1 subalgebras we used to generate the DAHA do
not have {\em any} nontrivial global sections (the leading Bruhat
subquotient is a generically nontrivial line bundle of degree 0 in every
variable). As~a result, it seems unlikely that the Fourier transform on
the DAHA (assuming it exists) would have a construction that was
significantly simpler than the construction we give in the spherical case.
Beyond that, there is another issue: as we discussed above, the description
of the rank~1 DAHA via a Morita equivalence to the spherical algebra
strongly suggests that the Fourier transform only exists for the noncompact
version of the DAHA. In~other words, the Fourier transform on the DAHA
would not respect the filtration by degree; since this filtration comes
from the Bruhat filtration, the latter also could not be preserved. As~a
result, even having a Fourier transform for the DAHA would not be enough to
prove the theorem; one also needs to understand why the spherical version
is triangular!

We thus wish an approach that works directly with the spherical algebra.
Note that since the action of the Fourier transform on objects preserves
$\Z\langle s,f\rangle$, the Fourier transform for~$m>0$ restricts to a
transform of the same sort for~$m=0$. Moreover, since every $\Hom$ sheaf
is contained in one of degree in~$\Z\langle s,f\rangle$, it suffices to
specify how the transform acts on such sheaves and show that it preserves
the various subsheaves of interest. We thus focus our initial attention on
the case $m=0$.

In the univariate setting, the Fourier transform was easy to construct: for
generic parameters, one can give an explicit presentation for the category
(with generators of degrees $s$ and $f$) and this presentation has an
obvious symmetry. Moreover, a slightly larger set of elements generates
the category even without the genericity condition, and one can determine
how the transform must act on those elements by taking a suitable limit.

Although we have analogues of those generators (and will indeed be able to
describe their Fourier transforms explicitly), this approach founders in
the multivariate setting for two reasons. The~first is that the operators
of degree $s$ and $f$ do not even come close to generating the category for~$n>1$ in general: for~$q=0$, $t=0$, the full category is the bihomogeneous
coordinate ring of~$\Sym^n\big(\P^1\times \P^1\big)$, while the elements of degrees
$s$ and $f$ lie in the subring corresponding to the quotient $\P^n\times
\P^n$. If~we include elements of degree $s+f$, the situation is somewhat
better (we will see that these come close enough to generating to be
useful), but this only forces us to confront the fact that we have
absolutely no understanding of the {\em relations} satisfied by these
elements.

As a result, we~will need some way to construct the Fourier transform which
is explicitly a homomorphism. We will do this by constructing a transform
on a much larger algebra of operators, and then show that it preserves the
particular subspace we care about. The~simplest way to construct a
homomorphism on a category of operators is to apply a gauge transformation:
assign an operator to each object and apply the associated
quasi-conjugation.

The first step in constructing such operators is to determine on what
spaces they act, and thus we need to think a bit about where our existing
operators act. Define a family of (gerbe) polarizations
\begin{gather*}
P_d(\eta';q,t):=-((n-1)t-(d-1)q+\eta')\sum_i z_i^2/q.
\end{gather*}
If $F$ is the product of a $\Gamq$ symbol with polarization~$P_{d'_1-d_1}(\eta';q,t)$ and a rational function on~${\cal E}^n$, then we
can apply any global section of a fiber of~${\cal
 S}^{(n)}_{\eta';q,t}(d_1s+d'_1f,d_2s+d'_2f)$ and the result will be a
rational function times a $\Gamq$ symbol with polarization~$P_{d'_2-d_2}(\eta';q,t)$.

Thus the Fourier transform should be given by operators that take functions
with polarization~$P_d(\eta';q,t)$ to functions with polarization~$P_{-d}(-\eta';q,t)$, or equivalently take $P_0(\eta'+dq;q,t)$ to~$P_0(-\eta'-dq;q,t)$. There are issues in general, but there is one
important case in which ope\-ra\-tors of this form do indeed exist. Indeed,
the simplest way to obtain an operator mapping $P_d(0;q,t)$ to~$P_{-d}(0;q,t)$ would be to take a global section of~${\cal
 S}^{(n)}_{0;q,t}(df,ds)$, assuming such a global section exists.

For $d=1$, this is not too difficult to control, and indeed we can understand
global sections of order 1 in general.

\begin{lem}\label{lem:first_order}
 For any point of~${\cal E}^3$, the corresponding fiber of~${\cal
 S}^{(n)}_{\eta';q,t}(0,s+d'f)$ is spanned by operators of the form
 \begin{gather*}
 D^{(n)}_q(u_0,u_1,\dots,u_{2d'+1};t)
 \\ \qquad
 {}=
 \sum_{\sigma\in \{\pm 1\}^n}
 \prod_{1\le i\le n}
 \frac{\prod_{0\le r<2d'+2} \vartheta(u_r+\sigma_i z_i)}
 {\vartheta(2\sigma_i z_i)}
 \prod_{1\le i<j\le n}
 \frac{\vartheta(t+\sigma_i z_i+\sigma_j z_j)}
 {\vartheta(\sigma_i z_i+\sigma_j z_j)}
 \prod_{1\le i\le n}T_i^{\sigma_i/2},
 \end{gather*}
 with~$u_0+\cdots+u_{2d'+1}=q+\eta'$.
\end{lem}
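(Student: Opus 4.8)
The plan is to identify the space $\Gamma({\cal S}^{(n)}_{\eta';q,t}(0,s+d'f))$ explicitly by combining three ingredients: a dimension count coming from the flatness/acyclicity results already established, an explicit construction of the proposed operators, and a verification that these operators actually lie in the $\Hom$ sheaf. First I would check the dimension: by Proposition \ref{prop:spherical_as_symmetric_power} (the case $t=0$), the space of global sections of ${\cal S}^{(n)}_{\eta';q,0}(0,s+d'f)$ is $\Sym^n$ of the corresponding univariate space, which by Lemma \ref{lem:first_order}'s univariate precursor (i.e.\ the $n=1$ case, which should be immediate from the explicit presentation of the univariate category or a direct residue computation) has dimension $2d'+2$ minus the relations. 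More precisely, the univariate degree-$s+d'f$ global sections are spanned by $D^{(1)}_q(u_0,\dots,u_{2d'+1};t)$ with $\sum u_r = q+\eta'$, and these span a space of dimension $2d'+2$ (the operators $T^{1/2}$ and $T^{-1/2}$ each have coefficient an arbitrary section of a degree-$(d'+1)$ bundle on ${\cal E}$, subject to one residue condition at $2z=0$ tying the two coefficients together). Taking $\Sym^n$ gives the predicted dimension for all $t$ by flatness, since ${\cal S}^{(n)}_{\eta';q,t}(0,s+d'f)$ is acyclic (Corollary \ref{cor:universally_ample_is_flat}, as $d'\ge 0 = d-1$) and hence its global sections form a vector bundle over parameter space.

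Next I would verify that each $D^{(n)}_q(u_0,\dots,u_{2d'+1};t)$ is genuinely a global section of the sheaf. This is the computational heart of the argument but is routine in principle. One checks: (i) the coefficients are meromorphic functions on ${\cal E}^n$ with the correct polar divisor (poles only along $2z_i=0$ and $z_i\pm z_j=0$, all of which are reflection hypersurfaces of $C_n$, with at most simple poles) --- this is visible from the $\vartheta(2\sigma_i z_i)$ and $\vartheta(\sigma_i z_i + \sigma_j z_j)$ denominators; (ii) the coefficient of $T^{\vec\sigma/2}$, when multiplied by the appropriate $\Gamq$-symbol gauge factor carrying polarization $P_{d'}(\eta';q,t)$, produces a section of the line bundle on ${\cal E}^n$ with polarization matching the twisting datum in degree $s+d'f$ --- here one uses Theorem (the master section-construction theorem) together with the balancing condition $\sum_r u_r = q+\eta'$, which is exactly what makes the product of $\vartheta$'s have the right quasi-periodicity and weight; (iii) the residue conditions relating $c_{\vec\sigma}$ and $c_{r\vec\sigma}$ along each reflection hypersurface hold --- for the hypersurface $2z_i = 0$ this is a version of the addition law / the $C_n$ theta identity from the proof of Theorem \ref{thm:W-invariants}, and for $z_i+z_j=0$ (swapping $\sigma_i,\sigma_j$ via an $S_2$) it follows from the elementary symmetry of the summand under interchanging the two signed variables; (iv) $C_n$-invariance, which is manifest from the symmetric form of the sum. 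Since we are asserting the conditions on a fiber, and flatness tells us it suffices to check generically, I would do the residue verification for $q$ non-torsion and $t$ not a multiple of $q$, where the residue description via Corollary \ref{cor:global_order_two_residue_conditions} applies cleanly.

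Finally I would conclude by the dimension count: the operators $D^{(n)}_q(u_0,\dots,u_{2d'+1};t)$ as $\vec u$ ranges over the $(2d'+2)$-dimensional affine space $\{\sum u_r = q+\eta'\}$ span a subspace of $\Gamma({\cal S}^{(n)}_{\eta';q,t}(0,s+d'f))$; to see they span the \emph{whole} space it is enough to see that in the $t=0$ limit they span, since the target is a bundle over parameter space and spanning is an open condition (or: the span has dimension at least that of the full space on a dense open set, hence everywhere). At $t=0$ the operator $D^{(n)}_q(\vec u;0)$ degenerates to the symmetrization of a product of univariate degree-$s+d'f$ operators (the $\vartheta(t+\cdots)/\vartheta(\cdots)$ factors become $1$), and these products span $\Sym^n$ of the univariate space by the classical fact that symmetric tensors are spanned by products of the form $\sum_\pi v_{\pi(1)}\otimes\cdots\otimes v_{\pi(n)}$ when the $v$'s range over a spanning set. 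So the span is everything.

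\medskip\noindent\textbf{Main obstacle.} The genuinely delicate step is (iii) above --- verifying the residue condition along $2z_i=0$ has the precise form dictated by the twisting datum of ${\cal S}^{(n)}_{\eta';q,t}$, i.e.\ matching the explicit residue formulas with correction factors $\frac{\vartheta(u,\,q+(n-1)t+\eta'+u+2z_1+mq)}{\vartheta(u+2z_1+mq,\,q+(n-1)t+\eta'+u)}$ recorded earlier in the $C^\vee C_n$ section. This amounts to a theta-function identity of addition-law type, most cleanly proved by the standard technique: fix all variables but $z_i$, observe both sides (the residue of $c_{\vec\sigma}$ plus the twist-corrected residue of $c_{r\vec\sigma}$) are sections of the same line bundle in $z_i$ with the same divisor, and match leading coefficients at one point. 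Propositions \ref{prop:pullback_simplification_i} and \ref{prop:pullback_simplification_ii} are tailored to exactly this kind of simplification and should reduce the verification to a finite, mechanical check. I expect no conceptual difficulty, only bookkeeping --- but it is the step where a sign or a shift error would be fatal, so it deserves care.
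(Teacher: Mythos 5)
Your proposal takes a genuinely different and substantially longer route than the paper's. The paper's proof is essentially one observation: the Bruhat interval $[\le (1/2,\dots,1/2)]$ in the dominance order on $W\backslash\tilde{C}_n/W$ consists of a \emph{single} double coset, because $(1/2,\dots,1/2)$ is the unique dominant half-integer weight $\le$ itself. Consequently the $\Hom$ sheaf has exactly one Bruhat subquotient --- it \emph{is} the (direct image of the) line bundle on ${\cal E}^n/S_n$ attached to that double coset --- and its global sections are just $S_n$-invariant sections of the corresponding line bundle on ${\cal E}^n$, up to the explicit prefactor $\prod_i \vartheta(2\sigma_iz_i)^{-1}\prod_{i<j}\frac{\vartheta(t+\sigma_iz_i+\sigma_jz_j)}{\vartheta(\sigma_iz_i+\sigma_jz_j)}$ carrying the leading-coefficient divisor. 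Those invariant sections are spanned by products $\prod_i f(z_i)$, and the balancing condition $\sum_r u_r = q+\eta'$ is exactly what makes $f(z)=\prod_r\vartheta(u_r+z)$ range over the full space of sections of the degree-$(2d'+2)$ bundle with the correct polarization and weight.

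This single-double-coset observation is what you missed, and it dissolves all three steps of your plan. In particular, your flagged ``main obstacle'' --- the bookkeeping verification that $D_q^{(n)}(\vec u;t)$ satisfies the $2z_i=0$ residue condition with the precise twist correction --- does not need to be done at all: there is no lower Bruhat stratum for the coefficients to have residues against, so once the coefficient is $S_n$-invariant with poles confined to $[X^r]$ and lands in the correct equivariant line bundle, it is automatically a section. Your dimension-count step also becomes unnecessary (and carries a small mis-citation: you want the standalone acyclicity lemma ``$d'\ge d-1$'' rather than Corollary \ref{cor:universally_ample_is_flat}, which requires $d'\ge d$). Your approach would likely work after patching the citation, and the $t=0$ symmetric-power limit you use at the end is a correct way to see spanning, but it recapitulates at greater length what the Bruhat structure already gives for free.
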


\begin{proof}
 The interval $[\le (1/2,\dots,1/2)]$ in the Bruhat order consists of a
 single double coset, and thus the space of global sections is (up to
 multiplication by an explicit product of~$\vartheta$ functions) the space
 of~$S_n$-invariant sections of the appropriate line bundle. That space
 is spanned by products of the form $\prod_{1\le i\le n} f(z_i)$, where $f$
 is a section of the corresponding line bundle on~${\cal E}$, and any such
 section can be factored into~$\vartheta$ functions.
\end{proof}

\begin{prop}
 For any $d\ge 0$, the space of global sections of~${\cal
 S}^{(n)}_{0;q,t}(df,ds)$ is $1$-dimensional.
\end{prop}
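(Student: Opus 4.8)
The plan is to bracket the dimension of $\Gamma({\cal S}^{(n)}_{0;q,t}(df,ds))$ between $1$ and $1$ using the Bruhat filtration supplied by Theorem~\ref{thm:CCn_DAHA_strongly_flat} (in its spherical form). The bimodule ${\cal S}^{(n)}_{0;q,t}(df,ds)$ has $s$-degree $d$, so it carries a finite filtration by sub-bimodules whose successive quotients are direct images $(1,w^{-1})_*{\cal L}_w$ of line bundles, indexed by the dominant weights $\lambda$ with $\lambda_i\in\tfrac d2+\Z$ and $0\le\lambda_n\le\cdots\le\lambda_1\le\tfrac d2$, where ${\cal L}_w$ lives on ${\cal E}^n/W_\lambda$ ($W_\lambda\subset C_n$ the stabiliser of $\lambda$) and is determined by the equivariant gerbe appearing in the definition of ${\cal H}^{(n)}_{0;q,t}$, the leading-coefficient divisors $D_w$, and the $\sO_{\P^n}(-d)$-twist recording the drop of the $f$-degree from $d$ to $0$. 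Since a filtered coherent sheaf has $h^0$ at most the sum of the $h^0$'s of its subquotients, and taking global sections is left exact, it suffices to prove: (a) the subquotient attached to the maximal weight $\lambda=(\tfrac d2,\dots,\tfrac d2)$ has a one-dimensional space of global sections on every fibre while every other subquotient has none; and (b) this one section lifts to a genuine global section of ${\cal S}^{(n)}_{0;q,t}(df,ds)$.

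For (a) I would first reduce to $n=1$. By Propositions~\ref{prop:spherical_as_symmetric_power} and \ref{prop:t_q-t_symmetry} the $z$-dependent part of each subquotient polarisation is $t$-independent (up to line bundles on the base, which affect neither vanishing nor the rank of $H^0$), and for each $\lambda$ it is the descent to ${\cal E}^n/W_\lambda$ of the outer tensor product of the univariate subquotients of ${\cal S}^{(1)}_{0;q,t}(df,ds)$ attached to the parts of $\lambda$, pushed forward to $\P^n$. The remaining work is the routine but delicate degree computation for the univariate subquotients: iterating the polarisation in the definition of ${\cal H}^{(1)}_{\eta';q,t}$ $d$ times, folding in the leading-coefficient divisors and the $\sO_{\P^1}(-d)$-twist, one expects the top weight $\lambda=\tfrac d2$ to give a degree-$1$ line bundle on ${\cal E}$ (the special feature of $\eta'=0$), every strictly smaller $\lambda>0$ to give a line bundle of strictly negative degree on ${\cal E}$, and $\lambda=0$ (which occurs only for even $d$) to give a strictly negative power of $\sO_{\P^1}$; strict negativity is essential so that $H^0$ vanishes on every fibre, not merely generically. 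Passing back to general $n$: the maximal weight $(\tfrac d2,\dots,\tfrac d2)$ has stabiliser $S_n$, so its subquotient is the descent to $\Sym^n{\cal E}$ of the $n$-th outer power of a degree-$1$ bundle, i.e.\ $\Sym^n$ of that bundle, whose global sections form $\mathrm{Sym}^n$ of a one-dimensional space; every other $\lambda$ contains a univariate factor of negative degree, so its subquotient has no global sections. This yields $\dim\Gamma\le 1$.

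For (b) one must exhibit a global section of ${\cal S}^{(n)}_{0;q,t}(df,ds)$ surjecting onto the one-dimensional top subquotient; equivalently, the $H^1$-obstruction coming from the sub-filtration must vanish on this class, which is not automatic since the lower subquotients have $H^1\ne 0$. The cleanest route is probably to write the section down explicitly in ``leading-term'' form, $\sum_{\sigma\in\{\pm1\}^n}\bigl(\prod_i c_{i,\sigma}\bigr)\prod_i T_i^{d\sigma_i/2}$ for a suitable product of $\vartheta$-factors depending on $d,q,t$, and then verify directly that it satisfies the residue and vanishing conditions defining ${\cal S}^{(n)}_{0;q,t}(df,ds)$, exactly as van Diejen's operator is verified in the proof of Theorem~\ref{thm:vandiejen}; by construction its top coefficient is nonzero, so it generates the top subquotient. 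An alternative is to build the section as a composite along a chain of objects of first-order operators $D^{(n)}_q(u_0,\dots;t)$ from Lemma~\ref{lem:first_order} (whose leading coefficients multiply to a manifestly nonzero product), provided the chain is arranged so that each link lies in an admissible degree. The base cases $d=0$ (where ${\cal S}^{(n)}_{0;q,t}(0,0)$ in $s$-degree $0$ is just multiplication by $\Gamma(\P^n,\sO_{\P^n})\cong k$) and $d=1$ (where, after the translation symmetry, Lemma~\ref{lem:first_order} with no free parameters gives the single operator $D^{(n)}_q(;t)$) anchor the pattern and can also serve as the base of an induction on $d$.

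The main obstacle is twofold. First, the degree bookkeeping of the second paragraph — accumulating the gerbe polarisation of ${\cal H}^{(n)}_{0;q,t}$ over $d$ applications of $\omega$, combining it with the leading-coefficient divisors $D_w$ and the $\sO_{\P^n}(-d)$-twist, and separately handling the $\P^1$-factors that appear when some $\lambda_i=0$ — must be carried out uniformly over all weights $\lambda$, with every non-maximal subquotient ending up strictly negative in some factor; this is elementary but computation-heavy. Second, and subtler, is the lower bound: producing, and verifying the admissibility (residue and vanishing conditions) of, the explicit section realising the top subquotient, so as to secure $\dim\Gamma\ge 1$. Once these are in place the squeeze gives $\dim\Gamma=1$ for every $d\ge 0$ and on every fibre.
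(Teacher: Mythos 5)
Your framing---bounding $h^0$ from above by the sum over Bruhat subquotients and from below by producing a section hitting the top---is the right shape, and your upper-bound half is essentially sound. The reduction to $n=1$ via Propositions~\ref{prop:spherical_as_symmetric_power} and \ref{prop:t_q-t_symmetry} is legitimate (the paper makes exactly this observation when it notes the subquotient polarizations are $t$-independent up to bundles on the base), and the content of the upper bound is that every sub-leading univariate subquotient of ${\cal S}^{(1)}_{0;q,t}(df,ds)$ has strictly negative degree, so every sub-leading multivariate subquotient has indefinite polarization and hence no global sections. One caveat: you claim the top univariate subquotient is a degree-$1$ bundle, whereas the paper asserts it is the \emph{trivial} bundle; both give $h^0=1$ so the squeeze is unaffected, but your bookkeeping is worth rechecking, because if the top came out as a \emph{nontrivial} degree-$0$ bundle you would get $h^0=0$ and the statement would fail---this is exactly the kind of place where a stray $q$-linear term in the accumulated gerbe polarization matters.

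The real gap is the one you yourself flag: you never actually produce the section realizing the top subquotient, and neither of your suggested routes closes the gap as stated. An explicit ``leading-term'' ansatz requires solving a large triangular system of residue conditions with no closed form for the interior coefficients; and a naive chain of $d$ first-order operators from Lemma~\ref{lem:first_order} does not land in $\Hom(df,ds)$: operators of the parameter-free form $D^{(n)}_q(;t)$ (degree $s-f$) only exist when $\eta'=-q$, so one must insert operators with free parameters $u$, which then appear as spurious $u$-dependent poles in the coefficients. The paper's proof is precisely the trick needed to cancel these: it uses your upper-bound observation a second time, in the intermediate $\Hom$ sheaf ${\cal S}^{(n)}_{0;q,t}((d-1)f,(d+1)s)$. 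Both $D^{(n)}_q((d+1)q/2\pm u;t)\,D^{(n)}_d(q,t)\,\prod_i\vartheta(z_i\pm v)$ and its $u\leftrightarrow v$ swap lie there with the \emph{same} leading coefficient, so their difference has vanishing leading term; since all sub-leading subquotients there have indefinite polarization the difference is identically zero. This symmetry makes $D^{(n)}_q((d+1)q/2\pm u;t)\,D^{(n)}_d(q,t)\,\prod_i\vartheta(z_i\pm u)^{-1}$ manifestly $u$-independent, so the apparent $u$-dependent poles are removable, and one obtains the desired element of ${\cal S}^{(n)}_{0;q,t}((d+1)f,(d+1)s)$. With $d=0$ as base case this closes the induction; absent some version of this pole-cancellation argument, your sketch proves only $h^0\le 1$.
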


\begin{proof}
 We proceed by induction in~$d$, with the case $d=0$ being obvious.
 Suppose we are given a nonzero global section~$D^{(n)}_{q,t}(d)\in {\cal
 S}^{(n)}_{0;q,t}(df,ds)$. Then for any $u$, $v$, the operator
 \begin{gather*}
 D^{(n)}_q((d+1)q/2\pm u;t) D^{(n)}_{q,t}(d)
 \prod_{1\le i\le n} \vartheta(z_i\pm v)
 \\ \phantom{D^{(n)}_q((d+1)q/2\pm u}
 {}- D^{(n)}_q((d+1)q/2\pm v;t) D^{(n)}_{q,t}(d)
 \prod_{1\le i\le n} \vartheta(z_i\pm u)
 \end{gather*}
 is a section of~${\cal S}^{(n)}_{0;q,t}((d-1)f,(d+1)s)$. Moreover, we~know the leading coefficient of~$D^{(n)}_{q,t}(d)$ up to a scalar multiple,
 and may therefore verify that both operators have the same leading
 coefficient. Since every subquotient below the top of the corresponding
 univariate vector bundle has negative degree, none of the multivariate
 subquotients below the top have polarizations represented by positive
 semidefinite matrices. Thus none of those subquotients have any global
 sections, let alone symmetric ones. It follows that a section of~${\cal
 S}^{(n)}_{0;q,t}((d-1)f,(d+1)s)$ with vanishing leading coefficient
 must in fact be 0, and thus
 \begin{gather*}
 D^{(n)}_q((d+1)q/2\pm u;t)
 D^{(n)}_{q,t}(d)\!\!\!
 \prod_{1\le i\le n}\!\!\! \vartheta(z_i\pm v)
 =
 D^{(n)}_q((d+1)q/2\pm v;t)
 D^{(n)}_{q,t}(d)\!\!\!
 \prod_{1\le i\le n} \!\!\!\vartheta(z_i\pm u).
 \end{gather*}
 Equivalently,
 \begin{gather*}
 D^{(n)}_q((d+1)q/2\pm u;t)
 D^{(n)}_{q,t}(d)
 \prod_{1\le i\le n} \vartheta(z_i\pm u)^{-1}
 \end{gather*}
 is independent of~$u$. In~particular, the apparent $u$-dependent poles
 of this product of operators are not, in fact, singularities, and thus
 this gives a section of~${\cal S}^{(n)}_{0;q,t}((d+1)f,(d+1)s)$ as
 required. That this is the only global section up to scalar multiples
 follows by observing that again all Bruhat subquotients below the top
 have indefinite polarizations, while the top subquotient is trivial.
\end{proof}

Following the above proof, we~define $D^{(n)}_{q,t}(d)$ by the recurrence
\begin{gather*}
D^{(n)}_{q,t}(d+1)
=
D^{(n)}_q((d+1)q/2\pm u;t)
D^{(n)}_{q,t}(d)
\prod_{1\le i\le n} \vartheta(z_i\pm u)^{-1},
\end{gather*}
with base case $D^{(n)}_{q,t}(0)=1$. (Note that
$D^{(n)}_{q,t}(1)=D^{(n)}_q(;t)$.) Equivalently, $D^{(n)}_{q,t}(d)$ is the
unique global section of~${\cal S}^{(n)}_{0;q,t}(df,ds)$ with leading term
\begin{gather*}
\prod_{1\le i<j\le n}
 \frac{\Gamq(dq+t-z_i-z_j)}
 {\Gamq(t-z_i-z_j)}
\prod_{1\le i\le j\le n}
 \frac{\Gamq(-z_i-z_j)}
 {\Gamq(dq-z_i-z_j)}
\prod_{1\le i\le n} T_i^{-d/2}.
\end{gather*}
Since translation by $s+f$ does not change the parameters, this also gives
a global section of~${\cal S}^{(n)}_{0;q,t}(d_0(s+f)+df,d_0(s+f)+ds)$ for
any $d_0$.

\begin{cor}
 If $q$ has exact order $d$, then
 \begin{gather*}
 D^{(n)}_{q,t}(d)
 =
\sum_{\sigma\in \{\pm 1\}^n}
\prod_{1\le i\le n}
 \frac{1}
 {\vartheta(2\sigma_i z_i;q)_d}
\prod_{1\le i<j\le n}
 \frac{\vartheta(t+\sigma_i z_i+\sigma_j z_j;q)_d}
 {\vartheta(\sigma_i z_i+\sigma_i z_j;q)_d}
\prod_{1\le i\le n} T_i^{-d/2}.
\end{gather*}
\end{cor}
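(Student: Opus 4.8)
The plan is to show that the displayed operator is forced to equal $D^{(n)}_d(q,t)$ by the uniqueness built into the definition of the latter, together with a structural fact: that $D^{(n)}_d(q,t)$ has no terms below the top of its Bruhat filtration. Recall that $D^{(n)}_d(q,t)$ is characterized as the unique global section of the relevant fiber of ${\cal S}^{(n)}_{0;q,t}(df,ds)$, and that its leading coefficient — the coefficient of $\prod_i T_i^{-d/2}$ — is the prescribed product of $\Gamq$ symbols. The first step is to observe that this section is supported on the single top $C_n$-double coset $C_n\cdot(d/2,\dots,d/2)$. By the argument in the proof of the preceding Proposition, every Bruhat subquotient of ${\cal S}^{(n)}_{0;q,t}(df,ds)$ below the top has indefinite polarization on every fiber (the univariate subquotients below the top having negative degree), hence no global sections, while the top subquotient is trivial. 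Since the projection onto the operators supported on a fixed double coset is $\sO$-linear, the sheaf ${\cal S}^{(n)}_{0;q,t}(df,ds)$ splits, as a sheaf of $\sO$-modules, as the direct sum of the Bruhat order ideal $[<(d/2,\dots,d/2)]$ and the sheaf of difference operators supported on $C_n\cdot(d/2,\dots,d/2)$ that satisfy the residue conditions among themselves (the latter being a complement isomorphic to the top subquotient). Taking $H^0$, the first summand contributes nothing, so every global section, $D^{(n)}_d(q,t)$ in particular, is supported on that one coset.

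Given this, $D^{(n)}_d(q,t)$ is determined by its leading coefficient together with $C_n$-equivariance: as a $C_n$-invariant difference operator, the coefficient of $\prod_i T_i^{\sigma_i d/2}$ is the image of the leading coefficient under the composite of the sign flips $z_i\mapsto -z_i$ over those $i$ with $\sigma_i=+1$. The second step is then the purely computational task of carrying this out. Using the functional equation for the elliptic Gamma symbol in the form $\Gamq(dq+a)=\vartheta(a;q)_d\,\Gamq(a)$, the leading coefficient $\prod_{i<j}\Gamq(dq+t-z_i-z_j)\Gamq(t-z_i-z_j)^{-1}\prod_{i\le j}\Gamq(-z_i-z_j)\Gamq(dq-z_i-z_j)^{-1}$ becomes $\prod_{i<j}\vartheta(t-z_i-z_j;q)_d\cdot\prod_{i\le j}\vartheta(-z_i-z_j;q)_d^{-1}$, the diagonal factors producing the $\vartheta(-2z_i;q)_d^{-1}$. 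Applying the sign flips turns $-z_i-z_j$ into $\sigma_iz_i+\sigma_jz_j$, $t-z_i-z_j$ into $t+\sigma_iz_i+\sigma_jz_j$, and $-2z_i$ into $2\sigma_iz_i$, yielding exactly the $\sigma$-th summand; summing over $\sigma\in\{\pm1\}^n$ gives the asserted formula.

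It remains to note where the hypothesis that $q$ have exact order $d$ enters. It guarantees that each factor $\vartheta(\,\cdot\,;q)_d=\prod_{0\le l<d}\vartheta(lq+\,\cdot\,)$ is a genuine meromorphic function whose zeros lie at the $d$ distinct points of $\langle q\rangle$ and are simple along each of the reflection hypersurfaces $2z_i\in\langle q\rangle$ and $z_i\pm z_j\in\langle q\rangle$, so the right-hand side is literally an elliptic difference-reflection operator of the expected shape, with at most simple poles. This is consistent with the general torsion-$q$ discussion: the Bruhat interval $[\le(d/2,\dots,d/2)]$ remains flat in $q$, so the section $D^{(n)}_d(q,t)$ is the limit from generic $q$, and its residue conditions are the limits of those for generic $q$, which are satisfied automatically since the formula is manifestly $C_n$-equivariant with the correct leading term.

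The main obstacle I expect is the first step — establishing that $D^{(n)}_d(q,t)$ is concentrated on the top coset with no lower-order corrections — because it requires the polarization/degree analysis of the Proposition to be valid at the (possibly special) parameter point in question; this is fine, being a statement about degrees of line bundles on the abelian scheme and hence uniform in $q$ and $t$, but it must be stated carefully, as must the $\sO$-module splitting of the Bruhat-filtered sheaf. Once that is secured, the rest is bookkeeping: matching the $\Gamq$ ratios (including the diagonal $i=j$ contributions) to finite theta products and tracking the $C_n$-action on the twisting datum, neither of which presents any real difficulty.
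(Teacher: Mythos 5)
Your second step — converting the leading $\Gamq$-ratio to $\vartheta(\,\cdot\,;q)_d$ products via the functional equation and propagating via $C_n$-equivariance — is correct and matches the bookkeeping the paper implicitly does. The problem is the first step, and it is not a minor one.

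You claim the spherical sheaf ${\cal S}^{(n)}_{0;q,t}(df,ds)$ splits as an $\sO$-module into the Bruhat subsheaf $[<(d/2,\dots,d/2)]$ plus "the sheaf of difference operators supported on $C_n\cdot(d/2,\dots,d/2)$ that satisfy the residue conditions among themselves", and from the vanishing of $H^0$ on the lower piece you conclude $D^{(n)}_d(q,t)$ is concentrated on the top orbit. But no such complementary summand exists inside the spherical algebra. The residue conditions couple the top-orbit coefficients to interior-orbit coefficients: for an affine reflection $r$ carrying the shift $(d/2,\dots,d/2)$ to an interior shift, the condition forces the sum (with gerbe correction) of the two coefficients to be holomorphic along $[X^r]$; the projection onto the top coset does not preserve that condition, so it does not land in the sheaf. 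Indeed, for generic $q$ the section $D^{(n)}_d(q,t)$ has nonzero coefficients on every coset of the Bruhat interval — the "concentration on the top" is a genuinely special phenomenon at torsion $q$, not a consequence of the filtration structure. Showing it requires some input that distinguishes torsion $q$, and this is precisely what your argument lacks.

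The paper's route supplies that input via Theorem~\ref{thm:CCn_center}: for $q$ of exact order $d$, the center of the spherical algebra is a (pullback of a) $q=0$ spherical algebra, and the center contains an element of the right degree which by the one-dimensionality established in the preceding Proposition must be proportional to $D^{(n)}_d(q,t)$. Centrality then forces every shift $\vec k$ appearing in the operator to act trivially, i.e.\ $k_i\equiv -d/2\pmod d$; within the Bruhat interval the only such shifts are those with $k_i\in\{\pm d/2\}$, a single $C_n$-orbit. After that, $C_n$-equivariance determines every coefficient from the leading one, which is your second step. So the actual missing idea is the centrality constraint, and the "splitting" you invoke as a substitute is not available. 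If you want to avoid centrality you would instead have to check directly that the right-hand side of the displayed formula satisfies the degenerated residue conditions at torsion $q$ (the paper discusses how those conditions arise as limits from generic $q$), which is considerably more delicate than your sketch suggests.
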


\begin{proof}
 By Theorem~\ref{thm:CCn_center}, the center of the given spherical
 algebra is itself a spherical algebra with~$q=0$. In~particular, the
 center contains a nonzero element mapping $f$ to~$s$, which becomes an
 element of~${\cal S}^{(n)}_{0;q,t}(df,ds)$ under the isomorphism. By
 uniqueness, this element is proportional to~$D^{(n)}_{q,t}(d)$. For~this
 element to be central, every shift that appears must be congruent to~$-d/2$ modulo $d$. There is only one $C_n$-orbit of shifts that
 survives, so that we may recover all coefficients from the leading
 coefficient, and obtain the stated formula.
\end{proof}

\begin{rem}
 If we gauge by a product of gamma functions before specializing $q$ (\`a
 la an elementary transformation), the interior coefficients will still
 vanish, giving central sections of degree \mbox{$d(s+d'f)$} for any $d'$, and
 establishing that any element of the center with degree of the given form
 is obtained in this way. Since the Fourier transform respects leading
 coefficients, we~can compute how it acts on central elements of degree
 $ds$, $df$, $d(s+f)$, and thus conclude (following the argument below)
 that the Fourier transform respects the isomorphism of Theorem~\ref{thm:CCn_center}.
\end{rem}

The following result shows that these operators indeed behave like Fourier
transforms.

\begin{prop}
 We have the operator relations
 \begin{gather*}
 D^{(n)}_q((d+1)q/2\pm u;t) D^{(n)}_{q,t}(d)=
 D^{(n)}_{q,t}(d+1)
 \prod_{1\le i\le n} \vartheta(z_i\pm u),
 \\
 D^{(n)}_{q,t}(d) D^{(n)}_q(-dq/2\pm u;t)=
 \prod_{1\le i\le n} \vartheta(z_i\pm u)
 D^{(n)}_{q,t}(d+1),
 \end{gather*}
 and, if $u_0+u_1+u_2+u_3=(d+1)q$,
 \begin{gather*}
 D^{(n)}_{q,t}(d)
 D^{(n)}_q(u_0,u_1,u_2,u_3;t)
 =
 D^{(n)}_q(u_0+dq/2,u_1+dq/2,u_2+dq/2,u_3+dq/2;t)
 D^{(n)}_{q,t}(d).
 \end{gather*}
\end{prop}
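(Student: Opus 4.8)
The plan is to verify each of the three operator identities by exploiting the rigidity results established above: specifically, the fact that global sections of the relevant $\Hom$ bimodules are essentially one-dimensional (or at worst controlled by a Bruhat interval), so that any two sections with the same leading coefficient must coincide. First I would identify the degrees of the operators on each side. The left-hand side of the first identity is a composition of a section of ${\cal S}^{(n)}_{0;q,t}(df,ds)$ (namely $D^{(n)}_d(q,t)$) followed by a section of ${\cal S}^{(n)}_{\eta';q,t}((d+1)s,\dots)$ — more precisely, $D^{(n)}_q((d+1)q/2\pm u;t)$, which by Lemma~\ref{lem:first_order} with appropriate balancing parameter lies in ${\cal S}^{(n)}_{0;q,t}(ds,(d+1)s+nf)$ (the $n$ auxiliary pairs $u,-u$ contribute $2n$ to the parameter count, i.e.\ $d'=n$). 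Composing gives a section of ${\cal S}^{(n)}_{0;q,t}(df,(d+1)s+nf)$. The right-hand side is $D^{(n)}_{d+1}(q,t)$, a section of ${\cal S}^{(n)}_{0;q,t}((d+1)f,(d+1)s)$, times the multiplication operator $\prod_i\vartheta(z_i\pm u)$, which raises $f$-degree by $n$; the product again lands in ${\cal S}^{(n)}_{0;q,t}(df,(d+1)s+nf)$. So both sides live in the same bimodule, and it suffices to check equality of leading coefficients and then invoke rigidity.

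The key step is therefore the leading-coefficient computation together with the vanishing of the intermediate Bruhat subquotients. The recurrence defining $D^{(n)}_{d+1}(q,t)$ was set up precisely so that $D^{(n)}_q((d+1)q/2\pm u;t)\,D^{(n)}_d(q,t)\,\prod_i\vartheta(z_i\pm u)^{-1}=D^{(n)}_{d+1}(q,t)$, which is the first identity after clearing the $\vartheta$ factors — so this one is essentially a restatement of the definition and requires only checking that the apparent $u$-dependent poles are spurious, which was already argued in the proof of the Proposition stating $\dim\Gamma=1$. For the second identity, I would argue symmetrically: the operator $D^{(n)}_q(-dq/2\pm u;t)$ has the right balancing ($\sum$ of the four-ish arguments works out, using that $2(-dq/2)+2u+\text{correction}$ matches $q+0$ after accounting for $d'=n$) to be a section taking the correct source polarization, and composing with $D^{(n)}_d(q,t)$ on the left produces something in ${\cal S}^{(n)}_{0;q,t}((d+1)f,(d+1)s+ns)$ — wait, more carefully, $D^{(n)}_q(-dq/2\pm u;t)$ should be chosen of degree $(d+1)f\to df+ns$-ish so that the composite has the same degree as $\prod_i\vartheta(z_i\pm u)\,D^{(n)}_{d+1}(q,t)$. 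I would pin down these bookkeeping details by matching the $z$-dependent parts of the gerbe polarizations $P_d$ rather than tracking $s$ and $f$ separately. Once the degrees match, equality again follows from uniqueness of global sections up to scalar, after checking the leading coefficient via the explicit formula for $D^{(n)}_d(q,t)$ (the product of $\Gamq$ ratios) and the known leading term of $D^{(n)}_q(\vec u;t)$.

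The third identity is the genuinely new relation and the main obstacle. Here both sides are sections of ${\cal S}^{(n)}_{0;q,t}(df + nf,\, ds + nf)$-type bimodule (the four parameters $u_0,\dots,u_3$ summing to $(d+1)q$ contribute $d'=1$, i.e.\ a single $f$, on the operator $D^{(n)}_q(u_0,\dots,u_3;t)$), and the assertion is that $D^{(n)}_d(q,t)$ intertwines the two first-order operators obtained by shifting all four $u_r$ by $dq/2$. The strategy is again: both sides lie in a common $\Hom$ bimodule whose intermediate Bruhat subquotients carry line bundles with indefinite polarization (hence no symmetric global sections), so it suffices to match leading coefficients; the leading coefficient of $D^{(n)}_q(\vec u;t)$ is the single $C_n$-orbit term $\prod_i \big(\prod_r\vartheta(u_r+z_i)\big/\vartheta(2z_i)\big)\prod_{i<j}\vartheta(t+z_i+z_j)/\vartheta(z_i+z_j)\cdot\prod_i T_i^{1/2}$ (up to the lower Bruhat pieces), and composing with the leading term of $D^{(n)}_d(q,t)$, which is $\propto\prod_i T_i^{-d/2}$ times an explicit $\Gamq$-ratio, one computes the leading term of the composite in either order. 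The shift $u_r\mapsto u_r+dq/2$ is exactly what is needed for the shift operator $\prod_i T_i^{\pm d/2}$ to pass through, using the functional equation $\Gamq(q+z)=\vartheta(z)\Gamq(z)$ and the resulting telescoping $\vartheta(u_r+z_i;q)_d$-type products. The hard part will be organizing this telescoping cleanly — in particular verifying that the $u_r$-dependent and $t$-dependent factors recombine correctly after the half-integer translation $\prod_i T_i^{-d/2}$ acts on $\prod_r\vartheta(u_r+z_i)$, and that the $\vartheta(2z_i)^{-1}$ and cross-terms $\vartheta(t+z_i+z_j)/\vartheta(z_i+z_j)$ transform consistently with the $\Gamq$-ratios in $D^{(n)}_d$ — but this is a finite $\vartheta$-function identity of the kind already used repeatedly (cf.\ Propositions~\ref{prop:pullback_simplification_i} and~\ref{prop:pullback_simplification_ii}), and once the leading coefficients agree the rest is forced. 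Finally I would remark that the first two identities together already imply the third for $d=0$ and, by an inductive argument pushing $D^{(n)}_d$ through one factor of $D^{(n)}_q$ at a time using the first identity, reduce the general case to a single application, which may in fact be the cleanest route and avoid the explicit telescoping altogether.
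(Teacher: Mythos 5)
Your proof takes essentially the same approach as the paper: for each identity you place both sides in a common $\Hom$ bimodule of ${\cal S}^{(n)}_{0;q,t}$, observe (as the paper does) that only the top Bruhat subquotient has positive semidefinite polarization so a section is determined by its leading coefficient, and then match leading coefficients. Your observation that the first identity is just a rearrangement of the recurrence defining $D^{(n)}_{d+1}(q,t)$ is correct and is the key point the paper's proof silently subsumes.

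However, your degree bookkeeping contains a concrete error. You write that $D^{(n)}_q((d+1)q/2\pm u;t)$ has ``the $n$ auxiliary pairs $u,-u$ contribute $2n$ to the parameter count, i.e.\ $d'=n$,'' and correspondingly that $\prod_i\vartheta(z_i\pm u)$ raises $f$-degree by $n$. Both claims conflate the number of variables $n$ with the number of $\vartheta$-factors per variable. The operator $D^{(n)}_q((d+1)q/2\pm u;t)$ has exactly two arguments (not $2n$), so in the notation of Lemma~\ref{lem:first_order} we have $2d'+2=2$, i.e.\ $d'=0$, and the operator has degree $s$; likewise $\prod_i\vartheta(z_i\pm u)$ has polarization $\sum_i z_i^2$ modulo terms on the base, i.e.\ it is $\sO_{\P^n}(1)$, of degree $f$, not $nf$. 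Your two errors happen to be consistent with each other (both inflated by a factor of $n$), so the conclusion that the two sides live in the same bimodule is unaffected; but as written the degree calculation is wrong, and the same confusion reappears in your discussion of the second and third identities. If you rewrite the bookkeeping in terms of the polarizations $P_d(\eta';q,t)$ as you suggest at one point, the $n$-dependence drops out cleanly and the error disappears. The telescoping discussion for the third identity and the proposed inductive route are superfluous once one has the rigidity argument; the paper dispenses with them entirely, and so could you.
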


\begin{proof}
 In each case, both sides are sections of the same $\Hom$ sheaf of~${\cal
 S}^{(n)}_{0;q,t}$ with the same leading coefficient, and only the top
 subquotient has positive semidefinite polarization.
\end{proof}

It turns out that if we adjoined formal inverses of the operators
$D^{(n)}_{q,t}(d)$ and declared them to be $D^{(n)}_{q,t}(-d)$, then the result
would indeed define a Fourier transform on a certain subcategory of the
category with~$\eta'=0$ (in which the $\Hom$ sheaves of degree $ds+d'f$ for~$d>d'$ are replaced by the images under the Fourier transform of the $\Hom$
sheaves of degree $d's+df$). Proving this directly is somewhat tricky,
however, as~unlike in the univariate setting, there does not appear to be a
readily accessible test for right divisibility by $D^{(n)}_{q,t}(d)$. And,
of course, even using the translation symmetry, this would at best give us
a transform for~$\eta'\in \Z q$, which is especially weak when~$q$ is
torsion.

The key idea for proceeding further is that the relation
\begin{gather*}
D^{(n)}_{q,t}(d)
D^{(n)}_q(-dq/2\pm u;t)
=
\prod_{1\le i\le n} \vartheta(z_i\pm u)
D^{(n)}_{q,t}(d+1)
\end{gather*}
gives us a system of recurrences that we can use to solve for the
coefficients of~$D^{(n)}_{q,t}(d)$. Indeed, it follows from this relation
that
\begin{gather*}
D^{(n)}_{q,t}(d)
D^{(n)}_q(-dq/2\pm u;t)
|_{u=z_i}
=
0
\end{gather*}
for~$1\le i\le n$. Since the operators are symmetric, let us consider the
specialization~$u=z_n$. The~coefficient of~$\prod_i T_i^{k_i-(d+1)/2}$ in
this specialized operator is a linear combination of the left coefficients
of~$\prod_i T_i^{l_i-d/2}$ in~$D^{(n)}_{q,t}(d)$ for~$\max(k_i-1,0)\le l_i\le
k_i$. The~coefficient in this linear combination for~$\vec{l}=\vec{k}$ is
\begin{gather*}
\frac{\prod_{1\le i\le n} \vartheta(-k_iq+z_n-z_i,k_iq+z_i+z_n)
 \prod_{1\le i<j\le n} \vartheta(t+dq-(k_i+k_j)q-z_i-z_j)}
 {\prod_{1\le i\le j\le n} \vartheta(dq-(k_i+k_j)q-z_i-z_j)},
\end{gather*}
and thus we can solve for the coefficient of~$\prod_i T_i^{k_i-d/2}$ in~$D^{(n)}_{q,t}(d)$, at least generically. In~fact, we~find the only
difficulty arises when $\vartheta(-k_nq)=0$, so if $q$ is not torsion and
$\vec{k}\ne 0$, there will always be one of the $n$ specializations that
allows us to solve for the coefficient of~$\prod_i T_i^{k_i-d/2}$ in terms
of coefficients of terms which are smaller in dominance order. In~other
words, $D^{(n)}_{q,t}(d)$ is determined by the given relation along with the
choice of leading coefficient.

The fact that we can control coefficients near the leading coefficients
suggests a way to proceed further: take an appropriate completion! Define
a nonarchimedean metric on the $\Z/2\Z$-graded algebra
$k(X)\big[T_1,\dots,T_n,\prod_i T_i^{-1/2}\big]$ by
\begin{gather*}
\bigg|\sum_{\vec{k}} c_k \prod_i T_i^{k_i}\bigg|
:=
\max_{\vec{k}:c_k\ne 0} \exp\bigg({-}\sum_i k_i\bigg).
\end{gather*}
We call an element of the corresponding completion a {\em formal difference
 operator}. We in particular denote the completion of the subalgebra
$k(X)[T_1,\dots,T_n]$ by $k(X)[[T_1,\dots,T_n]]$. This construction of
course applies equally well to the case of twisted difference operators, or
even to~the~corresponding category in which the objects are polarizations
$P_d(\eta';q,t)$ with fixed $\eta'$. We~will mostly suppress the twisting
from the notation.

The major advantage of formal difference operators is that the ring has a
large number of units. Indeed, the usual argument for inverting a
commutative formal power series with invertible constant term applies
equally well in the noncommutative setting to give the following.

\begin{prop}
 If $D\in k(X)[[T_1,\dots,T_n]]$ has nonzero constant term, then $D$ is a unit.
\end{prop}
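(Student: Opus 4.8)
The plan is the classical geometric-series inversion, adapted to the non-archimedean completion at hand. Write $D = \sum_{\vec k \in \Z_{\ge 0}^n} c_{\vec k}\, T^{\vec k}$ with $T^{\vec k} := \prod_i T_i^{k_i}$, and let $c_0 := c_{\vec 0}$ be the constant term, which by hypothesis is a nonzero element of $k(X)$ and hence invertible there (the statement should of course be read as ``invertible constant term'', which is automatic when $k(X)$ is a field, e.g.\ over the generic point of $S$; I would note this so the claim is unambiguous in the generality in which it is applied). The first step is to renormalize: since $c_0^{-1} \in k(X) \subset k(X)[[T_1,\dots,T_n]]$, the element $c_0^{-1} D$ has constant term $c_0^{-1} c_0 = 1$, so we may write $c_0^{-1} D = 1 - E$ where $E$ has vanishing constant term, i.e.\ $|E| \le e^{-1} < 1$ in the given metric.

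Next I would record the two facts about the metric that make the argument run. First, the norm is submultiplicative, $|D_1 D_2| \le |D_1|\,|D_2|$, because the total degree $\sum_i k_i$ is additive on monomials and the twisting action of the $T_i$ on $k(X)$ (pullback along translations) preserves the property of being nonzero, so a monomial of degree $d_1$ times one of degree $d_2$ contributes only to degree $d_1 + d_2$. Second, $k(X)[[T_1,\dots,T_n]]$ is complete for this metric essentially by construction: each ``degree $N$'' slice $\{\vec k : \sum_i k_i = N\}$ is finite, so a formal sum with $|c_{\vec k}| \to 0$ defines an element, and every Cauchy sequence converges to its formal limit. Consequently $|E^j| \le |E|^j \to 0$, so the series $\sum_{j \ge 0} E^j$ is Cauchy and converges to an element $F \in k(X)[[T_1,\dots,T_n]]$.

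Then I would verify the telescoping identities $(1-E)F = F(1-E) = 1$: for each partial sum one has $(1-E)\sum_{j=0}^{N} E^j = 1 - E^{N+1}$, and likewise on the other side, and $E^{N+1} \to 0$, so passing to the limit — using continuity of multiplication, which is immediate from submultiplicativity — gives the claim. Hence $1-E$ is a two-sided unit with inverse $F$, and therefore $D = c_0(1-E)$ is a two-sided unit, with explicit inverse $D^{-1} = F c_0^{-1} = \bigl(\sum_{j\ge 0}(1 - c_0^{-1}D)^j\bigr) c_0^{-1}$.

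There is no serious obstacle here; the only point that requires genuine (if minor) care, and which I would make explicit, is that $k(X)$ does not commute with the $T_i$, so one must multiply by $c_0^{-1}$ consistently on the left throughout (or consistently on the right, obtaining instead the factorization $D = (1 - E')c_0$ with $E'$ of zero constant term). With this bookkeeping fixed, the twisting enters only through automorphisms of $k(X)$, which leave all the degree and norm estimates untouched, so the argument goes through verbatim.
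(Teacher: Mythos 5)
Your proof is correct and is exactly the argument the paper has in mind: the paper's proof consists of the one-line remark that ``the usual argument for inverting a commutative formal power series with invertible constant term applies equally well in the noncommutative setting,'' and you have simply written out that standard geometric-series inversion, correctly checking submultiplicativity of the norm, completeness, and the telescoping identity. The only superfluous point is your caveat about invertibility of the constant term: in the paper $k(X)$ denotes the sheaf of meromorphic functions defined over the generic point of every geometric fiber, so a nonzero section is automatically invertible and no extra hypothesis is needed.
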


Since $\prod_i T_i^{-1/2}$ is also clearly invertible, we~find that any of
the operators $D^{(n)}_{q,t}(d)$ are inver\-tible as formal operators. In~fact, in the ring of formal operators we can solve for~$D^{(n)}_{q,t}(d)$ in
terms of~$D^{(n)}_{q,t}(d+1)$ and in this way define $D^{(n)}_{q,t}(d)$ for~$d<0$. We then find by an easy induction that
$D^{(n)}_{q,t}(-d)=D^{(n)}_{q,t}(d)^{-1}$.

In addition to these inner automorphisms, we~also have automorphisms coming
from gauging by $\Gamq$ symbols and translations on~$E$. Let $T_\omega(c)$
denote the translation of all variables by $c$, so that $T_\omega(q/2) =
\prod_{1\le i\le n} T_i^{1/2}$. Then for any $\Gamq$ symbol $\Gamma$
of polarization
\begin{gather*}
(\eta'-\eta'')\sum_i z_i^2/q
+2((n-1)t+q+\eta')c\sum_i z_i/q,
\end{gather*}
there is an induced isomorphism
\begin{gather*}
D\mapsto \Gamma T_\omega(c) D T_\omega(-c) \Gamma^{-1}
\end{gather*}
from $\End(P_0(\eta';q,t))^0$ (the subspace involving only integer powers
of~$T_i$) to~$\End(P_0(\eta'';q,t))^0$. With this in mind, we~define a
``formal gauging operator'' from $P_0(\eta';q,t)$ to~$P_0(\eta'';q,t)$ to
be an object of the form
\begin{gather*}
\Gamma T_\omega(c) D,
\end{gather*}
where $D$ is a unit in the endomorphism ring. The~``leading term'' of such
an operator is the formal symbol $\Gamma T_\omega(c) f$, where $f$ is the
constant term of~$D$. The~formal gauging operators form a group, with a
natural subgroup consisting of elements of the form $\Theta T_\omega(kq/2)
D$, where $\Theta$ is a product of~$\vartheta$ symbols (i.e., of invertible
formal difference operators). If~$G_1$, $G_2$ are formal gauging operators
such that $G_1G_2^{-1}$ lies in the subgroup of formal difference
operators, then for any formal difference operator $D$ with only integer
shifts (and with coefficients having appropriate polarizations), $G_1 D
G_2^{-1}:=(G_1 G_2^{-1}) G_2 D G_2^{-1}$ will again be a formal difference
operator. This extends to the half-integer case by writing
$D=T_\omega(q/2) D'$ and $G_1 D G_2^{-1}:=(G_1 T_\omega(q/2) G_2^{-1}) G_2
D' G_2^{-1}$. In~either case, the operation clearly respects
multiplication as long as the gauging operators match up.

\begin{prop}\label{prop:fourier_xform_exists}
 There is a unique family of formal gauging operators ${\cal
 D}^{(n)}_{q,t}(c)$ from $P_0(2c;q,t)$ to~$P_0(-2c;q,t)$
 with leading term
 \begin{gather*}
 \prod_{1\le i\le j\le n}
 \frac{\Gamq(-z_i-z_j)}
 {\Gamq(-2c-z_i-z_j)}
 \prod_{1\le i<j\le n}
 \frac{\Gamq(t-2c-z_i-z_j)}
 {\Gamq(t-z_i-z_j)}
 T_\omega(c)
 \end{gather*}
 such that ${\cal D}^{(n)}_{q,t}(-dq/2) = D^{(n)}_{q,t}(d)$ for all $d\in
 \Z$. Moreover, if one divides any coefficient of~${\cal D}^{(n)}_{q,t}(c)$
 by the leading term, then the only $z$-independent poles of the resulting
 meromorphic section on~${\cal E}^{n+3}$ are along hypersurfaces for
 which $q$ is torsion.
\end{prop}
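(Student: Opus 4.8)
The plan is to construct ${\cal D}^{(n)}_{q,t}(c)$ coefficient by coefficient in dominance order, using the quasi-commutation relations with the first-order operators $D^{(n)}_q(u_0,\dots,u_3;t)$ as a system of recurrences, exactly as sketched in the paragraph preceding the statement. First I would observe that a formal gauging operator from $P_0(2c;q,t)$ to $P_0(-2c;q,t)$ is determined by its coefficients (after dividing out the prescribed leading term $\Gamma_c T_\omega(c)$, which I will abbreviate by $\gamma_c$), which are a family of meromorphic sections of explicit line bundles on ${\cal E}^{n+3}$ indexed by $\vec k\in (\Z+\tfrac12)^n$ (or $\Z^n$, depending on parity); the leading coefficient is normalized to $1$. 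The desired operator should satisfy the analogue of the relation
\[
{\cal D}^{(n)}_{q,t}(c)\, D^{(n)}_q(-2c+q\pm u;t)
=
\prod_{1\le i\le n}\vartheta(z_i\pm u)\,{\cal D}^{(n)}_{q,t}(c-q/2),
\]
which at the specializations $u=z_i$ forces ${\cal D}^{(n)}_{q,t}(c)\,D^{(n)}_q(-2c+q\pm z_i;t)=0$. Expanding the coefficient of $\prod_i T_i^{k_i}$ in this vanishing identity produces, just as in the integer case discussed above, an equation expressing (the normalized coefficient) $c_{\vec k}$ as a linear combination of coefficients $c_{\vec l}$ with $\vec l$ strictly below $\vec k$ in dominance order, with a leading multiplier that is a product of $\vartheta$-functions, one factor of which is $\vartheta(-k_i q + \cdots)$ or the like. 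The key point — the same one used above for $D^{(n)}_d$ — is that for $\vec k\ne 0$ at least one of the $n$ specializations $u=z_i$ has nonvanishing leading multiplier unless $q$ is torsion of a small order, so the recursion solves uniquely for $c_{\vec k}$ away from a divisor on which $q$ is torsion. This simultaneously proves existence, uniqueness, and the final clause about poles.

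The steps, in order: (1) Set up the normalization: pin down the line bundle ${\cal Z}_w$ (equivalently the gerbe polarization $P_d$) that each coefficient lives in, verify it is the one dictated by $\gamma_c$ and the twisting datum, and confirm the leading term stated in the proposition is consistent with the parities in both ${\cal H}^{(n)}$ and ${\cal H}^{\prime(n)}$. (2) Write the quasi-commutation identity as a formal-operator identity and extract, for each $i$, the system obtained by setting $u=z_i$; identify the recursion explicitly, tracking which $\vartheta$-factor in the leading multiplier can vanish. (3) Run the induction on dominance order: the base case $\vec k$ minimal is the leading coefficient (normalized to $1$), and the inductive step solves for $c_{\vec k}$ using whichever $i$ has nonvanishing multiplier; this gives a well-defined meromorphic section whose only new $z$-independent poles are the zeros of that multiplier, all of which are torsion-of-$q$ loci. (4) Check that the coefficients so produced actually assemble into a formal gauging operator — i.e., the unit $D$ with $\gamma_c^{-1}{\cal D}^{(n)}_{q,t}(c) = D$ really has nonzero constant term (immediate from normalization) and the metric-completion convergence is automatic since we are working in the completed algebra by construction. (5) Verify compatibility with the already-constructed integer specializations: when $c=-dq/2$, both ${\cal D}^{(n)}_{q,t}(-dq/2)$ and $D^{(n)}_d(q,t)$ satisfy the same recursion with the same leading term (using the operator relations in the Proposition just above), so by uniqueness they coincide; this also shows ${\cal D}^{(n)}_{q,t}$ is genuinely a family interpolating the $D^{(n)}_d$.

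The main obstacle I anticipate is step (3)'s bookkeeping: showing that, for \emph{every} nonzero $\vec k$ and for non-torsion $q$, at least one specialization $u=z_i$ gives a nonvanishing leading multiplier, and more delicately, identifying \emph{precisely} which torsion-of-$q$ hypersurfaces can appear as poles so that the final clause is sharp rather than merely true. One must be careful that the factors like $\vartheta(t+dq-(k_i+k_j)q-z_i-z_j)$ in the denominator of the recursion multiplier do not introduce spurious $z$-independent poles — but these depend on $z_i+z_j$, hence are not $z$-independent, so they are harmless; the genuinely $z$-independent factors are exactly those of the shape $\vartheta(mq)$ for integers $m$ arising from the shift pattern, and these vanish only when $q$ is torsion. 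A secondary technical point is handling the half-integer-shift case uniformly with the integer case (the odd DAHA ${\cal H}^{\prime(n)}$ and the $T_\omega(q/2)$ twist); I would deal with this by the same device used in the text for $G_1 D G_2^{-1}$, factoring out $T_\omega(q/2)$ once and for all so that the recursion is always run among integer-shift coefficients of an appropriately gauged operator. Everything else — the explicit form of the leading term, the consistency of polarizations, the verification that the recursion coefficients are the claimed products of $\vartheta$-functions — is routine once the inductive scheme is in place.
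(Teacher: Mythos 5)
Your proposal follows essentially the same approach as the paper's proof: derive a triangular recursion from the vanishing conditions obtained by specializing $u=z_i$ in the quasi-commutation relation, solve it in dominance order, and check compatibility with the known integer cases. The paper's actual proof is terser and organizes the logic slightly more cleanly, which is worth comparing: rather than first solving the recursion and then checking agreement with the $D^{(n)}_d$, the paper simply observes that any such family must satisfy the vanishing conditions, that these (together with the leading term) form a triangular system with at most one solution, and that the system demonstrably has solutions on the Zariski-dense set $c\in\Z q$ — hence a solution exists in general, and the pole statement follows from the shape of the multipliers. Your step (5) plays the same role as the paper's Zariski-density sentence, so the overall argument is sound, but your earlier claim in step (3) that running the recursion ``simultaneously proves existence'' gets ahead of itself: the recursion produces a candidate, and it is step (5) (or the density argument) that actually certifies the candidate satisfies the overdetermined system rather than just the one equation used at each step.

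One concrete error to fix: the defining relation should read
\[
{\cal D}^{(n)}_{q,t}(c)\, D^{(n)}_q(c\pm u;t)
=
\prod_{1\le i\le n}\vartheta(z_i\pm u)\,{\cal D}^{(n)}_{q,t}(c-q/2),
\]
with argument $c\pm u$, not $-2c+q\pm u$. (One checks this from the polarization bookkeeping: for ${\cal D}^{(n)}_{q,t}(c)D$ to be composable, $D$ must land in $P_0(2c;q,t)$, which for a first-order operator $D^{(n)}_q(a\pm u;t)$ forces $2a=2c$, i.e., $a=c$.) Your version would scramble the explicit form of the recursion multiplier, though the triangularity and the nature of the $z$-independent poles would survive. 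With that argument corrected, and with the understanding that existence rests on step (5) rather than step (3), your proposal matches the paper's proof.
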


\begin{proof}
 If such a family of operators exists, then it must satisfy
 \begin{gather*}
 {\cal D}^{(n)}_{q,t}(c)
 D^{(n)}_q(c\pm u;t)
 |_{u=z_i}
 =
 0
 \end{gather*}
 for~$1\le i\le n$. This gives an algebraic (and triangular) system of
 equations for the coefficients of~${\cal D}^{(n)}_{q,t}(c)$ which we have
 already seen has at most one solution (and if it has a solution, the only
 $z$-independent poles are where $q$ is torsion). Since it has a solution
 on the Zariski dense set of divisors $c\in \Z q/2$, it must have a solution
 in general.
\end{proof}

\begin{rem}
 For an analytic approach to constructing such operators, see~\cite{quadxforms}.
\end{rem}

Since we understand a Zariski dense subset of these operators, we~can
immediately deduce some relations.

\begin{prop}
 The operators ${\cal D}^{(n)}_{q,t}(c)$ satisfy the operator identities
 \begin{gather*}
 D^{(n)}_q(-c\pm u;t) {\cal D}^{(n)}_{q,t}(c+q/2)
 = {\cal D}^{(n)}_{q,t}(c) \prod_{1\le i\le n} \vartheta(z_i\pm u),
 \\
 {\cal D}^{(n)}_{q,t}(c) D^{(n)}_q(c\pm u;t)
 = \prod_{1\le i\le n} \vartheta(z_i\pm u) {\cal D}^{(n)}_{q,t}(c-q/2),
 \end{gather*}
 and, if $u_0+u_1+u_2+u_3=q+2c$,
 \begin{gather*}
 {\cal D}^{(n)}_{q,t}(c) D^{(n)}_q(u_0,u_1,u_2,u_3;t)
 =
 D^{(n)}_q(u_0-c,u_1-c,u_2-c,u_3-c;t) {\cal D}^{(n)}_{q,t}(c).
 \end{gather*}
\end{prop}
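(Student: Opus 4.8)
The plan is to reduce all three identities to the operator identities for the $D^{(n)}_d(q,t)$ established in the preceding Proposition, by a density argument in the variable $c$ that mirrors the proof of Proposition~\ref{prop:fourier_xform_exists}. For each identity I would first verify, by tracking polarizations, that the two sides are formal gauging operators between the \emph{same} pair of objects $P_\bullet(\cdot\,;q,t)$: here one uses that $D^{(n)}_q(v\pm u;t)$ has degree $s$, that $D^{(n)}_q(u_0,u_1,u_2,u_3;t)$ has degree $s+f$, that $\prod_{1\le i\le n}\vartheta(z_i\pm u)$ has degree $f$ up to a polarization on the auxiliary $u$-factor, and that ${\cal D}^{(n)}_{q,t}(c)$ runs from $P_0(2c;q,t)$ to $P_0(-2c;q,t)$, together with the identity $P_0(\eta';q,t)-P_0(\eta'+q;q,t)=\sum_i z_i^2$. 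Then I would compute the leading terms of the two sides and check that they coincide; this reduces, via the explicit leading terms of ${\cal D}^{(n)}_{q,t}(c)$ and of the operators of Lemma~\ref{lem:first_order}, to the addition law for $\vartheta$. Once the leading terms agree, the difference of the two sides is, after dividing each coefficient by that common leading term, a collection of meromorphic sections of line bundles on the relevant integral moduli variety whose only $z$-independent poles lie on the locus where $q$ is torsion, by the final clause of Proposition~\ref{prop:fourier_xform_exists}.

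It is then enough to prove each identity on the dense open locus where $q$ is non-torsion, where for each fixed value of the remaining variables the relevant coefficient is a regular section of a line bundle on the $c$-curve ${\cal E}$; so it suffices to show the identity for $c$ in the Zariski-dense set $\{-dq/2:d\ge 0\}$ (dense because $q$, hence $q/2$, is non-torsion). At $c=-dq/2$ one has ${\cal D}^{(n)}_{q,t}(c)=D^{(n)}_d(q,t)$ and ${\cal D}^{(n)}_{q,t}(c\pm q/2)=D^{(n)}_{d\mp 1}(q,t)$, and every operator occurring becomes an honest global section of a $\Hom$ bimodule of ${\cal S}^{(n)}$; each identity is then an equality of two such global sections with the same leading coefficient, which forces equality because in each relevant degree the top Bruhat subquotient is the only one carrying a global section. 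For the first two identities this specialization is literally the corresponding identity of the preceding Proposition after an index shift, with the small values of $d$ handled by the defining recurrence and $D^{(n)}_{-d}(q,t)=D^{(n)}_d(q,t)^{-1}$. Since the difference of the two sides thus vanishes on a dense open subset of an integral variety, it vanishes identically, proving the identity for all parameter values, torsion $q$ included.

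The step I expect to be the main obstacle is the bookkeeping of the first paragraph: making the objects of the two sides match so that the products of formal gauging operators are even defined, and then confirming that the leading terms agree in every case, since it is only after this that the pole control and uniqueness furnished by Proposition~\ref{prop:fourier_xform_exists} can be invoked coefficientwise; after that the density argument is routine. As a cross-check on the second identity one can also proceed directly: by the defining relations of Proposition~\ref{prop:fourier_xform_exists} and the evenness of $D^{(n)}_q$ in $u$, the operator ${\cal D}^{(n)}_{q,t}(c)\,D^{(n)}_q(c\pm u;t)$ vanishes at $u=\pm z_i$ for every $i$, hence is left-divisible by $\prod_{1\le i\le n}\vartheta(z_i\pm u)$, and the quotient has exactly the leading term characterizing ${\cal D}^{(n)}_{q,t}(c-q/2)$, so equals it by uniqueness; the first identity admits the analogous right-hand argument, and the third then follows by combining the first two.
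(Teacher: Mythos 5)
Your proposal matches the paper's argument, which is given essentially in one sentence ("Since we understand a Zariski dense subset of these operators, we can immediately deduce some relations"): specialize $c=-dq/2$ so that ${\cal D}^{(n)}_{q,t}(c)=D^{(n)}_d(q,t)$, observe that each identity becomes the corresponding one of the preceding Proposition for $D^{(n)}_d$ (including $d<0$ via $D^{(n)}_{-d}=D^{(n)}_d{}^{-1}$), and extend by Zariski density in $c$ using the pole-control clause of Proposition~\ref{prop:fourier_xform_exists}. Your extra care about matching objects and leading terms is a reasonable expansion of what the paper leaves tacit, and your direct cross-check for the second identity is literally the defining recurrence from the proof of Proposition~\ref{prop:fourier_xform_exists}, so it is sound.

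One small caveat about the closing sentence of your cross-check: the assertion that "the third then follows by combining the first two" is questionable for $n>1$. Products of a degree-$s$ operator with a degree-$f$ operator span only a subspace of $\Gamma{\cal S}^{(n)}(0,s+f)$ of dimension at most $2(n+1)$, whereas by the symmetric-power computation the full space has dimension $\binom{n+3}{3}$; so the operators $D^{(n)}_q(u_0,u_1,u_2,u_3;t)$ do not in general factor, and the third identity cannot simply be assembled from the first two. It is, however, directly an instance of the same density argument (its specialization at $c=-dq/2$ is the third identity of the preceding Proposition), so this does not affect the main proof; it only means the optional cross-check for the third identity should be dropped or replaced by the same vanishing-at-$u=\pm z_i$ argument applied to the appropriate specialization of the $u_r$.
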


We also note the following fact, generalizing the first two identities.

\begin{prop}\label{prop:fourier_braid_relation}
 The operators ${\cal D}^{(n)}_{q,t}(c)$ satisfy the operator identity
 \begin{gather*}
 \prod_{1\le i\le n}\!\! \frac{\Gamq(t_0\,{-}\,d\pm z_i)} {\Gamq(t_0\,{+}\,d\pm z_i)}
 {\cal D}^{(n)}_{q,t}(c\,{+}\,d)\!\!\! \prod_{1\le i\le n}\!\!
 \frac{\Gamq(t_0\,{-}\,c\pm z_i)} {\Gamq(t_0\,{+}\,c\pm z_i)}
 \,{=}\, {\cal D}^{(n)}_{q,t}(c)\!\!\! \prod_{1\le i\le n}\!\!
 \frac{\Gamq(t_0\,{-}\,c\,{-}\,d\pm z_i)} {\Gamq(t_0\,{+}\,c\,{+}\,d\pm z_i)} {\cal D}^{(n)}_{q,t}(d).
 \end{gather*}
 In particular, ${\cal D}^{(n)}_{q,t}(c)^{-1}={\cal D}^{(n)}_{q,t}(-c)$.
\end{prop}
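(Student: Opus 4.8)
\textbf{Proof proposal for Proposition~\ref{prop:fourier_braid_relation}.} The plan is to verify the identity by first establishing it on a Zariski dense subset of parameter space where everything reduces to the previously recorded identities for the operators $D^{(n)}_d(q,t)$, and then extend by continuity. Specifically, I would first observe that both sides of the claimed identity are formal gauging operators from $P_0(2c+2d;q,t)$ to $P_0(-2c-2d;q,t)$ (tracking the polarization shifts through each factor: the $\Gamq$ prefactors shift the $\eta'$-parameter by $\mp 2d$ and $\mp 2c$ respectively, while the ${\cal D}^{(n)}_{q,t}$ factors invert the sign of their argument). By the uniqueness part of Proposition~\ref{prop:fourier_xform_exists}, it therefore suffices to check that both sides have the same leading term. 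Computing the leading term of each side is a routine bookkeeping exercise with $\Gamq$ symbols: one uses that the leading term of ${\cal D}^{(n)}_{q,t}(c)$ is the explicit product given in Proposition~\ref{prop:fourier_xform_exists}, that the leading term of a product is the product of leading terms (since $T_\omega(c)$ acts by translation on the $\Gamq$ arguments), and that $T_\omega(c)\,T_\omega(d)=T_\omega(c+d)$.

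Concretely, I would first prove the identity on the dense set where $c=-aq/2$ and $d=-bq/2$ for integers $a,b\ge 0$. There, ${\cal D}^{(n)}_{q,t}(c)=D^{(n)}_a(q,t)$ etc., and the prefactors $\prod_i \Gamq(t_0-c\pm z_i)/\Gamq(t_0+c\pm z_i)$ telescope into products of $\vartheta$ functions via the functional equation $\Gamq(q+z)=\vartheta(z)\Gamq(z)$. The resulting identity among honest difference operators should then follow by iterating the two recurrence relations
\[
  D^{(n)}_q(-c\pm u;t)\,{\cal D}^{(n)}_{q,t}(c+q/2)
  =
  {\cal D}^{(n)}_{q,t}(c)\prod_{1\le i\le n}\vartheta(z_i\pm u),
\]
which is exactly the first displayed identity in the preceding Proposition, specialized appropriately and composed $b$ times on one side and $a$ times on the other. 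Since both sides live in the honest (non-completed) spherical category ${\cal S}^{(n)}_{0;q,t}$ and have the same leading coefficient, and since every Bruhat subquotient strictly below the top has indefinite polarization (hence no symmetric global sections), they must agree—this is the same rigidity argument already used to pin down $D^{(n)}_d(q,t)$.

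Having the identity on a Zariski dense subset, I would extend it to all $c,d$ by the uniqueness argument: both sides are formal gauging operators of the prescribed source and target with leading terms that (by the explicit computation above, valid for all $c,d$) coincide; Proposition~\ref{prop:fourier_xform_exists} guarantees such an operator is unique. The final assertion ${\cal D}^{(n)}_{q,t}(c)^{-1}={\cal D}^{(n)}_{q,t}(-c)$ then follows by setting $d=-c$ and choosing $t_0$ so that the prefactors $\prod_i\Gamq(t_0-d\pm z_i)/\Gamq(t_0+d\pm z_i)$ with $d=-c$ cancel against $\prod_i\Gamq(t_0-c-d\pm z_i)/\Gamq(t_0+c+d\pm z_i)=\prod_i\Gamq(t_0\pm z_i)/\Gamq(t_0\pm z_i)=1$, leaving ${\cal D}^{(n)}_{q,t}(c)\,{\cal D}^{(n)}_{q,t}(-c)$ on one side and, after the prefactors on the left telescope, the identity operator on the other; alternatively one reads it off directly from the $c,d,-c-d$ symmetry of the braid relation together with ${\cal D}^{(n)}_{q,t}(0)=D^{(n)}_0(q,t)=1$.

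\textbf{Main obstacle.} The genuinely delicate point is the leading-term computation: one must carefully check that the three-fold product of $\Gamq$ prefactors and ${\cal D}^{(n)}_{q,t}$-leading-terms on each side really do collapse to the \emph{same} product of $\Gamq$ symbols (up to the base-point ambiguity, i.e.\ up to $z$-independent factors, which is why the statement is only ``locally on the base''). This requires matching up the $t$-dependent cross terms $\prod_{i<j}\Gamq(t-\ast-z_i-z_j)/\Gamq(t-z_i-z_j)$ and the diagonal terms $\prod_{i\le j}\Gamq(-z_i-z_j)/\Gamq(-\ast-z_i-z_j)$ after the translations $T_\omega(c)$, $T_\omega(d)$ have shifted their arguments, and verifying that the single-variable $\Gamq(t_0\pm z_i)$ prefactors interpolate correctly between the two cross-term patterns. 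I expect this to be a somewhat lengthy but entirely mechanical verification using only $\Gamq(q+z)=\vartheta(z)\Gamq(z)$, the reflection behavior, and additivity of polarizations; no new idea should be needed beyond the uniqueness principle of Proposition~\ref{prop:fourier_xform_exists}.
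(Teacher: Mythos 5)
Your proposal leans on two ideas that don't coexist comfortably, and the one you actually lean on hardest is wrong. You write ``by the uniqueness part of Proposition~\ref{prop:fourier_xform_exists}, it therefore suffices to check that both sides have the same leading term.'' But the uniqueness in Proposition~\ref{prop:fourier_xform_exists} is not the statement that a formal gauging operator is determined by its leading term --- that is false: any formal gauging operator can be multiplied on the right by an arbitrary unit with constant term $1$ without changing its leading term. The uniqueness there comes from the \emph{recurrence} $\bigl({\cal D}^{(n)}_{q,t}(c)\,D^{(n)}_q(c\pm u;t)\bigr)\big|_{u=z_i}=0$, which produces a triangular system pinning down all coefficients from the top one. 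To invoke that uniqueness you would have to show the left-hand side of the braid relation satisfies this same recurrence (and has the correct leading term), which you never do. So the ``it therefore suffices to check leading terms'' step, and the later ``extend by the uniqueness argument,'' are a genuine gap: the strategy as stated would prove nothing.

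Your other strategy --- verify the identity on the Zariski-dense set $c=-aq/2$, $d=-bq/2$ and extend by continuity --- is sound in outline, but the crucial step (``iterating the two recurrence relations $\ldots$ composed $b$ times on one side and $a$ times on the other'') is not spelled out and is not obviously a routine matter of telescoping; the prefactors $\Gamq(t_0\mp c\pm z_i)$ carry the auxiliary parameter $t_0$, and matching the two chains of $D^{(n)}_q$-factors requires a specific choice of the free parameter $u$ in the recurrence at each step. The paper's actual argument is shorter and more robust: it considers the single composite
\[
X(c,d,t_0)=\prod_i\frac{\Gamq(t_0+d\pm z_i)}{\Gamq(t_0-d\pm z_i)}\,{\cal D}^{(n)}_{q,t}(c)\,
\prod_i\frac{\Gamq(t_0-c-d\pm z_i)}{\Gamq(t_0+c+d\pm z_i)}\,{\cal D}^{(n)}_{q,t}(d)\,
\prod_i\frac{\Gamq(t_0+c\pm z_i)}{\Gamq(t_0-c\pm z_i)},
\]
applies the recurrence for ${\cal D}^{(n)}_{q,t}(d)$ with the parameter $u$ chosen to be $t_0-c$, and after one `easy' rearrangement obtains $X(c,d,t_0)=X(c-q/2,d+q/2,t_0+q/2)$. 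By Zariski density this single invariance extends to $X(c,d,t_0)=X(c-u,d+u,t_0+u)$ for all $u$, and evaluating at $u=c$ collapses $X$ to ${\cal D}^{(n)}_{q,t}(c+d)$ directly using ${\cal D}^{(n)}_{q,t}(0)=1$. That is a different (and cleaner) use of density than yours --- density of $\{kq/2\}$ in the curve, applied to one scalar $u$ in a translation symmetry, rather than density of a discrete lattice of pairs $(c,d)$, and no iterated telescoping or leading-term rigidity is needed. If you want to salvage your approach, you must either (a) prove the left side of the braid relation satisfies the defining recurrence of $\ret{\cal D}^{(n)}_{q,t}(c+d)$ (not merely has its leading term), or (b) carry out the iteration on the dense set in full.
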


\begin{proof}
 Consider the composition
 \begin{gather*}
 \prod_{1\le i\le n} \frac{\Gamq(t_0+d\pm z_i)} {\Gamq(t_0-d\pm z_i)}
 {\cal D}^{(n)}_{q,t}(c) \prod_{1\le i\le n}
 \frac{\Gamq(t_0-c-d\pm z_i)} {\Gamq(t_0+c+d\pm z_i)}
 {\cal D}^{(n)}_{q,t}(d) \prod_{1\le i\le n}
 \frac{\Gamq(t_0+c\pm z_i)} {\Gamq(t_0-c\pm z_i)}.
 \end{gather*}
 If we substitute
 \begin{gather*}
 {\cal D}^{(n)}_{q,t}(d) =
 D^{(n)}_q(-d\pm (t_0-c);t) {\cal D}^{(n)}_{q,t}(d+q/2)
 \prod_{1\le i\le n} \vartheta(z_i\pm (t_0-c))^{-1},
 \end{gather*}
 then apply the easy relation
 \begin{gather*}
 \prod_{1\le i\le n} \frac{\Gamq(t_0-c-d\pm z_i)} {\Gamq(t_0+c+d\pm z_i)}
 \,D^{(n)}_q(c-d-t_0,t_0-c-d;t)
 \\ \phantom{\prod_{1} \frac{\Gamq(t_0-c-d\pm z_i)} {\Gamq(t_0+c+d\pm z_i)}}\!\!
 {}=
 D^{(n)}_q(c-d-t_0,t_0+c+d;t)
 \prod_{1\le i\le n}
 \frac{\Gamq(t_0+q/2-c-d\pm z_i)}
 {\Gamq(t_0+q/2+c+d\pm z_i)},
 \end{gather*}
 we can combine the two operators:
 \begin{gather*}
 {\cal D}^{(n)}_{q,t}(c)
 D^{(n)}_q(c\pm (t_0+d);t)
 =
 \prod_{1\le i\le n} \vartheta(z_i\pm (t_0+d))
 {\cal D}^{(n)}_{q,t}(c-q/2),
 \end{gather*}
 and find that the result simplifies to the case $(c,d,t_0)\mapsto
 (c-q/2,d+q/2,t_0+q/2)$ of the above composition. In~other words, the
 given operator is invariant under such translations, so by density is
 invariant under any translation~$(c,d,t_0)\mapsto (c-u,d+u,t_0+u)$.
 Taking $u=c$ gives
 \begin{gather*}
 \prod_{1\le i\le n}
 \frac{\Gamq(t_0+2c+d\pm z_i)}
 {\Gamq(t_0-d\pm z_i)}
 {\cal D}^{(n)}_{q,t}(0)
 \prod_{1\le i\le n}
 \frac{\Gamq(t_0-d\pm z_i)}
 {\Gamq(t_0+2c+d\pm z_i)}
 {\cal D}^{(n)}_{q,t}(c+d)
 =
 {\cal D}^{(n)}_{q,t}(c+d),
 \end{gather*}
 since ${\cal D}^{(n)}_{q,t}(0)=1$.
\end{proof}

\begin{rems} Compare the proof of~\cite[Theorem~4.1]{bctheta}. The~similarity
 in arguments is not at all a coincidence: The analytic construction of~${\cal D}^{(n)}_{q,t}(c)$ in terms of the interpolation kernel of~\cite{quadxforms} implies that one can obtain the elliptic binomial
 coefficients of~\cite{bctheta} as specializations of the coefficients of~${\cal D}^{(n)}_{q,t}(c)$, making~\cite[Theorem~4.1]{bctheta} a (Zariski
 dense) special case of the above relation.
\end{rems}

\begin{rems} The action of the Fourier transform on objects is a reflection
 in an appropriate inner product (the intersection form of the surface!), as~are the generators of the $W(D_m)$ action. Each generator has a
 certain action on operators. If~one takes into account the action on
 parameters, the generators are involutions, and all relevant braid
 relations are satisfied, so that this gives an action of a~Coxeter group
 $W(E_{m+1})$ on the base of the family. Showing that this lifts to an
 action of~$W(E_{m+1})$ on the actual sheaf categories reduces to
 verifying lifts of each braid relation, and the only nontrivial such lift
 reduces to the above identity.
\end{rems}

We thus define a Fourier transform on formal difference operators in the
following way. If~the formal operator $D$ maps the polarization~$P_0(2c;q,t)$
to the polarization~$P_0(2c';q,t)$, then its Fourier transform $\hat{D}$
is the operator
\begin{gather*}
 {\cal D}^{(n)}_{q,t}(c') D {\cal D}^{(n)}_{q,t}(-c)
\end{gather*}
mapping $P_0(-2c;q,t)$ to~$P_0(-2c';q,t)$. There is some choice here (since
the polarizations only depend on~$2c$, $2c'$), but luckily it is not
particularly serious.

\begin{lem}
 If $\tau$ is a $2$-torsion point, then
 \begin{gather*}
 {\cal D}^{(n)}_{q,t}(c+\tau)
 =
 T_\omega(\tau)
 {\cal D}^{(n)}_{q,t}(c)
 =
 {\cal D}^{(n)}_{q,t}(c)
 T_\omega(\tau).
 \end{gather*}
\end{lem}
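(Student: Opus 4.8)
The plan is to invoke the uniqueness statement of Proposition~\ref{prop:fourier_xform_exists}: ${\cal D}^{(n)}_{q,t}(c)$ is the unique formal gauging operator $P_0(2c;q,t)\to P_0(-2c;q,t)$ whose leading term is the prescribed $\Gamq$-product times $T_\omega(c)$ and which satisfies ${\cal D}^{(n)}_{q,t}(c)\,D^{(n)}_q(c\pm u;t)\big|_{u=z_i}=0$ for $1\le i\le n$. Everything rests on $2\tau=0$, so that $-\tau=\tau$ and $T_\omega(\tau)^2=T_\omega(2\tau)=\id$. First I would note that $T_\omega(\tau)$ is itself a formal gauging operator from $P_0(\eta';q,t)$ to $P_0(\eta';q,t)$ with trivial $\Gamq$ part: the $z$-linear correction $2((n-1)t+q+\eta')\tau\sum_i z_i/q$ to the polarization that a translation by $\tau$ would otherwise demand vanishes because $2\tau=0$. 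Then I would record the elementary fact that conjugation $X\mapsto X^{+\tau}:=T_\omega(\tau)\,X\,T_\omega(\tau)$ acts on the symbols by $\vartheta(w)\mapsto\vartheta(w+\tau)$ and $\Gamq(w)\mapsto\Gamq(w+\tau)$, hence (using $\sigma_i\tau=\tau$ for $\sigma_i=\pm1$) carries $D^{(n)}_q(u_0,\dots,u_k;t)$ to $D^{(n)}_q(u_0+\tau,\dots,u_k+\tau;t)$, while it fixes every factor of the form $\vartheta(2z_i)$, $\vartheta(\pm z_i\pm z_j)$, $\vartheta(t\pm z_i\pm z_j)$ and the entire $\Gamq$-product prefactor of the leading term of ${\cal D}^{(n)}_{q,t}(c)$ (each involves the $z_j$ only through $z_i+z_j$, $z_i-z_j$ or $2z_i$, which are invariant mod $2\tau=0$); in particular that prefactor is also unchanged when $c$ is replaced by $c+\tau$, since its arguments depend on $c$ only through $2c=2(c+\tau)$.

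I would then prove ${\cal D}^{(n)}_{q,t}(c)\,T_\omega(\tau)={\cal D}^{(n)}_{q,t}(c+\tau)$ by checking that $G:={\cal D}^{(n)}_{q,t}(c)\,T_\omega(\tau)$ meets the characterization of ${\cal D}^{(n)}_{q,t}(c+\tau)$. It is the composite $P_0(2(c+\tau);q,t)=P_0(2c;q,t)\xrightarrow{T_\omega(\tau)}P_0(2c;q,t)\xrightarrow{{\cal D}^{(n)}_{q,t}(c)}P_0(-2c;q,t)=P_0(-2(c+\tau);q,t)$, hence a formal gauging operator of the required type; its leading term is the $\Gamq$-prefactor times $T_\omega(c)\,T_\omega(\tau)=T_\omega(c+\tau)$, which by the previous paragraph is exactly the leading term of ${\cal D}^{(n)}_{q,t}(c+\tau)$. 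For the defining relation, using $T_\omega(\tau)\,X=X^{+\tau}\,T_\omega(\tau)$ together with $D^{(n)}_q((c+\tau)\pm u;t)^{+\tau}=D^{(n)}_q(c\pm u;t)$ (shifting all arguments by $\tau$ turns $c+\tau$ into $c+2\tau=c$) gives
\[
G\cdot D^{(n)}_q((c+\tau)\pm u;t)={\cal D}^{(n)}_{q,t}(c)\,D^{(n)}_q(c\pm u;t)\,T_\omega(\tau),
\]
and since the substitution $u=z_i$ commutes with right multiplication by the $u$-independent operator $T_\omega(\tau)$, the right-hand side vanishes at $u=z_i$ by the recurrence satisfied by ${\cal D}^{(n)}_{q,t}(c)$. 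Uniqueness in Proposition~\ref{prop:fourier_xform_exists} then forces $G={\cal D}^{(n)}_{q,t}(c+\tau)$.

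Finally I would obtain the remaining equality $T_\omega(\tau)\,{\cal D}^{(n)}_{q,t}(c)={\cal D}^{(n)}_{q,t}(c+\tau)$ from Proposition~\ref{prop:fourier_braid_relation}, which gives ${\cal D}^{(n)}_{q,t}(a)^{-1}={\cal D}^{(n)}_{q,t}(-a)$. Applying the step just proved with $c$ replaced by $-c$, and using $-c+\tau=-c-\tau=-(c+\tau)$, yields ${\cal D}^{(n)}_{q,t}(-c)\,T_\omega(\tau)={\cal D}^{(n)}_{q,t}(-(c+\tau))={\cal D}^{(n)}_{q,t}(c+\tau)^{-1}$; inverting both sides and using $T_\omega(\tau)^{-1}=T_\omega(\tau)$ and ${\cal D}^{(n)}_{q,t}(-c)^{-1}={\cal D}^{(n)}_{q,t}(c)$ gives $T_\omega(\tau)\,{\cal D}^{(n)}_{q,t}(c)={\cal D}^{(n)}_{q,t}(c+\tau)$, as desired. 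I expect the one place needing genuine care to be the first paragraph's claim that conjugation by $T_\omega(\tau)$ acts on $\vartheta$, $\Gamq$ and $D^{(n)}_q$ purely by shifting arguments by $\tau$ while leaving the ``doubled'' factors and the leading-term prefactor untouched --- this is precisely where $2\tau=0$ and $-\tau=\tau$ enter --- together with the routine but essential check that $T_\omega(\tau)$ is a legitimate formal gauging operator with trivial $\Gamq$ part.
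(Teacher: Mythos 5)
Your proof is correct, and it is essentially the same argument the paper makes --- the paper's entire proof is the single sentence "Indeed, the recurrence we used to solve for the coefficients of ${\cal D}^{(n)}_{q,t}(c)$ is equivariant under translation by $2$-torsion," and your proposal is a careful unpacking of that: you verify that $T_\omega(\tau)$ is a legitimate formal gauging operator (with trivial $\Gamq$ part because $2\tau=0$), check that the $\Gamq$-prefactor of the leading term is fixed by $\tau$-translation and by $c\mapsto c+\tau$, push $T_\omega(\tau)$ through $D^{(n)}_q$ using $-\tau=\tau$, and then invoke the uniqueness from the proof of Proposition~\ref{prop:fourier_xform_exists}. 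The one refinement you make is to explicitly derive the second equality $T_\omega(\tau)\,{\cal D}^{(n)}_{q,t}(c)={\cal D}^{(n)}_{q,t}(c+\tau)$ from the first via the inversion identity of Proposition~\ref{prop:fourier_braid_relation} (rather than running the equivariance argument on both sides at once); that is a legitimate shortcut since that proposition precedes this lemma, and it cleanly sidesteps the fact that left-multiplication by $T_\omega(\tau)$ does not interact with the $D^{(n)}_q$ appearing in the defining recurrence the way right-multiplication does.
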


\begin{proof}
 Indeed, the recurrence we used to solve for the coefficients of~${\cal
 D}^{(n)}_{q,t}(c)$ is equivariant under translation by $2$-torsion.
\end{proof}

For our purposes, we~will always be working in the subcategory with objects
$P_0(-dq+\eta';q,t)$, and will take $c$, $c'$ in the Fourier transform to
be the appropriate linear combination of~$q/2$ and some fixed $\eta'/2$.
We have, of course, already computed some instances of the Fourier
transform:
\begin{gather*}
\prod_{1\le i\le n} \vartheta(z_i\pm u)\mapsto D^{(n)}_q(q/2-c\pm u;t),
\\
D^{(n)}_q(c+q/2\pm u;t)\mapsto \prod_{1\le i\le n} \vartheta(z_i\pm u),
\end{gather*}
and
\begin{gather*}
D^{(n)}_q(u_0,u_1,u_2,q\!+\!2c\!-\!u_0\!-\!u_1\!-\!u_2;t)
\mapsto D^{(n)}_q(u_0\!-\!c,u_1\!-\!c,u_2\!-\!c,q\!+\!c\!-\!u_0\!-\!u_1\!-\!u_2;t),
\end{gather*}
where in each case the input is a (general) section of~${\cal
 S}^{(n)}_{2c;q,t}$ starting from the $0$ object, of deg\-ree~$f$,~$s$, and
$s+f$ respectively.

\begin{thm}
 If $c'-c$ is an integer multiple of~$q/2$, then the Fourier transform
 is holomorphic; that is, the Fourier transform of any holomorphic family
 of operators is a holomorphic family of operators.
\end{thm}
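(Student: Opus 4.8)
The plan is as follows. Write $\hat D = {\cal D}^{(n)}_{q,t}(c')\,D\,{\cal D}^{(n)}_{q,t}(-c)$. By Proposition~\ref{prop:fourier_xform_exists}, each coefficient of ${\cal D}^{(n)}_{q,t}(\pm c)$, divided by its leading term, is a section on the base whose only $z$-independent poles lie along hypersurfaces on which $q$ is torsion; and the leading terms are products of $\Gamq$ symbols, conjugation by which is harmless (it merely changes the gauge, replacing a coefficient $Y_{\vec m}$ relative to its object by the same $Y_{\vec m}$ relative to the new object, and the explicit $\vartheta$-quotients it produces on a difference operator are base-holomorphic). Hence $\hat D$ is a formal difference operator which is automatically holomorphic in the base away from the torsion locus of $q$, and the entire content of the theorem is that, when $c'-c\in (q/2)\Z$, no pole actually appears along a hypersurface $H$ on which $q$ has some exact order $\ell$. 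So I would localize at the generic point of such an $H$.

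Next I would use the recurrences and the braid relation to strip off everything honest. Writing $c'=c+kq/2$ and using the identity ${\cal D}^{(n)}_{q,t}(c+\tau)=T_\omega(\tau)\,{\cal D}^{(n)}_{q,t}(c)$ for $\tau=\ell q/2\in {\cal E}[2]$, one may absorb a genuine translation and normalize $0\le k<\ell$. Substituting $d=kq/2$ into Proposition~\ref{prop:fourier_braid_relation} expresses ${\cal D}^{(n)}_{q,t}(c+kq/2)$ as $A\,{\cal D}^{(n)}_{q,t}(c)\,B\,{\cal D}^{(n)}_{q,t}(kq/2)\,C$, where $A,B,C$ are products of $\Gamq(\text{linear}\pm z_i)^{\pm1}$ (harmless as above), and by the normalization ${\cal D}^{(n)}_{q,t}(-dq/2)=D^{(n)}_d(q,t)$ the factor ${\cal D}^{(n)}_{q,t}(kq/2)=D^{(n)}_{-k}(q,t)$ is, for $0\le k<\ell$, the formal inverse of the honest finite global section $D^{(n)}_k(q,t)$, whose leading coefficient does not vanish identically along $H$ precisely because $k<\ell$, so that $D^{(n)}_{-k}(q,t)$ is base-holomorphic along $H$. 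Since ${\cal D}^{(n)}_{q,t}(-c)={\cal D}^{(n)}_{q,t}(c)^{-1}$ by Proposition~\ref{prop:fourier_braid_relation}, this rewrites $\hat D$ as $A\,{\cal D}^{(n)}_{q,t}(c)\,Z\,{\cal D}^{(n)}_{q,t}(c)^{-1}$ with $Z$ base-holomorphic along $H$ and of polarization degree matched to $c$; the hypothesis $c'-c\in (q/2)\Z$ is used exactly here, as otherwise the two ${\cal D}$-factors could not be brought into inverse position by recurrences that shift the argument only by multiples of $q/2$.

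The crux, then, is to show that conjugation by ${\cal D}^{(n)}_{q,t}(c)$ of a base-holomorphic operator of compatible polarization degree is again base-holomorphic along $H$; note that ${\cal D}^{(n)}_{q,t}(c)$ does genuinely have base-poles there (its defining triangular recursion degenerates at torsion $q$), so a real cancellation must occur and it must use the degree compatibility. Here I would run the recurrence $\ell$ times and close it up via ${\cal D}^{(n)}_{q,t}(c-\ell q/2)=T_\omega(-\tau)\,{\cal D}^{(n)}_{q,t}(c)$ to obtain an intertwining relation ${\cal D}^{(n)}_{q,t}(c)\,\Phi=\Psi\,{\cal D}^{(n)}_{q,t}(c)$ with $\Phi,\Psi$ honest difference operators (built from the $D^{(n)}_q(\cdot;t)$ of Lemma~\ref{lem:first_order} and from multiplications by $\prod_i\vartheta(z_i\pm u_j)$), base-holomorphic along $H$ and with nonzero leading coefficients for generic auxiliary parameters; combined with base-holomorphy of the leading term of ${\cal D}^{(n)}_{q,t}(c)$ itself, this should pin the leading base-pole part of ${\cal D}^{(n)}_{q,t}(c)$ along $H$ down enough that in the matched-degree conjugate ${\cal D}^{(n)}_{q,t}(c)\,Z\,{\cal D}^{(n)}_{q,t}(c)^{-1}$ its contributions cancel. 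An alternative to this last step is to pass to the limit along $H$: by Theorem~\ref{thm:CCn_center} the spherical algebra there is governed by its center, an honestly commutative ($q=0$-type) spherical category on the isogenous curve $E'$, and the Fourier transforms of the nearby non-torsion fibers converge to the corresponding transform for that center, which carries no torsion-$q$ poles. Either way, the main obstacle is precisely this cancellation: the formal-operator language makes it transparent that poles can only sit along torsion-$q$ hypersurfaces but gives no direct handle on why they vanish, so one must extract enough rigidity from the intertwining relations (or the center description) while tracking the $\Gamq$-symbol and gauge bookkeeping carefully so that a pole is not silently hidden in the leading-term data.
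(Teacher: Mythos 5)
Your structural reduction---use the $2$-torsion identity to normalize $0\le k<\ell$, then use the braid relation (Proposition~\ref{prop:fourier_braid_relation}) with $d=kq/2$ to rewrite ${\cal D}^{(n)}_{q,t}(c+kq/2)$ in terms of ${\cal D}^{(n)}_{q,t}(c)$, ${\cal D}^{(n)}_{q,t}(kq/2)=D^{(n)}_{-k}(q,t)$, and $\Gamq$-gauge factors---is reasonable as far as it goes, and your bookkeeping of why $D^{(n)}_{-k}(q,t)$ is base-holomorphic along $H$ for $0\le k<\ell$ is plausible. The problem is that this reduces the theorem to the single unproven assertion you yourself flag as ``the crux'': that conjugation $Z\mapsto {\cal D}^{(n)}_{q,t}(c)\,Z\,{\cal D}^{(n)}_{q,t}(c)^{-1}$ preserves base-holomorphy along $H$. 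That assertion is exactly the $c'=c$ case of the theorem you are trying to prove, and it contains all of the difficulty; the two ways you sketch of finishing it (an intertwining relation ${\cal D}^{(n)}_{q,t}(c)\Phi=\Psi{\cal D}^{(n)}_{q,t}(c)$ ``pinning down the leading base-pole part,'' or a ``limit along $H$ via the center'') are not arguments but directions, and neither makes the required cancellation visible. In particular, the $q=0$ component of the locus (where $\ell=1$ forces $k=0$) is handled by nothing at all in your sketch, and is genuinely delicate because all shifts become trivial there.

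The paper's proof attacks the cancellation directly and constructively, and is different in kind from your reduction. It first proves base-holomorphy of $\hat h$ for $C_n$-invariant multiplication operators $h$ by expressing them in terms of the generators $\prod_i\vartheta(z_i\pm u)$, whose Fourier transforms are known explicitly. It then extends to \emph{arbitrary} multiplication operators by a local approximation trick: at the generic point of a component of $\vartheta(jq)=0$, the finitely many points at which a given coefficient evaluates $h$ lie in distinct $C_n$-orbits, so one may choose a $C_n$-invariant $g$ agreeing with $h$ there to high order and use holomorphy of $\hat{g}$ and of the coefficient of $\widehat{h-g}$ together. Next, it observes that $k(X)\,D^{(n)}_q(c+q/2\pm u;t)\,k(X)$ is (for $q\ne0$) a $2^n$-dimensional left space whose elements therefore also have holomorphic Fourier transforms, and that this space together with multiplication operators topologically generates the ring of twisted formal difference operators; continuity of the Fourier transform for the nonarchimedean metric then finishes $q\ne0$. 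Finally, $q=0$ requires a separate argument exploiting the Poisson structure $(D_1,D_2)\mapsto (D_1D_2-D_2D_1)/\pi$ inherited by the commutative special fiber, since the direct $2^n$-dimensionality argument degenerates there. None of this appears in your proposal, and the gap you acknowledge is not closable without introducing some substitute for these steps.
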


\begin{proof}
 The only issue is when~$q$ is torsion, as~otherwise both
 ${\cal D}^{(n)}_{q,t}(-c)$ and ${\cal D}^{(n)}_{q,t}(c')$ are
 holomorphic (in the sense that they have no $z$-independent poles other
 than those for~$q$ torsion).

 Consider a multiplication operator $h$. If~this is $C_n$-invariant, we~can express it as a ratio of~holomorphic $C_n$-invariant theta functions. The~algebra of such theta functions is generated
 by~functions~$\prod_{1\le i\le n} \vartheta(u\pm z_i)=(-1)^n\prod_{1\le i\le n}
 \vartheta(z_i\pm u)$, and thus any holomorphic family of~\mbox{$C_n$-inva}riant
 functions $h$ has holomorphic Fourier transform. (The leading term of
 the Fourier transform of an operator is easy to determine, so we find
 that the Fourier transform of the denominator is indeed invertible.)

 Now, let $h$ be a general multiplication operator. To~show that
 $\hat{h}$ is holomorphic, we~need to show that every coefficient is
 holomorphic. The~coefficient of~$\prod_i T_i^{k_i}$ has denominator
 dividing $\prod_{1\le j\le \max(k_1,\dots,k_n)} \vartheta(jq)$, and by
 Hartog's lemma it suffices to prove that the coefficient is holomorphic
 at the generic point of every component of the corresponding divisor.
 Each coefficient is a finite linear combination of shifts of~$h$, and we
 are evaluating it at a~point with generic $(z_1,\dots,z_n)$. In~particular, none of the points where we are evaluating $h$ are in the
 same $C_n$ orbit (though we may be hitting the same point multiple
 times). It follows that there exists a $C_n$-invariant function~$g$
 such that the corresponding sum for~$h-g$ is holomorphic: simply take $g$
 to be a very good approximation near the points where $h$ is being
 evaluated. Since $\hat{g}$ is holomorphic and this coefficient of the
 Fourier transform of~$h-g$ is holomorphic, it follows that the given
 coefficient of~$\hat{h}$ is holomorphic as required.

 Now, let $D$ be an operator of the form $D^{(n)}_q(c+q/2\pm u;t)$, which
 again has a holomorphic Fourier transform. If~$q\ne 0$, then the space
 of operators $k(X) D^{(n)}_q(c+q/2\pm u;t) k(X)$ is a $2^n$-dimensional
 vector space on the left. Indeed, each of the $2^n$ shifts that appear
 induce different automorphisms of~$k(X)$. It follows that any element of
 that space has holomorphic Fourier transform (except possibly where
 $q=0$). Since that space contains elements $\propto \prod_i T_i^{\pm
 1/2}$ for every combination of signs, we~have proved holomorphy of the
 Fourier transform on a set of (topological) generators of the ring of
 twisted formal difference operators. The~Fourier transform is continuous
 with respect to the nonarchimedean metric, so the result follows in
 general.

 It remains to consider the case $q=0$. This splits into two components,
 depending on whether $q/2=0$ or $q/2$ is nontrivial $2$-torsion. The~latter case reduces to the first, however, since everything is invariant
 under translation by $2$-torsion. We may thus restrict our attention to
 the local ring at the generic point with~$q/2=0$. In~that case, the
 special fiber of the ring of twisted formal difference operators is
 abelian, since all shifts are trivial. As~a result, the algebra over the
 local ring picks up an additional (Poisson bracket) operation on
 operators: $(D_1,D_2)\mapsto (D_1D_2-D_2D_1)/\pi$, where $\pi$ is a
 uniformizer. This takes any pair of holomorphic families of operators to
 a holomorphic family of operators, and the Fourier transform respects
 this operation. We may thus use this operation to construct operators
 with known holomorphic Fourier transform. It turns out that the usual
 proof of independence of automorphisms of fields can be expressed in
 terms of this operation, and thus we still obtain the full
 $2^n$-dimensional space of operators.
\end{proof}

\begin{rem}
 The proof for~$q=0$ is of course based on the standard fact that an
 automorphism of a~family of noncommutative algebras preserves the induced
 Poisson structure on any commutative fiber.
\end{rem}

Of course, the algebra of formal difference operators is far too large, and
doesn't even have an action of~$C_n$ (as it preserves neither the metric
nor the topology). So we need to show that the operators we care about map
to operators which not only have finite support, but have $C_n$ symmetry.
Luckily, this is a closed condition, so it suffices to prove it
generically.

\begin{lem}
 On the generic fiber, the $\Z/2\Z$-graded algebra
 $
 \bigcup_d {\cal S}^{(n)}_{2c;0,0}(0,d(s+f))
$
 is generated by ${\cal S}^{(n)}_{2c;0,0}(0,s+f)$.
\end{lem}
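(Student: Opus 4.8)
Looking at this statement, the claim is that on the generic fiber, the $\Z/2\Z$-graded algebra $\bigcup_d {\cal S}^{(n)}_{2c;0,0}(0,d(s+f))$ is generated by its degree-$(s+f)$ piece. This is a purely classical statement (since $q=t=0$), and by Proposition \ref{prop:spherical_as_symmetric_power}, the algebra in question is the homogeneous coordinate ring of $\Sym^n$ of a (generic) anticanonical del Pezzo of degree... wait, $m=0$ so this is $\Sym^n(\P^1\times\P^1)$ with respect to the diagonal polarization $\sO(1,1)$.

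The plan is to reduce the claim to a statement about projective normality. The subring $\bigcup_d {\cal S}^{(n)}_{2c;0,0}(0,d(s+f))$ is, by Proposition \ref{prop:spherical_as_symmetric_power} and the identification of the $n=1$ spherical algebra with the coordinate ring of a Hirzebruch surface (via \cite{noncomm1}, as recalled in the discussion of $\Gamma{\cal S}^{(1)}$), the section ring $\bigoplus_d \Gamma(\Sym^n(\P^1\times\P^1), \sO(d))$ where $\sO(1) = \Sym^n(\sO_{\P^1\times\P^1}(1,1))$ — more precisely the descent of $\sO(1,1)^{\boxtimes n}$ to the symmetric power. So I first want to establish this identification cleanly: the $\Hom$ bimodule ${\cal S}^{(n)}_{2c;0,0}(0,d(s+f))$ is the direct image in $\P^n = {\cal E}^n/C_n$ of a line bundle on $({\cal E})^n/C_n$ built from the Bruhat subquotients, and for $q=t=0$ this line bundle is exactly $\Sym^n$ applied to the ample line bundle of degree $d$ on the univariate surface. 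Then the generation claim becomes: the natural maps $\Sym^d\bigl(\Gamma(\Sym^n(Y),\sO(1))\bigr)\to \Gamma(\Sym^n(Y),\sO(d))$ are surjective for all $d$, where $Y=\P^1\times\P^1$ with $\sO(1,1)$.

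The key step is projective normality of $\Sym^n(Y)$ under this polarization. The standard tool here: if $L$ is a very ample line bundle on a projective scheme $Z$ such that $L$ is projectively normal (its section ring is generated in degree 1), then $\Sym^n(L)$ on $\Sym^n(Z)$ is projectively normal as well — this is a classical fact about symmetric products (see e.g. the theory around "$\Sym^n$ of a projectively normal variety is projectively normal", which follows from the surjectivity of $\Sym^d(\Sym^n V) \to \Gamma(\Sym^n Z, \Sym^n(L)^{\otimes d})$ obtained by an argument with $S_n$-invariants and the fact that $\Sym^\bullet$ of a free polynomial ring / polynomial ring of invariants is generated in the right degrees). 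Concretely: $\P^1\times\P^1$ with $\sO(1,1)$ is projectively normal (it is the Segre-Veronese quadric in $\P^3$, whose coordinate ring $k[x_0,x_1]\otimes k[y_0,y_1]$ bigraded along the diagonal is generated in bidegree $(1,1)$). Then one invokes the symmetric-power statement to conclude that $\Sym^n(\P^1\times\P^1)$ is projectively normal with respect to $\Sym^n(\sO(1,1))$, which is precisely the desired generation.

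The main obstacle I expect is not the projective normality of $\P^1\times\P^1$ itself (that is elementary), but rather \emph{matching up} the abstractly-defined graded algebra $\bigcup_d {\cal S}^{(n)}_{2c;0,0}(0,d(s+f))$ — defined via the Bruhat filtration and the descent of equivariant line bundles from ${\cal E}^n$ to ${\cal E}^n/C_n$ — with the honest section ring $\bigoplus_d \Gamma(\Sym^n(\P^1\times\P^1),\Sym^n\sO(1,1)^{\otimes d})$, including the identification of the multiplication maps. The ingredients are all present: the $n=1$ identification with line bundles on Hirzebruch surfaces from \cite{noncomm1} (recalled in the excerpt), Proposition \ref{prop:spherical_as_symmetric_power} giving the symmetric-power structure, and the acyclicity lemmas (e.g. the lemma stating fibers of ${\cal S}^{(n)}_{\eta';q,t}(0,ds+d'f)$ are acyclic when $d'\ge d-1$, which here with $d=d'$ gives acyclicity, so that Grauert applies and global sections commute with passing to the generic fiber). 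One assembles these, checks that the multiplication in the spherical category corresponds under Proposition \ref{prop:spherical_as_symmetric_power} to the multiplication of sections of $\Sym^n$ of the univariate line bundles — which is exactly the multiplication in the section ring of $\Sym^n(Y)$ — and then the projective normality computation finishes the argument. A mild bookkeeping point is that "generic fiber" here means working over the function field of the relevant parameter space, where $Y$ is a genuine smooth $\P^1\times\P^1$; smoothness of $\Sym^n$ of a smooth surface ensures there are no pathologies with the Veronese/symmetric-power cohomology vanishing.
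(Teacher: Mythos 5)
Your proposal is correct and follows the same overall route as the paper: identify $\bigcup_d {\cal S}^{(n)}_{2c;0,0}(0,d(s+f))$ with the section ring of $\Sym^n(\P^1\times\P^1)$ polarized by $\Sym^n(\sO_{\P^1\times\P^1}(1,1))$, then appeal to a general fact about this polarization on symmetric powers.  The difference is in the endgame.  The paper uses only very ampleness: it observes that the parity filtration is nested (the degree-$d$ piece embeds into the degree-$(d+2)$ piece via multiplication by $1$), so it suffices to show that for $d\gg 0$ of each parity the degree-$d$ piece is spanned by $d$-fold products of degree-$1$ elements, and this is exactly what very ampleness of $\Sym^n(\sO_{\P^m}(1))$ on $\Sym^n(\P^m)$ (Brion) together with restriction to the closed subscheme $\P^1\times\P^1\hookrightarrow\P^3$ gives.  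You instead go for full projective normality of $\Sym^n(\P^1\times\P^1)$ under $\Sym^n(\sO(1,1))$, which gives exact degree-$d$ generation without the nesting step.  That is a valid, and in some sense cleaner, conclusion — but note that what you call a ``classical fact'' (projective normality of $Z$ implies projective normality of $\Sym^n(Z)$ under $\Sym^n(L)$) is \emph{not} a formal consequence of symmetric functions and genuinely requires a characteristic-$0$ hypothesis; the paper's own remark following this lemma gives the counterexample of $\Sym^3(\P^2)$ in characteristic $3$.  It would be safer to cite Brion directly (as the paper does) rather than presenting it as obvious, and to state explicitly that ``generic fiber'' forces characteristic $0$.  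One further point worth noting: very ampleness automatically restricts from $\P^m$ to closed subschemes, which is how the paper avoids any direct analysis of $\P^1\times\P^1$, whereas projective normality does \emph{not} descend to subschemes, so your argument correctly needs the (easy but not automatic) extra input that $\P^1\times\P^1$ itself is projectively normal under $\sO(1,1)$.
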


\begin{proof}
 In fact, we~claim that for~$d\gg 0$, ${\cal S}^{(n)}_{2c;0,0}(0,d(s+f))$
 is spanned by products of~$d$ elements of~${\cal
 S}^{(n)}_{2c;0,0}(0,s+f)$. Since this contains the spaces for all
 smaller $d$ of the same parity, the result will immediately follow.

 Since this graded algebra is the homogeneous coordinate ring of~$\Sym^n(\P^1\times \P^1)$, what we are in fact claiming is that the ample
 bundle $\Sym^n(\sO_{\P^1\times \P^1}(1))$ is very ample. In~general, it
 follows from~\cite[Section~1.3]{BrionM:1993} that over any field of
 characteristic 0, $\Sym^n(\sO_{\P^m}(1))$ is very ample on~$\Sym^n(\P^m)$, and thus the same holds for the symmetric power of any
 closed sub\-scheme~of~$\P^m$.
\end{proof}

\begin{rem}
 It is likely that this fails in small characteristic. It is certainly
 the case that the line bundle $\Sym^n(\sO_{\P^m}(1))$ can fail to be very
 ample on~$\Sym^n(\P^m)$; indeed this already happens for~$\Sym^3(\P^2)$
 in characteristic 3. In~addition, even in characteristic 0, the
 $\Z$-{\em graded} algebra is not generated in degree 1 if $n$ is
 sufficiently large. Indeed, one has $h^0(\Sym^n(\sO_{\P^1\times
 \P^1}(1))) = \binom{n+3}{3}$, while $h^0(\Sym^n(\sO_{\P^1\times
 \P^1}(2))) = \binom{n+8}{8}$. So for~$n\gg 0$, even if we take into
 account noncommutativity, there are simply not enough sections of degree
 1 for their products to account for every section of degree 2!
\end{rem}

\begin{cor}
 For any $d$, the Fourier transform induces an isomorphism of stalks
 \begin{gather*}
 \Gamma{\cal S}^{(n)}_{2c;q,t}(0,d(s+f))_{q=t=0}
 \cong
 \Gamma{\cal S}^{(n)}_{-2c;q,t}(0,d(s+f))_{q=t=0}.
 \end{gather*}
\end{cor}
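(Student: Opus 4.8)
The plan is to deduce the corollary from three facts already established above. First, the operation $D\mapsto\widehat D:={\cal D}^{(n)}_{q,t}(c)\,D\,{\cal D}^{(n)}_{q,t}(-c)$ (the Fourier transform taken with $c'=c$, which is legitimate since the objects $0$ and $d(s+f)$ both correspond to the polarization $P_0(2c;q,t)$, and since $c'-c=0$ is an integer multiple of $q/2$) is a multiplicative map on germs of holomorphic families of formal difference operators near $q=t=0$, it respects the composition maps of the category, and it is invertible with inverse the $(-2c\to 2c)$ transform $D\mapsto{\cal D}^{(n)}_{q,t}(-c)\,D\,{\cal D}^{(n)}_{q,t}(c)$, because ${\cal D}^{(n)}_{q,t}(c)^{-1}={\cal D}^{(n)}_{q,t}(-c)$ by Proposition~\ref{prop:fourier_braid_relation}. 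Second, in degree $1$ one has the explicit formula
\[
\widehat{D^{(n)}_q(u_0,u_1,u_2,q+2c-u_0-u_1-u_2;t)}=D^{(n)}_q(u_0-c,u_1-c,u_2-c,q+c-u_0-u_1-u_2;t),
\]
whose right-hand side is a general degree-$1$ section of ${\cal S}^{(n)}_{-2c;q,t}$ (its four arguments sum to $q-2c$) and whose left-hand side is a general degree-$1$ section of ${\cal S}^{(n)}_{2c;q,t}$. By Lemma~\ref{lem:first_order} applied to both $2c$ and $-2c$, these operators span every fibre of $\Gamma{\cal S}^{(n)}_{2c;q,t}(0,s+f)$ and of $\Gamma{\cal S}^{(n)}_{-2c;q,t}(0,s+f)$ respectively, so the Fourier transform carries $\Gamma{\cal S}^{(n)}_{2c;q,t}(0,s+f)$ onto $\Gamma{\cal S}^{(n)}_{-2c;q,t}(0,s+f)$, isomorphically since it is globally invertible.

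Next I would reduce arbitrary degree $d$ to degree $1$ by a generation argument. Since $v=d(s+f)$ satisfies the hypotheses of Corollary~\ref{cor:universally_ample_is_flat} (one has $d'=d$, so $d'\ge\max(d,d/2)$), the sheaf $\Gamma{\cal S}^{(n)}_{2c;q,t}(0,d(s+f))$ is flat over parameter space and its formation commutes with base change; the same holds for $\Gamma{\cal S}^{(n)}_{2c;q,t}(0,s+f)$. Let ${\cal A}_d\subseteq\Gamma{\cal S}^{(n)}_{2c;q,t}(0,d(s+f))$ be the image of the $d$-fold composition map out of $\Gamma{\cal S}^{(n)}_{2c;q,t}(0,s+f)^{\otimes d}$ (using the translation isomorphisms ${\cal S}^{(n)}(0,s+f)\cong{\cal S}^{(n)}(j(s+f),(j+1)(s+f))$), and define ${\cal A}'_d$ analogously for $-2c$. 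By right-exactness of tensor product, the image of ${\cal A}_d$ in the fibre over the generic point of $\{q=t=0\}$ agrees with the image of the composition map on fibres, and by base change the latter is the composition map of the fibre category ${\cal S}^{(n)}_{2c;0,0}$, whose image is by hypothesis the whole degree-$1$-generated subalgebra; the cited Lemma (that on the generic fibre of $\{q=t=0\}$ the algebra $\bigcup_d{\cal S}^{(n)}_{2c;0,0}(0,d(s+f))$ is generated in degree $1$) says this subalgebra is everything. Nakayama then upgrades this to an equality of stalks ${\cal A}_d=\Gamma{\cal S}^{(n)}_{2c;q,t}(0,d(s+f))$ at $q=t=0$, and likewise ${\cal A}'_d=\Gamma{\cal S}^{(n)}_{-2c;q,t}(0,d(s+f))$ there (the generic point of $\{q=t=0\}$ is the same for $(E,2c)$ and $(E,-2c)$, so the Lemma applies verbatim).

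The proof is then completed by combining these: the Fourier transform, being multiplicative and compatible with the composition maps, carries ${\cal A}_d$ isomorphically onto ${\cal A}'_d$ (the inverse being the $(-2c\to 2c)$ transform), so on stalks at $q=t=0$, using the two identifications just obtained,
\[
\Gamma{\cal S}^{(n)}_{2c;q,t}(0,d(s+f))_{q=t=0}=({\cal A}_d)_{q=t=0}\xrightarrow{\ \sim\ }({\cal A}'_d)_{q=t=0}=\Gamma{\cal S}^{(n)}_{-2c;q,t}(0,d(s+f))_{q=t=0},
\]
which is the asserted isomorphism. I would remark that no separate check is needed that $\widehat D$ is an honest (finite-support, $C_n$-symmetric) difference operator for $D$ in the stalk: on that stalk $D$ is a sum of products of the explicit degree-$1$ operators, whose Fourier transforms are exactly the explicit degree-$1$ operators for $-2c$; the holomorphy of the Fourier transform proved above is only used if one prefers to present the map directly on germs of holomorphic families, bypassing the generation step.

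The main obstacle, and the only subtle point, is the passage from ``generated in degree $1$ on the generic fibre'' (the cited Lemma, itself resting on Brion's very-ampleness of $\Sym^n(\sO_{\P^1\times\P^1}(1))$ in characteristic $0$) to the corresponding statement about stalks, i.e.\ the Nakayama step: this is valid precisely because the degree-$d(s+f)$ bimodules are flat (so global sections commute with base change), and without it one would obtain surjectivity of the composition map only on the closed fibre, not on the stalk. Multiplicativity and invertibility of the Fourier transform are formal consequences of the operator identities of Propositions~\ref{prop:fourier_xform_exists}--\ref{prop:fourier_braid_relation}, and the degree-$1$ computation is already in hand, so the remaining work is the bookkeeping described above.
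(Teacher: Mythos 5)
Your proposal follows the same broad route as the paper (degree-$1$ operators are explicit under the Fourier transform; the algebra at $q=t=0$ is generated in degree $1$; pass to stalks), but the generation step contains a genuine gap, and the missing ingredient is precisely the observation the paper relies on instead.

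You define $\mathcal{A}_d$ as the image of the $d$-fold composition map $\Gamma\mathcal{S}^{(n)}_{2c;q,t}(0,s+f)^{\otimes d}\to\Gamma\mathcal{S}^{(n)}_{2c;q,t}(0,d(s+f))$ and then argue (via base change and Nakayama) that $\mathcal{A}_d$ fills the whole stalk at $q=t=0$. The cited Lemma, however, only asserts that the \emph{filtered} union $\bigcup_d\mathcal{S}^{(n)}_{2c;0,0}(0,d(s+f))$ is generated by its degree-$1$ piece as a filtered algebra: every degree-$d$ element is a polynomial of some (possibly much larger, same-parity) total degree $D$ in first-order operators. It does \emph{not} say that the degree-$d$ piece is spanned by products of exactly $d$ first-order operators, which is what $\mathcal{A}_d$ records. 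In fact the Remark immediately after that Lemma gives a counterexample to the stronger statement: the $\Z$-graded algebra is not generated in degree $1$ once $n$ is large, since $h^0(\Sym^n(\sO_{\P^1\times\P^1}(1)))=\binom{n+3}{3}$ while $h^0(\Sym^n(\sO_{\P^1\times\P^1}(2)))=\binom{n+8}{8}$. For such $n$ and already for $d=2$ the inclusion $\mathcal{A}_d\subsetneq\Gamma\mathcal{S}^{(n)}_{2c;q,t}(0,d(s+f))_{q=t=0}$ is strict, so your Nakayama step does not close.

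The paper avoids this as follows. A basis of the fiber at the generic point of $q=t=0$ is lifted to a basis of the stalk, each element of which is a polynomial of \emph{unbounded} degree $D\ge d$ in first-order operators. That shows only that its Fourier transform lies in $\bigcup_{e\ge 0}\Gamma\mathcal{S}^{(n)}_{-2c;q,t}(0,e(s+f))_{q=t=0}$. The decisive extra point --- absent from your proposal --- is that the Fourier transform (with $c'=c$) preserves, for each $l$, the space $\prod_iT_i^{-l/2}k(X)[[T_1,\dots,T_n]]$, i.e.\ it respects the support filtration on formal difference operators. Since the original section has support in the Bruhat interval of degree $d$, so does its transform, and this cuts the image back down from $\bigcup_e$ to exactly degree $d$. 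To repair your argument you would need either to replace $\mathcal{A}_d$ by the intersection of the degree-$d$ bimodule with the images of all $D$-fold products (for $D\ge d$ of the same parity) before applying Nakayama, or --- more simply --- to invoke this support-preservation property as the paper does.
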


\begin{proof}
 Fix a basis of the global sections of the fiber over the generic point
 with~$q=t=0$. Each such global section can be expressed as a polynomial
 in sections of degree 1; if we choose an~extension to the stalk for each
 degree 1 operator that appears, then the result will be a basis of~the
 stalk of degree $d$ operators in which every element is a polynomial in
 first-order operators. It~follows that every element of the basis has
 Fourier transform in~$\bigcup_{e\ge 0} \Gamma{\cal S}^{(n)}_{-2c;q,t}(0,e(s+f))_{q=t=0}$,
 but the Fourier transform clearly preserves the space of operators
 $\prod_i T_i^{-l/2} k(X)[[T_1,\dots,T_n]]$ for each $l$, and thus
 the Fourier transform is actually in~$\Gamma{\cal
 S}^{(n)}_{-2c;q,t}(0,d(s+f))_{q=t=0}$ as required. The~inverse operation is of course just the Fourier transform again.
\end{proof}

\begin{cor}
 For any $d$, the Fourier transform induces a $($local$)$ isomorphism of
 sheaves of categories $\Gamma{\cal S}^{(n)}_{2c;q,t}|_{\Z(s+f)} \cong
 \Gamma{\cal S}^{(n)}_{-2c;q,t}|_{\Z(s+f)}$.
\end{cor}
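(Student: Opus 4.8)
The plan is to promote the isomorphism of stalks at the generic point $\eta_0$ of the locus $\{q=t=0\}\subset {\cal E}^3$ (the preceding Corollary) to an isomorphism of the coherent sheaves $\Gamma{\cal S}^{(n)}_{\pm 2c;q,t}(0,d(s+f))$ over the whole parameter space, using $S$-flatness to kill any pathology away from a dense open and the involutivity of the transform to upgrade the resulting monomorphism to an isomorphism. By the translation symmetry in the object group it suffices to treat the $\Hom$ sheaves starting from the $0$ object, and for $d<0$ these vanish, so we may take $d\ge 0$ throughout.

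First I would record the structural inputs. Since on $\Z(s+f)$ the parameter $2c$ is constant along the objects, the Fourier transform acts there as the (quasi-)conjugation $D\mapsto {\cal D}^{(n)}_{q,t}(c)\,D\,{\cal D}^{(n)}_{q,t}(-c)$, with ${\cal D}^{(n)}_{q,t}(-c)={\cal D}^{(n)}_{q,t}(c)^{-1}$ by Proposition~\ref{prop:fourier_braid_relation}; in particular it visibly respects composition, and by the Theorem asserting holomorphy of the Fourier transform (which applies because consecutive objects $d(s+f)\to(d+1)(s+f)$ have $c'-c=0\in\tfrac{q}{2}\Z$) it is a morphism of sheaves on ${\cal E}^3$ out of $\Gamma{\cal S}^{(n)}_{2c;q,t}(0,d(s+f))$ into the sheaf of holomorphic formal difference operators. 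By Corollary~\ref{cor:universally_ample_is_flat}, applied with $v=d(s+f)$ and $m=0$ (whose hypotheses $d'\ge d\ge r_1+r_2$ and $d'\ge d/2+r_1$ hold trivially, so its bad locus is empty), the sheaf $\Gamma{\cal S}^{(n)}_{2c;q,t}(0,d(s+f))$ is coherent, $S$-flat, and injects into meromorphic difference operators on every fiber; the same holds for $\Gamma{\cal S}^{(n)}_{-2c;q,t}(0,d(s+f))$, which moreover is supported on the fixed Bruhat interval $[\le\lambda]$, $\lambda$ the dominant weight of $d(s+f)$, hence sits as a coherent subsheaf of a coherent sheaf ${\cal N}$ of holomorphic difference operators of that bounded support.

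Next I would spread out. By the preceding Corollary the transform carries the stalk of $\Gamma{\cal S}^{(n)}_{2c;q,t}(0,d(s+f))$ at $\eta_0$ into that of $\Gamma{\cal S}^{(n)}_{-2c;q,t}(0,d(s+f))$; since both are coherent and a germ vanishing in a coherent quotient vanishes on a neighbourhood, the transform maps the former into the latter over some open $U\ni\eta_0$, which is dense as ${\cal E}^3$ is irreducible and $U$ meets $\{q=t=0\}$. Over $U$ the image has support in the fixed Bruhat interval, and as the coefficients of a formal difference operator lie in a sheaf that is torsion-free over ${\cal E}^3$, those coefficients outside the interval — vanishing on the dense $U$ — vanish identically, so the transform in fact lands in ${\cal N}$ over all of ${\cal E}^3$. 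Composing with ${\cal N}\to{\cal N}/\Gamma{\cal S}^{(n)}_{-2c;q,t}(0,d(s+f))$ yields a morphism of coherent sheaves on ${\cal E}^3$ vanishing on the dense $U$; since the source is $S$-flat it has no sections supported on a proper closed subset, so this morphism is zero. Hence the Fourier transform maps $\Gamma{\cal S}^{(n)}_{2c;q,t}|_{\Z(s+f)}$ into $\Gamma{\cal S}^{(n)}_{-2c;q,t}|_{\Z(s+f)}$ globally, compatibly with composition. Running the same argument with $c$ replaced by $-c$ gives a morphism the other way, and the two compositions are the identity because the Fourier transform is an involution on formal difference operators; therefore both are isomorphisms, giving the asserted (local) isomorphism of sheaves of categories.

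I expect the main obstacle to be precisely this promotion step: on its own the preceding Corollary controls the transform only over the codimension-$2$ locus $\{q=t=0\}$, equivalently over a dense open, and a genuinely new input is needed to reach the rest of parameter space. The two ingredients that close the gap are the holomorphy theorem — which makes the transform an honest global morphism of sheaves rather than a merely birational one — and the flatness (hence torsion-freeness) of the relevant $\Hom$ sheaves, which forces a morphism of coherent sheaves that vanishes generically to vanish everywhere. The involutivity is then a cheap but indispensable last step, since the argument a priori only produces an inclusion of one sheaf of categories into the other; without the $c\mapsto-c$ symmetry a cokernel could in principle persist over $\{q=t=0\}$ (or over higher-torsion analogues of it).
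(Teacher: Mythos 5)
Your overall strategy matches the paper's: holomorphy of the Fourier transform, flatness together with fiberwise injectivity of the global $\Hom$ sheaves, the preceding stalk corollary at the generic point of $\{q=t=0\}$, and then an extension-from-generic argument. The details of the spread-out step and the use of the $c\mapsto-c$ symmetry for invertibility are also as the paper has them (the paper's version is terser but the reasoning is the same). However, the single step that actually carries the weight of the proof has the flatness pinned on the wrong sheaf.

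You write that the composed morphism
\[
  \Gamma{\cal S}^{(n)}_{2c;q,t}(0,d(s+f))\longrightarrow {\cal N}\longrightarrow {\cal N}/\Gamma{\cal S}^{(n)}_{-2c;q,t}(0,d(s+f))
\]
vanishes because it vanishes on the dense $U$ and ``the source is $S$-flat [and] has no sections supported on a proper closed subset.'' That implication is false: the restriction morphism $\sO_S\to\sO_D$ for a divisor $D$ vanishes on $S\setminus D$ even though its source $\sO_S$ is torsion-free. Torsion-freeness of the source says nothing about whether a morphism out of it that vanishes generically vanishes; it is the \emph{target} that must be torsion-free, since the image of a generically-vanishing morphism is a torsion subsheaf of the target. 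What you actually need is that the quotient ${\cal N}/\Gamma{\cal S}^{(n)}_{-2c;q,t}(0,d(s+f))$ is torsion-free over ${\cal E}^3$. This is available from the same two facts you already cite from the paper: both ${\cal N}$ (a finite sum of direct images of line bundles on the fixed Bruhat interval) and $\Gamma{\cal S}^{(n)}_{-2c;q,t}(0,d(s+f))$ are flat over the base, and the inclusion is injective on fibers. Hence $\Tor_1$ of the quotient against every residue field vanishes, so by the local criterion the quotient is flat, and in particular torsion-free over the integral base ${\cal E}^3$. With this correction the argument closes and agrees with the paper's proof, which cites precisely flatness and fiberwise injectivity of the global $\Hom$ sheaves for exactly this purpose.
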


\begin{proof}
 The given $\Hom$ sheaves of the global section category are flat and the
 map to difference operators is injective on fibers. We may thus identify
 sections with holomorphic families of difference operators and apply the
 Fourier transform to obtain a holomorphic family of formal difference
 operators. The~generic point of this family is a section of the other
 global section category, and thus the family itself is a section.
\end{proof}

\begin{cor}
 For $d\le d'$, the Fourier transform induces a morphism
 \begin{gather*}
 \Gamma{\cal S}^{(n)}_{2c;q,t}(0,ds+d'f)
 \to
 \Gamma{\cal S}^{(n)}_{-2c;q,t}(0,d's+df).
 \end{gather*}
\end{cor}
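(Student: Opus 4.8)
The plan is to reduce to the already-established isomorphism on the subcategory of objects in $\Z(s+f)$, handling the extra ``$f$'' by hand. Writing $ds+d'f=d(s+f)+(d'-d)f$ with $d'-d\ge 0$, the first point is to record the Fourier transform of the two relevant elementary pieces. By the explicit computations preceding the holomorphy theorem, a degree-$f$ multiplication operator $\prod_{1\le i\le n}\vartheta(z_i\pm u)$ is sent to the first-order difference operator $D^{(n)}_q(q/2-c\pm u;t)$, a global section of ${\cal S}^{(n)}_{-2c;q,t}(0,s)$ by the $d'=0$ case of Lemma~\ref{lem:first_order}; and a general degree-$(s+f)$ global section $D^{(n)}_q(u_0,u_1,u_2,q+2c-u_0-u_1-u_2;t)$ is sent to $D^{(n)}_q(u_0-c,u_1-c,u_2-c,q+c-u_0-u_1-u_2;t)$, a global section of ${\cal S}^{(n)}_{-2c;q,t}(0,s+f)$. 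Since the Fourier transform is a homomorphism of formal difference operators and these spanning operators generate, respectively, all of ${\cal S}^{(n)}_{2c;q,t}(0,f)$ and ${\cal S}^{(n)}_{2c;q,t}(0,s+f)$, any product of $d$ global sections of degree $s+f$ and $d'-d$ global sections of degree $f$ is carried to a product of $d$ global sections of degree $s+f$ and $d'-d$ global sections of degree $s$ in the $-2c$ category, which composes in the spherical category to a global section of ${\cal S}^{(n)}_{-2c;q,t}(0,d(s+f)+(d'-d)s)={\cal S}^{(n)}_{-2c;q,t}(0,d's+df)$.

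It therefore suffices to show that, over a dense open subset of the base, $\Gamma{\cal S}^{(n)}_{2c;q,t}(0,ds+d'f)$ is spanned by such products. As in the proof of the $\Z(s+f)$ case I would check this at the fibre $q=t=0$: there Proposition~\ref{prop:spherical_as_symmetric_power} identifies ${\cal S}^{(n)}_{2c;0,0}$ with $\Sym^n$ of the category of line bundles on the Hirzebruch surface $F_0=\P^1\times\P^1$, so that $\Gamma{\cal S}^{(n)}_{2c;0,0}(0,ds+d'f)=H^0(\Sym^n F_0;\Sym^n\sO_{F_0}(d,d'))$; and since $\Gamma{\cal S}^{(n)}_{2c;q,t}(0,ds+d'f)$ is locally free (Corollary~\ref{cor:universally_ample_is_flat}, whose bad locus is empty here because $d'\ge d\ge d/2$), the subsheaf generated by products of degree-$(s+f)$ and degree-$f$ sections agrees with it on a dense open as soon as it does so on this fibre. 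For $n=1$ the bihomogeneous coordinate ring of $\P^1\times\P^1$ is generated, in bidegrees $(d,d')$ with $d\le d'$, by its pieces of bidegrees $(1,1)$ and $(0,1)$; the statement for general $n$, namely that $H^0(\Sym^n\sO(d,d))\cdot H^0(\Sym^n\sO(0,d'-d))$ spans $H^0(\Sym^n\sO(d,d'))$, follows from projective normality of the symmetric powers of $\P^1\times\P^1$ under the very ample bundle $\Sym^n\sO(1,1)$ (\cite{BrionM:1993}; this was already used to prove generation in degree $s+f$ of $\bigcup_d{\cal S}^{(n)}_{2c;0,0}(0,d(s+f))$), together with a Castelnuovo--Mumford regularity argument for the globally generated bundle $\Sym^n\sO(0,e)$ --- or, alternatively, by the same induction on $n$ and on dimension (filtering a symmetric power after pushing through a section) used for the acyclicity lemmas.

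Granting this, I would finish exactly as for the $\Z(s+f)$ case: fix a basis of $\Gamma{\cal S}^{(n)}_{2c;q,t}(0,ds+d'f)_{q=t=0}$ whose elements are products of $d$ degree-$(s+f)$ and $d'-d$ degree-$f$ sections, extend each factor to a neighbourhood of $q=t=0$ (the sheaves of such sections being flat --- Lemma~\ref{lem:first_order}, resp.\ the identification of degree-$e f$ operators with $H^0(\P^n;\sO(e))$), and observe that local freeness of the source promotes the products of the extensions to a basis over a dense open subset of the base. By the first paragraph the Fourier transform of each such basis element is a genuine global section of $\Gamma{\cal S}^{(n)}_{-2c;q,t}(0,d's+df)$ --- the target bidegree being forced, rather than merely contained in $\bigcup_e\Gamma{\cal S}^{(n)}_{-2c;q,t}(0,es+e'f)$, because the Fourier transform preserves each subspace $\prod_iT_i^{-l/2}k(X)[[T_1,\dots,T_n]]$. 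Hence the (holomorphic) Fourier transform carries $\Gamma{\cal S}^{(n)}_{2c;q,t}(0,ds+d'f)$ into $\Gamma{\cal S}^{(n)}_{-2c;q,t}(0,d's+df)$ over a dense open; since the target is cut out inside the sheaf of meromorphic difference operators by residue and vanishing conditions, a holomorphic family whose generic member lies in it lies in it everywhere, and the morphism extends over the whole base.

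The only real content is the generation statement of the second paragraph, and I expect that to be the main obstacle; everything else is bookkeeping with the operators ${\cal D}^{(n)}_{q,t}(\cdot)$ of Proposition~\ref{prop:fourier_xform_exists}, the relations of Proposition~\ref{prop:fourier_braid_relation}, and the holomorphy theorem. The subtlety in the generation step is that one cannot simply take $\Sym^n$ of a product expression valid for $\P^1\times\P^1$: the Cauchy map $\Sym^n A\otimes\Sym^n B\to\Sym^n(A\otimes B)$ only hits the $S_{(n)}\otimes S_{(n)}$ summand, so one genuinely needs a positivity input (projective normality or regularity of symmetric powers) rather than a formal identity.
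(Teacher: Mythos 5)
The route you propose is genuinely different from the paper's. The paper never proves (or uses) the bigraded generation statement at the heart of your argument; instead it proceeds by a dual/test-function trick: given $D\in\Gamma{\cal S}^{(n)}_{2c;q,t}(0,ds+d'f)$, one takes an arbitrary multiplication operator $g\in\Gamma{\cal S}^{(n)}_{-2c;q,t}(d's+df,d's+d'f)$ (degree $(d'-d)f$), notes that $\hat g$ has degree $(d'-d)s$ by the easy $d=0$ case (which is just generation of $H^0(\P^n,\sO(d'))$ by $H^0(\P^n,\sO(1))$), so $\hat g D$ lives in $\Gamma{\cal S}^{(n)}_{2c;q,t}(0,d'(s+f))$ where the transform is already known to be an isomorphism, and then uses covariance and involutivity to conclude that $g\hat D=\widehat{\hat g D}$ lies in $\Gamma{\cal S}^{(n)}_{-2c;q,t}(0,d'(s+f))$; since this holds for all $g$ and the sections of $\sO_{\P^n}(d'-d)$ globally generate, $\hat D$ must itself lie in $\Gamma{\cal S}^{(n)}_{-2c;q,t}(0,d's+df)$. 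This argument only needs the $\Z(s+f)$ case and the $d=0$ case, both of which are easy.

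Your approach, by contrast, requires showing that $\Gamma{\cal S}^{(n)}_{2c;q,t}(0,ds+d'f)$ is (generically) spanned by products of sections of degree $s+f$ and $f$, which at $q=t=0$ amounts to surjectivity of the multiplication map
\[
\Sym^n\bigl(H^0(\sO_{\P^1\times\P^1}(1,1))\bigr)^{\otimes d}\otimes \Sym^n\bigl(H^0(\sO_{\P^1\times\P^1}(0,1))\bigr)^{\otimes(d'-d)}\longrightarrow \Sym^n\bigl(H^0(\sO_{\P^1\times\P^1}(d,d'))\bigr)
\]
inside the $S_n$-invariant ring of $(\P^1\times\P^1)^n$. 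You honestly flag this as ``the main obstacle,'' and it is: the multiplication is only equivariant for the \emph{diagonal} $S_n$, so that lifting an invariant of $R_{(d,d')}^{\otimes n}$ and averaging over $S_n\times S_n$ does not produce the right thing. Brion's result gives you projective normality along the diagonal grading (so $H^0(\Sym^n\sO(d,d))$ is generated in degree $(1,1)$), and projective normality of $\P^n$ handles the $(0,*)$-graded piece, but the mixed step — $H^0(\Sym^n\sO(d,d))\cdot H^0(\Sym^n\sO(0,e))\to H^0(\Sym^n\sO(d,d+e))$ — is the genuinely new content and is not covered by either. Castelnuovo--Mumford regularity as you invoke it does not obviously apply, since $\Sym^n\sO_{\P^1\times\P^1}(0,1)$ is only semiample (pulled back from $\P^n$, it is not even big on $\Sym^n(\P^1\times\P^1)$), so the standard $m$-regularity surjectivity theorems don't trigger. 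I checked the smallest nontrivial instance ($n=2$, $(d,d')=(1,2)$) by hand and it does hold there, so the statement is plausibly true — but as written your proof has a real gap, and even if filled it would be noticeably more work than the paper's test-function argument. The rest of your bookkeeping (reduction to $q=t=0$, extension over the base by closedness of the vanishing/residue conditions, and the observation that the transform preserves $\prod_iT_i^{-l/2}k(X)[[T_1,\dots,T_n]]$ so the target bidegree is forced) is fine and matches the machinery the paper uses elsewhere.
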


\begin{proof}
 If $d=0$, this is easy, as~the algebra is generated in degree 1, and we
 know the result there. More generally, given a section~$D$ of~$\Gamma{\cal S}^{(n)}_{2c;q,t}(0,ds+d'f)$ and any section~$g$ of~$\Gamma{\cal S}^{(n)}_{-2c;q,t}(d's+df,d's+d'f)$, consider the
 composition~$\hat{g} D\in \Gamma{\cal S}^{(n)}_{2c;q,t}(0,d's+d'f)$,
 which makes sense since $g$ has degree $(d'-d)f$. Since the Fourier
 transform is a covariant involution, we~find that $\hat{g} D$ has Fourier
 transform $g \hat{D}$, so that $g\hat{D}$ is a section of~$\Gamma{\cal
 S}^{(n)}_{-2c;q,t}(0,d's+d'f)$ for any $g$. But this implies that
 $\hat{D}$ is actually a section of~$\Gamma{\cal
 S}^{(n)}_{-2c;q,t}(0,d's+df)$ as required.
\end{proof}

\begin{cor}
 For any $d$, $d'$, the sheaf $\Gamma{\cal S}^{(n)}_{2c;q,t}(0,ds+d'f)$ is
 flat and the map to difference operators is injective on fibers.
 Moreover, the Fourier transform induces an isomorphism
 \begin{gather*}
 \Gamma{\cal S}^{(n)}_{2c;q,t}(0,ds+d'f)
 \cong
 \Gamma{\cal S}^{(n)}_{-2c;q,t}(0,d's+df)
 \end{gather*}
 for all $d$, $d'$.
\end{cor}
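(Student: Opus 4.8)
The plan is to deduce the corollary from Corollary~\ref{cor:universally_ample_is_flat} together with the two statements immediately preceding it (the Fourier transform gives a sheaf isomorphism in degrees $\Z(s+f)$, and it gives a morphism $\Gamma{\cal S}^{(n)}_{2c;q,t}(0,ds+d'f)\to\Gamma{\cal S}^{(n)}_{-2c;q,t}(0,d's+df)$ whenever $d\le d'$). The only genuinely new work is upgrading that morphism to an isomorphism and extending the flatness statement beyond the range already handled.

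First the reductions. The asserted isomorphism for $(d,d',c)$ is the same statement as for $(d',d,-c)$, since the Fourier transform interchanges the $s$- and $f$-coefficients and replaces $c$ by $-c$, and the Fourier transform attached to $-c$ inverts that attached to $c$ by Proposition~\ref{prop:fourier_braid_relation}; likewise the flatness assertion for $(d',d)$ will follow from that for $(d,d')$ once the isomorphism is available. If $d<0$ or $d'<0$ then $\Gamma{\cal S}^{(n)}_{2c;q,t}(0,ds+d'f)=0$ (as in the proof of the flatness Theorem above: a section would give an effective divisor class on $X_0$ of negative intersection with one of the at most two moving rulings, or equivalently the relevant order ideal in $W\backslash\tW/W$ is empty), so both assertions are trivial there. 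Hence we may assume $0\le d\le d'$, and it suffices to treat that case.

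For $0\le d\le d'$ the flatness and fibrewise injectivity of $\Gamma{\cal S}^{(n)}_{2c;q,t}(0,ds+d'f)$ are exactly Corollary~\ref{cor:universally_ample_is_flat} with $m=0$: the inequalities there reduce to $d'\ge d\ge 0$, and the exceptional codimension $\ge 2$ locus is empty since $d'\ge d\ge d/2$. Let $\phi\colon\Gamma{\cal S}^{(n)}_{2c;q,t}(0,ds+d'f)\to\Gamma{\cal S}^{(n)}_{-2c;q,t}(0,d's+df)$ be the morphism from the preceding corollary. It is the restriction of the Fourier transform on formal difference operators, which is a bijection (its inverse being the Fourier transform attached to $-c$, again by Proposition~\ref{prop:fourier_braid_relation}), so $\phi$ is injective; since the Fourier transform is holomorphic and fibrewise bijective on formal operators and the source is flat and injects into difference operators on fibres, $\phi$ is injective on every fibre. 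It remains to prove $\phi$ surjective. Given a local section $E$ of $\Gamma{\cal S}^{(n)}_{-2c;q,t}(0,d's+df)$, multiply on the left by an arbitrary section $g$ of the degree $(d'-d)f$ bimodule $\Gamma{\cal S}^{(n)}_{-2c;q,t}(d's+df,d'(s+f))$: then $gE$ lies in $\Gamma{\cal S}^{(n)}_{-2c;q,t}(0,d'(s+f))$, so by the already-established isomorphism in degrees $\Z(s+f)$ its Fourier transform lies in $\Gamma{\cal S}^{(n)}_{2c;q,t}(0,d'(s+f))$. Since the Fourier transform is a ring homomorphism, $\widehat{gE}=\hat g\,\hat E$ where $\hat g$ is a section of the degree $(d'-d)s$ bimodule $\Gamma{\cal S}^{(n)}_{2c;q,t}(ds+d'f,d'(s+f))$ (the preceding corollary, applied in degree $(d'-d)f\ge 0$); and the $d=0$ case of the present corollary — the bimodules in pure $s$- and pure $f$-degree being generated in degree one, exactly as in the proof of the preceding corollary and by projective normality of $\P^n$ — shows that as $g$ varies the $\hat g$ exhaust that bimodule. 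Thus $\hat g\,\hat E$ is a section of $\Gamma{\cal S}^{(n)}_{2c;q,t}(0,d'(s+f))$ for every such $\hat g$, and one then concludes that the formal operator $\hat E$ of degree $ds+d'f$ itself lies in $\Gamma{\cal S}^{(n)}_{2c;q,t}(0,ds+d'f)$: for $\vec T$ generic the latter sheaf is cut out inside the meromorphic difference operators by the residue conditions of Corollary~\ref{cor:affine_vanishing_conditions} together with the $t$-vanishing conditions, both of which are inherited from those satisfied by the $\hat g\,\hat E$ once enough $\hat g$ are present to separate the relevant reflection hypersurfaces, and the general $\vec T$ case follows by flatness. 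Hence $\phi$ is surjective, so an isomorphism; the case $d>d'$ and its flatness then follow from the reductions above.

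The hard part is this last ``saturation'' step, i.e.\ showing that precomposition by the $\hat g$ really detects membership in $\Gamma{\cal S}^{(n)}_{2c;q,t}(0,ds+d'f)$ — equivalently, that the target $\Gamma{\cal S}^{(n)}_{-2c;q,t}(0,d's+df)$ is itself flat with the same Hilbert polynomial as the source even when $d<d'-1$, a regime not covered by the acyclicity lemmas. I expect to handle it by the usual upper-semicontinuity argument applied to the Bruhat subquotients: their Hilbert polynomials are independent of $t$ and, at $t=0$, each subquotient is the direct image under a finite flat morphism of an outer tensor product of symmetric powers of the univariate subquotients, for which flatness of the global sections (and the needed generation/saturation statements) is available from \cite{noncomm1}; combined with the fibrewise injectivity already proved for the source and the genericity of $q,t$ where $\phi$ is visibly an isomorphism, this pins down the cokernel of $\phi$ and forces it to vanish.
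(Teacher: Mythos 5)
Your reductions (to $0\le d\le d'$, via Proposition~\ref{prop:fourier_braid_relation} and the vanishing of the sheaf when $d<0$ or $d'<0$) and the use of Corollary~\ref{cor:universally_ample_is_flat} for the source all match what the paper needs, but the strategy of proving surjectivity of $\phi$ directly stalls at exactly the step you flag, and for a structural reason rather than a fixable technicality. In the proof of the preceding corollary one left-multiplies the given operator by $\hat g$ on the pre-Fourier side, so that after transforming one has $g\hat D$ with $g$ a \emph{multiplication} operator of pure $f$-degree; the detection principle is then immediate, since global sections of $\sO_{\P^n}(d'-d)$ have no common zeros. Running the same scheme backwards, as you do, produces $\hat g\,\hat E$ where $\hat g$ is now a genuine \emph{difference} operator of pure $s$-degree. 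Left-composition by such a $\hat g$ mixes the shift-coefficients of $\hat E$, so ``each $\hat g\hat E$ satisfies the residue/vanishing conditions'' does not decouple into conditions on the individual coefficients of $\hat E$; there is no analogue of ``no common zeros'' available, and your fallback upper-semicontinuity sketch, if carried out, would just be a direct proof that the target $\Gamma{\cal S}^{(n)}_{-2c;q,t}(0,d's+df)$ is flat --- i.e.\ the thing the reduction was supposed to give you for free.

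The paper avoids surjectivity-by-saturation entirely. The key missing idea is that flatness already follows from fiberwise injectivity alone: writing $\Gamma{\cal S}$ as a subsheaf of the (flat) sheaf of formal difference operators, injectivity of the comparison map on each fiber is precisely $\Tor_1(\text{cokernel},k(s))=0$, so the quotient is flat and hence so is $\Gamma{\cal S}$. With that reduction, the Fourier map from the \emph{known-flat} side (your $\Gamma{\cal S}^{(n)}_{-2c;q,t}(0,d's+df)$ with $d'\le d$, playing the role of source) need only be used \emph{generically}: for a local section $D$ of the target whose fiber vanishes, generic surjectivity gives $D'$ on the flat side with $\hat{D'}=D$ (and hence $D'=\hat D$), the flat side's fiber injectivity forces $D'=\sum_i c_iD_i$ with all $c_i(s)=0$ in a local basis, and transporting back gives $D=\sum_i c_i\hat D_i$ with each $\hat D_i$ a section of the target, so $D$'s image in the target's fiber also vanishes. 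The isomorphism of the statement is then a corollary of flatness plus generic isomorphism, not an input. You have the right ingredients (source flatness, injectivity of the formal Fourier transform, equality of generic ranks), but the argument needs to be reorganized around fiber injectivity rather than surjectivity.
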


\begin{proof}
 We already have flatness if $d<0$ or $d\le d'$, so suppose $d\ge d'$. It
 suffices to show injectivity on fibers, as~it implies that any $\Tor_1$
 of the cokernel is 0. Thus, let $D\in \Gamma{\cal
 S}^{(n)}_{2c;q,t}(0,ds+d'f)$ be a local section such that the
 corresponding difference operator vanishes on some fiber. Consider the
 Fourier transform
 \begin{gather*}
 \Gamma{\cal S}^{(n)}_{-2c;q,t}(0,d's+df)\to \Gamma{\cal S}^{(n)}_{2c;q,t}(0,ds+d'f).
 \end{gather*}
 The domain is locally free and injective on fibers, and the codomain is
 at least {\em generically} free of the same rank. Since the Fourier
 transform is invertible at the level of operators, this map is injective,
 and thus generically an isomorphism. It follows that there is a section~$D'\in \Gamma{\cal S}^{(n)}_{-2c;q,t}(0,d's+df)$ such that $D-\hat{D'}$
 is generically 0. But this, of course, implies that $D'=\hat{D}$. In~particular, the corresponding fiber of~$D'$ vanishes, which means that in
 a suitable local basis we have $ D' = \sum_i c_i D_i$, in which each
 $c_i$ vanishes on a divisor passing through that fiber. We~then have $D
 = \sum_i c_i \hat{D}_i$ with each $\hat{D}_i$ a local section of~$\Gamma{\cal S}^{(n)}_{2c;q,t}(0,ds+d'f)$. It follows that the section
 corresponding to~$D$ vanishes at the fiber, so that injectivity holds.

 In particular, the Fourier transform induces a morphism in both
 directions, and thus gives an isomorphism as required.
\end{proof}

To finish the proof of the theorem, we~need to show that the transform
respects Bruhat order, that it respects the vanishing conditions associated
to~$x_1,\dots,x_m$, and that it commutes with the Selberg adjoint. Each
of these have analogous statements for general formal difference operators,
and in the first two cases reduce to the fact that (due to continuity) the
Fourier transform affects leading coefficients in easy to control ways.

For the Bruhat order, we~actually obtain a finer (inclusion) partial order
in the formal setting.

\begin{prop}
 Let $D$ be a holomorphic family of formal difference operators from
 $P_0(2c;q,t)$ to~$P_0(2c+lq;q,t)$. Let $S\subset \Z^n\cup
 (1/2,\dots,1/2)\Z^n$ be the set of vectors $\vec{v}$ such that for some
 $\vec{k}\in \N^n$, the left coefficient of~$\prod_i T_i^{v_i-k_i}$ is
 nonzero, and let $\hat{S}$ be the corresponding set for~$\hat{D}$.
 Then $\hat{S}=(l/2,\dots,l/2)+S$.
\end{prop}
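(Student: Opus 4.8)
The plan is to track, for a formal difference operator $D$, the ``Newton up-set'' $S=\supp(D)+\N^n$ directly through the formula $\hat D={\cal D}^{(n)}_{q,t}(c')\,D\,{\cal D}^{(n)}_{q,t}(-c)$ defining the Fourier transform, where $2c'=2c+lq$, i.e.\ $c'=c+lq/2$. Since everything in sight is a holomorphic family and $\hat D$ is again such a family, it suffices to argue on the generic fibre, where $q$ is non-torsion and so (by Proposition~\ref{prop:fourier_xform_exists}) ${\cal D}^{(n)}_{q,t}(\pm c)$ have no $z$-independent poles. The first step is to record the structure of ${\cal D}^{(n)}_{q,t}(e)$: it factors as $\Gamma_e\,T_\omega(e)\,G_e$, where $\Gamma_e$ is multiplication by the leading $\Gamq$-symbol displayed in Proposition~\ref{prop:fourier_xform_exists}, $T_\omega(e)$ is the formal translation by $e$, and $G_e$ is the quotient of ${\cal D}^{(n)}_{q,t}(e)$ by its leading term. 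The two facts I need about $G_e$ are that its constant term is $1$ and that all of its remaining shifts lie in $\N^n\setminus\{0\}$; the latter is exactly the triangularity of the linear recurrence ${\cal D}^{(n)}_{q,t}(e)\,D^{(n)}_q(e\pm u;t)\big|_{u=z_i}=0$ used to solve for the coefficients (and matches the explicit $d=1$ case $D^{(n)}_q(\,;t)$, whose shifts are $\{0,1\}^n$ relative to $\prod_i T_i^{-1/2}$).

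Next I would substitute this factorization into the definition of $\hat D$ and slide the formal translations to one side. Writing $X:=G_{c'}\,D\,\Gamma_{-c}$ one gets $\hat D=\Gamma_{c'}\,T_\omega(c')\,X\,T_\omega(-c)\,G_{-c}=\Gamma_{c'}\,X^{(c')}\,T_\omega(c')T_\omega(-c)\,G_{-c}$, where $X^{(c')}$ denotes $X$ with every coefficient translated by $c'$, so that $\supp(X^{(c')})=\supp(X)$. This is the point where the hypothesis $c'-c=lq/2$ is used: $T_\omega(c')T_\omega(-c)=T_\omega(lq/2)=\prod_i T_i^{l/2}$ is an honest lattice shift, whence $\hat D=\Gamma_{c'}\,X^{(c')}\,\big(\prod_i T_i^{l/2}\big)\,G_{-c}$.

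Then I would compute the support factor by factor. Multiplication operators ($\Gamma_{\pm c}$) have support $\{0\}$ with invertible coefficient and so do not affect the support; $G_{c'}$ and $G_{-c}$ have support inside $\N^n$ with unique minimal element $0$ carrying coefficient $1$; and the minimal support elements of $D$ form an antichain $M$, so $S=M+\N^n$. Because $k(X)$ is a field, leading coefficients cannot cancel, and the standard antichain argument gives $\supp(X)\subseteq M+\N^n=S$ with minimal elements exactly $M$ (for $\vec m\in M$, the coefficient of $X$ at $\vec m$ collapses to $D_{\vec m}\cdot{}^{\vec m}\Gamma_{-c}\neq0$). Multiplying by $\prod_i T_i^{l/2}$ shifts the minimal support to $M+(l/2,\dots,l/2)$; multiplying on the right by $G_{-c}$ leaves those minimal coefficients untouched, since any higher-order term of $G_{-c}$ would have to pair with an exponent strictly below a minimal one; and $\Gamma_{c'}$ on the left changes nothing. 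Hence $\hat D$ has minimal support $M+(l/2,\dots,l/2)$ with nonzero coefficients, so $\hat S=\supp(\hat D)+\N^n=(l/2,\dots,l/2)+S$.

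The main obstacle I expect is the bookkeeping at the bottom of the Newton diagram, i.e.\ checking that no accidental cancellation occurs among the possibly several minimal support elements of $D$ after all three multiplications, so that the coefficient of $\hat D$ at each $\vec m_j+(l/2,\dots,l/2)$ is genuinely nonzero; the antichain argument above reduces this to a product of nonzero elements of the function field involving the leading $\Gamq$-symbols and $D_{\vec m_j}$. A secondary point requiring care is that the formal translations $T_\omega(\pm c)$ need not be lattice translations and are only well-defined up to the two-torsion ambiguity in the choice of $c$; one must check they only relabel coefficients without altering which $T_i$-shifts occur. The Lemma on translation by two-torsion preceding Proposition~\ref{prop:fourier_braid_relation} shows that ambiguity contributes at worst an extra factor $T_\omega(\tau)$ with $\tau\in E[2]$, which permutes coefficients and is therefore harmless for the support computation.
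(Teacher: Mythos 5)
Your proof is correct and follows essentially the same approach as the paper's: factor each formal gauging operator $\mathcal{D}^{(n)}_{q,t}(e)$ into a $\Gamq$-symbol part, a formal translation $T_\omega(e)$, and a unit in $k(X)[[T_1,\dots,T_n]]$, and track the Newton up-set through each piece (multiplication/conjugation by $\Gamq$-symbols and $T_\omega(c)$ leaves $S$ unchanged, the net translation $T_\omega(lq/2)$ shifts it by $(l/2,\dots,l/2)$, and units preserve $S$). The paper states these three observations tersely; you supply the explicit bookkeeping and the antichain argument, but the underlying decomposition is identical.
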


\begin{proof}
 Conjugating by $T_\omega(c)$ or a $\Gamq$ symbol has no effect on the
 support of an operator, and multiplication by $T_\omega(lq/2)$ shifts the
 support by $(l/2,\dots,l/2)$. The~remaining operation consists of left-
 and right-multiplication by units in~$k(X)[[T_1,\dots,T_n]]$, and this
 clearly preserves the set~$S$.
\end{proof}

For the vanishing conditions, we~have the following. Note that we only
consider half of the vanishing conditions, as~in the formal setting it only
makes sense to consider conditions on the leading few terms. Also, for~convenience, we~only consider the generic case.

\begin{prop}
 Over the generic point $(E,x,c,q,t)\in {\cal E}^4$ and for integers $r$,
 $l$, consider the space of formal difference operators $D$ mapping
 $P_0(2c;q,t)$ to~$P_0(2c+lq;q,t)$ such that $D \prod_i T_i^{r/2}$
 involves only integer shifts. If~the left coefficients of both $D$ and
 \begin{gather*}
 \prod_{1\le i\le n} \Gamq(x+rq/2-z_i)^{-1}
 D
 \prod_{1\le i\le n} \Gamq(x-z_i)
 \end{gather*}
 are holomorphic along all hypersurfaces of the form $z_i\in x+rq/2+kq$,
 $k\in \Z$, then the left coefficients of both $\hat{D}$ and
 \begin{gather*}
 \prod_{1\le i\le n} \Gamq(x-c+(r-l)q/2-z_i)^{-1}
 \hat{D}
 \prod_{1\le i\le n} \Gamq(x-c-z_i)
 \end{gather*}
 are holomorphic along all hyperplanes of the form $z_i\in
 x-c+(r-l)q/2+kq$, $k\in \Z$.
\end{prop}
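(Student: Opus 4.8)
The plan is to reduce the statement to the explicit relations already established between the Fourier transform and the first-order operators $D^{(n)}_q(\cdots;t)$, together with the continuity of the Fourier transform with respect to the nonarchimedean metric. The key observation is that the vanishing conditions in the statement are local conditions on the leading few coefficients of a formal difference operator (the coefficients of $\prod_i T_i^{k_i}$ with $\sum_i k_i$ close to the top), and these coefficients transform in a controlled way, essentially by left- and right-multiplication by units together with a shift by $(l/2,\dots,l/2)$ (see the previous Proposition on supports). So the first step is to rephrase the hypothesis ``the left coefficients of $D$ and of $\prod_i\Gamq(x+rq/2-z_i)^{-1} D \prod_i\Gamq(x-z_i)$ are holomorphic along $z_i\in x+rq/2+kq$'' in the form that a certain gauged operator $\prod_i\Gamq(x-z_i)^{-1}$-conjugate of $D$ has holomorphic leading coefficients there, and similarly for the conclusion, so that everything becomes a statement about how the Fourier transform interacts with gauging by products of $\Gamq$ symbols of the specific form $\prod_{1\le i\le n}\Gamq(a\pm z_i)$.

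The second step is to use the explicit relations $\prod_i\vartheta(z_i\pm u)\mapsto D^{(n)}_q(q/2-c\pm u;t)$ and $D^{(n)}_q(c+q/2\pm u;t)\mapsto \prod_i\vartheta(z_i\pm u)$, and more generally Proposition \ref{prop:fourier_braid_relation}, to compute the Fourier transform of an operator of the form $\prod_i\Gamq(x-z_i)^{-1}\, D'\,\prod_i\Gamq(x-z_i)$ where $D'$ has holomorphic leading coefficients. Conjugating by an infinite product $\prod_i\Gamq(x-z_i)$ can be expanded via the functional equation $\Gamq(q+z)=\vartheta(z)\Gamq(z)$ into a limit of conjugations by finite products of $\vartheta$ symbols times $\Gamq$ symbols with shifted arguments; each finite conjugation is exactly the kind of operation appearing in the elementary transformation symmetry, and the Fourier transform intertwines the $x$-conjugation on one side with the $(x-c)$-conjugation on the other (the shift by $c$ being forced by the leading-term formula for ${\cal D}^{(n)}_{q,t}(c)$ and the identities relating ${\cal D}^{(n)}_{q,t}(c)$ with $D^{(n)}_q(c\pm u;t)$ and $D^{(n)}_q(-c\pm u;t)$). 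Carrying this out identifies the Fourier transform of the $x$-gauged $D$ with the $(x-c)$-gauged $\hat D$ up to a shift of $r$ by $l$ coming from the $T_\omega(lq/2)$ factor, which is precisely the $r\mapsto r-l$ appearing in the hypersurfaces $z_i\in x-c+(r-l)q/2+kq$.

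The third step is to pass from the gauged statement back to the statement about raw leading coefficients: since the leading term of the Fourier transform of any operator is computed explicitly (it is just $D$'s leading term times the ratio of leading terms of ${\cal D}^{(n)}_{q,t}(c')$ and ${\cal D}^{(n)}_{q,t}(-c)$, a product of $\Gamq$ symbols with no $z$-independent poles since $q$ is non-torsion over the generic point), holomorphy of the gauged leading coefficients along the relevant hypersurfaces is equivalent to holomorphy of the raw leading coefficients, exactly as in the hypothesis. Finally, one notes that it suffices to verify the claim generically on a dense set, and the whole family in question is flat over the base, so the conclusion extends. The main obstacle I expect is the bookkeeping in the second step: one must keep careful track of how the (formal, infinite) $\Gamq$-conjugation distributes across the product ${\cal D}^{(n)}_{q,t}(c')\,D\,{\cal D}^{(n)}_{q,t}(-c)$, and in particular that the extra factors produced when one commutes ${\cal D}^{(n)}_{q,t}(\pm c)$ past the finite truncations of the $\Gamq$ products are themselves holomorphic along the target hypersurfaces — this is where Proposition \ref{prop:fourier_braid_relation} and the identity $D^{(n)}_q(c+q/2\pm u;t){\cal D}^{(n)}_{q,t}(c+q/2)=\cdots$ do the real work, and where one must be careful that no spurious poles in $z_i$ are introduced by the gauging.
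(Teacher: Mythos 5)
Your proposal takes a substantially more roundabout route than the paper's argument and, as written, contains real gaps.

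The paper's proof is short and direct. It just writes out the definition $\hat D = {\cal D}^{(n)}_{q,t}(c+lq/2)\, D\, {\cal D}^{(n)}_{q,t}(-c)$ and observes that (i) the two outer factors have left coefficients with poles along only countably many hypersurfaces of the form $z_i = y$, so since $x$ is generic they are holomorphic along the target orbit $z_i\in x-c+(r-l)q/2+kq$, and $D$ is holomorphic there after the shift built into the outer factors, giving the raw statement; and (ii) for the gauged statement, one inserts the cancelling pairs $\prod_i\Gamq(x+rq/2-z_i)^{\mp 1}$ and $\prod_i\Gamq(x-z_i)^{\mp 1}$ to rewrite the gauged $\hat D$ as the product of a gauged version of ${\cal D}^{(n)}_{q,t}(c+lq/2)$, the gauged $D$ (holomorphic by hypothesis), and a gauged version of ${\cal D}^{(n)}_{q,t}(-c)$, and then notes that those gaugings only multiply the coefficients of the outer factors by holomorphic theta functions. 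No elementary transformations, no braid identity, no limits.

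Concretely, the problems with your plan are: First, your Step 2 relies on the elementary transformation symmetry, but that symmetry is a statement about $C_n$-symmetric gauging by $\prod_i\Gamq(a\pm z_i)$; the gauging factor $\prod_i\Gamq(x-z_i)$ that actually appears in the proposition is not $C_n$-symmetric, so the elementary transformation machinery does not apply without further justification. Second, "expanding the $\Gamq$-conjugation via the functional equation into a limit of finite conjugations" is not a legitimate operation in the formal completion — the $\Gamq$ factor is a formal gauging symbol, not a convergent limit, and the claim that the Fourier transform of the $x$-gauged $D$ equals the $(x-c)$-gauged $\hat D$ is exactly the content to be proven, not an intermediate step that can be verified by partial truncation. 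Third, your Step 3 only discusses the leading term; the proposition is about all left coefficients, and the coefficients of ${\cal D}^{(n)}_{q,t}(\pm c)$ below the leading one must also be shown holomorphic along the target hypersurfaces, which is precisely where the genericity of $x$ (not merely non-torsion $q$) enters. Fourth, the appeal to flatness and a dense set at the end is vacuous: the statement is already posed at the generic point of ${\cal E}^4$, so there is nothing to spread out. The crucial ingredient you are missing is simply that genericity of $x$ lets one avoid the countable union of polar hypersurfaces of the ${\cal D}^{(n)}_{q,t}(\cdot)$ factors, making the direct factorization argument work without any need to compute Fourier transforms of gauged operators.
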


\begin{proof}
 By definition, we~have
 \begin{gather*}
 \hat{D} = {\cal D}^{(n)}_{q,t}(c+lq/2) D {\cal D}^{(n)}_{q,t}(-c).
 \end{gather*}
 There are only countably many hypersurfaces of the form $z_i=y$ on which
 some left coefficient of~${\cal D}^{(n)}_{q,t}(-c)$ and ${\cal
 D}^{(n)}_{q,t}(c+lq/2)$ has a pole (including poles of the meromorphic
 sections of equivariant gerbes corresponding to the leading
 coefficients). Since $x$ is generic, it follows that all three factors
 on the right are holomorphic on the given orbits of hypersurfaces, and
 thus so is the product.

 The claim for
 \begin{gather*}
 \prod_{1\le i\le n} \Gamq(x-c+(r-l)q/2-z_i)^{-1} \hat{D} \prod_{1\le i\le
 n} \Gamq(x-c-z_i)
 \end{gather*}
 analogously reduces to checking possible poles of
 \begin{gather*}
 \prod_{1\le i\le n} \Gamq(x-z_i)^{-1}
 {\cal D}^{(n)}_{q,t}(-c)
 \prod_{1\le i\le n} \Gamq(x-c-z_i)
 \end{gather*}
 and
 \begin{gather*}
 \prod_{1\le i\le n} \Gamq(x-c+(r-l)q/2-z_i)^{-1}
 {\cal D}^{(n)}_{q,t}(c+lq/2)
 \prod_{1\le i\le n} \Gamq(x+rq/2-z_i).
 \end{gather*}
 In each case, the gauging only multiplies the coefficients by holomorphic
 theta functions, so cannot introduce any new poles.
\end{proof}

To get an analogue for the Selberg adjoint, there is a mild difficulty
coming from the fact that the Selberg adjoint was only defined for~$C_n$-symmetric operators, and the obvious extension does not make sense
for formal operators. Luckily, the formal adjoint with respect to the
inner product
\begin{gather*}
\int f(z_1,\dots,z_n)g(-z_1,\dots,-z_n)
\prod_{1\le i<j\le n} \frac{\Gamq(t\pm z_i\pm z_j)} {\Gamq(\pm z_i\pm z_j)}
\prod_{1\le i\le n} \frac{1} {\Gamq(\pm 2z_i)}\,{\rm d}T
\end{gather*}
{\em does} make sense for formal difference operators and formal gauging
operators and agrees with the Selberg adjoint in the $C_n$-symmetric case.
Using this as the definition of the Selberg adjoint for formal operators
gives the following, which immediately implies consistency of the Fourier
transform with the Selberg adjoint.

\begin{prop}
 The operators ${\cal D}^{(n)}_{q,t}(c)$ are self-adjoint under the Selberg
 adjoint.
\end{prop}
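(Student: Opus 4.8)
The plan is to first verify self-adjointness at the Zariski-dense set of arguments $c=-dq/2$, $d\in\Z$, where ${\cal D}^{(n)}_{q,t}(c)=D^{(n)}_d(q,t)$ is an honest (non-formal) global section lying in a one-dimensional $\Hom$-space, and then to spread the identity to all $c$ by a density argument. As a preliminary I would record that the formal Selberg adjoint of a formal gauging operator is again a formal gauging operator (the transpose of a single monomial, the sign reflection $\vec z\mapsto-\vec z$, and conjugation by the interaction weight $w(\vec z)=\prod_{i<j}\Gamq(t\pm z_i\pm z_j)/\Gamq(\pm z_i\pm z_j)\cdot\prod_i\Gamq(\pm 2z_i)^{-1}$ each preserve the shape $\Gamma\,T_\omega(c)\,D$), so that the statement is meaningful, and that on honest $C_n$-symmetric operators it agrees with the Selberg adjoint of the preceding Proposition. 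Hence, fibrewise, it induces on the sheaf subcategories the contravariant isomorphisms $\mathcal S^{(n)}_{\eta';q,t}\cong\mathcal S^{(n)}_{-\eta';q,t}$ of that Proposition, acting on objects by $v\mapsto-v$; in particular it takes a global section of a fibre of $\mathcal S^{(n)}_{0;q,t}(df,ds)$ to a global section of $\mathcal S^{(n)}_{0;q,t}(-ds,-df)$.

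\textbf{Reduction at $c=-dq/2$.} Now $D^{(n)}_d(q,t)$ lies in $\mathcal S^{(n)}_{0;q,t}(df,ds)$ and, via the translation symmetry $v\mapsto v-d(s+f)$ (which fixes all parameters, in particular $\eta'=0$), also in $\mathcal S^{(n)}_{0;q,t}(-ds,-df)$; and on each fibre this last space of global sections is one-dimensional, spanned by $D^{(n)}_d(q,t)$, since (exactly as in the construction of $D^{(n)}_d$) every Bruhat subquotient below the top has indefinite polarization and the top subquotient is trivial. Therefore, locally on the base, $D^{(n)}_d(q,t)^{*}=\lambda_d\,D^{(n)}_d(q,t)$ for a unit $\lambda_d$.

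\textbf{Pinning down $\lambda_d=1$.} I would compute the coefficient of the extremal monomial $\prod_i T_i^{-d/2}$ on both sides. With respect to the (shift-invariant) formal Selberg density the transpose of a monomial operator $c(\vec z)\prod_i T_i^{k_i}$ is again a monomial operator, and reflection and conjugation by $w$ keep it so; there is no mixing of monomials, and one gets that the $\prod_i T_i^{-d/2}$-coefficient of $D^{(n)}_d(q,t)^{*}$ equals $\dfrac{w(\vec z-\tfrac{dq}{2}\mathbf 1)}{w(\vec z)}\;c\bigl(-\vec z+\tfrac{dq}{2}\mathbf 1\bigr)$, where $c(\vec z)=\prod_{i\le j}\Gamq(-z_i-z_j)/\Gamq(dq-z_i-z_j)\cdot\prod_{i<j}\Gamq(t+dq-z_i-z_j)/\Gamq(t-z_i-z_j)$ is the known leading coefficient of $D^{(n)}_d(q,t)$. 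It remains to check the identity
\[
\frac{w(\vec z-\tfrac{dq}{2}\mathbf 1)}{w(\vec z)}\;c\bigl(-\vec z+\tfrac{dq}{2}\mathbf 1\bigr)=c(\vec z),
\]
which is a direct computation using the functional equation $\Gamq(q+z)=\vartheta(z)\Gamq(z)$ (equivalently the reflection and multiplication identities for the $\Gamq$ symbol), and which forces $\lambda_d=1$; thus $D^{(n)}_d(q,t)$ is Selberg self-adjoint for every $d\in\Z$.

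\textbf{Spreading to all $c$.} By Proposition \ref{prop:fourier_xform_exists} each coefficient $[{\cal D}^{(n)}_{q,t}(c)]_{\vec k}$ is a meromorphic section on ${\cal E}^{n+3}$ (holomorphic off the torsion-$q$ locus), and by the transpose formula the corresponding coefficient of ${\cal D}^{(n)}_{q,t}(c)^{*}$ is $w(\vec z+q\vec k)\,w(\vec z)^{-1}\,[{\cal D}^{(n)}_{q,t}(c)]_{\vec k}(-\vec z-q\vec k)$, again meromorphic in the parameters. These two agree on the set $c\in\tfrac12 q\Z$ by the previous step, and the union of the multisections $c=\tfrac{d}{2}q$ is Zariski dense in the relevant ${\cal E}^2$, so they agree identically; hence ${\cal D}^{(n)}_{q,t}(c)^{*}={\cal D}^{(n)}_{q,t}(c)$ for all $c$. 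The main obstacle is the leading-coefficient identity of the third paragraph — it is the only place where a genuine (if routine) elliptic-hypergeometric computation is needed, and it is what upgrades $\lambda_d=\pm1$ to $\lambda_d=1$; everything else is formal or follows from results already established.
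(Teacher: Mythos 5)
Your proof is correct and takes a genuinely different route from the paper's. The paper observes that the Selberg adjoint of ${\cal D}^{(n)}_{q,t}(c)$ has the same leading term, and then shows it satisfies the same characterizing first-order recurrence: applying contravariance to the relation $D^{(n)}_q(-c\pm u;t){\cal D}^{(n)}_{q,t}(c+q/2)={\cal D}^{(n)}_{q,t}(c)\prod_i\vartheta(z_i\pm u)$ reduces the claim to the single identity $D^{(n)}_q(c\pm u;t)^{\ad_t}=D^{(n)}_q(q/2-c\pm u;t)$, after which uniqueness of the solution to the recurrence (from the proof of Proposition~\ref{prop:fourier_xform_exists}) finishes. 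You instead specialize to the Zariski-dense locus $c=-dq/2$, $d\ge 0$, where ${\cal D}^{(n)}_{q,t}(c)=D^{(n)}_d(q,t)$ is an honest section of a one-dimensional $\Hom$ sheaf that is preserved (up to $v\mapsto -v$ and translation by $d(s+f)$, both of which fix $\eta'=0$) by the adjoint; the scalar of proportionality is pinned to $1$ by a single leading-coefficient identity; and density spreads the identity to all $c$. Both approaches are sound. The paper's verification lives at the level of first-order operators (a full $\{\pm 1\}^n$ sum), and the paper's own remark actually notes that this sub-verification can itself be done by your leading-coefficient-plus-$\Hom$-dimension trick; your argument shifts the uniqueness burden from the recurrence to fibrewise $\Hom$-dimensions plus density. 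One small slip: in the spreading step you write $c\in\tfrac12 q\Z$, but the honest-section/one-dimensionality ingredients are only in force for $c=-dq/2$ with $d\ge 0$. This is harmless, both because that subset is already Zariski dense and because self-adjointness for $d<0$ follows formally from $d>0$ since the adjoint of an inverse is the inverse of the adjoint.
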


\begin{proof}
 The Selberg adjoint has the correct leading term, so it suffices to show
 that
 \begin{gather*}
 {\cal D}^{(n)}_{q,t}(c)^{\ad_t}
 D^{(n)}_q(c\pm u;t)
 =
 \prod_{1\le i\le n} \vartheta(z_i\pm u)
 {\cal D}^{(n)}_{q,t}(c-q/2)^{\ad_t}.
 \end{gather*}
 Since $\prod_{1\le i\le n} \vartheta(z_i\pm u)$ is self-adjoint,
 this reduces to checking that
 \begin{gather*}
 D^{(n)}_q(c\pm u;t)^{\ad_t}
 =
 D^{(n)}_q(q/2-c\pm u;t),
 \end{gather*}
 an easy verification.
\end{proof}

\begin{rem}
 In fact, one has in general
 \begin{gather*}
 D^{(n)}_q(u_0,\dots,u_{2d'+1};t)^{\ad_t}
 =
 D^{(n)}_q(q/2-u_0,\dots,q/2-u_{2d'+1};t),
 \end{gather*}
 either by a direct computation or by using the fact that both are
 sections of the same $\Hom$ sheaf, and with the same leading coefficient.
\end{rem}

We mention a couple of further consequences of the proof. First, the fact
that the Fourier transform is determined by its values where we know it
explicitly has consequences in the analytic setting. Indeed, in
\cite{quadxforms}, a kernel function~${\cal
 K}^{(n)}_c(\vec{x};\vec{y};q,t)$ was constructed, with the property that
for~$D$ of degree $s$, $f$, or $s+f$, one had
\begin{gather*}
D_{\vec{x}}{\cal K}^{(n)}_c(\vec{x};\vec{y};q,t)
=
\hat{D}_{\vec{y}}^{\ad_t}{\cal K}^{(n)}_c(\vec{x};\vec{y};q,t).
\end{gather*}
It follows from the above proof and continuity that this holds for {\em
 all} operators which are global sections of the appropriate $\Hom$
spaces. In~particular, this applies to operators of degree
$2s+2f-e_1-\cdots-e_8$ (i.e., the van Diejen/Komori--Hikami integrable
system considered in Theorem~\ref{thm:vandiejen}), showing that the
associated formal integral operator takes eigenvalue equations of this form
to~eigenvalue equations of the same form.

Also, we~have already mentioned the consequence that the resulting
deformations of~$\Sym^n(\P^2)$ only depend (geometrically) on~$E$, $q$, and
$t$. It is worth mentioning the specific form that the given isomorphisms
take. The~isomorphism $\Gamma{\cal S}^{\prime(n)}_{x_0;q,t}|_{\Z(s+f)}
\cong \Gamma{\cal S}^{\prime(n)}_{x_1;q,t}|_{\Z(s+f)}$ is given (up to a~choice of element $(x_0-x_1)/3$) by gauging by the operator
\begin{gather*}
G_d(x_0,x_1):=
\prod_{1\le i\le n} \Gamq\bigg({-}\frac{(d-1)q}{2}-\frac{2x_0+x_1}{3}\pm z_i\bigg)
{\cal D}^{(n)}_{q,t}((x_0-x_1)/3)
\\ \phantom{G_d(x_0,x_1):=}
{}\times\prod_{1\le i\le n} \Gamq\bigg({-}\frac{(d-1)q}{2}-\frac{x_0+2x_1}{3}\pm z_i\bigg)^{-1}
\end{gather*}
in degree $d$; i.e., the action on morphisms from $d_1(s+f)$ to~$d_2(s+f)$
is given by
\begin{gather*}
D \mapsto G_{d_2}(x_0,x_1) D G_{d_1}(x_0,x_1)^{-1}.
\end{gather*}
In particular, we~see that when $(x_0-x_1)/3$ is $3$-torsion, the resulting
automorphism is still quite nontrivial. In~addition, the translation
symmetry of the category involves changing $x_0$, and thus although one
{\em can} identify it with a graded algebra at the cost of choosing an
element $q/3$, the resulting graded algebra does not actually have a
representation in (finite) difference operators. If~we restrict to the
``anticanonical'' model, i.e., to~$\Z(3s+3f)$, then the $\Hom$ space of
degree $3s+3f$ contains the $1$-dimensional subspace of operators of degree
$s$, spanned by
\begin{gather*}
G_0(x_0,x_0-3q/2)
=
\prod_{1\le i\le n} \Gamq(-x_0\pm z_i)
{\cal D}^{(n)}_{q,t}(-q/2)
\prod_{1\le i\le n} \Gamq(-q/2-x_0\pm z_i)^{-1}.
\end{gather*}
If we adjoin the formal inverse of such an operator, then the result in
degree 0 may be identified with a filtered algebra of {\em formal}
difference operators. We can include elements of degree not a~multiple of
3 at the cost of choosing $q/3$ and allowing some $\Gamq$ factors and
shifts by multiples of~$q/3$. Indeed, Proposition~\ref{prop:fourier_braid_relation} tells us (assuming compatible choices
when dividing by $3$) that $G_d(x_1,x_2)G_d(x_0,x_2)=G_d(x_0,x_2)$, and
thus the various isomorphisms between categories with parameter $x_0+kq/2$
are all compatible. It follows that if we compose an operator mapping
$d_1(s+f)$ to~$d_2(s+f)$ with the formal operators giving isomorphisms
$x_0\mapsto x_0+d_1q/2$ and $x_0+d_2q/2\mapsto x_0$, then the result will
be compatible with compositions and will be the same as if we only used the
spaces with~$d_1=0$. Of course, even in the univariate setting, the
resulting algebra is not likely to be easy to describe in any direct
fashion!

One final thing to mention is that our description of first-order operators
in Lemma~\ref{lem:first_order} as well as our description of the operators
${\cal D}^{(n)}_{q,t}(c)$ are both quite well suited to considering
degenerations of the tuple $(E,c,q,t)$. In~light of the fact that
operators of degree $s+f$ are generically very ample, we~can give at least
indirect descriptions of the limiting algebras $\bigcup_d {\cal
 S}^{(n)}_{\eta';q,t}(0,d(s+f))$ by specifying their elements of degree 1,
and understanding the extension to the whole category simply requires
keeping track of the elements of degree $f$ as well.

Taking the limit can be somewhat tricky in general, as~it may be necessary
to gauge by suitable functions before the limit is well-defined. The~simplest approach is to choose a suitable gauge transformation to make the
operators elliptic before taking the limit; this introduces additional
parameters which we can then eliminate by a further limit. Indeed, we~find
that for any operator $D\in \Gamma{\cal S}^{(n)}_{\eta';q,t}(0,ds+d'f)$,
the gauge transformation
\begin{gather*}
\prod_{1\le i\le n}
 \frac{\Gamq(\eta'-dq/2-d'q+(n-1)t+v_0+v_1+v_2\pm z_i)}
 {\Gamq(-dq/2+v_0\pm z_i,-dq/2+v_1\pm z_i,-dq/2+v_2\pm z_i)}
 \\ \hphantom{\prod_{1\le i\le n}}
\times {} D
\prod_{1\le i\le n} \frac{\Gamq(v_0\pm z_i,v_1\pm z_i,v_2\pm z_i)}
 {\Gamq(\eta'+(n-1)t+v_0+v_1+v_2\pm z_i)}
\end{gather*}
is elliptic (for fixed $v_0$, $v_1$, $v_2$). We can then take the limit as
$p\to 0$ and remove the parameters by gauging back by an appropriate
product of~$q$-Pochhammer symbols $(x;q)_\infty:=\prod_{0\le j} (1-q^j x)$.
We obtain a limit $\Gamma{\cal S}^{(n)}_{\eta';q,t;*}(0,s+d'f)$ (with
$q$, $t$, $\eta'$ and $\vec{z}$ in the multiplicative group) consisting of
operators of the form
\begin{gather*}
\sum_{\sigma\in \{\pm 1\}^n}
\prod_{1\le i\le n}
 \frac{z_i^{\sigma_i}f(z_i^{\sigma_i})}
 {1-z_i^{2\sigma_i}}
\prod_{1\le i<j\le n}
 \frac{1-t z_i^{\sigma_i} z_j^{\sigma_j}}
 {1-z_i^{\sigma_i} z_j^{\sigma_j}}
\prod_{1\le i\le n}T_i^{\sigma_i/2},
\end{gather*}
where $f(z)$ is a univariate Laurent polynomial with exponents ranging from
$1-d'$ to~$d'-1$ satisfying the condition~$
[z^{d'-1}]f(z) = q\eta'[z^{1-d'}]f(z)
$
on its extreme coefficients. The~Fourier transformation has a
corresponding limit, represented by operators
$
{\cal D}^{(n)}_{q,t:*}(c)
$
satisfying
\begin{gather}
{\cal D}^{(n)}_{q,t:*}(c)\prod_{1\le i\le n}
 \frac{((v cd) z_i^{\pm 1};q)_\infty}
 {((v/cd) z_i^{\pm 1};q)_\infty}
{\cal D}^{(n)}_{q,t:*}(d)\nonumber
\\ \phantom{{\cal D}^{(n)}_{q,t:*}}
{}=
\!\prod_{1\le i\le n}
 \frac{((v d) z_i^{\pm 1};q)_\infty}
 {((v/d) z_i^{\pm 1};q)_\infty}
{\cal D}^{(n)}_{q,t:*}(cd)
\prod_{1\le i\le n}
 \frac{((v c) z_i^{\pm 1};q)_\infty}
 {((v/c) z_i^{\pm 1};q)_\infty}
\label{eq:braid_for_*_case}
\end{gather}
with leading term
\begin{gather*}
\prod_{1\le i\le n}
 \frac{\theta_{q^{1/2}}(-1/z_i)}
 {\theta_{q^{1/2}}(-1/cz_i)}
\prod_{1\le i\le j\le n}
 \frac{(1/c^2z_iz_j;q)_\infty}
 {(1/z_iz_j;q)_\infty}
\prod_{1\le i<j\le n}
 \frac{(t/z_iz_j;q)_\infty}
 {(t/c^2z_iz_j;q)_\infty}
T_\omega(c)
\end{gather*}
and special case
\begin{gather*}
 {\cal D}^{(n)}_{q,t:*}(q^{-1/2})
 =
 \sum_{\sigma\in \{\pm 1\}^n}
\prod_{1\le i\le n}
 \frac{z_i^{\sigma_i}}
 {1-z_i^{2\sigma_i}}
\prod_{1\le i<j\le n}
 \frac{1-t z_i^{\sigma_i} z_j^{\sigma_j}}
 {1-z_i^{\sigma_i} z_j^{\sigma_j}}
\prod_{1\le i\le n}T_i^{\sigma_i/2}.
\end{gather*}
Note that taking $v=0$ in \eqref{eq:braid_for_*_case} gives $ {\cal
 D}^{(n)}_{q,t:*}(c) {\cal D}^{(n)}_{q,t:*}(d) = {\cal
 D}^{(n)}_{q,t:*}(cd)$, so that we may interpret ${\cal
 D}^{(n)}_{q,t:*}(c)$ as a fractional power of the operator for~$c=q^{-1/2}$, which in turn is a lowering operator appearing in the theory
of Koornwinder polynomials. We can further extend this limit to the case
of blowups in sufficiently general position (i.e., with all $x_i$ finite)
by imposing the appropriate conditions on the leading coefficients; this is
how we tested the $n=r=2$ case of Conjecture~\ref{conj:torsion_van_diejen}.

Another noteworthy limit involves gauging by a translation so as to break
the $z\mapsto 1/z$ symmetry, and taking a limit in the resulting
parameter. This gives a Fourier transform represented by operators
satisfying
\begin{gather*}
{\cal D}^{(n)}_{q,t:**}(c)
\!\!\prod_{1\le i\le n}\!\!
 \frac{((v cd) z_i;q)_\infty}
 {((v/cd) z_i;q)_\infty}\,
{\cal D}^{(n)}_{q,t:**}(d)
=
\!\!\prod_{1\le i\le n}\!\!
 \frac{((v d) z_i;q)_\infty}
 {((v/d) z_i;q)_\infty}\,
{\cal D}^{(n)}_{q,t:**}(cd)
\!\!\prod_{1\le i\le n}\!\!
 \frac{((v c) z_i;q)_\infty}
 {((v/c) z_i;q)_\infty}
\end{gather*}
with leading term
\begin{gather*}
\prod_{1\le i\le n}
 \frac{\theta_{q^{1/2}}(-1/z_i)}
 {\theta_{q^{1/2}}(-1/cz_i)}
T_\omega(c)
\end{gather*}
and special case
\begin{gather*}
{\cal D}^{(n)}_{q,t:**}(q^{-1/2})
=
\sum_{I\subset \{1,\dots,n\}}
(-1)^{|I|} t^{|I|(|I|-1)/2}
\prod_{1\le i\le n} z_i^{-1}
\prod_{i\in I,j\notin I}
 \frac{z_j-t z_i}
 {z_j-z_i}
\prod_{i\in I}T_i^{1/2}
\prod_{i\notin I}T_i^{-1/2},
\end{gather*}
a.k.a.\ the lowering operator for~$\GL_n$-type Macdonald polynomials. The~$\Hom$ spaces of deg\-ree~$s+d'f$ have similar, if somewhat more complicated
forms, obtained by gauging the $q^{-1/2}$ case of the Fourier transform
operator by suitable products of Pochhammer symbols. We omit the details,
except to note that the results again look like operators arising in
Macdonald theory.

There are some other symmetry breaking limits (e.g., the image of~$\eta'\to
0$ under the Fourier transform); we omit the details. Of course, such
symmetry-breaking limits have unfortunate effects on the Bruhat ordering;
for instance, the ``leading term'' must now incorporate all $\sim 2^n$
$S_n$-orbits corresponding to the given $C_n$-orbit of weights. As~a
result, in more degenerate cases, it can be difficult to figure out the
correct way to compactify the algebra. This can be fixed in some cases by
realizing that the $S_n$-symmetric operator is actually a shadow of a
$C_n$-symmetric operator acting on a power of a reducible curve (a
hyperelliptic curve of {\em arithmetic} genus 1). Similarly, there are
differential limits living on a power of the nonreduced curve $y^2=0$.

\section{Deformations of Hilbert schemes}\label{section9}

One potential issue with studying the category of sheaves on~${\cal S}$,
even if one can resolve the uncertainty in the definition, is that the
commutative projective scheme it deforms, the symmetric power of a surface,
is singular. This suggests that one should look for an analogous
deformation in which the symmetric power is replaced by its natural
resolution, namely the Hilbert scheme of points.

The Picard group of the $n$-point Hilbert scheme of~$X$ is still discrete
(and isomorphic to the N\'eron--Severi group), with~$\Pic(\Hilb^n(X))\cong
\Pic(X)\oplus \Z$ for~$n>1$. The~copy of~$\Pic(X)$ in~$\Pic(\Hilb^n(X))$
is the pullback of~$\Pic(\Sym^n(X))\cong \Pic(X)$, and since the map
$\Hilb^n(X)\to \Sym^n(X)$ is a birational morphism, the global sections of
any such bundle will be the same on~either $2n$-fold. Thus if we deform
the category of line bundles on~$\Hilb^n(X)$, the subcategory corresponding
to~$\Sym^n(X)$ should be precisely ${\cal S}$. As~we saw for the symmetric
power, twisting by line bundles cannot be expected to give an endofunctor,
and thus we expect that twisting by the additional generator of~$\Pic(\Hilb^n(X))$ should also change the parameters.

There is, of course, only one remaining parameter we could reasonably
shift, and thus we~should consider what happens as we change $t$. Consider
for the moment the $\P^1\times \P^1$ case. The~sections of~${\cal S}$ are
(for generic $t$) cut out by the condition that both ${\cal D}$ and
\begin{gather*}
 \prod_{1\le i<j\le n} \Gamq(t\pm z_i\pm z_j)
 {\cal D}
 \prod_{1\le i<j\le n} \Gamq(t\pm z_i\pm z_j)^{-1}
\end{gather*}
are sections of the corresponding parameter-free category. Incorporating a
shift in~$t$ here is then mostly straightforward: we should be considering
operators such that both ${\cal D}$ and
\begin{gather*}
 \prod_{1\le i<j\le n} \Gamq(t+a_2q\pm z_i\pm z_j)
 {\cal D}
 \prod_{1\le i<j\le n} \Gamq(t+a_1q\pm z_i\pm z_j)^{-1}
\end{gather*}
are holomorphic for some integers $a_1$ and $a_2$. We should also note the
effect on twisting, which boils down to noting that shifting $t$ acts on
the standard polarization as $P_d(\eta';q,t+aq) = P_{d-(n-1)a}(\eta';q,t)$.

With this in mind (and incorporating the conditions corresponding to~$x_1,\dots,x_m$), define
a~product of~$\Gamq$ symbols for any element of~$\Pic(\Hilb^n(X))=\Z\langle \delta,s,f,e_1,\dots,e_m\rangle$ as follows:
\begin{gather*}
\Delta_{x_1,\dots,x_m;q,t}(a\delta+ds+d'f-r_1e_1-\cdots-r_me_m;\vec{z})
\\[.5ex] \qquad
{}=
\prod_{1\le i<j\le n} \Gamq(t+aq\pm z_i\pm z_j)
\prod_{\substack{1\le i\le n\\1\le j\le m}} \Gamq((r_j+(1-d)/2)q-x_j\pm z_i).
\end{gather*}
Also, let $\pi\colon\Pic(\Hilb^n(X))\to \Z\langle s,f\rangle$ be given by
$\pi(s)=s$, $\pi(f)=f$, $\pi(e_i)=0$ and $\pi(\delta)=-(n-1)f$. Then (for~$n>1$) ${\cal S}^{(n)}_{\eta',x_1,\dots,x_m;q,t}(v,w)$ is defined to be the
subsheaf of~${\cal S}^{(n)}_{\eta';q}(\pi(v),\pi(w))$ (the spherical
algebra of the construction corresponding to the master DAHA) consisting
for generic parameters of the operators ${\cal D}$ such that
$\Delta_{x_1,\dots,x_m;q,t}(w,\vec{z}) {\cal D}
\Delta_{x_1,\dots,x_m;q,t}(v,\vec{z})^{-1}$ is also a local section of the
correspondingly twisted master spherical algebra. It is easy to see that
when~$v$ and $w$ have the same coefficient of~$\delta$, this imposes the
same vanishing conditions as the original definition of~${\cal S}$, and
thus this indeed extends the category to the larger group of objects. The~elementary transformation symmetries automatically extend (using the
obvious definition for~${\cal S}^{\prime(n)}$), as~does the $t\mapsto q-t$
symmetry, which now acts nontrivially on the objects (negating~$\delta$);
for the Fourier transform, see below.

We should note in passing that this is closely related to a construction
valid for general affine Weyl groups. We have already mentioned that
translating part of the system of parameters of such a DAHA or spherical
algebra by $q$ gives a Morita equivalent algebra, and thus in particular a
natural bimodule. More generally, we~may translate any point of any $T_i$
by any multiple of~$q$ without affecting generic Morita equivalence, and
the associated bimodule can be constructed as the tensor product of the
bimodules for the atomic shifts. As~a result, we~may construct a sheaf
category in which the objects are the lattice of different ways we may
shift the parameters (i.e., with rank equal to the total number of
components of the independent $T_i$), and each $\Hom$ bimodule is the
relevant Morita equivalence. Each of the original bimodules embeds in~${\cal H}_{\tW,W;\gamma}(X)$ (or ${\cal H}_{\tW;\gamma}(X)$ in the DAHA
version) as operators satisfying appropriate vanishing conditions and thus
the construction has a natural extension to general parameters. (Note that
this construction only allows us to shift those parameters which are
associated to roots of~$W$ itself, so to obtain the $C^\vee C_n$ version in
this way we need to associate the $\vec{x}$ parameters to~$s_n$ rather than
$s_0$.) This may be thought of as a generalization (to the elliptic level
and arbitrary numbers of parameters) of the construction of
\cite{GordonI/StaffordJT:2005}.

As in the symmetric power case, we~would like to show that the above
generic conditions extend to give a strongly flat family of categories
(i.e., not only flat but such that each fiber injects in the algebra of
meromorphic difference operators). The~first step of the construction
carries over: replacing the $t$-dependent factor of~$\Delta$ by the
appropriate nonsymmetric version gives us a natural corresponding extension
of~${\cal H}$. Unfortunately, the argument of Theorem~\ref{thm:CCn_DAHA_strongly_flat} fails in this case to show flatness of the
extended ${\cal H}$. The~key difference here is that the roots involved in
the conditions corresponding to~$x_1,\dots,x_m$ form a root system of
type $A_1^n$. If~we make the analogous construction for any other root
system, then we will find that there are some leading terms for which the
only vanishing condition involves a non-simple root, and such that any of
the terms covered by that leading term do not have the corresponding
vanishing condition. In~particular, since the $t$-dependent vanishing
conditions live on a root system of type~$D_n$, this argument fails, with
the notable exception of~$n=2$.

\begin{prop}
 The sheaf category ${\cal S}^{(2)}_{\eta',x_1,\dots,x_m;q,t}$ defined
 above is locally free, and the map from any $\Hom$ bimodule to the sheaf
 bimodule of meromorphic operators is injective on fibers.
\end{prop}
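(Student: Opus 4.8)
The strategy is to transpose the proof of Theorem~\ref{thm:CCn_DAHA_strongly_flat} to the present situation. As remarked just above, the nonsymmetric version of the $t$-dependent factor of $\Delta$ produces an extended DAHA ${\cal H}^{(2)}_{\eta',x_1,\dots,x_m;q,t}$ whose spherical subquotient is the sheaf category ${\cal S}^{(2)}_{\eta',x_1,\dots,x_m;q,t}$ of the statement; since each Bruhat subquotient of ${\cal H}^{(2)}$ is a line bundle on a quotient ${\cal E}^2/W_I$ by a parabolic subgroup of $C_2$, and such quotients are flat over ${\cal E}^2/C_2=\P^2$ by miracle flatness (the corollary in the $C^\vee C_n$ section), passing from ${\cal H}^{(2)}$ to ${\cal S}^{(2)}$ preserves local freeness of the relevant direct images and injectivity on fibers. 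It therefore suffices to show that for every object $v\in\Z\langle\delta,s,f,e_1,\dots,e_m\rangle$ the upper bound on each Bruhat subquotient of ${\cal H}^{(2)}_{\eta',x_1,\dots,x_m;q,t}(0,v)$ coming from the vanishing conditions is \emph{saturated}; granting that, the sheaf is an iterated extension of the expected line bundles (hence locally free), and the naive bound sheaf and its saturation coincide, so no torsion is lost on specialization --- exactly as in the corollary to Theorem~\ref{thm:CCn_DAHA_strongly_flat}.

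To prove saturation I would run the same induction. First I would reduce the $x$-parameters: the elementary-transformation symmetries extend verbatim to the larger object group (using the obvious definitions of ${\cal S}^{\prime(2)}$ and of its DAHA analogue), so one may push every $r_i$ attached to an $x_j$ down to $\le 0$ and then delete it, reducing to $m=0$ for the $x$-part. Next I would reduce the $\delta$-coefficient: the symmetry $t\mapsto q-t$ negates $\delta$, and the translation symmetry of the family of categories lets one bring the $\delta$-coefficient of $v$ into a fixed range, so one may assume $v$ has nonnegative $\delta$-coefficient. One then inducts along a chain of objects, peeling off at each step either the element $2s+2f-e_1-\cdots-e_m$ (handled exactly as in Theorem~\ref{thm:CCn_DAHA_strongly_flat}) or a ``shift $t$ by $q$'' element used to lower the $\delta$-coefficient. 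The crucial point --- and the place where $n=2$ is special --- is that for $n=2$ the $t$-dependent vanishing conditions, including the ``layered'' ones coming from a nonzero $\delta$, are attached to the root system $D_2=A_1\times A_1$, which, like the $A_1^2$ configuration of the $x$-parameter divisors and like the orbit of $s_0$ in the endpoint-parameter case, is a sum of mutually orthogonal rank-$1$ systems. Consequently the relevant step $\Hom$ bimodule from $v-C$ to $v$ is, up to the usual twist, a Bruhat interval in an honest elliptic DAHA --- for the $\delta$-step, one of $\tilde C_2$-type carrying an additional divisor on the $D_2$-orbit --- and in particular contains the global section $1$. That section gives an inclusion ${\cal H}^{(2)}(0,v-C)\hookrightarrow{\cal H}^{(2)}(0,v)$ identifying the former with a Bruhat sub-ideal; the subquotients strictly below the top are then saturated by the inductive hypothesis, while the top subquotients (those dominant weights $\lambda(w)$ with $\lambda(w)_1$ maximal, for which $s_0w<w$) are saturated by comparing leading-coefficient divisors, just as in the symmetric-power case.

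The main obstacle is precisely this step, which genuinely fails for $n\ge 3$: there the $t$-conditions live on $D_n$, which is not a sum of orthogonal $A_1$'s, so there exist dominant weights whose only associated vanishing condition involves a non-simple root of $D_n$ while the weights they dominate lack it, and the product-of-Bruhat-intervals argument cannot be run. For $n=2$ this obstruction disappears, but the honest work in the write-up is to pin down the correct step elements $C$ decreasing the $\delta$-coefficient, to verify that the associated $\Hom$ bimodules are indeed Bruhat intervals (or external tensors of two such, one per orthogonal $\tilde A_1$) in genuine elliptic DAHAs and carry the section $1$ with the expected leading coefficient, and to check that the flatness of Bruhat subquotients over $\P^2$ supplied by miracle flatness lets one conclude local freeness and fiberwise injectivity as in the corollaries to Theorem~\ref{thm:CCn_DAHA_strongly_flat}. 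The remaining divisor bookkeeping is routine and parallels both the symmetric-power construction and the proof of Theorem~\ref{thm:CCn_DAHA_strongly_flat}.
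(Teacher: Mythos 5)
Your proposal correctly identifies the structural fact that makes $n=2$ work --- the $t$-conditions live on $D_2=A_1\times A_1$, a sum of mutually orthogonal rank-$1$ systems, so every active vanishing condition at an alcove can be attached to a \emph{simple} reflection --- and this is precisely the insight the paper uses (and elaborates in the alcove/square picture of the remark following the proof). But the route to saturation is genuinely different. You model the argument on Theorem~\ref{thm:CCn_DAHA_strongly_flat}: an induction on objects that peels off step classes $C$ (either $2s+2f-e_1-\cdots-e_m$ or a $\delta$-lowering class), using the global section $1$ of the step $\Hom$ bimodule to identify a Bruhat sub-ideal and then comparing leading-coefficient divisors at the top. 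The paper's proof is instead a direct induction on Bruhat length inside a \emph{single} $\Hom$ bimodule $\Hom(0,v)$: after using elementary transformations and the $t\mapsto q-t$/$\delta$-negation symmetry to make the vanishing condition at the identity trivial, one observes that for any $w$ with a nontrivial vanishing condition somewhere in $[\le w]$, such a condition is active at $w$ and attached to a simple reflection $s$ with $sw<w$, so $\Hom|_{[\le w]}$ is the image of the rank-$1$ Hecke algebra for $s$ (with the still-active parameters) composed with $\Hom|_{[\le sw]}$. The difference is not cosmetic: the paper's induction never needs to produce a well-behaved $\delta$-step bimodule. Your route requires verifying that the $\delta$-step $\Hom$ bimodule (you would need $C=\delta+(n-1)f$ rather than $\delta$ itself to even contain $1$, since $\pi(\delta)=-(n-1)f$) really is a Bruhat interval in a genuine $\tilde C_2$ DAHA carrying an additional parameter on the $D_2$-orbit with the section $1$ and the expected leading coefficient --- exactly your flagged ``honest work'' --- and carrying that out would likely reproduce the direct Bruhat-length induction anyway when handling the top alcoves of each peel. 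So the proposal is sound in spirit and hits the key point, but the paper's Bruhat-length induction is more economical and avoids the step-bimodule verification entirely.
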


\begin{proof}
 We argue essentially as in the symmetric power case. We first note that
 by using elementary transformations and the analogue for~$t$, that we may
 arrange for the vanishing condition on the identity to be trivial. Then
 for any $w$ in the appropriate Bruhat interval, if there is any
 nontrivial vanishing condition associated to~$t$ or $x_i$ in the
 interval, then there is such a condition at $w$ and associated to a
 simple reflection that makes $w$ shorter. Thus we may obtain the
 restriction of the given $\Hom$ sheaf to~$[\le w]$ by composing the
 appropriate rank~1 Hecke algebra (using only those parameter still
 active) to the restriction to~$[\le sw]$.
\end{proof}

\begin{rem}
 For each simple root of~$W$ and each associated parameter $t$ or $x_i$,
 the existence of a~nontrivial vanishing condition of~$c_w$ along
 $t+\alpha$ or $x_i+\alpha$ is equivalent to the alcove associated to~$w$
 being on the appropriate side of a hyperplane orthogonal to~$\alpha$. For~$n=2$, there is thus a square (itself a Bruhat order ideal) such that
 any alcove in the square has no vanishing condition at $t$ (so we may
 ignore the parameter) and similarly for each $x_i$. For~$n>2$, there is
 a corresponding shape in which no {\em simple} root (or its negative) has
 a vanishing condition for~$t$; if one could prove strong flatness for the
 corresponding Bruhat order ideal, this would imply strong flatness in
 general.
\end{rem}

There is, however, one more special case in which we can prove strong
flatness. This has to do with the additional symmetry of the spherical
algebra in the DAHA case. In~particular, we~noted that both the algebra
for~$t$ and the algebra for~$t+q$ could be obtained from the Hecke algebra
via an analogue of the construction of the spherical algebra, and that
moreover there were analogous constructions of intertwining bimodules. But
these intertwiners are precisely the $\Hom$ sheaves we want.

\begin{prop}
 Suppose the coefficient of~$\delta$ in~$v$ is $-1$, $0$, or $1$. Then
 ${\cal S}^{(n)}_{\eta',x_1,\dots,x_m;q,t}(0,v)$ is locally free and
 injects on fibers into the sheaf bimodules of meromorphic operators.
\end{prop}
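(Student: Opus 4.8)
The plan is to reduce the statement, one value of the $\delta$-coefficient at a time, to flatness results already established. When the coefficient of $\delta$ in $v$ is $0$, we have $v=\pi(v)\in\Z\langle s,f,e_1,\dots,e_m\rangle$, and by construction the defining conditions on ${\cal S}^{(n)}_{\eta',x_1,\dots,x_m;q,t}(0,v)$ are exactly those of the symmetric-power category of Section 7, so there is nothing to prove beyond the corollary of Theorem \ref{thm:CCn_DAHA_strongly_flat}. When the coefficient of $\delta$ is $-1$, I would apply the $\delta$-graded extension of the contravariant $t\mapsto q-t$ isomorphism (the extension of Corollary \ref{cor:spherical_t_symmetry}, equivalently of Proposition \ref{prop:t_q-t_symmetry}, which negates $\delta$), or alternatively the Selberg adjoint, to identify the $\delta=-1$ bimodule with a $\delta=+1$ bimodule for the parameters $(-\eta',-x_1,\dots,-x_m)$. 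Thus everything reduces to the case that the coefficient of $\delta$ in $v$ is $+1$.

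So suppose $v=\delta+ds+d'f-r_1e_1-\cdots-r_me_m$. The key observation is that the defining conditions then present ${\cal S}^{(n)}_{\eta',x_1,\dots,x_m;q,t}(0,v)$, up to twisting by pullbacks of line bundles from $\P^n$ and conjugation by $\Gamq$-symbols, as one of the intertwining ``shift'' bimodules of the $\tilde C_n$ spherical DAHA further cut out by the generic vanishing conditions attached to the $x_j$. Concretely, the $t$-part of $\Delta_{x_1,\dots,x_m;q,t}$ has numerator $\prod_{i<j}\Gamq(t+q\pm z_i\pm z_j)$ over denominator $\prod_{i<j}\Gamq(t\pm z_i\pm z_j)$, whose associated cocycle in Cartier divisors is precisely the one produced by the reindexing $\sum_{\alpha\in\Phi^+(\tilde C_n)\cup\Phi^-(C_n)}T_\alpha=\sum_{\alpha\in\Phi^-(\tilde C_n)\setminus\Phi^-(C_n)}T'_\alpha$ used in the affine case to pass between the spherical algebra at $t$ and at $t+q$. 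By Propositions \ref{prop:split_system_of_parameters} and \ref{prop:decoupling_of_parameters}, imposing this condition together with the $x_j$-conditions (which are attached to the orbit of $s_0$) exhibits the $\Hom$ sheaf as the relevant instance of the strongly flat intertwiner $\sHom_{{\cal H}_{\tilde C_n;\vec T}}(\Ind^{\tilde C_n;\vec T}_{C_n}\sO_X(-\textstyle\sum_{\alpha\in\Phi^+(C_n)}T_\alpha),\Ind^{\tilde C_n;\vec T}_{C_n}\sO_X)$ of the affine analogue of Proposition \ref{prop:shift_operators_finite}, with the $x_j$-conditions layered on. Since the root kernel of $C_n$ on ${\cal E}^n$ is trivial, all the diagonalizability hypotheses of those affine-case results hold automatically.

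It remains to verify that the $x_j$-conditions can be imposed on this intertwiner without destroying saturation of the Bruhat subquotient bounds; this is the real work and is the direct analogue of the proof of Theorem \ref{thm:CCn_DAHA_strongly_flat}. The Bruhat filtration of the intertwiner is indexed by minimal representatives of $C_n\backslash\tilde C_n/C_n$, i.e.\ by dominant weights in dominance order, exactly as for the spherical DAHA, and the subquotient line bundles are descents from quotients of ${\cal E}^n$ by parabolic subgroups, so the combinatorial input is identical to the symmetric-power situation. I would first use elementary-transformation symmetries, and the fact that a negative $r_j$ may be set to $0$, to reduce to $0\le r_j$ and to the fundamental chamber for $W(D_m)$; the cases $d\in\{0,1\}$ collapse to the symmetric-power case. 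For $d>1$ I would argue by induction, using the global section $1$ of the Bruhat interval attached to $(d/2-1,\dots,d/2-1)$ to identify ${\cal S}^{(n)}_{\dots}(0,v-C_m)$, with $C_m=2s+2f-e_1-\cdots-e_m$, as a sub-bimodule, and then checking that for each minimal representative $w$ with $\lambda(w)_1=d/2$ one has $s_0w<w$ and the change in the leading-coefficient line bundle from $\lambda(w)$ to $\lambda(s_0w)$ equals the leading-coefficient divisor of $s_0$ in the relevant rank-$1$ Hecke algebra, which forces the bound to be tight. Local freeness and injectivity on fibers then follow in the usual way from the resulting filtration with the predicted invertible subquotients. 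The main obstacle I anticipate is not conceptual but the bookkeeping of the identification in the second paragraph — matching the $\Delta$-conjugation description with the affine intertwiner, with the correct twists by $\sO_{\P^n}(\cdot)$ and the correct system of parameters $\vec T$.
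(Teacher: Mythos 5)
Your approach matches the paper's. The paper essentially states the result without proof, justifying it by the preceding observation that the $\delta$-coefficient $\pm 1$ $\Hom$ sheaves are exactly the intertwining bimodules between the spherical DAHAs at $t$ and $t\pm q$ (with the $x_j$ parameters carried along, as these live on the $s_0$-orbit and are unaffected by the shift over $\Phi^+(C_n)$); for $\delta$-coefficient $0$ it is the corollary to Theorem~\ref{thm:CCn_DAHA_strongly_flat}. Your detailed reconstruction — reducing $\delta=-1$ to $\delta=+1$, identifying the $\delta=+1$ $\Hom$ sheaf with the shift intertwiner for the anticanonical degrees, and then transplanting the elementary-transformation-plus-$d$-induction argument from the proof of Theorem~\ref{thm:CCn_DAHA_strongly_flat} to cover general degrees — is sound and is the honest content behind the paper's one-sentence justification. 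One terminological slip worth flagging: Proposition~\ref{prop:t_q-t_symmetry} is a \emph{covariant} (gauge) isomorphism, not contravariant; Corollary~\ref{cor:spherical_t_symmetry} (equivalently the Selberg adjoint) is the contravariant version. Both negate $\delta$ in the extended category and thus both serve the $\delta=-1$ reduction, so the slip is harmless, but the two should not be described as the same map. Also, the $d\in\{0,1\}$ base cases are not literally symmetric-power $\Hom$ sheaves (they still have $\delta$-coefficient $1$); they are instead direct images of line bundles on $\P^n$ because the $t$-shift acts trivially on multiplication operators and first-order operators form a single double coset, which is the correct (and trivially strongly flat) description.
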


\begin{rem}
 Actually, there are two more cases in which we can prove strong flatness. If~$a\ge d$, then the $t$-dependent vanishing conditions become vacuous,
 and thus the claim reduces to strong flatness in the version without a
 $t$ parameter, where the usual argument works. By the $t\mapsto q-t$
 symmetry, this implies strong flatness when~$a\le -d$.
\end{rem}

Not only is this further evidence that this construction is well-behaved,
but this also has seve\-ral useful consequences. The~most significant of
these is that the construction of the intertwining bimodules allows us to
understand the case $t=0$. In~particular, let $v\in \Z\langle
s,f,e_1,\dots,e_m\rangle$, and consider the $\Hom$ sheaf
\begin{gather*}
{\cal S}^{(n)}_{\eta',x_1,\dots,x_m;q,0}(0,\delta+v).
\end{gather*}
By the infinite analogue of Proposition~\ref{prop:shift_operators_finite},
these operators are all of the form $\sum_{w\in C_n} w {\cal D}$, where
${\cal D}$ is a section of the left twist by $\sO_X(D_{w_{C_n}})$ of the
corresponding spherical {\em module}. Since $t=0$, the spherical module is
just the tensor product of~$n$ copies of the univariate spherical module
$M^{(1)}_{\eta',x_1,\dots,x_m;q}(v)$, and thus the $\Hom$ sheaf is spanned
by elements of the form
\begin{gather*}
\bigg(\sum_{w\in C_n} w\bigg)
\frac{1}{\prod_{1\le i\le n} \vartheta(2z_i)
 \prod_{1\le i<j\le n} \vartheta(z_i\pm z_j)}
D_1(z_1)\cdots D_n(z_n)
\end{gather*}
with each $D_i$ in the univariate spherical module. Summing over the
normal subgroup of order~$2^n$ turns each $D_i$ into a general element of
the corresponding spherical algebra, giving
\begin{gather*}
\sum_{\pi\in S_n} \pi
\frac{1}{\prod_{1\le i<j\le n} \vartheta(z_i\pm z_j)}
D'_1(z_1)\cdots D'_n(z_n)
=
\frac{1}{\prod_{1\le i<j\le n} \vartheta(z_i\pm z_j)}
\det_{1\le i,j\le n} D'_i(z_j)
\end{gather*}
with each $D'_i$ in the univariate spherical algebra. (Note that since
these are univariate operators in distinct variables, the determinant makes
sense.) We thus conclude the following.

\begin{prop}
 For any $v\in \langle s,f,e_1,\dots,e_m\rangle$, there is a natural
 isomorphism of sheaf bimo\-dules
 \begin{gather*}
 {\cal S}^{(n)}_{\eta',x_1,\dots,x_m;q,0}(0,\delta+v)
 \cong
 \wedge^n {\cal S}^{(1)}_{\eta',x_1,\dots,x_m;q,0}(0,v).
 \end{gather*}
\end{prop}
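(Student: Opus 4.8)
The entire computation is essentially the one carried out in the paragraph preceding the statement; the plan is to organize it into a clean reduction and then to confirm that the resulting identification is an isomorphism of sheaf bimodules rather than merely of generic fibers. We may assume $n\ge 2$, since for $n=1$ there is no generator $\delta$. Both sides are coherent sheaf bimodules on $\P^n\times \P^n$ — the left one by the preceding strong-flatness proposition, applicable because the $\delta$-degree of $\delta+v$ is $1$, and the right one by the exterior-power construction on sheaf bimodules over $\Sym^n(\P^1)=\P^n$. Hence it suffices to exhibit a natural map between them which is bijective on sections over every open set; being a morphism of sheaves it will then automatically be an isomorphism, and the identifications of sections with difference operators on each side show this isomorphism respects the bimodule structure and is natural.

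The reduction has two ingredients. (i) Increasing the coefficient of $\delta$ by one shifts the $t$-parameter on the $D_n$-type roots by $q$, so ${\cal S}^{(n)}_{\eta',x_1,\dots,x_m;q,0}(0,\delta+v)$ is one of the spherical intertwining bimodules of the affine section, and by (the infinite analogue of) Proposition~\ref{prop:shift_operators_finite} it is the image of the averaging map $m\mapsto \sum_{w\in C_n} w\,m$ applied to $\sO_X(\sum_{\alpha\in \Phi^+(C_n)}[X^{r_\alpha}])\otimes M$, where $M$ is the spherical module over ${\cal H}_{C_n;\vec{T}_0}(E^n)$ governing $\Hom(0,v)$ and the twisting divisor is cut out on the smooth locus by $\prod_{1\le i\le n}\vartheta(2z_i)\prod_{1\le i<j\le n}\vartheta(z_i\pm z_j)$. (ii) At $t=0$ the divisors $T_i$ attached to the $A_{n-1}$-roots are exactly the reflection hypersurfaces $[z_i=z_{i+1}]$, so the corresponding rank $1$ algebras are the twisted group algebras $\sO_{E^n}[\langle s_i\rangle]$ and ${\cal H}_{C_n;\vec{T}_0}(E^n)$ is generated by $\sO_{E^n}[S_n]$ together with the $n$ conjugate master rank $1$ Hecke algebras of the sign changes $z_i\mapsto -z_i$; consequently $M$ is the $S_n$-cross of the outer tensor product over $i$ of the univariate spherical modules $M^{(1)}_{\eta',x_1,\dots,x_m;q}(v)$ in the variables $z_i$ — the structure underlying Proposition~\ref{prop:spherical_as_symmetric_power}.

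Granting (i) and (ii), a section of the left-hand side is $\bigl(\sum_{w\in C_n} w\bigr)\bigl(\prod_i\vartheta(2z_i)\prod_{i<j}\vartheta(z_i\pm z_j)\bigr)^{-1}D_1(z_1)\cdots D_n(z_n)$ with each $D_i$ in the univariate spherical module; averaging over the order $2^n$ normal subgroup of $C_n$ turns each $D_i$ into an arbitrary element $D'_i\in {\cal S}^{(1)}_{\eta',x_1,\dots,x_m;q,0}(0,v)$ (the rank $1$ passage from the spherical module to the spherical algebra, which carries along the $\eta'$- and $x_j$-conditions unchanged since these are conditions on each univariate factor), and averaging the remainder over $S_n$ uses that $\prod_{i<j}\vartheta(z_i+z_j)$ is $S_n$-invariant while $\prod_{i<j}\vartheta(z_i-z_j)$ is $S_n$-alternating, giving
\[
\sum_{\pi\in S_n}\pi\,\frac{D'_1(z_1)\cdots D'_n(z_n)}{\prod_{1\le i<j\le n}\vartheta(z_i\pm z_j)}
=\frac{1}{\prod_{1\le i<j\le n}\vartheta(z_i\pm z_j)}\det_{1\le i,j\le n}D'_i(z_j),
\]
the determinant being well defined because univariate operators in distinct variables commute. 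Conversely, every such determinant is a section of the left-hand side: it is $C_n$-invariant, its poles are bounded by reflection hypersurfaces of $C_n$ (so it lies in the master spherical algebra), conjugation by $\Delta$ multiplies its coefficients by a square of the discriminant and hence leaves it holomorphic (using $\Gamq(q+z)=\vartheta(z)\Gamq(z)$), and it satisfies the $x_j$-conditions because each $D'_i$ does. The span of these determinants, as the $D'_i$ range over ${\cal S}^{(1)}_{\eta',x_1,\dots,x_m;q,0}(0,v)$, is by definition the $n$-th exterior power of that sheaf bimodule — the $\sgn$-isotypic part of the $n$-fold outer tensor power for the diagonal $S_n$-action, descended to $\Sym^n(\P^1)=\P^n$ with the discriminant implementing the descent — so the map so constructed is the asserted isomorphism, and it is natural since every step is canonical and $\sO_{\P^n}$-bilinear.

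I expect the only real friction to be bookkeeping: matching the twist $\sO_X(\sum_{\alpha\in \Phi^+(C_n)}[X^{r_\alpha}])$ against the line bundles built into the definition of ${\cal S}^{(n)}$, confirming that Proposition~\ref{prop:shift_operators_finite} applies verbatim with $W=\tilde{C}_n$, $W_I=W_J=C_n$ and the $t$-translated system of parameters, and verifying term by term that the $\eta'$- and $x_j$-vanishing conditions on the multivariate side restrict to exactly the conditions cutting out ${\cal S}^{(1)}_{\eta',x_1,\dots,x_m;q,0}(0,v)$ inside the master univariate spherical algebra. None of this is deep, but it is where care is needed; the existence, coherence and strong flatness of all the objects involved are already in hand.
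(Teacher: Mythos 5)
Your proposal is correct and takes essentially the same route as the paper: the paper's proof is precisely the paragraph preceding the statement, and you follow it step by step — express the $\Hom$ sheaf via (the infinite analogue of) Proposition~\ref{prop:shift_operators_finite} as the image of the $C_n$-symmetrizer on the twisted spherical module, use $t=0$ to factor that module into a tensor power of univariate spherical modules, sum over the order-$2^n$ normal subgroup, and then over $S_n$ to obtain the determinant formula. The extra remarks you add (explicitly checking $n\ge 2$, recording the converse inclusion, and noting that all the existence/flatness inputs are already available) are reasonable elaboration of what the paper leaves implicit rather than a different argument.
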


If $q=0$ and $v$ corresponds to an acyclic line bundle on the corresponding
rational projective surface $X_m$, then the exterior power is also acyclic
and we deduce
\begin{gather*}
\Gamma{\cal S}^{(n)}_{\eta',x_1,\dots,x_m;0,0}(0,\delta+v)
\cong
\wedge^n \Gamma(X_m;\sO_{X_m}(v)).
\end{gather*}
Any element of either side (which differ only in the product of theta
functions corresponding to the $D_n$ roots) gives a rational function on~$X_m^n$, and the description as an exterior power tells us that the
corresponding map to projective space is in fact a map to a~Grassmannian:
each point in~$X_m$ determines a linear functional on~$\Gamma(X_m;\sO_{X_m}(v))$, and the map takes $X_m^n$ to the
$n$-dimensional span of the corresponding linear functionals.

This is clearly invariant under permutations of the $n$-tuple, so gives a
rational map on the symmetric power, and thus on the Hilbert scheme. Since
we may also view this as the Grassmannian of~$(h^0(\sO_{X_m}(v))-n)$-dimensional subspaces of~$\Gamma(X_m;\sO_{X_m}(v))$, we~see that the map on the Hilbert scheme takes
a given ideal sheaf $I$ to the subspace $\Gamma(X_m;I(v))$, assuming this
has the correct dimension. In~particular, for~any sufficiently ample $v$,
$I(v)$ will always be acyclic and globally generated, and thus we obtain an
embedding of the Hilbert scheme in the Grassmannian. From known facts
about acyclicity and Hilbert polynomials of line bundles on the Hilbert
scheme~\cite{EllingsrudG/GoettscheL/LehnM:2001}, the Pl\"ucker coordinates
in fact form all global sections of the given line bundle on the Hilbert
scheme. (This line bundle has the form $-\Delta/2+v$, where $\Delta$ is
the discriminant, i.e., the divisor where the corresponding $n$-point
subscheme is nonreduced.)

We thus conclude that for~$q=t=0$ and $v$ sufficiently (relatively) ample
on~$X_m$, the (left) vector bundle ${\cal
 S}^{(n)}_{\eta',x_1,\dots,x_m;0,0}(0,\delta+v)$ on~$\P^n$ may be
identified with the direct image of~$\sO_{X_m}(v)$ under the map
$\Hilb^n(X_m)\to \Sym^n(\P^1)\cong \P^n$. Moreover, since we obtained this
identification by using the actual values of the ``operators'' in~${\cal
 S}$, these elements satisfy precisely the same relations as their
counterparts on the Hilbert scheme.

Note that although this indicates that there is a strong relation between
our construction and the Hilbert scheme, it is not quite enough to tell us
that ${\cal S}$ is a true deformation of the Hilbert scheme: the problem is
that there could conceivably be additional global sections involving larger
multiples of~$\delta$. For~$n=2$, we~can resolve this issue: it is
straightforward (if tedious) to use the Bruhat filtration to compute the
Euler characteristics of~$\Hom$ sheaves of~${\cal S}$, and we find in
particular that when $a(-\Delta/2)+v$ is acyclic over $\P^n$ on the Hilbert
scheme, the Euler characteristic of the $\Hom$ sheaf agrees with the Euler
characteristic of the corresponding line bundle on the Hilbert scheme.
Since we've shown an isomorphism for a large class of very ample divisors,
it follows that there is a cone in which the direct image of the line
bundle on the Hilbert scheme injects in the corresponding $\Hom$ sheaf, and
must therefore be isomorphic. We~thus conclude that (in a somewhat vague
sense) ${\cal S}^{(2)}$ is indeed the desired deformation of the Hilbert
scheme. (Note that this argument does not apply to the obvious category
associated to~$\Hilb^2\big(\P^2\big)$ in the absence of additional acyclicity
results.)

Unfortunately, the calculation of Euler characteristics of~$\Hom$ sheaves
is not only tedious but subject to combinatorial explosion: the
contribution from each subquotient depends in a~nontrivial and apparently
nonuniform way on the corresponding parabolic subgroup, and thus the
general computation requires computing a~subsum for each parabolic
subgroup, a total of~$2^n$ individual sums. Moreover, the condition under
which the divisor on the Hilbert scheme is acyclic involves an upper bound
on~$a$ depending on~$d$, $d'$, etc., and that bound is not preserved if we
subtract $2s+2f-e_1-\cdots-e_m$; as a result, we~cannot expect to simplify
the calculation by using an induction on~$d$ and $n$. A further issue
arises in the final step of comparing to the Hilbert scheme: the formula of
\cite{EllingsrudG/GoettscheL/LehnM:2001} is not {\em quite} explicit; it
depends on a certain universal power series which is only known in low
degree, and thus the Euler characteristics are only known for small $n$.

Still, it seems reasonable on the above evidence to conjecture that ${\cal
 S}$ indeed provides a flat deformation of~$\Hilb^n(X_m)$ for every $n$.

Assuming that this is true, we~would like to know that the deformation
depends only on the original rational surface rather than on the way in
which we obtained it as a blowup. As~in the symmetric power case,
elementary transformations are easy to deal with, and it is only the
analogue of the Fourier transform that we must consider. There is a clear
guess as to how the Fourier transformation should be defined, at least when~$q$ is not torsion: use the same operators~${\cal D}_{q,t}(c)$ and simply
adjust $t$ as appropriate. This leads to an obvious question, namely
whether this extension of the Fourier transform is still well-defined when~$q$ is torsion. As~in the symmetric power case, we~can first ask for this
to hold for formal difference operators, where it takes the~form
\begin{gather*}
 D
 \mapsto
 {\cal D}^{(n)}_{q,t+aq}(c+bq/2)
 D
 {\cal D}^{(n)}_{q,t}(-c)
\end{gather*}
for~$a,b\in \Z$. Since we know the usual Fourier transform
\begin{gather*}
\hat{D} = {\cal D}^{(n)}_{q,t}(c+(b+(n-1)a)q/2)
 D
 {\cal D}^{(n)}_{q,t}(-c)
\end{gather*}
is well-defined, we~may factor this as
\begin{gather*}
 {\cal D}^{(n)}_{q,t+aq}(c+bq/2)
 {\cal D}^{(n)}_{q,t}(-c-(b+(n-1)a)q/2)
 \hat{D},
\end{gather*}
and thus reduce to showing that the formal operator
\begin{gather*}
 {\cal D}^{(n)}_{q,t+aq}(c+bq/2)
 {\cal D}^{(n)}_{q,t}(-c-(b-(n-1)a)q/2)
\end{gather*}
remains holomorphic when~$q$ is torsion. This then easily reduces to the
analogous statement for
\begin{gather*}
 {\cal D}^{(n)}_{q,t+q}(-c-(n-1)q/2)
 {\cal D}^{(n)}_{q,t}(c).
\end{gather*}
Indeed, for~$a>0$, we~can factor the operator in question into~$a$
operators of the given form, while for~$a<0$ we have a similar
factorization of the inverse, and the leading term is clearly nonzero.

\begin{prop}
 The formal operator
 \begin{gather*}
 {\cal D}^{(n)}_{q,t+q}(-c-(n-1)q/2)
 {\cal D}^{(n)}_{q,t}(c)
 \end{gather*}
 is a global section of~${\cal S}^{(n)}_{2c;q,t}(0,\delta+(n-1)s)$.
\end{prop}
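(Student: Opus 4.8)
The plan is to exhibit
$G:={\cal D}^{(n)}_{q,t+q}(-c-(n-1)q/2)\,{\cal D}^{(n)}_{q,t}(c)$
as a section of the target sheaf by combining a leading‑term computation (showing $G$ is genuinely a finitely supported difference operator), a direct verification of membership on a Zariski‑dense locus, and a holomorphy argument at torsion $q$ modelled on the proof that the ordinary Fourier transform is holomorphic. First I would do the bookkeeping. Since the coefficient of $\delta$ in $\delta+(n-1)s$ is $1$, the sheaf ${\cal S}^{(n)}_{2c;q,t}(0,\delta+(n-1)s)$ is locally free over the base and injects on fibres into meromorphic difference operators; it is cut out inside the strongly flat master spherical sheaf ${\cal S}^{(n)}_{2c;q}(0,(n-1)(s-f))$ by the requirement that $G$ and its conjugate $\bigl(\prod_{i<j}\Gamq(t+q\pm z_i\pm z_j)\bigr)\,G\,\bigl(\prod_{i<j}\Gamq(t\pm z_i\pm z_j)\bigr)^{-1}$ both lie in appropriate twists of the master sheaf — a closed condition. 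Using $P_d(\eta';q,t+q)=P_{d-(n-1)}(\eta';q,t)$ and the value $c'=-c-(n-1)q/2$ one checks that the codomain polarization of ${\cal D}^{(n)}_{q,t}(c)$ equals the domain polarization of ${\cal D}^{(n)}_{q,t+q}(c')$, so that $G$ is a well‑defined formal difference operator whose $\pi$‑image is $(n-1)(s-f)$ and whose total shift is $\prod_i T_i^{-(n-1)/2}$, i.e. of exactly the right formal degree.

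Next I would compute the leading term. By Proposition~\ref{prop:fourier_xform_exists} the leading terms of the two factors are explicit products of $\Gamq$ symbols times $T_\omega(c)$, $T_\omega(c')$; multiplying them out and telescoping with $\Gamq(z+q)=\vartheta(z)\Gamq(z)$ one finds that all $\Gamq$ symbols cancel, leaving a genuine finite $\vartheta$‑ratio times $T_\omega(c+c')=\prod_i T_i^{-(n-1)/2}$, which agrees up to base‑dependent units with the leading coefficient of the appropriate Bruhat subquotient of ${\cal S}^{(n)}_{2c;q,t}(0,\delta+(n-1)s)$. Moreover $G$ satisfies the same triangular system $G\cdot D^{(n)}_q(c\pm u;t)|_{u=z_i}=0$ ($1\le i\le n$) that was used in Proposition~\ref{prop:fourier_xform_exists} to pin down ${\cal D}^{(n)}_{q,t}(c)$ — immediate, since the right‑hand factor already annihilates these — and together with the genuine leading term this forces $G$ to have finitely supported coefficients with shifts bounded as needed.

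For membership itself I would reduce to the Zariski‑dense set of parameters with $c$ a half‑integer multiple of $q$. On the subloci $c=-dq/2$ with $0\le d\le n-1$ one has ${\cal D}^{(n)}_{q,t}(c)=D^{(n)}_d(q,t)$ and ${\cal D}^{(n)}_{q,t+q}(c')=D^{(n)}_{n-1-d}(q,t+q)$, both genuine global sections of the spherical DAHAs ${\cal S}^{(n)}_{0;q,t}$, ${\cal S}^{(n)}_{0;q,t+q}$; writing $\Gamma_t=\prod_{i<j}\Gamq(t\pm z_i\pm z_j)$, the intersection description (Proposition~\ref{prop:infinite_hecke_as_intersection}) says conjugation by $\Gamma_t$, resp.\ $\Gamma_{t+q}$, preserves the master sheaf on these operators, and since $\Gamma_{t+q}\Gamma_t^{-1}=\prod_{i<j}\vartheta(t\pm z_i\pm z_j)$ is a genuine multiplication operator, the product $\Gamma_{t+q}G\Gamma_t^{-1}=[\Gamma_{t+q}D^{(n)}_{n-1-d}(q,t+q)\Gamma_{t+q}^{-1}]\,[\Gamma_{t+q}\Gamma_t^{-1}]\,[\Gamma_t D^{(n)}_d(q,t)\Gamma_t^{-1}]$ — and $G$ itself — lie in the master sheaf, placing $G$ in ${\cal S}^{(n)}_{2c;q,t}(0,\delta+(n-1)s)$ there. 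The remaining $c=-dq/2$ are dealt with by the operator identities preceding Proposition~\ref{prop:fourier_braid_relation}, which rewrite $G$ in terms of genuine operators and multiplication operators; alternatively, over the non‑torsion‑$q$ locus $G$ is the formal Hilbert‑scheme Fourier transform of the identity, hence a section there. Since membership is a closed condition, this establishes the statement away from torsion $q$.

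The hard part will be torsion $q$: one must show $G$ acquires no poles along the divisors where $q$ is torsion. Here the coefficient of $G$ along $\prod_i T_i^{k_i}$ has $z$‑independent denominator dividing a fixed $\vartheta(jq)$‑type product (as for any Fourier‑transformed operator), so by Hartogs it suffices to check at the generic point of each such divisor, where I would run the argument of the holomorphy theorem for the ordinary Fourier transform: use the operator identities of this section and the braid relation of Proposition~\ref{prop:fourier_braid_relation} to relate $G$ to a topological generating set of operators whose Fourier transforms (now with $t$ replaced by $t+q$) are computed explicitly and are manifestly holomorphic, invoke continuity of the transform in the nonarchimedean metric, and handle multiplication operators by approximation by $C_n$‑invariant functions and the degenerate stratum $q=q/2=0$ by the Poisson‑bracket trick. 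Once $G$ is known holomorphic everywhere it lies in the master spherical sheaf (holomorphy together with the residue conditions, which persist by continuity because the target sheaf is strongly flat), and the closed conjugation condition then places it in ${\cal S}^{(n)}_{2c;q,t}(0,\delta+(n-1)s)$, completing the proof.
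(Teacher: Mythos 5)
Your proposal takes a genuinely different route from the paper, and it has a real gap at a crucial point: the claim that the leading‑term identification together with the triangular recurrence $G\cdot D^{(n)}_q(c\pm u;t)|_{u=z_i}=0$ "forces $G$ to have finitely supported coefficients with shifts bounded as needed." This does not follow. The recurrence (solved downward in dominance order from the leading term) determines each successive coefficient uniquely, but gives no mechanism forcing the coefficients to eventually vanish; a priori $G$ could have an infinite tail. You would further need to check that the meromorphic coefficients have no poles along torsion‑$q$ divisors, and your sketch for that — running the holomorphy theorem's argument and invoking Proposition~\ref{prop:fourier_braid_relation} — is not obviously adapted to a product of ${\cal D}$'s with \emph{different} $t$ parameters ($t$ and $t+q$): the braid relation is stated for a fixed $t$, and it is not clear how to reduce the mixed‑$t$ product to the generators whose transforms were computed explicitly.

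The paper avoids both difficulties by running the argument in the opposite direction. It first shows that the target sheaf ${\cal S}^{(n)}_{2c;q,t}(0,\delta+(n-1)s)$ is a line bundle which (via the $t=0$ exterior‑power description) has Euler characteristic $1$ and is acyclic away from a finite bad locus not meeting torsion $q$, so there is a genuine nonzero section $D(c)$, automatically finitely supported and holomorphic. It then derives a two‑sided recurrence relating $D(c)$ and $D(c-q/2)$ by playing off the two copies of the $(n+1)$‑dimensional sheaf ${\cal S}^{(n)}_{2c;q,t}(-s,\delta+(n-1)s)$, deduces a factored form of the leading coefficient, pins it down at $c=-(n-1)q/2$ using $D_{n-1}(q,t)$, and finally shows ${\cal D}^{(n)}_{q,t+q}(c+(n-1)q/2)D(c)$ satisfies the same leading‑term normalization and defining recurrence as ${\cal D}^{(n)}_{q,t}(c)$, hence equals it by the uniqueness built into Proposition~\ref{prop:fourier_xform_exists}. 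The identity $D(c)={\cal D}^{(n)}_{q,t+q}(-c-(n-1)q/2){\cal D}^{(n)}_{q,t}(c)$ then drops out, with finite support and holomorphy at torsion $q$ coming for free because $D(c)$ is a section of a vector bundle. To salvage your approach you would need to independently establish that $G$ has bounded support (e.g.\ by exhibiting it as a section of a coherent sheaf on a dense locus and arguing coefficient‑by‑coefficient that the infinitely many "tail" coefficients vanish identically, then separately proving torsion‑$q$ holomorphy of the finitely many remaining ones), at which point you would essentially have reconstructed the coherent‑sheaf argument anyway.
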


\begin{proof}
 The $\Hom$ sheaf ${\cal S}^{(n)}_{2c;q,t}(0,\delta+(n-1)s)$ is a vector
 bundle, and the description as an alternating power for~$t=0$ tells us
 that it has Euler characteristic $1$ and is acyclic away from a finite
 set of hypersurfaces, not containing any component on which $q$ is
 torsion. Similarly, ${\cal S}^{(n)}_{2c;q,t}(-s,\delta+(n-1)s)$ has
 Euler characteristic $n+1$ and is also acyclic on a suitable open subset.
 Moreover, a section of either vector bundle with vanishing leading
 coefficient is 0; by~semicontinuity (and flatness for Bruhat intervals),
 it suffices to check this for~$t=0$, where it again reduces to facts
 about the univariate case. Let $D(c)$ be any nonzero local section of~${\cal S}^{(n)}_{2c;q,t}(0,\delta+(n-1)s)$. Then as $u$ varies,
 \begin{gather*}
 D(c) D^{(n)}_q(c\pm u;t)
 \end{gather*}
 spans an $n+1$-dimensional family of sections of~${\cal
 S}^{(n)}_{2c;q,t}(-s,\delta+(n-1)s)$, which must therefore be
 everything. Applying the same argument on the left and comparing the
 dependence of the leading coefficient on~$u$, we~conclude that
 \begin{gather*}
 D^{(n)}_q((n-1)q/2+c\pm u;t+q)^{-1} D(c) D^{(n)}_q(c\pm u;t)
 \end{gather*}
 is a section of~${\cal S}^{(n)}_{2c;q,t}(-s,\delta+(n-2)s)={\cal
 S}^{(n)}_{2c-q;q,t}(0,\delta+(n-1)s)$ and thus
 \begin{gather*}
 D^{(n)}_q((n-1)q/2+c\pm u;t+q)^{-1} D(c) D^{(n)}_q(c\pm u;t)
 \propto
 D(c-q/2),
 \end{gather*}
 with coefficient independent of~$z_i$ and $u$.

 Now, $D(c)$ has leading term
 \begin{gather*}
 F(\vec{z};c;q,t)
 \prod_{1\le i\le j\le n}
 \frac{1}
 {\vartheta(-z_i-z_j;q)_{n-1}}
 \prod_{1\le i<j\le n}
 \vartheta(q+t-z_i-z_j;q)_{n-2}
 T_\omega(-(n-1)q/2),
 \end{gather*}
 with~$F(\vec{z};c;q,t)$ holomorphic and $S_n$-invariant. The~relation
 between $D(c)$ and $D(c-q/2)$ gives a weak recurrence for the leading
 coefficient, namely that
 \begin{gather*}
 F(z_1-q/2,\dots,z_n-q/2;c-q/2;q,t)
 F(z_1,\dots,z_n;c;q,t)^{-1}
 \end{gather*}
 is independent of~$\vec{z}$, and thus $F$ factors as
 \begin{gather*}
 F(\vec{z};c;q,t) = G(c-\vec{z};q,t) H(c;q,t),
 \end{gather*}
 where we may as well absorb any $c$-dependence of~$H$ into~$D(c;q,t)$
 and thus make the recurrence exact.

 If we specialize $c=-(n-1)q/2$, then we still find that ${\cal
 S}^{(n)}_{-(n-1)q;q,t}(0,\delta+(n-1)s)$ is generically 1-dimensional
 by reference to the $t=0$ case. Since the operator $D^{(n)}_{q,t}(n-1)$
 satisfies all of the requisite vanishing conditions, we~conclude that
 $D(-(n-1)q/2)$ is proportional to~$D^{(n)}_{q,t}(n-1)$, and thus that
 \begin{gather*}
 G(-(n-1)q/2-\vec{z};q,t) \propto \prod_{1\le i<j\le n} \vartheta(t-z_i-z_j),
 \end{gather*}
 so that (after rescaling) $D(c)$ has leading term
 \begin{gather*}
 \prod_{1\le i\le j\le n} \frac{1} {\vartheta(-z_i-z_j;q)_{n-1}}
 \prod_{1\le i<j\le n}
 \vartheta(q+t-z_i-z_j;q)_{n-2}
 \vartheta(2c+t+(n-1)q-z_i-z_j)
 \\ \hphantom{\prod_{1\le i\le j\le n} \frac{1} {\vartheta(-z_i-z_j;q)_{n-1}} \prod_{1\le i<j\le n}}
 {}\times T_\omega(-(n-1)q/2).
 \end{gather*}
 We then find that
 \begin{gather*}
 {\cal D}^{(n)}_{q,t+q}(c+(n-1)q/2)D(c)
 \end{gather*}
 has the same leading term and satisfies the same defining recurrence as
 ${\cal D}^{(n)}_{q,t}(c)$, and thus by the proof of Proposition~\ref{prop:fourier_xform_exists},
 \begin{gather*}
 D(c)
 =
 {\cal D}^{(n)}_{q,t+q}(-c-(n-1)q/2){\cal D}^{(n)}_{q,t}(c),
 \end{gather*}
 from which the desired claim immediately follows.
\end{proof}

\begin{rem}
 When $n=2$, this is a first-order operator, and thus one can easily
 deduce an explicit formula from the leading coefficient. This turns out
 to be an operator we have already seen, albeit with a rather odd change of
 parameters:
 ${\cal D}^{(2)}_{q,t+q}(-c-q/2){\cal D}^{(2)}_{q,t}(c)
 =
 D^{(2)}_{q,t+q+2c}(1),$
 a special case of the curious identity
 $
 {\cal D}^{(2)}_{q,2u_1}(u_2-u_3)
 {\cal D}^{(2)}_{q,2u_2}(u_3-u_1)
 {\cal D}^{(2)}_{q,2u_3}(u_1-u_2)
 =
 1$. The~latter can be proved by Zariski closure from the case $u_2=kq/2+u_3$,
 which in turn follows by induction in~$k$ from the known special case.
\end{rem}

The operator considered in the proposition is the Fourier transform of the
global section
\begin{gather*}
1\in \Gamma{\cal S}^{(n)}_{-2c;q,t}(0,\delta+(n-1)f).
\end{gather*}
As mentioned above, the usual $t\mapsto q-t$ symmetry extends to a symmetry
that negates $\delta$, and thus we also find that the Fourier transform of
the global section
\begin{gather*}
\prod_{1\le i<j\le n} \vartheta(t\pm z_i\pm z_j)
\in
\Gamma{\cal S}^{(n)}_{-2c;q,t}(0,-\delta+(n-1)f)
\end{gather*}
is a section of~$\Gamma{\cal S}^{(n)}_{2c;q,t}(0,-\delta+(n-1)s)$.
An easy induction then shows that
\begin{gather*}
 {\cal D}^{(n)}_{q,t}(-c-a(n-1)q/2)
 \prod_{1\le i<j\le n} \vartheta(t\pm z_i\pm z_j;q)_a
 {\cal D}^{(n)}_{q,t+aq}(c)
\end{gather*}
is a section of~${\cal S}^{(n)}_{2c;q,t}(a\delta,a(n-1)s)$ for~$a\ge 0$. For~purposes of the following proof, denote this operator by
$S_-^{(n)}(a;c;q,t)$.

\begin{prop}
 The Fourier transform extends to the Hilbert scheme category.
\end{prop}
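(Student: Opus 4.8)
The plan is to reduce the statement to the formal/operator level, where we have already done the hard work, and then bootstrap from a small set of explicitly known Fourier transforms. First I would define the candidate Fourier transform on the larger category exactly as in the symmetric power case: given a local section $D$ of $\Gamma{\cal S}^{(n)}_{2c,x_1,\dots,x_m;q,t}(v,w)$, interpret it as a holomorphic family of (formal, if necessary) difference operators via the embedding into meromorphic difference operators, and set $\hat D := {\cal D}^{(n)}_{q,t'}(c') D {\cal D}^{(n)}_{q,t}(-c)$, where the source/target polarizations are matched as in the symmetric power construction and $t'$ is the appropriately shifted value of $t$ determined by the coefficient of $\delta$ in $v$ versus $w$. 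The theorem that the Fourier transform is holomorphic on formal difference operators (whose proof made no use of the symmetric power structure, only of the identities satisfied by the ${\cal D}^{(n)}_{q,t}(c)$, holomorphy of $C_n$-invariant theta functions, and the $q=0$ Poisson-bracket argument) applies verbatim. The new content is therefore only that $\hat D$ is again a section of one of the $\Hom$ sheaves of the Hilbert scheme category, i.e.\ that it is an honest ($C_n$-symmetric, finitely supported) difference operator satisfying the defining vanishing conditions in the right degree.

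Next I would establish this for the generating pieces. Since $\Gamma{\cal S}^{(n)}_{2c;q,t}|_{\Z(s+f)}$ is preserved by the Fourier transform by the already-proved symmetric-power result, it suffices to check that the Fourier transform behaves correctly on one $\Hom$ sheaf with coefficient of $\delta$ equal to $\pm 1$ — everything else is obtained by composing such morphisms with morphisms in the $\Z(s+f)$-subcategory, using that the Fourier transform is a covariant involution, exactly as in the chain of corollaries following Proposition~\ref{prop:fourier_xform_exists}. The crucial computation is the Proposition just proved above: the operator ${\cal D}^{(n)}_{q,t+q}(-c-(n-1)q/2){\cal D}^{(n)}_{q,t}(c)$ is the Fourier transform of $1\in \Gamma{\cal S}^{(n)}_{-2c;q,t}(0,\delta+(n-1)f)$ and is a global section of $\Gamma{\cal S}^{(n)}_{2c;q,t}(0,\delta+(n-1)s)$; dually, using the $t\mapsto q-t$ symmetry (which negates $\delta$), the operators $S_-^{(n)}(a;c;q,t)$ are sections of ${\cal S}^{(n)}_{2c;q,t}(a\delta,a(n-1)s)$. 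Together with the explicit Fourier transforms of operators of degree $f$, $s$, $s+f$, these give us enough known transforms: composing the degree-$\delta$ generator with a general element of the $\Z(s+f)$-subcategory, and invoking that the Fourier transform respects leading coefficients (the support proposition) to match up polarizations, shows that $\hat D$ lands in the correct $\Hom$ sheaf for arbitrary $v,w$.

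Finally I would check the three side conditions, each of which has already been verified in the formal setting: triangularity with respect to Bruhat order follows from the support proposition for formal operators (the Fourier transform shifts the support set by a uniform vector and preserves its ``shape''); compatibility with the vanishing conditions attached to $x_1,\dots,x_m$ follows from the corresponding formal proposition, noting that the $t$-dependent factors of $\Delta_{x_1,\dots,x_m;q,t}$ are handled the same way as in the symmetric power case since conjugating ${\cal D}^{(n)}_{q,t}(c)$ by the $t$-part of $\Delta$ only multiplies coefficients by holomorphic theta functions; and compatibility with the Selberg adjoint follows from the self-adjointness of ${\cal D}^{(n)}_{q,t}(c)$ together with the observation that shifting $t$ commutes with the formal Selberg adjoint. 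The main obstacle I anticipate is purely the bookkeeping in the last step of matching polarizations and degrees: the map on objects $a\delta+ds+d'f-\sum r_ie_i\mapsto a\delta+d's+df-\sum r_ie_i$ combined with the shift $t\mapsto t+(d-d')q/(n-1)$-type corrections interacts with $\pi:\Pic(\Hilb^n)\to\Z\langle s,f\rangle$ in a way that requires care to ensure the source and target twisted master spherical algebras line up; but this is the same sort of routine verification as in the symmetric power theorem, and no genuinely new difficulty arises because strong flatness of the relevant $\Hom$ sheaves is only needed in the cases ($|a|\le 1$, or $|a|\ge d$) where it has already been established, and injectivity on fibers — which is what lets us pass freely between sections and families of operators — is preserved by the Fourier transform since it is defined by an operator-level formula.
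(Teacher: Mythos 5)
Your overall strategy — define the transform by the same conjugation by the formal operators ${\cal D}^{(n)}_{q,t}(c)$, use the already-proved holomorphy of the formal Fourier transform, and bootstrap from the explicit transform of $1\in\Gamma{\cal S}^{(n)}(0,\delta+(n-1)f)$ established in the preceding Proposition — matches the paper's starting point. But the middle of your argument has a genuine gap at exactly the step the paper spends the most effort on.

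You claim that, once the $\Z(s+f)$-subcategory is known to be preserved and one $\Hom$ sheaf of $\delta$-degree $\pm 1$ is handled, ``everything else is obtained by composing such morphisms with morphisms in the $\Z(s+f)$-subcategory, using that the Fourier transform is a covariant involution, exactly as in the chain of corollaries following Proposition~\ref{prop:fourier_xform_exists}.'' This does not carry over. In the symmetric-power case, the covariant-involution trick worked because the auxiliary element $g$ was a multiplication operator of degree $(d'-d)f$: such operators can be chosen locally invertible and impose no vanishing conditions, so $g\hat D$ being a section of a bigger sheaf is equivalent to $\hat D$ being a section of the smaller one, and one invertible $g$ suffices to ``cancel''. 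A $\delta$-shifting element such as $S_-^{(n)}(a;c;q,t)$ is {\em not} invertible in this sense; it is a section of a sheaf cut out by nontrivial vanishing conditions, and knowing $S_-^{(n)}\hat D$ is a section of a $\delta$-free $\Hom$ sheaf does {\em not} by itself let you divide out $S_-^{(n)}$ to recover the conditions on $\hat D$. There is also no result in the paper asserting that the Hilbert-scheme category is generated by $\Z(s+f)$ together with the $\delta$-degree-$\pm1$ pieces, and indeed strong flatness is only established for $|a|\le 1$ or $|a|\ge d$, so one cannot reach general $\Hom$ sheaves by span-of-products arguments.

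What the paper actually does at this point is a two-pronged pinch. For $a\ge 0$ it multiplies $D$ by {\em both} of the two prefactors $\prod_{i<j}\vartheta(t\pm z_i\pm z_j;q)_a$ (degree $-a\delta\mapsto a(n-1)f$) and $S_-^{(n)}(a;\cdot)$ (degree $a\delta\mapsto a(n-1)s$), lands both products in the $\delta$-free regime where the Fourier transform is already under control, and hence obtains two separate containments for $\hat D$. The first containment shows that the $t$-gauged version of $\hat D$ is a section of the $t$-free category; the second containment, combined with an explicit analysis of the leading coefficient of $S_-^{(n)}$ and the resulting pole structure of its formal inverse, shows that $\hat D$ is already holomorphic along the divisors $t\pm z_i\pm z_j\in\Z q$ for generic $(c,t)$. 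Only the combination gives the desired membership of $\hat D$ in $\Gamma{\cal S}^{(n)}_{-2c;q,t}(0,a\delta+d's+df)$. This pole analysis — checking that the leading coefficient of $S_-^{(n)}$ does not vanish on the relevant $t$-divisors for generic parameters — is where the genuine content lies, and it is precisely what your proposal characterizes as ``routine bookkeeping.'' You would need to supply an argument of comparable substance to close the gap.
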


\begin{proof}
 As~in the symmetric power case, this reduces to showing that the Fourier
 transform takes $\Gamma{\cal S}^{(n)}_{2c;q,t}(0,a\delta+ds+d'f)$
 to~$\Gamma{\cal S}^{(n)}_{-2c;q,t}(0,a\delta+d's+df)$. By the $t\mapsto
 q-t$ symmetry, it suffices to show this for~$a\ge 0$. Let $D$ be a
 local section of this category on some open subset of parameter space.
 Then
 \begin{gather*}
 \prod_{1\le i<j\le n} \vartheta(t\pm z_i\pm z_j;q)_a
 D \in \Gamma{\cal S}^{(n)}_{2c;q,t}(0,ds+(d'+a(n-1))f),
 \\
 S_-^{(n)}(a;c-(d'-d)q/2;q,t)
 D\in \Gamma{\cal S}^{(n)}_{2c;q,t}(0,(d+a(n-1))s+d'f),
 \end{gather*}
 since each prefactor is itself a section of~$\Gamma{\cal S}^{(n)}$.
 Both of these operators have well-behaved Fourier transforms, and we thus
 conclude that
 \begin{gather*}
 S_-^{(n)}(a;-c+(d'-d)q/2;q,t)\hat{D}
 \in \Gamma{\cal S}^{(n)}_{2c;q,t}(0,(d'+a(n-1))s+df)
 \\
 \prod_{1\le i<j\le n} \vartheta(t\pm z_i\pm z_j;q)_a
 \hat{D} \in \Gamma{\cal S}^{(n)}_{2c;q,t}(0,d's+(d+a(n-1))f).
 \end{gather*}
 We claim that this implies $\hat{D}\in \Gamma{\cal
 S}^{(n)}_{2c;q,t}(0,a\delta+d's+df)$ as required.

 It suffices to show this on an open subset, and thus we may assume that
 $c$ and $t$ are in general position. The~second claim shows that
 \begin{gather*}
 \prod_{1\le i<j\le n} \Gamq(aq+t\pm z_i\pm z_j) \hat{D}
 \prod_{1\le i<j\le n} \Gamq(t\pm z_i\pm z_j)^{-1}
 \\ \phantom{ \prod_{1\le i<j\le n} }
 {}= \prod_{1\le i<j\le n} \Gamq(t\pm z_i\pm z_j)
 \prod_{1\le i<j\le n} \vartheta(t\pm z_i\pm z_j;q)_a
 \hat{D} \prod_{1\le i<j\le n} \Gamq(t\pm z_i\pm z_j)^{-1}
 \end{gather*}
 is a section of the $t$-free version of the category, and thus it remains
 only to show that $\hat{D}$ itself is such a section. In~other words, we~need to show that dividing by $\vartheta(t\pm z_i\pm z_j;q)_a$ does not
 introduce any poles. Now, the operator $S_-^{(n)}(a;-c+(d'-d)q/2;q,t)$
 is holomorphic away from the reflection hypersurfaces, and has leading
 (left) coefficient
 \begin{gather*}
 \frac{\prod_{1\le i<j\le n}
 \vartheta(t\,{-}\,z_i\,{-}\,z_j;q)_{an}
 \vartheta(t\pm (z_i\,{-}\,z_j);q)_a
 \vartheta(2c\,{+}\,t\,{+}\,z_i\,{+}\,z_j\,{-}\,(d'\,{-}\,d\,{+}\,a(n\,{-}\,1))q;q)_a}
 {\prod_{1\le i\le j\le n} \vartheta(\,{-}\,z_i\,{-}\,z_j;q)_{a(n\,{-}\,1)}}.
 \end{gather*}
 If we take its inverse as a formal operator, the only poles that can
 arise are translates of those originally present and translates of the
 divisors on which the leading coefficient vanishes. We~find in
 particular that the inverse remains (since $t$ and $c$ are generic)
 holomorphic on any divisor of the form $t+z_i+z_j=kq$. Since $
 S_-^{(n)}(a;-c+(d'-d)q/2;q,t)\hat{D}$ is holomorphic away from the
 reflection hypersurfaces, it follows that $\hat{D}$ itself is holomorphic
 on the divisors $t+z_i+z_j=kq$, and thus by symmetry on any divisor $t\pm
 z_i\pm z_j=kq$. But this is precisely what we needed to~show.
\end{proof}

\begin{rem}
 As~in the symmetric power case, this tells us that the subcategory
 corresponding to~$\P^2$ is indeed (up to fppf local isomorphism)
 independent of~$c$.
\end{rem}

When $q=0$, the category is equal to its center (taking the natural
extension of the definition used in the symmetric power case), and thus we
expect to obtain a family of commutative deformations of the Hilbert scheme
as $t$ varies. (We similarly expect the center for~$q$ torsion to be the
pullback of this family through a suitable base change; this does not {\em
 quite} follow from the usual arguments, since those depended on strong
flatness for the spherical algebra.) Such a family was constructed in
\cite{noncomm1} (see also~\cite{NevinsTA/StaffordJT:2007} for the case of~$\P^2$), and thus we naturally conjecture that they agree under a suitable
reparametrization. The~significance of this is that the construction of
\cite{NevinsTA/StaffordJT:2007,noncomm1} is as a moduli space of sheaves on
a noncommutative surface (essentially $\Proj {\cal S}^{(1)}$), and there
are some natural birational transformations between such moduli spaces. In~particular, under certain conditions, the deformed Hilbert scheme is
birational to a moduli space of sheaves with~$1$-dimensional support, which
in turn may be interpreted as a moduli space of elliptic difference
equations. We thus expect that our deformations are similarly closely
related to {\em noncommutative} deformations of such moduli spaces.

When $q=t=0$ (i.e., the usual Hilbert scheme), this birational map may be
described as follows. Let $X$ be a rational surface with a chosen
anticanonical curve $C_a$, and let $D$ be a divisor class such that
$\sO_{C_a}(D)$ is trivial and the linear system $|D|$ is generically
integral. The~curves in this linear system have genus $g=D^2/2+1$, and the
linear system is itself a $\P^g$. There is a natural incidence relation
between $\Hilb^g(X)$ and the linear system, i.e., whether the curve
contains the \mbox{$g$-point} subscheme $Z$. Each divisor in~$|D|$ is incident
with a $g$-dimensional subscheme of~$\Hilb^g(X)$, and thus $\Hilb^g(X)$ is
birational to the relative $\Hilb^g$ of the linear system. For~$C\in |D|$,
there is a natural map from $\Hilb^g(C)$ to a compactification of~$\Pic^g(C)$ (i.e., torsion-free sheaves on~$C$), and again this is
generically invertible. In~other words, the incidence relation is the
graph of a birational map as desired.

There are two things that may go wrong with the map from $\Hilb^g(X)$ to
the relative $\Pic^g$: there may be more than one curve in the linear
system containing the given subscheme, and the curve may be reducible (so
that the resulting torsion-free sheaf may fail to be stable). The~first
issue happens when $h^0({\cal I}(D))>1$ (where ${\cal I}$ is the ideal
sheaf of~$Z$); since this sheaf has Euler characteristic $0$, we~expect
this to happen only in codimension~$\ge 3$. (In fact, given a particular
curve $C$ containing $Z$, the deformations of~$C$ that still contain~$Z$
are given by a~subsheaf of the cotangent sheaf corresponding to~$\sO_C(K-Z)$, and thus $C$ is moveable iff $Z$ is moveable in~$\Pic^g(C)$,
so this is codimension~$\ge 2$ in the graph of the birational map.)
For~generic parameters, the only reducible curves of~$D$ will be those meeting
$C_a$ and thus having it as a~component. Since $|D-C_a|\cong \P^{g-1}$,
there are two divisors in~$\Hilb^g(X)$ compatible with such a reducible
curve: either all $g$ points lie on a curve of~$|D-C_a|$ or at least one
point lies on~$C_a$. (These correspond to the divisor classes on~$\Hilb^g(X)$ denoted by $\delta+D-C_a$ and $C_a$ respectively relative to
the above basis.) For an appropriate choice of stability conditions, the
map is well-defined on the locus where one point lies on~$C_a$, and
contracts the other divisor to a subscheme of codimension~$\ge 2$.

We thus find that if we remove the unique divisor of class $\delta+D-C_a$
from $\Hilb^g(X)$, then the result is not only birational to the
compactified relative $\Pic^g(C)$, but the birational map is an isomorphism
in codimension~1. The~compactified relative $\Pic^g(C)$ is an abelian
fibration over $\P^g\cong |D|$ (with integral fibers), and the derived
autoequivalences of the fibers should extend to derived autoequivalences of~$\Pic^g(C)$. One also expects that birational maps which are isomorphisms
in codimension~1 should induce derived equivalences, and thus we expect
that contracting the given divisor on~$\Hilb^g(X)$ should give a projective
scheme with a large family of derived autoequivalences. Moreover, since
the noncommutative deformations of a scheme are functions of the derived
category alone, we~should expect these derived autoequivalences to act on
the corresponding formal neighborhood in the family corresponding to~${\cal
 S}^{(n)}$, and thus one hopes on the family itself. The~result would be
an analogue of the derived equivalences of geometric Langlands, with
(symmetric) elliptic difference equations replacing connections.
(For~$n=1$, such derived equivalences in fact exist, see~\cite[Section~12]{noncomm1}.)

One part of the above line of reasoning is the strong suggestion that the
divisor $\delta+D-C_a$ should be contractible. Something along these lines
holds for~${\cal S}$.

\begin{prop} For any $v\in \langle \delta,s,f,e_1,\dots,e_m\rangle$, the union
 \begin{gather*}
 \bigcup_{k\in \Z} {\cal
 S}^{(n)}_{\eta',x_1,\dots,x_m;q,t}(0,v+k(\delta+(n-1)f))
 \end{gather*}
 is coherent and equal to
 \begin{gather*}
 {\cal S}^{(n)}_{\eta',x_1,\dots,x_m;q,t}(0,v+(d-a)(\delta+(n-1)f))
 \\ \phantom{{\cal S}^{(n)}_{\eta',x_1,\dots,x_m;q,t}}
{} = {\cal S}^{(n)}_{\eta'+(n-1)t,x_1,\dots,x_m;q,0}(0,v+(d-a)(\delta+(n-1)f))
 \end{gather*}
 if $v=a\delta+ds+\cdots$.
\end{prop}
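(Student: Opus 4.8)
The plan is to read the union off directly from the conditions defining the sheaves ${\cal S}^{(n)}_{\eta',x_1,\dots,x_m;q,t}(0,v+k(\delta+(n-1)f))$, by showing that these form an ascending chain inside one fixed coherent sheaf and that the chain stabilizes as soon as the $\delta$-coefficient of the target reaches its $s$-coefficient. Write $v=a\delta+ds+d'f-r_1e_1-\cdots-r_me_m$. Since $\pi(\delta+(n-1)f)=0$, every sheaf in the union is a subsheaf of the single coherent $\Hom$ sheaf ${\cal S}^{(n)}_{\eta';q}(0,\pi(v))$ of the master spherical algebra; working (as usual) over generic parameters, a local section ${\cal D}$ of the latter lies in ${\cal S}^{(n)}_{\eta',x_1,\dots,x_m;q,t}(0,w)$ for $w=v+k(\delta+(n-1)f)$ exactly when $\Delta_{x_1,\dots,x_m;q,t}(w,\vec z)\,{\cal D}\,\Delta_{x_1,\dots,x_m;q,t}(0,\vec z)^{-1}$ is a section of the correspondingly twisted master algebra. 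Passing from $k$ to $k+1$ multiplies $\Delta(w,\vec z)$ on the left by the holomorphic $C_n$-invariant theta factor $\prod_{1\le i<j\le n}\vartheta(t+(a+k)q\pm z_i\pm z_j)$ — the $f$-coefficient does not enter $\Delta$, and the $\vec x$-part is unchanged because $k$ does not alter $d$ — and changes the target twist by the bundle of that factor on the codomain side. Left composition with multiplication by that section carries the twisted master algebra into the newly twisted one, so we obtain inclusions ${\cal S}^{(n)}_{\eta',x_1,\dots,x_m;q,t}(0,v+k(\delta+(n-1)f))\subseteq {\cal S}^{(n)}_{\eta',x_1,\dots,x_m;q,t}(0,v+(k+1)(\delta+(n-1)f))$ for all $k$.

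The heart of the argument is showing that this chain is constant for $k\ge d-a$, i.e.\ once the target $\delta$-coefficient $a'=a+k$ satisfies $a'\ge d$. This is the content of the remark (preceding the statement) that the $t$-dependent conditions become vacuous for $a'\ge d$: for a monomial $\prod_iT_i^{\kappa_i}$ occurring in a section of ${\cal S}^{(n)}_{\eta';q}(0,\pi(v))$, the coefficient of $\prod_iT_i^{\kappa_i}$ in $\Delta(w,\vec z)\,{\cal D}\,\Delta(0,\vec z)^{-1}$ is $c_{\vec\kappa}$ times $\Delta(w,\vec z)/\Delta(0,\vec z+\vec\kappa q)$. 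Since the Bruhat order on $W\backslash\tW/W$ is dominance order and $\pi(v)$ has $s$-coefficient $d$, $\vec\kappa$ is a signed permutation of a dominant weight $\le(d/2,\dots,d/2)$, so $|\kappa_i|+|\kappa_j|\le d\le a'$; then, using $\Gamq(x+q)=\vartheta(x)\Gamq(x)$, the $t$-part of $\Delta(w,\vec z)/\Delta(0,\vec z+\vec\kappa q)$ is a product of holomorphic $\vartheta$-factors with no poles, and the $t$-twisting imposes no vanishing condition on $c_{\vec\kappa}$. Hence for $k\ge d-a$ the sheaf is just the subsheaf of ${\cal S}^{(n)}_{\eta';q}(0,\pi(v))$ cut out by the $\vec x$-conditions alone, independently of $k$; being a coherent subsheaf of a coherent sheaf on $\P^n\times_{{\cal E}^3}\P^n$ cut out by closed conditions, it is coherent. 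Thus the union is coherent and equals ${\cal S}^{(n)}_{\eta',x_1,\dots,x_m;q,t}(0,v+(d-a)(\delta+(n-1)f))$, which has $\delta$-coefficient $d$.

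For the identification with the $t=0$ category I would invoke the polarization identity $P_d(\eta';q,t)=P_d(\eta'+(n-1)t;q,0)$, immediate from the definition of $P_d$, which shows that the master spherical algebra over the ambient family with parameter $t$ and twisting datum $\eta'$ coincides, as a sheaf of bimodules, with the one with parameter $0$ and twisting datum $\eta'+(n-1)t$. The $\vec x$-conditions involve neither $t$ nor $\eta'$, and the target $v+(d-a)(\delta+(n-1)f)$ has $\delta$-coefficient equal to its $s$-coefficient $d$, so the $t$-twisting condition for it is vacuous even with the literal parameter value $0$; hence ${\cal S}^{(n)}_{\eta'+(n-1)t,x_1,\dots,x_m;q,0}(0,v+(d-a)(\delta+(n-1)f))$ is again the subsheaf of the master $\Hom$ sheaf cut out by the $\vec x$-conditions, and the two sheaves agree.

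The main obstacle is the vacuousness estimate of the second paragraph: one needs the precise bound on which monomials $\prod_iT_i^{\kappa_i}$ occur in a master spherical $\Hom$ sheaf of degree $\pi(v)$ (equivalently, that the relevant dominance interval is $[\le(d/2,\dots,d/2)]$), so that $|\kappa_i|+|\kappa_j|\le d$ forces the $\Gamq$-ratio to be holomorphic. Everything else is bookkeeping with $\Delta$, $P_d$, and the reflection principle $\Gamq(x+q)=\vartheta(x)\Gamq(x)$.
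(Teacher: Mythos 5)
Your proposal is correct and takes essentially the same route as the paper's (very terse) proof: it is a nested sequence; once $k\ge d-a$ the $t$-conditions coming from the dominance bound $\lambda\le(d/2,\dots,d/2)$ become vacuous so the sequence stabilizes, giving coherence; and the residual $t$-dependence sits only in the twisting datum via $(n-1)t+\eta'$, so it can be absorbed into $\eta'$ at $t=0$. You have simply supplied the $\Gamq$/$\vartheta$ bookkeeping that the paper leaves implicit.
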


\begin{proof}
 Indeed, this is a nested sequence of sheaf bimodules as $k$ increases,
 and once $k\ge d-a$, there are no longer any $t$-dependent vanishing
 conditions, and thus the sequence stabilizes. The~only remaining
 dependence on~$t$ is in the twisting datum, and thus we may make $t=0$
 as long as we adjust $\eta'$ accordingly.
\end{proof}

\begin{rem}
 For the commutative Hilbert scheme, what this is suggesting is that if
 $D$ is the unique divisor of class $\delta+(n-1)f$, then for any line
 bundle ${\cal L}$, $\Gamma(\Hilb^n(X)\setminus D;{\cal L})$ is
 finite-dimensional (and isomorphic to the space of global sections of
 some ${\cal L}'$). Applying this to powers of an ample bundle gives a
 graded algebra the $\Proj$ of which has the same homogeneous coordinate
 ring as $\Hilb^n(X)\setminus D$, and thus the corresponding map contracts
 $D$ to a subscheme of codimension~$\ge 2$. Note that we cannot expect
 this to be a blow down, but it should be similar: if we take $e_m$ rather
 than $\delta+(n-1)f$, then the same construction gives the Hilbert scheme
 of the blow down of~$X$.
\end{rem}

\begin{rem}
 The case $D-C_a=(n-1)f$ (and thus $m=2n+6$) corresponds to a moduli space
 of second-order equations with~$2n+6$ singularities.
\end{rem}

As this holds even without the constraint that $D\cdot C_a=0$, it seems likely that something ana\-lo\-gous should hold for any divisor. To~be precise,
if $w\in \Z\langle s,f,e_1,\dots,e_m\rangle$ is such that~$\sO_{X_m}(w)$ is
generically acyclic with~$n$ global sections, then we conjecture that the
union
\begin{gather*}
\bigcup_{k\in \Z} {\cal S}^{(n)}_{\eta',x_1,\dots,x_m;q,t}(0,v+k(\delta+w))
\end{gather*}
stabilizes (and is thus coherent) for large $k$, and moreover that the
smallest $k$ for which it stabilizes is a linear function of~$v$, so that
the resulting category may be identified with a~subcategory of~${\cal S}$. The~result would then be invariant under the shift in parameters
corresponding to~$\delta+w$, and thus equivalent to a category with~$t=0$.
Something along these lines appears to work for~$n=2$ with~$w=2s+2f-e_1-\cdots-e_7$, in that the Euler characteristic of the line
bundle reaches a maximum at $k$ depending linearly on~$v$. (This case
corresponds to the unique other case (modulo the Fourier transform and
elementary transformations) of a divisor $D$ such that $(D+C_a)\cap
C_a=\varnothing$ and $(D+C_a)^2=2$, corresponding to a moduli space of line
bundles on genus~2 curves.)

\subsection*{Acknowledgements} The author would particularly like to thank
P.~Etingof both for asking the original seed question (with an important
assist from A.~Okounkov!) and hosting the author's sabbatical at~MIT (which
the author would also like to thank, naturally) where much of the basic
approach was worked out, with great assistance from conversations with not
only Etingof but also (regarding various geometrical issues) B.~Poonen.
Thanks also go to T.~Graber and E.~Mantovan for helpful and encouraging
conversations regarding the constructions of Section~\ref{section2} as well as various
general algebraic geometric questions, and to O.~Chalykh for asking some
fruitful questions about residue conditions. The~author's work presented
here was supported in part by grants from the National Science Foundation,
DMS-1001645 and DMS-1500806.

\addcontentsline{toc}{section}{References}
\LastPageEnding

\end{document}